 \documentclass[a4paper,11pt]{article}
 \pagestyle{plain}
 \setlength{\oddsidemargin}{12pt}
 \setlength{\evensidemargin}{12pt}
 \setlength{\topmargin}{0pt}
 \setlength{\textwidth}{15cm}
 \setlength{\textheight}{21.5cm}
 \setlength{\parindent}{0.5cm}
 \setlength{\parskip}{1ex plus 0.2ex minus0.2ex}

\synctex=1
\usepackage{pdfsync}

 \usepackage[plainpages=false]{hyperref}
 \usepackage{amsfonts,amsmath,amssymb,amsthm}
 \usepackage{latexsym,lscape,rawfonts,mathrsfs}

 \usepackage[dvips]{color}
 \usepackage{multicol}


 \usepackage[all]{xy}
 \usepackage{eufrak}
 \usepackage{makeidx}
 \usepackage{graphicx,psfrag}
 \usepackage{pstool}

 \usepackage{array,tabularx}

 \usepackage{setspace}

 \usepackage{appendix}


\usepackage{txfonts}


 
 \newcommand{\C}{\ensuremath{\mathbb{C}}}
 \newcommand{\D}[2]{\ensuremath{ \frac{\partial{#1}}{\partial{#2}}}}
 \newcommand{\E}{\ensuremath{\mathbb{E}}}

 \newcommand{\R}{\ensuremath{\mathbb{R}}}
 \newcommand{\Z}{\ensuremath{\mathbb{Z}}}
 \newcommand{\CP}{\ensuremath{\mathbb{CP}}}

 \newcommand{\ba}{\begin{align*}}
 \newcommand{\ea}{\end{align*}}


 \DeclareMathOperator{\supp}{supp}

 \DeclareMathOperator{\Vol}{Vol}
 \DeclareMathOperator{\diam}{diam}

 \DeclareMathOperator{\diag}{diag}


 \newcommand{\norm}[2]{{ \ensuremath{\left\|} #1 \ensuremath{\right\|}}_{#2}}
 \newcommand{\snorm}[2]{{ \ensuremath{\left |} #1 \ensuremath{\right |}}_{#2}}

 \makeatletter
 \def\ExtendSymbol#1#2#3#4#5{\ext@arrow 0099{\arrowfill@#1#2#3}{#4}{#5}}
 \newcommand\longeq[2][]{\ExtendSymbol{=}{=}{=}{#1}{#2}}
 \makeatother

 \makeatletter
 \def\ExtendSymbol#1#2#3#4#5{\ext@arrow 0099{\arrowfill@#1#2#3}{#4}{#5}}
 \newcommand\longright[2][]{\ExtendSymbol{-}{-}{\rightarrow}{#1}{#2}}
 \makeatother

 \definecolor{hao}{rgb}{1,0.5,0}
 \definecolor{miao}{cmyk}{0.5,0,0.2,0.2}
 \definecolor{qiao}{gray}{0.96}


\newtheorem{prop}{Proposition}[section]

\newtheorem{proposition}[prop]{Proposition}
\newtheorem{theoremin}[prop]{Theorem}
\newtheorem{theorem}[prop]{Theorem}

\newtheorem{lemma}[prop]{Lemma}
\newtheorem{claim}[prop]{Claim}
\newtheorem{corollaryin}[prop]{Corollary}
\newtheorem{corollary}[prop]{Corollary}

\newtheorem{remark}[prop]{Remark}

\newtheorem{definition}[prop]{Definition}

\newtheorem{conjecture}[prop]{Conjecture}

\numberwithin{equation}{section}

 \title{Space of Ricci flows (II)}
 \author{Xiuxiong Chen\footnote{Supported by NSF grant DMS-1211652.}\;,  Bing Wang\footnote{Supported by NSF grant DMS-1312836.}}
\date{}
 \begin{document}
  \maketitle
 \begin{abstract}
  Based on the compactness of the moduli of non-collapsed Calabi-Yau spaces with mild singularities,
  we set up a structure theory for polarized K\"ahler Ricci flows with proper geometric bounds.
  Our theory is a generalization of the structure theory of non-collapsed K\"ahler Einstein manifolds.
  As applications, we prove the Hamilton-Tian conjecture and the partial-$C^0$-conjecture of Tian.
 \end{abstract}

\tableofcontents

\section{Introduction}

This paper is the continuation of the study in (\cite{CW3}) and (\cite{CW4}). In~\cite{CW3}, we developed
a weak compactness theory for non-collapsed Ricci flows with bounded scalar curvature and bounded half-dimensional curvature integral.  This weak compactness theory is applied in~\cite{CW4} to prove the Hamilton-Tian conjecture of
complex dimension 2 and its geometric consequences.
However, the assumption of half dimensional curvature integral is restrictive.
It is not available for high dimensional anti-canonical K\"ahler Ricci flow, i.e., K\"ahler Ricci flow on a Fano manifold $(M,J)$, in the class $2\pi c_1(M,J)$.
In this paper,  by taking advantage of the extra structures from K\"ahler geometry,  we drop this curvature integral condition.

The present paper is inspired by two different sources.
One source is the structure theory of K\"ahler Einstein manifolds which was developed over last 20 years by many people,
notably, Anderson, Cheeger, Colding, Tian and more recently,  Naber, Donaldson and Sun.  The recent progress of the structure theory of
K\"ahler Einstein manifolds supplies many additional tools for our approach.
The other source is the seminal work of Perelman on the Ricci flow(c.f.~\cite{Pe1},~\cite{SeT}).
Actually, it was pointed out by Perelman already that his idea in \cite{Pe1} can be applied to study K\"ahler Ricci flow.
He said that

``\textit{present work has also some applications to the
Hamilton-Tian conjecture concerning K\"ahler-Ricci flow on K\"ahler manifold with
positive first Chern class: these will be discussed in a separate paper}".

\noindent We cannot help to wonder how far he will push the subject of Ricci flow if he continued to publicize
his works on arxiv.
Although ``\textit{this separate paper}" never appears,  his fundamental estimates of K\"ahler Ricci flow on Fano manifolds is the base of our present
research.  Besides Perelman's estimates,
we also note that the following technical results in the Ricci flow are important to the formation of this paper
over a long period of time: the Sobolev constant estimate by Q.S. Zhang(\cite{Zhq1}) and R. Ye (\cite{Ye}),
and the volume ratio upper bound estimate by Q.S. Zhang(\cite{Zhq3}) and  Chen-Wang(\cite{CW5}). Some other important estimates can be found in the summary of~\cite{CW2}. 

Our key observation is that there is a ``canonical neighborhood" theorem for anti-canonical K\"ahler Ricci flows.
The idea of ``canonical neighborhood"  originates from Theorem 12.1 of Perelman's paper~\cite{Pe1}.
For every 3-dimensional Ricci flow, Perelman showed that the space-time neighborhood of a high curvature point can be approximated by
a $\kappa$-solution, which is a model Ricci flow solution.
To be precise, a $\kappa$-solution is a 3-dimensional, $\kappa$-noncollapsed,  ancient Ricci flow solution with bounded, nonnegative curvature operator.
 By definition, it is not clear at all that the moduli of $\kappa$-solutions has compactness under (pointed-) smooth topology (modulo diffeomorphisms).
Perelman genuinely proved the compactness by delicate use of Hamilton-Ivey estimate and the geometry of nonnegatively curved 3-manifolds.
In light of the compactness of the moduli of $\kappa$-solutions,  by a maximum principle type argument,
Perelman developed the ``canonical neighborhood" theorem,
which is of essential importance to his celebrated
solution of the Poincar\'{e} conjecture(c.f.~\cite{KL},~\cite{MT},~\cite{CZ}).

The idea of  ``canonical neighborhood"  is universal and  can be applied in many different geometric settings.
In particular,  there is a ``canonical neighborhood" theorem
for the anti-canonical K\"ahler Ricci flows, where estimates of many quantities, including scalar curvature, Ricci potential and Sobolev constant,
are available.  Clearly, a ``canonical neighborhood" should be a neighborhood in space-time, behaving like a model space-time,
which is more or less the blowup limit of the given flow.
Therefore, it is natural to expect that the model space-time is the scalar flat Ricci flow solutions,
which must be Ricci flat,  due to the
equation $ \frac{\partial}{\partial t} R= \Delta R + 2|Ric|^2$, satisfied by the scalar curvature $R$.
For this reason, the model space and model space-time can be identified, since the evolution on time direction is trivial.
It is also natural to expect that the model space has some K\"ahler structure.  In other words, the model space should
be K\"ahler Ricci flat space, or Calabi-Yau space.
Now the first essential difficulty  appears.  A good model space should have a compact moduli.
For example, in the case of 3-dimensional Ricci flow, the moduli space of $\kappa$-solutions, which are the model space-times, has compactness in the smooth topology.
However, the moduli space of all the non-collapsed smooth Calabi-Yau space-times
is clearly not compact under the smooth topology.
A blowdown sequence of Eguchi-Hanson metrics is an easy example.
For the sake of compactness,
we need to replace the smooth topology by a weaker topology,
the pointed-Cheeger-Gromov topology.
At the same time,
we also need to enlarge the class of model spaces from complete Calabi-Yau manifolds to the Calabi-Yau spaces with mild singularities(c.f.~Definition~\ref{dfn:SC27_1}),
which we denote by $\widetilde{\mathscr{KS}}(n,\kappa)$.
Similar to the compactness theorem of Perelman's $\kappa$-solutions,
we have the compactness of   $\widetilde{\mathscr{KS}}(n,\kappa)$.

\begin{theoremin}[\textbf{Compactness of model moduli}]
$\widetilde{\mathscr{KS}}(n,\kappa)$ is compact under the pointed Cheeger-Gromov topology.
Moreover, each space $X \in \widetilde{\mathscr{KS}}(n,\kappa)$ is a Calabi-Yau conifold.
\label{thmin:HE21_1}
\end{theoremin}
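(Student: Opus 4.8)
The plan is to follow the blueprint developed for moduli of non-collapsed Einstein manifolds (Anderson, Cheeger--Colding--Tian, Cheeger--Naber, Donaldson--Sun), carried out here in the K\"ahler--Ricci-flat category and, crucially, under the a priori regularity built into Definition~\ref{dfn:SC27_1} rather than under an integral curvature bound. Take a sequence $(X_i, x_i, g_i) \in \widetilde{\mathscr{KS}}(n,\kappa)$. On its regular part each $X_i$ is Ricci-flat and $\kappa$-non-collapsed, so the Bishop--Gromov inequality bounds the volume ratio of every metric ball from above by the Euclidean value $\omega_{2n}$ while non-collapsing bounds it from below; Gromov's precompactness theorem then produces a subsequence converging in the pointed Gromov--Hausdorff sense to a complete length space $(X_\infty, x_\infty, d_\infty)$, which is again $\kappa$-non-collapsed by Colding's volume continuity. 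Cheeger--Colding theory gives the decomposition $X_\infty = \mathcal R \sqcup \mathcal S$ in which every iterated tangent cone is a metric cone (here the Ricci-flatness and volume monotonicity are used), $\mathcal R$ --- the set of points with Euclidean tangent cone --- carries, by the $\epsilon$-regularity below, a smooth Ricci-flat K\"ahler metric $g_\infty$ inducing $d_\infty$, and $\mathcal S$ is closed.

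\textbf{$\epsilon$-regularity --- the crux.} The heart of the matter, and exactly the point where the half-dimensional curvature integral hypothesis of~\cite{CW3} must be removed, is a uniform $\epsilon$-regularity theorem: there are $\epsilon(n,\kappa)>0$ and $C(n,\kappa)$ such that if $B(p,1)\subset X_i$ is $\epsilon$-Gromov--Hausdorff close to the unit ball in $\C^n$ then $|Rm|_{g_i}\le C$ on $B(p,\tfrac{1}{2})$, with all derivatives then controlled on $B(p,\tfrac{1}{4})$ by interior estimates (Einstein metrics being real-analytic in harmonic coordinates). Lacking an $L^n$ curvature bound one cannot invoke Anderson's theorem; instead I would argue by contradiction, rescaling at a point of near-maximal curvature by a point-selection argument to obtain a pointed non-collapsed Ricci-flat limit that is $\R^{2n}$ as a metric space at infinity yet has $|Rm|=1$ somewhere --- impossible by the volume rigidity of Cheeger--Colding. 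The work is to guarantee that the rescaled limit is again an admissible model: here one uses the full strength of Definition~\ref{dfn:SC27_1} (uniform scalar curvature, Ricci potential and Sobolev bounds) together with genuinely K\"ahler estimates --- the $\ddb$-lemma and pluripotential control of the K\"ahler potential --- which are scale-invariant and pass to the limit. This is the step I expect to be the main obstacle, because the $\epsilon$-regularity and the compactness statement are intertwined; the way out is an induction, on complex dimension and on the volume density at the base point, whose base case (complex surfaces, where the only non-collapsed Ricci-flat K\"ahler cones are $\C^2/\Gamma$) is unconditional.

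\textbf{The singular set.} Granting $\epsilon$-regularity, $X_i \to X_\infty$ smoothly (pointed Cheeger--Gromov) away from $\mathcal S$, and $\mathcal S$ is precisely the curvature blow-up locus. Stratify $\mathcal S$ by the Euclidean rank of tangent cones; the Cheeger--Colding--Tian and Cheeger--Naber codimension-four theorem gives that $\mathcal S$ has Hausdorff dimension $\le 2n-4$. The K\"ahler condition forces every tangent cone $C(Z)$ to be a Ricci-flat K\"ahler cone --- in particular there is no real-codimension-two stratum --- and dimension reduction identifies the link $Z$ of a top-stratum cone, after splitting off its Euclidean factor, with a model of the same type one complex dimension lower, closing the induction that organises Definition~\ref{dfn:SC27_1}. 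Combined with quantitative stratification, $\mathcal S$ is closed of Minkowski codimension $\ge 4$ with iterated conical structure, and $X_\infty$ inherits all the defining properties of the class.

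\textbf{Conifold structure and conclusion.} Thus $g_\infty$ is a smooth Ricci-flat K\"ahler metric on $X_\infty \setminus \mathcal S$ with $\mathcal S$ of codimension $\ge 4$ and conical tangent cones of the inductive type; by Cheeger--Colding--Tian and Donaldson--Sun (uniqueness and algebraicity of tangent cones), together with Hein--Sun's analysis of asymptotically conical Calabi--Yau metrics, $X_\infty$ is biholomorphic to a normal complex space with singular locus $\mathcal S$ near which $g_\infty$ is asymptotic to its tangent cone --- that is, $X_\infty$ is a Calabi--Yau conifold, hence lies in $\widetilde{\mathscr{KS}}(n,\kappa)$. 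This proves compactness under the pointed Cheeger--Gromov topology; applying the same analysis to the constant sequence $X_i \equiv X$ for an arbitrary $X \in \widetilde{\mathscr{KS}}(n,\kappa)$ gives the ``moreover'' assertion.
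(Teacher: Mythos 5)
Your overall plan — Gromov precompactness, Cheeger–Colding regular/singular decomposition, codimension-four stratification with the K\"ahler condition ruling out codimension two — is on the right track, and the paper's Proposition~\ref{prn:HA05_1} and Theorem~\ref{thm:HD19_1} do follow this outline. But there are several concrete errors and a genuine gap.

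First, you mischaracterize Definition~\ref{dfn:SC27_1}. You write that the $\epsilon$-regularity step uses ``the full strength of Definition~\ref{dfn:SC27_1} (uniform scalar curvature, Ricci potential and Sobolev bounds),'' but the definition of $\widetilde{\mathscr{KS}}(n,\kappa)$ contains \emph{none} of these; those bounds live in the definition of $\mathscr{K}(n,A)$, the flow moduli space. What Definition~\ref{dfn:SC27_1} does contain is the volume-density gap (item 5): $\mathrm{v}\equiv 1$ on $\mathcal{R}$ and $\mathrm{v}\leq 1-2\delta_0$ on $\mathcal{S}$ with $\delta_0$ the Anderson constant, together with $\dim_{\mathcal{M}}\mathcal{S}<2n-3$ and weak convexity of $\mathcal{R}$. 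These are precisely the properties that substitute for an $L^n$ curvature bound, and your argument never invokes them.

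Second, you vastly overcomplicate the $\epsilon$-regularity by proposing an induction on complex dimension and on volume density, and you worry that the rescaled limit must again be an admissible model, creating circularity. The paper avoids all of this: in Proposition~\ref{prn:SC26_7} the point-selection argument lands you on a region of \emph{uniformly bounded curvature} inside the smooth Ricci-flat manifold $\mathcal{R}$; non-collapsing then gives an injectivity-radius bound via Cheeger–Gromov–Taylor, so the rescaled balls converge \emph{smoothly} to a complete smooth non-collapsed Ricci-flat manifold, and Anderson's gap theorem immediately gives the contradiction. No singular limit, no induction, no circularity — the gap constant $\delta_0$ is hardwired into the definition precisely to make this work.

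Third, and most seriously, in your final paragraph you invoke ``Donaldson–Sun (uniqueness and algebraicity of tangent cones), together with Hein–Sun's analysis of asymptotically conical Calabi–Yau metrics'' to conclude conifold structure. The paper does not use, and in fact does not have, uniqueness of tangent cones: this is explicitly stated as an open problem in Conjecture~\ref{cje:HE09_1}. The paper's notion of conifold (Definition~\ref{dfn:HD20_1}) requires only that \emph{every} tangent space at a singular point is a metric cone of the correct dimension with the Anderson gap, together with strong convexity of $\mathcal{R}$ and the codimension-four Minkowski estimate. It does not assert uniqueness of tangent cones or asymptotically conical behavior of the metric. You are proving something both stronger and unsupported here. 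The actual work to establish the conifold structure in Theorem~\ref{thm:HE08_1} is: (i) the volume-cone-implies-metric-cone rigidity (Lemma~\ref{lma:HE04_2}) and K\"ahler cone splitting (Lemma~\ref{lma:HD30_1}), carried out on the singular model space — this requires the analytic foundation of Sections 2.2–2.3 (Dirichlet forms, heat kernels, Cheng–Yau gradient estimate, integration by parts on $X$) which your proposal never mentions; and (ii) upgrading weak convexity to strong convexity and obtaining the Minkowski (not just Hausdorff) dimension bound in the limit, which the paper does via the Colding–Naber H\"older continuity translated into the global Harnack inequality of volume radius (Proposition~\ref{prn:SC26_2}) and the Cheeger–Naber quantitative stratification (Proposition~\ref{prn:SB25_2}). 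Neither step appears in your outline.
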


The notion of conifold is well known to string theorist as
some special Calabi-Yau 3-folds with singularities(c.f.~\cite{Green}).
In this paper, by abusing notation,
we use it to denote a space whose singular part admits cone type tangent spaces.
The precise definition is given in Definition~\ref{dfn:HD20_1}.
Note that Calabi-Yau conifold is a generalization of Calabi-Yau orbifold.
The strategy to prove the compactness of
$\widetilde{\mathscr{KS}}(n,\kappa)$ follows the same route of the weak compactness theory
of K\"ahler Einstein manifolds, developed by Cheeger, Gromoll, Anderson, Colding, Tian, Naber, etc.
However, the analysis foundation on the singular spaces need to be carefully checked, which is done in section 2.
Theorem~\ref{thmin:HE21_1} is motivated by section 11 of Perelman's seminal paper~\cite{Pe1},
where Perelman proved the compactness of moduli space of $\kappa$-solutions and showed that
$\kappa$-solutions have many properties which are not obvious from definition.

By trivial extension, each
$X \in \widetilde{\mathscr{KS}}(n,\kappa)$ can be understood as a space-time $X \times (-\infty, \infty)$
satisfying Ricci flow equation.
Intuitively, the rescaled space-time structure in a given anti-canonical K\"ahler Ricci flow
should behave similar to that of $X \times (-\infty, \infty)$ for some $X \in \widetilde{\mathscr{KS}}(n,\kappa)$,
when the rescaling factor is large enough.
In order to make sense that two space-times are close to each other,
we need the Cheeger-Gromov topology for space-times, a slight generalization of the Cheeger-Gromov
topology for metric spaces.
When restricted on each time slice, this topology is the same as the usual Cheeger-Gromov topology.
Between every two different time slices, there is a natural homeomorphism map connecting them.
Therefore, the above intuition can be realized if we can show a blowup sequence of Ricci flow
space-times from a given K\"ahler Ricci flow converges to a limit space-time $X \times (-\infty, \infty)$,
in the pointed Cheeger-Gromov topology for space-times.
However,  it is not easy to obtain the homeomorphism maps between different time slices in the limit.
Although it is quite obvious to guess that the homeomorphism maps among different time slices
are the limit  of identity maps,
there exists serious technical difficulty to show the existence and regularity of the limit maps.
The difficulty boils down to a fundamental improvement of Perelman's pseudolocality theorem(Theorem 10.1 of~\cite{Pe1}).
Recall that Perelman's pseudolocality theorem says that Ricci flow cannot ``quickly" turn an almost Euclidean region into a very curved one.
It is a  short-time, one-sided estimate in nature. We need to improve it  to a long-time, two-sided estimate.
Not surprisingly, the rigidity of K\"ahler geometry plays an essential role for such an improvement.
The two-sided, long-time pseudolocality is an estimate in the time direction.
Modulo this time direction estimate and the weak compactness in the space direction, we can take limit for
a sequence of Ricci flows blown up from a given flow.
Then the canonical neighborhood theorem can be set up if we can show that the limit space-time locates in
$\widetilde{\mathscr{KS}}(n,\kappa)$,
following the same route as that in the proof of Theorem 12.1 of~\cite{Pe1}. \\

From the above discussion, it is clear that the strategy to prove the canonical neighborhood theorem is simple.
However, the technical difficulty hidden behind this simple strategy is not that simple.
We observe that the anti-canonical
K\"ahler Ricci flow has many additional structures, all of them should be used to carry out the proof of
the canonical neighborhood theorem.
In particular, over every anti-canonical K\"ahler Ricci flow, there is a natural anti-canonical polarization, which should play an important role, as done in~\cite{CW4}.
Although we are aiming at the anti-canonical case,
in this paper, however, we shall consider flows with more general polarizations.
We call  $\mathcal{LM}=\left\{ (M^n, g(t), J, L, h(t)),  t \ \in (-T,T) \subset \R \right\}$  a polarized K\"ahler Ricci flow if
\begin{itemize}
\item $\mathcal{M}=\left\{ (M^n, g(t), J),  t \ \in (-T,T) \right\}$ is a K\"ahler Ricci flow solution.
\item $L$ is a Hermitian line bundle over $M$, $h(t)$ is a family of smooth metrics on $L$ whose curvature is $\omega(t)$, the metric form compatible with
$g(t)$ and the complex structure $J$.
\end{itemize}
Clearly, the first Chern class of $L$ is $[\omega(t)]$, which does not depend on time.  So a polarized K\"ahler Ricci flow stays in a fixed integer K\"ahler class.
The evolution equation of $g(t)$ can be written as
\begin{align}
     \frac{\partial}{\partial t} g_{i\bar{j}}=-R_{i\bar{j}}+ \lambda g_{i\bar{j}},  \label{eqn:K07_1}
\end{align}
where $\lambda=\frac{c_1(M)}{c_1(L)}$.
Since the flow stays in the fixed class, we can let $\omega_t=\omega_0 + \sqrt{-1} \partial \bar{\partial} \varphi$. Then $\dot{\varphi}$
is the Ricci potential, i.e.,
\begin{align*}
  \sqrt{-1} \partial \bar{\partial} \dot{\varphi} = -Ric + \lambda g.
\end{align*}
Note the choice of $\varphi$ is unique up to adding a constant. So we can always modify the choice of $\varphi$ such that  $\displaystyle \sup_M \dot{\varphi}=0$.
For simplicity, we denote $\mathscr{K}(n,A)$ as the collection of all the polarized K\"ahler Ricci flows
$\mathcal{LM}$ satisfying the following estimate
\begin{align}
\begin{cases}
  &T \geq 2, \\
  &C_S(M)+\frac{1}{\Vol(M)}+|\dot{\varphi}|_{C^0(M)} + |R-n\lambda|_{C^0(M)} \leq A,
  \quad \textrm{for every time}  \quad t \in (-T,T).
\end{cases}
 \label{eqn:SK20_1}
\end{align}
Here $C_S$ means the Sobolev constant, $A$ is a uniform constant.  In this paper, we study the structure of
polarized K\"ahler Ricci flows locating in the space $\mathscr{K}(n,A)$.
The motivation behind (\ref{eqn:SK20_1}) arises from the fundamental estimate of diameter, scalar curvature,
$C^1$-norm of Ricci potential, and Sobolev constant
along the anti-canonical K\"ahler Ricci flows(c.f.~\cite{SeT},~\cite{Zhq1},~\cite{Ye}).
Every polarized K\"ahler Ricci flow solution in $\mathscr{K}(n,A)$ has at least three structures: the metric space structure, the flow structure, the line bundle structure.
Same structures can be discussed on the model space-time in $\widetilde{\mathscr{KS}}(n,\kappa)$.
All the structures of a flow in $\mathscr{K}(n,A)$ can be modeled by the corresponding structures
in $\widetilde{\mathscr{KS}}(n,\kappa)$, which is the same meaning as the ``canonical neighborhood theorem".
We shall compare these structures term by term. \\

Under the (pointed-)Cheeger-Gromov topology at time $0$,  let us compare the metric structure of a flow in
 $\mathscr{K}(n,A)$ with a Calabi-Yau conifold in $\widetilde{\mathscr{KS}}(n,\kappa)$.
We shall show that  $\mathscr{K}(n,A)$ and  $\widetilde{\mathscr{KS}}(n,\kappa)$ behaves almost the same in this
perspective. Intuitively, one can think that the weak compactness theory of Ricci-flat manifolds and Einstein manifolds are almost the same. 
For simplicity of notation,  we use $\stackrel{G.H.}{\longrightarrow}$ to denote the convergence in Gromov-Hausdorff topology. We use $\stackrel{\hat{C}^k}{\longrightarrow}$ to denote
the $C^k$-Cheeger-Gromov topology, i.e., the convergence is in the Gromov-Hausdorff topology,
and can be improved to be in $C^k$-topology (modulo diffeomorphisms) away from singularities.
We call a point being regular if it has a neighborhood with smooth manifold structure and call a point being singular if it is not regular(c.f. Proposition~\ref{prn:HA08_1} and Remark~\ref{rmk:HA07_1}).

\begin{theoremin}[\textbf{Metric space estimates}]
 Suppose $\mathcal{LM}_i \in \mathscr{K}(n,A)$. By taking subsequence if necessary, we have
\begin{align*}
  (M_i, x_i, g_i(0)) \longright{\hat{C}^{\infty}} (\bar{M}, \bar{x}, \bar{g}).
\end{align*}
 The limit space $\bar{M}$ has a classical regular-singular decomposition $\mathcal{R} \cup \mathcal{S}$ with the following properties.
 \begin{itemize}
     \item  $\left(\mathcal{R}, \bar{g}\right)$ is a smooth, open Riemannian manifold. Moreover,
            $\mathcal{R}$ admits a limit K\"ahler structure $\bar{J}$ such that  $\left(\mathcal{R}, \bar{g}, \bar{J} \right)$ is an open K\"ahler manifold.
     \item  $\mathcal{S}$ is a closed set and $\dim_{\mathcal{M}} \mathcal{S} \leq 2n-4$, where $\mathcal{M}$ means Minkowski dimension(c.f.~Definition~\ref{dfn:HE08_1}).
     \item  Every tangent space of $\bar{M}$ is an irreducible metric cone.
     \item  Let $\mathrm{v}$ be the volume density, i.e.,
            \begin{align}
              \mathrm{v}(y)=\limsup_{r \to 0} \omega_{2n}^{-1}r^{-2n}|B(y,r)|
            \end{align}
            for every point $y \in \bar{M}$.
            Then a point is regular if and only if $\mathrm{v}(y)=1$,  a point is singular if and only if
            $\mathrm{v}(y) \leq 1-2\delta_0$, where $\delta_0$ is a dimensional
            constant determined by Anderson's gap theorem.
\end{itemize}
\label{thmin:SC24_1}
\end{theoremin}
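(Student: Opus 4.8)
The plan is to transplant the weak-compactness theory of Einstein and Ricci-flat manifolds --- Anderson, Cheeger--Colding--Tian, Naber --- onto the time-$0$ slices of the flows in $\mathscr{K}(n,A)$, calling on the flow itself only to supply (a) the a priori local geometry and (b) a substitute for the Bishop--Gromov volume comparison. So I would first record the prerequisite bounds. From $C_S(M)\le A$ and Perelman's no-local-collapsing one gets a uniform two-sided volume ratio bound $\kappa r^{2n}\le |B(x,r)|\le \Lambda r^{2n}$ up to a definite scale --- the upper side via the volume-ratio estimates of Zhang and Chen--Wang --- hence Gromov--Hausdorff precompactness and a uniformly doubling limit measure. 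From the scalar bound $|R-n\lambda|\le A$, the Ricci-potential bound, the identity $\sqrt{-1}\partial\bar\partial\dot{\varphi}=-Ric+\lambda g$, and Perelman's estimates along the flow, one controls $Ric$ in $L^p$ for every finite $p$, uniformly on unit balls; this is an integral Ricci bound in the sense of Petersen--Wei, so on sufficiently small balls it already yields the local volume comparison replacing Bishop--Gromov. The remaining non-classical tool, used where the flow structure is genuinely needed, is the two-sided long-time pseudolocality theorem (the promised improvement of Theorem~10.1 of~\cite{Pe1}), which lets an almost-Euclidean ball at time $0$ be propagated both forward and backward along the flow.

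With these inputs the $\varepsilon$-regularity theorem of section~2 applies: a geodesic ball whose normalized volume ratio lies within $\varepsilon$ of $1$ is, at half the radius, a smooth K\"ahler manifold carrying uniform bounds on curvature and all of its covariant derivatives. Let $\mathcal{R}\subset\bar M$ be the set of points admitting such a ball at some scale and let $\mathcal{S}=\bar M\setminus\mathcal{R}$; then $\mathcal{R}$ is open, $\mathcal{S}$ is closed, and by the volume convergence this dichotomy coincides with $\mathrm{v}(y)=1$ versus $\mathrm{v}(y)\le 1-2\delta_0$, $\delta_0$ being Anderson's gap constant in its scalar-curvature form (section~2). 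On $\mathcal{R}$ the Gromov--Hausdorff convergence upgrades to $\hat{C}^{\infty}$-Cheeger--Gromov convergence, so $(\mathcal{R},\bar g)$ is a smooth open Riemannian manifold; and since each $J_i$ is $g_i$-parallel, hence has uniformly bounded covariant derivatives of every order wherever the metric is controlled, the $J_i$ subconverge on $\mathcal{R}$ to a $\bar g$-parallel $\bar J$, making $(\mathcal{R},\bar g,\bar J)$ an open K\"ahler manifold.

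For the Minkowski dimension estimate I would cover $\mathcal{S}$ at scale $r$ by $\varepsilon$-bad balls; the $\varepsilon$-regularity theorem forces a definite amount of curvature energy (equivalently, volume deficit) onto each of them, and the two-sided volume bounds then cap the number of disjoint such balls by $C r^{-(2n-4)}$. The reduction from the elementary codimension $2$ --- there are no singularities of codimension less than $2$, by the removable-singularity argument available under an integral Ricci bound --- down to codimension $4$ is precisely where K\"ahler rigidity enters: a codimension-$2$ blow-up cone must be $\R^{2n-2}\times C(S^1_\beta)$, and compatibility with the limiting complex structure (and with the polarizing line bundle) forces $\beta=1$, i.e.\ smoothness. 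This gives $\dim_{\mathcal{M}}\mathcal{S}\le 2n-4$. Finally, a diagonal blow-up argument realizes every tangent space of $\bar M$ as a pointed Cheeger--Gromov limit of flows rescaled out of the $\mathcal{LM}_i$, along which both the scalar curvature and the ratio $\lambda$ tend to $0$; such a limit lies in $\widetilde{\mathscr{KS}}(n,\kappa)$, so by Theorem~\ref{thmin:HE21_1} it is a Calabi--Yau conifold, and the extra dilation invariance intrinsic to a tangent space forces it to be a metric cone, irreducible because of the K\"ahler structure carried by the cone.

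I expect the main obstacle to be exactly the $\varepsilon$-regularity step together with the codimension-$4$ estimate, obtained \emph{without} the half-dimensional curvature integral bound used in~\cite{CW3,CW4}: one must replace that bound by the scalar-curvature bound, the integral Ricci control, the two-sided pseudolocality, and K\"ahler rigidity, and then propagate the resulting estimates onto the possibly singular limit --- this is the analytic heart of the matter, and it is what the foundational work of section~2 is designed to deliver.
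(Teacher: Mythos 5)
Your sketch correctly identifies several ingredients the paper uses — volume ratio bounds from the Sobolev constant together with Zhang/Chen--Wang, two-sided pseudolocality, blow-up into $\widetilde{\mathscr{KS}}(n,\kappa)$, and K\"ahler rigidity of codimension-$2$ cones — but it rests on inputs that are not available at the stage you need them, and it omits the paper's central mechanism. The claimed uniform local $L^p$-Ricci bound for every finite $p$ does not follow from~(\ref{eqn:SK20_1}); what the paper eventually proves (Corollary~\ref{cly:SL27_4}) is only $r^{2p-2n}\int_B|Ric|^{2p}\le C$ for $p<2$, and that is a \emph{corollary} of the whole structure theory (via Theorem~\ref{thm:SC12_2}, hence Theorem~\ref{thm:SC28_1}), not an input. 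Treating integral Ricci control as the substitute for Bishop--Gromov is the Tian--Zhang route~\cite{TZZ2}, which is exactly what this paper was built to avoid, because that input is not available a priori in all dimensions.

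The deeper gap is that your proposal never invokes the canonical radius and polarized canonical radius (Definitions~\ref{dfn:SC02_1} and~\ref{dfn:SK27_1}), nor the a priori bound $\mathbf{pcr}\ge\hslash$ proved by a maximum-principle contradiction (Proposition~\ref{prn:SC28_2}, Theorem~\ref{thm:SC28_1}) built on the weak continuity theorems~\ref{thm:SC03_1} and~\ref{thm:SK27_2}. Since Bishop--Gromov monotonicity fails on a general K\"ahler time slice, the $\varepsilon$-regularity and density estimates of section~2 can only be applied \emph{after} one knows blow-up limits land in $\widetilde{\mathscr{KS}}(n,\kappa)$; that knowledge is exactly what the canonical-radius machinery supplies, and without it your $\varepsilon$-regularity step is circular. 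Likewise the two-sided pseudolocality is a \emph{consequence} of the canonical-radius bound, not an independent input. Finally, tangent spaces are not automatically metric cones by ``dilation invariance'' once Bishop--Gromov rigidity is gone; the paper replaces that rigidity with the monotonicity and rigidity of Perelman's reduced volume and local $W$-functional (Proposition~\ref{prn:SC16_1}, Theorem~\ref{thm:SC09_1}, Lemmas~\ref{lma:SL18_1}--\ref{lma:SK27_4}), which yield the soliton equation $\nabla\nabla\hat f+\hat g/(2t)=0$ on the regular part of the blow-up and, after integrating the resulting conformal Killing flow past the singular set, the cone structure. This chain is the analytic heart your proposal leaves out.
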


It is important to note the difference between $\widetilde{\mathscr{KS}}(n,\kappa)$ and $\mathscr{K}(n,A)$. 
We use $\widetilde{\mathscr{KS}}(n,\kappa)$ to denote the space of possible bubbles, or blowup limits.
Therefore, every metric space in it is a non-compact one.   However, each time slice of flows in $\mathscr{K}(n,A)$ is a compact manifold.  
The limit space $\bar{M}$ of Theorem~\ref{thmin:SC24_1} maybe compact and does not belong to $\widetilde{\mathscr{KS}}(n,\kappa)$.

In the study of the line bundle structure of $\mathscr{K}(n,A)$,  the Bergman function plays an important role. 
Actually, for every positive integer $k$ large enough such that $L^k$ is globally generated, we define the Bergman function $\mathbf{b}^{(k)}$ as follows
\begin{align}
   \mathbf{b}^{(k)}(x,t)=\log \sum_{i=0}^{N_k} \norm{S_i^{(k)}}{h(t)}^2(x,t),
\label{eqn:HA03_3}
\end{align}
where $N_k=\dim_{\C} H^0(M, L^k)-1$, $\left\{S_i^{(k)} \right\}_{i=0}^{N_k}$ are orthonormal basis of $H^0(M, L^k)$ under the natural metrics $\omega(t)$ and $h(t)$.
Theorem~\ref{thmin:SC24_1} means that the metric structure of the center time slice of a K\"ahler Ricci flow in $\mathscr{K}(n,A)$ can be modeled by
non-collapsed Calabi-Yau manifolds with mild singularities.  In particular, each tangent space of a point in the limit space is a metric cone.
The trivial line bundle structure on metric cone then implies an estimate of line bundle structure of the original manifold, due to
delicate use of H\"omander's $\bar{\partial}$-estimate, as done by Donaldson and Sun(c.f.~\cite{DS}).

\begin{theoremin}[\textbf{Line bundle estimates}]
  Suppose $\mathcal{LM} \in \mathscr{K}(n,A)$, then
  \begin{align*}
    \inf_{x \in M} \mathbf{b}^{(k_0)}(x,0) \geq -c_0
  \end{align*}
  for some positive number $c_0=c_0(n,A)$, and positive integer $k_0=k_0(n,A)$.
\label{thmin:HC08_1}
\end{theoremin}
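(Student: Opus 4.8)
The plan is to argue by contradiction, following the Donaldson--Sun strategy adapted to the Ricci flow setting supplied by Theorems~\ref{thmin:HE21_1} and~\ref{thmin:SC24_1}. Suppose the statement fails. Then for every $c_0>0$ and every large $k$ we can find a flow $\mathcal{LM}_k \in \mathscr{K}(n,A)$ and a point $x_k \in M_k$ with $\mathbf{b}^{(k)}(x_k,0) < -c_0$, i.e.\ every holomorphic section $S \in H^0(M_k, L^k)$, normalized in $L^2$ with respect to $\omega_k(0)$ and $h_k(0)$, is pointwise tiny at $x_k$: $\norm{S}{h_k(0)}^2(x_k,0)$ is uniformly small. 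First I would apply Theorem~\ref{thmin:SC24_1} to extract a subsequence with $(M_k, x_k, g_k(0)) \xrightarrow{\hat{C}^\infty} (\bar M, \bar x, \bar g)$, where $\bar M = \mathcal{R} \cup \mathcal{S}$ carries a limit K\"ahler structure $\bar J$ on its regular part, has $\dim_{\mathcal{M}}\mathcal{S} \le 2n-4$, and has every tangent cone an irreducible metric cone. In particular the tangent cone at $\bar x$ is a metric cone $C(Y)$, so after a further blow-up (rescaling the $L^k$ power appropriately, i.e.\ passing through a "two-step" limit as in~\cite{DS}) one reaches a model space where the polarizing line bundle becomes the trivial bundle with a plurisubharmonic weight — concretely, the relevant limit object lies in $\widetilde{\mathscr{KS}}(n,\kappa)$ by the identification of blowup limits, and on such a Calabi-Yau conifold the line bundle $L$ degenerates to $\mathcal{O}$ with the flat metric on the cone.

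The heart of the argument is then a gluing/perturbation step: on the model cone one has an obvious nonvanishing "section" (the constant function $1$ weighted by the cone potential), with controlled $L^2$ norm and definite pointwise size at the vertex. I would cut this off away from the singular set $\mathcal{S}$ (using that $\mathcal{S}$ has Minkowski codimension $\ge 4$, hence admits good cutoff functions with small Dirichlet energy, exactly the estimate established in Section~2 for these singular spaces) and transplant it to $M_k$ via the Cheeger-Gromov approximating diffeomorphisms, obtaining an \emph{almost-holomorphic} section $\sigma_k$ of $L^k$ with $\db\sigma_k$ small in $L^2$ and $\norm{\sigma_k}{h_k(0)}$ bounded below near $x_k$. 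Here the curvature of $h_k(0)^{\otimes k}$ is $k\,\omega_k(0)$, which is positive, so H\"ormander's $\db$-estimate (in the form used by Donaldson--Sun, with the weight twisted by a local potential to absorb the singularity) produces a correction $\tau_k$ solving $\db\tau_k = \db\sigma_k$ with $\norm{\tau_k}{L^2}$ small; crucially, by interior elliptic estimates and the smallness of $\db\sigma_k$, $\tau_k$ is also pointwise small at $x_k$. Then $S_k := \sigma_k - \tau_k$ is a genuine holomorphic section with $\norm{S_k}{L^2}$ bounded and $\norm{S_k}{h_k(0)}^2(x_k,0)$ bounded below by a definite constant, contradicting $\mathbf{b}^{(k)}(x_k,0)\to-\infty$. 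This also pins down $k_0$ and $c_0$: once the contradiction is in place for a \emph{fixed} large power (determined by how large a rescaling is needed so that the rescaled unit ball looks $\epsilon$-close to the model), a compactness/diagonal argument upgrades it to uniform constants $k_0(n,A)$, $c_0(n,A)$.

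The main obstacle I anticipate is controlling the $\db$-estimate uniformly near the singular set and across the sequence. One must ensure (i) that the cutoff localizing the model section away from $\mathcal{S}$ really does have $L^2$-small $\db$, which relies on the Minkowski-codimension-$4$ bound from Theorem~\ref{thmin:SC24_1} together with the Sobolev/Poincar\'e machinery on the singular limit developed in Section~2; (ii) that the H\"ormander weight (a bounded-from-above psh function with the right singularity) can be chosen with estimates independent of $k$ and of the flow, which is where the non-collapsing ($\Vol(M)\ge A^{-1}$) and the uniform Sobolev constant $C_S(M)\le A$ in the definition of $\mathscr{K}(n,A)$ enter decisively; and (iii) that the approximating diffeomorphisms furnished by the $\hat C^\infty$-convergence are defined on a large enough region and distort the metric and the line bundle metric by a factor $\to 1$, so that "holomorphic on the model" transplants to "almost holomorphic on $M_k$" with errors that the $\db$-estimate can kill. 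Granting the Section~2 analysis on Calabi-Yau conifolds, each of these is a now-standard (if delicate) adaptation of~\cite{DS}, and the rest of the argument is the routine contradiction/compactness packaging sketched above.
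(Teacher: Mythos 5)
Your outline is sound and would work as stated, but it takes a genuinely different route from the paper, and it is worth understanding why. The paper's proof of Theorem~\ref{thmin:HC08_1} is essentially formal once the heavy machinery is in place: the Bergman bound $\sup_{1\le j\le 2k_0}\mathbf{b}^{(j)}(x)\ge -2k_0$ is \emph{built into} the definition of polarized canonical radius (Definition~\ref{dfn:SK27_1}), so once Theorem~\ref{thm:SC28_1} gives the uniform lower bound $\mathbf{pcr}(\mathcal{M}^0)\ge\hslash=1/j_0$, the statement drops out by unpacking that definition at scale $\hslash$ and absorbing the rescaling constants --- the $k_0$ in the statement is then of size $O(k_0 j_0)$ and has nothing to do with a Donaldson--Sun contradiction run on the CG limit of time slices. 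The Donaldson--Sun/H\"ormander argument that you propose \emph{does} appear in the paper, but earlier and in a more delicate position: it is the content of Proposition~\ref{prn:SK27_1} (weak continuity of the Bergman function), applied to manifolds that are GH-close to a model in $\widetilde{\mathscr{KS}}(n,\kappa)$ (so already Ricci-flat with metric-cone tangent structure guaranteed by Theorem~\ref{thmin:HE21_1}), and under the a priori hypothesis $\mathbf{cr}(M)\ge 1$. That proposition is one input to the Perelman-style maximum-principle contradiction of Proposition~\ref{prn:SC28_2}, which is what actually makes $\mathbf{pcr}$ bounded below. In other words, the paper bootstraps the metric compactness and the partial-$C^0$ estimate simultaneously through the polarized canonical radius; your proposal instead takes Theorem~\ref{thmin:SC24_1} as a black box and re-runs DS afterward. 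Logically this is not circular --- by Section~5 both Theorem~\ref{thmin:SC24_1} and Theorem~\ref{thmin:HC08_1} already rest on Theorem~\ref{thm:SC28_1} --- but it duplicates work the paper already encapsulates in the definition of $\mathbf{pcr}$.

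Two smaller points worth flagging. First, your contradiction setup ("for every $c_0$ and every large $k$ we can find ...") has the quantifiers slightly tangled: the correct negation fixes a sequence $k_i\to\infty$ and considers $\sup_{1\le j\le k_i}\mathbf{b}^{(j)}(x_i)\to-\infty$, so that the DS argument --- which produces a bound for a \emph{single} power $q=q(\bar x)$ --- contradicts the sup once $k_i>q$; this is exactly how Proposition~\ref{prn:SK27_1} is phrased. Second, in the flow setting one should not have bounded Ricci curvature at hand, so the H\"ormander estimate must be run with the $\sqrt{-1}\partial\bar\partial\dot\varphi+\mathrm{Ric}=\lambda g$ identity and the uniform Sobolev bound from $\mathscr{K}(n,A)$ replacing the usual Ricci lower bound; you gesture at this but the paper makes this substitution precise (cf.\ the remark in the proof of Proposition~\ref{prn:SK27_1} pointing to section~3 of~\cite{CW4}).
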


In other words, Theorem~\ref{thmin:HC08_1} states that there is a uniform partial-$C^0$-estimate at time $t=0$.  This estimate then implies variety structure of
limit space, as discussed in~\cite{Tian12} and~\cite{DS}.
Theorem~\ref{thmin:HC08_1} can be understood that the line bundle structure of $\mathscr{K}(n,A)$ is modeled after
that of $\widetilde{\mathscr{KS}}(n,\kappa)$.

Theorem~\ref{thmin:SC24_1} and Theorem~\ref{thmin:HC08_1} deal only with one time slice.
In order to make sense of limit K\"ahler Ricci flow, we have to compare the limit spaces of different time slices.
For example, we choose $x_i \in M_i$, then we have
\begin{align*}
    (M_i, x_i, g_i(0)) \longright{\hat{C}^{\infty}} (\bar{M}, \bar{x}, \bar{g}), \quad (M_i, x_i, g_i(-1)) \longright{\hat{C}^{\infty}} (\bar{M}', \bar{x}', \bar{g}').
\end{align*}
How are $\bar{M}$ and $\bar{M}'$ related?  If $\bar{x}$ is a regular point of $\bar{M}$,  can we say $\bar{x}'$ is a regular point of $\bar{M}'$?
Note that Perelman's pseudolocality theorem cannot answer this question, due to its short-time, one-sided property.   In order to relate different time slices,
we need to improve Perelman's pseudolocality theorem to the following long-time, two-sided estimate,
which is the technical core of the current paper.

\begin{theoremin}[\textbf{Time direction estimates}]
  Suppose $\mathcal{LM} \in \mathscr{K}(n,A)$.
  Suppose $x_0 \in M$, $\Omega=B_{g(0)}(x_0,r)$, $\Omega'=B_{g(0)}(x_0, \frac{r}{2})$ for some $r \in (0,1)$. 
  At time $t=0$, suppose the isoperimetric constant estimate $\mathbf{I}(\Omega) \geq (1-\delta_0) \mathbf{I}(\C^n)$ holds for $\delta_0=\delta_0(n)$,
  the same constant in Theorem~\ref{thmin:SC24_1}.   Then we have
  \begin{align*}
     |\nabla^k Rm|(x,t) \leq C_k,  \quad \forall \;  k \in \Z^{\geq 0}, \quad x \in \Omega',
     \quad t \in [-1,1],
  \end{align*}
  where $C_k$ is a constant depending on $n,A,r$ and $k$.
\label{thmin:HC06_1}
\end{theoremin}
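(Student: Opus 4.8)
The strategy is a contradiction/compactness argument combined with the weak compactness of Theorem~\ref{thmin:SC24_1} and the rigidity of Calabi-Yau conifolds. Suppose the conclusion fails for some fixed $k$. Then there is a sequence $\mathcal{LM}_i \in \mathscr{K}(n,A)$, points $x_{0,i}\in M_i$, radii $r_i \in (0,1)$ bounded below by the hypothesis (or, after a point-selection argument, a sequence of radii that we may normalize), with the isoperimetric hypothesis $\mathbf{I}(B_{g_i(0)}(x_{0,i},r)) \geq (1-\delta_0)\mathbf{I}(\C^n)$ at time $0$, yet a sequence of points $y_i \in B_{g_i(0)}(x_{0,i},\frac r2)$ and times $t_i \in [-1,1]$ at which $|\nabla^{k}Rm|(y_i,t_i)\to\infty$ (after appropriate parabolic rescaling by the maximal curvature scale, using a standard Perelman-type point-picking in space-time so that curvature is comparable to its maximum on a large backward-forward parabolic neighborhood). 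Rescale the flows $\mathcal{LM}_i$ so that the offending curvature quantity is $1$ at the marked point, and pass to a pointed Cheeger-Gromov limit of space-times.

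The heart of the matter is to identify the limit space-time. First I would establish that at time $0$ the isoperimetric (equivalently Sobolev) lower bound, together with the scalar curvature and Ricci potential bounds from $\mathscr{K}(n,A)$, forces the rescaled time-$0$ slices to converge to a limit that is, near the base point, Euclidean: the near-extremal isoperimetric constant combined with $\epsilon$-regularity (Anderson's gap theorem, the same $\delta_0$) shows the base point is regular and the rescaled limit slice at $t=0$ is flat $\C^n$ in a neighborhood, hence the rescaled injectivity radius and all curvature derivatives at time $0$ blow up. Then I would propagate this flatness in the time direction: here is exactly where the improved, two-sided pseudolocality is being proved, so I cannot invoke it — instead the argument must be bootstrapped from within. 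The mechanism is that the limit space-time, being a non-collapsed ancient (or eternal, after the rescaling exhausts $[-1,1]$ to $(-\infty,\infty)$) solution with bounded scalar curvature $R\equiv 0$ in the limit (since $|R-n\lambda|\le A$ and $\lambda\to 0$ under rescaling), must be Ricci-flat by $\partial_t R=\Delta R+2|Ric|^2$; combined with the Kähler structure surviving in the limit and the non-collapsing from the uniform Sobolev bound, the limit is a Calabi-Yau space in $\widetilde{\mathscr{KS}}(n,\kappa)$ by Theorem~\ref{thmin:HE21_1}. But a Calabi-Yau conifold that contains a Euclidean ball (the time-$0$ flat neighborhood of the base point) must, by the rigidity of Ricci-flat metrics (maximal volume density $\mathrm{v}\equiv 1$ propagates, or: a tangent cone argument forces it) be globally $\C^n$. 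Since the evolution on a Ricci-flat space is trivial, the entire limit space-time is the static $\C^n\times(-\infty,\infty)$, so $|\nabla^k Rm|\equiv 0$ on the limit — contradicting that this quantity equals $1$ at the marked point.

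A few steps require care. The point-selection must be carried out so that after rescaling one retains the isoperimetric hypothesis at the (rescaled) time $0$ on a ball whose radius tends to infinity; this uses that rescaling improves the isoperimetric ratio toward that of $\C^n$ and that the hypothesis is at the fixed time $0$, which is why the estimate is genuinely two-sided in $t$ but anchored at one time slice in space. One must also justify the Cheeger-Gromov convergence of space-times: each time slice converges by Theorem~\ref{thmin:SC24_1}, and the homeomorphisms between slices in the limit come from limits of identity maps, whose existence and regularity on the regular part follows once one knows the regular part of the time-$0$ limit persists — which near the base point it does, being Euclidean. Finally, one needs the scalar curvature of the limit to be identically zero: this follows since $\lambda_i \to 0$ under the parabolic rescaling (as $\lambda$ is a fixed constant for each flow and curvature scale $\to\infty$) and $|R_i - n\lambda_i|\le A$, so $R_i \to 0$ in the rescaled flows.

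The main obstacle is the propagation of regularity in the time direction without circularity: one cannot use the conclusion (two-sided pseudolocality) to produce the smooth convergence of the rescaled space-times that one needs to extract the limit. The resolution must come from the uniform bounds built into $\mathscr{K}(n,A)$ — the Sobolev constant, scalar curvature, and Ricci potential bounds are preserved (indeed improved) under parabolic rescaling and hold at \emph{every} time in $(-T,T)$ with $T\ge 2$, so they give \emph{a priori} compactness of the rescaled space-times as metric-measure flows via the weak compactness machinery of Theorem~\ref{thmin:SC24_1} applied slice by slice, plus a Hamilton-type distance-distortion estimate (controlled by $\int |R|$, hence by the scalar bound) to glue slices into a space-time limit. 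Only after the limit is extracted does one invoke Theorem~\ref{thmin:HE21_1} and Ricci-flat rigidity to collapse it to static $\C^n$; the contradiction then closes the argument. Getting the slice-gluing and the regularity of the limiting identity maps right — in particular ruling out that singularities appear or disappear in the time direction in the limit — is the delicate technical core.
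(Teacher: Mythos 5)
Your overall strategy (contradiction, parabolic rescaling, identify the blowup limit as static $\C^n$ via Theorem 1.1 and rigidity) is the right skeleton and echoes the paper's use of compactness. But you have correctly identified the dangerous step and then proposed a fix that does not work. The gap is in the sentence where you propose to ``glue slices into a space-time limit'' using ``a Hamilton-type distance-distortion estimate (controlled by $\int |R|$, hence by the scalar bound).'' There is no such estimate. The Hamilton/Perelman distance-distortion estimates along Ricci flow require a \emph{Ricci} curvature bound (typically an upper bound on $Ric$ on a pair of balls), not merely a scalar curvature bound; bounded scalar curvature gives no pointwise control of $Ric$ and hence no control of $\frac{d}{dt}d_{g(t)}(x,y)$. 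Without that control you cannot form a Cheeger--Gromov limit \emph{of the space-time}: each time slice has a subsequential metric limit by Theorem 1.1, but you have no identification between the limits at different times, which is precisely the thing the theorem is asserting. So the argument is circular in exactly the place you worried about.

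The paper escapes this circle by using the K\"ahler polarization rather than any Riemannian curvature estimate. The mechanism is (Section~5.3, Lemma~\ref{lma:HB02_1} through Proposition~\ref{prn:HB05_1}, fed into Corollary~\ref{cly:HB05_1} and Proposition~\ref{prn:HA08_3}): once the a priori lower bound of the polarized canonical radius (Theorem~\ref{thm:SC28_1}) provides a uniform partial-$C^0$-estimate, the pulled-back Fubini--Study metrics $\tilde\omega_t$ satisfy $e^{-2A|t|}\tilde\omega_0\leq\tilde\omega_t\leq e^{2A|t|}\tilde\omega_0$ (Proposition~\ref{prn:HB05_2}) \emph{independently of curvature}, because the evolution of an orthonormal basis of holomorphic sections is controlled by $\|\dot\varphi\|_{C^0}+\|R\|_{C^0}$. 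Combining this Bergman-metric stability with a parabolic Schwarz lemma (Lemma~\ref{lma:J02_1}, which bounds $\Lambda_{\omega_t}\tilde\omega_0$ in terms of a heat solution with controlled lower bound) and the uniform equivalence of $\omega^n_0$ and $\omega^n_t$ (from the scalar bound) yields $\frac{1}{C}\omega_0\leq\omega_t\leq C\omega_0$ at any point with good regularity scale at $t=0$. This is the distance-distortion control you were missing, and it is genuinely a line-bundle estimate, not a Riemannian one. Only with it in hand does Proposition~\ref{prn:HA08_3} run the blowup argument you sketched; note also that the blowup contradiction there rests on Corollary~\ref{cly:SL27_1} (weak continuity of $\mathbf{cvr}$) rather than on a Euclidean-ball-forces-$\C^n$ rigidity claim, which by itself is not obviously enough without first knowing the blowup limit lives in $\widetilde{\mathscr{KS}}(n,\kappa)$ \emph{as a space-time}.

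In short: your outline captures the compactness endgame correctly, but the load-bearing step — controlling the metric comparison across time slices before you know curvature is bounded — cannot be done by Riemannian estimates alone and must instead come from the polarization (partial-$C^0$-estimate plus Bergman/Fubini--Study stability), which is exactly the input your proposal omits.
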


Theorem~\ref{thmin:HC06_1} holds trivially on each space in $\widetilde{\mathscr{KS}}(n,\kappa)$, when regarded as
a static Ricci flow solution.  Therefore, it can be understood as the time direction structure, or the flow structure of
$\mathcal{LM} \in \mathscr{K}(n,A)$ is similar to that of $\widetilde{\mathscr{KS}}(n,\kappa)$.
Theorem~\ref{thmin:HC06_1} removes the major stumbling block for defining a limit K\"ahler Ricci flow,
since it guarantees that the regular-singular decomposition of the limit space is independent of time.
Therefore, there is a natural induced K\"ahler Ricci flow structure on the regular part of the limit space.   We denote its completion
by a limit K\"ahler Ricci flow solution, in a weak sense.  Clearly, the limit K\"ahler Ricci flow naturally inherits a  limit line
bundle structure, or a limit polarization, on the regular part.
Moreover, the limit underlying space does have a variety structure due to Theorem~\ref{thmin:HC08_1}.
With these structures in hand,  we are ready to discuss the convergence theorem of polarized K\"ahler Ricci flows,
which is the main structure theorem of this paper(c.f. section~\ref{sec:sflow} for meaning of the notations).

\begin{theoremin}[\textbf{Weak compactness of polarized flows}]
  Suppose $\mathcal{LM}_i \in \mathscr{K}(n,A)$, $x_i \in M_i$ satisfying $\diam_{g_i(0)}(M_i)<C$ uniformly or $\sup_{\mathcal{M}_i} |R| \to 0$.
  By passing to subsequence if necessary, we have
  \begin{align*}
       \left(\mathcal{LM}_i, x_i \right) \stackrel{\hat{C}^{\infty}}{\longrightarrow} \left(\overline{\mathcal{LM}}, \bar{x}\right),
  \end{align*}
  where $\overline{\mathcal{LM}}$ is a polarized K\"ahler Ricci flow solution on an analytic normal variety $\bar{M}$,
  whose singular set $\mathcal{S}$ has Minkowski codimension at least $4$, with respect to each $\bar{g}(t)$.
  Moreover, if $\bar{M}$ is compact, then it is a projective normal variety with at most log-terminal singularities.
\label{thmin:HC06_2}
\end{theoremin}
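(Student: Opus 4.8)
The plan is to fuse the three structural theorems already established --- Theorem~\ref{thmin:SC24_1} in the space direction, Theorem~\ref{thmin:HC06_1} in the time direction, and Theorem~\ref{thmin:HC08_1} in the line-bundle direction --- into a single space-time limit, with the complex-analytic part modeled on the treatment of K\"ahler--Einstein limits by Donaldson--Sun~\cite{DS} and Tian~\cite{Tian12}. First I would set up slice-wise convergence: choosing a countable dense set $\{t_j\}\subset(-1,1)$, applying Theorem~\ref{thmin:SC24_1} at each $t_j$, and diagonalizing, one extracts a subsequence along which $(M_i,x_i,g_i(t_j))\stackrel{\hat{C}^{\infty}}{\longrightarrow}(\bar M_{t_j},\bar x_{t_j},\bar g_{t_j})$ for every $j$, where each $\bar M_{t_j}=\mathcal{R}_{t_j}\cup\mathcal{S}_{t_j}$ carries the regular--singular decomposition of Theorem~\ref{thmin:SC24_1} ($\dim_{\mathcal{M}}\mathcal{S}_{t_j}\le 2n-4$, a limit K\"ahler structure $\bar J_{t_j}$ on $\mathcal{R}_{t_j}$), after also passing to a subsequence with $\lambda_i\to\lambda_\infty$. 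In the case $\diam_{g_i(0)}(M_i)<C$ the limits are compact, while in the case $\sup_{\mathcal{M}_i}|R|\to 0$ the limit flow is scalar-flat, hence Ricci-flat, so it is a static Calabi--Yau conifold (cf. Theorem~\ref{thmin:HE21_1}).

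The heart of the argument is to show that the decomposition $\mathcal{R}_{t_j}\cup\mathcal{S}_{t_j}$ does not depend on $t_j$. Here I would use the volume-density characterization in Theorem~\ref{thmin:SC24_1} to translate the regularity of a point $\bar y\in\mathcal{R}_{t_0}$ into the statement that, for $i$ large, the approximating points $y_i\in M_i$ satisfy $\mathbf{I}\big(B_{g_i(t_0)}(y_i,r)\big)\ge(1-\delta_0)\mathbf{I}(\C^n)$ on a definite scale $r\in(0,1)$; feeding this into Theorem~\ref{thmin:HC06_1}, applied at time $t_0$ after a time translation, yields uniform bounds $|\nabla^k Rm|(\cdot,t)\le C_k$ on $B_{g_i(t_0)}(y_i,\tfrac r2)$ for all $t$ in a definite interval about $t_0$ and all $i$. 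This propagates the local smooth Cheeger--Gromov convergence off the slice $t_0$ to a whole space-time neighborhood of $\bar y$, so $\bar y$ stays regular at nearby times, and the same estimate applied at other slices shows the regular set is locally constant in $t$, hence constant on $(-1,1)$; thus there is a single decomposition $\mathcal{R}\cup\mathcal{S}$ and the convergence on $\mathcal{R}$ is simultaneous in $t$. Consequently the identity maps of the $M_i$ pass to the limit as homeomorphisms of $\bar M$ intertwining the metrics $\bar g(t_j)$, these extend continuously to a family $\bar g(t)$, $t\in(-1,1)$, solving the K\"ahler Ricci flow on $\mathcal{R}$, and taking the metric completion $\bar M:=\overline{\mathcal{R}}$ together with the limit Hermitian line bundle $(\bar L,\bar h(t))$ obtained from the local $\hat{C}^{\infty}$-convergence of $(L_i,h_i(t))$ yields the weak limit flow $\overline{\mathcal{LM}}$ with base point $\bar x$.

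To upgrade $\bar M$ to an analytic normal variety I would invoke Theorem~\ref{thmin:HC08_1}: the uniform lower bound on $\mathbf{b}^{(k_0)}(\cdot,0)$ makes the maps $\iota_i:M_i\to\CP^{N_{k_0}}$ defined by $L^2$-orthonormal bases of $H^0(M_i,L_i^{k_0})$ uniformly non-degenerate, and the Donaldson--Sun argument --- H\"ormander's $\bar\partial$-estimate on the metric tangent cones, transplanted to $L_i^{k_0}$, producing enough holomorphic sections near $\mathcal{S}$, together with control of the limit map --- shows that $\iota_i(M_i)$ converges to a variety in $\CP^{N_{k_0}}$ biholomorphic to $\bar M$ away from $\mathcal{S}$. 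Since $\mathcal{S}$ has complex codimension $\ge 2$, Serre's criterion --- with the $S_2$ property supplied by $L^2$-extension of holomorphic functions across $\mathcal{S}$ --- identifies $\bar M$ with a normal analytic variety, which is a normal projective variety when $\bar M$ is compact (Kodaira embedding by $\bar L^{k_0}$, extended over $\bar M$ as $\iota_\infty^{*}\mathcal{O}(1)$); and $\dim_{\mathcal{M}}\mathcal{S}\le 2n-4$ holds with respect to each $\bar g(t)$ because the decomposition is time-independent. Finally, in the compact case the uniform non-collapsing together with the volume-density gap bounds the local volumes of the singular germs of $\bar M$ from below, which by the Donaldson--Sun analysis of tangent cones is equivalent to $\bar M$ having at worst log-terminal singularities.

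Granting Theorems~\ref{thmin:SC24_1}, \ref{thmin:HC06_1} and \ref{thmin:HC08_1}, I expect the main obstacle to be the gluing of the second paragraph: converting slice-by-slice smooth convergence into a genuine convergence of space-times, with the identity maps between time slices admitting homeomorphic limits and the weak limit flow genuinely supported on one fixed variety independent of $t$. The second delicate point is the complex-analytic bookkeeping of the third paragraph --- that a metric-singular set of codimension $\ge 2$ is a genuine analytic subvariety, and that the resulting variety is normal, with log-terminal singularities in the compact case --- which is exactly where the rigidity of K\"ahler geometry, entering through Theorems~\ref{thmin:HC08_1} and~\ref{thmin:HC06_1}, plays its essential role.
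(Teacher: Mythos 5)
The overall architecture matches the paper's: Theorem~\ref{thmin:SC24_1} for the space direction, Theorem~\ref{thmin:HC08_1} for the line-bundle direction, the long-time two-sided pseudolocality Theorem~\ref{thmin:HC06_1} for the time direction, and a Donaldson--Sun argument for the variety structure (cf.\ the paper's Theorem~\ref{thm:HB07_1} and Theorem~\ref{thm:HE19_1}). Your third paragraph, invoking H\"ormander's estimate on tangent cones, normality via Serre's criterion, Kodaira embedding, and the $L^2$-holomorphic volume form giving log-terminal singularities, is essentially the route of Theorem~\ref{thm:HE19_1} and is sound in outline.

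The gap is in your second paragraph, at the step you yourself flag as the main obstacle: converting slice-by-slice convergence into a single limit space-time. Feeding the volume-density lower bound into Theorem~\ref{thmin:HC06_1} correctly shows that the regular sets $\mathcal{R}_{t}$ agree for all $t$ and that $\bar g(t)$ solves the flow there, but your subsequent assertion that ``the identity maps of the $M_i$ pass to the limit as homeomorphisms of $\bar M$'' is claimed, not proved, and does not follow from the pseudolocality alone. Theorem~\ref{thmin:HC06_1} is a curvature estimate on a neighborhood where one already has almost-Euclidean isoperimetry; it says nothing about how distances between points near the singular set change with $t$. The identity maps of $M_i$ are certainly not uniformly Lipschitz from $g_i(0)$ to $g_i(1)$ near high-curvature regions (that would force a Ricci bound), and without some uniform distance control it is not clear that a sequence $y_i\in M_i$ converging to a singular point $\bar y\in\bar M(0)$ even has a well-defined limit in $\bar M(1)$, let alone that different approximating sequences of the same $\bar y$ produce the same limit, or that the completion $\overline{\mathcal{R}}$ is independent of which $\bar g(t)$ one completes in. Consequently you have not actually produced a single variety $\bar M$ carrying all the $\bar g(t)$.

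The paper closes this gap with the polarization. The pulled-back Fubini--Study metrics $\tilde\omega_i(t)$ evolve within a uniformly bounded factor (Proposition~\ref{prn:HB05_2}, using the boundedness of $\dot\varphi$, $R$ and $\lambda$ in $\mathscr{K}(n,A)$), hence the Bergman limits at any two times are bi-Lipschitz equivalent as metric spaces; at each fixed time the Bergman limit is homeomorphic to the flow-metric limit via the Donaldson--Sun partial-$C^0$-argument (Theorem~\ref{thmin:HC08_1}); composing gives the homeomorphism $\Psi=Id_1^{-1}\circ Id_{01}\circ Id_0$ between $\bar M(0)$ and $\bar M(1)$ that identifies the singular sets. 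Lemmas~\ref{lma:HA15_1}, \ref{lma:HA14_1}, \ref{lma:HA14_2} and Theorem~\ref{thm:HE15_1}, which control distances by levels of peak holomorphic sections, are the quantitative backbone. In short: to glue the time slices you must use the line-bundle structure (the Bergman/Fubini--Study metrics as a time-uniform scaffold), not only the curvature estimates; Theorem~\ref{thmin:HC06_1} handles $\mathcal{R}$, and Theorem~\ref{thmin:HC08_1} together with Proposition~\ref{prn:HB05_2} handle $\mathcal{S}$.
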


As it is developed for, our structure theory has  applications in the study of anti-canonical K\"ahler Ricci flows.
Due to the fundamental estimate of Perelman and the monotonicity of his $\mu$-functional along each anti-canonical
K\"ahler Ricci flow, we can apply Theorem~\ref{thmin:HC06_2} directly and obtain the following theorem.

  \begin{theoremin}[\textbf{Hamilton-Tian conjecture}]
    Suppose $\left\{ (M^n, g(t)), 0 \leq t <\infty \right\}$ is an anti-canonical K\"ahler Ricci flow solution
    on a Fano manifold $(M,J)$.   For every $s>1$, define
    \begin{align*}
      &g_s(t) \triangleq g(t+s), \\
      &\mathcal{M}_{s} \triangleq \{(M^n, g_s(t)), -s \leq t \leq s\}.
    \end{align*}
    Then for every sequence $s_i \to \infty$, by taking subsequence if necessary, we have
    \begin{align}
      \left(\mathcal{M}_{s_i}, g_{s_i}\right) \longright{\hat{C}^{\infty}} \left( \bar{\mathcal{M}}, \bar{g} \right),   \label{eqn:SC13_111}
    \end{align}
    where the limit space-time $\bar{\mathcal{M}}$ is a K\"ahler Ricci soliton flow solution on a $Q$-Fano normal variety $(\bar{M},\bar{J})$.
    Moreover,  with respect to each $\bar{g}(t)$, there is a uniform $C$ independent of time such that
    the $r$-neighborhood of the singular set $\mathcal{S}$ has measure not greater than $Cr^4$.
 \label{thmin:SC24_3}
  \end{theoremin}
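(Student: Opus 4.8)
The plan is to deduce the statement from the weak compactness theorem for polarized flows (Theorem~\ref{thmin:HC06_2}): first, Perelman's fundamental estimates place all the time-shifted flows $\mathcal{M}_s$ inside a fixed space $\mathscr{K}(n,A)$; then Theorem~\ref{thmin:HC06_2} produces the limit as a polarized K\"ahler Ricci flow on an analytic normal variety; finally, the monotonicity of Perelman's $\mu$-functional along the anti-canonical flow upgrades this limit to a K\"ahler Ricci soliton flow, and the partial-$C^0$ estimate identifies the underlying variety as $Q$-Fano.

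\textbf{Membership in $\mathscr{K}(n,A)$.} For the anti-canonical flow $L=K_M^{-1}$, so $c_1(L)=c_1(M)$ and $\lambda=1$ in~\eqref{eqn:K07_1}. By Perelman's no-local-collapsing theorem together with his uniform estimates on diameter, scalar curvature, and the Ricci potential (c.f.~\cite{SeT}), the Sobolev constant bounds of Q.S.~Zhang~\cite{Zhq1} and R.~Ye~\cite{Ye}, and after the normalization $\sup_M\dot\varphi=0$, there is a constant $A=A(M,J,g(0))$ such that every shift $\mathcal{M}_s$ (for $s$ large), equipped with its natural anti-canonical polarization, satisfies~\eqref{eqn:SK20_1}: indeed $T=s\geq 2$, $\Vol(M)=c_1(M)^n/n!$ is a fixed topological number, $|R-n\lambda|_{C^0}$ is bounded by Perelman's scalar estimate, and $|\dot\varphi|_{C^0}=\mathrm{osc}_M\dot\varphi$ is bounded by Perelman's $C^1$ estimate for the Ricci potential. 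Hence $\mathcal{M}_{s_i}\in\mathscr{K}(n,A)$ for all $i$ large.

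\textbf{Extracting the limit and its algebraic structure.} Since $\diam_{g_{s_i}(0)}(M)$ is uniformly bounded, Theorem~\ref{thmin:HC06_2} applies, and after passing to a subsequence $(\mathcal{M}_{s_i},g_{s_i})\longright{\hat{C}^{\infty}}(\bar{\mathcal{M}},\bar g)$, where $\bar{\mathcal{M}}$ is a polarized K\"ahler Ricci flow solution on an analytic normal variety $\bar M$ whose singular set $\mathcal{S}$ has Minkowski codimension at least $4$ for each $\bar g(t)$. The diameter bound forces $\bar M$ to be compact, so by the last assertion of Theorem~\ref{thmin:HC06_2} it is a projective normal variety with at most log-terminal singularities; by the partial-$C^0$-estimate of Theorem~\ref{thmin:HC08_1} the limit polarization is, up to a fixed power, the anti-canonical one, so $K_{\bar M}^{-1}$ is an ample $\Q$-Cartier divisor and $(\bar M,\bar J)$ is $Q$-Fano. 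The measure bound claimed—that the $r$-neighborhood of $\mathcal{S}$ has $\bar g(t)$-volume $\leq Cr^4$ with $C$ independent of $t$—is the quantitative, uniform-in-time form of the codimension-$4$ estimate; its independence of $t$ relies on Theorem~\ref{thmin:HC06_1}, which guarantees that the regular-singular decomposition of the limit does not move with time.

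\textbf{The limit is a soliton flow, and the main obstacle.} Along the anti-canonical flow the $\mu$-functional $\mu(g(t),\tau)$, with $\tau$ coupled by $\tfrac{d\tau}{dt}=-1$, is non-decreasing in $t$ and bounded above by an invariant of $(M,J)$, so $\mu(g(t))$ converges to some $\mu_\infty$. Perelman's variation formula gives
\begin{align*}
  \mu(g(s+1))-\mu(g(s-1))=\int_{s-1}^{s+1}\!\int_M 2\tau\left|Ric+\nabla^2 f-\tfrac{1}{2\tau}g\right|^2(4\pi\tau)^{-n}e^{-f}\,dV\,dt,
\end{align*}
whose left side tends to $0$ as $s=s_i\to\infty$. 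The minimizers $f_{s_i}$ are uniformly controlled by Perelman's estimates; combining $\hat{C}^{\infty}$-convergence on the regular part $\mathcal{R}$ with a cutoff argument near $\mathcal{S}$ justified by the codimension-$4$ measure bound, one passes to the limit to obtain a potential $\bar f$ on $\mathcal{R}$ with $\bar R_{i\bar j}+\bar\nabla_i\bar\nabla_{\bar j}\bar f=\bar g_{i\bar j}$ and $\bar\nabla_i\bar\nabla_j\bar f=0$, i.e., $\bar{\mathcal{M}}$ is, in the weak sense of Theorem~\ref{thmin:HC06_2}, the flow generated by the gradient shrinking K\"ahler Ricci soliton $(\bar M,\bar g,\bar f)$. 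The genuinely delicate parts are this last passage to the limit across the singular set—controlling the soliton integral and the convergence and regularity of the $f_{s_i}$ near $\mathcal{S}$—and the algebro-geometric identification of $\bar M$ as a $Q$-Fano variety, which leans on Theorems~\ref{thmin:HC06_2} and~\ref{thmin:HC08_1} together with a Donaldson--Sun type $\bar{\partial}$-argument (c.f.~\cite{DS}).
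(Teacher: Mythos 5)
Your strategy matches the paper's almost exactly: Perelman's estimates plus the Sobolev constant bound place the truncated flows in $\mathscr{K}(n,A)$; Theorem~\ref{thmin:HC06_2} produces a limit polarized K\"ahler Ricci flow on a compact projective normal variety; the partial-$C^0$-estimate identifies the limit polarization as anti-canonical, hence $Q$-Fano; and $\mu$-functional monotonicity upgrades the limit to a soliton flow. Your re-derivation of the soliton step is essentially a rederivation of the paper's Proposition~\ref{prn:HB07_2}, which the paper simply invokes; this is fine but unnecessary.

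There is, however, a genuine gap in your handling of the measure estimate. You claim that the bound $\bigl|\{d(\cdot,\mathcal{S})<r\}\bigr|_{\bar g(t)}\leq Cr^4$ is ``the quantitative, uniform-in-time form of the codimension-$4$ estimate'' from Theorem~\ref{thmin:HC06_2}. This does not follow. The statement $\dim_{\mathcal{M}}\mathcal{S}\leq 2n-4$ (by Definition~\ref{dfn:HE08_1}) only gives, for every $\epsilon>0$, a bound $|\mathcal{S}_r|\leq C(\epsilon)r^{4-\epsilon}$ with a constant that may blow up as $\epsilon\to 0$; it does not imply $|\mathcal{S}_r|\leq Cr^4$. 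The sharp $Cr^4$ bound requires the $\epsilon$-regularity of Theorem~\ref{thm:SC12_3} together with a \emph{uniform} $L^2$-curvature bound $\int_{M}|Rm|^2\,dv\leq H$, which is exactly the hypothesis of Theorem~\ref{thm:HA11_1} and Corollary~\ref{cly:HA11_1}. In the anti-canonical setting this bound is available uniformly in time because the Chern numbers of $(M,J)$ are fixed and $R$, $\Vol(M)$ are uniformly controlled along the flow, so a Chern--Gauss--Bonnet type identity bounds $\int_M|Rm|^2\,dv$. You should therefore cite Corollary~\ref{cly:HA11_1} together with estimate~\eqref{eqn:SB13_1} (which converts the $r$-neighborhood of $\mathcal{S}$ into the set $\mathcal{S}_{4c_a^{-1}r}$), rather than appealing to the codimension estimate.
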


 Theorem~\ref{thmin:SC24_3} confirms the famous Hamilton-Tian conjecture, with more information
 than that was conjectured(c.f.~Conjecture 9.1. of~\cite{Tian97} for the precise statement).
 The two dimensional case was confirmed by the authors in~\cite{CW3}.
 We note that in a recent paper \cite{TZZ2},
 another approach to attack Hamilton-Tian conjecture in complex dimension  $3$,
 based on $L^4$-bound of Ricci curvature, was presented by Z.L. Zhang and G. Tian.
 Their work in turn depends on the comparison geometry with integral Ricci bounded developed
 by G.F. Wei and P. Petersen(\cite{PW1}).
 For other important progress in K\"ahler Ricci flow, we refer interested readers  to the following papers(far away from being complete): \cite{Sesum}, \cite{Zhq1}, \cite{Ye}, \cite{TZ2}, \cite{Zhq2}, \cite{SongWeinkove}, \cite{TZ2}, \cite{SongTian}, \cite{PhongSturm}, \cite{Tosa}, \cite{SzeKrf}, as well as references listed therein.\\

 As corollaries of Theorem~\ref{thmin:SC24_3}, we can affirmatively answer some problems raised in~\cite{CW4}.

\begin{corollaryin}
  Every anti-canonical K\"ahler Ricci flow is tamed, i.e., partial-$C^0$-estimate holds along the flow.
 \label{clyin:SC24_4}
\end{corollaryin}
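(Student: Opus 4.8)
The statement to prove, \textbf{Corollary~\ref{clyin:SC24_4}}, asserts that an anti-canonical K\"ahler Ricci flow is \emph{tamed}: there exist a positive integer $k_0$ and a constant $c_0$, both independent of time, so that $\inf_{x\in M}\mathbf{b}^{(k_0)}(x,t)\ge -c_0$ for all $t$ in the existence interval. Since for the anti-canonical flow $L=K_M^{-1}$ and $\lambda=\frac{c_1(M)}{c_1(L)}=1$, this is precisely a uniform lower bound for the Bergman functions along the flow. The plan is to obtain it directly from the line bundle estimate Theorem~\ref{thmin:HC08_1}, applied not once but to \emph{every} time-translate of the flow, exploiting that the defining bounds of $\mathscr{K}(n,A)$ are translation invariant.

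First I would verify that for every $s\ge 2$ the translated segment $\mathcal{M}_s=\{(M^n,g_s(t),J),\ t\in(-s,s)\}$ with $g_s(t)=g(t+s)$, polarized by $K_M^{-1}$ with the induced Hermitian metrics, is a polarized K\"ahler Ricci flow lying in $\mathscr{K}(n,A)$ for a single constant $A=A(M,J)$: the flow exists on $[0,\infty)$, so $g_s$ is defined on $(-s,s)$ with $T=s\ge 2$; the volume $\Vol(M,g_s(t))$ equals the time-independent anti-canonical volume $\frac{(2\pi)^n c_1(M)^n}{n!}$; Perelman's fundamental estimates (c.f.~\cite{SeT}) bound $|R-n|_{C^0}$ and the oscillation of the Ricci potential uniformly along the entire flow, hence $|\dot{\varphi}|_{C^0}$ under the normalization $\sup_M\dot{\varphi}=0$; and the Sobolev constant $C_S$ is uniformly bounded by the results of Q.S.~Zhang and R.~Ye (\cite{Zhq1},~\cite{Ye}). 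Crucially none of these bounds depend on $s$, so $A$ is uniform. Then Theorem~\ref{thmin:HC08_1} applied to $\mathcal{M}_s$ produces $c_0=c_0(n,A)$ and $k_0=k_0(n,A)$, \emph{independent of $s$}, with $\inf_{x\in M}\mathbf{b}^{(k_0)}(x,0)\ge -c_0$ where the Bergman function is computed with respect to $g_s(0)$. By definition the Bergman function of $\mathcal{M}_s$ at time $0$ is the Bergman function of the original flow at time $s$, so $\inf_{x\in M}\mathbf{b}^{(k_0)}(x,s)\ge -c_0$ for every $s\ge 2$. On the remaining compact interval $t\in[0,2]$ the flow is smooth with $L^{k_0}$ globally generated and all geometric quantities uniformly controlled, so $\mathbf{b}^{(k_0)}(\cdot,\cdot)$ is continuous and bounded below on $M\times[0,2]$; enlarging $c_0$ absorbs this contribution. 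Combining the two ranges yields $\inf_{x\in M}\mathbf{b}^{(k_0)}(x,t)\ge -c_0$ for all $t\ge 0$, which is exactly the tameness assertion.

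The step requiring the most care — rather than a genuine obstacle — is the uniformity in the first step: one must make sure the constants in Perelman's, Zhang's and Ye's estimates along an anti-canonical K\"ahler Ricci flow depend only on $(M,J)$ and not on the time parameter, so that a single $A$ controls all translates $\mathcal{M}_s$ simultaneously and hence $k_0,c_0$ in Theorem~\ref{thmin:HC08_1} are time-independent. Once this is recorded, the corollary is an immediate consequence of Theorem~\ref{thmin:HC08_1}; alternatively one may read it off directly from the soliton limit structure in Theorem~\ref{thmin:SC24_3} together with the convergence of Bergman functions, but the translation argument above is cleaner and avoids passing to limits.
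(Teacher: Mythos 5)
Your argument is correct, and it is more direct than the route the paper actually cites. The paper proves Corollary~\ref{clyin:SC24_4} together with Corollary~\ref{clyin:SC24_5} in one breath, invoking the limit-structure Theorem~\ref{thm:SC13_4}, the singular-neighborhood volume estimate Corollary~\ref{cly:HA11_1}, and the main results of~\cite{CW4} (with a remark that the line-bundle metric normalization here matches the one in~\cite{CW4} because $\dot\varphi$ is bounded). That chain passes through Gromov--Hausdorff limits of time slices and the tameness framework of~\cite{CW4}, which is the natural language when one also wants the $\alpha$-invariant convergence criterion of Corollary~\ref{clyin:SC24_5}. You instead apply Theorem~\ref{thmin:HC08_1} directly to each time-translated segment $\mathcal{M}_s$, observing that Perelman's estimates and the Sobolev bound of Zhang/Ye make $\mathcal{M}_s\in\mathscr{K}(n,A)$ with $A$ independent of $s\ge 2$, so the resulting $(k_0,c_0)$ from Theorem~\ref{thmin:HC08_1} are time-independent, and the remaining interval $[0,2]$ is handled by compactness. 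This bypasses the limit-structure theorems entirely and, for the tameness statement alone, is the cleaner path; it also sidesteps the $L^2$-curvature hypothesis that Theorem~\ref{thm:SC13_4} imposes, which is not a priori available along the flow. One small omission worth recording if you wanted to match the paper's statement precisely: the paper's definition of the Bergman function uses the evolving Hermitian metric $h(t)$ normalized so that $\sup_M\dot\varphi=0$, which differs from the reference metric convention in~\cite{CW4}; the remark in the paper's proof that the two conventions are equivalent because $|\dot\varphi|$ is uniformly bounded is what lets one identify this corollary with the notion of tameness in~\cite{CW4}. This is a notational point rather than a mathematical gap, but it is the one thing your write-up glosses over.
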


\begin{corollaryin}
 Suppose $\{(M^n, g(t)), 0 \leq t < \infty \}$ is an anti-canonical K\"ahler Ricci flow on a Fano manifold $M$.
 Then the flow converges to a K\"ahler Einstein metric
 if one of the following conditions hold for every large positive integer $\nu$.
 \begin{itemize}
  \item $\alpha_{\nu,1}>\frac{n}{n+1}$.
  \item $\alpha_{\nu,2}>\frac{n}{n+1}$ and $\alpha_{\nu,1} > \frac{1}{2- \frac{n-1}{(n+1) \alpha_{\nu,2}}}$.
 \end{itemize}
 \label{clyin:SC24_5}
\end{corollaryin}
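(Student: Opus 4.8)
The plan is to combine Theorem~\ref{thmin:SC24_3} with Tian's classical mechanism converting $\alpha$-invariant bounds into a Moser--Trudinger type inequality. By Theorem~\ref{thmin:SC24_3}, for any sequence $s_i \to \infty$ the time-shifted flows $\mathcal{M}_{s_i}$ subconverge in the $\hat{C}^\infty$ topology to a \Kahler Ricci soliton flow $(\bar{\mathcal{M}}, \bar{g})$ on a $Q$-Fano normal variety $(\bar{M}, \bar{J})$ with soliton vector field $X$. It therefore suffices to prove that, under either hypothesis, $\bar{M}$ is smooth, $(\bar{M}, \bar{J}) \cong (M, J)$, $X = 0$, and the limit is independent of $\{s_i\}$; the first three conclusions say that the soliton is a genuine \Kahler Einstein metric on $M$, and the last promotes the subconvergence in~(\ref{eqn:SC13_111}) to actual convergence.

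First I would invoke Corollary~\ref{clyin:SC24_4}: the flow is tamed, so for every large $\nu$ the Bergman function satisfies $\inf_M \mathbf{b}^{(\nu)}(\cdot,t) \ge -c$ with $c = c(n,\nu)$ independent of $t$. Hence the Kodaira maps $\iota_{\nu,t}\colon M \to \CP^{N_\nu}$ determined by $L^\nu = K_M^{-\nu}$, $\omega(t)$ and $h(t)$ are embeddings with uniformly controlled image, $\frac{1}{\nu}\iota_{\nu,t}^*\omega_{FS}$ is uniformly comparable to $\omega(t)$, and in the limit $\bar{M} = \lim_i \iota_{\nu,s_i}(M)$. Writing $\frac{1}{\nu}\iota_{\nu,t}^*\omega_{FS} = \omega_0 + \sqrt{-1}\partial\bar\partial\psi_t$ with $\sup_M \psi_t = 0$, the potentials $\psi_t$ belong to the families measured by $\alpha_{\nu,1}$ and $\alpha_{\nu,2}$. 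The hypothesis $\alpha_{\nu,1} > \frac{n}{n+1}$ then gives, via the standard passage from a log-canonical-threshold bound to an integrability estimate, a uniform bound
\begin{align*}
 \int_M e^{-p\,\psi_t}\, \omega_0^n \le C \qquad \textrm{for some } p > n,
\end{align*}
and the alternative hypothesis produces the same estimate with $p$ controlled through $\alpha_{\nu,2}$ on the ``diagonal'' degenerations that the one-point invariant does not detect. Together with Perelman's estimates and the normalization $\sup_M \dot\varphi = 0$ this bounds $\inf_M \dot\varphi_t$ uniformly from below, so the volume and Sobolev normalizations never degenerate; consequently $\bar{M}$ is non-singular, $(\bar{M}, \bar{J}) = (M, J)$, and $\Aut_0(M,J) = \{\id\}$, since a nontrivial holomorphic vector field would destroy the integrability just obtained.

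Then I would rule out $X \neq 0$. If $X \neq 0$, the soliton identity on $(M, J, X)$ produces a nontrivial holomorphic $\C^*$-action with strictly positive modified Futaki invariant; transporting the integrability estimate to the associated one-parameter family and using the precise numerology of the hypotheses --- $\frac{n}{n+1}$ is exactly the threshold at which the twisted Aubin--Moser--Trudinger functional becomes coercive, and $\frac{1}{2 - \frac{n-1}{(n+1)\alpha_{\nu,2}}}$ is the sharp two-point refinement --- forces the modified Futaki invariant to be $\le 0$, a contradiction. Hence the soliton is \Kahler Einstein on $M$. Finally, $M$ now carries a \Kahler Einstein metric with $\Aut_0(M,J) = \{\id\}$, so it is unique, and the integrability estimate also yields properness of the Mabuchi $K$-energy; by the standard convergence theory for the \Kahler Ricci flow the limit in~(\ref{eqn:SC13_111}) is then independent of $\{s_i\}$ and the flow converges to it. I expect the step $X = 0$ to be the main obstacle: it requires pushing the $\alpha$-invariant integrability estimate onto the possibly singular soliton limit and matching the sharp thresholds against the modified Futaki invariant of the limiting $\C^*$-action, with the two-point invariant brought in precisely to handle the degenerations concentrated along a pencil that defeat the one-point bound alone.
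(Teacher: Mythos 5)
The paper's own proof of this corollary is a one‐liner: after establishing the partial-$C^0$-estimate (tameness, Corollary~\ref{clyin:SC24_4}) via Theorem~\ref{thm:SC13_4} and Corollary~\ref{cly:HA11_1}, it invokes the main results of~\cite{CW4}, where precisely these $\alpha_{\nu,k}$-criteria were already shown to imply convergence of a \emph{tamed} anti-canonical K\"ahler Ricci flow to a K\"ahler Einstein metric. Your proposal does not take this reduction; instead it attempts to re-derive the whole argument from scratch, and that is where it goes astray.

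Two of your steps are not just heuristic but wrong. First, the claim that the integrability estimate forces $\Aut_0(M,J)=\{\mathrm{id}\}$ does not hold: the hypothesis $\alpha_{\nu,1}>\tfrac{n}{n+1}$ is perfectly compatible with a nontrivial connected automorphism group, and Tian's classical criterion yields existence of a K\"ahler Einstein metric with no such rigidity assertion. Second, the mechanism you propose for ruling out $X\neq 0$ --- transporting the integrability estimate to the soliton limit and comparing the thresholds $\tfrac{n}{n+1}$ and $\tfrac{1}{2-\frac{n-1}{(n+1)\alpha_{\nu,2}}}$ against a modified Futaki invariant of the limiting $\mathbb{C}^*$-action --- is not the argument in the paper or in~\cite{CW4}, and it would be difficult to make rigorous on a possibly singular limit. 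The correct mechanism is much more direct: tameness plus the $\alpha$-criterion gives, via the standard passage from partial-$C^0$ to an $\epsilon$-integrability estimate on the Bergman potentials, a \emph{uniform $C^0$ bound on the K\"ahler potential $\varphi(\cdot,t)$ for all $t$}. Combined with Perelman's estimates this gives uniform higher-order bounds, so the flow converges smoothly on $M$ itself without any degeneration of the complex structure; the limit is then automatically a smooth K\"ahler Einstein metric and the soliton vector field vanishes as a by-product, not by a separate Futaki-invariant argument. If you want to write a self-contained proof, you should replace the modified-Futaki/automorphism steps by this potential estimate and cite~\cite{CW4} (or~\cite{Tian90}) for the precise translation of the two $\alpha_{\nu,k}$ hypotheses into that estimate.
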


 Corollary~\ref{clyin:SC24_5} give rise to a method for searching Fano K\"ahler Einstein metrics in high dimension,
 which generalize the 2-dimensional case due to Tian(c.f.~\cite{Tian90}). The quantities $\alpha_{\nu,k}$ are some algebro-geometric invarariant.
 The interested readers are referred to~\cite{Tian90} for the precise definition.

Our structure theory can be applied to study a family of K\"ahler Ricci flows with some uniform initial conditions.
In this perspective, we have the following theorem.

\begin{theoremin}[\textbf{Partial-$C^0$-conjecture of Tian}]
 For every positive constants $R_0,V_0$, there exists a positive integer $k_0$
 and a positive constant $c_0$ with the following properties.

 Suppose $(M,\omega, J)$ is a K\"ahler manifold satisfying
 $Ric \geq R_0$ and $\Vol(M) \geq V_0$, $[\omega]=2\pi c_1(M,J)$.
 Then we have
 \begin{align*}
    \inf_{x \in M} \mathbf{b}^{(k_0)}(x) > -c_0.
 \end{align*}
 \label{thmin:HA01_1}
 \end{theoremin}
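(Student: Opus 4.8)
\emph{Proof sketch (plan).}
The plan is to realize $(M,\omega,J)$ as the time--zero slice of an anti-canonical K\"ahler Ricci flow, to feed the late-time portion of this flow into the structure theory above, and then to transport the resulting structure back to the initial time. I argue by contradiction: if the conclusion failed there would be K\"ahler manifolds $(M_i,\omega_i,J_i)$ with $Ric\geq R_0$, $\Vol(M_i)\geq V_0$, $[\omega_i]=2\pi c_1(M_i,J_i)$, and integers $k_i\to\infty$ for which the Bergman functions $\mathbf{b}_i^{(k_i)}$ of $(M_i,\omega_i,J_i)$ satisfy $\inf_{M_i}\mathbf{b}_i^{(k_i)}\to-\infty$.

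First I would record the soft consequences of the hypotheses. Since $Ric\geq R_0>0$, Myers' theorem bounds $\diam(M_i,\omega_i)$; Bishop--Gromov together with $\Vol(M_i)\geq V_0$ forces $n\leq N(R_0,V_0)$, a uniform volume-ratio lower bound for all balls, and hence a uniform bound for the Sobolev constant $C_S(M_i,\omega_i)$. As $[\omega_i]=2\pi c_1(M_i,J_i)$, the manifold $(M_i,J_i)$ is Fano; the anti-canonical K\"ahler Ricci flow $g_i(t)$ with $g_i(0)$ the metric of $\omega_i$ exists for all $t\geq0$ (so $L_i=K_{M_i}^{-1}$, $\lambda=1$), and $\Vol(M_i,g_i(t))=\tfrac{(2\pi)^n}{n!}c_1(M_i,J_i)^n\geq V_0$ is independent of $t$. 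The uniform lower bound on $C_S(M_i,\omega_i)$ yields a uniform lower bound for Perelman's $\mu$-functional over a fixed range of scales; its monotonicity along the flow propagates uniform $\kappa$-noncollapsing, so by the estimates of Perelman, Sesum--Tian, Q.~Zhang and R.~Ye (c.f.~\cite{Pe1},~\cite{SeT},~\cite{Zhq1},~\cite{Ye}) there are $t_0=t_0(R_0,V_0)$ and $A=A(R_0,V_0)$ such that, after normalizing $\sup_{M_i}\dot\varphi_i=0$, the bounds of (\ref{eqn:SK20_1}) hold along $g_i$ for all $t\geq t_0$. Consequently, for each fixed $s\geq t_0+2$ the time-shifted flows $\mathcal{LM}_i:=\{(M_i,g_i(t+s),J_i,L_i,h_i(t+s))\}$ belong to $\mathscr{K}(n,A)$.

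Now I would apply the structure theory. Since diameters are uniformly bounded, Theorem~\ref{thmin:HC06_2} gives, after passing to a subsequence, $(\mathcal{LM}_i,x_i)\stackrel{\hat{C}^{\infty}}{\longrightarrow}(\overline{\mathcal{LM}},\bar x)$ with $\overline{\mathcal{LM}}$ a polarized K\"ahler Ricci flow on a projective normal variety $\bar M$ having at worst log-terminal singularities and singular set of Minkowski codimension $\geq4$; Theorem~\ref{thmin:SC24_1} identifies the regular points by $\mathrm{v}=1$ and exhibits metric-cone tangent spaces. The key step is to descend to the initial time. Near a regular point of $\bar M$ the convergence is smooth and the isoperimetric constants of small $g_i(s)$-balls converge to $\mathbf{I}(\C^n)$, so Theorem~\ref{thmin:HC06_1} propagates uniform $|\nabla^k Rm|$-bounds along these balls through a whole unit time interval; applying it repeatedly, each time re-establishing the almost-Euclidean isoperimetric hypothesis at the new base time from the smooth convergence, carries these bounds down to flow time $0$. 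Hence the regular--singular decomposition of the limit is independent of time, and the Cheeger--Gromov limit of $(M_i,\omega_i)=(M_i,g_i(0))$ is the time-$0$ slice of $\overline{\mathcal{LM}}$: the same projective normal variety $\bar M$, now carrying $\bar g(0)$, the limit complex structure, and the limit polarization $\bar L$ with its limit Hermitian metric.

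Finally I would run the peak-section construction of Donaldson--Sun (c.f.~\cite{DS},~\cite{Tian12}) at the initial time, exactly as in the proof of Theorem~\ref{thmin:HC08_1}: on $\bar M$ the metric-cone structure of the tangent spaces produces, for a fixed $k_0=k_0(R_0,V_0)$, an almost-holomorphic section of $\bar L^{k_0}$ peaked at any prescribed point with controlled norm, and since $Ric(\omega_i)\geq R_0>0$ makes H\"ormander's $\bar\partial$-estimate available on $(M_i,\omega_i)$ with uniform constants, this section corrects to a genuine holomorphic section of $L_i^{k_0}$ of controlled $L^2$- and sup-norm, whence $\inf_{M_i}\mathbf{b}_i^{(k_0)}\geq-c_0$ for all large $i$, contradicting the choice of the sequence. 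I expect the real difficulty to lie in the third paragraph --- forcing the two-sided, long-time pseudolocality of Theorem~\ref{thmin:HC06_1} all the way down to $t=0$, which sits below the threshold $t_0$ past which membership in $\mathscr{K}(n,A)$ is guaranteed, and checking that the limit polarization extends compatibly across $t=0$ --- whereas everything upstream (dimension bound, noncollapsing, $\mu$-monotonicity) is soft and everything downstream is the $\bar\partial$-estimate already used for Theorem~\ref{thmin:HC08_1}.
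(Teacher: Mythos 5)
Your first two paragraphs are fine and mirror the paper's setup: the soft consequences of $Ric\ge R_0>0$, $\Vol\ge V_0$, $[\omega]=2\pi c_1$ give dimension, diameter, noncollapsing, and Sobolev bounds, and Jiang's estimate (Theorem~\ref{thm:HA01_1}) then puts the anti-canonical flow into $\mathscr{K}(n,A)$ for each fixed $t>0$, after which the structure theory (Theorems~\ref{thmin:HC06_2},~\ref{thmin:HC08_1}) delivers a partial-$C^0$-estimate at, say, $t=1$.

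The gap is exactly where you flagged it, and it is not a technicality you can expect to repair: your third paragraph proposes to carry $|\nabla^k Rm|$-bounds down to $t=0$ by iterating Theorem~\ref{thmin:HC06_1}, but that theorem requires the flow to lie in $\mathscr{K}(n,A)$ on the time interval over which it is applied, and in particular requires $|R-n\lambda|_{C^0}$ bounded there. On $(0,t_0)$ that is precisely what you do not have: the hypothesis is only $Ric\ge R_0$, so there is no a priori upper bound on scalar curvature as $t\to 0^+$ (Jiang's bound degenerates like $t^{-(n+1)}$). You cannot invoke two-sided pseudolocality below $t_0$ without the input it needs, so the attempted descent of metric regularity to $t=0$ does not go through. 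Trying to reprove pseudolocality on $(0,t_0)$ with only a Ricci lower bound would amount to a new theorem that the paper does not have and does not need.

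The paper's actual route is much lighter and bypasses the obstruction entirely: rather than transporting curvature bounds, it transports the Bergman function itself. Proposition~\ref{prn:HA01_3} uses a maximum-principle/Green-function argument on the Monge--Amp\`ere evolution (\ref{eqn:HA01_5}) to show $C(1-e^t)\le\varphi\le Ce^t$, so $\varphi(\cdot,1)$ is uniformly bounded. Then Proposition~\ref{prn:HA01_4} exploits the algebraic identity $\norm{S}{h(1)}^2=\norm{S}{h(0)}^2 e^{-\varphi(1)}$ together with the subelliptic inequality $\Delta\norm{S}{}^2\ge -n\norm{S}{}^2$ and Moser iteration to compare the $L^2$- and sup-normalizations of holomorphic sections at $t=0$ and $t=1$ directly, giving $\mathbf{b}^{(k)}(x,0)-C\le\mathbf{b}^{(k)}(x,1)\le\mathbf{b}^{(k)}(x,0)+C$. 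The partial-$C^0$-estimate at $t=1$ from the structure theory then descends to $t=0$ with no pseudolocality input at all. Your last paragraph (running Donaldson--Sun at $t=0$) also becomes unnecessary: the peak-section argument is run at $t=1$ inside the structure theory, and only the scalar output $\inf\mathbf{b}^{(k_0)}(\cdot,1)\ge -c_0$ needs to be carried back.
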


 Theorem~\ref{thmin:HA01_1} confirms the partial-$C^0$-conjecture of Tian(c.f.~\cite{Tian90Kyo},\cite{Tian12}).
 The low dimension case ($n \leq 3$) was proved by Jiang(\cite{Jiang}), depending on the partial-$C^0$-estimate along the flow,
 developed by  Chen-Wang(\cite{CW3},\cite{CW4}) in complex dimension 2
 and Tian-Zhang(\cite{TZZ2}) in complex dimension 3.
 In fact, a more general version of Theorem~\ref{thmin:HA01_1} is proved(c.f. Theorem~\ref{thm:HA01_3}).
 As a corollary of Theorem~\ref{thmin:HA01_1}, we have

 \begin{corollaryin}(c.f.~\cite{Sze})
  The partial-$C^0$-estimate holds along the classical continuity path.
 \label{clyin:HA01_2}
 \end{corollaryin}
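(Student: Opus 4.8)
The plan is to obtain Corollary~\ref{clyin:HA01_2} as a formal consequence of Theorem~\ref{thmin:HA01_1}: I would simply verify that every metric along the classical (Aubin) continuity path satisfies the hypotheses of that theorem, with constants that are uniform along the path. Recall the set-up. On a Fano manifold $(M,J)$ one fixes a background Kähler form $\omega_0\in 2\pi c_1(M,J)$ and solves, for $t$ in a maximal interval $[0,T)$ with $T\in(0,1]$,
\begin{align*}
  Ric(\omega_t)=t\,\omega_t+(1-t)\,\omega_0,\qquad \omega_t=\omega_0+\sqrt{-1}\,\partial\bar\partial\varphi_t ,
\end{align*}
solvability at $t=0$ being Yau's theorem and openness the implicit function theorem. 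Along the path the Kähler class is the fixed class $2\pi c_1(M,J)$, so $[\omega_t]=2\pi c_1(M,J)$ for every $t$, and $\Vol(M,\omega_t)=\frac1{n!}\int_M\omega_t^n$ depends only on this class, hence equals a fixed positive constant $V_0=V_0(M)$. Thus two of the three hypotheses of Theorem~\ref{thmin:HA01_1} hold automatically and uniformly in $t$.

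The only remaining point is a uniform positive Ricci lower bound $Ric(\omega_t)\ge R_0\,g_t$ with $R_0>0$ independent of $t\in[0,T)$. For $t$ bounded away from $0$ this is immediate: since $\omega_0>0$,
\begin{align*}
  Ric(\omega_t)=t\,\omega_t+(1-t)\,\omega_0\ \ge\ t\,\omega_t ,
\end{align*}
so $Ric(\omega_t)\ge t_0\,g_t$ whenever $t\ge t_0$, for any fixed $t_0\in(0,T)$. For the remaining compact range $t\in[0,t_0]$, since $t_0<T$ the solutions $\{\varphi_t\}_{t\in[0,t_0]}$ form a smooth family and therefore obey a uniform two-sided bound $C^{-1}\omega_0\le\omega_t\le C\,\omega_0$; combining this with $Ric(\omega_t)\ge(1-t)\,\omega_0\ge(1-t_0)\,\omega_0$ gives $Ric(\omega_t)\ge\frac{1-t_0}{C}\,g_t$ there. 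Hence $R_0:=\min\{t_0,\frac{1-t_0}{C}\}>0$ furnishes a Ricci lower bound valid along the whole path, depending only on $M$ and the fixed choice of $\omega_0$.

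With all three hypotheses verified with constants $R_0,V_0$ independent of $t$, applying Theorem~\ref{thmin:HA01_1} to $(M,\omega_t,J)$ for each $t$ yields a positive integer $k_0=k_0(R_0,V_0)$ and a constant $c_0=c_0(R_0,V_0)$, both independent of $t$, such that $\inf_{x\in M}\mathbf{b}^{(k_0)}(x)>-c_0$, where $\mathbf{b}^{(k_0)}$ is formed from $H^0(M,K_M^{-k_0})$ with the $L^2$-structure induced by $\omega_t$ and the Hermitian metric on $K_M^{-1}$ of curvature $\omega_t$. This is exactly the partial-$C^0$-estimate along the continuity path. I do not expect any genuine obstacle at the level of this corollary: all the substance is contained in Theorem~\ref{thmin:HA01_1}, and the only item needing a moment's care is the uniform Ricci lower bound near $t=0$, which is why one appeals to the standard smoothness of the continuity path on $[0,t_0]$ rather than to the defining equation alone. (The same reduction applies to the more general statement quoted as Theorem~\ref{thm:HA01_3}.)
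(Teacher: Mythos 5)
Your proposal is correct and follows essentially the same approach as the paper: the paper simply asserts that Corollary~\ref{clyin:HA01_2} follows from Theorem~\ref{thm:HA01_3} (equivalently Theorem~\ref{thmin:HA01_1}), and you have spelled out the verification that the hypotheses (uniform positive Ricci lower bound, fixed volume, anticanonical class) hold with constants uniform along the Aubin path. The only point worth noting is that your treatment of the range $t\in[0,t_0]$ is slightly more careful than strictly necessary — that range is precompact and the partial-$C^0$-estimate there is trivial by smooth dependence on $t$ — but the reduction you give via $Ric(\omega_t)\ge(1-t_0)\omega_0\ge \frac{1-t_0}{C}\,\omega_t$ is clean and matches the intent of the paper.
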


 Following Corollary~\ref{clyin:SC24_4},  we obtain the following result, which was originally proved by
 G. Sz\'{e}kelyhidi(c.f.~\cite{Sze})  along the classical continuity path.
\begin{corollaryin}
  Suppose  $(M,J)$  is a Fano manifold with $Aut(M,J)$ discrete.  If it is stable in the sense of S.Paul(c.f.~\cite{Paul12}), then it admits a K\"ahler Einstein metric.
\label{clyin:HA03_1}
\end{corollaryin}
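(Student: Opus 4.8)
The plan is to transplant Sz\'{e}kelyhidi's argument (\cite{Sze}) from the classical continuity path onto the normalized anti-canonical K\"ahler Ricci flow, using the structure theorems of this paper as the analytic input. Fix a smooth reference form $\omega_0 \in 2\pi c_1(M,J)$ and run the normalized anti-canonical flow $\{(M, g(t))\}_{t \geq 0}$ starting at $\omega_0$, which exists for all time. By Corollary~\ref{clyin:SC24_4} this flow is tamed, so there are $k_0 = k_0(M,J,\omega_0)$ and $c_0 = c_0(M,J,\omega_0)$ with $\inf_M \mathbf{b}^{(k_0)}(\cdot, t) \geq -c_0$ for every $t \geq 0$. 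Consequently $K_M^{-k_0}$ is very ample, and orthonormal bases of $H^0(M, K_M^{-k_0})$ with respect to $(g(t), h(t))$ give Kodaira embeddings $\iota_t \colon M \hookrightarrow \CP^{N}$, $N = \dim_{\C} H^0(M, K_M^{-k_0}) - 1$, of uniformly bounded degree and with uniformly controlled local geometry on the region where the Bergman function is bounded below.

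I would then pass to the limit. By the Hamilton-Tian theorem (Theorem~\ref{thmin:SC24_3}), for any $s_i \to \infty$ a subsequence of the time-shifted flows $\mathcal{M}_{s_i}$ converges in the $\hat{C}^{\infty}$-Cheeger-Gromov sense to a K\"ahler Ricci soliton flow $(\bar{\mathcal{M}}, \bar{g})$ on a $Q$-Fano normal variety $(\bar{M}, \bar{J})$. Combining this with the uniform Bergman embeddings $\iota_{s_i}$ and Hilbert- (or Chow-) scheme compactness, the images $\iota_{s_i}(M) \subset \CP^{N}$ subconverge to a cycle supported on $\bar{M}$, exhibiting $\bar{M}$ as a flat degeneration of $(M, K_M^{-k_0})$; the one-parameter group generated by the soliton vector field on $\bar{M}$ promotes this to a special $\C^{*}$-degeneration, that is, a possibly trivial test configuration $\mathcal{X}$ of $M$ with central fibre $(\bar{M}, \bar{J})$.

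Next I would invoke stability. The soliton identity on $(\bar{M}, \bar{J})$, together with the a priori monotonicity of Perelman's $\mu$-functional along the anti-canonical flow, pins down the sign of the modified (soliton) Futaki invariant of $\mathcal{X}$ to be nonpositive. On the other hand, stability of $(M,J)$ in the sense of S.~Paul (\cite{Paul12}), which since $\Aut(M,J)$ is discrete is essentially the properness of the Mabuchi K-energy, excludes every nontrivial special degeneration with nonpositive Futaki-type invariant. Hence $\mathcal{X}$ is trivial, $(\bar{M}, \bar{J}) \cong (M, J)$, and $\bar{g}$ is a K\"ahler Ricci soliton on $M$ itself. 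Finally, its soliton vector field lies in the Lie algebra of $\Aut(M,J)$, which vanishes by hypothesis, so $\bar{g}$ is K\"ahler-Einstein; this proves the corollary.

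The step I expect to be the main obstacle is the bridge between the purely analytic limit produced by Theorem~\ref{thmin:SC24_3} and an honest algebraic test configuration to which Paul's stability can be applied: one must verify that the uniform Bergman maps $\iota_t$ are projective morphisms, that the Cheeger-Gromov limit $\bar{M}$ agrees scheme-theoretically with the flat Hilbert-scheme limit of $\iota_{s_i}(M)$, and that the relevant numerical invariant is semicontinuous along the degeneration and coincides with Paul's Chow and hyperdiscriminant weights. This is precisely the package Sz\'{e}kelyhidi assembled along the continuity path in \cite{Sze}; Theorem~\ref{thmin:HC06_2}, together with the partial-$C^0$-estimate of Theorem~\ref{thmin:HC08_1}, is what makes the same package available along the flow, and the residual work is bookkeeping to reconcile the normalizations of the polarization $K_M^{-k_0}$, of the soliton, and of Paul's notion of a semistable pair.
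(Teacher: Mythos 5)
Your route diverges from the paper's and contains a genuine gap at its center. The paper's proof of this corollary is much more direct: given the tamed flow (Corollary~\ref{clyin:SC24_4}), one invokes S.~Paul's characterization of his stability as coercivity/properness of the K-energy (\cite{Paul1209}, \cite{Paul1308}) together with the argument in Section~6 of~\cite{TZZ2} to conclude that the Aubin-Yau $I$-functional stays bounded along the flow. This $I$-bound prevents the K\"ahler potentials from degenerating, so the flow actually subconverges smoothly to a K\"ahler-Einstein metric \emph{on $M$ itself}, without ever passing to a singular limit variety, a Hilbert-scheme degeneration, or a test configuration. Your proposal instead tries to run the Chen-Donaldson-Sun style argument on the flow limit: pass to a $Q$-Fano soliton limit $\bar M$ via Theorem~\ref{thmin:SC24_3}, exhibit it as the central fiber of a special test configuration, estimate a modified Futaki invariant, and rule out nontriviality by stability.

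The concrete gap is the step where you ``promote'' the Cheeger-Gromov degeneration $M \rightsquigarrow \bar M$ to a special test configuration using the soliton vector field. A flat degeneration extracted from a subconvergent sequence of Kodaira embeddings $\iota_{s_i}(M)$ carries no $\C^*$-action on the total family; the soliton vector field acts only on the central fiber $\bar M$, and building a $\C^*$-equivariant total space around it (and then comparing its Donaldson-Futaki/modified Futaki invariant with Paul's hyperdiscriminant and Chow weights) is precisely the delicate algebro-geometric work you correctly flag at the end as unproved. Moreover, Paul's stability is not formulated as a sign condition on Futaki-type invariants of test configurations, so even granting the test configuration you would still need to translate between the two notions. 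The paper sidesteps all of this: properness of the K-energy \`a la Paul immediately gives the $I$-functional bound along the flow, and no degeneration argument is needed. If you want to salvage your route you would need to supply the $\C^*$-equivariant structure and the invariant comparison; as written those are missing ideas, not bookkeeping.
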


An important application of our structure theory is devoted to the study of the relationships among different stabilities.
By the work of Chen, Donaldson and Sun(c.f.~\cite{CDS0},~\cite{CDS1},~\cite{CDS2} and~\cite{CDS3}),  a long standing stability conjecture, going back to Yau(c.f. Problem 65 of~\cite{Yau93}) and critically contributed by Tian(c.f~\cite{Tian97}) and Donaldson(c.f.~\cite{Do02}), was confirmed.
We now know a Fano manifold is K-stable if and only if it admits K\"ahler Einstein metrics.  A posteriori, we see that
the K-stability is equivalent to Paul's stability if the underlying manifold has discrete automorphism group.
It is an interesting problem  to prove this equivalence a priori,
which will be discussed in a separate paper(c.f.~\cite{CSW}). \\

 Let us quickly go over the relationships among the theorems.
 Theorem~\ref{thmin:HE21_1} is the structure theorem of the model space $\widetilde{\mathscr{KS}}(n,\kappa)$.
 Theorem~\ref{thmin:SC24_1}, Theorem~\ref{thmin:HC08_1} and Theorem~\ref{thmin:HC06_1}
 combined together give the canonical neighborhood structure of the polarized K\"ahler Ricci flow
  in $\mathscr{K}(n,A)$, in a strong sense.
 The main structure theorem in this paper is Theorem~\ref{thmin:HC06_2}, the weak compactness theorem of polarized K\"ahler Ricci flows.
 It is clear that Theorem~\ref{thmin:SC24_3} and Theorem~\ref{thmin:HA01_1} are
 direct applications of Theorem~\ref{thmin:HC06_2}.  The proof of Theorem~\ref{thmin:HC06_2} is based on the combination of
 Theorem~\ref{thmin:SC24_1}, Theorem~\ref{thmin:HC08_1} and Theorem~\ref{thmin:HC06_1}.   These three theorems deal with different structures
 of $\mathscr{K}(n,A)$,  including the Ricci flow structure, metric space structure, line bundle structure and variety structure.
 The importance of these structures decreases in order, for the purpose of developing compactness.
 However, all these structures are intertwined together.
 Paradoxically, the proof of the compactness of these structures does not follow the same order, due to the lack of precise estimate of Bergman functions.
 Instead of proving them in order, we define a concept called ``polarized canonical radius", which guarantees the convergence of all these structures
 under this radius.  The only thing we need to do then is to show that this radius cannot be too small.  Otherwise,  we can apply a maximum principle argument
 to obtain a contradiction, which essentially arise from the monotonicity of Perelman's reduced volume and localized $W$-functional.\\

 This paper is organized as follows.   In section 2, we discuss the model space $\widetilde{\mathscr{KS}}(n,\kappa)$, which consists of non-collapsed Calabi-Yau spaces with mild singularities.   By checking analysis foundation and repeating the weak compactness theory of K\"ahler Einstein manifolds, we prove the compactness of
 $\widetilde{\mathscr{KS}}(n,\kappa)$ and show that every space in it is a conifold.
 In other words, we prove Theorem~\ref{thmin:HE21_1} at the end of section 2.
 We also develop  some a priori estimates, which will be  essentially used in the following sections.
 In section 3, we define the ``canonical radius" and discuss the convergence of metric structures when canonical radius is uniformly bounded from below.
 In section 4, we first set up a forward, long-time pseudolocality theorem based on the existence of
 partial-$C^0$-estimate. Motivated by this pseudolocality  theorem,  we then refine the ``canonical radius" to ``polarized canonical radius"  and discuss the convergence of
 flow structure and line bundle structure under the assumption that polarized canonical radius  is uniformly bounded from below. Finally,  at the end of section 4,  we use a maximum principle argument to
 show that there is an a priori bound of the polarized canonical radius.
 In section 5, we prove Theorem~\ref{thmin:SC24_1}-\ref{thmin:HC06_2}, together with some other
 more detailed properties of the space $\mathscr{K}(n,A)$.
 Up to this section, everything is developed for general polarized K\"ahler Ricci flow.
 At last, in section 6, we focus on the anti-canonical K\"ahler Ricci flows.
 Applying the general structure theory,  we prove Theorem~\ref{thmin:SC24_3} and Theorem~\ref{thmin:HA01_1}. \\

\noindent {\bf Acknowledgment}
 Both authors are very grateful to professor Simon Donaldson for his constant support.
 The second author would like to thank Song Sun and Shaosai Huang for helpful discussions and suggestions. 
 Thanks also go to Weiyong He, Haozhao Li,Yuanqi Wang,  Guoqiang Wu, Chengjian Yao, Hao Yin, Kai Zheng
 for their valuable comments.

\section{Model space---Calabi-Yau Space with mild singularities}

The Model space of a polarized K\"ahler Ricci flow in $\mathscr{K}(n,A)$ consists of the space-time blowup limits from
flows in $\mathscr{K}(n,A)$.   In this section, we shall discuss the properties of the model space, from the perspective of
metric space structure and the intrinsic Ricci flow structure.

\subsection{Singular Calabi-Yau space $\widetilde{\mathscr{KS}}(n,\kappa)$}

Let $\mathscr{KS}(n)$ be the collection of all the complete $n$-dimensional Calabi-Yau (K\"ahler Ricci flat) manifolds.
By Bishop-Gromov comparison, it is clear that the asymptotic volume ratio is well defined for every manifolds in the moduli space $\mathscr{KS}(n)$.
The gap theorem of Anderson (c.f. Gap Lemma 3.1 of~\cite{An90}) implies that the asymptotic volume ratio is strictly less than $1-2\delta_0$ whenever the underlying manifold is not the flat $\C^n$, where
$\delta_0$ is a dimensional constant.  We fix this constant and call it as Anderson constant in this paper.

Let $\mathscr{KS}(n,\kappa)$ be a subspace of $\mathscr{KS}(n)$, with every element has asymptotic volume ratio at least $\kappa$.
Clearly, $\mathscr{KS}(n,\kappa)$  is not compact under the pointed-Gromov-Hausdorff topology.
It can be compactified as a space $\overline{\mathscr{KS}}(n,\kappa)$.
 However, this may not be the largest space that one can develop weak-compactness theory.  So we extend the space $\overline{\mathscr{KS}}(n,\kappa)$ further to a possibly bigger compact space $\widetilde{\mathscr{KS}}(n,\kappa)$, which is defined as follows.

 \begin{definition}
 Let $\widetilde{\mathscr{KS}}(n,\kappa)$ be the collection of length spaces $(X,g)$ with the following properties.
\begin{enumerate}
  \item  $X$ has a disjoint regular-singular decomposition $X=\mathcal{R} \cup \mathcal{S}$, where $\mathcal{R}$ is the regular part,  $\mathcal{S}$ is the singular part.
   A point is called regular if it has a neighborhood which is isometric to a totally geodesic convex domain of  some smooth Riemannian manifold.  A point is called singular
   if it is not regular.
  \item  The regular part $\mathcal{R}$ is a nonempty, open Ricci-flat manifold of real dimension $m=2n$.
         Moreover, there exists a complex structure $J$ on $\mathcal{R}$ such that $(\mathcal{R}, g, J)$ is a K\"ahler manifold.
  \item  $\mathcal{R}$ is weakly convex, i.e., for every point $x \in \mathcal{R}$, there exists a measure ($2n$-dimensional Hausdorff measure) zero set
    $\mathcal{C}_x \supset \mathcal{S}$ such that every point in $X \backslash \mathcal{C}_x$ can be connected to $x$
    by a unique shortest geodesic in $\mathcal{R}$.   For convenience, we call $\mathcal{C}_x$ as the cut locus of $x$.
  \item $\dim_{\mathcal{M}} \mathcal{S} < 2n-3$, where $\mathcal{M}$ means Minkowski dimension.
  \item Let $\mathrm{v}$ be the volume density function,i.e.,
        \begin{align}
          \mathrm{v}(x) \triangleq \lim_{r \to 0} \frac{|B(x,r)|}{\omega_{2n} r^{2n}}   \label{eqn:SC16_1}
        \end{align}
        for every $x \in X$. Then $\mathrm{v}\equiv 1$ on $\mathcal{R}$ and $\mathrm{v} \leq 1-2\delta_0$ on $\mathcal{S}$.
        In other words, the function $\mathrm{v}$ is a criterion function for singularity.
        Here $\delta_0$ is the Anderson constant.
  \item The asymptotic volume ratio $\mathrm{avr}(X) \geq \kappa$. In other words, we have
   \begin{align*}
      \lim_{r \to \infty} \frac{|B(x,r)|}{\omega_{2n}r^{2n}} \geq \kappa
   \end{align*}
    for every $x \in X$.
\end{enumerate}
 Let $\widetilde{\mathscr{KS}}(n)$ be the collection of metric spaces $(X,g)$ with all the above properties except the last one.  Since Euclidean space is a special element,
 we define
 \begin{align*}
      \widetilde{\mathscr{KS}}^{*}(n) \triangleq \widetilde{\mathscr{KS}}(n) \backslash \{(\C^n, g_{\E})\},
      \quad \widetilde{\mathscr{KS}}^{*}(n,\kappa) \triangleq \widetilde{\mathscr{KS}}(n,\kappa) \backslash \{(\C^n, g_{\E})\}.
 \end{align*}
\label{dfn:SC27_1}
\end{definition}

Note that the $\kappa$ in $\widetilde{\mathscr{KS}}(n)$ means the asymptotic area ratio is at least $\kappa$.  If we drop $\kappa$,  the space $\widetilde{\mathscr{KS}}(n)$ may contain
compact spaces.  The default measure is always the $2n$-dimensional Hausdorff measure, unless we mention otherwise. 
We use $\dim_{\mathcal{H}}$ to denote Hausdorff dimension, $\dim_{\mathcal{M}}$ to denote Minkowski dimension, or the
box-counting dimension.    Since Minkowski dimension is not as often used as Hausdorff dimension,  let us recall the definition of it quickly(c.f.~\cite{Falco}).
\begin{definition}
  Suppose $E$ is a bounded subset of $X$.
  $E_r$ is the $r$-neighborhood of $E$ in $X$.  Then
  the upper Minkowski dimension of $E$ is defined as the limit:
  $\displaystyle \dim_{\mathcal{H}} X-\liminf_{r \to 0^+}  \frac{\log |E_r|}{\log r}$.
  We say $\dim_{\mathcal{M}} E \leq \dim_{\mathcal{H}} X-k$
  if the upper Minkowski dimension of $E$ is not greater than $2n-k$.
  Namely, we have
  \begin{align*}
    \liminf_{r \to 0^+}  \frac{\log |E_r|}{\log r} \geq k.
  \end{align*}
  If $E$ is not a bounded set, we say $\dim_{\mathcal{M}} E \leq \dim_{\mathcal{H}} X-k$ if
  $\dim_{\mathcal{M}} E \cap B \leq \dim_{\mathcal{H}} X-k$ for each unit geodesic ball $B \subset X$
  satisfying $B\cap E \neq \emptyset$.
\label{dfn:HE08_1}
\end{definition}

In general, it is known that Hausdorff dimension is not greater than Minkowski dimension. Hence, we always have
$\dim_{\mathcal{H}}\mathcal{S} \leq \dim_{\mathcal{M}} \mathcal{S}$.
In our discussion,  $X$ clearly has Hausdorff dimension $2n$. Therefore, $\dim_{\mathcal{M}} \mathcal{S}<2n-3$
implies  that for each nonempty intersection $B(x_0,1) \cap \mathcal{S}$,  its $r$-neighborhood has measure
$o(r^{3})$ for sufficiently small $r$.  By virtue of the high codimension of $\mathcal{S}$ and the Ricci-flatness of $\mathcal{R}$,
in many aspects, each metric space $X \in \widetilde{\mathscr{KS}}(n,\kappa)$ can be treated as an intrinsic Ricci-flat space.  We shall see that the geometry of $X$ is almost the same as that of Calabi-Yau manifold.

\begin{proposition}[\textbf{Bishop-Gromov volume comparison}]
Suppose $x_0 \in X$, $0<r_a<r_b<\infty$,  and $\delta>0$. Then we have
\begin{align}
 &\frac{|B(x_0,r_a)|}{r_a^{2n}} \geq \frac{|B(x_0,r_b)|}{r_b^{2n}},  \label{eqn:GD08_2}\\
 &\frac{|B(x_0,r_a+\delta)|-|B(x_0,r_a)|}{(r_a+\delta)^{2n}-r_a^{2n}} \geq \frac{|B(x_0,r_b+\delta)|-|B(x_0,r_b)|}{(r_b+\delta)^{2n}-r_b^{2n}}.   \label{eqn:GD08_3}
\end{align}
\label{prn:HD19_1}
\end{proposition}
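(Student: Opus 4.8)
The plan is to reduce both inequalities to the classical Bishop--Gromov computation carried out in geodesic polar coordinates on the smooth Ricci-flat part $\mathcal{R}$, the weak convexity and high-codimension axioms of Definition~\ref{dfn:SC27_1} supplying precisely what is needed to run the usual polar-coordinate argument on the singular space $X$. First I would reduce to the case $x_0\in\mathcal{R}$. If $x_0$ is singular, pick regular points $x_j\to x_0$; since $B(x_0,r-\epsilon_j)\subset B(x_j,r)\subset B(x_0,r+\epsilon_j)$ with $\epsilon_j=d(x_j,x_0)\to0$, one gets $|B(x_j,r)|\to|B(x_0,r)|$ for every $r$ outside the (at most countable) set of radii for which $\partial B(x_0,r)$ has positive measure; passing to the limit in (\ref{eqn:GD08_2})--(\ref{eqn:GD08_3}), which are closed conditions, and then filling in the remaining radii by left-continuity of $r\mapsto|B(x_0,r)|$, it suffices to treat regular centers.

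So fix $x_0\in\mathcal{R}$. By the weak convexity axiom there is a null set $\mathcal{C}_{x_0}\supset\mathcal{S}$ such that each $y\in X\setminus\mathcal{C}_{x_0}$ is joined to $x_0$ by a unique minimizing geodesic lying in $\mathcal{R}$; in particular, for a.e.\ unit direction $\theta$ at $x_0$ this geodesic is defined and minimizes on a maximal interval $(0,c(\theta))$, with the whole segment contained in $\mathcal{R}$. Writing the Riemannian volume in polar coordinates as $\mathcal{A}(s,\theta)\,ds\,d\theta$ on $\{s<c(\theta)\}$ and extending $\mathcal{A}$ by $0$ for $s\ge c(\theta)$, the fact that $\mathcal{C}_{x_0}$ (hence also $\mathcal{S}$, which is null since it has Minkowski codimension larger than $3$) carries no volume gives $|B(x_0,r)|=\int_0^r A(s)\,ds$ with $A(s)=\int_{S^{2n-1}}\mathcal{A}(s,\theta)\,d\theta$. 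Since $\mathcal{R}$ is Ricci-flat, $\mathrm{Ric}\ge0$ on it, so Bishop's inequality along each such geodesic yields $\partial_s\!\left(\mathcal{A}(s,\theta)/s^{2n-1}\right)\le0$ on $(0,c(\theta))$; because $\mathcal{A}$ is identically zero afterwards, $s\mapsto\mathcal{A}(s,\theta)/s^{2n-1}$ is non-increasing on $(0,\infty)$ for a.e.\ $\theta$, and averaging over $\theta$ shows $s\mapsto A(s)/s^{2n-1}$ is non-increasing.

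Both assertions then follow from an elementary one-variable fact: if $\phi\ge0$ is non-increasing and $g>0$, then $r\mapsto\bigl(\int_0^r\phi g\bigr)\big/\bigl(\int_0^r g\bigr)$ and, for each fixed $\delta>0$, $r\mapsto\bigl(\int_r^{r+\delta}\phi g\bigr)\big/\bigl(\int_r^{r+\delta}g\bigr)$ are non-increasing. Both are seen by differentiating the quotient $N/D$ directly and using that $\int\phi g=\langle\phi\rangle\int g$ over the interval in question for some mean value $\langle\phi\rangle$ lying between the endpoint values of $\phi$; the sign works out from $g>0$ and monotonicity of $\phi$. Applying this with $g(s)=s^{2n-1}$ and $\phi(s)=A(s)/s^{2n-1}$, so that $\int_\alpha^\beta g=\frac{1}{2n}(\beta^{2n}-\alpha^{2n})$ and $\int_\alpha^\beta\phi g=|B(x_0,\beta)|-|B(x_0,\alpha)|$, the first monotonicity with $\beta\in\{r_a,r_b\}$ gives (\ref{eqn:GD08_2}), and the second with $[\alpha,\beta]\in\{[r_a,r_a+\delta],[r_b,r_b+\delta]\}$ gives (\ref{eqn:GD08_3}).

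The main obstacle is foundational rather than computational: one must verify that the polar-coordinate volume formula is valid on $X$, i.e.\ that a full-measure set of minimizing geodesics issuing from $x_0$ genuinely stays inside $\mathcal{R}$ up to the cut distance and that $\mathcal{C}_{x_0}\cup\mathcal{S}$ is a null set. This is exactly what axioms (3) (weak convexity) and (4) (Minkowski codimension $>3$) are there to provide; once they are granted, the pointwise estimate for $\mathcal{A}$ and the monotonicity lemma are the standard smooth arguments, and the approximation step handles a possibly singular center.
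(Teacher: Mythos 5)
Your proof is correct and follows essentially the same route as the paper's: reduce to a regular center by approximation, then run the standard Bishop--Gromov polar-coordinate computation on the Ricci-flat regular part, using the weak-convexity axiom to justify the coarea/polar formula and the measure-zero cut locus. The one difference worth noting is in the approximation step. You pass to the limit $|B(x_j,r)|\to|B(x_0,r)|$ only at continuity radii of $r\mapsto|B(x_0,r)|$ and then fill in the exceptional (at most countable) radii by left-continuity; the paper instead establishes $|B(x_j,r)|\to|B(x_0,r)|$ at \emph{every} radius $r$ directly, by bounding the annulus volume $|B(x_j,r+\epsilon)\setminus B(x_j,r-\epsilon)|\le C(n,r)\epsilon$ via Bishop--Gromov applied to the regular centers $x_j$, which avoids the a.e./left-continuity bookkeeping altogether. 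Both arguments work; the paper's is a little slicker at that step, while your explicit one-variable monotonicity lemma (for $\bigl(\int\phi g\bigr)/\bigl(\int g\bigr)$ with $\phi$ non-increasing) is a clean unification of (\ref{eqn:GD08_2}) and (\ref{eqn:GD08_3}) that the paper only cites from textbooks. One tiny inaccuracy: you attribute the nullity of $\mathcal{S}$ to the Minkowski-codimension axiom, but what you actually need is that $\mathcal{C}_{x_0}$ is null, which is built directly into the weak-convexity axiom (item 3 of Definition~\ref{dfn:SC27_1}); $\mathcal{S}\subset\mathcal{C}_{x_0}$ is then null for free.
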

\begin{proof}
 We first prove (\ref{eqn:GD08_2}) for the case $x_0 \in \mathcal{R}$.
 Away from the cut locus $\mathcal{C}_{x_0}$, which is measure-zero, every point can be connected to $x_0$ by a unique smooth geodesic. 
 Therefore, every point $y \in X \backslash \mathcal{C}_{x_0}$ can be identified with a point $(\gamma'(0), L) \in \R^{2n}$, where $\gamma$ is
 the shortest geodesic connecting $x_0$ and $y$, with $\gamma(0)=x_0$, $L$ is the length of $\gamma$.  In this way, we constructed a
 polar coordinate system around $x_0$.  Since $|B(x_0,r)|=|B(x_0,r) \backslash \mathcal{C}_{x_0}|$, by calculating the volume element evolution
 along each $\gamma$ in polar coordinate, we obtain the volume comparison same as the Riemannian case.
 This is more or less standard. For example, one can check the details from~\cite{ZhuSH}, or the survey~\cite{Wei}. 
 Now we show (\ref{eqn:GD08_2}) for $x_0 \in \mathcal{S}$.  Let $x_i \in \mathcal{R}$ and $x_i \to x_0$.  Fix $r>0$.  Note that 
 \begin{align}
    \lim_{i \to \infty} |B(x_i,r)| = |B(x,r)|.   \label{eqn:GD01_0}
 \end{align}
 Actually, for each $\epsilon>0$ and large $i$, we have $B(x_i,r-\epsilon) \subset B(x,r) \subset B(x_i, r+\epsilon)$ and hence
 \begin{align}
  &|B(x_i,r-\epsilon)|-|B(x,r)| \leq |B(x_i,r)| -|B(x,r)|\leq |B(x_i,r+\epsilon)|-|B(x,r)|, \notag \\
  &  | |B(x_i,r)| -|B(x,r)| | \leq |B(x_i,r+\epsilon)-B(x_i,r-\epsilon)|. \label{eqn:GD01_1}
 \end{align}
 Note that $x_i$ is a regular point for each $i>1$, by standard Bishop-Gromov comparison, we have
 \begin{align*}
  |B(x_i,r+\epsilon)-B(x_i,r-\epsilon)| \leq 2n\omega_{2n} \{(r+\epsilon)^{2n}-(r-\epsilon)^{2n}\} \leq C(n,r) \epsilon. 
 \end{align*}
 Therefore, taking limit of (\ref{eqn:GD01_1}) as $i \to \infty$ and then let $\epsilon \to 0$, we obtain (\ref{eqn:GD01_0}). 
 Consequently, we have
 \begin{align}
     \lim_{i \to \infty} \omega_{2n}^{-1}r_a^{-2n}|B(x_i,r_a)|= \omega_{2n}^{-1}r_a^{-2n}|B(x_0,r_a)|,   \quad
     \lim_{i \to \infty} \omega_{2n}^{-1}r_b^{-2n}|B(x_i,r_b)|=\omega_{2n}^{-1}r_b^{-2n}|B(x_0,r_b)|.   \label{eqn:GD01_2}
 \end{align}
 Again, $x_i$ is a regular point for each $i>1$, so (\ref{eqn:GD08_2}) was proved for $x_i$ and can be written as
 \begin{align*}
   \omega_{2n}^{-1}r_a^{-2n}|B(x_i,r_a)| \geq  \omega_{2n}^{-1}r_b^{-2n}|B(x_i,r_b)|. 
 \end{align*}
 Plugging the above inequality into (\ref{eqn:GD01_2}), we obtain (\ref{eqn:GD08_2}) for the singular point $x_0$. 
 
 The proof of (\ref{eqn:GD08_3}) is similar.  We first prove (\ref{eqn:GD08_3}) for regular point $x_0$ and then use approximation to prove it for
 singular $x_0$.   For regular $x_0$,  in polar coordinates, (\ref{eqn:GD08_3}) can be proved the same as the smooth Riemannian manifold
 case(c.f. Theorem 3.1 of~\cite{ZhuSH}).   In the approximation step, it is important to have volume continuity of annulus. However, this can be proved similar
 to (\ref{eqn:GD01_0}), by using triangle inequalities.  
\end{proof}

\begin{corollary}[\textbf{Volume doubling}]
 $X$ is a volume doubling metric space. More precisely, for every $x_0 \in X$ and $r>0$, we have
 \begin{align*}
    \frac{|B(x_0,2r)|}{|B(x_0,r)|} \leq \kappa^{-1}.
 \end{align*}
\label{cly:HD20_1}
\end{corollary}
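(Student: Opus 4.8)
The plan is to derive the volume doubling property directly from the Bishop--Gromov comparison established in Proposition~\ref{prn:HD19_1}, together with the asymptotic volume ratio lower bound built into the definition of $\widetilde{\mathscr{KS}}(n,\kappa)$. The key point is that for a space $X \in \widetilde{\mathscr{KS}}(n,\kappa)$, the monotone quantity $r \mapsto \frac{|B(x_0,r)|}{r^{2n}}$ decreases in $r$, and its limit as $r \to \infty$ is at least $\kappa$; hence its value at any finite $r$ is also at least $\kappa$, i.e. $|B(x_0,r)| \geq \kappa\, \omega_{2n}\, r^{2n}$ — wait, more precisely one must be careful about the normalizing constant $\omega_{2n}$, but since (\ref{eqn:GD08_2}) is stated with the same normalization on both sides, the clean statement is that $\frac{|B(x_0,r_a)|}{r_a^{2n}} \geq \frac{|B(x_0,r_b)|}{r_b^{2n}}$ for $r_a < r_b$, and letting $r_b \to \infty$ gives $\frac{|B(x_0,r)|}{\omega_{2n} r^{2n}} \geq \mathrm{avr}(X) \geq \kappa$ for every finite $r$.

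First I would fix $x_0 \in X$ and $r > 0$. Applying (\ref{eqn:GD08_2}) with $r_a = r$ and letting $r_b \to \infty$, and using property (6) of Definition~\ref{dfn:SC27_1} (the asymptotic volume ratio is at least $\kappa$), I get
\begin{align*}
  \frac{|B(x_0,r)|}{\omega_{2n} r^{2n}} \geq \lim_{r_b \to \infty} \frac{|B(x_0,r_b)|}{\omega_{2n} r_b^{2n}} = \mathrm{avr}(X) \geq \kappa.
\end{align*}
Second, I would use the trivial upper bound $|B(x_0,2r)| \leq \omega_{2n}(2r)^{2n} \cdot \frac{|B(x_0,2r)|}{\omega_{2n}(2r)^{2n}}$ — but here I need a genuine \emph{upper} bound on $\frac{|B(x_0,2r)|}{(2r)^{2n}}$. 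This follows again from (\ref{eqn:GD08_2}): since $r \mapsto \frac{|B(x_0,r)|}{r^{2n}}$ is non-increasing and its limit as $r \to 0^+$ equals $\omega_{2n} \mathrm{v}(x_0) \leq \omega_{2n}$ (by the volume density being $\leq 1$ everywhere, combining properties (5) and the regular case $\mathrm{v} \equiv 1$), we get $\frac{|B(x_0,2r)|}{(2r)^{2n}} \leq \omega_{2n}$, hence $|B(x_0,2r)| \leq \omega_{2n}(2r)^{2n} = 2^{2n}\omega_{2n} r^{2n}$. Combining this with the lower bound $|B(x_0,r)| \geq \kappa \omega_{2n} r^{2n}$ yields
\begin{align*}
  \frac{|B(x_0,2r)|}{|B(x_0,r)|} \leq \frac{2^{2n}\omega_{2n} r^{2n}}{\kappa \omega_{2n} r^{2n}} = \frac{2^{2n}}{\kappa}.
\end{align*}

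This gives the doubling constant $2^{2n}\kappa^{-1}$, which is slightly weaker than the stated $\kappa^{-1}$; to recover the sharp constant as written, I would instead argue directly: by (\ref{eqn:GD08_2}) applied with $r_a = r$, $r_b = 2r$ we have $\frac{|B(x_0,2r)|}{(2r)^{2n}} \leq \frac{|B(x_0,r)|}{r^{2n}}$ is the \emph{wrong} direction, so the honest comparison is $|B(x_0,2r)| \leq 2^{2n}|B(x_0,r)|$ always — and the improvement to $\kappa^{-1}$ must come from comparing $|B(x_0,2r)|$ against $\mathrm{avr}(X)$-type bounds rather than against $|B(x_0,r)|$ via Euclidean scaling. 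Concretely: $|B(x_0,2r)| \leq \omega_{2n}(2r)^{2n} \cdot 1$ is too lossy; instead note $\frac{|B(x_0,2r)|}{\omega_{2n}(2r)^{2n}} \leq \lim_{s\to 0}\frac{|B(x_0,s)|}{\omega_{2n}s^{2n}} = \mathrm{v}(x_0)$ and $\frac{|B(x_0,r)|}{\omega_{2n}r^{2n}} \geq \mathrm{avr}(X) \geq \kappa$, so
\begin{align*}
  \frac{|B(x_0,2r)|}{|B(x_0,r)|} = \frac{|B(x_0,2r)|}{\omega_{2n}(2r)^{2n}} \cdot \frac{2^{2n}\omega_{2n}r^{2n}}{|B(x_0,r)|} \leq \mathrm{v}(x_0) \cdot \frac{2^{2n}}{\kappa} \leq \frac{2^{2n}}{\kappa}.
\end{align*}
I expect the main (and essentially only) subtlety to be exactly this bookkeeping of the sharp constant: whether the intended statement relies on an additional normalization (e.g. $\mathrm{avr}$ defined without $\omega_{2n}$, or an absorbed factor), or simply tolerates the dimensional constant $2^{2n}\kappa^{-1}$ written loosely as $\kappa^{-1}$. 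The analytic content — monotonicity plus the asymptotic lower bound — is entirely furnished by Proposition~\ref{prn:HD19_1} and Definition~\ref{dfn:SC27_1}, so no further geometric input is needed.
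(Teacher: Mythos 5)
Your argument is correct and follows the route the paper clearly intends: the corollary is stated with no proof immediately after Proposition~\ref{prn:HD19_1}, and is a direct consequence of the Bishop--Gromov monotonicity in (\ref{eqn:GD08_2}) together with the asymptotic lower bound $\mathrm{avr}(X)\geq\kappa$ from property~6 of Definition~\ref{dfn:SC27_1}. You are also right to flag the constant: from monotonicity alone, taking $r_a=r$, $r_b=2r$ in (\ref{eqn:GD08_2}) gives the clean bound $|B(x_0,2r)|\leq 2^{2n}|B(x_0,r)|$, while the route you write out — combining $|B(x_0,r)|\geq\kappa\omega_{2n}r^{2n}$ (monotonicity plus the limit at infinity) with $|B(x_0,2r)|\leq\omega_{2n}(2r)^{2n}$ (monotonicity plus $\mathrm{v}(x_0)\leq 1$) — gives $2^{2n}\kappa^{-1}$. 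Neither yields $\kappa^{-1}$, and in fact the constant $\kappa^{-1}$ as written cannot be correct: $\C^n$ itself lies in $\widetilde{\mathscr{KS}}(n,\kappa)$ for $\kappa=1$, and there $|B(x_0,2r)|/|B(x_0,r)|=2^{2n}>1=\kappa^{-1}$. The honest doubling constant is $2^{2n}$ (or, more wastefully, $2^{2n}\kappa^{-1}$), which is what the downstream uses of volume doubling actually require; the stated $\kappa^{-1}$ has simply dropped the dimensional factor. Your parenthetical observation — that the sharpest bound comes from comparing the scales $r$ and $2r$ directly, not from squeezing each against its limiting value — is exactly the point, and your proposal contains the complete and correct proof.
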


\begin{corollary}[\textbf{``Area ratio" monotonicity}]
For each $x_0 \in X$, there is a function $A(r)$, the ``area ratio", defined almost everywhere on $(0,\infty)$ such that
 \begin{align}
     &|B(x_0,r)|=\int_0^r A(s) s^{2n-1} ds, \quad \forall \; r>0.   \label{eqn:GD08_4}\\
     & \frac{|B(x_0,r_b)|}{r_b^{2n}}-\frac{|B(x_0,r_a)|}{r_a^{2n}}=\int_{r_a}^{r_b} \frac{2n}{r} \left( \frac{A(r)}{2n} - \frac{|B(x_0,r)|}{r^{2n}}\right) dr, \quad \forall \; 0<r_a<r_b.   \label{eqn:GD08_5}
 \end{align}
 Furthermore, $A$ is non-increasing on its domain.  In other words, we have $ A(r_a) \geq A(r_b)$ whenever $A(r_a)$, $A(r_b)$ are well defined and $0<r_a<r_b$. 
\label{cly:GD08_1} 
\end{corollary}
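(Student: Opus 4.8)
The plan is to reduce the whole statement to one–variable real analysis applied to the monotone function $V(r):=|B(x_0,r)|$, feeding in Proposition~\ref{prn:HD19_1} as the only geometric input. First I would record that $V$ is locally Lipschitz on $(0,\infty)$, continuous at $0$ with $V(0)=0$. Indeed, the annulus estimate $|B(x_0,r+\delta)\setminus B(x_0,r)| \le 2n\omega_{2n}\{(r+\delta)^{2n}-r^{2n}\} \le C(n,R)\,\delta$ for $0<r\le R$ — which is exactly the Bishop--Gromov consequence already used inside the proof of Proposition~\ref{prn:HD19_1} (established there for regular centers and passed to singular centers by the same triangle-inequality approximation that gives volume continuity of annuli) — yields $|V(r+\delta)-V(r)|\le C(n,R)\delta$, while $V(\delta)\le\omega_{2n}\delta^{2n}$ handles the endpoint $0$. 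Hence $V$ is absolutely continuous on every $[0,R]$, differentiable for a.e.\ $r$, and $V(r)=\int_0^r V'(s)\,ds$. Defining $A(r):=V'(r)\,r^{-(2n-1)}$ at every $r>0$ where $V'(r)$ exists gives a function defined a.e.\ on $(0,\infty)$, and the substitution into $V(r)=\int_0^r V'(s)\,ds$ is precisely (\ref{eqn:GD08_4}).

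For (\ref{eqn:GD08_5}), since $r\mapsto r^{-2n}$ is smooth and bounded on any $[r_a,r_b]\subset(0,\infty)$, the product $r\mapsto V(r)r^{-2n}$ is absolutely continuous there, so the fundamental theorem of calculus applies. Differentiating a.e.\ gives $\frac{d}{dr}\bigl(V(r)r^{-2n}\bigr)=V'(r)r^{-2n}-2nV(r)r^{-2n-1}=\frac{2n}{r}\bigl(\frac{A(r)}{2n}-\frac{V(r)}{r^{2n}}\bigr)$, using $A(r)/r=V'(r)r^{-2n}$; integrating this identity from $r_a$ to $r_b$ is (\ref{eqn:GD08_5}).

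The monotonicity of $A$ is where the second Bishop--Gromov inequality (\ref{eqn:GD08_3}) is used. Fix $0<r_a<r_b$ at which $V'$ exists. Dividing numerator and denominator of each side of (\ref{eqn:GD08_3}) by $\delta$ and letting $\delta\to 0^{+}$, the difference quotient $\delta^{-1}\bigl(V(r+\delta)-V(r)\bigr)\to V'(r)$ while $\delta^{-1}\bigl((r+\delta)^{2n}-r^{2n}\bigr)\to 2n\,r^{2n-1}$; hence (\ref{eqn:GD08_3}) passes to the limit as $\frac{V'(r_a)}{2n\,r_a^{2n-1}}\ge \frac{V'(r_b)}{2n\,r_b^{2n-1}}$, i.e.\ $A(r_a)\ge A(r_b)$. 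Since the set of $r$ where $V'$ exists has full measure in $(0,\infty)$, this says exactly that $A$ is non-increasing on its domain.

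I expect the only points requiring care to be the measure-theoretic bookkeeping rather than anything geometric: verifying that the limit in the difference quotient above is legitimate precisely at the (full-measure) set of differentiability points, so that the ``domain'' of $A$ is as claimed, and checking that the local Lipschitz bound for $V$ is genuinely uniform on compact $r$-intervals — both of which are delivered by the annulus volume estimate already in hand from Proposition~\ref{prn:HD19_1}. No difficulty beyond that proposition should appear.
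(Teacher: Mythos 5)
Your proof is correct and follows essentially the same route as the paper: local Lipschitzness of $V(r)=|B(x_0,r)|$ from the Bishop--Gromov annulus bound, hence absolute continuity and an a.e.\ derivative, $A(r):=V'(r)r^{1-2n}$, Newton--Leibniz for (\ref{eqn:GD08_4}) and (\ref{eqn:GD08_5}), and passing (\ref{eqn:GD08_3}) to the $\delta\to 0^{+}$ limit for monotonicity. The only cosmetic differences are that you spell out the derivative of $V(r)r^{-2n}$ and the difference-quotient limit a bit more explicitly than the paper does.
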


\begin{proof}
 From the approximation process in the proof of Proposition~\ref{prn:HD19_1}, we see that even for $x_0 \in \mathcal{S}$, the inequalities
 \begin{align*}
  0 \leq  \frac{d}{dr} |B(x_0, r)| \leq 2n \omega_{2n} r^{2n-1},  \quad
  -\frac{2n}{r}     \leq     \frac{d}{dr} \left\{\frac{|B(x_0, r)|}{\omega_{2n} r^{2n}} \right\} \leq 0,
 \end{align*}
 hold in the barrier sense.  In particular,  $ |B(x_0, r)|$ and $\omega_{2n}r^{-2n}|B(x_0,r)|$ are monotone, uniformly Lipschitz functions of $r$ on each compact sub-interval of $(0,\infty)$. 
 Therefore, they have bounded derivatives almost everywhere. By abuse of notation, we denote the derivatives of $|B(x_0,r)|$ by $|\partial B(x_0, r)|$. 
 Let $A(r)$ be  $r^{1-2n}|\partial B(x_0, r)|$.  Clearly, $A(r)$ is defined almost everywhere on $(0,\infty)$. 
 Intuitively, $A(r)$ is the area ratio of geodesic sphere.  By absolute continuity of $|B(x_0,r)|$ and $r^{-2n}|B(x_0,r)|$, (\ref{eqn:GD08_4}) and (\ref{eqn:GD08_5}) are nothing but the Newton-Leibniz formula.

 We now show the monotonicity of $A$. Actually, suppose $A(r_a)$ and $A(r_b)$ are well defined.  Then we have
\begin{align*}
  &A(r_a)=\lim_{\epsilon \to 0^{+}}   \frac{|B(x_0,r_a+\epsilon)|-|B(x_0,r_a)|}{r_a^{2n-1}\epsilon}=\lim_{\epsilon \to 0^{+}}   \frac{2n \left\{|B(x_0,r_a+\epsilon)|-|B(x_0,r_a)| \right\}}{(r_a+\epsilon)^{2n}-r_a^{2n}},\\
  &A(r_b)=\lim_{\epsilon \to 0^{+}}   \frac{|B(x_0,r_b+\epsilon)|-|B(x_0,r_b)|}{r_b^{2n-1}\epsilon}=\lim_{\epsilon \to 0^{+}}   \frac{2n \left\{|B(x_0,r_b+\epsilon)|-|B(x_0,r_b)| \right\}}{(r_b+\epsilon)^{2n}-r_b^{2n}}. 
\end{align*}
Following from (\ref{eqn:GD08_3}) and the above identities, we obtain $A(r_a) \geq A(r_b)$ by taking limits.  
\end{proof}

\begin{proposition}[\textbf{Segment inequality}]
  For every nonnegative function $f \in L_{loc}^1(X)$,  define
 \begin{align*}
   \mathcal{F}_f(x_1,x_2) \triangleq \inf_{\gamma} \int_0^{l} f(\gamma(s))ds,
 \end{align*}
 where the infimum is taken over all minimal geodesics $\gamma$, from $x_1$ to $x_2$ and $s$ denotes the arc length.
 Suppose $p \in X$,  $r>0$, $A_1$, $A_2$ are two subsets of $B(p,r)$.
 Then we have
 \begin{align}
  \int_{A_1 \times A_2} \mathcal{F}_{f}(x_1,x_2) \leq 4^n r (|A_1|+|A_2|) \int_{B(p, 3r)} f.
 \label{eqn:HD17_3}
 \end{align}
\label{prn:HD17_1}
\end{proposition}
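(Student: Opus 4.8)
The plan is to follow the classical proof of the Cheeger--Colding segment inequality, adapting it to the singular setting $X \in \widetilde{\mathscr{KS}}(n,\kappa)$ by working in the regular part $\mathcal{R}$ and using the weak convexity and high-codimension properties of Definition~\ref{dfn:SC27_1}. The key point that makes this adaptation possible is that for $x_1 \in \mathcal{R}$, the complement of the cut locus $\mathcal{C}_{x_1}$ has full measure, minimal geodesics from $x_1$ stay in $\mathcal{R}$, and in the polar coordinate system around $x_1$ the volume element evolution satisfies the same Jacobian comparison as in the smooth Riemannian case (exactly as used in the proof of Proposition~\ref{prn:HD19_1}). So the only genuinely new ingredient is bookkeeping about null sets.

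First I would reduce to the case where $A_1, A_2$ are subsets of $\mathcal{R}$: since $\mathcal{S}$ has measure zero, replacing $A_i$ by $A_i \cap \mathcal{R}$ changes neither side of (\ref{eqn:HD17_3}). Next, fix $x_1 \in A_1$ and estimate $\int_{A_2} \mathcal{F}_f(x_1,x_2)\, dx_2$. Working in geodesic polar coordinates centered at $x_1$ on $\mathcal{R} \setminus \mathcal{C}_{x_1}$, write the volume element as $\mathcal{A}(s,\theta)\, ds\, d\theta$. For a point $x_2 = (\ell,\theta)$ outside the cut locus, parametrize the minimal geodesic $\gamma_\theta$ from $x_1$ and bound $\mathcal{F}_f(x_1,x_2) \le \int_0^\ell f(\gamma_\theta(s))\, ds$. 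The standard trick is: for $s \le \ell/2$, the Jacobian comparison (which holds here because $\mathcal{R}$ is Ricci-flat, hence nonnegatively Ricci curved along each radial geodesic) gives $\mathcal{A}(s,\theta) \le (\ell/s)^{2n-1}\mathcal{A}(\ell,\theta) \le 2^{2n-1}\mathcal{A}(\ell,\theta)$ on the range $s \in [\ell/2,\ell]$; integrating and changing the order of integration in $(s,\theta)$ converts $\int_{A_2}\int_0^{\ell/2} f(\gamma_\theta(s))\,ds\,dx_2$ into something controlled by $2^{2n-1} r \int_{B(x_1,2r)} f$, since $A_2 \subset B(p,r) \subset B(x_1,2r)$ and all the geodesics lie in $B(x_1,2r) \subset B(p,3r)$. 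The contribution from $s \in [\ell/2,\ell]$ is handled symmetrically by instead centering at $x_2$; this is where the factor $4^n$ and the $(|A_1|+|A_2|)$ term arise. Then integrate over $x_1 \in A_1$ and symmetrize.

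The main obstacle I anticipate is making the Fubini/coarea manipulations rigorous on the singular space: one must check that the map $(\ell,\theta) \mapsto \gamma_\theta(s)$ interacts correctly with the Hausdorff measure on $\mathcal{R}$, that $f \in L^1_{loc}$ restricted to almost every radial geodesic is integrable (a Fubini statement in polar coordinates), and that the cut loci $\mathcal{C}_{x_1}$ and $\mathcal{C}_{x_2}$ being null does not obstruct the double integral — i.e., that for a.e.\ pair $(x_1,x_2) \in A_1 \times A_2$ there is a minimal geodesic lying entirely in $\mathcal{R}$. The last point follows from weak convexity (property (3) of Definition~\ref{dfn:SC27_1}): for fixed $x_1 \in \mathcal{R}$, all but a measure-zero set of $x_2$ are joined to $x_1$ by a unique minimal geodesic in $\mathcal{R}$, so by Fubini the bad set in $A_1 \times A_2$ is null. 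Everything else is the standard smooth computation, which I would cite in the form of, e.g., the Cheeger--Colding segment inequality, noting that the only curvature input is $\mathrm{Ric} \ge 0$ on $\mathcal{R}$, available here because $\mathcal{R}$ is Ricci-flat.

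Concretely, the write-up would proceed: (i) reduce to $A_i \subset \mathcal{R}$; (ii) fix $x_1 \in \mathcal{R}$, set up polar coordinates on $\mathcal{R}\setminus\mathcal{C}_{x_1}$, and record the Bishop--Gromov Jacobian inequality $\partial_s \log \mathcal{A}(s,\theta) \le (2n-1)/s$; (iii) split $\mathcal{F}_f(x_1,x_2) \le \int_0^{d(x_1,x_2)/2} f(\gamma) + \int_{d(x_1,x_2)/2}^{d(x_1,x_2)} f(\gamma)$ and bound the first integral after integrating over $A_2$, using the Jacobian bound and Fubini to get $\le 2^{2n-1} r \int_{B(p,3r)} f$; (iv) integrate over $x_1 \in A_1$, obtaining a bound $\le 2^{2n-1} r\, |A_1| \int_{B(p,3r)} f$ for the "first-half" part; (v) by the symmetric argument (centering at $x_2$) the "second-half" part is $\le 2^{2n-1} r\, |A_2| \int_{B(p,3r)} f$; (vi) add, noting $2 \cdot 2^{2n-1} = 4^n$, to conclude (\ref{eqn:HD17_3}).
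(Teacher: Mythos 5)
Your proposal follows the same route as the paper: reduce to pairs in $\mathcal{R}$ joined by a smooth minimal geodesic (using weak convexity of $\mathcal{R}$ and that cut loci and $\mathcal{S}$ are null), and then run the classical Cheeger--Colding argument on the regular part using only $\mathrm{Ric}\ge 0$, which the paper itself simply cites rather than rewrites. One small slip worth fixing: on $s\in[\ell/2,\ell]$ Bishop--Gromov gives $\mathcal{A}(\ell,\theta)\le(\ell/s)^{2n-1}\mathcal{A}(s,\theta)\le 2^{2n-1}\mathcal{A}(s,\theta)$ (not the reverse), and it is this \emph{far} half of the geodesic that is controlled when centering at $x_1$, with the near half $s\in[0,\ell/2]$ handled symmetrically by centering at $x_2$.
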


\begin{proof}
 Fix a smooth point $x_1$, then away from cut locus, every point can be connected to $x_1$ by a unique geodesic.
 Since $X \times X$ is equipped with the product measure, it is clear that away from a measure-zero set, every point
 $(x_1, x_2) \in X \times X$ has the property that $x_1$ and $x_2$ are smooth and  can be joined by
 a unique smooth shortest geodesic.
 Then the proof of (\ref{eqn:HD17_3})  is reduced to the same status as the Riemannian manifold case.
 The interested readers can find the details in the work of  Cheeger and Colding in~\cite{CCWarp}.
 \end{proof}

Due to the work of Cheeger and Colding (c.f. Remark 2.82 of~\cite{CCWarp}),
the  segment inequality implies the $(1,2)$-Poincar\'{e} inequality in general.
In our particular case, the Poincar\'{e} constant can be understood more precisely.

\begin{proposition}[\textbf{Bound of Poincar\'{e} constant}]
Suppose $f \in L_{loc}^1(X)$, $h$ is an upper gradient of $f$ in the sense of Cheeger(c.f.~Definition~\ref{dfn:HD18_1}).
Then for every geodesic ball $B(p,r) \subset X$ and real number $q \geq 1$, we have
\begin{align}
    \fint_{B(p,r)} |f-\underline{f}| \leq  2 \cdot 6^{2n} \cdot r \left(\fint_{B(p,3r)} h^q \right)^{\frac{1}{q}},
 \label{eqn:HD18_1}
 \end{align}
where $\fint$ means the average, $\underline{f}$ is the average of $f$ on $B(p,r)$.
In particular, there is a uniform  $(1,2)$-Poincar\`e constant on $X$.
\label{prn:HC29_2}
\end{proposition}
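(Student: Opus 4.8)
The plan is to follow the Cheeger--Colding scheme, deducing \eqref{eqn:HD18_1} from the segment inequality (Proposition~\ref{prn:HD17_1}) and the Bishop--Gromov volume comparison (Proposition~\ref{prn:HD19_1}), the whole argument being essentially a bookkeeping of constants. First I would pass from the mean oscillation to a double integral of differences: writing $\underline{f}=\fint_{B(p,r)} f$, Jensen's inequality gives $|f(x)-\underline f|\le \fint_{B(p,r)}|f(x)-f(y)|\,dy$ for a.e.\ $x$, and averaging in $x$ yields
\[
  \fint_{B(p,r)}|f-\underline f| \;\le\; \frac{1}{|B(p,r)|^2}\int_{B(p,r)\times B(p,r)} |f(x_1)-f(x_2)|.
\]
Since $h$ is an upper gradient of $f$ in the sense of Definition~\ref{dfn:HD18_1}, for every rectifiable curve joining $x_1$ to $x_2$ — in particular for every minimal geodesic — one has $|f(x_1)-f(x_2)|\le\int_\gamma h$, hence $|f(x_1)-f(x_2)|\le\mathcal{F}_h(x_1,x_2)$ for almost every pair $(x_1,x_2)$; the exceptional set, where the two points fail to be joined by a geodesic contained in $\mathcal{R}$, has product measure zero by weak convexity, exactly as in the proof of Proposition~\ref{prn:HD17_1}.

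Next I would apply the segment inequality with $f$ replaced by $h$ and $A_1=A_2=B(p,r)$, noting that any minimal geodesic between two points of $B(p,r)$ has length $\le 2r$ and therefore stays inside $B(p,3r)$. This gives
\[
  \int_{B(p,r)\times B(p,r)} \mathcal{F}_h(x_1,x_2) \;\le\; 4^n\,r\cdot 2|B(p,r)|\int_{B(p,3r)} h ,
\]
so that, combining with the first display,
\[
  \fint_{B(p,r)}|f-\underline f| \;\le\; 2\cdot 4^n\, r\,\frac{|B(p,3r)|}{|B(p,r)|}\,\fint_{B(p,3r)} h .
\]
To control the volume ratio I would invoke the Bishop--Gromov inequality \eqref{eqn:GD08_2} (applied at $p$ with radii $r$ and $3r$), which gives $|B(p,3r)|/|B(p,r)|\le 3^{2n}$; since $4^n\cdot 3^{2n}=6^{2n}$, this produces the constant $2\cdot 6^{2n}$ with the $L^1$-average of $h$. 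Finally, Jensen's (Hölder's) inequality gives $\fint_{B(p,3r)} h\le\big(\fint_{B(p,3r)} h^q\big)^{1/q}$ for every $q\ge1$, upgrading the $L^1$-average to the $L^q$-average and yielding \eqref{eqn:HD18_1}; the uniform $(1,2)$-Poincaré constant is then the special case $q=2$.

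I do not expect a genuine obstacle here. The only points requiring care are the measure-theoretic justification that $|f(x_1)-f(x_2)|\le\mathcal{F}_h(x_1,x_2)$ holds for a.e.\ pair despite the presence of $\mathcal{S}$, and the containment of minimal geodesics in $B(p,3r)$; both are either immediate from the triangle inequality or already absorbed into the proof of the segment inequality, so the argument reduces to the chain of estimates above.
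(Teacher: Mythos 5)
Your proof is correct and follows exactly the standard Cheeger--Colding route that the paper invokes (the paper's proof is simply a citation of~\cite{CCWarp}): mean oscillation bounded by the double integral, upper gradient bound yielding $|f(x_1)-f(x_2)|\le\mathcal{F}_h(x_1,x_2)$ for a.e.\ pair, segment inequality with $A_1=A_2=B(p,r)$, Bishop--Gromov to control $|B(p,3r)|/|B(p,r)|\le 3^{2n}$, and H\"older to pass from $L^1$ to $L^q$. The constant bookkeeping $2\cdot 4^n\cdot 3^{2n}=2\cdot 6^{2n}$ matches the statement, so this fills in the reference rather than departing from it.
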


\begin{proof}
 This is standard. For example, one can check~\cite{CCWarp} and references therein for the details. 
\end{proof}

\begin{proposition}[\textbf{Bound of Sobolev constant}]
There is a uniform isoperimetric constant on $X$.
Consequently, a uniform $L^2$-Sobolev inequality hold on $X$.
\label{prn:HC29_3}
\end{proposition}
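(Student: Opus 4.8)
The plan is to derive the uniform isoperimetric inequality from the structural package already established for $X \in \widetilde{\mathscr{KS}}(n,\kappa)$: the Bishop-Gromov volume comparison (Proposition~\ref{prn:HD19_1}), volume doubling with constant $\kappa^{-1}$ (Corollary~\ref{cly:HD20_1}), and the uniform $(1,2)$-Poincar\'e inequality (Proposition~\ref{prn:HC29_2}). These are exactly the hypotheses of the abstract Sobolev/isoperimetric machinery on metric measure spaces. Concretely, once one knows a space is a length space that is locally volume doubling and supports a local $(1,p)$-Poincar\'e inequality, one obtains a local Sobolev inequality; this is the content of the work of Hajłasz-Koskela and, in the Ricci-flow-relevant form, of Maheux-Saloff-Coste and of Cheeger-Colding. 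The point of the proposition is that all the constants in these implications depend only on the doubling constant and the Poincar\'e constant, both of which are here bounded purely in terms of $n$ and $\kappa$, so the resulting isoperimetric/Sobolev constant is uniform over the whole class $\widetilde{\mathscr{KS}}(n,\kappa)$.

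The key steps, in order, are as follows. First I would fix $X \in \widetilde{\mathscr{KS}}(n,\kappa)$ and record that, by Corollary~\ref{cly:HD20_1} and the ``area ratio'' monotonicity (Corollary~\ref{cly:GD08_1}), one has the reverse volume comparison $|B(x,r)| \geq \kappa\, \omega_{2n} r^{2n}$ for all $x$ and all $r>0$ (using $\mathrm{avr}(X)\geq\kappa$ together with monotonicity of $r^{-2n}|B(x,r)|$); combined with Bishop-Gromov this pins the volume growth between two powers of $r$ with constants depending only on $n,\kappa$. Second, invoke Proposition~\ref{prn:HC29_2} to get the $(1,2)$-Poincar\'e inequality with the explicit constant $2\cdot 6^{2n}$ and dilation factor $3$. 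Third, feed these two ingredients into the standard Sobolev-inequality-from-Poincar\'e argument — a Whitney-type covering / telescoping chain of balls argument, or the heat-kernel route of Saloff-Coste — to obtain, on each unit ball $B(p,1)$, an $L^2$-Sobolev inequality
\begin{align*}
  \left( \fint_{B(p,1)} |f|^{\frac{2n}{n-1}} \right)^{\frac{n-1}{2n}}
  \leq C(n,\kappa) \left( \fint_{B(p,1)} |\nabla f|^2 + |f|^2 \right)^{\frac12},
\end{align*}
and then dualize / co-area to recover the isoperimetric inequality $|\partial \Omega| \geq c(n,\kappa)\, \min\{|\Omega|, |X\setminus\Omega|\}^{\frac{2n-1}{2n}}$ for sets $\Omega$ of bounded size. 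Since all the preceding steps are strictly local (they only ever refer to balls and their triples) and the constants are uniform, the statement passes to the noncompact $X$ without change.

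The main obstacle — and the only nontrivial point beyond quoting the metric-measure-space literature — is ensuring that the abstract Poincar\'e-to-Sobolev implication genuinely applies on these singular spaces, i.e. that the measure-zero singular set $\mathcal{S}$ causes no trouble in the chaining/covering argument. This is precisely where weak convexity of $\mathcal{R}$ (property (3) of Definition~\ref{dfn:SC27_1}) and the high codimension $\dim_{\mathcal{M}}\mathcal{S} < 2n-3$ are used: they guarantee that $X$ is a genuine geodesic length space for which the segment inequality (Proposition~\ref{prn:HD17_1}) and hence the Poincar\'e inequality hold with no exceptional set, so the standard arguments go through verbatim. I expect the write-up to be short, citing Cheeger-Colding~\cite{CCWarp} and standard references (e.g. Hajłasz-Koskela, Saloff-Coste) for the implication, and noting only that the input constants are controlled by $n$ and $\kappa$ via the results just proved; in fact the proof in the paper is likely just a one-line pointer of the form ``This follows from Proposition~\ref{prn:HD19_1}, Corollary~\ref{cly:HD20_1}, Proposition~\ref{prn:HC29_2} and standard arguments, e.g.~\cite{CCWarp}.''
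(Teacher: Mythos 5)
Your route is genuinely different from the paper's, and the two are worth contrasting. The paper proves the isoperimetric inequality \emph{directly} by invoking Croke's integral-geometric argument (Santal\'o/visibility formula using the exponential map), noting that the three inputs Croke needs --- non-collapsing, Ricci $\geq 0$ on the regular part, and the weak convexity of $\mathcal{R}$ ensuring generic pairs of points are joined by smooth minimizing geodesics --- are all available on $X$; the $L^2$-Sobolev inequality is then deduced from the isoperimetric inequality in the classical direction via Schoen-Yau. Your route goes the other way: volume doubling plus Poincar\'e gives Sobolev by the metric-measure-space machinery (Haj\l{}asz-Koskela/Saloff-Coste). Both routes are legitimate; the paper's is more geometric and does not use the Poincar\'e inequality at all, whereas yours leans entirely on the Dirichlet-form package. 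The cost of the paper's choice is that it needs the weak convexity of $\mathcal{R}$ as a geometric hypothesis; yours would get away with only the measure-theoretic properties.

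There is, however, a logical slip in your chain that you should fix. You produce the local $L^2$-Sobolev inequality from the $(1,2)$-Poincar\'e inequality, and then claim to ``dualize / co-area to recover the isoperimetric inequality.'' That implication does not hold: the coarea formula converts the \emph{$L^1$}-Sobolev inequality into the isoperimetric inequality and back, but it does not convert an $L^2$-Sobolev inequality into an isoperimetric one --- the $L^1$ statement is strictly stronger than the $L^p$ statements for $p>1$ (the easy direction, isoperimetric $\Rightarrow L^p$-Sobolev via H\"older applied to $|f|^\gamma$, does not reverse). Since the proposition explicitly asserts the isoperimetric constant first and the $L^2$-Sobolev as a consequence, you must obtain the $L^1$-level statement. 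Fortunately the repair is immediate: Proposition~\ref{prn:HC29_2} as stated holds for every $q \geq 1$, so you already have the $(1,1)$-Poincar\'e inequality with a uniform constant. Feeding \emph{that}, together with volume doubling and the Euclidean lower volume bound $|B(x,r)| \geq \kappa\, \omega_{2n} r^{2n}$, into the Haj\l{}asz-Koskela machinery yields the $L^1$-Sobolev (equivalently isoperimetric) inequality, from which the $L^2$-Sobolev follows in the correct logical order. With that reordering your argument is sound, just longer than the paper's one-line reduction to Croke.
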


\begin{proof}
 Due to the uniform non-collapsing condition and the weak convexity and Ricci-flatness of $\mathcal{R}$, the argument of Croke (c.f.~\cite{Croke}) applies.
 So there is a uniform isoperimetric constant on $X$.
 The $L^2$-Sobolev constant follows from the isoperimetric constant(c.f.~\cite{SchYau}).
\end{proof}

Note that for each $X \in \widetilde{\mathscr{KS}}(n,\kappa)$, we lose smooth structure around $\mathcal{S}$.
In orbifold case, one can recover the smooth structure at a local ``covering" space.  For our $X$, it is not known whether one has such a property.    However,  the good news is that the smooth structure does not play an essential role in many aspects.  In the next subsection, we shall see that  the  analysis on $X$ is almost the same as that on manifold.

\subsection{Sobolev space, Dirichlet form and heat semigroup}

On a metric measure space, one can define
Sobolev space $H_{1,2}(X)$ following Cheeger(\cite{Cheeger99}), or $N^{1,2}(X)$
 following Shanmugalingam(\cite{ShanNew}).
However, these two definitions coincide whenever volume doubling property and uniform $(1,2)$-Poincar\'{e} inequality holds,
in light of  Theorem 4.10 of~\cite{ShanNew}, or the discussion on page 440 of~\cite{Cheeger99}.
In particular, for the space $(X,g,d\mu)$ which we are interested in, we have $N^{1,2}(X)=H_{1,2}(X)$ as Banach spaces.
For simplicity, we shall only use the notation $N^{1,2}(X)$ and follow the route of Cheeger.

\begin{definition}
Suppose $\Omega \subset X$. Let $f: \Omega \to [0,\infty]$ be an extended function.
 An extended real  function $h: \Omega \to [0,\infty]$ is called an upper gradient of $f$ on $\Omega$
if for every two points
$z_1,z_2 \in \Omega$ and all continuous rectifiable curves $c: [0,l] \to \Omega$, parameterized by arc length $s$, with $z_1,z_2$ end points, we have
\begin{align*}
   |f(z_1)-f(z_2)| \leq \int_0^l h(c(s))ds.
\end{align*}
\label{dfn:HD18_1}
\end{definition}

\begin{definition}
 The Sobolev space $N^{1,2}(X)$ is the subspace of $L^2(X)$ consisting of functions $f$ for which the norm
 \begin{align}
   \norm{f}{N^{1,2}}^2=\norm{f}{L^2}^2 + \inf_{f_i} \liminf_{i \to \infty}\norm{h_i}{L^2}^2 < \infty,
 \label{eqn:HD13_1}
\end{align}
where the limit infimum is taken over all upper gradients $h_i$ of the functions $f_i$, which satisfies $\norm{f_i -f}{L^2(X)} \to 0$.
\end{definition}

Note that the above $N^{1,2}$-norm  is equivalent to Cheeger's definition(c.f. equation (2.1) of ~\cite{Cheeger99}).
With this norm, we know $N^{1,2}(X)$
is complete(c.f. Theorem 2.7 of~\cite{Cheeger99}).
Clearly, it follows directly from the definition that zero function $f \in L^2(X)$ is the zero function in $N^{1,2}(X)$.
It is not surprising that $N^{1,2}(X)$ is the classical Sobolev space whenever $X$ is a smooth manifold.
This can be easily proved following the same argument of Theorem 4.5 of~\cite{ShanNew}, where the same conclusion was proved whenever
$X$ is a domain of Euclidean space.  In particular, as Banach spaces, we have
\begin{align}
   N^{1,2}(\mathcal{R}) \cong W^{1,2}(\mathcal{R}),
\end{align}
where $W^{1,2}(\mathcal{R})$ is the classical Sobolev space on the smooth manifold $\mathcal{R}$.

\begin{proposition}[\textbf{Smooth approximation}]
Suppose $\Omega$ is an open set of $X$,  $f \in N^{1,2}(\Omega)$.
Then there is a sequence of $f_i \in C^{\infty}(\Omega \backslash \mathcal{S}) \cap N^{1,2}(\Omega)$,
$\supp f_i \subset \Omega \backslash \mathcal{S}$ such that
 \begin{align}
     \lim_{i \to \infty}  \norm{f_i-f}{N^{1,2}(\Omega)}=0.
  \label{eqn:HD15_1}
  \end{align}
Moreover, if $f$ is also nonnegative, we can choose the approximation $f_i$ nonnegative.
If $\Omega$ is bounded,  then $\supp f_i$ is  a compact subset of $\overline{\Omega} \backslash \mathcal{S}$.
\label{prn:HD04_1}
\end{proposition}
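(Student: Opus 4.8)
Plan for the proof.

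The plan is to strip $f$ of its mass near the singular set and of its large values, so as to reach a bounded function that vanishes on a neighbourhood of $\mathcal{S}$; such a function lives on the genuine smooth manifold $\mathcal{R}$, where the classical Meyers--Serrin theorem ($H=W$) supplies a smooth approximation that can be extended back by zero. The singular structure of $X$ enters only through a capacity-type cutoff of $\mathcal{S}$, which works because $\mathcal{S}$ has Minkowski codimension bigger than $2$. Concretely, for small $r>0$ write $d_{\mathcal S}=d(\cdot,\mathcal S)$, $\mathcal S_r=\{d_{\mathcal S}<r\}$ and set $\phi_r=\min\!\big(1,\max(0,r^{-1}(d_{\mathcal S}-r))\big)$; then $0\le\phi_r\le1$, $\phi_r\equiv0$ on $\mathcal S_r$, $\phi_r\equiv1$ off $\mathcal S_{2r}$, and since $d_{\mathcal S}$ is $1$-Lipschitz, $r^{-1}\chi_{\mathcal S_{2r}\setminus\mathcal S_r}$ is an upper gradient of $\phi_r$ in the sense of Definition~\ref{dfn:HD18_1}. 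On any bounded set $U$ the hypothesis $\dim_{\mathcal M}\mathcal S<2n-3$ gives $|\mathcal S_{2r}\cap U|=o(r^3)$, hence
\[
   \int_U |\nabla\phi_r|^2 \;\le\; r^{-2}\,|\mathcal S_{2r}\cap U|\;=\;o(r),
\]
which tends to $0$ as $r\to0$.

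Fix $f\in N^{1,2}(\Omega)$ and $\varepsilon>0$. If $\supp f$ is unbounded, replace $f$ by $\chi_R f$, where $\chi_R\equiv1$ on $B(p,R)$, $\chi_R\equiv0$ off $B(p,2R)$ and $|\nabla\chi_R|\le R^{-1}$; then $\norm{\chi_R f-f}{N^{1,2}(\Omega)}\le\norm{(1-\chi_R)f}{L^2}+\norm{\chi_{B(p,2R)\setminus B(p,R)}\nabla f}{L^2}+R^{-1}\norm{f}{L^2}$, which tends to $0$ as $R\to\infty$, so we may assume $\supp f$ bounded. Next truncate the values: $f_M=\max(-M,\min(M,f))\to f$ in $N^{1,2}(\Omega)$, so fix $M$ with $\norm{f_M-f}{N^{1,2}(\Omega)}<\varepsilon/3$. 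Put $g=\phi_r f_M$, which vanishes on $\mathcal S_r$; since the minimal upper gradient of $g-f_M=(\phi_r-1)f_M$ is bounded a.e.\ by $\chi_{\mathcal S_{2r}}|\nabla f_M|+M r^{-1}\chi_{\mathcal S_{2r}}$,
\[
  \norm{g-f_M}{N^{1,2}(\Omega)}\;\le\;\norm{(1-\phi_r)f_M}{L^2}+\norm{\chi_{\mathcal S_{2r}}\nabla f_M}{L^2}+M r^{-1}\,|\mathcal S_{2r}\cap\supp f|^{1/2},
\]
and all three terms tend to $0$ as $r\to0$ by dominated convergence and the cutoff estimate above. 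Fix $r$ with $\norm{g-f_M}{N^{1,2}(\Omega)}<\varepsilon/3$.

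Set $\Omega'=\Omega\setminus\mathcal S=\Omega\cap\mathcal R$. Since $g$ vanishes on the neighbourhood $\mathcal S_r$ of $\mathcal S$, its restriction to $\Omega'$ lies in the classical Sobolev space $W^{1,2}(\Omega')$ of the smooth manifold $\mathcal R$ (recall $N^{1,2}(\mathcal R)\cong W^{1,2}(\mathcal R)$, and the same holds for open subsets of $\mathcal R$), and for any function supported in $\{d_{\mathcal S}\ge r/2\}$ the $N^{1,2}(\Omega)$-norm of its extension by zero equals its $W^{1,2}(\Omega')$-norm; if such a function is smooth on $\Omega'$, its extension lies in $C^\infty(\Omega\setminus\mathcal S)\cap N^{1,2}(\Omega)$ with support in $\Omega\setminus\mathcal S$. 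Applying the Meyers--Serrin theorem to $g|_{\Omega'}$ — a partition of unity on $\Omega'$ subordinate to a locally finite cover by charts, followed by mollification, arranged so as to respect $g|_{\Omega'}\equiv0$ near $\mathcal S$ — we obtain $g_j\in C^\infty(\Omega')$ with $\supp g_j\subset\{d_{\mathcal S}\ge r/2\}\cap\Omega'$ and $g_j\to g|_{\Omega'}$ in $W^{1,2}(\Omega')$; hence the extensions of $g_j$ converge to $g$ in $N^{1,2}(\Omega)$, and choosing $j$ with error $<\varepsilon/3$ and running $\varepsilon=1/i$ produces the required $f_i$. Every operation used — multiplication by $\chi_R,\phi_r\in[0,1]$, the lower truncation $\min(f,M)$ when $f\ge0$, mollification of a nonnegative function — preserves nonnegativity, so $f_i\ge0$ whenever $f\ge0$. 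Finally, if $\Omega$ is bounded then $\overline\Omega$ is compact ($X$ is complete and, by Corollary~\ref{cly:HD20_1}, doubling, hence proper), the cutoff $\chi_R$ is unnecessary, and $\supp f_i$ is closed in $\overline\Omega$ and disjoint from $\mathcal S_{r/2}$, hence a compact subset of $\overline\Omega\setminus\mathcal S$.

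The only step with genuine content is the cutoff: one needs $\mathcal S$ to be \emph{removable} for $N^{1,2}$, not merely of measure zero, and this is precisely where $\dim_{\mathcal M}\mathcal S<2n-3$ is used (any codimension strictly above $2$ suffices). Everything else is the standard Meyers--Serrin argument together with routine truncations in value and at spatial infinity; the only mild technical point is the Leibniz/truncation calculus for minimal upper gradients on $X$ — legitimate here because $\phi_r$ is Lipschitz and $f_M$ bounded — which I would record first, it being available once one has volume doubling and the $(1,2)$-Poincar\'e inequality of Proposition~\ref{prn:HC29_2}.
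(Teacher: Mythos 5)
Your proof is correct and reaches the same conclusion, but the organization is genuinely different from the paper's. The paper builds a dyadic Whitney-type decomposition of $\Omega\setminus\mathcal S$ via the sets $\Omega_i=\{d(\cdot,\mathcal S)>2^{-i}\}$, constructs a partition of unity $\{\eta_i\}$ subordinate to it, mollifies each piece $\eta_i f$, and sums; the cutoff away from $\mathcal S$ and the smoothing are done simultaneously, and the codimension hypothesis enters when bounding the tail $\norm{\sum_{i>k}\eta_i f}{W^{1,2}}$. You instead factor the argument into three independent reductions: truncate the support and the values, apply a single Lipschitz cutoff $\phi_r$ to kill $f$ near $\mathcal S$, and then invoke Meyers--Serrin on the bona fide smooth manifold $\Omega\setminus\mathcal S$. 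Both approaches hinge on exactly the same estimate -- $|\mathcal S_{2r}\cap U|=o(r^3)$ forces $\int|\nabla\phi_r|^2=o(r)\to0$ -- so neither is ``more general''; your observation that any Minkowski codimension strictly above $2$ suffices applies equally to the paper's version. What your route buys is a cleaner separation of concerns: once $f$ vanishes on a full neighbourhood of $\mathcal S$, the singular set is genuinely irrelevant and everything reduces to a classical density theorem on a smooth Riemannian manifold, whereas the paper re-proves that density theorem by hand inside the same partition-of-unity argument. What it costs is that you quote the Leibniz calculus for minimal upper gradients of Lipschitz-times-bounded products and the Meyers--Serrin theorem as black boxes; you flag this honestly and both are standard, so it is a matter of taste rather than a gap.

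One small slip: in the spatial truncation estimate, $(1-\chi_R)|\nabla f|$ is supported on $B(p,R)^c$, not on the annulus $B(p,2R)\setminus B(p,R)$, so the middle term should be $\norm{\chi_{B(p,R)^c}\nabla f}{L^2}$. Either expression tends to $0$ as $R\to\infty$ by dominated convergence, so the conclusion is unaffected, but it is worth correcting.
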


\begin{proof}
It suffices to show the proof for the case when  both $\diam(\Omega)$ and $\norm{f}{L^{\infty}}$ are bounded.
For otherwise, we can apply the bounded result for the truncated function $\min\{k, \max\{-k, f\}\}$ on $\Omega \cap B(x_0,k)$ for each $k$ and then use a
standard diagonal sequence argument, to reduce to the bounded case.

Since $\mathcal{S}$ has measure zero, $\Omega \backslash \mathcal{S}$ is a smooth manifold, we have
  \begin{align*}
    \norm{f_i-f}{N^{1,2}(\Omega)}=\norm{f_i-f}{N^{1,2}(\Omega \backslash \mathcal{S})}
    =\norm{f_i-f}{W^{1,2}(\Omega \backslash \mathcal{S})}.
  \end{align*}
  Therefore, (\ref{eqn:HD15_1})  is equivalent to
  \begin{align}
     \lim_{i \to \infty}  \norm{f_i-f}{W^{1,2}(\Omega \backslash \mathcal{S})}=0.
  \label{eqn:HD15_2}
  \end{align}
  However, this sequence of $f_i$ can be constructed following a standard method,
  as indicated by the proof of Theorem 2 of section 5.3.2 of Evans' book~\cite{Evans}.
  For the convenience of the readers, we include a detailed construction of $f_i$ here.

  For each positive integer $i$, define
   \begin{align*}
      \Omega_i \triangleq \{y \in \Omega| d(y,\mathcal{S})>2^{-i}\}, \quad
      V_i \triangleq \Omega_{i+3} \backslash \overline{\Omega}_{i+1}, \quad
      W_i \triangleq \Omega_{i+4} \backslash \overline{\Omega}_{i}.
   \end{align*}
   Also, choose open sets $V_0$ and $W_0$ such that
   \begin{align*}
   \overline{\Omega}_4 \cap \Omega \supset  V_0  \supset \overline{\Omega}_2 \cap \Omega,   \quad W_0 \supset \overline{\Omega}_6 \cap \Omega \supset V_0.
   \end{align*}
   Then we have
   \begin{align*}
    \Omega \backslash \mathcal{S}= \bigcup_{i=0}^{\infty} V_i=\bigcup_{i=0}^{\infty} W_i,  \quad \overline{V}_i \cap \Omega_i \subset W_i, \quad \forall \; i \geq 0.
   \end{align*}
   Clearly, by composing with $d(\cdot, \mathcal{S})$, we can choose Lipschitz cutoff functions $\zeta_i$ such that $\zeta_i=1$ on $V_i$
   and $\supp \zeta_i \subset W_i$,   $|\nabla \zeta_i|<2^{i+5}$.
   Set $ \eta_i \triangleq \frac{\zeta_i}{\sum_j \zeta_j}$. 
   Clearly, $\eta_i$ is a kind of partition of unity subordinate to the covering $\bigcup_{i} W_i$. In other words, we have
   \begin{align*}
      \begin{cases}
       0\leq \eta_i \leq 1, & \eta_i \in C_c^{1}(W_i),  \quad \forall \; i \geq 1, \\
       \sum_{i} \eta_i =1, & \textrm{on} \; \Omega \backslash \mathcal{S}.
      \end{cases}
   \end{align*}
   Note that $\eta_0$ is special. It is only in $C^{1}(W_0)$ in general.   However, it vanishes around $\partial W_0 \cap \Omega$.
   For each $i \geq 0$, note that
   $V_i \cap V_j =\emptyset$ if $|i-j| \geq 2$,  $W_i \cap W_j= \emptyset$ if $|i-j| \geq 4$.  Therefore, we have
   \begin{align*}
     0\leq \eta_i<1,  \quad  |\nabla \eta_i|<2^{i+10}.
   \end{align*}
   For each $i \geq 1$, we see that $\eta_i f \in W_0^{1,2}(W_i)$. Note that $W_i \subset \mathcal{R}$.
   Applying convolution with smooth mollifiers(c.f. Theorem 1 of section 5.3.1 of~\cite{Evans}),  we can choose a smooth function
   $h_i \in C_c^{\infty}(W_i)$ such that
   \begin{align*}
        \norm{h_i-\eta_i f}{W^{1,2}(\Omega \backslash \mathcal{S})}^2=\norm{h_i-\eta_i f}{W^{1,2}(W_i)}^2 < 9^{-i-1}\epsilon^2.
   \end{align*}
   For $i=0$, we can choose $h_0 \in C^{\infty}(W_0)$ which vanishes around $\partial W_0 \cap \Omega$ such that the above inequality hold.
   For each large $k$, we define $ H_k \triangleq \sum_{i=0}^k h_i$.  Then $H_k \in C^{\infty}(\cup_{i=0}^k W_i)
    \subset C^{\infty}(\Omega \backslash \mathcal{S})$.
    Moreover, we have estimate
    \begin{align}
       \norm{H_k-f}{W^{1,2}(\Omega \backslash \mathcal{S})}
       &=\norm{\sum_{i=0}^k h_i -\sum_{i=1}^{\infty}\eta_i f}{W^{1,2}(\Omega  \backslash \mathcal{S})}
        =\norm{\sum_{i=0}^k (h_i-\eta_i f) -\sum_{i=k+1}^{\infty}\eta_i f}{W^{1,2}(\Omega  \backslash \mathcal{S})} \notag\\
         &\leq  \sum_{i=0}^k \norm{h_i-\eta_i f}{W^{1,2}(\Omega  \backslash \mathcal{S})}
         + \norm{\sum_{i=k+1}^{\infty}\eta_i f}{W^{1,2}(\Omega  \backslash \mathcal{S})}.
    \label{eqn:HD03_2}
    \end{align}
  However, the first term on the right hand side of the above inequality can be bounded as follows.
  \begin{align}
    \sum_{i=0}^k \norm{h_i-\eta_i f}{W^{1,2}(\Omega \backslash \mathcal{S})}
    <\sum_{i=0}^{k} 3^{-i-1} \epsilon<\frac{1}{2}\epsilon.
  \label{eqn:HD03_3}
  \end{align}
  On the other hand, note that $\sum_{i=k+1}^{\infty} \eta_i =1$ on $\bigcup_{i=k+5}^{\infty}W_i$, and it is supported on
  $\bigcup_{i=k+1}^{\infty}W_i$. Thus, we have
  \begin{align}
    \norm{\sum_{i=k+1}^{\infty}\eta_i f}{W^{1,2}(\Omega  \backslash \mathcal{S})}^2
    &\leq \norm{\sum_{i=k+1}^{\infty}\eta_i f}{W^{1,2}(\bigcup_{i=k+5}^{\infty}W_i)}^2
    +\norm{\sum_{i=k+1}^{\infty}\eta_i f}{W^{1,2}(\bigcup_{i=k+1}^{k+4}W_i)}^2 \notag\\
    &=\norm{f}{W^{1,2}(\bigcup_{i=k+5}^{\infty}W_i)}^2
    +\norm{\sum_{i=k+1}^{k+8}\eta_i f}{W^{1,2}(\bigcup_{i=k+1}^{k+4}W_i)}^2.
  \label{eqn:HD03_1}
  \end{align}
  For simplicity of notation, define $\chi_k \triangleq \sum_{i=k+1}^{k+8} \eta_i$. Clearly,  $0\leq \chi_k \leq 1$.
  We have
  \begin{align*}
    \norm{\sum_{i=k+1}^{k+8}\eta_i f}{W^{1,2}(\bigcup_{i=k+1}^{k+4}W_i)}^2
    &=\norm{\chi_k f}{W^{1,2}(\bigcup_{i=k+1}^{k+4}W_i)}^2
      =\int_{\bigcup_{i=k+1}^{k+4}W_i} \chi_k^2 f^2 + \left| \left \langle \chi_k \nabla f + f \nabla \chi_k \right \rangle \right|^2\\
    &\leq \int_{\bigcup_{i=k+1}^{k+4}W_i}  f^2 + 2\chi_k^2  |\nabla f|^2 + 2f^2|\nabla \chi_k|^2\\
    &\leq \left( 2 \int_{\bigcup_{i=k+1}^{k+4}W_i}  f^2+|\nabla f|^2 \right)
      +2\norm{f}{L^{\infty}(\Omega)}^2 \int_{\bigcup_{i=k+1}^{k+4}W_i} |\nabla \chi_k|^2.
  \end{align*}
  It is easy to see that $|\nabla \chi_k|<2^{k+20}$ by estimate of $\eta_k$.
  By virtue of Minkowski codimension assumption, we obtain
  \begin{align*}
     \left|\bigcup_{i=k+1}^{k+4}W_i \right|< \left|\bigcup_{i=k+1}^{\infty}W_i \right|< C2^{-3k}
       <C \left(2^{-k+5} \right)^3,
  \end{align*}
  which in turn implies that
  \begin{align*}
     \norm{\sum_{i=k+1}^{k+8}\eta_i f}{W^{1,2}(\bigcup_{i=k+1}^{k+4}W_i)}^2
     \leq 2\norm{f}{W^{1,2}(\bigcup_{i=k+1}^{k+4}W_i)}^2
     + C\norm{f}{L^{\infty}(\Omega)}^2 2^{-k}.
  \end{align*}
  Plug the above inequality into (\ref{eqn:HD03_1}), we obtain
  \begin{align*}
      \norm{\sum_{i=k+1}^{\infty}\eta_i f}{W^{1,2}(\Omega)}^2
      \leq 2 \norm{f}{W^{1,2}(\bigcup_{i=k+1}^{\infty}W_i)}^2 + C\norm{f}{L^{\infty}(\Omega)}^2 2^{-k}.
  \end{align*}
  Together with (\ref{eqn:HD03_2}) and (\ref{eqn:HD03_3}), the above inequality implies that
  \begin{align*}
       \norm{H_k-f}{W^{1,2}(\Omega)} <\frac{1}{2}\epsilon
       +2 \norm{f}{W^{1,2}(\bigcup_{i=k+1}^{\infty}W_i)}^2 + C\norm{f}{L^{\infty}(\Omega)}^2 2^{-k}.
    \end{align*}
   Recall that $f \in W^{1,2}(\Omega  \backslash \mathcal{S})$, $|\bigcup_{i=k+1}^{\infty}W_i| \to 0$ as $k \to \infty$.  So we can choose $k$ large
   enough such that
   \begin{align*}
     \norm{H_k-f}{W^{1,2}(\Omega  \backslash \mathcal{S})} <\epsilon.
   \end{align*}
   Let $\epsilon=\frac{1}{i}$,  we denote the corresponding $H_k$ in the above inequality by $f_i$.
   Clearly, $f_i$ is supported on $\Omega \backslash \mathcal{S}$ and is smooth.
   Moreover, (\ref{eqn:HD15_2}), consequently (\ref{eqn:HD15_1}),  follows from the above inequality.

   It follows from the construction that $f_i \geq 0$ whenever $ f \geq 0$.  Also, from the construction,
    if $\Omega$ is bounded, $\supp f_i$ is a compact subset of $\overline{\Omega} \backslash \mathcal{S}$.
\end{proof}

\begin{corollary}[\textbf{Smooth functions with compact supports}]
   $C_c^{\infty}(\mathcal{R}) \cap N^{1,2}(X)$ is dense in $N^{1,2}(X)$.
\label{cly:HE10_1}
\end{corollary}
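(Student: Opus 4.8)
The plan is to derive this directly from the smooth approximation theorem, Proposition~\ref{prn:HD04_1}, by first cutting down the support of a given $f\in N^{1,2}(X)$ to a bounded set before smoothing. Indeed, Proposition~\ref{prn:HD04_1} applied with $\Omega=X$ already produces, for any $f\in N^{1,2}(X)$, smooth functions on $\mathcal{R}$ lying in $N^{1,2}(X)$, supported in $\mathcal{R}$, and converging to $f$ in $N^{1,2}(X)$; the one thing it does not give, when $\Omega$ is unbounded, is \emph{compactness} of the supports. So the whole task is to recover compact support, and the natural device is a spatial cutoff. (Note that $X$, being a complete length space which is volume doubling by Corollary~\ref{cly:HD20_1}, is proper, so closed bounded sets such as $\overline{B(x_0,2R)}$ are genuinely compact.)

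First I would fix $x_0\in X$ and, for each $R>0$, set $\chi_R=\psi_R\circ d(\cdot,x_0)$ with $\psi_R\colon[0,\infty)\to[0,1]$ equal to $1$ on $[0,R]$, vanishing on $[2R,\infty)$, and $|\psi_R'|\le 2/R$. Then $\chi_R$ is a bounded Lipschitz function with upper gradient $\le 2/R$, vanishing outside $B(x_0,2R)$. Since $N^{1,2}$ is stable under multiplication by bounded Lipschitz functions — on the smooth part $\mathcal{R}$ this is the classical Leibniz rule $|\nabla(\chi_R f)|\le \chi_R|\nabla f|+|f|\,|\nabla\chi_R|$ a.e., and $\mathcal{S}$ is null — one gets $\chi_R f\in N^{1,2}(X)$ with $\supp(\chi_R f)\subset \overline{B(x_0,2R)}$, and, by dominated convergence applied to $\norm{f-\chi_R f}{L^2}$, $\norm{(1-\chi_R)\nabla f}{L^2}$ and $\norm{f\,\nabla\chi_R}{L^2}\le \tfrac{2}{R}\norm{f}{L^2}$, that $\chi_R f\to f$ in $N^{1,2}(X)$ as $R\to\infty$. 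Hence it is enough to approximate each $\chi_R f$ in $N^{1,2}(X)$ by functions in $C_c^\infty(\mathcal{R})$.

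Next I would apply Proposition~\ref{prn:HD04_1} with the bounded open set $\Omega=B(x_0,2R+1)$ to the function $\chi_R f\in N^{1,2}(\Omega)$, whose support $\overline{B(x_0,2R)}$ is a compact subset of $\Omega$ staying at distance $\ge 1$ from $\partial\Omega$. This yields $f_j\in C^\infty(\Omega\setminus\mathcal{S})$ with $\supp f_j$ a compact subset of $\Omega$ disjoint from $\mathcal{S}$ — hence a compact subset of $\mathcal{R}$ — and $\norm{f_j-\chi_R f}{N^{1,2}(\Omega)}\to 0$. Extending each $f_j$ by zero, it belongs to $C_c^\infty(\mathcal{R})\cap N^{1,2}(X)$; and since $f_j$ and $\chi_R f$ both vanish in a neighborhood of $\partial\Omega$, their $N^{1,2}(X)$-distance equals their $N^{1,2}(\Omega)$-distance, so $f_j\to\chi_R f$ in $N^{1,2}(X)$. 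A diagonal argument over $R\to\infty$ then produces a sequence in $C_c^\infty(\mathcal{R})\cap N^{1,2}(X)$ converging to $f$, proving density.

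I do not expect a serious obstacle: the corollary is essentially a repackaging of Proposition~\ref{prn:HD04_1}, and the only genuinely new ingredient is the observation that one must cut off \emph{before} smoothing (a smooth cutoff that equals $1$ on a ball meeting $\mathcal{S}$ cannot have compact support in $\mathcal{R}$, so cutting off afterwards fails). The two points that deserve a moment's care are the Leibniz estimate for the upper gradient of the product $\chi_R f$ — carried out on $\mathcal{R}$, where ordinary calculus applies, using that $\mathcal{S}$ is $2n$-null — and the identification of the $N^{1,2}$-norm over $X$ with that over $\Omega$ for functions compactly supported inside $\Omega$, which follows from geodesic spheres having measure zero (the continuity of $r\mapsto|B(x_0,r)|$ noted in the proof of Corollary~\ref{cly:GD08_1}).
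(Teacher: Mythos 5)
Your argument is correct in overall structure, but it runs the two operations in the opposite order from the paper, and your final parenthetical explanation of why that order is necessary is wrong. The paper \emph{does} cut off after smoothing: it applies Proposition~\ref{prn:HD04_1} with $\Omega=X$ to replace $f$ by a smooth approximant whose (closed) support lies in the open set $\mathcal{R}$, hence which vanishes in a \emph{neighborhood} of $\mathcal{S}$; then it multiplies by a radial cutoff $\phi_k$ supported in $B(x_0,k+1)$, mollified on the compact set $\supp f\cap\overline{B(x_0,k+1)}\subset\mathcal{R}$ to make it smooth, and estimates the error. The cutoff $\phi_k$ itself is not required to have compact support in $\mathcal{R}$ — only the product $\phi_k f$ is, and that is automatic once $f$ already vanishes near $\mathcal{S}$. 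So ``cutting off afterwards fails'' is a misdiagnosis; both orders work. Your reverse order is a legitimate alternative, at the cost of a diagonal argument over $R$, and of one small technicality you should fill in: Proposition~\ref{prn:HD04_1} gives $\supp f_j$ compact in $\overline{\Omega}\backslash\mathcal{S}$, not a priori in $\Omega\backslash\mathcal{S}$, so to justify extending $f_j$ by zero you must either verify from the construction (using that $\chi_R f$ is supported a unit distance inside $\Omega$ and the mollification radii are small) that $f_j$ vanishes near $\partial\Omega$, or simply take $\Omega$ larger than $\supp(\chi_R f)$ by a fixed margin and cut $f_j$ down by a second smooth bump with compact support in $\Omega\backslash\mathcal{S}$. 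Once that is addressed the proof is complete.
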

\begin{proof}
 Fix $f \in N^{1,2}(X)$, without loss of generality, we may assume that $f \in C^{\infty}(\mathcal{R})$ and $f$ vanishes around $\mathcal{S}$,
 by Proposition~\ref{prn:HD04_1}.
 Fix $x_0 \in \mathcal{R}$ and let $r(x)=d(x,x_0)$.
 For each large $k$, let $\phi_k=\phi(r(x)-k)$, where $\phi$ is a smooth cutoff function on real axis such that $\phi \equiv 1$ on $(-\infty, 0)$ and
 $\phi \equiv 0$ on $(1,\infty)$.   Moreover, $|\phi'| \leq 2$.
 Note that $\supp f \cap \overline{B(x_0, k+1)}$ is a compact subset of $B(x_0,k+2) \backslash \mathcal{S}$.  By convolution with mollifier if necessary,
 we can assume $\phi_k$ is smooth and on $\supp f \cap \supp \phi_k$, $\supp \phi_k \subset B(x_0,k+2)$, $\phi_k \equiv 1$ on $B(x_0,k-1)$.
 Moreover,  $|\nabla \phi_k|<4$ and $0 \leq \phi_k<2$.
 Therefore, $\phi_k f \in C_c^{\infty}(\mathcal{R})$.  It is easy to calculate
 \begin{align*}
   \norm{f-\phi_k f}{N^{1,2}(X)}^2&=\int_X (1-\phi_k)^2f^2 d\mu + \int_X \left| \nabla \{(1-\phi_k) f\} \right|^2 d\mu\\
      &\leq \int_{X \backslash B(x_0,k-1)} (1-\phi_k)^2 f^2 du + 2\int_{X \backslash B(x_0, k-1)} \left\{(1-\phi_k)^2 |\nabla f|^2 + f^2|\nabla \phi_k|^2 \right\} d\mu\\
      &\leq \int_{X \backslash B(x_0,k-1)}  f^2 du + 2\int_{X \backslash B(x_0, k-1)} \left\{|\nabla f|^2 +16 f^2 \right\} d\mu\\
      &\leq 33 \int_{X \backslash B(x_0, k-1)} \left\{|\nabla f|^2 + f^2 \right\} d\mu.
 \end{align*}
 Clearly, the right hand side of the above inequality goes to $0$ as $k \to \infty$, since $f \in N^{1,2}(X)$.  Therefore, every $f \in  N^{1,2}(X)$ can
 be approximated by smooth functions with compact supports.
\end{proof}

In light of Proposition~\ref{prn:HD04_1}, we can define $N_0^{1,2}(\Omega)$ as the completion of all the functions in
$C_c^{\infty}(\Omega \backslash \mathcal{S}) \cap N^{1,2}(X)$, under the $N^{1,2}(\Omega)$-norm.
Note that a function $f$ in $N_0^{1,2}(\Omega)$ may not have compact support, with respect to $\Omega$.
However, $f|_{\partial \Omega}=0$, in the sense of traces.

\begin{proposition}[\textbf{Global continuous approximation}]
For each $f \in C_c(X)$, i.e., a continuous function with compact support, there exists a sequence of $f_i \in C_c(X) \cap N^{1,2}(X)$ such that
$\displaystyle \lim_{i \to \infty} \norm{f_i-f}{C(X)} \to 0$.
\label{prn:HD24_1}
\end{proposition}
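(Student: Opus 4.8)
The plan is to approximate $f$ uniformly by compactly supported Lipschitz functions and then to observe that every such function automatically lies in $N^{1,2}(X)$. Since a Lipschitz regularization of a function that changes sign need not preserve compact support, I would first reduce to the nonnegative case: write $f=f^{+}-f^{-}$ with $f^{\pm}\in C_c(X)$, $f^{\pm}\ge 0$ and $\supp f^{\pm}\subset K:=\supp f$; it then suffices to produce the approximating sequence for $f^{+}$ and $f^{-}$ separately and subtract.

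So assume $f\ge 0$ is continuous with compact support $K$. Since $f$ vanishes off $K$ and is uniformly continuous on $K$, it is uniformly continuous on all of $X$; let $\omega$ denote a modulus of continuity. For $\lambda>0$ set
\[
 f_\lambda(x):=\inf_{y\in X}\bigl\{f(y)+\lambda\, d(x,y)\bigr\}.
\]
Then $f_\lambda$ is $\lambda$-Lipschitz, $0\le f_\lambda\le f$, and $f_\lambda(x)=0$ whenever $x\notin K$ (take $y=x$, using $f(x)=0$ and $f\ge 0$), so $\supp f_\lambda\subset K$. A routine estimate with $\omega$ and $\norm{f}{C(X)}$ shows $f-f_\lambda\le \varepsilon(\lambda)$ with $\varepsilon(\lambda)\to 0$ as $\lambda\to\infty$: for $d(x,y)$ small the quantity $f(y)+\lambda d(x,y)\ge f(x)-\omega(d(x,y))$ is close to $f(x)$, while for $d(x,y)$ bounded below it dominates $f(x)$ once $\lambda$ is large. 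Hence $f_\lambda\to f$ uniformly on $X$.

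It remains to check $f_\lambda\in N^{1,2}(X)$. The pointwise upper Lipschitz constant $\mathrm{Lip}\, f_\lambda$ is an upper gradient of $f_\lambda$ in the sense of Definition~\ref{dfn:HD18_1}, it is bounded by $\lambda$, and it vanishes on $X\setminus K$ because $f_\lambda$ is identically zero on the open set $X\setminus K$. By Proposition~\ref{prn:HD19_1} every geodesic ball has finite volume, so the compact set $K$ has finite measure, whence $\mathrm{Lip}\, f_\lambda\in L^2(X)$; together with $f_\lambda\in L^{2}(X)$ this gives $f_\lambda\in N^{1,2}(X)$. Returning to the general case, $f_i:=(f^{+})_{\lambda_i}-(f^{-})_{\lambda_i}$ for a sequence $\lambda_i\to\infty$ then lies in $C_c(X)\cap N^{1,2}(X)$ and satisfies $\norm{f_i-f}{C(X)}\to 0$, as required.

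The only point requiring genuine care is the assertion that a compactly supported Lipschitz function belongs to $N^{1,2}(X)$, i.e.\ that its local slope is a bona fide upper gradient with $L^2$ control; this rests on the standard metric-space fact that the local Lipschitz constant is an upper gradient together with the finiteness of the measure on compact sets, both of which are available here. Everything else (the inf-convolution estimates) is elementary. An alternative route would smooth $f$ via the heat semigroup of the following subsection, but that is heavier machinery and out of order at this stage, so the Lipschitz-regularization argument is preferable.
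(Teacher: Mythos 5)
Your proof is correct, and it takes a genuinely different route from the paper's. The paper uses a ball-averaging mollifier, $f_\epsilon(x)=\int_X f(y)\,\psi_{\epsilon,x}(y)\,d\mu_y$ with $\psi_{\epsilon,x}=|B(x,\epsilon)|^{-1}\chi_{B(x,\epsilon)}$, and then establishes the Lipschitz bound on $f_\epsilon$ by a direct estimate that invokes Bishop--Gromov volume comparison and the $\kappa$-noncollapsing hypothesis to control how $|B(\cdot,\epsilon)|$ and the symmetric difference $B(x_1,\epsilon)\triangle B(x_2,\epsilon)$ vary with the center. Your inf-convolution $f_\lambda(x)=\inf_y\{f(y)+\lambda d(x,y)\}$ gets the Lipschitz bound for free and the uniform convergence from nothing but the modulus of continuity; the price is that $f_\lambda$ need not vanish off $\supp f$ unless $f\ge 0$, which you correctly handle by splitting $f=f^+-f^-$. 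So your argument is more elementary and uses less of the geometry of $X$ (only finiteness of volume of compact sets, not noncollapsing), while the paper's argument handles compact support automatically but needs more structure for the gradient bound. Both proofs rest at the end on the same (not explicitly spelled out in the paper) fact that a compactly supported Lipschitz function lies in $N^{1,2}(X)$ because its local Lipschitz constant is an upper gradient that is bounded and supported on a finite-measure set; you state this dependence clearly, which is a small improvement in exposition.
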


\begin{proof}
For each $\epsilon>0$, $x \in X$, define $\phi_{\epsilon,x}$ to be the character equation of the geodesic ball
 $B(x,\epsilon)$.
In other words, $\phi_{\epsilon, x} \equiv 1$ on $B(x,\epsilon)$ and $0$ on $X \backslash B(x,\epsilon)$.  Define
$\psi_{\epsilon,x}$ to be $\frac{\phi_{\epsilon,x}}{|B(x,\epsilon)|}$. Clearly, we have
 \begin{align}
    \int_X \psi_{\epsilon,x}(y)d\mu_y=1.
 \label{eqn:HD26_1}
 \end{align}
 Similar to Euclidean case, we define approximation functions as convolution of $f$ and $\psi_{\epsilon,\cdot}$ as follows:
 \begin{align*}
    f_{\epsilon}(x) \triangleq (\psi_{\epsilon}* f)(x)= \int_X f(y) \psi_{\epsilon, x}(y) d\mu_y.
 \end{align*}
 Fix $\epsilon>0$. Suppose $x_1,x_2$ are two points in $X$ with distance $\rho \in (0,\epsilon)$. Then we calculate
 \begin{align*}
    |f_{\epsilon}(x_1)-f_{\epsilon}(x_2)| &\leq \int_X |f|(y) \left| \psi_{\epsilon,x_1}(y)-\psi_{\epsilon,x_2}(y)\right| d\mu_y\\
       &\leq  \norm{f}{C(X)} \int_X  \left| \frac{\phi_{\epsilon,x_1}}{|B(x_1,\epsilon)|} - \frac{\phi_{\epsilon,x_2}}{|B(x_2,\epsilon)|}\right| d\mu_y\\
       &=\frac{\norm{f}{C(X)} }{|B(x_1,\epsilon)||B(x_2,\epsilon)|} \int_X \left\vert\phi_{\epsilon,x_1} |B(x_2,\epsilon)|
       -\phi_{\epsilon,x_2} |B(x_1,\epsilon)| \right\vert d\mu_y\\
   &\leq C(n,\kappa) \norm{f}{C(X)} \epsilon^{-4n}  
   \int_X \left\vert\phi_{\epsilon,x_1} |B(x_2,\epsilon)|
       -\phi_{\epsilon,x_2} |B(x_1,\epsilon)| \right\vert d\mu_y. 
 \end{align*}
 Notice that
 \begin{align*}
  &\quad \int_X \left\vert\phi_{\epsilon,x_1} |B(x_2,\epsilon)|
       -\phi_{\epsilon,x_2} |B(x_1,\epsilon)| \right\vert d\mu_y\\
  &=\int_X \left \vert \phi_{\epsilon,x_1}\left\{  |B(x_2,\epsilon)|-|B(x_1,\epsilon)| \right\}
      +|B(x_1,\epsilon)| \cdot (\phi_{\epsilon,x_1}-\phi_{\epsilon,x_2}) \right\vert d\mu_y\\
  &\leq  \int_X\phi_{\epsilon,x_1} \left \vert   |B(x_2,\epsilon)|-|B(x_1,\epsilon)| \right\vert d\mu_y
      +|B(x_1,\epsilon)| \int_X |\phi_{\epsilon,x_1} -\phi_{\epsilon,x_2}| d\mu_y \\
  &=|B(x_1,\epsilon)| \left\{  \left \vert   |B(x_2,\epsilon)|-|B(x_1,\epsilon)|  \right\vert
    +  \int_X |\phi_{\epsilon,x_1} -\phi_{\epsilon,x_2}| \right\}\\
  &=|B(x_1,\epsilon)| \left\{  \left \vert   |B(x_2,\epsilon)|-|B(x_1,\epsilon)|  \right\vert
    +|B(x_1,\epsilon) \backslash B(x_2,\epsilon)|+|B(x_2,\epsilon) \backslash B(x_1,\epsilon)|  \right\}\\
  &\leq 2  |B(x_1,\epsilon)| \left\{  
    |B(x_1,\epsilon) \backslash B(x_2,\epsilon)|+|B(x_2,\epsilon) \backslash B(x_1,\epsilon)|  \right\}.     
 \end{align*}
By Bishop-Gromov volume comparison and non-collapsing condition, we have
 \begin{align*}
   &|B(x_2,\epsilon) \backslash B(x_1,\epsilon)| 
   \leq  \left| B(x_1,\epsilon+\rho) \backslash B(x_1,\epsilon-\rho)\right| \leq  C(n,\kappa) \epsilon^{2n-1} \rho, \\
   &|B(x_1,\epsilon) \backslash B(x_2,\epsilon)| 
   \leq  \left| B(x_2,\epsilon+\rho) \backslash B(x_2,\epsilon-\rho)\right| \leq  C(n,\kappa) \epsilon^{2n-1} \rho. 
 \end{align*}
 Thus, for each $\rho \in (0,\epsilon)$, we have estimate
 \begin{align*}
    |f_{\epsilon}(x_1)-f_{\epsilon}(x_2)| 
     \leq \frac{C(n,\kappa) \norm{f}{C(X)}}{\epsilon} \rho,
 \end{align*}
 which means that the Lipschitz constant of $f_{\epsilon}$ is uniformly bounded, for each fixed $\epsilon$.
 In particular, $f_{\epsilon}$ locates in $C_c(X) \cap N^{1,2}(X)$.
 It follows from (\ref{eqn:HD26_1}) that
 \begin{align*}
     \left| f_{\epsilon}(x) -f(x) \right|&=\left| \int_X \{f(y)-f(x)\} \psi_{\epsilon, x}(y) d\mu_y \right| 
    \leq \int_{B(x,\epsilon)} |f(y)-f(x)| \psi_{\epsilon,x}(y) d\mu_y
    \leq \sup_{y \in B(x,\epsilon)} |f(y)-f(x)|. 
 \end{align*}
 Note that $f$ is uniformly continuous since $f$ is continuous and $\supp(f)$ is contained in a compact subset of $X$.
 Hence the right hand side of the above inequality converges to zero uniformly as $\epsilon \to 0$.
 Therefore, $\psi_{2^{-i}} *f$ is a sequence of functions in $C_c(X) \cap N^{1,2}(X)$ and converges to $f$ in $C(X)$-norm.
\end{proof}

For each open set $\Omega \subset X$,
there is a restriction map $\pi: N^{1,2}(\Omega) \to N^{1,2}(\Omega \backslash \mathcal{S})$ in the obvious way.
Note that $\Omega \backslash \mathcal{S}=\Omega \cap \mathcal{R}$ is a smooth manifold,
hence $N^{1,2}(\Omega \backslash \mathcal{S})=W^{1,2}(\Omega \backslash \mathcal{S})$.
In general, the map $\pi$ is not surjective. However, in our special setting, $\mathcal{S}$ has high codimension,
we have much more information.

\begin{proposition}(\textbf{Identity is isometry})
Suppose $\Omega$ is an open set of $X$, then
the restriction map $\pi: N^{1,2}(\Omega) \to N^{1,2}(\Omega \backslash \mathcal{S})=W^{1,2}(\Omega \backslash \mathcal{S})$ is an isomorphic  isometry.
\label{prn:HD13_1}
\end{proposition}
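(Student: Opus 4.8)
The plan is to prove the two halves of the isometry separately, with the easy half coming straight from the definitions and the real content being surjectivity (``removability of $\mathcal{S}$ for $W^{1,2}$''). Since $|\mathcal{S}|=0$, the spaces $L^{2}(\Omega)$ and $L^{2}(\Omega\backslash\mathcal{S})$ are canonically identified and $\pi$ is, at the level of $L^{2}$, the identity. For the easy direction I would use that every rectifiable curve in $\Omega\backslash\mathcal{S}$ is also a rectifiable curve in $\Omega$, so that whenever $h$ is an upper gradient of $u$ on $\Omega$, its restriction $h|_{\Omega\backslash\mathcal{S}}$ is an upper gradient of $u|_{\Omega\backslash\mathcal{S}}$ on $\Omega\backslash\mathcal{S}$, with $\norm{h}{L^{2}(\Omega)}=\norm{h}{L^{2}(\Omega\backslash\mathcal{S})}$. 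Restricting an admissible approximating pair $(f_{i},h_{i})$ for $f\in N^{1,2}(\Omega)$ then gives an admissible pair for $\pi f$, so $\pi$ is well defined with $\norm{\pi f}{N^{1,2}(\Omega\backslash\mathcal{S})}\le\norm{f}{N^{1,2}(\Omega)}$; injectivity is immediate since $\pi f=0$ forces $f=0$ a.e. on $\Omega\backslash\mathcal{S}$, hence on $\Omega$. Thus it remains only to show $\pi$ is onto and $\norm{\pi f}{N^{1,2}(\Omega\backslash\mathcal{S})}\ge\norm{f}{N^{1,2}(\Omega)}$, and both follow from the single assertion: every $g\in W^{1,2}(\Omega\backslash\mathcal{S})=N^{1,2}(\Omega\backslash\mathcal{S})$ lies, viewed as an $L^{2}(\Omega)$ function, in $N^{1,2}(\Omega)$ with $\norm{g}{N^{1,2}(\Omega)}\le\norm{g}{W^{1,2}(\Omega\backslash\mathcal{S})}$.

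To prove this assertion I would first reduce to the case of $g$ bounded with support in a bounded set $K$: truncating $g$ at height $k$ and multiplying by a distance cutoff $\phi_{R}$ centered at a fixed point produce functions converging to $g$ in $W^{1,2}(\Omega\backslash\mathcal{S})$ with non-increasing Dirichlet energy, and since these approximants and their differences are again bounded with bounded support, the general case follows from the reduced case together with completeness of $N^{1,2}(\Omega)$ and the easy inclusion above. For such a $g$, the hypothesis $\dim_{\mathcal{M}}\mathcal{S}<2n-3$ gives $|\{x:d(x,\mathcal{S})<r\}\cap K|=o(r^{3})$ as $r\to 0^{+}$ (cover $\mathcal{S}\cap K_{1}$ by finitely many unit balls and apply Definition~\ref{dfn:HE08_1}). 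Hence the globally Lipschitz cutoffs $\psi_{r}=\min\{1,\max\{0,2\,d(\cdot,\mathcal{S})/r-1\}\}$ vanish on a neighborhood of $\mathcal{S}$, equal $1$ where $d(\cdot,\mathcal{S})\ge r$, and satisfy $\int_{K}|\nabla\psi_{r}|^{2}\le 4r^{-2}|\{d(\cdot,\mathcal{S})<r\}\cap K|\to 0$; this vanishing of the $2$-capacity of $\mathcal{S}$ is the one place the codimension is used (codimension $>2$, with a logarithmic cutoff, would already suffice).

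Next I would choose locally Lipschitz $g_{i}$ on the smooth manifold $\Omega\backslash\mathcal{S}$ (e.g. smooth functions, truncated at height $\norm{g}{L^{\infty}}+1$), supported in a fixed bounded set, with $g_{i}\to g$ in $W^{1,2}(\Omega\backslash\mathcal{S})$, pick $r_{i}\to 0$, and set $\tilde g_{i}:=\psi_{r_{i}}g_{i}$, extended by $0$ across $\mathcal{S}$. Since $\psi_{r_{i}}g_{i}$ is locally Lipschitz on $\Omega\backslash\mathcal{S}$ and vanishes on a neighborhood of $\mathcal{S}$, the function $\tilde g_{i}$ is locally Lipschitz on all of $\Omega$, so $h_{i}:=\psi_{r_{i}}|\nabla g_{i}|+|g_{i}|\,|\nabla\psi_{r_{i}}|$ (extended by $0$) is a genuine upper gradient of $\tilde g_{i}$ on $\Omega$ in the sense of Definition~\ref{dfn:HD18_1}. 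Then $\tilde g_{i}\to g$ in $L^{2}(\Omega)$ and $\norm{h_{i}}{L^{2}(\Omega)}\le\norm{\nabla g_{i}}{L^{2}(\Omega\backslash\mathcal{S})}+\norm{g_{i}}{L^{\infty}}\norm{\nabla\psi_{r_{i}}}{L^{2}(K)}\to\norm{\nabla g}{L^{2}(\Omega\backslash\mathcal{S})}$, so $(\tilde g_{i},h_{i})$ is admissible for $g$ in $N^{1,2}(\Omega)$ and yields $g\in N^{1,2}(\Omega)$ with the required norm bound; as $\pi g=g$, surjectivity follows, and combining with the easy inclusion gives the isometric isomorphism.

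The hard part is precisely the removability step of the previous two paragraphs; everything else is bookkeeping with the two descriptions of the Sobolev space. A subsidiary point that should not be glossed over is that the paper's upper gradients are required to hold along \emph{every} rectifiable curve, not merely almost every curve, which is exactly why the approximants $\tilde g_{i}$ must be taken locally Lipschitz (so that $h_{i}$ is honestly an upper gradient) rather than merely of class $W^{1,2}$. One could alternatively graft the cutoff construction onto the proof of Proposition~\ref{prn:HD04_1}, but the direct argument above seems cleanest.
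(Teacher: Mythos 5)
Your argument is correct and is essentially the paper's: both reduce the hard (surjectivity / reverse-norm) direction to showing that locally Lipschitz functions supported away from $\mathcal{S}$ are dense in $W^{1,2}(\Omega\backslash\mathcal{S})$, exploiting the vanishing $2$-capacity of $\mathcal{S}$ that follows from $\dim_{\mathcal{M}}\mathcal{S}<2n-3$, and then concluding by the definition (or completeness) of $N^{1,2}$. The only packaging difference is that the paper outsources the density statement to Proposition~\ref{prn:HD04_1} (which uses a dyadic partition of unity indexed by $d(\cdot,\mathcal{S})$) and then passes to the limit in $N^{1,2}$, whereas you inline a single-cutoff construction and correctly flag the point the paper leaves implicit: since Definition~\ref{dfn:HD18_1} requires the upper-gradient inequality along \emph{every} rectifiable curve (including those crossing $\mathcal{S}$), the approximants $\tilde g_i=\psi_{r_i}g_i$ must be locally Lipschitz across $\mathcal{S}$, not merely Sobolev on $\Omega\backslash\mathcal{S}$.
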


\begin{proof}
 If we proved $\pi$ is an isomorphism, it is clear that $\pi$ is an isometry since $\mathcal{S}$ has measure zero.
 Thus, we only need to focus on the proof of isomorphism. For simplicity, we assume $\Omega=X$.
 Then $\Omega \backslash \mathcal{S}=X \backslash \mathcal{S}=\mathcal{R}$.

 \textit{Injectivity}: Suppose $\pi(f)=0$.  Then $\norm{f}{L^2(X)}=0$ since $\mathcal{S}$ has measure zero.
 Due to the fact $f \in N^{1,2}(X)$, $\norm{f}{L^2(X)}=0$ implies that $\norm{f}{N^{1,2}(X)}=0$.
 Therefore, $f$ is the zero element in $N^{1,2}(X)$.

 \textit{Surjectivity}:  For every $0 \neq \tilde{f} \in W^{1,2}(\mathcal{R})$,  from the proof of Proposition~\ref{prn:HD04_1},
 there is a sequence of smooth functions $\tilde{f}_i$ supported on $\mathcal{R}$ such that
 $\displaystyle \norm{\tilde{f}_i -\tilde{f}}{W^{1,2}(\mathcal{R})} \to 0$.
 In particular, $\tilde{f}_i$ is a Cauchy sequence in $W^{1,2}(\mathcal{R})$.
 Since $\displaystyle \norm{\tilde{f}_i -\tilde{f}}{N^{1,2}(X)}=\norm{\tilde{f}_i -\tilde{f}}{W^{1,2}(\mathcal{R})}$,
 it is clear that $\tilde{f}_i$ is a Cauchy sequence in $N^{1,2}(X)$.
 Therefore, there is a function $f \in N^{1,2}(X)$, as the limit of $\tilde{f}_i$, by completeness of $N^{1,2}(X)$.
 After we obtain $f$, it is clear that $\norm{f_i-f}{N^{1,2}(X)} \to 0$, which forces that
 \begin{align*}
   \norm{f_i-\pi(f)}{W^{1,2}(\mathcal{R})} \to 0.
 \end{align*}
 Therefore, $\pi(f)=\tilde{f}$.
\end{proof}

In light of Proposition~\ref{prn:HD13_1}, we can regard $N^{1,2}(X)$ as the same Banach space as $W^{1,2}(\mathcal{R})$.
However, $W^{1,2}(\mathcal{R})$ is a Hilbert space. This induces a natural inner product structure on $W^{1,2}(\mathcal{R})$ as follows:
\begin{align*}
 \langle\langle f_1, f_2\rangle\rangle
 = \int_{\mathcal{R}} \left\{ \pi(f_1) \pi(f_2) +\langle \nabla \pi(f_1), \nabla \pi(f_2)\rangle \right\}d\mu, \quad \forall \; f_1, f_2 \in N^{1,2}(X).
\end{align*}
For simplicity of notation, we shall not differentiate $f$ and $\pi(f)$. Under this convention, we have
\begin{align*}
 \langle\langle f_1, f_2\rangle\rangle
 = \int_{\mathcal{R}} \left\{ f_1f_2 +\langle \nabla f_1, \nabla f_2 \rangle \right\}d\mu, \quad \forall \; f_1, f_2 \in N^{1,2}(X).
\end{align*}
Therefore, $N^{1,2}(X)$ is isomorphic to $W^{1,2}(\mathcal{R})$ as a Hilbert space.  For every $f_1,f_2 \in N^{1,2}(X)$, we define
a nonnegative, symmetric, bilinear form $\mathscr{E}$ as follows
\begin{align}
    \mathscr{E}(f_1,f_2) \triangleq \int_{\mathcal{R}} \langle  \nabla f_1, \nabla f_2 \rangle d\mu.
\label{eqn:HD26_2}
\end{align}
We want to show that $\mathscr{E}$ is a Dirichlet form.
Actually, it is clear that $\norm{f}{N^{1,2}(X)}^2=\norm{f}{L^2(X)}^2+\mathscr{E}(f,f)$.
Since $N^{1,2}(X)$ is complete, we know that $\mathscr{E}$ is closed by definition.
On the other hand,  since $W^{1,2}(\mathcal{R})$ is dense
in $L^2(\mathcal{R})=L^2(X)$, $\mathcal{S}$ has measure zero,
it follows directly that $N^{1,2}(X)$ is dense in $L^2(X)$.
Furthermore, it is clear that
\begin{align}
    \mathscr{E}(\min\{1,\max\{0,f\}\}, \min\{1,\max\{0,f\}\}) \leq \mathscr{E}(f,f), \quad \forall \; f \in N^{1,2}(X).
\label{eqn:HE26_1}
\end{align}
Therefore,  $\mathscr{E}$ is a closed, nonnegative, symmetric, bilinear form on $N^{1,2}(X)$,
which is a dense subspace of $L^2(X)$, with unit contraction property (\ref{eqn:HE26_1}).
It follows from a standard definition (c.f.~\cite{FuOsTa} for definition of Dirichlet form) that $\mathscr{E}$ is a Dirichlet form.
Not surprisingly, this Dirichlet form $\mathscr{E}$ is much better than general Dirichlet form since
the underlying space $X$ has rich geometry.   In fact, suppose $u \in N_0^{1,2}(\Omega)$ for some open set $\Omega \subset X$,
it is clear that $u \equiv 0$ on $\Omega$ if and only if $ \mathscr{E}(u,u)=0$.
This means that $\mathscr{E}$ is irreducible by direct definition.
Also, for every constant $c$, we have $\mathscr{E}(u,v)=0$, whenever $v \equiv c$ in a neighborhood of the support set of $u$. 
This means that $\mathscr{E}$ is strongly local.
Furthermore,  it follows from Corollary~\ref{cly:HE10_1} that $N^{1,2}(X) \cap C_c(X)$ is dense in $N^{1,2}(X)$ with $N^{1,2}$-norm.
On the other hand, Proposition~\ref{prn:HD24_1} implies that $N^{1,2}(X) \cap C_c(X)$ is dense in $C_c(X)$ with uniform supreme norm.
Consequently, $N^{1,2}(X) \cap C_c(X)$ is a core of $\mathscr{E}$ and $\mathscr{E}$ is a regular Dirichlet form,  following from the definition verbatim.
Putting all the above information together, we obtain the following property.

\begin{proposition}[\textbf{Existence of excellent Dirichlet form}]
 On the Hilbert space $L^2(X)$, there exists a Dirichlet form $\mathscr{E}$ defined on a dense subspace
 $N^{1,2}(X) \subset L^2(X)$, by formula (\ref{eqn:HD26_2}).  Furthermore, the Dirichlet form $\mathscr{E}$
 is irreducible, strongly local and regular.
\label{prn:HD26_1}
\end{proposition}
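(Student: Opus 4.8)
The plan is to assemble the proposition from the structural facts accumulated just above its statement, verifying the axioms of a Dirichlet form and then the three additional properties one at a time. First I would transport the whole problem onto the smooth open manifold $\mathcal{R}$ using Proposition~\ref{prn:HD13_1}, which identifies $N^{1,2}(X)$ with $W^{1,2}(\mathcal{R})$ as Hilbert spaces; since $\mathcal{S}$ has $2n$-dimensional Hausdorff measure zero we also have $L^2(X)=L^2(\mathcal{R})$, and $W^{1,2}(\mathcal{R})$ (already containing $C_c^\infty(\mathcal{R})$) is dense in $L^2(\mathcal{R})$, so $N^{1,2}(X)$ is dense in $L^2(X)$. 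Symmetry, bilinearity and nonnegativity of $\mathscr{E}$ are immediate from formula~(\ref{eqn:HD26_2}). Closedness follows from the identity $\norm{f}{N^{1,2}(X)}^2=\norm{f}{L^2(X)}^2+\mathscr{E}(f,f)$ together with completeness of $N^{1,2}(X)$. The unit contraction (Markov) property is exactly~(\ref{eqn:HE26_1}), itself a pointwise consequence of $|\nabla \min\{1,\max\{0,f\}\}|\le|\nabla f|$ a.e.\ on $\mathcal{R}$. Hence $\mathscr{E}$ is a Dirichlet form on the dense domain $N^{1,2}(X)$.

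Next I would check the three refinements. For regularity it suffices to produce a core, and $N^{1,2}(X)\cap C_c(X)$ is one: Corollary~\ref{cly:HE10_1} gives that $C_c^\infty(\mathcal{R})\cap N^{1,2}(X)$, hence $N^{1,2}(X)\cap C_c(X)$, is dense in $N^{1,2}(X)$ in the $N^{1,2}$-norm, while Proposition~\ref{prn:HD24_1} gives that $N^{1,2}(X)\cap C_c(X)$ is dense in $C_c(X)$ in the uniform norm; together these are precisely the definition of a core, so $\mathscr{E}$ is regular. For strong locality, if $v$ is constant on a neighborhood $U$ of $\supp u$, then $\nabla v=0$ on $U$ and $\langle\nabla u,\nabla v\rangle=0$ a.e., so $\mathscr{E}(u,v)=0$. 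For irreducibility, $\mathscr{E}(u,u)=0$ forces $\nabla u=0$ a.e.\ on the connected manifold $\mathcal{R}$, so $u$ is locally, hence globally on $\mathcal{R}$, constant; if moreover $u\in N_0^{1,2}(\Omega)$ then the zero boundary trace encoded in $N_0^{1,2}$ forces that constant to vanish, i.e.\ $u\equiv 0$, which is one of the standard equivalent formulations of irreducibility.

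I do not expect a genuine obstacle here: the proof is bookkeeping, and essentially every line has already been written in the paragraph preceding the statement. The only point that deserves to be made explicit in the write-up is that all the computations ($\mathscr{E}(u,u)=0\Rightarrow u$ locally constant, the chain rule behind the unit contraction, the vanishing of $\langle\nabla u,\nabla v\rangle$) are carried out on the honest smooth manifold $\mathcal{R}$ after passing through the isometry of Proposition~\ref{prn:HD13_1}, and then pulled back; it is the high Minkowski codimension of $\mathcal{S}$---which supplied that isometry in the first place---that justifies ignoring the singular set throughout. So the actual write-up is just: (i) invoke Proposition~\ref{prn:HD13_1} to reduce to $W^{1,2}(\mathcal{R})$; (ii) verify symmetry, nonnegativity, closedness, denseness and the unit contraction; (iii) verify regularity via the core built from Corollary~\ref{cly:HE10_1} and Proposition~\ref{prn:HD24_1}; (iv) verify strong locality and irreducibility by the vanishing-gradient argument.
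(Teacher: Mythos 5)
Your proposal is correct and follows essentially the same route as the paper: identify $N^{1,2}(X)$ with $W^{1,2}(\mathcal{R})$ via Proposition~\ref{prn:HD13_1}, read off closedness, denseness and the unit contraction from the preceding paragraph, then verify regularity by the core $N^{1,2}(X)\cap C_c(X)$ using Corollary~\ref{cly:HE10_1} and Proposition~\ref{prn:HD24_1}, and check strong locality and irreducibility via vanishing-gradient arguments on $\mathcal{R}$. The paper bundles these checks into the prose immediately before the proposition rather than into a formal proof block, but the content and the order of reasoning are the same as yours.
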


With respect to the Dirichlet form $\mathscr{E}$, one can obtain much geometric and analytic information. A good reference is
the nice paper~\cite{KoZhou}, by P. Koskela and Y. Zhou.  We now focus on some elementary properties. 
Note that there is a unique generator (c.f. Chapter 1 of~\cite{FuOsTa}) of $\mathscr{E}$, which we denote by $\mathcal{L}$.
In other words, $\mathcal{L}$ is a self-adjoint and non-positive definite operator in $L^2(X)$ with domain $Dom(\mathcal{L})$
which is dense in $N^{1,2}(X)$ such that
\begin{align}
     \mathscr{E}(f,h)=-\int_X h \cdot \mathcal{L}f d\mu, \quad \forall \; f \in Dom(\mathcal{L}), \; h\in N^{1,2}(X).
\label{eqn:HD27_1}
\end{align}
Note that $C_c^{\infty}(\mathcal{R})$ is a dense subset of $Dom(\mathcal{L})$.  Suppose $f \in C_c^{\infty}(\mathcal{R})$, $h \in N^{1,2}(X)=N^{1,2}(\mathcal{R})$,
it is clear that
\begin{align*}
   \mathscr{E}(f,h)=\int_{\mathcal{R}} \langle \nabla f, \nabla h\rangle d\mu= -\int_X h \cdot \Delta f d\mu.
\end{align*}
Therefore, $\mathcal{L}$ is nothing but the extension of the classical Laplacian operator, with domain as the largest dense subset of $N^{1,2}(X)$ such that the integration by parts, i.e., equation (\ref{eqn:HD27_1}), holds.
For this reason, we shall just denote $\mathcal{L}$ by $\Delta$ in the future.

Based on the generator operator $\Delta$, there is an associated heat semigroup $\left(P_t\right)_{t \geq 0}= \left( e^{t\Delta}\right)_{t \geq 0}$, which acts on
$L^2(X)$ with the following properties(c.f. Chapter 1 of~\cite{FuOsTa}).
\begin{itemize}
\item Semi-group: $P_0=Id$; $P_t \circ P_s=P_{t+s}$, for every $t, s  \geq 0$.
\item Generator: $\displaystyle \lim_{t \to 0^+} \norm{\frac{1}{t}(P_t f-f) -\Delta f}{L^2(X)}=0$, for every $f \in L^2(X) \cap Dom(\Delta)$.
\item $L^2$-contractive: $\norm{P_t f}{L^2(X)}^2 \leq \norm{f}{L^2(X)}^2$, for every $f \in L^2(X)$, $t>0$.
\item Strong continuous: $\displaystyle \lim_{t \to 0^{+}}\norm{P_t f-f}{L^2(X)}=0$, for every $f \in L^2(X)$.
\item Markovian: $\norm{P_t f}{L^{\infty}(X)} \leq \norm{f}{L^{\infty}(X)}$, for every $f \in L^2(X)\cap L^{\infty}(X)$, $t>0$.
\item Heat solution: $\Delta P_t f= \frac{\partial}{\partial t} P_t f$, for every $f \in L^2(X)$ and $t>0$.
\end{itemize}
The above properties are well known in semigroup theory on Banach spaces(c.f. Section 7.4 of \cite{Evans}).  Actually, for every $f \in L^2(X)$,
one can also show that $P_t f$ is the unique square-integrable solution with initial value $f$(c.f. Proposition 1.2 of~\cite{Stu95} and references therein).
We call $(P_t)_{t \geq 0}$ as the heat semigroup as usual.
Associated with this heat semigroup, there exists a nonnegative kernel function, or fundamental solution, $p(t,x,y)$,
such that
\begin{align*}
  P_t(f)(y)=\int_X f(x) p(t,x,y)d\mu_x, \quad \forall \; f \in L^2(X), \; t>0.
\end{align*}
Moreover, $p$ satisfies the symmetry $p(t,x,y)=p(t,y,x)$.   Interested readers are referred to Proposition 2.3 and the discussion in Section 2.4(C) of~\cite{Stu95}
for more detailed information.
As usual, we call $p(t,x,y)$ as the heat kernel.

\begin{definition}
Suppose $u \in N_{loc}^{1,2}(\Omega)$. Define
\begin{align}
   \int_{\Omega} \varphi \Delta u \triangleq -\mathscr{E}(u,\varphi)
\label{eqn:HD29_2}
\end{align}
for every $\varphi \in N_c^{1,2}(\Omega)$, i.e., $\varphi \in N^{1,2}(\Omega)$ and has compact support set in $\Omega$.
Similarly, (\ref{eqn:HD29_2}) can be applied if $u \in N^{1,2}(\Omega)$ and $\varphi \in N_0^{1,2}(\Omega)$.
\label{dfn:HD29_1}
\end{definition}

Suppose $u \in N_{loc}^{1,2}(\Omega) \cap C^{2}(\Omega \backslash \mathcal{S})$, then $\Delta u|_{\Omega \backslash \mathcal{S}}$
is  a continuous function.
By taking value $\infty$ on $\mathcal{S}$, we can regard $\Delta u$ as an extended function on $\Omega$. Suppose $\Delta u \in L_{loc}^2(\Omega)$, then
for every smooth test function $\varphi_i \in N_c^{1,2}(\Omega)$, we have
\begin{align*}
    \int_{\Omega \backslash \mathcal{S}} \varphi_i \Delta u=-\int_{\Omega \backslash \mathcal{S}} \langle \nabla u, \nabla \varphi_i \rangle.
\end{align*}
Let $\varphi$ be the limit of $\varphi_i$ in $N^{1,2}(\Omega)$. Taking limit of the above equation shows that
\begin{align*}
  \int_{\Omega} \varphi \Delta u=-\int_{\Omega} \langle \nabla u, \nabla \varphi \rangle.
\end{align*}
Therefore, whenever $u \in N_{loc}^{1,2}(\Omega) \cap C^{2}(\Omega \backslash \mathcal{S})$ and the classical $\Delta u$ is in $L_{loc}^2(\Omega)$,
we see that the LHS and RHS of (\ref{eqn:HD29_2}) holds in the classical sense.  Similar argument applies 
if $u \in N^{1,2}(\Omega) \cap C^2(\Omega \backslash \mathcal{S})$, $\Delta u \in L^2(\Omega)$, $\varphi \in N_0^{1,2}(\Omega)$. 
Therefore, Definition~\ref{dfn:HD29_1} is justified. 

  Now we assume $u \in N_c^{1,2}(\Omega)$. 
  Then in the weak sense, for every $\varphi \in N_c^{1,2}(\Omega)$, we can define
  $\int_{\Omega} \varphi \Delta u$.  It is not hard to see that $\int_{\Omega} \varphi \Delta u$ makes sense even if $\varphi$ is in $N^{1,2}(\Omega)$ only. 
  In fact,  let $\chi$ be a cutoff function with value $1$ on $\Omega'$ and vanishes
  around $\partial \Omega$, where $\Omega'$ contains the support of $u$.
  By Definition~\ref{dfn:HD29_1}, we have
  \begin{align*}
     \int_{\Omega} (\chi \varphi) \Delta u= -\mathscr{E}(u, \chi \varphi)=-\int_{\Omega} \langle \nabla u, \nabla (\chi \varphi)\rangle
      =-\int_{\Omega'} \langle \nabla u, \nabla \varphi \rangle=-\int_{\Omega} \langle \nabla u, \nabla \varphi \rangle
      =-\mathscr{E}(u,\varphi).
  \end{align*}
  The above calculation does not depend on the particular choice of $\chi$.   Consequently,  we can define
  $\int_{\Omega} \varphi \Delta u$ as $-\mathscr{E}(u,\varphi)$.
  Summarizing the above discussion, we have the following property.

\begin{proposition}[\textbf{Integration by parts}]
  Suppose $\Omega$ is a domain in $X$, $f_1 \in N_c^{1,2}(\Omega)$, $f_2 \in N^{1,2}(\Omega)$.
  Then we have
  \begin{align}
       \int_{\Omega} f_2 \Delta f_1 d\mu = -\int_{\Omega} \langle \nabla f_1, \nabla f_2 \rangle d\mu=\int_{\Omega} f_1 \Delta f_2 d\mu.
  \label{eqn:HD13_2}
  \end{align}
  Furthermore, if $f_2 \in N^{1,2}(\Omega) \cap C^2(\Omega \backslash \mathcal{S})$ and $\Delta f_2 \in L^2(\Omega \backslash \mathcal{S})$
  as a classical function,     then we can understand the integral
  $\int_{\Omega} f_1 \Delta f_2 d\mu=\int_{\Omega \backslash \mathcal{S}} f_1 \Delta f_2 d\mu$ in the classical sense.
  Similarly, if $f_1 \in N_c^{1,2}(\Omega) \cap C^2(\Omega \backslash \mathcal{S})$,
  $\Delta f_1 \in L^2(\Omega \backslash \mathcal{S})$ as a classical function,  then
  $\int_{\Omega} f_2 \Delta f_1 d\mu=\int_{\Omega \backslash \mathcal{S}} f_2 \Delta f_1 d\mu$ can be understood in
  the classical sense.  If both $f_1$ and $f_2$ locate in $N_0^{1,2}(\Omega)$, then (\ref{eqn:HD13_2}) also holds. 
\label{prn:HD13_2}
\end{proposition}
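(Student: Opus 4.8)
The plan is to obtain all three identities in (\ref{eqn:HD13_2}) as a formal consequence of the symmetry of the bilinear form $\mathscr{E}$ in (\ref{eqn:HD26_2}), together with Definition~\ref{dfn:HD29_1} and the cutoff extension of it discussed immediately above the statement. First note that, since $\mathcal{S}$ has measure zero, the middle quantity $-\int_{\Omega}\langle\nabla f_1,\nabla f_2\rangle d\mu$ equals $-\int_{\Omega\cap\mathcal{R}}\langle\nabla f_1,\nabla f_2\rangle d\mu=-\mathscr{E}(f_1,f_2)=-\mathscr{E}(f_2,f_1)$. For the left equality, I would invoke the remark preceding the proposition with $u=f_1\in N_c^{1,2}(\Omega)$ and $\varphi=f_2\in N^{1,2}(\Omega)$: the cutoff argument there shows $\int_{\Omega}f_2\Delta f_1\,d\mu$ is well defined and equals $-\mathscr{E}(f_1,f_2)$. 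For the right equality, apply Definition~\ref{dfn:HD29_1} directly with $u=f_2\in N^{1,2}(\Omega)\subset N_{loc}^{1,2}(\Omega)$ and $\varphi=f_1\in N_c^{1,2}(\Omega)$, giving $\int_{\Omega}f_1\Delta f_2\,d\mu=-\mathscr{E}(f_2,f_1)$. The symmetry of $\mathscr{E}$ then closes the chain. The same argument, but now using the second clause of Definition~\ref{dfn:HD29_1} (valid since $N_0^{1,2}(\Omega)\subset N^{1,2}(\Omega)$ and each of $f_1,f_2$ serves once as the $N_0^{1,2}$ test function), proves (\ref{eqn:HD13_2}) in the case $f_1,f_2\in N_0^{1,2}(\Omega)$.

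Next I would establish the classical-sense refinement, say the case $f_2\in N^{1,2}(\Omega)\cap C^2(\Omega\backslash\mathcal{S})$ with $\Delta f_2\in L^2(\Omega\backslash\mathcal{S})$; the case with $f_1$ and $f_2$ interchanged is verbatim the same. Let $K=\supp f_1\subset\subset\Omega$. Combining the compact support of $f_1$ with a preliminary Lipschitz cutoff equal to $1$ near $K$ and then applying Proposition~\ref{prn:HD04_1}, produce $\varphi_i\in C_c^{\infty}(\Omega\backslash\mathcal{S})$ all supported in one fixed compact subset $K'\subset\Omega$, each vanishing near $\mathcal{S}$, with $\norm{\varphi_i-f_1}{N^{1,2}(\Omega)}\to 0$. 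On the open Riemannian (indeed K\"ahler) manifold $\Omega\backslash\mathcal{S}$ each $\varphi_i$ has compact support, so the classical divergence theorem gives, with no boundary term, $\int_{\Omega\backslash\mathcal{S}}\varphi_i\,\Delta f_2=-\int_{\Omega\backslash\mathcal{S}}\langle\nabla f_2,\nabla\varphi_i\rangle$. Letting $i\to\infty$, the right-hand side tends to $-\mathscr{E}(f_2,f_1)$ because $\varphi_i\to f_1$ in $N^{1,2}(\Omega)$, while the left-hand side tends to $\int_{\Omega\backslash\mathcal{S}}f_1\,\Delta f_2$ because $\Delta f_2\in L^2(K'\backslash\mathcal{S})$ and $\varphi_i\to f_1$ in $L^2$. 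Hence the weakly defined integral $\int_{\Omega}f_1\Delta f_2\,d\mu=-\mathscr{E}(f_2,f_1)$ coincides with the classical integral $\int_{\Omega\backslash\mathcal{S}}f_1\Delta f_2\,d\mu$, as asserted.

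The proposition is in essence a bookkeeping corollary of the Dirichlet-form formalism set up earlier: the genuine analytic content, namely that $\mathcal{S}$ carries no capacity so that functions smooth and vanishing near $\mathcal{S}$ are $N^{1,2}$-dense and integration by parts acquires no term along $\mathcal{S}$, was already packaged into Proposition~\ref{prn:HD04_1}, where the Minkowski-codimension bound $\dim_{\mathcal{M}}\mathcal{S}<2n-3$ does the real work. The one point requiring some care is the classical-sense refinement: one must arrange the smooth approximants of $f_1$ to lie in a single compact subset of $\Omega$ and to vanish near $\mathcal{S}$ (combining the compactness of $\supp f_1$ with a cutoff), and one must genuinely use the hypothesis $\Delta f_2\in L^2$ — not merely $L^1_{loc}$ — to justify passing the limit inside $\int\varphi_i\,\Delta f_2$. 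I expect this to be the only step that is not entirely routine.
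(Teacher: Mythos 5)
Your proposal is correct and follows essentially the same route as the paper: the paper's ``proof'' consists of the discussion immediately preceding the proposition, which first verifies the classical interpretation by approximating with compactly supported functions smooth away from $\mathcal{S}$ (via Proposition~\ref{prn:HD04_1}) and then establishes the cutoff extension for $u\in N_c^{1,2}(\Omega)$, after which the three-way identity is, as you say, just the symmetry of $\mathscr{E}$ applied to Definition~\ref{dfn:HD29_1}. The one small imprecision in your write-up is that after multiplying by a Lipschitz cutoff $\eta$, the products $\eta\tilde\varphi_i$ are only Lipschitz rather than $C_c^\infty$, but this is harmless since Lipschitz compactly supported test functions suffice for the classical divergence theorem against $f_2\in C^2(\Omega\backslash\mathcal{S})$ (or one can smooth $\eta$ on $\mathcal{R}$ near $\supp\tilde\varphi_i$, which sits at positive distance from $\mathcal{S}$ by the construction in Proposition~\ref{prn:HD04_1}).
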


\begin{definition}
  Suppose $u \in N_{loc}^{1,2}(\Omega)$, $f \in L_{loc}^2(\Omega)$, we say $\Delta u \geq f$ in the weak sense whenever
  \begin{align}
   \int_{\Omega}(-\Delta u +f)\varphi= \mathscr{E}(u,\varphi) + \int_{\Omega} f\varphi \leq 0
  \label{eqn:HD29_1}
  \end{align}
  for every nonnegative test function $\varphi \in N_c^{1,2}(\Omega)$.  We call $u$ subharmonic if $\Delta u \geq 0$ in the weak sense.
  We call $u$ superharmonic if $-u$ is subharmonic. We call $u$ harmonic if $u$ is both subharmonic and superharmonic.
\label{dfn:HD29_2}
\end{definition}

 Due to Proposition~\ref{prn:HD04_1}, for a $u \in  N_{loc}^{1,2}(\Omega)$, in order to check (\ref{eqn:HD29_1}) for all $\varphi \in N_c^{1,2}(\Omega)$,
  it suffices to check all smooth nonnegative test functions with supports in $\Omega \backslash \mathcal{S}$.
It is important to notice that the restriction of $\Delta$ on $\mathcal{R}$ is the classical Laplacian on Riemannian manifold.
In fact,  if a function $u$ is harmonic in the above sense, then  $u |_{\mathcal{R}}$ is harmonic function in the distribution sense.
By standard improving regularity theory of elliptic equations, we know our $u$ is smooth and $\Delta u=0$ in the classical sense.

 Similarly, one can follow the standard route to define heat solution (sub solution, super solution) for the heat operator
 $\square= \left( \frac{\partial}{\partial t} -\Delta \right)$ in the weak sense.  We leave these details to interested readers.
 It is quite clear that a weak heat solution is a smooth function when restricted on $\mathcal{R} \times (0,T]$, by
 standard improving regularity theory of heat equations(c.f. Chapter 7 of~\cite{Evans}).

\subsection{Harmonic functions and heat flow solutions on model space}

Suppose $K$ is a compact subset of $\Omega \backslash \mathcal{S}$,  it is clear that $K$ is also a compact subset of $\Omega$.
However, the reverse is not true.  If $K$ is a compact subset of $\Omega$, then $K\backslash \mathcal{S}$ may not be a compact
subset of $\Omega \backslash \mathcal{S}$.
For this reason, we see that  $N_{loc}^{1,2}(\Omega) \subset N_{loc}^{1,2}(\Omega \backslash \mathcal{S})$ and not equal if $\mathcal{S} \neq \emptyset$, even if
$\mathcal{S}$ has very high codimension.
However, if we restrict our attention only on bounded subharmonic functions, then the above difference will vanish.

\begin{proposition}[\textbf{Extension of bounded subharmonic functions}]
  Suppose $\Omega$ is a bounded open domain in $X$, $u$ is a bounded  subharmonic function on $\Omega \backslash \mathcal{S}$.
  Then $u \in N_{loc}^{1,2}(\Omega)$ and it is  subharmonic on $\Omega$.
\label{prn:HD16_2}
\end{proposition}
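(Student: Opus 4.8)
The plan is to reduce the statement to a single energy estimate showing that the Dirichlet energy of $u$ does not concentrate on $\mathcal{S}$; once $u\in N_{loc}^{1,2}(\Omega)$ is established, subharmonicity on all of $\Omega$ will follow almost for free from the density remark after Definition~\ref{dfn:HD29_2}. The only input special to this setting is the high codimension of $\mathcal{S}$, which will enter through the existence of good cutoff functions around $\mathcal{S}$. Fix $\Omega'\Subset\Omega''\Subset\Omega$. Since $\mathcal{S}\cap\overline{\Omega''}$ is covered by finitely many unit geodesic balls and $\dim_{\mathcal{M}}\mathcal{S}<2n-3$, its $r$-neighborhood obeys $|(\mathcal{S})_r\cap\overline{\Omega''}|=o(r^3)$. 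Composing a fixed Lipschitz profile with the $1$-Lipschitz function $d(\cdot,\mathcal{S})$ (as with the functions $\zeta_i$ in the proof of Proposition~\ref{prn:HD04_1}), I obtain Lipschitz cutoffs $\phi_\epsilon$ with $0\le\phi_\epsilon\le1$, $\phi_\epsilon\equiv0$ on $(\mathcal{S})_\epsilon$, $\phi_\epsilon\equiv1$ off $(\mathcal{S})_{2\epsilon}$, and $|\nabla\phi_\epsilon|\le 2\epsilon^{-1}$ supported in $(\mathcal{S})_{2\epsilon}\setminus(\mathcal{S})_\epsilon$. Hence $\int_{\Omega''}|\nabla\phi_\epsilon|^2\le 4\epsilon^{-2}|(\mathcal{S})_{2\epsilon}\cap\overline{\Omega''}|=o(\epsilon)\to0$ as $\epsilon\to0$, and $\phi_\epsilon\to1$ almost everywhere.

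\emph{Caccioppoli estimate up to $\mathcal{S}$.} Choose $\chi\in C_c^\infty(\Omega'')$ with $0\le\chi\le1$ and $\chi\equiv1$ on $\overline{\Omega'}$, set $M:=\sup u<\infty$ (finite because $u$ is bounded) and $\eta:=\chi\phi_\epsilon$; then $\eta$ is Lipschitz with compact support in $\Omega''\setminus\mathcal{S}$. Because $u$ is bounded and subharmonic on the smooth open manifold $\Omega\setminus\mathcal{S}$, the function $\varphi:=\eta^2(M-u)$ is a nonnegative element of $N_c^{1,2}(\Omega\setminus\mathcal{S})$, so testing $\Delta u\ge0$ against it in the sense of Definition~\ref{dfn:HD29_2} and absorbing the cross term by Cauchy--Schwarz gives
\begin{align*}
  \int\eta^2|\nabla u|^2 \le 2\int\eta(M-u)\langle\nabla u,\nabla\eta\rangle \le 4\big(\sup\nolimits_{\supp\chi}u-\inf\nolimits_{\supp\chi}u\big)^2\int|\nabla\eta|^2.
\end{align*}
Since $|\nabla\eta|^2\le 2|\nabla\chi|^2+2|\nabla\phi_\epsilon|^2$ and $\eta=\phi_\epsilon$ on $\Omega'$, the right-hand side stays bounded as $\epsilon\to0$, so letting $\epsilon\to0$ and applying Fatou yields $\int_{\Omega'\setminus\mathcal{S}}|\nabla u|^2<\infty$.

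\emph{Conclusion.} Combined with $\|u\|_{L^2(\Omega'\setminus\mathcal{S})}\le\|u\|_{L^\infty}|\Omega'|^{1/2}<\infty$, this shows $u|_{\Omega'\setminus\mathcal{S}}\in W^{1,2}(\Omega'\setminus\mathcal{S})$, hence $u\in N^{1,2}(\Omega')$ by Proposition~\ref{prn:HD13_1}; as $\Omega'\Subset\Omega$ was arbitrary, $u\in N_{loc}^{1,2}(\Omega)$. For subharmonicity on $\Omega$: by the remark following Definition~\ref{dfn:HD29_2} (which relies on Proposition~\ref{prn:HD04_1}), once $u\in N_{loc}^{1,2}(\Omega)$ it suffices to verify $\mathscr{E}(u,\varphi)=\int_{\mathcal{R}}\langle\nabla u,\nabla\varphi\rangle\le0$ for all nonnegative $\varphi\in C_c^\infty(\Omega\setminus\mathcal{S})$, and this is exactly the hypothesis that $u$ is subharmonic on $\Omega\setminus\mathcal{S}$. (Alternatively, avoiding that remark, test $\Delta u\ge0$ with $\varphi\phi_\epsilon$ for bounded nonnegative $\varphi\in N_c^{1,2}(\Omega)$ — reducing to bounded $\varphi$ by truncation — and let $\epsilon\to0$: the cross term is $O(\|\nabla\phi_\epsilon\|_{L^2})\to0$ while the main term tends to $\mathscr{E}(u,\varphi)$ by dominated convergence.)

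\emph{Main obstacle.} The crux is the Caccioppoli estimate of the second paragraph, i.e. controlling the Dirichlet energy of $u$ as one approaches $\mathcal{S}$. This succeeds only through the interplay between the boundedness of $u$ (which is what makes $\eta^2(M-u)$ a legitimate bounded test function) and the vanishing $2$-capacity of $\mathcal{S}$ — the cutoffs $\phi_\epsilon$ with $\int|\nabla\phi_\epsilon|^2\to0$ — and the latter is precisely the step that consumes the codimension bound $\dim_{\mathcal{M}}\mathcal{S}<2n-3$. Everything else is routine bookkeeping inside the weak formulation, together with the identification $N^{1,2}(\Omega')\cong W^{1,2}(\Omega'\setminus\mathcal{S})$ from Proposition~\ref{prn:HD13_1}.
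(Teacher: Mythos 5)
Your proof is correct and follows essentially the same route as the paper's: a Caccioppoli estimate against a test function built from the product of a smooth cutoff localizing in $\Omega$ and a Lipschitz cutoff near $\mathcal{S}$, using the Minkowski codimension bound $\dim_{\mathcal{M}}\mathcal{S}<2n-3$ to make $\int|\nabla\phi_\epsilon|^2\to 0$ (i.e.\ $\mathcal{S}$ has vanishing $2$-capacity). The only cosmetic difference is the choice of test function — the paper shifts $u$ so $u\ge 0$ and tests with $u\eta^2(1-\chi)^2$, while you test with $\eta^2(M-u)$ — but both are standard Caccioppoli variants yielding the same energy bound.
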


\begin{proof}
It suffices to prove that $u \in N_{loc}^{1,2}(\Omega)$.
Note that by definition, we only have $u \in N_{loc}^{1,2}(\Omega \backslash \mathcal{S})$, which is a superset of $N_{loc}^{1,2}(\Omega)$.
In fact, for each small $r>0$, one can construct a Lipschitz cutoff function
\begin{align}
   \chi(x)=\phi \left(\frac{d(x,\mathcal{S})}{r} \right),
\label{eqn:HD16_1}
\end{align}
where $\phi$ is a cutoff function on $[0,\infty)$ which is equivalent to $1$ on $[0,1]$, $0$ on $[2, \infty)$, and $|\phi'| \leq 2$.
By the assumption of Minkowski codimension of $\mathcal{S}$,  we have
\begin{align}
   | \Omega \cap \supp \chi| \leq Cr^3,  \quad \int_{\Omega} |\nabla \chi|^2 \leq C r^{-2}\int_{\Omega \cap \{\nabla \chi \neq 0\}} \chi^2 \leq Cr.
\label{eqn:HD16_2}
\end{align}
Fix a relatively compact subset $\Omega' \subset \Omega$, we can find a cutoff function $\eta$ which is identically $1$ on $\Omega'$ and vanishes around
$\partial \Omega$. Moreover, $|\nabla \eta| \leq C$, which depends on $\Omega'$ and $\Omega$.

By adding a constant if necessary, we can assume $u \geq 0$.
Note that $u$ is subharmonic on $\Omega \backslash \mathcal{S}$,  $u\eta^2 (1-\chi)^2$ can be chosen as a test function. It follows from definition that
\begin{align*}
   0&\leq  \int_{\Omega \backslash \mathcal{S}} (\Delta u)  u\eta^2(1-\chi)^2
       =-\int_{\Omega \backslash \mathcal{S}} \langle \nabla u, \nabla (u\eta^2(1-\chi)^2) \rangle \\
   & =-\int_{\Omega \backslash \mathcal{S}} |\nabla u|^2\eta^2 (1-\chi)^2
   + \int_{\Omega \backslash \mathcal{S}} u \langle \nabla u, -2(1-\chi)^2\eta \nabla \eta+ 2\eta^2(1-\chi) \nabla \chi \rangle.
\end{align*}
Note that $u,\eta,\nabla \eta$ are bounded. Then H\"{o}lder inequality applies.
\begin{align*}
   \frac{1}{2} \int_{\Omega \backslash \mathcal{S}} |\nabla u|^2\eta^2 (1-\chi)^2
   &\leq  C+ C\int_{\Omega \backslash \mathcal{S}}\eta^2 (1-\chi) |\nabla u||\nabla \chi|\\
   &\leq  C+ \frac{1}{4}\int_{\Omega \backslash \mathcal{S}} |\nabla u|^2\eta^2 (1-\chi)^2 + C\int_{\Omega \backslash \mathcal{S}}  \eta^2 |\nabla \chi|^2.
\end{align*}
Recall the definition of $\chi$ in (\ref{eqn:HD16_1}) and estimate (\ref{eqn:HD16_2}).
Let $r \to 0$,  the above inequality yields that
\begin{align*}
     \int_{\Omega \backslash \mathcal{S}} |\nabla u|^2\eta^2 \leq C,
\end{align*}
which forces that $\displaystyle \int_{\Omega'  \backslash \mathcal{S}} |\nabla u|^2 \leq C$.
Hence $u \in W^{1,2}(\Omega' \backslash \mathcal{S})$ since $u$ is bounded.  This is the same to say $u \in N^{1,2}(\Omega')$.
By the arbitrary choice of $\Omega'$, we have proved that $u \in N_{loc}^{1,2}(\Omega)$.
\end{proof}

We now move on to the discussion of heat kernels.

\begin{proposition}[\textbf{Heat Kernel estimates}]
The exists a unique heat kernel  $p(t,x,y)$ of $X$, with respect to the Dirichlet form $\mathscr{E}=\langle \nabla \cdot, \nabla \cdot \rangle$.
Moreover, $p(t,x,y)$ satisfies the following properties.
\begin{itemize}
\item Stochastically completeness. In other words, we have
\begin{align*}
   \int_X p(t,x,y) d\mu_x=1
\end{align*}
for every $x \in X$.
\item The Gaussian estimate holds.  In other words, there exists a constant $C$ depending only on $n,\kappa$ such that
\begin{align}
  \frac{1}{C}t^{-n} e^{-\frac{d^2(x,y)}{3t}} \leq p(t,x,y) \leq Ct^{-n} e^{-\frac{d^2(x,y)}{5t}}
\label{eqn:HC31_7}
\end{align}
for every $x,y \in X$ and $t>0$.
\item For each positive integer $j$, there is a constant $C=C(n,\kappa,j)$ such that
\begin{align}
    \left| \left( \frac{\partial}{\partial t} \right)^j p(t,x,y)\right| \leq Ct^{-n-j} e^{-\frac{d^2(x,y)}{5t}}
\label{eqn:HD08_1}
\end{align}
for every $x,y \in X$ and $t>0$.
\label{prn:HC29_7}
\end{itemize}
\label{prn:HD07_1}
\end{proposition}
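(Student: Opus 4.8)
The plan is to build on the structural properties already established: $X$ is a volume-doubling metric measure space (Corollary~\ref{cly:HD20_1}) that supports a uniform $(1,2)$-Poincar\'e inequality (Proposition~\ref{prn:HC29_2}) and carries the regular, strongly local, irreducible Dirichlet form $\mathscr{E}$ of Proposition~\ref{prn:HD26_1}, with generator $\Delta$ and heat semigroup $(P_t)_{t\geq 0}$. The existence and uniqueness of a symmetric, nonnegative heat kernel $p(t,x,y)$ is already recorded in the discussion following Proposition~\ref{prn:HD26_1}; the substance of the proposition is the three quantitative estimates. First I would invoke the Sturm--Grigor'yan--Saloff-Coste theory: on a metric measure space where the measure is doubling and a scale-invariant $L^2$-Poincar\'e inequality holds, the parabolic Harnack inequality is valid, and it is equivalent to two-sided Gaussian heat kernel bounds of the form $p(t,x,y)\asymp |B(x,\sqrt t)|^{-1}\exp(-d^2(x,y)/ct)$. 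Combining this with the fact that $X$ is non-collapsed with Euclidean volume growth upper bound (Proposition~\ref{prn:HD19_1}, Bishop--Gromov, together with the $\mathrm{avr}\geq\kappa$ lower bound) converts $|B(x,\sqrt t)|$ into a quantity comparable to $t^n$ with constants depending only on $n$ and $\kappa$; this yields (\ref{eqn:HC31_7}), after adjusting the numerical constants in the exponents from a generic $c$ to the stated $3$ and $5$ by the standard trick of trading a slightly worse exponent for a better multiplicative constant.

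For stochastic completeness $\int_X p(t,x,y)\,d\mu_x=1$, I would appeal to Grigor'yan's volume test: on a geodesically complete length space with the doubling property, if $|B(x_0,r)|\leq C r^{2n}$ for all large $r$ (which holds here by Bishop--Gromov), then the associated diffusion is conservative; more simply, the uniform Gaussian \emph{upper} bound just proved, together with volume-doubling, already forces stochastic completeness by a direct integration argument. One must check that the weak convexity of $\mathcal{R}$ and the high codimension of $\mathcal{S}$ (so that $\mathcal{S}$ carries no mass and is not ``felt'' by Brownian motion) guarantee that $X$ genuinely behaves as a complete space with no boundary — this is exactly the point where the earlier analytic foundation (Propositions~\ref{prn:HD04_1},~\ref{prn:HD13_1},~\ref{prn:HD16_2}) is used: bounded harmonic / caloric functions extend across $\mathcal{S}$, so there is no hidden absorbing set.

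For the time-derivative estimate (\ref{eqn:HD08_1}), I would use the standard analytic-semigroup bootstrap. Since $\Delta$ is self-adjoint and non-positive, the semigroup $P_t=e^{t\Delta}$ is analytic on $L^2$, and the spectral calculus gives $\|\Delta^j P_t\|_{L^2\to L^2}\leq C_j t^{-j}$; writing $(\partial_t)^j p(t,x,y)=\Delta_x^j p(t,x,y)$ and using the reproducing identity $p(t,x,y)=\int_X p(t/2,x,z)\,p(t/2,z,y)\,d\mu_z$ one localizes: $(\partial_t)^j p(t,x,y) = \int_X \bigl[(\partial_s)^j p(s,\cdot,x)\bigr]_{s=t/2}(z)\,p(t/2,z,y)\,d\mu_z$ and then estimates the $L^2$-in-$z$ norm of each factor over dyadic annuli around $x$ (resp. $y$) using the Gaussian bound (\ref{eqn:HC31_7}) and the Caccioppoli/mean-value inequality for caloric functions that follows from the parabolic Harnack inequality on $\mathcal{R}$; summing the Gaussian tails over the annuli reproduces a bound of the shape $C t^{-n-j}e^{-d^2(x,y)/5t}$. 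Equivalently and perhaps more cleanly, one can differentiate the Gaussian upper bound in $t$ directly via Cauchy's integral formula for the analytic function $t\mapsto p(t,x,y)$ on a disk of radius comparable to $t$, using that the upper bound (\ref{eqn:HC31_7}) holds on that whole disk.

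The main obstacle I anticipate is not any single inequality but the bookkeeping to make the Gaussian constants come out exactly as $3$ and $5$ while keeping all constants dependent only on $(n,\kappa)$ (and additionally $j$ for the last item): the Grigor'yan--Saloff-Coste machinery delivers $\asymp$ with unspecified exponents, so one applies the elementary inequality $e^{-a^2/(c t)}\leq C(c,c')\,e^{-a^2/(c' t)}$ only for the \emph{lower} bound exponent can be worsened freely, whereas for the \emph{upper} bound one must genuinely start from a sharp-enough exponent and then relax; pinning this down is routine but fiddly. A secondary subtlety, already flagged above, is verifying that the singular set $\mathcal{S}$ does not obstruct the Harnack inequality — but since $\mathcal{R}$ is Ricci-flat and $\mathcal{S}$ has Minkowski codimension $>3$, the capacity of $\mathcal{S}$ vanishes and the Harnack inequality on $X$ reduces to the classical Li--Yau Harnack inequality on the Ricci-flat manifold $\mathcal{R}$, patched together across the negligible set $\mathcal{S}$ using the extension results of the previous subsection.
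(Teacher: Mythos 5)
Your proposal follows essentially the same route as the paper: both rest entirely on the Sturm--Grigor'yan--Saloff-Coste machinery for metric measure spaces with doubling and a uniform $(1,2)$-Poincar\'e inequality, with the paper's proof being a compressed list of citations (Prop.~2.3 and Cor.~2.7 of~\cite{Stu95}, Thm.~4 of~\cite{Stu94}, Cor.~4.2/4.10 of~\cite{Stu96}, and Thm.~6.3 of~\cite{Saloff}) to exactly the facts you unpack. One small caveat: your aside that the Gaussian upper bound together with doubling "already forces stochastic completeness by a direct integration argument" is not correct as stated --- a sub-Markovian kernel can satisfy a Gaussian upper bound without being conservative --- but this does not harm the argument, since the Grigor'yan/Sturm volume test you invoke first is what the paper uses and is sufficient.
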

\begin{proof}
 Since $X$ satisfies the doubling property and has a uniform $(1,2)$-Poincar\'{e} constant,  by Corollary~\ref{cly:HD20_1}
 and Proposition~\ref{prn:HC29_2},
 the existence of the heat Kernel follows from the work of Sturm (c.f.Proposition 2.3 of~\cite{Stu95}).
 The uniqueness of the heat kernel follows from the uniqueness of the heat semigroup.
 The stochastic completeness is guaranteed by the doubling property, see Theorem 4 and the following remarks of~\cite{Stu94}.

  The Gaussian estimate follows from Corollary 4.2 and Corollary 4.10 in Sturm's paper~\cite{Stu96}, where $C$
   depends on the volume doubling condition   and the $(1,2)$-Poincar\'{e} constant.

  The heat kernel derivative estimate, inequality (\ref{eqn:HD08_1}), follows from Corollary 2.7. of ~\cite{Stu95}, whose proof follows the same line
  as Theorem 6.3 of~\cite{Saloff}.
 \end{proof}

 In general, the estimates of heat kernel only hold almost everywhere with respect to the measure $d\mu$.
 However, in the current situation, when restricted on $\mathcal{R} \times (0,\infty)$, $p$ is clearly a smooth function.
 Therefore, (\ref{eqn:HC31_7}) and (\ref{eqn:HD08_1}) actually hold true everywhere away from $\mathcal{S}$.
 Note that  $\Delta p=\frac{\partial}{\partial t} p$ clearly locates in $L^2(X) \cap C^{\infty}(\mathcal{R})$.
 Hence integration by parts (Proposition~\ref{prn:HD13_2}) applies.  Then by standard radial cutoff function construction and direct calculation,
 Proposition~\ref{prn:HD07_1}
 yields the following estimates immediately.

 \begin{corollary}[\textbf{Off-diagonal integral estimates of heat kernel}]
    For every $r>0, t>0$, and $x_0 \in X$, we have
    \begin{align}
     &\int_{X \backslash B(x_0,2r)} p^2 d\mu_x<Ct^{-n}e^{-\frac{r^2}{5t}}, \label{eqn:HD08_7}\\
     &\int_{X \backslash B(x_0,2r)} |\nabla p(t,x,x_0)|^2 d\mu_x
       < C \left( \frac{1}{t} + \frac{1}{r^2}\right) t^{-n} e^{-\frac{r^2}{5t}}, \label{eqn:HD08_4}
    \end{align}
    for some $C=C(n,\kappa)$.  Consequently, we have
    \begin{align}
    \int_0^{t} \int_{X \backslash B(x_0,2r)}  \left( p^2+|\nabla p|^2 \right) d\mu_x ds
      < C \int_0^t \left(1+\frac{1}{s} + \frac{1}{r^2}\right) s^{-n} e^{-\frac{r^2}{5s}} ds.
    \label{eqn:HD08_3}
    \end{align}
 \end{corollary}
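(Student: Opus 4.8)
The plan is to derive all three estimates from the pointwise heat kernel bounds in Proposition~\ref{prn:HD07_1}, the stochastic completeness recorded there, and the volume upper bound $|B(x_0,\rho)|\le\omega_{2n}\rho^{2n}$, which follows from the monotonicity (\ref{eqn:GD08_2}) of Proposition~\ref{prn:HD19_1} by letting the inner radius tend to $0$. We may take $x_0\in\mathcal{R}$, the case $x_0\in\mathcal{S}$ being recovered by the approximation already used in Proposition~\ref{prn:HD19_1}. For (\ref{eqn:HD08_7}), observe that on $X\setminus B(x_0,2r)$ one has $d(x,x_0)\ge 2r$, so the Gaussian upper bound (\ref{eqn:HC31_7}) gives $p(t,x,x_0)\le Ct^{-n}e^{-\frac{4r^2}{5t}}\le Ct^{-n}e^{-\frac{r^2}{5t}}$ there; writing $p^2=p\cdot p$ and using $\int_X p(t,x,x_0)\,d\mu_x=1$ yields
\[
  \int_{X\setminus B(x_0,2r)} p^2\,d\mu_x \le \Big(\sup_{x\notin B(x_0,2r)} p(t,x,x_0)\Big)\int_X p\,d\mu_x \le Ct^{-n}e^{-\frac{r^2}{5t}} .
\]
The same argument carried out on $X\setminus B(x_0,r)$ gives $\int_{X\setminus B(x_0,r)} p^2\,d\mu_x\le Ct^{-n}e^{-\frac{r^2}{5t}}$, which I will reuse below.

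For (\ref{eqn:HD08_4}), fix $t>0$ and set $u:=p(t,\cdot,x_0)$. Since $t>0$, $u\in Dom(\Delta)\cap N^{1,2}(X)$, $u$ is smooth and bounded on $\mathcal{R}$, and $\Delta u=\partial_t p(t,\cdot,x_0)$ in $L^2(X)$. Choose a Lipschitz cutoff $\phi$ on $X$ --- the composition of $d(\cdot,x_0)$ with a one-variable cutoff --- with $\phi\equiv 0$ on $B(x_0,r)$, $\phi\equiv 1$ on $X\setminus B(x_0,2r)$, $0\le\phi\le 1$, $|\nabla\phi|\le 2/r$. Then $\phi^2u\in N^{1,2}(X)$ (its weak gradient $\phi^2\nabla u+2\phi u\nabla\phi$ lies in $L^2$, as $u$ is bounded at fixed $t$ and $B(x_0,2r)$ has finite measure), so the defining identity of the generator applied to the test function $\phi^2u$ gives
\[
  \int_X \phi^2|\nabla u|^2\,d\mu + 2\int_X \phi u\,\langle\nabla u,\nabla\phi\rangle\,d\mu = -\int_X \phi^2 u\,\partial_t p\,d\mu .
\]
Absorbing the cross term by Cauchy--Schwarz leaves $\tfrac12\int_X\phi^2|\nabla u|^2\,d\mu \le \big|\int_X\phi^2 u\,\partial_t p\,d\mu\big| + 2\int_X u^2|\nabla\phi|^2\,d\mu$. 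For the first term, (\ref{eqn:HD08_1}) with $j=1$ together with (\ref{eqn:HC31_7}) give $|u\,\partial_t p|\le Ct^{-2n-1}e^{-\frac{2d^2}{5t}}$ on $\mathcal{R}$; since $\phi$ vanishes on $B(x_0,r)$ we bound $e^{-\frac{2d^2}{5t}}\le e^{-\frac{r^2}{5t}}e^{-\frac{d^2}{5t}}$ on its support, and $\int_X e^{-\frac{d^2}{5t}}\,d\mu\le Ct^n$ follows from $|B(x_0,\rho)|\le\omega_{2n}\rho^{2n}$ by integrating in the radial variable, so the first term is $\le Ct^{-n-1}e^{-\frac{r^2}{5t}}$. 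The second term is $\le \frac{C}{r^2}\int_{X\setminus B(x_0,r)} u^2\,d\mu\le \frac{C}{r^2}t^{-n}e^{-\frac{r^2}{5t}}$ by the $L^2$ bound above. Since $\phi\equiv 1$ off $B(x_0,2r)$, adding the two gives (\ref{eqn:HD08_4}). Finally (\ref{eqn:HD08_3}) follows at once: adding (\ref{eqn:HD08_7}) and (\ref{eqn:HD08_4}) at time $s\in(0,t)$ gives $p^2+|\nabla p|^2\le C\big(1+\tfrac1s+\tfrac1{r^2}\big)s^{-n}e^{-\frac{r^2}{5s}}$ on $X\setminus B(x_0,2r)$, and one integrates in $s$ over $(0,t)$.

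The step I expect to need the most care is the integration by parts in the proof of (\ref{eqn:HD08_4}): one must make sure it is legitimate despite the singular set $\mathcal{S}$ and the non-compactness of $X$. Both are circumvented by working intrinsically with the Dirichlet form $\mathscr{E}$ of Proposition~\ref{prn:HD26_1}: the identity $\mathscr{E}(u,h)=-\int_X h\,\Delta u\,d\mu$ holds for every $h\in N^{1,2}(X)$ as soon as $u\in Dom(\Delta)$ --- which is the case for $u=p(t,\cdot,x_0)$ with $t>0$ --- so no compact support of the test function is needed; the Gaussian decay of $u$ and of $\partial_t p$ ensures absolute convergence of all the integrals; and the smoothness of $p$ on $\mathcal{R}\times(0,\infty)$ makes the pointwise manipulations legitimate there, while $\mathcal{S}$, having measure zero, contributes nothing.
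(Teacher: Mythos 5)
Your proof is correct and follows essentially the same approach the paper sketches: integration by parts with a radial cutoff (you phrase it via the generator identity $\mathscr{E}(u,h)=-\int_X h\,\Delta u\,d\mu$ rather than Proposition~\ref{prn:HD13_2}, but these are the same tool), the Gaussian bounds of Proposition~\ref{prn:HD07_1}, and stochastic completeness. The only blemish is a slip of wording at the very end: (\ref{eqn:HD08_7}) and (\ref{eqn:HD08_4}) are integral bounds, not pointwise ones, so the sentence should read ``adding the two integral estimates gives $\int_{X\setminus B(x_0,2r)}(p^2+|\nabla p|^2)\,d\mu_x \le C(1+\tfrac1s+\tfrac1{r^2})s^{-n}e^{-\frac{r^2}{5s}}$,'' which is what you then integrate in $s$; the conclusion is unaffected.
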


By virtue of Proposition~\ref{prn:HD04_1}, smooth functions are dense in $N^{1,2}(X)$.  Then it is easy to see that $(X,g,d\mu)$, together with the heat process,
has non-negative Ricci curvature in the sense of Bakry-Emery, i.e., $X \in CD(0,\infty)$ by the notation of Bakry-Emery(c.f.~\cite{BaEm},~\cite{Bak}).
The following Proposition is nothing but part of Proposition 2.1 of~\cite{Bak}.
The rigorous proof is tedious and is postponed in the appendix. 

\begin{proposition}[\textbf{Weighted Sobolev inequality}]
For every function $f \in N^{1,2}(X)$, every $t>0$ and every $y \in X$, we have
 \begin{align}
      \int_X f^2(x) p(t,x,y) d\mu_x - \left(\int_X f(x) p(t,x,y)d\mu_x \right)^2 \leq 2t \int_X |\nabla f|^2(x) p(t,x,y) d\mu_x.
   \label{eqn:HD05_3}
   \end{align}
 In other words, for every $t>0$, with respect to the probability measure $p(t,x,y)d\mu_x$, $L^2$-Sobolev inequality holds
 with the uniform Sobolev constant $\frac{1}{2t}$.
\label{prn:HD07_2}
\end{proposition}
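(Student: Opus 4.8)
The plan is to run the classical Bakry-Emery semigroup interpolation, taking care that every differentiation under the integral sign and every integration by parts is carried out on $X$ and is justified near the singular set $\mathcal{S}$ by the high-codimension cutoff technique already used above. First I would reduce to convenient test functions: by Corollary~\ref{cly:HE10_1} and Proposition~\ref{prn:HD04_1} it suffices to prove (\ref{eqn:HD05_3}) for $f\in C_c^\infty(\mathcal{R})$. Indeed, along a sequence $f_i\to f$ in $N^{1,2}(X)$ the right-hand side converges since $|\nabla f_i|\to|\nabla f|$ in $L^2(X)$ and $p(t,\cdot,y)\le Ct^{-n}$ is bounded (Proposition~\ref{prn:HD07_1}); and the left-hand side converges because $f_i\to f$ and $f_i^2\to f^2$ in $L^1\!\big(p(t,x,y)\,d\mu_x\big)$, again using boundedness of $p$. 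So fix $y\in X$, $t>0$ and $f\in C_c^\infty(\mathcal{R})$.

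Next, set $u(\cdot,s)\triangleq P_{t-s}f$ for $s\in[0,t]$ and
\[
   \phi(s)\triangleq \int_X u^2(x,s)\,p(s,x,y)\,d\mu_x = P_s\!\left(u(\cdot,s)^2\right)(y),
\]
so that $\phi(0)=(P_tf)^2(y)$, $\phi(t)=\int_X f^2\,p(t,\cdot,y)\,d\mu$, and $\phi$ is continuous on $[0,t]$ by strong continuity of $(P_s)_{s\ge 0}$. For $s\in(0,t)$ I would differentiate under the integral — justified by the Gaussian and $\partial_t$-bounds of Proposition~\ref{prn:HD07_1} and the off-diagonal integral estimates — and use $\partial_s u=-\Delta u$, $\partial_s p=\Delta_x p$, to obtain
\[
   \phi'(s)=\int_X\big(\Delta_x(u^2)-2u\,\Delta u\big)\,p\,d\mu_x
          =2\int_X |\nabla u|^2(x,s)\,p(s,x,y)\,d\mu_x\ \ge 0 .
\]
The only nontrivial point is the identity $\int_X (\Delta_x u^2)\,p\,d\mu_x=\int_X u^2\,\Delta_x p\,d\mu_x$: this is Proposition~\ref{prn:HD13_2} after inserting the cutoff $\chi(x)=\phi\big(d(x,\mathcal{S})/r\big)$ of (\ref{eqn:HD16_1}) and letting $r\to 0$, the error term being controlled by $\int|\nabla\chi|^2\le Cr$ exactly as in the proof of Proposition~\ref{prn:HD16_2}.

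The real point is the pointwise gradient estimate $|\nabla P_uf|^2\le P_u\!\big(|\nabla f|^2\big)$ on $\mathcal{R}$, valid for all $u>0$. I would prove it by a second interpolation: for $z\in\mathcal{R}$ put $\psi(r)=P_r\!\big(|\nabla P_{u-r}f|^2\big)(z)$, $r\in[0,u]$, so that $\psi(0)=|\nabla P_uf|^2(z)$, $\psi(u)=P_u(|\nabla f|^2)(z)$, and, writing $g=P_{u-r}f$,
\[
   \psi'(r)=2\,P_r\!\Big(|\nabla^2 g|^2+Ric(\nabla g,\nabla g)\Big)(z)=2\,P_r\!\big(|\nabla^2 g|^2\big)(z)\ge 0,
\]
the Ricci term dropping out because $\mathcal{R}$ is Ricci-flat (Definition~\ref{dfn:SC27_1}). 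Hence $\psi(0)\le\psi(u)$. Feeding $|\nabla u(\cdot,s)|^2\le P_{t-s}(|\nabla f|^2)$ into the formula for $\phi'$ and using the semigroup property gives $\phi'(s)\le 2\,P_s\big(P_{t-s}(|\nabla f|^2)\big)(y)=2\,P_t(|\nabla f|^2)(y)$; integrating over $s\in[0,t]$ yields $\phi(t)-\phi(0)\le 2t\,P_t(|\nabla f|^2)(y)$, which is exactly (\ref{eqn:HD05_3}).

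I expect the main obstacle to be the rigorous justification of the Bochner computation for $\psi'$ and of the integrations by parts against the heat kernel near $\mathcal{S}$: one must know that $P_uf$ and $P_rg$ are smooth on $\mathcal{R}\times(0,\infty)$ with enough spatial decay, that $|\nabla P_{u-r}f|^2$ lies in the function classes required by Proposition~\ref{prn:HD13_2}, and that the cutoff-function error terms genuinely vanish as $r\to 0$ — all of which rest on the high Minkowski codimension of $\mathcal{S}$ (through estimates of the type (\ref{eqn:HD16_2})), on the heat-kernel bounds of Proposition~\ref{prn:HD07_1}, and on the smooth approximation machinery of Propositions~\ref{prn:HD04_1} and~\ref{prn:HD16_2}. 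This is the ``tedious'' verification the statement alludes to, and the reason the complete argument is deferred to the appendix.
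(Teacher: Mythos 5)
Your plan is correct and follows the same Bakry--\'{E}mery interpolation strategy the paper carries out in Appendix~A: both reduce to $f\in C_c^\infty(\mathcal{R})$, both define the same weighted interpolation $\phi(s)=\Psi_0(s)$, and both compute $\dot\phi(s)=2\int_X|\nabla u|^2 p\,d\mu_x$. The one genuine difference is how the bound $\dot\phi(s)\le 2P_t(|\nabla f|^2)(y)$ is obtained: the paper proves monotonicity of $\Psi(s)=\int_X|\nabla u|^2(1-s,\cdot)\,p(s,\cdot,y)\,d\mu$ via an integral identity (Lemma~\ref{lma:HH08_1}) and then needs a separate boundary-continuity argument as $s\to t^-$ (Lemma~\ref{lma:HH12_1}), whereas you prove the pointwise gradient commutation $|\nabla P_uf|^2\le P_u(|\nabla f|^2)$ on $\mathcal{R}$ by a nested interpolation $\psi(r)$, and then the semigroup identity $P_sP_{t-s}=P_t$ closes the argument with no boundary-continuity step. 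Your route is marginally cleaner in organization, but it buys nothing on the hard side: to differentiate $\psi$ and integrate by parts against the heat kernel you still need exactly the paper's technical ladder --- that $|\nabla P_{u-r}f|^2$ lies in $N^{1,2}_{\mathrm{loc}}$, is bounded, and that $\Delta$-integration by parts against $p$ is valid (Lemmas~\ref{lma:HH18_1}--\ref{lma:HH09_2}), all resting on $\dim_{\mathcal{M}}\mathcal{S}<2n-3$, the weak convexity of $\mathcal{R}$, and the heat-kernel decay. Two small cautions: the Bochner identity in your computation of $\psi'$ is only pointwise on $\mathcal{R}$, so one must know $P_r$ acts as an integral operator on $|\nabla g|^2$ with the singular set contributing nothing (this is precisely what Lemmas~\ref{lma:HH18_2} and~\ref{lma:HH10_1} supply); and the integration by parts $\int_X u^2\Delta p=\int_X p\,\Delta(u^2)$ needs the off-diagonal Gaussian decay of both $p$ and $\nabla p$, not just the cutoff near $\mathcal{S}$, since $X$ is noncompact --- the paper handles this in Lemma~\ref{lma:HH09_2} by a radial cutoff at infinity in addition to the cutoff near $\mathcal{S}$. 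With those two points filled in, your proof is complete and slightly more streamlined than the paper's.
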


On a Riemannian manifold with proper geometry bound, the heat kernel can be regarded as a solution starting from a $\delta$-function.
This property also holds for every $X \in \widetilde{\mathscr{KS}}(n,\kappa)$.

\begin{proposition}[\textbf{$\delta$-function property of heat kernel}]
  Suppose $w$ is a function on $[0,t] \times X$, differentiable along the time direction,
  $w(s,\cdot) \in N_c^{1,2}(X)$ for each $s \in [0,t]$. 
  Moreover,   we assume $\displaystyle \limsup_{s \to 0^{+}} \norm{w(s,\cdot)}{L^1(X)}<\infty$, $w$ is continuous at $(0,x_0)$. 
  Then we have
  \begin{align}
   &-w(0,x_0) +\int_{X} w(t,x)p(t,x,x_0)d\mu_x=\int_0^{t} \int_X \left\{ \left(\frac{\partial}{\partial s}+\Delta \right) w(s,x) \right\} p(s,x,x_0) d\mu_x ds.
        \label{eqn:HD09_1}
 \end{align}
 Consequently,  equation (\ref{eqn:HD09_1}) holds for functions $w(s,x)+a(s)$ where $a$ is a differentiable function of time.
\label{prn:HD09_1}
\end{proposition}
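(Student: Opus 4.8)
The plan is to establish (\ref{eqn:HD09_1}) by an approximation argument that reduces the statement to the smooth parabolic computation on $\mathcal{R}$, then handles the singular set $\mathcal{S}$ via its high Minkowski codimension. First I would note that since $w(s,\cdot) \in N_c^{1,2}(X)$ and $p(s,\cdot,x_0) \in N^{1,2}(X)$ with $\Delta p = \frac{\partial}{\partial s}p$ smooth on $\mathcal{R}$, the integration by parts formula (Proposition~\ref{prn:HD13_2}) applies to the pair $(w,p)$. Formally, for $0 < \epsilon < t$ one computes
\begin{align*}
 \frac{d}{ds}\int_X w(s,x)\, p(t-s,x,x_0)\, d\mu_x
 = \int_X \left\{ \left(\frac{\partial}{\partial s} w\right) p - w\, \frac{\partial}{\partial s}\big|_{t-s} p \right\} d\mu_x
 = \int_X \left\{ \left(\frac{\partial}{\partial s} w\right) p + w\, \Delta p \right\} d\mu_x,
\end{align*}
and then moving $\Delta$ onto $w$ via integration by parts gives $\int_X \{(\frac{\partial}{\partial s}+\Delta)w\}\, p\, d\mu_x$. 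Integrating in $s$ from $\epsilon$ to $t$ yields the identity with $w(0,x_0)$ replaced by $\int_X w(\epsilon,x)\, p(t-\epsilon,x,x_0)\, d\mu_x$; here one has to be slightly careful about reparametrizing $p(s,\cdot)$ versus $p(t-s,\cdot)$, but this is routine.

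Next I would justify passing $\epsilon \to 0^+$ in the boundary term. The Gaussian upper bound (\ref{eqn:HC31_7}) together with the continuity of $w$ at $(0,x_0)$ and the assumed $\limsup_{s \to 0^+}\norm{w(s,\cdot)}{L^1(X)}<\infty$ forces $\int_X w(\epsilon,x)\, p(t-\epsilon,x,x_0)\, d\mu_x \to w(0,x_0)$: one splits $X$ into a small ball $B(x_0,\rho)$, where $w \approx w(0,x_0)$ by continuity and $\int p\, d\mu = 1$ by stochastic completeness, and the complement, where $p(t-\epsilon,x,x_0) \le C t^{-n} e^{-\rho^2/(5t)}$ is uniformly small so the $L^1$-bound on $w$ controls the contribution. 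This is exactly the $\delta$-function behavior of the heat kernel. The final sentence about $w(s,x)+a(s)$ is then immediate: $\frac{\partial}{\partial s}(w+a) = \frac{\partial}{\partial s}w + a'(s)$ and $\Delta a = 0$, while $\int_X a(s) p(s,x,x_0)\, d\mu_x = a(s)$ by stochastic completeness, so the extra terms $\int_0^t a'(s)\, ds = a(t)-a(0)$ match the boundary contribution $a(t)-a(0)$.

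The main obstacle is making the formal integration-by-parts and $\frac{d}{ds}$-differentiation rigorous \emph{on $X$ rather than on $\mathcal{R}$}, since $w(s,\cdot) \in N_c^{1,2}(X)$ only has a gradient defined $\mu$-a.e. and is not assumed smooth. The clean way around this is: work entirely through the Dirichlet form. By definition $\int_X w\,\Delta p\, d\mu = -\mathscr{E}(w, p) = \int_X p\, \Delta w\, d\mu$ (in the weak sense of Definition~\ref{dfn:HD29_1}), which bypasses any need to integrate by parts pointwise near $\mathcal{S}$; the codimension estimates are then only invoked implicitly, already absorbed into the validity of Proposition~\ref{prn:HD13_2} and Proposition~\ref{prn:HD04_1}. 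The remaining technical point is differentiating $s \mapsto \int_X w(s,x) p(t-s,x,x_0)\, d\mu_x$ under the integral sign: this is legitimate because $w$ is differentiable in time with $w(s,\cdot), \partial_s w(s,\cdot)$ uniformly in $N_c^{1,2}$ on $[\epsilon,t]$, while $p(t-s,\cdot,x_0)$ and its $s$-derivative $\partial_s p = -\Delta p$ are bounded in $L^2$ on $[\,0,t-\epsilon\,]$ by (\ref{eqn:HD08_1}) and the off-diagonal estimates (\ref{eqn:HD08_7})--(\ref{eqn:HD08_3}), so a dominated-convergence / product-rule argument in the Bochner space $L^2(X)$ closes it. I expect the write-up to spend most of its length on this differentiation-under-the-integral and the $\epsilon \to 0$ limit, with the algebraic identity itself being a one-line consequence.
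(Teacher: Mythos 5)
Your proposal chooses the wrong pairing, and this leads to a false claim about the boundary term. You set up the Duhamel-type quantity $\int_X w(s,x)\,p(t-s,x,x_0)\,d\mu_x$, but the identity (\ref{eqn:HD09_1}) is \emph{not} a Duhamel formula: on its right-hand side $w$ and $p$ both appear at the \emph{same} time $s$, and on its left-hand side the term $\int_X w(t,x)p(t,x,x_0)\,d\mu_x$ has $p$ at the terminal time $t$, not at time $0$. There is also a sign slip in your formal step: since $\dot p=\Delta p$, one has $\tfrac{d}{ds}\bigl(p(t-s,\cdot)\bigr)=-\Delta p(t-s,\cdot)$, so your derivative yields $(\partial_s-\Delta)w$, not $(\partial_s+\Delta)w$. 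Integrating your quantity in $s$ from $\epsilon$ to $t$ produces boundary terms $\int_X w(t,x)p(0,x,x_0)\,d\mu_x-\int_X w(\epsilon,x)p(t-\epsilon,x,x_0)\,d\mu_x$; the first is $w(t,x_0)$ (by concentration of $p$ at time $0$), not $\int_X w(t,x)p(t,x,x_0)\,d\mu_x$, and — this is the serious gap — your claim that the second tends to $w(0,x_0)$ as $\epsilon\to 0^+$ is simply false. As $\epsilon\to 0^+$, $p(t-\epsilon,\cdot,x_0)\to p(t,\cdot,x_0)$, a fixed smooth density that does not concentrate, so the limit is $\int_X w(0,x)p(t,x,x_0)\,d\mu_x$, not the point value $w(0,x_0)$. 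The concentration tools you cite (Gaussian decay, stochastic completeness, $L^1$ bound, continuity at $(0,x_0)$) are the right toolbox, but you have pointed them at a quantity whose kernel is not concentrating.

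The paper avoids all of this by differentiating $\int_X w(t,x)p(t,x,x_0)\,d\mu_x$ directly, with both time slots advancing together: the product rule plus a single integration by parts (Proposition~\ref{prn:HD13_2}) gives $\tfrac{d}{dt}\int_X wp=\int_X\{(\partial_t+\Delta)w\}p+\int_X w\{(\partial_t-\Delta)p\}$, and the second integral vanishes because $p$ solves the \emph{forward} heat equation. Integrating from $0^+$ to $t$ leaves exactly the boundary contribution $\lim_{\tau\to 0^+}\int_X w(\tau,x)p(\tau,x,x_0)\,d\mu_x$, and it is \emph{here} — with $p(\tau,\cdot,x_0)$ genuinely concentrating at $x_0$ as $\tau\to 0^+$ — that the ball/complement splitting, continuity of $w$ at $(0,x_0)$, Gaussian decay, and the $L^1$ bound on $w(\tau,\cdot)$ combine to yield $w(0,x_0)$. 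In principle your Duhamel route could be rescued by applying it to the time-reversed function $\tilde w(s,\cdot):=w(t-s,\cdot)$, but this is a genuine substitution, not the ``routine reparametrization'' you wave at, and as written your argument derives a different identity than the one being asserted.
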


\begin{proof}
Clearly, (\ref{eqn:HD09_1}) holds if $w(s,\cdot) \equiv a(s)$. Therefore, it suffices to show (\ref{eqn:HD09_1}) when $w(s,\cdot) \in N_c^{1,2}(X)$ for each $s$.  
For simplicity of notation,  we denote $p(t,x,x_0)$ by $p$ and assume $d\mu_x$ as the default measure.  It follows from integration by parts that
\begin{align*}
  \frac{d}{dt} \int_{X} wp
  =\int_X \left\{ \left(\frac{\partial}{\partial t}+\Delta \right) w \right\} p   + \int_X w \left\{\left(\frac{\partial}{\partial t}-\Delta \right) p\right\} 
  =\int_X \left\{ \left(\frac{\partial}{\partial t}+\Delta \right) w \right\} p.  
\end{align*}
For each $\epsilon>0$, we can find $\delta$ small enough such that $|w(s,x)-w(0,x_0)|<\epsilon$ whenever $0<s<\delta^2$ and $d(x,x_0)<\delta$.
Then the heat kernel estimate implies that
\begin{align*}
  &\quad \left|-w(0,x_0)+\lim_{t \to 0^+} \int_X w(t,x)p(t,x,x_0)d\mu_x \right|
  = \left|\lim_{t \to 0^+} \int_X  \left\{w(t,x)-w(0,x_0)\right\}p(t,x,x_0)d\mu_x \right|\\
   &\leq \lim_{t \to 0^{+}} \left\{ |w(0,x_0)|\int_{X \backslash B(x_0,\delta)} p(t,x,x_0)d\mu_x
     + \int_{X \backslash B(x_0,\delta)} |w(t,x)| p(t,x,x_0)d\mu_x +\epsilon \right\}
    \leq \epsilon.
\end{align*}
By arbitrary choice of $\epsilon$,  we have $\displaystyle \lim_{t \to 0^+} \int_X w(t,x)p(t,x,x_0)d\mu_x =w(0,x_0)$.  Plugging this relationship into the integration of previous equation, 
we obtain (\ref{eqn:HD09_1}). 
\end{proof}

Based on the excellent properties of heat kernels, from Proposition~\ref{prn:HC29_7} to Proposition~\ref{prn:HD09_1},  we are ready to generalize the celebrated Cheng-Yau estimate (c.f.~\cite{ChYau}) to our setting.
We basically follow the paper~\cite{KoRaSh}.
However, due to the essential importance of this estimate and the excellent geometry of our underlying space, we write down a simplified proof here.

\begin{proposition} [\textbf{Cheng-Yau type gradient estimate}]
Suppose $\Omega=B(x_0,4r)$ for some $r>0$ and $x_0 \in \mathcal{R}$.
Suppose $u \in L^{\infty}(\Omega) \cap N^{1,2}(\Omega)$ and $u$ satisfies the equation
\begin{align}
    \Delta u=h
\label{eqn:HE03_3}
\end{align}
for some $h \in C^{\frac{1}{2}}(\Omega)$.
Then we have
\begin{align}
    |\nabla u|(x_0) \leq \frac{C}{r}  \left( \norm{u}{L^{\infty}(\Omega)} + r^{\frac{5}{2}}[h]_{C^{\frac{1}{2}}(\Omega)} + r^2|h(x_0)| \right)
 \label{eqn:HD01_1}
\end{align}
for a constant $C=C(n,\kappa)$, where $\displaystyle [h]_{C^{\frac{1}{2}}(\Omega)}=\sup_{x,y \in \Omega} \frac{|h(x)-h(y)|}{d^{\frac{1}{2}}(x,y)}$. 
\label{prn:HC29_6}
\end{proposition}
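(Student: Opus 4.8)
The plan is to prove the gradient bound through the heat semigroup and the Bakry-Emery $\Gamma_2$-calculus, using that $X \in CD(0,\infty)$ (noted just before the statement) together with the Gaussian heat kernel estimates of Proposition~\ref{prn:HD07_1}. Since $x_0 \in \mathcal{R}$, the space is a smooth Riemannian manifold on some (a priori uncontrolled) ball about $x_0$, so $|\nabla u|(x_0)$ is the ordinary gradient of a smooth function there; the task is to estimate it by data on the large ball $\Omega = B(x_0,4r)$, across which $u$ solves the equation only weakly and $\mathcal{S}$ intervenes. This is precisely why the classical Bochner argument is unavailable and the heat kernel machinery is used instead.

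First I would localize with a good cutoff $\phi$: $\phi \equiv 1$ on $B(x_0,2r)$, $\supp \phi \subset B(x_0,3r)$, and $|\nabla\phi|^2 + |\Delta\phi| \le Cr^{-2}$ (such functions exist on $X$; one construction is $\phi = P_{r^2}\psi$ for a Lipschitz cutoff $\psi$, the bounds then coming from the $CD(0,\infty)$ gradient estimate and the analyticity of the semigroup, with the small discrepancy harmless). Set $\tilde u = \phi u \in N_c^{1,2}(X)$, so $\Delta\tilde u = \phi h + H_1$, where $H_1 := 2\langle\nabla\phi,\nabla u\rangle + u\,\Delta\phi$ is supported on the annulus $A = B(x_0,3r)\setminus B(x_0,2r)$ and $\Delta\tilde u = h$ on $B(x_0,2r)$. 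Because $\tilde u$ has compact support, the Duhamel identity $\tilde u = P_{r^2}\tilde u - \int_0^{r^2} P_s(\Delta\tilde u)\,ds$ holds by the package of Proposition~\ref{prn:HD09_1} and Proposition~\ref{prn:HD13_2}; differentiating at $x_0$ (legitimate since each $P_s$ of an $L^2$ function is smooth on $\mathcal{R}$) and using $\nabla u(x_0) = \nabla\tilde u(x_0)$ gives
\begin{align*}
 \nabla u(x_0) = \nabla P_{r^2}\tilde u(x_0) - \int_0^{r^2}\nabla P_s(\phi h)(x_0)\,ds - \int_0^{r^2}\nabla P_s(H_1)(x_0)\,ds.
\end{align*}

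Each term is estimated with the pointwise bound $|\nabla P_s f|^2(x_0) \le \tfrac{1}{2s}\bigl(P_s(f^2)(x_0) - (P_s f(x_0))^2\bigr)$, the companion of Proposition~\ref{prn:HD07_2} in the $CD(0,\infty)$ calculus (also in Proposition 2.1 of~\cite{Bak}). For the first term, $\tfrac{1}{2r^2}P_{r^2}(\tilde u^2)(x_0) \le \tfrac{1}{2r^2}\|u\|_{L^\infty(\Omega)}^2$ yields $\tfrac{C}{r}\|u\|_{L^\infty(\Omega)}$. For the second, since adding a constant leaves $P_s(f^2) - (P_s f)^2$ unchanged, I subtract $h(x_0)$, bound $|h(y)-h(x_0)| \le [h]_{C^{1/2}(\Omega)}\,d(x_0,y)^{1/2}$ on $B(x_0,2r)$ and use the first-moment estimate $\int_X d(x_0,y)\,p(s,x_0,y)\,d\mu_y \le C\sqrt{s}$ (from the Gaussian bound and volume comparison), while off $B(x_0,2r)$ the Gaussian decay of $p(s,x_0,\cdot)$ for $s \le r^2$ kills the contribution; the resulting $s$-integral is $\le C\bigl(r^{3/2}[h]_{C^{1/2}(\Omega)} + r\|h\|_{L^\infty(B(x_0,3r))}\bigr) \le \tfrac{C}{r}\bigl(r^{5/2}[h]_{C^{1/2}(\Omega)} + r^2|h(x_0)|\bigr)$. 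For the third term, $H_1$ lives on $A$, so $p(s,x_0,\cdot)$ there carries a factor $e^{-2r^2/5s}$; hence $\int_0^{r^2}|\nabla P_s H_1|(x_0)\,ds \le C r^{1-n}\|H_1\|_{L^2(X)}$, and a Caccioppoli inequality for $u$ on $\Omega$ (test the equation against $u\zeta^2$, as in the proof of Proposition~\ref{prn:HD16_2}) gives $\|H_1\|_{L^2(X)}^2 \le C\|u\|_{L^\infty(\Omega)}^2 r^{2n-4} + C\|u\|_{L^\infty(\Omega)}\|h\|_{L^\infty(\Omega)} r^{2n-2}$; an arithmetic-geometric mean split of the mixed term then shows the third term is $\le \tfrac{C}{r}\bigl(\|u\|_{L^\infty(\Omega)} + r^2|h(x_0)| + r^{5/2}[h]_{C^{1/2}(\Omega)}\bigr)$. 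Adding the three estimates proves (\ref{eqn:HD01_1}).

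The main obstacle is precisely the combination of the singular set with the merely-lower Ricci bound: one cannot shrink down to the smooth ball about $x_0$, whose size is not controlled. Concretely the delicate points are (i) producing $\phi$ with an (at least $L^2$) Laplacian bound despite $\mathcal{S}$; (ii) arranging that the time integrals $\int_0^{r^2} s^{-\alpha}(\cdots)\,ds$ converge, which is why one must retain the $s^{-1/4}$ Hölder gain for the $\phi h$ term and must place $H_1$ on the far annulus so that its contribution decays like $e^{-cr^2/s}$ for $n \ge 2$; and (iii) bookkeeping the scaling exponents so that the mixed $\|u\|_{L^\infty}\|h\|_{L^\infty}$ term coming from Caccioppoli can be absorbed by the arithmetic-geometric mean into the stated right-hand side.
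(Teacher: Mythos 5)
Your proposal is correct in outline but takes a genuinely different route from the paper and rests on one tool the paper never states. The paper's proof (following Koskela--Rajala--Shanmugalingam) sets $w = u\chi - a(t)$, tracks the time-averaged weighted energy $J(t) = t^{-1}\int_0^t\int_X |\nabla w|^2\,p(s,\cdot,x_0)\,ds$ from $t=1$ down to $J(0)=|\nabla u(x_0)|^2$ via $J'(t) \ge F(t)/t^2$, and then estimates $F(t)$ using only the \emph{forward} weighted Poincar\'e inequality of Proposition~\ref{prn:HD07_2}, the off-diagonal heat-kernel bounds, integration by parts, and Caccioppoli. Crucially, that route needs a cutoff $\chi$ with only $|\nabla\chi|$ bounded; no $\Delta\chi$ bound is required. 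Your Duhamel route instead needs both a cutoff with $|\Delta\phi|\lesssim r^{-2}$ and, more importantly, the pointwise semigroup gradient bound
\begin{align*}
 |\nabla P_s f|^2(x_0) \;\le\; \frac{1}{2s}\Bigl(P_s(f^2)(x_0) - (P_s f(x_0))^2\Bigr),
\end{align*}
which is the \emph{dual} of Proposition~\ref{prn:HD07_2}, not the proposition itself. The paper does not establish this reverse local Poincar\'e inequality. It is within reach: Lemma~\ref{lma:HH08_1} in Appendix~\ref{app:A} shows $\Psi(s)=\int_X|\nabla P_{T-s}f|^2\,p(s,\cdot,y)$ is non-decreasing, and if one adds the (not stated) continuity $\lim_{s\to 0^+}\Psi(s)=|\nabla P_T f|^2(y)$ at a regular $y$, the reverse inequality follows by integrating $\dot\Psi_0 = 2\Psi \ge 2\Psi(0^+)$. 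But as written this is an unproved ingredient, and you should either derive it or cite it as an independent consequence of $CD(0,\infty)$ requiring the full $\Gamma_2$-commutation rather than just HD07\_2.

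Two smaller points. First, your cutoff via $\phi = P_{r^2}\psi$ is not identically $1$ near $x_0$, so $\nabla\tilde u(x_0) = \phi(x_0)\nabla u(x_0) + u(x_0)\nabla\phi(x_0)$; you need the on-diagonal lower heat-kernel bound to conclude $\phi(x_0)\ge c(n,\kappa)>0$, and the $u(x_0)\nabla\phi(x_0)$ error then contributes $\lesssim r^{-1}\|u\|_{L^\infty}$, which is harmless but must be acknowledged. (Alternatively, the paper's Lemma~\ref{lma:HE04_3} already gives cutoff functions with pointwise $|\Delta\phi|$ bounds.) Second, your exponent bookkeeping for the three Duhamel terms is correct -- in particular the annulus localization of $H_1$ combined with the Gaussian factor $e^{-cr^2/s}$ does render the $s$-integral finite for all $n$, and the AM-GM splitting of the mixed Caccioppoli term recovers the stated right-hand side. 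In short: a workable alternative, conceptually cleaner at the cost of a stronger, unstated semigroup input and a slightly more delicate cutoff; the paper's energy-tracking route pays with a more intricate calculus but stays strictly within the inequalities it has already verified on the singular space.
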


 \begin{proof}
   Without loss of generality, we assume $r=1$ and $\norm{u}{L^{\infty}(\Omega)}=1$.
   Let $\chi$ be a Lipschitz cutoff function such that $\chi \equiv 1$ on $B(x_0,1)$ and vanishes outside $B(x_0, 2)$ such that $|\nabla \chi| \leq 2$.
   Define
   \begin{align*}
         a(t) &\triangleq P_t(u\chi)(x_0),    &J(t) &\triangleq \frac{1}{t} \int_0^{t} \int_X |\nabla w(s,x)|^2 p(s,x,x_0) d\mu_x ds, \quad \forall \; t>0,\\
      w(t,x) &\triangleq u(x)\chi(x)-a(t), & J(0) &\triangleq \lim_{t \to 0^{+}} J(t)=|\nabla w(0, x_0)|^2=|\nabla u(x_0)|^2.
   \end{align*}
   From the definition of $w(t,x)$, it is clear that  $\displaystyle \int_X w(t,x) p(t,x,x_0)d\mu_x=0$.  Applying (\ref{eqn:HD05_3}), we have
   \begin{align}
    &\int_X w^2(t,x) p(t,x,x_0)d\mu_x \leq 2t\int_X |\nabla w|^2p(t,x,x_0) d\mu_x,  \label{eqn:HD09_3}\\
    &|a(t)|= \left|\int_X u\chi p(t,x,x_0)d\mu_x\right| \leq \int_X |u\chi| p(t,x,x_0)d\mu_x   \leq \int_X  p(t,x,x_0)d\mu_x \leq 1, \label{eqn:HF20_1}\\
    &|w(t,x)|= |u(x)\chi(x)  -a(t)| \leq |u(x)\chi(x)|+|a(t)| \leq 2, \quad \forall \; x \in X.   \label{eqn:HD06_5}
   \end{align}
   It follows from the definition of $J$ that
   \begin{align}
    |\nabla u(x_0)|^2=J(0)=-\int_0^{1} J'(t) dt +J(1).
   \label{eqn:HD06_6}
   \end{align}
  However,  in light of (\ref{eqn:HD09_3}), we have
   \begin{align*}
     J'(t)&=-\frac{1}{t} J(t)+\frac{1}{t} \int_X |\nabla w|^2p(t,x,x_0) d\mu_x
      \geq-\frac{1}{t} J(t)+\frac{1}{2t^2} \int_X w^2(t,x) p(t,x,x_0)d\mu_x \notag\\
      &=\frac{1}{t^2} \left(-tJ(t) +\frac{1}{2}\int_X w^2(t,x) p(t,x,x_0)d\mu_x \right)
      =\frac{F(t)}{t^2},  
  \end{align*}
  where we used the definition $F(t) \triangleq -tJ(t) +\frac{1}{2}\int_X w^2(t,x) p(t,x,x_0)d\mu_x$. 
  Therefore, by the previous inequalities,  equation (\ref{eqn:HD06_6}) can be rewritten as
   \begin{align}
    |\nabla u(x_0)|^2 \leq -\int_0^1 \frac{F(s)}{s^2} ds -F(1) +\frac{1}{2} \int_X w^2p
    \leq 2-\int_0^1 \frac{F(s)}{s^2} ds -F(1).
   \label{eqn:HD09_7}
   \end{align}
  Therefore,  $|\nabla u(x_0)|$ follows from the estimate of $F(t)$.

  We now focus on the estimate of $F(t)$.    Applying (\ref{eqn:HD09_1}) to $w^2$, we obtain 
    \begin{align}
      \int_0^t \int_X \left\{ \left( \frac{\partial}{\partial s} + \Delta \right) w^2(s,x)\right\}p(s,x,x_0)d\mu_x ds
      =\int_X w^2(t,x)p(t,x,x_0) d\mu_x,
      \label{eqn:HD06_7}
   \end{align}
   since $w(0,x_0)=0$. Note that the application of (\ref{eqn:HD09_1}) can be justified.
   Actually, from its definition, $w^2(x,t)=u\chi (u\chi -2a(t)) +a^2(t)$.   The first part of the right hand side of this equation is a function in $N_c^{1,2}(X)$ for each $t$. It is Lipschitz continuous around $(0,x_0)$,
  since $x_0$ is a smooth point and the standard improving regularity theory of elliptic functions applies here.
  The second part is a differentiable function of time.
  Therefore, (\ref{eqn:HD09_1}) applies for $w^2$.
  On the other hand, the fact that $u$ is a weak solution of (\ref{eqn:HE03_3}) implies that
   \begin{align*}
     |\nabla w|^2=\frac{1}{2}\Delta w^2-w\Delta w
        =\frac{1}{2}\Delta w^2-w(u\Delta \chi +\chi h+2 \langle \nabla u, \nabla \chi \rangle)
   \end{align*}
   in the weak sense.
   Plugging the above equation and (\ref{eqn:HD06_7}) into the definition of $J(t)$, we have
   \begin{align*}
    tJ(t)
    &=\int_0^t\int_X \left\{ \frac{1}{2}\left( \frac{\partial}{\partial s}+\Delta\right) w^2
       -w(u\Delta \chi+\chi h +2 \langle \nabla u, \nabla \chi \rangle) \right\} p(s,x,x_0)d\mu_x ds \notag\\
    &=\int_0^t\int_X \left\{ \frac{1}{2}w^2
       -w(u\Delta \chi +\chi h+2 \langle \nabla u, \nabla \chi \rangle) \right\} p(s,x,x_0)d\mu_x ds.
   \end{align*}
   For the simplicity of notation, we will  denote $p(s,x,x_0)$ by $p$ only. Also, we will  drop integration elements when they are clear. 
   From the definition of $F(t)$,  the above equation can be written as 
   \begin{align*}
     &\quad F(t)-\int_0^t\int_X w\chi h p=\int_0^{t}\int_X w(u\Delta \chi+2 \langle \nabla u, \nabla \chi \rangle) p.
   \end{align*}
   Recall that $w=u\chi -a$ and $\nabla w=u\nabla \chi + \chi \nabla u$.   Integrating by parts gives us   
   \begin{align*}  
     F(t)-\iint  w\chi h p =\iint \langle -up\nabla w-uw \nabla p +pw \nabla u, \nabla \chi  \rangle
           =\iint \langle -ap \nabla u -uw \nabla p, \nabla \chi \rangle - pu^2|\nabla \chi|^2. 
   \end{align*}
  By  (\ref{eqn:HF20_1}) and (\ref{eqn:HD06_5}), we have $|a| \leq 1$ and $|uw| \leq 2$. 
   By the choice of $\chi$ and the H\"{o}lder inequality, it is clear that
   \begin{align}
         &\quad \left|F(t) +\int_0^t\int_X ( u^2|\nabla \chi|^2 -w\chi h) p \right| 
            \leq 2 \int_0^t \int_{B(x_0,2) \backslash B(x_0,1)} (p|\nabla u| + |\nabla p|)  \notag\\
         &\leq C\left( \int_0^t \int_{B(x_0,2) \backslash B(x_0,1)} \left(|\nabla u| ^2+1\right) \right)^{\frac{1}{2}}
             \cdot \left( \int_0^t \int_{B(x_0,2) \backslash B(x_0,1)} \left(|\nabla p| ^2+p^2\right) \right)^{\frac{1}{2}} \notag\\
         &\leq C\left(1+ \norm{h}{L^{\infty}(\Omega)} \right) \sqrt{t}\left( \int_0^t \int_{B(x_0,2) \backslash B(x_0,1)} \left(|\nabla p| ^2+p^2\right) \right)^{\frac{1}{2}} .  \label{eqn:GD05_1}
   \end{align}
   Note that in the last step of the above inequality, we used the following Caccioppoli-type inequality:
   \begin{align*}
      \int_{B(x_0,2)} |\nabla u|^2  \leq C \int_{B(x_0,4)} (u^2+h^2)  \leq C(n,\kappa) \left(1+\norm{h}{L^2(\Omega)} \right)^2
      \leq C(n,\kappa) \left(1+\norm{h}{L^{\infty}(\Omega)} \right)^2, 
   \end{align*}
   which can be proved by multiplying equation (\ref{eqn:HE03_3}) on both sides by $\tilde{\chi}^2 u$ and doing integration by parts, for some cutoff function $\tilde{\chi}$. 
   By inequality (\ref{eqn:HD08_3}), the last term in (\ref{eqn:GD05_1}) can be controlled by $C(\kappa, \beta) t^{\beta}$ for any positive number $\beta$.  
   For the simplicity of later calculation, we choose $\beta=\frac{3}{4}$. Note that $\norm{h}{L^{\infty}(\Omega)}<|h(x_0)|+[h]_{C^{\frac{1}{2}}(\Omega)}$. 
   Let $L=1+ |h(x_0)|+[h]_{C^{\frac{1}{2}}(\Omega)}$, then we have
   \begin{align}
    |F(t)| &\leq  \left| \int_0^t\int_X w\chi h p \right| + \left| \int_0^t\int_{B(x_0,2) \backslash B(x_0,1)} u^2|\nabla \chi|^2 p \right|+ CLt^{\frac{5}{4}} \notag\\
            &\leq \left| \int_0^t\int_X w\chi h p \right| +4 \int_0^t\int_{B(x_0,2) \backslash B(x_0,1)}  p +CLt^{\frac{5}{4}}. 
   \label{eqn:HD09_9} 
   \end{align} 
  The second term on the right hand side of the above inequality can be absorbed by the last term, due to the exponential decay of $p$ and Euclidean volume growth condition(c.f. Proposition~\ref{prn:HD07_1} and Proposition~\ref{prn:HD19_1}). 
  On the other hand,  since $pw$ has zero integeral, we have
  \begin{align*}
    \left| \int_0^t\int_X w\chi h p \right|&= \left| \int_0^t\int_X (\chi h-h(x_0)) pw \right|
    \leq  \left| \int_0^t\int_{B(x_0,2)} (\chi h-h(x_0)) pw \right| +  \left| \int_0^t\int_{X \backslash B(x_0,2)} h(x_0)pw \right|. 
  \end{align*}
  However, as $\chi h-h(x_0)$ vanishes at $x_0$, we have $ |\chi h-h(x_0)|  \leq [h]_{C^{\frac{1}{2}}(\Omega)} \cdot d^{\frac{1}{2}}(x,x_0)$. 
  Consequently, we have
  \begin{align*}
    &\left| \int_0^t\int_{B(x_0,2)} (\chi h-h(x_0)) pw d\mu_x ds\right| < C  [h]_{C^{\frac{1}{2}}(\Omega)} \int_0^t s^{\frac{1}{4}} ds< C [h]_{C^{\frac{1}{2}}(\Omega)} t^{\frac{5}{4}} < CLt^{\frac{5}{4}},\\
    &\left| \int_0^t\int_{X \backslash B(x_0,2)} h(x_0)pw d\mu_x ds\right|<2|h(x_0)| \int_0^t\int_{X \backslash B(x_0,2)} p d\mu_x ds < C|h(x_0)| t^{\frac{5}{4}}<CLt^{\frac{5}{4}}. 
  \end{align*}
  Plugging the above inequalities into (\ref{eqn:HD09_9}), we obtain  
  \begin{align}
    |F(t)|<CLt^{\frac{5}{4}},  \quad  \int_0^{1}\frac{|F(s)|}{s^2} ds< CL \int_0^1 s^{-\frac{3}{4}} ds<CL.
  \label{eqn:HD09_5}
  \end{align}
  Recall that $L=1+ |h(x_0)|+[h]_{C^{\frac{1}{2}}(\Omega)}$.  Plugging the above inequalities into (\ref{eqn:HD09_7}), we obtain the desired estimate of $|\nabla u(x_0)|$.
\end{proof}

Combining Proposition~\ref{prn:HC29_6} and the $C^{\alpha}$-estimate (c.f. Theorem 4.1 of~\cite{Saloffnote}) for bounded heat solutions, one can derive the Li-Yau type gradient estimate.  
Alternatively, for bounded heat solution $u$, one can follow the proof of Lemma~\ref{lma:HH10_1} to obtain the uniform bound of $|\nabla u|$ by De-Giorgi iteration process. 
Another interesting application of Cheng-Yau type inequality is the following Liouville theorem.

\begin{corollary}[\textbf{Liouville theorem}]
  Suppose $u$ is a bounded harmonic function on $\mathcal{R}$, then $u \equiv C$.
\label{cly:HE08_1}
\end{corollary}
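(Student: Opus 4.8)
The plan is to derive the result directly from the Cheng--Yau type gradient estimate (Proposition~\ref{prn:HC29_6}) by a scaling argument, letting the radius of the ball tend to infinity. The only preliminary point is that Proposition~\ref{prn:HC29_6} is stated for functions on a ball $B(x_0,4r)\subset X$ that lie in $N^{1,2}$ of that ball, so I first need to know that a bounded harmonic function on $\mathcal{R}$ is an admissible function on all of $X$, on balls of arbitrarily large radius.

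First I would carry out this extension step. Since $u$ is bounded and harmonic on $\mathcal{R}$, both $u$ and $-u$ are bounded subharmonic functions on $\Omega\setminus\mathcal{S}$ for every bounded open $\Omega\subset X$. By Proposition~\ref{prn:HD16_2}, $u\in N^{1,2}_{loc}(X)$ and both $u$ and $-u$ are subharmonic on $X$ in the weak sense of Definition~\ref{dfn:HD29_2}; hence $u$ is weakly harmonic on $X$, i.e.\ $\Delta u\equiv 0$. In particular, for any $x_0\in\mathcal{R}$ and any $r>0$, the ball $\Omega=B(x_0,4r)$ satisfies $u\in L^{\infty}(\Omega)\cap N^{1,2}(\Omega)$ and $\Delta u=h$ with $h\equiv 0\in C^{\frac12}(\Omega)$, so all hypotheses of Proposition~\ref{prn:HC29_6} are met. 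Applying that proposition with $h\equiv 0$, so that $[h]_{C^{1/2}(\Omega)}=0$ and $h(x_0)=0$, gives
\begin{align*}
   |\nabla u|(x_0)\;\leq\; \frac{C}{r}\,\norm{u}{L^{\infty}(B(x_0,4r))}\;\leq\; \frac{C}{r}\,\norm{u}{L^{\infty}(\mathcal{R})},
\end{align*}
where $C=C(n,\kappa)$ does not depend on $r$ or $x_0$. Letting $r\to\infty$ forces $|\nabla u|(x_0)=0$; if $X$ happened to have finite diameter the same conclusion follows once $4r$ exceeds that diameter. Since $x_0\in\mathcal{R}$ was arbitrary, $\nabla u\equiv 0$ on the smooth manifold $\mathcal{R}$, so $u$ is locally constant. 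Finally, $\mathcal{R}$ is connected: by the weak convexity in Definition~\ref{dfn:SC27_1}, for a fixed $x\in\mathcal{R}$ every point of $\mathcal{R}$ lies at distance zero from the set of points joined to $x$ by a shortest geodesic inside $\mathcal{R}$, so $\mathcal{R}$ is path-connected. Therefore $u\equiv C$.

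The argument is essentially mechanical once Proposition~\ref{prn:HC29_6} is available; the only step requiring attention is the extension across $\mathcal{S}$, namely checking that a bounded harmonic function on $\mathcal{R}$ is genuinely an $N^{1,2}$-harmonic function on the whole of $X$, so that the gradient estimate may be invoked on balls of unbounded radius rather than only on relatively compact subsets of $\mathcal{R}$. This is precisely the content of Proposition~\ref{prn:HD16_2}, so I do not expect any real obstacle.
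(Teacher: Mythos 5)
Your proof is correct and follows essentially the same route as the paper's: extend $u$ across $\mathcal{S}$ via Proposition~\ref{prn:HD16_2} to a weakly harmonic function in $N^{1,2}_{loc}(X)$, apply the Cheng--Yau estimate of Proposition~\ref{prn:HC29_6} with $h\equiv 0$, and let $r\to\infty$. The extra remarks (connectedness of $\mathcal{R}$, the finite-diameter case, applying the subharmonic extension to both $\pm u$) are harmless fillings-in of details the paper leaves implicit.
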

\begin{proof}
  By virtue of Proposition~\ref{prn:HD16_2}, the extension property of subharmonic functions, we know that $u \in N_{loc}^{1,2}(X)$.
  In light of Proposition~\ref{prn:HD04_1}, the dense property of smooth functions in $N_0^{1,2}(\Omega)$,  it is clear that $\Delta u=0$ on $X$ in the weak sense.
  Fix $x_0 \in \mathcal{R}$ and a large $r>0$, by Cheng-Yau estimate in Proposition~\ref{prn:HC29_6},
  we have
  \begin{align*}
     |\nabla u|(x_0) < \frac{C}{r}
  \end{align*}
  for some uniform constant $C$. Let $r \to \infty$, we see that $|\nabla u|(x_0)=0$.  It follows that $\nabla u \equiv 0$ on $\mathcal{R}$
  by the arbitrary choice of $x_0 \in \mathcal{R}$.  Consequently, $u \equiv C$ on $\mathcal{R}$.
\end{proof}

\begin{proposition}[\textbf{Estimates for Dirichlet problem solution}]
Suppose $\Omega$ is a bounded open set of $X$, $f $ is a continuous function in $N^{1,2}(\Omega)$. Then we have the following properties.

\begin{itemize}
\item  There is a unique solution $u \in N^{1,2}(\Omega)$ solving the Dirichlet problem
\begin{align}
  \Delta u=0, \quad \textrm{in} \; \Omega; \quad
  (u-f)|_{\partial \Omega}=0
\label{eqn:HE03_2}
\end{align}
in the weak sense of traces. In other words, $\Delta u=0$ in the weak sense and $u-f \in N_0^{1,2}(\Omega)$.
\item Weak maximum principle holds for $u$, i.e.,
       \begin{align}
          \sup_{x \in \Omega} u(x)=\sup_{x \in \partial \Omega} u(x), \quad  \inf_{x \in \Omega} u(x)=\inf_{x \in \partial \Omega} u(x).
       \end{align}
\item  Strong maximum principle holds for $u$, i.e.,   if there is an interior point $x_0 \in \Omega$ such that
      $\displaystyle u(x_0)= \sup_{x \in \Omega} u(x)$ or $\displaystyle u(x_0)=\inf_{x \in \Omega} u(x)$, then $u$ is a constant.
\end{itemize}
\label{prn:HD11_1}
\end{proposition}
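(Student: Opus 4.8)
The plan is to prove the three bullets in order, using the Dirichlet‑form machinery of \S2.2 and reducing everything to the smooth Riemannian case on $\mathcal{R}$.

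\textbf{Existence, uniqueness, regularity.} For existence I would solve (\ref{eqn:HE03_2}) variationally. On the bounded domain $\Omega$ the Poincar\'e inequality of Proposition~\ref{prn:HC29_2}, combined with non‑collapsing and the Sobolev inequality of Proposition~\ref{prn:HC29_3}, upgrades to a Friedrichs inequality $\norm{w}{L^2(\Omega)} \le C(\Omega)\,\mathscr{E}(w,w)^{1/2}$ for all $w \in N_0^{1,2}(\Omega)$; hence $\mathscr{E}$ is a bounded, coercive, symmetric bilinear form on the Hilbert space $N_0^{1,2}(\Omega)$. By Lax--Milgram there is a unique $w \in N_0^{1,2}(\Omega)$ with $\mathscr{E}(w,\varphi) = -\mathscr{E}(f,\varphi)$ for every $\varphi \in N_0^{1,2}(\Omega)$, equivalently $\mathscr{E}(f+w,\varphi)=0$; set $u:=f+w$, so $u-f\in N_0^{1,2}(\Omega)$ and $\Delta u = 0$ in the sense of Definition~\ref{dfn:HD29_2}. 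For uniqueness: if $u_1,u_2$ both solve (\ref{eqn:HE03_2}), then $w_0:=u_1-u_2\in N_0^{1,2}(\Omega)$ and $\mathscr{E}(w_0,w_0)=0$, which forces $w_0\equiv 0$ by the irreducibility observation recorded just before Proposition~\ref{prn:HD26_1} (or by Friedrichs). For regularity, $u|_{\mathcal{R}\cap\Omega}$ is a classical weak harmonic function, hence smooth by elliptic regularity (as already noted after Definition~\ref{dfn:HD29_2}); moreover the Gaussian heat‑kernel bounds of Proposition~\ref{prn:HD07_1} are equivalent to a scale‑invariant parabolic Harnack inequality on $X$, so $u$ is locally H\"older continuous across $\mathcal{S}$ as well. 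From now on I would work with this continuous representative, so the pointwise statements make sense.

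\textbf{Weak maximum principle.} Write $\ell:=\sup_{\partial\Omega}u=\sup_{\partial\Omega}f$, which is finite since the boundary trace of $u$ agrees with that of $f$ and $f$ is continuous on the compact set $\overline{\Omega}$. The idea is to test the equation against $\varphi:=(u-\ell)^{+}=\max\{u-\ell,0\}$. Because $u-f\in N_0^{1,2}(\Omega)$ and $f\le\ell$ on $\partial\Omega$, one checks that $\varphi\in N_0^{1,2}(\Omega)$ — a standard property of $N_0^{1,2}$, obtained by truncating near $\partial\Omega$ using the continuity of $f$ together with Proposition~\ref{prn:HD04_1} and Corollary~\ref{cly:HE10_1}. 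Since $u$ is weakly harmonic, $\mathscr{E}(u,\varphi)=0$; on the other hand $\nabla\varphi=\nabla u\cdot\mathbf{1}_{\{u>\ell\}}$ gives $\mathscr{E}(u,\varphi)=\mathscr{E}(\varphi,\varphi)=\int_{\mathcal{R}\cap\Omega}|\nabla\varphi|^2$. Hence $\mathscr{E}(\varphi,\varphi)=0$ with $\varphi\in N_0^{1,2}(\Omega)$, so $\varphi\equiv 0$, i.e. $u\le\ell$ in $\Omega$. Applying the same argument to $-u$, which solves (\ref{eqn:HE03_2}) with boundary data $-f$, gives $u\ge\inf_{\partial\Omega}u$, and together with the continuity of $u$ up to the trace this yields both stated equalities.

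\textbf{Strong maximum principle.} Suppose $u$ attains an interior maximum $M:=\sup_\Omega u$ (finite, by the weak maximum principle) at some $x_0\in\Omega$; I would run Moser's argument. Choose $r>0$ with $\overline{B(x_0,2r)}\subset\Omega$. The function $M-u\ge 0$ is harmonic on $\Omega$, and the parabolic Harnack inequality on $X$ (again a consequence of Proposition~\ref{prn:HD07_1}) gives
\[
  \sup_{B(x_0,r)}(M-u)\;\le\;C\inf_{B(x_0,r)}(M-u)\;\le\;C\bigl(M-u(x_0)\bigr)=0,
\]
so $u\equiv M$ on $B(x_0,r)$. Therefore $\{x\in\Omega:u(x)=M\}$ is relatively open in $\Omega$; it is relatively closed by continuity of $u$; hence it is a union of connected components of $\Omega$, and $u$ is constant on the component containing $x_0$. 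Since the Harnack inequality holds at every point of $X$, the case $x_0\in\mathcal{S}$ needs no separate treatment.

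\textbf{Expected main obstacle.} The genuinely new difficulty compared with the Riemannian case is entirely concentrated in controlling the singular set $\mathcal{S}$: first, one must know that the weak solution $u$ — a priori defined only up to a null set and smooth merely on $\mathcal{R}$ — is in fact continuous on all of $\Omega$, and this is exactly where the high Minkowski codimension of $\mathcal{S}$ and the Gaussian heat‑kernel bounds of Proposition~\ref{prn:HD07_1} must be invoked; second, one needs that the elementary operations used above (the truncation $(u-\ell)^{+}$, preservation of the zero boundary trace, and the integration by parts of Proposition~\ref{prn:HD13_2}) remain inside $N_0^{1,2}(\Omega)$, which rests on the metric‑space Sobolev facts established earlier in this section. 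Once these are secured, each of the three assertions reduces to its classical counterpart.
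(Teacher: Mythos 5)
Your proposal is correct and self-contained, but it takes a genuinely different route from the paper in the third bullet. For existence and uniqueness you give essentially the Lax--Milgram argument; the paper simply cites Theorems 7.12 and 7.14 of Cheeger \cite{Cheeger99}, which is the same variational mechanism packaged as a reference. For the weak maximum principle, you give the standard cut-off test using $(u-\ell)^{+}$; the paper states that it "follows from the uniqueness" and refers to Shanmugalingam \cite{ShanHar}, so your argument is a more explicit form of the same idea. The real divergence is the strong maximum principle. You run the elliptic Harnack inequality on $M-u\geq 0$ (justified because Proposition~\ref{prn:HD07_1}'s two-sided Gaussian bounds, or equivalently doubling plus $(1,2)$-Poincar\'e via Sturm, yield a scale-invariant Harnack inequality valid even at singular basepoints) and conclude $\{u=M\}$ is open and closed. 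The paper does acknowledge that this Harnack route works (crediting Sturm \cite{Stu96}), but then deliberately writes out a different, more "elementary" argument following Kinnunen--Shanmugalingam: it normalizes to $u\geq 0$ on $\partial\Omega$ with $u$ non-constant, observes that positivity on $\Omega\cap\mathcal{R}$ is classical, assumes $u(y_0)=0$ at a singular $y_0$, uses continuity of $u$ to make the sublevel set $\{u\leq\tau\}\cap B(y_0,2r)$ have volume $<\epsilon|B(y_0,2r)|$, applies Moser iteration to the bounded continuous subharmonic function $(\tau-u)^{+}$ to get $\sup_{B(y_0,r)}(\tau-u)^{+}\leq C\sqrt{\epsilon}\,\tau<\tau/2$, and contradicts $u(y_0)=0$. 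What your route buys is brevity and directness — one line of Harnack and a connectedness argument. What the paper's route buys is transparency about which ingredients are really being used: only $L^\infty$-$L^2$ Moser iteration for subharmonic functions (a weaker tool than the full two-sided Harnack inequality), plus the continuity extension across $\mathcal{S}$ (Proposition~\ref{prn:HD16_1}), making it easier to see exactly how the geometry of the singular set enters. Both proofs are valid; yours is shorter, the paper's is more self-contained given the toolkit it had already built in \S 2.2--2.3.
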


\begin{proof}
   The existence and uniqueness of the Dirichlet problem follows from Theorem 7.12 and Theorem 7.14 of Cheeger's work~\cite{Cheeger99},
   where a much more general case was considered.     The weak maximum principle follows from the uniqueness.
   Also, the weak maximum principle was proved by Shanmugalligam in~\cite{ShanHar}.
   The strong maximum principle follows from elliptic Harnack estimates, which is a consequence of the volume doubling and $(1,2)$-Poincar\'{e} inequality. 
   This is due to the work of K.T. Sturm in~\cite{Stu96}.  The manifold case was obtained by A. Grigor'yan in~\cite{Grigor}, and L. Saloff-Coste in~\cite{Saloffnote}. 
   More information can be found in the beautiful survey~\cite{Saloffsur} by L. Saloff-Coste. 
   
   We write down an elementary proof here for the convenience of the readers, based on the excellent underlying geometry.
   Here we follow Corollary 6.4 of~\cite{KiSh}. 
   Without loss of generality, we assume $u \geq 0$ on $\partial \Omega$ and $u$ is not a constant.
   It suffices to show that $u>0$ in $\Omega$.  Clearly, by classical harmonic function theory on Riemannian manifold and continuity of
   $u$(c.f.~Proposition~\ref{prn:HD16_1}), it is clear that
   $u>0$ on $\Omega \cap \mathcal{R}$.   Therefore, we only need to show that $u>0$ on $\Omega \cap \mathcal{S}$.
   We argue by contradiction.  If this statement were wrong, we can find a point $y_0 \in \Omega \cap \mathcal{S}$ such that
   $u(y_0)=0$.  Choose $r$ small enough such that $B(y_0,2r) \subset \Omega$.
   For each small $\epsilon$, choose $\tau$ small enough such that
   \begin{align*}
       \left| \Omega_{\tau} \cap B(y_0,2r) \right| < \epsilon |B(y_0,2r)|,
   \end{align*}
   where $\Omega_{\tau}=\{x \in \Omega| u(x) \leq \tau\}$.  Note that $\tau$ can be chosen since $\Omega_0 \cap B(y_0,2r)$ is a subset of
   $\mathcal{S}$ which has zero measure, and $u$ is continuous. 
   Now consider the function $\tau-u$, which is obviously harmonic.  Let $(\tau-u)^{+}$ be $\max\{\tau-u, 0\}$.  Then
   $(\tau-u)^{+}$ is a bounded, continuous, subharmonic function in $N^{1,2}(B(y_0,2r))$. So Moser(or Nash-Moser-De-Giorgi) iteration applies to obtain
   \begin{align*}
     \sup_{B(y_0,r)} |(\tau-u)^{+}|^2 \leq C \int_{B(y_0,2r)} |(\tau-u)^{+}|^2=C \int_{\Omega_{\tau} \cap B(y_0,2r)} |(\tau-u)^{+}|^2 \leq C\epsilon r^{2n} \tau^2
   \end{align*}
   for some $C=C(n,\kappa)$.  Choose $\epsilon$ small enough such that $C\epsilon^2 r^{2n}<\frac{1}{4}$.  Then we have
   \begin{align*}
     \sup_{B(y_0,r)} |(\tau-u)^{+}| <\frac{\tau}{2},
   \end{align*}
   which implies that $u>\frac{\tau}{2}$ on $B(y_0,r)$.  In particular, $u(y_0) \geq \frac{\tau}{2}>0$, which contradicts the assumption $u(y_0)=0$.
\end{proof}

Clearly, the essential stuff in the proof of the strong maximum principle of Proposition~\ref{prn:HD11_1} is a delicate use of elliptic Moser iteration.
In equation (\ref{eqn:HE03_2}),  if we replace the operator $\Delta$ by $\square$, the heat operator, then one can easily obtain a strong maximum principle for heat equation solutions, based on a parabolic Moser iteration. The details are left to the interested readers.
On the other hand, if we replace the right hand side of equation (\ref{eqn:HE03_2}) by a function $h$, we can also obtain uniqueness and existence of solutions.

\begin{proposition}[\textbf{Existence and uniqueness of Poisson equation solution}]
Suppose $\Omega$ is a bounded open set of $X$, $f \in N^{1,2}(\Omega)$, $h \in L^2(\Omega)$.
Then there exists a unique $u \in N^{1,2}(\Omega)$ such that
\begin{align}
  \Delta u=h, \quad \textrm{in} \; \Omega; \quad
   (u-f)|_{\partial \Omega}=0.
\label{eqn:HE03_1}
\end{align}
\label{prn:HE03_1}
\end{proposition}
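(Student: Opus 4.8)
The plan is to solve (\ref{eqn:HE03_1}) variationally, after reducing to homogeneous boundary data. Write $u=f+v$ with $v\in N_0^{1,2}(\Omega)$. Unwinding Definition~\ref{dfn:HD29_1} and using Proposition~\ref{prn:HD04_1} to pass from the test class $N_c^{1,2}(\Omega)$ to its completion $N_0^{1,2}(\Omega)$ by density, the problem (\ref{eqn:HE03_1}) is equivalent to finding $v\in N_0^{1,2}(\Omega)$ with
\[
   \mathscr{E}(v,\varphi)=-\int_{\Omega}h\varphi\, d\mu-\mathscr{E}(f,\varphi),\qquad\forall\,\varphi\in N_0^{1,2}(\Omega),
\]
the boundary condition being automatic since $u-f=v\in N_0^{1,2}(\Omega)$ vanishes on $\partial\Omega$ in the trace sense. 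Denote the right-hand side by $T(\varphi)$; it is a bounded linear functional on $N_0^{1,2}(\Omega)$, since $|T(\varphi)|\le\|h\|_{L^2(\Omega)}\|\varphi\|_{L^2(\Omega)}+\mathscr{E}(f,f)^{\frac12}\mathscr{E}(\varphi,\varphi)^{\frac12}\le(\|h\|_{L^2(\Omega)}+\|f\|_{N^{1,2}(\Omega)})\|\varphi\|_{N^{1,2}(\Omega)}$.

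The one structural point that needs checking is that $\mathscr{E}$ is coercive on $N_0^{1,2}(\Omega)$, i.e.\ a Poincar\'e inequality $\|\varphi\|_{L^2(\Omega)}\le C(\Omega,n,\kappa)\,\mathscr{E}(\varphi,\varphi)^{\frac12}$ holds for all $\varphi\in N_0^{1,2}(\Omega)$. I would deduce this from the uniform $L^2$-Sobolev inequality of Proposition~\ref{prn:HC29_3}: for $\varphi\in C_c^{\infty}(\mathcal{R})\cap N^{1,2}(X)$ it gives $\|\varphi\|_{L^{2n/(n-1)}(X)}\le C(n,\kappa)\,\mathscr{E}(\varphi,\varphi)^{\frac12}$, and, since $\Omega$ has finite measure, H\"older's inequality yields $\|\varphi\|_{L^2(\Omega)}\le|\Omega|^{1/n}\|\varphi\|_{L^{2n/(n-1)}(\Omega)}\le C(n,\kappa)|\Omega|^{1/n}\mathscr{E}(\varphi,\varphi)^{\frac12}$; this extends to all of $N_0^{1,2}(\Omega)$ by the density defining that space. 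Hence $\mathscr{E}(\varphi,\varphi)\ge c(\Omega,n,\kappa)\|\varphi\|_{N^{1,2}(\Omega)}^2$, so $(N_0^{1,2}(\Omega),\mathscr{E})$ is a Hilbert space whose norm is equivalent to the ambient $N^{1,2}$-norm, and $T$ is bounded on it.

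Existence then follows from the Riesz representation theorem ($\mathscr{E}$ being a symmetric, bounded, coercive bilinear form on the Hilbert space $N_0^{1,2}(\Omega)$): there is a unique $v\in N_0^{1,2}(\Omega)$ with $\mathscr{E}(v,\cdot)=T$, equivalently the unique minimizer of the strictly convex, coercive, weakly lower semicontinuous functional $\frac12\mathscr{E}(w,w)+\int_{\Omega}hw\, d\mu+\mathscr{E}(f,w)$ over $N_0^{1,2}(\Omega)$. Setting $u=f+v$ and unravelling signs gives, for every $\varphi\in N_c^{1,2}(\Omega)$, $-\mathscr{E}(u,\varphi)=-\mathscr{E}(f,\varphi)-\mathscr{E}(v,\varphi)=-\mathscr{E}(f,\varphi)-T(\varphi)=\int_{\Omega}h\varphi\, d\mu$, i.e.\ $\Delta u=h$ in the weak sense of Definition~\ref{dfn:HD29_1}. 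For uniqueness, if $u_1,u_2$ both solve (\ref{eqn:HE03_1}) then $w:=u_1-u_2\in N_0^{1,2}(\Omega)$ satisfies $\mathscr{E}(w,\varphi)=0$ for every $\varphi\in N_c^{1,2}(\Omega)$, hence for every $\varphi\in N_0^{1,2}(\Omega)$ by density; taking $\varphi=w$ gives $\mathscr{E}(w,w)=0$, and the Poincar\'e inequality then forces $\|w\|_{L^2(\Omega)}=0$, so $w=0$ in $N^{1,2}(\Omega)$ (a function vanishing in $L^2$ is the zero element of $N^{1,2}$). Since everything reduces to standard Hilbert-space arguments, no real obstacle remains beyond the Poincar\'e/coercivity step, which is where the geometry of $X$ (the uniform Sobolev constant of Proposition~\ref{prn:HC29_3}) enters; alternatively one may quote Theorems~7.12 and~7.14 of Cheeger~\cite{Cheeger99} verbatim, exactly as for the homogeneous Dirichlet problem in Proposition~\ref{prn:HD11_1}, with $h$ playing the role of a fixed source term.
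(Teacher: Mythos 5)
Your proof is correct, and it takes a mildly different route from the paper's. The paper decomposes $u=w+v$, where $w$ solves the harmonic Dirichlet problem of Proposition~\ref{prn:HD11_1} (with the required boundary data and $\Delta w=0$, quoting Cheeger's Theorems~7.12 and~7.14) and $v\in N_0^{1,2}(\Omega)$ solves the homogeneous-boundary Poisson problem $\Delta v=h$, the latter via Riesz representation; uniqueness of $v$ is attributed to the irreducibility of $\mathscr{E}$. You instead collapse this to a single step, writing $u=f+v$ with $v\in N_0^{1,2}(\Omega)$, solving one variational equation $\mathscr{E}(v,\cdot)=T$ by Riesz/Lax--Milgram, and you make explicit the coercivity ingredient that the paper leaves to the cited Cheeger theorems: the Poincar\'e inequality on $N_0^{1,2}(\Omega)$, which you derive from the uniform Sobolev inequality of Proposition~\ref{prn:HC29_3} and H\"older (the factor should be $|\Omega|^{1/(2n)}$ rather than $|\Omega|^{1/n}$, but that is a harmless typo). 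Your route is slightly more self-contained and direct; the paper's route has the advantage of re-using the already established Dirichlet theory of Proposition~\ref{prn:HD11_1}. Both are valid; there is no gap.
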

\begin{proof}
  First, let us consider the Poisson equation
 \begin{align*}
 \Delta v=h, \; \textrm{in} \; \Omega;   \quad v \in N_0^{1,2}(\Omega).
\end{align*}
By standard functional analysis, the existence of the above equation is guaranteed by Riesz representation theorem.
The uniqueness follows from the irreducibility of $\mathscr{E}$.
Second, it is obvious that there is a bijective map between the solution $u$ of (\ref{eqn:HE03_1})
and harmonic solution $w$ of (\ref{eqn:HE03_2}) by $u=w+v$.   Therefore, the existence and uniqueness of
(\ref{eqn:HE03_1}) follows from Proposition~\ref{prn:HD11_1}.
\end{proof}

Combining the strong maximum principle for harmonic functions in Proposition~\ref{prn:HD11_1} with the heat kernel estimates, we obtain the
strong maximum principle for subharmonic functions.

\begin{proposition} [\textbf{Strong maximum principle for subharmonic functions}]
Suppose $\Omega$ is a bounded domain of $X$, $u$ is a continuous subharmonic function in $N^{1,2}(\Omega)$. Then we have
\begin{align}
    \sup_{x \in \Omega} u(x) = \sup_{x \in \partial \Omega} u(x).
\label{eqn:HC29_2}
\end{align}
In other words,  the weak maximum principle holds for subharmonic functions. Moreover, if $\displaystyle \sup_{x \in \Omega}u(x)$ is achieved at
some point $x_0 \in \Omega$, then $u$ is a constant. Namely, the strong maximum principle holds for subharmonic functions.
\label{prn:HC29_5}
\end{proposition}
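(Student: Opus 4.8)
The plan is to prove the weak maximum principle first by a one-line energy argument resting on the irreducibility of $\mathscr{E}$, and then the strong maximum principle by showing that the coincidence set $\{u=\sup_\Omega u\}$ is relatively open and invoking connectedness of $\Omega$. For the weak statement I would argue as follows. Assume $u\in C(\overline{\Omega})$ and fix any constant $k>\sup_{\partial\Omega}u$. By continuity and compactness of $\overline{\Omega}$ the set $\{u>k\}$ is compactly contained in $\Omega$, so $(u-k)^{+}\in N_c^{1,2}(\Omega)\subset N_0^{1,2}(\Omega)$ is an admissible nonnegative test function in Definition~\ref{dfn:HD29_2}, and subharmonicity gives $\mathscr{E}\bigl(u,(u-k)^{+}\bigr)\le 0$. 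Since $\nabla u=\nabla(u-k)^{+}$ on $\{u>k\}$ and $\nabla(u-k)^{+}=0$ elsewhere, $\mathscr{E}\bigl(u,(u-k)^{+}\bigr)=\mathscr{E}\bigl((u-k)^{+},(u-k)^{+}\bigr)$, so this common value is $0$; irreducibility of $\mathscr{E}$ then forces $(u-k)^{+}\equiv 0$, i.e.\ $u\le k$ on $\Omega$. Letting $k\downarrow\sup_{\partial\Omega}u$ gives $\sup_\Omega u\le\sup_{\partial\Omega}u$, the reverse inequality being automatic from continuity; this is (\ref{eqn:HC29_2}).

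For the strong maximum principle, suppose $M:=\sup_\Omega u$ is attained at an interior point $x_0$ and set $v:=M-u\ge 0$, which is continuous, superharmonic (since $\Delta u\ge 0$ weakly) and satisfies $v(x_0)=0$. I would show that $Z:=\{x\in\Omega:v(x)=0\}$, which is relatively closed and nonempty, is also open; since $\Omega$ is a domain this gives $Z=\Omega$, i.e.\ $u\equiv M$. Given $y_0\in Z$, choose $\rho>0$ with $\overline{B(y_0,4\rho)}\subset\Omega$. The weak (elliptic) Harnack inequality for the nonnegative supersolution $v$ --- valid on $X$ because $X$ is volume doubling (Corollary~\ref{cly:HD20_1}) and carries a uniform $(1,2)$-Poincar\'{e} inequality (Proposition~\ref{prn:HC29_2}), equivalently because of the Gaussian heat kernel estimates of Proposition~\ref{prn:HD07_1}, and produced by exactly the Moser iteration already used in the proof of Proposition~\ref{prn:HD11_1} --- yields
\begin{align*}
 \left(\fint_{B(y_0,2\rho)}v^{p_0}\,d\mu\right)^{1/p_0}\le C\,\inf_{B(y_0,\rho)}v
\end{align*}
for some $p_0=p_0(n,\kappa)>0$ and $C=C(n,\kappa)$. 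Since $v\ge 0$ and $\inf_{B(y_0,\rho)}v\le v(y_0)=0$, the left-hand side vanishes, so $v\equiv 0$ a.e.\ on $B(y_0,2\rho)$, hence $v\equiv 0$ there by continuity and $B(y_0,2\rho)\subset Z$.

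To make the role of Proposition~\ref{prn:HD11_1} explicit, an alternative treatment of the opening of $Z$ is: at $y_0\in Z$ solve the Dirichlet problem $\Delta h=0$ on $B:=B(y_0,2\rho)$ with $h-u\in N_0^{1,2}(B)$; testing $\Delta(u-h)\ge 0$ against $(u-h)^{+}\in N_0^{1,2}(B)$ as in the weak statement gives $u\le h$ on $B$, so $h(y_0)\ge M\ge\sup_{\partial B}u=\sup_{\partial B}h$, whence $h$ attains its maximum at the interior point $y_0$ and Proposition~\ref{prn:HD11_1} forces $h\equiv M$ on $B$. Then $v=M-u=h-u$ is nonnegative superharmonic on $B$ and vanishes at $y_0$; unless $v\equiv 0$ on $B$, the classical strong minimum principle on the connected manifold $\mathcal{R}\cap B$ (connected since $\dim_{\mathcal{M}}\mathcal{S}<2n-3$) gives $v>0$ on $\mathcal{R}\cap B$, so $\{v=0\}\cap B\subset\mathcal{S}$ has measure zero; choosing $\tau>0$ small enough that $|\{v\le\tau\}\cap B|$ is an arbitrarily small fraction of $|B|$ and running the Moser iteration of Proposition~\ref{prn:HD11_1} on the bounded continuous subharmonic function $(\tau-v)^{+}$ yields $v>\tfrac{\tau}{2}$ on $B(y_0,\rho)$, contradicting $v(y_0)=0$.

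I expect the only real obstacle to be this last step, namely upgrading ``nonnegative superharmonic, vanishing at a point'' to ``identically zero near that point''. It rests entirely on the elliptic Harnack/Moser machinery over the singular space $X$; the mild subtlety is that $v$ is a priori defined and superharmonic only on the smooth part $\mathcal{R}$, so one should first use Proposition~\ref{prn:HD16_2} to extend the relevant bounded sub/superharmonic functions across $\mathcal{S}$, after which Sturm's Harnack estimate --- a consequence of the heat kernel estimates already established --- applies as on a manifold. All the remaining steps are soft, using only the irreducibility of $\mathscr{E}$ and the connectedness of $\Omega$.
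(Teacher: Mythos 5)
Your proposal is correct, and it departs from the paper at two places. For the weak maximum principle the paper simply cites Sturm (Lemma 4 of \cite{Stu94}), whereas you give a short self-contained energy argument: testing subharmonicity against $(u-k)^+\in N_c^{1,2}(\Omega)$, observing $\mathscr{E}\bigl(u,(u-k)^+\bigr)=\mathscr{E}\bigl((u-k)^+,(u-k)^+\bigr)\ge 0$, and invoking the irreducibility of $\mathscr{E}$ from Proposition~\ref{prn:HD26_1}; this buys self-containedness at essentially no extra cost. For the strong maximum principle the paper reduces in one line to the second half of the proof of Proposition~\ref{prn:HD11_1}: observe that $M-u$ is a nonnegative superharmonic function, use the classical strong minimum principle on the smooth part $\mathcal{R}$ to conclude $\{M-u=0\}\subset\mathcal{S}$, and then run the $L^2\to L^\infty$ Moser estimate on the subharmonic truncation $(\tau-(M-u))^+$ after a measure-smallness choice of $\tau$. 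Your first alternative compresses this two-step argument into a single application of the weak elliptic Harnack inequality for nonnegative supersolutions, available from Sturm's theory under volume doubling and $(1,2)$-Poincar\'{e} (both established earlier in the section); this is cleaner and bypasses the detour through $\mathcal{R}$, at the modest price of invoking a stronger analytic ingredient than the paper's simple $L^2\to L^\infty$ bound. Your second alternative is essentially the paper's argument dressed with an unnecessary harmonic-replacement detour: solving the Dirichlet problem for $h$ and proving $h\equiv M$ merely reconstructs $M-u$, which is already in hand. One small inaccuracy in your closing remark: you suggest $v$ is superharmonic a priori only on $\mathcal{R}$ and that Proposition~\ref{prn:HD16_2} is needed to extend it, but the hypothesis of Proposition~\ref{prn:HC29_5} together with Definition~\ref{dfn:HD29_2} already gives $\Delta u\ge 0$ weakly on all of $\Omega$, so $M-u$ is superharmonic on $\Omega$ with no extension needed; Proposition~\ref{prn:HD16_2} matters only when the hypothesis is posed on $\Omega\setminus\mathcal{S}$, as in Proposition~\ref{prn:HD16_1}.
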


\begin{proof}
The weak maximum principle is well known in literature.
For example, see Lemma 4 of~\cite{Stu94} and the reference therein.

The strong maximum principle can be proved as that in Proposition~\ref{prn:HD11_1}.  Actually, $-w$ is a nonnegative superharmonic function on $\Omega$.
If $w$ is not a constant, then we can regard $-w$ as $u$ in the second part of the proof of Proposition~\ref{prn:HD11_1}.
Then everything goes through since only Moser iteration for subharmonic function is used there. 
\end{proof}

\begin{proposition}[\textbf{Removing singularity of harmonic functions}]
Suppose $\Omega$ is an open domain in $X$, $u$ is a  bounded harmonic function on $\Omega \backslash \mathcal{S}$.
Then $u$ can be regarded as a harmonic function on $\Omega$. Moreover, on each compact subset of $\Omega$, $u$ is uniformly Lipschitz continuous.
In particular, $u$ can be extended continuously over the singular set $\Omega \cap \mathcal{S}$.
\label{prn:HD16_1}
\end{proposition}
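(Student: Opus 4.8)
The plan is to first upgrade $u$ to an element of $N^{1,2}_{loc}(\Omega)$ that is weakly harmonic across $\mathcal{S}$, and then to promote this to local Lipschitz continuity by combining the Cheng--Yau gradient estimate with the weak convexity of $\mathcal{R}$. For the first step I would apply Proposition~\ref{prn:HD16_2} to both $u$ and $-u$: each is a bounded subharmonic function on $\Omega\backslash\mathcal{S}$, hence each extends to an element of $N^{1,2}_{loc}(\Omega)$ which is subharmonic on $\Omega$. Thus $u\in N^{1,2}_{loc}(\Omega)$ and, being simultaneously subharmonic and superharmonic there, $u$ is harmonic on $\Omega$ in the weak sense of Definition~\ref{dfn:HD29_2}. (Equivalently: $u$ is harmonic on $\Omega\backslash\mathcal{S}$, and since by Proposition~\ref{prn:HD04_1} every nonnegative $\varphi\in N^{1,2}_c(\Omega)$ is an $N^{1,2}$-limit of smooth nonnegative functions supported in $\Omega\backslash\mathcal{S}$, the identity $\mathscr{E}(u,\varphi)=0$ passes to the limit.)

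Next I would bound $|\nabla u|$ on the regular part. Fix a compact $K\subset\Omega$, choose a slightly larger compact $K'\subset\Omega$ with $d(K,\partial K')>0$, and set $d_0=\frac14 d(K',\partial\Omega)>0$. For $x_0\in K'\cap\mathcal{R}$ we have $B(x_0,4d_0)\subset\Omega$, and on this ball $u$ is a bounded weak solution of $\Delta u=0$, hence smooth near $x_0$ by interior elliptic regularity on the manifold $\mathcal{R}$ (as noted after Definition~\ref{dfn:HD29_2}). Applying Proposition~\ref{prn:HC29_6} with $h\equiv 0$ and $r=d_0$ gives $|\nabla u|(x_0)\le \frac{C}{d_0}\|u\|_{L^\infty(\Omega)}=:\Lambda$ for every $x_0\in K'\cap\mathcal{R}$. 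To turn this pointwise bound into a Lipschitz bound, I would use property (3) of Definition~\ref{dfn:SC27_1}: for each fixed $x\in K'\cap\mathcal{R}$, all but a measure-zero set of points are joined to $x$ by a unique minimal geodesic of $X$ lying entirely in $\mathcal{R}$; integrating $|\nabla u|\le\Lambda$ along such a geodesic, and using continuity of $u$ on $\mathcal{R}$ to remove the exceptional set, yields $|u(x)-u(y)|\le\Lambda\,d(x,y)$ whenever $x,y\in K'\cap\mathcal{R}$ are close enough that the connecting geodesic stays inside $K'$. Patching over $K$, $u|_{K\cap\mathcal{R}}$ is $\Lambda$-Lipschitz. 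Since $\mathcal{S}$ has measure zero, $\mathcal{R}\cap K$ is dense in $K$, so $u|_{K\cap\mathcal{R}}$ extends uniquely to a $\Lambda$-Lipschitz function $\bar u$ on $K$; carrying this out along a compact exhaustion of $\Omega$ produces a locally Lipschitz $\bar u$ on $\Omega$ with $\bar u=u$ on $\mathcal{R}$, hence $\bar u=u$ $\mu$-a.e.

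Finally, $\bar u$ is the desired object: it is continuous and locally Lipschitz, agrees with $u$ off $\mathcal{S}$, and as an a.e.-representative of the weakly harmonic function $u\in N^{1,2}_{loc}(\Omega)$ obtained in the first step, it is harmonic on $\Omega$. I expect the main obstacle to be exactly the passage from the interior gradient bound on $\mathcal{R}$ to an honest Lipschitz estimate in the ambient distance: a priori the intrinsic distance of the open manifold $\mathcal{R}$ may exceed the distance in $X$, and it is precisely the weak convexity of Definition~\ref{dfn:SC27_1}(3)---that minimal $X$-geodesics between (almost every pair of) regular points avoid $\mathcal{S}$---which forces the two distances to coincide and lets the gradient bound integrate correctly. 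Everything else is assembly of Proposition~\ref{prn:HD16_2}, Proposition~\ref{prn:HD04_1} and Proposition~\ref{prn:HC29_6}.
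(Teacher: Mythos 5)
Your proof is correct and follows essentially the same route as the paper: use Proposition~\ref{prn:HD16_2} to place $u$ in $N^{1,2}_{loc}(\Omega)$ and then push the weak harmonicity across $\mathcal{S}$ via density of smooth test functions supported in $\Omega\backslash\mathcal{S}$ (Proposition~\ref{prn:HD04_1}), and finally combine the Cheng--Yau estimate (Proposition~\ref{prn:HC29_6}) with the weak convexity of $\mathcal{R}$ to integrate the gradient bound into a genuine Lipschitz estimate in the ambient distance of $X$. The paper's proof is just a compressed version of what you wrote; your explicit identification of weak convexity as the step that reconciles the intrinsic distance of $\mathcal{R}$ with the distance on $X$ is exactly the point the paper gestures at with ``density and weak convexity of $\mathcal{R}$.''
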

\begin{proof}
 In light of Proposition~\ref{prn:HD16_2}, we see that $u \in N_{loc}^{1,2}(\Omega)$.  Since $\Delta u=0$ on $\Omega \backslash \mathcal{S}$, we see that
 \begin{align*}
    \int_{\Omega} \langle \nabla u, \nabla \varphi \rangle=0
 \end{align*}
 for every smooth test function supported on $\Omega \backslash \mathcal{S}$.  However, such functions are dense in $N_0^{1,2}(\Omega)$, so the above
 equation actually holds form every $\varphi \in N_c^{1,2}(\Omega)$. Therefore, $u$ is harmonic in $\Omega$ by definition.

 The Lipschitz continuity follows from Proposition~\ref{prn:HC29_6} and the density and weak convexity of $\mathcal{R}$.
\end{proof}

Note that the weak convexity of $\mathcal{R}$ is important for that $u$ can be extended over singularities. For otherwise, the limit of $u(x_i)$ for $x_i \to x_0$ may depends
on the choice of  sequence $\{x_i\}$, where $x_0 \in \mathcal{S}, x_i \in \mathcal{R}$.   If $\mathcal{R}$ is not convex, there is an easy counter example
of Proposition~\ref{prn:HD16_1}.  Let $X$ be the union of two cones $C(S^3/\Gamma)$, for some finite group $\Gamma \in ISO(S^3)$, by identifying two vertices.
In this case, $\mathcal{S}$ is the isolated vertex $O$.  Let $u$ be $1$ on  one branch and $0$ on the other, then it is clear that $u$ is a harmonic function
on $X \backslash \mathcal{S}$.  However, $u$ can not take a value at $O$ so that $u$ is continuous.  Of course, convexity of $\mathcal{R}$ is only a sufficient
condition to guarantee the continuity extension.  It can be replaced by other weaker conditions.
Moreover, based on Proposition~\ref{prn:HC29_6}, one can obtain uniform gradient estimate of $|\nabla p(t,\cdot, x_0)|$, which depends only
on $t,n$ and $\kappa$.  Hence the heat kernel $p(t,\cdot, x_0)$ is a continuous function on $X \times (0,\infty)$.
Therefore,  by approximation, the estimate in Proposition~\ref{prn:HD07_1} holds on every point on
$X$, even if this point is singular.

\subsection{Approximation  functions of distance}

\begin{proposition}[\textbf{Almost super-harmonicity of distance function}] Suppose $x_0 \in X$,  $r(x)=d(x,x_0)$. Then we have
\begin{align}
\Delta r^2 \leq 4n
\label{eqn:HC29_1}
\end{align}
in the weak sense. In other words, for every nonnegative $\chi \in N_c^{1,2}(X)$, we have
\begin{align}
      -\int_X \left\langle \nabla r^2, \nabla \chi \right\rangle \leq \int_X 4n \chi.
\label{eqn:HD16_3}
\end{align}
\label{prn:HC29_1}
\end{proposition}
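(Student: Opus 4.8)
The plan is to reduce the assertion to a radial integration-by-parts computation in geodesic polar coordinates, exactly of the type already used in the proof of Proposition~\ref{prn:HD19_1}. First note that $r^2$ is locally Lipschitz, so $\nabla r^2 = 2r\,\nabla r \in L^\infty_{loc}(X)$ with $|\nabla r^2|\le 2r$ a.e.; hence on any fixed bounded neighborhood of $\supp\chi$ both sides of (\ref{eqn:HD16_3}) depend continuously on $\chi$ in the $N^{1,2}$-norm. By Proposition~\ref{prn:HD04_1} (and Corollary~\ref{cly:HE10_1}) every nonnegative $\chi\in N_c^{1,2}(X)$ is an $N^{1,2}$-limit of nonnegative functions in $C_c^\infty(\mathcal{R})$, so it suffices to prove (\ref{eqn:HD16_3}) for $\chi\in C_c^\infty(\mathcal{R})$ with $\chi\ge 0$; in particular $\mathcal{S}$ never enters the integrals.

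Assume first $x_0\in\mathcal{R}$. By weak convexity of $\mathcal{R}$, away from the measure-zero cut locus $\mathcal{C}_{x_0}$ every point is joined to $x_0$ by a unique minimal geodesic in $\mathcal{R}$, which gives polar coordinates $(\theta,s)$ with $s=r$ and volume element $d\mu=J(s,\theta)\,ds\,d\theta$, where for each direction $\theta$ the density $J(\cdot,\theta)$ is smooth and positive on $(0,c(\theta))$ and the coordinate chart exhausts the full-measure set $\{s<c(\theta)\}$ (exactly as in Proposition~\ref{prn:HD19_1}). On $\mathcal{R}\setminus(\mathcal{C}_{x_0}\cup\{x_0\})$ the function $r$ is smooth, and Ricci-flatness yields the classical Laplacian comparison $\Delta r\le \frac{2n-1}{s}$, i.e. $\partial_s\log J\le \frac{2n-1}{s}$. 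Writing $\langle\nabla r^2,\nabla\chi\rangle = 2s\,\partial_s\chi$ and integrating by parts in $s$ on $(0,c(\theta))$, the boundary term at $s=0$ vanishes ($sJ\sim s^{2n}\to 0$, $\chi$ bounded), while the boundary term at $s=c(\theta)$ equals $-2c(\theta)\bigl(\lim_{s\uparrow c(\theta)}J(s,\theta)\bigr)\chi(\gamma_\theta(c(\theta)))\le 0$ since $J\ge 0$ and $\chi\ge 0$. Hence
\begin{align*}
  -\int_X\langle\nabla r^2,\nabla\chi\rangle\,d\mu
  &= -\int\!\!\int_0^{c(\theta)} 2sJ\,\partial_s\chi\,ds\,d\theta
   \le \int\!\!\int_0^{c(\theta)}\bigl(2J+2s\,\partial_sJ\bigr)\chi\,ds\,d\theta\\
  &= \int\!\!\int_0^{c(\theta)} 2J\bigl(1+s\,\partial_s\log J\bigr)\chi\,ds\,d\theta
   \le 4n\int\!\!\int_0^{c(\theta)} J\chi\,ds\,d\theta = 4n\int_X\chi\,d\mu,
\end{align*}
which is (\ref{eqn:HD16_3}).

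For a singular base point $x_0\in\mathcal{S}$, choose $x_i\in\mathcal{R}$ with $x_i\to x_0$ and set $r_i=d(\cdot,x_i)$; the previous step gives $\Delta r_i^2\le 4n$ in the weak sense. On any fixed bounded set $r_i^2\to r^2$ uniformly by the triangle inequality and $|\nabla r_i^2|\le 2r_i$ is uniformly bounded, so $\nabla r_i^2\to\nabla r^2$ weakly in $L^2_{loc}(X)$; passing to the limit in $-\int_X\langle\nabla r_i^2,\nabla\chi\rangle\le 4n\int_X\chi$ for each fixed $\chi\in C_c^\infty(\mathcal{R})$ with $\chi\ge0$ yields the claim. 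This is the same approximation device already employed in Proposition~\ref{prn:HD19_1}.

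The only delicate point is the radial integration by parts across $\mathcal{C}_{x_0}$: one needs that $J(\cdot,\theta)$ is absolutely continuous on $(0,c(\theta))$ with a nonnegative left limit at $c(\theta)$ and that the polar chart covers a full-measure set, so that the cut-locus contribution enters with the favorable sign. In the smooth complete Riemannian setting this is Calabi's classical argument; here it is inherited from the weak convexity and Ricci-flatness of $\mathcal{R}$ in precisely the same way as in the proof of Proposition~\ref{prn:HD19_1}, to which the details can be deferred.
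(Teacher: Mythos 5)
Your proof is correct and follows essentially the same route as the paper: reduce to nonnegative $\chi\in C_c^\infty(\mathcal{R})$ by the density results, treat the regular base point case via Laplacian comparison on $\mathcal{R}$ (the paper asserts this as ``same as the smooth Riemannian manifold case''; you spell out the Calabi-type radial integration by parts with the favorable sign at the cut locus), and handle a singular base point by approximating with $x_i\in\mathcal{R}\to x_0$ and passing to the weak $N^{1,2}$-limit of $r_i^2$. The extra detail you supply for the integration by parts is the only substantive addition over the paper's proof.
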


\begin{proof}
Let us first assume $x_0 \in \mathcal{R}$.
Clearly, away from the generalized cut locus, we have
$\Delta r^2 \leq 4n$ in the classical sense.  Therefore, $\Delta r^2 \leq 4n$ on $\mathcal{R}$ in the distribution sense,
same as the smooth Riemannian manifold case.   Since smooth cutoff functions supported on $\mathcal{R}$
are dense in $N_0^{1,2}(X)$(c.f. Corollary~\ref{cly:HE10_1} and Proposition~\ref{prn:HD04_1}, where $codim(\mathcal{S})>2$ is essentially used),
we see that for every $\chi \in N_c^{1,2}(X)$,  inequality (\ref{eqn:HD16_3}) holds true.

 Now suppose $x_0 \in \mathcal{S}$, we can choose regular points $x_i \to x_0$.  Let  $r_i=d(x_i, \cdot)$, then for each nonnegative function $\chi \in N_c^{1,2}(X)$, we have
\begin{align*}
  -\int_X \left\langle \nabla r_i^2, \nabla \chi \right\rangle \leq \int_X 4n \chi.
\end{align*}
Let $\Omega$ be a bounded open set containing the support of $\chi$.
Then $r_i^2$ weakly converges to a unique limit in $N^{1,2}(\Omega)$,
$r_i^2$ strongly converges to $r^2$ in $L^2(\Omega)$. This means that
$r^2$ is the weak limit of $r_i^2$ in $N^{1,2}(\Omega)$. It follows that
\begin{align*}
    -\int_X \left\langle \nabla r^2, \nabla \chi \right\rangle
    =-\lim_{i \to \infty}  \int_X \left\langle \nabla r_i^2, \nabla \chi \right\rangle
    \leq \int_X 4n \chi.
\end{align*}
\end{proof}

In view of Proposition~\ref{prn:HC29_1}, we can obtain many rigidity theorems.

\begin{lemma}[\textbf{Cheeger-Gromoll type splitting}]
Suppose $X$ contains a straight line $\gamma$. Then there is a length space $N$ such that
$X$ is isometric to $N \times \R$ as metric space product.
\label{lma:HE04_1}
\end{lemma}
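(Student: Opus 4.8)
The plan is to run the classical Cheeger--Gromoll line splitting argument on $X$, using the analytic machinery built above to compensate for the singular set. Write $\gamma:\R\to X$ for the given line and introduce the two Busemann functions
\[
 b^{+}(x)=\lim_{t\to\infty}\bigl(d(x,\gamma(t))-t\bigr),\qquad
 b^{-}(x)=\lim_{t\to\infty}\bigl(d(x,\gamma(-t))-t\bigr),
\]
both limits existing by monotonicity, both functions being $1$-Lipschitz, hence in $N^{1,2}_{loc}(X)$. Since each $d(\cdot,\gamma(\pm t))^{2}$ satisfies $\Delta(\cdot)\le 4n$ weakly on $X$ by Proposition~\ref{prn:HC29_1}, a test-function manipulation gives $\Delta d(\cdot,\gamma(\pm t))\le\frac{2n-1}{d(\cdot,\gamma(\pm t))}$ weakly away from the base point; letting $t\to\infty$ shows $b^{+}$ and $b^{-}$ are superharmonic on all of $X$. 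By the triangle inequality $b^{+}+b^{-}\ge 0$ with equality along $\gamma$, so $-(b^{+}+b^{-})$ is a continuous subharmonic function in $N^{1,2}$ attaining its maximum $0$ at the interior point $\gamma(0)$; applying the strong maximum principle (Proposition~\ref{prn:HC29_5}) on arbitrarily large balls about $\gamma(0)$ forces $b^{+}+b^{-}\equiv 0$. Therefore $b:=b^{+}=-b^{-}$ is simultaneously sub- and super-harmonic, i.e.\ harmonic on $X$, and by Proposition~\ref{prn:HD16_1} it is smooth (and locally Lipschitz) on the regular part $\mathcal R$.

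Next I would show $b$ is, on $\mathcal R$, an affine Riemannian submersion onto $\R$. Fix any $x\in X$; taking subsequential limits of minimal geodesics from $x$ to $\gamma(t_i)$ and to $\gamma(-t_i)$ with $t_i\to\infty$ (these exist since $X$ is proper by Bishop--Gromov, Proposition~\ref{prn:HD19_1}) produces rays from $x$ along which $b^{+}$, resp.\ $b^{-}$, decreases at unit rate; using $b^{+}=-b^{-}$, their concatenation is a curve $c_x:\R\to X$ through $x$ with $b(c_x(s))=b(x)-s$, which is length-minimizing (since $b$ is $1$-Lipschitz) and hence a line. For $x\in\mathcal R$, openness of $\mathcal R$ makes $c_x$ a smooth unit-speed geodesic near $x$, so differentiating $b\circ c_x$ gives $|\nabla b|(x)\ge 1$; combined with the Lipschitz bound, $|\nabla b|\equiv 1$ on $\mathcal R$. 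Feeding $\Delta b=0$, $Ric\equiv 0$ on $\mathcal R$, and $|\nabla b|^{2}\equiv 1$ into the Bochner formula $\tfrac12\Delta|\nabla b|^{2}=|\nabla^{2}b|^{2}+Ric(\nabla b,\nabla b)+\langle\nabla b,\nabla\Delta b\rangle$ yields $\nabla^{2}b\equiv 0$ on $\mathcal R$. Thus $\nabla b$ is a parallel unit vector field on $\mathcal R$, in particular a Killing field, and its integral curves are exactly the lines $c_x$ traversed backwards.

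The flow $\Phi_s$ of $\nabla b$ is then a (local) one-parameter group of isometries of $\mathcal R$ with $b\circ\Phi_s=b+s$, carrying the level set $\mathcal R_c:=b^{-1}(c)\cap\mathcal R$ isometrically onto $\mathcal R_{c+s}$; its orbits are the lines $c_x$, which extend for all parameter values in $X$. Granting that these orbits stay inside $\mathcal R$ (so the flow is complete on $\mathcal R$), the map $\mathcal R_0\times\R\to\mathcal R$, $(y,s)\mapsto\Phi_s(y)$, is an isometry for the product metric, so $(\mathcal R,g)\cong(\mathcal R_0,g|_{\mathcal R_0})\times\R$. Finally, because $\dim_{\mathcal M}\mathcal S<2n-3$ the regular part $\mathcal R$ is dense in $X$, and its weak convexity (axiom (3) of Definition~\ref{dfn:SC27_1}) forces its intrinsic length metric to agree with $d_X$; hence $X$ is the metric completion of $(\mathcal R,d_X)$. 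Passing to completions on both sides of the splitting and using $\overline{A\times\R}=\bar A\times\R$ for product metrics gives $X\cong N\times\R$ with $N:=\overline{\mathcal R_0}$, a length space, which is the assertion.

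The main obstacle is precisely the step I flagged in parenthesis: upgrading the smooth de Rham-type splitting of the open, \emph{incomplete} manifold $\mathcal R$ to a genuine metric product decomposition of the singular space $X$. One must rule out a gradient-flow line reaching $\mathcal S$ in finite parameter time — equivalently, show the $b$-affine lines foliate \emph{all} of $X$ with a unique line through each point, including singular ones — and one must check that taking the metric completion respects the product structure. I expect both to follow from the weak convexity of $\mathcal R$, the high codimension of $\mathcal S$, and the fact that a partial isometry of $\mathcal R$ extends to an isometry of the completion $X$ preserving $\mathcal R$ and $\mathcal S$; by contrast, the ray/asymptotic-line constructions in the second paragraph are routine since $\mathcal R$ is open and one only needs the smooth Riemannian geometry of $\mathcal R$ near a regular point.
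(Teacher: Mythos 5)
Your outline tracks the paper's proof almost line for line through the construction of the harmonic Busemann function $b$, the identity $b^{+}+b^{-}\equiv 0$ via the strong maximum principle, the Weitzenb\"ock computation giving $|\nabla b|\equiv 1$ and $\mathrm{Hess}_{b}\equiv 0$ on $\mathcal R$, and the conclusion that $\nabla b$ is a parallel Killing field there. (One small remark in your favor: you correctly call $b^{\pm}$ superharmonic; the paper's text writes ``subharmonic,'' which is a slip — the argument it then runs, strong maximum principle applied to $-(b^{+}+b^{-})\le 0$, is exactly the one you write.)

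The genuine gap is exactly the one you flag yourself, and your proposed repair — ``a partial isometry of $\mathcal R$ extends to an isometry of the completion $X$ preserving $\mathcal R$ and $\mathcal S$'' — is circular as stated: the issue is precisely to show that the extension sends regular points to regular points, i.e.\ that an integral curve of $\nabla b$ emanating from a regular point does not hit $\mathcal S$ in finite time, and asserting that the extended isometry ``preserves $\mathcal R$ and $\mathcal S$'' presupposes that conclusion. The paper devotes two claims to this. Claim~\ref{clm:MA20_1} produces, for each $A$, an exceptional set $E_A$ with $\dim_{\mathcal M}E_A<2n-2$ outside of which the flow $\varphi_t$ stays in $\mathcal R$ for $|t|\le A$; this uses the coarea estimate on a perturbed level hypersurface of $d(\cdot,\mathcal S)$ together with the Killing property, and the Minkowski codimension hypothesis $\dim_{\mathcal M}\mathcal S<2n-3$ is used essentially. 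Claim~\ref{clm:GD11_1} then shows regular flow lines cannot reach $\mathcal S$: assuming a first hitting time $T_0$, one builds, from the cone-type cutoff $\chi_r$ pushed through the partially defined flow, an $N^{1,2}_0$ function of Lipschitz norm $\le 1$ and replaces it by a ``good version'' obtained by running the heat semigroup briefly (so that the gradient bound is pointwise, not merely essential); comparing $\tilde f(y_0)$ and $\tilde f(\varphi_{T_0-\epsilon}(x))$ shows $\varphi_{T_0-\epsilon}$ is distance-nonincreasing, the reversed flow gives the opposite inequality, hence it preserves $|B(\cdot,r)|$ for small $r$. The volume-density gap (axiom~5 of Definition~\ref{dfn:SC27_1} together with Anderson's gap theorem, i.e.\ Corollary~\ref{cly:SL23_1}) then forces $\varphi_{T_0}(x_0)$ to be regular, a contradiction. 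So your intuition about which hypotheses do the work is right — high codimension of $\mathcal S$ plus some density/convexity of $\mathcal R$ — but the mechanism is the quantitative volume-density dichotomy between regular and singular points, not an abstract extension principle; without that you cannot rule out the extended isometry shuffling $\mathcal R$ into $\mathcal S$.
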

\begin{proof}
The proof is almost the same as the classical one.  However, we shall take this as an opportunity to check the analysis tools developed in previous subsections.
Actually, fix $x_0 \in \gamma$, we can divide $\gamma$ into two rays $\gamma^{+}$ and $\gamma^{-}$.
Accordingly, there are Buseman functions $b^{+}$ and $b^{-}$.  Proposition~\ref{prn:HC29_1} implies that $\Delta r \leq \frac{2n-1}{r}$ in the weak sense,
which in turn forces both $b^{+}$ and $b^{-}$ to be subharmonic functions.  By triangle inequality, we know $b^{+}+b^{-} \geq 0$ globally and
achieve $0$ on $x_0$.  It follows from strong maximum principle, by Proposition~\ref{prn:HC29_5},  that $b^{+}+b^{-} \equiv 0$.
Consequently, $b^{+}$ is harmonic.  Then Weitzenb\"{o}ck formula implies that in the weak sense, we have
\begin{align*}
  0=\frac{1}{2}\Delta |\nabla b^{+}|^2= |Hess_{b^{+}}|^2.
\end{align*}
Since $b^{+}$ is harmonic, it is harmonic on $\mathcal{R}=X \backslash \mathcal{S}$.  By standard improving regularity theory of harmonic functions on
smooth manifold, we see that $b^{+}$ is a smooth function on $\mathcal{R}$ satisfying
\begin{align*}
  |\nabla b^{+}|^2 \equiv 1, \quad  |Hess_{b^{+}}|^2 \equiv 0.
\end{align*}
Up to this step, everything is the same as the classical case.
However, since the regular part $\mathcal{R}$ is not complete, the following argument is slightly different.
On $\mathcal{R}$, since $\mathcal{L}_{\nabla b^{+}} g=2Hess_{b^{+}}=0$, we see that $\nabla b^{+}$ is a Killing field.
The flow generated by $\nabla b^{+}$ preserves metrics, and in particular the volume element.   
By the high codimension of $\mathcal{S}$, weak convexity of $\mathcal{R}$ and the essential gap of volume density between regular and singular points, one can obtain that the flow generated by $\nabla b^{+}$ preserves regularity.
The full details will be explained as follows. 

Let $\varphi_t$ be the time $t$ flow map generated by $\nabla b^{+}$ when it is well defined. In other words,  we have $\frac{d}{dt} \varphi_t(x)=\left. \nabla b^{+} \right|_{\varphi_t(x)}$ whenever $\varphi_t(x) \in \mathcal{R}$. 

\begin{claim}[\textbf{Existence of flows away from small sets}]
 For each fixed $x_0 \in X$ and $A>0$, there is a set $E_A$ such that $\varphi_t(x)$ exists and locates in $\mathcal{R}$ for all
 $x \in B(x_0, A) \backslash E_A$ and $t \in [-A, A]$.   Moreover, we have
 \begin{align}
   \dim_{\mathcal{M}} E_A \leq \dim_{\mathcal{M}} \mathcal{S} +1<2n-2.  \label{eqn:MC14_4}
 \end{align}
 Consequently, there is a measure-zero set $E$ such that $\varphi_t(x)$ exists and locates in $\mathcal{R}$ for all $x \in X \backslash E$ and $t \in (-\infty, \infty)$.  
\label{clm:MA20_1} 
\end{claim}

Fix $x_0 \in X$ and choose $\xi$ to be a very small positive number, $A$ to be a large positive number.  
Let $q_0$ be the Minkowski codimension of $\mathcal{S}$, i.e., $q_0=2n-\dim_{\mathcal{M}} \mathcal{S}$,   
$\epsilon$ be a very small number to be used in the volume estimate related to Minkowski codimension. 
Note that $d(\cdot, \mathcal{S})$ is a Lipshitz function with Lipshitz constant $1$.
By perturbing $d(\cdot, \mathcal{S})$, we can find a smooth hyper surface $\Sigma_{\xi}$ (c.f. Corollary~\ref{cor:MA20_1} for more details)
such that
\begin{align*}
 &\left|  B(x_0, 3 A) \cap \Sigma_{\xi} \right|_{\mathcal{H}^{2n-1}} \leq C \xi^{q_0-\epsilon-1},  \\
 & \frac{1}{H} \xi <d(x, \mathcal{S})< H \xi, \quad \forall \; x \in \Sigma_{\xi} \cap B(x_0, 2A). 
\end{align*}
Note that the $\xi^{q_0-\epsilon}$ in the first inequality comes from the fact that $\dim_{\mathcal{M}}\mathcal{S}=2n-q_0$ and the application of co-area formula. 
The constant $C$ in the first inequality depends both on $\epsilon$ and the set $B(x_0, 3A)$. 
The constant $H$ in the second inequality depends on $\kappa, n$ and comes from the perturbation technique. $H$ will be fixed in the following discussion.
$C$ may vary from line to line, as usual. 

Define a set
\begin{align}
  E_{A,\xi} \triangleq \left\{ x\in \overline{B(x_0, A)} \left| d\left( \varphi_t(x), \mathcal{S} \right) \leq \frac{1}{H} \xi, \quad \textrm{for some} \; t \in [-A, A] \right. \right\}.  
\label{eqn:MA20_0}  
\end{align}
Now we decompose $E_{A,\xi}$ into two parts $I$ and $II$ as follows.
\begin{align*}
   I= \{x \in \overline{B(x_0, A)} | d(x, \mathcal{S}) \leq 10 \xi\} \cap E_{A,\xi}, \quad
   II=\{x \in \overline{B(x_0, A)} | d(x, \mathcal{S}) >10 \xi \} \cap E_{A,\xi}. 
\end{align*}
By the Minkowski dimension assumption of $\mathcal{S}$, we know that 
\begin{align}
 |I| \leq    \left| \left\{ \left. x \in \overline{B(x_0, A)} \right| d(x, \mathcal{S}) \leq 10 \xi \right\} \right| \leq C \xi^{q_0-\epsilon}, 
\label{eqn:MA20_1} 
\end{align}
where $|\cdot|$ means the $2n$-dimensional Hausdorff measure. 
Taking every point $y \in II$, one can flow it to a point on $\Sigma_{\xi}$ at some time $t \in [-A, A]$ by the definition of $E_{A,\xi}$. 
Since $|\nabla b^{+}|=1$, it is clear that $d(y, \varphi_t(y)) \leq |t|$. Then triangle inequality implies that
\begin{align*}
   d(x_0, \varphi_t(y)) < d(x_0, y) + d(y, \varphi_t(y)) \leq A + |t| \leq 2A < 3A
\end{align*}
Therefore, the set $II$ can be locally regarded as a bundle over $\Sigma_{\xi} \cap B(x_0, 3A)$.  Note that along the flow line of the Killing field $\nabla b^+$,  $\varphi_t$ preserves local isometry. 
We equip the set $\left\{\Sigma_{\xi} \cap B(x_0, 3A) \right\} \times [-A, A]$ with the obvious product measure. Consider the map
\begin{align*}
  \varphi: \Omega \subset \left\{\Sigma_{\xi} \cap B(x_0, 3A) \right\} \times [-A, A]  &\mapsto  X, \\
              (x,t) &\mapsto \varphi_t(x).    
\end{align*} 
Here $\Omega$ is the  maximal subset of $\left\{\Sigma_{\xi} \cap B(x_0, 3A) \right\} \times [-A, A]$ such that $\varphi(x,t)=\varphi_t(x)$ is well defined. 
It is clear that $\varphi$ decrease volume whenever the flow line is not perpendicular to $\Sigma_{\xi}$.   It follows that
\begin{align*}
 |II| \leq  |\Omega|  \leq \left| \left\{\Sigma_{\xi} \cap B(x_0, 3A) \right\}  \right|_{\mathcal{H}^{2n-1}} \cdot 3A \leq CA \xi^{q_0-\epsilon-1}. 
\end{align*}
Combining the above inequality with (\ref{eqn:MA20_1}), we have
\begin{align}
    |E_{A,\xi}| \leq |I|+|II| \leq  C \xi^{q_0-\epsilon} + C A \xi^{q_0-\epsilon-1} \leq C\xi^{q_0-\epsilon-1},   \label{eqn:MA20_3}
\end{align}
where the last $C$ depends on $n,\kappa$ and $B(x_0, 3A)$.

We observe that
\begin{align}
   \left\{ x \left| d(x, E_{A, \xi}) < H^{-1}\xi  \right. \right\} \subset E_{2A, 2\xi}.  \label{eqn:MC14_0}
\end{align}
In fact, if $x$ locates in the $H^{-1}\xi$-neighborhood of $E_{A, \xi}$,  then we can find a point $y \in E_{A, \xi}$ such that
$d(x, E_{A, \xi})=d(x,y)=\delta<H^{-1}\xi$.  So we can find a shortest geodesic connecting $x$ to $y$ satisfying
$\gamma(0)=x$ and $\gamma(\delta)=y$, with $|\gamma|=\delta$.  
Note that we can assume $\gamma$ is a smooth geodesic. For otherwise, we have
$d(x, \mathcal{S})\leq \delta <H^{-1}\xi$,  which automatically implies $x \in E_{2A, 2\xi}$ by the definition in (\ref{eqn:MA20_0}). 
For the same reason, triangle inequality guarantees us to assume
\begin{align}
  d(\gamma, \mathcal{S})>H^{-1}\xi.   \label{eqn:MC14_1}
\end{align}
As $y \in E_{A, \xi}$, following its definition in (\ref{eqn:MA20_0}),  we can find a $t_0 \in [-A, A]$ such that 
\begin{align}
d(\varphi_{t_0}(y), \mathcal{S}) \leq H^{-1}\xi.  \label{eqn:MC14_2}
\end{align}
Let $s_0$ be the smallest positive value such that $d(\varphi_{s_0}(\gamma), \mathcal{S}) \leq H^{-1}\xi$. 
The combination of (\ref{eqn:MC14_1}) and (\ref{eqn:MC14_2}) then yields that $0<|s_0| \leq |t_0|$. 
Note that $\varphi_{s}(\gamma)$ is well defined on $(-|s_0|, |s_0|)$. So the length of $\varphi_s(\gamma)$ is the same as the length of $\gamma$
for each $s \in (-|s_0|, |s_0|)$.
Now we are ready to estimate the distance 
between $\varphi_{s_0}(x)$ and the singular set $\mathcal{S}$:
\begin{align*}
  d(\varphi_{s_0}(x), \mathcal{S}) \leq |\varphi_{s_0}(\gamma)| + d(\varphi_{s_0}(\gamma), \mathcal{S})
   =|\gamma| + d(\varphi_{s_0}(\gamma), \mathcal{S})
   <H^{-1} \xi + H^{-1} \xi=\frac{2\xi}{H}. 
\end{align*}
Note that $E_{A, \xi} \subset \overline{B(x_0, A)}$.  Triangle inequality implies that $x \in \overline{B(x_0, 2A)}$. 
Recall also that $s_0 \in [-A, A] \subset [-2A, 2A]$.  In light of definition equation (\ref{eqn:MA20_0}), the above inequality 
implies that $x \in E_{2A, 2\xi}$.  So we finish the proof of (\ref{eqn:MC14_0}).

Let $\xi=\xi_i \to 0$ and define 
\begin{align}
   E_{A} \triangleq \cap_{i=1}^{\infty} E_{A,\xi_i}.    \label{eqn:MC14_3}
\end{align}
We obtain a set $E_{A} \subset \overline{B(x_0, A)}$ such that $\varphi_t(x) \in \mathcal{R}$ for each point 
$x \in \overline{B(x_0, A)} \backslash E_{A}$ and $t \in [-A,A]$. 
Furthermore, from defintion equation (\ref{eqn:MC14_3}), it is clear that $E_A \subset E_{A, \xi}$, which together with (\ref{eqn:MC14_0})
implies that the $H^{-1}\xi$ neighborhood of $E_A$ is contained in $E_{2A, 2\xi}$. 
Consequently, it follows from (\ref{eqn:MA20_3}) (replacing $A$ by $2A$, $\xi$ by $2\xi$) that
\begin{align*}
  \left|  \left\{ x \left |  d(x, E_A) < H^{-1} \xi \right.\right\}  \right|_{\mathcal{H}^{2n}} \leq C \xi^{q_0-\epsilon-1}=C \left( H^{-1} \xi \right)^{q_0-\epsilon-1},
\end{align*}
where the last $C$ depends on $n,\kappa, H, \epsilon$ and the set $B(x_0, 3A)$, but independt of the choice of $\xi$. 
Since $\epsilon$ can be any small number, the above inequality implies (\ref{eqn:MC14_4}) by the definition of
Minkowski dimension(c.f. Definition~\ref{dfn:HE08_1}).

Then we set $A=A_i \to \infty$ and define $E \triangleq \cup_{i=1}^{\infty} E_{A_i}$. As a union of countably many measure-zero sets, $E$ is clearly a measure-zero set. 
Clearly, for each $x \in X \backslash E$,  and each $t \in (-\infty, \infty)$, we can always find a large $A_i$ such that $x \in B(x_0, A_i)$ and $t \in (-A_i, A_i)$.
Then it follows that $\varphi_t(x)$ exists and locates in $B(x_0, 2A_i) \cap \mathcal{R} \subset \mathcal{R}$. 
So we finish the proof of Claim~\ref{clm:MA20_1}.

\begin{claim}[\textbf{Flow lines preserve regularity}]
 Suppose $x_0$ is a regular point, then the whole flow line of $\nabla b^+$ initiated from $x_0$ is defined for all time and stays in $\mathcal{R}$. 
\label{clm:GD11_1}  
\end{claim}

Suppose Claim~\ref{clm:GD11_1} was wrong. Without loss of generality,  we can assume that the flow image of $x_0$ hits singularity the first time at $T_0>0$.
Let $\varphi_{t}$ be the time $t$ flow map generated by $\nabla b^{+}$.  Fix $\epsilon$ very small, then $y_0=\varphi_{T_0-\epsilon}(x_0)$ is a regular point. 
By Claim~\ref{clm:MA20_1}, we see that $\varphi_{T_0-\epsilon}$ is well defined away from a measure-zero set $E$. 
Furthermore, it preserves volume element and length element. For each $r \in (0,1)$, we define $\chi_r$ as
\begin{align*}
  \chi_r(x)=
  \begin{cases}
   r-d(x,x_0),  & \textrm{if} \; d(x,x_0)<r \\
   0, &\textrm{if} \; d(x,x_0) \geq r. 
  \end{cases}
\end{align*}
The function $f_{r,\epsilon}=\chi_r \circ \varphi_{T_0-\epsilon}^{-1}$ is defined on $X \backslash E$.  Actually, by choosing $A >>T_0+1$, we know that
$f_{r,\epsilon}$ is defined on $B(y_0, A) \backslash E_{A}$ and vanishes outside $B(y_0, 0.5A)$.  This is a simple application of triangle inequality.
If $x \in B(y_0, A) \backslash E_{A}$ and $d(x,y_0)>0.5A>5(T_0+1)$, then we have
\begin{align*}
    d(\varphi_{T_0-\epsilon}^{-1}(x), x_0) &\geq -d(\varphi_{T_0-\epsilon}^{-1}(x), x) + d(x, x_0)
    \geq -d(\varphi_{T_0-\epsilon}^{-1}(x), x) -d(y_0, x_0)+d(x, y_0)\\
    &\geq -2(T_0-\epsilon) + 5 (T_0+1)> 3T_0+5>r. 
\end{align*}
Then $\chi_r(\varphi_{T_0-\epsilon}^{-1}(x))=0$ by definition of $\chi_r$. 
By the local isometry property of $\varphi_{T_0-\epsilon}$, we obtain that $|\nabla (\chi_r \circ \varphi_{T_0-\epsilon}^{-1})| \leq 1$ on $B(y_0, A) \backslash E_{A}$.
Recall that $E_A$ has codimension at least $2$ by (\ref{eqn:MC14_4}). Then it is clear that $f_{r,\epsilon} \in N_0^{1,2}(X)$. 
Note that $f_{r,\epsilon}$
has a version $\tilde{f}_{r,\epsilon}$ which is globally Lipshitz with Lipshitz constant $1$.  In other words, one can find a measure-zero set $F$ such that $\tilde{f}_{r,\epsilon}=f_{r,\epsilon}$
on $X \backslash F$.  The function $\tilde{f}_{r,\epsilon}$ can be obtained as follows. Let $f_{r,\epsilon}(t)$ be the heat flow solution initiated from $f_{r,\epsilon} \in N^{1,2}(X)$. 
Then $|\nabla f_{r,\epsilon}(t)|$ is a heat subsolution.  
Note that here we used the condition $\dim_{\mathcal{M}} \mathcal{S}<2n-3$ and the weak convexity of $\mathcal{R}$, to guarantee that $|\nabla f_{r,\epsilon}(t)| \in N_{loc}^{1,2}(X)$.
Further details can be found in Appendix~\ref{app:A}. 
By maximum principle(on the space possibly has singularities), we see that $|\nabla f_{r,\epsilon}(t)| \leq 1$ for each 
$t>0$. Let $t_i \to 0$, then the limit of $f_{r,\epsilon}(t_i)$ can be chosen as $\tilde{f}_{r,\epsilon}$. 
Under the help of $\tilde{f}_{r,\epsilon}$, we shall see that $\varphi_{T_0-\epsilon}$ is an isometry from $X \backslash E$ to $X \backslash E$, by further ajusting $E$ with an extra measure-zero set if necessary.
Actually,  if $x \in \partial B(x_0,r) \backslash E$, then $y=\varphi_{T_0-\epsilon}(x)$ is a regular point.  Note that $\tilde{f}_{r,\epsilon}(y_0)=r$
and $\tilde{f}_{r,\epsilon}(y)=0$.  Since $\tilde{f}_{r,\epsilon}$ has uniform global Lipshitz constant $1$, we have $d(y,y_0) \geq r$.  
Therefore,  $\varphi_{T_0-\epsilon}$ is a distance-expanding map from  $X \backslash E$ to $X \backslash E$.   By reversing the position of $x_0,y_0$ and using $-\nabla b^{+}$ to generate flow, it is clear that
$\varphi_{T_0-\epsilon}$ is distance-shrinking.    Combining these two directions, we obtain 
\begin{align*}
    \varphi_{T_0-\epsilon}^{-1} \left( B(y_0, r) \backslash E \right) = B(x_0, r) \backslash E. 
\end{align*}
In particular, we see that 
\begin{align*}
    |B(x_0,r)|=|B(y_0,r)|=|B(\varphi_{T_0-\epsilon}(x_0), r)|. 
\end{align*}
Using triangle inequality and letting $\epsilon \to 0$, we have  $|B(x_0,r)|=|B(\varphi_{T_0}(x_0), r)|$ for each $r \in (0, 1)$. 
However, $x_0$ is regular.  For some $r \in (0, 1)$, we have $\omega_{2n}^{-1}r^{-2n}|B(x_0,r)|>1-\frac{\delta_0}{100}$. 
Same volume ratio estimate hold for the ball $B(\varphi_{T_0}(x_0),r)$. 
Therefore,  $\varphi_{T_0}(x_0)$ is forced to be a regular point by Anderson's gap theorem(c.f. Corollary~\ref{cly:SL23_1}). 
This contradicts the assumption of $T_0$.  Therefore, the proof of Claim~\ref{clm:GD11_1} is complete. \\

Let $N$ be the level set $b^{+}=0$, $N'=N \cap \mathcal{R}$. Then it is clear that
$\mathcal{R}=N' \times \R$. Taking metric completion on both sides, we obtain $X= N \times \R$ as metric space product. 
\end{proof}

Lemma~\ref{lma:HE04_1} should be a special case of Gigli~\cite{Gigli13}.    Its local version is the following lemma. 

\begin{lemma}[\textbf{Metric cone rigidity}]
 Suppose $x_0 \in X$, $\Omega=B(x_0,1)$. Then the following conditions are equivalent.
 \begin{enumerate}
 \item Volume ratio same on scales $0.5$ and $1$, i.e., we have
     \begin{align}
        2^{2n}|B(x_0,0.5)|=|B(x_0,1)|.   \label{eqn:HE10_1}
      \end{align}
 \item $\Omega$ is a volume cone, i.e., for every $0<r_1<r_2<1$, we have
      \begin{align}
        r_1^{-2n}|B(x_0,r_1)|=r_2^{-2n}|B(x_0,r_2)|.   \label{eqn:HE04_1}
      \end{align}
  \item $\frac{r^2}{2}$ is the unique weak solution of the Poisson equation
      \begin{align}
        \Delta u=2n, \; \textrm{in} \; \Omega; \quad \left.\left(u-\frac{r^2}{2} \right)\right|_{\partial \Omega}=0.
      \label{eqn:HE03_4}
      \end{align}
 \item  $\frac{r^2}{2}$ induces local metric cone structure on $\Omega$. In other words, on $\Omega \backslash \mathcal{S}$, we have
      \begin{align*}
          Hess_{\frac{r^2}{2}} -g \equiv 0.
      \end{align*}
 \end{enumerate}
\label{lma:HE04_2}
\end{lemma}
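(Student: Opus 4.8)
The plan is to show that each of conditions (1), (2), (3) is equivalent to the single weak identity $\Delta r^{2}\equiv 4n$ on all of $\Omega$, to deduce (4) from this by a Bochner computation, and to recover (2) from (4) using the homothety flow of $\nabla\frac{r^{2}}{2}$; this last step is where the real work lies. For the volume side, recall from Corollary~\ref{cly:GD08_1} that $\rho\mapsto|B(x_{0},\rho)|$ is locally Lipschitz with a.e.\ derivative $A(\rho)\rho^{2n-1}$, and from Bishop--Gromov (Proposition~\ref{prn:HD19_1}) that $\rho^{-2n}|B(x_{0},\rho)|$ is non-increasing. Testing the weak inequality $\Delta r^{2}\le 4n$ of Proposition~\ref{prn:HC29_1} against radial tent functions $\chi_{\rho,\epsilon}$ (equal to $1$ on $B(x_{0},\rho-\epsilon)$, vanishing outside $B(x_{0},\rho)$, linear in $r$ in between) and letting $\epsilon\to 0$ shows that the nonnegative measure $\nu:=4n\,d\mu-\Delta r^{2}$ satisfies $\nu\big(B(x_{0},\rho)\big)=4n|B(x_{0},\rho)|-2A(\rho)\rho^{2n}$ for a.e.\ $\rho\in(0,1)$. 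Now (1) together with monotonicity forces $\rho^{-2n}|B(x_{0},\rho)|$ to be constant on $[1/2,1]$, so $A(\rho)=2n|B(x_{0},1)|$ there and hence $\nu\big(B(x_{0},\rho)\big)\to 0$ as $\rho\to 1^{-}$; since $\nu\ge 0$ this gives $\nu\equiv 0$ on $\Omega$, i.e.\ $\Delta r^{2}\equiv 4n$ in the weak sense. Conversely, $\Delta r^{2}\equiv 4n$ on $\Omega$ forces $\frac{d}{d\rho}|B(x_{0},\rho)|=\frac{2n}{\rho}|B(x_{0},\rho)|$ for a.e.\ $\rho$, whose integration gives $|B(x_{0},\rho)|=c\rho^{2n}$, which is (2); and (2)$\Rightarrow$(1) is immediate. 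Finally, $\Delta r^{2}\equiv 4n$ weakly on $\Omega$ says exactly that $\frac{r^{2}}{2}$ is a weak solution of (\ref{eqn:HE03_4}), so by the uniqueness statement in Proposition~\ref{prn:HE03_1} it is equivalent to (3).

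For $(3)\Rightarrow(4)$: a weak solution of $\Delta u=2n$ is smooth on $\mathcal{R}$ by elliptic regularity (as in the discussion following Definition~\ref{dfn:HD29_2}), so on $\mathcal{R}\cap\Omega$ the function $b:=\frac{r^{2}}{2}$ is smooth, with $\Delta b\equiv 2n$ and, by continuity from the dense open set where $|\nabla r|=1$, with $|\nabla b|^{2}=r^{2}=2b$. Since $\mathcal{R}$ is Ricci--flat, the Bochner formula gives
\[
  4n=\Delta\big(|\nabla b|^{2}\big)=2|Hess_{b}|^{2}+2\langle\nabla b,\nabla\Delta b\rangle+2\,Ric(\nabla b,\nabla b)=2|Hess_{b}|^{2},
\]
so $|Hess_{b}|^{2}=2n$; as Cauchy--Schwarz in real dimension $2n$ gives $|Hess_{b}|^{2}\ge(\Delta b)^{2}/(2n)=2n$, equality holds and forces $Hess_{b}=\frac{\Delta b}{2n}g=g$ on $\mathcal{R}\cap\Omega$, which is (4).

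The remaining implication $(4)\Rightarrow(2)$ is the expected main obstacle, and it runs parallel to the proof of Lemma~\ref{lma:HE04_1}. On $\mathcal{R}\cap\Omega$, (4) says $\mathcal{L}_{\nabla b}g=2\,Hess_{b}=2g$, so the local flow $\psi_{s}$ of $\nabla b$ is a homothety, $\psi_{s}^{*}g=e^{2s}g$: it scales lengths by $e^{s}$, volume by $e^{2ns}$, preserves the volume density $\mathrm{v}$, and (since $b\circ\psi_{s}=e^{2s}b$, hence $r\circ\psi_{s}=e^{s}r$) carries the distance sphere $\{r=\rho\}$ to $\{r=e^{s}\rho\}$. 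The one difficulty, exactly the one resolved in Lemma~\ref{lma:HE04_1}, is that a flow line of $\nabla b$ might a priori run into $\mathcal{S}$; this is excluded by repeating the arguments of Claims~\ref{clm:MA20_1} and~\ref{clm:GD11_1}: since $\mathcal{S}$ has Minkowski codimension $>3$, the set of initial points from which the flow cannot be continued in $\mathcal{R}$ has Minkowski codimension $>2$, hence measure zero; and since $\psi_{s}$ preserves $\mathrm{v}$, a flow line issuing from a regular point cannot reach a point with $\mathrm{v}\le 1-2\delta_{0}$, so by Anderson's gap theorem (Corollary~\ref{cly:SL23_1}) it remains in $\mathcal{R}$. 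Hence for every admissible $\rho$ and $s$, $\psi_{s}$ induces a bijection of $\mathcal{R}\cap B(x_{0},\rho)$ onto $\mathcal{R}\cap B(x_{0},e^{s}\rho)$ multiplying volume by $e^{2ns}$, and therefore $|B(x_{0},e^{s}\rho)|=e^{2ns}|B(x_{0},\rho)|$, which is (2). Combined with the equivalences of the first paragraph, all four conditions are equivalent.
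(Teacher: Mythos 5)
Your proof is correct and rests on the same pillars as the paper's: Bishop--Gromov plus the weak super-Laplacian inequality $\Delta r^2 \le 4n$ on the volume side, a Bochner/Weitzenb\"{o}ck computation for $(3)\Rightarrow(4)$, and the homothety flow of $\nabla\frac{r^2}{2}$ with the regularity-preservation argument of Claims~\ref{clm:MA20_1} and~\ref{clm:GD11_1} for the final implication. The organization is slightly different and arguably cleaner: you route (1), (2), and (3) through the single pivot ``$4n\,d\mu-\Delta r^2$ vanishes as a nonnegative measure on $\Omega$'' (observing via tent-function testing that $\nu(B(x_0,\rho))=4n|B(x_0,\rho)|-2A(\rho)\rho^{2n}$ and that (1) kills $\nu$ near the boundary, hence everywhere), whereas the paper runs a cyclic chain $1\Rightarrow 2\Rightarrow 3\Rightarrow 4\Rightarrow 1$ in which $2\Rightarrow 3$ is a separate integration-by-parts argument with a radial cutoff $\eta$. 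Your framing makes $3\Leftrightarrow\{\Delta r^2\equiv 4n\}$ essentially tautological (given the uniqueness statement of Proposition~\ref{prn:HE03_1}), which trims the boundary-cutoff step. The trade-off is purely cosmetic; the hard step, $(4)\Rightarrow(2)$, is the same in both.
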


\begin{proof}
 \textit{1 $\Rightarrow$ 2:}
Let $A(r)$ be the  ``area" ratio function in Corollary~\ref{cly:GD08_1}, i.e.,  $ A(r)= \frac{|\partial B(x_0,r)|}{r^{2n-1}}$. 
By Corollary~\ref{cly:GD08_1}, we know $A(r)$ is defined almost everywhere and is non-increasing on its domain. 
Note that
\begin{align*}
  \frac{d}{dr} \left( \frac{|B(x_0,r)|}{r^{2n}}\right)=\frac{|\partial B(x_0,r)|}{r^{2n}} -\frac{2n}{r} \frac{|B(x_0,r)|}{r^{2n}}
    =\frac{2n}{r} \left\{\frac{A(r)}{2n} -\frac{|B(x_0,r)|}{r^{2n}} \right\} \leq 0.
\end{align*}
Combining  (\ref{eqn:HE10_1})  and (\ref{eqn:GD08_5}), we have $ \frac{A(r)}{2n}-\frac{|B(x_0,r)|}{r^{2n}} \equiv 0$ for a.e. $r \in (0.5, 1)$. 
In particular, we have
\begin{align*}
   |B(x_0,1)|=\frac{A(1)}{2n}=\int_0^{1} A(1) r^{2n-1}dr,
\end{align*}
where $A(1)$ is understood as $\displaystyle \lim_{r \to 1^{-}} A(r)$. 
On the other hand, it follows from (\ref{eqn:GD08_4}) that
\begin{align*}
  |B(x_0,1)|=\int_0^{1} A(r) r^{2n-1} dr.
\end{align*}
So we have
\begin{align*}
  \int_0^{1} (A(r)-A(1)) r^{2n-1} dr=0.
\end{align*}
Note that $A(r)$ is a non-increasing function. So the above equality means that
\begin{align*}
 A(1) \equiv A(r) \equiv \lim_{r \to 0^{+}} A(r).
\end{align*}
It follows that $|B(x_0,r)| = \frac{A(1)}{2n}r^{2n}$ for every $0<r<1$. In particular,  $B(x_0,1)$ is a volume cone.

 \textit{2 $\Rightarrow$ 3:}   Suppose $u$ is the unique solution of the Poisson equation (\ref{eqn:HE03_4}), we need to show that
 $u\equiv \frac{r^2}{2}$. By uniqueness of weak solutions, it suffices to show that $\frac{r^2}{2}-u$ is harmonic on $\Omega$, i.e., for every
 $\varphi \in N_c^{1,2}(\Omega)$, we have $\int_{\Omega} \varphi \Delta \left( \frac{r^2}{2}-u\right)=0$. 
 By rescaling, we can also assume $0 \leq \varphi \leq 1$. 
 Fix such a $\varphi$, we can choose $\epsilon$ small such that the support of $\varphi$ is contained in $B(x_0, 1-\epsilon)$.  Define
 \begin{align*}
    \eta(x)=
 \begin{cases}
  1, & \textrm{if}  \quad d(x,x_0)<1-\epsilon, \\
  \frac{1-d(x,x_0)}{\epsilon}, & \textrm{if} \quad 1-\epsilon \leq d(x,x_1) \leq 1. 
 \end{cases}    
 \end{align*}
 Note that $\eta \in N_0^{1,2}(\Omega)$, $\frac{r^2}{2}-u$ is superharmonic on $\Omega$. It follows from integration by parts that
 \begin{align*}
   \int_X \eta \Delta\left( \frac{r^2}{2} -u\right) =-2n\int_{\Omega} \eta +\frac{1}{\epsilon} \int_{B(x_0,1) \backslash B(x_0,1-\epsilon)} r \geq -O(\epsilon),
 \end{align*}
 where we used volume cone condition in the last step. 
 Thus, we have
 \begin{align*}
  0 \geq  \int_{\Omega} \varphi \Delta \left( \frac{r^2}{2}-u\right)
  =\int_{\Omega} \eta \Delta \left( \frac{r^2}{2}-u\right)+\int_{\Omega} (\varphi-\eta) \Delta \left( \frac{r^2}{2}-u\right)
  \geq \int_{\Omega} \eta \Delta \left( \frac{r^2}{2}-u\right) \geq -O(\epsilon). 
 \end{align*}
 Let $\epsilon \to 0$, we obtain $\int_{\Omega} \varphi \Delta \left( \frac{r^2}{2}-u\right)=0$. Consequently, $\frac{r^2}{2}-u$ is harmonic by the arbitrary choice 
 of $\varphi$.

  \textit{3 $\Rightarrow$ 4:}
   Since $\frac{r^2}{2}$ solves the Poisson equation with right hand side a constant,  by standard bootstrapping argument for  elliptic equation,
   we see that $\frac{r^2}{2}$ is a smooth function on $\Omega \backslash \mathcal{S}$.
   Clearly, we have $\left|\nabla \frac{r^2}{2}\right|^2=\frac{r^2}{2}$. Taking  Laplacian on both sides, Weitzenb\"{o}ck formula  yields that
   $\displaystyle \left|Hess_{\frac{r^2}{2}} \right|^2=2n$, which in turn implies that
 \begin{align*}
    \left|Hess_{\frac{r^2}{2}} -g\right|^2= \left|Hess_{\frac{r^2}{2}} \right|^2-2\Delta \frac{r^2}{2}+2n
   = \left|Hess_{\frac{r^2}{2}} \right|^2-2n=0.
 \end{align*}
 Therefore, on $\Omega \backslash \mathcal{S}$, we have $Hess_{\frac{r^2}{2}} -g\equiv 0$ in the classical sense.
 Consequently,  $\nabla \frac{r^2}{2}$ is a conformal Killing field.
 Similar to the proof of Claim~\ref{clm:GD11_1}, one can show that the flow generated by $\nabla \frac{r^2}{2}$ preserves regularity.  
 Hence it is clear that $\Omega \backslash \mathcal{S}$ has a local metric cone structure, whose completion implies that $\overline{\Omega}$ is a unit ball in a metric cone.

 \textit{4 $\Rightarrow$ 1:}
  For each $0<r<1$, note that $|B(x_0,r)|=|B(x_0,r) \backslash \mathcal{S}|$.  Note the flow generated by $\nabla \frac{r^2}{2}$ preserves regularity.
  More precisely, we have
  \begin{align*}
      \mathcal{L}_{\nabla \frac{r^2}{2}} g=g, \quad \mathcal{L}_{\nabla \frac{r^2}{2}} d\mu= 2n d\mu.
  \end{align*}
  Then (\ref{eqn:HE04_1}) follows from the integration of the above equation along flow lines.
\end{proof}

\begin{lemma}[\textbf{K\"ahler cone splitting}]
 Suppose $X \in \widetilde{\mathscr{KS}}^*(n,\kappa)$ is a metric cone with vertex $x_0$.  Then we can find a metric cone $C(Z)$ with vertex $z^*$ such that
 \begin{align*}
     X= \C^{n-k} \times C(Z), \quad x_0=(0, z^*), \quad   2 \leq k \leq n.
 \end{align*}
 Moreover, there is no straight line in $C(Z)$ passing through $z^*$.
\label{lma:HD30_1}
\end{lemma}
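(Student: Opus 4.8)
The plan is to split $X$ isometrically as a Euclidean factor times a cone with no lines through its vertex, then use the \Kahler condition to turn the Euclidean factor into a complex-linear one and the high codimension of $\mathcal{S}$ to bound its size from above.

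First I would iterate the splitting Lemma~\ref{lma:HE04_1}: as long as $X$ (or the remaining factor) contains a straight line, split off an $\R$-factor; since each step lowers the real dimension by one, after finitely many steps one gets an isometry $X\cong \R^m\times Y$ with $Y$ a length space containing no straight line. One must check that $Y$ is again a metric cone with vertex $y^*$ and that $x_0=(0,y^*)$. This is handled by tracking the radial field $P=\nabla(\tfrac{r^2}{2})$, which on the cone $X$ satisfies $\mathrm{Hess}\,P=g$ on $\mathcal{R}$ (the global form of the equivalence $2\Leftrightarrow 4$ in Lemma~\ref{lma:HE04_2}): if $e$ is the unit parallel field along an $\R$-factor, then $\langle P,e\rangle$ is, up to an additive constant, the affine coordinate on that factor, and $P'=P-\langle P,e\rangle e$ is parallel along $e$, so it descends to a field on the complementary factor with $\mathrm{Hess}\,P'=g$ there, vanishing over the point $y^*$ with $(0,y^*)=x_0$. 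Thus at the end $Y=C(Z)$ for a metric cone with vertex $z^*:=y^*$, and $C(Z)$ contains no straight line, in particular none through $z^*$.

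Next I would bring in the complex structure. On $\mathcal{R}=\mathcal{R}(X)$ let $\mathcal{P}$ be the vector space of parallel vector fields; it contains the $m$ coordinate fields of the $\R^m$-factor, and since $(\mathcal{R},g,J)$ is \Kahler ($\nabla J=0$) it is $J$-invariant. I claim $\dim\mathcal{P}=m$: for a parallel $V$ the function $f=\langle V,P\rangle$ has $\nabla f=V$ and $\mathrm{Hess}\,f=0$, so — exactly as in the proof of Lemma~\ref{lma:HE04_1}, invoking Claims~\ref{clm:MA20_1} and \ref{clm:GD11_1} to see that the gradient flow of $V$ is defined off a measure-zero set and preserves regularity — the flow lines of $V$ are lines; were $V$ to have a component transverse to $\R^m$ this would produce a line in $X$ transverse to $\R^m$, hence (after translating it through the vertex) a line in $C(Z)$ through $z^*$, contradicting the previous step. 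Being $J$-invariant, $\mathcal{P}$ has even dimension $m=2\ell$; the restriction of $(g,J)$ to the $\R^m$-factor is a complete flat metric together with a parallel orthogonal complex structure, hence is $(\C^{\ell},g_{\E},J_{\mathrm{std}})$. Writing $k=n-\ell$ we obtain $X=\C^{n-k}\times C(Z)$ with $C(Z)$ of complex dimension $k$, no line through $z^*$, and $1\le k\le n$; it remains to exclude $k=0,1$.

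Finally, $k=0$ would give $X=\C^n$, which is excluded from $\widetilde{\mathscr{KS}}^*(n,\kappa)$. If $k=1$, then $C(Z)$ is a flat \Kahler cone of real dimension $2$ with no line through $z^*$; its regular part is a flat punctured cone, so $C(Z)=C(S^1_\beta)$ with $\beta<2\pi$ (the value $\beta=2\pi$ giving $\C$, which does contain lines through the vertex). Hence $z^*$ is a genuine singular point of $C(Z)$, so $\C^{n-1}\times\{z^*\}\subseteq\mathcal{S}$; but this set has real dimension $2n-2$, contradicting $\dim_{\mathcal{M}}\mathcal{S}<2n-3$ in item 4 of Definition~\ref{dfn:SC27_1}. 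Therefore $2\le k\le n$, as desired. The main obstacle is the bookkeeping in the first two steps — keeping the iterated splitting inside the class of cones (so the Euclidean factor is genuinely linear and passes through the vertex) and identifying the maximal parallel distribution with the $\R^m$-factor so that $J$-invariance forces evenness; the serious analytic content, namely that flows of Killing fields on the possibly singular space $X$ exist off a small set and preserve regularity, is already supplied by the proof of Lemma~\ref{lma:HE04_1}.
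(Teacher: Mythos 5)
Your proof is correct and takes essentially the same approach as the paper: iterate the splitting Lemma~\ref{lma:HE04_1} to obtain a Euclidean factor, use the cone structure (the radial field $P=\nabla(\tfrac{r^2}{2})$ with $\mathrm{Hess}=g$) to recognize every parallel field as a Busemann gradient whose integral curves are lines, invoke $\nabla J=0$ to make the Euclidean factor complex-linear, and finish with the codimension bound on $\mathcal{S}$ to get $k\ge 2$.

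The only real difference is organizational. The paper performs the complexification one pair of directions at a time: given a line it takes the Busemann function $h$ and shows that $u=\langle J\nabla h,\nabla(\tfrac{r^2}{2})\rangle$ satisfies $\nabla u=J\nabla h$ and $\mathrm{Hess}\,u=0$, so the two parallel fields $\nabla h$, $J\nabla h$ span a $J$-closed $2$-plane that splits off a $\C$-factor, and then iterates. You instead carry out the entire Euclidean splitting $X\cong\R^m\times C(Z)$ first, then argue that the space $\mathcal{P}$ of parallel fields has dimension exactly $m$ (a nonzero parallel field on $C(Z)$ would generate a line there, contradicting the no-lines conclusion of the iteration), and conclude $m$ is even from $J$-invariance of $\mathcal{P}$. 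The ingredients are the same; your ordering avoids writing down the conjugate function $u$ explicitly, at the cost of having to establish the equality $\dim\mathcal{P}=m$ as a separate step. You also spell out the exclusion of $k=1$ in full (a $2$-dimensional flat K\"ahler cone with no line through the vertex is $C(S^1_\beta)$, $\beta<2\pi$, so its vertex is singular and $\C^{n-1}\times\{z^*\}\subseteq\mathcal{S}$ has Minkowski dimension $2n-2$, violating item 4 of Definition~\ref{dfn:SC27_1}), which the paper compresses to a one-sentence appeal to the dimension restriction on $\mathcal{S}$; your version is the right level of detail. One small stylistic point: the phrase ``after translating it through the vertex'' is unnecessary — the projection of the line onto $C(Z)$ is already a line in $C(Z)$ (as you in effect observe via the function $f_2=\langle V_2,P_2\rangle$), and that alone contradicts the no-lines conclusion without needing to move it through $z^*$.
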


\begin{proof}
   It suffices to show that if $X$ splits off a real straight line $\R$, then it splits off a complex line $\C$.
   In fact, if there is a straight line passing through $x_0$, we can find a function $h$ which is the Buseman function determined by the line.
   Therefore, $\nabla h$ is a parallel vector field with $|\nabla h| \equiv 1$.   The K\"ahler condition implies that
   $J \nabla h$ is another parallel vector field satisfying $|J \nabla h| \equiv 1$ on $\mathcal{R}=X \backslash \mathcal{S}$.
   On the regular set,  define function
   \begin{align*}
     u=\left\langle J\nabla h, \nabla \frac{r^2}{2} \right\rangle,
   \end{align*}
   where $r$ is the distance to the vertex $x_0$.
   Metric cone condition implies that $Hess_{\frac{r^2}{2}}=g$. Since $J\nabla h$ is parallel, we see that
   \begin{align*}
     \nabla u= Hess_{\frac{r^2}{2}} (J\nabla h, \cdot)=J \nabla h.
   \end{align*}
   Recall that $Hess_h \equiv 0$. Taking gradient of the above equation implies that $Hess_u \equiv 0$.  This forces that $\nabla u=J \nabla h$
   is also a splitting direction.  Note that although $u$ is only defined on $\mathcal{R}$, which is not complete.
   However, we can bypass this difficulty as done in the proof of  Lemma~\ref{lma:HE04_1}, since $\nabla u$ is a Killing field preserving regularity.
   Therefore, we obtain a splitting factor $\C$.  Since $J\nabla u=-\nabla h$, the space spanned by $\nabla u$ and
   $J \nabla u$ is closed under the $J$-action.  This induces the $J$-action closedness of the split linear space, which then must be $\C^{n-k}$
   for some integer $k$.  Because $X$ is not $\C^n$, we know the singular set is not empty, whose dimension restriction forces that $k \geq 2$.
\end{proof}

For each rigidity property in Lemma~\ref{lma:HE04_1}-Lemma~\ref{lma:HD30_1},  there should exist an ``almost" version.
For example,  Lemma~\ref{lma:HE04_2} basically says that a volume cone implies a metric cone.
Hence the ``almost" version is that for a unit geodesic ball $B(x_0,1)$ whose volume ratio function $r^{-2n}|B(x_0,r)|$ is very close to a constant
function on $[0,1]$, then after proper rescaling, each ball $B(x_0, r)$ is very close to $B(x_0,1)$ in the Gromov-Hausdorff topology.
The basic idea is expressed clearly in~\cite{CCWarp}.
We only interpret what they did.  Actually, if volume ratio is almost a constant, then it is expected that $|Hess_{\frac{r^2}{2}}-g|$ has a small
$L^2$-norm.  However, since the regularity of distance function $r$ is bad, one should replace $\frac{r^2}{2}$ by an approximation function, which is
very close to $\frac{r^2}{2}$ in $N^{1,2}$-norm on one hand, and has excellent regularity on the other hand.
Such approximation function is nothing but the solution of the Poisson equation  (\ref{eqn:HE03_4}).
For the purpose of developing ``almost" rigidity properties, one need some technical preparation, which will be listed as Lemmas.
Note that the space $\widetilde{\mathscr{KS}}(n,\kappa)$ has scaling invariance.
Therefore, we can always let the scale we are interested in to be $1$, to simplify the notations.

In view of Proposition~\ref{prn:HC29_1}, we can define many auxiliary radial functions, as in the classical case for Riemannian manifold(c.f.~\cite{CCWarp}).
For each $0<r<R<\infty$, define
\begin{align}
  &\underline{U}(r) \triangleq \frac{r^2}{4n},
  \quad
  \underline{G}(r) \triangleq \frac{r^{2-2n}}{2n(2n-2)\omega_{2n}}, \\
  &\underline{U}_R \triangleq \frac{r^2-R^2}{4n}, \quad  \underline{G}_R \triangleq \frac{r^{2-2n}-R^{2-2n}}{2n(2n-2)\omega_{2n}}, \\
  &\underline{L}_R \triangleq \frac{r^{2-2n}R^{2n}-R^{2}}{2n(2n-2)} + \frac{r^2-R^2}{4n}.
\label{eqn:HE08_1}
\end{align}
Then by Proposition~\ref{prn:HC29_1} and direct calculation, we have the following lemma.

\begin{lemma}[\textbf{Existence of good radial comparison functions}]
Suppose $x_0 \in X$.  Let $r(x)=d(x,x_0)$ and
define $\underline{U}_1(x)=\underline{U}_1(r(x))$, $\underline{G}_1=\underline{G}_1(r(x))$ and $\underline{L}_1(x)=\underline{L}(r(x))$
as done in (\ref{eqn:HE08_1}).  Then we have
\begin{align*}
 &\Delta \underline{U}_1 \leq 1, \textrm{on} \; X;  \quad \underline{U}_1|_{\partial B(x_0,1)}=0.  \\
 &\Delta \underline{G}_1 \geq 0, \textrm{on} \; B(x_0,1) \backslash \{x_0\}; \quad \underline{G}_1 |_{\partial B(x_0,1)}=0. \\
 &\Delta \underline{L}_1 \geq 0, \textrm{on} \; B(x_0,1) \backslash \{x_0\}; \quad \underline{L}_1 |_{\partial B(x_0,1)}=0.
\end{align*}
\label{lma:HE08_1}
\end{lemma}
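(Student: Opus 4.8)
The plan is to deduce the whole lemma from the weak Laplacian comparison already in hand — namely $\Delta r^2 \leq 4n$ from Proposition~\ref{prn:HC29_1}, together with its consequence $\Delta r \leq \frac{2n-1}{r}$ on $X\setminus\{x_0\}$ (in the weak sense, as already used in the proof of Lemma~\ref{lma:HE04_1}) — combined with the elementary one-variable identities satisfied by the explicit radial profiles in (\ref{eqn:HE08_1}). The boundary conditions are immediate: each profile in (\ref{eqn:HE08_1}) with $R=1$ is manifestly $0$ at $r=1$, so $\underline{U}_1|_{\partial B(x_0,1)}=\underline{G}_1|_{\partial B(x_0,1)}=\underline{L}_1|_{\partial B(x_0,1)}=0$.

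\textbf{The $\underline{U}_1$ inequality and a weak chain rule.} For $\underline{U}_1=\frac{r^2-1}{4n}$ there is nothing to do: $\Delta\underline{U}_1=\frac{1}{4n}\Delta r^2\leq 1$ weakly on all of $X$. For $\underline{G}_1$ and $\underline{L}_1$ I would first record the weak chain rule: if $\phi\in C^2((0,1])$ is non-increasing, then on $B(x_0,1)\setminus\{x_0\}$ one has, in the weak sense,
\begin{align*}
\Delta(\phi\circ r)\ \geq\ \phi''(r)+\frac{2n-1}{r}\,\phi'(r).
\end{align*}
Indeed, on the regular part $\mathcal{R}$ the function $r$ is smooth away from its cut locus, where $\Delta(\phi\circ r)=\phi''(r)|\nabla r|^2+\phi'(r)\Delta r$ with $|\nabla r|=1$; the distributional Laplacian of $r$ is this smooth part plus a nonpositive measure supported on the cut locus, and multiplying by $\phi'\leq 0$ turns both $\Delta r\leq\frac{2n-1}{r}$ and the nonpositive cut-locus contribution into the asserted lower bound. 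The passage from $\mathcal{R}$ to all of $X$, including the case $x_0\in\mathcal{S}$, is carried out exactly as in the proof of Proposition~\ref{prn:HC29_1}: approximate $x_0$ by regular points and use the weak convexity of $\mathcal{R}$ together with the density of $C_c^{\infty}(\mathcal{R})\cap N^{1,2}(X)$ in $N^{1,2}(X)$ (Corollary~\ref{cly:HE10_1}).

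\textbf{Plugging in.} For $\underline{G}_1$, $\phi(r)=\frac{r^{2-2n}-1}{2n(2n-2)\omega_{2n}}$ has $\phi'(r)=-\frac{r^{1-2n}}{2n\omega_{2n}}<0$ (so $\phi$ is non-increasing), and it is exactly the Euclidean fundamental-solution profile in real dimension $2n$, so $\phi''(r)+\frac{2n-1}{r}\phi'(r)\equiv 0$; hence $\Delta\underline{G}_1\geq 0$ on $B(x_0,1)\setminus\{x_0\}$. For $\underline{L}_1=\frac{r^{2-2n}-1}{2n(2n-2)}+\frac{r^2-1}{4n}$ a direct computation gives $\phi'(r)=\frac{r-r^{1-2n}}{2n}$, which is negative for $0<r<1$, and $\phi''(r)+\frac{2n-1}{r}\phi'(r)=1$; hence $\Delta\underline{L}_1\geq 1\geq 0$ there. (Equivalently, one may write $\underline{G}_1$ and $\underline{L}_1$ as $\psi(r^2)$ with $\psi$ convex and decreasing and use only $\Delta r^2\leq 4n$ together with $|\nabla r^2|^2=4r^2$, which yields the same conclusions.)

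\textbf{Main obstacle.} The only point requiring care is the rigorous justification of the weak chain rule on the singular space $X$: that $\phi\circ r\in N_{loc}^{1,2}$, that testing against functions supported in $B(x_0,1)\setminus\{x_0\}$ sees only the favorable part of $\Delta r$, and that nothing pathological occurs across $\mathcal{S}$. But all the inputs for this — $\dim_{\mathcal{M}}\mathcal{S}<2n-3$, weak convexity of $\mathcal{R}$, and the density/approximation results of the previous subsections — are precisely those already used to prove Proposition~\ref{prn:HC29_1}, so this step is routine bookkeeping; everything else is the elementary ODE computation above.
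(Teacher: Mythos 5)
Your proof is correct and follows exactly the route the paper indicates (the paper gives no written proof, merely saying the lemma follows "by Proposition~\ref{prn:HC29_1} and direct calculation," and your argument is a careful spelling-out of precisely that). The ODE computations for $\underline{U}_1$, $\underline{G}_1$, $\underline{L}_1$ check out (including the useful strengthening $\Delta\underline{L}_1\ge 1$, which the paper in fact uses later), and your attention to the sign condition $\phi'\le 0$ in the weak chain rule and to the approximation of singular $x_0$ by regular points is exactly what makes the comparison inequality go in the right direction on this singular space.
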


Lemma~\ref{lma:HE08_1} is used to improve the maximum principle.  Same as that done by Abresch-Gromoll (c.f. Proposition 2.3 of~\cite{AbGr}), we
obtain the following estimate of excess  function.

\begin{lemma}[\textbf{Abresch-Gromoll type estimate}]
Suppose $x_0 \in X$,
 $\gamma$ is a line segment centered at $x_0$ with length $2$, end points $p_{+}$ and $p_{-}$.
 Let $e(x)$ be the excess function $d(x,p_{+})+d(x,p_{-})-2$.
 Then we have
 \begin{align}
    \sup_{x \in B(x_0,\epsilon)} e(x) \leq C \epsilon^{\frac{2n}{2n-1}}   \label{eqn:HE08_2}
 \end{align}
 for each $\epsilon \in (0, 1)$ and some universal constant $C=C(n)$.
\label{lma:HE05_1}
\end{lemma}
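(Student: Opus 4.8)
The plan is to reduce the statement to a weak Laplacian estimate for the excess function and then run the Abresch--Gromoll comparison argument cited just after the statement, built on the radial barriers furnished by Lemma~\ref{lma:HE08_1} and the weak maximum principle of Proposition~\ref{prn:HC29_5}. By scaling invariance of $\widetilde{\mathscr{KS}}(n,\kappa)$ we keep $|\gamma|=2$, so $d(x_0,p_{\pm})=1$.

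\emph{Step 1 (almost subharmonicity of $e$).} Write $h=d(\cdot,p_{+})$, $\bar h=d(\cdot,p_{-})$; then $e=h+\bar h-2$, the triangle inequality gives $e\ge 0$ on $X$ with $e\equiv 0$ on $\gamma$, in particular $e(x_0)=0$, and both $h,\bar h$ are globally $1$-Lipschitz, hence lie in $N^{1,2}_{loc}(X)$ by Proposition~\ref{prn:HD13_1}. The proof of Proposition~\ref{prn:HC29_1}, run with base point $p_{\pm}$ instead of $x_0$ (first for $p_{\pm}\in\mathcal R$ via the polar coordinate computation, then for singular $p_{\pm}$ by approximation with regular points), yields the weak Laplacian comparison $\Delta d(\cdot,p_{\pm})\le\frac{2n-1}{d(\cdot,p_{\pm})}$. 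On $\Omega_0:=B(x_0,\tfrac12)$ one has $d(\cdot,p_{\pm})\ge\tfrac12$, so summing the two inequalities gives
\begin{align*}
 \Delta e\le 4(2n-1)=:a
\end{align*}
in the weak sense on $\Omega_0$. Thus $e$ is a nonnegative continuous weak subsolution of $\Delta u=a$ on $\Omega_0$ that vanishes along the whole segment $\gamma$, with $a=a(n)$ a fixed constant.

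\emph{Step 2 (Abresch--Gromoll barrier).} For $\rho\in(0,\tfrac12)$ let $\underline U_\rho,\underline G_\rho,\underline L_\rho$ (centred at $x_0$) be the radial functions of (\ref{eqn:HE08_1}), which by Lemma~\ref{lma:HE08_1} are weak sub/supersolutions on $B(x_0,\rho)$ with the stated signs and boundary behaviour. Exactly as in Proposition~2.3 of~\cite{AbGr}, one tests the subsolution $e$ against a barrier assembled from these functions: the contribution of $\underline U_\rho$ accounts for the inhomogeneity $a$, the Green-type function $\underline G_\rho$ propagates the information that $e$ vanishes along $\gamma$ (so that $e$ is forced to decay transverse to $\gamma$, and not merely at the single point $x_0$), and Proposition~\ref{prn:HE03_1} is used to solve the corresponding Poisson problems. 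The weak maximum principle for sub/superharmonic functions on $X$ (Propositions~\ref{prn:HD11_1} and~\ref{prn:HC29_5}), together with the crude bound $e(x)\le 2\,d(x,\gamma)\le 2\,d(x,x_0)$ on the spheres centred at $x_0$, then yields, after optimising the free radius $\rho$ against $\epsilon$ (this optimisation is what produces the exponent $\frac{2n}{2n-1}$), the pointwise estimate $e(x)\le C(n)\,\epsilon^{\frac{2n}{2n-1}}$ for $x\in B(x_0,\epsilon)$, which is (\ref{eqn:HE08_2}).

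I expect the only real point of difficulty to be the interaction with the singular set $\mathcal S$; the classical mechanism is otherwise unchanged. What must be checked is that (i) the distance Laplacian comparison $\Delta d(\cdot,p_{\pm})\le\frac{2n-1}{d(\cdot,p_{\pm})}$ holds weakly across $\mathcal S$ --- handled exactly as in Proposition~\ref{prn:HC29_1}; (ii) the barriers of Lemma~\ref{lma:HE08_1} are genuine weak sub/supersolutions on all of $X$ --- already established; and (iii) the weak maximum principle is available on the possibly singular space --- this is Propositions~\ref{prn:HD11_1} and~\ref{prn:HC29_5}, whose proofs rest on the volume doubling and $(1,2)$-Poincar\'e inequality that $X$ satisfies. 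Since $\dim_{\mathcal{M}}\mathcal S<2n-3$ and $\mathcal R$ is weakly convex, every cutoff and integration-by-parts step in the barrier comparison goes through verbatim, so the argument reduces to the Riemannian one of Abresch--Gromoll. (Alternatively the same estimate can be obtained in the spirit of Cheeger--Colding directly from the Segment Inequality of Proposition~\ref{prn:HD17_1} applied to the weak Laplacians of $h$ and $\bar h$ along minimal geodesics, followed by the elliptic sub-mean-value inequality for the subsolution $e$; but the route above is the one indicated in the statement.)
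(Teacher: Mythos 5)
Your proposal is correct and takes the same route the paper intends: the paper simply refers to Proposition 2.3 of~\cite{AbGr} and the radial barriers of Lemma~\ref{lma:HE08_1}, and your Steps 1--2 are exactly the ingredients needed to make that argument run on a space in $\widetilde{\mathscr{KS}}(n,\kappa)$ --- the weak Laplacian comparison $\Delta d(\cdot,p_{\pm}) \leq \frac{2n-1}{d(\cdot,p_{\pm})}$ from (the proof of) Proposition~\ref{prn:HC29_1}, the sub/supersolution barriers $\underline{U}_\rho,\underline{G}_\rho$ from Lemma~\ref{lma:HE08_1}, and the weak maximum principle from Propositions~\ref{prn:HD11_1} and~\ref{prn:HC29_5}. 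One small remark: in the classical Abresch--Gromoll comparison one does not actually need to invoke Proposition~\ref{prn:HE03_1} to solve Poisson problems, since the explicit radial functions $\underline{U}_\rho,\underline{G}_\rho$ already serve directly as barriers via the maximum principle; and the role of $\underline{G}_\rho$ is to propagate smallness of $e$ from the single point $x_0$ (where $e(x_0)=0$), not specifically along $\gamma$ --- but neither of these affects the validity of your argument.
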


Lemma~\ref{lma:HE08_1} can also be applied to construct good cutoff functions.

\begin{lemma}[\textbf{Cutoff functions on annulus}]
Suppose $x_0 \in X$, $0<\rho<1<\infty$.
Then there exists a function $\phi: X \to [0,1]$ such that
\begin{align*}
  &\phi \in C^{\infty}(B(x_0,1) \backslash \mathcal{S}),  \quad \supp \phi \Subset B(x_0, 1), \quad \phi \equiv 1 \; \textrm{on} \; B(x_0,\rho), \\
  &|\nabla \phi| \leq c(n, \rho), \qquad \quad |\Delta \phi| \leq c(n, \rho), \quad \textrm{on} \; B(x_0,1) \backslash \mathcal{S}.
\end{align*}
Furthermore, for each pair $\rho_1, \rho_2$ satisfying $0<\rho_1<\frac{1}{2}<2<\rho_2<\infty$, there exists a function $\phi: X \times [0,1]$ such that
\begin{align*}
  &\phi \in C^{\infty}((B(x_0,\rho_2) \backslash \overline{B(x_0, \rho_1 )}) \cap \mathcal{R}),
  \quad \supp \phi \Subset B(x_0, \rho_2) \backslash \overline{B(x_0,\rho_1)}, \\
  &\phi \equiv 1 \; \textrm{on} \; B\left(x_0,\frac{\rho_2}{2} \right) \backslash \overline{B(x_0, 2\rho_1)}, \\
  &|\nabla \phi| \leq c(n, \rho_1,\rho_2), \qquad \quad |\Delta \phi| \leq c(n, \rho_1,\rho_2),
   \quad \textrm{on} \; (B(x_0,\rho_2) \backslash \overline{B(x_0, \rho_1 )}) \cap \mathcal{R}.
\end{align*}
\label{lma:HE04_3}
\end{lemma}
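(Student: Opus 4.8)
The plan is to build $\phi$ as $\phi=\psi(G)$, where $G=G_{x_0}$ is the Dirichlet Green's function of $B(x_0,1)$ with pole at $x_0$ and $\psi\colon\R\to[0,1]$ is a fixed smooth cut-off of one real variable, following the classical Cheeger--Colding recipe (c.f.~\cite{CCWarp}). By the scaling invariance of $\widetilde{\mathscr{KS}}(n,\kappa)$ we may always normalize the relevant outer radius to $1$. The construction splits into two parts: first recording the properties of $G$ we need, all of which are exact analogues of the Euclidean facts and follow from the analysis of Section~2, and then the composition argument.

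For the first part, take $G=\lim_{\epsilon\to0}G_\epsilon$, where $G_\epsilon\in N^{1,2}(B(x_0,1))$ solves $\Delta G_\epsilon=-|B(x_0,\epsilon)|^{-1}\mathbf{1}_{B(x_0,\epsilon)}$ with zero boundary values; each $G_\epsilon$ exists, is continuous up to $\partial B(x_0,1)$ and obeys the maximum principle by Proposition~\ref{prn:HE03_1} and Proposition~\ref{prn:HD11_1}, and the monotone limit $G$ is positive on $B(x_0,1)\setminus\{x_0\}$, harmonic there, continuous on $\overline{B(x_0,1)}\setminus\{x_0\}$ with $G|_{\partial B(x_0,1)}=0$, and smooth on $\mathcal{R}\setminus\{x_0\}$ by interior elliptic regularity; equivalently $G$ is the time integral of the Dirichlet heat kernel of $B(x_0,1)$ (Proposition~\ref{prn:HD07_1}). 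We also need the two-sided bound $c^{-1}(r^{2-2n}-1)\le G\le c\,(r^{2-2n}-1)$ on $B(x_0,1)\setminus\{x_0\}$ for $c=c(n,\kappa)$, where $r(x)=d(x,x_0)$: the lower bound comes from comparing $G$ with the subharmonic barrier $\underline{G}_1$ of Lemma~\ref{lma:HE08_1}, which vanishes on $\partial B(x_0,1)$ and blows up at $x_0$ at the rate $r^{2-2n}$, together with the pole asymptotics of $G$ (via Bishop--Gromov, Proposition~\ref{prn:HD19_1}); the upper bound follows from the Gaussian bound for the heat kernel. Finally we need the gradient control $|\nabla G|(y)\le c(n,\kappa,\rho)$ for regular $y$ in the annulus $\{\rho\le r\le\rho_*\}$, where $\rho_*=\rho_*(n,\kappa,\rho)<1$: apply the Cheng--Yau estimate of Proposition~\ref{prn:HC29_6} to $G$ on a ball $B(y,\delta)$ with $\delta=\delta(n,\kappa,\rho)>0$ small enough that $B(y,\delta)\Subset B(x_0,1)\setminus\{x_0\}$; since $h=\Delta G\equiv0$ is trivially in $C^{1/2}$, only the $\norm{G}{L^\infty}$ term survives, and it is controlled by the two-sided bound.

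Now assemble. Put $a:=c^{-1}(\rho^{2-2n}-1)>0$, a lower bound for $G$ on $\overline{B(x_0,\rho)}$, choose $\psi$ smooth with $\psi\equiv1$ on $[a,\infty)$, $\psi\equiv0$ on $(-\infty,a/2]$ and $|\psi'|+|\psi''|\le C(n,\rho)$, and set $\phi=\psi(G)$ inside $B(x_0,1)$, extended by $0$ outside and by $1$ at $x_0$. Then $\phi\equiv1$ on $B(x_0,\rho)$; since $G$ is continuous up to $\partial B(x_0,1)$ with $G=0$ there, $\{G\ge a/2\}$ is a compact subset of $B(x_0,1)$, hence $\supp\phi\Subset B(x_0,1)$; the transition set $\{a/2\le G\le a\}$ lies, by the two-sided bound, in the annulus $\{\rho\le r\le\rho_*\}$, which stays away from $x_0$ and from $\partial B(x_0,1)$, so $\phi\in C^\infty(B(x_0,1)\setminus\mathcal{S})$, and there $\nabla\phi=\psi'(G)\nabla G$ and $\Delta\phi=\psi''(G)|\nabla G|^2$ (the term $\psi'(G)\Delta G$ vanishes off the pole). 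The gradient control for $G$ then gives $|\nabla\phi|\le c(n,\rho)$ and $|\Delta\phi|\le c(n,\rho)$ on $B(x_0,1)\setminus\mathcal{S}$. The annular statement reduces to the first one: write $\phi=\phi_2\,(1-\phi_1)$, with $\phi_2$ a first-type cut-off equal to $1$ on $B(x_0,\rho_2/2)$ and $\supp\phi_2\Subset B(x_0,\rho_2)$, and $\phi_1$ a first-type cut-off equal to $1$ on $B(x_0,\tfrac{3}{2}\rho_1)\supset\overline{B(x_0,\rho_1)}$ with $\supp\phi_1\Subset B(x_0,2\rho_1)$ (both obtained from the construction above after rescaling; note $\rho_2/2>2>2\rho_1$), and apply the product rule and the individual estimates.

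The main obstacle is the second paragraph: one must verify that the Green's function of a ball in the \emph{singular} space $X$ genuinely behaves like its Euclidean model --- existence as a monotone limit, attainment of the zero boundary data, the two-sided pointwise comparison with $r^{2-2n}$, and the gradient control away from the pole --- so that the transition region of $\psi(G)$ is trapped in a fixed annulus and the composition argument closes with constants independent of the particular $X$. This is precisely where the machinery of Section~2 is used: the Dirichlet theory of Propositions~\ref{prn:HD11_1}--\ref{prn:HE03_1}, the heat kernel estimates of Proposition~\ref{prn:HD07_1}, the radial barriers of Lemma~\ref{lma:HE08_1} (themselves a consequence of Proposition~\ref{prn:HC29_1}), and the Cheng--Yau gradient estimate of Proposition~\ref{prn:HC29_6}; once these are in hand the remainder is the standard argument of~\cite{CCWarp}. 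The constants produced this way a priori also depend on $\kappa$; as in Cheeger--Colding this dependence can be removed, but in any case it is harmless, since the lemma is only ever applied inside $\widetilde{\mathscr{KS}}(n,\kappa)$.
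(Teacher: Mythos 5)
Your construction via the Dirichlet Green's function $G=G_{x_0}$ is a genuinely different route from the paper's: the authors instead solve a Poisson equation $\Delta f=1$ on $B(x_0,1)$ with boundary data matched to $\underline{L}_1$, squeeze $f$ between the explicit subharmonic/superharmonic radial barriers $\underline{U}_1, \underline{L}_1$ of Lemma~\ref{lma:HE08_1} via the maximum principle, and set $\phi=\psi(f)$. The decisive advantage of their choice of $f$ over $G$ is that $f$ is \emph{bounded} (no pole) and both sides of the needed two-sided comparison come immediately from the one-sided Laplacian inequalities they have already proved, namely $\Delta r^2\le 4n$ and $\Delta\underline{G}_R\ge 0$; the upper and lower bounds are then equally cheap.

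There is a concrete gap in your argument at the step ``the upper bound $G\le c\,(r^{2-2n}-1)$ follows from the Gaussian bound for the heat kernel.'' Proposition~\ref{prn:HD07_1} gives Gaussian two-sided bounds for the \emph{global} heat kernel on $X$; integrating in time, and using domain monotonicity, this only yields $G\le C\,r^{2-2n}$, which does \emph{not} decay to $0$ at $\partial B(x_0,1)$. The $-1$ in the Euclidean formula encodes a linear boundary decay of the Dirichlet Green's function, which in a singular space requires either a Gaussian estimate for the \emph{Dirichlet} heat kernel with boundary decay or a superharmonic barrier vanishing on $\partial B(x_0,1)$ with the right pole asymptotics --- neither of which is in the paper's Section~2. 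The lower bound $G\ge c^{-1}\underline{G}_1$ is fine: $\underline{G}_1$ is subharmonic, so $G-c^{-1}\underline{G}_1$ is superharmonic and the minimum principle applies once the pole asymptotics are matched. But the upper barrier would have to be \emph{superharmonic}, and $\Delta r\le (2n-1)/r$ only gives barriers of the wrong sign; in a Ricci-flat singular space you do not get a lower bound on $\Delta r$ for free. Without the boundary decay of $G$, the containment $\{G\ge a/2\}\subset B(x_0,\rho_*)$ with $\rho_*(n,\kappa,\rho)<1$ fails when $\rho$ is close to $1$: with only $G\le Cr^{2-2n}$ and $a=c^{-1}(\rho^{2-2n}-1)$, one needs $\rho^{2-2n}>1+2Cc$, i.e. $\rho$ bounded away from $1$ by a constant depending on $n,\kappa$. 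You cannot quote the compactness of $\widetilde{\mathscr{KS}}(n,\kappa)$ to fix the constants either, since that theorem appears later and itself relies on Lemma~\ref{lma:HE04_3} through the parabolic approximation machinery. Switching to the paper's bounded Poisson solution repairs the argument with no further work.
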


The proof of Lemma~\ref{lma:HE04_3} is based on the maximum principle, solvability of Poisson equation and the fact that  $\Delta \underline{L}_1 \geq 1$
and $\Delta \underline{U}_{R'} \geq 1$ for each $R'>0$.
With these properties, one can compare  $\underline{L}_1$ with the Poisson equation solution $f$ which has same boundary value as $\underline{L}_1$.
Then construct cutoff function based on the value of $f$.  Since the proof follows that of~\cite{CCWarp} verbatim, we omit the details here.

\begin{lemma}[\textbf{Harmonic approximation of local Buseman function}]
There exists a constant $c=c(n)$ with the following properties.

 Suppose $x_0 \in X$,
 $\gamma$ is a line segment centered at $x_0$ with length $2$, end points $p_{+}$ and $p_{-}$, $\epsilon$ is an arbitrary small positive number,
 say $0<\epsilon<0.1$.
 In the ball $B(x_0,4\epsilon)$, define local Buseman functions
 \begin{align*}
    b_{+}(x)=d(x, p_{+})-d(x_0, p_{+}),   \quad b_{-}(x)=d(x,p_{-})-d(x_0, p_{-}).
 \end{align*}
 Let $u_{\pm}$ be the harmonic functions in $B(x_0,4\epsilon)$ such that $\left.\left( u_{\pm}-b_{\pm} \right) \right|_{\partial B(x_0,4\epsilon)}=0$.
 Let $u$ be one of $u_{\pm}$ and $b$ be the corresponding $b_{\pm}$ respectively. Then we have
 \begin{itemize}
 \item  $|u-b| \leq c\epsilon^{1+\alpha}$.
 \item  $\fint_{B(x,\epsilon)} |\nabla (u-b)|^2 \leq c\epsilon^{\alpha}$.
 \item  $\fint_{B(x,\epsilon)} |Hess_{u}|^2 < c\epsilon^{-2+\alpha}$.
 \end{itemize}
 Here $\alpha=\alpha(n)$ is a universal constant, which can be chosen as $\frac{1}{2n-1}$.
\label{lma:HE05_2}
\end{lemma}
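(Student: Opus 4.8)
The plan is to follow the classical Abresch--Gromoll--Cheeger--Colding harmonic-approximation scheme, checking at each step that the analytic tools established in Sections 2.2--2.4 (solvability of the Dirichlet problem in Proposition~\ref{prn:HD11_1}, integration by parts in Proposition~\ref{prn:HD13_2}, the weak maximum principle and Moser iteration, the Cheng--Yau gradient estimate in Proposition~\ref{prn:HC29_6}, and the good radial comparison functions in Lemma~\ref{lma:HE08_1}) carry over to the singular space $X$. The function $b$ is $1$-Lipschitz with $\Delta b \le \frac{2n-1}{r}$ in the weak sense by Proposition~\ref{prn:HC29_1} (applied to $d(\cdot,p_\pm)$), and $u$ is its harmonic replacement on $B(x_0,4\epsilon)$, which exists and is unique by Proposition~\ref{prn:HE03_1}; by Proposition~\ref{prn:HD16_1} $u$ extends continuously over $\mathcal{S}$ and is smooth and Lipschitz on the regular part. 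The key point throughout is that the high codimension of $\mathcal{S}$ ($\dim_{\mathcal{M}}\mathcal{S}<2n-3$) plus weak convexity of $\mathcal{R}$ lets us treat $X$ for all practical purposes as a manifold.

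First I would prove the $C^0$-estimate $|u-b|\le c\epsilon^{1+\alpha}$. By the weak maximum principle, $u-b$ is controlled on $\partial B(x_0,4\epsilon)$ (where it vanishes) via a barrier: since $\Delta(b)\le\frac{2n-1}{r}$ weakly, $b$ is a subsolution up to the explicit radial function, and comparing $b-u$ against the Abresch--Gromoll-type comparison function built from $\underline{L}_1$ and $\underline{G}_1$ in Lemma~\ref{lma:HE08_1} (rescaled to the ball of radius $4\epsilon$) gives one-sided control $c\epsilon^{1+\alpha}$ with $\alpha=\frac{1}{2n-1}$. The opposite inequality follows the same way after using the excess estimate of Lemma~\ref{lma:HE05_1}: near $x_0$ the two Buseman functions $b_+$ and $b_-$ nearly add to zero, so $b_+$ is also nearly superharmonic, giving the reverse barrier. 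This is precisely the Abresch--Gromoll argument and only the maximum principle and Lemma~\ref{lma:HE08_1} are needed.

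Next the energy estimate $\fint_{B(x_0,\epsilon)}|\nabla(u-b)|^2\le c\epsilon^\alpha$: multiply $\Delta u=0$ by $\phi^2(u-b)$ for a cutoff $\phi$ supported in $B(x_0,4\epsilon)$, equal to $1$ on $B(x_0,\epsilon)$ (Lemma~\ref{lma:HE04_3}), integrate by parts via Proposition~\ref{prn:HD13_2}, and use $\int \phi^2\nabla u\cdot\nabla b = \int\phi^2|\nabla b|^2 +$ (lower order), together with $|\nabla b|=1$ a.e. and $\Delta b\le \frac{2n-1}{r}$. The cross terms are absorbed using the $C^0$-bound just obtained and Cauchy--Schwarz; the volume non-collapsing bounds $|B(x_0,4\epsilon)|$ from below so averages behave. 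Finally, for $\fint_{B(x_0,\epsilon)}|Hess_u|^2 < c\epsilon^{-2+\alpha}$, apply the Bochner formula $\frac12\Delta|\nabla u|^2 = |Hess_u|^2 + \langle\nabla u,\nabla\Delta u\rangle + Ric(\nabla u,\nabla u) = |Hess_u|^2$ on $\mathcal{R}$ (Ricci-flat, $u$ harmonic), integrate against $\phi^2$, integrate by parts: $\int \phi^2|Hess_u|^2 = \frac12\int\phi^2\Delta|\nabla u|^2 = -\frac12\int\langle\nabla\phi^2,\nabla|\nabla u|^2\rangle$, and bound the right side by $\int|\nabla\phi|\,|\nabla u|\,|Hess_u| $, reabsorbing $|Hess_u|^2$ and using $\big||\nabla u|^2-1\big|$ small (from $|\nabla(u-b)|^2$ small, $|\nabla b|=1$, and $|u-b|$ small) to control $|\nabla|\nabla u|^2| = 2|\nabla u||Hess_u|$; the factor $\epsilon^{-2}$ comes from $|\nabla\phi|^2\le c\epsilon^{-2}$ and the $\epsilon^\alpha$ gain comes from the energy estimate.

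The main obstacle I anticipate is not any single estimate — each is a line-by-line transcription of \cite{AbGr} and \cite{CCWarp} — but rather justifying the integrations by parts and the Bochner/Weitzenböck identity across the singular set. One must ensure $|\nabla u|^2\in N^{1,2}_{loc}$ and that $\frac12\Delta|\nabla u|^2 = |Hess_u|^2$ holds in the weak sense on all of $X$, not merely on $\mathcal{R}$; this is exactly the kind of removable-singularity statement handled for subharmonic functions in Proposition~\ref{prn:HD16_2} and used already in Lemma~\ref{lma:HE04_1} and Lemma~\ref{lma:HE04_2}, and it relies essentially on $\dim_{\mathcal{M}}\mathcal{S}<2n-3$ together with the cutoff $\chi(x)=\phi(d(x,\mathcal{S})/r)$ with $\int|\nabla\chi|^2\le Cr\to 0$. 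So the proof reduces to: (i) quote the Abresch--Gromoll barrier argument for the $C^0$-bound, (ii) run the standard Caccioppoli estimate for the energy bound, (iii) run the standard Bochner integral estimate for the Hessian bound, in each case inserting the cutoff-near-$\mathcal{S}$ trick to legitimize the singular integration by parts; I would state these three steps and refer to \cite{AbGr} and \cite{CCWarp} for the routine computations, emphasizing only the points where singularities enter.
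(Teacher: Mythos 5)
Your proposal is correct and follows essentially the same route as the paper: Abresch--Gromoll barrier for the $C^0$-bound, integration by parts plus the $L^1$-bound $\fint_{B(x_0,4\epsilon)}|\Delta b|\lesssim\epsilon^{-1}$ for the energy estimate, and Weitzenb\"ock plus the good cutoff of Lemma~\ref{lma:HE04_3} for the Hessian estimate, with the singular set handled through Proposition~\ref{prn:HD16_2} and the high-codimension cutoff trick. The paper's execution is marginally cleaner in two places you could adopt: since $u-b\in N_0^{1,2}(B(x_0,4\epsilon))$, the energy estimate needs no cutoff at all (just $\fint|\nabla(u-b)|^2=\fint(u-b)\Delta b$), and for the Hessian estimate it integrates by parts twice to land on $\fint(|\nabla u|^2-1)\Delta\phi$, avoiding the reabsorption of $|Hess_u|^2$ that your one-step version requires.
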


\begin{proof}
   For simplicity, we assume $u=u_{+}$ and $b=b_{+}$.

   The pointwise estimate of $|u-b|$ follows from maximum principle and the the excess estimate (\ref{eqn:HE08_2}), same as traditional case.

  We proceed to show the integral estimate of $|\nabla (u-b)|$.
  Note that $b(x)=r(x)-d(x_0,p_{+})$, where $r(x)=d(x, p_{+})$.  It follows from rescaling that
   \begin{align*}
        \fint_{B(x_0,4\epsilon)} |\Delta b| = \fint_{B(x_0,4\epsilon)} |\Delta r| < C \epsilon^{-1}.
   \end{align*}
   Actually, by the fact $\Delta r \leq \frac{2n-1}{r}$, the estimate of $\int |\Delta r|$ is reduced to the estimate of
   $\int \Delta r$. However,  $\int \Delta r$ can be bounded by integration by parts,
   modulo some technical discussion around  the generalized cut locus and singular set $\mathcal{S}$.  Due to the high
   codimension of $\mathcal{S}$, the integral of $\Delta r$ around of $\mathcal{S}$ can be ignored. Then we return
   to the smooth manifold case, which is discussed clearly in~\cite{Cheeger01}.

   Clearly, $u-b \in N_0^{1,2}(B(x_0,4\epsilon))$. Hence integration by parts, Proposition~\ref{prn:HD13_2},
   applies and we have
   \begin{align*}
     \fint_{B(x_0,4\epsilon)} |\nabla (u-b)|^2 &= \fint_{B(x_0,4\epsilon)} (u-b) \Delta (b-u)= \fint_{B(x_0,4\epsilon)} (u-b) \Delta b
         \leq  C\epsilon^{1+\alpha} \fint_{B(x_0,4\epsilon)} |\Delta b|  < C\epsilon^{\alpha}.
   \end{align*}
Note that $u$ is harmonic in $B(x_0,4\epsilon)$.    Weitzenb\"{o}ck formula implies that
\begin{align*}
  \frac{1}{2} \Delta \left(|\nabla u|^2-1\right)=\frac{1}{2} \Delta |\nabla u|^2=|Hess_{u}|^2 \geq 0
\end{align*}
in the classical sense on $B(x_0, 4\epsilon) \backslash \mathcal{S}$.  By extension property of subharmonic function, Proposition~\ref{prn:HD16_2}, we see that
$|\nabla u|^2 \in N_{loc}^2(B(x_0, 4\epsilon))$.  Let $\phi$ be a cutoff function vanishes on $\partial B(x_0, 4\epsilon)$ and equivalent to $1$ on $B(x_0,\epsilon)$,
with $\epsilon |\nabla \phi|$
and $\epsilon^2 |\Delta \phi|$ bounded as in Lemma~\ref{lma:HE04_3}.   Clearly, $\phi \in N_c^{1,2}(B(x_0, 4\epsilon))$.
Therefore, it follows from  integration by parts, Proposition~\ref{prn:HD13_2}, that
\begin{align*}
2\fint_{B(x_0, 4\epsilon)} \phi |Hess_u|^2=\fint_{B(x_0, 4\epsilon)} \phi \Delta \left(|\nabla u|^2-1\right)=\fint_{B(x_0, 4\epsilon)} \left(|\nabla u|^2-1 \right) \Delta \phi.
\end{align*}
Consequently, we obtain
\begin{align*}
   \fint_{B(x_0,\epsilon)} |Hess_{u}|^2 \leq \fint_{B(x_0, 4\epsilon)} \phi |Hess_u|^2 \leq C \epsilon^{-2} \fint_{B(x_0, 4\epsilon)} \left||\nabla u|^2-1\right| \leq C\epsilon^{\alpha-2}.
\end{align*}
\end{proof}

Note that Lemma~\ref{lma:HE05_2} implies almost splitting property already.  Therefore, it is generalization of Lemma~\ref{lma:HE04_1}, the splitting property.
Not surprisingly, one can use Lemma~\ref{lma:HE05_2} to prove Lemma~\ref{lma:HE04_1}, at least formally.
Actually, if there is a line with length $2L$ centered at $x_0$, then in the unit ball $B(x_0,1)$,  it follows from Lemma~\ref{lma:HE05_2} that
\begin{align*}
  |u-b|<cL^{-\alpha}, \quad  \fint_{B(x_0,1)} |\nabla(u-b)|<cL^{-\alpha},   \quad \fint_{B(x_0,1)} |Hess_u|^2<cL^{-\alpha}.
\end{align*}
Let $L \to \infty$, we see that $Hess_{u} \equiv 0$ on $\mathcal{R}$.

From the proof of Lemma~\ref{lma:HE05_2}, it is clear that the key to  obtain smallness of $|Hess_u|^2$ is the integration by parts, which is checked in our case.
For smooth Riemannian manifold, the approximation in Lemma~\ref{lma:HE05_2} was improved by Colding and Naber in~\cite{ColdNa}.
The essential difference is that they chose parabolic approximation functions, instead of harmonic approximations.
Suppose $\gamma$ is a line segment with length $2$, centered around $x_0$, with end points $p_{+}$ and $p_{-}$.
 Then one can construct cutoff functions $\psi$ such that it vanishes outside
$B(x_0, 8)$ and inside $B(p_{+}, 0.1)$ and $B(p_{-}, 0.1)$, and equals $1$ on $B(x_0,4) \backslash (B(p_{+}, 0.2) \cup B(p_{-}, 0.2))$. Moreover, we have pointwise
bound of $|\Delta \psi|$ and $|\nabla \psi|$.  Then for $b_{\pm}$, we can run heat flow starting from $\psi b_{\pm}$ to obtain solution $h_{t,\pm}$.
Then the function $h_{t,\pm}$ is a better approximation function of  $b_{\pm}$ on the scale around $\sqrt{t}$.
The extra technical tools needed for Colding-Naber's argument beyond the harmonic approximation consists of an a priori bound of heat kernel, and the construction of
cutoff function with the properties as mentioned above.  However, in light of Proposition~\ref{prn:HD07_1} and Lemma~\ref{lma:HE04_3}, both tools are available in our setting.
Therefore, we can develop our version of the parabolic approximation estimate, Theorem 2.19 of~\cite{ColdNa},  in the current case.

\begin{lemma}[\textbf{Parabolic approximation of local Buseman function}]
There exist two constants $c=c(n), \bar{\epsilon}=\bar{\epsilon}(n)$ with the following properties.

Suppose $x_0 \in X$, $\gamma$ is a line segment whose center point locates in $B(x_0,0.2)$, with end points $p_{+}$ and $p_{-}$, with length $2$.
Let $h_{t}$ be the heat approximation of $b$ which is one of $b_{\pm}$.
Suppose the excess value $d(x_0,p_{+})+d(x_0,p_{-})-2<\epsilon^2$ for some $\epsilon \in (0,\bar{\epsilon})$.
Then there exists $\lambda \in [0.5, 2]$ such that
 \begin{itemize}
 \item $|h_{\lambda \epsilon^2} - b| \leq c \epsilon^2$.
 \item $\fint_{B(x,\epsilon)} ||\nabla h_{\lambda \epsilon^2}|^2-1| \leq c\epsilon$.
 \item $\int_{0.1}^{1.9} \fint_{B(x,\epsilon)} ||\nabla h_{\lambda \epsilon^2}|^2-1| \leq c\epsilon^2$.
 \end{itemize}
Most importantly, we have
 \begin{align*}
  \int_{0.1}^{1.9} \fint_{B(\gamma(s), \epsilon)} |Hess_{h_{\lambda \epsilon^2}}|^2 \leq c.
 \end{align*}
\label{lma:HE04_4}
\end{lemma}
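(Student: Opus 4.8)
The plan is to follow the strategy of Colding--Naber (Theorem 2.19 of~\cite{ColdNa}) step by step, substituting for each analytic ingredient its counterpart established above for $X \in \widetilde{\mathscr{KS}}(n,\kappa)$. First I would fix the cutoff: applying the second part of Lemma~\ref{lma:HE04_3} on the region between small balls around $p_\pm$ and the ball $B(x_0,8)$, one obtains $\psi\colon X \to [0,1]$, smooth on $\mathcal{R}$, with $\psi \equiv 1$ on $B(x_0,4)\setminus(B(p_+,0.2)\cup B(p_-,0.2))$, supported in $B(x_0,8)$, vanishing near $p_\pm$, and with $|\nabla\psi|,|\Delta\psi|$ uniformly bounded. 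Then $\psi b_\pm \in N_c^{1,2}(X)$, and I set $h_{t,\pm} \triangleq P_t(\psi b_\pm)$, the heat semigroup of Proposition~\ref{prn:HD07_1} applied to it. All subsequent estimates are localized to $B(x_0,2)$, where $\psi b_\pm = b_\pm$.

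Next comes the pointwise estimate $|h_{\lambda\epsilon^2}-b|\le c\epsilon^2$. Writing $h_t - b$ through the Duhamel/$\delta$-function formula of Proposition~\ref{prn:HD09_1}, the difference is controlled by $\int_0^t \int_X \square(\psi b)\, p(s,\cdot,\cdot)$, where on $B(x_0,4)$ one has $\square(\psi b) = -\Delta b$. Now $b$ is $1$-Lipschitz with $\Delta b \le (2n-1)/r$ in the weak sense (Proposition~\ref{prn:HC29_1}), and --- this is the point --- the excess $e = b_+ + b_-$ is nonnegative, vanishes along $\gamma$, and satisfies $e \le C\epsilon^{2n/(2n-1)}$ on $B(x_0,\epsilon)$ by the Abresch--Gromoll estimate (Lemma~\ref{lma:HE05_1}). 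Feeding these into the same maximum-principle comparison as in the smooth case, using the radial barriers $\underline{U}_1,\underline{L}_1$ of Lemma~\ref{lma:HE08_1} and the Gaussian heat kernel bounds~(\ref{eqn:HC31_7}), produces the estimate; the parabolic smoothing on scale $\sqrt{t}\sim\epsilon$ is what upgrades the exponent to $2$, and the off-diagonal integral bounds~(\ref{eqn:HD08_3}) absorb the contribution of the region where $\psi b \ne b$.

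For the gradient and Hessian bounds I would use the Bochner identity. Since $\mathcal{R}$ is Ricci-flat, the Bakry--Émery gradient estimate on $CD(0,\infty)$ (as noted before Proposition~\ref{prn:HD07_2}) gives $|\nabla h_t|^2 \le P_t(|\nabla(\psi b)|^2)$, hence $|\nabla h_t|^2 \le 1$ up to an exponentially small error on $B(x_0,2)$; together with the pointwise estimate and integration by parts (Proposition~\ref{prn:HD13_2}) this yields $\fint_{B(x,\epsilon)}\big||\nabla h_{\lambda\epsilon^2}|^2 - 1\big| \le c\epsilon$ and its time-integrated refinement. For the Hessian, $\square|\nabla h_t|^2 = -2|Hess_{h_t}|^2$ holds weakly on $X$ --- the extension Proposition~\ref{prn:HD16_2} (using $\dim_{\mathcal{M}}\mathcal{S} < 2n-3$ and weak convexity) guarantees $|\nabla h_t|^2 \in N_{loc}^{1,2}(X)$, so the identity is legitimate across $\mathcal{S}$. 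Testing it against $p(\lambda\epsilon^2,\cdot,\gamma(s))$ with a cutoff from Lemma~\ref{lma:HE04_3}, integrating over $s \in [0.1,1.9]$, and invoking the weighted Sobolev inequality of Proposition~\ref{prn:HD07_2} closes the bound. The good time scale $\lambda \in [0.5,2]$ is then selected by a pigeonhole argument applied to the logarithmic time integral of $\fint\big||\nabla h_t|^2-1\big|$, exactly as in~\cite{ColdNa}.

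The main obstacle is the bookkeeping around $\mathcal{S}$: at each step one must verify that the cutoffs, test functions, and energy densities genuinely lie in the correct function spaces on $X$ --- not merely on $\mathcal{R}$ --- so that integration by parts (Proposition~\ref{prn:HD13_2}), the $\delta$-function property (Proposition~\ref{prn:HD09_1}), and the Bochner identity can be invoked weakly through the singular set, and so that the maximum principle for the heat subsolution $|\nabla h_t|^2$ applies on a space possibly carrying singularities. This is precisely where $\dim_{\mathcal{M}}\mathcal{S} < 2n-3$ and the weak convexity of $\mathcal{R}$ are essential, and establishing that $|\nabla h_t|$ is a genuine heat subsolution in $N_{loc}^{1,2}(X)$ (the content deferred to the appendix) is the technical heart. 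Once these membership facts are secured, the quantitative estimates are formally identical to the smooth Colding--Naber argument.
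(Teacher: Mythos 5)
Your proposal matches the paper's intended approach: the paper states Lemma~\ref{lma:HE04_4} essentially without proof, remarking only that Colding--Naber's Theorem 2.19 carries over once the cutoff construction (Lemma~\ref{lma:HE04_3}), heat kernel bounds (Proposition~\ref{prn:HD07_1}), superharmonicity of distance (Proposition~\ref{prn:HC29_1}), Bishop--Gromov (Proposition~\ref{prn:HD19_1}), and integration by parts (Proposition~\ref{prn:HD13_2}) are available --- exactly the substitutions you enumerate. You also correctly isolate the genuine technical content, namely that $|\nabla h_t|$ lies in $N^{1,2}_{loc}(X)$ and is a heat subsolution across $\mathcal{S}$, which the paper defers to its appendix and establishes using $\dim_{\mathcal{M}}\mathcal{S} < 2n-3$ together with the weak convexity of $\mathcal{R}$.
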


Note that we did not formulate the parabolic approximation in the most precise way. For example,  $\gamma$ need not to be a geodesic, an $\epsilon$-geodesic
suffices.  Interested readers are referred to \cite{ColdNa} for the most general version.

According to the discussion form Lemma~\ref{lma:HE04_3} to Lemma~\ref{lma:HE04_4}, it is quite clear that the integral estimate of approximation functions
can be obtained in the same way as the Riemannian manifold case, provided the following properties.
\begin{itemize}
\item Almost super-harmonicity of distance functions, Proposition~\ref{prn:HC29_1}.
\item Bishop-Gromov volume comparison, Proposition~\ref{prn:HD19_1}.
\item Strong maximum principle for subharmonic functions, Proposition~\ref{prn:HC29_5}.
\item Integration by parts, Proposition~\ref{prn:HD13_2}.
\item Existence of excellent cutoff function, Lemma~\ref{lma:HE04_3}.
\end{itemize}
Since all of these properties are checked in our situation, we can follow the route of Cheeger-Colding to obtain the following properties, almost line by line.

\begin{lemma}[\textbf{Approximation slices}]
Suppose $x_0 \in X$, $\gamma_1, \gamma_2, \cdots, \gamma_k$ are $k$ line segments with length $2L>>2$ such that the center point of
$\gamma_k$ locates in $B(x_0,1)$ for each $k$.  Furthermore, these lines are almost perpendicular to each other, i.e., the Gromov-Hausdorff distance between
$\gamma_1 \cup \gamma_2 \cdots \cup \gamma_k$ and $\tilde{\gamma}_1 \cup \tilde{\gamma}_2 \cup \cdots \tilde{\gamma}_k$ is bounded by
$L\psi(L^{-1})$, where $\tilde{\gamma}_i$ is the line segment on the $i$-th coordinate axis of $\R^k$, centered at the origin and with length $2L$, $\psi$ is a nonnegative monotonically increasing function satisfying $\psi(0)=0$. 
 Suppose the end points of $\gamma_k$ are $p_{i,+}$ and $p_{i,-}$.
Let $b_{i,\pm}$ be the corresponding local Buseman functions with respect to $\gamma_i$.   Let $u_i$ be the harmonic function
on $B(x_i,4)$ with the same value as $b_{i,+}$ on $\partial B(x_0,4)$.  Then we have
\begin{align*}
    \int_{B(x_0,1)} \left\{ \sum_{1 \leq i \leq k} |\nabla u_i -1|^2 + \sum_{1 \leq i<j\leq k}|\langle \nabla u_i, \nabla u_j\rangle| + \sum_{1 \leq i \leq k} |Hess_{u_i}|^2\right\}
    \leq  \bar{\psi}(L^{-1}), 
  \end{align*}
 where $\bar{\psi}$ is also a nonnegative monotonically increasing function satisfying $\bar{\psi}(0)=0$, depending on $\psi$.  
\label{lma:HE07_1}
\end{lemma}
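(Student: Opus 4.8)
The plan is to reduce Lemma~\ref{lma:HE07_1} to the single-segment harmonic approximation of Lemma~\ref{lma:HE05_2}, following Cheeger--Colding~\cite{CCWarp}, since every analytic tool used in their argument is now available on $X$. First I would use the scaling invariance of $\widetilde{\mathscr{KS}}(n,\kappa)$ to rescale the metric by $L^{-1}$, so that each $\gamma_i$ becomes a line segment of length $2$ with center in $B(x_0,L^{-1})$, the $u_i$ are harmonic on $B(x_0,4L^{-1})$, and the estimate is to be proved on $B(x_0,L^{-1})$ (which is $B(x_0,1)$ in the original metric). With $\epsilon=L^{-1}$, and absorbing into the error term the harmless fact that the center of $\gamma_i$ is within $\epsilon$ of $x_0$ rather than exactly at $x_0$ (the standard adjustment noted after Lemma~\ref{lma:HE04_4}), Lemma~\ref{lma:HE05_2} applied to each $b_{i,+}$ gives, on $B(x_0,1)$,
\begin{align*}
  |u_i-b_{i,+}|\leq\bar{\psi}(L^{-1}),\qquad
  \fint_{B(x_0,1)}|\nabla(u_i-b_{i,+})|^2\leq\bar{\psi}(L^{-1}),\qquad
  \fint_{B(x_0,1)}|Hess_{u_i}|^2\leq\bar{\psi}(L^{-1}),
\end{align*}
with $\bar{\psi}$ a nonnegative increasing function, $\bar{\psi}(0)=0$, changing from line to line. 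Because $b_{i,+}$ is $1$-Lipschitz, $|\nabla b_{i,+}|=1$ a.e. on $\mathcal{R}$, and the middle estimate upgrades to $\fint_{B(x_0,1)}\bigl||\nabla u_i|^2-1\bigr|\leq\bar{\psi}(L^{-1})$; this already handles the first and third sums.

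For the mixed terms $\langle\nabla u_i,\nabla u_j\rangle$, $i\neq j$, I would use polarization. Here the Gromov--Hausdorff hypothesis enters: the almost-perpendicularity of $\gamma_i\cup\gamma_j$ forces $\fint_{B(x_0,1)}\bigl|\langle\nabla b_{i,+},\nabla b_{j,+}\rangle\bigr|\leq\bar{\psi}(L^{-1})$ --- this is the only configuration-dependent step, proved exactly as in~\cite{CCWarp} from the segment inequality (Proposition~\ref{prn:HD17_1}) together with the Abresch--Gromoll excess estimate (Lemma~\ref{lma:HE05_1}) applied to the near-lines --- and since $|\nabla b_{i,+}|^2=|\nabla b_{j,+}|^2=1$ a.e., this yields $\fint_{B(x_0,1)}\bigl||\nabla(b_{i,+}\pm b_{j,+})|^2-2\bigr|\leq\bar{\psi}(L^{-1})$. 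The functions $u_i\pm u_j$ are harmonic and, by the single-segment estimates above, are $\bar{\psi}(L^{-1})$-close to $b_{i,+}\pm b_{j,+}$ both in $L^\infty$ and in gradient $L^2$ norm on $B(x_0,1)$, so a Cauchy--Schwarz comparison gives $\fint_{B(x_0,1)}\bigl||\nabla(u_i\pm u_j)|^2-2\bigr|\leq\bar{\psi}(L^{-1})$. Writing $\langle\nabla u_i,\nabla u_j\rangle=\tfrac14\bigl(|\nabla(u_i+u_j)|^2-|\nabla(u_i-u_j)|^2\bigr)$, I then obtain
\begin{align*}
  \fint_{B(x_0,1)}\bigl|\langle\nabla u_i,\nabla u_j\rangle\bigr|
  \leq\tfrac14\fint_{B(x_0,1)}\bigl||\nabla(u_i+u_j)|^2-2\bigr|
  +\tfrac14\fint_{B(x_0,1)}\bigl||\nabla(u_i-u_j)|^2-2\bigr|
  \leq\bar{\psi}(L^{-1}),
\end{align*}
and summing all three families over $i$ and over the at most $\binom{n}{2}$ pairs $i<j$ gives the asserted bound.

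The reason this goes through is that each ingredient of the Cheeger--Colding proof has been verified on $X$: Bishop--Gromov comparison (Proposition~\ref{prn:HD19_1}), almost superharmonicity of $r^2$ (Proposition~\ref{prn:HC29_1}), the segment and Abresch--Gromoll inequalities (Proposition~\ref{prn:HD17_1}, Lemma~\ref{lma:HE05_1}), solvability of the Dirichlet and Poisson problems and the strong maximum principle (Propositions~\ref{prn:HD11_1}, \ref{prn:HE03_1}, \ref{prn:HC29_5}), integration by parts and the Weitzenb\"{o}ck identity $\tfrac12\Delta|\nabla u|^2=|Hess_u|^2$ for harmonic $u$ on the Ricci-flat regular part (Propositions~\ref{prn:HD13_2}, \ref{prn:HD16_2}), and the annular cutoff functions (Lemma~\ref{lma:HE04_3}). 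Granting these, the argument is essentially line by line that of the smooth Riemannian case. The main obstacle --- and the only place needing genuine care --- is that $|\nabla u_i|^2$ and $\langle\nabla u_i,\nabla u_j\rangle$ are controlled a priori only on $\mathcal{R}$ and could degenerate near $\mathcal{S}$; I would handle this exactly as in the proof of Proposition~\ref{prn:HD16_2}, inserting a cutoff $\chi(x)=\phi(d(x,\mathcal{S})/r)$ before each integration by parts and letting $r\to0$, the singular contribution vanishing thanks to $\dim_{\mathcal{M}}\mathcal{S}<2n-3$.
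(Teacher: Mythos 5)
Your proposal is correct and takes precisely the approach the paper has in mind: the paper itself gives no written proof for Lemma~\ref{lma:HE07_1}, stating only that, having verified almost super-harmonicity of $r^2$, Bishop--Gromov, the strong maximum principle, integration by parts, and the existence of good cutoff functions, one ``can follow the route of Cheeger--Colding ... almost line by line.'' Your write-up supplies exactly those details --- rescaling by $L^{-1}$ to reduce to Lemma~\ref{lma:HE05_2}, polarization for the cross terms using the almost-orthogonality coming from the Gromov--Hausdorff hypothesis and the segment inequality, and the by-now-standard cutoff near $\mathcal{S}$ to justify integration by parts --- and is consistent with the intended argument. (Two cosmetic remarks: the paper's term $|\nabla u_i - 1|^2$ is notationally loose and should be read as $\bigl||\nabla u_i|^2-1\bigr|$, as you do; and the number of mixed pairs is $\binom{k}{2}$ rather than $\binom{n}{2}$, though both are bounded in the cases of interest.)
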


Let $\vec{u}=(u_1, u_2, \cdots, u_k)$, we can regard $\vec{u}$ as an almost submersion from $B(x_0,1)$ to its image on
$\R^k$. Consequently,  slice argument can be set up as that in~\cite{CCT}.  The slice argument together with the Chern-Simons theory can improve the behavior of the singular set $\mathcal{S}$.  A more fundamental application of the slice argument is to set up the following volume convergence property, as done in~\cite{ColdVol}.

\begin{proposition}[\textbf{Volume continuity}]
For every $(X,x_0,g) \in  \widetilde{\mathscr{KS}}(n,\kappa)$ and $\epsilon>0$, there is a constant $\xi=\xi(X,\epsilon)$ such that
\begin{align*}
   \left|  \log \frac{|B(y_0,1)|}{|B(x_0,1)|} \right|<\epsilon
\end{align*}
for any $(Y,y_0,h) \in \widetilde{\mathscr{KS}}(n,\kappa)$ satisfying $d_{PGH}((X,x_0,g), (Y,y_0,h))<\xi$.
\label{prn:HD22_1}
\end{proposition}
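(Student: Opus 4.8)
The proof will follow Colding's volume-convergence argument \cite{ColdVol} (see also \cite{CCWarp}), now that all of the analytic ingredients it requires have been verified for spaces in $\widetilde{\mathscr{KS}}(n,\kappa)$: almost super-harmonicity of the distance function (Proposition~\ref{prn:HC29_1}), Bishop-Gromov comparison (Proposition~\ref{prn:HD19_1}), the strong maximum principle (Proposition~\ref{prn:HC29_5}), integration by parts (Proposition~\ref{prn:HD13_2}), good cutoff functions (Lemma~\ref{lma:HE04_3}), the Abresch-Gromoll excess estimate (Lemma~\ref{lma:HE05_1}), the harmonic and parabolic approximations of the local Busemann function (Lemma~\ref{lma:HE05_2}, Lemma~\ref{lma:HE04_4}), and the approximation slices (Lemma~\ref{lma:HE07_1}). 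Since the topology is induced by the distance $d_{PGH}$, it suffices to prove sequential continuity: if $(Y_i,y_i,h_i)\in\widetilde{\mathscr{KS}}(n,\kappa)$ converges to $(X,x_0,g)$ in the pointed Cheeger-Gromov topology, then $|B(y_i,1)|\to|B(x_0,1)|$. By Proposition~\ref{prn:HD19_1} and the non-collapsing hypothesis, $\kappa\omega_{2n}\le|B(y_i,1)|\le\omega_{2n}$, so after passing to a subsequence $|B(y_i,1)|\to V_\infty$; using the uniform doubling property (Corollary~\ref{cly:HD20_1}) we may also assume the renormalized $2n$-dimensional Hausdorff measures converge weakly to a Radon measure $\nu$ on $X$ with $\nu(B(x_0,r))=\lim_i|B(y_i,r)|$ for almost every $r$. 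Adjusting $r$ slightly (using that $r\mapsto|B(x_0,r)|$ is Lipschitz, by the barrier inequality in the proof of Corollary~\ref{cly:GD08_1}), we arrange $\nu(B(x_0,1))=V_\infty$, so the task reduces to showing $\nu=\mathcal{H}^{2n}$, equivalently $V_\infty=|B(x_0,1)|$.

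One inequality is comparatively elementary. Passing $|B(y_i,r)|\le\omega_{2n}r^{2n}$ to the limit and invoking Bishop-Gromov monotonicity gives $\nu(B(x,r))\le\omega_{2n}r^{2n}$ for all $x\in X$, $r>0$, so the $2n$-density $\vartheta_\nu(x):=\lim_{r\to0}\omega_{2n}^{-1}r^{-2n}\nu(B(x,r))$ exists and is $\le1$ everywhere. Conversely, for $q\in\mathcal{R}$ the Anderson gap theorem together with $\epsilon$-regularity (uniform along $\widetilde{\mathscr{KS}}(n,\kappa)$) shows that balls in the $Y_i$ centered near $q$ have volume ratio tending to $1$; transporting a Vitali packing of $B(x_0,1)\cap\mathcal{R}$ through the Gromov-Hausdorff approximations then yields $V_\infty\ge|B(x_0,1)\cap\mathcal{R}|=|B(x_0,1)|$, the last equality because $\dim_{\mathcal{M}}\mathcal{S}<2n-3$ forces $\mathcal{H}^{2n}(\mathcal{S})=0$.

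The reverse inequality $V_\infty\le|B(x_0,1)|$ is the substance of the statement, and it rests on the effective form of Lemma~\ref{lma:HE04_2}: a unit ball on which $r^{-2n}|B(\cdot,r)|$ is almost constant on $[0,1]$ is, after rescaling, Gromov-Hausdorff close to a ball in a metric cone, with a modulus depending only on $n,\kappa$. With the Busemann approximations of Lemma~\ref{lma:HE05_2}--Lemma~\ref{lma:HE04_4} and the slice estimate of Lemma~\ref{lma:HE07_1} in hand, this ``almost volume cone implies almost metric cone'' statement is proved exactly as in \cite{CCWarp}, and the dyadic-scale induction of \cite{ColdVol} then shows that at $\nu$-almost every $x\in X$ the density $\vartheta_\nu(x)$ equals the intrinsic density $\mathrm{v}(x)$ of $X$. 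Since $\mathrm{v}\equiv1$ on $\mathcal{R}$, while $\mathcal{S}$ is $\mathcal{H}^{2n}$-null and, by the $\epsilon$-regularity, $\nu$ does not charge $\mathcal{S}$, we obtain $\vartheta_\nu\equiv1$ $\nu$-a.e., hence $\nu=\mathcal{H}^{2n}$ and $V_\infty=|B(x_0,1)|$.

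I expect the main obstacle to be precisely this last step: controlling the weak limit measure $\nu$ near the singular locus of $X$ and excluding the possibility that $\nu$ carries extra mass on a set where the intrinsic density drops. This is exactly where the high Minkowski codimension of $\mathcal{S}$ (which makes the $r$-neighborhoods of $\mathcal{S}$ have volume $o(r^3)$, uniformly along the family thanks to the $\epsilon$-regularity encoded in Definition~\ref{dfn:SC27_1}) and the almost-cone rigidity machinery built above are indispensable; once they are available, the remaining arguments are routine transcriptions of \cite{ColdVol} and \cite{CCT}.
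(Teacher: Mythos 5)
Your overall strategy — reduce to sequential continuity, pass to a weak limit measure $\nu$, and then run Colding's argument from \cite{ColdVol} using the almost-rigidity tools (almost super-harmonicity, Bishop--Gromov, cutoffs, the Busemann approximations, and the approximation slices) that the paper has verified for $\widetilde{\mathscr{KS}}(n,\kappa)$ — is the same as what the paper does: the text states Proposition~\ref{prn:HD22_1} with no proof beyond the pointer ``as done in~\cite{ColdVol},'' so your expansion of that pointer is exactly the intended argument.

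There is, however, a logical wrinkle in the way you present the ``comparatively elementary'' lower bound. You invoke ``$\epsilon$-regularity (uniform along $\widetilde{\mathscr{KS}}(n,\kappa)$)'' to conclude that balls in $Y_i$ centered near a regular point $q\in\mathcal{R}(X)$ have volume ratio tending to $1$. But the uniform $\epsilon$-regularity of the paper (Proposition~\ref{prn:HD20_1}) is proved \emph{after} weak compactness (Proposition~\ref{prn:HA05_1}), whose proof in turn cites the present Proposition~\ref{prn:HD22_1} for the volume convergence step; invoking it here is circular. The local Harnack inequality for $\mathbf{vr}$ (Proposition~\ref{prn:SC26_7}, via Anderson's gap and Hamilton's smooth compactness) is available independently, but that goes from ``big $\mathbf{vr}$'' to ``small curvature,'' not from ``$q_i$ is GH-close to a regular $q\in X$'' to ``$\mathbf{vr}(q_i)$ is big'' — the latter implication is precisely what volume convergence is needed for, so the ``easy'' inequality cannot in fact be extracted this way.

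Fortunately, this circular step is dispensable: the dyadic-scale induction you describe in the ``hard'' direction already gives $\vartheta_\nu=\mathrm{v}$ at $\nu$-a.e. point, and since $\nu\ll\mathcal{H}^{2n}$ with density in $[\kappa,1]$ (from the non-collapsing and the density bound $\vartheta_\nu\le1$) and $\mathcal{H}^{2n}(\mathcal{S})=0$ by the Minkowski codimension bound, you get $\nu=\mathcal{H}^{2n}$ and hence \emph{both} inequalities simultaneously — no separate lower bound argument is needed, and no appeal to $\epsilon$-regularity to show $\nu$ does not charge $\mathcal{S}$ either. If you reorganize so that the lower bound is just read off from $\vartheta_\nu=\mathrm{v}$ a.e. rather than argued first via $\epsilon$-regularity, the proof is sound and matches the paper's intent.
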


Recall that $d_{PGH}$ means the pointed-Gromov-Hausdorff distance. In Proposition~\ref{prn:HD22_1}, the inequality $d_{PGH}((X,x_0,g), (Y,y_0,h))<\xi$ means that the Gromov-Hausdorff distance between $B(x_0, \xi^{-1}) \subset X$ and $B(y_0, \xi^{-1}) \subset Y$ is less than $\xi$. 

Applying the same argument as in~\cite{CCWarp}, we obtain the rigidity of almost volume cones.

\begin{proposition}[\textbf{Almost volume cone implies almost metric cone}]
For each $\epsilon>0$, there exists $\xi=\xi(n,\epsilon)$ with the following properties.

Suppose $(X,x_0,g) \in  \widetilde{\mathscr{KS}}(n,\kappa)$ satisfies $\displaystyle  \frac{|B(x_0,2)|}{ |B(x_0,1)|} \geq (1-\epsilon)2^{2n}$,
then there exists a metric cone over a length space $Z$,  with vertex $z^*$ such that
\begin{align*}
  \diam(Z) < \pi + \xi, \quad
  d_{GH} \left( B(x_0,1), B(z^*, 1)\right) <\xi.
\end{align*}
Furthermore, $\displaystyle \lim_{\epsilon \to 0} \xi(n,\epsilon)=0$. 
\label{prn:HD22_2}
\end{proposition}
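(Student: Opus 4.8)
The plan is to follow the classical Cheeger--Colding scheme for ``almost volume cone implies almost metric cone,'' which is well adapted to our setting because all the required analytic tools have been checked in the preceding propositions. I would argue by contradiction and compactness. Suppose the statement fails; then there exist $\epsilon_0>0$ and a sequence $(X_i, x_i, g_i) \in \widetilde{\mathscr{KS}}(n,\kappa)$ with $\frac{|B(x_i,2)|}{|B(x_i,1)|} \geq (1-\tfrac{1}{i})2^{2n}$, but such that no metric cone $C(Z)$ with vertex $z^*$ and $\diam(Z)<\pi+\tfrac{1}{i}$ satisfies $d_{GH}(B(x_i,1), B(z^*,1)) < \tfrac{1}{i}$. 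By the compactness of $\widetilde{\mathscr{KS}}(n,\kappa)$ (Theorem~\ref{thmin:HE21_1}) we may pass to a subsequence converging in the pointed Gromov--Hausdorff (indeed pointed Cheeger--Gromov) topology to a limit $(X_\infty, x_\infty, g_\infty) \in \widetilde{\mathscr{KS}}(n,\kappa)$. By the volume continuity Proposition~\ref{prn:HD22_1}, the volume ratios pass to the limit, so $\frac{|B(x_\infty,2)|}{|B(x_\infty,1)|} = 2^{2n}$. Then the volume-cone characterization in Lemma~\ref{lma:HE04_2} (implication $1 \Rightarrow 4$) shows that $B(x_\infty,1)$ is isometric to the unit ball in a metric cone $C(Z_\infty)$ with vertex $z^*_\infty$; the diameter bound $\diam(Z_\infty) \leq \pi$ follows from the Bishop--Gromov comparison (Proposition~\ref{prn:HD19_1}) applied at the vertex, exactly as in the Riemannian case. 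This contradicts the defect hypothesis for large $i$, proving the qualitative statement; the final claim $\lim_{\epsilon\to 0}\xi(n,\epsilon)=0$ is then immediate from the contradiction setup (equivalently, one takes $\xi(n,\epsilon)$ to be the supremum over all admissible configurations, which tends to zero).

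For the quantitative version with an explicit dependence $\xi=\xi(n,\epsilon)$ one does not invoke compactness but runs the effective argument directly. First I would use the radial comparison functions of Lemma~\ref{lma:HE08_1} and the maximum principle to show that the solution $f$ of the Poisson equation $\Delta f = 2n$ on $B(x_0,1)$ with $f|_{\partial B(x_0,1)} = \tfrac{1}{2}$ (boundary value of $\tfrac{r^2}{2}$) is $N^{1,2}$-close to $\tfrac{r^2}{2}$: the almost-volume-cone hypothesis forces the ``area ratio'' $A(r)$ of Corollary~\ref{cly:GD08_1} to be almost constant, and integrating (\ref{eqn:GD08_5}) bounds $\fint_{B(x_0,1)}|f - \tfrac{r^2}{2}|$ and $\fint|\nabla(f-\tfrac{r^2}{2})|^2$ by $\psi(\epsilon)$. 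Next, the Weitzenböck formula on $\mathcal{R}$ together with integration by parts (Proposition~\ref{prn:HD13_2}) against the good cutoff functions of Lemma~\ref{lma:HE04_3} gives $\fint_{B(x_0,1/2)}|\mathrm{Hess}_f - g|^2 \leq \psi(\epsilon)$, since $|\mathrm{Hess}_f - g|^2 = |\mathrm{Hess}_f|^2 - 2n = \tfrac{1}{2}\Delta|\nabla f|^2 - 2n$ integrates against the cutoff to something small. Finally, a small $L^2$-bound on $\mathrm{Hess}_f - g$ together with the segment inequality (Proposition~\ref{prn:HD17_1}) and the volume doubling / Poincaré machinery upgrades to a Gromov--Hausdorff estimate: one uses $f$ to define an approximate radial/angular coordinate system and shows the metric is $\xi$-close to the cone metric $dr^2 + r^2 g_Z$ for the appropriate link $Z$, with $\xi = \xi(n,\epsilon) \to 0$. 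This is precisely the argument of Cheeger--Colding, and every ingredient it uses has an established analogue above, so it goes through essentially verbatim.

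The main obstacle, and the one point requiring genuine care rather than citation, is controlling the behavior near the singular set $\mathcal{S}$ when integrating by parts and when constructing the Gromov--Hausdorff map. The high Minkowski codimension of $\mathcal{S}$ ($\dim_{\mathcal{M}}\mathcal{S} < 2n-3$) is exactly what makes this work: the $r$-neighborhood of $\mathcal{S}$ has measure $o(r^3)$, so boundary terms over tubular neighborhoods of $\mathcal{S}$ vanish in the limit, and the cutoff functions of Lemma~\ref{lma:HE04_3} supported away from $\mathcal{S}$ suffice to justify all the Stokes-type manipulations; the weak convexity of $\mathcal{R}$ ensures the radial structure extends continuously across $\mathcal{S}$ as in Proposition~\ref{prn:HD16_1}. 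A secondary subtlety is that the limiting link $Z$ is only a length space, not a priori in $\widetilde{\mathscr{KS}}(n-1,\kappa')$; but for the statement as phrased this is harmless, since we only claim a metric cone over a length space. I would organize the write-up so that the compactness argument delivers the qualitative statement and the final sentence, and then remark that the effective estimate follows by repeating Cheeger--Colding's proof with the substitutes listed in the bulleted summary preceding this proposition.
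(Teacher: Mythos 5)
Your first (compactness) argument is circular as stated. You invoke Theorem~\ref{thmin:HE21_1} to extract a subsequential limit in $\widetilde{\mathscr{KS}}(n,\kappa)$ and then apply the rigidity Lemma~\ref{lma:HE04_2} to the limit. But Theorem~\ref{thmin:HE21_1} is proved via Theorem~\ref{thm:HD19_1}, which relies on Proposition~\ref{prn:HA05_1}, whose proof explicitly cites Proposition~\ref{prn:HD22_2} (``it follows from Proposition~\ref{prn:HD22_2} that every volume cone is actually a metric cone''). So you would be using the result to prove itself. Replacing the full compactness theorem by Gromov precompactness does not repair this: you would then have a pointed GH limit, but you would not know a priori that the limit is a space of the type covered by Lemma~\ref{lma:HE04_2} (i.e.\ a length space with convex Ricci-flat regular part and high-codimension singular set), and establishing exactly that closure property is what Proposition~\ref{prn:HA05_1} does, via Proposition~\ref{prn:HD22_2}.

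Your second (effective, Cheeger--Colding-style) argument is the correct route and is precisely what the paper does; the paper's ``proof'' is simply the sentence ``Applying the same argument as in~\cite{CCWarp}, we obtain the rigidity of almost volume cones,'' with the required substitutes (Poisson approximation of $r^2/2$ via Lemma~\ref{lma:HE08_1}, the area-ratio identity~(\ref{eqn:GD08_5}), Weitzenböck plus integration by parts via Proposition~\ref{prn:HD13_2} and the cutoffs of Lemma~\ref{lma:HE04_3}, the segment inequality Proposition~\ref{prn:HD17_1}, and the high Minkowski codimension of $\mathcal{S}$ to kill the boundary terms near the singular set) already catalogued in the surrounding text. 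So the fix is to drop the compactness paragraph entirely and present the effective argument as the proof; the claim $\lim_{\epsilon\to 0}\xi(n,\epsilon)=0$ comes out of that estimate directly, without any contradiction scaffolding. Your observation that $Z$ need only be a length space, and your remark on handling $\mathcal{S}$ via its measure bound, are both appropriate and align with what the paper assumes.
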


Similar to  Lemma 9.14 of~\cite{CCT},   we obtain the almost K\"ahler cone splitting, based on Proposition~\ref{prn:HD22_2}.

\begin{proposition}[\textbf{Almost K\"ahler cone splitting}]
For each $\epsilon>0$, there exists $\xi=\xi(n,\epsilon)$ with the following properties.

 Suppose $X \in \widetilde{\mathscr{KS}}(n,\kappa)$, $x_0 \in X$,  $b$ is a Lipschitz function on $B(x_0,2)$ satisfying
 \begin{align*}
  \sup_{B(x_0,2) \backslash \mathcal{S}} |\nabla b| \leq 2,  \quad \fint_{B(x_0,2) \backslash \mathcal{S}} |Hess_b|^2 \leq \epsilon^2.
   \end{align*}
 Suppose also $\displaystyle \frac{|B(x_0,2)|}{|B(x_0,1)|} \geq (1-\epsilon) 2^{2n}$, i.e., $B(x_0,1)$ is an almost volume cone.
  Then there exists a Lipschitz function $\tilde{b}$ on $B(x_0,1)$ such that
  \begin{align*}
     \sup_{B(x_0,1) \backslash \mathcal{S}} \left|\nabla \tilde{b}\right| \leq 3,
     \quad \fint_{B(x_0,1) \backslash \mathcal{S}} \left|\nabla \tilde{b}-J \nabla b\right|^2 \leq \xi.
  \end{align*}
\label{prn:HE08_1}
\end{proposition}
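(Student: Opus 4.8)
The plan is to follow the scheme of Lemma 9.14 of~\cite{CCT}, but with the analysis tools on $X$ replaced by their singular-space counterparts established earlier in this section. First I would run Proposition~\ref{prn:HD22_2} on the almost volume cone $B(x_0,1)$: the hypothesis $\frac{|B(x_0,2)|}{|B(x_0,1)|}\geq (1-\epsilon)2^{2n}$ gives a length space $Z$ with $\diam(Z)<\pi+\xi_1(\epsilon)$ and $d_{GH}(B(x_0,1),B(z^*,1))<\xi_1(\epsilon)$, where $\xi_1(\epsilon)\to 0$. Moreover, by the quantitative version of Lemma~\ref{lma:HE04_2} (almost volume cone implies $|Hess_{r^2/2}-g|$ small in $L^2$), there is an approximate radial function — the solution $f$ of the Poisson equation $\Delta f=2n$ in $B(x_0,2)$ with $(f-\tfrac{r^2}{2})|_{\partial}=0$ — satisfying $\fint_{B(x_0,1)\backslash\mathcal{S}}|Hess_f-g|^2\leq \xi_2(\epsilon)$, and $f$ is close to $\tfrac{r^2}{2}$ in $N^{1,2}$. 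This $f$ is smooth on $\mathcal{R}$ by the elliptic bootstrap (Proposition~\ref{prn:HD16_1}).

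Next, in analogy with the rigid case of Lemma~\ref{lma:HD30_1}, I would set $\tilde{b}$ to be (a cutoff-localized, suitably normalized version of) the function $\langle J\nabla b, \nabla f\rangle$ on $B(x_0,1)\backslash\mathcal{S}$, or equivalently solve $\Delta \tilde{b} = \mathrm{div}(J\nabla b)$ with appropriate boundary data; the point is that $\nabla\tilde{b}$ should approximate $J\nabla b$. The gradient bound $\sup |\nabla\tilde{b}|\leq 3$ comes from the Cheng--Yau type estimate, Proposition~\ref{prn:HC29_6}, applied on interior balls, using that the right-hand side $\mathrm{div}(J\nabla b)$ is controlled in the appropriate norm by $\fint|Hess_b|^2\leq\epsilon^2$ (after the usual mollification of $b$) together with $\sup|\nabla b|\leq 2$. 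To get $\fint_{B(x_0,1)\backslash\mathcal{S}}|\nabla\tilde b - J\nabla b|^2\leq\xi$, I would compute, using $Hess_f\approx g$ and $Hess_b\approx 0$ in $L^2$, that $\nabla\langle J\nabla b,\nabla f\rangle = Hess_f(J\nabla b,\cdot) + (\nabla_{\cdot}(J\nabla b))(\nabla f) \approx J\nabla b$ in $L^2$, the error being a sum of terms each bounded by $\bigl(\fint|Hess_f-g|^2\bigr)^{1/2}$, $\bigl(\fint|Hess_b|^2\bigr)^{1/2}$ times bounded quantities. Since $J$ is parallel on $\mathcal{R}$ (the limit K\"ahler structure, Definition~\ref{dfn:SC27_1}(2)), $\nabla_{\cdot}(J\nabla b) = J(\nabla_\cdot \nabla b) = J\,Hess_b(\cdot)$, which is small. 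Integration by parts (Proposition~\ref{prn:HD13_2}) together with the good cutoff functions of Lemma~\ref{lma:HE04_3} handles the boundary contributions and the passage between $B(x_0,1)$ and $B(x_0,2)$.

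The main obstacle, I expect, is \emph{not} any single estimate but rather the bookkeeping of error terms: one must show all the $\epsilon$-dependent quantities assemble into a single function $\xi=\xi(n,\epsilon)$ independent of $X$, which forces uniform control of (i) the $L^2$-closeness of $f$ to $\tfrac{r^2}{2}$ and of $Hess_f$ to $g$ in terms of $\epsilon$ alone — this is where the uniform volume doubling (Corollary~\ref{cly:HD20_1}), uniform Sobolev/Poincar\'e constants (Propositions~\ref{prn:HC29_2},~\ref{prn:HC29_3}), and the volume continuity (Proposition~\ref{prn:HD22_1}) all enter — and (ii) the uniform gradient bound on $\tilde b$ via Proposition~\ref{prn:HC29_6}, which has constants depending only on $n,\kappa$. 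The singular set $\mathcal{S}$ causes no additional trouble here: because $\dim_{\mathcal M}\mathcal S<2n-3$, the mollification of $b$ and all integrations by parts proceed as in Proposition~\ref{prn:HD16_2} and Proposition~\ref{prn:HD13_2}, with $\mathcal S$ invisible to the $N^{1,2}$ theory. A secondary point to be careful with is the mollification of the merely Lipschitz function $b$: one replaces $b$ by its heat-regularization $h_s$ for a small fixed $s$ (as in Lemma~\ref{lma:HE04_4}), controls $\fint|Hess_{h_s}|^2$ by $\fint|Hess_b|^2+o(1)$ using the weighted Sobolev inequality (Proposition~\ref{prn:HD07_2}) and heat kernel estimates (Proposition~\ref{prn:HD07_1}), and then carries out the construction with $h_s$ in place of $b$, the difference being absorbed into $\xi$.
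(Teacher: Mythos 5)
Your proposal follows exactly the approach the paper indicates: the paper gives no proof and states only that the result follows ``similar to Lemma 9.14 of~\cite{CCT}, based on Proposition~\ref{prn:HD22_2}.'' Your fleshing out — using the Poisson approximation $f$ to $\tfrac{r^2}{2}$, defining $\tilde{b}$ by the ansatz $\langle J\nabla b, \nabla f\rangle$ as in the rigid case Lemma~\ref{lma:HD30_1}, and invoking the $N^{1,2}$ machinery and Cheng--Yau estimate of sections~2.2--2.3 for uniformity of constants and harmlessness of $\mathcal{S}$ — is precisely the adaptation of Lemma~9.14 of~\cite{CCT} to the singular setting that the paper has in mind.
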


\subsection{Volume radius}
Anderson's gap theorem implies that one can improve regularity of the  very interior part of a geodesic ball whenever the volume ratio of the geodesic ball is very close
to the Euclidean one.    This suggests us to define the volume radius as follows.

 \begin{definition}
Let $\delta_0$ be  the Anderson constant.
Suppose $X \in \widetilde{\mathscr{KS}}(n,\kappa)$, $x_0 \in X$.Then we define
  \begin{align*}
    &\Omega_{x_0} \triangleq \left\{ r| r>0, r^{-2n}|B(x_0,r)|\geq (1-\delta_0) \omega_{2n} \right\}.\\
    &\mathbf{vr}(x_0) \triangleq
    \begin{cases}
      \sup \Omega_{x_0}, &\textrm{if} \; \Omega_{x_0} \neq \emptyset, \\
      0, &\textrm{if} \; \Omega_{x_0}=\emptyset.
    \end{cases}
  \end{align*}
We call $\mathbf{vr}(x_0)$ the volume radius of the point $x_0$.
\label{dfn:SC17_1}
\end{definition}
According to this definition,  a point is regular if and only if its volume radius is positive.
On the other hand, if the space is not $\C^n$, then every point has a finite volume radius by a generalized Anderson's gap theorem.

\begin{proposition}[\textbf{Euclidean space by $\mathbf{vr}$}]
  Suppose $X \in \widetilde{\mathscr{KS}}(n)$ and $\mathbf{vr}(x_0)=\infty$ for some $x_0 \in X$, then $X$ is isometric to the Euclidean space $\C^{n}$.
\label{prn:SC17_1}
\end{proposition}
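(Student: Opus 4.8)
The plan is to exploit the "volume-cone implies metric-cone" rigidity together with the monotonicity of the area ratio to conclude that $X$ must split off a line, and then iterate. First I would observe that if $\mathbf{vr}(x_0)=\infty$, then $\Omega_{x_0}=(0,\infty)$, so $r^{-2n}|B(x_0,r)|\geq(1-\delta_0)\omega_{2n}$ for all $r>0$. Combined with the Bishop--Gromov monotonicity (\ref{eqn:GD08_2}) of Proposition~\ref{prn:HD19_1}, which says $r^{-2n}|B(x_0,r)|$ is non-increasing, the function $r\mapsto \omega_{2n}^{-1}r^{-2n}|B(x_0,r)|$ is non-increasing, everywhere at least $1-\delta_0$, and at most $1$ (again by Bishop--Gromov, comparing against the Euclidean model near $r\to 0^{+}$, since the regular part is Ricci-flat and $x_0$ must in fact be regular as $\mathbf{vr}(x_0)>0$). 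Hence it has a limit $\ell\in[1-\delta_0,1]$ as $r\to\infty$.

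Next I would promote this to an exact volume-cone statement. Since $\omega_{2n}^{-1}r^{-2n}|B(x_0,r)|$ is monotone and bounded, for any fixed ratio like doubling the two values $\omega_{2n}^{-1}|B(x_0,R)|/R^{2n}$ and $\omega_{2n}^{-1}|B(x_0,2R)|/(2R)^{2n}$ both converge to $\ell$ as $R\to\infty$; rescaling the ball $B(x_0,2R)$ to unit size and applying Proposition~\ref{prn:HD22_2} (almost volume cone implies almost metric cone) shows that, after rescaling, $B(x_0,1)$ is Gromov--Hausdorff close to a unit ball in a metric cone, with error $\to 0$. Passing to a pointed Gromov--Hausdorff limit of the rescalings $(X, x_0, R_i^{-2}g)$ and using the compactness of $\widetilde{\mathscr{KS}}(n,\kappa)$-type spaces (here one uses that rescalings of $X$ stay in the relevant class, with $\kappa$ replaced by $\ell$, since $\mathrm{avr}$ is scale-invariant), the blow-down limit $X_\infty$ is an exact metric cone with vertex, and by Lemma~\ref{lma:HE04_2} the function $\tfrac{r^2}{2}$ satisfies $Hess_{r^2/2}=g$ on its regular part. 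Moreover its asymptotic volume ratio equals $\ell\geq 1-\delta_0$, so by Anderson's gap theorem (the Gap Lemma of \cite{An90}, as recalled at the start of the subsection) $X_\infty$ must be isometric to $\C^n$, forcing $\ell=1$.

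Now I would feed $\ell=1$ back into the original space. Since $\omega_{2n}^{-1}r^{-2n}|B(x_0,r)|$ is non-increasing, bounded above by $1$, and has limit $1$ at infinity, it must be identically $1$: $|B(x_0,r)|=\omega_{2n}r^{2n}$ for every $r>0$. In particular $\omega_{2n}^{-1}\cdot 2^{-2n}\cdot(2)^{-2n}\cdot\ldots$ — more simply, $2^{2n}|B(x_0,1/2)|=|B(x_0,1)|$ and the same on every scale, so condition (1) of Lemma~\ref{lma:HE04_2} holds on every ball $B(x_0,R)$ after rescaling. By the implication $1\Rightarrow 4$ of that lemma, $Hess_{r^2/2}-g\equiv 0$ on $\mathcal{R}$, i.e. $\nabla\tfrac{r^2}{2}$ is a conformal Killing field whose flow (by the regularity-preservation argument in the proof of $3\Rightarrow 4$, modeled on Claim~\ref{clm:GD11_1}) exhausts $\mathcal{R}$; the metric completion then gives that $X$ is a metric cone with vertex $x_0$ and constant volume density $1$. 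A cone all of whose spheres have full Euclidean area, with smooth Ricci-flat regular part, has cross-section a space form $S^{2n-1}$ of curvature $1$, hence is $\C^n$ with $\mathcal{S}=\emptyset$; alternatively, invoke Proposition~\ref{prn:SC17_1}'s sibling situation and Anderson's gap theorem directly on $X$ since $\mathrm{avr}(X)=1$.

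The main obstacle I anticipate is the rigorous justification of the blow-down limit: one must check that the rescaled spaces $(X,x_0,R_i^{-2}g)$ have a subsequential pointed Cheeger--Gromov limit lying in a class to which Proposition~\ref{prn:HD22_2} and Anderson's gap theorem apply, and that volume ratios pass to the limit (Proposition~\ref{prn:HD22_1}, volume continuity). Once the limit is known to be an exact metric cone with $\mathrm{avr}\geq 1-\delta_0$, the gap theorem closes the argument cleanly; the remaining steps are soft consequences of the monotonicity in Proposition~\ref{prn:HD19_1} and the rigidity in Lemma~\ref{lma:HE04_2}. An alternative route that sidesteps the blow-down entirely: directly note $\omega_{2n}^{-1}r^{-2n}|B(x_0,r)|$ is non-increasing with values in $[1-\delta_0,1]$, and if the limit at infinity were $<1$ one could still extract, by Proposition~\ref{prn:HD22_2} applied on a sequence of scales where the ratio is nearly constant, arbitrarily good approximate cone structure contradicting Anderson's gap quantitatively; I would use whichever of these is cleanest given the tools already assembled.
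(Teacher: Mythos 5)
The paper's proof is two sentences and deliberately avoids blow-downs: for \emph{any} $x\in X$, Bishop--Gromov (Proposition~\ref{prn:HD19_1}) gives
$\mathrm{v}(x)\geq\lim_{r\to\infty}\omega_{2n}^{-1}r^{-2n}|B(x,r)|=\mathrm{avr}(X)\geq 1-\delta_0$,
where the last inequality holds because $\mathbf{vr}(x_0)=\infty$ forces $\omega_{2n}^{-1}r^{-2n}|B(x_0,r)|\geq 1-\delta_0$ on every scale and hence in the limit. Since $1-\delta_0>1-2\delta_0$, item~5 of Definition~\ref{dfn:SC27_1} rules out $x\in\mathcal{S}$; as $x$ was arbitrary, $X$ is a smooth complete Calabi--Yau manifold, and Anderson's gap theorem finishes. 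Your route is considerably more elaborate, and it has two genuine problems beyond length.

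First, there is a circularity. The blow-down step --- extracting a pointed limit of $(X,x_0,R_i^{-2}g)$ inside $\widetilde{\mathscr{KS}}(n,\kappa)$ --- rests on the compactness of the model moduli (Proposition~\ref{prn:HA05_1}, Theorem~\ref{thm:HD19_1}). But the proof of Proposition~\ref{prn:HA05_1} invokes Anderson's gap theorem via exactly the present proposition, and Corollary~\ref{cly:SL23_1} and Proposition~\ref{prn:SC26_7}, both downstream of Proposition~\ref{prn:SC17_1}, feed into the compactness argument. In the paper's logical ordering this proposition must be proved \emph{before} compactness is available, so the blow-down plan is not admissible here.

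Second, even granting the blow-down, you apply Anderson's gap theorem (stated for smooth complete Ricci-flat manifolds) directly to $X_\infty$, which at that point is only known to be a metric cone in $\widetilde{\mathscr{KS}}$, possibly singular. The step you need to bridge this --- observing that the vertex has volume density $\geq 1-\delta_0>1-2\delta_0$ and hence cannot be singular by item~5 of Definition~\ref{dfn:SC27_1} --- is precisely the density-gap argument. Once you notice it, you should apply it to every point of the original $X$ (via $\mathrm{v}(x)\geq\mathrm{avr}(X)$) and dispense with the blow-down altogether; your closing ``alternative route'' paragraph gestures at this but never states the key inequality $\mathrm{v}(x)\geq\mathrm{avr}(X)$ for all $x$, which is what collapses the whole argument to two lines.
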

\begin{proof}
 Fix an arbitrary point $x \in X$, then volume comparison implies that
 \begin{align*}
   \mathrm{v}(x) \geq \lim_{r \to \infty} \omega_{2n}^{-1} r^{-2n} |B(x,r)|=\mathrm{avr}(X) \geq 1-\delta_0.
 \end{align*}
 Therefore, $x$ is a regular point. Since $x$ is arbitrarily chosen, we see that $X \in \mathscr{KS}(n)$.  Then the statement follows from Anderson's gap theorem.
\end{proof}

A local version of Proposition~\ref{prn:SC17_1} is the following local Harnack inequality of $\mathbf{vr}$.

\begin{proposition}[\textbf{Local Harnack inequality of volume radius}]
There is a constant $\tilde{K}=\tilde{K}(n)$ with the following properties.

Suppose $x \in X \in \widetilde{\mathscr{KS}}^{*}(n)$, $r=\mathbf{vr}(x)>0$, then we have
\begin{align}
   \tilde{K}^{-1}r  \leq \mathbf{vr} \leq \tilde{K}r
\label{eqn:SC26_8}
\end{align}
in the ball $B(x, \tilde{K}^{-1}r)$.  Moreover, for every $\rho \in (0, \tilde{K}^{-1}r)$, $y \in B(x, \tilde{K}^{-1}r)$, we have
\begin{align}
   &\omega_{2n}^{-1}\rho^{-2n} |B(y,\rho)| \geq 1-\frac{\delta_0}{100}, \label{eqn:SC26_9}\\
   & |Rm|(y) \leq \tilde{K}^{2} r^{-2}, \label{eqn:SC27_3}\\
   & inj(y) \geq \tilde{K}^{-1} r. \label{eqn:SC27_4}
\end{align}
\label{prn:SC26_7}
\end{proposition}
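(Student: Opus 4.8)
Since $X\in\widetilde{\mathscr{KS}}^*(n)$ is not $\C^n$, Proposition~\ref{prn:SC17_1} shows $r=\mathbf{vr}(x)<\infty$, so by the scaling invariance of $\widetilde{\mathscr{KS}}(n)$ and the homogeneity $\mathbf{vr}_{\lambda^2 g}=\lambda\,\mathbf{vr}_g$ I would first normalize $r=1$. Then $|B(x,1)|\geq (1-\delta_0)\omega_{2n}$, and Proposition~\ref{prn:HD19_1} forces $|B(x,\rho)|\geq (1-\delta_0)\omega_{2n}\rho^{2n}$ for every $\rho\in(0,1]$, i.e.\ a neighborhood of $x$ is non-collapsed with a universal lower volume ratio on all scales $\leq 1$. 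The upper bound $\mathbf{vr}(y)\leq\tilde K$ for $y\in B(x,\tilde K^{-1})$ is then elementary: if $\mathbf{vr}(y)=R$ then $|B(y,R)|\geq(1-\delta_0)\omega_{2n}R^{2n}$, and since $B(y,R)\supset B(x,R-d(x,y))$, Proposition~\ref{prn:HD19_1} gives $|B(x,R-d(x,y))|\geq(1-\delta_0)\omega_{2n}(R-d(x,y))^{2n}$, so $R-d(x,y)\in\Omega_x$ and $R\leq 1+d(x,y)<2$. Moreover, (\ref{eqn:SC26_9}) with constant $1-\delta_0/100>1-\delta_0$ immediately yields the lower bound $\mathbf{vr}(y)\geq\tilde K^{-1}$; and once the curvature bound (\ref{eqn:SC27_3}) is known, (\ref{eqn:SC27_4}) follows from bounded curvature together with the non-collapsing above (Cheeger--Gromov--Taylor). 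Thus the whole statement reduces to producing a dimensional $\tilde K$ with $B(x,\tilde K^{-1})\subset\mathcal R$, $|Rm|\leq\tilde K^2$ there, and $\omega_{2n}^{-1}\rho^{-2n}|B(y,\rho)|\geq 1-\delta_0/100$ for all $y\in B(x,\tilde K^{-1})$ and $\rho\leq\tilde K^{-1}$.

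\textbf{The $\epsilon$-regularity step, by contradiction and compactness.} This ``$\epsilon$-regularity'' assertion I would prove in the style of Anderson's gap theorem \cite{An90}. Assume it fails: there are $X_j\in\widetilde{\mathscr{KS}}^*(n)$, points $x_j$ with $\mathbf{vr}(x_j)=1$, and $K_j\to\infty$ such that for each $j$ one of the three conclusions fails with constant $K_j$. By Corollary~\ref{cly:HD20_1} the pointed spaces $(X_j,x_j)$ are uniformly volume-doubling, so a subsequence converges in the pointed Gromov--Hausdorff sense to some $(X_\infty,x_\infty)$. Using the analytic and geometric package of this section --- Bishop--Gromov (Proposition~\ref{prn:HD19_1}), the uniform Poincar\'e and Sobolev bounds, and ``almost volume cone implies almost metric cone'' (Proposition~\ref{prn:HD22_2}) --- one shows, exactly as in the proof of Theorem~\ref{thmin:HE21_1} but localized around $x_\infty$, that $X_\infty$ again has a regular--singular decomposition with smooth Ricci-flat K\"ahler regular part, that each tangent cone is a Ricci-flat metric cone, and that the convergence is $\hat C^\infty$ away from the singular set.

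\textbf{Regularity of the limit point.} The key claim is that $x_\infty\in\mathcal R(X_\infty)$. Volume continuity (Proposition~\ref{prn:HD22_1}) together with Proposition~\ref{prn:HD19_1} gives $\mathrm v(x_\infty)=\lim_{\rho\to0}\omega_{2n}^{-1}\rho^{-2n}|B(x_\infty,\rho)|\geq 1-\delta_0>1-2\delta_0$; a tangent cone of $X_\infty$ at $x_\infty$ is then a Ricci-flat metric cone of asymptotic volume ratio $\geq 1-\delta_0$, which by Anderson's gap theorem (in the form used in Proposition~\ref{prn:SC17_1}) must be the flat $\C^n$. Hence $\mathrm v(x_\infty)=1$, $x_\infty$ is regular, and there is a $\rho_0>0$ with $\overline{B(x_\infty,2\rho_0)}$ a relatively compact piece of a smooth Ricci-flat K\"ahler manifold, on which $|Rm|\leq C$, $\mathrm{inj}\geq c$, and the volume ratio at scale $\rho$ about any point tends to $1$ as $\rho\to 0$. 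Since the convergence is $\hat C^\infty$ on $B(x_\infty,2\rho_0)$, for $j$ large $B(x_j,\rho_0)\subset\mathcal R(X_j)$ with $|Rm|_{X_j}\leq 2C$, $\mathrm{inj}_{X_j}\geq c/2$, and $\omega_{2n}^{-1}\rho^{-2n}|B(y,\rho)|\geq 1-\delta_0/100$ for $y\in B(x_j,\rho_0)$ and all sufficiently small $\rho$; choosing $\tilde K^{-1}<\min\{\rho_0,c/2,(2C)^{-1/2}\}$ contradicts, for $j$ large, whichever conclusion was assumed to fail with $K_j\geq\tilde K$. Undoing the normalization finishes the proof.

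\textbf{Main obstacle.} The genuinely delicate point is the second step: that the pointed Gromov--Hausdorff limit of the $X_j$ shares the local structure of an element of $\widetilde{\mathscr{KS}}(n)$ --- smooth Ricci-flat K\"ahler regular part, high Minkowski-codimension singular set, conical tangent spaces, and the density gap of Definition~\ref{dfn:SC27_1}(5) --- together with $\hat C^\infty$-convergence on the regular set, all \emph{without} a uniform global non-collapsing constant $\kappa$ (one has such control only on a fixed-size neighborhood of $x_j$). This is a localized version of the compactness argument behind Theorem~\ref{thmin:HE21_1}, and it is here that the high codimension of $\mathcal S$, the weak convexity of $\mathcal R$, the rigidity Lemmas~\ref{lma:HE04_1}--\ref{lma:HD30_1} and their almost-rigidity counterparts, and the precise use of Anderson's constant $\delta_0$ (the essential gap between regular density $1$ and singular density $\leq 1-2\delta_0$) all enter.
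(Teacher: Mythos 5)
Your overall strategy (contradiction, volume comparison, Anderson's gap) is right, and your reduction to an $\epsilon$-regularity statement near $x$ is sound, but you have taken a genuinely heavier route than the paper, and the ``main obstacle'' you flag is real and is precisely what the paper's proof is engineered to avoid. In the paper, the proposition is reduced to the single curvature bound (\ref{eqn:SC27_3}) (the volume-ratio estimate (\ref{eqn:SC26_9}) and the injectivity-radius bound (\ref{eqn:SC27_4}) and both Harnack inequalities in (\ref{eqn:SC26_8}) then follow as you observe), and (\ref{eqn:SC27_3}) is proved by a \emph{point-picking and curvature-scale rescaling} argument in the style of Schoen: if it fails, one selects points and rescales by $|Rm|$ to obtain $(X_i,x_i,g_i)\in\widetilde{\mathscr{KS}}^*(n)$ with $|Rm|(x_i)=1$, $\sup_{B(x_i,L_i)}|Rm|\leq 2$, $L_i\to\infty$, and $\omega_{2n}^{-1}L_i^{-2n}|B(x_i,L_i)|\geq 1-\delta_0$ (the last since the rescaled volume radius at $x_i$ tends to infinity). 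Because of the uniform two-sided curvature bound on increasingly large balls, elliptic regularity for Ricci-flat metrics gives all higher $|\nabla^kRm|$ bounds, and one may pass to a \emph{smooth} Cheeger--Gromov limit. The limit is a complete smooth Ricci-flat manifold with asymptotic volume ratio $\geq 1-\delta_0$ yet $|Rm|(x_\infty)=1$, contradicting Anderson's gap theorem (or Proposition~\ref{prn:SC17_1}). No singular-limit structure theory is needed at any step.

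By contrast, you take a pointed Gromov--Hausdorff limit of the unrescaled sequence $(X_j,x_j)$ with $\mathbf{vr}(x_j)=1$. That limit is expected to carry singularities, and to conclude you invoke ``the analytic and geometric package of this section'' localized near $x_\infty$ to get a regular--singular decomposition, conical tangent spaces, $\hat C^\infty$-convergence on the regular part, etc. But the weak compactness and structure theory of this section (Propositions~\ref{prn:HA05_1}, \ref{prn:HD22_1}, \ref{prn:HD22_2}, and Theorems~\ref{thm:HD19_1}, \ref{thm:HE08_1}) is proved only for $\widetilde{\mathscr{KS}}(n,\kappa)$, i.e.\ with a \emph{uniform} asymptotic non-collapsing constant $\kappa$. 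You only control volume ratios in a unit ball around $x_j$. Some of the tools (Sobolev/Poincar\'e constants via Croke, heat kernel Gaussian bounds via Sturm, volume convergence) would all need to be localized. This is plausible but is substantial additional work that the paper's shorter proof makes unnecessary. The buy of the paper's approach: rescaling to the curvature scale converts the problem from one about singular limits to one about smooth limits, after which the contradiction with Anderson's gap theorem is immediate.

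One small additional point: even if you set up the localized structure theory, using the density-gap item (5) of Definition~\ref{dfn:SC27_1} for the GH limit $X_\infty$ is not automatic; that property is part of the hypotheses on spaces in $\widetilde{\mathscr{KS}}(n,\kappa)$ and is \emph{re-established} for limits in Proposition~\ref{prn:HA05_1} via a blowup argument that itself relies on the compactness machinery. Your step ``$\mathrm v(x_\infty)\geq 1-\delta_0>1-2\delta_0$ hence $x_\infty$ is regular'' therefore also leans on structure that is not yet available without the uniform $\kappa$. The paper's smooth-blowup proof sidesteps this entirely.
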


\begin{proof}
 It follows from Bishop volume comparison, Anderson's gap theorem and a compactness argument.
 Actually, by adjusting $\tilde{K}$ if necessary, it suffices to show (\ref{eqn:SC27_3}).  
 We argue by contradiction. Suppose (\ref{eqn:SC27_3}) were wrong, by point-selecting and rescaling, we can find a sequence of $L_i \to \infty$
 and Ricci-flat spaces $(X_i, x_i, g_i)  \in \widetilde{\mathscr{KS}}^{*}(n)$ such that 
 \begin{align*}
      |Rm|(x_i)=1,  \quad \sup_{x \in B(x_i, L_i)} |Rm|(x) \leq 2,  \quad \omega_{2n}^{-1} L_i^{-2n} |B(x_i, L_i)| \geq 1-\delta_0.
 \end{align*}
 Improving regularity property of Ricci-flat metrics implies higher order estimate of $Rm$ in the balls $B(x_i, L_i-1)$.
 Therefore, we can take smooth convergence limit(c.f.~\cite{Ha95}): 
 \begin{align*}
    (X_i, x_i, g_i)  \longright{C^{\infty}-Cheeger-Gromov}  (X_{\infty}, x_{\infty}, g_{\infty}). 
 \end{align*}
 The limit space satisfying $|Rm|(x_{\infty})=1$ and  $\textrm{avr}(X_{\infty}) \geq 1-\delta_0$, which is impossible by Anderson's gap theorem or Proposition~\ref{prn:SC17_1}. 
\end{proof}

On a Ricci-flat geodesic ball, it is well known that $|Rm|$ bound implies
bound of $|\nabla^k Rm|$ for each positive integer $k$ in a smaller geodesic ball.
So (\ref{eqn:SC27_3}) immediately yields the following corollary.

\begin{corollary}[\textbf{Improving regularity property of volume radius}]
 There is a small positive constant $c_a=c_a(n)$ with the following properties.

 Suppose $x \in X \in \widetilde{\mathscr{KS}}^{*}(n)$, $\mathbf{vr}(x) \geq r>0$, then we have
 \begin{align}
   r^{2+k}|\nabla^k Rm|(y) \leq c_a^{-2}, \quad \forall \; y \in B(x,c_a r), \quad 0 \leq k \leq 5.
 \label{eqn:SL23_8}
 \end{align}
\label{cly:SL23_1}
\end{corollary}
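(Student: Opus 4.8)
The plan is to reduce the statement to the classical interior regularity theory for Ricci-flat metrics of bounded geometry, the bounded geometry being furnished by Proposition~\ref{prn:SC26_7}.

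First I would use the scaling invariance of $\widetilde{\mathscr{KS}}(n)$ to normalize $r=1$, so that $\mathbf{vr}(x)\geq 1$ and in particular $x$ is a regular point. Proposition~\ref{prn:SC26_7} then supplies a constant $\tilde{K}=\tilde{K}(n)$ such that, on the ball $B(x,\tilde{K}^{-1})$, the estimates (\ref{eqn:SC26_9})--(\ref{eqn:SC27_4}) hold; concretely,
\begin{align*}
  \omega_{2n}^{-1}\rho^{-2n}|B(y,\rho)|\geq 1-\tfrac{\delta_0}{100},\qquad |Rm|(y)\leq \tilde{K}^{2},\qquad inj(y)\geq \tilde{K}^{-1}
\end{align*}
for all $y\in B(x,\tilde{K}^{-1})$ and $\rho\in(0,\tilde{K}^{-1})$. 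Since $1-\tfrac{\delta_0}{100}>1-2\delta_0$, the volume-ratio lower bound together with the singularity criterion of Definition~\ref{dfn:SC27_1} forces $B(x,\tilde{K}^{-1})\subset\mathcal{R}=X\setminus\mathcal{S}$, so from here on we are working on a genuine smooth Riemannian manifold and $\mathcal{S}$ plays no further role.

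Next I would invoke the standard elliptic regularity for Ricci-flat metrics: the contracted second Bianchi identity together with $Ric\equiv 0$ gives an elliptic system $\Delta Rm = Rm\ast Rm$ for the full curvature tensor. On the ball $B(x,\tilde{K}^{-1})$, the curvature bound and the positive injectivity radius yield harmonic coordinate charts of a definite size $\rho_0=\rho_0(n)$ in which the metric is controlled in $C^{1,\alpha}$; Schauder estimates for the Einstein equation in harmonic coordinates upgrade the metric to $C^{\infty}$ with uniform bounds, and differentiating the curvature system then gives
\begin{align*}
  |\nabla^k Rm|(y)\leq C_k(n),\qquad y\in B\!\left(x,\tfrac12\tilde{K}^{-1}\right),\quad 0\leq k\leq 5 .
\end{align*}
Equivalently, one may regard $g$ as a static Ricci flow on $B(x,\tilde{K}^{-1})\times(-\infty,\infty)$ and quote Shi's local derivative estimates directly. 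Either route is completely classical once we know the ball is smooth Ricci-flat with controlled geometry.

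To finish I would choose $c_a=c_a(n)>0$ small enough that $c_a\leq\tfrac12\tilde{K}^{-1}$ and $c_a^{-2}\geq\max_{0\leq k\leq 5}C_k(n)$; then $|\nabla^k Rm|(y)\leq C_k(n)\leq c_a^{-2}$ on $B(x,c_a)$, and undoing the normalization (which rescales $|\nabla^k Rm|$ by $r^{-(2+k)}$) yields (\ref{eqn:SL23_8}). I do not anticipate any serious obstacle here; the only points demanding a little care are confirming, via Proposition~\ref{prn:SC26_7} and the singularity criterion, that the whole ball $B(x,\tilde{K}^{-1}r)$ sits in $\mathcal{R}$, and the routine scaling bookkeeping needed to track the powers of $r$ and to select a single constant $c_a$ valid across the finite range $0\leq k\leq 5$.
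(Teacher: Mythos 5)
Your argument is correct and is essentially the route the paper takes: Proposition~\ref{prn:SC26_7} supplies the two-sided geometry control (curvature bound, injectivity radius, and volume-ratio pinching forcing $B(x,\tilde K^{-1}r)\subset\mathcal{R}$), and then the standard Ricci-flat interior regularity gives the derivative bounds after rescaling. The paper simply states this as "well known" in the sentence preceding the corollary, whereas you spell out the harmonic-coordinate/Shi's-estimate mechanism and the bookkeeping; no substantive difference.
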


 In Ricci bounded geometry, harmonic radius (c.f.~\cite{An90}) plays an important role.   A point $x$ is defined to have harmonic radius at least $r$ if on the smooth geodesic ball $B(x,r)$,  there exists a harmonic diffeomorphism
 $\Psi=(u_1, u_2, \cdots, u_{2n}): B(x,r) \to \Omega \subset \R^{2n}$ such that
 \begin{align*}
     \frac{1}{2}\delta_{ij} \leq g_{ij}=g(\nabla u_i, \nabla u_j) \leq 2 \delta_{ij},  \quad
     r^{\frac{3}{2}}\norm{g_{ij}}{C^{1,\frac{1}{2}}} \leq 2.
 \end{align*}
 Then harmonic radius is defined as the supreme of all the possible $r$'s mentioned above.  For convenience, we use
 $\mathbf{hr}$ to denote harmonic radius.   This definition can be easily moved to our case when the underlying space
 is in $\widetilde{\mathscr{KS}}^*(n)$. We define $\mathbf{hr}$ to be $0$ on the singular part of the underlying space.
 It is clear from the definitions and Proposition~\ref{prn:SC26_7} that volume radius and harmonic radius can bound each other, i.e., they are equivalent. The following Proposition is obvious.

 \begin{proposition}[\textbf{Equivalence of volume and harmonic radius}]
 Suppose $x \in X \in \widetilde{\mathscr{KS}}^{*}(n)$, then we have
 \begin{align*}
   \frac{1}{C} \mathbf{hr}(x) \leq \mathbf{vr}(x) \leq C \mathbf{hr}(x)
 \end{align*}
 for some uniform constant $C=C(n)$.
 \label{prn:HA09_1}
 \end{proposition}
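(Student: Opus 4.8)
The plan is to read off both inequalities from the a priori estimates already established for points of positive volume radius, namely Proposition~\ref{prn:SC26_7} and Corollary~\ref{cly:SL23_1}, together with the Ricci-flatness of $\mathcal{R}$. The whole argument is the classical comparison between volume radius and harmonic radius in bounded geometry, adapted to our singular setting by the trivial observation that everything happens strictly inside the smooth part $\mathcal{R}$: both $\mathbf{vr}$ and $\mathbf{hr}$ are zero on $\mathcal{S}$ by convention, so it suffices to treat $x\in\mathcal{R}$, where $X$ is an honest Ricci-flat Riemannian manifold near $x$.

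\emph{From $\mathbf{vr}$ to $\mathbf{hr}$.} Suppose $r=\mathbf{vr}(x)>0$. By Proposition~\ref{prn:SC26_7}, the ball $B(x,\tilde K^{-1}r)\subset\mathcal{R}$ carries the bounds $|Rm|\le \tilde K^2 r^{-2}$ and $inj(y)\ge \tilde K^{-1}r$, and by Corollary~\ref{cly:SL23_1} one moreover has $r^{2+k}|\nabla^k Rm|\le c_a^{-2}$ for $0\le k\le 5$. This is exactly the bounded-geometry hypothesis under which the harmonic coordinate construction of Jost--Karcher/Anderson (c.f.~\cite{An90}) produces, on $B(x,c_1 r)$ for some dimensional $c_1\le \tilde K^{-1}$, a harmonic chart $\Psi=(u_1,\dots,u_{2n})$ with $\tfrac12\delta_{ij}\le g(\nabla u_i,\nabla u_j)\le 2\delta_{ij}$ and $(c_1 r)^{3/2}\|g_{ij}\|_{C^{1,\frac{1}{2}}}\le 2$. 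Hence $\mathbf{hr}(x)\ge c_1 r=c_1\mathbf{vr}(x)$, which is one of the two inequalities.

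\emph{From $\mathbf{hr}$ to $\mathbf{vr}$.} Suppose $r=\mathbf{hr}(x)>0$, so on $B(x,r)$ there is a harmonic diffeomorphism onto a Euclidean domain with $\tfrac12\delta_{ij}\le g_{ij}\le 2\delta_{ij}$ and $r^{3/2}\|g_{ij}\|_{C^{1,\frac{1}{2}}}\le 2$. Since $\mathcal{R}$ is Ricci-flat, in these harmonic coordinates the metric solves the quasilinear elliptic system $g^{kl}\partial_k\partial_l g_{ij}=Q_{ij}(g,\partial g)$ coming from $Ric_{ij}=0$; starting from the $C^{1,\frac12}$ bound on $g_{ij}$, Schauder estimates bootstrap this to interior bounds $\sup_{B(x,r/2)}r^{2+k}|\nabla^k Rm|\le C(n)$ for every $k$, in particular a two-sided sectional curvature bound $|K|\le C(n)r^{-2}$ on $B(x,r/2)$. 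Because $x$ is regular we have $\mathrm{v}(x)=1$, so after rescaling to $r=1$, Günther/Bishop volume comparison applied with the sectional curvature bound gives $1-\delta_0\le \omega_{2n}^{-1}\rho^{-2n}|B(x,\rho)|\le 1$ for all $\rho\le \rho_0(n)$, with $\rho_0(n)$ a fixed fraction of $r$. (Alternatively, one may use directly the Ricci-flat small-ball expansion $|B(x,\rho)|=\omega_{2n}\rho^{2n}(1+O(C(n)^2\rho^4))$, whose first-order term vanishes precisely because $Ric=0$.) Therefore $\mathbf{vr}(x)\ge \rho_0(n)\ge c_2 r=c_2\mathbf{hr}(x)$, and the two inequalities together give the equivalence with $C=C(n)=\max\{c_1^{-1},c_2^{-1}\}$.

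There is, as the paper indicates, no real obstacle here; the only point requiring a word of care is precisely the one already isolated above, that all the estimates are carried out inside the genuine smooth manifold $\mathcal{R}$, so that the classical existence theorem for harmonic coordinates and the elliptic bootstrap on Ricci-flat metrics apply verbatim, with the quantitative inputs supplied by Proposition~\ref{prn:SC26_7} and Corollary~\ref{cly:SL23_1}. If one prefers, the same statement can be obtained by a compactness/contradiction argument: a sequence $x_i\in X_i\in\widetilde{\mathscr{KS}}^{*}(n)$ with $\mathbf{hr}(x_i)=1$ but $\mathbf{vr}(x_i)\to 0$ would, by the bootstrap bounds, converge in the $C^\infty$-Cheeger-Gromov topology to a complete Ricci-flat manifold with $\mathbf{hr}(x_\infty)\ge 1$, hence $\mathbf{vr}(x_\infty)>0$, contradicting volume continuity; the reverse ratio is handled symmetrically using Proposition~\ref{prn:SC26_7}.
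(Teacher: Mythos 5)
Your proof is correct and follows the same approach the paper leaves implicit when it says the statement is "clear from the definitions and Proposition~\ref{prn:SC26_7}": both directions are the classical volume--harmonic radius equivalence in Ricci-flat bounded geometry, with the quantitative input supplied by Proposition~\ref{prn:SC26_7} and Corollary~\ref{cly:SL23_1}, carried out strictly inside $\mathcal{R}$ so the standard elliptic bootstrap and Ricci-flat small-ball expansion apply. The only remark worth adding is that Corollary~\ref{cly:SL23_1} gives derivative bounds only for $0\le k\le 5$; your bootstrap via the Ricci-flat system in harmonic coordinates correctly supplies the rest, so this is not an issue, but it is the one point where your write-up silently upgrades the stated input.
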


 Note that the regularity requirement of the underlying space to define volume radius is much weaker than that to define harmonic radius a priori. Therefore, Proposition~\ref{prn:HA09_1} already implies a regularity improvement.
We shall set up the compactness theory based on volume radius,
since volume radius may be applicable to more general metric measure spaces.

  Let $X \in \widetilde{\mathscr{KS}}^*(n)$ and decompose it as $X=\mathcal{R} \cup \mathcal{S}$. Then $\mathbf{vr}$ is a positive finite function on $\mathcal{R}$ and equals $0$ on $\mathcal{S}$.

\begin{proposition}[\textbf{Rigidity of volume ratio}]
  Suppose $X \in \widetilde{\mathscr{KS}}(n)$. If for two concentric geodesic balls $B(x_0,r_1) \subset B(x_0,r_2)$
  centered at a regular point $x_0$,  we have
\begin{align}
     \omega_{2n}^{-1}r_1^{-2n}|B(x_0,r_1)|= \omega_{2n}^{-1}r_2^{-2n}|B(x_0,r_2)|,
\label{eqn:SL23_1}
\end{align}
then the ball $B(x_0,r_2)$ is isometric to a geodesic ball of radius $r_2$ in $\C^n$.
Furthermore, if $X \in \mathscr{KS}(n)$, then we can further conclude that $X$ is Euclidean.
\label{prn:SC17_3}
\end{proposition}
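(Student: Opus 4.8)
\noindent The plan is to reduce the statement to the volume-cone rigidity of Lemma~\ref{lma:HE04_2}.  Since $\widetilde{\mathscr{KS}}(n)$ is scaling invariant I normalize $r_2=1$ and write $W(r)=r^{-2n}|B(x_0,r)|$ (the $\omega_{2n}^{-1}$ factors in (\ref{eqn:SL23_1}) cancel, so the hypothesis reads $W(r_1)=W(1)$).  First I would invoke the ``area ratio'' monotonicity (Corollary~\ref{cly:GD08_1}): $W$ is non-increasing on $(0,\infty)$, so $W(r_1)=W(1)$ forces $W\equiv W(r_1)=:c$ on $[r_1,1]$; by (\ref{eqn:GD08_5}) this gives $A(r)=2nc$ for a.e.\ $r\in(r_1,1)$.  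Now I would compute $|B(x_0,1)|$ in two ways: by (\ref{eqn:GD08_4}) it equals $\int_0^1 A(s)s^{2n-1}\,ds$, while directly $|B(x_0,1)|=W(1)=c=\int_0^1 2nc\,s^{2n-1}\,ds$.  Subtracting, $\int_0^1\bigl(A(s)-2nc\bigr)s^{2n-1}\,ds=0$; since $A$ is non-increasing and already equals $2nc$ on $(r_1,1)$, the integrand is $\geq 0$ on $(0,r_1)$ and vanishes on $(r_1,1)$, hence $A\equiv 2nc$ a.e.\ on $(0,1)$.  Therefore $W$ is constant on the whole interval $(0,1]$; i.e.\ $B(x_0,1)$ is a volume cone in the sense of Lemma~\ref{lma:HE04_2}(2).

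Next I would apply the equivalence $(2)\Leftrightarrow(4)$ of Lemma~\ref{lma:HE04_2}: $\overline{B(x_0,1)}$ is isometric to the closed unit ball of a metric cone $C(Z)$ over a length space $Z$, with vertex $x_0$, and $\mathrm{Hess}_{r^2/2}\equiv g$ on $B(x_0,1)\backslash\mathcal{S}$.  Here is where I use that $x_0$ is a \emph{regular} point: a neighborhood of the vertex is isometric to a domain in a smooth Riemannian manifold, so $C(Z)$ is a smooth Riemannian manifold near its vertex.  A metric cone $dr^2+r^2g_Z$ that is smooth at the vertex must have link $(Z,g_Z)$ isometric to the round unit sphere $S^{2n-1}$, whence $C(Z)=\R^{2n}$ and $B(x_0,1)$ is a Euclidean unit ball.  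Because the regular part of $X$ carries a K\"ahler structure $J$ compatible with the metric (Definition~\ref{dfn:SC27_1}), $\R^{2n}$ is identified with $\C^n$; scaling back gives the first assertion.

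For the last statement, assume in addition $X\in\mathscr{KS}(n)$, so $\mathcal{S}=\emptyset$ and $X$ is a complete smooth Calabi--Yau manifold; the above shows $X$ is flat on the open set $B(x_0,r_2)$.  Since a Ricci-flat, hence Einstein, metric is real analytic in harmonic coordinates, the curvature tensor is real analytic on the connected manifold $X$, and, vanishing on a nonempty open set, it vanishes identically; thus $X$ is a complete flat K\"ahler manifold, and being simply connected it is isometric to $\C^n$.  (Equivalently, one may extend the relation $\mathrm{Hess}_{r^2/2}=g$ outward along the radial geodesics from $x_0$ using completeness, conclude $\mathrm{avr}(X)=1$, and then apply Proposition~\ref{prn:SC17_1}.)

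The principal obstacle is the first paragraph --- promoting the single two-scale equality (\ref{eqn:SL23_1}) to constancy of $W$ on all of $(0,1]$, so that $B(x_0,1)$ is a genuine volume cone rather than merely a ``cone over an annulus''.  This hinges on the monotonicity of the area-ratio function $A$ interacting with the two evaluations of $|B(x_0,1)|$, exactly mirroring the proof of $1\Rightarrow 2$ in Lemma~\ref{lma:HE04_2}.  After that, the metric-cone rigidity and the elementary structure of smooth metric cones carry the argument through with no further difficulty.
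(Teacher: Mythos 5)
Your first two paragraphs give a correct proof of the main assertion, and they follow the paper's overall strategy—reduce to a volume cone via (the obvious generalization to arbitrary $r_1<r_2$ of) the $1\Rightarrow 2$ argument in Lemma~\ref{lma:HE04_2}, then apply cone rigidity—but the final step is handled differently. The paper observes that $c=1$ (since $x_0$ regular forces $\mathrm{v}(x_0)=1$), invokes the density gap of Definition~\ref{dfn:SC27_1}(5) to conclude that $B(x_0,r_2)$ contains no singular points, and then applies the equality case of Bishop-Gromov on the resulting smooth Ricci-flat ball. You instead pass through the metric-cone equivalence $(2)\Leftrightarrow(4)$ of Lemma~\ref{lma:HE04_2} and identify $C(Z)=\R^{2n}$ by noting that a metric cone which is smooth at its vertex (smooth because $x_0$ is a regular point) must be Euclidean. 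Both rest on the same cone-rigidity lemma and are of comparable length; yours is slightly cleaner in that it dispenses with the density gap argument, and the value of $c$ comes out automatically rather than having to be noted in advance.

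Your third paragraph has a genuine gap. The assertion that $X$ is simply connected does not follow from $X\in\mathscr{KS}(n)$: this class is defined as the collection of \emph{all} complete K\"ahler Ricci-flat manifolds, with no topological hypothesis, so $X$ could a priori be a free flat quotient such as $\C^{n-1}\times(\C/\Lambda)$. The parenthetical alternative is not sound either: $r^2/2$ fails to be smooth beyond the cut locus of $x_0$, so the relation $\mathrm{Hess}_{r^2/2}=g$ has no meaning there and cannot be ``extended outward by completeness''—on the flat cylinder example above, the hypothesis of the proposition holds for small $r_2$ yet $\mathrm{avr}(X)=0$. The paper's own proof instead leans on ``the non-collapsing property at infinity'' (a hypothesis not literally contained in the definition of $\mathscr{KS}(n)$, but clearly intended); with it, the correct argument is: once $X$ is flat, write $X=\R^{2n}/\Gamma$ with $\Gamma$ a free discrete isometry group; positive asymptotic volume ratio forces $\Gamma$ finite, and a finite isometry group of $\R^{2n}$ acting freely is trivial, hence $X=\C^n$. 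Your proof needs this same noncollapsing input, and neither of your two suggested shortcuts supplies it.
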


\begin{proof}
  From the proof of Lemma~\ref{lma:HE04_2}, it is clear that $B(x_0,r_2)$ is a volume cone with constant volume
  ratio $\omega_{2n}$.
  Observe the change of volume element along each smooth geodesic emanating from $x_0$, in the polar coordinate.
  By the volume density gap between regular and singular points,
  the optimal volume ratio of $B(x_0,1)$ forces that it does not contain any singular point.
  Then the situation is the same as the smooth Riemannian case.   Clearly, a smooth Ricci-flat geodesic ball with volume
  ratio $\omega_{2n}$ is isometric to a Euclidean ball of the same radius.

 If $X\in \mathscr{KS}(n)$, by analyticity of metric tensor, it is clear that $X$ is flat and hence $\C^n$ due to its non-collapsing property at infinity.
\end{proof}

\begin{proposition}[\textbf{Continuity of volume radius}]
  $\mathbf{vr}$ is a continuous function on $X$ whenever $X \in \widetilde{\mathscr{KS}}(n)$.
\label{prn:HE07_2}
\end{proposition}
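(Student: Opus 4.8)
The plan is to prove that $\mathbf{vr}$ is continuous at every point $x_0 \in X$ by treating separately the case of a regular point and the case of a singular point, in both cases relying on the Bishop--Gromov comparison (Proposition~\ref{prn:HD19_1}) together with the volume continuity statement already established (Proposition~\ref{prn:HD22_1}) and the rigidity of volume ratio (Proposition~\ref{prn:SC17_3}). The key analytic fact to exploit is that for each fixed $r>0$ the function $y \mapsto \omega_{2n}^{-1}r^{-2n}|B(y,r)|$ is continuous on $X$; this follows from the estimate $|\,|B(x_i,r)| - |B(x_0,r)|\,| \leq |B(x_0,r+\epsilon)\setminus B(x_0,r-\epsilon)|$ and the uniform annulus bound $\leq C(n,\kappa)\, r^{2n-1}\epsilon$ established in the proof of Proposition~\ref{prn:HD19_1}; in fact, these same inequalities show the two-variable function $(y,r) \mapsto \omega_{2n}^{-1}r^{-2n}|B(y,r)|$ is (jointly) continuous on $X \times (0,\infty)$, since it is continuous in $r$ uniformly in $y$ on compact sets and continuous in $y$ for each fixed $r$.

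For the case $\mathbf{vr}(x_0) = 0$, i.e.\ $x_0 \in \mathcal{S}$, I would argue by contradiction: if $\mathbf{vr}$ fails to be upper semicontinuous at $x_0$, there are $y_i \to x_0$ with $\mathbf{vr}(y_i) \geq \eta > 0$ for all $i$. Then $\omega_{2n}^{-1}\rho^{-2n}|B(y_i,\rho)| \geq 1-\delta_0$ for $\rho = \min\{\eta, 1\}$ (using Bishop--Gromov monotonicity to pass from $\mathbf{vr}(y_i)$ to the smaller radius $\rho$), and letting $i\to\infty$ the joint continuity above gives $\omega_{2n}^{-1}\rho^{-2n}|B(x_0,\rho)| \geq 1-\delta_0$, so $\mathbf{vr}(x_0) \geq \rho > 0$, contradicting $x_0 \in \mathcal{S}$ (recall a point is regular iff its volume radius is positive, and $\mathrm{v}\leq 1-2\delta_0 < 1-\delta_0$ on $\mathcal{S}$). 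Lower semicontinuity is trivial here since $\mathbf{vr}\geq 0$ everywhere.

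For the case $x_0 \in \mathcal{R}$, set $r_0 = \mathbf{vr}(x_0) \in (0,\infty)$ (finite unless $X = \C^n$, by Proposition~\ref{prn:SC17_1}, in which case $\mathbf{vr}\equiv\infty$ and continuity is vacuous). \emph{Upper semicontinuity:} if $y_i \to x_0$ and $\mathbf{vr}(y_i) \geq r_0 + \tau$ for some $\tau>0$, then $\omega_{2n}^{-1}(r_0+\tau)^{-2n}|B(y_i, r_0+\tau)| \geq 1-\delta_0$, and passing to the limit via joint continuity gives $\omega_{2n}^{-1}(r_0+\tau)^{-2n}|B(x_0,r_0+\tau)| \geq 1-\delta_0$, so $\mathbf{vr}(x_0)\geq r_0+\tau$, a contradiction. \emph{Lower semicontinuity:} here is where rigidity enters. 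If $y_i\to x_0$ with $\mathbf{vr}(y_i) \leq r_0 - \tau$, i.e.\ $\omega_{2n}^{-1}(r_0-\tau)^{-2n}|B(y_i,r_0-\tau)| < 1-\delta_0$ for all $i$ — this would force, in the limit, $\omega_{2n}^{-1}(r_0-\tau)^{-2n}|B(x_0,r_0-\tau)| \leq 1-\delta_0$. But by Bishop--Gromov monotonicity and the definition of $\mathbf{vr}(x_0)=r_0$ we know $\omega_{2n}^{-1}\rho^{-2n}|B(x_0,\rho)|$ is non-increasing in $\rho$ and equals some value $\geq 1-\delta_0$ at $\rho$ slightly below $r_0$; in particular for $\rho = r_0-\tau$ it is $\geq 1-\delta_0$. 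Combining gives equality $\omega_{2n}^{-1}(r_0-\tau)^{-2n}|B(x_0,r_0-\tau)| = 1-\delta_0$, and then monotonicity forces $\omega_{2n}^{-1}\rho^{-2n}|B(x_0,\rho)|$ to be constant $=1-\delta_0<1$ on $(0, r_0-\tau]$; by Proposition~\ref{prn:SC17_3} the ball $B(x_0,r_0-\tau)$ is then isometric to a Euclidean ball, forcing its volume ratio to be $\omega_{2n}$, i.e.\ the constant value is $1$, a contradiction. So no such sequence exists.

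The step I expect to be the main obstacle is pinning down the lower-semicontinuity argument rigorously: one must be careful that the limiting volume-ratio inequality combined with the \emph{a priori} information about $\mathbf{vr}(x_0)$ really does force the rigid equality case, rather than merely a strict inequality that Proposition~\ref{prn:SC17_3} cannot be applied to. This requires using the precise fact that $\sup\Omega_{x_0} = r_0$ means the ratio is $\geq 1-\delta_0$ for all radii $\leq r_0$ (by monotonicity) but drops below $1-\delta_0$ for radii $>r_0$, so that the hypothetical bad sequence pushes the ratio exactly to the threshold $1-\delta_0$ at radius $r_0-\tau$, triggering rigidity. Everything else is a routine application of the joint continuity of $(y,r)\mapsto r^{-2n}|B(y,r)|$, which itself is a packaged consequence of the annulus volume estimates already recorded in the proof of Proposition~\ref{prn:HD19_1}.
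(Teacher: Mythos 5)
Your overall strategy mirrors the paper's: both rest on the observation that the normalized volume ratio $\rho \mapsto \omega_{2n}^{-1}\rho^{-2n}|B(x_0,\rho)|$ is continuous in the base point, together with the rigidity of volume ratio (Proposition~\ref{prn:SC17_3}). Your treatment of the singular case and of upper semicontinuity at a regular point is fine (the paper reaches the singular case via the local Harnack inequality, Proposition~\ref{prn:SC26_7}, but your direct limit argument also works). The paper handles the regular case without splitting into semicontinuity halves: it passes to a subsequential limit $\bar r$ of $\mathbf{vr}(x_i)$, notes that the ratio at both $r_0$ and $\bar r$ (at $x_0$) equals exactly $1-\delta_0$, and applies rigidity once.

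The gap is in your lower-semicontinuity step. Having established that the ratio at $\rho=r_0-\tau$ equals exactly $1-\delta_0$, you claim that ``monotonicity forces'' the ratio to be constant $=1-\delta_0$ on $(0,r_0-\tau]$. That conclusion is false, and cannot follow from monotonicity alone: Bishop--Gromov says the ratio is non-increasing, so for $\rho<r_0-\tau$ it is $\geq 1-\delta_0$, and since $x_0\in\mathcal{R}$ the ratio tends to $\mathrm{v}(x_0)=1$ as $\rho\to 0^+$. So the ratio actually decreases from $1$ down to $1-\delta_0$ on that interval and is certainly not constant there. The correct interval is $[r_0-\tau,\,r_0]$, and to see this you need one more input that you do not make explicit: the ratio at $\rho=r_0=\mathbf{vr}(x_0)$ is also exactly $1-\delta_0$, because the ratio is a continuous, non-increasing function of $\rho$ and $r_0$ is by definition the threshold where it last touches $1-\delta_0$ (this is precisely the observation the paper records at the start of its regular-point argument). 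With that, for $\rho\in[r_0-\tau,r_0]$ the ratio is squeezed between the value at $r_0-\tau$ and the value at $r_0$, hence is identically $1-\delta_0$ there, and then Proposition~\ref{prn:SC17_3} applied to $r_1=r_0-\tau$, $r_2=r_0$ gives that $B(x_0,r_0)$ is Euclidean, contradicting the ratio being $1-\delta_0<1$. The rest of your argument is sound, but as written the constancy claim is the wrong deduction and refers to the wrong interval, so this step needs to be repaired.
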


\begin{proof}
Since $\mathbf{vr} \equiv \infty$ on $\C^n$, which is obvious continuous. So we can assume $X \in \widetilde{\mathscr{KS}}^{*}(n)$ without loss of generality.
By Proposition~\ref{prn:SC17_1}, we know $\mathbf{vr}$ is a finite function on $X$.

So we assume $\mathbf{vr}$ is a function with value in $[0,\infty)$.  It is also easy to see that $\mathbf{vr}$ is continuous at singular points.
We know that a point $x_0$ is singular if and only if $\mathbf{vr}(x_0)=0$.  Clearly, for every sequence $x_i \to x_0$, we must have
$\displaystyle \lim_{i \to \infty} \mathbf{vr}(x_i)=0$.
Otherwise, we have a sequence $x_i$ converging to $x_0$ and
$\displaystyle \lim_{i \to \infty} \mathbf{vr}(x_i) \geq \xi>0$.  However, we note that $x_0 \in B(x_i, \tilde{K}^{-1} \xi)$ for
large $i$. Therefore, $x_0$ is forced to be regular by the improving regularity property of volume radius. Contradiction.

Therefore, discontinuity point must admit positive $\mathbf{vr}$ if it does exist.  Suppose $x_0$ is a discontinuous point of $\mathbf{vr}$.
Then we can find a sequence of points $x_i \in X$ such that
\begin{align*}
 &x_0=\lim_{i \to \infty} x_i, \\
 &0<\mathbf{vr}(x_0)=r_0<\infty, \\
 &\lim_{i \to \infty} \mathbf{vr}(x_i) \neq r_0.
\end{align*}
Clearly,  $\log \mathbf{vr}(x_i)$ are uniformly bounded by Proposition~\ref{prn:SC26_7}.
So we can assume $\mathbf{vr}(x_i)$ converge to a positive number $\bar{r}$. By volume continuity, we clearly have
\begin{align*}
  \omega_{2n}^{-1}r_0^{-2n}|B(x_0,r_0)|=1-\delta_0=\lim_{i \to \infty} \omega_{2n}^{-1}\mathbf{vr}(x_i)^{-2n}|B(x_i,\mathbf{vr}(x_i))|=
  \omega_{2n}^{-1}\bar{r}^{-2n}|B(x_0,\bar{r})|.
\end{align*}
Since $\bar{r} \neq r_0$, we obtain from Proposition~\ref{prn:SC17_3} that $B(x_0,r_0)$ is a ball in a metric cone centered at the vertex. Note that $x_0$ is a regular
point since $\mathbf{vr}(x_0)>0$. Therefore, $B(x_0,r_0)$ is the standard ball in $\C^{n}$ with radius $r_0$.
Consequently, the normalized volume ratio of $B(x_0,r_0)$ is $1$, which contradicts the fact that $\mathbf{vr}(x_0)=r_0$ and the definition of volume radius.
\end{proof}

The volume radius has better property. It satisfies Harnack inequality in the interior of a shortest geodesic.
The H\"older continuity estimate of Colding-Naber (c.f.~\cite{ColdNa}) can be interpreted by volume radius as follows.

  \begin{proposition}[\textbf{Global Harnack inequality of volume radius}]
 For every small constant $c$, there is a constant $\epsilon=\epsilon(n,\kappa,c)$ with the following properties.

 Suppose $(X,g) \in \widetilde{\mathscr{KS}}(n,\kappa)$, $x,y \in X$, $\gamma$ is  shortest, unit speed geodesic connecting $x$ and $y$, with smooth interior parts.
 Suppose $\gamma(0)=x, \gamma(L)=y$, $L \leq r$.
 If $\mathbf{vr}(y)>c r$, then we have
 \begin{align}
  \mathbf{vr}(\gamma(t))> \epsilon r, \quad \forall \; t \in [cL, L].
  \label{eqn:SC26_4}
 \end{align}
 In particular, if $\min\{\mathbf{vr}(x), \mathbf{vr}(y)\}>cr$, then we have
 \begin{align}
    \mathbf{vr}(\gamma(t))>\epsilon r, \quad \forall \; t \in [0,L].
   \label{eqn:SC26_44}
 \end{align}
 \label{prn:SC26_2}
 \end{proposition}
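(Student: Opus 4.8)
The plan is to reduce the statement to the Hölder-continuity-of-tangent-cones estimate of Colding--Naber, reinterpreted through the volume radius $\mathbf{vr}$. The key point is that $\mathbf{vr}$ is, up to a uniform constant, equivalent to the harmonic radius (Proposition~\ref{prn:HA09_1}), and the latter is precisely the quantity that Colding--Naber control along the interior of a minimal geodesic in a space with Ricci curvature bounded below. Since every $X \in \widetilde{\mathscr{KS}}(n,\kappa)$ carries, on its regular part $\mathcal{R}$, an honest Ricci-flat metric, and since the analytic toolkit needed for the Colding--Naber argument has been checked on $X$ in the preceding subsections — Bishop--Gromov comparison (Proposition~\ref{prn:HD19_1}), segment inequality (Proposition~\ref{prn:HD17_1}), heat kernel estimates (Proposition~\ref{prn:HD07_1}), the parabolic approximation of Buseman functions (Lemma~\ref{lma:HE04_4}), and good cutoff functions (Lemma~\ref{lma:HE04_3}) — the machinery transplants essentially verbatim.

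First I would argue by contradiction and scaling: suppose the conclusion fails for some fixed small $c$. Then there are spaces $(X_i,g_i) \in \widetilde{\mathscr{KS}}(n,\kappa)$, points $x_i, y_i$, unit-speed minimal geodesics $\gamma_i$ with $\gamma_i(0)=x_i$, $\gamma_i(L_i)=y_i$, $L_i \le r_i$, with $\mathbf{vr}(y_i) > c r_i$, yet $\mathbf{vr}(\gamma_i(t_i)) \le \frac{1}{i} r_i$ for some $t_i \in [cL_i, L_i]$. Rescaling so that $r_i = 1$, we get a controlled endpoint: $\mathbf{vr}(y_i) > c$, so by the local Harnack inequality for $\mathbf{vr}$ (Proposition~\ref{prn:SC26_7}) and improving regularity (Corollary~\ref{cly:SL23_1}), the geometry near $y_i$ is uniformly bounded in $C^\infty$. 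Meanwhile the point $\gamma_i(t_i)$ is becoming singular. The core estimate to establish — following the segment-inequality plus parabolic-approximation scheme of Colding--Naber — is a quantitative lower bound on $\mathbf{vr}(\gamma_i(t))$ for $t$ in a definite subinterval $[cL_i, L_i]$ in terms of $\mathbf{vr}$ at the endpoint, of the form $\mathbf{vr}(\gamma_i(t)) \ge \epsilon(n,\kappa,c)$; concretely one shows the metric along this portion of the geodesic stays close, in the appropriate Cheeger--Gromov sense, to a fixed Ricci-flat model, using the Hessian estimates for parabolic Buseman approximations in Lemma~\ref{lma:HE04_4} to propagate the almost-splitting/almost-metric-cone structure along $\gamma$. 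This contradicts $\mathbf{vr}(\gamma_i(t_i)) \to 0$.

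For the second assertion, if $\min\{\mathbf{vr}(x),\mathbf{vr}(y)\} > cr$ then I would apply the first statement twice, once from each endpoint: parametrizing $\gamma$ from $x$ controls $\mathbf{vr}(\gamma(t))$ for $t \in [cL, L]$, and parametrizing from $y$ controls it for $t \in [0, (1-c)L]$; since $c$ may be taken small, these two intervals cover all of $[0,L]$, giving (\ref{eqn:SC26_44}) with a possibly smaller but still uniform $\epsilon$.

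The main obstacle I anticipate is not the geodesic estimate itself — that is genuinely ``Colding--Naber verbatim'' once the analytic foundations are in place — but rather the bookkeeping needed to ensure the argument never leaves $\mathcal{R}$ in an uncontrolled way: one must know that the interior of $\gamma$ avoids $\mathcal{S}$ (guaranteed by the weak convexity of $\mathcal{R}$, item~(3) of Definition~\ref{dfn:SC27_1}, so that minimal geodesics are smooth in their interiors), and that the parabolic approximation functions and their Hessian integrals behave as in the smooth case despite the presence of the singular set, which is handled by the high Minkowski codimension $\dim_{\mathcal{M}}\mathcal{S} < 2n-3$ together with the cutoff-function and integration-by-parts results of the previous subsections. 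Once these points are granted, the contradiction-and-rescaling argument closes in the standard way.
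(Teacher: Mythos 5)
Your proposal is correct and takes essentially the same route as the paper: use the local Harnack inequality (Proposition~\ref{prn:SC26_7}) to get uniform regularity on a definite sub-arc near the controlled endpoint $y$, then propagate along the interior of $\gamma$ via the Colding--Naber H\"older continuity estimate, whose analytic inputs (segment inequality, Proposition~\ref{prn:HD17_1}; parabolic Buseman approximation, Lemma~\ref{lma:HE04_4}) have been verified on $X$. One remark: the contradiction-and-rescaling wrapper is superfluous, since the ``core estimate to establish'' you describe is precisely the desired conclusion; the paper instead argues directly, iterating the Colding--Naber volume-ratio comparability $1-\tfrac{\delta_0}{100} \le |B(\gamma(t_1),r)|/|B(\gamma(t_2),r)| \le 1+\tfrac{\delta_0}{100}$ for $|t_1-t_2|<\bar{s}$ and $r<\bar{r}$, which makes the propagation step explicit and yields a concrete $\epsilon$.
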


 \begin{proof}
   Clearly, (\ref{eqn:SC26_44}) follows from (\ref{eqn:SC26_4}).  Therefore,  it suffices to prove (\ref{eqn:SC26_4}) only.

   Up to a normalization, we can assume $r=L=1$.
   So $\gamma$ is the shortest geodesic connecting $x,y$ such that $\gamma(0)=x, \gamma(1)=y$.
   By assumption, we have $\mathbf{vr}(y)>c$.
   By local Harnack inequality of volume radius, Proposition~\ref{prn:SC26_7},
   there exists $\bar{\epsilon}=\bar{\epsilon}(n, c)$ such that $\mathbf{vr}>\bar{\epsilon}$ for each $\gamma(t)$
   with $t \in  [1-\bar{\epsilon}, 1]$.  Clearly, in the middle part of $\gamma$, i.e., for every $t \in [\bar{\epsilon}, 1-\bar{\epsilon}]$, we have
   $|\Delta r|< \frac{C}{\bar{\epsilon}}$ for a universal $C=C(n)$, where $r$ is the distance to $\gamma(0)$.
   Because of the segment inequality (Proposition~\ref{prn:HD17_1}) and the parabolic approximation (Lemma~\ref{lma:HE04_4}),
   we can follow the proof of Proposition 3.6 and Theorem 1.1 of~\cite{ColdNa} verbatim.
   Similar to the statement in the  proof of Theorem 1.1 on page 1213 of~\cite{ColdNa}, we can find constants $\bar{s}=\bar{s}(n,c,\bar{\epsilon}), \bar{r}=\bar{r}(n,c,\bar{\epsilon})$
   such that for every $t_1, t_2 \in [\bar{\epsilon},1-\bar{\epsilon}]$ satisfying $|t_1-t_2|<\bar{s}$ and every $r \in (0,\bar{r})$, we have
   \begin{align*}
     1-\frac{\delta_0}{100} \leq \frac{|B(\gamma(t_1), r)|}{|B(\gamma(t_2),r)|} \leq 1+\frac{\delta_0}{100}.
   \end{align*}
   Then it is easy to see that if the volume radius is uniformly bounded below at $t_1$, it must be uniformly bounded below at $t_2$.
   Actually, suppose the volume radius at $\gamma(t_1)$ is greater than $r_1$ for some $r_1 \in (0,\bar{r})$,
   by inequality (\ref{eqn:SC26_9}) in Proposition~\ref{prn:SC26_7}, we have
   $\omega_{2n}^{-1}r^{-2n}|B(\gamma(t_1),r)| \geq 1-\frac{\delta_0}{100}$ for every $r \in [0, \frac{r_1}{\tilde{K}}]$.
   Put this information into the above inequality implies that
   \begin{align*}
     \omega_{2n}^{-1}r^{-2n}|B(\gamma(t_2),r)| \geq \frac{1-\frac{\delta_0}{100}}{1+\frac{\delta_0}{100}}>1-\delta_0,
     \quad \forall \; r \in \left[0, \frac{r_1}{\tilde{K}} \right].
   \end{align*}
   Therefore, the volume radius of $\gamma(t_2)$ is at least $\frac{r_1}{\tilde{K}}$.  From this induction, it is clear that
   \begin{align*}
      \mathbf{vr}(\gamma(t)) \geq
      \tilde{K}^{-\frac{1-\bar{\epsilon}-t}{\bar{s}}} \mathbf{vr}(\gamma(1-\bar{\epsilon}))
      >\bar{\epsilon}  \tilde{K}^{-\frac{1-\bar{\epsilon}-t}{\bar{s}}}.
   \end{align*}
  Let $\epsilon$ be the number on the right hand side of the above inequality when $t=\bar{\epsilon}$.
  Then $\epsilon=\epsilon(\bar{\epsilon}, \tilde{K}, \bar{s})=\epsilon(n,\kappa,c)$
  and we finish the proof of (\ref{eqn:SC26_4}).
 \end{proof}

In general, if $X$ is only a metric space, we even do not know whether $\mathbf{vr}$ is semi-continuous.  The continuity of $\mathbf{vr}$ on $X$ whenever
  $X \in \widetilde{\mathscr{KS}}(n,\kappa)$ makes $\mathbf{vr}$ a convenient tool to study the geometry of $X$.  By Proposition~\ref{prn:SC26_7},
  one can improve regularity on a scale proportional to $\mathbf{vr}$. So it is convenient to decompose the space $X$ according to the function $\mathbf{vr}$.

 \begin{definition}
 Suppose $(X,g) \in \widetilde{\mathscr{KS}}(n,\kappa)$. Define
 \begin{align}
  &\mathcal{F}_{r}(X) \triangleq \left\{ x \in X | \mathbf{vr}(x) \geq r \right\}, \label{eqn:SB25_1}\\
  &\mathcal{D}_{r}(X) \triangleq \left( \mathcal{F}_{r}(X) \right)^{c}=\left\{ x \in X | \mathbf{vr}(x) < r \right\}. \label{eqn:SB25_2}
\end{align}
 We call $\mathcal{F}_{r}(X)$ the $r$-regular part of $X$, $\mathcal{D}_{r}(X)$ the $r$-singular part of $X$.
\label{dfn:SB25_1}
\end{definition}

 From Definition~\ref{dfn:SB25_1}, it is clear that
 \begin{align}
  & \mathcal{R}(X)= \bigcup_{r>0} \mathcal{F}_{r}(X), \label{eqn:SB25_4}\\
  & \mathcal{S}(X)= \bigcap_{r>0} \mathcal{D}_{r}(X).  \label{eqn:SB25_5}
 \end{align}

We observe that the volume radius of each point is related to its distance to singular set by the following property.
 \begin{proposition}[\textbf{$\mathbf{vr}$ bounded from above by distance to $\mathcal{S}$}]
   Suppose $(X,x,g) \in \widetilde{\mathscr{KS}}(n,\kappa)$, $r$ is a positive number.   Then
 \begin{align}
  & \left\{ x | d(x, \mathcal{S}) \geq r \right\} \supset \mathcal{F}_{\tilde{K}r}, \label{eqn:SB25_14} \\
  & \left\{x | d(x,\mathcal{S}) < r \right\} \subset \mathcal{D}_{\tilde{K}r}. \label{eqn:SB25_13}
 \end{align}
 \end{proposition}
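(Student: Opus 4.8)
The plan is to obtain both displayed inclusions as immediate consequences of the local Harnack inequality for the volume radius, Proposition~\ref{prn:SC26_7}. First observe that, by Definition~\ref{dfn:SB25_1}, one has $\mathcal{D}_{\tilde{K}r} = (\mathcal{F}_{\tilde{K}r})^c$, so taking complements in $X$ shows that (\ref{eqn:SB25_14}) and (\ref{eqn:SB25_13}) are equivalent; it therefore suffices to prove (\ref{eqn:SB25_14}). Also, the case $X = (\C^n, g_{\E})$ is trivial: then $\mathcal{S} = \emptyset$, so $d(\cdot, \mathcal{S}) \equiv +\infty$ and $\mathcal{F}_{\tilde{K}r} = X$, whence the inclusion holds vacuously. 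Hence we may assume $X \in \widetilde{\mathscr{KS}}^{*}(n,\kappa) \subset \widetilde{\mathscr{KS}}^{*}(n)$, which is precisely the setting of Proposition~\ref{prn:SC26_7}.

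Next I would fix $x \in \mathcal{F}_{\tilde{K}r}$ and set $\rho := \mathbf{vr}(x) \geq \tilde{K}r > 0$. By Proposition~\ref{prn:SC26_7}, every point $y \in B(x, \tilde{K}^{-1}\rho)$ satisfies $\mathbf{vr}(y) \geq \tilde{K}^{-1}\rho > 0$, and a point with positive volume radius is regular. Consequently $B(x, \tilde{K}^{-1}\rho) \cap \mathcal{S} = \emptyset$, so $d(x, \mathcal{S}) \geq \tilde{K}^{-1}\rho = \tilde{K}^{-1}\mathbf{vr}(x) \geq \tilde{K}^{-1}\cdot \tilde{K}r = r$. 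Thus $x$ lies in $\{\, y : d(y,\mathcal{S}) \geq r \,\}$, which proves (\ref{eqn:SB25_14}), and then (\ref{eqn:SB25_13}) follows by the complement remark.

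There is no real obstacle here: the entire content has been absorbed into Proposition~\ref{prn:SC26_7} (whose proof in turn rests on Anderson's gap theorem and a compactness argument). The only points that require a moment's care are that $\tilde{K} = \tilde{K}(n) \geq 1$, so that $B(x, \tilde{K}^{-1}\rho)$ genuinely lies in the region where Proposition~\ref{prn:SC26_7} bounds $\mathbf{vr}$ from below, and that the constants match exactly --- $\tilde{K}^{-1}\cdot \tilde{K} = 1$ --- so that no enlargement of $\tilde{K}$ beyond the constant of Proposition~\ref{prn:SC26_7} is needed.
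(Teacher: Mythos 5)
Your proof is correct and follows essentially the same route as the paper: use the local Harnack inequality for $\mathbf{vr}$ to propagate the lower bound $\mathbf{vr}(x)\geq\tilde{K}r$ to a ball of radius $r$ around $x$, conclude every point there is regular, and take complements for the second inclusion. Note that the paper's proof cites Proposition~\ref{prn:SB25_2} (the density estimate), which appears to be a typo for Proposition~\ref{prn:SC26_7}; you correctly identify the latter as the needed ingredient, and your handling of the trivial case $X=\C^n$ is a small but welcome additional piece of care.
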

\begin{proof}
 Choose an arbitrary point $x \in \mathcal{F}_{\tilde{K}r}$, then $ {\mathbf{vr}}(x) \geq \tilde{K} r.$  It follows from Proposition~\ref{prn:SB25_2} that
 $\mathbf{vr}(y) \geq r>0$ for every point $y \in B(x, r)$.  Therefore, every point in $B(x,r)$ is regular. So $d(x,\mathcal{S})>r$. This proves
 (\ref{eqn:SB25_14}) by the arbitrary choice of $x \in \mathcal{F}_{\tilde{K}r}$.  Taking complement of (\ref{eqn:SB25_14}), we obtain (\ref{eqn:SB25_13}).
\end{proof}

\subsection{Compactness of $\widetilde{\mathscr{KS}}(n, \kappa)$}

As a model space, $\widetilde{\mathscr{KS}}(n,\kappa)$ should have compactness.
However, we need first to obtain a weak compactness, then we improve regularity further to obtain the
genuine compactness.
It is not hard to see the weak compactness theory of  Anderson-Cheeger-Colding-Tian-Naber can be generalized to apply on $\widetilde{\mathscr{KS}}(n,\kappa)$
without fundamental difficulties, almost verbatim.
Actually,  the key of Anderson-Cheeger-Colding-Tian-Naber  theory is that one can approximate the distance function by harmonic function, or heat flow solution,
which have much better regularity for developing integral estimates.  These estimates are justified by the technical preparation in previous subsections.

\begin{proposition}[\textbf{Weak compactness}]
Suppose $(X_i,x_i,g_i) \in \widetilde{\mathscr{KS}}(n,\kappa)$, by taking subsequences if necessary,
we have
\begin{align*}
    (X_i, x_i, g_i) \stackrel{\hat{C}^{\infty}}{\longrightarrow} (\bar{X},\bar{x},\bar{g})
\end{align*}
for some length space $\bar{X}$ which satisfies all the properties of spaces in
 $\widetilde{\mathscr{KS}}(n,\kappa)$ except the 3rd and 4th property, i.e., the weak convexity of $\mathcal{R}$
 and the Minkowski dimension estimate $\mathcal{S}$.
 However, the Hausdorff dimension of $\mathcal{S}$ is not greater than $2n-4$.
\label{prn:HA05_1}
\end{proposition}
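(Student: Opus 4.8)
The plan is to run the standard Anderson--Cheeger--Colding--Tian--Naber weak compactness machinery, checking at each stage that the technical tools already assembled in this section are exactly what is needed to push the proof through verbatim. First I would fix base points $x_i$ and invoke Bishop--Gromov volume comparison (Proposition~\ref{prn:HD19_1}) together with the non-collapsing hypothesis $\mathrm{avr}(X_i)\geq\kappa$ to obtain uniform volume doubling on every ball of fixed radius. By Gromov's compactness theorem this yields a pointed Gromov--Hausdorff limit $(\bar X,\bar x,\bar g)$ along a subsequence, and the limit is a length space which is again volume doubling. The volume continuity statement (Proposition~\ref{prn:HD22_1}) ensures that the renormalized volume ratios pass to the limit, so that the limiting measure is the $2n$-dimensional Hausdorff measure and all the volume comparison/monotonicity identities (Corollary~\ref{cly:GD08_1}) survive.

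Next I would build the regular--singular decomposition of $\bar X$. Define $\mathcal{R}(\bar X)$ as the set of points with Euclidean volume density $1$ and $\mathcal{S}(\bar X)$ as its complement; the volume density gap (property~5 of Definition~\ref{dfn:SC27_1}, i.e. Anderson's gap theorem) combined with volume continuity shows this is an open--closed decomposition. On $\mathcal{R}(\bar X)$ one runs the $\epsilon$-regularity argument: using the harmonic approximation of local Buseman functions (Lemma~\ref{lma:HE05_2}) and the parabolic refinement (Lemma~\ref{lma:HE04_4}), together with the good cutoff functions of Lemma~\ref{lma:HE04_3} and the almost-volume-cone implies almost-metric-cone statement (Proposition~\ref{prn:HD22_2}), one shows that near a point of high volume density the $X_i$ are smoothly close to $\C^n$; the Ricci-flatness then bootstraps to $\hat C^\infty$ convergence on $\mathcal{R}(\bar X)$, giving the smooth open Ricci-flat Riemannian structure. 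The limit complex structures $J_i$ converge (after passing to a further subsequence, using the $C^\infty$ convergence and parallelism) to a $\bar J$ making $(\mathcal{R}(\bar X),\bar g,\bar J)$ K\"ahler. The volume density estimates transfer from $X_i$ to $\bar X$ by Proposition~\ref{prn:HD22_1}, establishing property~5 on $\bar X$.

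Then I would establish the dimension bound on $\mathcal{S}(\bar X)$. Here one uses the cone splitting principle: by the approximation slices lemma (Lemma~\ref{lma:HE07_1}) and the K\"ahler cone splitting (Lemma~\ref{lma:HD30_1}, with its almost version Proposition~\ref{prn:HE08_1}), every tangent cone of $\bar X$ at a singular point splits isometrically as $\C^{n-k}\times C(Z)$ with $k\geq 2$, hence its cross-section has Hausdorff dimension at most $2n-4$. A standard Federer--dimension-reduction argument on the filtration of $\mathcal{S}$ by the splitting number then gives $\dim_{\mathcal{H}}\mathcal{S}(\bar X)\leq 2n-4$. I would emphasize that this argument only controls Hausdorff dimension, not Minkowski dimension, and that weak convexity of $\mathcal{R}$ is not automatically preserved in a GH limit --- which is precisely why the statement of the proposition explicitly excludes properties~3 and~4 of Definition~\ref{dfn:SC27_1} from the conclusion. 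Finally the asymptotic volume ratio bound $\mathrm{avr}(\bar X)\geq\kappa$ follows from Bishop--Gromov monotonicity on $\bar X$ (Proposition~\ref{prn:HD19_1}, now valid on $\bar X$) together with volume continuity at large scales.

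The main obstacle I anticipate is not the dimension reduction (which is routine once cone splitting is in hand) but rather the careful verification that each analytic input --- harmonic and parabolic approximation, Abresch--Gromoll excess estimates, the good cutoff functions, strong maximum principles --- remains valid \emph{uniformly along the sequence} $X_i$, i.e. with constants depending only on $n,\kappa$, given that each $X_i$ may itself already carry a singular set of high codimension rather than being smooth. The paper has front-loaded exactly these uniform estimates in the preceding subsections (Propositions~\ref{prn:HC29_1}--\ref{prn:HD26_1}, \ref{prn:HC29_6}, \ref{prn:HD11_1}, \ref{prn:HC29_5} and Lemmas~\ref{lma:HE04_3}--\ref{lma:HE07_1}), so the proof should indeed go through "almost verbatim" as claimed; the work is in citing the right statement at each step and confirming that the singular sets $\mathcal{S}(X_i)$, having codimension $>3$, are negligible for every integral estimate used. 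I would therefore write the proof as a sequence of short paragraphs, each quoting the Cheeger--Colding step and the corresponding Proposition/Lemma from this section that licenses it.
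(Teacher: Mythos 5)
Your overall strategy tracks the paper's proof closely, and most of the steps (volume doubling from Bishop--Gromov and non-collapsing, Gromov precompactness, volume continuity via Proposition~\ref{prn:HD22_1}, $\epsilon$-regularity via harmonic and parabolic approximations, the density gap and improvement of the convergence topology, and the $\mathrm{avr}\geq\kappa$ bound) are cited correctly. There is, however, one genuine gap at the heart of the dimension estimate for $\mathcal{S}(\bar X)$.

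You assert that every tangent cone of $\bar X$ at a singular point splits as $\C^{n-k}\times C(Z)$ with $k\geq 2$, citing Lemma~\ref{lma:HD30_1} and its almost version Proposition~\ref{prn:HE08_1}. Proposition~\ref{prn:HE08_1} does give the complex splitting, i.e.\ that the split Euclidean factor has even real dimension, so the odd strata $\mathcal{S}_1,\mathcal{S}_3,\ldots$ vanish. But it only forces $k\geq 1$. The sharper conclusion $k\geq 2$ in Lemma~\ref{lma:HD30_1} is derived there from the hypothesis $\hat Y\in\widetilde{\mathscr{KS}}^*(n,\kappa)$, which already builds in the codimension bound $\dim_{\mathcal{M}}\mathcal{S}(\hat Y)<2n-3$ (item~4 of Definition~\ref{dfn:SC27_1}). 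At this point of the argument the tangent cones of the limit $\bar X$ are not yet known to be in $\widetilde{\mathscr{KS}}^*(n,\kappa)$; invoking Lemma~\ref{lma:HD30_1} to rule out the codimension-two stratum $\mathcal{S}_{2n-2}$ is therefore circular --- it assumes the very dimension bound you are trying to prove. The paper avoids this by inserting a topological step that your outline omits entirely: after obtaining the stratification $\mathcal{S}=\mathcal{S}_2\cup\mathcal{S}_4\cup\cdots\cup\mathcal{S}_{2n}$, it uses the slice construction of Lemma~\ref{lma:HE07_1} (as in Cheeger--Colding--Tian and Cheeger) together with Chern--Simons theory and the Ricci-flatness of $\mathcal{R}$. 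A generic two-dimensional slice near a would-be codimension-two singularity is a smooth surface with boundary on which the restricted Ricci form vanishes, and the resulting transgression of the second Chern form forces the cone angle to be trivial, excluding $\mathcal{S}_{2n-2}$. This is the essential input --- it is where the second Chern class enters, and it is what separates the K\"ahler case from the real Ricci-flat case (where codimension-two cone singularities do occur). Without it you only get $\dim_{\mathcal{H}}\mathcal{S}\leq 2n-2$, not $\leq 2n-4$.
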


\begin{proof}

Note that each space in $\widetilde{\mathscr{KS}}(n,\kappa)$ satisfies volume doubling property.  Therefore, if
there exists a sequence $(X_i,x_i,g_i) \in \widetilde{\mathscr{KS}}(n,\kappa)$, by standard ball packing argument, it is clear that
\begin{align*}
    (X_i, x_i, g_i) \stackrel{G.H.}{\longrightarrow} (\bar{X},\bar{x},\bar{g})
\end{align*}
for some length space $\bar{X}$. Then let us list the properties satisfied by $\bar{X}$.

By Proposition~\ref{prn:HD22_1},  $\bar{X}$ inherits a natural measure from the limit process, which is a measure compatible with
the limit metric structure, as that in~\cite{CC1}. Then  the volume convergence follows, almost  tautologically.
It follows directly from this property and the volume comparison that $\bar{X}$ satisfies
Property 6 in Definition~\ref{dfn:SC27_1}.

 In the limit space $\bar{X}$, we can define regular points as the collection
of points where every tangent space is $\R^{2n}$, singular points as those points which are not regular.
Let $\mathcal{R}(\bar{X})$ and $\mathcal{S}(\bar{X})$ be the regular and singular part of $\bar{X}$ respectively.
We automatically obtain the regular-singular decomposition $\bar{X}=\mathcal{R}(\bar{X}) \cup \mathcal{S}(\bar{X})$.
By a version of Anderson's gap theorem(c.f. Proposition~\ref{prn:SC17_1}) and volume convergence,
a blowup argument shows that each regular point
has a small neighborhood which has a smooth manifold structure.  Clearly, this manifold is Ricci-flat with an attached
limit K\"ahler structure.  So we proved Property 1 and Property 2, except the non-emptiness of $\mathcal{R}$.

Note that  each tangent space of $\bar{X}$ is a volume cone, due to the volume convergence
and Bishop-Gromov volume comparison, which can be established as that in~\cite{CC1}.
The it follows from Proposition~\ref{prn:HD22_2} that every volume cone is actually a metric cone.  Then an induction argument can be applied, like that in~\cite{CC1}, to obtain the stratification of singularities $\mathcal{S}=\mathcal{S}_1\cup \mathcal{S}_2 \cdots \cup \mathcal{S}_{2n}$, where $\mathcal{S}_k$
is the union of singular points whose tangent space can split-off at least $(2n-k)$-straight lines.
In particular, generic points of $\bar{X}$ have tangent spaces $\R^{2n}$. In other words, generic points
are regular, so $\mathcal{R} \neq \emptyset$ and we finish the proof of Property 2.

The K\"ahler condition
guarantees that each tangent cone exactly splits off $\C^k$, by Proposition~\ref{prn:HE08_1}, as done in~\cite{CCT}.
So the stratification of singular set can be improved as
$\mathcal{S}=\mathcal{S}_2 \cup \mathcal{S}_4 \cup \cdots \mathcal{S}_{2n}$.
By Lemma~\ref{lma:HE07_1}, we can apply slice argument as that in \cite{CCT} and \cite{Cheeger03}.
Consequently, Chern-Simons theory implies that codimension 2 singularity cannot appear, due to the fact that a generic slice is a smooth surface with boundary, and the Ricci curvature's restriction on such a surface is zero.
Actually, the smoothness of generic slices follows from the high codimension of the singular set (item 4 of Definition~\ref{dfn:SC27_1}) and the gradient estimates of the harmonic approximation functions(Proposition~\ref{prn:HC29_6}).  
Therefore, $\mathcal{S}=\mathcal{S}_4 \cup \cdots \mathcal{S}_{2n}$,
which means $\dim_{\mathcal{H}} \mathcal{S} \leq 2n-4$.

Let $\bar{y} \in \mathcal{S}(\bar{X})$. Suppose $y_i \in X_i$ satisfies $y_i \to \bar{y}$. Then either there is a uniform $\xi$ such that
every point in each $B(y_i,\xi)$ are regular(but without uniform curvature bound as $i$ increase), or we can choose $y_i$ such that every $y_i$ is singular.  In the first case,
we can use a blowup argument and Anderson's gap theorem to show that the volume density of $\bar{y}$ is strictly less
that $1-2\delta_0$. In the second case, we can use volume comparison and convergence to show
$\mathrm{v}(\bar{y}) \leq 1-2\delta_0$.  So we proved Property 5.

We have checked all the properties of $\bar{X}$ as claimed. We now need to improve the convergence topology from Gromov-Hausdorff topology.
However, this improvement follows from volume convergence and the improving regularity property of volume radius, Corollary~\ref{cly:SL23_1}.
\end{proof}

 From the above argument, it is clear that no new idea is needed beyond the traditional theory,
 when technical lemmas and propositions in the previous sections are available.  
 Actually, weak compactness can be established under even weaker conditions, which will be discussed in our forthcoming work. 
 Based on the weak compactness, we immediately obtain an $\epsilon$-regularity property, as that in~\cite{CCT}.

\begin{proposition}[\textbf{$\epsilon$-regularity}]
There exists an $\epsilon=\epsilon(n,\kappa)$ with the following properties.

 Suppose $X \in \widetilde{\mathscr{KS}}(n,\kappa)$, $x_0 \in X$.
 Suppose
 \begin{align*}
    d_{GH}\left( B(x_0,1),  B((z_0^*,0), 1) \right)<\epsilon
 \end{align*}
  where $(z_0^*,0) \in C(Z_0) \times \R^{2n-3}$ for some metric cone $C(Z_0)$ with vertex $z_0^*$.  Then we have
  \begin{align*}
      \mathbf{vr}(x_0)>\frac{1}{2}.
  \end{align*}
\label{prn:HD20_1}
\end{proposition}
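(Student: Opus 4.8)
The plan is to argue by contradiction using the weak compactness theorem (Proposition~\ref{prn:HA05_1}) together with the rigidity of volume cones. Suppose no such $\epsilon$ exists. Then there is a sequence $X_i \in \widetilde{\mathscr{KS}}(n,\kappa)$ and points $x_i \in X_i$ with
\begin{align*}
  d_{GH}\left( B(x_i,1), B((z_i^*,0),1)\right) < \tfrac{1}{i},
\end{align*}
where $(z_i^*,0) \in C(Z_i) \times \R^{2n-3}$ for some metric cone $C(Z_i)$ with vertex $z_i^*$, yet $\mathbf{vr}(x_i) \leq \tfrac12$. First I would pass to a subsequence and apply Proposition~\ref{prn:HA05_1} to extract a pointed $\hat{C}^\infty$-limit $(X_i,x_i,g_i) \to (\bar{X}, \bar{x}, \bar{g})$, where $\bar{X}$ satisfies all properties of $\widetilde{\mathscr{KS}}(n,\kappa)$ except possibly weak convexity and the Minkowski (as opposed to Hausdorff) dimension bound; in particular $\dim_{\mathcal{H}} \mathcal{S}(\bar X) \leq 2n-4$. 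On the other side, after passing to a further subsequence the metric cones $C(Z_i) \times \R^{2n-3}$ converge in the pointed Gromov-Hausdorff topology to a limit of the form $C(Z_\infty) \times \R^{2n-3}$ (the cone structure and the split $\R^{2n-3}$ factor pass to the limit). Since Gromov-Hausdorff distance is preserved under limits, $B(\bar x, 1)$ is isometric to the unit ball $B((z_\infty^*,0),1)$ in $C(Z_\infty) \times \R^{2n-3}$.

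Next I would extract the volume information. By volume continuity (Proposition~\ref{prn:HD22_1}) and Bishop-Gromov monotonicity, the normalized volume ratio $r^{-2n}|B(\bar x, r)|$ is constant in $r$ on $(0,1]$ — indeed the metric cone structure of $C(Z_\infty) \times \R^{2n-3}$ with vertex at $(z_\infty^*, 0)$ forces this. Now I claim $\bar x$ must be a regular point with $\mathbf{vr}(\bar x) \geq 1$. Here is the key point: the codimension-$(2n-3)$ split-off of $\R^{2n-3}$ means that, after the Cheeger-Colding stratification applied to $\bar X$ (which is available because all the technical ingredients — almost splitting, almost-volume-cone-implies-almost-metric-cone, Proposition~\ref{prn:HE08_1} — have been checked), the tangent cone at $\bar x$ lies in the stratum $\mathcal{S}_{2n-3}$ at worst. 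But $\dim_{\mathcal{H}} \mathcal{S}(\bar X) \leq 2n-4$, and moreover the K\"ahler improvement forces $\mathcal{S} = \mathcal{S}_4 \cup \mathcal{S}_6 \cup \cdots \cup \mathcal{S}_{2n}$ with all strata of even codimension $\geq 4$. A point whose tangent cone splits off $\R^{2n-3}$ cannot lie in $\mathcal{S}_4 \cup \cdots$; hence $\bar x$ is regular. Then, being regular with a constant volume-ratio ball, Proposition~\ref{prn:SC17_3} (rigidity of volume ratio) shows $B(\bar x, 1)$ is isometric to the unit Euclidean ball in $\C^n$, so $\omega_{2n}^{-1} |B(\bar x, 1)| = 1 > 1-\delta_0$, giving $\mathbf{vr}(\bar x) \geq 1$.

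Finally, I would use the $\hat{C}^\infty$-convergence to transport this back to the $X_i$. Since $\bar x$ is regular with $\mathbf{vr}(\bar x) \geq 1$, Proposition~\ref{prn:SC26_7} (local Harnack for volume radius) gives that in a definite neighborhood of $\bar x$ the volume radius is bounded below, say $\mathbf{vr} > c_0$ on $B(\bar x, c_0)$ for some $c_0 = c_0(n)$; in fact the smooth convergence on the regular part shows that for $i$ large, $\omega_{2n}^{-1} r^{-2n}|B(x_i, r)| \to 1$ for each fixed small $r$, so in particular $\omega_{2n}^{-1} (\tfrac12)^{-2n} |B(x_i, \tfrac12)| \geq 1 - \delta_0$ for $i$ large, i.e.\ $\mathbf{vr}(x_i) \geq \tfrac12$. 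This contradicts $\mathbf{vr}(x_i) \leq \tfrac12$ (with the strict inequality in the conclusion handled by choosing the neighborhood slightly larger, or by noting the volume ratios actually converge to $1 > 1-\delta_0$ strictly).

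The main obstacle is the stratification step: one must be sure that in the limit space $\bar X$ — which a priori only satisfies the \emph{weak} compactness conclusion, lacking the Minkowski bound and weak convexity — the point $\bar x$ whose local geometry is $C(Z_\infty) \times \R^{2n-3}$ is genuinely regular. This requires combining the Hausdorff-codimension-$4$ bound on $\mathcal{S}(\bar X)$ with the even-codimension improvement coming from the K\"ahler condition (via Proposition~\ref{prn:HE08_1} and the Chern-Simons argument in Proposition~\ref{prn:HA05_1}), so that the stratum $\mathcal{S}_{2n-3}$ is actually empty near such a point; once regularity is in hand, the volume-cone rigidity (Proposition~\ref{prn:SC17_3}) and the Euclidean characterization finish things cleanly.
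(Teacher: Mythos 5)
Your proposal is correct and follows essentially the same compactness-by-contradiction argument as the paper: pass to a weak limit $\bar{X}$, identify $B(\bar{x},1)$ with the limiting cone ball, use the K\"ahler rigidity to force the tangent cone to be $\C^n$, and transport the conclusion back via volume convergence. The only cosmetic difference is that the paper invokes the K\"ahler cone splitting rigidity (Lemma~\ref{lma:HD30_1}) directly to conclude that a K\"ahler cone splitting off $\R^{2n-3}$ is $\C^n$, whereas you reach the same conclusion by going through the Cheeger-Colding stratification together with its even-codimension K\"ahler refinement and then appealing to Proposition~\ref{prn:SC17_3} — a slightly longer route relying on the same underlying mechanism.
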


\begin{proof}
 Otherwise, there is a sequence of $\epsilon_i \to 0$ and $x_i \in X_i$ violating the statement.
 By weak compactness of $\widetilde{\mathscr{KS}}(n,\kappa)$,  we can assume $x_i \to x$ and $z_i^* \to z^*$ with the following identity holds.
  \begin{align*}
    d_{GH}\left( (B(x,1),  B((z^*,0), 1))\right)=0.
 \end{align*}
 In particular, the tangent cone at $x$ is exactly the cone $C(Z) \times \R^{2n-3}$, which must be $\C^n$ by the complex rigidity.
 Therefore, $B(x,1)$ is the unit ball in $\C^n$. Thus, the volume convergence implies that for large $i$,
 $2^{2n}\left|B \left(x_i, \frac{1}{2} \right) \right|$ can be very close to $1$. In particular,  $\mathbf{vr}(x_i)>\frac{1}{2}$
 by the definition of volume radius. However, this contradicts our assumption.
\end{proof}

Then we are able to move the integral estimate of~\cite{CN} to $X \in \widetilde{\mathscr{KS}}(n,\kappa)$.

\begin{proposition}[\textbf{Density estimate of regular points}]
 For every $0<p<2$, there is a constant $E=E(n,\kappa,p)$ with the following properties.

   Suppose $(X,x,g) \in \widetilde{\mathscr{KS}}(n,\kappa)$, $r$ is a positive number. Then we have
 \begin{align}
   r^{2p-2n} \int_{B(x,r)} \mathbf{vr}(y)^{-2p} dy \leq E(n,\kappa,p).  \label{eqn:SB25_15}
 \end{align}
 \label{prn:SB25_2}
 \end{proposition}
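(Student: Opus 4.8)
The plan is to reduce the integral estimate \eqref{eqn:SB25_15} to the volume density estimate of Cheeger--Naber (\cite{CN}), using the equivalence of volume radius and harmonic radius (Proposition~\ref{prn:HA09_1}) together with the technical foundations established above. By the scaling invariance of $\widetilde{\mathscr{KS}}(n,\kappa)$, one may normalize $r=1$ and prove $\int_{B(x,1)} \mathbf{vr}(y)^{-2p}\, dy \leq E(n,\kappa,p)$. The key point is that $\mathbf{vr}$ controls the distance to the singular set from below (the inclusion $\{ y : d(y,\mathcal{S}) < s \} \subset \mathcal{D}_{\tilde{K}s}$, i.e. \eqref{eqn:SB25_13}), so that a pointwise lower bound on $\mathbf{vr}$ follows from a lower bound on $d(\cdot,\mathcal{S})$, and hence the bad set $\{\mathbf{vr} < s\}$ is contained in a tubular neighborhood of $\mathcal{S}$.

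First I would establish the layer-cake reformulation: for the nonnegative measurable function $\mathbf{vr}^{-2p}$ on $B(x,1)$,
\begin{align*}
 \int_{B(x,1)} \mathbf{vr}(y)^{-2p}\, dy = 2p \int_0^{\infty} \lambda^{2p-1} \left| \{ y \in B(x,1) : \mathbf{vr}(y) < \lambda^{-1} \} \right| d\lambda,
\end{align*}
so everything comes down to an estimate of the form $\left| \mathcal{D}_s(X) \cap B(x,1) \right| \leq C(n,\kappa)\, s^{2-\beta}$ for some $\beta < 2$, uniformly in $s \in (0,1)$ (for $s \geq 1$ the trivial bound $|B(x,1)| \leq C(n,\kappa)$ suffices, by volume doubling, Corollary~\ref{cly:HD20_1}). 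The route to such a bound is the standard Cheeger--Colding--Naber argument: cover $\mathcal{D}_s(X) \cap B(x,1)$ efficiently by balls $B(y_j, s)$ with $y_j \in \mathcal{D}_s$ and the $B(y_j, s/5)$ disjoint; at each such center, since $\mathbf{vr}(y_j) < s$, the $\epsilon$-regularity criterion (Proposition~\ref{prn:HD20_1}) is violated at scale comparable to $s$, which via the weak compactness of $\widetilde{\mathscr{KS}}(n,\kappa)$ (Proposition~\ref{prn:HA05_1}) and the stratification $\mathcal{S} = \mathcal{S}_4 \cup \dots \cup \mathcal{S}_{2n}$ gives a quantitative deficiency forcing the number of such balls to be at most $C(n,\kappa,p)\, s^{-(2n-4)-\eta}$ for any small $\eta$ (here I would invoke the Cheeger--Naber quantitative stratification / volume estimate for the $\epsilon$-effective singular strata, whose hypotheses—Bishop--Gromov comparison, Poincar\'e inequality, $\epsilon$-regularity, metric cone rigidity—have all been verified in the preceding subsections). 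Summing the volumes $|B(y_j,s)| \leq C(n,\kappa) s^{2n}$ over the $\lesssim s^{-(2n-4)-\eta}$ balls yields $|\mathcal{D}_s \cap B(x,1)| \leq C\, s^{4-\eta}$; choosing $\eta$ small relative to $2-p$ makes the layer-cake integral converge and produces the bound $E(n,\kappa,p)$.

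The main obstacle is not conceptual but bookkeeping: making precise the quantitative passage from ``$\mathbf{vr}(y_j) < s$'' to a genuine effective-codimension-$4$ content bound on the covering number, i.e.\ importing the Cheeger--Naber estimate of \cite{CN} into the singular-space setting. This requires checking that every ingredient their proof uses—the segment inequality (Proposition~\ref{prn:HD17_1}), the almost-volume-cone implies almost-metric-cone rigidity (Proposition~\ref{prn:HD22_2}), the parabolic/harmonic Buseman approximations (Lemmas~\ref{lma:HE05_2} and~\ref{lma:HE04_4}), and the cone-splitting (Proposition~\ref{prn:HE08_1})—holds verbatim on $X \in \widetilde{\mathscr{KS}}(n,\kappa)$; since all of these have been checked in the preceding subsections, the argument of \cite{CN} transfers word for word, and the estimate follows. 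One should also verify the harmless point that $\mathbf{vr}$ is measurable (it is continuous by Proposition~\ref{prn:HE07_2}), so the layer-cake step is legitimate.
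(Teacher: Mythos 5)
Your overall route is the same as the paper's: both arguments come down to importing the Cheeger--Naber quantitative-stratification estimate (the singular version of their Corollary 1.26), after checking that its structural prerequisites (Bishop--Gromov, segment inequality, almost-volume-cone rigidity, cone splitting, weak compactness, $\epsilon$-regularity, equivalence of $\mathbf{vr}$ and $\mathbf{hr}$) hold on $\widetilde{\mathscr{KS}}(n,\kappa)$. Your layer-cake reformulation plus the volume bound $|\mathcal{D}_s\cap B(x,1)|\lesssim s^{4-\eta}$ is an unnecessary detour---the paper directly invokes the integral form of the Cheeger--Naber estimate---but it is equivalent, since the volume bound and the integral bound are two faces of the same quantitative-stratification machinery.

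One thing you should fix: your stated ``key point'' is backwards. Inclusion \eqref{eqn:SB25_13}, namely $\{d(\cdot,\mathcal{S})<r\}\subset\mathcal{D}_{\tilde K r}$, says that points \emph{near} $\mathcal{S}$ have \emph{small} volume radius; it does \emph{not} say that small $\mathbf{vr}$ forces proximity to $\mathcal{S}$, and the claimed conclusion that $\{\mathbf{vr}<s\}$ lies in a tubular neighborhood of $\mathcal{S}$ is not a consequence of what you cite and is in fact the hard content one is trying to quantify. Fortunately your actual covering argument never uses this assertion---it goes through $\epsilon$-regularity and the quantitative stratification, which is exactly the nontrivial ``quantitative calculus'' of Cheeger--Naber that the paper is invoking---so the error is in the motivation, not the proof. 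You also garbled the intermediate exponent: what is needed is $|\mathcal{D}_s\cap B(x,1)|\lesssim s^{\alpha}$ with $\alpha>2p$, which you later correctly identify as $s^{4-\eta}$ with $\eta<2(2-p)$, not ``$s^{2-\beta}$ for $\beta<2$.''
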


\begin{proof}
 In light of Proposition~\ref{prn:SC26_7}, it is clear that volume radius and harmonic radius are uniformly equivalent. Therefore, Proposition~\ref{prn:SB25_2} is nothing but a singular version of the Cheeger-Naber estimate(c.f. the second inequality of part 2 of Corollary 1.26 in~\cite{CN}).
As pointed out by Cheeger and Naber, their estimate holds for Gromov-Hausdorff limit for Ricci-flat manifolds.
Actually, going through their proof, it is clear that the smooth structure of the underlying space is not used.
Intuitively, if Bishop-Gromov volume comparison holds, then most geodesic balls are almost volume cones, hence almost metric cones.
However, if a cone is very close to a cone which splits off at least $(2n-3)$-lines,
 then it must be Euclidean space by the $\epsilon$-regularity property.  This intuition was quantified in~\cite{CN}, by the method they called quantitative calculus,
 which does not depends on smooth structure by its nature.
 We note that the quantitative calculus argument of~\cite{CN} works when we have the following properties.
\begin{itemize}
 \item Bishop-Gromov volume comparison, by Proposition~\ref{prn:HD19_1}.
 \item Weak compactness of $ \widetilde{\mathscr{KS}}(n,\kappa)$,  by Proposition~\ref{prn:HA05_1}.
 \item Volume convergence, by Proposition~\ref{prn:HD22_1}.
 \item Almost volume cone implies almost metric cone, by Proposition~\ref{prn:HD22_2}.
 \item $\epsilon$-regularity, by Proposition~\ref{prn:HD20_1}.
\end{itemize}
Since all these properties hold on $ \widetilde{\mathscr{KS}}(n,\kappa)$, the proof follows that of~\cite{CN} verbatim.
\end{proof}

An immediate consequence of Proposition~\ref{prn:SB25_2} is the following volume estimate of neighborhood of
singular set.
\begin{corollary}[\textbf{Volume estimate of singular neighborhood}]
    Suppose $(X,x_0,g) \in \widetilde{\mathscr{KS}}(n,\kappa)$, $0<\rho<<1$.
    Then for each $0<p<2$,  we have
    \begin{align*}
      \left| \{x| d(x,\mathcal{S})<\rho, x \in B(x_0, 1)\} \right|< C\rho^{2p},
    \end{align*}
    for some $C=C(n,\kappa,p)$.
\label{cly:HD19_2}
\end{corollary}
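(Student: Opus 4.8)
The plan is to deduce this corollary directly from the density estimate of regular points, Proposition~\ref{prn:SB25_2}, by integrating the pointwise bound $\mathbf{vr}(y)^{-2p}$ over the set we care about. The starting observation is the inclusion~(\ref{eqn:SB25_13}): if $d(y,\mathcal{S})<\rho$, then $y \in \mathcal{D}_{\tilde{K}\rho}$, i.e. $\mathbf{vr}(y)<\tilde{K}\rho$, which is the same as $(\tilde{K}\rho)^{-2p} < \mathbf{vr}(y)^{-2p}$. Hence on the set $\{x \mid d(x,\mathcal{S})<\rho,\ x\in B(x_0,1)\}$ the integrand $\mathbf{vr}(y)^{-2p}$ is bounded below by $(\tilde{K}\rho)^{-2p}$.

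First I would write, for $0<p<2$ fixed,
\begin{align*}
  (\tilde{K}\rho)^{-2p}\left| \{x \mid d(x,\mathcal{S})<\rho,\ x \in B(x_0,1)\} \right|
  &\leq \int_{\{x \mid d(x,\mathcal{S})<\rho\} \cap B(x_0,1)} \mathbf{vr}(y)^{-2p}\, dy \\
  &\leq \int_{B(x_0,1)} \mathbf{vr}(y)^{-2p}\, dy \leq E(n,\kappa,p),
\end{align*}
where the last step is Proposition~\ref{prn:SB25_2} applied with $r=1$ (whose left-hand side is exactly $1^{2p-2n}\int_{B(x_0,1)}\mathbf{vr}(y)^{-2p}\,dy$). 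Rearranging gives
\begin{align*}
  \left| \{x \mid d(x,\mathcal{S})<\rho,\ x \in B(x_0,1)\} \right| \leq E(n,\kappa,p)\,\tilde{K}^{2p}\,\rho^{2p} = C(n,\kappa,p)\,\rho^{2p},
\end{align*}
which is the assertion, with $C = E(n,\kappa,p)\tilde{K}^{2p}$ (and $\tilde{K}=\tilde{K}(n)$ is the dimensional constant from Proposition~\ref{prn:SC26_7}, so the dependence of $C$ is as claimed).

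There is essentially no obstacle here: the corollary is a one-line consequence once Proposition~\ref{prn:SB25_2} is in hand, and the only point requiring a word of care is that the measure-zero singular set $\mathcal{S}$ itself is harmless (it contributes nothing to the integral, and on $\mathcal{S}$ one has $\mathbf{vr}=0$ so $\mathbf{vr}^{-2p}=+\infty$, but this is a null set). One should also note the hypothesis $0<\rho\ll 1$ is not actually needed for the estimate as stated — it is only there so that $\rho^{2p}$ is the dominant term and the statement is non-vacuous — and that restricting to the unit ball $B(x_0,1)$ matches the normalization $r=1$ in Proposition~\ref{prn:SB25_2}; for larger balls one would simply rescale. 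If one wanted the estimate on $B(x_0,R)$ one would apply Proposition~\ref{prn:SB25_2} with $r=R$ and pick up the extra factor $R^{2n-2p}$, but that is not needed for the corollary as stated.
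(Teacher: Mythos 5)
Your proof is correct, and it is in fact cleaner than the argument in the paper. The paper's own proof proceeds by a dyadic annular decomposition: it first bounds the measure of each dyadic shell $B(x_0,1)\cap\mathcal{F}_{r}\backslash\mathcal{F}_{2r}$ (using that $\mathbf{vr}\in[r,2r)$ there, hence $\mathbf{vr}^{-2p}>(2r)^{-2p}$), then sums the geometric series to conclude $|B(x_0,1)\cap\mathcal{D}_{2r}|<\frac{E}{1-4^{-p}}(2r)^{2p}$, and finally applies the inclusion~(\ref{eqn:SB25_13}). Your argument collapses this to a single Chebyshev-type step: on $\mathcal{D}_{\tilde{K}\rho}$ the integrand $\mathbf{vr}^{-2p}$ is everywhere $>(\tilde{K}\rho)^{-2p}$, so you integrate once over the whole set and divide. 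Both routes lean on exactly the same ingredients (Proposition~\ref{prn:SB25_2} with $r=1$, and the inclusion~(\ref{eqn:SB25_13}) relating $d(\cdot,\mathcal{S})$ to $\mathbf{vr}$), but yours avoids the dyadic sum and even yields a marginally better constant $C=E\tilde{K}^{2p}$ versus the paper's $C=\frac{E}{1-4^{-p}}\tilde{K}^{2p}$. Your remarks about the measure-zero contribution of $\mathcal{S}$ and the irrelevance of the hypothesis $\rho\ll 1$ are both accurate.
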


\begin{proof}
  It follows from Definition~\ref{dfn:SB25_1} that
  \begin{align*}
     (2r)^{-2p}\left|B(x_0,1) \cap \mathcal{F}_{r} \backslash \mathcal{F}_{2r}\right|
     = \int_{B(x_0,1) \cap \mathcal{F}_{r} \backslash \mathcal{F}_{2r}} (2r)^{-2p}
     < \int_{B(x_0,1) \cap \mathcal{F}_{r} \backslash \mathcal{F}_{2r}} \mathbf{vr}^{-2p}<E(n,\kappa,p),
  \end{align*}
  which implies that
  \begin{align*}
    \left|B(x_0,1) \cap \mathcal{D}_{2r} \backslash \mathcal{D}_{r}\right| < 2^{2p} E r^{2p}
    \Rightarrow  |B(x_0,1) \cap \mathcal{D}_{2r}|<\frac{E}{1-4^{-p}} (2r)^{2p}.
  \end{align*}
  By virtue of (\ref{eqn:SB25_13}), we have
  \begin{align*}
     \left|B(x_0,1) \cap \{x |d(x,\mathcal{S})<\rho\} \right| \leq |B(x_0,1) \cap \mathcal{D}_{\tilde{K}\rho}|< \frac{E}{1-4^{-p}} \tilde{K}^{2p} \rho^{2p}<C\rho^{2p}.
  \end{align*}
\end{proof}

Now we are ready to prove the compactness theorem.
 \begin{theorem}[\textbf{Compactness}]
  $\widetilde{\mathscr{KS}}(n,\kappa)$ is compact under the pointed Cheeger-Gromov topology.
 \label{thm:HD19_1}
 \end{theorem}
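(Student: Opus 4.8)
The plan is to read the theorem off the weak compactness already in hand. By Proposition~\ref{prn:HA05_1}, any sequence $(X_i,x_i,g_i)\in\widetilde{\mathscr{KS}}(n,\kappa)$ has, after passing to a subsequence, a pointed $\hat{C}^\infty$-limit $(\bar X,\bar x,\bar g)$ which already satisfies Properties 1, 2, 5 and 6 of Definition~\ref{dfn:SC27_1}, together with $\dim_{\mathcal H}\mathcal S\le 2n-4$. Thus the entire content left to establish is that the limit $\bar X$ also satisfies Property 4 (the Minkowski dimension bound $\dim_{\mathcal M}\mathcal S<2n-3$) and Property 3 (weak convexity of $\mathcal R$); granting these, $\bar X\in\widetilde{\mathscr{KS}}(n,\kappa)$, and the asserted compactness follows at once since $\hat C^\infty$-convergence refines the pointed Cheeger--Gromov convergence.

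Property 4 is a quick limiting consequence of Corollary~\ref{cly:HD19_2}. That corollary, applied to each $X_i$ with its uniform constant $C(n,\kappa,p)$ and passed to the $\hat C^\infty$-limit (using volume convergence together with the fact, noted in the proof of Proposition~\ref{prn:HA05_1}, that every singular point of $\bar X$ is a limit of points $z_i\in X_i$ with $\mathbf{vr}(z_i)\to 0$, hence with $d(z_i,\mathcal S(X_i))\to 0$), yields for every unit geodesic ball $B\subset\bar X$ and every $0<p<2$
\begin{align*}
  \left| \{\, x\in B \mid d(x,\mathcal S)<\rho \,\} \right| \le C(n,\kappa,p)\,\rho^{2p}
\end{align*}
for all small $\rho$. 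Taking logarithms and letting $\rho\to 0^+$ gives $\liminf_{\rho\to 0^+}\frac{\log|\{d(\cdot,\mathcal S)<\rho\}\cap B|}{\log\rho}\ge 2p$; letting $p\to 2$ this is $\ge 4$, which by Definition~\ref{dfn:HE08_1} is exactly $\dim_{\mathcal M}\mathcal S\le 2n-4<2n-3$.

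The real work is Property 3. Fix $\bar x\in\mathcal R(\bar X)$; minimal geodesics from $\bar x$ to any other point exist since $\bar X$ is a complete, locally compact length space. Let $\mathcal C_{\bar x}$ be the union of $\mathcal S$, the cut locus of $\bar x$ inside the open Riemannian manifold $\mathcal R$, and the set $\mathcal B$ of endpoints $\bar y$ such that \emph{some} minimal geodesic from $\bar x$ to $\bar y$ meets $\mathcal S$. The set $\mathcal S$ is $\mathcal H^{2n}$-null and the cut locus in $\mathcal R$ is null by the standard smooth argument, so it remains to prove $|\mathcal B|=0$; this is where the codimension of $\mathcal S$ enters. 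If a minimal geodesic from $\bar x$ of length $\le R$ comes within $\rho$ of $\mathcal S$, then at some parameter its position lies in $\{d(\cdot,\mathcal S)<\rho\}\cap B(\bar x,R)$, a set of measure $O(\rho^{2p})$ by Corollary~\ref{cly:HD19_2}; feeding this into the Bishop--Gromov Jacobian bound for the exponential map at $\bar x$ in polar coordinates --- equivalently, into the segment inequality of Proposition~\ref{prn:HD17_1} --- bounds the measure of the set of corresponding endpoints by a quantity tending to $0$ as $\rho\to 0$. Intersecting over $\rho\to 0$ gives $|\mathcal B|=0$. For $\bar y\notin\mathcal C_{\bar x}$, every minimal geodesic from $\bar x$ to $\bar y$ then misses $\mathcal S$, hence lies in the totally geodesic open manifold $\mathcal R$, and there it is unique because $\bar y$ avoids the cut locus; this is precisely weak convexity. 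I expect this step --- essentially Cheeger--Colding's theorem that almost every pair of points in a non-collapsed Ricci limit is joined by a minimal geodesic inside the regular part, now available because $\dim_{\mathcal H}\mathcal S\le 2n-4$ and because of the quantitative volume estimate above --- to be the main obstacle; the remaining assembly into the compactness statement is routine.
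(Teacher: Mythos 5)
Your plan reverses the paper's order, proving the Minkowski dimension bound before convexity, but this reversal cannot be made to work with the paper's toolkit, and both halves of your argument contain a gap.

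For the Minkowski dimension step, the implication ``every singular point of $\bar X$ is a limit of points $z_i\in X_i$ with $\mathbf{vr}(z_i)\to 0$, hence with $d(z_i,\mathcal S(X_i))\to 0$'' fails: small volume radius does not imply proximity to $\mathcal S(X_i)$. Indeed the proof of Proposition~\ref{prn:HA05_1} explicitly separates the two cases --- either $y_i$ can be chosen singular, or there is a uniform $\xi$ so that $B(y_i,\xi)$ consists entirely of regular points (but with curvature blowing up). In the second case $d(y_i,\mathcal S(X_i))\ge\xi$ uniformly, yet $\mathbf{vr}(y_i)\to 0$; e.g.\ each $X_i$ could be a smooth Eguchi--Hanson-type space with $\mathcal S(X_i)=\emptyset$, and then $d(\cdot,\mathcal S(X_i))\equiv+\infty$ while $\mathbf{vr}\to 0$. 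So Corollary~\ref{cly:HD19_2} applied on the $X_i$ cannot be transferred to a volume estimate for $\rho$-tubes about $\mathcal S(\bar X)$ along this route. What the paper actually does is first pass the density estimate (\ref{eqn:SB25_15}), a statement about $\mathbf{vr}$ alone, to the limit, and to do that it needs $\mathbf{vr}$ to be continuous under the convergence --- which in turn requires the rigidity of volume ratios (Proposition~\ref{prn:SC17_3}, hence Lemma~\ref{lma:HE04_2}), and those rest on Bishop--Gromov for $\bar X$, which the paper obtains only once convexity of $\mathcal R(\bar X)$ has been established. So the Minkowski bound is logically downstream of convexity, not parallel to it.

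For the convexity step, your measure-theoretic argument invokes the segment inequality (Proposition~\ref{prn:HD17_1}) and a Bishop--Gromov Jacobian estimate \emph{on the limit} $\bar X$. But both of those are proved in the paper only for spaces already known to be in $\widetilde{\mathscr{KS}}(n,\kappa)$, and weak convexity is used in those proofs; invoking them for $\bar X$ before you know $\bar X\in\widetilde{\mathscr{KS}}(n,\kappa)$ is circular within the framework of this paper. The device that the paper uses, and which your proposal does not mention, is Proposition~\ref{prn:SC26_2}, the global Harnack inequality for $\mathbf{vr}$ --- the Colding--Naber interior H\"older continuity statement. Crucially it is applied not to $\bar X$ but to the approximating spaces $X_i$, which \emph{are} in $\widetilde{\mathscr{KS}}(n,\kappa)$: given $\bar y,\bar z\in\mathcal R(\bar X)$, take $y_i,z_i\to\bar y,\bar z$ with $\mathbf{vr}(y_i),\mathbf{vr}(z_i)$ bounded below; then every shortest $X_i$-geodesic $\gamma_i$ from $y_i$ to $z_i$ has $\mathbf{vr}\ge\epsilon$ along its whole length, uniformly in $i$, so the $\gamma_i$ converge smoothly to a shortest geodesic inside $\mathcal R(\bar X)$. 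That gives (in fact strong) convexity directly, with no measure-theoretic excursion and no use of estimates on $\bar X$. This is the ingredient your proposal is missing, and it is what makes the proof non-circular.
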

 \begin{proof}
  Suppose $(X_i, x_i,g_i) \in \widetilde{\mathscr{KS}}(n,\kappa)$,  we already know, by Proposition~\ref{prn:HA05_1},
  that $(X_i,x_i,g_i)$ converges to a limit space $(\bar{X},\bar{x},\bar{g})$, which satisfies almost all the properties  of
  spaces in $\widetilde{\mathscr{KS}}(n,\kappa)$, except the weak convexity of $\mathcal{R}$ and the
  Minkowski dimension estimate of $\mathcal{S}$.
  However, fix every two points  $\bar{y},\bar{z} \in \mathcal{R} \subset \bar{X}$, we can find a sequence of points
  $y_i, z_i \in X_i$ such that $y_i \to \bar{y}$ and $z_i \to \bar{z}$. It is clear that
   $\mathbf{vr}(y_i) \to \mathbf{vr}(\bar{y})$ and $\mathbf{vr}(z_i) \to \mathbf{vr}(\bar{z})$.
   It follows from the global Harnack inequality of volume radius, Proposition~\ref{prn:SC26_2}, that each shortest geodesic
   $\gamma_i$ connecting $y_i$ and $z_i$ is uniformly regular.  Consequently, the limit shortest geodesic $\bar{\gamma}$
   connecting $\bar{y}$ and $\bar{z}$ is a smooth shortest geodesic. Therefore, we have actually proved that
   $\mathcal{R}$ is  convex, rather than weakly convex.
   Furthermore, if we repeatedly use the first inequality in Proposition~\ref{prn:SC26_2} and smooth convergence determined by volume radius,
   one can see that a shortest geodesic $\bar{\gamma}$ with smooth interior can be obtained, even if we drop the condition $\bar{y} \in \mathcal{R}$.
   In other words, if $\bar{z} \in \mathcal{R}$, $\bar{y} \in \bar{X}$, then there is a shortest geodesic $\bar{\gamma}$ connecting them, with smooth interior.
   This means that $\mathcal{R}$ is strongly convex.

   By convexity of $\mathcal{R}$,  it is clear that the limit space $\bar{X}$ has Bishop-Gromov volume comparison.
   By virtue of volume convergence and the same argument in Proposition~\ref{prn:HE07_2},
   we see that $\mathbf{vr}$ is a continuous function under the pointed Cheeger-Gromov topology.  In other words, for every point $\bar{z} \in \bar{X}$, and points
   $z_i \in X_i$ satisfying $z_i \to \bar{z}$, we have  $\displaystyle \mathbf{vr}(\bar{z})=\lim_{i \to \infty} \mathbf{vr}(z_i)$.
   For each $r>0$, by density estimate, Proposition~\ref{prn:SB25_2},
   we see that inequality (\ref{eqn:SB25_15}) holds for every $B(x_i,r)$ uniformly.
   Taking limit, by the convergence of volume radius, we obtain (\ref{eqn:SB25_15}) holds on $(\bar{X},\bar{x},\bar{g})$, for
   each $p\in (1.5, 2)$.  Then it follows from Corollary~\ref{cly:HD19_2} and the definition of Minkowski dimension (c.f. Definition~\ref{dfn:HE08_1})
   that  $\dim_{\mathcal{M}} \mathcal{S} \leq 2n-4$.
 \end{proof}

\begin{theorem}[\textbf{Space regularity improvement}]
 Suppose $X \in \widetilde{\mathscr{KS}}(n,\kappa)$, then $\mathcal{R}$ is strongly convex, and
 $\dim_{\mathcal{M}}\mathcal{S} \leq 2n-4$.
 Suppose  $x_0 \in \mathcal{S}$,  $Y$ is a tangent space of $X$ at $x_0$.
 Then $Y$ is a metric cone in $\widetilde{\mathscr{KS}}(n,\kappa)$ with the splitting
 \begin{align*}
   Y=\C^{n-k} \times C(Z)
 \end{align*}
 for some $k \geq 2$, where $C(Z)$ is a metric cone without lines.
\label{thm:HE08_1}
\end{theorem}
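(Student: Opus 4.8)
The three assertions will be assembled from the machinery built above; no genuinely new argument is needed. The first two hold for \emph{every} $X\in\widetilde{\mathscr{KS}}(n,\kappa)$, not merely for limits, and amount to a re-reading of the proof of Theorem~\ref{thm:HD19_1} applied to the constant sequence $X_i\equiv X$. Concretely: feeding the density estimate of regular points (Proposition~\ref{prn:SB25_2}) with any $p\in(3/2,2)$ into Corollary~\ref{cly:HD19_2} and invoking Definition~\ref{dfn:HE08_1} yields $\dim_{\mathcal{M}}\mathcal{S}\le 2n-4$; and strong convexity of $\mathcal{R}$ follows as in the compactness proof, by joining any two points by shortest geodesics, noting via Proposition~\ref{prn:SC26_7} that the volume radius is positive and controlled near a regular endpoint, and then applying the global Harnack inequality of the volume radius (Proposition~\ref{prn:SC26_2}) to deduce that each such geodesic has smooth interior. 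Thus the a priori weaker Definition~\ref{dfn:SC27_1} conditions (weak convexity, $\dim_{\mathcal{M}}\mathcal{S}<2n-3$) are upgraded for free.

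For the tangent-cone statement, fix $x_0\in\mathcal{S}$ and realize $Y$ as a pointed Cheeger-Gromov limit of $(X,x_0,\lambda_i^2 g)$ with $\lambda_i\to\infty$. Every defining property in Definition~\ref{dfn:SC27_1} is scale invariant --- Ricci-flatness, the compatible complex structure, weak convexity of $\mathcal{R}$, the Minkowski codimension bound, the criterion $\mathrm{v}\equiv1$ on $\mathcal{R}$ and $\mathrm{v}\le1-2\delta_0$ on $\mathcal{S}$, and $\mathrm{avr}\ge\kappa$ --- so each rescaling $(X,x_0,\lambda_i^2 g)$ again lies in $\widetilde{\mathscr{KS}}(n,\kappa)$. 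Passing to a subsequence and applying the compactness theorem (Theorem~\ref{thm:HD19_1}) produces $(Y,y_0,g_Y)\in\widetilde{\mathscr{KS}}(n,\kappa)$.

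Next I would check $Y$ is a metric cone with vertex $y_0$. By Bishop-Gromov (Proposition~\ref{prn:HD19_1}) the ratio $\omega_{2n}^{-1}r^{-2n}|B(x_0,r)|$ is non-increasing in $r$, so it has a limit as $r\to0^+$ equal to $\mathrm{v}(x_0)$, and the same monotonicity gives $\kappa\le\mathrm{avr}(X)\le\mathrm{v}(x_0)\le1-2\delta_0$, the last inequality because $x_0\in\mathcal{S}$. Under the rescaling by $\lambda_i^2$ one has $\omega_{2n}^{-1}r^{-2n}|B_{\lambda_i^2 g}(x_0,r)|=\omega_{2n}^{-1}(r/\lambda_i)^{-2n}|B_g(x_0,r/\lambda_i)|\to\mathrm{v}(x_0)$ for each fixed $r>0$; combined with the volume continuity along the convergence (Proposition~\ref{prn:HD22_1}, applied at each radius after rescaling) this forces $\omega_{2n}^{-1}r^{-2n}|B_{g_Y}(y_0,r)|\equiv\mathrm{v}(x_0)$ for all $r>0$. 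Hence every geodesic ball about $y_0$ is a volume cone, and the metric cone rigidity lemma (Lemma~\ref{lma:HE04_2}, implication $2\Rightarrow4$ and the part of its proof showing that a volume cone is a ball in a metric cone, applied after rescaling each $B_{g_Y}(y_0,R)$ to the unit ball and letting $R\to\infty$) shows $Y$ is a metric cone with vertex $y_0$. Since $\mathrm{v}(x_0)\le1-2\delta_0<1$, $Y$ is not isometric to $\C^n$, i.e. $Y\in\widetilde{\mathscr{KS}}^*(n,\kappa)$, and $\mathrm{v}(x_0)\ge\kappa>0$ guarantees $Y$ is genuinely $2n$-dimensional (non-collapsed). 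Applying the K\"ahler cone splitting (Lemma~\ref{lma:HD30_1}) gives $Y=\C^{n-k}\times C(Z)$ with $2\le k\le n$ and with no straight line through the vertex of $C(Z)$; for a metric cone this is equivalent to $C(Z)$ containing no line at all, since any line would split off an $\R$-factor and hence yield a line through the vertex.

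I do not expect a serious obstacle here: the content is scale invariance of the class, Theorem~\ref{thm:HD19_1}, volume convergence, and the two rigidity lemmas, all already available. The one point deserving attention is that the limit $Y$ must retain a compatible K\"ahler structure on its regular part so that Lemma~\ref{lma:HD30_1} is applicable --- but this is exactly part of the output of Theorem~\ref{thm:HD19_1}. The only mildly technical step is the re-derivation of the improved convexity and Minkowski bound for an individual $X$, which, as noted, is the proof of Theorem~\ref{thm:HD19_1} read with $X_i\equiv X$.
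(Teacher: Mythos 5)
Your proof is correct and follows essentially the same route as the paper's: strong convexity and $\dim_{\mathcal{M}}\mathcal{S}\le 2n-4$ are read off from the proof of Theorem~\ref{thm:HD19_1} applied to the constant sequence, the rescalings stay in the scale-invariant class so Theorem~\ref{thm:HD19_1} places $Y\in\widetilde{\mathscr{KS}}(n,\kappa)$, the volume-cone property follows from Bishop--Gromov monotonicity plus volume convergence and upgrades to a metric cone via Lemma~\ref{lma:HE04_2}, and the splitting is Lemma~\ref{lma:HD30_1}. You merely spell out steps the paper compresses into three sentences (in particular the intermediate ``volume cone $\Rightarrow$ metric cone'' step and the observation that $\mathrm{v}(x_0)\le 1-2\delta_0$ rules out $Y=\C^n$), and you flag the cosmetic gap between ``no line through the vertex'' (the literal output of Lemma~\ref{lma:HD30_1}) and ``no line'' (the statement of the theorem), which the paper silently identifies.
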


\begin{proof}
 The strong convexity of $\mathcal{R}$ and $\dim_{\mathcal{M}}\mathcal{S} \leq 2n-4$ follows from the argument in the proof of Theorem~\ref{thm:HD19_1}.
 Moreover, by Theorem~\ref{thm:HD19_1}, we know each tangent space, as a pointed Gromov-Hausdorff limit,
 must locate in $\widetilde{\mathscr{KS}}(n,\kappa)$.  Since $Y$ is a volume cone, due to volume convergence,
 the splitting of $Y$ follows from Lemma~\ref{lma:HD30_1}.
\end{proof}

 Because of Theorem~\ref{thm:HD19_1} to Theorem~\ref{thm:HE08_1}, it seems reasonable to make the following definition for the simplicity of notations.

\begin{definition}
 A length space $(X^n,g)$ is called a conifold of complex dimension $n$ if the following properties are satisfied.
\begin{enumerate}
  \item  $X$ has a disjoint regular-singular decomposition $X=\mathcal{R} \cup \mathcal{S}$, where $\mathcal{R}$ is the regular part,  $\mathcal{S}$ is the singular part.
   A point is called regular if it has a neighborhood which is isometric to a totally geodesic convex domain of  some smooth Riemannian manifold.  A point is called singular
   if it is not regular.
  \item  The regular part $\mathcal{R}$ is a nonempty, open manifold of real dimension $2n$.
            Moreover, there exists a complex structure $J$ on $\mathcal{R}$ such that $(\mathcal{R}, g, J)$ is a K\"ahler manifold.
  \item     $\mathcal{R}$ is  strongly convex, i.e., for every two points $x \in \mathcal{R}$ and $y \in X$, one can find a shortest geodesic  $\gamma$ connecting $x$, $y$ whose every interior point is in $\mathcal{R}$. In particular, $\mathcal{R}$ is geodesic convex.
  \item $\dim_{\mathcal{M}} \mathcal{S} \leq  2n-4$, where $\mathcal{M}$ means Minkowski dimension.
  \item Every tangent space of $x \in \mathcal{S}$ is a metric cone of Hausdorff dimension $2n$.  Moreover, if $Y$ is a tangent cone of $x$, then the unit ball
  $B(\hat{x},1)$ centered at vertex $\hat{x}$ must satisfy
        \begin{align*}
             |B(\hat{x},1)|_{d\mu} \leq (1-\delta_0) \omega_{2n}
       \end{align*}
       for some uniform positive number $\delta_0=\delta_0(n)$.
       Here $d\mu$ is the $2n$-dimensional Hausdorff measure, $\omega_{2n}$ is the volume of unit ball in $\C^n$.
\end{enumerate}
\label{dfn:HD20_1}
\end{definition}
Roughly speaking, a conifold is a space which is almost a manifold away from a small singular set, where every tangent space is a metric cone.
Note that we abuse notation here since the conifold has different meaning
in the literature of string theory(c.f.~\cite{Green}).
It is easy to see that every K\"ahler orbifold with singularity codimension not less than $4$ is a conifold in our sense.
With this terminology, we see that $\widetilde{\mathscr{KS}}(n,\kappa)$ is nothing but the collection of Calabi-Yau conifold
with Euclidean volume growth, i.e.,
 \begin{align*}
      \lim_{r \to \infty} \frac{|B(x,r)|_{d\mu}}{\omega_{2n}r^{2n}} \geq \kappa, \quad \forall \; x \in X.
 \end{align*}
Then Theorem~\ref{thm:HD19_1} can be interpreted as that the moduli space of non-collapsed Calabi-Yau conifolds is compact, under the pointed Cheeger-Gromov topology. Theorem~\ref{thm:HE08_1} can be understood as that a ``weakly" Calabi-Yau conifold is really a conifold,  due to an intrinsic improving regularity property
originates from the intrinsic Ricci flatness of the underlying space.
The property of the moduli space $\widetilde{\mathscr{KS}}(n,\kappa)$ is quite clear now.

\begin{proof}[Proof of Theorem~\ref{thmin:HE21_1}]
It follows directly from Theorem~\ref{thm:HD19_1}, Theorem~\ref{thm:HE08_1} and Definition~\ref{dfn:HD20_1}.
\end{proof}

Actually, along the route to prove Theorem~\ref{thm:HE08_1}, we shall be able to improve the regularity of the
spaces in  $\widetilde{\mathscr{KS}}(n,\kappa)$ even further.  For example,  we believe the following statement is true.

\begin{conjecture}
 At every point $x_0$ of a Calabi-Yau conifold $X \in \widetilde{\mathscr{KS}}(n,\kappa)$, the tangent space is unique.
\label{cje:HE09_1}
\end{conjecture}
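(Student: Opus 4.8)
The plan is to prove uniqueness of tangent cones at every point of a Calabi-Yau conifold $X \in \widetilde{\mathscr{KS}}(n,\kappa)$ by adapting the Colding-Minicozzi strategy (\cite{ColdMin}), which establishes uniqueness of tangent cones for Einstein manifolds whenever one tangent cone has a smooth cross-section, together with the refinements available in the K\"ahler setting. First I would reduce to the case $x_0 \in \mathcal{S}$, since at regular points the statement is trivial from the smooth manifold structure. By Theorem~\ref{thm:HE08_1}, every tangent space $Y$ at $x_0$ is a metric cone in $\widetilde{\mathscr{KS}}(n,\kappa)$ with a splitting $Y = \C^{n-k} \times C(Z)$, so it suffices to show that the factor $C(Z)$, hence the cross-section $Z$, is independent of the blow-up sequence $r_i \to 0$. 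The key monotone quantity is the rescaled volume ratio $r^{-2n}|B(x_0,r)|$, which by Proposition~\ref{prn:HD19_1} is non-increasing, hence has a well-defined limit $\Theta(x_0)$ as $r \to 0$; the almost-volume-cone-implies-almost-metric-cone estimate, Proposition~\ref{prn:HD22_2}, then guarantees that on small scales the space is Gromov-Hausdorff close to \emph{some} metric cone, and the content of uniqueness is that all these cones coincide.

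The main steps I would carry out are as follows. First, establish an \emph{\L ojasiewicz-Simon type inequality} on the space of cross-sections: for cross-sections $Z$ arising as limits, the Einstein-type functional (coming from the scalar curvature / Hilbert action on the link, which for a Calabi-Yau cone is an Einstein-Sasaki type constraint) satisfies a \L ojasiewicz gradient inequality near critical points. The analysis foundation for this — Sobolev inequalities, elliptic estimates, heat kernel bounds, the Dirichlet form machinery — is exactly what was assembled in Section 2 (Propositions~\ref{prn:HC29_2}, \ref{prn:HC29_3}, \ref{prn:HD07_1}, \ref{prn:HC29_6}), so these tools transfer to the singular links as well, provided we verify that the cross-sections themselves are conifolds of one lower dimension (which follows from Theorem~\ref{thm:HE08_1} applied inductively to tangent cones of tangent cones). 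Second, convert the \L ojasiewicz inequality into an integrability/decay estimate for the ``flow'' of rescaled metrics $g_{r} = r^{-2} g$ as $r \to 0$, showing the quantity $\int_0^1 \| \partial_s g_{e^{-s}} \|_{L^2} \, ds < \infty$; this forces Cauchy behavior and hence a unique limit. Third, use the K\"ahler structure: the complex rigidity (Lemma~\ref{lma:HD30_1} and Proposition~\ref{prn:HE08_1}) ensures the split is always by a \emph{complex} subspace $\C^{n-k}$, so $k$ is locally constant along the blow-up and no continuum of non-isometric cones with jumping split-dimension can occur; this is the analogue of the observation that makes the K\"ahler case more rigid than the general Einstein case.

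I expect the main obstacle to be the \L ojasiewicz-Simon inequality on singular cross-sections. In the smooth case of \cite{ColdMin} one crucially uses that a neighbourhood in the blow-up is a smooth manifold with two-sided Ricci bounds, so that standard elliptic regularity and the analyticity of the Einstein operator apply. Here the cross-section $Z$ may itself be a conifold with a codimension-$\geq 3$ singular set, and one must run the entire \L ojasiewicz argument on $Z \setminus \mathcal{S}(Z)$ while controlling error terms coming from the singular neighbourhood. The key point in our favor is Corollary~\ref{cly:HD19_2}: the $\rho$-neighbourhood of $\mathcal{S}$ has volume $O(\rho^{2p})$ for any $p<2$, hence volume $o(\rho^{3})$, which is precisely the codimension threshold used repeatedly in Section 2 (e.g.\ in the proof of Proposition~\ref{prn:HD04_1}) to show that cutoff-function error terms vanish in the limit. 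So the plan is to carry out the \L ojasiewicz argument using the excellent cutoff functions of Lemma~\ref{lma:HE04_3} to excise $\mathcal{S}$, estimate the resulting boundary/interior error terms by the Minkowski codimension bound, and take limits. A secondary difficulty is the possibility that $C(Z)$ has continuous moduli even with fixed link topology; here the finiteness of the energy integral from the \L ojasiewicz inequality, combined with the discreteness forced by the K\"ahler split-dimension, is what rules it out. If a full \L ojasiewicz approach proves too delicate on conifolds, a fallback is to prove the weaker statement that the tangent cone is unique \emph{up to the obvious isometry group action}, or to assume additionally that some tangent cone has smooth link — but the goal, as stated in Conjecture~\ref{cje:HE09_1}, is the unconditional uniqueness, and the structure theory developed above (especially the continuity of $\mathbf{vr}$, Proposition~\ref{prn:HE07_2}, and the global Harnack inequality, Proposition~\ref{prn:SC26_2}) gives the geometric control needed to push it through.
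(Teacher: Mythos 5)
The paper does \emph{not} prove this statement: it is presented explicitly as a conjecture, and immediately afterward the authors note that ``the uniqueness of tangent cone is a well known open problem, in the classical theory of Cheeger-Colding-Tian,'' deferring any further discussion to a separate paper. So there is no in-text proof to compare your proposal against; what you have written has to stand or fall on its own.

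As it stands, your proposal has a genuine gap exactly where you anticipate one, and it is not clear that the fallback you sketch overcomes it. The Colding--Minicozzi argument proves uniqueness of tangent cones for Einstein limits under the hypothesis that \emph{some} tangent cone has a smooth cross-section, because the {\L}ojasiewicz--Simon inequality there is built from analyticity of the Einstein operator on a smooth compact link together with Fredholm theory for its linearization. In the conifold setting the cross-section $Z$ is itself singular, and the Section~2 machinery (Lemma~\ref{lma:HE04_3}, Corollary~\ref{cly:HD19_2}, Propositions~\ref{prn:HC29_2}--\ref{prn:HC29_6}) gives $L^2$ integration-by-parts and heat-kernel control across $\mathcal{S}$, but it does not supply the elliptic $C^{2,\alpha}$ regularity, finite-dimensional kernel, or analytic-germ structure near the critical point that the {\L}ojasiewicz--Simon framework requires; one cannot simply excise $\mathcal{S}$ with cutoffs and pass to the limit in those error terms, because the inequality is not a codimension-four volume statement but a statement about the local structure of an infinite-dimensional functional near a (possibly singular) critical point. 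Your second pillar --- that the K\"ahler rigidity forces the split dimension $k$ in $Y = \C^{n-k}\times C(Z)$ to be locally constant and hence rules out continuous moduli --- does not by itself close the gap either: fixing $k$ does not exclude a continuum of pairwise non-isometric cone factors $C(Z)$ with the same $k$, and ruling that out is precisely what the {\L}ojasiewicz step is supposed to do, so this part of the argument is circular as written. For what it is worth, the case that has since been resolved (Donaldson--Sun, ``Gromov--Hausdorff limits of K\"ahler manifolds and algebraic geometry~II'') is handled by an entirely different, algebro-geometric mechanism --- a two-step degeneration through Hilbert schemes and the uniqueness of the K-semistable degeneration --- rather than by extending the {\L}ojasiewicz argument to singular links, which remains open in the generality you propose.
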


The above problem is only interesting when $n>2$ and away from generic singular point.
Note that if $X$ is a limit space of a sequence of Ricci flat manifolds, then the uniqueness of tangent cone is a well known open problem, in the classical theory of Cheeger-Colding-Tian.
Clearly, similar questions can be asked for general K\"ahler Einstein conifold.
It is not hard to see that a compact K\"ahler Einstein conifold is a projective variety.
Due to its independent interest, we shall discuss this issue in  another separate paper.

\subsection{Space-time structure of $\widetilde{\mathscr{KS}}(n)$}
\label{subsec:reduced}

Every space $X \in \widetilde{\mathscr{KS}}(n)$ can be regarded as a trivial Ricci flow solution.
Therefore, Perelman's celebrated work \cite{Pe1}  can find its role in the study of $X$.
Let us briefly recall some fundamental functionals defined for the Ricci flow by Perelman.

Suppose $\{(X^m, g(t)), -T \leq t \leq 0\}$ is a Ricci flow solution on a smooth complete Riemannian manifold
$X$ of real dimension $m$.   Suppose $x,y \in X$. Suppose $\boldsymbol{\gamma}$ is a space-time curve
parameterized by $\tau=-t$ such that
\begin{align*}
   \boldsymbol{\gamma}(0)=(x,0), \quad  \boldsymbol{\gamma}(\bar{\tau})=(y,-\bar{\tau}).
\end{align*}
Let $\gamma$ be the space-projection curve of $\boldsymbol{\gamma}$.
In other words, we have
\begin{align*}
    \boldsymbol{\gamma}(\tau)=(\gamma(\tau), -\tau).
\end{align*}
By the way, for the simplicity of notations, we always use bold symbol of a Greek character to denote a space-time curve. The corresponding space projection will be denoted by the normal Greek character.
Following Perelman, the Lagrangian of the space-time curve $\boldsymbol{\gamma}$ is defined as
\begin{align}
     \mathcal{L}(\boldsymbol{\gamma})=\int_0^{\bar{\tau}} \sqrt{\tau} \left(R+|\dot{\gamma}|^2 \right)_{g(-\tau)} d\tau.
\label{eqn:MA22_1}     
\end{align}
Among all such $\boldsymbol{\gamma}$'s that connected $(x,0)$, $(y,-\bar{\tau})$ and parameterized by $\tau$,
there is at least one smooth curve $\boldsymbol{\alpha}$ which minimizes the Lagrangian.
This curve is called a shortest reduced geodesic. The reduced distance between $(x,0)$ and $(y,-\bar{\tau})$ is
defined as
\begin{align}
  l((x,0),(y,-\bar{\tau}))=\frac{\mathcal{L}(\boldsymbol{\alpha})}{2\sqrt{\bar{\tau}}}.
\label{eqn:MA22_2}  
\end{align}
Let $V=\dot{\alpha}$. Then $V$ satisfies the equation
\begin{align}
  \nabla_V V +\frac{V}{2\tau} + 2Ric(V, \cdot) + \frac{\nabla R}{2}=0,
\label{eqn:MA22_3}  
\end{align}
which is called the reduced geodesic equation.  It is easy to check that $\dot{\alpha}=V=\nabla l$.
The reduced volume is defined as
\begin{align}
   \mathcal{V}((x,0), \bar{\tau})=\int_{X} (4\pi \bar{\tau})^{-\frac{m}{2}} e^{-l} dv.
\label{eqn:MA22_4}   
\end{align}
It is proved by Perelman that $(4\pi \tau)^{-\frac{m}{2}} e^{-l}dv$, the reduced volume element, is
monotonically non-increasing along each reduced geodesic emanating from $(x,0)$.

Suppose the Ricci flow solution mentioned above is static, i.e., $Ric \equiv 0$.   Then it is easy to check that
\begin{align}
\begin{cases}
&\mathcal{L}(\boldsymbol{\alpha})=\frac{d^2(x,y)}{2\sqrt{\bar{\tau}}}, \\
&l((x,0),(y,-\bar{\tau}))=\frac{d^2(x,y)}{4\bar{\tau}}, \\
&\nabla_V V +\frac{V}{2\tau}=0, \\
&|\dot{\alpha}|^2=|V|^2=|\nabla l|^2=\tau l, \\
&\mathcal{V}((x,0), \bar{\tau})=\int_{X} (4\pi \bar{\tau})^{-\frac{m}{2}} e^{-\frac{d^2}{4\bar{\tau}}} dv.
\end{cases}
\label{eqn:SL25_6}
\end{align}

Now we assume $X \in \widetilde{\mathscr{KS}}(n)$.
By a trivial extension in an extra time direction, we obtain a static, eternal singular K\"ahler Ricci flow solution.
Since distance structure is already known, we can define reduced distance, reduced volume, etc, following the equation
(\ref{eqn:SL25_6}).  Clearly, this definition coincides with the original one when $X$ is smooth.

The following theorem is important to bridge the Cheeger-Colding's structure theory to the Ricci flow theory.

\begin{theorem}[\textbf{Volume ratio and reduced volume}]
 Suppose $X \in \widetilde{\mathscr{KS}}(n)$, $x \in X$.  Let $X \times (-\infty, 0]$ have the obvious
 static space-time structure.  Then we have
 \begin{align}
     &\mathrm{avr}(X)=\lim_{\tau \to \infty} \mathcal{V}((x,0), \tau).       \label{eqn:SL25_4}\\
     &\mathrm{v}(x)=\lim_{\tau \to 0} \mathcal{V}((x,0), \tau).  \label{eqn:SL25_7}
 \end{align}
\label{thm:SL25_1}
\end{theorem}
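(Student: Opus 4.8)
The plan is to compute both limits of the reduced volume $\mathcal{V}((x,0),\tau) = \int_X (4\pi\tau)^{-n} e^{-\frac{d^2(x,y)}{4\tau}}\,d\mu_y$ directly, using the Bishop-Gromov comparison and monotonicity machinery already set up in Proposition~\ref{prn:HD19_1} and Corollary~\ref{cly:GD08_1}. Recall the real dimension is $m=2n$, so $(4\pi\tau)^{-m/2} = (4\pi\tau)^{-n}$. The key analytic tool is the "area ratio" function $A(r)$ from Corollary~\ref{cly:GD08_1}: writing $|B(x,r)| = \int_0^r A(s)s^{2n-1}\,ds$, a change of variables (coarea / layer-cake) gives
\begin{align*}
  \mathcal{V}((x,0),\tau) = (4\pi\tau)^{-n}\int_0^\infty e^{-\frac{r^2}{4\tau}} A(r) r^{2n-1}\,dr.
\end{align*}
Substituting $r = 2\sqrt{\tau}\,\rho$ turns this into $\omega_{2n}^{-1}\cdot\frac{1}{n!}\cdot\frac{1}{?}\int_0^\infty e^{-\rho^2}\rho^{2n-1} A(2\sqrt{\tau}\rho)\,d\rho$ up to an explicit dimensional constant; the normalization constant is pinned down by testing against the flat $\C^n$ case (where $A\equiv 2n\omega_{2n}$ and $\mathcal{V}\equiv 1$), which forces $\int_0^\infty e^{-\rho^2}\rho^{2n-1}\,d\rho = \frac{(n-1)!}{2}$ and fixes all constants.

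For \eqref{eqn:SL25_7}, as $\tau\to 0$ the Gaussian weight concentrates near $r=0$, so by monotonicity of $A$ (Corollary~\ref{cly:GD08_1}) and the dominated convergence theorem, $A(2\sqrt{\tau}\rho)$ converges to $\lim_{r\to 0^+}A(r)$ for each fixed $\rho$, and one identifies $\lim_{r\to 0^+}\frac{A(r)}{2n} = \lim_{r\to 0}\frac{|B(x,r)|}{\omega_{2n}r^{2n}} = \mathrm{v}(x)$ using \eqref{eqn:GD08_5} (the area ratio and volume ratio share the same limit at $0$ since $A$ is monotone). Plugging in and using the flat-space normalization yields $\lim_{\tau\to 0}\mathcal{V} = \mathrm{v}(x)$. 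For \eqref{eqn:SL25_4}, as $\tau\to\infty$ the weight spreads out, and the same argument with $A(2\sqrt{\tau}\rho)\to \lim_{r\to\infty}A(r)$ applies; one identifies $\lim_{r\to\infty}\frac{A(r)}{2n} = \mathrm{avr}(X)$ from \eqref{eqn:GD08_2}. Monotonicity of $A$ gives the clean domination needed for dominated convergence in both limits: for $\tau\to 0$ we bound $A(2\sqrt\tau\rho)\le A(0^+)<\infty$, and for $\tau\to\infty$ we note $A\ge \mathrm{avr}(X)\cdot 2n \ge 2n\kappa$ is bounded and $A\le A(0^+)$, so in fact $A$ is globally bounded and dominated convergence is immediate.

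The main obstacle I anticipate is purely technical bookkeeping: justifying the coarea-type identity $|B(x,r)| = \int_0^r A(s)s^{2n-1}\,ds$ as a genuine Lebesgue-integral change of variables for the distance function on the singular space $X$ (rather than just the smooth manifold case), and then legitimately rewriting $\int_X e^{-d^2/4\tau}\,d\mu = \int_0^\infty e^{-r^2/4\tau}\,d|B(x,r)|$ via integration by parts / Fubini against the measure $d|B(x,r)|$. Both are handled by Corollary~\ref{cly:GD08_1}, which already establishes that $|B(x,r)|$ is absolutely continuous in $r$ with $|\partial B(x,r)| = A(r)r^{2n-1} \in L^\infty_{loc}$; the pushforward of $d\mu$ under $y\mapsto d(x,y)$ is then $A(r)r^{2n-1}\,dr$, and everything reduces to a one-dimensional Fubini computation. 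A minor additional point is verifying that the limits $\lim_{r\to0^+}A(r)$ and $\lim_{r\to\infty}A(r)$ exist (they do, by monotonicity) and coincide with $2n\,\mathrm{v}(x)$ and $2n\,\mathrm{avr}(X)$ respectively — this follows by squeezing $\frac{A(r)}{2n}$ between difference quotients of $\frac{|B(x,r)|}{r^{2n}}$ using \eqref{eqn:GD08_5}. No new geometric input is needed beyond what is already proved.
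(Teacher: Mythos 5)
Your proposal is correct and follows essentially the same route as the paper: both proofs write the reduced volume as the one-dimensional coarea integral $(4\pi\tau)^{-n}\int_0^\infty e^{-r^2/4\tau}\,|\partial B(x,r)|\,dr$ using the absolute continuity established in Corollary~\ref{cly:GD08_1}, and both pass to the limit by exploiting that the Gaussian weight concentrates (as $\tau\to0$) or spreads (as $\tau\to\infty$) onto the region where the monotone area ratio $A(r)$ has stabilized at $2n\omega_{2n}\mathrm{v}(x)$ or $2n\omega_{2n}\mathrm{avr}(X)$. The only cosmetic difference is that you package the final limit via a change of variables $r=2\sqrt{\tau}\rho$ and the dominated convergence theorem (with the clean global bound $0\le A\le 2n\omega_{2n}$ from Bishop--Gromov), whereas the paper carries out an explicit $\epsilon$-splitting of the integral over $[0,\epsilon H]$ and $[\epsilon H,\infty)$ — a hand-rolled version of the same DCT argument.
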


\begin{proof}
The proof relies on the volume cone structure at local tangent space, or tangent space at infinity.  So
the proof of (\ref{eqn:SL25_4}) and (\ref{eqn:SL25_7}) are almost the same. For simplicity, we will only
prove (\ref{eqn:SL25_4}) and leave the proof for (\ref{eqn:SL25_7}) to the readers.

Clearly, the real dimension of $X$ is $m=2n$.
For each $\epsilon$ small, we have
  \begin{align*}
     m \omega_m \mathrm{avr}(X)+\epsilon
     > H^{-m+1}|\partial B(x,H)| >m \omega_m \mathrm{avr}(X)-\epsilon,
  \end{align*}
  whenever $H$ is large enough.  Note that
  \begin{align*}
  \mathcal{V}((x,0), H^2)&=(4\pi)^{-\frac{m}{2}}  H^{-m} \int_0^{\infty} |\partial B(x,r)| e^{-\frac{r^2}{4H^2}} dr, \\
    1&=(4\pi)^{-\frac{m}{2}} H^{-m} \int_0^{\infty} m \omega_m r^{m-1} e^{-\frac{r^2}{4H^2}} dr.
  \end{align*}
  So we have
  \begin{align*}
     \mathcal{V}((x,0), H^2)-\mathrm{avr}(X)
    =(4\pi)^{-\frac{m}{2}} H^{-m} \int_0^{\infty} \left\{|\partial B(x,r)| -m\omega_m\mathrm{avr}(X) r^{m-1}\right\}
    e^{-\frac{r^2}{4H^2}} dr.
  \end{align*}
  We can further decompose the last integral as follows.
  \begin{align*}
     &\quad \left| \int_0^{\epsilon H} \left\{|\partial B(x,r)| -m\omega_m\mathrm{avr}(X)r^{m-1} \right\}
     e^{-\frac{r^2}{4H^2}} dr \right|\\
     &\leq m\omega_m \int_0^{\epsilon H} r^{m-1} e^{-\frac{r^2}{4H^2}} dr
     =m\omega_m H^m \int_0^{\epsilon} s^{m-1}e^{-\frac{s^2}{4}} ds, \\
     &\quad \left| \int_{\epsilon H}^{\infty} \left\{|\partial B(x,r)| -m\omega_m\mathrm{avr}(X)r^{m-1} \right\}
     e^{-\frac{r^2}{4H^2}} dr \right|\\
     &\leq \epsilon \int_{\epsilon H}^{\infty} r^{m-1}e^{-\frac{r^2}{4H^2}} dr
     <\epsilon H^m \int_{0}^{\infty} e^{-\frac{s^2}{4}} ds=\epsilon H^m \pi^{\frac{1}{2}}.
      \end{align*}
   Therefore, we have
   \begin{align*}
      \left|\mathcal{V}((x,0), H^2)-\mathrm{avr}(X)\right|
     <(4\pi)^{-\frac{m}{2}} \left\{m\omega_m \int_0^{\epsilon} s^{m-1}e^{-\frac{s^2}{4}} ds + \epsilon \pi^{\frac{1}{2}}\right\}.
   \end{align*}
   Since the above inequality holds for every $H$ large enough, we see that
   \begin{align*}
      \left|\lim_{\tau \to \infty} \mathcal{V}((x,0),\tau)-\mathrm{avr}(X) \right|
     <(4\pi)^{-\frac{m}{2}} \left\{m\omega_m \int_0^{\epsilon} s^{m-1}e^{-\frac{s^2}{4}} ds + \epsilon \pi^{\frac{1}{2}}\right\}.
   \end{align*}
   Let $\epsilon \to 0$, we obtain (\ref{eqn:SL25_4}).
\end{proof}
 Theorem~\ref{thm:SL25_1} says that when we study the asymptotic behavior of $X$, the volume ratio and reduced volume
 play the same role.   Note that volume ratio is monotone along radius direction on a manifold with nonnegative Ricci curvature, which property plays an essential role in Cheeger-Colding's theory.
 Since reduced volume is monotone along Ricci flow, Theorem~\ref{thm:SL25_1} suggests that Cheeger-Colding's theory can be transplanted to the Ricci flow case.

\section{Canonical radius}

In section 2, we established the compactness of the model space $\widetilde{\mathscr{KS}}(n,\kappa)$,
following the route of Anderson-Cheeger-Colding-Tian-Naber.
It is clear that the volume ratio's monotonicity is essential to this route.  However, most  K\"ahler manifolds  do not have this monotonicity.
For example, if we take out a time slice from a K\"ahler Ricci flow solution, there is no obvious reason at all that volume ratio monotonicity holds on it.
Therefore, in order to set up weak compactness for general K\"ahler manifolds, we have to give up the volume ratio monotonicity and search for a new route.
This will be done in this section.

\subsection{Motivation and definition}

 Let us continue the discussion in Section 2.6.
 As a consequence of the weak compactness of $\widetilde{\mathscr{KS}}(n,\kappa)$, we have
density estimate of volume radius, Proposition~\ref{prn:SB25_2}.  
For simplicity of notation,  we fix some $p_0$ very close to $2$, say $p_0=2-\frac{1}{1000n}$. 
Define
 \begin{align}
    \mathbf{E} \triangleq E(n,\kappa,p_0)+200 \omega_{2n} \kappa^{-1}. \label{eqn:SC17_11}
 \end{align}
 Here we adjust the number $E(n,\kappa,p_0)$ to a much larger number, to reserve spaces for later use.
 Then Proposition~\ref{prn:SB25_2}  implies
 \begin{align}
   r^{2p_0-2n} \int_{B(x,r)} \mathbf{vr}(y)^{-2p_0} dy < \mathbf{E}.  \label{eqn:SL23_5}
 \end{align}
 The above inequality contains a lot of information. For example,  it immediately implies that in every unit  ball,
 there exists a fixed sized sub-ball with uniform regularity.

\begin{proposition}[\textbf{Generic regular sub-ball}]
 Suppose $(X,x_0,g) \in \widetilde{\mathscr{KS}}(n,\kappa)$, $r$ is a positive number. Then  we have
  \begin{align}
      \mathcal{F}_{c_b r} \cap B(x_0,r) \neq \emptyset, \label{eqn:SC29_10}
  \end{align}
where
\begin{align}
   c_b \triangleq \left(\frac{\omega_{2n} \kappa}{4 \mathbf{E}} \right)^{\frac{1}{2p_0}}.
\label{eqn:SL23_6}
\end{align}
\label{prn:HE11_1}
\end{proposition}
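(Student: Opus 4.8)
The plan is to argue by contradiction using the density estimate (\ref{eqn:SL23_5}). Suppose that $\mathcal{F}_{c_b r} \cap B(x_0,r) = \emptyset$; this means $\mathbf{vr}(y) < c_b r$ for every $y \in B(x_0,r)$, i.e., $\mathbf{vr}(y)^{-2p_0} > (c_b r)^{-2p_0}$ throughout the ball. Integrating this pointwise lower bound over $B(x_0,r)$ gives
\begin{align*}
  \int_{B(x_0,r)} \mathbf{vr}(y)^{-2p_0}\, dy > (c_b r)^{-2p_0} |B(x_0,r)|.
\end{align*}

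Next I would bound $|B(x_0,r)|$ from below using the non-collapsing hypothesis. Since $X \in \widetilde{\mathscr{KS}}(n,\kappa)$, the asymptotic volume ratio is at least $\kappa$, and by Bishop-Gromov volume comparison (Proposition~\ref{prn:HD19_1}, in the form of the monotonicity (\ref{eqn:GD08_2})) the normalized volume ratio $r^{-2n}|B(x_0,r)|$ is non-increasing in $r$ with limit $\geq \kappa \omega_{2n}$ as $r \to \infty$; hence $|B(x_0,r)| \geq \kappa \omega_{2n} r^{2n}$ for all $r>0$. Combining the two displayed estimates yields
\begin{align*}
  r^{2p_0 - 2n} \int_{B(x_0,r)} \mathbf{vr}(y)^{-2p_0}\, dy
  > r^{2p_0-2n} (c_b r)^{-2p_0} \kappa \omega_{2n} r^{2n}
  = c_b^{-2p_0} \kappa \omega_{2n}.
\end{align*}
Now plug in the definition of $c_b$ from (\ref{eqn:SL23_6}): $c_b^{-2p_0} = \frac{4\mathbf{E}}{\omega_{2n}\kappa}$, so the right-hand side equals $4\mathbf{E} > \mathbf{E}$. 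This contradicts the density estimate (\ref{eqn:SL23_5}), which asserts $r^{2p_0-2n}\int_{B(x_0,r)} \mathbf{vr}(y)^{-2p_0}\, dy < \mathbf{E}$. Therefore the assumption was false and (\ref{eqn:SC29_10}) holds.

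This is essentially a bookkeeping argument; there is no serious obstacle. The only point requiring a little care is the non-collapsing lower bound $|B(x_0,r)| \geq \kappa \omega_{2n} r^{2n}$ for \emph{all} scales $r$ (not just large $r$ where the asymptotic ratio is visible), which follows from the monotonicity of the volume ratio in Proposition~\ref{prn:HD19_1} together with property 6 of Definition~\ref{dfn:SC27_1}; one should also note that the constant $\mathbf{E}$ was deliberately enlarged in (\ref{eqn:SC17_11}) beyond $E(n,\kappa,p_0)$, so the strict inequality in (\ref{eqn:SL23_5}) is comfortably available. A minor subtlety worth a remark: if $X = \C^n$ then $\mathbf{vr} \equiv \infty$ and $\mathcal{F}_{c_b r} = X$, so the statement is trivial there; the argument above is for $X \in \widetilde{\mathscr{KS}}^*(n,\kappa)$, where $\mathbf{vr}$ is finite and the density estimate applies.
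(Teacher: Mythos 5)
Your proof is correct and uses the same two ingredients as the paper's — the density estimate (\ref{eqn:SL23_5}) and the non-collapsing lower bound $|B(x_0,r)| \geq \kappa\omega_{2n}r^{2n}$ — with the only cosmetic difference that you phrase it as a contradiction while the paper bounds the maximum of $\mathbf{vr}$ over $\overline{B(x_0,r)}$ directly and then invokes continuity of $\mathbf{vr}$ to pass to the open ball.
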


\begin{proof}
Let $\mathbf{vr}$ achieve maximum value at $y_0$ in the ball closure $\overline{B(x_0,r)}$.
By inequality (\ref{eqn:SL23_5}),  we have
\begin{align*}
   \mathbf{vr}(y_0)^{-2p_0} \leq   \fint_{B(x,r)} \mathbf{vr}(y)^{-2p_0} dy
   \leq (\omega_{2n} \kappa)^{-1} r^{-2n} \int_{B(x,r)} \mathbf{vr}(y)^{-2p_0} dy
   \leq (\omega_{2n} \kappa)^{-1} r^{-2p_0} \mathbf{E}.
\end{align*}
It follows that
\begin{align*}
   \mathbf{vr}(y_0) \geq \left(\frac{\omega_{2n}\kappa}{\mathbf{E}} \right)^{\frac{1}{2p_0}}r
   >c_br.
\end{align*}
By continuity of $\mathbf{vr}$, there must exist a point $z \in B(x_0,r)$ such that  $\mathbf{vr}(z) >c_b r$.
In other words, we have $z \in \mathcal{F}_{cr} \cap B(x_0,r)$.  So (\ref{eqn:SC29_10}) holds.
\end{proof}

Let $\mathbf{E}$ and $c_b$ be the constants defined in (\ref{eqn:SC17_11}) and (\ref{eqn:SL23_6}).
Then we can choose a small constant $\epsilon_b$ such that
\begin{align}
  \epsilon_b \triangleq \epsilon\left(n,\kappa,\frac{c_b}{100} \right)
\label{eqn:SL23_7}
\end{align}
by the dependence in (\ref{eqn:SC26_44}) of Proposition~\ref{prn:SC26_2}.
Combining the estimates in $\widetilde{\mathscr{KS}}(n,\kappa)$, we obtain the following theorem.

 \begin{theorem}[\textbf{A priori estimates in model spaces}]
  Suppose $(X,x_0,g) \in \widetilde{\mathscr{KS}}(n,\kappa)$, $r$ is a positive number.  Then the following estimates
  hold.
   \begin{enumerate}
  \item Strong volume ratio estimate: $\kappa \leq \omega_{2n}^{-1}r^{-2n}|B(x_0,r)| \leq 1$.
  \item Strong regularity estimate: $r^{2+k}|\nabla^k Rm|\leq c_a^{-2}$ in the ball $B(x_0, c_a r)$ for every $0 \leq k \leq 5$ whenever     $\mathbf{vr}(x_0) \geq r$.
  \item Strong density estimate: $\displaystyle r^{2p_0-2n} \int_{B(x_0, r)} \mathbf{vr}(y)^{-2p_0} dy \leq \mathbf{E}$.
  \item Strong connectivity estimate: Every two points $y_1,y_2 \subset B(x_0,r) \cap \mathcal{F}_{\frac{1}{100}c_b r}(X)$
  can be connected by a shortest geodesic $\gamma$ such that
  $\gamma \subset  \mathcal{F}_{\epsilon_b r} (X)$.
\end{enumerate}
\label{thm:SL21_1}
\end{theorem}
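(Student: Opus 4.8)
The plan is to assemble Theorem~\ref{thm:SL21_1} by collecting the four estimates from the corresponding propositions and corollaries already established in Section 2, after normalizing the scale. Since every space $X \in \widetilde{\mathscr{KS}}(n,\kappa)$ is scaling invariant, I would first reduce to the case $r=1$ by replacing $g$ with $r^{-2}g$; all four assertions are scale-invariant (the $k$-th derivative of curvature scales as $r^{-2-k}$, volumes of $r$-balls scale as $r^{2n}$, the weighted integral $r^{2p_0-2n}\int_{B(x_0,r)}\mathbf{vr}^{-2p_0}$ is dimensionless, and the connectivity statement only involves ratios), so no constant changes under rescaling.

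For item 1, the upper bound $\omega_{2n}^{-1}r^{-2n}|B(x_0,r)| \leq 1$ is Bishop-Gromov volume comparison (Proposition~\ref{prn:HD19_1}, inequality (\ref{eqn:GD08_2})) combined with the fact that $\mathrm{v}(x) \leq 1$ everywhere, i.e. taking $r_a \to 0$; the lower bound $\kappa \leq \omega_{2n}^{-1}r^{-2n}|B(x_0,r)|$ follows by taking $r_b \to \infty$ in (\ref{eqn:GD08_2}) and using the defining property $\mathrm{avr}(X) \geq \kappa$ (item 6 of Definition~\ref{dfn:SC27_1}). For item 2, this is precisely Corollary~\ref{cly:SL23_1}, the improving regularity property of volume radius, applied at $x_0$ with $\mathbf{vr}(x_0) \geq r$. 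For item 3, this is the density estimate of regular points, Proposition~\ref{prn:SB25_2}, specialized to the chosen exponent $p_0 = 2 - \frac{1}{1000n} \in (0,2)$, together with the definition (\ref{eqn:SC17_11}) of $\mathbf{E}$, which is chosen to dominate $E(n,\kappa,p_0)$ with room to spare.

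Item 4 is the only one requiring a small argument rather than a direct citation. Given $y_1, y_2 \in B(x_0,r) \cap \mathcal{F}_{\frac{1}{100}c_b r}(X)$, I would connect them by a shortest geodesic $\gamma$ of length $L \leq 2r$ (by the triangle inequality, since both lie in $B(x_0,r)$; if $X$ is not proper one appeals to weak convexity of $\mathcal{R}$ from Theorem~\ref{thm:HE08_1}, but in fact strong convexity holds). Both endpoints satisfy $\mathbf{vr} \geq \frac{1}{100}c_b r = \frac{c_b}{100}\cdot(2r)\cdot\frac{1}{2} \geq \frac{c_b}{200}\cdot(2r)$, so with the constant $c = \frac{c_b}{100}$ and radius $2r \geq L$ one can apply the global Harnack inequality of volume radius, Proposition~\ref{prn:SC26_2}, specifically inequality (\ref{eqn:SC26_44}): since $\min\{\mathbf{vr}(y_1),\mathbf{vr}(y_2)\} > \frac{c_b}{100}\cdot(2r)$ would be needed, one sets $c = \frac{c_b}{200}$ or simply chooses $r$ in the proposition to be $2r$ and notes $\mathbf{vr}(y_i) \geq \frac{c_b}{100} r = \frac{c_b}{200}(2r)$. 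This yields $\mathbf{vr}(\gamma(t)) > \epsilon r$ for all $t \in [0,L]$ where $\epsilon = \epsilon\big(n,\kappa,\frac{c_b}{200}\big)$; relabeling, this is exactly $\gamma \subset \mathcal{F}_{\epsilon_b r}(X)$ with $\epsilon_b$ as defined in (\ref{eqn:SL23_7}). The mild bookkeeping obstacle is matching the constant $c$ inside Proposition~\ref{prn:SC26_2} with the threshold $\frac{1}{100}c_b$; one absorbs the factor of $2$ from $L \leq 2r$ into the choice of $c$, which is harmless since $\epsilon_b$ is only required to exist and depend on $n,\kappa$.

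I do not expect any genuine difficulty here: Theorem~\ref{thm:SL21_1} is a packaging statement whose entire content has been proved in the preceding subsections. The proof is essentially a paragraph citing Propositions~\ref{prn:HD19_1}, \ref{prn:SB25_2}, \ref{prn:SC26_2} and Corollary~\ref{cly:SL23_1}, preceded by the scaling normalization. The only place to be careful is the compatibility of the constants $c_b$, $\epsilon_b$, $\mathbf{E}$ as defined in (\ref{eqn:SC17_11}), (\ref{eqn:SL23_6}), (\ref{eqn:SL23_7}) with the hypotheses of the cited results, but since all these constants were defined precisely so that the citations apply, this is routine.
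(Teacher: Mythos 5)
Your proof is correct and matches the paper's implicit argument: Theorem~\ref{thm:SL21_1} is stated without a displayed proof precisely because its four items are direct citations of the Bishop--Gromov comparison together with the defining bounds on $\mathrm{v}$ and $\mathrm{avr}$, Corollary~\ref{cly:SL23_1}, Proposition~\ref{prn:SB25_2} with the slack built into the definition (\ref{eqn:SC17_11}) of $\mathbf{E}$, and Proposition~\ref{prn:SC26_2}, exactly as you assemble them. Your observation about the constant mismatch in item 4 (using radius $2r$ and $c=\tfrac{c_b}{200}$ instead of $\tfrac{c_b}{100}$) is legitimate, and your resolution — that the precise value of $\epsilon_b$ is immaterial since the definition of canonical radius only quotes $\tfrac{1}{2}\epsilon_b$ and $\tfrac{1}{50}c_b$, leaving a factor-of-two cushion by design — is exactly the right way to dispose of it.
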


We shall show that a weak compactness of $\widetilde{\mathscr{KS}}(n,\kappa)$ can be established using the estimates in Theorem~\ref{thm:SL21_1},
without knowing the volume ratio monotonicity.
For this new route of weak compactness theory, we define a scale called canonical radius with respect to $\widetilde{\mathscr{KS}}(n,\kappa)$.
Under the canonical radius,  rough estimates like that in Theorem~\ref{thm:SL21_1} are satisfied.

In this section, we focus on the study of smooth complete K\"ahler manifold.
Every such a manifold is denoted by $(M^n, g, J)$, where $n$ is the complex dimension.
The Hausdorff dimension, or real dimension of $M$ is  $m=2n$.
We first need to make sense of the rough volume radius, without the volume ratio monotonicity.
\begin{definition}
Denote the set $\left\{ r \left| 0<r<\rho, \omega_{2n}^{-1}r^{-2n}|B(x_0,r)|\geq 1-\delta_0 \right. \right\}$ by
 $I_{x_0}^{(\rho)}$ where $x_0 \in M$, $\rho$ is a positive number.
Clearly, $I_{x_0}^{(\rho)} \neq \emptyset$ since $M$ is smooth. Define
  \begin{align*}
    \mathbf{vr}^{(\rho)}(x_0) \triangleq  \sup I_{x_0}^{(\rho)}.
  \end{align*}
For each pair $0<r \leq \rho$, define
\begin{align*}
  &\mathcal{F}_{r}^{(\rho)}(M) \triangleq \left\{ x \in M | \mathbf{vr}^{(\rho)}(x) \geq r  \right\}, \\
  &\mathcal{D}_{r}^{(\rho)}(M) \triangleq \left\{ x \in M | \mathbf{vr}^{(\rho)}(x) < r  \right\}.
\end{align*}
\label{dfn:SC24_1}
\end{definition}

\begin{definition}
  A subset $\Omega$ of $M$ is called $\epsilon$-regular-connected on the scale $\rho$ if every two points $x,y \in \Omega$ can be connected by a
   rectifiable curve $\gamma \subset \mathcal{F}_{\epsilon}^{(\rho)}$ and $|\gamma| < 2d(x,y)$. 
   For notational simplicity, if the scale is clear in the context, we shall just say
   $\Omega$ is $\epsilon$-regular-connected. 
\label{dfn:SC15_3}
\end{definition}

Inspired by the estimates in Theorem~\ref{thm:SL21_1}, we can define the concept of canonical radius as follows.

\begin{definition}
We say that the canonical radius (with respect to model space $\widetilde{\mathscr{KS}}(n,\kappa)$) of a point $x_0 \in M$ is
not less than $r_0$ if for every  $r < r_0$, we have the following properties.
\begin{enumerate}
  \item Volume ratio estimate: $\kappa \leq \omega_{2n}^{-1}r^{-2n}|B(x_0,r)| \leq \kappa^{-1}$.
  \item Regularity estimate: $r^{2+k}|\nabla^k Rm|\leq 4 c_a^{-2}$ in the ball $B(x_0, \frac{1}{2}c_a r)$ for every $0 \leq k \leq 5$ whenever
        $\omega_{2n}^{-1}r^{-2n}|B(x_0,r)| \geq 1-\delta_0$.
  \item Density estimate: $\displaystyle r^{2p_0-2n} \int_{B(x_0, r)} \mathbf{vr}^{(r)}(y)^{-2p_0} dy \leq 2\mathbf{E}$.
  \item Connectivity estimate: $B(x_0,r) \cap \mathcal{F}_{\frac{1}{50}c_b r}^{(r)}(M)$ is $\frac{1}{2}\epsilon_b r$-regular-connected on the scale $r$.
\end{enumerate}
Then we define canonical radius of $x_0$ to be the supreme of all the $r_0$ with the properties mentioned above.
We denote the canonical radius by $\mathbf{cr}(x_0)$.
For subset $\Omega \subset M$, we define  the canonical radius of $\Omega$ as the infimum of all $\mathbf{cr}(x)$ where $x \in \Omega$.
We denote this canonical radius by $\mathbf{cr}(\Omega)$.
\label{dfn:SC02_1}
\end{definition}

\begin{remark}
 In Definition~\ref{dfn:SC02_1}, the first condition(volume ratio estimate) is used to guarantee the existence of Gromov-Hausdorff limit. The second condition (regularity estimate) is for the purpose of improving regularity.  The third condition (density estimate), together with the second condition(regularity estimate),
 implies that the regular part is almost dense(c.f.~Theorem~\ref{thm:HE11_1}). The fourth condition(connectivity estimate) is defined to assure that the regular part is connected (c.f.~Proposition~\ref{prn:SB27_1}).
\label{rmk:SB16_1}
\end{remark}

Because of the regularity estimate of Definition~\ref{dfn:SC02_1}, it is useful to define the concept of canonical volume radius as follows.

\begin{definition}
Suppose $\rho_0=\mathbf{cr}(x_0)$. Then we define
  \begin{align}
    \mathbf{cvr}(x_0) \triangleq \mathbf{vr}^{(\rho_0)}(x_0).  \label{eqn:SC24_4}
  \end{align}
We call $\mathbf{cvr}(x_0)$ the canonical volume radius of the point $x_0$.
\label{dfn:SB25_2}
\end{definition}

\begin{remark}
 For every compact smooth manifold $M$,  there is an $\eta>0$ such that
 every geodesic ball with radius less than $\eta$ must have normalized volume radius at least $1-\delta_0$.
 Then it is easy to see that $\displaystyle r^{2p_0-2n} \int_{B(x_0, r)} \mathbf{vr}^{(r)}(y)^{-2p_0} dy$ is a continuous
 function with respect to $x_0$ and $r$.    Therefore, if $\rho_0=\mathbf{cr}(x_0)$ is a finite positive number, we have
 \begin{align}
   \displaystyle \rho_0^{2p_0-2n} \int_{B(x_0, \rho_0)} \mathbf{vr}^{(\rho_0)}(y)^{-2p_0} dy \leq 2\mathbf{E}.
 \label{eqn:SL25_1}
 \end{align}
 If $r \leq \mathbf{cr}(M)$, then $\mathbf{vr}^{(r)} \leq \mathbf{cvr}$ as functions.  Therefore, we have
 \begin{align}
      r^{2p_0-2n} \int_{B(x_0, r)} \mathbf{cvr}(y)^{-2p_0} dy \leq
      r^{2p_0-2n} \int_{B(x_0, r)} \mathbf{vr}^{(r)}(y)^{-2p_0} dy \leq 2\mathbf{E}.
 \label{eqn:SL25_2}
 \end{align}
\label{rmk:SL22_1}
\end{remark}

Let $r_0$ be $\mathbf{cvr}(x_0)$. By Definition~\ref{dfn:SB25_2}, it is clear that $r_0 \leq \mathbf{cr}(x_0)$.
If $r_0=\mathbf{cvr}(x_0)<\mathbf{cr}(x_0)$, then we have
  \begin{align}
    &\omega_{2n}^{-1}r_0^{-2n}|B(x_0,r_0)| = 1-\delta_0,  \label{eqn:SC16_2}\\
    &\omega_{2n}^{-1}r^{-2n}|B(x_0,r)| < 1-\delta_0, \quad \forall \; r \in (r_0, \mathbf{cr}(x_0)).  \label{eqn:SC16_3}
  \end{align}
If $r_0=\mathbf{cvr}(x_0)=\mathbf{cr}(x_0)$, then we only have
  \begin{align}
    \omega_{2n}^{-1} r_0^{-2n}|B(x_0,r_0)| \geq 1-\delta_0.   \label{eqn:SC17_1}
  \end{align}
It is possible that equality (\ref{eqn:SC16_2}) does not hold on the scale $r_0$ in this case.

\begin{remark}
The three radii functions, $\mathbf{cr}$,$\mathbf{vr}$ and $\mathbf{cvr}$ are all positive functions on the interior part of $M$.
However, we do not know whether they are continuous in general.
\label{rmk:DA25_1}
\end{remark}

We shall use canonical radius as a tool to study the weak-compactness theory of K\"ahler manifolds.

\subsection{Rough estimates when canonical radius is bounded from below}
We assume $\mathbf{cr}(M) \geq 1$ in the following discussion of this subsection.
Under this condition,  we collect important estimates for the development of weak-compactness.

For simplicity of notation, we denote
\begin{align}
\mathcal{F}_{r} \triangleq \mathcal{F}_{r}^{(\mathbf{cr}(M))},  \quad
\mathcal{D}_{r} \triangleq  \mathcal{D}_{r}^{(\mathbf{cr}(M))}.
\label{eqn:SL27_1}
\end{align}
Note that this definition can be regarded as the generalization of the corresponding definition for metric spaces in $\widetilde{\mathscr{KS}}(n,\kappa)$.
It coincides the original one since $\mathbf{cr}(M)=\infty$ whenever $M \in \widetilde{\mathscr{KS}}(n,\kappa)$.

\begin{proposition}
  For every $0<r\leq \rho_0 \leq 1$, $x_0 \in M$,  we have
\begin{align}
  &\left| B(x_0,\rho_0) \cap \mathcal{D}_r \right| < 4\mathbf{E} \rho_0^{2n-2p_0} r^{2p_0}, \label{eqn:SC02_2} \\
  &\left| B(x_0,\rho_0) \cap \mathcal{F}_r \right| > \left(\kappa \omega_{2n} - 4\mathbf{E}r^{2p_0}\rho_0^{-2p_0} \right) \rho_0^{2n}.
    \label{eqn:SC02_4}
\end{align}
In particular, there exists at least one point $z \in B(x_0,\rho_0)$ such that
\begin{align}
    \mathbf{cvr}(z) > c_b \rho_0,
 \label{eqn:SC02_5}
 \end{align}
where $c_b=\left(\frac{\kappa \omega_{2n}}{4 \mathbf{E}} \right)^{\frac{1}{2p_0}}$.
\label{prn:SC02_2}
\end{proposition}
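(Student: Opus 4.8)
The statement is the manifold analogue of the density estimate (Proposition~\ref{prn:SB25_2}) and the generic-regular-sub-ball statement (Proposition~\ref{prn:HE11_1}), now phrased with the rough volume radius $\mathbf{cvr}$ and under the running hypothesis $\mathbf{cr}(M)\geq 1$. The key input is the density estimate item~(3) in the definition of canonical radius (Definition~\ref{dfn:SC02_1}), which for every $x_0$ and every $\rho_0\leq\mathbf{cr}(M)$ gives $\rho_0^{2p_0-2n}\int_{B(x_0,\rho_0)}\mathbf{vr}^{(\rho_0)}(y)^{-2p_0}\,dy\leq 2\mathbf{E}$, together with the observation recorded in Remark~\ref{rmk:SL22_1} that when $r\leq\mathbf{cr}(M)$ one has $\mathbf{vr}^{(r)}\leq\mathbf{cvr}$ pointwise, hence also $\rho_0^{2p_0-2n}\int_{B(x_0,\rho_0)}\mathbf{cvr}(y)^{-2p_0}\,dy\leq 2\mathbf{E}$. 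This is exactly the localized form of (\ref{eqn:SL25_2}).

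First I would prove (\ref{eqn:SC02_2}). On the set $B(x_0,\rho_0)\cap\mathcal{D}_r$ we have $\mathbf{cvr}(y)<r$ by definition (\ref{eqn:SL27_1}), so $\mathbf{cvr}(y)^{-2p_0}>r^{-2p_0}$ there. Therefore
\begin{align*}
 r^{-2p_0}\left|B(x_0,\rho_0)\cap\mathcal{D}_r\right|
 \leq \int_{B(x_0,\rho_0)\cap\mathcal{D}_r}\mathbf{cvr}(y)^{-2p_0}\,dy
 \leq \int_{B(x_0,\rho_0)}\mathbf{cvr}(y)^{-2p_0}\,dy
 \leq 2\mathbf{E}\,\rho_0^{2n-2p_0},
\end{align*}
which gives $|B(x_0,\rho_0)\cap\mathcal{D}_r|\leq 2\mathbf{E}\,\rho_0^{2n-2p_0}r^{2p_0}$, even slightly better than the claimed constant $4\mathbf{E}$; I keep $4\mathbf{E}$ to leave room for the technical subtleties below. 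Next, (\ref{eqn:SC02_4}) follows by taking complements inside $B(x_0,\rho_0)$: the volume ratio estimate item~(1) of Definition~\ref{dfn:SC02_1} applied at scale $\rho_0\leq\mathbf{cr}(M)$ gives $|B(x_0,\rho_0)|\geq\kappa\omega_{2n}\rho_0^{2n}$, so
\begin{align*}
 \left|B(x_0,\rho_0)\cap\mathcal{F}_r\right|
 =\left|B(x_0,\rho_0)\right|-\left|B(x_0,\rho_0)\cap\mathcal{D}_r\right|
 >\bigl(\kappa\omega_{2n}-4\mathbf{E}r^{2p_0}\rho_0^{-2p_0}\bigr)\rho_0^{2n}.
\end{align*}

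Finally, for (\ref{eqn:SC02_5}) I take $r=c_b\rho_0$ with $c_b=\left(\frac{\kappa\omega_{2n}}{4\mathbf{E}}\right)^{1/(2p_0)}$; then $4\mathbf{E}r^{2p_0}\rho_0^{-2p_0}=\kappa\omega_{2n}$, so the right-hand side of (\ref{eqn:SC02_4}) is nonnegative, and one must argue it is in fact strictly positive in order to conclude $B(x_0,\rho_0)\cap\mathcal{F}_{c_b\rho_0}\neq\emptyset$. This is where the sharper constant from the first step ($2\mathbf{E}$ rather than $4\mathbf{E}$) is used: with it the bound reads $|B(x_0,\rho_0)\cap\mathcal{F}_{c_b\rho_0}|>\tfrac12\kappa\omega_{2n}\rho_0^{2n}>0$, so there is a point $z\in B(x_0,\rho_0)$ with $\mathbf{cvr}(z)\geq c_b\rho_0$, and a marginal adjustment of the radius (or appeal to continuity/maximality) upgrades this to the strict inequality $\mathbf{cvr}(z)>c_b\rho_0$ as stated; alternatively one picks $z$ where $\mathbf{cvr}$ is maximal on $\overline{B(x_0,\rho_0)}$, exactly as in the proof of Proposition~\ref{prn:HE11_1}. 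The only genuine subtlety — and the step I expect to need the most care — is the bookkeeping between the three radii $\mathbf{vr}^{(\rho_0)}$, $\mathbf{cvr}=\mathbf{vr}^{(\mathbf{cr}(\cdot))}$ and the scale at which item~(3) of Definition~\ref{dfn:SC02_1} is applied: one must verify that for $y\in B(x_0,\rho_0)$ with $\rho_0\leq 1\leq\mathbf{cr}(M)$ the monotonicity $\mathbf{vr}^{(\rho_0)}(y)\leq\mathbf{vr}^{(\mathbf{cr}(M))}(y)=\mathbf{cvr}(y)$ indeed holds (it does, since $\rho_0\leq\mathbf{cr}(M)$ enlarges the admissible set $I_y^{(\cdot)}$), so that the density bound applied with $\mathbf{vr}^{(\rho_0)}$ transfers to one for $\mathbf{cvr}$; everything else is the elementary Chebyshev-type argument sketched above.
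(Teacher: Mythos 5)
Your argument follows the same route as the paper's — Chebyshev against the canonical-radius density estimate, then the volume-ratio lower bound, then the choice $r=c_b\rho_0$ using the extra factor of $2$ between $2\mathbf{E}$ and $4\mathbf{E}$ to get strict positivity. But the Chebyshev step as you wrote it is not quite right, and the bookkeeping issue you flagged at the end is indeed exactly where it goes wrong.

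You assert that on $B(x_0,\rho_0)\cap\mathcal{D}_r$ one has $\mathbf{cvr}(y)<r$, and later write $\mathbf{vr}^{(\mathbf{cr}(M))}(y)=\mathbf{cvr}(y)$. Neither is correct in general. By definition $\mathcal{D}_r=\mathcal{D}_r^{(\mathbf{cr}(M))}=\{y:\mathbf{vr}^{(\mathbf{cr}(M))}(y)<r\}$, whereas $\mathbf{cvr}(y)=\mathbf{vr}^{(\mathbf{cr}(y))}(y)$ with $\mathbf{cr}(y)\geq\mathbf{cr}(M)$. Since $\rho\mapsto\mathbf{vr}^{(\rho)}(y)$ is nondecreasing, this gives $\mathbf{cvr}(y)\geq\mathbf{vr}^{(\mathbf{cr}(M))}(y)$, which is the \emph{wrong} direction for your purposes: a point $y$ can lie in $\mathcal{D}_r$ (no good scale below $\mathbf{cr}(M)$) while still having $\mathbf{cvr}(y)\geq r$ because the normalized volume ratio recovers to $\geq 1-\delta_0$ at some scale in $[\mathbf{cr}(M),\mathbf{cr}(y))$; the volume ratio is not monotone, so this cannot be ruled out. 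Consequently the claim $\mathbf{cvr}(y)^{-2p_0}>r^{-2p_0}$ on $\mathcal{D}_r$, and hence your first displayed inequality, is unjustified. The correct intermediary is $\mathbf{vr}^{(\mathbf{cr}(M))}$ itself, which satisfies both needed monotonicities at once: on $\mathcal{D}_r$ it is $<r$ by definition, and $\mathbf{vr}^{(\mathbf{cr}(M))}\geq\mathbf{vr}^{(\rho_0)}$ converts the integral into the one controlled by item (3) of Definition~\ref{dfn:SC02_1}:
\begin{align*}
  r^{-2p_0}\left|B(x_0,\rho_0)\cap\mathcal{D}_r\right|
  \leq \int_{B(x_0,\rho_0)\cap\mathcal{D}_r}\bigl\{\mathbf{vr}^{(\mathbf{cr}(M))}\bigr\}^{-2p_0}
  \leq \int_{B(x_0,\rho_0)}\bigl\{\mathbf{vr}^{(\rho_0)}\bigr\}^{-2p_0}
  \leq 2\mathbf{E}\,\rho_0^{2n-2p_0}.
\end{align*}
This is precisely the chain the paper uses; note it never passes through $\mathbf{cvr}$. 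The rest of your argument then goes through: (\ref{eqn:SC02_4}) from the volume-ratio lower bound, and for (\ref{eqn:SC02_5}) the point $z$ you produce has $\mathbf{vr}^{(\mathbf{cr}(M))}(z)\geq c_b\rho_0$, and the inequality $\mathbf{cvr}(z)\geq\mathbf{vr}^{(\mathbf{cr}(M))}(z)$ — which is the direction that is actually available — upgrades this to the statement about $\mathbf{cvr}$. Your remark about the $2\mathbf{E}$ versus $4\mathbf{E}$ slack being what gives strict positivity, and about the possible need for a small adjustment to turn $\geq$ into $>$, is a fair observation; the paper is in fact slightly terse on that last point.
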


\begin{proof}
Recall that  $\mathbf{vr}^{(\mathbf{cr}(M))} \geq \mathbf{vr}^{(\rho_0)}$. By density estimate(c.f. Definition~\ref{dfn:SC02_1}), we have
\begin{align*}
  r^{-2p_0}  \left|B(x_0,\rho_0) \cap \mathcal{D}_r \right|
  \leq \int_{B(x_0,\rho_0) \cap  \mathcal{D}_{r}}\left\{ \mathbf{vr}^{(\mathbf{cr}(M))}\right\}^{-2p_0}
  \leq  \int_{B(x_0,\rho_0)}  \left\{ \mathbf{vr}^{(\rho_0)}\right\}^{-2p_0} \leq 2\mathbf{E} \rho_0^{2n-2p_0}.
\end{align*}
Then (\ref{eqn:SC02_2}) follows from above inequality. Recall that $\mathcal{D}_r$ is the set where $\mathbf{vr}^{(\mathbf{cr}(M))}<r$.
Together with the $\kappa$-non-collapsing condition, (\ref{eqn:SC02_2}) yields (\ref{eqn:SC02_4}).
Let $r=c_b \rho_0$, then (\ref{eqn:SC02_2}) implies
\begin{align*}
  \left| B(x_0,\rho_0) \cap \mathcal{F}_{c_b \rho_0} \right| >0.
\end{align*}
In particular, $B(x_0,\rho_0) \cap \mathcal{F}_{c_b \rho_0} \neq \emptyset$.  In other words, we can find a point $z \in B(x_0, \rho_0)$ satisfying
$\mathbf{vr}^{(\mathbf{cr}(M))}>c_b \rho_0$ and consequently inequality (\ref{eqn:SC02_5}).
\end{proof}

\begin{corollary}
  Suppose $x_0 \in M$, $H \geq 1 \geq r$, then we have
\begin{align}
  \left| B(x_0,H) \cap \mathcal{D}_r \right| \leq \left(\frac{2^{2n+2}\left|B(x_0, 2H) \right|}{\kappa \omega_{2n}} \right) r^{2p_0} \mathbf{E}.
\label{eqn:SC06_1}
\end{align}
\label{cly:SC04_2}
\end{corollary}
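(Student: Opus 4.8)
The plan is to reduce the estimate on the large ball $B(x_0,H)$ to the unit-scale estimate (\ref{eqn:SC02_2}), exploiting the standing hypothesis $\mathbf{cr}(M) \geq 1$ of this subsection, which makes (\ref{eqn:SC02_2}) available at scale $\rho_0 = 1$. The mechanism is a routine ball-packing (Vitali-type) argument: cover $B(x_0,H)$ by unit balls whose number is controlled by the volume of $B(x_0,2H)$, then sum the local bound over the cover.

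First I would pick a maximal $1$-separated subset $\{x_1, \dots, x_N\} \subset B(x_0,H)$; this set is finite because $\overline{B(x_0,H)}$ is compact by Hopf--Rinow on the complete manifold $M$. Maximality gives the covering property $B(x_0,H) \subset \bigcup_{i=1}^N B(x_i,1)$, while $1$-separation makes the balls $B(x_i,\tfrac12)$ pairwise disjoint. Since $H \geq 1$ and each $x_i \in B(x_0,H)$, we have $B(x_i,\tfrac12) \subset B(x_0, H+\tfrac12) \subset B(x_0,2H)$. Because $\mathbf{cr}(x_i) \geq \mathbf{cr}(M) \geq 1 > \tfrac12$, the volume ratio estimate in Definition~\ref{dfn:SC02_1} gives $|B(x_i,\tfrac12)| \geq \kappa \omega_{2n} 2^{-2n}$, so disjointness yields
\[
  N \cdot \kappa \omega_{2n} 2^{-2n} \leq \sum_{i=1}^{N} |B(x_i,\tfrac12)| \leq |B(x_0,2H)|,
\]
hence $N \leq 2^{2n} \kappa^{-1} \omega_{2n}^{-1} |B(x_0,2H)|$.

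Next I would apply (\ref{eqn:SC02_2}) of Proposition~\ref{prn:SC02_2} with $\rho_0 = 1$, which is admissible since $0 < r \leq 1 = \rho_0 \leq \mathbf{cr}(M)$, obtaining $|B(x_i,1) \cap \mathcal{D}_r| < 4\mathbf{E} r^{2p_0}$ for every $i$. Summing over the cover,
\[
  |B(x_0,H) \cap \mathcal{D}_r| \leq \sum_{i=1}^{N} |B(x_i,1) \cap \mathcal{D}_r| < 4 N \mathbf{E} r^{2p_0} \leq \frac{2^{2n+2} |B(x_0,2H)|}{\kappa \omega_{2n}} \, r^{2p_0} \mathbf{E},
\]
which is exactly the asserted inequality. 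There is no real obstacle in this argument; the only points requiring attention are that the packing balls $B(x_i,\tfrac12)$ genuinely lie inside $B(x_0,2H)$ (this is where $H \geq 1$ enters) and that (\ref{eqn:SC02_2}) is invoked at the admissible scale $\rho_0 = 1$, both of which are immediate.
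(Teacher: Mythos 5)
Your proof is correct and follows essentially the same route as the paper: a maximal packing of $B(x_0,H)$ by $\tfrac12$-balls, bounding the count $N$ via the $\kappa$-noncollapsing lower bound on $|B(x_i,\tfrac12)|$ and $\bigcup_i B(x_i,\tfrac12)\subset B(x_0,2H)$, followed by summing the unit-scale estimate (\ref{eqn:SC02_2}) over the covering balls $B(x_i,1)$. The only cosmetic difference is that you phrase the packing as a maximal $1$-separated set, while the paper phrases it as filling in $\tfrac12$-balls until no more fit; these are the same construction.
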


\begin{proof}
  Try to fill the ball $B(x_0,H)$ with balls $B(y_i,\frac{1}{2})$ such that $y_i \in B(x_0,H)$ until no more such balls can squeeze in. Clearly, we have
  \begin{align*}
      B(x_0,H) \subset \bigcup_{i=1}^{N} B(y_i,1), \quad   \bigcup_{i=1}^{N} B \left(y_i, \frac{1}{2} \right) \subset B \left(x_0,H+\frac{1}{2} \right) \subset B(x_0,2H).
  \end{align*}
  On one hand,  the balls $B\left(y_i,\frac{1}{2}\right)$ are disjoint to each other. So we have
  \begin{align}
                 N\kappa \omega_{2n} \left( \frac{1}{2}\right)^{2n} \leq  \sum_{i=1}^{N} \left|B(y_i,1)\right|
                 \leq \left|B(x_0,2H)\right|, \quad
                 \Rightarrow \quad N \leq \frac{2^{2n} |B(x_0,2H)|}{\kappa \omega_{2n}}.
  \label{eqn:SC06_2}
  \end{align}
  On the other hand,  $B(x_0,H)$ is covered by $\displaystyle \bigcup_{i=1}^{N} B(y_i,1)$. So we have
  \begin{align*}
    \left| B(x_0,H) \cap \mathcal{D}_r \right| \leq \sum_{i=1}^{N} \left|  B(y_i,1) \cap \mathcal{D}_r \right| \leq 4N\mathbf{E} r^{2p_0} \leq \left(\frac{2^{2n+2}\left|B(x_0, 2H) \right|}{\kappa \omega_{2n}} \right) r^{2p_0} \mathbf{E},
  \end{align*}
  where we used (\ref{eqn:SC06_2}) and (\ref{eqn:SC02_2}).
\end{proof}

\begin{proposition}
 For every $r \leq 1$, two points $x,y  \in \mathcal{F}_{r}$ can be connected by a curve $\gamma \subset \mathcal{F}_{\frac{1}{2}\epsilon_b r}$  with length $|\gamma| <3d(x,y)$.
  \label{prn:SB27_1}
\end{proposition}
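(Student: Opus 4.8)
The plan is to chain the local connectivity estimate of Definition~\ref{dfn:SC02_1}(4) along a minimizing geodesic from $x$ to $y$, after first upgrading membership in $\mathcal{F}_r$ to the hypothesis that this estimate actually demands. First I would record the inclusion $\mathcal{F}_r\subseteq\mathcal{F}^{(\rho)}_{\frac{1}{50}c_b\rho}$ valid for every $\rho$ in a definite window $[r,\,50c_1r/c_b)\cap(0,\mathbf{cr}(M))$, $c_1=c_1(n,\kappa)$. This is exactly where the improving-regularity mechanism enters: if $z\in\mathcal{F}_r$ then $\mathbf{vr}^{(\mathbf{cr}(M))}(z)\ge r$, so there is a scale $s\ge r/2$ with $s<\mathbf{cr}(M)$ and almost-Euclidean volume ratio; the regularity estimate of Definition~\ref{dfn:SC02_1}(2) then gives $|Rm|\le C(n)r^{-2}$ on $B(z,c(n)r)$, and together with the non-collapsing bound of Definition~\ref{dfn:SC02_1}(1) and a Cheeger--Gromov--Taylor injectivity radius estimate one gets $\mathbf{vr}^{(\rho)}(z)\ge c_1 r$ for all $\rho\ge c_1 r$ --- the same reasoning as in Corollary~\ref{cly:SL23_1} and Proposition~\ref{prn:HA09_1}, and the numerical constants $c_a,c_b,\epsilon_b$ fixed in Section~2.6 are arranged so that $c_1\ge\frac{1}{50}c_b$. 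Granting this, Definition~\ref{dfn:SC02_1}(4) together with Definition~\ref{dfn:SC15_3} says: whenever $x',y'\in\mathcal{F}_r$ and $d(x',y')<\rho$ for some admissible $\rho$ in the above window, there is a rectifiable curve joining $x'$ to $y'$ inside $\mathcal{F}^{(\rho)}_{\frac12\epsilon_b\rho}\subseteq\mathcal{F}_{\frac12\epsilon_b\rho}\subseteq\mathcal{F}_{\frac12\epsilon_b r}$ (the last inclusion using $\rho\ge r$) of length $<2\,d(x',y')$.

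If $d(x,y)$ is small enough to fit into one admissible ball --- say $d(x,y)<\tilde c\,r$ for a suitable $\tilde c=\tilde c(n,\kappa)$ --- the paragraph above finishes the proof with $|\gamma|<2d(x,y)<3d(x,y)$. Otherwise I would fix a minimizing unit-speed geodesic $\sigma:[0,d]\to M$, $d=d(x,y)$, and subdivide it by nodes $p_0=x,p_1,\dots,p_N=y$ with $d(p_i,p_{i+1})=L$ for $i<N$, where $L$ is a fixed fraction of $r$ chosen just below the scale threshold of the first step. At each interior node I pick $z_i\in B(p_i,r/c_b)\cap\mathcal{F}_r$ via Proposition~\ref{prn:SC02_2} --- the ball must have radius $\gtrsim r/c_b$ precisely so that $\mathcal{F}_r$ meets it, by inequality (\ref{eqn:SC02_4}) --- and set $z_0=x$, $z_N=y$. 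Each consecutive pair $z_i,z_{i+1}$ is then joined by a curve $\gamma_i\subset\mathcal{F}_{\frac12\epsilon_b r}$ with $|\gamma_i|<2\,d(z_i,z_{i+1})$ as above, and concatenating gives a curve $\gamma$ from $x$ to $y$ lying in $\mathcal{F}_{\frac12\epsilon_b r}$, of length $|\gamma|=\sum_i|\gamma_i|<2\sum_i d(z_i,z_{i+1})\le 2\bigl(d+2\textstyle\sum_{0<i<N}d(p_i,z_i)\bigr)$.

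The main obstacle is the length accounting in this last line. The intermediate nodes have to be displaced from the geodesic by a definite amount ($\sim r/c_b$) in order to land in $\mathcal{F}_r$ --- and hence to feed the connectivity estimate with output radius exactly $\frac12\epsilon_b r$ rather than something smaller --- yet the accumulated displacement $\sum_{0<i<N}d(p_i,z_i)\approx N\cdot(r/c_b)\approx (d/L)(r/c_b)$ must stay below $\tfrac14 d$ so that $2\sum d(z_i,z_{i+1})<3d$. This pins the subdivision length $L$ both from below (too many nodes means too much total displacement) and from above (too few means consecutive $z_i$ are too far apart to lie in a common admissible ball, i.e.\ $d(z_i,z_{i+1})\ge\rho$ violates the hypothesis of the first step, and one also runs against $\rho<\mathbf{cr}(M)$). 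Checking that the window $[r,50c_1r/c_b)\cap(0,\mathbf{cr}(M))$ is wide enough for such an $L$ to exist, and that the resulting constant in front of $d$ is $<3$, is a bookkeeping argument in the spirit of Theorem~\ref{thm:SL21_1} that uses no new idea but does use the specific calibration of $\tfrac{1}{50}$, $c_b$, $\epsilon_b$, $\mathbf{E}$ from Section~2.6 against the factor-$2$ loss built into Definition~\ref{dfn:SC15_3} and the slack factor $3$ in the statement; the reserve term $200\omega_{2n}\kappa^{-1}$ added into $\mathbf{E}$ in (\ref{eqn:SC17_11}) is there to absorb exactly this kind of loss.
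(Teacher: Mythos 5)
Your high-level plan — upgrade the endpoint regularity, place displaced waypoints along a minimizing geodesic, and chain the connectivity estimate from Definition~\ref{dfn:SC02_1}(4) — is the same strategy the paper uses, and your preliminary inclusion $\mathcal{F}_r\subseteq\mathcal{F}^{(\rho)}_{\frac{1}{50}c_b\rho}$ for the endpoints is a careful version of what the paper asserts in passing. But there is a genuine gap in the waypoint selection that the bookkeeping cannot fix.

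You insist that the intermediate nodes $z_i$ lie in $\mathcal{F}_r$ itself, which by Proposition~\ref{prn:SC02_2} forces a displacement of order $r/c_b$ from the geodesic. This is fatal: the connectivity estimate only fires at scales $\rho\le\mathbf{cr}(M)$, and after the normalization the only guarantee is $\mathbf{cr}(M)\ge r$, not $\mathbf{cr}(M)\ge r/c_b$. Since $c_b$ is a small dimensional constant (from (\ref{eqn:SC17_11})--(\ref{eqn:SL23_6}), $c_b\le(\kappa^2/800)^{1/(2p_0)}\ll1/2$), two consecutive displaced nodes sit at distance at least $2r/c_b-L>r\ge\mathbf{cr}(M)$ apart no matter how you choose $L$, so the window $[r,\,50c_1r/c_b)\cap(0,\mathbf{cr}(M))$ in which you want to run the connectivity estimate is simply empty when $\mathbf{cr}(M)$ is near $r$. (Note also that Proposition~\ref{prn:SC02_2} itself only applies for $\rho_0\le1=\mathbf{cr}(M)$ after rescaling, so you cannot even invoke it with $\rho_0=r/c_b$.) The reserve term $200\omega_{2n}\kappa^{-1}$ in $\mathbf{E}$ only makes $c_b$ smaller, worsening this tension rather than absorbing it.

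The paper avoids this by demanding much less of the intermediate nodes: after rescaling $r=1$ it places nodes $x_i$ on the geodesic at spacing $\approx 1/2$ and replaces each by a nearby $x_i'\in B(x_i,1/20)$ with $\mathbf{cvr}(x_i')>c_b/40$, which is exactly the (weak) threshold $\frac{1}{50}c_b\cdot1$ that the connectivity estimate requires at scale $1$ — \emph{not} the much stronger $\mathbf{cvr}\ge 1$. This needs only a ball of radius $1/20$ (Proposition~\ref{prn:SC02_2} with $\rho_0=1/20$), so the displacement $1/20$ is small compared to the node spacing $1/2$, giving $d(x_i',x_{i+1}')\le 3/5<1=\mathbf{cr}(M)$; the lengths then sum to $\le 2H+N/5\le(12H+1)/5<3H$. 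Your argument would go through if you targeted $\mathcal{F}_{\frac{1}{50}c_b r}^{(r)}$ for the waypoints instead of $\mathcal{F}_r$; the asymmetry between what the endpoints must satisfy ($\mathcal{F}_r$, by hypothesis) and what the internal nodes need (only the weak density threshold) is exactly the observation that makes the chaining close.
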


\begin{proof}
  By rescaling if necessary, we can assume $r=1$. Then $\mathbf{cr}(M) \geq 1$.

  Suppose $x,y \in \mathcal{F}_{1}$. If $d(x,y) \leq 1$, then there is a curve connecting $x,y$ and it satisfies the requirements,  by the $\frac{1}{2}\epsilon_b$-regular connectivity property of the canonical radius. So we assume $H=d(x,y) >1$ without loss of generality.

 Let $\beta$ be a shortest geodesic connecting $x,y$ such that $\beta(0)=x$ and $\beta(H)=y$.
 Let $N$ be an integer locating in $[2H, 2H+1]$. Define
 \begin{align*}
   s_i=\frac{Hi}{N},  \quad x_i=\beta(s_i).
 \end{align*}
 Clearly, $x_0=x, x_N=y$, which are both in $\mathcal{F}_1 \subset \mathcal{F}_{\frac{c_b}{50}}$.
 For each $1 \leq i \leq N-1$, $x_i$ may not locate in $\mathcal{F}_{\frac{c_b}{50}}$. However,
 in the ball $B(x_i, \frac{1}{20})$, there exists a point $x_i'$ such that
 \begin{align*}
   \mathbf{vr}(x_i') \geq \frac{1}{2} c_b \cdot \frac{1}{20}=\frac{c_b}{40}> \frac{c_b}{50}.
 \end{align*}
 Clearly, we have
 \begin{align*}
   d(x_i', x_{i+1}') \leq d(x_i', x_i) + d(x_i, x_{i+1}) + d(x_{i+1}, x_{i+1}') \leq \frac{H}{N} + \frac{1}{10} < \frac{3}{5} \leq 1.
 \end{align*}
 Since $\mathbf{cr}(M) \geq 1$, one can apply $\frac{1}{2}\epsilon_b$-regular connectivity property of the canonical radius
 to find a curve $\beta_i$ connecting $x_i'$ and $x_{i+1}'$ such that
 $\beta_i \subset \mathcal{F}_{\frac{1}{2}\epsilon_b}$. Moreover, we have
 \begin{align*}
   |\beta_i| \leq 2d(x_i', x_{i+1}') \leq 2\left(\frac{H}{N} + \frac{1}{10} \right).
 \end{align*}
 Concatenating all $\beta_i$'s, we obtain a curve $\gamma$ connecting $x=x_0,y=x_{N}$ and $\gamma \subset \mathcal{F}_{\frac{1}{2} \epsilon_b}$.
 Furthermore, we have
 \begin{align*}
   |\gamma|=\sum_{i=0}^{N-1} |\beta_i| \leq 2N \left( \frac{H}{N} + \frac{1}{10} \right)= 2H +\frac{1}{15}N \leq 2H + \frac{2H+1}{5} \leq \frac{12}{5}H + \frac{1}{5}
   < \frac{13}{5}H <3H.
 \end{align*}
\end{proof}

\begin{corollary}
 For every $x \in M$, $0<r \leq 1$, we can find a curve $\gamma$ connecting $\partial B(x, \frac{r}{2})$
 and $\partial B(x,r)$ such that
 \begin{align*}
  \gamma \subset \mathcal{F}_{\frac{1}{2}\epsilon_br},  \quad |\gamma| \leq 2r.
 \end{align*}
 In particular, we have
 \begin{align*}
    \partial B(x, r) \cap \mathcal{F}_{\frac{\epsilon_b}{2}r} \neq \emptyset.
 \end{align*}
 \label{cly:SL24_1}
\end{corollary}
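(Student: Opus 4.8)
The plan is to realize the desired $\gamma$ as a sub-arc of a regular-connectivity curve joining two good points that are lined up along a minimizing geodesic through $x$; the whole difficulty is in the bookkeeping of scales and in squeezing the length below $2r$. Throughout we use the standing hypothesis $\mathbf{cr}(M)\geq 1$, and we take $r<\mathbf{cr}(M)$ (the borderline $r=\mathbf{cr}(M)$ follows by letting the scale decrease to $r$). We may assume the spheres in question are nonempty, so that there is a shortest unit-speed geodesic $\beta\colon[0,L]\to M$ with $\beta(0)=x$ and $L\geq 2r$. Put $q_1:=\beta(\tfrac{3}{8}r)$ and $q_2:=\beta(\tfrac{17}{16}r)$, so that $d(x,q_1)=\tfrac{3}{8}r$, $d(x,q_2)=\tfrac{17}{16}r$ and $d(q_1,q_2)=\tfrac{11}{16}r$.

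First I would produce a good point in each small ball $B(q_i,\tfrac{1}{16}r)$. Since $\mathbf{cr}(q_i)\geq\mathbf{cr}(M)\geq 1>\tfrac{1}{16}r$, the density and volume-ratio estimates in Definition~\ref{dfn:SC02_1} hold at $q_i$ on the scale $\rho_0:=\tfrac{1}{16}r$; averaging exactly as in the proof of Proposition~\ref{prn:HE11_1} (the argument behind Proposition~\ref{prn:SC02_2}) then gives $z_i\in B(q_i,\tfrac{1}{16}r)$ with $\mathbf{vr}^{(\rho_0)}(z_i)>c_b\rho_0=\tfrac{1}{16}c_b r$. Monotonicity of $\rho\mapsto\mathbf{vr}^{(\rho)}$ together with $\tfrac{1}{16}r\leq r\leq\mathbf{cr}(M)$ upgrades this to $\mathbf{vr}^{(r)}(z_i)\geq\mathbf{vr}^{(\rho_0)}(z_i)>\tfrac{1}{16}c_b r>\tfrac{1}{50}c_b r$, i.e. $z_1,z_2\in\mathcal{F}^{(r)}_{\frac{1}{50}c_b r}(M)$. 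The triangle inequality along $\beta$ then records $d(x,z_1)<\tfrac{3}{8}r+\tfrac{1}{16}r=\tfrac{7}{16}r<\tfrac{1}{2}r$, $d(x,z_2)>\tfrac{17}{16}r-\tfrac{1}{16}r=r$, and $d(z_1,z_2)<\tfrac{1}{16}r+\tfrac{11}{16}r+\tfrac{1}{16}r=\tfrac{13}{16}r<r$; in particular $z_2\in B(z_1,r)$.

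Next I would invoke the connectivity estimate of Definition~\ref{dfn:SC02_1}(4) with base point $z_1$ and scale $r$ — equivalently Proposition~\ref{prn:SB27_1}, whose proof in fact produces a curve of length $<2d(\cdot,\cdot)$ whenever the two points lie at distance $\leq 1$: the set $B(z_1,r)\cap\mathcal{F}^{(r)}_{\frac{1}{50}c_b r}(M)$ is $\tfrac{1}{2}\epsilon_b r$-regular-connected on the scale $r$, so $z_1$ and $z_2$ are joined by a curve $\hat\gamma$ with $\hat\gamma\subset\mathcal{F}^{(r)}_{\frac{1}{2}\epsilon_b r}\subset\mathcal{F}_{\frac{1}{2}\epsilon_b r}$ (the inclusion because $\mathbf{vr}^{(r)}\leq\mathbf{vr}^{(\mathbf{cr}(M))}$) and $|\hat\gamma|<2d(z_1,z_2)<2r$. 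Finally, $t\mapsto d(x,\hat\gamma(t))$ is continuous and runs from a value $<\tfrac{1}{2}r$ to a value $>r$; setting $b:=\min\{t:d(x,\hat\gamma(t))=r\}$ and $a:=\max\{t<b:d(x,\hat\gamma(t))=\tfrac{1}{2}r\}$, the sub-arc $\gamma:=\hat\gamma|_{[a,b]}$ joins $\partial B(x,\tfrac{1}{2}r)$ to $\partial B(x,r)$, still lies in $\mathcal{F}_{\frac{1}{2}\epsilon_b r}$, and has length $\leq|\hat\gamma|<2r$. Since $\hat\gamma(b)\in\partial B(x,r)\cap\mathcal{F}_{\frac{\epsilon_b}{2}r}$, the ``in particular'' assertion follows.

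The one point that actually has to be engineered is the length bound. The generic connectivity statement of Proposition~\ref{prn:SB27_1} only yields $|\gamma|<3d(z_1,z_2)$, which is too weak here; I would therefore rely on the sharper $<2d$ bound, which is available precisely when the two endpoints sit in a common $r$-ball, and I would keep $d(z_1,z_2)<r$ by anchoring $z_1,z_2$ along $\beta$ (at parameters $\tfrac{3}{8}r$ and $\tfrac{17}{16}r$, with selection radius $\tfrac{1}{16}r$) rather than choosing them in arbitrary balls near the two spheres — these choices are arranged so that $d(x,z_1)<\tfrac{1}{2}r$, $d(x,z_2)>r$ and $d(z_1,z_2)<r$ hold simultaneously. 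Everything else is the routine tracking of which scale $\rho$ the functions $\mathbf{vr}^{(\rho)}$ and the sets $\mathcal{F}^{(\rho)}_\bullet$ are taken at, using that $\mathbf{vr}^{(\rho)}$ increases in $\rho$ to move between the scales $\tfrac{1}{16}r$, $r$ and $\mathbf{cr}(M)$.
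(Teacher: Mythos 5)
Your argument is correct and follows essentially the same route as the paper's: choose two anchor points along a minimizing geodesic from $x$ — one just inside $\partial B(x,\tfrac{r}{2})$, one just outside $\partial B(x,r)$ — use the density estimate to replace each anchor by a nearby point of high volume radius, apply the connectivity estimate (which gives length $<2d$ when the points lie within distance $r\leq\mathbf{cr}(M)$ of each other) to join them by a curve confined to $\mathcal{F}_{\frac{1}{2}\epsilon_b r}$, and truncate. The paper anchors at $\beta(\tfrac{3}{8}r)$ and $\beta(\tfrac{9}{8}r)$ with selection radius $\tfrac{r}{8}$, you anchor at $\beta(\tfrac{3}{8}r)$ and $\beta(\tfrac{17}{16}r)$ with radius $\tfrac{r}{16}$; both choices are engineered so that the two selected points lie within distance $<r$, which is what makes the $<2d$ bound usable. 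One small point in your favor: by extracting the good point directly from the density estimate at scale $\rho_0=\tfrac{r}{16}$ (giving $\mathbf{vr}^{(\rho_0)}(z_i)>c_b\rho_0$) and then using the increasing monotonicity $\mathbf{vr}^{(\rho_0)}\leq\mathbf{vr}^{(r)}$, you place $z_i$ in $\mathcal{F}^{(r)}_{\frac{1}{50}c_b r}$ without detouring through $\mathbf{cvr}$, which is the scale the connectivity estimate actually requires; the paper quotes membership in $\mathcal{F}_{\frac{c_b}{8}r}$ (at scale $\mathbf{cr}(M)$) and leaves the passage down to scale $r$ implicit. This is a cosmetic tightening rather than a different proof.
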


\begin{proof}
 Let $\beta$ be a shortest geodesic connecting $x$ and some point $y \in \partial B(x, \frac{9}{8}r)$.
 Let $z$ be the intersection point of $\beta$ and $\partial B(x, \frac{3}{8}r)$.  Let $y', z'$ be regular points
 around $y,z$, i.e., we require
 \begin{align*}
     y' \in B\left(y,\frac{r}{8} \right) \cap \mathcal{F}_{\frac{c_b}{8}r}, \quad
     z' \in B\left(z,\frac{r}{8} \right) \cap \mathcal{F}_{\frac{c_b}{8}r}.
 \end{align*}
 Clearly, triangle inequality implies that
  \begin{align*}
     d(y',z') \leq \frac{6}{8}r + \frac{1}{8}r + \frac{1}{8}r=r \leq 1.
 \end{align*}
 Since $\mathbf{cr}(M) \geq 1$, by connectivity estimate, there is a curve $\alpha$ connecting $y'$ and $z'$ such that
 \begin{align*}
    |\alpha| \leq 2r,  \quad \alpha \subset \mathcal{F}_{\frac{\epsilon_b}{2}r}.
 \end{align*}
 Note that $z' \in B(x,\frac{r}{2})$ and $y' \in B(x,r)^{c}$.  The connectedness of $M$ guarantees that
 $\alpha$ must have intersection with both $\partial B(x,\frac{r}{2})$ and $\partial B(x,r)$.
 So we can truncate $\alpha$ to obtain a curve $\gamma$ which connects $\partial B(x,\frac{r}{2})$ and $\partial B(x,r)$. Clearly, we have
 \begin{align*}
    \gamma \subset \alpha \subset \mathcal{F}_{\frac{\epsilon_b}{2}r},
    \quad  |\gamma| \leq |\alpha| \leq 2r.
 \end{align*}
\end{proof}

\begin{proposition}
  Suppose $x \in M$, $0<r \leq 1$.  Then for every point $y \in \mathcal{F}_{\frac{1}{2}\epsilon_br} \cap \partial B(x,r)$.
  There is a curve $\gamma$ connecting $x$ and $y$ such that
  \begin{itemize}
  \item $|\gamma|<10r$.
  \item For each nonnegative integer $i$,
    $\gamma \cap B(x,2^{-i}r) \backslash B\left(x, 2^{-i-1}r \right)$ contains a component which connects
           $\partial B(x,2^{-i}r)$ and $\partial B(x, 2^{-i-1}r)$ and is contained in
           $\mathcal{F}_{2^{-i-3}\epsilon_b^2r}$.
  \end{itemize}
\label{prn:SL24_1}
\end{proposition}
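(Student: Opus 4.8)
The plan is to build the curve $\gamma$ by iterating Corollary~\ref{cly:SL24_1} on the dyadic annuli $A_i = B(x, 2^{-i}r) \setminus B(x, 2^{-i-1}r)$, and then concatenating the pieces with the connectivity estimate. Fix $y \in \mathcal{F}_{\frac12 \epsilon_b r} \cap \partial B(x,r)$. For each $i \geq 0$, apply Corollary~\ref{cly:SL24_1} (rescaled to radius $2^{-i}r$, which is legitimate since $\mathbf{cr}(M) \geq 1 \geq 2^{-i}r$) to obtain a curve $\gamma_i$ connecting $\partial B(x, 2^{-i-1}r)$ to $\partial B(x, 2^{-i}r)$ with $\gamma_i \subset \mathcal{F}_{\frac12 \epsilon_b 2^{-i}r} \subset \mathcal{F}_{2^{-i-3}\epsilon_b^2 r}$ (using $\epsilon_b \leq 1$, so $\frac12 \epsilon_b 2^{-i} \geq 2^{-i-3}\epsilon_b^2$) and length $|\gamma_i| \leq 2 \cdot 2^{-i}r$. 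This already gives the required component inside each annulus; the only remaining issue is to glue these $\gamma_i$ together into a single connected curve from $x$ to $y$, since the endpoints of consecutive $\gamma_i$'s on the sphere $\partial B(x, 2^{-i-1}r)$ need not coincide.

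To do the gluing I would proceed as follows. Let $p_i^{\mathrm{in}}$ be the endpoint of $\gamma_i$ on $\partial B(x, 2^{-i-1}r)$ and $p_i^{\mathrm{out}}$ the endpoint on $\partial B(x, 2^{-i}r)$; both lie in $\mathcal{F}_{\frac12 \epsilon_b 2^{-i}r}$. I want to connect $p_i^{\mathrm{in}}$ to $p_{i+1}^{\mathrm{out}}$, which both lie near the sphere $\partial B(x, 2^{-i-1}r)$. By the triangle inequality $d(p_i^{\mathrm{in}}, p_{i+1}^{\mathrm{out}}) \leq 2^{-i}r \leq 2^{-i-1}r \cdot 2$, and both points are in $\mathcal{F}_r'$ for $r' = \frac12 \epsilon_b 2^{-i-1}r$ comparable to the relevant scale; after rescaling to the scale $2^{-i-1}r$ one checks the two points lie in $\mathcal{F}_{c \cdot 2^{-i-1}r}$ for a definite constant $c$, so provided $c \geq \frac{c_b}{50}$ (which I arrange by inserting one intermediate generic regular point via Proposition~\ref{prn:SC02_2} if necessary, exactly as in the proof of Proposition~\ref{prn:SB27_1}) the connectivity estimate of Definition~\ref{dfn:SC02_1} yields a connecting curve inside $\mathcal{F}_{\frac12 \epsilon_b 2^{-i-1}r}$ of length at most $2 \cdot 2^{-i}r$. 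These joining arcs sit in the ball $B(x, 2^{-i}r)$, hence do not spoil the annulus-component conclusion. Finally, connect $x$ itself to the innermost endpoint: since $p_i^{\mathrm{in}} \to x$ as $i \to \infty$ and $|\gamma_i| \to 0$, the concatenation $\gamma = \bigcup_i (\gamma_i \cup \beta_i)$ together with the limit point $x$ is a well-defined rectifiable curve from $x$ to $p_0^{\mathrm{out}} \in \partial B(x,r)$; a final short arc (again by connectivity, since $y$ and $p_0^{\mathrm{out}}$ are both in $\mathcal{F}_{\frac12 \epsilon_b r} \cap \partial B(x,r)$) joins it to $y$.

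For the length bound, $|\gamma| \leq \sum_{i \geq 0} (|\gamma_i| + |\beta_i|) + (\text{final arc}) \leq \sum_{i \geq 0} 4 \cdot 2^{-i}r + 2r = 8r + 2r = 10r$, giving $|\gamma| < 10r$ after being slightly more careful with the geometric series (or by absorbing constants). The main obstacle I anticipate is bookkeeping the scale-matching in the gluing step: one must verify that after rescaling to $2^{-i-1}r$ the points to be joined genuinely satisfy the $\frac{1}{50}c_b$-regularity threshold required to invoke the connectivity estimate — this is where the trick from Proposition~\ref{prn:SB27_1} of first hopping to a nearby generic regular point (guaranteed by Proposition~\ref{prn:SC02_2}) is essential, and one has to check these detour points and arcs stay within the correct dyadic ball so the component statement in each $A_i$ is preserved. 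Everything else is a routine concatenation and geometric-series estimate.
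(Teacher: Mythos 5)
Your strategy is the right one and is in the same spirit as the paper's proof---a dyadic decomposition with Corollary~\ref{cly:SL24_1} for existence at each scale and Proposition~\ref{prn:SB27_1} for gluing---but the paper is more economical, and that economy is what makes the length bound close. The paper simply picks, for each $i\geq 0$, a point $y_i \in \partial B(x,2^{-i}r) \cap \mathcal{F}_{2^{-i-1}\epsilon_b r}$ (this nonemptiness is the last clause of Corollary~\ref{cly:SL24_1}, rescaled), crucially taking $y_0 = y$ since $y \in \partial B(x,r) \cap \mathcal{F}_{\frac12 \epsilon_b r}$ is already of the required form. Then Proposition~\ref{prn:SB27_1} (applied at the scale $2^{-i-2}\epsilon_b r$, in which both $y_i$ and $y_{i+1}$ lie) produces $\gamma_i \subset \mathcal{F}_{2^{-i-3}\epsilon_b^2 r}$ with $|\gamma_i| < 3\,d(y_i,y_{i+1}) \leq 9 \cdot 2^{-i-1}r$; concatenating gives $|\gamma| < 9r < 10r$ with no leftover piece.

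Your version---first produce radial arcs $\gamma_i$ across each annulus via the curve statement of Corollary~\ref{cly:SL24_1}, then glue with bridging arcs $\beta_i$ on the dyadic spheres---has two bookkeeping problems. First, you do not control where the outer endpoint $p_0^{\mathrm{out}}$ of $\gamma_0$ lands on $\partial B(x,r)$, so the closing arc from $p_0^{\mathrm{out}}$ to $y$ costs up to $3\,d(p_0^{\mathrm{out}},y) \leq 6r$ by Proposition~\ref{prn:SB27_1}, not the $2r$ you budgeted. Second, each $\beta_i$ joins two points on $\partial B(x,2^{-i-1}r)$ that are as far apart as $2^{-i}r$, and Proposition~\ref{prn:SB27_1} only gives $|\beta_i| < 3 \cdot 2^{-i}r$ (not $\leq 2 \cdot 2^{-i}r$, which would need the raw connectivity estimate at a scale that these points may not satisfy without the generic-point detour), so $|\gamma_i| + |\beta_i| \leq 5 \cdot 2^{-i}r$. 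Summed, this already gives $10r$ before the final arc. Both issues disappear if you anchor the construction at $y$ itself, which is exactly what choosing $y_0 = y$ does; you should also simply invoke Proposition~\ref{prn:SB27_1} for the gluing rather than re-deriving its generic-point hop from Proposition~\ref{prn:SC02_2}, since that argument is already packaged there.
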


\begin{proof}
  Choose $y_i$ be a point on $\partial B(x, 2^{-i}r) \cap \mathcal{F}_{2^{-i-1}\epsilon_b r}$.
  By Proposition~\ref{prn:SB27_1},  for each $i \geq 0$, there is a curve $\gamma_i$ connecting
  $y_i$ and $y_{i+1}$ such that
  \begin{align*}
    |\gamma_i| < 9 \cdot 2^{-i-1} r, \quad   \gamma_i \subset \mathcal{F}_{2^{-i-3}\epsilon_b^2 r}.
  \end{align*}
  Concatenate all the $\gamma_i$'s to obtain $\gamma$. Then $\gamma$ satisfies all the properties.
\end{proof}

For the purpose of improving regularity, we need to study  the behavior of $\mathbf{cvr}$.
Similar to $\mathbf{vr}$ on spaces in $\widetilde{\mathscr{KS}}(n,\kappa)$ (c.f. Proposition~\ref{prn:SC26_7}),
$\mathbf{cvr}$ satisfies a local Harnack inequality.

\begin{proposition}
There is a constant $K=K(n,\kappa)$ with the following properties.

Suppose $x \in X$, $r=\mathbf{cvr}(x)<\frac{1}{K}$, then for every point $y \in B(x, K^{-1}r)$, we have
\begin{align}
   &K^{-1}r  \leq \mathbf{cvr}(y) \leq Kr, \label{eqn:SC24_1}\\
   &\omega_{2n}^{-1}\rho^{-2n} |B(y,\rho)| \geq 1-\frac{1}{100}\delta_0, \quad \forall \; \rho \in (0, K^{-1}r), \label{eqn:SC24_2}\\
   & |Rm|(y) \leq K^{2} r^{-2}, \label{eqn:SC29_6}\\
   & inj(y) \geq K^{-1} r. \label{eqn:SC29_7}
\end{align}
\label{prn:SC24_1}
\end{proposition}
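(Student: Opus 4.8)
The statement is the exact analogue of Proposition~\ref{prn:SC26_7} (local Harnack inequality of volume radius on the model spaces), but now on a smooth K\"ahler manifold $M$ with $\mathbf{cr}(M)\geq 1$, and with $\mathbf{vr}$ replaced by $\mathbf{cvr}$. So the plan is to mimic the proof of Proposition~\ref{prn:SC26_7}: argue by contradiction and rescaling, extract a limit in $\widetilde{\mathscr{KS}}(n,\kappa)$, and derive a contradiction with Anderson's gap theorem (via Proposition~\ref{prn:SC17_1}). The first step is to observe that, just as in Proposition~\ref{prn:SC26_7}, by adjusting the constant $K$ it suffices to establish the curvature estimate \eqref{eqn:SC29_6}; the volume estimate \eqref{eqn:SC24_2} then follows from \eqref{eqn:SC29_6} together with Bishop--Gromov volume comparison and the defining inequality $\omega_{2n}^{-1}r^{-2n}|B(x,r)|\geq 1-\delta_0$ that comes from $r=\mathbf{cvr}(x)=\mathbf{vr}^{(\rho_0)}(x)$ being (essentially) the top of $I_x^{(\rho_0)}$ (see \eqref{eqn:SC16_2}--\eqref{eqn:SC17_1}); the injectivity radius bound \eqref{eqn:SC29_7} follows from \eqref{eqn:SC29_6} and \eqref{eqn:SC24_2} by the standard injectivity radius estimate under a curvature bound and a non-collapsing lower volume bound; and the two-sided bound \eqref{eqn:SC24_1} on $\mathbf{cvr}$ follows from \eqref{eqn:SC24_2} applied at $y$ and at nearby points, using that $\mathbf{cr}(M)\geq 1$ so that the canonical volume radius really is governed by the normalized volume ratio on scales $\leq 1$.

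For the curvature estimate I would argue by contradiction. Suppose \eqref{eqn:SC29_6} fails for every $K$; then after point-selecting (a standard maximum-of-curvature-times-distance-squared argument, legitimate since $M$ is smooth) and rescaling by the curvature scale, one obtains a sequence of pointed smooth K\"ahler manifolds $(M_i, x_i, \tilde g_i)$ with $|Rm|(x_i)=1$, $\sup_{B(x_i,L_i)}|Rm|\leq 2$ for some $L_i\to\infty$, and — crucially — the rescaled data still satisfy the canonical radius axioms of Definition~\ref{dfn:SC02_1} on scales up to $\to\infty$. The key point is that the hypothesis $r=\mathbf{cvr}(x)$ small forces, via \eqref{eqn:SC16_2}, that the rescaled balls $B(x_i,L_i)$ have normalized volume ratio $\geq 1-\delta_0$ (the canonical volume radius after rescaling is $\gtrsim L_i$), so $\omega_{2n}^{-1}L_i^{-2n}|B(x_i,L_i)|\geq 1-\delta_0$; together with the uniform curvature bound and improving regularity (higher-order estimates for almost-Ricci-flat K\"ahler metrics — here one uses $|Ric|=|R_{i\bar j}|$ small after rescaling, since the unrescaled $R-n\lambda$ and $\dot\varphi$ bounds are not present but along the flow we have no Ricci bound — so one must instead use that the blow-up limit is Ricci-flat because curvature is bounded on an expanding region and... actually for a fixed K\"ahler manifold with $\mathbf{cr}\geq 1$ there is no flow, so one gets a smooth Cheeger--Gromov limit $(M_\infty,x_\infty,g_\infty)$ which is a complete K\"ahler manifold with $|Rm|(x_\infty)=1$ and $\mathrm{avr}(M_\infty)\geq 1-\delta_0$). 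Now $M_\infty$ — being a smooth complete non-collapsed space that is a blow-up limit of spaces satisfying the a priori estimates of Theorem~\ref{thm:SL21_1} in the limit — lies in $\widetilde{\mathscr{KS}}(n,\kappa)$, hence in $\mathscr{KS}(n)$ as it is smooth; but $\mathrm{avr}(M_\infty)\geq 1-\delta_0 > 1-2\delta_0$ forces $M_\infty\cong\C^n$ by Anderson's gap theorem (Proposition~\ref{prn:SC17_1}), contradicting $|Rm|(x_\infty)=1$.

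The main obstacle — and the place where genuine care is needed — is justifying that the rescaled limit $M_\infty$ is \emph{Ricci-flat}, i.e.\ actually lands in $\widetilde{\mathscr{KS}}(n,\kappa)$ rather than merely being some non-collapsed K\"ahler manifold of bounded curvature. On the model spaces this is automatic; here the Ricci-flatness must be extracted from the density/regularity axioms of Definition~\ref{dfn:SC02_1}, which say that outside a set of small measure (controlled by $\mathbf{E}$, uniformly under rescaling) the metric has uniform higher-order regularity, and in the limit this regularity must force $Ric_\infty\equiv 0$ on a dense open set — the point being that the estimate in condition (2) of Definition~\ref{dfn:SC02_1} is a genuine curvature bound on a $c_a r$-ball scaling like the model estimate, so its blow-up carries no intrinsic scale and the limit tangent structure is a scale-invariant Ricci-flat space. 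I would carry this out by following the proof of Proposition~\ref{prn:HA05_1} (weak compactness), which shows precisely that limits of sequences satisfying the four canonical-radius estimates lie in (a slight weakening of) $\widetilde{\mathscr{KS}}(n,\kappa)$; since here the sequence additionally has a uniform two-sided curvature bound on expanding balls around $x_i$, the limit near $x_\infty$ is smooth and Ricci-flat, which is all that is needed to invoke the gap theorem. Once these pieces are assembled the proof is, as in Proposition~\ref{prn:SC26_7}, a short contradiction argument.
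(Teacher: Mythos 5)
Your plan has a structural flaw: you treat the curvature estimate \eqref{eqn:SC29_6} as the deep part, to be extracted by point-selection, rescaling, and a Cheeger--Gromov limit, and you then rightly worry about how to make the limit Ricci-flat so as to invoke Anderson's gap theorem. But \eqref{eqn:SC29_6} is not the hard part at all — it is already \emph{built into} the definition of canonical radius. Since $r=\mathbf{cvr}(x)<\frac{1}{K}<1\leq\mathbf{cr}(M)\leq\mathbf{cr}(x)$ and, by the definition of $\mathbf{cvr}$, $\omega_{2n}^{-1}r^{-2n}|B(x,r)|\geq 1-\delta_0$, the regularity-estimate axiom (item~2 of Definition~\ref{dfn:SC02_1}) applies at the scale $r<\mathbf{cr}(x)$ and gives $r^{2+k}|\nabla^k Rm|\leq 4c_a^{-2}$ on $B(x,\frac12 c_a r)$ for $0\leq k\leq 5$. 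Taking $K\geq 2/c_a$ (so $B(x,K^{-1}r)\subset B(x,\frac12 c_a r)$) and $K^2\geq 4c_a^{-2}$, estimate \eqref{eqn:SC29_6} follows verbatim, with no blow-up needed. In fact your contradiction argument can never start: a hypothetical $y\in B(x,K^{-1}r)$ with $|Rm|(y)>K^2r^{-2}$ is immediately excluded by this axiom.

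This matters because the Ricci-flatness gap you flag at the end is genuine and unresolved in your proposal. You cite Proposition~\ref{prn:HA05_1}, but that theorem is for sequences already in $\widetilde{\mathscr{KS}}(n,\kappa)$, which are Ricci-flat by definition; for an abstract K\"ahler manifold with $\mathbf{cr}(M)\geq 1$ the analogous weak compactness is Theorem~\ref{thm:HE11_1}, which does \emph{not} conclude Ricci-flatness of the limit. In the paper Ricci-flatness of blow-up limits is only established later, under K\"ahler Ricci flow hypotheses with $|R|+|\lambda|\to 0$ (Proposition~\ref{prn:SL04_1}, Theorem~\ref{thm:SC04_1}), which are not available here. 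So the route you propose for \eqref{eqn:SC29_6} is not merely longer — it cannot be completed with the tools the paper provides at this stage. The correct reading is that Proposition~\ref{prn:SC24_1} is essentially a restatement of the axioms plus standard comparison geometry: \eqref{eqn:SC29_6} from the regularity axiom; \eqref{eqn:SC29_7} from the curvature bound, the non-collapsing of item~1 of Definition~\ref{dfn:SC02_1}, and Cheeger--Gromov--Taylor; \eqref{eqn:SC24_2} from the injectivity radius bound and the $C^5$-bound on the metric (which makes $|B(y,\rho)|=\omega_{2n}\rho^{2n}\bigl(1+O(\rho^2 r^{-2})\bigr)$ for $\rho<K^{-1}r$); and then the lower bound in \eqref{eqn:SC24_1} from \eqref{eqn:SC24_2}. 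The upper bound in \eqref{eqn:SC24_1} also requires a short extra argument: assuming $\mathbf{cvr}(y)>Kr$ and applying \eqref{eqn:SC24_2} at $y$ to the scale $\rho=2r$ and the point $z=x$ forces $\omega_{2n}^{-1}(2r)^{-2n}|B(x,2r)|\geq 1-\delta_0/100>1-\delta_0$, which contradicts \eqref{eqn:SC16_3} since $r=\mathbf{cvr}(x)<2r<\mathbf{cr}(x)$. Your reductions of \eqref{eqn:SC24_1}, \eqref{eqn:SC24_2}, \eqref{eqn:SC29_7} to \eqref{eqn:SC29_6} are on the right track, but the proof of \eqref{eqn:SC29_6} itself needs to be replaced by the direct reading of the axiom.
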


\begin{corollary}
 For every $r \in (0,1]$, $\mathcal{F}_{r}(M)$ is a closed set. Moreover,
 $\mathbf{cvr}$ is an upper-semi-continuous function on $\mathcal{F}_{r}(M)$.
\label{cly:SC24_1}
\end{corollary}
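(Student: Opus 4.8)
The statement to prove is Corollary~\ref{cly:SC24_1}: for every $r \in (0,1]$, the set $\mathcal{F}_r(M)$ is closed, and $\mathbf{cvr}$ is upper-semi-continuous on $\mathcal{F}_r(M)$. Both assertions should follow directly from the local Harnack inequality for $\mathbf{cvr}$ established in Proposition~\ref{prn:SC24_1}, together with the standing assumption $\mathbf{cr}(M) \geq 1$ of this subsection. The plan is to argue by contradiction in both cases, exploiting the quantitative two-sided bound $K^{-1}r \leq \mathbf{cvr}(y) \leq Kr$ valid on $B(x, K^{-1}r)$ whenever $\mathbf{cvr}(x) = r < 1/K$.

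First I would prove that $\mathcal{F}_r(M)$ is closed. Suppose $x_i \in \mathcal{F}_r(M)$ with $x_i \to x_\infty$; we must show $\mathbf{cvr}(x_\infty) \geq r$. If $\mathbf{cvr}(x_\infty) = \rho < r \leq 1$, then in particular $\rho < 1/K$ after possibly noting $K \geq 1$ (one handles the borderline $\rho \geq 1/K$ separately, but since $\rho < r \leq 1$ this is the only delicate range; if $1/K \leq \rho < r$ one can instead apply the Harnack inequality at a nearby point of slightly smaller canonical volume radius, or simply observe that the bound degenerates trivially). Applying Proposition~\ref{prn:SC24_1} at $x_\infty$, every point $y \in B(x_\infty, K^{-1}\rho)$ satisfies $\mathbf{cvr}(y) \leq K\rho$. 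But for $i$ large, $x_i \in B(x_\infty, K^{-1}\rho)$, so $\mathbf{cvr}(x_i) \leq K\rho$. This does not immediately contradict $\mathbf{cvr}(x_i) \geq r$ unless $K\rho < r$, which need not hold. So the cleaner route is the reverse: apply the upper-semicontinuity part (proved next) — but that is circular. Instead, the right argument is to use the \emph{lower} Harnack bound applied at $x_i$: since $\mathbf{cvr}(x_i) \geq r$, if additionally $\mathbf{cvr}(x_i) < 1/K$ we get $\mathbf{cvr}(y) \geq K^{-1}\mathbf{cvr}(x_i) \geq K^{-1} r$ for $y \in B(x_i, K^{-1}\mathbf{cvr}(x_i))$. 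This still loses a factor $K$. The genuinely correct approach uses the definition of $\mathbf{cvr}$ together with continuity of volume ratios on a compact manifold (Remark~\ref{rmk:SL22_1}): $\mathbf{cvr}(x_0) = \mathbf{vr}^{(\rho_0)}(x_0)$ where $\rho_0 = \mathbf{cr}(x_0)$, and the normalized volume ratio $\omega_{2n}^{-1}r^{-2n}|B(x,r)|$ is jointly continuous in $(x,r)$; combined with lower-semicontinuity of $\mathbf{cr}$ (which itself follows from the fact that all four conditions in Definition~\ref{dfn:SC02_1} are closed under limits using continuity of the integrand in the density estimate and Proposition~\ref{prn:SC24_1} for the regularity/connectivity estimates), one concludes $\mathbf{cvr}$ is upper-semicontinuous globally, and hence $\mathcal{F}_r = \{\mathbf{cvr} \geq r\}$ is closed.

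Thus the streamlined plan is: (1) Show $\mathbf{cr}$ is lower-semicontinuous on $M$. For this, fix $x_i \to x_\infty$ and let $\rho = \liminf \mathbf{cr}(x_i)$; I check that each of the four defining conditions of Definition~\ref{dfn:SC02_1} holds at $x_\infty$ on every scale $r < \rho$, using: joint continuity of $r \mapsto \omega_{2n}^{-1}r^{-2n}|B(x,r)|$ for the volume ratio estimate; Proposition~\ref{prn:SC24_1} (stability of curvature bounds under Cheeger-Gromov/pointwise limits, here just pointwise since $M$ is fixed and smooth) for the regularity estimate; continuity of the integral $\int_{B(x_0,r)} \mathbf{vr}^{(r)}(y)^{-2p_0}dy$ in $(x_0,r)$, as noted in Remark~\ref{rmk:SL22_1}, for the density estimate; and a limiting argument on curves for the connectivity estimate. (2) Deduce upper-semicontinuity of $\mathbf{cvr}$: since $\mathbf{cvr}(x_0) = \mathbf{vr}^{(\mathbf{cr}(x_0))}(x_0)$ and $\mathbf{vr}^{(\rho)}$ is monotone in $\rho$ with $\mathbf{vr}^{(\rho)}(x)$ depending upper-semicontinuously on $x$ (again via volume-ratio continuity), and $\mathbf{cr}$ is lower-semicontinuous, the composition gives upper-semicontinuity of $\mathbf{cvr}$. (3) Conclude $\mathcal{F}_r(M) = \{x : \mathbf{cvr}(x) \geq r\}$ is closed as the superlevel set of an upper-semicontinuous function.

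I expect the main obstacle to be step (1), the lower-semicontinuity of $\mathbf{cr}$ — specifically verifying that the connectivity estimate (condition 4 of Definition~\ref{dfn:SC02_1}) passes to the limit, since it is a statement about existence of regular-connecting curves and the sets $\mathcal{F}^{(r)}_{\frac{1}{50}c_b r}$ vary with $r$; one must take a diagonal sequence of connecting curves and extract a limit curve while controlling that it stays in the appropriate regular region, using the openness coming from Proposition~\ref{prn:SC24_1}. The volume-ratio and density conditions are comparatively routine given the continuity statements already recorded in Remark~\ref{rmk:SL22_1}, and the regularity estimate is immediate from smoothness of $M$ together with the curvature bound in Proposition~\ref{prn:SC24_1}. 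Everything here is genuinely soft once Proposition~\ref{prn:SC24_1} is in hand, so I anticipate a short proof consisting mostly of bookkeeping.
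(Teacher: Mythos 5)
Your plan goes through a detour that the paper deliberately avoids, and that detour contains the gap you yourself flag. Proving lower-semicontinuity of $\mathbf{cr}$ is \emph{not} a step the paper takes, and for good reason: Remark~\ref{rmk:DA25_1} explicitly states that continuity of $\mathbf{cr}$, $\mathbf{vr}$, $\mathbf{cvr}$ is not known in general. You correctly anticipate that the sticking point would be passing the connectivity estimate (condition 4 of Definition~\ref{dfn:SC02_1}) to the limit, but you resolve it only with a vague appeal to ``a diagonal sequence of connecting curves,'' and it is not at all clear that such a limit curve can be kept inside the required open region $\mathcal{F}^{(r)}_{\epsilon_b r/2}$ — this is exactly the kind of claim one should not wave at. Your initial, more direct attempt via the Harnack inequality was closer in spirit but you abandoned it when the constant $K$ got in the way.

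The key observation you are missing is that the standing hypothesis $\mathbf{cr}(M) \geq 1$ of this subsection makes the whole excursion through $\mathbf{cr}$ unnecessary. Since $\mathbf{cr}(x) \geq 1 \geq r$ for every $x$, the condition $\mathbf{cvr}(x) \geq \rho$ for a scale $\rho \leq 1$ is \emph{equivalent} to the pure volume-ratio statement $\omega_{2n}^{-1}\rho'^{-2n}\lvert B(x,\rho')\rvert \geq 1-\delta_0$ for some (equivalently, for all) $\rho'$ up to $\rho$; the cap coming from $\mathbf{cr}(x)$ is never reached. On a fixed smooth compact Riemannian manifold, $\omega_{2n}^{-1}\rho^{-2n}\lvert B(x,\rho)\rvert$ is jointly continuous in $(x,\rho)$. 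So if $x_i \to x$ with $x_i \in \mathcal{F}_r(M)$, set $r_i = \mathbf{cvr}(x_i) \geq r$; the local Harnack inequality (Proposition~\ref{prn:SC24_1}) handles the degenerate case $r_i \to \infty$, and otherwise, after passing to a subsequence with $r_i \to \bar r \in [r,1]$, volume continuity gives $\lvert B(x,\bar r)\rvert = \lim\lvert B(x_i,r_i)\rvert \geq (1-\delta_0)\omega_{2n}\bar r^{2n}$, hence $\mathbf{cvr}(x) \geq \bar r \geq r$. This simultaneously proves closedness of $\mathcal{F}_r(M)$ and the desired upper-semicontinuity of $\mathbf{cvr}$, with no claim about $\mathbf{cr}$ whatsoever. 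The Harnack inequality is used only to rule out $r_i \to \infty$; it is not the engine of the argument as your first attempt assumed.
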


\begin{proof}
  Fix $r \in (0,1]$. Suppose $x_i \in \mathcal{F}_{r}(M)$ converges to a point $x \in M$.
  Let $r_i=\mathbf{cvr}(x_i)$.  We need to show $\mathbf{cvr}(x) \geq r$. Clearly, this follows directly
  if $\displaystyle \lim_{i \to \infty} r_i=\infty$ by Proposition~\ref{prn:SC24_1}. Without loss of generality, we may assume
  $r_i$ is uniformly bounded from above.  Use Proposition~\ref{prn:SC24_1} again, we see that
  $r_i$ is uniformly bounded away from zero.  Let $r$ be a limit of $r_i$. Then we have
  \begin{align}
   |B(x,r)|=\lim_{i \to \infty} |B(x_i,r_i)| \geq \lim_{i \to \infty} (1-\delta_0) \omega_{2n} r_i^{2n}
   = (1-\delta_0) \omega_{2n} r^{2n},
  \end{align}
  which implies $\displaystyle \mathbf{cvr}(x) \geq r= \lim_{i \to \infty} r_i$ by definition of canonical volume radius and the fact that
  $\displaystyle r=\lim_{i \to \infty} r_i\leq 1 \leq \mathbf{cvr}$.
  Consequently, we have $x \in \mathcal{F}_{r}(M)$. Therefore, $\mathcal{F}_{r}(M)$ is a closed set by the arbitrary choice of $\{x_i\}$.
  From the above argument, we have already seen that
  \begin{align}
    \mathbf{cvr}(x) \geq \lim_{i \to \infty} \mathbf{cvr}(x_i),
  \end{align}
  which means that $\mathbf{cvr}$ is an upper-continuous function on $\mathcal{F}_{r}(M)$.
\end{proof}

Clearly, the conclusion in the above corollary is weaker than that in Proposition~\ref{prn:HE07_2},  since here we do not have a rigidity property like Proposition~\ref{prn:SC17_3}.   However,  even if
$\displaystyle \mathbf{cvr}(x)>\lim_{i \to \infty} \mathbf{cvr}(x_i)$, the local Harnack inequality of $\mathbf{cvr}$ guarantees that $\displaystyle \mathbf{cvr}(x) < K \lim_{i \to \infty} \mathbf{cvr}(x_i)$.  So $\mathbf{cvr}$ is better than
general semi-continuous function.   For example, in the decomposition $M=\mathcal{F}_r \cup \mathcal{D}_r$,
every point $y \in \partial \mathcal{F}_r$ satisfies $r \leq \mathbf{cvr}(y) \leq Kr$.  In many situations, it is convenient to just regard $K=1$, i.e., $\mathbf{cvr}$ being continuous, without affecting the effectiveness of the argument.
Furthermore,  up to perturbation, one can even regard $\mathbf{cvr}$ as smooth functions.   Full details of the perturbation can be found in Appendix~\ref{app:B}.

\subsection{K\"ahler manifolds with canonical radius bounded from below}

Similar to the traditional theory, volume convergence is very important.  However, in the current situation, the volume convergence can be proved in a much easier way.

\begin{proposition}[\textbf{Volume convergence}]
 Suppose $(M_i,g_i,J_i)$ is a sequence of  K\"ahler manifolds satisfying $\mathbf{cr}(M_i) \geq r_0$.  Then
 we have \begin{align*}
  (M_i, x_i, g_i) \longright{G.H.} (\bar{M}, \bar{x}, \bar{g}).
\end{align*}
 Moreover, the volume (2n-dimensional Gromov-Hausdorff measure) is continuous under this convergence, i.e., for every fixed $\rho_0>0$, we have
\begin{align*}
    |B(\bar{x}, \rho_0)|= \lim_{i \to \infty} |B(x_i,\rho_0)|.
\end{align*}
\label{prn:SC12_1}
\end{proposition}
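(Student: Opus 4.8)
The plan is to extract a Gromov--Hausdorff limit from the two-sided volume bounds, and then to upgrade it to volume convergence via the splitting ``the regular part converges smoothly, the singular part has uniformly small volume''. Unlike the Cheeger--Colding approach this requires no harmonic approximation, because the canonical radius already supplies curvature bounds on the regular part. After rescaling $g_i\mapsto r_0^{-2}g_i$ we may assume $r_0=1$, so that $\mathbf{cr}(M_i)\geq 1$ and all the estimates of this subsection apply. For precompactness, item~1 of Definition~\ref{dfn:SC02_1} gives $\kappa\,\omega_{2n}r^{2n}\leq|B(x,r)|\leq\kappa^{-1}\omega_{2n}r^{2n}$ for every $x\in M_i$ and $r\leq 1$; hence any ball $B(x,2r)$ with $r\leq\tfrac12$ contains at most $\kappa^{-2}2^{2n}$ disjoint $r$-balls, so $B(x_i,1)$ admits an $\varepsilon$-net of cardinality bounded in $n,\kappa,\varepsilon$, and chaining balls of radius $\tfrac12$ gives the same for $B(x_i,R)$ with $R$ fixed, uniformly in $i$. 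Gromov's precompactness theorem then produces a subsequence with $(M_i,x_i,g_i)\GHconv(\bar M,\bar x,\bar g)$ for a complete length space $\bar M$.

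Next I would invoke item~2 of Definition~\ref{dfn:SC02_1} together with Proposition~\ref{prn:SC24_1}: every point $y$ with $\mathbf{cvr}(y)\geq r$ carries $|Rm|\leq K^2r^{-2}$, $inj(y)\geq K^{-1}r$, and bounds on $\nabla^kRm$ for $0\leq k\leq 5$ on $B(y,K^{-1}r)$ (hence all orders, by the improving regularity of Ricci--flat type limits). Thus, for each fixed $r>0$, the sets $\mathcal F_r(M_i)\cap B(x_i,2\rho_0)$ have uniformly bounded geometry; after a further subsequence and a diagonal argument over $r=r_j\to0$ one obtains an open set $\mathcal R\subset\bar M$ carrying a smooth limit K\"ahler metric, together with $C^\infty$-Cheeger--Gromov convergence of the regular parts, so that the Riemannian volume on $\mathcal R$ is the limit of $dv_{g_i}$ restricted to the corresponding regions. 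Set $\mathcal S=\bar M\setminus\mathcal R$; since $\mathcal S$ is contained in the limit of $\mathcal D_r(M_i)$ for every $r$, the bound below will also force $|\mathcal S\cap B(\bar x,\rho_0)|=0$.

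Finally, fix $\rho_0>0$ and $\varepsilon>0$. Proposition~\ref{prn:SC02_2} and Corollary~\ref{cly:SC04_2} give $|B(x_i,\rho_0)\cap\mathcal D_r(M_i)|\leq C(n,\kappa,\rho_0)\,r^{2p_0}$ for all $i$ and $r\leq1$, and since $2p_0>2$ we may fix $r$ so small that this is $<\varepsilon$; by lower semicontinuity of measure under the convergence of the previous step, the ``missing'' part of $B(\bar x,\rho_0)$ then also has measure $<\varepsilon$. Choosing $r$ and $\rho_0$ generically so that $\partial B(\bar x,\rho_0)$ and the relevant level set $\{\mathbf{cvr}=r\}$ carry no limit volume, the smooth convergence gives $|B(x_i,\rho_0)\cap\mathcal F_r(M_i)|\to |B(\bar x,\rho_0)\cap\mathcal R_r|$, where $\mathcal R_r\subset\bar M$ is the corresponding limiting region; combining the three pieces yields $\limsup_{i\to\infty}\bigl|\,|B(x_i,\rho_0)|-|B(\bar x,\rho_0)|\,\bigr|\leq 2\varepsilon$, and letting $\varepsilon\to0$ concludes. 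The main obstacle is precisely this last bookkeeping at the seam between $\mathcal F_r$ and the full balls: ruling out volume that concentrates in a shrinking neighborhood of $\mathcal S$ or near $\partial B(\bar x,\rho_0)$, and confirming that the limit measure is the genuine weak limit of $dv_{g_i}$ rather than merely dominated by it. The density estimate (item~3 of Definition~\ref{dfn:SC02_1}) is exactly what makes this go through, since it furnishes the $i$-uniform smallness $|\mathcal D_r\cap B|\lesssim r^{2p_0}$ with exponent $2p_0>2$.
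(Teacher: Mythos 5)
Your proposal takes essentially the same route as the paper: decompose $B(x_i,\rho_0)$ into $\mathcal{F}_r$ and $\mathcal{D}_r$, use the regularity estimate (item~2 of Definition~\ref{dfn:SC02_1}) plus Proposition~\ref{prn:SC24_1} to get smooth Cheeger--Gromov convergence on the regular strata and hence trivial volume convergence there, and use the density estimate (item~3, via Proposition~\ref{prn:SC02_2}) to make $|\mathcal{D}_r\cap B|\lesssim r^{2p_0}$ uniformly small, then let $r\to0$. The one place where you are looser than the paper is the step ``by lower semicontinuity of measure under the convergence of the previous step, the missing part of $B(\bar x,\rho_0)$ then also has measure $<\varepsilon$.'' Gromov--Hausdorff convergence alone does not give a free semicontinuity statement for $2n$-dimensional Hausdorff measure; what the paper actually does is a short Vitali covering argument: cover $\mathcal{S}_r\cap B(\bar x,\rho_0)$ by $5r$-balls centered at limit points $z_k$, note each $z_{k,i}\in M_i$ lies in $\mathcal{D}_{2r}$ so $B(z_{k,i},5r)\subset\mathcal{D}_{5Kr}$, use disjointness of the $B(z_{k,i},\tfrac12 r)$ together with the non-collapsing and the density estimate to bound the count $N\leq Cr^{2p_0-2n}$, and conclude $\dim_{\mathcal{M}}(\mathcal{S}'\cap B(\bar x,\rho_0))\leq 2n-2p_0<2n$, so the singular set has $\mathcal{H}^{2n}$-measure zero in the limit. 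That covering step is exactly the rigorous version of what you are gesturing at, and you have all the ingredients for it (you cite Proposition~\ref{prn:SC24_1} and the density estimate already), so this is a minor imprecision in exposition rather than a gap in the method.
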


\begin{proof}
The existence of the Gromov-Hausdorff limit space follows from the volume doubling property and the standard ball-packing argument.  Fix $r<<\rho_0$, then it follows from the definition of $\mathcal{F}_r$ that the convergence
on $B(x_i, \rho_0) \cap \mathcal{F}_r$ can be improved to $C^4$-topology. Then the volume converges trivially on this part. On the other hand, the volume of $B(x_i, \rho_0) \cap \mathcal{D}_r$ is bounded by $C r^{2p_0}$, which tends to
zero as $r \to 0$.  So the volume convergence of geodesic balls $B(x_i,\rho_0)$ follows from the combination of the two factors mentioned above.   More details are given as follows.

Let  $\left( \bar{M}, \bar{x}, \bar{g} \right)$ be
the limit space.  For each $r \leq r_0$, define
\begin{align}
  &\mathcal{R}_{r} \triangleq  \left\{ \bar{y} \in \bar{M}
  \left| \textrm{There exists} \; y_i \in M_i \; \textrm{such that} \; y_i \to \bar{y} \; \textrm{and} \; \liminf_{i \to \infty} \mathbf{cvr}(y_i) \geq r \right. \right\},  \label{eqn:HA11_3}\\
  &\mathcal{S}_{r} \triangleq \left( \mathcal{R}_{r} \right)^{c},  \label{eqn:HA11_4}\\
  &\mathcal{R}' \triangleq  \bigcup_{0<r\leq r_0} \mathcal{R}_{r},  \label{eqn:GD25_2} \\
  &\mathcal{S}' \triangleq \bigcap_{0<r\leq r_0} \mathcal{S}_{r}.   \label{eqn:GD25_3}
\end{align}
We now show that $\mathcal{S}'$ is a subset of $\bar{M}$ of Minkowski dimension at most $2n-2p_0$.   
Without loss of generality, it suffices to show this dimension for $\mathcal{S}' \cap B(\bar{x}, \rho_0)$.  

For each small $r>0$, we shall construct a covering for the set $\mathcal{S}_r \cap B(\bar{x}, \rho_0)$.   Clearly, the choice
$\cup_{z \in  \mathcal{S}_r \cap B(\bar{x}, \rho_0)} B(z, r)$ is a cover, but with uncoutable many balls.
By Vitali covering lemma, we can find countable many $z_k$'s such that $\cup_{z_k} B(z_k, r)$ is a disjoint union and  
\begin{align}
\mathcal{S}_r \cap B(\bar{x}, \rho_0) \subset \bigcup_{z_k} B(z_k, 5r).     \label{eqn:GD25_1}   
\end{align}
We shall show that this covering is actually a finite covering with number of balls $N$ uniformly bounded by $Cr^{2p_0-2n}$. 
Let $z_k$ be the limit point of $z_{k,i} \in M_i$.  For large $i$, it follows from definition that $\mathbf{cvr}(z_{k,i}) <2r$. 
By Proposition~\ref{prn:SC24_1}, we see that $B(z_{k,i}, 5r) \subset \mathcal{D}_{5Kr}$.  It follows that
\begin{align*}
   \bigcup_{k=1}^{} B(z_{k,i}, 0.5 r) \subset \bigcup_{k} B(z_{k,i}, 5r) \subset   \left\{ B(x_i, 2\rho_0) \cap \mathcal{D}_{5Kr} \right\}. 
\end{align*}
Note that $\bigcup_{k} B(z_{k,i}, 0.5 r)$ is a disjoint union. 
Taking volume on the manifold $M_i$, using the volume ratio's lower bound and Proposition~\ref{prn:SC02_2}, we obtain
\begin{align*}
 N \kappa \omega_{2n} (0.5 r)^{2n} \leq  \sum_{k} |B(z_{k,i}, 0.5r)| \leq |B(x_i, 2\rho_0) \cap \mathcal{D}_{5Kr}| \leq C (5Kr)^{2p_0}.
\end{align*}
It follows that $N \leq Cr^{2p_0-2n}$ for some uniform constant $C$.  
Therefore, the covering we choose in (\ref{eqn:GD25_1}) is a finite covering with the number of balls dominated by $Cr^{2p_0-2n}$. 
Since $\mathcal{S}'$ is a subset of $\mathcal{S}_r$, we obtain a covering of $\mathcal{S}' \cap B(\bar{x},\rho_0)$ by size-$r$ balls with number at most $Cr^{2p_0-2n}$,
where $C$ is independent of $r$. Therefore, we have
\begin{align}
   \dim_{\mathcal{M}} \left\{ \mathcal{S}' \cap B(\bar{x}, \rho_0) \right\} \leq 2n-2p_0. 
\label{eqn:GD25_4}   
\end{align}
In particular, $\mathcal{S}' \cap B(\bar{x},\rho_0)$ has $2n$-Hausdorff measure zero, or volume zero.  This means we can ignore the effect of $\mathcal{S}'$ when we consider 
volume convergence.  On the other hand, away from $\mathcal{S}'$, the volume convergence is obvious.     We therefore obtain the volume convergence property
whenever $B(x_i,\rho_0)$ converges to $B(\bar{x},\rho_0)$. 
\end{proof}

Now we are able to show the weak compactness theorem.

\begin{theorem}[\textbf{Rough weak compactness}]
 Same conditions as Proposition~\ref{prn:SC12_1}.
 Denote $\mathcal{R} \subset \bar{M}$ as the set of regular points, i.e., the points with some small neighborhoods which have $C^4$-Riemannian manifolds structure.
 Denote $\mathcal{S} \subset \bar{M}$ be the set of singular points, i.e., the points which are not regular.
 Then we have the regular-singular decomposition $\bar{M}=\mathcal{R} \cup \mathcal{S}$ with the following properties.
 \begin{itemize}
 \item The regular part $\mathcal{R}$ is an open, path connected $C^4$-Riemannian manifold.
           Furthermore, for every two points $x,y \in \mathcal{R}$, there exists a curve $\gamma$ connecting $x,y$ satisfying
           \begin{align}
           \gamma \subset \mathcal{R}, \quad  |\gamma| \leq 3d(x,y).       \label{eqn:HE11_1}
           \end{align}
 \item The singular part $\mathcal{S}$ satisfies the Minkowski dimension estimate
 \begin{align}
   \dim_{\mathcal{M}} \mathcal{S} \leq 2n-2p_0.
 \label{eqn:SB13_4}
 \end{align}
 \end{itemize}
 \label{thm:HE11_1}
\end{theorem}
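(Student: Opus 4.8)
\textbf{Proof proposal for Theorem~\ref{thm:HE11_1}.}

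The plan is to define $\mathcal{R}$ as the union $\mathcal{R}' = \bigcup_{0<r \leq r_0} \mathcal{R}_r$ already constructed in the proof of Proposition~\ref{prn:SC12_1}, and $\mathcal{S} = \bar{M}\setminus\mathcal{R}$; the first task is to check that this coincides with the regular-singular decomposition stated in the theorem, i.e. that a point of $\mathcal{R}'$ genuinely has a $C^4$-Riemannian neighborhood and that a point not in $\mathcal{R}'$ cannot. For the first direction I would take $\bar{y} \in \mathcal{R}_r$ with approximating sequence $y_i \in M_i$, $\liminf \mathbf{cvr}(y_i) \geq r$; passing to a subsequence with $\mathbf{cvr}(y_i) \geq r/2$, the regularity estimate in Definition~\ref{dfn:SC02_1} (item 2), together with Proposition~\ref{prn:SC24_1}, gives uniform $C^4$-bounds (indeed $C^{4}$ from the $k\leq 5$ derivative bound on $Rm$, Ricci-flatness not being needed — just bounded geometry) and a uniform injectivity radius lower bound on $B(y_i, K^{-1}r/2)$. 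Standard Cheeger-Gromov convergence in $C^{3,\alpha}$ (or $C^4$ as stated) then produces a smooth manifold chart around $\bar y$; the Harnack inequality (\ref{eqn:SC24_1}) ensures the size of this good ball is comparable along the sequence so the limit chart is nondegenerate. Hence $\mathcal{R}'$ is open and carries a $C^4$ structure, and $\mathcal{R}' \subseteq \mathcal{R}$. Conversely, if $\bar y \notin \mathcal{R}'$, then for every $r$ and every sequence $y_i \to \bar y$ one has $\liminf \mathbf{cvr}(y_i) < r$; if $\bar y$ were regular one could run a blow-up/contradiction argument showing some definite ball around $y_i$ has almost-Euclidean volume ratio, forcing $\mathbf{cvr}(y_i)$ bounded below — contradiction. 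So $\mathcal{R} = \mathcal{R}'$ and $\mathcal{S} = \mathcal{S}'$.

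Next, the Minkowski dimension bound (\ref{eqn:SB13_4}) is already essentially proved: it is exactly inequality (\ref{eqn:GD25_4}) established inside the proof of Proposition~\ref{prn:SC12_1}, obtained by a Vitali covering of $\mathcal{S}_r \cap B(\bar x, \rho_0)$ by $r$-balls whose count is bounded by $C r^{2p_0-2n}$ via the density/volume estimate of Proposition~\ref{prn:SC02_2} applied on the $M_i$. I would simply reproduce (or cite) that estimate: the number of $5r$-balls needed to cover $\mathcal{S}'\cap B(\bar x,\rho_0)$ is $\leq C r^{2p_0-2n}$, and since $\rho_0$ is arbitrary and the bound is uniform, Definition~\ref{dfn:HE08_1} yields $\dim_{\mathcal{M}}\mathcal{S} \leq 2n - 2p_0$ for each unit ball meeting $\mathcal{S}$, hence globally.

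The remaining and genuinely substantive point is the connectivity statement (\ref{eqn:HE11_1}): any two $x,y \in \mathcal{R}$ are joined by a curve in $\mathcal{R}$ of length $\leq 3 d(x,y)$. Here I would argue on the approximating sequence. Fix $x,y \in \mathcal{R}$; choose $r>0$ small with $x,y \in \mathcal{R}_{2r}$, so there are $x_i, y_i \in M_i$ with $x_i \to x$, $y_i \to y$ and $\mathbf{cvr}(x_i), \mathbf{cvr}(y_i) \geq r$, i.e. $x_i, y_i \in \mathcal{F}_r(M_i)$ for large $i$. By Proposition~\ref{prn:SB27_1} (applicable since $\mathbf{cr}(M_i) \geq r_0$, after rescaling $r \mapsto$ that scale, and noting $\mathcal{F}_r \subset \mathcal{F}_{\frac{c_b}{50}}$-type inclusions are handled there) there is a curve $\gamma_i \subset \mathcal{F}_{\frac12\epsilon_b r}(M_i)$ with $|\gamma_i| < 3 d(x_i,y_i)$. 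Along $\mathcal{F}_{\frac12\epsilon_b r}$ we have uniform $C^4$-bounds and injectivity radius $\gtrsim \epsilon_b r$, so the curves $\gamma_i$ (parametrized on $[0,1]$ proportionally to arclength, with uniformly bounded speed) are equicontinuous and subconverge, uniformly, to a curve $\gamma$ in $\bar M$ with $|\gamma| \leq \liminf |\gamma_i| \leq 3 d(x,y)$. The key is that $\gamma$ stays in $\mathcal{R}$: each $\gamma_i(t) \in \mathcal{F}_{\frac12\epsilon_b r}(M_i)$ and these points converge to $\gamma(t)$, so by the definition (\ref{eqn:HA11_3}) of $\mathcal{R}_{\frac12\epsilon_b r}$ we get $\gamma(t) \in \mathcal{R}_{\frac12\epsilon_b r} \subset \mathcal{R}$. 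Path-connectedness of $\mathcal{R}$ follows, and openness was noted above.

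The main obstacle I anticipate is the convergence-of-curves step for (\ref{eqn:HE11_1}): one must ensure that the limit curve does not "escape to the singular set," which is exactly why Proposition~\ref{prn:SB27_1} is crucial — it keeps $\gamma_i$ in a \emph{fixed} canonical-volume-radius stratum $\mathcal{F}_{\frac12\epsilon_b r}$ (not a shrinking one), giving uniform local geometry along the whole curve and allowing passage to the limit inside $\mathcal{R}_{\frac12\epsilon_b r}$. A secondary technical nuisance is matching the length bound constant $3$ cleanly: Proposition~\ref{prn:SB27_1} gives $|\gamma_i| < 3 d(x_i,y_i)$ and $d(x_i,y_i) \to d(x,y)$, so lower semicontinuity of length under uniform convergence gives $|\gamma| \leq 3 d(x,y)$ as required, with no further care needed. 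Everything else (openness, $C^4$ structure, the Minkowski estimate) is a transcription of facts already assembled in Sections 2--3.
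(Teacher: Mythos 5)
Your proposal is correct and follows essentially the same route as the paper: you take $\mathcal{R}=\mathcal{R}'=\bigcup_r\mathcal{R}_r$ (forward inclusion from the regularity estimate in the definition of canonical radius, reverse from volume convergence plus the gap between almost-Euclidean volume ratio and $\mathbf{cvr}$), obtain (\ref{eqn:SB13_4}) from the covering/density estimate already proved in Proposition~\ref{prn:SC12_1}, and build the curve in (\ref{eqn:HE11_1}) by applying Proposition~\ref{prn:SB27_1} on the $M_i$ to get $\gamma_i\subset\mathcal{F}_{\frac12\epsilon_b r}$ and passing to the limit inside a fixed regularity stratum. The only stylistic difference is that your reverse inclusion is phrased as a blow-up/contradiction while the paper argues directly from volume convergence, but the content is the same.
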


\begin{proof}
Let  $\left( \bar{M}, \bar{x}, \bar{g} \right)$ be
the limit space.  For each $r \leq r_0$, define $\mathcal{R}_r$, $\mathcal{S}_r$ as in (\ref{eqn:HA11_3}) and (\ref{eqn:HA11_4}). 
Define $\mathcal{R}', \mathcal{S}'$ as in (\ref{eqn:GD25_2}) and (\ref{eqn:GD25_3}). 
Recall that the regular set $\mathcal{R} \subset \bar{M}$ is defined as the collection of points which have small neighborhoods with manifolds structure.  
We shall show that $\mathcal{R}'$ is nothing but $\mathcal{R}$, i.e., $\displaystyle \mathcal{R} = \bigcup_{0<r\leq r_0} \mathcal{R}_{r}$.

Actually, by regularity estimate property of canonical radius, for every fixed $r \in (0,r_0)$, every point
$\bar{y} \in \mathcal{R}_{r}$, we see that the convergence to $B(\bar{y}, \frac{1}{3}c_a r)$ can be improved to be in the $C^{4}$-topology.  Clearly, $B(\bar{y}, \frac{1}{3}c_a r)$ has a  manifold structure. So $\mathcal{R}_r \subset \mathcal{R}$.  Let $r \to 0$, we have $\displaystyle \bigcup_{0<r\leq r_0} \mathcal{R}_{r} \subset \mathcal{R}$.
On the other hand, suppose $\bar{y} \in \mathcal{R}$. Then there is a ball $B(\bar{y},r)$ with a  manifold structure.
By shrinking $r$ if necessary, we can assume that the volume ratio of this ball is very close to the Euclidean one.
Note that the volume ($2n$-dimensional Hausdorff measure) converges when $(M_i, x_i, g_i)$ converges to $(\bar{M}, \bar{x}, \bar{g})$.
Suppose $y_i \to \bar{y}, y_i \in M_i$. Then we have $\omega_{2n}^{-1}r^{-2n}|B(y_i,r)|>1-\delta_0$ for large $i$. By definition, this means that
$\mathbf{cvr}(y_i,0) \geq r$. It follows from the regularity estimates that
$\displaystyle \bar{y} \in \mathcal{R}_{r} \subset \mathcal{R}'$.
By the arbitrary choice of $\bar{y}$, we obtain $\mathcal{R} \subset \mathcal{R}'$.
So we finish the proof of
\begin{align*}
  \mathcal{R}=\mathcal{R}'=\bigcup_{0<r\leq r_0} \mathcal{R}_{r}.
\end{align*}
Combining the above equation with the definitions in (\ref{eqn:GD25_2}) and (\ref{eqn:GD25_3}), we have $\mathcal{S}=\mathcal{S}'$.   Therefore, (\ref{eqn:SB13_4}) follows from $\dim_{\mathcal{M}} \mathcal{S}' \leq 2n-2p_0$, 
which can be proved following (\ref{eqn:GD25_4}).  Alternatively, we can prove (\ref{eqn:SB13_4}) as follows. 

Fix $r<r_0$. Let $\rho_0=r_0$ and take limit of (\ref{eqn:SC02_2}), we obtain
\begin{align}
   |B(\bar{y}, r_0) \cap  \mathcal{S}_{r}| \leq 4\mathbf{E}r_0^{2n-2p_0} r^{2p_0}, \label{eqn:B10_1}
\end{align}
for every $\bar{y} \in \bar{M}$.  Suppose $y \in \mathcal{R}_r \subset \bar{M}$.
The regularity estimate property of canonical radius yields that every point in $B(y, \frac{1}{4}c_a r)$ is regular.
So $d(y,\mathcal{S})\geq \frac{1}{4}c_a r$.
It follows that
\begin{align*}
 \mathcal{R}_{r} \subset \left\{x \in \bar{M} \left| d(x, \mathcal{S}) \geq \frac{1}{4}c_a r  \right.\right\}
 \Leftrightarrow \mathcal{S}_{r} \supset \left\{x \in \bar{M} \left| d(x, \mathcal{S}) <\frac{1}{4}c_a r \right. \right\}.
\end{align*}
Therefore, we have
\begin{align}
  \{x \in \bar{M} | d(x, \mathcal{S}) <r\} \subset \mathcal{S}_{4c_a^{-1} r},  \label{eqn:SB13_1}
\end{align}
whenever $r$ is very small. Combining (\ref{eqn:B10_1}) and (\ref{eqn:SB13_1}) yields
\begin{align*}
  \left| B(\bar{y},r_0) \cap \left\{x \in \bar{M} | d(x, \mathcal{S}) <r \right\} \right| \leq 4^{2p_0+1}\mathbf{E}c_a^{-2p_0} r_0^{2n-2p_0} r^{2p_0}=C r^{2p_0}.
\end{align*}
Since the above inequality holds for every small $r$ and every $\bar{y} \in \bar{M}$, it yields (\ref{eqn:SB13_4}) directly.

 It follows from definition that $\mathcal{R}$ is an open $C^4$-manifold.  The path connectedness of $\mathcal{R}$ follows from (\ref{eqn:HE11_1}).
 Now we proceed to show (\ref{eqn:HE11_1}).  Fix $x,y \in \mathcal{R}$, let $r=\sup\{\rho | x \in \mathcal{R}_{\rho}, y \in \mathcal{R}_{\rho}\}$.
 Since $r>0$, we can choose sequence $x_i, y_i \in \mathcal{F}_{\frac{r}{2}}(M_i)$ such that  $x_i \to x$, $y_i \to y$.
 Let $\gamma_i$ be a curve connecting $x_i, y_i$ constructed by the method described in Proposition~\ref{prn:SB27_1}.
 Clearly, $\gamma_i \subset \mathcal{F}_{\frac{1}{4}\epsilon_b r}$ and $|\gamma_i|<3d(x_i,y_i)$. Note that the convergence of
 $\mathcal{F}_{\frac{1}{4}\epsilon_b r}$ to its limit set is in $C^{4}$-topology.  Consequently, the limit curve $\gamma$ satisfies (\ref{eqn:HE11_1}).
\end{proof}

\begin{remark}
  The definition of regular points in Theorem~\ref{thm:HE11_1} is stronger than the classical one, i.e., a point is regular if and only if every tangent space at this point is
  isometric to $\C^{n}$.  Therefore, some regular points in the classical definition may be singular in our definition.
  Of course, at last, our definition coincides with the classical one after we set up sufficient estimates(c.f. Remark~\ref{rmk:HA07_1}).
\label{rmk:HA01_1}
\end{remark}

\begin{remark}
 Note that in Definition~\ref{dfn:SC02_1}, the definition of canonical radius, no K\"ahler condition is used.  Therefore, the convergence results discussed in this subsection works naturally in the Riemannian setting.
\label{rmk:MA21_1} 
\end{remark}

The properties of the limit space $\bar{M}$ in Theorem~\ref{thm:HE11_1} are not good enough. For example, we do not know if every tangent space is a metric cone, we do not know if $\mathcal{R}$
is convex. In general, one should not expect these to hold. However, if $(M_i, g_i, J_i)$ is a blowup sequence from  given K\"ahler Ricci flow solutions with proper geometric bounds, we shall show that $\bar{M}$ do have the mentioned good properties.

\section{Polarized canonical radius}

In this section, we shall improve the regularity of the limit pace $\bar{M}$ in Theorem~\ref{thm:HE11_1}, under the help of K\"ahler geometry and the Ricci flow.
The Ricci flow has reduced volume and local $W$-functional monotonicity, discovered by Perelman.  These monotonicities will be used to show
that each tangent space is a metric cone, and the regular part $\mathcal{R}$ is weakly convex, under some natural geometric conditions.  However, the weak-compactness we developed in last section only deals with the metric structure. On $\bar{M}$, we cannot see a Ricci flow structure. In order to make use of the intrinsic monotonicity of the Ricci flow, we need a weak compactness of Ricci flows, not just the weak compactness of time slices.  However, along the Ricci flow, the metric at different time slices cannot be compared obviously if no estimate of Ricci curvature is known.  This is one of the fundamental difficulty to develop the weak compactness theory of the Ricci flows.
We overcome this difficulty by taking advantage of the rigidity of K\"ahler geometry.

\subsection{A rough long-time pseudolocality theorem for polarized K\"ahler Ricci flow}

Suppose $\mathcal{LM}=\left\{ (M^n, g(t), J, L, h(t)),  t \ \in I \subset \R \right\}$  is a polarized K\"ahler Ricci flow.
Let $\mathbf{b}$ be the Bergman function with respect to $\omega(t)$ and $h(t)$, i.e.,
\begin{align*}
   \mathbf{b}(x,t) =\log \sum_{k=0}^{N} \norm{S_k}{h(t)}^2,  \quad \int_{M} \langle S_k,  S_l\rangle \omega(t)^n = \delta_{kl},
\end{align*}
where $N=\dim (H^0(L))-1$, $\{S_k\}_{k=0}^{N}$ are holomorphic sections of $L$.
By pulling back the Fubini-Study metric through the natural holomorphic embedding, we have
\begin{align*}
    \tilde{\omega}= \iota^*(\omega_{FS})= \omega+ \sqrt{-1} \partial \bar{\partial} \mathbf{b}.
\end{align*}
Let $\omega_0=\omega(0), \mathbf{b}_0=\mathbf{b}(0)$. Then
\begin{align*}
\omega(t)=\omega_0 + \sqrt{-1}\partial \bar{\partial} \varphi,
\quad
\tilde{\omega}(t)=\omega_0 + \sqrt{-1}\partial \bar{\partial} (\varphi + \mathbf{b}).
\end{align*}
Clearly, $\varphi(0)=0$.

In this section, we focus on polarized K\"ahler Ricci flow $\mathcal{LM}$ satisfying the following estimate
 \begin{align}
    \norm{\dot{\varphi}}{C^0(M)} + \norm{\mathbf{b}}{C^0(M)} + \norm{R}{C^0(M)} +|\lambda| +C_S(M) \leq B
 \label{eqn:SK23_1}
 \end{align}
 for every time $t \in I$.  Let $\mathbf{b}^{(k)}$ be the Bergman function of the line bundle $L^{k}$ with the naturally induced metric.
Then a standard argument implies that
 \begin{align}
    \norm{\dot{\varphi}}{C^0(M)} + \norm{\mathbf{b}^{(k)}}{C^0(M)} + \norm{R}{C^0(M)} +|\lambda| +C_S(M) \leq B^{(k)}
 \label{eqn:SK23_2}
 \end{align}
 for a constant $B^{(k)}$ depending on $B$ and $k$. Define
 \begin{align*}
    &\tilde{\omega}^{(k)} \triangleq \frac{1}{k} \left( \iota^{(k)} \right)^* (\omega_{FS}), \\
    &F^{(k)} \triangleq  \Lambda_{\omega_t} \tilde{\omega}_{0}^{(k)}=n- \Delta \left(\varphi-\mathbf{b}_0^{(k)} \right).
 \end{align*}
 In this section, the existence time of the polarized K\"ahler Ricci flow is always infinity, i.e., $I=[0,\infty)$.

\begin{lemma}[\textbf{Integral bound of trace}]
  Suppose $\mathcal{LM}$ is a polarized K\"ahler Ricci flow satisfying (\ref{eqn:SK23_1}).
  Suppose $u$ is a positive, backward heat equation solution, i.e.,
  \begin{align*}
     \square^* u=(-\partial_{t}-\Delta +R-\lambda n) u=0,
  \end{align*}
  and $\int_{M} u dv \equiv 1$. Then for every $t_0>0$, we have
  \begin{align}
    \int_0^{t_0} \int_{M} F^{(k)}u dv dt \leq \left(n+2B^{(k)} \right)t_0 + 2B^{(k)}.
    \label{eqn:SK07_2}
  \end{align}
  \label{lma:I29_1}
\end{lemma}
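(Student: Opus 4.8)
The plan is to estimate $\int_0^{t_0}\int_M F^{(k)}u\,dv\,dt$ by splitting $F^{(k)} = n - \Delta(\varphi - \mathbf{b}_0^{(k)})$ into its constant part and its Laplacian part. The term $\int_0^{t_0}\int_M n\, u\, dv\, dt = n t_0$ is immediate since $\int_M u\,dv \equiv 1$. So everything reduces to controlling $-\int_0^{t_0}\int_M \Delta(\varphi - \mathbf{b}_0^{(k)})\, u\, dv\, dt$. Here I would write $\psi^{(k)} \triangleq \varphi - \mathbf{b}_0^{(k)}$ (a time-dependent function, since $\varphi$ evolves but $\mathbf{b}_0^{(k)}$ is the fixed initial Bergman function), so the task is to bound $-\int_0^{t_0}\int_M (\Delta \psi^{(k)})\, u\, dv\, dt$.

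The key step is to integrate by parts in the time variable using the conjugate heat equation $\square^* u = (-\partial_t - \Delta + R - \lambda n)u = 0$. The standard trick is to compute $\frac{d}{dt}\int_M \psi^{(k)} u\, dv$. Since $u$ satisfies the backward heat equation and $dv = \omega(t)^n/n!$ evolves by $\partial_t dv = (-R + n\lambda)\, dv$, one gets
\begin{align*}
  \frac{d}{dt}\int_M \psi^{(k)} u\, dv = \int_M \left\{ (\partial_t \psi^{(k)})\, u + \psi^{(k)}\, \partial_t u + \psi^{(k)} u (-R + n\lambda)\right\} dv.
\end{align*}
Using $\partial_t u = -\Delta u + (R - \lambda n)u$ from the conjugate equation, the last two terms combine and after integration by parts (moving $\Delta$ from $u$ onto $\psi^{(k)}$) one obtains $\frac{d}{dt}\int_M \psi^{(k)} u\, dv = \int_M (\partial_t \psi^{(k)} - \Delta \psi^{(k)})\, u\, dv$. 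Now $\partial_t \psi^{(k)} = \partial_t \varphi = \dot{\varphi}$, so integrating in time from $0$ to $t_0$ yields
\begin{align*}
  -\int_0^{t_0}\int_M (\Delta \psi^{(k)})\, u\, dv\, dt = \int_M \psi^{(k)}(t_0) u(t_0)\, dv - \int_M \psi^{(k)}(0) u(0)\, dv - \int_0^{t_0}\int_M \dot{\varphi}\, u\, dv\, dt.
\end{align*}
Each of the three terms on the right is now controlled by (\ref{eqn:SK23_2}): $|\dot\varphi| \le B^{(k)}$ gives the last term $\le B^{(k)} t_0$; and since $\varphi(0) = 0$, we have $\psi^{(k)}(0) = -\mathbf{b}_0^{(k)}$, so $|\psi^{(k)}(0)| \le B^{(k)}$, giving $|\int_M \psi^{(k)}(0) u(0) dv| \le B^{(k)}$. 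For the boundary term at $t_0$, I need $|\psi^{(k)}(t_0)| = |\varphi(t_0) - \mathbf{b}_0^{(k)}| \le |\varphi(t_0)| + B^{(k)}$; combining with the $\Delta(\varphi)$ piece I should instead keep $\varphi$ and $\mathbf{b}_0^{(k)}$ together more carefully, using $\norm{\mathbf{b}^{(k)}(t_0)}{C^0} \le B^{(k)}$ as well, so the $\varphi$ contributions telescope against the Bergman bounds. Assembling: $-\int_0^{t_0}\int_M \Delta\psi^{(k)} u\,dv\,dt \le 2B^{(k)} t_0 + 2B^{(k)}$ after being slightly generous with constants, and adding back $n t_0$ gives the claimed $(n + 2B^{(k)})t_0 + 2B^{(k)}$.

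The main obstacle I anticipate is handling the $\varphi(t_0)$ term, which is \emph{not} directly bounded by (\ref{eqn:SK23_1}) — only $\dot\varphi$ is controlled in $C^0$, not $\varphi$ itself, and along an infinite-time flow $\varphi$ could grow. The resolution must be that $F^{(k)} = \Lambda_{\omega_t}\tilde\omega_0^{(k)} \ge 0$ is manifestly nonnegative (it is a trace of a positive form against a positive metric), so one only needs the \emph{upper} bound, and the combination $\Delta \varphi$ appearing in $F^{(k)}$ is paired with $\Delta \mathbf{b}_0^{(k)}$ in a way that the genuinely problematic term is $n - \Delta\varphi$, i.e. $\Lambda_{\omega_t}\omega_0$. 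One should track that $\int_M (n - \Delta\varphi) u\, dv$ telescopes via $\frac{d}{dt}\int_M \varphi u\,dv = \int_M(\dot\varphi - \Delta\varphi)u\,dv$ so that $\int_0^{t_0}\int_M(-\Delta\varphi)u\,dv\,dt = \int_M \varphi(t_0)u(t_0)dv - \int_M\varphi(0)u(0)dv - \int_0^{t_0}\int_M\dot\varphi u\,dv\,dt$, and since $\varphi(0)=0$ the only surviving awkward term is $\int_M\varphi(t_0)u(t_0)dv$; this must be absorbed either by noting $\varphi(t_0) = \psi^{(k)}(t_0) + \mathbf{b}_0^{(k)}$ and that the $\psi^{(k)}(t_0)$ part re-pairs with $\Delta\mathbf{b}_0^{(k)}$ — so in fact one should do the integration by parts on $\psi^{(k)} = \varphi - \mathbf{b}_0^{(k)}$ as a single object from the start, and then $\psi^{(k)}(t_0) = \varphi(t_0) - \mathbf{b}_0^{(k)}$ is controlled because $\iota^{(k),*}\omega_{FS}$ and $\omega(t)$ stay in bounded relation via the $C^0$-bound on the time-$t_0$ Bergman function $\mathbf{b}^{(k)}(t_0)$ together with $\varphi(t_0) = (\varphi + \mathbf{b}^{(k)})(t_0) - \mathbf{b}^{(k)}(t_0)$ and the fact that $\varphi + \mathbf{b}^{(k)}$ is the potential of $\tilde\omega^{(k)}$ relative to $\omega_0$. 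Pinning down this bookkeeping so that $\psi^{(k)}(t_0)$ is genuinely bounded by $C(B^{(k)})$ is the crux; once that is in hand the rest is the routine integration-by-parts computation sketched above.
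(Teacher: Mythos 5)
The core integration-by-parts computation is correct and matches the paper exactly: split $F^{(k)}=n-\Delta\psi^{(k)}$ with $\psi^{(k)}=\varphi-\mathbf{b}_0^{(k)}$, pass $\Delta$ onto $u$, use $\square^*u=0$ and the volume-element evolution $\partial_t dv=(n\lambda-R)dv$ to recognize $\frac{d}{dt}\int_M\psi^{(k)}u\,dv-\int_M\dot\varphi u\,dv$. Up to that point you have reproduced the proof.

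The gap is in the final step, and it stems from a misdiagnosis. You correctly observe that $|\varphi(t_0)|$ is not bounded by a $t_0$-independent constant, but you then treat this as a serious obstruction and propose to route around it by relating $\psi^{(k)}(t_0)$ to the time-$t_0$ Bergman function $\mathbf{b}^{(k)}(t_0)$ and the potential of $\tilde\omega^{(k)}$. That route does not close: knowing $\tilde\omega^{(k)}(t_0)$ and $\omega_0$ are cohomologous and that their potential difference involves $\varphi(t_0)+\mathbf{b}^{(k)}(t_0)$ gives no $C^0$ control on $\varphi(t_0)$ itself, since potentials are only determined up to constants and you would need an a priori normalization. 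Moreover, if such a $t_0$-independent bound on $\psi^{(k)}(t_0)$ existed, the conclusion of the lemma would be stronger than stated (linear-in-$t_0$ growth with slope $n+B^{(k)}$, not $n+2B^{(k)}$), which should itself be a warning sign.

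The actual resolution is immediate and you already have all the ingredients: since $\varphi(0)=0$ (fixed by the normalization in the paragraph preceding the lemma) and $\norm{\dot\varphi}{C^0}\leq B^{(k)}$ from (\ref{eqn:SK23_2}), the fundamental theorem of calculus gives $|\varphi(t_0)|=\bigl|\int_0^{t_0}\dot\varphi\,dt\bigr|\leq B^{(k)}t_0$. Hence $|\psi^{(k)}(t_0)|\leq B^{(k)}t_0+B^{(k)}$, and the three terms sum to $nt_0+(B^{(k)}t_0+B^{(k)})+B^{(k)}+B^{(k)}t_0=(n+2B^{(k)})t_0+2B^{(k)}$, exactly the claimed bound. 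The linear-in-$t_0$ term on the right-hand side of (\ref{eqn:SK07_2}) is precisely there to absorb the linear growth of $\varphi$; you should expect it, not try to eliminate it.
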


\begin{proof}
For simplicity of notation, we only give a proof of the case $k=1$ and denote $F=F^{(1)}$. Note that $\mathbf{b}=\mathbf{b}^{(1)}, B=B^{(1)}$.
The proof of general $k$ follows verbatim.

   Direct calculation shows that
   \begin{align*}
     \int_0^{t_0} \int_{M} Fu dv &=nt_0- \int_{0}^{t_0} \int_{M} \{\Delta (\varphi-\mathbf{b}_0)\} u dv\\
     &=nt_0-\int_0^{t_0}\int_{M} (\varphi-\mathbf{b}_0) (\Delta u) dv\\
     &=nt_0+ \int_{0}^{t_0} \int_{M} (\varphi-\mathbf{b}_0) \left( \dot{u}-Ru +\lambda n u \right) dv\\
     &=nt_0 + \int_{0}^{t_0}   \left[ \frac{d}{dt}\left( \int_{M} (\varphi-\mathbf{b}_0) u dv \right) - \int_{M}\dot{\varphi} u dv\right]dt\\
     &=nt_0+ \left. \int_{M} (\varphi-\mathbf{b}_0) u dv \right|_0^{t_0} - \int_0^{t_0} \int_{M} \dot{\varphi} u dv dt.
   \end{align*}
   Note $|\varphi| \leq Bt_0$ at time $t_0$,  then (\ref{eqn:SK07_2}) follows from the above inequality and (\ref{eqn:SK23_2}).
\end{proof}

We shall proceed to improve the integral estimate (\ref{eqn:SK07_2}) of $F^{(k)}$ to pointwise estimate, under local geometry bounds.
Before we go into details, let us first fix some notations.
Suppose $\mathcal{LM}$ is a polarized K\"ahler Ricci flow solution satisfying (\ref{eqn:SK23_1}),  $x_0 \in M$.
In this subsection, we shall always assume
\begin{align}
    \Omega \triangleq B_{g(0)}(x_0,r_0),  \quad \Omega' \triangleq B_{g(0)}(x_0, (1-\delta)r_0),
    \quad  \Omega'' \triangleq B_{g(0)}(x_0, (1-2\delta)r_0).
\label{eqn:SL26_3}
\end{align}
See Figure~\ref{figure:fourballs} for intuition. Then we define
\begin{align}
  w_0 \triangleq \phi \left(\frac{2(d-1+2\delta)}{\delta} \right),
\label{eqn:SL26_4}
\end{align}
where $d=d_{g(0)}(x_0, \cdot)$, $\phi$ is a cutoff function, which equals one on $(-\infty, 1]$, decreases to $0$ on $(1,2)$. Moreover, $(\phi')^2 \leq 10\phi$.
Note that such $\phi$ exists by considering the behavior of $e^{-\frac{1}{s}}$ around $s=0$.
Clearly, $w_0$ satisfies
\begin{align}
\begin{cases}
   &|\nabla w_0|^2 \leq \frac{40}{\delta^2}w_0, \\
   &w_0 \equiv 1, \quad \textrm{on} \quad \Omega'', \\
   &w_0 \equiv 0, \quad \textrm{on} \quad (\Omega')^c.
\end{cases}
\label{eqn:SL26_5}
\end{align}

\begin{lemma}[\textbf{Pointwise bound of trace}]
Suppose $\mathcal{LM}$ is a polarized K\"ahler Ricci flow satisfying (\ref{eqn:SK23_1}), $x_0 \in M$, $\Omega'$ is defined by (\ref{eqn:SL26_3}).
Suppose $\frac{1}{2} \omega_0 \leq \tilde{\omega}_0^{(k)} \leq 2 \omega_0$ on $\Omega'$.
Let $w$ be a solution of heat equation $\square w=\left( \frac{\partial}{\partial t} -\Delta \right)w=0$, initiating from a cutoff function $w_0$
satisfying (\ref{eqn:SL26_5}). Then for every $t_0>0$ and $y_0 \in M$, we have
  \begin{align}
      F^{(k)}(y_0,t_0)  w(y_0, t_0) \leq C   \label{eqn:SK23_3}
  \end{align}
where $C=C(B,k,\delta,t_0)$.
  \label{lma:J02_1}
\end{lemma}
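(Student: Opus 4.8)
The plan is to derive a pointwise bound on the product $F^{(k)}w$ by running a maximum principle argument on a well-chosen auxiliary quantity, using the parabolic Schwarz (Chern--Lu) inequality for $F^{(k)}$ together with the fact that $w$ is a heat solution and the integral bound already established in Lemma~\ref{lma:I29_1}. First I would recall that $F^{(k)}=\Lambda_{\omega_t}\tilde\omega_0^{(k)}$ is the trace of the \emph{fixed} K\"ahler metric $\tilde\omega_0^{(k)}$ (a pullback of a Fubini--Study form, hence with curvature bounded below in terms of $k$ and $B$) with respect to the evolving metric $\omega_t$. The standard Chern--Lu / Yau computation for the identity map $(M,\omega_t)\to(M,\tilde\omega_0^{(k)})$ gives an inequality of the form
\begin{align*}
  \left(\frac{\partial}{\partial t}-\Delta\right)\log F^{(k)} \leq C(k,B)\, F^{(k)},
\end{align*}
where the $C(k,B)F^{(k)}$ term absorbs the lower curvature bound of $\tilde\omega_0^{(k)}$ and the term $\lambda$ from \eqref{eqn:K07_1} (the bisectional curvature lower bound of the target is the only geometric input, and it is controlled because $\tilde\omega_0^{(k)}$ is a pullback of $\omega_{FS}$ under the Kodaira embedding, with a bound depending on $k$ and \eqref{eqn:SK23_2}).

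Next I would consider the quantity $H \triangleq t\, w\, \log F^{(k)} - A \int_0^t\!\int_M F^{(k)} u\,dv$ or, more directly in the local setting, work with $G \triangleq w F^{(k)}$ (or $w\log F^{(k)}$) and compute $\left(\frac{\partial}{\partial t}-\Delta\right)G$. Since $\square w=0$ we get $\left(\frac{\partial}{\partial t}-\Delta\right)(w\log F^{(k)}) = w\left(\frac{\partial}{\partial t}-\Delta\right)\log F^{(k)} - 2\langle\nabla w,\nabla\log F^{(k)}\rangle \leq C(k,B) w F^{(k)} - 2\langle\nabla w,\nabla\log F^{(k)}\rangle$. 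The cross term is handled by Cauchy--Schwarz using the gradient bound $|\nabla w_0|^2\le \frac{40}{\delta^2}w_0$ on the initial cutoff and the fact (standard for heat solutions from a cutoff with bounded gradient, using the Li--Yau / Bernstein estimate or a simple energy argument) that $|\nabla w|^2 \le C(\delta,t_0) w$ is preserved up to controlled deterioration for $t\in[0,t_0]$; then $2|\langle\nabla w,\nabla\log F^{(k)}\rangle| \le \epsilon w|\nabla\log F^{(k)}|^2 + \epsilon^{-1}\delta^{-2}w F^{(k)}/(\cdots)$, and the $w|\nabla\log F^{(k)}|^2$ term can be dominated after noting that $\log F^{(k)}$ itself satisfies a good differential inequality, or alternatively by working with $\log F^{(k)}$ raised to control the gradient via the Chern--Lu inequality's own second-order terms. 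The upshot is a differential inequality $\square(w\log F^{(k)}) \le C(k,B,\delta) w F^{(k)}$ modulo the gradient term.

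Then the key step: integrate in time against the conjugate heat kernel. Take $u$ to be the solution of $\square^* u = 0$ with $\lim_{t\to t_0}u(\cdot,t)=\delta_{y_0}$, so $\int_M u\,dv\equiv 1$ (this is where the Sobolev bound $C_S(M)\le B$ in \eqref{eqn:SK23_1} enters, to guarantee existence and Gaussian bounds for $u$). Pairing the differential inequality with $u$ and using $\frac{d}{dt}\int_M (w\log F^{(k)}) u\,dv = \int_M \square(w\log F^{(k)})\, u\, dv$ gives
\begin{align*}
  w(y_0,t_0)\log F^{(k)}(y_0,t_0) \leq \int_M (w\log F^{(k)})(\cdot,\epsilon) u(\cdot,\epsilon)\,dv + C(k,B,\delta)\int_0^{t_0}\!\int_M F^{(k)} u\,dv\,dt + (\text{error}).
\end{align*}
As $\epsilon\to 0$ the first term tends to $(w_0\log F^{(k)})(y_0,0) \le \log F^{(k)}(y_0,0)$, which is bounded by $\log(n - \Delta(\varphi-\mathbf{b}_0^{(k)}))$ at $t=0$ — and at $t=0$ we have $\varphi(0)=0$ so this is controlled by the hypothesis $\frac12\omega_0\le\tilde\omega_0^{(k)}\le 2\omega_0$ on $\Omega'$, giving $F^{(k)}(\cdot,0)\le 2n$ on the support of $w_0$. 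The middle term is bounded by Lemma~\ref{lma:I29_1}, which gives $\int_0^{t_0}\!\int_M F^{(k)}u\,dv\,dt \le (n+2B^{(k)})t_0 + 2B^{(k)}$. Combining, $w(y_0,t_0)\log F^{(k)}(y_0,t_0) \le C(B,k,\delta,t_0)$. Since $F^{(k)}>0$ and $x\log x$ bounded above forces $x$ bounded above when the coefficient $w$ is positive (and when $w(y_0,t_0)$ is small the claim \eqref{eqn:SK23_3} is trivial because then $F^{(k)}w$ could still be large — so one must be careful), I would instead carry $G = wF^{(k)}$ directly or use that $\log F^{(k)}$ is bounded below by $\log(1) $-type estimates is false; rather the clean fix is: wherever $w(y_0,t_0)\ge \eta>0$, the bound on $w\log F^{(k)}$ yields a bound on $F^{(k)}$ hence on $wF^{(k)}$; wherever $w(y_0,t_0)<\eta$ we bound $F^{(k)}w$ by running the same argument on $wF^{(k)}$ whose drift inequality $\square(wF^{(k)})\le C wF^{(k)} + (\text{grad})$ combined with Gr\"onwall in time and the $L^1$ bound $\int F^{(k)}u$ closes it.

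\textbf{Main obstacle.} The hard part will be controlling the gradient cross-term $\langle\nabla w,\nabla\log F^{(k)}\rangle$ and the second-order ``good'' terms in the Chern--Lu inequality simultaneously, so that the differential inequality for $wF^{(k)}$ (or $w\log F^{(k)}$) has a right-hand side that is \emph{integrable} against $u$ over $[0,t_0]\times M$ — this is exactly why one needs the integral bound of Lemma~\ref{lma:I29_1} rather than just a crude maximum-principle bound, and why the cutoff $w$ (with its carefully arranged $(\phi')^2\le 10\phi$ property) is introduced: it lets the gradient term be absorbed into $C\,w\,F^{(k)}$ with a constant depending only on $\delta$. A secondary technical point is justifying the integration-by-parts and the $\epsilon\to 0$ limit against the conjugate heat kernel $u$, which requires the Gaussian-type estimates on $u$ available from the uniform Sobolev constant in \eqref{eqn:SK23_1}; this is routine given \cite{Zhq1},\cite{Ye} but must be stated. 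I expect the bisectional-curvature-lower-bound input for $\tilde\omega_0^{(k)}$ (needed for Chern--Lu) to be the cleanest part, since it follows from the Kodaira embedding and \eqref{eqn:SK23_2} with a constant depending only on $k$ and $B$.
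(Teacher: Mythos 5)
Your proposal correctly identifies the right ingredients (the parabolic Schwarz/Chern--Lu inequality for $F^{(k)}$, the gradient-preservation property of the cutoff $w$, pairing against the conjugate heat kernel $u$ based at $(y_0,t_0)$, and invoking Lemma~\ref{lma:I29_1}), and it correctly flags the cross term $\langle\nabla w,\nabla\log F^{(k)}\rangle$ as the main difficulty. But you never close that difficulty, and the paper's proof does so by two specific devices you are missing.

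First, the Chern--Lu inequality gives only $\square\log F^{(k)} \leq BF^{(k)}-\lambda$, with the troublesome $BF^{(k)}$ on the right. The paper kills this by introducing the twisted potential $\tilde\varphi=\varphi-\mathbf{b}_0^{(k)}$ (so $\omega_t=\tilde\omega_0^{(k)}+\sqrt{-1}\partial\bar\partial\tilde\varphi$) and noting $\square\tilde\varphi=F^{(k)}-n+\dot\varphi$. Hence $\square(\log F^{(k)}-B\tilde\varphi)\leq C(B)$ is bounded by a \emph{constant}, not by $F^{(k)}$. Your proposal never introduces this correction, and without it a Gr\"onwall argument in $F^{(k)}$ itself does not close, since the coefficient is not integrable a priori.

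Second, the cross term is not absorbed by Cauchy--Schwarz at all; it is eliminated structurally. For a positive quantity $A$ one has $\square A = A\bigl(\square\log A-|\nabla\log A|^2\bigr)$, and the full gradient term $-|\nabla\log A|^2$ has a sign and can simply be discarded. Applying this with $A=F^{(k)}e^{-B\tilde\varphi}w$ gives
\begin{align*}
  \frac{d}{dt}\int_M F^{(k)}e^{-B\tilde\varphi}wu\,dv
  &=\int_M \square\bigl(F^{(k)}e^{-B\tilde\varphi}w\bigr)u\,dv
  \leq \int_M F^{(k)}e^{-B\tilde\varphi}w\,\square\log\bigl(F^{(k)}e^{-B\tilde\varphi}w\bigr)u\,dv\\
  &=\int_M F^{(k)}e^{-B\tilde\varphi}w\Bigl\{\square\log\bigl(F^{(k)}e^{-B\tilde\varphi}\bigr)+|\nabla\log w|^2\Bigr\}u\,dv,
\end{align*}
since $\square\log w=|\nabla\log w|^2$ for a heat solution. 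The first bracketed term is $\leq C$ by the previous paragraph, and the second is handled by the preserved estimate $w|\nabla\log w|^2\leq H=40/\delta^2$ (obtained from the maximum principle applied to $e^{\lambda t}(|\nabla w|^2-Hw)$), yielding $\leq H\int F^{(k)}e^{-B\tilde\varphi}u\,dv$, which is then controlled by Lemma~\ref{lma:I29_1}. No cross term $\langle\nabla w,\nabla\log F^{(k)}\rangle$ ever appears. Your closing discussion, which first proposes $w\log F^{(k)}$ (correctly noting it does not directly bound $wF^{(k)}$) and then switches to $wF^{(k)}$ without resolving the gradient term, reflects precisely the missing structural idea; as written, the argument does not close.
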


\begin{proof}
For simplicity of notation, we only give a proof for the case $k=1$ and denote $F=F^{(1)}$, $B=B^{(1)}$, $H=\frac{40}{\delta^2}$.
The proof of general $k$ follows verbatim.

Note that $0 \leq w_0 \leq 1$, since $w$ is the heat solution, it follows from maximum principle that $0 \leq w \leq 1$.
On the other hand,  according to the choice of $w_0$, we have $|\nabla w|^2 -Hw \leq 0$ at the initial time.  Direct calculation implies that
\begin{align*}
  \square \left\{ e^{\lambda t}\left(|\nabla w|^2 -Hw\right) \right\}
  =-e^{\lambda t}\left\{|\nabla \nabla w|^2 +|\nabla \bar{\nabla} w|^2 +Hw\right\} \leq 0.
\end{align*}
Therefore, $|\nabla w|^2 -Hw \leq 0$ is preserved along the flow by maximum principle.
In other words, we always have
\begin{align*}
  w|\nabla \log w|^2 \leq H, \quad 0 \leq w \leq 1
\end{align*}
on the space-time $M \times [0,\infty)$.
In light of parabolic Schwarz lemma(c.f.~\cite{SongWeinnotes} and references therein), we obtain
\begin{align*}
 \square \log F \leq B F-\lambda.
\end{align*}
Note that
\begin{align*}
  \omega(t)=\omega_0+\sqrt{-1} \partial \bar{\partial} \varphi=\tilde{\omega}_0+\sqrt{-1} \partial \bar{\partial} (\varphi-\mathbf{b}_0)
  =\tilde{\omega}_0+\sqrt{-1} \partial \bar{\partial}\tilde{\varphi},
\end{align*}
where we denote $\varphi-\mathbf{b}_0$ by $\tilde{\varphi}$ for simplicity of notation. It is obvious that $\dot{\tilde{\varphi}}=\dot{\varphi}$.
Direct calculation shows that
\begin{align*}
  &\square \tilde{\varphi}=\dot{\varphi}-\Delta \tilde{\varphi}= F-n+\dot{\varphi}, \\
  &\square (\log F - B\tilde{\varphi})\leq B(n-\dot{\varphi})-\lambda \leq B(n+\norm{\dot{\varphi}}{C^0(\mathcal{M})})+|\lambda|
  \leq C.
\end{align*}
Let $u$ be the solution of $\square^*u=0$, starting from a $\delta$-function from $(y_0, t_0)$. Then we calculate
\begin{align*}
  \frac{d}{dt} \int_{M} Fe^{-B\tilde{\varphi}} wu dv
  &=\int_{M} \square(Fe^{-B\tilde{\varphi}}w) u dv - \int_{M} Fe^{-B\tilde{\varphi}}w \square^*u dv\\
     &=\int_{M} \square(Fe^{-B\tilde{\varphi}}w) u dv\\
     &=\int_{M} Fe^{-B\tilde{\varphi}}w \left\{\square \log (Fe^{-B\tilde{\varphi}}w)-|\nabla \log (Fe^{-B\tilde{\varphi}}w)|^2 \right\} udv\\
     &\leq \int_{M} Fe^{-B\tilde{\varphi}}w \left\{\square \log (Fe^{-B\tilde{\varphi}}w) \right\} udv\\
     &=\int_{M} Fe^{-B\tilde{\varphi}}w \left\{\square \log (Fe^{-B\tilde{\varphi}}) + \square \log w \right\} udv\\
     &=\int_{M} Fe^{-B\tilde{\varphi}}w \left\{\square \log (Fe^{-B\tilde{\varphi}}) + |\nabla \log w|^2 \right\} udv\\
     &\leq C \int_{M} Fe^{-B\tilde{\varphi}} w u dv + H \int_{M} Fe^{-B\tilde{\varphi}} u dv.
\end{align*}
It follows that
\begin{align*}
  \frac{d}{dt} \left\{e^{-Ct} \int_{M} Fe^{-B\tilde{\varphi}} w u dv \right\} \leq H e^{-Ct} \int_{M} Fu dv.
\end{align*}
Integrate the above inequality and apply Lemma~\ref{lma:I29_1}, we have
\begin{align*}
  e^{-Ct_0} F(y_0,t_0)w(y_0, t_0) e^{-B\tilde{\varphi}(y_0, t_0)} &\leq \left. \int_{M} Fe^{-B\tilde{\varphi}}wu dv \right|_{t=0} + H \int_0^{t_0} \int_{M} Fu dv dt\\
                       &\leq C \left. \int_{\Omega'} Fu dv \right|_{t=0}+C\\
                       &\leq C \left. \int_{\Omega'} u dv \right|_{t=0} + C \leq C.
\end{align*}
Therefore,  (\ref{eqn:SK23_3}) follows directly from the above inequality.
\end{proof}

\begin{figure}
 \begin{center}
 \psfrag{t0}[c][c]{$t=0$}
 \psfrag{t1}[c][c]{$t=t_0$}
 \psfrag{x0}[c][c]{$x_0$}
 \psfrag{A1}[c][c]{$(M,x_0,g(t_0))$}
 \psfrag{A2}[c][c]{$(M,x_0,g(0))$}
 \psfrag{B1}[c][c]{$\Omega=\textrm{blue}$}
 \psfrag{B2}[c][c]{$\Omega'=\textrm{green}$}
 \psfrag{B3}[c][c]{$\Omega''=\textrm{red}$}
 \psfrag{B4}[c][c]{$B_{g(t_0)}(x_0,r)=\textrm{black}$}
 \includegraphics[width=0.5 \columnwidth]{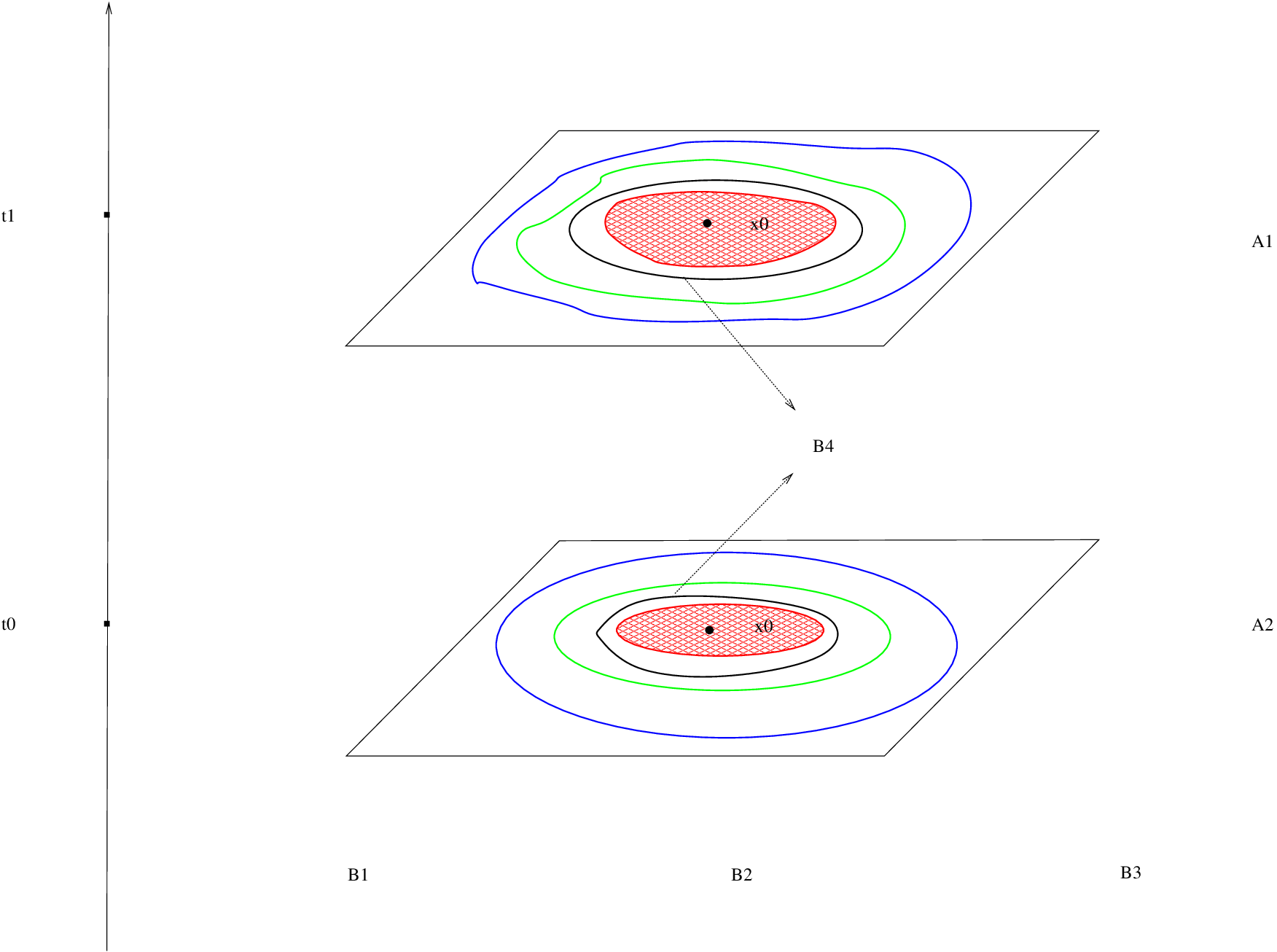}
 \caption{Different domains}
  \label{figure:fourballs}
 \end{center}
 \end{figure}

\begin{lemma}[\textbf{Lower bound of heat solution}]
Suppose $\mathcal{LM}$ is a polarized K\"ahler Ricci flow satisfying (\ref{eqn:SK23_1}), $x_0 \in M$,
notations fixed by (\ref{eqn:SL26_3}) and (\ref{eqn:SL26_4}).

Suppose $\Omega'' \subset B_{g(t)}(x_0, r)$ for some $t>0$ and $r>0$. Then in the geodesic ball $B_{g(t)}(x_0, r)$, we have
\begin{align*}
  w(y,t)>c
\end{align*}
for some constant $c=c(n,B,k,\delta,r_0,r,t)$.
\label{lma:SK23_1}
\end{lemma}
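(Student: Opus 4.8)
The plan is to propagate the information that $w_0 \equiv 1$ on $\Omega''$ forward in time by means of the parabolic maximum principle, together with a Harnack-type lower bound for positive heat solutions coming from the uniform geometry bounds in (\ref{eqn:SK23_1}). First I would note that $w$ solves $\square w = (\partial_t - \Delta)w = 0$ with $0 \le w_0 \le 1$, so by the maximum principle $0 \le w \le 1$ on $M \times [0,\infty)$; in particular $w(\cdot, s)$ is a nonnegative heat solution for every $s$. The key mechanism is that at the initial time $w \equiv 1$ on the fixed-sized set $\Omega'' = B_{g(0)}(x_0, (1-2\delta)r_0)$, which has definite $g(0)$-volume bounded below by the $\kappa$-noncollapsing (equivalently, the Sobolev bound $C_S(M) \le B$ in (\ref{eqn:SK23_1})).

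The main step is a heat-kernel lower bound argument. Write $w(y,t) = \int_M w_0(z)\, G(z, 0; y, t)\, dv_{g(0)}(z)$ where $G$ is the (conjugate) heat kernel of the flow $\mathcal{LM}$; since $w_0 \ge \chi_{\Omega''}$, we get $w(y,t) \ge \int_{\Omega''} G(z,0;y,t)\, dv_{g(0)}(z)$. The Sobolev constant bound $C_S(M) \le B$ along the flow, combined with the scalar curvature bound $\norm{R}{C^0} \le B$, yields via the standard Moser iteration / Davies-Gaffney machinery (as in Q.~Zhang and Ye, cited in the introduction) a two-sided Gaussian bound on $G$ depending only on $n, B$ and the elapsed time. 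A Gaussian lower bound $G(z,0;y,t) \ge c_1 t^{-n} e^{-c_2 d^2/t}$ valid for $z \in \Omega''$, $y \in B_{g(t)}(x_0, r)$ — here $d$ denotes a space-time distance which is controlled because the set $\Omega'' \subset B_{g(t)}(x_0,r)$ is assumed to remain inside a fixed-radius $g(t)$-ball, and because the Ricci potential bound forces two-sided volume/distance distortion control over the fixed time interval $[0,t]$ — then gives
\begin{align*}
  w(y,t) \ge c_1 t^{-n} e^{-c_2 D^2/t}\, |\Omega''|_{g(0)} \ge c(n, B, k, \delta, r_0, r, t) > 0,
\end{align*}
where $D = D(r_0, r, t)$ bounds the relevant distance and $|\Omega''|_{g(0)} \ge \kappa \omega_{2n}((1-2\delta)r_0)^{2n}$.

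Alternatively, if one prefers to avoid heavy heat-kernel estimates, I would run a barrier/maximum-principle argument directly: construct a subsolution of $\square$ of the form $\psi(d_{g(t)}(x_0,\cdot), t)$ supported near $\Omega''$ and use the Laplacian comparison for $d_{g(t)}$ (which needs only a lower Ricci bound on the relevant ball — available from $\norm{R}{C^0} \le B$ together with the K\"ahler-Ricci flow equation, at least after localizing, though a genuine two-sided Ricci bound on the ball would make this cleanest), then compare with $w$ via the maximum principle to conclude $w \ge c$ on $B_{g(t)}(x_0,r)$. Either way the conclusion is uniform in the stated parameters.

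The hard part will be making rigorous the control of how the geodesic ball $B_{g(t)}(x_0,r)$ sits relative to $\Omega''$ over the time interval $[0,t]$ without an a priori Ricci bound: this is exactly where one must exploit the bound on the Ricci potential $\dot\varphi$ and the scalar curvature along the polarized K\"ahler Ricci flow (rather than a pointwise $|Ric|$ bound). The cleanest route is probably to use the distance distortion estimates implied by $\square$-heat-kernel Gaussian bounds, which only require the Sobolev and scalar curvature bounds from (\ref{eqn:SK23_1}), so the heat-kernel approach is likely the one I would carry out in full; the barrier approach, while conceptually simpler, risks needing hypotheses slightly stronger than what (\ref{eqn:SK23_1}) provides.
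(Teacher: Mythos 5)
Your overall plan (maximum principle for $0\le w\le 1$, write $w$ as a heat-kernel integral, bound below by the contribution from $\Omega''$, and use the Sobolev constant to bound $|\Omega''|$ from below) matches the paper's. The gap is in the middle step: you invoke ``a two-sided Gaussian bound on $G$ \ldots\ via the standard Moser iteration / Davies--Gaffney machinery,'' but Moser iteration and Davies--Gaffney give the on-diagonal and off-diagonal \emph{upper} Gaussian bounds; a Gaussian \emph{lower} bound for the conjugate heat kernel along a Ricci flow with only Sobolev and scalar curvature control is not a freely available package, and the paper does not rely on it.

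What the paper does instead is split the off-diagonal lower bound into two ingredients that are available here. First, the Sobolev and scalar curvature bounds give the \emph{on-diagonal} two-sided bound $C^{-1}t^{-n}\le P(x,t;x,0)\le Ct^{-n}$. Second, the Cao--Hamilton--Zhang gradient estimate for positive heat solutions under Ricci flow (Theorem 3.3 of \cite{Zhq2}, needing only the scalar curvature bound) gives a Harnack-type inequality at time $t$: $|\nabla\log P(\cdot,t;y,0)|$ is controlled, so integrating along a $g(t)$-geodesic propagates the on-diagonal lower bound to $P(x_0,t;y,0)\ge C^{-1}t^{-n}$ with $C=C(B,d_{g(t)}(x_0,y))$. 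Plugging this into the heat-kernel representation over $\Omega''$ and using the noncollapsing lower bound on $|\Omega''|$ gives $w(x_0,t)\ge c$. Applying the same gradient estimate once more to $w$ itself spreads the lower bound over all of $B_{g(t)}(x_0,r)$.

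Your worry at the end about controlling distance distortion on $[0,t]$ ``without an a priori Ricci bound'' is also a red herring given the hypotheses: the assumption $\Omega''\subset B_{g(t)}(x_0,r)$ is precisely what pins down $d_{g(t)}(x_0,y)\le r$ for all $y\in\Omega''$, and the gradient estimate is applied at time $t$, so no intermediate-time distance comparison is needed. Once you replace the unproven ``two-sided Gaussian lower bound'' by the on-diagonal lower bound plus the Cao--Hamilton--Zhang gradient estimate and use the hypothesis to control $d_{g(t)}$, the argument closes.
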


\begin{proof}
By the construction of $w_0$ and maximum principle, it is clear that $0 <w \leq 1$ when $t>0$.
Let $P$ be the heat kernel function, then we can write
\begin{align}
    w(x_0,t)=\int_M P(x_0,t; y,0) w_0(y) dv_y \geq \int_{\Omega''} P(x_0,t;y,0)w_0(y)dv_y.
\label{eqn:SK23_4}
\end{align}
In light of the Sobolev constant bound and scalar curvature bound, one has the on-diagonal bound
\begin{align*}
           \frac{1}{C} t^{-n} \leq P(x,t;x,0) \leq C t^{-n},
\end{align*}
which combined with the gradient estimate of heat equation(c.f. Theorem 3.3 of~\cite{Zhq2}) implies that
\begin{align*}
   P(x,t;y,0) \geq \frac{1}{C} t^{-n}
\end{align*}
where $C=C(B,d_{g(t)}(x,y))$.  Plugging this estimate into (\ref{eqn:SK23_4}) implies that
\begin{align*}
  w(x_0,t) \geq \frac{|\Omega''|}{Ct^n}.
\end{align*}
Note that $C_S$ bound forces $|\Omega''|$ is bounded from below.  Since $0<w\leq 1$,
then (\ref{eqn:SK23_4}) follows from the above inequality and the gradient estimate of heat equation.
\end{proof}

The following two lemmas show that K\"ahler geometry is much more rigid than Riemannian geometry.

\begin{lemma}[\textbf{Fubini-Study approximation}]
Suppose $\mathcal{LM}$ is a polarized K\"ahler Ricci flow satisfying (\ref{eqn:SK23_1}), $x_0 \in M$,
notations fixed by (\ref{eqn:SL26_3}) and (\ref{eqn:SL26_4}).

Suppose $|Rm| \leq r_0^{-2}$ in $\Omega$ at time $t=0$.  Then there exists an integer $k=k(B,r_0, \delta)$ such that
\begin{align}
   \frac12 \omega_0 \leq \tilde{\omega}_0^{(k)} \leq 2\omega_0      \label{eqn:SL29_3}
\end{align}
on $\Omega'$.
\label{lma:SK25_1}
\end{lemma}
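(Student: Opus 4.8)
The plan is to reduce this to the standard partial-$C^0$-type estimate obtained from H\"ormander's $L^2$ $\bar\partial$-technique, applied to the line bundle $L^k$ with its naturally induced metric $h^k(0)$ at the fixed time $t=0$. First I would observe that the hypothesis $|Rm|\le r_0^{-2}$ on $\Omega$, combined with (\ref{eqn:SK23_1}) (which controls $C_S(M)$, hence a non-collapsing bound, and controls $R$, hence $Ric$ in the K\"ahler case via the flow equation) gives uniform $C^\infty$ local geometry on the slightly smaller ball: by Shi-type interior estimates along the flow, or simply by elliptic regularity for the K\"ahler-Einstein-type equation satisfied by $\omega_0$ up to the controlled Ricci potential $\dot\varphi$, one gets $|\nabla^j Rm|\le C(B,r_0,\delta,j)$ on $\Omega'$ together with a uniform injectivity radius lower bound there. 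Thus $(\Omega', x_0, g(0))$ sits in a compact family of pointed K\"ahler manifolds with bounded geometry and a polarizing line bundle whose curvature is $\omega_0$ with $|\dot\varphi|,\,|R|,\,C_S$ all bounded.

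Next I would invoke the H\"ormander construction of peak sections (as in Tian's original partial-$C^0$ argument and as used by Donaldson-Sun, cf.~\cite{DS}): for $k$ large depending only on the geometry bounds just established, and for each point $y\in\Omega'$, one produces holomorphic sections of $L^k$ which are concentrated near $y$ and have controlled $L^2$ and pointwise norms, using a cutoff of a local holomorphic frame and solving $\bar\partial$ with the Bochner-Kodaira-Nakano estimate; the positivity needed comes from $Ric(\omega_0)+k\omega_0>0$, which holds once $k$ exceeds a bound depending on the uniform lower Ricci bound. This yields on $\Omega'$ a uniform two-sided bound on the Bergman kernel of $L^k$, i.e. $\frac{1}{C}\le \mathbf{b}_0^{(k)}$-type density $\le C$, and moreover, because the full $1$-jet of these peak sections is controlled (one constructs sections peaking to first order), the induced metric $\tilde\omega_0^{(k)}=\frac1k(\iota^{(k)})^*\omega_{FS}$ is $C^0$-close to $\omega_0$ on $\Omega'$ after rescaling, with the closeness improving as $k\to\infty$. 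Concretely $\tilde\omega_0^{(k)}=\omega_0+\sqrt{-1}\partial\bar\partial \mathbf{b}_0^{(k)}$ and one estimates $\|\sqrt{-1}\partial\bar\partial \mathbf{b}_0^{(k)}\|_{C^0(\Omega')}$ via second-derivative control of the Bergman kernel, which the peak-section machinery provides under bounded geometry. Choosing $k=k(B,r_0,\delta)$ large enough makes this perturbation smaller than $\frac12$ in operator norm relative to $\omega_0$, giving (\ref{eqn:SL29_3}).

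The main obstacle, and the step deserving the most care, is ensuring that all constants in the H\"ormander/peak-section construction depend only on $B$, $r_0$, $\delta$ and not on the particular flow $\mathcal{LM}$: this requires the uniform bounded-geometry package on $\Omega'$ (bounded curvature derivatives and injectivity radius lower bound) to be genuinely quantitative, which is where the curvature hypothesis $|Rm|\le r_0^{-2}$ on the larger ball $\Omega$ and the global bounds (\ref{eqn:SK23_1}) are both used — the former to start the interior regularity iteration, the latter to keep $Ric$, and hence the twisting needed for positivity of $L^k$, uniformly controlled. A secondary technical point is that the sections $S_i^{(k)}$ are orthonormal with respect to the \emph{global} $L^2$ inner product on $M$, not a local one; but the global volume and Sobolev bounds in (\ref{eqn:SK23_1}) let one compare the global and local norms up to uniform constants, so the locally constructed peak sections remain effective after global normalization. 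Once these uniformity issues are handled, a compactness/contradiction argument can also be used as an alternative to explicit constants: if (\ref{eqn:SL29_3}) failed for every $k$ along a sequence of flows, the uniform bounded geometry would force smooth Cheeger-Gromov convergence of $(\Omega',x_0,g(0))$ with polarization to a limit K\"ahler manifold with polarizing bundle, on which the Bergman metrics $\frac1k\omega_{FS}^{(k)}$ converge smoothly to the limit K\"ahler form by Tian's theorem, contradicting the assumed failure for large $k$.
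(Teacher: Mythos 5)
Your proposal uses the same peak-section / H\"ormander strategy as the paper; the paper carries it out more concretely by fixing a K-coordinate at an arbitrary $x\in\Omega'$, reducing the metric comparison to $\tilde{\omega}^{(k)}(V,JV)=|\partial f_1^k/\partial z_1|^2/(k|f_0^k|^2)$, and invoking Tian's asymptotic expansion of $f_0^k$ and $\partial f_1^k/\partial z_1$ from \cite{Tian90JDG} (inequalities (\ref{eqn:MC18_1})--(\ref{eqn:MC18_2})) to conclude that this ratio tends to $1$ uniformly. So the overall route is essentially the same, but there is one genuine gap in your argument.

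You justify the global $\bar\partial$-estimate by asserting ``the positivity needed comes from $Ric(\omega_0)+k\omega_0>0$, which holds once $k$ exceeds a bound depending on the uniform lower Ricci bound,'' and later say (\ref{eqn:SK23_1}) keeps ``$Ric$ $\dots$ uniformly controlled.'' Neither claim is available. The bounds in (\ref{eqn:SK23_1}) control only $\dot{\varphi}$, $\mathbf{b}$, $R$, $\lambda$, and $C_S$ in $C^0$, and bounded scalar curvature does not give a Ricci bound; the curvature hypothesis $|Rm|\le r_0^{-2}$ holds only on $\Omega$, not globally on $M$. So $Ric(\omega_0)$ may well be unbounded below elsewhere on $M$, and your positivity step fails as written. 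The correct replacement --- which the paper's proof singles out as the only global input, via the identity $\sqrt{-1}\partial\bar{\partial}\dot{\varphi}+Ric=\lambda g$ with $|\dot{\varphi}|+|\lambda|\le B$ --- is to equip $L^k$ with the twisted Hermitian metric $h^k e^{-\dot{\varphi}}$. Its curvature is $k\omega+\sqrt{-1}\partial\bar{\partial}\dot{\varphi}$, and the combination entering the Bochner--Kodaira estimate becomes $k\omega+\sqrt{-1}\partial\bar{\partial}\dot{\varphi}+Ric=(k+\lambda)\omega$, which is uniformly positive for $k>B\ge|\lambda|$ with a lower bound depending only on $B$. This is exactly what makes the peak-section constants depend only on $(B,r_0,\delta)$ rather than on the (a priori uncontrolled) global geometry, and it should replace your appeal to a nonexistent global Ricci bound. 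Your compactness/contradiction alternative has a related subtlety: $\tilde{\omega}^{(k)}$ is built from globally $L^2$-normalized sections of $L^k$ over all of $M$, so one cannot simply invoke Tian's convergence theorem on the Cheeger--Gromov limit of the ball $\Omega'$ alone; the effective twisted $\bar\partial$-estimate is still what controls the globally normalized sections and their $1$-jets on the converging ball.
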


\begin{proof}
 This follows essentially from the peak section method of Tian(c.f.~\cite{Tian90JDG},\cite{Lu}).  We give a proof here for the convenience of the readers.

 Fix arbitrary $x \in \Omega'$, $V \in T_x^{(1,0)}M$ with unit norm.  In order to prove (\ref{eqn:SL29_3}), it suffices to show that
 \begin{align}
          \frac{1}{2} \leq \tilde{\omega}(V,JV) \leq 2.   \label{eqn:SL29_1}
 \end{align}
 Around $x$,   we can always choose a  normal coordinate (K-coordinate, c.f.~\cite{Lu}) chart around $x$ such that
\begin{align*}
  V=\frac{\partial}{\partial z_1},  \quad
  g_{i\bar{j}}(x)=\delta_{i\bar{j}}, \quad
  \frac{\partial^{p_1+p_2+\cdots +p_n}}{\partial z_1^{p_1} \partial z_2^{p_2} \cdots \partial z_n^{p_n}} g_{i\bar{j}}(x)=0
\end{align*}
for any nonnegative integers $p_1,p_2, \cdots, p_n$ with $p=p_1+p_2+p_3+\cdots p_n>0$.  Moreover,
there exists a
local holomorphic frame $e_L$ of $L$ around $x$ such that the local representation $a$ of the Hermitian metric $h$ has the
properties
\begin{align*}
  a(x)=1, \quad    \frac{\partial^{p_1+p_2+\cdots +p_n}}{\partial z_1^{p_1}\partial z_2^{p_2} \cdots \partial z_n^{p_n}}a(x)=0
\end{align*}
for  any nonnegative integers $p_1,p_2, \cdots, p_n$ with $p=p_1+p_2+p_3+\cdots p_n>0$.

Suppose $\{S_0^{k}, \cdots S_{N_k}^{k}\}$ is an orthonormal basis of $H^0(M, L^k)$,
where $N_k=\dim_{\C} H^0(M,L^k)-1$.  Around $x$, we can write
   \begin{align*}
      S_0^{k}=f_0^{k} e_L, \cdots, S_{N_k}^{k}=f_{N_k}^{k} e_L.
   \end{align*}
Rotating basis if necessary(c.f.~\cite{Tian90JDG}), we can assume
   \begin{align*}
     &f_i^{k}(x)=0, \quad \forall \; i \geq 1, \\
     &\frac{\partial f_i^k}{\partial z_j}(x)=0, \quad \forall \; i \geq j+1.
   \end{align*}
 Recall that
   \begin{align*}
       \tilde{\omega}^{(k)}&=\omega_0 + \frac{1}{k} \sqrt{-1} \partial \bar{\partial} \log \sum_{j=0}^{N_k} \norm{S_j^{k}}{}^2
        =\frac{1}{k} \sqrt{-1} \partial \bar{\partial} \log \sum_{j=0}^{N_k} |f_j^k|^2.
   \end{align*}
 So we have
    \begin{align}
     \tilde{\omega}^{(k)}(V,JV)
     =\frac{1}{k} \frac{\partial^2 \log \sum_{j=0}^{N_k} |f_j^k|^2}{\partial z_1 \bar{\partial} z_1}
     =\frac{|\frac{\partial f_1^k}{\partial z_1}|^2}{k|f_0^k|^2}.
   \label{eqn:SL29_2}
   \end{align}
   Because of (\ref{eqn:SL29_1}) and (\ref{eqn:SL29_2}), the problem boils down to a precise estimate of
   $\frac{\partial f_1^k}{\partial z_1}$ and $f_0^k$.

 As pointed out by Tian in \cite{Tian90JDG}, the peak section method is local in nature.
 The global information of the underlying manifold is only used in the step of H\"{o}rmander's estimate. However, in our case, we have
 \begin{align*}
     &\sqrt{-1} \partial \bar{\partial} \dot{\varphi}+Ric = \lambda g,  \quad  |\dot{\varphi}| + |\lambda| \leq B.
 \end{align*}
 Due to the uniformly bounded geometry (up to $C^2$-norm of $g$) inside $\Omega'$ and the uniform bound of
 $\sqrt{-1} \partial \bar{\partial} \dot{\varphi}+Ric$ on the whole manifold $M$,  Lemma 1.2 of~\cite{Tian90JDG}
 follows directly and can be written as follows.

  \textit{ For an $n$-tuple of integers $(p_1,p_2, \cdots, p_n) \in \Z_{+}^n$ and an integer $p'>p=p_1+p_2+\cdots+p_n$, there
   exists an $k_0=k_0(n,B,r_0,\delta)$ such that for $k>k_0$, there is a unit norm holomorphic section $S \in H^0(M, L^k)$ satisfying}
   \begin{align*}
       \int_{M \backslash \{|z|^2 \leq \frac{(\log k)^2}{k}\}} \norm{S}{}^2 dv \leq \frac{1}{k^{2p'}}.
   \end{align*}
  Then the same argument as in~\cite{Tian90JDG} implies that(c.f. Lemma 3.2 of~\cite{Tian90JDG})
  \begin{align}
     &\left|f_0^k(x)-\sqrt{\frac{(n+k)!}{k!}} \left\{1+\frac{1}{2(k+n+1)!} (R(x)-n^2-n) \right\} \right|<\frac{C}{k^2}, \label{eqn:MC18_1}\\
     &\left|\frac{\partial f_1^k}{\partial z_1}(x)-\sqrt{\frac{(n+k+1)!}{k!}}\left\{ 1+\frac{1}{2(k+n+1)}(R(x)-n^2-3n-2)\right\} \right| <\frac{C}{k^2},  \label{eqn:MC18_2}
  \end{align}
  for some $C=C(n,B,r_0,\delta)$.  Here $R$ is the complex scalar curvature.  Plugging the above estimate into (\ref{eqn:SL29_2}),
  we obtain (\ref{eqn:SL29_1}), whenever $k$ is larger than a big constant, which depends only on $n,B,r_0,\delta$.
\end{proof}

\begin{lemma}[\textbf{Liouville type theorem}]
 Every complete K\"ahler Ricci flat metric $\tilde{g}$ on $\C^n$ must be a Euclidean metric if there is a constant $C$ such that
\begin{align}
   \frac{1}{C} \delta_{i\bar{j}}(z) \leq \tilde{g}_{i\bar{j}}(z) \leq C \delta_{i\bar{j}}(z), \quad \forall \; z \in \C^n.
\label{eqn:SL26_1}
\end{align}
\label{lma:HE11_1}
\end{lemma}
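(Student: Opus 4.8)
\textbf{Proof proposal for Lemma~\ref{lma:HE11_1} (Liouville type theorem).}

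The plan is to reduce the statement to the classical fact that a Ricci-flat metric with Euclidean volume growth and Euclidean volume ratio is flat, and then to upgrade flatness to the Euclidean metric using the global complex structure. First I would observe that the two-sided bound \eqref{eqn:SL26_1} comparing $\tilde{g}$ with the Euclidean metric on $\C^n$ forces $(\C^n, \tilde{g})$ to be a complete, non-collapsed manifold with maximal volume growth: indeed, for any $\tilde{g}$-geodesic ball $B_{\tilde g}(0,r)$ one has, up to the constant $C$, the inclusions $B_{\E}(0, C^{-1/2} r) \subset B_{\tilde{g}}(0,r) \subset B_{\E}(0, C^{1/2} r)$, so that the volume ratio $\omega_{2n}^{-1} r^{-2n} |B_{\tilde{g}}(0,r)|$ stays pinched between two positive constants for all $r$, and in particular $\mathrm{avr}(\C^n, \tilde{g}) > 0$. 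Since $\tilde{g}$ is K\"ahler Ricci flat, $(\C^n, \tilde{g})$ lies in $\mathscr{KS}(n,\kappa)$ for a suitable $\kappa$.

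Next I would pin down the asymptotic volume ratio exactly. Because $\tilde{g}$ is Ricci flat, Bishop-Gromov (Proposition~\ref{prn:HD19_1}, or just the smooth classical version) gives that $\omega_{2n}^{-1} r^{-2n} |B_{\tilde{g}}(x,r)|$ is non-increasing in $r$ with limit $\mathrm{avr}$ as $r\to\infty$ and limit $1$ as $r\to 0$ (smoothness). The key point is that $\mathrm{avr}(\C^n,\tilde g) = 1$: this is because $\C^n$ is diffeomorphic to $\R^{2n}$, and the comparison \eqref{eqn:SL26_1} together with a blow-down argument identifies the tangent cone at infinity with a metric cone on $\C^n$ having full cross-sectional measure — more directly, the inclusions above show $(1-o(1))\omega_{2n} r^{2n}$ is impossible to beat from below while the monotone quantity is $\le 1$, and since the underlying space is simply connected $\R^{2n}$ the gap theorem of Anderson forces $\mathrm{avr}=1$ unless there is nontrivial topology, which there is not. (Alternatively one invokes directly that a Ricci-flat $\C^n$ with a bi-Lipschitz Euclidean structure cannot have $\mathrm{avr} < 1$: any tangent cone at infinity would be a Ricci-flat cone of the form $C(Z)$ with $Z$ a smooth simply connected space form quotient, hence $Z = S^{2n-1}$, hence the cone is $\C^n$.) Once $\mathrm{avr}=1$, monotonicity forces $\omega_{2n}^{-1} r^{-2n}|B_{\tilde g}(x,r)| \equiv 1$ for all $r$, so by the rigidity of volume ratio (Proposition~\ref{prn:SC17_3}, or the equality case of Bishop-Gromov) the metric $\tilde g$ is flat.

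Finally I would promote flatness to ``Euclidean''. A flat complete simply connected K\"ahler metric on the manifold $\C^n$ is isometric to $(\R^{2n}, \delta)$ with a compatible constant complex structure; since the given complex structure on $\C^n$ is the standard one and the metric is K\"ahler with respect to it, $\tilde g$ must be a (constant-coefficient) Hermitian Euclidean metric, i.e.\ after a complex linear change of coordinates it is the standard $\delta_{i\bar j}$. I expect the main obstacle to be the second step — rigorously showing $\mathrm{avr} = 1$ rather than merely $\mathrm{avr} > 0$ — since this is where the global topological triviality of $\C^n$ and the structure of Ricci-flat tangent cones at infinity must be combined; the bi-Lipschitz bound \eqref{eqn:SL26_1} does the non-collapsing and the maximal-growth bookkeeping, but the sharp value $1$ genuinely uses Anderson's gap theorem (together with the fact that any iterated tangent cone splits off lines forced by the cone structure, reducing to a smooth simply connected cross-section). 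Everything else is either the classical equality case of volume comparison or elementary linear algebra on the flat metric.
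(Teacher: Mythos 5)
Your proposal takes a geometric route (Bishop--Gromov monotonicity, tangent cones at infinity, Anderson's gap theorem, then rigidity of the volume ratio) that is entirely different from the paper's proof, which is a PDE argument: the paper reduces \eqref{eqn:SL26_1} to a global pluri-subharmonic potential $u$ satisfying $\det(u_{i\bar j})=1$ with uniform two-sided Hessian bounds, rescales $u^{(k)}(z)=k^{-2}u(kz)$, applies the Evans--Krylov interior $C^{2,\alpha}$ estimate on the rescaled solutions, and concludes $[D^2u]_{C^\alpha}=0$ by letting $k\to\infty$, so $u$ is a quadratic polynomial.

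The geometric route as written has a genuine gap at its central step: the claim that $\mathrm{avr}(\C^n,\tilde g)=1$. Everything you offer for this is either incorrect or unsupported. The bi-Lipschitz comparison \eqref{eqn:SL26_1} together with the fact (from Ricci-flatness) that $\det\tilde g_{i\bar j}$ is a bounded pluriharmonic function and hence constant gives only $C^{-n}\le \omega_{2n}^{-1}r^{-2n}|B_{\tilde g}(0,r)|\le 1$, not $(1-o(1))$; the volume ratio could monotonically decrease from $1$ at $r=0$ to any limit in $[C^{-n},1)$ without violating the bi-Lipschitz bound, because the bound only constrains the ratio up to a fixed multiplicative constant that is scale-invariant. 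Anderson's gap theorem says nothing of the form ``simply connected plus Euclidean volume growth $\Rightarrow\mathrm{avr}=1$''; it gives flatness only once one already knows $\mathrm{avr}>1-\delta_0$, which is precisely what one cannot conclude when $C$ is large. Your alternative argument assumes the cross-section $Z$ of a tangent cone at infinity is a \emph{smooth} simply connected space form quotient; neither smoothness (which would require $|Rm|=o(r^{-2})$, not a consequence of bi-Lipschitz) nor the space-form structure (which needs sectional, not Ricci, control) nor the reduction to the round sphere is justified, and in fact a Ricci-flat cone $C(Z)$ with $\mathrm{Vol}(Z)<\mathrm{Vol}(S^{2n-1})$ could a priori be bi-Lipschitz to $\C^n$. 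The subsequent steps of your plan (rigidity of the volume ratio once $\mathrm{avr}=1$, and the upgrade from flat K\"ahler on $\C^n$ to the Euclidean metric) are fine, but they all hinge on this unproven $\mathrm{avr}=1$. This is exactly the nontrivial content of the Liouville theorem, and it is why the paper resorts to the Monge--Amp\`ere/Evans--Krylov argument rather than a purely metric one.
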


\begin{proof}
 The original proof of this lemma goes back to the famous paper of E. Calabi~\cite{Ca58} and Pogorelov~\cite{Po78} on real Monge Amp\`{e}re equation. For complex
Monge Amp\`{e}re equation, this is initially due to  Riebesehl-Schulz~\cite{RS84} where higher derivatives are used heavily.
We say a few words here for the convenience of the readers,
using the Schauder estimate of Evans-Krylov.

 Actually,  it is not difficult to see that the problem boils down to the study of a
 global pluri-subharmonic function $u$ in $\C^n$ such that
\begin{align}
\begin{cases}
  &\det \left({\partial^2 u \over {\partial z_i\partial \bar z_j}}\right) = 1, \\
  &C^{-1} (\delta_{i\bar j}) <   \left({\partial^2 u \over {\partial z_i\partial \bar z_j}}\right) < C (\delta_{i\bar j}).
\end{cases}
\label{eqn:SL26_2}
\end{align}
In order to show the metric $\tilde{g}$ is Euclidean, it suffices to show that $u$ is a global quadratic polynomial.
Without loss of generality, we may assume that  $u(0) = D u(0) = 0$.
For every positive integer $k$, we can define a function $u^{(k)}$ in the unit ball by
\begin{align*}
u^{(k)} (z) = {u(kz)\over {k^2}}.
\end{align*}
Clearly, $u^{(k)}$ satisfies (\ref{eqn:SL26_2}). Note that $\norm{u^{(k)}}{C^2}$ is uniformly bounded,
in the unit ball $B(0,1)$.  By standard Evans-Krylov theorem, there exists a
uniform constant $C$ such that
\begin{align*}
[D^2 u^{(k)}]_{C^\alpha(B(0,\frac{1}{2}))} \leq C
\end{align*}
for every $k$.  Putting back the scaling factor,  the above inequality is equivalent to
\begin{align*}
[D^2 u]_{C^\alpha(B(0, \frac{k}{2}))} \leq C k^{-\alpha},\qquad \forall \quad  k=1,2,\cdots.
\end{align*}
Let $k\rightarrow \infty$, we have $[D^2 u]_{C^\alpha(\C^n)} = 0$. Therefore,  $D^2 u$ is a constant matrix,
$u$ is a quadratic polynomial. So we finish the proof.
\end{proof}

\begin{proposition}[\textbf{Ball containing relationship implies regularity improvement}]
Suppose $\mathcal{LM}$ is a polarized K\"ahler Ricci flow satisfying (\ref{eqn:SK23_1}), $x_0 \in M$,
notations fixed by (\ref{eqn:SL26_3}) and (\ref{eqn:SL26_4}).
Suppose $|Rm| \leq r_0^{-2}$ in $\Omega$ at time $t=0$.
Moreover, we assume
 \begin{align}
       \Omega'' \subset B_{g(t)}(x_0,r) \subset \Omega'        \label{eqn:MC09_1}
 \end{align}
 for every $0\leq t \leq t_0$.
 Then the following estimates hold.
 \begin{itemize}
 \item In the geodesic ball $B_{g(t)}(x_0,r)$,  we have
 \begin{align}
     \frac{1}{C} \omega_0 \leq  \omega_t \leq C \omega_0
 \label{eqn:SK23_6}
 \end{align}
 for some constant $C=C(n,B,k,\delta,r_0,r,t)$.
 \item  In the geodesic ball $B_{g(t)}(x_0,r-\xi)$,  we have
 \begin{align}
    |Rm|(x,t) \xi^2 \leq C  \label{eqn:SK23_5}
 \end{align}
 for each small $\xi$ and some constant $C=C(n,B,k,\delta,r_0,r,t_0)$.
 \end{itemize}
\label{prn:HE11_2}
\end{proposition}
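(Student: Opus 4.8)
The plan is to derive the metric equivalence (\ref{eqn:SK23_6}) by chaining the lemmas already established in this subsection, and then to extract the interior curvature bound (\ref{eqn:SK23_5}) from (\ref{eqn:SK23_6}) and the initial curvature hypothesis by a blow-up argument whose potential limit is excluded by the Liouville theorem, Lemma~\ref{lma:HE11_1}.

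\emph{First, the metric equivalence.} Since $|Rm|\le r_0^{-2}$ on $\Omega$ at $t=0$, Lemma~\ref{lma:SK25_1} produces an integer $k=k(B,r_0,\delta)$ with $\frac12\omega_0\le\tilde\omega_0^{(k)}\le 2\omega_0$ on $\Omega'$. Let $w$ be the heat solution issued from the cutoff $w_0$ of (\ref{eqn:SL26_4}). Applying Lemma~\ref{lma:J02_1} at each intermediate time gives $F^{(k)}w\le C(B,k,\delta,t_0)$ on $M\times[0,t_0]$, while Lemma~\ref{lma:SK23_1}, whose hypothesis $\Omega''\subset B_{g(t)}(x_0,r)$ is part of (\ref{eqn:MC09_1}), gives $w>c=c(n,B,k,\delta,r_0,r,t)$ on $B_{g(t)}(x_0,r)$. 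Hence on $B_{g(t)}(x_0,r)$ one has $F^{(k)}=\Lambda_{\omega_t}\tilde\omega_0^{(k)}\le C/c$, and since $B_{g(t)}(x_0,r)\subset\Omega'$, where $\tilde\omega_0^{(k)}\ge\frac12\omega_0$, this yields $\Lambda_{\omega_t}\omega_0\le C$, i.e. $\omega_t\ge\frac1C\omega_0$. For the reverse bound, the flow equation (\ref{eqn:K07_1}) gives $\partial_t\log\det g_{i\bar j}=-R+n\lambda$, so $|R|+|\lambda|\le B$ forces $\frac1C\omega_0^n\le\omega_t^n\le C\omega_0^n$; combined with $\omega_t\ge\frac1C\omega_0$ (every eigenvalue of $\omega_t$ relative to $\omega_0$ is $\ge\frac1C$, while the product of these eigenvalues is $\le C$), this gives $\omega_t\le C\omega_0$ and proves (\ref{eqn:SK23_6}).

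\emph{Second, the interior curvature bound.} Suppose (\ref{eqn:SK23_5}) fails. Then there are polarized K\"ahler Ricci flows $\mathcal{LM}_j$ satisfying (\ref{eqn:SK23_1}) with the same $B$ and the same $r_0,\delta,r,t_0$, each with $|Rm|_{g_j(0)}\le r_0^{-2}$ on $\Omega_j$ and with the ball-containment (\ref{eqn:MC09_1}), together with $t_j\in[0,t_0]$, $\xi_j\in(0,r/2)$ and $x_j\in B_{g_j(t_j)}(x_{0,j},r-\xi_j)$ such that $|Rm|_{g_j(t_j)}(x_j,t_j)\,\xi_j^2\to\infty$. A standard point-selection argument in the parabolic region $B_{g_j(t)}(x_{0,j},r)\times[0,t_0]$ lets one assume in addition that $Q_j:=|Rm|_{g_j(\bar t_j)}(\bar x_j,\bar t_j)$ is essentially maximal on a suitable sub-cylinder; after the parabolic rescaling $\tilde g_j(s):=Q_j\,g_j(\bar t_j+Q_j^{-1}s)$ the flows have uniformly bounded curvature on balls and backward time-intervals that exhaust space-time, with $|Rm|_{\tilde g_j}(\bar x_j,0)=1$. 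The Sobolev bound in (\ref{eqn:SK23_1}) gives a non-collapsing that only improves under rescaling, and the rescaled normalization constants $\lambda_jQ_j^{-1}$ and scalar curvatures $R_{\tilde g_j}=Q_j^{-1}R_{g_j}$ tend uniformly to $0$; so by Shi's local derivative estimates and Hamilton's compactness theorem a subsequence converges smoothly to a pointed K\"ahler Ricci flow $(\C^n,\tilde g_\infty(s),x_\infty)$ with vanishing normalization constant and $R_{\tilde g_\infty}\equiv0$, hence $Ric_{\tilde g_\infty}\equiv0$ via $\partial_sR=\Delta R+2|Ric|^2$, hence static. On the other hand $B_{g_j(t_j)}(x_{0,j},r)\subset\Omega'_j$ places $x_j$ inside $B_{g_j(0)}(x_{0,j},(1-\delta)r_0)$ at time $0$, where $|Rm|_{g_j(0)}\le r_0^{-2}$, so the rescaled time-$0$ slices converge to flat $\C^n$; and the rescaled form of (\ref{eqn:SK23_6}) passes to the limit to give $\frac1C g_{\E}\le\tilde g_\infty(0)\le C g_{\E}$ on $\C^n$. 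By Lemma~\ref{lma:HE11_1} the complete K\"ahler Ricci-flat metric $\tilde g_\infty(0)$ on $\C^n$ must then be flat, contradicting $|Rm|_{\tilde g_\infty}(x_\infty,0)=1$.

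\emph{Main difficulty.} Step~1 is merely a matter of combining the earlier lemmas. The substance is in Step~2: organizing the point-selection so the blow-up limit exists with bounded geometry, and, most delicately, transporting the rescaled metric-equivalence estimate into the limit so that the limit is genuinely modeled on $\C^n$ with a uniformly Euclidean metric — which is exactly the hypothesis demanded by Lemma~\ref{lma:HE11_1}. One must also treat uniformly the two regimes: when $\bar t_jQ_j$ stays bounded the limit flow has a flat initial slice and staticity alone forces flatness, while when $\bar t_jQ_j\to\infty$ the limit is ancient and the Liouville theorem is genuinely needed.
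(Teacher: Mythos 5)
Your argument follows the same route as the paper for both items. Step~1 reproduces the paper's chain exactly: Lemma~\ref{lma:SK25_1} to identify $\omega_0$ with $\tilde\omega_0^{(k)}$ on $\Omega'$, Lemma~\ref{lma:J02_1} for the upper bound on $F^{(k)}w$, Lemma~\ref{lma:SK23_1} for the lower bound on $w$ (using (\ref{eqn:MC09_1}) as you note), giving $\Lambda_{\omega_t}\omega_0\le C$, and then the volume-form evolution $\partial_t\log\omega_t^n=n\lambda-R$ to upgrade a one-sided bound to a two-sided one. Step~2 is also the paper's blow-up-and-Liouville strategy: point selection, parabolic rescaling, Hamilton compactness, observing that the time-zero slice becomes flat $\C^n$, transporting the rescaled (\ref{eqn:SK23_6}) to the limit to obtain a K\"ahler Ricci-flat metric on $\C^n$ uniformly equivalent to $g_{\E}$, and contradicting Lemma~\ref{lma:HE11_1}. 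Framing the contradiction against a sequence of flows rather than a single flow with $L\to\infty$ is an inessential cosmetic difference.

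There is, however, one genuine gap in Step~2. The paper opens the proof by invoking Perelman's strong pseudolocality (Theorem~10.3 of~\cite{Pe1}) to get $|Rm|$ uniformly bounded on $\Omega'\times[0,\eta]$ for an $\eta=\eta(n,\kappa,\delta)$; this has two effects. First, it disposes of (\ref{eqn:SK23_6})--(\ref{eqn:SK23_5}) trivially when $t_0\le\eta$, so one may assume $t_0>\eta$. Second, and more importantly, in the point selection it forces the chosen time $\bar t_j$ to satisfy $\bar t_j>0.5\eta$ (since $\bar x_j\in\Omega'$ and $Q_j\to\infty$ would contradict the bound on $[0,\eta]$). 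Combined with $Q_j\to\infty$, this gives $\bar t_j Q_j\to\infty$, which is exactly what turns your assertion that the rescaled flows have ``backward time-intervals that exhaust space-time'' into a fact. Your proposal omits the pseudolocality step entirely, so this assertion is unjustified. Your closing ``two-regimes'' remark shows you see the issue, but the patch you offer for the bounded regime (``a flat initial slice and staticity alone forces flatness'') does not visibly handle the degenerate sub-case $\bar t_jQ_j\to 0$, where there is no meaningful limit flow and hence no initial slice to speak of; one must rule this out beforehand, and the paper's pseudolocality step is precisely the tool that does so. (Alternatively one can argue directly that $\bar t_jQ_j\to 0$ is impossible because the rescaled initial curvature is $\le r_0^{-2}/Q_j\to 0$ while the rescaled curvature at $\bar x_j$ at time $0$ equals $1$, and Shi's derivative estimates together with the initial bound would force Lipschitz continuity of $|Rm|$ in rescaled time; but this is an extra argument, not a corollary of staticity.)
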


\begin{proof}
Note that Perelman's strong version of pseudolocality theorem, i.e., Theorem 10.3 of~\cite{Pe1}, can be modified and applied here.
In fact, the almost Euclidean volume ratio condition in that theorem can be replaced by $\kappa$-noncollapsing condition.
Since one has injectivity radius estimate when curvature and volume ratio bounds are available,
thanks to the work of Cheeger, Gromov and Taylor, in~\cite{ChGrTa}.
By shrinking the ball to some fixed smaller size, one can get back the condition of almost Euclidean volume ratio.
Up to a covering argument, we can apply this strong version pseudolocality theorem to show that
$|Rm|$ is uniformly bounded on $\Omega' \times [0,\eta]$ for some positive $\eta=\eta(n,\kappa,\delta)$.
Then (\ref{eqn:SK23_6}) and (\ref{eqn:SK23_5}) follows trivially. For this reason, we can assume $t_0>\eta$.

We first prove estimate (\ref{eqn:SK23_6}).   Due to Fubini-Study metrics' approximation, Lemma~\ref{lma:SK25_1}, it is clear that one can regard
$\omega_0$ and $\tilde{\omega}_0^{(k)}$ as the same metric on $\Omega'$.  Therefore, it follows from the combination of Lemma~\ref{lma:J02_1}
and Lemma~\ref{lma:SK23_1} that $F^{(k)}$ is bounded from above, which implies $\Lambda_{\omega_t} \omega_0 \leq C$.
Recall that the volume element $\omega_0^n$ and $\omega_t^n$ are uniformly equivalent, due to the uniform bound of $|R|+|\lambda|$ and the evolution equation
\begin{align*}
   \frac{\partial}{\partial t} \log \omega_t^n= n\lambda-R.
\end{align*}
Consequently, (\ref{eqn:SK23_6}) follows. We remind the readers that condition (\ref{eqn:MC09_1}) is used in the above discussion. 

Then we proceed to prove inequality (\ref{eqn:SK23_5}). Fix $L$ very large.
If (\ref{eqn:SK23_5}) does not hold uniformly, then we can find some space-time point $(y_0, s_0)$ such that  $y_0 \in B_{g(s_0)}(x_0, r-\xi)$ and $Q_0 \triangleq |Rm|(y_0, s_0)>100L^2 \xi^{-2}$ is very large.
Set $\rho_0 \triangleq d_{g(s_0)}(y_0, x_0)$.  On one hand, $\rho_0<r-\xi$ by the choice of $(y_0, s_0)$. On the other hand,  $s_0>\eta$ for some uniform $\eta$ due to the application of Perelman's pseudo-locality,
as discussed above. Search whether there is a point $(x,t)$ satisfying
\begin{align*}
 |Rm|(x,t)>4Q_0, \quad x \in B_{g(t)} \left(x_0, \rho_0 + LQ_0^{-\frac{1}{2}} \right), \; t \in \left[t_0-Q_0^{-1}, t_0 \right]. 
\end{align*}
If there exists such a point, we denote it by $(y_1, s_1)$ and continue the above searching.
We can find $(y_k, s_k)$ by induction.  Actually, if $(y_{k-1}, s_{k-1})$ is defined, then we denote $|Rm|(y_{k-1}, s_{k-1})$ by $Q_{k-1}$, denote $d_{g(s_{k-1})}(x_0, y_{k-1})$ by $\rho_{k-1}$ and search point $(x,t)$ satisfying
\begin{align*}
 |Rm|(x,t)>4Q_{k-1}, \quad x \in B_{g(t)} \left(x_0, \rho_{k-1}+ LQ_{k-1}^{-\frac{1}{2}} \right), \; t \in \left[t_{k-1}-Q_{k-1}^{-1}, t_{k-1} \right]. 
\end{align*}
If there is no such point, we stop the process. 
Otherwise, we denote such a point by $(y_k, s_k)$ and continue the process.  Clearly, we have
\begin{align*}
& Q_k=4^{k}Q_0 >100L^2 \xi^{-2}, \\
&\rho_k \leq \rho_0 + L \left(Q_0^{-\frac{1}{2}} + \cdots Q_{k-1}^{-\frac{1}{2}} \right)<\rho_0 + 4LQ_0^{-\frac{1}{2}}< r-0.5\xi, \\
&|s_0-s_k|=s_0-s_k \leq Q_0^{-1} + Q_{1}^{-1} + \cdots Q_{k-1}^{-1}< 2Q_0^{-1}<\frac{\xi^2}{50 L^2}<<\eta.
\end{align*}
Since the process happens in a compact space-time domain with bounded geometry, it must stop after finite steps.   Let $k$ be the last $(y_k, s_k)$. 
We denote it by $(y,s)$ and set $Q=|Rm|(y,s)$ and $\rho=d_{g(s)}(y, x_0)$.  Then we have
\begin{align}
\begin{cases}
&Q>100 L^2 \xi^{-2}, \\
&\rho< r-0.5 \xi,\\
&s> 0.5 \eta, \\
&|Rm|(x,t)<4Q,   \quad \forall \; x \in B_{g(t)}(x_0, \rho+LQ^{-\frac{1}{2}}),   \quad t \in \left[s-Q^{-1}, s \right]. 
\end{cases}
\label{eqn:MC09_2}
\end{align}
By its choice, we have $d_{g(s)}(x_0,y)=\rho$.  We observe that $y$ will stay in $B_{g(t)}(x_0, \rho+2Q^{-\frac{1}{2}})$ whenever $t \in [s-\frac{1}{5nQ}, s]$. 
This is an application of  Lemma 8.3 of Perelman~\cite{Pe1}, or section 17 of Hamilton~\cite{Ha95f}.   
Actually, let $\theta_0$ be the largest positive number such that $y$ fails to locate in $B_{g(t-\theta_0 Q^{-1})}(x_0, \rho+ 2Q^{-\frac{1}{2}})$. 
Then for each $t \in [s-\theta_0 Q^{-1}, s]$, triangle inequality implies that $B_{g(t)}(y, Q^{-\frac{1}{2}}) \subset B_{g(t)}(x_0, \rho+3Q^{-\frac{1}{2}})$. Conseqeuently, we have
\begin{align*}
 &|Rm|(x,t) \leq 4Q,  \quad \forall \; x \in B_{g(t)}(y, Q^{-\frac{1}{2}}), \\
 &|Rm|(x_0,t) \leq 4Q, \quad \forall \; x \in B_{g(t)}(x_0, Q^{-\frac{1}{2}}). 
\end{align*}
It follows from Lemma 8.3 (b) of Perelman~\cite{Pe1} that
\begin{align*}
  \frac{d}{dt} d(x_0,y) \geq -10 n Q^{\frac{1}{2}} \quad \Rightarrow \quad d_{g(s)}(x_0,y)-d_{g(s-\theta_0 Q^{-1})}(x_0,y) \geq -10 n Q^{\frac{1}{2}} \cdot \theta_0 Q^{-1}. 
\end{align*}
According to the choice of $\theta_0$, the left hand side of the second inquality is $-2Q^{-\frac{1}{2}}$.  It follows that $\theta_0 \geq \frac{1}{5n}$. 
Now we know that $y$ stays in $B_{g(t)}(x_0, \rho+2Q^{-\frac{1}{2}})$ for each $t \in [s-\frac{1}{5nQ}, s]$. 
In view of (\ref{eqn:MC09_2}) and the fact $L>>1$, the triangle inequality implies that
\begin{align*}
 |Rm|(x,t) < 4Q, \quad \forall x \in B_{g(t)}(y, 0.5LQ^{-\frac{1}{2}}), \quad t \in \left[s-\frac{1}{5nQ}, s \right]. 
\end{align*}
Let $\tilde{g}(t)=Qg(Q^{-1} t + s)$. We have
\begin{align*}
\begin{cases}
&|\widetilde{Rm}|(y,0)=1,\\
&|\widetilde{Rm}|(x,t) <4, \quad \forall x \in B_{\tilde{g}(t)}(y, 0.5 L),  \quad t \in \left[-\frac{1}{5n}, 0 \right].  
\end{cases}
\end{align*}
Note that $\left[-\frac{1}{5n}, 0 \right]$ is a fixed time period. The application of Perelman's pseudo-locality guarantees the existence of such a time period(c.f. (\ref{eqn:MC09_2})). 
Now let $L \to \infty$, we can use the compactness theorem of Hamilton~\cite{Ha95} to obtain a limit Ricci flow solution, which is non-flat, K\"ahler Ricci-flat and non-collapsed on all scales. 
We remark that the discussion above is nothing but repeating the argument of Claim 1 and Claim 2 in the proof of Perelman's pseudo-locality theorem, i.e, Theorem 10.1 in~\cite{Pe1}. 
Similar argument was also used in the distance estimate of the work of Tian and the second named author~\cite{TW2}. 

Note that $B_{g(s)}(y, 0.5LQ^{-\frac{1}{2}}) \subset B_{g(s)}(x_0, r-0.5\xi) \subset B_{g(s)}(x_0,r) \subset \Omega'=B_{g(0)}(x_0, 1-\delta)$.
Therefore, by the same scale blowup at $(y,0)$, we obtain nothing but $\C^n$. Recall we have (\ref{eqn:SK23_6}), so we obtain a nontrivial K\"ahler Ricci flat metric
$\tilde{g}_{i\bar{j}}$ on $\C^n$ such that (\ref{eqn:SL26_1}) holds for some $C$.
This contradicts Lemma~\ref{lma:HE11_1}.
\end{proof}

The rough estimate (\ref{eqn:SK23_6}) and (\ref{eqn:SK23_5}) can be improved when $|R|+|\lambda|$ is very small.
When curvature tensor is bounded in the space-time, one can estimate the Ricci curvature in terms of scalar curvature.
Let  $|R|+|\lambda|$ tend to zero, we see that the Ricci curvature tends to zero at the space-time where $|Rm|$ is bounded.   By adjusting $\xi$ if necessary, we obtain that in the limit,  $B_{g(t)}(x_0,(1-\xi)r)$ is isometric to
$B_{g(0)}(x_0, (1-\xi)r)$ for every $0<t<t_0$.   By  adjusting $\xi$ and applying Perelman's pseudolocality theorem, we see the convergence at time $t=t_0$ is also smooth since curvature derivatives are all bounded  in
the ball $B_{g(t_0)}(x_0,(1-\xi)r)$ at time $t_0$.

\begin{proposition}[\textbf{Volume element derivative small implies ball containing relationship}]
  For every $r_0, T$ and small $\xi$, there exists an $\epsilon$ with the following property.

  Suppose $\mathcal{LM}$ is a polarized K\"ahler Ricci flow satisfying (\ref{eqn:SK23_1}), $x_0 \in M$, notations fixed by (\ref{eqn:SL26_3}) and (\ref{eqn:SL26_4}).  Suppose $|Rm| \leq r_0^{-2}$ in $\Omega$ at time $t=0$.
If $\displaystyle \sup_{\mathcal{M}} (|R| + |\lambda|) <\epsilon$, then for every $t \in [0,T]$ we have
  \begin{align*}
    &\Omega'' \subset B_{g(t)}\left(x_0, \left(1-\frac{3}{2}\delta \right) r_0 \right) \subset \Omega', \\
    &\left(1- \xi \right)\omega(0) \leq  \omega(t)  \leq \left(1 + \xi \right) \omega(0),  \quad \textrm{in} \;  \Omega''.
  \end{align*}
  \label{prn:HE11_3}
\end{proposition}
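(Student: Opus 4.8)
The plan is to run a continuity-in-time argument whose engine is Proposition~\ref{prn:HE11_2}: as long as the ball-containing relation $\Omega'' \subset B_{g(t)}(x_0,(1-\tfrac32\delta)r_0) \subset \Omega'$ persists, Proposition~\ref{prn:HE11_2} upgrades it to uniform local curvature bounds, after which the smallness of $|R|+|\lambda|$ forces the metric to stay essentially static, which in turn reproduces the ball-containing relation with room to spare. By the scaling invariance of the hypotheses we may assume $r_0=1$. Suppose the statement fails. Then there are $\xi,T>0$, polarized K\"ahler Ricci flows $\mathcal{LM}_i$ satisfying (\ref{eqn:SK23_1}) with a fixed bound $B$, points $x_{0,i}\in M_i$ with $|Rm_i|\le 1$ on $\Omega_i=B_{g_i(0)}(x_{0,i},1)$ at $t=0$, and $\epsilon_i:=\sup_{\mathcal{M}_i}(|R_i|+|\lambda_i|)\to 0$, yet for each $i$ one of the two conclusions fails at some time in $[0,T]$. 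Let $s_i\in(0,T]$ be the first time at which some part of the conclusion fails; at $t=0$ both parts hold with a definite margin ($\tfrac{\delta}{2}$ in the radii, $\xi$ in the volume form), so $s_i>0$, and on $[0,s_i]$ the ball-containing relation holds.

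First I would record that $s_i$ is bounded away from $0$. The version of Perelman's pseudolocality theorem used in the proof of Proposition~\ref{prn:HE11_2} (combined with the $\kappa$-noncollapsing coming from the Sobolev bound and \cite{ChGrTa}) gives a uniform $\eta>0$ and $C_0$ with $|Rm_i|\le C_0$ on $\Omega_i'\times[0,\eta]$; since $\partial_t g_i=-Ric_i+\lambda_i g_i$, this yields $|\partial_t g_i|\le 2C_0$ there, so none of the margins above can be exhausted before a uniform time $\eta_1=\eta_1(n,\kappa,\delta,\xi)>0$, i.e.\ $s_i\ge\eta_1$. Next, on $[0,s_i]$ Proposition~\ref{prn:HE11_2} applies with $r=1-\tfrac32\delta$ and $t_0=s_i\le T$, producing a uniform bound $|Rm_i|\le C_1\sigma^{-2}$ on $B_{g_i(t)}(x_{0,i},1-\tfrac32\delta-\sigma)\times[0,s_i]$ for each fixed small $\sigma$, together with $C^{-1}\omega_i(0)\le\omega_i(t)\le C\omega_i(0)$ on the same region; fixing $\sigma<\tfrac{\delta}{4}$, this region contains a fixed $g_i(0)$-neighbourhood $U$ of $x_{0,i}$ with $\Omega_i''\Subset U$. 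Shi's local estimates then bound all derivatives of $Rm_i$ on a slightly smaller set for $t\in[0,s_i]$, and $\kappa$-noncollapsing gives an injectivity radius lower bound at $t=0$.

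Hence, passing to a subsequence with $s_i\to s_\infty\in[\eta_1,T]$ and invoking Hamilton's local compactness theorem \cite{Ha95}, the flows $(U,x_{0,i},g_i(t))$ converge in the pointed $C^\infty$ Cheeger--Gromov topology on $t\in[0,s_\infty)$ to a Ricci flow $(\bar U,\bar x,\bar g(t))$, with $\Omega_i''$ converging to $\bar\Omega''=B_{\bar g(0)}(\bar x,1-2\delta)$ and the part of $\Omega_i'$ inside $U$ converging correspondingly. Since $|R_i|+|\lambda_i|\to0$ uniformly, the limit satisfies $\bar R\equiv0$ and $\bar\lambda=0$; the evolution identity $\frac{\partial}{\partial t}\bar R=\Delta\bar R+2|\overline{Ric}|^2$ then forces $\overline{Ric}\equiv0$, so $\partial_t\bar g\equiv0$ and $\bar g(t)\equiv\bar g(0)$ for every $t\in[0,s_\infty)$. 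In particular, for each fixed $t<s_\infty$ we have $\bar\omega(t)=\bar\omega(0)$ on $\bar\Omega''$ and $B_{\bar g(t)}(\bar x,1-\tfrac32\delta)=B_{\bar g(0)}(\bar x,1-\tfrac32\delta)$, which lies strictly between $\bar\Omega''$ and $\bar\Omega'$; thus both conclusions hold in the limit with a margin $m_0>0$ \emph{independent of $t$}.

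Finally I would close the argument. By $C^\infty$-convergence on $U$, for $i$ large the conclusion holds at $t=s_\infty-\tau$ (any fixed $\tau\in(0,\eta_1)$) with margin $\ge m_0/2$; the uniform bound on $|\partial_t g_i|$ on $[0,s_i]$ supplied by Proposition~\ref{prn:HE11_2} (via $|Rm_i|\le C_1\sigma^{-2}$ on $U$) then shows this margin survives for an extra time $\ge c\,m_0$ with $c=c(C_1,\sigma,\delta)>0$ uniform, so $s_i\ge s_\infty-\tau+c\,m_0$. Letting $i\to\infty$ gives $\tau\ge c\,m_0$, which is false once $\tau<c\,m_0$; this contradiction proves the proposition, and also yields the stated $\xi$-closeness $(1-\xi)\omega(0)\le\omega(t)\le(1+\xi)\omega(0)$ on $\Omega''$ as an automatic by-product of the last step. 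The main obstacle, beyond the bookkeeping of the nested balls $\Omega''\Subset B_{g(t)}(x_0,(1-\tfrac32\delta)r_0)\Subset\Omega'$ and the passage between smallness of $\omega(t)-\omega(0)$ and smallness of the corresponding distances, is to guarantee that the Cheeger--Gromov limit is a genuine smooth Ricci flow on a \emph{fixed} region over the \emph{whole} interval $[0,s_i]$ — which is precisely what Proposition~\ref{prn:HE11_2} provides — so that the scalar-curvature evolution can be used to force Ricci-flatness of the limit; the genuinely hard inputs (the long-time pseudolocality mechanism and the Liouville theorem, Lemma~\ref{lma:HE11_1}) are already in place.
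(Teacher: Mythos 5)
Your overall strategy matches the paper's: argue by contradiction with a first--violating--time $s_i$, bound $s_i$ from below via Perelman's pseudolocality, invoke Proposition~\ref{prn:HE11_2} on $[0,s_i]$ to get uniform curvature bounds and metric equivalence, and then show that the smallness of $|R|+|\lambda|$ forces the metric to be essentially static so the conclusion persists past $s_i$. The one genuinely different ingredient is where ``$|R|$ small $\Rightarrow$ Ricci small'' comes from: you pass to a Cheeger--Gromov limit flow and apply $\partial_t\bar R=\Delta\bar R+2|\overline{Ric}|^2$ to conclude $\overline{Ric}\equiv 0$; the paper instead applies the pointwise estimate $|Ric|\lesssim\sqrt{|Rm|\,|R|}$ from Theorem~3.2 of~\cite{Wa2} (or Lemma~2.1 of~\cite{CW5}) directly at the sequence level to get (\ref{eqn:GE08_5}). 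Both reach the same place, but the sequence-level route lets the paper control the $g_i(t)$-distances at a point without ever leaving the bounded-curvature region, which matters for what follows.

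That matters because your sketch has a genuine gap at the second inclusion $B_{g(t)}(x_0,(1-\tfrac32\delta)r_0)\subset\Omega'$. Hamilton's local compactness only yields a limit flow on a region $U$ whose curvature you control uniformly in $t$, i.e.\ roughly $B_{g_i(t)}(x_{0,i},1-\tfrac32\delta-\sigma)$, which does not reach out to $\partial B_{g_i(t)}(x_{0,i},1-\tfrac32\delta)$, let alone $\partial\Omega_i'$. So the statement ``$B_{\bar g(t)}(\bar x,1-\tfrac32\delta)=B_{\bar g(0)}(\bar x,1-\tfrac32\delta)$ lies strictly between $\bar\Omega''$ and $\bar\Omega'$'' is not visible from the limit you actually have, and the subsequent ``the margin survives for extra time $\ge c\,m_0$'' is also unavailable: the rate of change of $d_{g_i(t)}(x_{0,i},y)$ is $-\int_{\gamma_t}(Ric-\lambda g)(\dot\gamma,\dot\gamma)$ along a $g_i(t)$-minimizer $\gamma_t$, and near the outer boundary $\gamma_t$ traverses the annulus where you have no Ricci bound at positive time, only the crude two-sided metric equivalence (\ref{eqn:GE08_4}). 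This is exactly the step the paper treats carefully: at the critical time $t_i$ it assumes the boundaries of $B_{g_i(t_i)}(x_i,1-\tfrac32\delta)$ and $\Omega_i'$ touch at $y_i$, splits the $g_i(t_i)$-minimizer into $\alpha_i$ (inside the bounded-curvature region, whose length barely changes because $|Ric|\to 0$ there) and a tiny end segment $\beta_i$ with $|\beta_i|_{g_i(t_i)}=100\psi$ but $|\beta_i|_{g_i(0)}\ge\tfrac12\delta$, and then forces $C\ge\delta^2/(4\cdot 10^4\psi^2)$ against the fixed $C$ of (\ref{eqn:GE08_4}) by choosing $\psi$ small. There is also a smaller gap earlier: ``fixing $\sigma<\tfrac\delta4$, this region contains a fixed $g_i(0)$-neighbourhood $U$ of $x_{0,i}$ with $\Omega_i''\Subset U$'' does not follow from $\Omega_i''\subset B_{g_i(t)}(x_{0,i},1-\tfrac32\delta)$; showing a fixed $g_i(0)$-ball lies in the \emph{time-dependent} bounded-curvature region $B_{g_i(t)}(x_{0,i},1-\tfrac32\delta-\sigma)$ for all $t\le s_i$ is itself a bootstrap, which the paper records as the nested-ball relation (\ref{eqn:GE08_6}) via a first-violating-time argument. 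In short: the skeleton is right, the $|Ric|\to 0$ mechanism you propose is a valid alternative to \cite{Wa2}, but the boundary-touching argument for the outer inclusion is the real content of the proof and is not reached by the interior Cheeger--Gromov limit.
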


\begin{proof}
   If the statement was wrong, we can find a tuple $(n,B,\delta,r_0,T)$ and $\epsilon_i \to 0$ such that the property does not hold for every $\epsilon_i \to 0$.
   Without loss of generality,  we can assume $r_0=1$.

   For each $\epsilon_i$, let $t_i \in [0,T]$ be the critical time of a flow $g_i(t)$ such that the properties hold on $[0,t_i]$.
   In other words, for every $t \in [0, t_i]$, we have
    \begin{align}
    &\Omega_i'' \subset B_{g_i(t)}\left(x_i,  1-\frac{3}{2}\delta  \right) \subset \Omega_i',  \label{eqn:GE08_2}\\
    &\left(1- \xi \right)\omega_i(0) \leq  \omega_i(t)  \leq  \left(1 + \xi \right) \omega_i(0),  \quad \textrm{in} \;  \Omega_i''.  \label{eqn:GE08_3}
    \end{align}
    However, for each time $t>t_i$, at least one of the above relations fails to hold.  Related to (\ref{eqn:SL26_3}), here we set
    \begin{align*}
    \Omega_i \triangleq B_{g_i(0)}(x_i, 1),  \quad \Omega_i' \triangleq B_{g_i(0)}(x_i, 1-\delta),
    \quad  \Omega_i'' \triangleq B_{g_i(0)}(x_i, 1-2\delta).
    \end{align*}
   We shall show that $t_i$ cannot locate in $[0, T]$ for large $i$ and therefore obtain a contradiction.   
   
   Note that $|Rm|_{g_i(0)} \leq 1$ at time $t=0$ in the ball $B_{g_i(0)}(x_i , 1)$.
   By the strong version of Perelman's pseudolocality theorem, i.e., Theorem 10.3 of~\cite{Pe1}, one can find a uniform small constant $\eta$ such that
   \begin{align}
       |Rm|_{g_i}(x,t) \leq \frac{\xi}{100 n^2} \eta^{-2}, \quad   \forall \; x \in B_{g_i(0)}(x_i, 1-\eta), \;  t \in [0, \eta^2]. 
   \label{eqn:GE08_1}    
   \end{align}
   The existence of $\eta$ can be obtained by a contradiction blowup argument.
   Since metrics evolve by $-Ric+\lambda g$, it follows from (\ref{eqn:GE08_1}) and the choice of $t_i$ that $\eta^2 \leq t_i \leq T$.      
   Recall that  we have the relationship (\ref{eqn:GE08_2}) by the choice of $t_i$. 
   Therefore,  Proposition~\ref{prn:HE11_2} can be applied to obtain a unfiorm $C$, independent of $i$, such that 
   \begin{align}
       \frac{1}{C} g_i(0) \leq  g_i(t_i) \leq Cg_i(0)
   \label{eqn:GE08_4}    
   \end{align}
   in the ball $B_{g_i(t_i)}(x_i, 1-\frac{3\delta}{2})$.  Furthermore, the inequality (\ref{eqn:SK23_5}) in Proposition~\ref{prn:HE11_2} yields that
   \begin{align*}
     |Rm|_{g_i}(x,t) \leq \frac{C}{\psi^2}, \quad  x \in  B_{g(t)} \left(x_i, 1-\frac{3}{2} \delta -\psi \right), \; 0 \leq t \leq t_i, 
   \end{align*}
   where $\psi$ is a small constant $\psi <<\delta$, to be determined.   Note that we are in a setting where each geodesic ball's volume ratio is bounded from below, due to the bounds in (\ref{eqn:SK20_1}).
   Consequently, injectivity radius has a lower bound(c.f.~\cite{ChGrTa}), by shrinking the ball if necessary. 
   Therefore, we can apply Theorem 3.2 of~\cite{Wa2} to obtain 
   \begin{align}
     \sup_{\eta^2 \leq t \leq t_i,  d_{g_i(t)}(x, x_i) \leq 1-\frac{3}{2} \delta -2\psi}  |Ric|_{g_i}(x, t)  \to 0, \quad \textrm{as} \; i \to \infty,
   \label{eqn:GE08_5}  
   \end{align}
   where $\eta$ is the constant in (\ref{eqn:GE08_1}).   Alternatively, one can apply Lemma 2.1 of~\cite{CW5} to obtain the above estimate, with the fact that geodesic balls at different times can be compared due to
   the Riemannian curvature bound and the evolution equation of the Ricci flow:  the metrics evolve by $-Ric+\lambda g$.
   Since $t_i$ is uniformly bounded by $T$,  the above equation implies (up to a maximum principle type argument of the first violating time if necessary)
   that
   \begin{align}
       B_{g_i(\eta^2)} \left( x_i, 1-\frac{3}{2}\delta -5\psi \right) \subset B_{g_i(t)} \left( x_i, 1-\frac{3}{2}\delta -4\psi \right) \subset B_{g_i(\eta^2)} \left( x_i, 1-\frac{3}{2}\delta -3\psi \right),  \quad \forall \; t \in [\eta^2, t_i).
   \label{eqn:GE08_6}    
   \end{align}
   Combining the above relationship with (\ref{eqn:GE08_5}), we obtain that
   \begin{align}
      \sup_{\eta^2 \leq t \leq t_i,  d_{g_i(\eta^2)}(x, x_i) \leq 1-\frac{3}{2} \delta-3\psi }  |Ric|_{g_i}(x, t)  \to 0, \quad \textrm{as} \; i \to \infty. 
   \label{eqn:GE08_7}   
   \end{align}
   By (\ref{eqn:GE08_1}) and $|R|+|\lambda| \to 0$, we see the metric at $g_i(0)$ and $g_i(\eta^2)$ are almost isometric to each other on the ball $B_{g_i(0)}(x_i, 1-\frac{3}{2}\delta)$. 
   Consequently, we have
   \begin{align*}
      \Omega_i''=B_{g_i(0)}(x_i, 1-2\delta) \subset B_{g_i(\eta^2)} \left( x_i, 1-\frac{7}{4}\delta \right) \subset B_{g_i(t_i)} \left( x_i, 1-\frac{7}{4}\delta +\psi \right) \Subset  B_{g_i(t_i)} \left( x_i, 1-\frac{3}{2}\delta \right),
   \end{align*}
   where $\Subset$ means ``compactly contained".   We claim that we also have
    \begin{align*}
       B_{g_i(t_i)} \left( x_i, 1-\frac{3}{2}\delta \right) \Subset \Omega_i'. 
   \end{align*}
   For otherwise, by the choice of $t_i$, the boundary of $B_{g_i(t_i)} \left( x_i, 1-\frac{3}{2}\delta \right)$ touches the boundary of $\Omega_i'$ at time $t_i$.
   Therefore, we can find a point $y_i$ satisfying
   \begin{align*}
     d_{g_i(t_i)}(x_i, y_i)=1-\frac{3}{2}\delta, \quad d_{g_i(0)}(x_i, y_i)=1-\delta. 
   \end{align*}
   Let $\gamma_i$ be a shortest unit-speed geodesic connecting $x_i$ and $y_i$, with respect to the metric $g_i(t_i)$. 
   Let $\gamma_i(0)=x_i$ and $\gamma_i(1-\frac{3}{2}\delta)=y_i$.  By previous estimates, we see that 
   \begin{align*}
     \gamma_i \left(1-\frac{3}{2}\delta -100\psi \right) \subset B_{g_i(0)} \left(x_i, 1-\frac{3}{2}\delta -50\psi \right). 
   \end{align*}
   Let $\alpha_i$ be the part of $\gamma_i$, connecting $x_i=\gamma_i(0)$ and $\gamma_i(1-\frac{3}{2}\delta -100\psi )$.
   Let  $\beta_i$ be the remainded part of $\gamma_i$, i.e., the part connecting $\gamma_i(1-\frac{3}{2}\delta -100\psi )$ and $y_i=\gamma_i(1-\frac{3}{2}\delta)$.  
   Using $|\cdot|$ to denote the length of curves.   It is clear that $|\beta_i|_{g_i(t_i)} =100\psi$. 
   Note that $\alpha_i$ locates in $B_{g_i(t_i)}(x_i, 1-\frac{3}{2}\delta -100\psi)$. 
   It follows from (\ref{eqn:GE08_6}), (\ref{eqn:GE08_7}) and  (\ref{eqn:GE08_1}) that
   \begin{align*}
       \sup_{\alpha_i \times [\eta^2, t_i]} |Ric|(x, t) \to 0, \quad \textrm{as} \; i \to \infty;  \quad   \sup_{\alpha_i \times [0, \eta^2]} |Rm|(x, t) \leq \frac{\xi}{100n^2} \eta^{-2}. 
   \end{align*}
   Together with $|R|+|\lambda| \to 0$ as $i \to \infty$, we can compare the length of $\alpha_i$ at time $t=t_i$ and $t=0$. 
   \begin{align*}
        |\alpha_i|_{g_i(t_i)}=1-\frac{3}{2}\delta -100\psi,   \quad |\alpha_i|_{g_i(0)} \leq 1-\frac{3}{2}\delta. 
   \end{align*}
   However,  since $d_{g_i(0)}(x_i, y_i)=1-\delta$, we have
   \begin{align*}
      1-\delta \leq |\gamma_i|_{g_i(0)}=|\alpha_i|_{g_i(0)} + |\beta_i|_{g_i(0)} \leq |\beta_i|_{g_i(0)}  + 1-\frac{3}{2}\delta. 
   \end{align*}
   It follows that $ |\beta_i|_{g_i(0)} \geq \frac{1}{2}\delta$.  Recall that $|\beta_i|_{g_i(t_i)} =100\psi$. 
   Therefore, by mean value theorem, we must have
   \begin{align*}
       \frac{\sqrt{\langle V, V\rangle_{g_i(0)}}}{ \sqrt{\langle V, V \rangle_{g_i(t_i)}}}  \geq \frac{\frac{1}{2}\delta}{100\psi}=\frac{\delta}{200\psi}. 
   \end{align*}
   at some point $z_i \in \beta_i$,  where $V$ is the unit tangent vector (with respect to $g_i(t_i)$)  of $\beta_i$ at $z_i$.  
   Since $z_i \in B_{g_i(t_i)}(x_i, 1-\frac{3}{2}\delta)$,  one can apply (\ref{eqn:GE08_4}) to
   bound the left hand side of the above inequality by $\sqrt{C}$, where $C$ is the constant in (\ref{eqn:GE08_4}).
   It follows that $ C \geq \frac{\delta^2}{40000\psi^2}$, which is impossible if we choose $\psi$ small enough.   Therefore, for $i$ large, we must have
    \begin{align*}
      \Omega_i '' \Subset  B_{g_i(t_i)} \left( x_i, 1-\frac{3}{2}\delta \right) \Subset \Omega_i'. 
   \end{align*}
   Then we can apply (\ref{eqn:GE08_1}), (\ref{eqn:GE08_5}) and the fact that $|R|+|\lambda| \to 0$ to obtain that
   \begin{align*}
     \left(1- \frac{\xi}{100} \right)\omega_i(0)  \leq \omega_i(t)  \leq \left(1 + \frac{\xi}{100} \right) \omega_i(0),  \quad \textrm{in} \;  \Omega_i''= B_{g_i(0)}(x_i, 1-2\delta),
   \end{align*}
   whenever $i$ large enough.   This means that for large $i$, we have both (\ref{eqn:GE08_2}) and (\ref{eqn:GE08_3}) hold for a short while beyond the time $t_i$.
   This contradicts to the choice of time $t_i$. 
\end{proof}

Combine Proposition~\ref{prn:HE11_2} and Proposition~\ref{prn:HE11_3}, by further applying the argument in Proposition~\ref{prn:HE11_2},
the following theorem is clear now.

 \begin{theorem}[\textbf{Rough long-time pseudolocality theorem for polarized K\"ahler Ricci flow}]
 For every group of numbers $\delta, \xi, r_0,T$, there exists an $\epsilon=\epsilon(n,B,\delta, \xi, r_0,T)$
 with the following properties.

  Suppose $\mathcal{LM}$ is a polarized K\"ahler Ricci flow satisfying (\ref{eqn:SK23_1}), $x_0 \in M$.
  Suppose $|Rm| \leq r_0^{-2}$ in $\Omega$ at time $t=0$, where $\Omega=B_{g(0)}(x_0,r_0)$.
  If $\displaystyle \sup_{\mathcal{M}} (|R| + |\lambda|) <\epsilon$, then for every $t \in [0,T]$ we have
 \begin{align}
   & B_{g(t)}(x_0, (1-2\delta)r_0) \subset \Omega, \\
   & |Rm|(\cdot, t) \leq 2r_0^{-2}, \quad \textrm{in} \; B_{g(t)}(x_0, (1-2\delta)r_0), \\
   & \left(1-\xi \right) g(0) \leq g(t) \leq \left(1 +\xi \right) g(0),  \quad \textrm{in}
   \; B_{g(t)}(x_0, (1-2\delta)r_0).
 \end{align}
 \label{thm:K8_1}
\end{theorem}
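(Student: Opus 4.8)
The plan is to deduce the theorem by chaining together the two preceding propositions and then sharpening the constants. Proposition~\ref{prn:HE11_3} turns the smallness of $|R|+|\lambda|$ into the ball-containment relation; Proposition~\ref{prn:HE11_2} turns that relation into metric and curvature control; and a second use of the smallness of $|R|+|\lambda|$ (via the Ricci-from-scalar estimate and the Liouville theorem Lemma~\ref{lma:HE11_1}) upgrades the non-explicit constants of Proposition~\ref{prn:HE11_2} into the clean bounds $2r_0^{-2}$ and $(1\pm\xi)$.

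By scaling invariance of the hypotheses — replacing $g$ by $r_0^{-2}g$ sends $R,\lambda$ to $r_0^2 R, r_0^2\lambda$ and $T$ to $r_0^{-2}T$ — I may assume $r_0=1$. Fix $\delta,\xi,T$ and a small auxiliary parameter $\xi_1\ll\xi$ to be chosen later. Applying Proposition~\ref{prn:HE11_3} with data $(\delta,\xi_1,1,T)$ produces $\epsilon_1$ so that, once $\sup_{\mathcal{M}}(|R|+|\lambda|)<\epsilon_1$, one has for every $t\in[0,T]$ both $\Omega''\subset B_{g(t)}(x_0,(1-\frac{3}{2}\delta))\subset\Omega'$ and $(1-\xi_1)\omega(0)\le\omega(t)\le(1+\xi_1)\omega(0)$ on $\Omega''$. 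Since $B_{g(t)}(x_0,1-2\delta)\subset B_{g(t)}(x_0,1-\frac{3}{2}\delta)\subset\Omega'\subset\Omega$, the first asserted conclusion is immediate; and the metric equivalence on $\Omega''$ with $\xi_1\ll\xi$ yields the last conclusion on $B_{g(t)}(x_0,1-2\delta)$, once the curvature bound is available so that $g(t)$-balls and $g(0)$-balls can be compared. Thus everything reduces to proving $|Rm|(\cdot,t)\le 2$ on $B_{g(t)}(x_0,1-2\delta)$ for $t\in[0,T]$.

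For this I would rerun the argument of Proposition~\ref{prn:HE11_2}. Feeding the ball containment into Proposition~\ref{prn:HE11_2} with $r=1-\frac{3}{2}\delta$ and $t_0=T$ gives a non-sharp uniform bound $|Rm|(\cdot,t)\le C$ on $B_{g(t)}(x_0,1-2\delta-\psi)\times[\eta^2,T]$, while on $[0,\eta^2]$ Perelman's pseudolocality (\cite{Pe1}) controls $|Rm|$ directly; the $C_S$-bound and $\kappa$-noncollapsing give an injectivity radius lower bound (\cite{ChGrTa}), so geodesic balls at nearby times are comparable. Suppose now the curvature bound failed along $\epsilon_i\to 0$: pick $(y_i,s_i)$ with $y_i\in B_{g_i(s_i)}(x_i,1-2\delta)$ and $|Rm|_{g_i}(y_i,s_i)>2$; for $\xi_1$ small the $[0,\eta^2]$-part is harmless, so $s_i\ge\eta^2$, and the point-selection and parabolic-neighborhood construction of Proposition~\ref{prn:HE11_2} apply verbatim. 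Because $|R|+|\lambda|\to 0$, the rescaled flows subconverge (Shi estimates, Hamilton's compactness \cite{Ha95}) to a complete, non-flat, $\kappa$-noncollapsed, static Kähler Ricci-flat limit which, by (\ref{eqn:SK23_6}), is uniformly equivalent to a flat metric on $\C^n$, contradicting Lemma~\ref{lma:HE11_1}. (Equivalently: on the bounded-curvature region the estimate $|Ric|\to 0$ in terms of scalar curvature, \cite{Wa2} or \cite{CW5}, forces the $g_i$ to converge to a static Ricci-flat limit whose curvature is the limit of $|Rm|_{g_i}(\cdot,0)\le 1$, hence $<2$ for $i$ large.) This contradiction, combined with the metric estimate above, finishes the proof.

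The main obstacle is the geometric bookkeeping rather than any new idea: every estimate lives on a time-dependent geodesic ball, so one must pass repeatedly between $B_{g(t)}(x_0,\cdot)$ for different $t$ using only the initially non-sharp curvature bound and the evolution equation $\partial_t g_{i\bar j}=-R_{i\bar j}+\lambda g_{i\bar j}$, and the small parameters — the auxiliary radius $\psi$, the pseudolocality clearing time $\eta$, and the auxiliary $\xi_1$ — must be coordinated so that the accumulated error in the metric comparison genuinely falls below $\xi$ and the accumulated curvature bound genuinely falls below $2r_0^{-2}$. All the analytic inputs (Perelman's pseudolocality, the injectivity-radius estimate, the Ricci-from-scalar estimate, Hamilton's compactness, and the Liouville theorem) are already available, so the theorem follows from Propositions~\ref{prn:HE11_2} and~\ref{prn:HE11_3} together with the tools invoked there.
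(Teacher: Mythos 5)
Your proposal is correct and follows essentially the same route as the paper, which itself simply states "Combine Proposition~\ref{prn:HE11_2} and Proposition~\ref{prn:HE11_3}, by further applying the argument in Proposition~\ref{prn:HE11_2}." You correctly identify that the ball containment comes from Proposition~\ref{prn:HE11_3}, the non-sharp curvature and metric bounds from Proposition~\ref{prn:HE11_2}, and that the clean constants $2r_0^{-2}$ and $1\pm\xi$ require a second pass through the blowup/Liouville argument using $|R|+|\lambda|\to0$ together with the Ricci-from-scalar estimate; the remaining bookkeeping of transferring the metric equivalence from $\Omega''$ to $B_{g(t)}(x_0,(1-2\delta)r_0)$ is genuine but resolved, as you note, once near-Ricci-flatness is used to compare $g(t)$- and $g(0)$-balls.
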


\subsection{Motivation and definition}

In previous subsection, we see that the assumption (\ref{eqn:SK23_1}) helps a lot to relate different time slices of
the K\"ahler Ricci flow solution.  However, why is this assumption reasonable? This question will be answered in this
subsection.

\begin{proposition}[\textbf{Weak continuity of Bergman function}]
 There is a big integer constant $k_0=k_0(n,A)$ and small constant $\epsilon=\epsilon(n,A)$
  with the following property.

 Suppose $(M,g,J,L,h)$ is a polarized K\"ahler manifold, taken out from a polarized K\"ahler Ricci flow
 in $\mathscr{K}(n,A)$ as a central time slice.  In particular, we have
  \begin{align}
     Osc_{M} \dot{\varphi} + C_S(M) + |\lambda| \leq B,
 \label{eqn:SK27_1}
 \end{align}
 where $B=B(n,A)$.
 If $\mathbf{cr}(M) \geq 1$,  then
 \begin{align}
    \sup_{1 \leq k \leq k_0} \mathbf{b}^{(k)}(x) > -k_0
 \label{eqn:SK27_2}
 \end{align}
 whenever $d_{PGH}((M,x,g), (\tilde{M}, \tilde{x}, \tilde{g}))<\epsilon$ for some space
 $(\tilde{M}, \tilde{x}, \tilde{g}) \in \widetilde{\mathscr{KS}}(n,\kappa)$.
\label{prn:SK27_1}
\end{proposition}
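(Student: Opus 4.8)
The plan is to argue by contradiction using the compactness of $\widetilde{\mathscr{KS}}(n,\kappa)$ (Theorem~\ref{thm:HD19_1}) together with Tian's peak section method, in the same spirit as Lemma~\ref{lma:SK25_1}. Suppose the statement fails. Then for every integer $k_0$ and every $\epsilon$ there is a counterexample; take $k_0 = i \to \infty$, $\epsilon = \epsilon_i \to 0$, and obtain a sequence of polarized K\"ahler manifolds $(M_i, g_i, J_i, L_i, h_i)$, each a central time slice of a flow in $\mathscr{K}(n,A)$, with $\mathbf{cr}(M_i) \geq 1$, with marked points $x_i$ and with reference spaces $(\tilde M_i, \tilde x_i, \tilde g_i) \in \widetilde{\mathscr{KS}}(n,\kappa)$ satisfying $d_{PGH}((M_i,x_i,g_i),(\tilde M_i,\tilde x_i,\tilde g_i)) < \epsilon_i$, yet $\sup_{1\le k\le i}\mathbf{b}^{(k)}(x_i) \le -i$. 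First I would apply the rough weak compactness theorem (Theorem~\ref{thm:HE11_1}) to $(M_i,x_i,g_i)$, and the compactness of $\widetilde{\mathscr{KS}}(n,\kappa)$ to $(\tilde M_i,\tilde x_i,\tilde g_i)$; since $\epsilon_i\to 0$, both sequences subconverge to the \emph{same} pointed limit $(\bar M,\bar x,\bar g)$, which therefore lies in $\widetilde{\mathscr{KS}}(n,\kappa)$ and in particular is a Calabi-Yau conifold whose regular part near $\bar x$ is a genuine Ricci-flat K\"ahler manifold. The point $\bar x$ may be singular, but by Proposition~\ref{prn:HE11_1}/Proposition~\ref{prn:SC02_2} there is a nearby point $\bar y \in B(\bar x, 1)$ which is uniformly regular, with $\mathbf{vr}(\bar y) > c_b$, and hence a uniformly Euclidean geodesic ball around $\bar y$ with uniform curvature and injectivity radius bounds; moreover $B(\bar x,1)$ carries the trivial (flat) line bundle structure on its regular tangent cone.

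The second step is to transport a local section from the model to $M_i$. On the limit $\bar M$, near the regular point $\bar y$, the local model is essentially flat $\C^n$ with the trivial bundle, so there is a holomorphic section of the trivial bundle (a constant) of controlled pointwise norm. I would pull this back to $M_i$ via the smooth convergence on the regular part: for large $i$, on the ball $B_{g_i}(y_i, c)$ (where $y_i \to \bar y$) one has $C^4$-closeness of metrics and an approximately holomorphic local frame, so the almost-holomorphic section can be corrected by H\"ormander's $\bar\partial$-estimate. This is exactly where the global structure of the flow enters through (\ref{eqn:SK23_1})/(\ref{eqn:SK27_1}): the uniform bound on $\dot\varphi$, $|\lambda|$ and the curvature identity $\sqrt{-1}\partial\bar\partial\dot\varphi + Ric = \lambda g$ provide the uniform global positivity/Bochner input needed to apply H\"ormander's estimate on $L^{k}$ for $k = k(n,A)$ large but fixed, exactly as in the proof of Lemma~\ref{lma:SK25_1} (Lemma 1.2 of Tian's~\cite{Tian90JDG} holds in this setting). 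One thereby produces, for some fixed $k_0 = k_0(n,A)$ and large $i$, a global holomorphic section $S \in H^0(M_i, L_i^{k_0})$ with $\int_{M_i}\|S\|^2\,\omega_i^n = 1$ and $\|S\|^2(y_i) \geq c(n,A) > 0$.

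The third step converts the lower bound at $y_i$ into a lower bound at $x_i$. Since $\|S^{(k_0)}\|^2$ is, up to the bounded factor $e^{k_0\mathbf{b}^{(k_0)}}$ comparisons, controlled by the Bergman function, and since $d_{g_i(0)}(x_i, y_i) < 1$, I would use the uniform gradient estimate for $\mathbf{b}^{(k_0)}$ — which follows from the uniform local geometry bounds on the region swept out along a curve from $y_i$ to $x_i$ lying in the regular part (available by the connectivity estimate in Definition~\ref{dfn:SC02_1} and Proposition~\ref{prn:SB27_1}, since $\mathbf{cr}(M_i)\ge 1$), together with parabolic/elliptic Schwarz-type estimates as in the proof of Lemma~\ref{lma:J02_1} — to conclude $\mathbf{b}^{(k_0)}(x_i) \geq \mathbf{b}^{(k_0)}(y_i) - C(n,A) \geq -C(n,A)$. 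This contradicts $\sup_{1 \le k \le i}\mathbf{b}^{(k)}(x_i) \le -i$ once $i > \max\{k_0, C(n,A)\}$, proving (\ref{eqn:SK27_2}).

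The main obstacle, I expect, is the third step: controlling $\mathbf{b}^{(k_0)}$ along a path from the ``good'' point $y_i$ to the marked point $x_i$ when $x_i$ itself may be close to the singular set. Unlike on a smooth manifold with uniformly bounded geometry, here the path may need to thread through regions of small volume radius, so one cannot simply integrate a pointwise gradient bound. The resolution is to use the structure of the canonical radius: the connectivity estimate guarantees a path staying in $\mathcal{F}_{\epsilon_b}$ with length comparable to $d(x_i,y_i)$, and on $\mathcal{F}_{\epsilon_b}$ one has uniform two-sided curvature and injectivity bounds, hence the Schwarz-lemma and heat-kernel estimates used in Lemma~\ref{lma:J02_1} and Lemma~\ref{lma:SK23_1} apply uniformly. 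An alternative, possibly cleaner, route is to avoid the path argument entirely by directly applying the peak section construction at (a regular point near) $x_i$ using the $\kappa$-noncollapsing and the integral trace bound of Lemma~\ref{lma:I29_1}, but this requires $x_i$ to have a definite-size regular neighborhood, which is not guaranteed; so the path/gradient argument using $\mathbf{cr}(M_i) \geq 1$ seems unavoidable and is the technical heart of the proof.
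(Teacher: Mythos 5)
There is a genuine gap in your proposal, and it lies exactly where you flag it: the third step of transporting the Bergman lower bound from a regular point $y_i$ to the marked point $x_i$ does not close. The connectivity estimate in Definition~\ref{dfn:SC02_1} only produces regular curves between points of $\mathcal{F}_{\frac{1}{50}c_b r}$; the point $x_i$ itself carries no lower bound on $\mathbf{cvr}$ and may be arbitrarily close to the singular set, so no regular path reaches it, and any pointwise gradient estimate for $\mathbf{b}^{(k_0)}$ that you would try to integrate blows up on the last stretch of the path. Your ``fix'' — that the path stays in $\mathcal{F}_{\epsilon_b}$ — only controls the section away from $x_i$, not at $x_i$, which is where (\ref{eqn:SK27_2}) must be verified. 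In short, the transport strategy converts a local statement at $x_i$ into one at a nearby regular point plus a path estimate, and the path estimate is precisely what fails.

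The paper avoids this entirely by \emph{not} moving to a regular point. It invokes the argument of Theorem~3.2 of Donaldson--Sun~\cite{DS}, which is a blowup/scaling argument at $\bar{x}$ itself. The point is that even when $\bar{x}$ is singular, its tangent cone (by Theorem~\ref{thm:HE08_1}) is a good metric cone in $\widetilde{\mathscr{KS}}(n,\kappa)$ with singular set of codimension at least $4$ and log-terminal vertex, and one constructs a peaked holomorphic section of $L^q$ directly at the rescaled scale via H\"ormander's estimate (using the uniform Sobolev constant and the uniform $C^0$-bound on $\dot\varphi$ to supply the positivity input), then propagates back to $M_i$ by the $\hat{C}^4$-convergence guaranteed by $\mathbf{cr}(M_i)\geq 1$. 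This yields $\inf_{B(x_i,r)}\mathbf{b}^{(q)}\geq -C$ for some $q=q(\bar{x})$ without any path argument. You in fact gesture at exactly this in your last paragraph (``directly applying the peak section construction at $x_i$'') but incorrectly dismiss it on the grounds that it requires a definite-size regular neighborhood of $x_i$ — the whole content of the Donaldson--Sun argument is that it does not: the cone structure replaces the regular neighborhood. Your first two steps (the contradiction setup, the identification of the limit in $\widetilde{\mathscr{KS}}(n,\kappa)$, the use of H\"ormander with the flow-supplied positivity) are correct and match the paper; the essential missing idea is to blow up at $\bar{x}$ and work on the tangent cone rather than to transport from a nearby regular point.
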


\begin{proof}

 Note that the model moduli  space $\widetilde{\mathscr{KS}}(n,\kappa)$ has compactness
 under the pointed Gromov-Hausdorff topology.  This compactness will be essentially used in
 the following argument.

 Suppose the statement was wrong, then there is a sequence of polarized K\"ahler manifolds,
 whose underlying K\"ahler manifolds converge to
 $(\bar{M},\bar{x},\bar{g}) \in \widetilde{\mathscr{KS}}(n,\kappa)$,
 satisfying the estimate (\ref{eqn:SK27_1}) and violating (\ref{eqn:SK27_2}) for $k_i \to \infty$.
 To be explicit, we have
  \begin{align}
    (M_i,x_i,g_i) \longright{G.H.} (\bar{M},\bar{x},\bar{g});  \qquad
    \sup_{1 \leq j \leq k_i} \mathbf{b}^{(j)}(x_i) \to -\infty, \quad k_i \to \infty. \label{eqn:K8_4}
  \end{align}  
 Then we shall use the argument of  the proof of Theorem 3.2 of~\cite{DS} by Donaldson-Sun to find positive integer $q=q(\bar{x})$, and real numbers $r=r(\bar{x})$, $C=C(\bar{x})$
  such that
  \begin{align}
    \inf_{y \in B(x_i, r)} \mathbf{b}^{(q)}(y) \geq -C.
  \label{eqn:GE09_1}  
  \end{align}
  Note that the proof of Theorem 3.2~\cite{DS} is based on a blowup argument.
  The essential ingredients there are the convergence theory,  the H\"{o}mander's estimate, and the fact that each tangent space in the limit space is a good metric cone. 
  By ``good" we mean the singular set of the metric cone has Hausdorff codimension strictly greater than $2$. 
  It is important to observe that  whether the limit space $\bar{M}$ is compact or not does not affect the argument.
  Basically, this is because of the local property of the H\"{o}mander's estimate. 
  Actually, no matter whether $\bar{M}$ is compact or not,  every tangent space of a point on $\bar{M}$ must be non-compact. 
  The contradiction is obtained from the convergence to the good tangent metric cone.   
  With the argument of Theorem 3.2 of~\cite{DS} in mind,   we  now check the conditions available to us in the current case.  
  Firstly,  the canonical radius assumption makes sure that the topology of the convergence can be improved to the
  $\hat{C}^4$-Cheeger-Gromov topology.
  Secondly,  by the uniform bound of Sobolev constant and $\norm{\dot{\varphi}}{C^0}$,  the general H\"ormander's estimate(c.f. section 3 of~\cite{CW4} and section 5 of~\cite{Wa1} for this particular case) can be applied. 
  Thirdly, we know each tangent space at $\bar{x}$ is a  good metric cone, by Theorem~\ref{thm:HE08_1}  since $(\bar{M},\bar{x},\bar{g}) \in \widetilde{\mathscr{KS}}(n,\kappa)$. 
  Therefore, we can use a contradiction blowup argument, like that in Theorem 3.2 of~\cite{DS}, to obtain (\ref{eqn:GE09_1}). 
  Consequently, we have
  \begin{align*}
    \mathbf{b}^{(q)}(x_i) \geq -C, \quad \Rightarrow \quad \sup_{j \leq k_i} \mathbf{b}^{(j)} (x_i) \geq -C,
  \end{align*}
  which contradicts (\ref{eqn:K8_4}), the assumption.
\end{proof}

Proposition~\ref{prn:SK27_1} means that the Bergman function has a weak continuity under the Cheeger-Gromov
convergence if the limit space is the model space.   Inspired by this property, we can define the polarized canonical radius as follows.

\begin{definition}
 Suppose $\left( M,g,J,L,h\right)$ is a polarized K\"ahler manifold satisfying (\ref{eqn:SK27_1}), $x \in M$. We say the polarized canonical radius of $x$ is not less than $1$ if
 \begin{itemize}
   \item $\mathbf{cr}(x) \geq 1$.
   \item $\displaystyle \sup_{1 \leq j \leq 2k_0}\mathbf{b}^{(j)}(x) \geq -2k_0$.
 \end{itemize}
 For every $r=\frac{1}{j}, j \in \Z^{+}$, we say the polarized canonical radius of $x$ is not less than $r$ if the rescaled polarized manifold $\left( M, j^2 g, J, L^j, h^j\right)$
 has polarized canonical radius at least $1$ at the point $x$.
 Fix $x$, let $\mathbf{pcr}(x)$ be the supreme of all the $r$ with the above property and call it
 as the polarized canonical radius of $x$.
 \label{dfn:SK27_1}
\end{definition}

We can define the polarized canonical radius of a manifold as the infimum of the polarized canonical radii of all points in that manifold.
Similarly, we can define the polarized canonical radius of time slices of a flow.
Note that from the above definition, $\mathbf{pcr}$ is always the reciprocal of a positive integer.
It could not be zero because of (\ref{eqn:MC18_1}) in the proof of Lemma~\ref{lma:SK25_1} and the fact that every compact smooth manifold has bounded geometry and positive $\mathbf{cr}$. 
Under this terminology, the continuity of Bergman function implies the following corollary.

\begin{corollary}[\textbf{Weak equivalence of $\mathbf{cr}$ and $\mathbf{pcr}$}]
  There is a small constant $\epsilon=\epsilon(n,B,\kappa)$ with the following property.

 Suppose $(M,g,J,L,h)$ is a polarized K\"ahler manifold satisfying (\ref{eqn:SK27_1}) and
 $\mathbf{cr}(M) \geq 1$. Then
 \begin{align}
   \mathbf{pcr}(x) \geq 1
    \label{eqn:SK27_4}
 \end{align}
 whenever $d_{PGH}((M,x,g), (\tilde{M}, \tilde{x}, \tilde{g}))<\epsilon$ for some space
 $(\tilde{M}, \tilde{x}, \tilde{g}) \in \widetilde{\mathscr{KS}}(n,\kappa)$.
\label{cly:SK27_1}
\end{corollary}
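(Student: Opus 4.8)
\textbf{Proof of Corollary~\ref{cly:SK27_1}.}
The plan is to deduce this immediately from Proposition~\ref{prn:SK27_1} together with Definition~\ref{dfn:SK27_1}, so the ``proof'' is essentially an unpacking of definitions. First I would invoke Proposition~\ref{prn:SK27_1}: it produces a large integer $k_0=k_0(n,A)$ (equivalently $k_0(n,B,\kappa)$, since $B$ and $\kappa$ are themselves controlled by $A$) and a small $\epsilon_0=\epsilon_0(n,A)$ such that, under the standing hypothesis (\ref{eqn:SK27_1}) and $\mathbf{cr}(M)\geq 1$, the bound
\begin{align*}
  \sup_{1\leq k\leq k_0}\mathbf{b}^{(k)}(x) > -k_0
\end{align*}
holds whenever $d_{PGH}\bigl((M,x,g),(\tilde M,\tilde x,\tilde g)\bigr)<\epsilon_0$ for some $(\tilde M,\tilde x,\tilde g)\in\widetilde{\mathscr{KS}}(n,\kappa)$. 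Setting $\epsilon\triangleq\epsilon_0$, the two bullet points in the definition of $\mathbf{pcr}(x)\geq 1$ are then both verified: the first, $\mathbf{cr}(x)\geq 1$, is part of the hypothesis (it follows from $\mathbf{cr}(M)\geq 1$ and the definition $\mathbf{cr}(M)=\inf_{x\in M}\mathbf{cr}(x)$); the second, $\sup_{1\leq j\leq 2k_0}\mathbf{b}^{(j)}(x)\geq -2k_0$, follows from the displayed inequality since $\sup_{1\leq j\leq 2k_0}\mathbf{b}^{(j)}(x)\geq \sup_{1\leq k\leq k_0}\mathbf{b}^{(k)}(x) > -k_0 \geq -2k_0$. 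Here I use the same $k_0$ in Definition~\ref{dfn:SK27_1} as the one produced by Proposition~\ref{prn:SK27_1}, which is legitimate because $k_0$ is a fixed dimensional-and-$A$ constant in both places.

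There is essentially no obstacle here — the content is entirely in Proposition~\ref{prn:SK27_1}, whose proof in turn rests on the Donaldson--Sun partial-$C^0$ argument adapted to the non-compact model spaces in $\widetilde{\mathscr{KS}}(n,\kappa)$ and on Theorem~\ref{thm:HE08_1} (each tangent cone at $\bar x$ is a good metric cone). The only minor point to check is the bookkeeping of constants: Proposition~\ref{prn:SK27_1} is stated with constants depending on $(n,A)$, whereas Corollary~\ref{cly:SK27_1} advertises dependence on $(n,B,\kappa)$; since in the setup of a central time slice of a flow in $\mathscr{K}(n,A)$ one has $B=B(n,A)$ and a Sobolev/volume-controlled non-collapsing constant $\kappa=\kappa(n,A)$, these dependences are interchangeable, and I would simply remark this. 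If one wishes to keep the statement self-contained with the stated $(n,B,\kappa)$ dependence, one re-reads the proof of Proposition~\ref{prn:SK27_1}: it only uses the uniform bound on $C_S$, $\mathrm{Osc}_M\dot\varphi$, $|\lambda|$ (that is, $B$), the non-collapsing $\kappa$, and the canonical-radius lower bound, so the constants there genuinely depend only on $(n,B,\kappa)$. Thus the corollary follows with $\epsilon=\epsilon(n,B,\kappa)$ and $k_0=k_0(n,B,\kappa)$, completing the argument. $\qed$
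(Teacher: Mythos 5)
Your argument is correct and is precisely the intended deduction: the paper itself offers no proof of Corollary~\ref{cly:SK27_1} because it is exactly this unpacking of Proposition~\ref{prn:SK27_1} against the two bullet points of Definition~\ref{dfn:SK27_1}, with $\epsilon$ inherited from the proposition. Your remark about the constants is also right — the proof of Proposition~\ref{prn:SK27_1} uses only the bounds in (\ref{eqn:SK27_1}), the non-collapsing constant $\kappa$, and $\mathbf{cr}(M)\geq 1$, so the dependence can be recorded as $(n,B,\kappa)$ and the hypothesis of being a central time slice of a flow in $\mathscr{K}(n,A)$ is inessential.
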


\subsection{K\"ahler Ricci flow with lower bound of polarized canonical radius}

Suppose the polarized canonical radius is uniformly bounded from below, then the convergence theory is much better
than that in section 3.   This is basically because of the rough long-time pseudolocality theorem, Theorem~\ref{thm:K8_1}.

\begin{proposition}[\textbf{Improving regularity in forward time direction}]
For every $r_0>0$, $r \in (0,r_0)$ and $T_0>0$, there is an $\epsilon=\epsilon(n,A,r_0,r,T_0)$ with the following properties.

If $\mathcal{LM}$ is a polarized K\"ahler Ricci flow satisfying (\ref{eqn:SK20_1}) and
\begin{align}
   \mathbf{pcr}(\mathcal{M}^{t}) \geq r_0, \quad \forall \;  t \in [0,T_0],
\label{eqn:SL04_1}
\end{align}
then  \begin{align}
    \mathcal{F}_{r}(M, 0) \subset \bigcap_{0\leq t \leq T_0}  \mathcal{F}_{\frac{r}{K}}(M, t)   \label{eqn:SC19_3}
 \end{align}
 whenever $\displaystyle \sup_{\mathcal{M}} (|R|+|\lambda|)<\epsilon$.
 Here $K$ is the constant in Proposition~\ref{prn:SC24_1}.
 \label{prn:SC09_3}
\end{proposition}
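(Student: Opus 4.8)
Proof proposal for Proposition~\ref{prn:SC09_3}.

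The plan is to argue by contradiction and reduce the statement to the rough long-time pseudolocality theorem, Theorem~\ref{thm:K8_1}. So, suppose the statement fails: there is a tuple $(n,A,r_0,r,T_0)$, a sequence $\epsilon_i \to 0$, a sequence of polarized K\"ahler Ricci flows $\mathcal{LM}_i$ satisfying (\ref{eqn:SK20_1}) and the polarized canonical radius bound (\ref{eqn:SL04_1}) on $[0,T_0]$, with $\sup_{\mathcal{M}_i}(|R|+|\lambda|)<\epsilon_i$, but such that there exist $x_i \in \mathcal{F}_r(M_i, 0)$ and times $t_i \in [0,T_0]$ with $x_i \notin \mathcal{F}_{r/K}(M_i, t_i)$; equivalently $\mathbf{cvr}(x_i, t_i) < r/K$. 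The first step is to upgrade the hypothesis $x_i \in \mathcal{F}_r(M_i,0)$ into genuine curvature bounds at time $0$: since $\mathbf{pcr}(\mathcal{M}_i^0) \geq r_0$ implies $\mathbf{cr}(\mathcal{M}_i^0) \geq r_0$, the regularity estimate in Definition~\ref{dfn:SC02_1} (item 2), applied with the volume radius bound $\mathbf{cvr}(x_i,0) \geq r$, gives $|Rm|_{g_i(0)}(y) \leq C(n) r^{-2}$ for every $y$ in a definite-sized ball $B_{g_i(0)}(x_i, c_a r / 2)$ around $x_i$ at time $0$, together with bounds on $|\nabla^k Rm|$, $0 \le k \le 5$. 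This is exactly the ``$|Rm| \le r_0^{-2}$ in $\Omega$ at time $t=0$" hypothesis needed to feed into Theorem~\ref{thm:K8_1}, after renaming constants (take the new $r_0$ to be a fixed multiple of $r$, say $c_a r/2$, and note that the bound on $|Rm|$ is a fixed multiple of $r^{-2}$, which only changes the constants in Theorem~\ref{thm:K8_1}).

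Next, I would apply Theorem~\ref{thm:K8_1} with the parameters $\delta = \frac14$, $\xi$ a small fixed number (say $\xi = \frac{1}{100}$), $r_0$ replaced by $c_a r /2$, and $T$ replaced by $T_0$. This is legitimate because the flows $\mathcal{LM}_i$ satisfy (\ref{eqn:SK23_1}) with a uniform bound $B = B(n,A)$: indeed, (\ref{eqn:SK20_1}) controls $\|\dot\varphi\|_{C^0}$, $\|R\|_{C^0}$, $|\lambda|$ and $C_S(M)$, and the polarized canonical radius bound (\ref{eqn:SL04_1}) together with Proposition~\ref{prn:SK27_1} / Definition~\ref{dfn:SK27_1} supplies the missing bound $\|\mathbf{b}\|_{C^0}$ (the $\sup_j \mathbf{b}^{(j)}(x) \geq -2k_0$ part of $\mathbf{pcr} \geq r_0$ gives a lower bound on the Bergman function; the upper bound on $\mathbf{b}$ is automatic from $\int_M \|S_k\|_h^2 \omega^n = 1$, Jensen's inequality and the volume lower bound, as in the standard argument). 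Since $\epsilon_i \to 0$, for $i$ large we have $\sup_{\mathcal{M}_i}(|R|+|\lambda|) < \epsilon_i < \epsilon(n,B,\delta,\xi,c_a r/2, T_0)$, so Theorem~\ref{thm:K8_1} applies and yields, for all $t \in [0,T_0]$,
\begin{align*}
  B_{g_i(t)}\!\left(x_i, \tfrac{1}{2}\tfrac{c_a r}{2}\right) \subset B_{g_i(0)}\!\left(x_i, \tfrac{c_a r}{2}\right), \quad
  |Rm|_{g_i}(\cdot, t) \leq \tfrac{8}{(c_a r)^2} \ \ \text{in} \ B_{g_i(t)}\!\left(x_i, \tfrac{c_a r}{4}\right),
\end{align*}
and moreover $(1-\xi) g_i(0) \leq g_i(t) \leq (1+\xi) g_i(0)$ on $B_{g_i(t)}(x_i, c_a r/4)$.

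The final step is to convert these bounds at time $t_i$ into the desired lower bound on $\mathbf{cvr}(x_i, t_i)$, contradicting the contradiction hypothesis. From the metric equivalence $(1-\xi)g_i(0) \le g_i(t_i) \le (1+\xi)g_i(0)$ on a fixed-size ball and the fact that $\mathbf{cvr}(x_i, 0) \ge r$ (so the normalized volume ratio at $x_i$ at time $0$ on scales up to $r$ is $\geq 1-\delta_0$ — here I should be slightly careful that $\mathbf{cvr}$ at time $0$ gives a volume ratio statement only up to the canonical radius scale, which is $\ge r_0 \gg r$, so this is fine), a short computation shows that for some definite $\rho = \rho(n, r) > 0$ the normalized volume ratio $\omega_{2n}^{-1} \rho^{-2n} |B_{g_i(t_i)}(x_i, \rho)|$ exceeds $1-\delta_0$ once $\xi$ is chosen small enough depending only on $n$ and $\delta_0$ (both dimensional). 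Combined with the curvature bound $|Rm|_{g_i(t_i)} \le 8(c_a r)^{-2}$ on $B_{g_i(t_i)}(x_i, c_a r/4)$, which certifies that the regularity-estimate clause in the definition of $\mathbf{cr}$ is satisfied at the scale $\rho$, this forces $\mathbf{cvr}(x_i, t_i) \ge c\, r$ for a definite constant $c = c(n) > 0$; choosing $K = K(n)$ in Proposition~\ref{prn:SC24_1} large enough (in particular $K \ge 1/c$), we get $\mathbf{cvr}(x_i, t_i) \ge r/K$, i.e.\ $x_i \in \mathcal{F}_{r/K}(M_i, t_i)$, contradicting the assumption. I expect the main obstacle to be the bookkeeping in this last step: carefully matching the scale $\rho$ at which one verifies the volume-ratio and regularity clauses of $\mathbf{cr}$ / $\mathbf{cvr}$ against the fixed-size ball on which Theorem~\ref{thm:K8_1} gives control, and making sure all the constant choices ($\delta$, $\xi$, $\rho$, and the constant $K$) can be made consistently and in the right order of dependence (in particular that $K$ depends only on $n$, matching its role in Proposition~\ref{prn:SC24_1}). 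Everything else is a direct invocation of Theorem~\ref{thm:K8_1} and the definitions.
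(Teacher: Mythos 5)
Your proof is correct and takes the same approach as the paper: the paper's own proof of Proposition~\ref{prn:SC09_3} is simply the observation that it ``follows directly from Theorem~\ref{thm:K8_1},'' and you spell out exactly this reduction. Two small remarks: the contradiction framing is unnecessary (the direct argument via Theorem~\ref{thm:K8_1} applied at an arbitrary $x_0\in\mathcal{F}_r(M,0)$ is cleaner), and in the last step $K$ is not a free parameter to be ``chosen large enough'' --- it is the fixed constant of Proposition~\ref{prn:SC24_1} --- so the bookkeeping should show that the metric-equivalence and curvature bounds from Theorem~\ref{thm:K8_1} certify the volume-ratio condition at scales up to $K^{-1}r$ (using that $K^{-1}\le c_a/4$ or similar, which holds since both are dimensional constants), rather than adjusting $K$ to fit the constant you produce.
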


\begin{proof}
  It follows directly from Theorem~\ref{thm:K8_1}, the long time pseudolocality theorem for polarized K\"ahler Ricci flow with partial-$C^0$-estimate.
\end{proof}

\begin{proposition}[\textbf{Improving regularity in backward time direction}]
For every $r_0>0$, $r \in (0,r_0)$ and $T_0>0$, there is an $\epsilon=\epsilon(n,A,r_0,r,T_0)$ with the following properties.

If $\mathcal{LM}$ is a polarized K\"ahler Ricci flow satisfying (\ref{eqn:SK20_1}) and (\ref{eqn:SL04_1}), then  \begin{align}
    \bigcup_{0 \leq t \leq T_0}  \mathcal{F}_{r}(M, t)   \subset \mathcal{F}_{\frac{r}{K}}(M,0) \label{eqn:SK27_6}
 \end{align}
 whenever $\displaystyle \sup_{\mathcal{M}} (|R|+|\lambda|)<\epsilon$.
 \label{prn:SK27_3}
\end{proposition}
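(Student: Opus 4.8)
The plan is to argue by contradiction, in the same spirit as Proposition~\ref{prn:HE11_2} and Proposition~\ref{prn:HE11_3}, reducing the assertion to a backward‑in‑time propagation of bounded curvature near a point, which is then excluded by the K\"ahler Liouville rigidity Lemma~\ref{lma:HE11_1}. Suppose the statement fails for some $(n,A,r_0,r,T_0)$. Then there is a sequence $\mathcal{LM}_i$ of polarized K\"ahler Ricci flows satisfying (\ref{eqn:SK20_1}) and (\ref{eqn:SL04_1}) with $\delta_i \triangleq \sup_{\mathcal{M}_i}(|R|+|\lambda|)\to 0$, points $x_i\in M_i$, and times $t_i\in[0,T_0]$ such that $x_i\in\mathcal{F}_r(M_i,t_i)$ but $\mathbf{cvr}(x_i,0)<\frac{r}{K}$; after rescaling we may assume $r=1$ (this only replaces $T_0$ by a fixed number), and after passing to a subsequence $t_i\to t_\infty$. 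By Proposition~\ref{prn:SC24_1} applied at $(x_i,t_i)$, the ball $B_{g_i(t_i)}(x_i,K^{-1})$ carries the bounds $|Rm|_{g_i}(\cdot,t_i)\le K^2$, $inj_{g_i(t_i)}\ge K^{-1}$, and $\omega_{2n}^{-1}\rho^{-2n}|B_{g_i(t_i)}(y,\rho)|\ge 1-\frac{\delta_0}{100}$ for $y$ in that ball and $\rho\le K^{-1}$, so any failure of regularity of $x_i$ can only occur at times strictly before $t_i$. The heart of the matter is the claim that, for $i$ large, the curvature near $x_i$ stays bounded on the whole interval $[0,t_i]$, i.e.\ there are $c=c(n,A)\in(0,1)$, $C=C(n,A)$ with $|Rm|_{g_i}(\cdot,s)\le C$ on $B_{g_i(s)}(x_i,c)$ for all $s\in[0,t_i]$. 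Granting this, a Bishop--Gromov expansion under two‑sided curvature bounds, together with the $\kappa$‑noncollapsing from (\ref{eqn:SK20_1}) and the injectivity‑radius estimate of Cheeger--Gromov--Taylor~\cite{ChGrTa}, yields $\omega_{2n}^{-1}\rho^{-2n}|B_{g_i(0)}(x_i,\rho)|\ge 1-\delta_0$ for all $\rho\le \rho_1(n,A)$ at time $0$; since $\mathbf{cr}(x_i,0)\ge r_0$ by (\ref{eqn:SL04_1}), this forces $\mathbf{cvr}(x_i,0)\ge\frac{1}{K}$ (with $K$ normalized to dominate these dimensional constants, consistently with Proposition~\ref{prn:SC24_1}), contradicting the choice of $x_i$.

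To establish the boxed claim I would scan backward from $t_i$. Let $\tau_i\in[0,t_i]$ be the infimum of those $\tau$ for which $B_{g_i(s)}(x_i,(2K)^{-1})\subset B_{g_i(t_i)}(x_i,K^{-1})$ and $|Rm|_{g_i}\le 4K^2$ on $B_{g_i(s)}(x_i,(2K)^{-1})$ hold for all $s\in[\tau,t_i]$. If $\tau_i=0$ along a subsequence the claim holds with $c=(2K)^{-1}$, $C=4K^2$, and we are done. If $\tau_i>0$, then at $s=\tau_i$ the curvature attains $4K^2$ somewhere in the closed ball, while on $[\tau_i,t_i]$ Theorem~\ref{thm:K8_1} — applied with initial time $\tau_i$, curvature scale $\sim(2K)^{-1}$, time span $t_i-\tau_i$ bounded, and $\delta_i<\epsilon$ — gives $|Rm|_{g_i}\le 8K^2$ and $(1-\xi_i)g_i(\tau_i)\le g_i(s)\le(1+\xi_i)g_i(\tau_i)$ on a slightly smaller ball, with $\xi_i\to 0$. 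Combining this space‑time curvature bound with the interior Ricci estimates of~\cite{Wa2} and~\cite{CW5} (legitimate because $\delta_i\to 0$), exactly as in the proof of Proposition~\ref{prn:HE11_3}, the flow near $x_i$ on $[\tau_i,t_i]$ becomes $C^\infty$‑close to a static Ricci‑flat K\"ahler flow; a Perelman‑type point‑selection near $\tau_i$ (as in the proof of Proposition~\ref{prn:HE11_2}) then produces in the limit a complete, non‑flat, $\kappa$‑noncollapsed Ricci‑flat K\"ahler metric on $\C^n$ satisfying the two‑sided bound (\ref{eqn:SL26_1}), contradicting Lemma~\ref{lma:HE11_1}. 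Hence the threshold $4K^2$ cannot be reached at $\tau_i$, so $\tau_i$ can be pushed below any prescribed positive value, and for $i$ large $\tau_i=0$ — which is the claim.

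The main obstacle is precisely this last step. Theorem~\ref{thm:K8_1} only propagates regularity \emph{forward} in time, so the delicate point is to control the flow on a neighborhood of the \emph{first} backward‑violating time $\tau_i$, where the parabolic interior estimates degenerate; this is exactly the phenomenon that the full two‑sided pseudolocality (the technical core Theorem~\ref{thmin:HC06_1}, proved via Perelman's reduced‑volume monotonicity) is designed to handle. Here the hypothesis $\sup_{\mathcal{M}}(|R|+|\lambda|)<\epsilon$ is what makes the softer route available: it is only because $|R|+|\lambda|$ is small that the bounded‑curvature region produced by running Theorem~\ref{thm:K8_1} forward from $\tau_i$ is $C^\infty$‑close to a static Ricci‑flat model, so that Lemma~\ref{lma:HE11_1} can be invoked. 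One should also take care that the first‑violating‑time $\tau_i$ is well defined (using the smoothness of the flow and the fact that $\mathbf{cr}\ge r_0$ throughout, which keeps $\mathbf{cvr}$ controlled at the relevant scales) and that the various balls at different times are comparable — routine points handled as in Propositions~\ref{prn:HE11_2}–\ref{prn:HE11_3}.
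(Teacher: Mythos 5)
Your proposal takes a genuinely different route from the paper's, and it has a gap that I do not think can be closed without essentially importing the paper's idea.

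The paper does not attempt to propagate a curvature bound backward in time at all. Instead it fixes the first violating time $t_0$, obtains $\mathbf{cvr}(x_0,0)=\tfrac{r}{K}$ and $\mathbf{cvr}(x_0,t_0)=r$, and then argues in polar coordinates centered at $x_0$ \emph{at time $t_0$}: a direction $v\in S^{2n-1}$ is called good if the $g(t_0)$-geodesic from $x_0$ in direction $v$ never enters $\mathcal{D}_\xi(M,0)$ before distance $\tfrac{r}{K}$. By the density estimate, bad directions have measure $\lesssim \xi^{2p_0}$. Along good directions, the geodesic only passes through points that are $\xi$-regular at time $0$, and the \emph{forward} pseudolocality (Proposition~\ref{prn:SC09_3}) then says the metric changes very little along them from $t=0$ to $t=t_0$, so these geodesics are nearly $g(0)$-geodesics of nearly the same length. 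Integrating, $|B_{g(0)}(x_0,\tfrac{r}{K})|$ is forced to be at least $(1-\tfrac{\delta_0}{2})\omega_{2n}(\tfrac{r}{K})^{2n}$, contradicting the exact identity $|B_{g(0)}(x_0,\tfrac{r}{K})|=(1-\delta_0)\omega_{2n}(\tfrac{r}{K})^{2n}$ from $\mathbf{cvr}(x_0,0)=\tfrac{r}{K}$. This only ever uses the forward estimate, applied to \emph{most} points along the geodesics, not to $x_0$ itself; it never needs a backward curvature bound.

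Your scheme, by contrast, tries to show that $|Rm|$ stays bounded on all of $[0,t_i]$ near $x_i$ via a backward stopping time $\tau_i$ and then derive a contradiction at $\tau_i$ from the Liouville Lemma~\ref{lma:HE11_1}. There are two concrete problems. First, at $\tau_i$ the curvature only \emph{equals} the finite threshold $4K^2$; it does not blow up. Lemma~\ref{lma:HE11_1} requires a complete Ricci-flat K\"ahler metric on all of $\C^n$, which one produces by Perelman point-selection and rescaling by a curvature that tends to infinity. Here there is no such rescaling; the limit you would obtain is a Ricci-flat K\"ahler metric with $|Rm|\le 4K^2$ on a \emph{ball}, which is not a contradiction — it is exactly what a regular point looks like. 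Second, the high-curvature point $y_i$ at time $\tau_i$ can sit at distance $\approx(2K)^{-1}$ from $x_i$, i.e.\ on the boundary of the ball on which $\tau_i$ was defined, and the forward Theorem~\ref{thm:K8_1} run from $\tau_i$ controls the flow only on a strictly smaller ball. Near the boundary the interior Ricci estimate of~\cite{Wa2} degenerates, so the ``$C^\infty$-close to static Ricci-flat'' conclusion does not hold at $y_i$, which is precisely where you need it. You flagged this as ``the main obstacle''; the subsequent paragraph does not actually overcome it, because the small-$|R|$ hypothesis does not make a finite curvature value into a contradiction, and does not extend the interior estimate to the boundary of the parabolic neighborhood. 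In short, the step ``hence the threshold $4K^2$ cannot be reached at $\tau_i$'' does not follow, and the boxed claim that $|Rm|$ stays uniformly bounded on all of $[0,t_i]$ near $x_i$ remains unestablished. The paper's polar-coordinate volume comparison is exactly the device that replaces this missing backward regularity propagation.
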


\begin{proof}
At time $0$, $\mathcal{F}_r(M,0) \subset \mathcal{F}_{\frac{r}{K}}(M,0)$ trivially. Suppose $t_0>0$ is the first time such that
(\ref{eqn:SK27_6}) start to fail.  It suffices to show that $t_0 >T_0$ whenever $\epsilon$ is small enough. Otherwise, at time $t_0 \in (0,T_0]$,
we can find a point $x_0 \in  (\partial \mathcal{F}_{\frac{r}{K}}(M,0)) \cap (\partial \mathcal{F}_{r}(M,t_0))$. In other words, we have
\begin{align*}
  \mathbf{cvr}(x_0,0)=\frac{r}{K}, \quad \mathbf{cvr}(x_0,t_0)=r.
\end{align*}
In particular, we have
\begin{align}
   \left|B_{g(0)}\left(x_0,\frac{r}{K} \right) \right|_0 = \left(1-\delta_0 \right) \omega_{2n} \left(\frac{r}{K} \right)^{2n}.  \label{eqn:SL16_1}
\end{align}

 Let $\xi$ be a small number which will be fixed later.   Let $\Omega_{\xi}(x_0,t_0)$ be the subset of unit sphere of tangent space of $T_{x_0}(M, g(t_0))$ such that every geodesic (under metric $g(t_0)$) emanating from $x_0$ along the direction in $\Omega_{\xi}(x_0,t_0)$ does not hit points in $\mathcal{D}_{\xi}(M,0)$ before
 distance $\frac{r}{K}$.    By canonical radius assumption, $|Rm|_{g(t_0)}$ is uniformly bounded in $B_{g(t_0)}(x_0,\frac{r}{K})$(See Figure~\ref{figure:backpseudo} for intuition).
 By long-time pseudolocality theorem (c.f. Proposition~\ref{prn:SK27_3}), $B_{g(t_0)}(x_0,\frac{r}{K^3})$ has empty intersection with $\mathcal{D}_{\xi}(M,0)$
 when $\xi<<\frac{r}{K^3}$. Note that every geodesic (emanating from $x_0$) entering $\mathcal{D}_{\xi}(M,0)$ must hit $\partial \mathcal{D}_{\xi}(M,0)$ first,
 where $\mathbf{cvr}(\cdot, 0)=\xi$.  So every point in $\partial \mathcal{D}_{\xi}(M,0)$ will be uniformly regular at time $t_0$,  in light of the long-time pseudolocality.
 At time $t_0$, observing from $x_0$, the set which stays behind $\partial \mathcal{D}_{\xi}(M,0)$ must have small measure.
 Since $B_{g(t_0)}(x_0,\frac{r}{K})$ has uniformly bounded curvature,
 it is clear that $\Omega_{\xi}(x_0,t_0)$ is an almost full measure subset of $S^{2n-1}$. Actually, we have
 \begin{align*}
    |\Omega_{\xi}(x_0,t_0)| \geq 2n\omega_{2n} \cdot \left(1-C \xi^{2p_0} \right)
 \end{align*}
 whenever $\epsilon$ is sufficiently small.  On the other hand, we see that every geodesic (under metric $g(t_0)$) emanating from $\Omega_{\xi}(x_0,t_0)$ is almost geodesic at time $t=0$ (under metric $g(0)$), when $\epsilon$ small enough.    Therefore,
$|B_{g(0)}(x_0,\frac{r}{K})|_{0}$ is almost not less than $|B_{g(t_0)}(x_0,\frac{r}{K})|_{t_0}$.  Note that the volume ratio of
$B_{g(t_0)}(x_0,\frac{r}{K})$ is at least $(1-\frac{\delta_0}{100})\omega_{2n}$.
Suppose we choose $\xi$ small (according to $\delta_0$) and $\epsilon$ very small (based on $\xi,\delta_0,A,T_0$), we obtain
\begin{align*}
   \left|B_{g(0)}\left(x_0,\frac{r}{K} \right) \right|_0 \geq \left(1-\frac{\delta_0}{2} \right) \omega_{2n} \left(\frac{r}{K} \right)^{2n},
\end{align*}
which contradicts (\ref{eqn:SL16_1}).
\end{proof}

\begin{figure}
 \begin{center}
 \psfrag{0}[c][c]{$0$}
 \psfrag{t}[c][c]{$t$}
 \psfrag{x0}[c][c]{$x_0$}
 \psfrag{M1}[c][c]{$(M,x_0,g(t_0))$}
 \psfrag{M2}[c][c]{$(M,x_0,g(0))$}
 \psfrag{B1}[c][c]{$yellow: B_{g(t_0)}(x_0,r)$}
 \psfrag{B2}[c][c]{$blue: B_{g(t_0)}(x_0,\frac{r}{K})$}
 \psfrag{B3}[c][c]{$red: B_{g(t_0)}(x_0,\frac{r}{K^3})$}
 \psfrag{B4}[c][c]{$green: B_{g(0)}(x_0,r)$}
 \psfrag{B5}[c][c]{$black: \mathcal{D}_{\xi}(M,0)$}
 \includegraphics[width=0.5 \columnwidth]{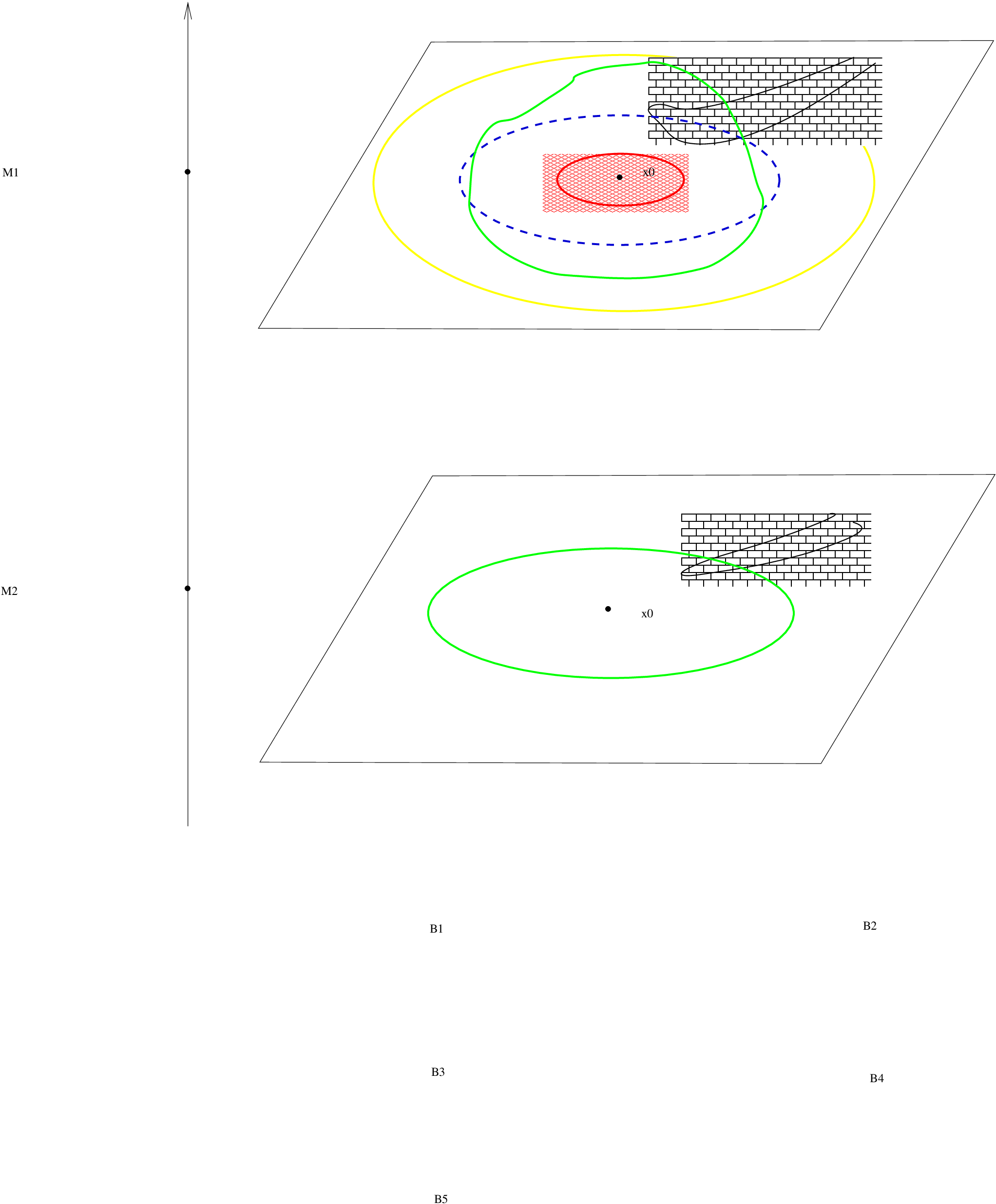}
 \caption{Find a geodesic ball with almost Euclidean volume ratio}
  \label{figure:backpseudo}
 \end{center}
 \end{figure}

\begin{definition}
 Let $\mathscr{K}(n,A)$ be the collection of polarized K\"ahler Ricci flows satisfying (\ref{eqn:SK20_1}).
 For every $r \in (0,1]$, define
    \begin{align*}
    \mathscr{K}(n,A;r)
    \triangleq \left\{\mathcal{LM} \left| \mathcal{LM} \in \mathscr{K}(n,A), \mathbf{pcr}(M \times [-1,1]) \geq r \right. \right\}.
  \end{align*}
   \label{dfn:SC23_1}
\end{definition}

Clearly,  $\mathscr{K}(n,A) \supset \mathscr{K}(n,A;r_1) \supset \mathscr{K}(n,A;r_2)$ whenever $1\geq r_2 >r_1>0$.
Since every polarized K\"ahler Ricci flow $\mathcal{LM} \in \mathscr{K}(n,A)$ has a smooth compact underlying manifold,  we see $\mathcal{LM} \in \mathscr{K}(n,A;r)$
for some very small $r$, which depends on $\mathcal{LM}$. Therefore, it is clear that
\begin{align*}
          \bigcup_{0<r<1}  \mathscr{K}(n,A;r) = \mathscr{K}(n,A).
\end{align*}
Fix $r>0$, we shall first make clear the structure of $\mathscr{K}(n,A;r)$ under the help of polarized canonical radius.
Then we show that the canonical radius can actually been bounded a priori.  In other words, there exists a uniform small constant $\hslash$ (Planck scale)
such that
\begin{align*}
  \mathscr{K}(n,A) = \mathscr{K}(n,A;\hslash),
\end{align*}
which will be proved in Theorem~\ref{thm:SC28_1}.

\begin{proposition}[\textbf{Limit space-time with static regular part}]
 Suppose $\mathcal{LM}_i \in \mathscr{K}(n,A)$ satisfies the following properties.
 \begin{itemize}
 \item  $\mathbf{pcr}(M_i \times [-T_i,T_i]) \geq r_0$ for each $i$.
 \item  $\displaystyle \lim_{i \to \infty} \sup_{\mathcal{M}_i} (|R|+|\lambda|)=0$.
 \end{itemize}
 Suppose $x_i \in M_i$ and
 $\displaystyle \lim_{i \to \infty} \mathbf{cvr}(x_i,0)>0$, then
 \begin{align}
   (M_i,x_i, g_i(0)) \longright{\hat{C}^{\infty}}  (\bar{M},\bar{x}, \bar{g}).   \label{eqn:MC05_10}
 \end{align}
 Moreover,  we have
 \begin{align}
   (M_i,x_i, g_i(t)) \longright{\hat{C}^{\infty}}  (\bar{M},\bar{x}, \bar{g})    \label{eqn:MC05_11}
 \end{align}
 for every $t \in (-\bar{T}, \bar{T})$, where $\displaystyle \bar{T}=\lim_{i \to \infty} T_i>0$.
 In particular, the limit space does not depend on time.
\label{prn:SL04_1}
\end{proposition}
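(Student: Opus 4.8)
The plan is to prove the two convergence statements \eqref{eqn:MC05_10} and \eqref{eqn:MC05_11} in sequence, using the machinery already developed. First I would establish \eqref{eqn:MC05_10}: since $\mathbf{pcr}(M_i \times [-T_i,T_i]) \geq r_0$ and $\mathbf{pcr} \leq \mathbf{cr}$ by Definition~\ref{dfn:SK27_1}, we have $\mathbf{cr}(M_i, 0) \geq r_0$, so the rough weak compactness theorem, Theorem~\ref{thm:HE11_1}, applies at the central time slice and gives a Gromov--Hausdorff limit $(\bar M,\bar x,\bar g)$ with regular-singular decomposition $\bar M=\mathcal R\cup\mathcal S$, where $\dim_{\mathcal M}\mathcal S\leq 2n-2p_0$ and $\mathcal R$ is an open $C^4$-manifold. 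The hypothesis $\lim_{i\to\infty}\mathbf{cvr}(x_i,0)>0$ guarantees that $\bar x$ is a regular point, so $\bar x\in\mathcal R$ and the limit is based at a genuine manifold point. The upgrade of the topology from $G.H.$ to $\hat C^\infty$ requires slightly more than the $\hat C^4$ coming directly from the canonical radius definition; here one uses that $\sup_{\mathcal M_i}(|R|+|\lambda|)\to 0$ together with the evolution equation $\partial_t g_{i\bar j}=-R_{i\bar j}+\lambda g_{i\bar j}$, which forces $|Ric_i|\to 0$ on regions of bounded curvature (e.g.\ via Theorem~3.2 of~\cite{Wa2} or Lemma~2.1 of~\cite{CW5} as invoked in the proof of Proposition~\ref{prn:HE11_3}), so each $g_i(0)$ is almost Ricci-flat on the regular scales; then Shi-type estimates bootstrap $\hat C^4$ convergence to $\hat C^\infty$ convergence, and the limit $\bar g$ is K\"ahler Ricci-flat on $\mathcal R$.

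Next I would prove \eqref{eqn:MC05_11}, the time-independence of the limit. Fix $t\in(-\bar T,\bar T)$ and let $r>0$ be small. By the lower bound on $\mathbf{pcr}$, the forward and backward regularity-improvement propositions, Proposition~\ref{prn:SC09_3} and Proposition~\ref{prn:SK27_3}, apply (their smallness hypothesis $\sup_{\mathcal M_i}(|R|+|\lambda|)<\epsilon$ holds for $i$ large since this quantity tends to $0$), giving for large $i$ the two-sided inclusion
\begin{align*}
   \mathcal{F}_{Kr}(M_i, 0) \subset \mathcal{F}_{r}(M_i, t) \subset \mathcal{F}_{r/K}(M_i, 0),
\end{align*}
with $K=K(n,\kappa)$ the constant of Proposition~\ref{prn:SC24_1}. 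This says precisely that the $r$-regular part at time $t$ sits, up to a fixed dilation of scale, inside the $r$-regular part at time $0$ and vice versa. Consequently the limit regular set $\mathcal R$ is the same subset of the limit space whether we take the $\hat C^\infty$ limit at time $0$ or at time $t$. Moreover Theorem~\ref{thm:K8_1}, the rough long-time pseudolocality theorem, gives on each such $r$-regular region the metric comparison $(1-\xi)g_i(0)\leq g_i(t)\leq(1+\xi)g_i(0)$ with $\xi\to 0$ as $i\to\infty$ (after possibly letting the smallness parameter $\epsilon\to 0$), so the two limiting metrics $\bar g$ at time $0$ and the limiting metric at time $t$ actually coincide, not merely are bi-Lipschitz. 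Taking $r\to 0$ exhausts $\mathcal R$, and since $\mathcal S$ has measure zero and high Minkowski codimension this determines the limit metric space completely; hence the limit at time $t$ equals $(\bar M,\bar x,\bar g)$.

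The main obstacle I expect is controlling the behavior near the singular set $\mathcal{S}$ uniformly in time: a priori the singular sets $\mathcal S(t)$ at different times of the limit could differ, and one must rule out that a point regular at time $0$ degenerates at time $t$ (handled by the forward estimate, Proposition~\ref{prn:SC09_3}) or conversely that a point regular at time $t$ was singular at time $0$ (handled by the backward estimate, Proposition~\ref{prn:SK27_3}, whose proof is the delicate one — it uses the long-time pseudolocality to locate a geodesic ball of almost Euclidean volume ratio observed from $x_0$ at time $t_0$). A secondary technical point is that $\mathbf{cvr}$ is only upper-semicontinuous (Corollary~\ref{cly:SC24_1}), not continuous, so the identification $\mathcal R = \bigcup_{0<r\le r_0}\mathcal R_r$ and the matching of regular sets across times must be phrased with the $K$-slack built in; I would carry this out exactly as in the proof of Theorem~\ref{thm:HE11_1}, where the same issue arises. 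Finally, one should note that $\bar T>0$ because $T_i\geq 1$ for flows in $\mathscr K(n,A)$ (indeed $T\geq 2$ in \eqref{eqn:SK20_1}), so the interval $(-\bar T,\bar T)$ is genuinely nondegenerate, and the whole argument above is valid on any compact subinterval.
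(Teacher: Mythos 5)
Your outline is correct in its first half (identifying $\mathbf{cr}\geq r_0$ from $\mathbf{pcr}\geq r_0$, invoking Theorem~\ref{thm:HE11_1} for the $G.H.$ limit, upgrading to $\hat C^\infty$ via $|Ric|\to 0$ plus Shi estimates, and using Propositions~\ref{prn:SC09_3} and~\ref{prn:SK27_3} to match regular sets across time slices), and this matches the strategy in the paper. The gap is in the final step.

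You conclude from the two-sided inclusion of $\mathcal{F}_r$-sets and the metric comparison on regular regions that the time-$0$ and time-$t$ limits coincide, and you dismiss the singular set by saying ``since $\mathcal{S}$ has measure zero and high Minkowski codimension this determines the limit metric space completely.'' That implication is not automatic, and it is precisely the hard part of the proof. Having isometric regular parts does \emph{not} by itself force the metric completions to be isometric: the ambient distance on $\bar M$ could differ from the intrinsic distance on $\mathcal R$ when geodesics pass through (or near) $\mathcal S$. At this stage in the paper only the factor-$3$ comparison $|\gamma|\leq 3\,d(x,y)$ from Theorem~\ref{thm:HE11_1} is available; the almost-convexity statement $|\gamma|\leq(1+\epsilon)\,d(x,y)$ (Proposition~\ref{prn:SC17_4}) is proved \emph{later}, downstream of the tangent-cone theorem which in turn builds on the present proposition, so appealing to it here would be circular. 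Consequently the two limit distance functions $d_{\bar g}$ and $d_{\bar g'}$ are a priori only bi-Lipschitz with a universal constant, not equal. The paper resolves this with a nontrivial technical device: Claim~\ref{clm:MC05_1} shows every bounded $N_0^{1,2}$-function with $L^\infty$-bounded weak gradient admits a ``good version'' $\tilde f$ with the \emph{pointwise} global Lipschitz bound $|\tilde f(x)-\tilde f(y)|\leq \norm{\nabla f}{L^\infty}\,d(x,y)$ for all $x,y\in\bar M$, including singular points. This is proved by running a heat flow on $M_i$ at a carefully chosen time scale $\delta=\epsilon^{1/n}$, controlling the gradient on the high-curvature set via the density estimate, and passing to the limit. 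Then, choosing $\chi=\max\{D-d_{\bar g}(\cdot,\bar y),0\}$ as in (\ref{eqn:MC07_1}) (which has $\norm{\nabla\chi}{L^\infty}=1$ on each space by the factor-$3$ curve estimate and high codimension), the pointwise Lipschitz bound of the good version yields $d_{\bar g}(\bar y,\bar z)\leq d_{\bar g'}(\bar y,\bar z)$, and reversing roles gives equality. Without Claim~\ref{clm:MC05_1} or an equivalent mechanism for transferring distance information across the singular set, the argument is incomplete.
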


\begin{proof}
  It follows from the combination of Proposition~\ref{prn:SC09_3} and Proposition~\ref{prn:SK27_3} that the limit
  space does not depend on time.   From the definition of canonical radius, the convergence locate in
  $\hat{C}^4$-topology for each time. However, this can be improved to $\hat{C}^{\infty}$-topology.
  Actually, if $\bar{y}$ is a regular point of $\bar{M}$(c.f. the definition in Theorem~\ref{thm:HE11_1}), then we can find
  $y_i \in M_i$ such that $y_i \to \bar{y}$ and $\mathbf{cvr}(y_i,0) \geq \eta$ uniformly for some $\eta>0$, in light of equation (\ref{eqn:HA11_3}) and the proof of Theorem~\ref{thm:HE11_1}. 
  It follows from Proposition~\ref{prn:SK27_3} that $\inf_{t \in [-1,0]} \mathbf{cvr}(y_i, t) \geq K^{-1} \eta$ for all large $i$. 
  By second property, or regularity estimate of canonical radius (c.f. Definition~\ref{dfn:SC02_1}), we know 
  \begin{align*}
      |Rm|(z,t) \leq CK^{-4} \eta^{-2}, \quad \forall \; z \in B_{g_i(t)}(y_i, K^{-2}\eta), \; t \in [-1, 0]. 
  \end{align*}
  Note that $R \to 0$, which implies $|Ric| \to 0$ when we have $|Rm|$-bound in a bigger ball(c.f. the $|Ric| \leq \sqrt{|Rm||R|}$-type estimate in~\cite{Wa2}).   In particular, we have
  $B_{g_i(0)}(y_i, 0.1 K^{-2} \eta) \subset B_{g_i(t)}(y_i, K^{-2} \eta)$ for all $t \in [-0.5, 0]$.  Hence, we obtain
  \begin{align}
     |Rm|(z,t) \leq CK^{-4} \eta^{-2}, \quad \forall \; z \in B_{g_i(0)}(y_i, 0.1 K^{-2}\eta), \; t \in [-0.5, 0].   \label{eqn:MC05_5}
  \end{align}
  Then we can apply Shi's estimate to obtain that $|\nabla^k Rm| \leq C_k$ on $B_{g_i(0)}(y_i, 0.01 K^{-2}\eta)$ for each positive integer $k$. 
  This is enough to set up a uniform sized harmonic coordinate chart around $y_i$ (with respect to metric $g_i(0)$) and all the metric tensor and its derivatives are uniformly bounded(c.f. Hamilton~\cite{Ha95}). 
  Clearly, the convergence around $\bar{y}$ happens in the $C^{\infty}$-topology.  Since $\bar{y}$ is an arbitrary regular point, we see that the convergence to $\bar{M}$ is in $\hat{C}^{\infty}$-Cheeger-Gromov topology.
  
  Note that we currently do not know whether $\bar{M}$ locates in the model space $\widetilde{\mathscr{KS}}(n,\kappa)$.   However, we do know that
  $\bar{M}=\mathcal{R}(\bar{M}) \cup \mathcal{S}(\bar{M})$. The regular part is a smooth Ricci-flat manifold, due to the smooth convergence and $|Ric| \to 0$ on regular part. 
  The singular part satisfies the Minkowski dimension bound(c.f. (\ref{eqn:SB13_4}) in Theorem~\ref{thm:HE11_1}): 
  \begin{align}
   \dim_{\mathcal{M}} \mathcal{S} \leq 2n-2p_0< 2n-4+\frac{2}{1000n}<2n-4+\frac{2}{2n-1},   \label{eqn:MC05_1}
 \end{align}
 where we used the choice of  $p_0$, which is discussed above the equation (\ref{eqn:SC17_11}).  
 Recall that each $(M_i, g_i(0))$ has uniform Sobolev constant (c.f. equation (\ref{eqn:SK20_1})) and uniform volume doubling condition(c.f. Q. Zhang~\cite{Zhq3} and Chen-Wang~\cite{CW5}).
 Therefore, there is a uniform local $L^2$-Poincar\'e constant.  All these estimates bypass to the limit space $(\bar{M}, \bar{x}, \bar{g})$.
 There exists a good heat semigroup theory on $\bar{M}$.  By the high codimension(c.f. inequality (\ref{eqn:MC05_1})) of $\mathcal{S}(\bar{M})$, we know that for every bounded heat solution $u$ on $\bar{M}$, 
 the gradient function $|\nabla u|\in N_{loc}^{1,2}(\bar{M})$ and it is a heat subsolution(c.f. Lemma~\ref{lma:HH18_2} and Remark~\ref{rmk:GD11_1}).
 Consequently, the technique used in the proof of the Cheeger-Gromoll splitting, i.e., Claim~\ref{clm:GD11_1} in Lemma~\ref{lma:HE04_1}, 
 can be applied here. Basically, every function $f \in N_0^{1,2}(\hat{Y})$ has a better version $\displaystyle \tilde{f}=\lim_{t \to 0^{+}} e^{-\Delta t}(f)$, 
 whose point-wise gradient(even on $\mathcal{S}(\bar{M})$, understood as weak upper gradient, c.f. Definition~\ref{dfn:HD18_1}, or Cheeger~\cite{Cheeger99}) can be bounded by the $L^{\infty}$-norm of the gradient. 
 We remark that such technique should be standard in the study of general metric measure spaces. 
  For example, it was used in the discussion in section 4.1.3 of Gigli's work~\cite{Gigli13}.  
  Actually,  the Lipshitz function approximation Lemma, i.e., Lemma 10.7 of Cheeger's work in 1990's~\cite{Cheeger99},  can be regarded as an predecessor of the technique mentioned above. 
 For the convenience of the readers who are not familiar with the singular space, we also provide a detailed alternative proof as follows, which only uses the Ricci flow, heat equation, Proposition~\ref{prn:SK27_3}
 and the canonical radius assumption.
 
 \begin{claim}[\textbf{Good version of Lipshitz function}]
  Every bounded function $f \in N_0^{1,2}(\bar{M})$ with finite $\norm{\nabla f}{L^{\infty}(\bar{M})}$  and Lipschitz on $\mathcal{R}(\bar{M})$
  has a good version $\tilde{f}$ such that 
  \begin{align}
       &f(x)=\tilde{f}(x), \quad \forall \; x \in \mathcal{R}(\bar{M}), \label{eqn:MC05_8} \\
       &\sup_{\bar{M}} |\nabla \tilde{f}| \leq \norm{\nabla f}{L^{\infty}(\bar{M})},  \label{eqn:MC05_9}
  \end{align}
  where the inequality (\ref{eqn:MC05_9}) can be understood as
  \begin{align*}
     |\tilde{f}(x)-\tilde{f}(y)| \leq \norm{\nabla f}{L^{\infty}(\bar{M})} \cdot d(x, y), \quad \forall \; x, y \in \bar{M}. 
  \end{align*}
 \label{clm:MC05_1} 
 \end{claim}

 This is a flow property, so we assume $\lambda=0$ without loss of generality. 
 For simplicity of notation, we also assume that  $\norm{\nabla f}{L^{\infty}(\bar{M})}=1$ and the support of $f$ is contained in $B(\bar{x}, 1)$.
 Note that these assumptions can always be achieved up to rescaling argument.
  Let $\chi_{\epsilon}=\phi(\frac{d(x, \mathcal{S})}{\epsilon})$ be the cutoff function where $\phi$ is a smooth cutoff function such that
 $\phi \equiv 1$ on $(2,\infty)$ and $\phi \equiv 0$ on $(-\infty, 1)$ and $|\nabla \phi| \leq 2$, $\phi' \leq 0$. 
 Then $\chi_{\epsilon} f$ is a Lipschitz function with compact support. 
 By the smooth convergence away from singularity(c.f. Proposition~\ref{prn:SK27_3} and the discussion around inequality (\ref{eqn:MC05_5})),
 we can regard $\chi_{\epsilon} f$ as a Lipshitz function on $(M_i, g_i(-\delta))$, denoted by $f_{\epsilon, i}$, where $\delta=\epsilon^{\frac{1}{n}}$.
 Starting from $f_{\epsilon,i}$, we solve the heat equation until time $t=0$
 and obtain a function $h_{\epsilon,i}=f_{\epsilon,i}(0)$, together with the metric evolving by the Ricci flow. Then we have
 \begin{align*}
   h_{\epsilon, i}(x)=\int_{M_i} w(x,y,-\delta) f_{\epsilon, i}(y) dv_y, \quad \forall \; x \in M_i,
 \end{align*}
 where $w$ is the fundamental solution of $\square^{*} w=(\partial_{\tau}-\Delta +R) w=0$.   Recall that $\int_{M_i} w dv \equiv 1$ and $|f_{\epsilon, i}| \leq C$ uniformly, we have
 \begin{align}
   |h_{\epsilon, i}|(x)= \left| \int_{M_i} w(x,y,-\delta) f_{\epsilon, i}(y) dv_y \right| \leq \sup_{M_i} |f_{\epsilon, i}| \int_{M_i} w(x,y, -\delta) dv_y = \sup_{M_i} |f_{\epsilon, i}| \leq C.
 \label{eqn:MC05_4}  
 \end{align}
 Direct calculation shows that
 \begin{align*}
  \square |\nabla f_{\epsilon, i}|^2=\left( \partial_t -\Delta \right) |\nabla f_{\epsilon,i}|^2=-2|\nabla \nabla f_{\epsilon, i}|^2 \leq 0. 
 \end{align*}
 It follows that
 \begin{align*}
   |\nabla h_{\epsilon,i}|^2(x) - \int_{M_i} w(x,y,-\delta) |\nabla f_{\epsilon, i}|^2 (y) dv_y=-2 \int_{-\delta}^{0} \int_{M_i} w(x,y,t) |\nabla \nabla f_{\epsilon, i}|^2 dv_y dt \leq 0. 
 \end{align*}
 Consequently, we have
 \begin{align}
   |\nabla h_{\epsilon,i}|^2(x) &\leq  \int_{M_i} w(x,y,-\delta) |\nabla f_{\epsilon, i}|^2 (y) dv_y \notag\\
     &=\int_{\Omega_i \backslash A_i}  w(x,y,-\delta) |\nabla f_{\epsilon, i}|^2 (y) dv_y + \int_{A_i} w(x,y,-\delta) |\nabla f_{\epsilon, i}|^2 (y) dv_y,  \label{eqn:MC05_2}
 \end{align}
 where $A_i$ is the set where the pull back of $\chi_{\epsilon}$ achieves values in $(0,1)$,  $\Omega_i$ is the support of the pull back function $f_i$.  Note that $|\nabla f_{\epsilon, i}|(x) \leq 1+\xi$ for arbitrary small, but fixed $\xi$, 
 whenever $i$ is large enough and $x \in \Omega_i \backslash A_i$.  On $A_i$, we have $|\nabla f_{\epsilon, i}| \leq C \epsilon^{-1}$ for some universal constant $C$.  Note that $\Omega_i \subset B_{g_i(0)}(x_i, 1)$, the canonical assumption then
 implies the density estimate $ |A_i| \leq C \epsilon^{2p_0}$.   Recall that we have the heat kernel(hence conjugate heat kernel estimate) estimate $w(x,y,-\delta) \leq C \delta^{-n}$ for some universal constant $C$. 
 Plugging these inequalities into (\ref{eqn:MC05_2}), we obtain
  \begin{align*}
   |\nabla h_{\epsilon,i}|^2(x) &\leq  \int_{\Omega_i \backslash A_i}  w(x,y,-\delta) |\nabla f_{\epsilon, i}|^2 (y) dv_y + \int_{A_i} w(x,y,-\delta) |\nabla f_{\epsilon, i}|^2 (y) dv_y\\
     &\leq (1+\xi) \int_{\Omega_i \backslash A_i}  w(x,y,-\delta) dv_y + C\epsilon^{-2} \cdot C \delta^{-n} \cdot |A_i|\\
     &\leq (1+\xi) \int_{M_i} w(x,y,-\delta) dv_y  + C \epsilon^{2p_0-2} \delta^{-n}\\
     &\leq (1+\xi) + C \epsilon^{2p_0-2} \delta^{-n} \leq 1+\xi + C \epsilon^{2-\frac{1}{500n}} \delta^{-n},
 \end{align*}
 where we used the fact $\epsilon<1$ and inequality (\ref{eqn:MC05_1}) in the last step. 
 Recall that $\delta=\epsilon^{\frac{1}{n}}$ and let $\xi=\epsilon^{1-\frac{1}{500n}}<<\sqrt{\epsilon}$, we then have
 \begin{align}
    |\nabla h_{\epsilon,i}|^2(x) \leq1+ \sqrt{\epsilon}.    \label{eqn:MC05_3}
 \end{align}
  Moreover, if $\bar{z}$ is a regular point of $\bar{M}$, i.e.,  $\bar{z} \in \mathcal{R}(\bar{M})$.  Let $z_i \in M_i$ and $z_i \to \bar{z}$. Then we have
 \begin{align*}
      &\quad \left| h_{\epsilon, i}(z_i)   -f_{\epsilon, i}(z_i) \right| \\
      &=\left| \int_{M_i} w(z_i,y,-\delta) \left\{ f_{\epsilon, i}(y) - f_{\epsilon, i}(z_i) \right\} dv_y \right|\\
      &\leq  \int_{B_{g_i(0)}(z_i, \delta^{\frac{1}{4}})} w(z_i,y,-\delta) \left| f_{\epsilon, i}(y) - f_{\epsilon, i}(z_i) \right| dv_y +  \int_{M_i \backslash B_{g_i(0)}(z_i, \delta^{\frac{1}{4}})} w(z_i,y,-\delta) \left| f_{\epsilon, i}(y) - f_{\epsilon, i}(z_i) \right| dv_y. 
 \end{align*}
 Note that $\bar{z}$ is regular, we can assume that the regularity scale(for example, $\mathbf{cvr}$) of each $z_i$ is much larger than $\delta=\epsilon^{\frac{1}{n}}$, if we choose $\epsilon$ small enough. 
 Clearly, $B_{g_i(0)}(z_i, \delta^{\frac{1}{4}}) \cap A_i =\emptyset$, which implies the Lipschitz constant of $f_{\epsilon, i}$ on $B_{g_i(0)}(z_i, \delta^{\frac{1}{4}})$ is uniformly bounded by $C$. 
 Recall that $\int_{M_i} w dv \equiv 1$.  Therefore, we have
 \begin{align}
  \left| h_{\epsilon, i}(z_i)   -f_{\epsilon, i}(z_i) \right| \leq C \delta^{\frac{1}{4}} + C \int_{M_i \backslash B_{g_i(0)}(z_i, \delta^{\frac{1}{4}})} w(z_i,y,-\delta) dv_y.  \label{eqn:MC05_6}
 \end{align}
 The last term of the above inequality is a small term which can be absorbed in $C\delta^{\frac{1}{4}}$.
 Actually, let $\psi$ be a cutoff function such that $\psi \equiv 0$ on $B_{g_i(0)}(z_i, 0.5 \delta^{\frac{1}{4}})$,  $\psi \equiv 1$ on $M_i \backslash B_{g_i(0)}(z_i, \delta^{\frac{1}{4}})$.
 Moreover, $|\nabla \psi|^2 + |\Delta \psi| \leq C \delta^{-\frac{1}{2}}$. This can be done since $\delta^{\frac{1}{4}}$ is much less than the regularity scale of $z_i$. 
 Now we extend $\psi$ to be a function on space-time by letting $\psi(x,t)=\psi(x)$.   Due to Proposition~\ref{prn:SK27_3} and the discussion before(c.f. inequality (\ref{eqn:MC05_5})), we obtain
 $|\nabla \psi|^2 + |\Delta \psi| \leq C \delta^{-\frac{1}{2}}$ on $M_i \times [-\delta, 0]$.  Consequently, we obtain
 \begin{align*}
   \frac{d}{dt} \int_{M_i} \psi(y, t) w(z_i,y,t) dv_y=\int_{M_i} ( w \square \psi - \psi \square^* w ) dv_y=- \int_{M_i} w \Delta \psi dv_y.
 \end{align*}
 As $w$ converges to the $\delta$-function at $z_i$ as $t$ approaches $0$, $\psi(z_i, 0)=0$, we have
 \begin{align*}
   0-  \int_{M_i} \psi(y, -\delta) w(z_i,y,-\delta) dv_y =-\int_{-\delta}^{0} \int_{M_i} w \Delta \psi dv_y dt \geq -C \delta^{-\frac{1}{2}} \int_{-\delta}^{0} \int_{M_i} w dv_y dt =-C \delta^{\frac{1}{2}},
 \end{align*}
 which implies that
 \begin{align*}
  \int_{M_i \backslash B_{g_i(0)}(z_i, \delta^{\frac{1}{4}})} \psi(y, -\delta) w(z_i,y,-\delta) dv_y \leq  \int_{M_i} \psi(y, -\delta) w(z_i,y,-\delta) dv_y \leq C \delta^{\frac{1}{2}}. 
 \end{align*}
 Plugging the above inequality into (\ref{eqn:MC05_6}), and noticing that $\delta=\epsilon^{\frac{1}{n}}$, we obtain
 \begin{align}
   \left| h_{\epsilon, i}(z_i)   -f_{\epsilon, i}(z_i) \right| \leq C \delta^{\frac{1}{4}} \leq C \epsilon^{\frac{1}{4n}}.    \label{eqn:MC05_7}
 \end{align}
 It follows from the combination of (\ref{eqn:MC05_4}), (\ref{eqn:MC05_3}) and (\ref{eqn:MC05_7}) that there is a limit function $h_{\epsilon}$ on $\bar{M}$. 
 Let $\epsilon=2^{-i} \to 0$, up to a diagonal sequence argument, we can assume that  $h_{2^{-i}, i}$ converges to a limit function $h$, which satisfies
 \begin{align*}
    &\sup_{\bar{M}} |h| \leq C,  \quad  \sup_{\bar{M}} |\nabla h| \leq 1= \norm{\nabla f}{L^{\infty}(\bar{M})}, \\
    & h(x)=f(x), \quad \forall \; x \in \mathcal{R}(\bar{M}). 
 \end{align*}
 In particular, $h$ is a good version of $f$.  We finish the proof of Claim~\ref{clm:MC05_1}.

 Based on Claim~\ref{clm:MC05_1}, the proof of (\ref{eqn:MC05_11}) follows from the standard technique used in the proof of Lemma~\ref{lma:HE04_1}. 
 Actually, for each $t \neq 0$, we already know that $(M_i, x_i, g_i(t))$ converges in the pointed Gromov-Hausdorff topology to some $(\bar{M}', \bar{x}', \bar{g}')$.
 We only need to show that $\bar{M}'$ is isometric to $\bar{M}$.  
 By Proposition~\ref{prn:SK27_3} and the fact $|R|+|\lambda| \to 0$, we know that there is a natural identification map between $\mathcal{R}(\bar{M})$ and $\mathcal{R}(\bar{M}')$, which contain a common point $\bar{x}$.  
 In the following discussion, we shall show that this identification map can be extended to an isometry between $\bar{M}$ and $\bar{M}'$. 
 
 Let $\bar{y}, \bar{z}$ be two regular points of $\bar{M}$. Clearly, $\bar{y}$ and $\bar{z}$ can also be regarded as regular points on $\bar{M}'$. We omit the identification map for the simplicity of notations. 
 Suppose $d_{\bar{g}}(\bar{y}, \bar{z})=D>0$. We can construct a function $\chi$ on $\bar{M}'$ as follows
 \begin{align}
    \chi(x) =  
  \begin{cases}    
    \max\{ D-d_{\bar{g}}(x, \bar{y}), 0\}, \quad \; &\textrm{if} \quad x \in \mathcal{R}(\bar{M}'), \\
   0,  \quad &\textrm{if} \quad x \in \mathcal{S}(\bar{M}'). 
  \end{cases}    
  \label{eqn:MC07_1}
 \end{align} 
 Fix point $x \in \mathcal{R}(\bar{M}') \backslash B_{\bar{g}'}(\bar{y}, 3D)$,  every smooth curve connecting $x$ and $\bar{y}$ has length as least $3D$.
 In light of  inequality (\ref{eqn:HE11_1}) in Theorem~\ref{thm:HE11_1}(applying to both $\bar{g}$ and $\bar{g}'$), we know that 
 \begin{align*}
   \min\{d_{\bar{g}}(x, \bar{y}),  d_{\bar{g}'}(x, \bar{y}) \} \geq D,
 \end{align*} 
 which implies that $\chi(x)=0$ by definition equation (\ref{eqn:MC07_1}).  We remark that inequality (\ref{eqn:HE11_1}) together with the high codimension of $\mathcal{S}$ implies that
 $\chi \in N_0^{1,2}(\bar{M}')$ and we have
 \begin{align*}
 \norm{\nabla \chi}{L^{\infty}(\bar{M}')}=\norm{\nabla \chi}{L^{\infty}(\mathcal{R}(\bar{M}'))}=\norm{\nabla \chi}{L^{\infty}(\mathcal{R}(\bar{M}))}=1. 
 \end{align*}
 Then we can apply Claim~\ref{clm:MC05_1} to obtain a good version $\tilde{\chi}$ of $\chi$. In particular, we have
 \begin{align}
 d_{\bar{g}}(\bar{y}, \bar{z})=D=\left| \chi(\bar{y}) - \chi(\bar{z}) \right|=\left| \tilde{\chi}(\bar{y}) - \tilde{\chi}(\bar{z}) \right| \leq d_{\bar{g}'}(\bar{y}, \bar{z}) \norm{\nabla \chi}{L^{\infty}(\bar{M}')} \leq d_{\bar{g}'}(\bar{y}, \bar{z}). 
 \label{eqn:MC07_2}
 \end{align}
 Similarly, by reversing the role of $\bar{g}'$ and $\bar{g}$ when we choose the test function, we obtain that
 \begin{align}
   d_{\bar{g}'}(\bar{y}, \bar{z}) \leq d_{\bar{g}}(\bar{y}, \bar{z}). 
 \label{eqn:MC07_3}  
 \end{align}
 By the arbitrary choice of $\bar{y}, \bar{z}$, we know the identity map between $\mathcal{R}(\bar{M})$ and $\mathcal{R}(\bar{M}')$ is an isometry map by (\ref{eqn:MC07_2}) and (\ref{eqn:MC07_3}). 
 Since $\mathcal{R}(\bar{M})$ is dense in $\bar{M}$,  $\mathcal{R}(\bar{M}')$ is dense in $\bar{M}'$, we obtain $\bar{M}$ and $\bar{M}'$ are isometric to each other by taking metric completion. 
 Consequently, (\ref{eqn:MC05_11}) follows from (\ref{eqn:MC05_10}). 
\end{proof}

In Proposition~\ref{prn:SL04_1},
we show that the limit flow exists and is static in the regular part, whenever we have $|R|+|\lambda| \to 0$.
It is possible that the limit points in the singular part $\mathcal{S}$ are moving as time evolves.  However, this possibility
will be ruled out finally(c.f. Proposition~\ref{prn:HE15_1}).

\subsubsection{Tangent structure of the limit space}

In this subsection, we shall show that the tangent space of each point in the limit space has a metric cone structure, provided polarized canonical radius is uniformly bounded below.   Basically, the cone structure is induced from
the localized $W$-functional's monotonicity.   Up to a parabolic rescaling, we can assume $\lambda=0$ without loss
of generality.

\begin{proposition}[\textbf{Local $W$-functional}]
 Suppose $\mathcal{LM}_i \in \mathscr{K}(n,A;r_0)$ and 
 $\sup_{\mathcal{M}_i} (|R|+|\lambda|) \to 0$.

 Let $u_i$ be the fundamental solution of the
 backward heat equation $\left[-\frac{\partial}{\partial t} - \triangle + R \right] u_i =0$ based at the space-time point $(x_i, 0)$. Then $u_i$ converges to
 a limit positive solution $\bar{u}$ on $\mathcal{R} \times (-1, 0]$, i.e.,
 \begin{align*}
   \left[-\frac{\partial}{\partial t} - \Delta + R \right] \bar{u}=0.
 \end{align*}
 Moreover, we have
\begin{align}
     \iint_{\mathcal{R} \times (-1, 0]}  2|t| \left|Ric+\nabla \nabla \bar{f} +\frac{\bar{g}}{2t} \right|^2 \bar{u} dv_{\bar{g}} dt \leq C,
 \label{eqn:SC16_4}
\end{align}
 where $C=C(n,A)$, $\bar{u}=(4\pi |t|)^{-n} e^{-\bar{f}}$.
\label{prn:SC16_1}
\end{proposition}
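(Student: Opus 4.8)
The plan is to obtain the limit solution $\bar u$ first, and then derive the integral bound \eqref{eqn:SC16_4} from the localized version of Perelman's $W$-functional monotonicity, where the localization is precisely what the hypothesis $\mathbf{pcr}\geq r_0$ together with $\sup_{\mathcal M_i}(|R|+|\lambda|)\to 0$ makes possible. First I would recall the standing normalization $\lambda=0$, so that the backward heat equation is $\square^* u_i = (-\partial_t-\Delta+R)u_i=0$ and $\int_{M_i} u_i\,dv\equiv 1$. By Proposition~\ref{prn:SL04_1} we already have $(M_i,x_i,g_i(t))\stackrel{\hat C^\infty}{\longrightarrow}(\bar M,\bar x,\bar g)$ for all $t\in(-\bar T,\bar T)$ with static regular part $\mathcal R$, and the convergence on any compact subset $K\Subset\mathcal R\times(-1,0]$ is smooth with uniformly bounded geometry. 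The Gaussian-type upper and lower bounds for the conjugate heat kernel $u_i$ (available from the uniform Sobolev constant $C_S\le A$ and the uniform scalar curvature bound, via the on-diagonal estimate plus a Harnack/gradient estimate as used in Lemma~\ref{lma:SK23_1}) give uniform local bounds for $u_i$ and, combined with parabolic Schauder estimates on $K$, uniform $C^\infty_{loc}$ bounds. Hence along a subsequence $u_i\to\bar u>0$ on $\mathcal R\times(-1,0]$ with $\square^*\bar u=(-\partial_t-\Delta+R_{\bar g})\bar u=0$; since $\bar g$ is Ricci-flat on $\mathcal R$ (smooth convergence and $|Ric|\to 0$, via the $|Ric|\le\sqrt{|Rm||R|}$-type estimate already invoked in the excerpt) this is just the backward heat equation $(-\partial_t-\Delta)\bar u=0$. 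Writing $\bar u=(4\pi|t|)^{-n}e^{-\bar f}$ defines $\bar f$ on $\mathcal R\times(-1,0]$.

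Next I would set up Perelman's pointed $W$-entropy for each flow: with $u_i=(4\pi\tau)^{-n}e^{-f_i}$, $\tau=-t$, let
\begin{align*}
  \mathcal W_i(\tau)=\int_{M_i}\left[\tau(R+|\nabla f_i|^2)+f_i-2n\right]u_i\,dv_{g_i(-\tau)}.
\end{align*}
Perelman's computation (valid verbatim on the smooth compact $M_i$) gives
\begin{align*}
  \frac{d}{d\tau}\mathcal W_i(\tau)=-\int_{M_i} 2\tau\left|Ric+\nabla\nabla f_i-\frac{g_i}{2\tau}\right|^2 u_i\,dv_{g_i(-\tau)}\le 0
\end{align*}
(here I use $Ric$ since $\lambda=0$; the $\frac{g}{2\tau}$-sign matches the $\tau$-parametrization). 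The key point is a uniform \emph{two-sided} bound $|\mathcal W_i(\tau)|\le C(n,A)$ on $\tau\in(0,1]$: the upper bound $\mathcal W_i\le 0$ (equivalently $\le$ some universal constant) is standard from the logarithmic Sobolev inequality controlled by $C_S\le A$; the lower bound $\mathcal W_i(\tau)\ge -C(n,A)$ follows by comparing with the on-diagonal heat kernel bound, i.e. $-\int f_i u_i\,dv = -\log\!\big((4\pi\tau)^n u_i\big)$-type quantities are controlled by the Gaussian bounds available from $C_S$, $\sup|R|$, $\mathrm{avr}\ge\kappa$, exactly as in the estimates collected around Lemma~\ref{lma:SK23_1} and Proposition~\ref{prn:HC29_7}. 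Integrating the monotonicity from $\tau$ near $0$ to $\tau=1$ then yields
\begin{align*}
  \int_0^1\!\!\int_{M_i} 2\tau\left|Ric+\nabla\nabla f_i-\frac{g_i}{2\tau}\right|^2 u_i\,dv_{g_i(-\tau)}\,d\tau
  =\mathcal W_i(0^+)-\mathcal W_i(1)\le C(n,A).
\end{align*}

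Finally I would pass this integral bound to the limit. Using the $\hat C^\infty$-convergence on compact subsets of $\mathcal R\times(-1,0]$, together with the smooth convergence $u_i\to\bar u$, $f_i\to\bar f$, $Ric_{g_i}\to 0$, $\nabla\nabla f_i\to\nabla\nabla\bar f$, $g_i\to\bar g$ on such compacta, Fatou's lemma gives
\begin{align*}
  \iint_{\mathcal R\times(-1,0]} 2|t|\left|Ric+\nabla\nabla\bar f+\frac{\bar g}{2t}\right|^2\bar u\,dv_{\bar g}\,dt
  \le\liminf_{i\to\infty}\iint 2\tau\left|Ric+\nabla\nabla f_i-\frac{g_i}{2\tau}\right|^2 u_i\,dv\,d\tau\le C,
\end{align*}
which is \eqref{eqn:SC16_4} (note $-\tfrac{g_i}{2\tau}=\tfrac{\bar g}{2t}$ since $\tau=-t$). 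I expect the main obstacle to be two-fold: (i) justifying that the limiting density $\bar u$ really is a bona fide conjugate heat kernel ``based at $(\bar x,0)$'' and that no mass escapes into the singular set $\mathcal S$ as $t\to 0^-$ — this requires the Gaussian lower bound for $u_i$ (so $\bar u>0$) and the high Minkowski codimension of $\mathcal S$ (so that $\int_{\mathcal R\cap B(\bar x,r)}\bar u\,dv\to 1$ as $t\to 0^-$), plus a careful treatment of the boundary term $\mathcal W_i(0^+)$, which one handles exactly as the $\delta$-function limit in Proposition~\ref{prn:HD09_1} once the Gaussian bounds are in place; and (ii) controlling the integrand near $\mathcal S$ and near $t=0$ uniformly in $i$, for which one exhausts $\mathcal R\times(-1,0]$ by compacta $K_j$ on which the convergence is smooth, applies the uniform bound there, and lets $j\to\infty$ by monotone convergence. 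Once these are dealt with, the rest is the standard Perelman monotonicity computation transplanted to the compact manifolds $M_i$, where it is classical.
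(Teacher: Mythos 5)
Your proposal is correct and follows essentially the same route as the paper: obtain the limit $\bar u$ via the conjugate heat kernel bounds (Cao--Zhang on-diagonal estimate plus Cao--Hamilton--Zhang gradient estimate, which is what the paper invokes) and smooth convergence on compacta of $\mathcal{R}\times(-1,0]$, then bound $\int_{-1}^0\int_{M_i} 2|t|\,|Ric+\nabla\nabla f_i+\tfrac{g_i}{2t}|^2 u_i\,dv\,dt$ by $-\mu(M_i,g_i(-1),1)\le C(n,A)$ via Perelman's pointed $W$-monotonicity with the uniform Sobolev bound, and pass to the limit on compact exhaustions. The paper frames the bound directly as $-\mu\le C$ rather than a two-sided bound on $\mathcal W_i(\tau)$, but the substance (monotonicity from $\mathcal W_i(0^+)=0$ down to $\mathcal W_i(1)\ge\mu\ge -C$) is the same.
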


\begin{proof}
 This is a flow property and has nothing to do with polarization. So we can assume $\lambda=0$ for simplicity. 
 
  Fix $r>0$.  Choose a point $\bar{y} \in \mathcal{R}_{r}$ and a time $\bar{t}<0$.   Without loss of generality, we assume that
  there is a sequence of points $(y_i, t_i)$ converging to $(\bar{y}, \bar{t})$.  Note that $d_{g_i(0)}(y_i, x_i)$ is uniformly bounded. It is not hard to see that $u_i$ is uniformly bounded around $(y_i, t_i)$.
  Actually, let $w_i$ be the heat equation $\square w_i=(\partial_t -\Delta) w_i=0$, starting from a $\delta$-function at $(y_i, t_i)$.  Then by the heat kernel estimate of Cao-Zhang(c.f.~\cite{CaoZhang}), we obtain the 
  on-diagonal bound $\frac{1}{C} |t_i|^{-n}<w_i(y_i, 0)<C|t_i|^{-n}$ for some uniform constant $C$. Then the gradient estimate of Cao-Hamilton-Zhang(c.f.~\cite{Zhq2},~\cite{CaHa}) and the fact $d_{g_i(0)}(y_i, x_i)<C$ implies that $|\log w_{i}(x_i, 0)|$ 
  is uniformly  bounded.  Note that $w_i(x_i, 0)=u_i(y_i, t_i)$ since the integral $\int_M u_i v_i d\mu$ does not depend on time.   Therefore, we have
  \begin{align}
   \frac{1}{C} \leq  u_i(y_i, t_i)=w_i(x_i, 0) \leq C,    \label{eqn:MC13_3}
  \end{align}
  where $C$ depends on $|t_i|$ and $d_{g_i(0)}(y_i, x_i)$. It clearly works uniformly for a fixed-sized space-time neighborhood of $(y_i, t_i)$, where curvatures are uniformly bounded.  
  Then standard regularity argument from heat equation shows that all derivatives of $u_i$ are uniformly bounded around $(y_i, t_i)$. Therefore, there is a limit positive solution $\bar{u}$ around $(\bar{y}, \bar{t})$.   By the arbitrary choice of $r,\bar{y}, \bar{t}$. It is clear
  that there is a smooth heat solution $\bar{u}$ defined on $\mathcal{R} \times (-1, 0)$.

  By Perelman's calculation, for each flow $g_i$, we have
  \begin{align}
      \int_{-1}^{0}\int_{M_i} 2|t| \left|Ric_{g_i}+\nabla \nabla f_i +\frac{g_i}{2t} \right|^2 u_i dv_{g_i} dt=-\mu(M_i, g_i(t_i), 1) \leq C,
  \label{eqn:SK17_1}
  \end{align}
  since Sobolev constant is uniformly bounded. By passing to limit, (\ref{eqn:SC16_4}) follows.
\end{proof}

\begin{theorem}[\textbf{Tangent cone structure}]
 Suppose $\mathcal{LM}_i$ is a sequence of polarized K\"ahler Ricci flow solutions in $\mathscr{K}(n,A;r_0)$,
 $x_i \in M_i$. Let $(\bar{M}, \bar{x}, \bar{g})$ be the limit space of
 $(M_i,x_i,g_i(0))$, $\bar{y}$ be an arbitrary point of $\bar{M}$.
Then every tangent space of $\bar{y}$ is an irreducible metric cone.
\label{thm:SC09_1}
\end{theorem}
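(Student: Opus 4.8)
The plan is to follow the Cheeger--Colding strategy for proving that tangent cones of non-collapsed Ricci limit spaces are metric cones, but with the monotone quantity being the volume ratio \emph{replaced} by Perelman's localized $W$-functional along the associated (nearly static) Ricci flow. First I would reduce to the model situation: rescale the flows $\mathcal{LM}_i$ at $(x_i,0)$ by larger and larger factors and use Proposition~\ref{prn:SL04_1} together with the rescaling invariance of $\mathscr{K}(n,A;r_0)$ — after rescaling, $\sup(|R|+|\lambda|)\to 0$, so the limit of the rescaled flows has a static regular part and the limit metric space is exactly the tangent cone $Y$ at $\bar y$. This is where the polarized canonical radius lower bound is crucially used: it propagates through the rescalings (Proposition~\ref{prn:SC09_3}, Proposition~\ref{prn:SK27_3}) so that the blowup limits stay in the good category, with $\mathcal{R}(Y)$ a smooth Ricci-flat K\"ahler manifold and $\dim_{\mathcal M}\mathcal{S}(Y)\le 2n-2p_0$.

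Next I would transport Perelman's integral estimate. By Proposition~\ref{prn:SC16_1} the fundamental solutions $u_i=(4\pi|t|)^{-n}e^{-f_i}$ of the conjugate heat equation based at $(x_i,0)$ converge to a limit solution $\bar u=(4\pi|t|)^{-n}e^{-\bar f}$ on $\mathcal{R}\times(-1,0]$, and
\begin{align*}
  \iint_{\mathcal{R}\times(-1,0]} 2|t|\left|Ric+\nabla\nabla \bar f+\frac{\bar g}{2t}\right|^2 \bar u\, dv_{\bar g}\, dt\le C.
\end{align*}
After the parabolic rescaling that produces the tangent cone, the same bound holds on each rescaled flow with a \emph{fixed} constant $C=C(n,A)$, while the time interval over which we integrate is stretched; dividing by the length of that interval shows that the scale-invariant average of $2|t|\,|Ric+\nabla\nabla\bar f + \bar g/(2t)|^2\,\bar u$ tends to $0$. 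Since the regular part of the limit is Ricci-flat, in the limit this forces $\nabla\nabla \bar f = -\bar g/(2t)$ on $\mathcal{R}(Y)$, i.e. $\bar f$ is (up to normalization, with $t=-1$ say) a function with $\mathrm{Hess}\,\bar f \equiv \tfrac12 \bar g$. One then identifies $\bar f$ with $\tfrac{r^2}{4}$ where $r=d(\cdot,\text{vertex})$ by matching the asymptotics of $\bar f$ (Perelman's estimates $l\sim d^2/4\tau$ for the reduced distance, or directly the ODE along unit-speed geodesics emanating from the limit of $x_i$), and invokes the rigidity implication $4\Rightarrow 1$ of Lemma~\ref{lma:HE04_2} — having $\mathrm{Hess}_{r^2/2}=\bar g$ on $\mathcal{R}(Y)$, with $\mathcal{R}(Y)$ strongly convex (Theorem~\ref{thm:HE11_1}) and $\nabla\tfrac{r^2}{2}$ a conformal Killing field preserving regularity — to conclude $Y$ is a metric cone with vertex. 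Irreducibility of the cone (i.e. the splitting $Y=\mathbb{C}^{n-k}\times C(Z)$ with $C(Z)$ line-free, $k\ge 2$, hence in particular no trivial $\mathbb{C}$-factor hiding a reducible product) follows from Lemma~\ref{lma:HD30_1} and Theorem~\ref{thm:HE08_1}, once we know $Y$ carries the limit K\"ahler structure on its regular part and is not all of $\mathbb{C}^n$ at singular points; at regular points the tangent space is $\mathbb{C}^n$ itself, which is an irreducible metric cone, so that case is immediate.

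The main obstacle I anticipate is the passage from the smooth, interior estimate on $\mathcal{R}(Y)$ to a genuine statement about the metric space $Y$: Proposition~\ref{prn:SC16_1} only gives $\bar u$ and the Hessian identity on the regular part, which is a priori \emph{incomplete}, so I must control the limit function $\bar f$ across the singular set $\mathcal{S}(Y)$ and show that the flow of $\nabla\bar f$ does not escape into $\mathcal S(Y)$ — exactly the difficulty handled in Claim~\ref{clm:MA20_1} and Claim~\ref{clm:GD11_1} of Lemma~\ref{lma:HE04_1}, which go through here because $\dim_{\mathcal M}\mathcal{S}(Y)<2n-2$, $\mathcal{R}(Y)$ is weakly (in fact strongly) convex, and there is a uniform volume-density gap at singular points (Anderson's gap theorem, Corollary~\ref{cly:SL23_1}). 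A secondary technical point is justifying that $\bar f$ is the \emph{correct} limit — i.e. that the conjugate heat kernels really converge with the Gaussian control needed to pass Perelman's monotonicity to the limit — which rests on the heat kernel bounds built into $\mathscr{K}(n,A)$ (uniform Sobolev constant, Cao--Zhang/Cao--Hamilton gradient estimates) and the $\hat C^\infty$-convergence of Proposition~\ref{prn:SL04_1}; these are available but must be assembled carefully. Modulo these points, the argument is the Ricci-flow analogue of ``almost volume cone implies almost metric cone'' (Proposition~\ref{prn:HD22_2}), upgraded to an exact statement in the blowup limit.
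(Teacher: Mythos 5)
Your strategy matches the paper's: parabolically rescale at a point above $\bar y$, pass Perelman's localized $W$-functional estimate (Proposition~\ref{prn:SC16_1}, equation (\ref{eqn:SK17_1})) to the blowup limit to obtain $\nabla\nabla\hat f + \hat g/(2t)\equiv 0$ on the Ricci-flat regular part, and then convert this Hessian identity into a cone structure via a Killing-flow argument. That is exactly the route the paper takes. However there is a genuine gap in the final step, and it is a circularity rather than a missing estimate. You propose to conclude by invoking the rigidity implication $4\Rightarrow 1$ of Lemma~\ref{lma:HE04_2}, and you cite Theorem~\ref{thm:HE11_1} for (weak or even strong) convexity of $\mathcal{R}(\hat Y)$. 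Neither ingredient is actually available at this stage. Lemma~\ref{lma:HE04_2} is stated and proved only for $X\in\widetilde{\mathscr{KS}}(n,\kappa)$; that the blowup limit lies in $\widetilde{\mathscr{KS}}(n,\kappa)$ is Theorem~\ref{thm:SC04_1}, which comes \emph{after} Theorem~\ref{thm:SC09_1} and uses it. Similarly, Theorem~\ref{thm:HE11_1} provides only path-connectedness of $\mathcal{R}$ with a factor-$3$ length bound (\ref{eqn:HE11_1}); weak convexity of $\mathcal{R}$ is Proposition~\ref{prn:SC30_1}, which again is established later (via Lemma~\ref{lma:SK27_4} and Proposition~\ref{prn:SC17_4}, both downstream of Theorem~\ref{thm:SC09_1}). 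So if you lean on convexity or on Lemma~\ref{lma:HE04_2} you end up assuming what you are trying to prove.

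The paper avoids this by not invoking Lemma~\ref{lma:HE04_2} at all. Instead, it runs a self-contained four-step argument directly from the soliton identity $\nabla\nabla\hat f=-\hat g/(2t)$: properness and continuity of $\hat f$ across $\mathcal S$, comparison of level sets of $\hat f$ with geodesic balls, uniqueness of the minimum at the vertex, and finally $\hat f = d^2/(4|t|)$ everywhere. The place where convexity would ordinarily enter (producing a good Lipschitz representative of cut-off functions so the conformal Killing flow can be shown to preserve regularity and $L^\infty$-gradient bounds) is replaced by Claim~\ref{clm:MC05_1} from Proposition~\ref{prn:SL04_1}, which manufactures the good version by running the heat flow on $M_i$ for a short time and passing to the limit -- a Ricci-flow substitute for convexity that is already available at this point because it only needs the high Minkowski codimension $\dim_{\mathcal M}\mathcal S < 2n-2p_0 < 2n-2$ from Theorem~\ref{thm:HE11_1} and the $\hat C^\infty$-convergence on the regular part. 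Your instinct that high codimension and the Anderson density gap are the essential inputs is correct; the fix is to swap out the convexity assertion for the heat-smoothing mechanism of Claim~\ref{clm:MC05_1} and to carry out the cone identification directly rather than appealing to Lemma~\ref{lma:HE04_2}.
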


\begin{proof}
 Suppose $\hat{Y}$ is a tangent space of $\bar{M}$ at the point $\bar{y}$, i.e., there are scales $r_k\to 0$ such that
 \begin{align}
     (\hat{Y}, \hat{y}, \hat{g})=\lim_{k \to \infty} (\bar{M}, \bar{y}, \bar{g}_k)   \label{eqn:MC12_1}
 \end{align}
 where $\bar{g}_k= r_k^{-2}\bar{g}$.  By taking subsequence if necessary, we can assume $ (\hat{Y}, \hat{y}, \hat{g})$
 as the limit space of $(M_{i_k}, y_{i_k}, \tilde{g}_{i_k})$ where $\tilde{g}_{i_k}=r_i^{-2}g_{i_k}(0)$.
 Denote the regular part of $\hat{Y}$ by $\mathcal{R}(\hat{Y})$.
 Then on the space-time
 $\mathcal{R}(\hat{Y}) \times (-\infty, 0]$, there is
 a smooth limit backward heat solution $\hat{u}$. 
 Recall that $\hat{u}$ is positive by Proposition~\ref{prn:SC16_1}. 
 For every compact subset $K \subset \mathcal{R}(\hat{Y})$ and positive number $H$, it follows from Cheeger-Gromov convergence and
 the estimate (\ref{eqn:SK17_1}) that
 \begin{align*}
     \iint_{K \times [-H, 0]}   2|t| \left| Ric + \nabla \nabla \hat{f} + \frac{\hat{g}}{2t}\right|^2 \hat{u} dv dt =0.
 \end{align*}
 Note the scaling invariance of $\hat{u} dv$ and $|t| \left| Ric + \nabla \nabla \hat{f} + \frac{\hat{g}}{2t}\right|^2 dt$. Actually, if the above equality fails for some $K$ and $H$, then by definition of tangent space and
 the integral accumulation,  we shall obtain the left hand side of (\ref{eqn:SK17_1}) is infinity and obtain a contradiction. 
 Then by  the arbitrary choice of $K$ and $H$, we arrive
 \begin{align*}
     \iint_{\mathcal{R}(\hat{Y}) \times (-\infty, 0]}   2|t| \left| Ric + \nabla \nabla \hat{f} + \frac{\hat{g}}{2t}\right|^2 \hat{u} dv dt =0.
 \end{align*}
 Note that $\mathcal{R}(\hat{Y})$ is Ricci flat. So there is a smooth function $\hat{f}$ defined on $\mathcal{R}(\hat{Y}) \times (-\infty, 0]$ such that
   \begin{align}
   \nabla \nabla \hat{f} + \frac{\hat{g}}{2t} \equiv 0.
   \label{eqn:SL13_1}
   \end{align}
  The above equation means that $\nabla \hat{f}$ is a conformal Killing vector field, when restricted on each time slice $t<0$.
  It follows from the work of Cheeger-Colding(c.f.~\cite{CCWarp}) that there is a local cone structure around each regular point. 
  We shall show that a global cone structure can be obtained due to the high co-dimension of the singular set $\mathcal{S}$ 
  and the Killing property arised from (\ref{eqn:SL13_1}).  
  The basic techniques we shall use in our proof is very similar to that in the proof of Lemma~\ref{lma:HE04_1}, Lemma~\ref{lma:HE04_2} and Proposition~\ref{prn:SL04_1}.

 Let's first list the excellent properties of $\hat{f}$. Recall that $\hat{f}$ satisfies the following differential equation on $\mathcal{R} \times (-\infty, 0)$ from the limit process. 
 \begin{align}
     \hat{f}_{t}=-\Delta \hat{f} + |\nabla \hat{f}|^2-R -\frac{n}{t}=|\nabla \hat{f}|^2.      \label{eqn:MC12_6}
 \end{align}
 On the other hand,  it follows from (\ref{eqn:SL13_1}) that 
 \begin{align*}
   \nabla \left(t|\nabla \hat{f}|^2 +\hat{f} \right)=2t Hess_{\hat{f}}(\nabla \hat{f}, \cdot) + \nabla \hat{f}= -\nabla \hat{f} + \nabla \hat{f} \equiv 0.
 \end{align*}
 So we have $t|\nabla \hat{f}|^2 +\hat{f}=C(t)$, whose time derivatives calculation yields that
 \begin{align*}
   C'(t)&=|\nabla \hat{f}|^2 + 2t \left \langle \nabla \hat{f}, \nabla \hat{f}_t \right \rangle + \hat{f}_t
            =2|\nabla \hat{f}|^2 +2t\left\langle \nabla \hat{f}, \nabla |\nabla \hat{f}|^2 \right \rangle\\
          &=2|\nabla \hat{f}|^2 +4t Hess_{\hat{f}} \left(\nabla \hat{f}, \nabla \hat{f} \right)=2|\nabla \hat{f}|^2-2|\nabla \hat{f}|^2=0,    
 \end{align*}
 where we repeatedly used (\ref{eqn:MC12_6}) and (\ref{eqn:SL13_1}).   Therefore, $t|\nabla \hat{f}|^2 +\hat{f} \equiv C$ on $\mathcal{R} \times (-\infty, 0)$. 
 Replacing $\hat{f}$ by $\hat{f}+C$ if necessary, we can assume that $t|\nabla \hat{f}|^2 + \hat{f} \equiv 0$,  which implies that
 \begin{align}
   \left(t\hat{f} \right)_{t}=\hat{f}+ t \hat{f}_t=\hat{f}+t|\nabla \hat{f}|^2 \equiv 0.     \label{eqn:MC12_5}
 \end{align}
 Consequently, we have
 \begin{align}
    &\hat{f}(x, t)= \frac{-1}{t} \hat{f}(x, -1),  \quad \forall \; x \in \mathcal{R}(\hat{Y}).  \label{eqn:MC12_7}\\
    & \left|\nabla \sqrt{\hat{f}(x, t)} \right|=\frac{1}{2|t|}.      \label{eqn:MC15_2}
 \end{align}
We remark that the above discussion is nothing but the application of general property of gradient shrinking solitons(c.f.  Chapter 4 of Chow-Lu-Ni~\cite{CLN}), in the special case that $Ric \equiv 0$. 

Intuitively, a space which is both Ricci-flat and is a gradient shrinking soliton must be a metric cone.
This can be easily proved if the underlying space is smooth.   
In our current situation,  due to high codimension of $\mathcal{S}$, the cone structure can be established using the technique developed in section 2.  
Suppose $\hat{Y}$ is a metric cone based at $\hat{y}$, then we should have
\begin{align}
 \hat{f}=\frac{d^2}{4|t|},  \label{eqn:MC17_7}
\end{align}
where $d$ is the distance to the origin. This will be confirmed in the following discussion.
The cone structure of $\hat{Y}$ will be established together with equality (\ref{eqn:MC17_7}). 
The basic idea to prove (\ref{eqn:MC17_7}) is to compare the level sets of $\hat{f}$ with geodesic balls, with more and more preciseness. 
Note that similar ideas to estimate distance will be essentially used in section 5.3(c.f. Lemma~\ref{lma:HA15_1}).
We remark that our proof could be much simpler if we use Lemma~\ref{lma:SK27_4}, which is independent(c.f. Remark~\ref{rmk:MC07_1}).
For example, the application of Lemma~\ref{lma:SK27_4} directly implies that $\hat{f}$ must achieve minimum only at base point $\hat{y}$(see step 3 below), 
 since $\hat{f}$ is a strictly convex function in regular part $\cal R$ and can be regarded as a continuous function on $\hat{Y}$(c.f. setp 1 below). 
 Here we want to give a self-contained proof,  using only the good property of $\hat{f}$ to improve the regularity of $\hat{Y}$.

We divide the proof of (\ref{eqn:MC17_7}) into four steps.

 \textit{Step 1. $\hat{f}$ is a nonngative, continuous, proper function which achieves minimum value $0$ at $\hat{y}$. }

 Let us focus our attention on time slice $t=-1$ for a while.  Denote $\hat{f}(x, -1)$ by $\hat{f}(x)$ for simplicity of notation.
 It is not hard to observe that $\hat{f}(x)$ is weakly proper. In other words, we have
 \begin{align}
   \lim_{\mathcal{R}(\hat{Y}) \ni x \to \infty} \hat{f}(x)=\infty.    \label{eqn:MC17_4}
 \end{align}
 For otherwise, we can find a sequence of points $z_i \in \mathcal{R}(\hat{Y})$ such that
 $d(z_i, \hat{y}) \to \infty$ and $\hat{f}(z_i) \leq D$ for some positive number $D$.  
 Note that $\hat{f}$ is uniformly bounded from below in the ball $B(z_i, 1)$.
 Actually, for every smooth point $x \in B(z_i, 1)$, we can find a smooth curve $\gamma$ connecting $x$ to $z_i$ such that $|\gamma| \leq 3 d(x,z_i)$.
 This is an application of inequality (\ref{eqn:HE11_1}) in Theorem~\ref{thm:HE11_1}.  Note that the caonical radius is very large in the current situation. 
 Parametrize $\gamma$ by arc length and let $\gamma(0)=z_i$ and $\gamma(L)=x$. Then $|\gamma|=L \leq 3$. 
 Along the curve $\gamma$,  by (\ref{eqn:MC15_2}), we have
 \begin{align*}
     \frac{d}{ds} \sqrt{\hat{f}}(\gamma(s))=\left \langle \nabla \sqrt{\hat{f}}, \dot{\gamma}(s) \right \rangle \leq \left|\nabla \sqrt{\hat{f}} \right| = \frac{1}{2}.
 \end{align*}
 Integration of the above inequality implies that
 \begin{align}
  \hat{f}(x)=\hat{f}(\gamma(L)) \leq \left( \frac{1}{2}L + \hat{f}(\gamma(0)) \right)^2= \left( \frac{1}{2}L + \hat{f}(z_i) \right)^2 \leq (1.5+D)^2 \leq 2(1+D)^2.
 \label{eqn:MC13_7} 
 \end{align}
 The above inequality holds for every regular point $x \in B(z_i,1)$.  In particular,  we know $\int_{B(z_i, 1)} e^{-\hat{f}} dv$ is uniformly bounded from below by some $C^{-1}$.
 Consequently, we have
 \begin{align*}
    \int_{B(z_i,1)} \hat{u} dv = (4\pi)^{-n} \int_{B(z_i,1)} e^{-\hat{f}} dv \geq \frac{1}{C},
 \end{align*} 
 for some uniform constant $C$ depending on $\kappa$ and $D$.   Up to reselecting a subsequence if necessary, we can assume that all $B(z_i,1)$ are disjoint to each other. Then we have
 \begin{align*}
  C \geq  \sum_{i=1}^{\infty} \int_{B(z_i, 1)} \hat{u} dv \geq \infty,
 \end{align*}
 which is impossible. This contradiction establishes the proof of (\ref{eqn:MC17_4}). 
 Note that in the above disccusion, we already know that  the function $\hat{f}$ is bounded on $B \cap \mathcal{R}(\hat{Y})$ for each fixed geodesic ball $B$, by the application of the proof of (\ref{eqn:MC13_7}). 
 Consequently, we have uniform gradient estimate of $\hat{f}$ in $B \cap \mathcal{R}(\hat{Y})$ by (\ref{eqn:MC12_5}), since $t=-1$. 
 The locally Lipschitz condition guarantees that $\hat{f}$ can be extended as a continuous function on whole $\hat{Y}$. 
 Actually, let $\bar{z}$ be a singular point on $\hat{Y}$.  Suppose $a_k$ and $b_k$ are two sequences of regular points in $\mathcal{R}(\hat{Y})$ converging to $\bar{z}$. 
 Clearly, $d(a_k, b_k) \to 0$. By inequality (\ref{eqn:HE11_1}) in Theorem~\ref{thm:HE11_1}, we can find a smooth curve $\gamma_k \subset \mathcal{R}(\hat{Y})$ connecting
 $a_k, b_k$ such that $|\gamma_k|<3d(a_k, b_k) \to 0$.   The bound of $|\nabla \hat{f}|$ then implies that $|\hat{f}(a_k) -\hat{f}(b_k)| \to 0$.  So we can define
 $\displaystyle \hat{f}(\bar{z}) \triangleq \lim_{y \to \bar{z},  y \in \mathcal{R}(\hat{Y})} \hat{f}(y)$ without ambiguity(c.f. Proposition~\ref{prn:HD16_1} for similar discussion). 
 Therefore, from now on we can regard $\hat{f}$ as a continuous function on $\hat{Y}$, rather than only on $\mathcal{R}(\hat{Y})$. 
 Clearly, the previous discussion implies that $\hat{f}$ is proper. Namely, we have
 \begin{align}
    \lim_{\hat{Y} \ni x \to \infty} \hat{f}(x)=\infty.  \label{eqn:MC17_5}
 \end{align}
 Consequently, the minimum value of $\hat{f}$ can be achieved at some point $\hat{z}$.
 The above discussion can be trivially extended for the function $\hat{f}(\cdot, t)$ for each $t \in (-\infty, 0)$.
 So we know $\hat{f}(\cdot, t)$ is a continuous proper function, which achieves minimum value at $\hat{z}$ also, by (\ref{eqn:MC12_7}). 
 Furthermore, it is also clear that (\ref{eqn:MC12_7}) and the first part of (\ref{eqn:MC12_5}) can be extended to hold on whole $\hat{Y} \times (-\infty, 0)$.  
 Then we observe that
 \begin{align}
  \hat{f}(\hat{y}, t)=\min_{x \in \hat{Y}} \hat{f}(x, t) =0, \quad \forall \; t \in (-\infty, 0).   \label{eqn:MC17_2}
 \end{align}
 Actually, following the discussion around inequality (\ref{eqn:MC13_3}), we can use the on-diagonal estimate of Cao-Zhang and the gradient estimate of Cao-Hamilton-Zhang
 to obtain that 
 \begin{align*}
  (4\pi |t|)^{-n} e^{-\hat{f}(x,t)-C}=\hat{u}(x, t) \geq \frac{1}{C}|t|^{-n}, \quad \forall \; x \in B\left(\hat{y}, \sqrt{|t|} \right) \cap \mathcal{R}(\hat{Y}),  
 \end{align*}
 where we used the fact that we adjusted $\hat{f}$ globally by adding a constant to obtain (\ref{eqn:MC12_5}).  By the continuity of $\hat{f}$,  the above inequality implies that
 \begin{align*}
    \frac{\hat{f}(\hat{y}, -1)}{|t|}=\hat{f}(\hat{y}, t) \leq C, \quad \Rightarrow \quad  \hat{f}(\hat{y}, -1) \leq C|t|,  \quad \forall \; t \in (-\infty, 0).   
 \end{align*}
 This forces that $\hat{f}(\hat{y}, -1)=0$. Recall that $\hat{f}$ is a nonnegative function by (\ref{eqn:MC12_5}), so we obtain $\displaystyle \min_{x \in \hat{Y}} \hat{f}(x, -1)=0$. 
 Then (\ref{eqn:MC17_2}) follows from the extended version of (\ref{eqn:MC12_7}).  So we finish Step 1.

 \textit{Step 2. Unit level set of $\hat{f}$ is comparable with unite geodesic ball centered at $\hat{y}$.}
 
 For each nonnegative number $a$, we define $\Omega_a \triangleq \{x \in \hat{Y} | \hat{f}(x, -1)  \leq a^2 \}$. 
 According to this definition, we immediately know that $\hat{y} \in \Omega_0$.  Furthermore, by (\ref{eqn:MC12_7}), it is clear that
 \begin{align*}
   \Omega_a = \{x \in \hat{Y} | \hat{f}(x, t)  \leq |t|^{-1}a^2 \}, \quad \forall \; t \in (-\infty, 0). 
 \end{align*}
 Note that $\Omega_1$ is bounded by the properness of $\hat{f}(\cdot)=\hat{f}(\cdot, -1)$.
 For simplicity, we assume that $ \Omega_1 \subset B(\hat{y}, 0.5H)$ for some $H>0$. 
 On the other hand, applying the gradient estimate of $\sqrt{\hat{f}}$, i.e., (\ref{eqn:MC15_2}),  and the smooth curve length estimate (\ref{eqn:HE11_1}), we have
 \begin{align*}
   \sqrt{\hat{f}(x)} \leq \sqrt{\hat{f}(\hat{y})} + \frac{1}{2} \cdot 4 \cdot H \leq 2H,  \quad \forall \; x \in B(\hat{y}, H)
 \end{align*}
 which means that $B(\hat{y}, H) \subset \Omega_{2H}$.  Let $D=2H$, we have the following relationships in short:
 \begin{align}
   \Omega_1 \subset B(\hat{y}, 0.25D) \subset B(\hat{y}, 0.5D) \subset \Omega_{D}.  \label{eqn:MC16_5}
 \end{align}
 Euqation (\ref{eqn:MC16_5}) can be regarded as the first step to improve (\ref{eqn:MC17_5}) and (\ref{eqn:MC17_2}). 
 In order to obtain the estimates of general level sets of $\hat{f}$, we need to use the conformal Killing equation (\ref{eqn:SL13_1}). 
 We observe that the space-time vector field $(-\nabla \hat{f}, \frac{\partial}{\partial t})=(-\frac{\hat{r}}{2} \frac{\partial}{\partial \hat{r}}, \frac{\partial}{\partial t})=(-0.5 \hat{r}\partial_{\hat{r}}, \partial_t)$, as the ``lift"
 of the conformal Killing vector field $-\nabla \hat{f}$(c.f. (\ref{eqn:SL13_1})), has many excellent properties.  First, direct calculation(c.f. (\ref{eqn:MC12_6})) shows that
 \begin{align}
   \frac{d}{dt} \hat{f}=\hat{f}_t -|\nabla \hat{f}|^2 \equiv 0     \label{eqn:MC16_3}
 \end{align}
 along the integral curve of this space-time vector field.  Second, it follows from (\ref{eqn:SL13_1}) that
 \begin{align}
    \mathcal{L}_{(-\nabla \hat{f}, \frac{\partial}{\partial t})}  \left\{ |t|\hat{g} \right\}=0.   \label{eqn:MC16_4}
 \end{align}
 Now we can regard $\hat{Y} \times (-\infty, 0)$ as a Riemannian manifold, equipped with metric $|t|\hat{g}(t) + dt^2$(c.f. section 6 of Perelman~\cite{Pe1}). 
 Then $(-\nabla \hat{f}, \frac{\partial}{\partial t})$ is really a Killing vector field. 
 
 \textit{Step 3. $\hat{f}$ and $d(\hat{y}, \cdot)$ have the same unique minimum value point $\hat{y}$.}
 
 In other words, the infimum of $\hat{f}$ must be $0$ and it is only achieved at base point $\hat{y}$. 
 We shall use Killing vector field to generate quasi-isometric diffeomorphisms. 
 Then an application of the technique, i.e., bounding distance by choosing good Lipshitz functions,  used in the proof  Lemma~\ref{lma:HE04_1}, Lemma~\ref{lma:HE04_2} and Proposition~\ref{prn:SL04_1} will imply the diameter bound for general
 level sets $\Omega_a$.  For small $a$, we shall show that $\diam \Omega_a$ is also small. Then $\Omega_0$ has diameter $0$ and consists of only one point $\hat{y}$, which is of course the unique minimum point of $d(\hat{y}, \cdot)$. 
 Actually, if one only want to show $\Omega_0 =\{\hat{y}\}$, then there is a shortcut by using the uniform convexity of $\hat{f}$ on $\mathcal{R}(\hat{Y})$ (i.e. equation (\ref{eqn:SL13_1})), the homogeneity of $\hat{f}$ 
 in time direction(i.e., equation (\ref{eqn:MC12_7})), the fact $\int_{\hat{Y}} e^{-\hat{f}} dv<C$ and the application of Lemma~\ref{lma:SK27_4}. 
 We leave the details to interested readers.   In the following paragraph, we shall show $\Omega_0=\{\hat{y}\}$ together with the construction of the cone structure. 
 
 Killing vector property together with high codimension of $\mathcal{S}$ implies metric product rigidity, as we have done in Lemma~\ref{lma:HE04_1}, Lemma~\ref{lma:HE04_2}. 
 We repeat the dicussion here again for the convenience of the readers. 
 Fix each positive integer $k$. We claim that there is a bounded closed set $E_k \subset \hat{Y}$ satisfying $\dim_{\mathcal{M}}(E_k)<2n-2$.
 Furthermore, for each $t \in [-2^{-k}, -2^{-k}]$, we have a family of smooth diffeomorphism $\varphi_{k,t}$
 from $\Omega_{D} \backslash E_{k}$ to $\Omega_{\sqrt{2^k|t|}D} \backslash E_{k}$ with
 \begin{align}
   \varphi_k^*(\hat{g})(z)=2^k|t| \hat{g}(z), \quad \forall \; t \in [-2^{-k}, -2^{-k}],  \; z \in \Omega_{\sqrt{2^k|t|}D} \backslash E_{k}.   \label{eqn:MC17_6}
 \end{align}
 The set $E_k$ can be constructed similarly as the set $E_k$ in the proof of Claim~\ref{clm:MA20_1}.
 Now the Killing vector field $\nabla b^{+}$ is replaced by the space-time ``Killing"(c.f. (\ref{eqn:MC16_4})) vector field $(-\nabla \hat{f}, \partial_t)$. 
 Let's describe more details about the construction of $E_k$. Actually, fixing a small positive number $\xi$, we define the set $E_{k,\xi}^{-}$ to be
 \begin{align}
     \{x \in \Omega_D | \textrm{flow line of} \; (-\nabla \hat{f}, \partial_t) \; \textrm{passing through} \; (x,-2^{-k}) \; \textrm{hits} \; \mathcal{D}_{\xi} \; \textrm{at some} \; t \in (-2^{k}, -2^{-k})\}.
 \label{eqn:MC17_1}    
 \end{align}
 The minus sign in $E_{k,\xi}^{-}$ indicates that we are flowing backward along the space-time integral curve of $(-\nabla \hat{f}, \partial_t)$, since $-2^{-k}>t$ for each $t \in (-2^{k}, -2^{-k})$. 
 Note that the intersection point to $\mathcal{D}_{\xi}$ locates in a uiformly bounded set.
 This can be simply proved as follows.
 Let $(y, -\tau)$ be the first point on $\mathcal{D}_{\xi}$.  
 By (\ref{eqn:MC16_3}) and (\ref{eqn:MC12_7}), we have
 \begin{align*}
     \frac{\hat{f}(y, -1)}{\tau}=\hat{f}(y, -\tau)=\hat{f}(x, -2^{-k})=2^k \hat{f}(x, -1) \leq2^k \cdot D^2, \quad \Rightarrow \quad \hat{f}(y, -1) \leq 2^k \tau D^2 \leq 4^k D^2, 
 \end{align*}
 which means that $y \in \Omega_{2^k D}$, a uniformly bounded set by the properness of $\hat{f}$. 
 By high Minkowski codimension of $\mathcal{S}$ and the application of the Killing condiiton (\ref{eqn:MC16_4}), similar argument for (\ref{eqn:MA20_3}) in Claim~\ref{clm:MA20_1} implies that
 \begin{align*}
    |E_{k,\xi}^{-}| \leq C \xi^{2p_0-1-\epsilon}, 
 \end{align*}
 where $p_0$ is the constant appeared in (\ref{eqn:MC05_1}), i.e., $\dim_{\mathcal{M}} \mathcal{S}<2n-2p_0$, $C$ may depends on $\epsilon$ also. 
 Let $\xi_i \to 0$ and define $E_k^{-}=\cap_{i=1}^{\infty} E_{k, \xi}^{-}$.  We obtain a measure-zero closed set $E_{k}^{-}$.   Moreover, same as (\ref{eqn:MC14_0}) in the proof of Claim~\ref{clm:MA20_1}, 
 the $\xi$-neighborhood of $E_{k}^{-}$ is contained in $E_{k, C\xi}^{-}$
 for some uniform constant $C$.  Then the above volume estimate implies that $\dim_{\mathcal{M}} E_{k}^{-} \leq 2n-2p_0+1<2n-2$. 
 Now we reverse the direction.  Similar to the definition of $E_{k,\xi}^{-}$ in (\ref{eqn:MC17_1}), we can define $E_{k,\xi}^{+}$ as follows:
 \begin{align*}
     \{x \in \Omega_{2^k D} | \textrm{flow line of} \; (-\nabla \hat{f}, \partial_t) \; \textrm{passing through} \; (x,-2^{k}) \; \textrm{hits} \; \mathcal{D}_{\xi} \; \textrm{at some} \; t \in (-2^{k}, -2^{-k})\}. 
 \end{align*}
 Clearly, the plus sign in $E_{k,\xi}^{+}$ indicates that we are flowing forward along the space-time integral curve of $(-\nabla \hat{f}, \partial_t)$, since $t>-2^{k}$ for each $t \in (-2^k, -2^{-k})$. 
  Suppose we start from $(x, -2^{k})$ outside $\mathcal{D}_{\xi}$ and the flow line of $(-\nabla \hat{f}, \partial_t)$ enters $\mathcal{D}_{\xi}$ at some $(y, -\tau)$. 
 We know $\hat{f}(y, -\tau)=\hat{f}(x, -2^{k})$ since the flow preserves $\hat{f}$-value.  Then we have
 \begin{align*}
  \hat{f}(y, -1)=\tau \hat{f}(y, -\tau)=\tau \hat{f}(x, -2^k)=2^{-k}\tau \hat{f}(x,  -1) \leq \hat{f}(x, -1) \leq 4^k D^2. 
 \end{align*} 
 Consequently, $y \in \Omega_{2^k D}$.  Therefore, the forward flow is also restricted in a bounded domain when we start from a point $(x, -2^{k})$ satisfying $x \in \Omega_{2^k D}$.  
 Applying high codimension of $\mathcal{S}$ and Killing condition again, we know $E_{k,\xi}^{+}$ has volume bounded by $C \xi^{2p_0-1-\epsilon}$.   Let $\xi_i \to 0$ and
 set $E_{k}^{+}$ to be $\cap_{i=1}^{\infty} E_{k, \xi}^{+}$. We know $E_{k}^{+}$ is a bounded closed set satisfying $\dim_{\mathcal{M}} E_{k}^{+}<2n-2$. 
 Now we define
 \begin{align}
    E_k \triangleq E_{k}^{+} \cup E_{k}^{-}.    \label{eqn:MC17_3}
 \end{align}
 Then each $E_k$ is a closed bounded set satisfying $\dim_{\mathcal{M}} E_k<2n-2$.
 According to their definitions and the above discussion, we know that there is a family of diffeomorphism $\varphi_{k,t}$, parametrized by $t \in [-2^{k}, -2^{-k}]$,  from $\Omega_{D} \backslash E_k$ to $\Omega_{\sqrt{2^k|t|} D} \backslash E_k$, generated by the integral curve of $(-\nabla \hat{f}, \partial_t)$. 
 It is clear that (\ref{eqn:MC17_6})  follows from the integration of (\ref{eqn:MC16_4}). 
 The above argument is almost the same as that in the proof of Claim~\ref{clm:MA20_1} in Lemma~\ref{lma:HE04_1}. 
 In particular, the argument for the proof of equation (\ref{eqn:MC14_4}) is more or less repeated here.
 We remind the readers that weak convexity of $\mathcal{R}$ is not used in the proof of equation (\ref{eqn:MC14_4}). Only the high codimension of $\mathcal{S}$ and the Killing vector properties are used.

 Now we are ready to use the existence of the diffeomrophism(c.f. discussion around (\ref{eqn:MC17_6})) $\varphi_{k, -2^{k}}: \Omega_D \backslash E_k \to \Omega_{2^kD} \backslash E_k$ to relate the estimate of general $\Omega_a$ to (\ref{eqn:MC16_5}). 
 We are particularly interested in the sets $\Omega_a$ for small $a$'s.  Without loss of generality, let $a=2^{-k}$. Fix some points $x,y \in \Omega_{2^{-k}} \backslash E_k$. Denote $\rho=d(x,y)$. 
 Similar to (\ref{eqn:MC07_1}) in the proof of Proposition~\ref{prn:SL04_1},  we choose a function
  \begin{align}
  \tilde{\chi} \triangleq \max \{\rho-d(\cdot, x), 0\}. 
 \label{eqn:MC12_3}
 \end{align}
 Note that $x,y \in \Omega_{2^{-k}} \subset \Omega_1 \subset B(\hat{y}, 0.25D)$ by (\ref{eqn:MC16_5}), which forces that $\rho=d(x,y)<0.25D$.
 Also by (\ref{eqn:MC16_5}), we know that $\tilde{\chi}$ is supported in $B(x,\rho) \subset B(x, 0.25D) \subset B(\hat{y}, 0.5D) \subset \Omega_{D}$. 
 Let $\varphi$ be the diffeomorphism generated by integrating $(-\nabla \hat{f}, \partial_t)$ from $t=-2^{-k}$ to $t=-2^{k}$. 
 In other words, $\varphi=\varphi_{k, -2^{-k}}$.   Using $\varphi$, we can push forward the function $\tilde{\chi}$ to obtain
 \begin{align*}
    \varphi_*(\tilde{\chi})(z) \triangleq \tilde{\chi} (\varphi^{-1} (z)),  \quad \forall \; z \in \Omega_{2^kD} \backslash E_k.  
 \end{align*}
 Clearly, $\varphi_*(\tilde{\chi})$ is supported on $\Omega_{2^k D} \backslash E_k$ with $\norm{\nabla \varphi_*(\tilde{\chi})}{L^{\infty}(\hat{Y})} \leq 2^{-k} \norm{\nabla\tilde{\chi}}{L^{\infty}(\hat{Y})}=2^{-k}$, in light of
 (\ref{eqn:MC17_6}) and $t=-2^{k}$. 
 By the high codimension of $E_k$, we know that $\varphi_*(\tilde{\chi})$ is an $N_0^{1,2}$-function, 
 which has a good version such that $\sup_{\hat{Y}} |\nabla \varphi_*(\tilde{\chi})| \leq \norm{\nabla \varphi_*(\tilde{\chi})}{L^{\infty}(\hat{Y})}$, due to the high codimension of $\mathcal{S}$(c.f. Claim~\ref{clm:MC05_1}). 
 For simplicity of notation, we still denote the new version of $\tilde{\chi}$ by $\tilde{\chi}$. Note that the values of $\tilde{\chi}(x)$ and $\tilde{\chi}(y)$ are independent of the different versions, since $x,y$ are away from $E_k$. 
 Recall that $x, y \in \Omega_{2^{-k}}$. Integration of (\ref{eqn:MC16_3}) implies that 
 \begin{align*}
   \hat{f}(x, -2^{k})=2^{-k} \hat{f}(x, -1)=4^{-k} \hat{f}(x, 2^{-k}) \leq 4^k \cdot 4^{-k} =1.
 \end{align*}
 Therefore, $\varphi(x) \in \Omega_1$. Similarly, we also know $\varphi(y) \in \Omega_1$. 
 Combining the previous inequalities and use (\ref{eqn:MC16_5}) again, we obtain that
 \begin{align*}
   0.5D \geq  d(\varphi(x), \varphi(y)) \geq    \frac{|\varphi_*(\tilde{\chi})( \varphi(x))-\varphi_*(\tilde{\chi})(\varphi(y))|}{\sup_{\hat{Y}} |\nabla \varphi_*(\tilde{\chi})|}
   \geq  \frac{|\varphi_*(\tilde{\chi})(\varphi(x))-\varphi_*(\tilde{\chi})(\varphi(y))|}{\norm{\nabla \varphi_*(\tilde{\chi})}{L^{\infty}(\hat{Y})}} \geq \frac{|\tilde{\chi}(x)-\tilde{\chi}(y)|}{2^{-k}}.
 \end{align*}
 Recall that $\tilde{\chi}(x)=\rho$ and $\tilde{\chi}(y)=0$ by (\ref{eqn:MC12_3}).
 It follows from the above inequality that 
 \begin{align}
   \rho=d(x,y) \leq 0.5D \cdot 2^{-k}=2^{-1-k} D,  \label{eqn:MC12_4}
 \end{align}
 which is independent of the choice of $x,y \in \Omega_{2^{-k}} \backslash E_k$.  Recall that $\Omega_{2^{-k}} \backslash E_k$ is dense in $\Omega_{2^{-k}}$.  So we have 
 \begin{align*}
 \diam \Omega_{2^{-k}}=\diam \{\Omega_{2^{-k}} \backslash E_k\} \leq 2^{-1-k}D.
 \end{align*}
 Consequently,  $\displaystyle \lim_{k \to \infty} \diam(\Omega_{2^{-k}})=0$. 
 Since $\Omega_0=\bigcap_{1 \leq k <\infty} \overline{\Omega_{2^{-k}}}$, we know that $\Omega_0$ consists of only one point $\{\hat{y}\}$.

\textit{Step 4. The level sets of $\hat{f}$ coincide the geodesic balls centered at $\hat{y}$.} 

Define
 \begin{align}
   \hat{r}(x) \triangleq \sqrt{4\hat{f}(x, -1)}=\sqrt{4\hat{f}(x)}, \quad d(x) \triangleq d(x, \hat{y}).   \label{eqn:MC16_1}
 \end{align}
 Recall that in the standard Euclidean case, $\hat{f}=\frac{d^2}{4}$ and $\hat{r}=d$.
 Our destination (\ref{eqn:MC17_7}) is equivalent to the equation $\hat{r}-d \equiv 0$.
 Clearly, we have $  |\nabla \hat{r}|=\frac{|\nabla \hat{f}|}{\sqrt{\hat{f}}}=1$. 
 Recall that (c.f. (\ref{eqn:MC17_3})) each $E_k$ is a bounded closed set with $\dim_{\mathcal{M}} E_k<2n-2$.  Let $E=\cup_{k=1}^{\infty} E_k$.
 Then it is clear that $E$ is measure-zero and $\hat{Y} \backslash E$ is dense in $\hat{Y}$.
 Note that $\hat{Y} \backslash E$ has a cone structure, as every point $x \in \hat{Y} \backslash E$ can be flowed to $\hat{y}$ along the integral curve of $\nabla \hat{f}=\frac{1}{2}\hat{r}\partial_{\hat{r}}$ 
 without hitting singularities(c.f. Section 1 of~\cite{CCWarp}). 
 Let $x \in \mathcal{R}(\hat{Y})$ and $a=\hat{r}(x)>0$, we can find $x_k \in \mathcal{R}(\hat{Y}) \backslash E$ approaching $y$.
 Every point $x_k$ can be flowed to a point nearby $\hat{y}$.  So we obtain 
 \begin{align}
  d(x)=d(x, \hat{y}) \leq \lim_{k \to \infty} d(x_k, \hat{y}) \leq\lim_{k \to \infty} \hat{r}(x_k) \leq \hat{r}(x).   \label{eqn:MC16_7}
 \end{align}
 On the other hand, we can construct a function $\chi$ as
 \begin{align*}
   \chi(x) \triangleq \max\{a-\hat{r}(x), 0\},
 \end{align*}
 which is supported on  a bounded set $\Omega_{0.5 a}$.  Clearly, $\chi$ is Lipshitz. By the high codimeision of $\mathcal{S}$, by replacing $\chi$ with a new version if necessary, we can assume
 $\sup_{\hat{Y}} |\nabla \chi| \leq \norm{\nabla \chi}{L^{\infty}(\hat{Y})} \leq 1$.  Note the values at $x$ adn $y$ does not depend on the choice of versions since they are regular points.   
 Therefore, we have
 \begin{align*}
   d(x, y) \geq \frac{|\chi(x)-\chi(y)|}{\sup_{\hat{Y}} |\nabla \chi|} \geq = \frac{|\chi(x)-\chi(y)|}{\norm{\nabla \chi}{L^{\infty}(\hat{Y})}}=|\chi(x)-\chi(y)|=|\chi(y)|\geq a-|\hat{r}(y)|,   
 \end{align*} 
 for every $y \in \mathcal{R}(\hat{Y})$.  Let $y$ approach $\hat{y}$ in $\mathcal{R}(\hat{Y}) \backslash E$, we obtain $d(x) \geq a=\hat{r}(x)$, which together with (\ref{eqn:MC16_7}) yields that
 \begin{align}
   d(x)=\hat{r}(x)  \label{eqn:MC16_8}
 \end{align}
 for arbitrary $x \in \mathcal{R}(\hat{Y}) \backslash \{\hat{y}\}$.   Since both $d$ and $\hat{r}$ are uniformly Lipshitz, the equation (\ref{eqn:MC16_8}) holds for every $y \in \hat{Y}$ by continuity and density reason.
 In particular, the relationship (\ref{eqn:MC16_5}) can be improved to the following one:
 \begin{align*}
    \Omega_a =B(\hat{y}, 2a), \quad \forall \; a \geq 0. 
 \end{align*}
 This confirms our expectation.  Clearly,  (\ref{eqn:MC17_7}) follows from the combination of (\ref{eqn:MC16_1}) and the extended version of (\ref{eqn:MC16_8}). 
 The proof of (\ref{eqn:MC17_7}) is complete. 
  
 From the discussion in Step 4 of the proof of (\ref{eqn:MC17_7}), we already know that $\hat{Y} \backslash \{E \cup \{\hat{y}\}\}$ has a local cone structure, which induces the global cone structure of $\hat{Y}$ by taking completion. 
 In view of (\ref{eqn:HE11_1}) in Proposition~\ref{thm:HE11_1}, we know $\mathcal{R}(\hat{Y})$ is path connected. Therefore, the cone $\hat{Y}$ is irreducible, i.e., $\hat{Y} \backslash \{\hat{y}\}$ is path connected.
 Therefore, we obtain the global cone structure  from the local cone structure,  due to the high co-dimension of the singular set $\mathcal{S}$ and the Killing property arised from (\ref{eqn:SL13_1}), as we claimed.   
\end{proof}

\subsubsection{Improved estimates in $\mathscr{K}(n,A;r_0)$}

In this subsection, we shall improve the limit space structure by the fact that every tangent space is a metric cone.
For simplicity, we assume $r_0=1$ if we do not mention otherwise. 

\begin{proposition}[\textbf{Improvement of codimension estimate of $\mathcal{S}$}]
 Suppose $\mathcal{LM}_i$ is a sequence of polarized K\"ahler Ricci flow solutions in $\mathscr{K}(n,A;1)$,
 $x_i \in M_i$. Let $(\bar{M}, \bar{x}, \bar{g})$ be the limit space of  $(M_i,x_i,g_i(0))$.
 Let $\mathcal{S}$ be the singular part of $\bar{M}$.  Then
 \begin{align}
     \dim_{\mathcal{M}} \mathcal{S} \leq 2n-2p_0, \quad
     \dim_{\mathcal{H}}\mathcal{S} \leq 2n-4.  \label{eqn:SL15_1}
 \end{align}
\label{prn:SL15_1}
\end{proposition}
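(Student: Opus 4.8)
\textbf{Proof proposal for Proposition~\ref{prn:SL15_1}.}

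The Minkowski bound $\dim_{\mathcal{M}} \mathcal{S} \leq 2n - 2p_0$ is already available from Theorem~\ref{thm:HE11_1} (equation~(\ref{eqn:SB13_4})), since each $\mathcal{LM}_i \in \mathscr{K}(n,A;1)$ has canonical radius at least $1$ on each time slice. So the real content is the Hausdorff estimate $\dim_{\mathcal{H}} \mathcal{S} \leq 2n-4$, and this is where the hypothesis $\mathbf{pcr}(\mathcal{M}_i^t) \geq 1$ (via Theorem~\ref{thm:SC09_1}) enters. The plan is to run the standard Cheeger--Colding dimension-reduction / stratification argument, now legitimized by the fact that every tangent space of $\bar{M}$ is an irreducible metric cone, and then invoke the Kähler rigidity (Lemma~\ref{lma:HD30_1} / Proposition~\ref{prn:HE08_1}) to rule out codimension-$2$ and codimension-$3$ strata.

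First I would set up the stratification $\mathcal{S} = \mathcal{S}_1 \supset \cdots$, where $\mathcal{S}_k$ is the set of $\bar y \in \mathcal{S}$ such that no tangent cone at $\bar y$ splits off an $\mathbb{R}^{2n-k+1}$-factor isometrically. By Theorem~\ref{thm:SC09_1} every tangent space is a metric cone, and by the volume-continuity and Bishop--Gromov comparison available on $\bar M$ (Proposition~\ref{prn:HD22_1}, and the cone structure), an Almgren--Federer-type argument gives $\dim_{\mathcal{H}} \mathcal{S}_k \leq k-1$; the key mechanism is that if $\dim_{\mathcal{H}}\mathcal{S}_k > k-1$ one finds, by a density/blow-up argument, a point whose tangent cone splits off too many lines, contradicting membership in $\mathcal{S}_k$. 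This is the part I would cite from \cite{CC1} and \cite{ColdNa} essentially verbatim, since all the analytic inputs (volume convergence, almost-volume-cone-implies-almost-metric-cone Proposition~\ref{prn:HD22_2}, segment inequality, harmonic/parabolic approximation Lemma~\ref{lma:HE05_2}, Lemma~\ref{lma:HE04_4}) have been checked earlier.

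Next I would upgrade to the Kähler stratification $\mathcal{S} = \mathcal{S}_2 \cup \mathcal{S}_4 \cup \cdots \cup \mathcal{S}_{2n}$: the limit carries a limit complex structure $\bar J$ on $\mathcal{R}$, and by Proposition~\ref{prn:HE08_1} (almost Kähler cone splitting), if a tangent cone at $\bar y$ splits off $\mathbb{R}^j$ then it in fact splits off $\mathbb{C}^{\lceil j/2 \rceil}$, so only even-dimensional splittings occur — exactly as in \cite{CCT}, Lemma~9.14. This already kills the odd strata, so $\dim_{\mathcal{H}}\mathcal{S} \leq 2n-3$. To eliminate the codimension-$2$ stratum $\mathcal{S}_2$ I would use the slice argument of Lemma~\ref{lma:HE07_1} together with the Chern--Simons obstruction as in \cite{CCT}, \cite{Cheeger03}: a generic slice through a neighborhood of a codimension-$2$ singularity is a smooth surface with boundary on which the ambient Ricci curvature restricts to zero, forcing the relevant transgression form / holonomy contribution to vanish, which is incompatible with a nontrivial codimension-$2$ cone cross-section of dimension $< $ the threshold. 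The smoothness of generic slices here rests on the already-established $\dim_{\mathcal{M}}\mathcal{S} \leq 2n-2p_0 < 2n-3$ (so the slice misses $\mathcal{S}$ generically) and the gradient estimates for harmonic approximation functions (Proposition~\ref{prn:HC29_6}).

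The main obstacle I anticipate is not any single step but ensuring the dimension-reduction machinery runs cleanly on the possibly-singular limit $\bar M$, which a priori need not lie in $\widetilde{\mathscr{KS}}(n,\kappa)$ and for which convexity of $\mathcal{R}$ has not yet been proven at this stage. The saving feature is that the Hausdorff-dimension estimate only needs: (i) volume convergence and Bishop--Gromov on $\bar M$; (ii) metric-cone structure of tangent spaces (Theorem~\ref{thm:SC09_1}); (iii) the Kähler splitting rigidity (Proposition~\ref{prn:HE08_1}); and (iv) smoothness of generic slices — all of which are in hand without convexity. So the write-up would carefully note that the argument is local near each singular point, reduce to the tangent-cone picture, and then quote the Cheeger--Colding--Tian stratification plus the Chern--Simons vanishing, concluding $\dim_{\mathcal{H}}\mathcal{S} \leq 2n-4$ and hence~(\ref{eqn:SL15_1}).
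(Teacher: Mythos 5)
You take a genuinely different route from the paper's for the Hausdorff bound, and it both overworks the problem and carries a gap.

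The paper's proof is deliberately short and purely Riemannian: the Minkowski bound $\dim_{\mathcal{M}}\mathcal{S}\le 2n-2p_0$ from Theorem~\ref{thm:HE11_1} is scale-invariant and hence applies equally to the singular sets of all (iterated) tangent cones of $\bar M$, since those are again Cheeger--Gromov limits of rescaled flows with $\mathbf{cr}\to\infty$. By Theorem~\ref{thm:SC09_1} each tangent cone is a metric cone $C(Z)\times\mathbb{R}^j$, and by the gap argument (tangent cone $\mathbb{R}^{2n}$ iff regular, cf.~Proposition~\ref{prn:HA08_1}), at a singular point the vertex stays singular, so $\{v\}\times\mathbb{R}^j\subset\mathcal{S}(\hat Y)$ forces $j\le\dim_{\mathcal{M}}\mathcal{S}(\hat Y)\le 2n-2p_0<2n-3$, hence $j\le 2n-4$ because $j$ is an integer. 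The Federer dimension-reduction then gives $\dim_{\mathcal{H}}\mathcal{S}\le 2n-4$ directly. The whole point is that $2p_0>3$ makes $\dim_{\mathcal{M}}\mathcal{S}$ strictly less than $2n-3$, so the K\"ahler stratification (Proposition~\ref{prn:HE08_1}) and the Chern--Simons slice obstruction are simply not needed here; they are used in the paper only for Proposition~\ref{prn:HA05_1}, where no a priori Minkowski bound is available.

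The gap in your route: when you invoke Chern--Simons you claim the ambient Ricci curvature restricts to zero on the slices, but $\bar M$ is a limit of flows \emph{without} $|R|+|\lambda|\to 0$, so its regular part is not Ricci-flat. To make this work you would have to run the slice argument on the tangent cones (which are Ricci-flat), but the machinery you would need there — harmonic approximation of Buseman functions (Lemma~\ref{lma:HE05_2}), the slice lemma (Lemma~\ref{lma:HE07_1}), smoothness of generic slices — is established in Section~2 only for $\widetilde{\mathscr{KS}}(n,\kappa)$, and membership of the tangent cones in $\widetilde{\mathscr{KS}}(n,\kappa)$ (Theorem~\ref{thm:SC04_1}) is proved \emph{later} and itself quotes Proposition~\ref{prn:SL15_1}. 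So your route as written risks a circularity the paper's argument is carefully structured to avoid. (A minor issue: your stratification indexing $\dim_{\mathcal{H}}\mathcal{S}_k\le k-1$ is inconsistent with your definition that $\mathcal{S}_k$ excludes tangent cones splitting off $\mathbb{R}^{2n-k+1}$; the correct bound with that indexing would be $2n-k$.)
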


\begin{proof}
  The Minkowski dimension estimate follows from Theorem~\ref{thm:HE11_1}.
  Recall that we are in a situation where canonical radius is uniformly bounded from below. 
  Therefore, there is a gap between local behavior of singular point and regular point. 
  In particular, if one tangent space is Euclidean space, then the base point has a neighborhood with smooth manifold structure.
  This follows from the volume convergence(c.f. Proposition~\ref{prn:SC12_1}) and the \textit{regularity estimate} in the definition of canonical radius(c.f. Definition~\ref{dfn:SC02_1}).
  One can find the detailed argument in the proof of Proposition~\ref{prn:HA08_1}, where only polarized canonical radius lower bound is used.
  Note that each iterated tangent space(away from vertex) is also a tangent space, and henceforce a tangent cone with more splitting directions. 
  Consequently, we can use induction to show that every tangent cone's singularity has an integer Hausdorff dimension(c.f.~\cite{CC1}).  
  However, the Minkowski dimension of singularity is
  at most $2n-2p_0$. This forces that every tangent cone's singularity has Hausdorff dimension $2n-4$ at most,
  which in turn implies $\dim_{\mathcal{H}} \mathcal{S}\leq 2n-4$.
\end{proof}

After we set up the tangent cone structure, we can improve Proposition~\ref{prn:SB27_1}.

\begin{proposition}[\textbf{Improvement of regular curve estimate}]
Same conditions as in Proposition~\ref{prn:SL15_1}.

For every two points $x,y \in \mathcal{R}$ and every small positive number $\epsilon>0$, there exists a rectifiable curve connecting $x,y$ such that
\begin{itemize}
 \item $\gamma$ locates in $\mathcal{R}$.
 \item $|\gamma| \leq (1+\epsilon)d(x,y)$.
\end{itemize}
\label{prn:SC17_4}
\end{proposition}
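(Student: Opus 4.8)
\textbf{Proof proposal for Proposition~\ref{prn:SC17_4}.}

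The plan is to upgrade the rough curve estimate of Proposition~\ref{prn:SB27_1} (which only gives $|\gamma| \leq 3 d(x,y)$ and $\gamma \subset \mathcal{F}_{\frac{1}{2}\epsilon_b r}$) to a nearly length-minimizing curve inside $\mathcal{R}$, using the newly established fact (Theorem~\ref{thm:SC09_1}) that every tangent cone of $\bar{M}$ is a metric cone, together with the improved Hausdorff codimension $\dim_{\mathcal{H}} \mathcal{S} \leq 2n-4$ from Proposition~\ref{prn:SL15_1}. The key geometric point is that a shortest geodesic $\bar\gamma$ from $x$ to $y$ in $\bar{M}$ is already almost entirely contained in $\mathcal{R}$: since $\mathcal{R}$ is strongly convex (Theorem~\ref{thm:HE11_1} in this setting gives the curve of Proposition~\ref{prn:SB27_1}, and with the cone structure one in fact gets genuine convexity as in the proof of Theorem~\ref{thm:HD19_1}), the issue is only that $\bar\gamma$ may pass through, or come close to, the singular set. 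So the strategy is: take an honest minimal geodesic, show it meets $\mathcal{S}$ in a very small set, and then perturb it locally around those bad portions to a curve that stays in $\mathcal{R}$ while paying only an arbitrarily small length penalty.

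First I would fix $x,y \in \mathcal{R}$ and let $\bar\gamma$ be a unit-speed minimal geodesic from $x$ to $y$ of length $D = d(x,y)$; such a geodesic exists because $\bar M$ is a complete length space (it is a limit of compact manifolds with uniform volume doubling). Next I would analyze the set $Z = \{s \in [0,D] : \bar\gamma(s) \in \mathcal{S}\}$, or more robustly the set of parameters where $\bar\gamma$ comes within distance $\rho$ of $\mathcal{S}$. Using the volume estimate of singular neighborhoods, Corollary~\ref{cly:HD19_2} together with Proposition~\ref{prn:SC12_1} (volume convergence) — equivalently the density estimate $|B \cap \{d(\cdot,\mathcal{S})<\rho\}| \leq C\rho^{2p_0}$ from the proof of Theorem~\ref{thm:HE11_1} — and the fact that $\dim_{\mathcal{H}}\mathcal S \le 2n-4 < 2n-1$, a standard covering/co-area argument shows that $Z$ has one-dimensional Hausdorff (indeed Minkowski) measure zero inside $[0,D]$, so it is covered by finitely many intervals of total length $<\delta$ for any prescribed $\delta>0$. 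On each such short interval $(a,b)$ with $\bar\gamma(a), \bar\gamma(b) \in \mathcal{R}$ and $b-a$ small, I would replace $\bar\gamma|_{(a,b)}$ by the regular curve provided by Proposition~\ref{prn:SB27_1} (or its localized sharpening), which connects $\bar\gamma(a)$ to $\bar\gamma(b)$ inside $\mathcal{R}$ with length at most $3 d(\bar\gamma(a),\bar\gamma(b)) \leq 3(b-a)$. Concatenating the unmodified pieces of $\bar\gamma$ with these detours yields a curve $\gamma \subset \mathcal{R}$ with $|\gamma| \leq D + 2\sum(b-a) < D + 2\delta$; choosing $\delta < \tfrac{\epsilon}{2} d(x,y)$ gives $|\gamma| \leq (1+\epsilon)d(x,y)$, which is the claim.

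The main obstacle I anticipate is making precise the assertion that the minimal geodesic $\bar\gamma$ hits (the $\rho$-neighborhood of) $\mathcal{S}$ only in a set of small one-dimensional measure — a priori a geodesic could run along $\mathcal{S}$ for a positive length. Here is where the cone structure of tangent spaces (Theorem~\ref{thm:SC09_1}) and the splitting $Y = \C^{n-k}\times C(Z)$ with $k\geq 2$ (Theorem~\ref{thm:HE08_1}) must be used: at any point $p$ where $\bar\gamma$ meets $\mathcal S$, blowing up reduces matters to the corresponding statement in the tangent cone, where the singular set has codimension $\geq 4$ and is itself a cone, so a minimizing ray either hits the vertex-type locus in a single point or avoids it; iterating along the stratification (as in the dimension-reduction argument of Cheeger--Colding, available here via Proposition~\ref{prn:HD22_2}) controls the bad set. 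An alternative, perhaps cleaner, route avoiding fine geodesic analysis is to not insist $\gamma$ follow $\bar\gamma$ at all: subdivide $[0,D]$ into $N$ pieces by points $\bar\gamma(s_j)$, perturb each $\bar\gamma(s_j)$ to a nearby regular point $p_j \in \mathcal{F}_{\eta_j}$ with $d(\bar\gamma(s_j),p_j)$ much smaller than $D/N$ (possible by Proposition~\ref{prn:SC02_2}, generic regular sub-balls), connect consecutive $p_j$ by the Proposition~\ref{prn:SB27_1} curves inside $\mathcal R$, and estimate the total length by $3\sum_j d(p_j,p_{j+1}) \le 3\sum_j (d(\bar\gamma(s_j),\bar\gamma(s_{j+1})) + \text{small}) \to 3D$; this only gives factor $3$, so to reach $(1+\epsilon)$ one still needs the detours to be length-efficient, which again forces one to show the geodesic itself is mostly regular. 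I expect the paper to take the first route, invoking the measure-zero property of $Z$ on $\bar\gamma$ as the crux and citing the covering argument together with $\dim_{\mathcal H}\mathcal S \le 2n-4$.
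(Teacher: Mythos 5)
Your proposal has a genuine gap, and it's exactly the one you flag yourself. Your primary route hinges on showing that the minimal geodesic $\bar\gamma$ meets $\mathcal{S}$ (or its $\rho$-neighborhood) in a parameter set of arbitrarily small one-dimensional measure, and the references you invoke — the codimension estimate $\dim_{\mathcal H}\mathcal S\le 2n-4$ and the $2n$-dimensional volume bound $|B\cap\{d(\cdot,\mathcal S)<\rho\}|\le C\rho^{2p_0}$ — do not give this. Those are ambient volume/Hausdorff estimates and place no constraint on the one-dimensional trace of a particular curve; a priori a minimizing geodesic could run along a singular stratum for a positive length. Resolving this via blowup and dimension reduction is in principle conceivable but would be a real argument, and without it the perturbation bookkeeping that follows has nothing to stand on. Your "alternative route" is actually closer to what works, but you correctly note that using the factor-$3$ curves of Proposition~\ref{prn:SB27_1} for the main pieces caps you at factor $3$, and you leave it there.

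The paper's proof sidesteps the whole question of whether $\bar\gamma$ avoids $\mathcal S$. The missing ingredient is a local "cone-like neighborhood" lemma: for each $\epsilon>0$ and each $z\in\bar M$ (singular or not), there is $r_z>0$ such that for every $v\in B(z,r_z)$ one can find a curve $\alpha\subset\mathcal R$ of length $<(1+\epsilon)d(v,z)$ whose endpoints lie within $\epsilon\,d(v,z)$ of $z$ and of $v$ respectively. This is proved by contradiction and blowup to a tangent cone, using exactly Theorem~\ref{thm:SC09_1}: in the cone, pick a regular point $\tilde v_\infty$ near $v_\infty$, take the radial geodesic from the vertex to $\tilde v_\infty$ (which has regular interior by the cone structure), truncate it outside a small ball around the vertex, and pull the truncated compact regular curve back by smooth convergence. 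With this lemma in hand, one covers the shortest geodesic $\beta$ from $x$ to $y$ by finitely many such cone-like balls (with a "greedy" selection so each point of $\beta$ is in at most two, adjacent balls overlap), uses the $\alpha_k$ from the lemma on each ball — paying only $(1+\epsilon)$ per piece — and then uses the factor-$3$ curves of Proposition~\ref{prn:SB27_1} only to join the endpoints of consecutive $\alpha_k$'s, which are already within $\epsilon\,d_k$ of each other, so their total contribution is $O(\epsilon)\sum d_k$. Summing gives $|\gamma|\le(1+7\epsilon)d(x,y)$, and rescaling $\epsilon$ finishes. So the decisive move you are missing is to get $(1+\epsilon)$-efficiency locally from the tangent-cone structure rather than trying to prove the geodesic is mostly regular; the factor-$3$ tool is relegated to the error terms where it is harmless.
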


\begin{proof}
  The proof is very similar to the proof of Proposition~\ref{prn:SB27_1}. The basic idea is to use the tangent cone structure, i.e., Theorem~\ref{thm:SC09_1} to improve Proposition~\ref{prn:SB27_1}. 

 \textit{First,  every point in $\bar{M}$ has a cone-like neighborhood.} 
 
 To be more precise,   fix $\epsilon>0$,  for every point $z \in \bar{M}$, there is a radius $r_z$, depending on $z$ and $\epsilon$,  with the following property:
  
  \textit{For every point $v \in B(z, r_z)$, one can find a curve $\alpha$ such that} 
  \begin{itemize}
  \item Initial point of $\alpha$ locates in $B(z, \epsilon d(v,z))$,  end point of $\alpha$ locates in $B(v, \epsilon d(v,z))$.
  \item $\alpha \subset \mathcal{R}$, \; $|\alpha|<(1+\epsilon) d(v,z)$.
  \end{itemize}
The existence of $r_z$ can be obtained by application of Theorem~\ref{thm:SC09_1} and a contradiction blowup argument.
Actually, if for some $z$ such $r_z$ does not exist, we can find $v_i \to z$ such that corresponding $\alpha_i$ does not exist.  
Blowup by $d^{-2}(v_i, z)$,  we obtain a tangent cone $M_{\infty}$ with vertex $z_{\infty}$ and a point $v_{\infty}$ on the unit sphere of the cone.   
By the density of regular part in the tangent cone, we have a regular point $\tilde{v}_{\infty} \in B(v_{\infty}, 0.5 \epsilon)$. 
The cone structure guarantees that the shortest geodesic connecting $\tilde{v}_{\infty}$ to $z_{\infty}$, which we denote by $\overline{z_{\infty} \tilde{v}_{\infty}}$, has regular interior(c.f. (\ref{eqn:MC16_8})).  
Denote the intersection of $\overline{z_{\infty} \tilde{v}_{\infty}}$ and $M_{\infty} \backslash B(z_{\infty}, 0.5 \epsilon)$ by $\alpha_{\infty}$.  
Then  $\alpha_{\infty}$ is a compact curve and locates in the regular part of $M_{\infty}$. 
By the uniform convergence around $\alpha_{\infty}$, we obtain a curve $\alpha_i$ with the desired property before we arrive limit.
Contradiction. 

\textit{Second,  we can find a good covering of each shortest geodesic by cone-like neighborhoods.}
 
Fix any two points $x,y \in \mathcal{R}$. 
Let $\beta$ be a shortest geodesic connecting $x,y$. 
Since $\bigcup_{z \in \beta} B(z, \frac{1}{4} r_z)$ is a cover of a compact curve $\beta$, we can find a finite covering. 
Starting from this finite covering, by deleting redundant extra balls from $x$ to $y$(e.g., using the ``greedy algorithm"), we obtain a covering $\cup_{i=1}^N B(z_i, \frac{1}{4}r_{z_i})$ with the following properties. 
\begin{itemize}
\item $z_i$'s are ordered by their distance to $x$. 
\item  Each point on $\beta$ locates in at most two balls.  If a point on $\beta$ is contained in two balls, then these two balls must be ``adjacent". In other words, if $z \in \beta \cap B(z_k, \frac{1}{4}r_k) \cap B(z_l, \frac{1}{4}r_l)$, then $|k-l|=1$. 
\item  Every  pair of ``adjacent" balls have nonempty intersection, i.e., if $|k-l|=1$, then $B(z_k, \frac{1}{4}r_k) \cap B(z_l, \frac{1}{4}r_l) \neq \emptyset$. 
\end{itemize}
 
\textit{Third, based on the good covering, one can construct approximation curve. }

Now we have a covering of $\beta$ by $\cup_{k=0}^{N} B(z_k, \frac{1}{4}r_k)$ with the property mentioned in the second step. 
Without loss of generalirty, we further assume $z_0=x, z_N=y$.   For each $0 \leq k \leq N-1$,  let $\beta_k$ be the part of $\beta$ connecting $z_{k}$ and $z_{k+1}$, 
let $d_k$ be the length of $\beta_k$. Then we have
\begin{align*}
  d_k=d(z_k, z_{k+1}) < \frac{1}{4}r_k + \frac{1}{4}r_{k+1} \leq \frac{1}{2} \max \{ r_k, r_{k+1}\}. 
\end{align*}
Hence either $z_{k+1}$ locates in the cone-like neighborhood of $z_k$, or $z_k$ locates in the cone-like neighborhood of $z_{k+1}$.  
No matter what case happens, we can find an approximation curve $\alpha_k \subset \mathcal{R}$, whose two ends locate in the $\epsilon d_k$ neighborhood of $z_{k}$ and $z_{k+1}$, satisfying
$|\alpha_k|<(1+\epsilon) d_k$.   According to this choice, the end point of $\alpha_{k-1}$ and the initial point of $\alpha_k$ have distance bounded by $\epsilon (d_k+d_{k-1})$, whenever $1 \leq k \leq N-1$. 
So they can be connected by a curve $\gamma_k \subset \mathcal{R}$ with $|\gamma_k| \leq 3\epsilon (d_k+d_{k-1})$, due to Proposition~\ref{prn:SB27_1}. 
For the boundary case, it is not hard to see that $z_0=x$ can be connected to the initial point of $\alpha_0$ by $\gamma_0 \subset \mathcal{R}$ and $|\gamma_0|<3\epsilon d_0$.
Similarly, $z_N=y$ can be connected to the end point of $\alpha_{N-1}$ by $\gamma_N \subset \mathcal{R}$ and $|\gamma_N|<3\epsilon d_{N-1}$.   
Concatenating all the curves $\alpha_k$ and $\gamma_k$, we obtain a curve $\gamma \subset \mathcal{R}$ connecting $x,y$ and satisfying(c.f. Figure~\ref{figure:gcovering} for the case $N=2$)
  \begin{align*}
   |\gamma| &=\sum_{k=0}^{N-1} |\alpha_k| + \sum_{k=0}^{N} |\gamma_k| \leq   \left( \sum_{k=0}^{N-1} (1+\epsilon) d_k \right) +\left( 3\epsilon d_0 +\sum_{k=0}^{N-2} 3\epsilon (d_k+d_{k+1}) + 3\epsilon d_{N-1} \right) \\
     &=(1+\epsilon) \sum_{k=0}^{N-1} d_k  + 6 \epsilon \sum_{k=0}^{N-1} d_k =(1+7\epsilon) |\beta|=(1+7 \epsilon) d(x,y). 
  \end{align*}
  Replacing $\epsilon$ by $0.1 \epsilon$ at the beginning, we then find a curve $\gamma$ satisfying the requirement.  
  \end{proof}

\begin{figure}
 \begin{center}
 \psfrag{B0}[c][c]{$B(z_0,\frac14 r_0)$}
 \psfrag{B1}[c][c]{$B(z_1,\frac14 r_1)$}
 \psfrag{B2}[c][c]{$B(z_2,\frac14 r_2)$}
 \psfrag{z0}[c][c]{$z_0=x$}
 \psfrag{z1}[c][c]{$z_1$}
 \psfrag{z2}[c][c]{$z_2=y$}
 \psfrag{g0}[c][c]{$\color{blue}{\gamma_0}$}
 \psfrag{g1}[c][c]{$\color{blue}{\gamma_1}$}
 \psfrag{g2}[c][c]{$\color{blue}{\gamma_2}$}
 \psfrag{a0}[c][c]{$\color{red}{\alpha_0}$}
 \psfrag{a1}[c][c]{$\color{red}{\alpha_1}$}
 \includegraphics[width=0.5 \columnwidth]{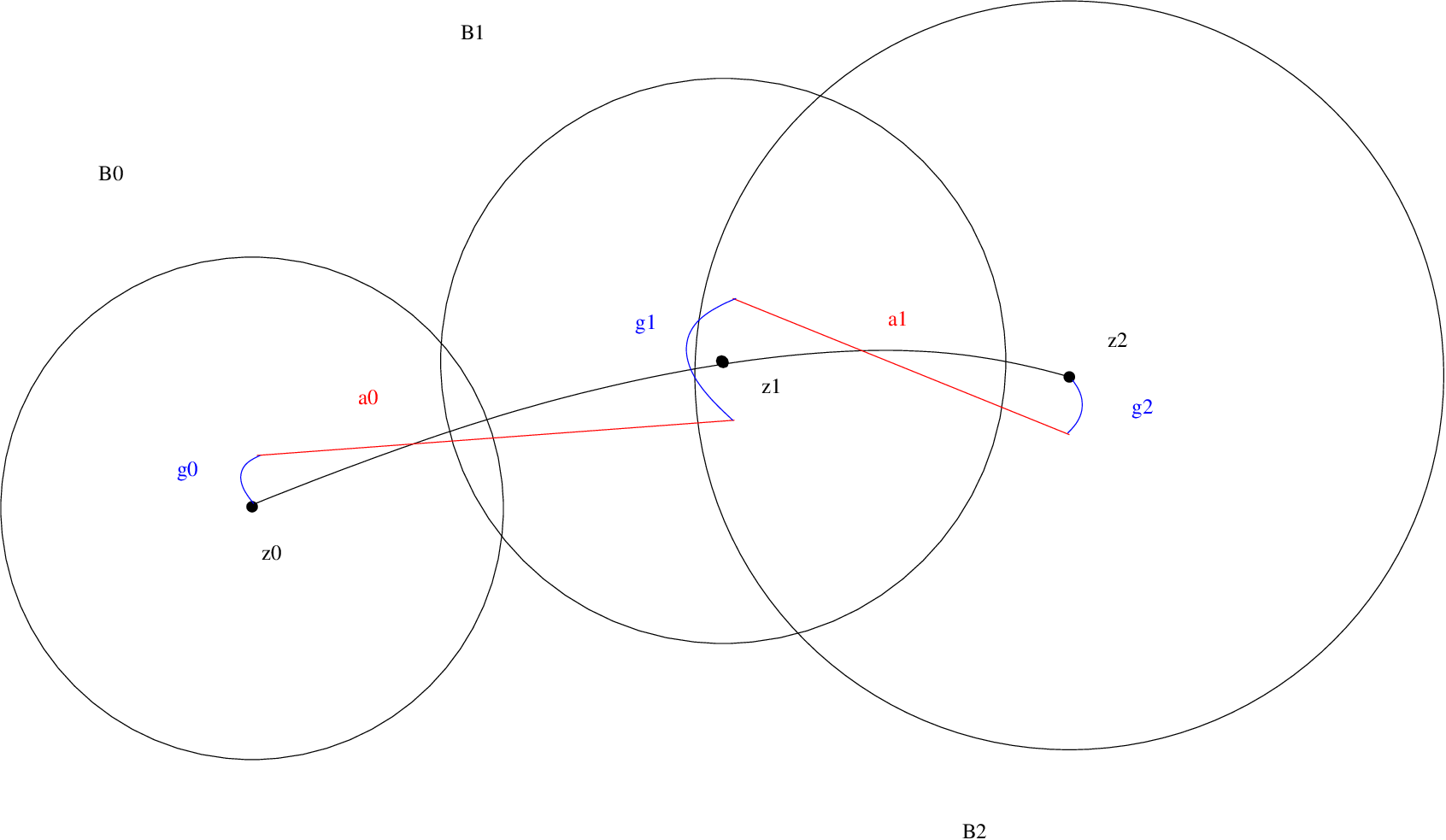}
 \caption{Construction of approximation curve $\gamma$}
 \label{figure:gcovering}
 \end{center}
\end{figure}

\begin{lemma}[\textbf{Rough estimate of reduced distance}]
There is an $\epsilon=\epsilon(n,A)$ with the following properties.

Suppose $\mathcal{LM} \in \mathscr{K}(n,A;1)$,
$x,y\in M$ and $r=d_0(x,y)<1$.  Suppose $y \in \mathcal{F}_{\frac{\epsilon_b}{2} r}(M,0)$.  Then we have
\begin{align}
   l((x,0),(y,-r^2))< 100        \label{eqn:SL25_3}
\end{align}
whenever $\sup_{\mathcal{M}} (|R|+|\lambda|)<\epsilon$.
\label{lma:SL13_1}
\end{lemma}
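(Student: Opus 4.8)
The strategy is to reduce the bound on the reduced distance to an explicit estimate of the $\mathcal{L}$-length of a cleverly chosen space-time curve connecting $(x,0)$ to $(y,-r^2)$, and then to bound that $\mathcal{L}$-length using the ingredients we have assembled: the uniform scalar curvature bound $|R-n\lambda|\le A$ together with the smallness $\sup_{\mathcal{M}}(|R|+|\lambda|)<\epsilon$, the long-time pseudolocality Theorem~\ref{thm:K8_1}, and the regular-curve estimate of Proposition~\ref{prn:SB27_1}. First I would normalize by parabolic rescaling so that $r=1$; then $d_0(x,y)<1$, $\mathbf{pcr}(M\times[-1,1])\ge 1$ after rescaling is no longer literally true, so I would instead keep $r$ and rescale only when convenient, using that $\mathscr{K}(n,A;1)$ controls the flow on $[-1,1]$ and $r^2\le 1$ keeps us inside this window. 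Recall from \eqref{eqn:MA22_1}--\eqref{eqn:MA22_2} that $l((x,0),(y,-r^2))=\frac{1}{2r}\inf_{\boldsymbol\gamma}\int_0^{r^2}\sqrt\tau\left(R+|\dot\gamma|^2\right)_{g(-\tau)}\,d\tau$, so an upper bound follows from exhibiting one good competitor curve.

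The key step is the construction of the competitor. Since $y\in\mathcal{F}_{\frac{\epsilon_b}{2}r}(M,0)$, the point $y$ is uniformly regular at time $0$; by the backward pseudolocality Proposition~\ref{prn:SK27_3} (or directly Theorem~\ref{thm:K8_1}), $y$ stays uniformly regular, with $|Rm|$ bounded by $Cr^{-2}$, on a definite-sized parabolic neighborhood, and the metrics $g(t)$ for $t\in[-r^2,0]$ are uniformly equivalent to $g(0)$ on a ball $B_{g(0)}(y,c r)$ with $C,c$ depending only on $n,A$. Using Proposition~\ref{prn:SB27_1} (applied at scale $r$, noting $\mathbf{cr}(M)\ge 1\ge r$ after the flow restriction), I would connect $x$ to $y$ by a spatial curve $\sigma$ with $|\sigma|_{g(0)}<3d_0(x,y)=3r$; then I would use this $\sigma$, traversed at an appropriate $\tau$-dependent speed, as the space-projection of the competitor $\boldsymbol\gamma$. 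A standard choice (as in Perelman, see also the static formula \eqref{eqn:SL25_6}) is to let $\gamma$ traverse $\sigma$ so that $|\dot\gamma|_{g(-\tau)}\approx \frac{|\sigma|}{2\sqrt{r^2}\sqrt\tau}\cdot$(bounded factor), making $\int_0^{r^2}\sqrt\tau|\dot\gamma|^2\,d\tau$ comparable to $\frac{|\sigma|^2}{4r}$, which is $\le \frac{9r^2}{4r}=\frac{9r}{4}$ up to the metric-equivalence constant. The scalar-curvature term contributes $\int_0^{r^2}\sqrt\tau\,|R|\,d\tau\le \epsilon\cdot\frac{2}{3}r^3\le \epsilon$, which is negligible for $\epsilon$ small. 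Dividing by $2\sqrt{r^2}=2r$ then gives $l<100$ provided the metric-equivalence constant and the factor $3$ from Proposition~\ref{prn:SB27_1} are absorbed; one checks the numerics leave ample room.

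The main obstacle I anticipate is making the competitor curve legitimate as a space-time curve in the sense required for $\mathcal{L}$: the curve $\sigma$ produced by Proposition~\ref{prn:SB27_1} is only rectifiable and lies in $\mathcal{F}_{\frac14\epsilon_b r}$, so along it the geometry is uniformly controlled, but I must verify that the length and curvature bounds are valid \emph{simultaneously} over the whole time interval $[-r^2,0]$, not merely at $t=0$. This is exactly where Theorem~\ref{thm:K8_1} and the smallness of $|R|+|\lambda|$ enter: they guarantee $g(t)$ and $g(0)$ are $(1\pm\xi)$-equivalent on a fixed-size neighborhood of $\sigma$ for all such $t$, so lengths measured at any time are comparable. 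A secondary technical point is that $\sigma$ might wander outside the region where pseudolocality gives control; to handle this I would, if necessary, first replace $\sigma$ by the concatenation of a short regular curve near $y$ (inside the pseudolocality ball) with the bulk of $\sigma$, using that only the portion near $y$ needs the time-uniform regularity while the rest can be handled by the uniform bounds $|R-n\lambda|\le A$ and $C_S\le A$ on all of $M$ — indeed away from $y$ we only need the integral of $\sqrt\tau(R+|\dot\gamma|^2)$ to be finite and controlled, and $R$ is bounded everywhere. Once the competitor is in hand, the estimate \eqref{eqn:SL25_3} is a direct computation with generous constants.
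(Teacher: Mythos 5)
Your proposal has the right broad shape (exhibit a competitor curve, use pseudolocality to keep the flow metric comparable to $g(0)$ along it, then compute), and the parabolic-speed ansatz $|\dot\gamma|\sim\tau^{-1/2}$ is the right parametrization. But there is a genuine gap in how you produce the spatial curve, and it stems from a misidentification of which endpoint is the dangerous one.

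You invoke Proposition~\ref{prn:SB27_1}, which connects two points $x,y\in\mathcal{F}_r$ by a curve staying in $\mathcal{F}_{\frac12\epsilon_b r}$. But the hypothesis of Lemma~\ref{lma:SL13_1} only puts $y$ in $\mathcal{F}_{\frac{\epsilon_b}{2}r}(M,0)$; the point $x$ is arbitrary and may have arbitrarily small $\mathbf{cvr}$, or even be converging to a singular point after rescaling. In that case no curve from $x$ to $y$ can remain in any fixed $\mathcal{F}_{cr}$: every such curve must pass through $\mathcal{D}_\xi$ for arbitrarily small $\xi$ as it approaches $x$. So the curve $\sigma$ you want does not exist, and the later claim that ``the bulk of $\sigma$ can be handled by the uniform bounds $|R-n\lambda|\le A$ and $C_S\le A$'' does not save it: those bounds control $R$, not $|\dot\gamma|^2_{g(-\tau)}$, which is the term that blows up if the metric degenerates along $\sigma$ near $x$. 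Your discussion also singles out the neighborhood of $y$ as the delicate region, but $y$ is the regular endpoint with the pseudolocality protection built in; the hard end is $x$, which is where $\sqrt\tau\to 0$ and where the curve's regularity scale also goes to zero.

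The paper's fix is Proposition~\ref{prn:SL24_1}, which is designed for exactly this asymmetric situation: it produces a curve from an arbitrary $x$ to a regular $y\in\partial B(x,r)$ whose regularity scale degrades dyadically, lying in $\mathcal{F}_{2^{-i-3}\epsilon_b^2 r}$ in the annulus $B(x,2^{-i}r)\setminus B(x,2^{-i-1}r)$. The paper then lifts the $i$-th dyadic piece onto the $\tau$-interval $[4^{-i-1}r^2,4^{-i}r^2]$, so that $\sqrt\tau\sim 2^{-i}r$ matches the spatial regularity scale $\sim 2^{-i}\epsilon_b^2 r$; the pseudolocality Propositions~\ref{prn:SC09_3}--\ref{prn:SK27_3} then apply at every scale (the threshold $\epsilon$ is scale-invariant, as the paper notes), keeping $g(-\tau)$ comparable to $g(0)$ on each piece, and the contributions $\sim 2^{-i}r$ sum geometrically to give $l<100$. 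Your $\tau^{-1/2}$-speed single-curve version is morally equivalent to this telescoping once the curve comes from Proposition~\ref{prn:SL24_1} rather than Proposition~\ref{prn:SB27_1} and once the degrading regularity near $x$ (rather than near $y$) is what the pseudolocality is tracking; without those two corrections the argument does not close.
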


\begin{proof}
 Let $y_0=y$.
 According to the construction in Proposition~\ref{prn:SL24_1}, there exists a point
 $y_1 \in \partial B_{g(0)}(x,\frac{r}{2}) \cap \mathcal{F}_{\frac{\epsilon_b r}{4}}(M,0)$ and a curve
 $\gamma_1 \subset \mathcal{F}_{\frac{\epsilon_b^2}{8} r}(M,0)$ connecting $y_0,y_1$, with length less than
 $\frac{9}{2}r$.
 Suppose $|R|+|\lambda|$ is small enough, then
 $\displaystyle \gamma_1 \subset \bigcap_{-r^2 \leq t \leq 0} \mathcal{F}_{\frac{\epsilon_b^2 r}{16}}(M,t)$.
 So $\gamma_1$ can be lifted as a space-time curve  connecting $(y_1, -\frac{r^2}{4})$
 and $(y_0, -r^2)$. Reparameterizing $\gamma_1$ by $\tau$, after a proper adjustment, we have
  \begin{align*}
     \int_{\frac{r^2}{4}}^{r^2}  \sqrt{\tau}  |\dot{\gamma}_1|_{g(-\tau)}^2 d\tau < 100 r.
 \end{align*}
 Following the same procedure, we can find $\gamma_2$ connecting $y_1$ to
 $y_2 \in \partial B_{g(0)}(x,\frac{r}{4}) \cap \mathcal{F}_{\frac{\epsilon_b r}{8}}(M,0)$ with
 $\displaystyle \gamma_2 \subset \bigcap_{-\frac{r^2}{4} \leq t \leq 0} \mathcal{F}_{\frac{\epsilon_b^2 r}{32}}(M,t)$.
 By a proper reparameterization of $\tau$, we can regard $\gamma_2$ as a space-time curve connecting
 $(y_1,-\frac{r^2}{4})$ and $(y_2, -\frac{r^2}{16})$, and it satisfies the estimate
 \begin{align*}
      \int_{\frac{r^2}{16}}^{\frac{r^2}{4}}  \sqrt{\tau} |\dot{\gamma}_2|_{g(-\tau)}^2 d\tau < 100 \cdot \frac{r}{2}.
 \end{align*}
 Note that there is no need to choose a new $\epsilon$ because of the rescaling property of $|R|+|{\lambda}|$. 
 Repeating this process, we can find curve $\gamma_k$ connecting $(y_k, -\frac{r^2}{4^k})$
 and $(y_{k+1}, -\frac{r^2}{4^{k+1}})$.  Concatenating all $\gamma_k$'s together, we obtain a space-time curve
 $\gamma$ connecting $(x,0)$ and $(y, -r^2)$ such that
 \begin{align*}
    \int_0^{r^2}  \sqrt{\tau} |\dot{\gamma}|_{g(-\tau)}^2 d\tau < 100 \sum_{k=0}^{\infty} \frac{r}{2^k}=200r.
 \end{align*}
 It follows that
 \begin{align*}
   l((x,0), (y,-r^2)) < \frac{200r}{2\sqrt{r^2}}=100.
 \end{align*}

\end{proof}

\begin{lemma}[\textbf{Most shortest reduced geodesics avoid high curvature part}]
For every group of numbers $0<\xi<\eta<1<H$, there is a big constant $C=C(n,A,\eta,H)$
and a small constant $\epsilon=\epsilon(n,A,H,\eta,\xi)$ with the following properties.

Suppose $\mathcal{LM} \in \mathscr{K}(n,A;1)$, $x\in \mathcal{F}_{\eta}(M,0)$.
Let $\Omega_{\xi}$ be the collection of points $z \in M$ such that there exists a
shortest reduced geodesic $\boldsymbol{\beta}$ connecting $(x,0)$ and $(z,-1)$ satisfying
\begin{align}
   \beta \cap \mathcal{D}_{\xi}(M,0) \neq \emptyset.
\label{eqn:SL24_4}
\end{align}
Then
\begin{align}
|B_{g(0)}(x,H) \cap \mathcal{F}_{\eta}(M,0) \cap \Omega_{\xi}| < C \xi^{2p_0-1}
\label{eqn:SL24_5}
\end{align}
whenever $\sup_{\mathcal{M}} (|R|+|\lambda|)<\epsilon$.
\label{lma:SL14_1}
\end{lemma}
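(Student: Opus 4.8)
The plan is to argue by a contradiction-compactness scheme, exactly parallel to the strategy already used to prove Proposition~\ref{prn:SK27_3} (backward improvement of regularity). The quantity $|B_{g(0)}(x,H) \cap \mathcal{F}_{\eta}(M,0) \cap \Omega_{\xi}|$ measures the set of ``almost-regular at time $0$'' points $z$ that are nonetheless connected to $(x,0)$ by a shortest reduced geodesic passing through the high-curvature part $\mathcal{D}_{\xi}(M,0)$. I want to show this set is small in the scale $\xi^{2p_0-1}$, i.e. it sits essentially in the $\xi$-neighborhood of the singular set $\mathcal S$ as seen from $x$. First I would set up the reduced geodesic machinery recalled in Section~\ref{subsec:reduced}: for $x \in \mathcal F_\eta(M,0)$, by Lemma~\ref{lma:SL13_1} (with $\sup(|R|+|\lambda|)$ small), any $z \in \mathcal F_{\epsilon_b r/2}(M,0)$ close to $x$ has bounded reduced distance $l((x,0),(z,-r^2))$, so the reduced geodesics emanating from $(x,0)$ form a controlled family; standard Perelman estimates give that $\mathcal V((x,0),\tau)$ is uniformly bounded below (using the uniform lower bound on volume ratio coming from $\kappa$-noncollapsing in \eqref{eqn:SK20_1}), hence the reduced-volume density $(4\pi\tau)^{-n}e^{-l}$ is comparable to a probability density on the time-$(-1)$ slice.

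The core step is a measure estimate on the ``exponential map'' for reduced geodesics based at $(x,0)$. For a unit vector $V$ in the time-$0$ tangent sphere (in the bounded-curvature ball $B_{g(0)}(x,\eta)$, where $|Rm|$ is controlled by the canonical-radius regularity estimate), let $\boldsymbol\beta_V$ be the reduced geodesic with initial data $V$. Say $V$ is \emph{bad} if $\beta_V$ enters $\mathcal D_\xi(M,0)$ before reaching time $-1$. Every bad geodesic must first cross $\partial \mathcal D_\xi(M,0)$, i.e. a point where $\mathbf{cvr}(\cdot,0)=\xi$; by the forward/backward long-time pseudolocality (Proposition~\ref{prn:SK27_3}, and the rough long-time pseudolocality Theorem~\ref{thm:K8_1}), such a crossing point stays uniformly regular, with curvature $\lesssim \xi^{-2}$, on the whole time interval $[-1,0]$ — here the smallness of $\sup(|R|+|\lambda|)$ is essential. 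Thus the family of bad reduced geodesics, restricted to the portion before the first crossing, is a Jacobi-field-controlled family in a region of curvature $\lesssim \xi^{-2}$, and the set of crossing points has measure $\le C\xi^{2p_0}$ by Proposition~\ref{prn:SC02_2} (the density estimate $|B(x_0,1)\cap \mathcal D_r| \le C r^{2p_0}$, here applied at scale $\xi$). Pushing forward: the solid angle of bad directions satisfies $|\{V \text{ bad}\}| \le C\xi^{2p_0-1}$ (one power of $\xi$ is lost going from a codimension-one boundary-crossing set to the sphere of directions, via the co-area / Jacobian comparison). Then I use the $L^1$-Jacobian bound for the reduced exponential map at time $-1$: the reduced volume element $(4\pi)^{-n}e^{-l}\, \mathcal{J}(V)\, dV$ integrates to $\mathcal V((x,0),1) \le$ const, and on the relevant bounded region ($z\in B_{g(0)}(x,H)\cap\mathcal F_\eta$, where $l$ is bounded above and below and $\mathcal J$ is comparable to a constant by smooth convergence/bounded geometry) the image measure of the bad set is comparable to $|\{V\text{ bad}\}|$. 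This yields $|B_{g(0)}(x,H)\cap\mathcal F_\eta(M,0)\cap\Omega_\xi| \le C\xi^{2p_0-1}$.

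Finally I would run this as a contradiction argument to extract the clean constants. Suppose \eqref{eqn:SL24_5} fails: there are $\mathcal{LM}_i \in \mathscr K(n,A;1)$ with $\sup_{\mathcal M_i}(|R|+|\lambda|)\to 0$, points $x_i\in\mathcal F_\eta(M_i,0)$, and $\xi_i$ with $|B_{g_i(0)}(x_i,H)\cap\mathcal F_\eta(M_i,0)\cap\Omega_{\xi_i}| \ge c_i\, \xi_i^{2p_0-1}$ for $c_i\to\infty$ (or a fixed large constant exceeding any putative $C$). By Proposition~\ref{prn:SL04_1} (polarized canonical radius bounded below plus $|R|+|\lambda|\to 0$) we pass to a limit static space-time $(\bar M, \bar x, \bar g)$ with $\dim_{\mathcal M}\mathcal S \le 2n-2p_0$, on which all reduced distances/geodesics converge smoothly on the regular part (Theorem~\ref{thm:SC09_1} identifies tangent cones, but here I just need the smooth convergence of $l$ and its geodesics away from $\mathcal S$). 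In the limit, reduced geodesics hitting the singular part would have to accumulate on $\mathcal S$, but $\mathcal S$ has codimension $>2$ and hence, by the density estimate passed to the limit, zero contribution in the scale $\xi^{2p_0-1}$ — contradicting the lower bound. The main obstacle I expect is controlling the reduced geodesics \emph{through} the crossing of $\partial\mathcal D_\xi$: one must ensure the reduced geodesic equation \eqref{eqn:MA22_3} does not produce wild behavior in the moderate-curvature shell $\{\mathbf{cvr}(\cdot,0)\sim\xi\}$, where curvature is only $\lesssim \xi^{-2}$ rather than uniformly bounded. This is handled by rescaling by $\xi^{-2}$ and invoking the a priori estimates in the model space $\widetilde{\mathscr{KS}}(n,\kappa)$ (Theorem~\ref{thm:SL21_1}) together with long-time pseudolocality, but making the Jacobian comparison quantitative across this shell is the delicate point, and it is precisely why the exponent is $2p_0-1$ rather than $2p_0$.
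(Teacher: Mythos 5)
Your proposal identifies most of the right ingredients (Perelman's monotonicity of the reduced volume element, the density estimate for $\mathcal{D}_\xi$, the need to control the crossing of $\partial\mathcal{D}_\xi$), but the mechanism you propose for extracting the power $\xi^{2p_0-1}$ is not correctly organized and, at the delicate point you flag yourself, is a genuine gap.

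The paper's actual argument is a one-directional push-forward that never passes through the tangent sphere of reduced-geodesic directions. It fixes the \emph{last} exit time $\bar\tau$ of each bad geodesic from $\mathcal{D}_\xi(M,0)$, shows $\bar\tau$ is uniformly bounded away from $0$ and $1$ (by estimating the escape times from $\mathcal{F}_{K^{-1}\eta}$ using the reduced-length bound), that $\beta(\bar\tau)$ stays in a fixed ball $B_{g(0)}(x,F)$, and that $|\dot\beta(\bar\tau)|$ is uniformly bounded. It then defines a projection $\varphi:z\mapsto\boldsymbol\beta(\bar\tau)$ from the bad set at time $-1$ into the space-time hypersurface $\partial\mathcal{F}_\xi(M,0)\times[-1,0]$, and uses the monotonicity of $(4\pi\tau)^{-n}e^{-l}\,dv$ \emph{from time $-1$ backward to time $\bar\tau$ only}: since on the regular range $[\bar\tau,1]$ the weight $e^{-l}$ and the metric are uniformly comparable, this immediately gives $|\Omega|\leq C\bigl|\partial\mathcal{F}_\xi\cap B(x,F)\bigr|_{\mathcal{H}^{2n-1}}$. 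The exponent $\xi^{2p_0-1}$ enters once, cleanly, via Corollary~\ref{cor:GI26_1}: a perturbed smooth hypersurface near $\partial\mathcal{F}_\xi$ has $(2n-1)$-Hausdorff measure $\leq C\xi^{2p_0-1}$ because of the density estimate and the co-area formula. No Jacobian estimate in the moderate-curvature shell is required, because the geodesic is never propagated \emph{through} $\mathcal{D}_\xi$ — the argument stops exactly at the last exit.

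Your version attempts two Jacobian comparisons that the paper avoids. First you go from the $(2n-1)$-dimensional crossing set back to the unit sphere of bad directions at $(x,0)$ ("one power of $\xi$ is lost going from a codimension-one boundary-crossing set to the sphere of directions"), and then forward from the sphere to the time $-1$ slice via an $L^1$-Jacobian bound. The backward pull-back to the tangent sphere is exactly the step that requires controlling the reduced exponential map inside the shell where $|Rm|\sim\xi^{-2}$, and there is no uniform two-sided Jacobian bound there (the one-sided bound $\mathcal{J}\leq 1$ from Perelman's monotonicity is not enough for a pull-back). You also conflate two different measure bounds: $|\mathcal{D}_\xi\cap B|_{\mathcal{H}^{2n}}\leq C\xi^{2p_0}$ is not what feeds the argument; what is needed and what Corollary~\ref{cor:GI26_1} supplies is the hypersurface bound $|\partial\mathcal{F}_\xi\cap B|_{\mathcal{H}^{2n-1}}\leq C\xi^{2p_0-1}$, and the loss of one power happens there (co-area on a gradient-bounded perturbation of $\mathbf{cvr}$), not in a geodesic-spray Jacobian. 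Finally, wrapping the whole thing in a contradiction-compactness argument does not help here: the conclusion is an explicit quantitative inequality in the scale $\xi$, and your limit statement ("zero contribution in the scale $\xi^{2p_0-1}$") would have to be extracted by essentially re-proving the same quantitative estimate in the limit space. The paper's proof is direct, and the contradiction framing in your last paragraph does not substitute for the projection-plus-monotonicity mechanism.
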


\begin{proof}
This is a flow property, we assume $\lambda=0$ without loss of generality.

From the argument in Lemma~\ref{lma:SL13_1}, it is not hard to obtain the following bound
\begin{align}
  l((x,0),(z,-1)) < C, \quad \forall \; z \in B_{g(0)}(x,H) \cap \mathcal{F}_{\eta}(M,0), \label{eqn:SL24_6}
\end{align}
where $C=C(\eta,H)$. 
Suppose  $z \in B_{g(0)}(x,H) \cap \mathcal{F}_{\eta}(M,0)$,  $\boldsymbol{\beta}$ is a shortest reduced geodesic connecting $(x,0)$ and $(z,-1)$.  Let $\beta$ be the corresponding space curve.
Note that $x$ and $z$, the two end points of $\beta$,  locate outside of $ \mathcal{D}_{\xi}(M,0)$.
Therefore if $z \in \Omega_{\xi}$, then (\ref{eqn:SL24_4}) is satisfied.  In other words, the shortest reduced geodesic connecting $(x, 0)$ and $(z, -1)$ cannot avoid the ``high curvature" part $\mathcal{D}_{\xi}(M,0)$. 
By continuity(c.f. Appendix~\ref{app:B}), we have
\begin{align*}
    \beta \cap \partial \mathcal{F}_{\xi}(M,0) =\beta \cap \partial \mathcal{D}_{\xi}(M,0) \neq \emptyset.
\end{align*}
Let $\tau_a$ be the first time $\boldsymbol{\beta}$ escape from $\mathcal{F}_{K^{-1}\eta}$, $\tau_b$ be the last time such that $\boldsymbol{\beta}(\tau)$ re-enter $\mathcal{F}_{K^{-1}\eta}$, when we move along backward time direction. 
Here $K$ is the constant defined in Proposition~\ref{prn:SC24_1}. To be more precise, we define
\begin{align*}
  &\tau_a \triangleq \sup \left\{ \tau \left| \beta(s) \in \mathcal{F}_{K^{-1}\eta}, \quad \forall \; s \in (0,\tau) \right. \right\}, \\
  &\tau_b \triangleq \inf \left\{ \tau \left| \beta(s) \in \mathcal{F}_{K^{-1}\eta}, \quad \forall \; s \in (\tau, 1) \right. \right\}.
\end{align*}
By the choice of $x$ and $z$, it is clear that $0<\tau_a<\tau_b<1$.   We can further estimate $\tau_a$ and $\tau_b$ uniformly. 
Actually,  since $l$ is achieved by $\boldsymbol{\beta}$ and is bounded by $C$,  it follows from the definition of $l$(c.f. equation (\ref{eqn:MA22_1}) and (\ref{eqn:MA22_2})) that
\begin{align}
  \int_0^1 \sqrt{\tau} \left( R + |\dot{\beta}|^2\right)_{g(-\tau)} d\tau<C. 
\label{eqn:MA22_5}  
\end{align}
Note that $\beta(\tau) \in \mathcal{F}_{\eta}(M, 0)$ whenever $\tau \in (0, \tau_a) \cup (\tau_b, 1)$.  In view of Proposition~\ref{prn:SL04_1},
we have the metric equivalence $0.5 g(x, 0)<g(x, -\tau)<2g(x, 0)$
for all $x \in \mathcal{F}_{K^{-1}\eta}(M, 0)$ and $\tau \in (0,1)$.  Recalling that $|R|$ is uniformly small.  Then (\ref{eqn:MA22_5}) implies that
\begin{align*}
  \int_0^{\tau_a} \sqrt{\tau} |\dot{\beta}|_{g(0)}^2 d\tau <C, \quad \int_{\tau_b}^1 \sqrt{\tau} |\dot{\beta}|_{g(0)}^2 d\tau <C. 
\end{align*} 
It follows from Proposition~\ref{prn:SC24_1} and the above inequality that
\begin{align}
  &\frac{\eta}{C}<d_{g(0)}(x, \beta(\tau_a)) \leq   \int_0^{\tau_a} |\dot{\beta}|_{g(0)} d\tau< \left( \int_0^{\tau_a} \sqrt{\tau} |\dot{\beta}|_{g(0)}^2 d\tau \right)^{\frac{1}{2}}  \left( \int_0^{\tau_a}  \frac{1}{\sqrt{\tau}} d\tau \right)^{\frac{1}{2}}<C\tau_a^{\frac{1}{4}}, \label{eqn:MA22_7}\\
  &\frac{\eta}{C}<d_{g(0)}(\beta(\tau_b), z) \leq   \int_{\tau_b}^{1} |\dot{\beta}|_{g(0)} d\tau< \left( \int_{\tau_b}^{1} \sqrt{\tau} |\dot{\beta}|_{g(0)}^2 d\tau \right)^{\frac{1}{2}}  \left( \int_{\tau_b}^{1}  \frac{1}{\sqrt{\tau}} d\tau \right)^{\frac{1}{2}}
  <C\sqrt{1-\sqrt{\tau_b}}, 
\label{eqn:MA22_8}  
\end{align}
where $C=C(\eta,H,K)$.  Consequently, we have 
\begin{align*}
  \tau_a>\frac{\eta^4}{C}, \quad 1-\tau_b=\left(1-\sqrt{\tau_b} \right) \left(1+\sqrt{\tau_b} \right) \geq 1-\sqrt{\tau_b} \geq \frac{\eta^2}{C}. 
\end{align*}
This means that $[\tau_a, \tau_b] \subset \left[\frac{\eta^4}{C}, 1-\frac{\eta^2}{C} \right]$. 
Define $\bar{\tau}$ as 
\begin{align}
  \bar{\tau} \triangleq \max\{ \tau| \beta(\tau) \in \mathcal{D}_{\xi}(M, 0)\}.   \label{eqn:MC23_1}
\end{align}
Clearly, we have  $\beta(\bar{\tau}) \in \partial \mathcal{D}_{\xi}(M, 0)=\partial \mathcal{F}_{\xi}(M, 0)$. 
Since $\xi<K^{-1}\eta$, we have $\bar{\tau} \in [\tau_a, \tau_b]$ for continuity reason.    Consequently, we know 
\begin{align}
   \frac{\eta^4}{C}< \bar{\tau} < 1-\frac{\eta^2}{C}, 
\label{eqn:MA22_6}   
\end{align}
for some $C=C(\eta, H, K)$, whenever $\xi<K^{-1} \eta$ and $|R|$ very small. 

Beyond the estimate of $\bar{\tau}$, there are more estimates around $\boldsymbol{\beta}(\bar{\tau})$. 
In light of the choice of $\bar{\tau}$, we have $\beta(\tau) \in \mathcal{F}_{\xi}(M,0)$ for each $\tau \in [\bar{\tau}, 1]$. 
By (\ref{eqn:SL24_6}),  we have uniform rough bound of the reduced distance from $(x, 0)$ to $(z, -1)$.
Noting that $R$ may be negative and $|R|$ is very small,  we have
\begin{align*}
  \int_{\bar{\tau}}^{1} \sqrt{\tau} \left( R + |\dot{\beta}|^2\right)_{g(-\tau)} d\tau <1+ \int_0^1 \sqrt{\tau} \left( R + |\dot{\beta}|^2\right)_{g(-\tau)} d\tau<C(\eta,H).
\end{align*}
Following the route of (\ref{eqn:MA22_7}), noting that metrics $g(0)$, $g(-\tau)$ and $g(-1)$ are all uniformly equivalent on $\beta(\tau)$ whenever $\tau \in [\bar{\tau}, 1]$, we have
\begin{align*}
 d_{g(0)}(z, \beta(\bar{\tau})) \leq   \int_{\bar{\tau}}^{1} |\dot{\beta}|_{g(0)} d\tau \leq   2\int_{\bar{\tau}}^{1} |\dot{\beta}|_{g(-1)} d\tau < 2\left( \int_{\bar{\tau}}^{1} \sqrt{\tau} |\dot{\beta}|_{g(-1)}^2 d\tau \right)^{\frac{1}{2}}  \left( \int_{\bar{\tau}}^{1}  \frac{1}{\sqrt{\tau}} d\tau \right)^{\frac{1}{2}}
  < C.
\end{align*} 
Note that $d_{g(0)}(z,x)<H$.  Triangle inequality then implies that
\begin{align}
 d_{g(0)}(\beta(\bar{\tau}), x)<F
 \label{eqn:MA22_9}  
\end{align}
for some $F$ independent of $\xi$ when $|R|+|\lambda|$ small enough.

The purpose of this paragraph is to estimate $\dot{\beta}(\bar{\tau})$. 
Recalling the reduced geodesic equation (\ref{eqn:MA22_3}):
\begin{align*}
  \nabla_V V +\frac{V}{2\tau} + 2Ric(V, \cdot) + \frac{\nabla R}{2}=0,
\end{align*}
where $V=\dot{\beta}$.  It follows that along the reduced geodesic $\boldsymbol{\beta}$, we have
\begin{align*}
  & \frac{d}{d\tau} |\dot{\beta}|^2=2\langle \nabla_{\dot{\beta}} \dot{\beta}, \dot{\beta} \rangle +2Ric(\dot{\beta}, \dot{\beta})=-\frac{|\dot{\beta}|^2}{\tau} -2Ric(\dot{\beta}, \dot{\beta}) -  \langle \nabla R, \dot{\beta} \rangle,\\
  & \frac{d}{d\tau} \left\{\tau |\dot{\beta}|^2 \right\}=-\tau \left\{ 2Ric(\dot{\beta}, \dot{\beta}) +\langle \nabla R, \dot{\beta} \rangle \right\}. 
\end{align*}
Note that $\beta(\tau) \in \mathcal{F}_{\xi}(M,0)$ for each $\tau \in [\bar{\tau}, 1]$.  By Proposition~\ref{prn:SC09_3}, we can assume $\beta(\tau) \in \mathcal{F}_{K^{-1}\xi}(M,-\tau)$. 
It follows that $|Ric|$ and $|\nabla R|$ uniformly small whenever $|R|$ globally very small.   Therefore, the above equation implies
\begin{align}
 \left| \frac{d}{d\tau} \left( \tau |\dot{\beta}|^2 +1\right) \right| <\theta \left( \tau |\dot{\beta}|^2 + 1\right) , \quad \forall \; \tau \in (\bar{\tau}, 1)
\label{eqn:MA22_10} 
\end{align}
for some small constant $\theta$ depending on $\xi$ and $\sup_{\mathcal{M}} |R|$.  Moreover, $\theta \to 0$ if $\sup_{\mathcal{M}} |R| \to 0$ and $\xi$ is fixed. 
Integrating (\ref{eqn:MA22_10}) and using (\ref{eqn:MA22_6}), we obtain
\begin{align*}
   \tau |\dot{\beta}|^2+1> e^{-\theta}  \left( \bar{\tau} |\dot{\beta}|_{g(-\bar{\tau})}^2 +1\right). 
\end{align*}
It follows that
\begin{align*}
    \int_{\bar{\tau}}^{1} \sqrt{\tau}|\dot{\beta}|_{g(-\tau)}^2  d\tau &= \int_{\bar{\tau}}^{1} \frac{1}{\sqrt{\tau}} \tau |\dot{\beta}|_{g(-\tau)}^2 d\tau
     \geq \int_{\bar{\tau}}^{1} \left\{ e^{-\theta} \left( \bar{\tau} |\dot{\beta}|_{g(-\bar{\tau})}^2 +1\right)-1 \right\} d\tau\\
     &=e^{-\theta}  |\dot{\beta}|_{g(-\bar{\tau})}^2 \bar{\tau} (1-\bar{\tau}) + (e^{-\theta}-1)(1-\bar{\tau}).
\end{align*}
In view of (\ref{eqn:MA22_5}) and the fact that $|R|$ is very small, we know the left hand side of the above inequality is bounded above by $C=C(\eta, H)$.  Since $\theta$ is very small, 
$\bar{\tau} \in \left[ \frac{\eta^4}{C}, 1-\frac{\eta^2}{C}  \right]$ by (\ref{eqn:MA22_6}),  the above inequality yields that
\begin{align}
   \left|\dot{\beta}(\bar{\tau}) \right|_{g(-\bar{\tau})} < C,
\label{eqn:MA22_11}   
\end{align} 
where $C=C(\eta, H, K)$ is independent of $\xi$.  Note that $\boldsymbol{\beta}(\tau)=(\beta(\tau), -\tau)$, the space-time tangent vector of $\boldsymbol{\beta}$ is $(\dot{\beta}, -1)$. 
Intuitively, (\ref{eqn:MA22_11}) can be understood that the ``angle" between the space-time tangent and the space tangent form a positive ``angle" which is uniformly bounded below.

Note that the reduced volume element $(4\pi \tau)^{-n}e^{-l} dv$ is decreasing along $\boldsymbol{\beta}$.
Up to a perturbation,  $\partial \mathcal{F}_{\xi}(M,0)$ can be regarded(c.f. Corollary~\ref{cor:GI26_1}) as a smooth hypersurface in $M$ satisfying
\begin{align}
\left| \partial \mathcal{F}_{\xi}(M,0) \cap B_{g(0)}(x,F) \right|_{\mathcal{H}^{2n-1}}   \leq C\xi^{2p_0-1},
\label{eqn:MA21_1}  
\end{align}
for some $C=C(n,A,F)$, $F=F(\eta, H)$ is the constant in (\ref{eqn:MA22_9}). 
Consequently, $\partial \mathcal{F}_{\xi}(M,0) \times [-1, 0]$ can be regarded as a hypersurface in the space-time.
Recall that  $\Omega_{\xi}$ is the collection of points $z \in M$ such that there exists a
shortest reduced geodesic $\boldsymbol{\beta}$ connecting $(x,0)$ and $(z,-1)$ satisfying (\ref{eqn:SL24_4}).
By reduced geodesic theory(c.f. Section 7 of~\cite{Pe1} and the corresponding sections in~\cite{KL} for more details),
the following results are known.
\begin{itemize}
\item[(a).]  For every $z \in M$, $(z, -1)$ can be connected to $(x, 0)$ by a shortest reduced geodesic.
\item[(b).]  For every $z \in M \backslash E$, $(z, -1)$ can be connected to $(x, 0)$ by a unique shortest reduced geodesic, where $E$ is a measure-zero set and is called the $\mathcal{L}$-cut-locus. 
\end{itemize}
Therefore, we can define a projection map $\varphi$ as follows.
\begin{align}
  \varphi:   B_{g(0)}(x,H) \cap \mathcal{F}_{\eta}(M,0) \cap  \left\{ \Omega_{\xi}  \backslash E \right\} &\mapsto \partial \mathcal{F}_{\xi}(M,0) \times [-1, 0],  \notag\\
                    z &\mapsto  \boldsymbol{\beta}(\bar{\tau}). 
\label{eqn:MC23_2}                    
\end{align}
For simplicity, let $\Omega=B_{g(0)}(x,H) \cap \mathcal{F}_{\eta}(M,0) \cap  \left\{ \Omega_{\xi}  \backslash E \right\}$. 
The reduced distance bound (\ref{eqn:SL24_6}) and  the entering-time bound (\ref{eqn:MA22_6}) implies that the reduced volume element  $(4\pi \tau)^{-n}e^{-l} dv$ along $\boldsymbol{\beta}$ is uniformly equivalent to $dv$,
whenever $\tau \in [\bar{\tau}, 1]$.  Since $(4\pi \tau)^{-n}e^{-l} dv$ is monotone along $\boldsymbol{\beta}$, we can regard $dv$ as almost monotone, up to multiplying a uniform constant $C$.    
Therefore, we have
\begin{align*}
 |\Omega|_{\mathcal{H}^{2n}}=\int_{\Omega}1 dv \leq C \int_{\Omega} e^{-l(z)} dv_z \leq \int_{\varphi(\Omega)} \bar{\tau}^{-n} e^{-l(y)} dv_y \leq C \int_{\varphi(\Omega)} dv_y, 
\end{align*}
where $y=\varphi(x)$.  Note that inequality  (\ref{eqn:MA22_11}) can be regarded as an ``angle" bound, since $\dot{\boldsymbol{\beta}}=(\dot{\beta}, -1)$. 
The uniform bound of $|\dot{\beta}|$ guarantees that  $dv_y \leq C |d\sigma_y \wedge dt|$ where $d \sigma_y$ is the ``area" element of $\partial \mathcal{F}_{\xi}$. 
Then we have
\begin{align*}
 |\Omega|_{\mathcal{H}^{2n}}&\leq C \int_{\varphi(\Omega)} |d\sigma_y \wedge dt| \leq C\int_{\left\{\partial \mathcal{F}_{\xi}(M,0) \cap B_{g(0)}(x, F) \right\} \times [-1,0]}   |d\sigma_y \wedge dt|\\
  &=C  \left| \left\{\partial \mathcal{F}_{\xi}(M,0) \cap B_{g(0)}(x, F) \right\} \times [-1, 0] \right|_{\mathcal{H}^{2n}}\\
  &\leq C \left| \partial \mathcal{F}_{\xi}(M,0) \cap B_{g(0)}(x,F) \right|_{\mathcal{H}^{2n-1}}, 
\end{align*} 
where we used the almost product structure of $\partial \mathcal{F}_{\xi} \times [-1, 0]$ in the last step. 
Note that $C=C(\eta, H, K)=C(n,A,\eta,H)$ since $K$ is determined by $n,A$(c.f. Proposition~\ref{prn:SC24_1}). 
Recall that $\Omega=B_{g(0)}(x,H) \cap \mathcal{F}_{\eta}(M,0) \cap  \left\{ \Omega_{\xi}  \backslash E \right\}$. 
Plugging (\ref{eqn:MA21_1}) into the above inequality, we obtain (\ref{eqn:SL24_5}). 
\end{proof}

Note that in Lemma~\ref{lma:SL14_1}, for every point $z \in  \left\{ B_{g(0)}(x,H) \cap \mathcal{F}_{\eta}(M,0) \right\}  \backslash \{\Omega_{\xi} \cup E\}$, there is 
a unique shortest reduced geodesic connecting $(z,-1)$ to $(x, 0)$ and avoiding $\mathcal{D}_{\xi}(M,0)$. 
If $z \in  \left\{ B_{g(0)}(x,H) \cap \mathcal{F}_{\eta}(M,0) \cap E \right\}  \backslash \Omega_{\xi}$, then every shortest reduced geodesic connecting 
$(z, -1)$ to $(x, 0)$ avoids  $\mathcal{D}_{\xi}(M,0)$.  However, we may not have uniqueness. 

Now we pass Lemma~\ref{lma:SL14_1} to Cheeger-Gromov limit and have the following property.

\begin{lemma}[\textbf{Rough weak convexity by reduced geodesics}]
Suppose $\mathcal{LM}_i \in \mathscr{K}(n,A;1)$ satisfies
\begin{align}
  \lim_{i \to \infty} \left( \frac{1}{T_i} + \frac{1}{\Vol(M_i)} + \sup_{\mathcal{M}_i} (|R|+|\lambda|) \right)=0.  \label{eqn:SL06_1}
\end{align}
Suppose $x_i \in M_i$.
Let $(\bar{M}, \bar{x}, \bar{g})$ be the limit space of $(M_i, x_i, g_i(0))$, $\mathcal{R}$ be the regular part of $\bar{M}$ and $\bar{x} \in \mathcal{R}$.
Suppose $\bar{t}<0$ is a fixed number. Then every $(\bar{z}, \bar{t})$ can be connected to $(\bar{x}, 0)$ by a smooth reduced geodesic, whenever $\bar{z}$ is away from a closed measure-zero set.
\label{lma:SK27_4}
\end{lemma}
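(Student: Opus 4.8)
The plan is to transplant Lemma~\ref{lma:SL14_1} to the Cheeger--Gromov limit, following the strategy already set up for other limit-space arguments in this section. First I would fix $\bar{t}<0$, set $r_i^2 = |\bar{t}|$ (so that the rescaling is trivial up to a bounded factor), and choose base points $x_i \in M_i$ with $x_i \to \bar{x}$; since $\bar{x}\in\mathcal{R}$, by the definition of the regular set (equation~(\ref{eqn:HA11_3})) we may assume $x_i \in \mathcal{F}_{\eta}(M_i,0)$ for some uniform $\eta>0$. The key exceptional set in the limit will be built as a ``$\limsup$'' of the sets $\Omega_{\xi}$ from Lemma~\ref{lma:SL14_1}, together with the limit of the $\mathcal{L}$-cut-loci $E_i$. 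Concretely, for each small dyadic $\xi = 2^{-j}$ I would let $\bar{\Omega}_{\xi} \subset \bar{M}$ be the set of limit points $\bar{z}$ of sequences $z_i \in \Omega_{\xi}(M_i)$, and set $\bar{E} \triangleq \bigcup_{k} \bigcap_{j\ge k} \bar{\Omega}_{2^{-j}}$ together with the limit of the cut-loci. The volume bound~(\ref{eqn:SL24_5}), namely $|B_{g_i(0)}(x_i,H)\cap\mathcal{F}_{\eta}(M_i,0)\cap\Omega_{\xi}| < C\xi^{2p_0-1}$, passes to the limit by the volume convergence (Proposition~\ref{prn:SC12_1}) applied to the regular part, giving $|\bar{\Omega}_{\xi}\cap B(\bar{x},H)\cap\mathcal{R}| \le C\xi^{2p_0-1}$; since $2p_0-1>0$, summing a geometric series shows $\bar{E}$ has $2n$-measure zero, and it is closed by construction (intersections/unions of closed sets, using the closedness of $\mathcal{F}_r$ from Corollary~\ref{cly:SC24_1}).

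Next I would show that for $\bar{z} \in \mathcal{R}\setminus\bar{E}$ the connecting reduced geodesic exists and is smooth. Since $\bar{z}\notin\bar{E}$, for every fixed small $\xi$ and all large $i$ we have $z_i \notin \Omega_{\xi}(M_i) \cup E_i$, where $z_i\to\bar{z}$ with $z_i \in \mathcal{F}_{\eta'}(M_i,0)$ for some uniform $\eta'>0$ (again because $\bar{z}$ is regular). Hence there is a \emph{unique} shortest reduced geodesic $\boldsymbol{\beta}_i$ from $(z_i,-1)$ to $(x_i,0)$ with $\beta_i \cap \mathcal{D}_{\xi}(M_i,0)=\emptyset$, i.e.\ $\beta_i$ stays inside $\mathcal{F}_{\xi}(M_i,0)$, and by Proposition~\ref{prn:SC09_3} inside $\mathcal{F}_{K^{-1}\xi}(M_i,-\tau)$ for $\tau\in[0,1]$. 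On this region the curvature is uniformly bounded (regularity estimate in Definition~\ref{dfn:SC02_1}), so by the $\epsilon$-regularity / Shi estimates discussed around~(\ref{eqn:MC05_5}) the convergence is $\hat C^{\infty}$ there. The estimates~(\ref{eqn:MA22_5})--(\ref{eqn:MA22_11}) in Lemma~\ref{lma:SL14_1} give uniform bounds on $\int_0^1\sqrt{\tau}|\dot\beta_i|^2 d\tau$ and on $|\dot\beta_i(\bar\tau)|$, hence, together with the reduced geodesic equation~(\ref{eqn:MA22_3}) and $|Ric|\to 0$, uniform $C^{1}_{loc}$ bounds on $\boldsymbol{\beta}_i$ away from the endpoints; by Arzel\`a--Ascoli and the $C^\infty$ convergence of the flow on $\mathcal{F}_{K^{-1}\xi}$, a subsequence converges to a limit space-time curve $\boldsymbol{\beta}$ from $(\bar{z},\bar{t})$ to $(\bar{x},0)$, smooth on $(\,0,1)$ and contained in $\mathcal{R}$. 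Letting $\xi \to 0$ (diagonalizing) gives the limit curve independent of $\xi$. The limit equation $\nabla_V V + \tfrac{V}{2\tau} + 2Ric(V,\cdot)+\tfrac{\nabla R}{2}=0$ becomes $\nabla_V V + \tfrac{V}{2\tau}=0$ since $Ric\equiv 0$ and $R\equiv 0$ on $\mathcal{R}$; one then checks this limit curve is length-minimizing for the limit $\mathcal{L}$-functional (lower semicontinuity of $\mathcal{L}$ under the convergence, using~(\ref{eqn:SL25_3}) for the upper bound on $l$), so $\boldsymbol{\beta}$ is a genuine smooth shortest reduced geodesic.

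Finally I would promote the conclusion from $\mathcal{R}\setminus\bar{E}$ to ``$\bar{z}$ away from a closed measure-zero set'': the singular part $\mathcal{S}$ itself has $\dim_{\mathcal{M}}\mathcal{S}\le 2n-2p_0$ (Theorem~\ref{thm:HE11_1}), so $\mathcal{S}\cup\bar{E}$ is closed with $2n$-measure zero, and every $\bar{z}$ outside this set is connected to $(\bar{x},0)$ by a smooth reduced geodesic, as required.

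The main obstacle I anticipate is the passage to the limit of the shortest reduced geodesics: one must ensure both that the limit curve is still \emph{minimizing} (not just a critical point of $\mathcal{L}$), which requires a lower-semicontinuity argument for $\mathcal{L}$ on the singular limit space together with the rough upper bound $l<100$ from Lemma~\ref{lma:SL13_1}, and that the limit stays in $\mathcal{R}$ and does not degenerate near the endpoint $\tau\to 1$ — this is exactly where the uniform ``angle'' bound~(\ref{eqn:MA22_11}) and the entering-time bound~(\ref{eqn:MA22_6}) are essential, so the curve cannot slip into the shrinking neighborhoods $\mathcal{D}_{\xi}$. Handling the $\mathcal{L}$-cut-locus carefully (so that the limit exceptional set remains measure zero and closed) is the other delicate bookkeeping point; here I would lean on the reduced-geodesic facts (a)--(b) cited from~\cite{Pe1} and the perturbation results in Appendix~\ref{app:B} used to make $\partial\mathcal{F}_{\xi}$ a smooth hypersurface.
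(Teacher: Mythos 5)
Your proposal captures the essential mechanism — pass the volume estimate of Lemma~\ref{lma:SL14_1} and the geometric bounds (\ref{eqn:MA22_6}), (\ref{eqn:MA22_9}), (\ref{eqn:MA22_11}) to the limit, then use smooth flow convergence on $\mathcal{F}_\xi$ to extract a limit reduced geodesic — and it is substantially correct. But your construction of the exceptional set differs from the paper's in a way worth noting. You take $\bar{E}=\bigcap_{j}\bar{\Omega}_{2^{-j}}$ (the nestedness of $\Omega_\xi$ in $\xi$ collapses your $\bigcup_k\bigcap_{j\ge k}$ to a single intersection), with $\bar{\Omega}_\xi$ the Kuratowski upper limit of $\Omega_\xi(M_i)$ over \emph{all} of $\bar{M}$. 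This is automatically closed. The paper instead defines the sets $E_{\eta,\xi}$ only inside $\overline{B(\bar{x},\eta^{-1})\cap\mathcal{R}_\eta}$, sets $E_\eta=\bigcap_{\xi<\eta}E_{\eta,\xi}$, and then builds $E$ as a union of annular pieces $E_{2^{-k}\eta_0}\setminus\{B(\bar{x},2^{k-2}\eta_0^{-1})\cap\overset{\circ}{\mathcal{R}}_{2^{-k+2}\eta_0}\}$, followed by an explicit case analysis to show $E\cup\mathcal{S}$ is closed; the paper does this precisely because the \emph{naive} union $\bigcup_k E_{2^{-k}\eta_0}$ need not be closed. Your intersection-over-$\xi$ construction sidesteps that whole issue, which I consider a genuine simplification of the bookkeeping, provided the details below are repaired.

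Two points need fixing. First, the constant in (\ref{eqn:SL24_5}) is $C=C(n,A,\eta,H)$ — it depends on the regularity level $\eta$ — while your $\bar{\Omega}_\xi$ is unrestricted; the clean passage to ``$|\bar{\Omega}_\xi\cap B(\bar{x},H)\cap\mathcal{R}|\le C\xi^{2p_0-1}$'' with a $\xi$-independent $C$ is therefore not immediate. What does work is to argue for fixed $(\eta,H)$: $|\bar{E}\cap\mathcal{R}_\eta\cap B(\bar{x},H)|\le\lim_{\xi\to 0}C(\eta,H)\xi^{2p_0-1}=0$, then exhaust $\mathcal{R}=\bigcup_\eta\mathcal{R}_\eta$ and $\bar{M}=\bigcup_H B(\bar{x},H)$ by a countable union (and recall $|\mathcal{S}|=0$). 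Second, including ``the limit of the $\mathcal{L}$-cut-loci $E_i$'' in $\bar{E}$ is both unnecessary and dangerous: as the paper remarks right after Lemma~\ref{lma:SL14_1}, if $z\notin\Omega_\xi$ then \emph{every} shortest reduced geodesic from $(z,-1)$ to $(x,0)$ avoids $\mathcal{D}_\xi$, so uniqueness (and hence exclusion of the cut-locus) is not required; moreover, the Kuratowski upper limit of the measure-zero but non-nested sets $E_i$ need not itself be measure zero, so adding it could destroy the very estimate you rely on. Drop it, and note in passing that your phrase ``intersections/unions of closed sets'' is only valid here because the $\bar{\Omega}_\xi$ are nested — a general countable union of closed sets is merely $F_\sigma$.
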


\begin{proof}
Without loss of generality, let $\bar{t}=-1$ and $\lambda=0$. We use $\bar{g}(0)$ as the default metric on the limit space. 

Since $\bar{x} \in \mathcal{R}$, it locates in $\mathcal{R}_{\eta_0}$ for some $\eta_0 \in (0,1)$, where we used the notation defined in equation (\ref{eqn:HA11_3}). 
Fix $\eta \in (0, \eta_0)$. Let $E_{\eta,\xi}$ be the closure of the limit set of  $B_{g_i(0)}\left(x_i, \eta^{-1} \right) \cap \mathcal{F}_{\eta}(M_i, 0) \cap \Omega_{\xi}(M_i)$, which 
we denote by $E_{\eta, \xi}'(M_i)$ for simplicity.   Suppose $\bar{z} \in E_{\eta,\xi}$ is the limit of some sequence $z_i \in E_{\eta,\xi}'(M_i)$. 
Then it is easy to see that $\bar{z} \in \overline{B(\bar{x}, \eta^{-1})} \cap \mathcal{R}_{\eta}(\bar{M})$. 
For each $i$, there is a shortest reduced geodesic $\boldsymbol{\beta}_i$ connecting $(x_i, 0)$ to $(z_i, -1)$ and passing through $\mathcal{D}_{\xi}(M_i, 0)$.
Let $\boldsymbol{\beta}$ be the limit  of $\boldsymbol{\beta}_i$. 
Note that $\boldsymbol{\beta}$ may pass through singularity. The largest $\tau$ such that $\boldsymbol{\beta}(\tau)$ comes out of $\mathcal{S}_{\xi}$(c.f. equation (\ref{eqn:HA11_4}) for notations)
is denoted by $\bar{\tau}$(c.f. equation (\ref{eqn:MC23_1})).
By (\ref{eqn:MA22_6}), i.e.,  $ \frac{\eta^4}{C}< \bar{\tau} < 1-\frac{\eta^2}{C}$, we know  $\bar{\tau}$ is uniformly bounded away from $0$ and $1$.  Moreover, $d(\bar{x}, \beta(\bar{\tau}))$ is uniformly
bounded by some constant $F$(c.f. inequality (\ref{eqn:MA22_9})), the value $|\dot{\beta}(\bar{\tau})|$ is uniformly bounded by inequality (\ref{eqn:MA22_11}). 
By taking limit on $\bar{M}$, we see that for every point $\bar{z}$(no matter whether it is a limit of points in $E_{\eta,\xi}'(M_i)$), we can find a shortest reduced geodesic $\boldsymbol{\beta}$ connecting $(\bar{x}, 0)$ and $(\bar{z}, -1)$, with $\bar{\tau}$ satisfying (\ref{eqn:MA22_6})
and $\beta(\bar{\tau})$ locating in $B(\bar{x}, F)$ for some uniform constant $F$, and $|\dot{\beta}(\bar{\tau})|$ uniformly bounded by $C$. Note that both $C$ and $F$ are independent of $\xi$.

As a closure, $E_{\eta, \xi}$ is clearly a closed set. Note that $E_{\eta, \xi} \subset \overline{B(\bar{x}, \eta^{-1}) \cap \mathcal{R}_{\eta}} \subset B(\bar{x}, 2\eta^{-1}) \cap \overset{\circ}{\mathcal{R}}_{0.5\eta}$, which is an open smooth manifold. 
Therefore, $E_{\eta, \xi}$ is measurable.

Suppose $\bar{z}_a, \bar{z}_b$ are two points in $\bar{E}_{\eta, \xi}$. Tracing their origin and use the shortest property, it is clear that $\boldsymbol{\beta}_a$ and $\boldsymbol{\beta}_b$ have no intersection
except $(\bar{x}, 0)$, where $\boldsymbol{\beta}_a$ is a shortest reduced geodesic connecting $(\bar{x}, 0)$ to $(\bar{z}_a, -1)$, $\boldsymbol{\beta}_b$ is a shortest reduced geodesic connecting $(\bar{x}, 0)$ to $(\bar{z}_b, -1)$.
Similar to (\ref{eqn:MC23_2}) in the proof of Lemma~\ref{lma:SL14_1}, we now define a mutli-value projection map $\tilde{\varphi}$ from $E_{\eta,\xi}$ to $\partial \mathcal{R}_{\xi}$ as follows:
\begin{align*}
  \tilde{\varphi}:  E_{\eta,\xi} &\mapsto \partial \mathcal{R}_{\xi} \times [-1, 0], \\
  z &\mapsto \{\boldsymbol{\beta}(z), \; \boldsymbol{\beta} \; \textrm{is a shortest reduced geodesic connecting} \; (z, -1) \; \textrm{to} \; (\bar{x}, 0) \; \textrm{with} \; \beta \cap \mathcal{S}_{\xi} \neq \emptyset \}. 
\end{align*}
Following the argument at the end of the proof of Lemma~\ref{lma:SL14_1},  we have
\begin{align*}
 |E_{\eta,\xi}|_{\mathcal{H}^{2n}} = \int_{E_{\eta,\xi}} 1 dv \leq C \int_{E_{\eta,\xi}} e^{-l(z)} dv_{z} \leq \int_{\tilde{\varphi}(E_{\eta,\xi})} \bar{\tau}^{-n} e^{-l(y)} dv_y,
\end{align*}
where $(y, -\bar{\tau})=\boldsymbol{\beta}(\bar{\tau})$ for some $\boldsymbol{\beta}$ connecting $(\bar{x}, 0)$ to $(z, -1)$ satisfying $\beta \cap \mathcal{S}_{\xi} \neq \emptyset$.
Note that the last inequality holds even if $\tilde{\varphi}$ is multi-valued.  
Starting from the above step, the remainder argument exactly follows from the proof of  (\ref{eqn:SL24_5}). Consequently, we have
\begin{align}
 |E_{\eta,\xi}| \leq C\xi^{2p_0-1}     \label{eqn:MA23_2}
\end{align}
for some $C$ independent of $\xi$.  
Note that  $E_{\eta_1,\xi_1} \subset E_{\eta_2, \xi_2}$ whenever $0<\xi_1<\xi_2$, $0<\eta_2 \leq \eta_1$. 
Then we define
\begin{align}
  E_{\eta} \triangleq \bigcap_{\xi \in (0, \eta)} E_{\eta, \xi}.  
\label{eqn:MA23_1}  
\end{align}
In light of (\ref{eqn:MA23_2}), we see that $E_{\eta}$ is a closed subset of $\overline{B(\bar{x}, \eta^{-1}) \cap \mathcal{R}_{\eta}(\bar{M})}$ with measure zero.  
Suppose 
\begin{align*}
   \bar{z} \in \left\{ B(\bar{x}, \eta^{-1}) \cap \mathcal{R}_{\eta}(\bar{M}) \right\} \backslash E_{\eta}=\bigcup_{\xi \in (0, \eta)} \left\{  \left\{B(\bar{x}, \eta^{-1}) \cap \mathcal{R}_{\eta}(\bar{M}) \right\} \backslash E_{\eta, \xi} \right\}, 
\end{align*}
then $\bar{z} \in  B(\bar{x}, \eta^{-1}) \cap \mathcal{R}_{\eta}(\bar{M}) \backslash E_{\eta, \xi}$ for some $\xi \in (0,\eta)$. 
By the smooth flow convergence on $\mathcal{F}_{\xi}(M_i, 0) \times [-1, 0]$(c.f. Proposition~\ref{prn:SL04_1}) and the definition of $E_{\eta, \xi}$, we obtain that
$(\bar{z}, -1)$ can be connected to $(\bar{x}, 0)$ by some shortest smooth reduced geodesic contained in $\mathcal{R}_{\xi}(\bar{M}) \times [-1,0]$. 
Moreover, every smooth shortest reduced geodesic connecting $(\bar{x}, 0)$ and $(\bar{z}, -1)$ are uniformly $\xi$-regular. 
To be more precise, every point $\bar{z} \in  \left\{ B(\bar{x}, \eta^{-1}) \cap \mathcal{R}_{\eta}(\bar{M}) \right\} \backslash E_{\eta}$ satisfies the following property:

\textit{$(\bar{z}, -1)$ can be connected to $(\bar{x}, 0)$ by a shortest smooth reduced geodesic $\boldsymbol{\beta}$. In other words, for every other smooth reduced geodesic $\boldsymbol{\gamma}$ with the same ends, we have
$\mathcal{L}(\boldsymbol{\gamma}) \geq \mathcal{L}(\boldsymbol{\beta})$.}

Now we define
\begin{align}
  E \triangleq \bigcup_{k\in \{1, 2, \cdots \}} E_{2^{-k}\eta_0} \backslash  \left\{ B(\bar{x}, 2^{k-2} \eta_0^{-1})  \cap \overset{\circ}{\mathcal{R}}_{2^{-k+2} \eta_0}\right\}. 
\label{eqn:MC25_1}  
\end{align}
The $\eta_0$ above is some fixed positive number.  
According to this definition,  every regular point locates in finitely many closed sets $E_{2^{-k}\eta_0} \backslash  \left\{ B(\bar{x}, 2^{k-2} \eta_0^{-1})  \cap \overset{\circ}{\mathcal{R}}_{2^{k-2} \eta_0}\right\}$. 
The reason we choose to define $E$ in the way of (\ref{eqn:MC25_1}) is to obtain the closedness of $E \cup \mathcal{S}$. Note that if we simply define $E$ to be the union of all $E_{2^{-k}\eta_0}$, then $E \cup \mathcal{S}$ may not be closed set.
It is possible to obtain points in $E_{2^{-k} \eta_0}$ converging to a regular point. However, from the discussion in the above paragraph, it is clear that for every regular point, one can find a small closed ball  regular neighborhood $\bar{B}$ where 
every point (with time $t=-1$) can be connected to $(\bar{x}, 0)$ away from a closed set $E_{\bar{B}}=E_{\eta} \cap \bar{B}$, where $\eta$ depends on $\bar{B}$.
Taking a countable, locally finite cover of $\mathcal{R}$ by such $\bar{B}$'s and let $E'$ be the union of such $E_{\bar{B}}$.
Then $E'$ is measure zero and relatively closed in $\mathcal{R}$.  The choice of $E$ in (\ref{eqn:MC25_1}) follows the same idea, with the covering of $\mathcal{R}$ being written down explicitly. 

It follows from (\ref{eqn:MC25_1}) that $E$ is the union of countably many measure-zero sets.
Consequently, $E$ is measure-zero. 
Fix arbitrary $\bar{z} \in \mathcal{R} \backslash E$. 
Because $\bar{z} \in \mathcal{R}$, we see that $\bar{z} \in B(\bar{x}, \eta^{-1}) \cap \mathcal{R}_{\eta}(\bar{M})$ for some $\eta>0$. 
Accordingly, we can find $k_0$ very large such that $\bar{z} \in B(\bar{x}, 2^{k_0}\eta_0^{-1}) \cap \mathcal{R}_{2^{-k_0}\eta_0}(\bar{M})$.  
Now using $\bar{z} \notin E$ and the decomposition of $E$ in (\ref{eqn:MA23_1}), we have
\begin{align*}
 \bar{z} \notin    E_{2^{-k_0}\eta_0} \backslash  \left\{ B(\bar{x}, 2^{k_0-2} \eta_0^{-1})  \cap \overset{\circ}{\mathcal{R}}_{2^{-k_0+2} \eta_0}\right\} \quad
 &\Leftrightarrow \quad \bar{z} \in   \left\{ B(\bar{x}, 2^{k_0-2} \eta_0^{-1})  \cap \overset{\circ}{\mathcal{R}}_{2^{-k_0+2} \eta_0}\right\} \backslash E_{2^{-k_0}\eta_0}, \\
 &\Rightarrow \quad \bar{z} \in \left\{ B(\bar{x}, 2^{k_0} \eta_0^{-1})  \cap \mathcal{R}_{2^{-k_0} \eta_0}\right\} \backslash E_{2^{-k_0}\eta_0}. 
\end{align*}
Then it follows from our discussion in the previous paragraph that $(\bar{z}, -1)$ can be connected to $(\bar{x}, 0)$ by a shortest smooth reduced geodesic in $\mathcal{R}(\bar{M}) \times [-1, 0]$.

It is not hard to see that $E \cup \mathcal{S}$ is a closed set, which will be proved in this paragraph. 
Suppose $z_i$ is a sequence of points in $E$. Without loss of genearlity, we can assume 
\begin{align*}
z_i \in E_{2^{-k}\eta_0} \backslash  \left\{ B(\bar{x}, 2^{k-2} \eta_0^{-1})  \cap \mathcal{R}_{2^{k-2} \eta_0}\right\},
\end{align*}
where $k=k(i)$. 
Let $z$ be a limit point of $z_i$.  There are two possibilities(by taking subsequence if necessary):
\begin{itemize}
\item $z \in \mathcal{S}$.
\item $z \in \mathcal{R}$. Then $z \in \mathcal{R}_{2\eta} \cap B(\bar{x}, 0.5 \eta^{-1})$ for some $\eta>0$.  Therefore, we can assume $z_i \in \mathcal{R}_{\eta} \cap B(\bar{x}, \eta^{-1})$ for large $i$. 
This forces that $k(i)$ is uniformly bounded.  By taking subsequence if necessary, we can assume that $z_i \in E_{2^{-k}\eta_0} \backslash  \left\{ B(\bar{x}, 2^{k-2} \eta_0^{-1})  \cap \overset{\circ}{\mathcal{R}}_{2^{k-2} \eta_0}\right\}$ for a fixed $k$.
By closedness of each $E_{\eta}$, we see that  $z \in E_{2^{-k}\eta_0} \backslash  \left\{ B(\bar{x}, 2^{k-2} \eta_0^{-1})  \cap \overset{\circ}{\mathcal{R}}_{2^{k-2} \eta_0}\right\} \subset E$. 
\end{itemize}
Therefore, we conclude that $z \in E \cup \mathcal{S}$.   Note that $\mathcal{S}$ is a closed set and has measuzre($2n$-Hausdorff measure) zero.
Then we obtain $E \cup \mathcal{S}$ is a closed measure-zero set.  

Clearly, away from the closed measure-zero set $E \cup \mathcal{S}$, every point $\bar{z} \in \bar{M}$ satisfies the following property:
\textit{$(\bar{z}, -1)$ can be connected to $(\bar{x}, 0)$ by a shortest smooth reduced geodesic.}

\end{proof}

\begin{remark}
Note that the devlopment from Lemma~\ref{lma:SL13_1} to Lemma~\ref{lma:SK27_4} is parallel, or independent to the development 
from Proposition~\ref{prn:SC16_1} to Prposition~\ref{prn:SC17_4}.
Our key observation is that the limit space has weakly convex regular part, which essentialy arises from
the weak convexity of $\mathcal{R} \times [-1, 0]$ in terms of reduced geodesics.  For the convenience of the readers who are not familiar with singular space theory, we also provide an alternative proof of Proposition~\ref{prn:SC17_4} 
in Appendix~\ref{app:C}, based on the discussion from Lemma~\ref{lma:SL13_1} to Lemma~\ref{lma:SK27_4}. 
Then Proposition~\ref{prn:SC17_4}  follows directly from Proposition~\ref{prn:MC04_1}.
\label{rmk:MC07_1}
\end{remark}

By natural projection to the time slice $t=0$, we obtain the following property.
\begin{proposition}[\textbf{Weak convexity by Riemannian geodesics}]
Same conditions as in Lemma~\ref{lma:SK27_4}.
Then away from a measure-zero set, every point in $\mathcal{R}$ can be connected to $\bar{x}$ with a unique smooth shortest geodesic. Consequently, $\mathcal{R}$ is weakly convex.
\label{prn:SC30_1}
\end{proposition}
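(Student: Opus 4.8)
The plan is to deduce Proposition~\ref{prn:SC30_1} directly from Lemma~\ref{lma:SK27_4} by projecting the space-time reduced geodesics onto the time slice $t=0$. The key point is that for a static Ricci flow (or, more precisely, a flow with $|R|+|\lambda| \to 0$), the limit flow is static on $\mathcal{R}$ by Proposition~\ref{prn:SL04_1}, so the space projection of a smooth shortest reduced geodesic connecting $(\bar{z},-1)$ to $(\bar{x},0)$ is essentially a Riemannian geodesic on $(\mathcal{R},\bar{g})$. Recall from the static formulas (\ref{eqn:SL25_6}) that when $Ric \equiv 0$ the Lagrangian of a space-time curve is $\mathcal{L}(\boldsymbol{\gamma}) = \frac{1}{2\sqrt{\bar{\tau}}} d^2(x,y)$, the reduced distance is $l = \frac{d^2(x,y)}{4\bar{\tau}}$, and the reduced geodesic equation $\nabla_V V + \frac{V}{2\tau} = 0$ is satisfied exactly by (reparametrizations of) constant-speed Riemannian geodesics. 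Hence on $\mathcal{R}(\bar M)$, where the limit is genuinely Ricci-flat and static, a smooth shortest reduced geodesic from $(\bar x,0)$ to $(\bar z,-1)$ projects to a smooth shortest Riemannian geodesic from $\bar x$ to $\bar z$, and conversely a minimizing reduced geodesic must project to a minimizing Riemannian one (otherwise one could lower $\mathcal{L}$).

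First I would invoke Lemma~\ref{lma:SK27_4} with $\bar t = -1$ to produce a closed measure-zero set $E \cup \mathcal{S} \subset \bar M$ such that every $\bar z \notin E \cup \mathcal{S}$ can be joined to $(\bar x, 0)$ by a shortest smooth reduced geodesic $\boldsymbol{\beta}$ whose spatial projection stays in $\mathcal{R}$; by Proposition~\ref{prn:SL04_1} this reduced geodesic lives in $\mathcal{R} \times [-1,0]$ and the metric is (in the limit) time-independent on $\mathcal{R}$. Second I would argue that the spatial projection $\beta$ of $\boldsymbol{\beta}$, reparametrized by arc length, is a smooth shortest geodesic from $\bar x$ to $\bar z$ in $(\mathcal{R},\bar g)$: smoothness is immediate since $\beta$ stays in the open smooth manifold $\mathcal{R}$ and solves a smooth ODE; the minimizing property follows from the computation (\ref{eqn:SL25_6}) identifying $\mathcal{L}(\boldsymbol{\beta})$ with $\frac{1}{2}|\beta|_{g}^2$ up to the $\tau$-weight, combined with Proposition~\ref{prn:SC17_4} which guarantees that $d_{\bar g}(\bar x,\bar z)$ is realized as an infimum of lengths of curves in $\mathcal{R}$. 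Third, for uniqueness, I would enlarge the exceptional set: the $\mathcal{L}$-cut locus is measure-zero (this is part (b) of the reduced geodesic theory cited in Lemma~\ref{lma:SL14_1}), so after removing it as well, $(\bar z,-1)$ is connected to $(\bar x,0)$ by a \emph{unique} shortest reduced geodesic, which projects to a unique shortest Riemannian geodesic. Combining the exceptional sets (still measure-zero, since a countable/finite union of measure-zero sets is measure-zero) yields the claim that away from a measure-zero set every point of $\mathcal{R}$ connects to $\bar x$ by a unique smooth shortest geodesic; weak convexity of $\mathcal{R}$ in the sense of Definition~\ref{dfn:SC27_1} then follows by the arbitrary choice of $\bar x \in \mathcal{R}$.

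The main technical obstacle I anticipate is making rigorous the passage from ``shortest reduced geodesic'' to ``shortest Riemannian geodesic'' in the limit space, because a priori the limit reduced geodesics come from the $\mathcal{L}$-geometry of the \emph{sequence} $\mathcal{LM}_i$, where $|R_i|+|\lambda_i|$ is only small, not zero. One must carefully use that $|R_i|+|\lambda_i| \to 0$ together with the uniform curvature bounds on $\mathcal{F}_\xi(M_i,t)$ (Proposition~\ref{prn:SL04_1}, Proposition~\ref{prn:SC09_3}) to show that along the regular part the $\mathcal{L}$-length converges to the static expression $\frac{1}{2\sqrt{\bar\tau}} d^2$, so that the spatial length is controlled and the limit curve is genuinely length-minimizing among curves in $\mathcal{R}$. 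A secondary subtlety is that $\mathcal{R}$ need not be complete, so a ``shortest geodesic'' must be understood as in Definition~\ref{dfn:SC27_1} --- a minimizer of lengths of admissible curves --- and one should cross-check this against the curve-length estimate (\ref{eqn:SC17_4}) / Proposition~\ref{prn:SC17_4} to ensure the infimum defining $d_{\bar g}$ is attained by a curve with interior in $\mathcal{R}$. Apart from these points, the argument is a bookkeeping exercise in combining Lemma~\ref{lma:SK27_4}, Proposition~\ref{prn:SL04_1}, and the static reduced-geometry formulas, and I would keep the write-up short, referring to the earlier lemmas for all the real work.
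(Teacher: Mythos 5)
Your proposal is correct and follows essentially the same route as the paper: take the measure-zero set $E$ from Lemma~\ref{lma:SK27_4}, use the static formulas~(\ref{eqn:SL25_6}) to identify $\mathcal{L}(\boldsymbol{\beta})=\tfrac{1}{2}|\beta|^2$, lift an almost-shortest curve from Proposition~\ref{prn:SC17_4} to a space-time comparison curve, and conclude $|\beta|\le d_0(\bar x,\bar y)$; the paper's uniqueness step (``enlarge $E$ to $E'$ and cite standard Riemannian geometry'') is the same idea as your $\mathcal{L}$-cut-locus enlargement, since a shortest Riemannian geodesic lifts to a shortest reduced geodesic and conversely, so their cut loci coincide. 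The technical concerns you flag (passing to the static limit and interpreting ``shortest geodesic'' in an incomplete $\mathcal{R}$) are exactly the ones the paper's proof handles via Proposition~\ref{prn:SC17_4} and the static lift.
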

\begin{proof} 
 Fix $\bar{x} \in \mathcal{R}$ and let $E$ be the measure-zero set constructed in the proof of Lemma~\ref{lma:SK27_4}. 
 Therefore, $(\bar{y}, -1)$ can be connected to $(\bar{x}, 0)$ by a smooth shortest reduced geodesic $\boldsymbol{\beta}$, with space projection curve $\beta$, whenever $\bar{y} \in \mathcal{R} \backslash E$.
 For our purpose of weak convexity, it suffices to show that each $\beta$ is a smooth shortest geodesic connecting $\bar{x}$ and $\bar{y}$. 
 Actually,  it follows from reduced geodesic equation on Ricci-flat manifold (c.f. equations (\ref{eqn:SL25_6})) that $\displaystyle  \mathcal{L}(\boldsymbol{\beta})=\frac{1}{2}|\beta|^2$, where $|\beta|$ is the length of $\beta$.     
 Since both $\bar{x}$ and $\bar{y}$ are regular, for each small $\epsilon>0$, we can find a smooth geodesic $\gamma$ such that $|\gamma|<d_0(\bar{x}, \bar{y})+\epsilon$, by Proposition~\ref{prn:SC17_4}. 
 Because the limit space-time is static, we can lift $\gamma$ to be a space-time curve $\boldsymbol{\gamma}$ such that $\mathcal{L}(\boldsymbol{\gamma})=\frac{|\gamma|^2}{2}$.
 Using the shortest property of $\boldsymbol{\beta}$ and the construction of $\gamma$, we have
      \begin{align*}
        \frac{|\beta|^2}{2}=\mathcal{L}(\boldsymbol{\beta}) \leq \mathcal{L}(\boldsymbol{\gamma})=\frac{|\gamma|^2}{2}< \frac{(d_0(\bar{x},\bar{y}) +\epsilon)^2}{2}, 
        \quad \Rightarrow \quad |\beta|<d_0(\bar{x}, \bar{y}) + \epsilon. 
      \end{align*} 
   Since $\epsilon$ can be chosen arbitrarily small,  we have $|\beta| \leq d_0(\bar{x}, \bar{y})$, which means $|\beta|=d_0(\bar{x}, \bar{y})$ and $\beta$ is a shortest Riemannian geodesic. 
   
   By adjusting $E$ to a bigger measure zero set $E'$ if necessary, we obtain the uniqueness of geodesics from $\bar{y}$ to $\bar{x}$ for each $\bar{y} \in \mathcal{R} \backslash E'$. 
   This follows from standard Riemannian geometry argument since $E' \backslash E \subset \mathcal{R}$. 
\end{proof}

By the correspondence between smooth Riemannian geodesic and smooth reduced geodesic(c.f. the discussion in Section~\ref{subsec:reduced}), it is clear (from the proof of Proposition~\ref{prn:SC30_1}) now that most smooth reduced geodesics obtained in Lemma~\ref{lma:SK27_4} are shortest among all smooth reduced geodesics.
Furthermore, the rough estimate in Lemma~\ref{lma:SL13_1} can be improved as the following proposition.

\begin{proposition}[\textbf{Continuity of reducecd distance}]
Same conditions as in Lemma~\ref{lma:SK27_4}.
Suppose $(y_i,t_i) \in \mathcal{M}_i$ converges to $(\bar{y}, \bar{t})$, which is regular and $\bar{t}<0$.
Then we have
\begin{align}
  \lim_{i \to \infty} l((x_i,0), (y_i,t_i))=\frac{d_0^2(\bar{x}, \bar{y})}{4|\bar{t}|}=l((\bar{x},0), (\bar{y},\bar{t}))
 \label{eqn:SK27_9}
\end{align}
where $l$ is Perelman's reduced distance. Therefore, reduced distance is continuous function under Cheeger-Gromov topology whenever $\bar{y}$ is regular.
\label{prn:SL14_1}
\end{proposition}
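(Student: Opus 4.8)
The plan is to prove the two equalities in~\eqref{eqn:SK27_9} by a squeeze argument: the middle quantity $\frac{d_0^2(\bar x,\bar y)}{4|\bar t|}$ is manifestly equal to $l((\bar x,0),(\bar y,\bar t))$ because the limit space-time is static and Ricci-flat (so the reduced distance on $\bar M\times(-\infty,0]$ is computed by the static formulas~\eqref{eqn:SL25_6}, and a shortest reduced geodesic projects to a shortest Riemannian geodesic by Proposition~\ref{prn:SC30_1} and Proposition~\ref{prn:SC17_4}). So the real content is $\lim_{i\to\infty} l((x_i,0),(y_i,t_i))=\frac{d_0^2(\bar x,\bar y)}{4|\bar t|}$, which I will get from a two-sided bound: $\limsup_i l \le \frac{d_0^2(\bar x,\bar y)}{4|\bar t|}$ (upper bound by constructing a good competitor space-time curve) and $\liminf_i l \ge \frac{d_0^2(\bar x,\bar y)}{4|\bar t|}$ (lower bound, using that the genuine minimizers $\boldsymbol\beta_i$ subconverge to a smooth reduced geodesic on the static limit, by Lemma~\ref{lma:SK27_4} and its proof).

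First, for the upper bound: fix $\epsilon>0$. Since $\bar x,\bar y\in\mathcal R$, by Proposition~\ref{prn:SC17_4} there is a curve $\gamma\subset\mathcal R$ connecting $\bar x,\bar y$ with $|\gamma|\le(1+\epsilon)d_0(\bar x,\bar y)$; moreover $\gamma$ stays in $\mathcal F_{\eta}(\bar M)$ for some $\eta=\eta(\gamma)>0$ since it is compact and contained in the open regular part. By Proposition~\ref{prn:SL04_1} and Proposition~\ref{prn:SK27_3}, this region is static and uniformly regular along the sequence for all $t\in[\bar t-1,0]$, so $\gamma$ lifts to competitor space-time curves $\boldsymbol\gamma_i$ in $\mathcal M_i$ from $(x_i,0)$ to $(y_i,t_i)$, parametrized by $\tau=-t$; since $|R|\to0$ and the metrics along $\gamma$ converge in $C^\infty$, reparametrizing $\boldsymbol\gamma_i$ so that its $\tau$-speed is constant (as in the static optimal curve) gives $\mathcal L(\boldsymbol\gamma_i)\le \frac{((1+\epsilon)d_0(\bar x,\bar y))^2}{2\sqrt{|t_i|}}+o(1)$, hence $l((x_i,0),(y_i,t_i))\le\frac{(1+\epsilon)^2 d_0^2(\bar x,\bar y)}{4|\bar t|}+o(1)$. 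Letting $i\to\infty$ and then $\epsilon\to0$ gives the $\limsup$ bound.

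Second, for the lower bound: let $\boldsymbol\beta_i$ be a shortest reduced geodesic from $(x_i,0)$ to $(y_i,t_i)$. The rough estimate Lemma~\ref{lma:SL13_1} (applied after rescaling to reach time $t_i$) bounds $l((x_i,0),(y_i,t_i))$ uniformly, so $\mathcal L(\boldsymbol\beta_i)$ is uniformly bounded. By the argument in the proof of Lemma~\ref{lma:SK27_4}, away from the $\mathcal L$-cut locus the curves $\boldsymbol\beta_i$ subconverge (after passing to a subsequence) to a space-time curve $\boldsymbol\beta_\infty$ from $(\bar x,0)$ to $(\bar y,\bar t)$, whose regular portion is a smooth reduced geodesic in $\mathcal R\times[\bar t,0]$ with $\mathcal L(\boldsymbol\beta_\infty)\le\liminf_i\mathcal L(\boldsymbol\beta_i)$ by lower semicontinuity of length under Cheeger--Gromov convergence (using $|R|\to0$ to kill the potential curvature term). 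On the static Ricci-flat limit, by~\eqref{eqn:SL25_6} and Proposition~\ref{prn:SC30_1}, any such space-time curve joining $(\bar x,0)$ and $(\bar y,\bar t)$ has $\mathcal L\ge\frac{d_0^2(\bar x,\bar y)}{2\sqrt{|\bar t|}}$, hence $\liminf_i l\ge\frac{d_0^2(\bar x,\bar y)}{4|\bar t|}$. Combining with the upper bound gives the first equality in~\eqref{eqn:SK27_9}; the second equality is the static computation noted above.

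The main obstacle I expect is controlling the minimizers $\boldsymbol\beta_i$ near singular points in the lower-bound step: a priori a positive fraction of the reduced geodesic could hug the high-curvature region $\mathcal D_\xi(M_i,0)$, where no uniform convergence is available, and one must rule out that such excursions \emph{lower} $\mathcal L$ in the limit. This is exactly the phenomenon quantified in Lemma~\ref{lma:SL14_1} and Lemma~\ref{lma:SK27_4}: the entering-time bounds~\eqref{eqn:MA22_6}, the location bound~\eqref{eqn:MA22_9}, and the speed bound~\eqref{eqn:MA22_11} force $\boldsymbol\beta_i$ to spend only a short, controlled parameter interval near singularities with bounded speed, so the contribution of the bad part to $\mathcal L$ is uniformly small and the limit curve still realizes at least the static length. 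I would cite those lemmas rather than re-deriving the estimates, and the remaining work is the lower-semicontinuity bookkeeping plus the static identity, both of which are routine.
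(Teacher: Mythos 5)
Your upper bound is sound and follows the same idea as the paper; in fact, you streamline it by exploiting $\bar x\in\mathcal R$ directly, whereas the paper handles the (formally) more general case of a singular base point via the dyadic decomposition of Lemma~\ref{lma:SL13_1}. Your lower bound, however, takes a route genuinely different from the paper's and it has a gap.

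The paper proves the lower bound indirectly: assume $\lim_i l<\tfrac14-\epsilon$, use the \emph{uniform regularity at $(y_i,t_i)$} to extend the estimate to all $z$ in a small ball at a slightly earlier time $t_i-\delta^2$, then invoke Lemma~\ref{lma:SK27_4} to find \emph{one} nearby $\bar z$ reached from $(\bar x,0)$ by a \emph{smooth} reduced geodesic in the static limit; on such a geodesic $l=\tfrac{d_0^2}{4\tau}$ exactly, giving $d_0(\bar x,\bar z)^2<1-\epsilon$, contradicting $d_0(\bar x,\bar y)=1$. This cleverly sidesteps any need to pass the Lagrangian functional itself to the limit.

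Your proposal instead tries to take a subsequential limit of the actual minimizers $\boldsymbol\beta_i$ and apply lower semicontinuity of $\mathcal L$. The gap is in two places. First, $\mathcal L$ is computed with the time-dependent metric $g_i(-\tau)$, and inside $\mathcal D_\xi(M_i,0)$ there is no pseudolocality control: the metrics $g_i(-\tau)$ and $g_i(0)$ need not be comparable there, so a segment of $\boldsymbol\beta_i$ inside $\mathcal D_\xi$ could a priori traverse large $g_i(0)$-distance while contributing little to $\int\sqrt\tau\,|\dot\beta_i|^2_{g_i(-\tau)}\,d\tau$. The quantity you need to be lower semicontinuous is an energy with respect to a metric that is \emph{not converging} on the bad set, which is exactly the failure mode lower semicontinuity cannot see. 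Second, the bounds you cite to dismiss this --- the entering-time bound~\eqref{eqn:MA22_6}, the location bound~\eqref{eqn:MA22_9}, and the speed bound~\eqref{eqn:MA22_11} --- control only the \emph{last exit time} $\bar\tau$ from $\mathcal D_\xi$, its location, and the speed \emph{at $\bar\tau$}; they give no bound on the total parameter time $\boldsymbol\beta_i$ spends inside $\mathcal D_\xi$ nor on its speed while there, so they do not show "the contribution of the bad part is uniformly small." Likewise, Lemma~\ref{lma:SL14_1} is a \emph{measure} estimate on which endpoints $z$ admit a bad minimizer; it does not tell you that your particular $y_i$ is among the good ones, and Lemma~\ref{lma:SK27_4} only guarantees a smooth minimizer from $(\bar x,0)$ \emph{away from a closed measure-zero set $E$}, into which your specific $\bar y$ may fall. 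The paper's argument is precisely engineered to avoid both of these difficulties by perturbing $\bar y$ to a nearby point off $E$; if you want a direct semicontinuity proof you would need a pointwise (not measure) statement controlling the behavior of the actual minimizers near $\mathcal D_\xi$, which is not established anywhere in the paper.
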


\begin{proof}

Without loss of generality, we assume $t_i \equiv -1$, $d_0(x_i,y_i) \equiv 1$.

We first show
\begin{align}
  \lim_{i \to \infty} l((x_i,0), (y_i,t_i)) \leq \frac{1}{4}.
  \label{eqn:SL14_5}
\end{align}
If $x_i$ are uniformly regular, then there is a limit smooth geodesic connecting $\bar{x}$ and $\bar{y}$, which can
be lifted to a smooth reduced geodesic connecting $(\bar{x},0)$ and $(\bar{y},-1)$ with reduced length $\frac{1}{4}$.
Then (\ref{eqn:SL14_5}) follows trivially.  So we focus on the case when $\bar{x}$ is a singular point.
Choose a smooth point $\bar{z}$ very close to $\bar{x}$, say $\delta$-away from $\bar{x}$ under metric $\bar{g}(0)$.
From Lemma~\ref{lma:SL13_1},  the reduced length from $(x_i, 0)$ to $(z_i, -\delta^2)$ is uniformly less than $100$.   So we have  space-time curves $\boldsymbol{\alpha}_i$ connecting these two points such that
\begin{align*}
   \int_0^{\delta^2} \sqrt{\tau} |\dot{\alpha}_i|^2 d\tau< 200\delta.
\end{align*}
Note that $(\bar{z},-\delta^2)$ and $(\bar{y}, -1)$ can be connected by a space-time curve $\boldsymbol{\beta}$ such that
\begin{align*}
       \int_{\delta^2}^{1} \sqrt{\tau} |\dot{\beta}|^2 d\tau< \frac{1}{2} + 100\delta
\end{align*}
if $\delta$ is small enough.
So for large $i$, we have space-time curve $\boldsymbol{\beta}_i$ connecting $(z_i,-\delta^2)$ and $(y_i, -1)$ such that
\begin{align*}
  \int_{\delta^2}^{1} \sqrt{\tau} |\dot{\beta_i}|^2 d\tau < \frac{1}{2} + 200\delta.
\end{align*}
Concatenating $\boldsymbol{\alpha}_i$ and $\boldsymbol{\beta}_i$ to obtain $\boldsymbol{\gamma}_i$ such that
\begin{align*}
  \int_{\delta^2}^{1} \sqrt{\tau} |\dot{\gamma_i}|^2 d\tau < \frac{1}{2} + 400\delta,
\end{align*}
which implies $l((x_i,0),(y_i,-1)) < \frac{1}{4} + 200\delta$ for large $i$.   Thus (\ref{eqn:SL14_5}) follows by letting $i \to \infty$ and $\delta \to 0$. 

Then we  show the equality holds. Otherwise, there exists a small $\epsilon$ such that
\begin{align*}
     \lim_{i \to \infty}  l((x_i,0), (y_i, -1))  < \frac{1}{4} -\epsilon.
\end{align*}
Note that $(y_i,-1)$ is uniformly regular. So we can find small $\delta$ such that
\begin{align*}
  l((x_i,0), (z, -1-\delta^2)) < \frac{1}{4}-\frac{1}{2}\epsilon, \quad \forall \;
  z \in B_{g(-1-\delta^2)}(y_i, \epsilon \delta).
\end{align*}
By Lemma~\ref{lma:SK27_4}, we obtain a point $(\bar{z}, -1-\delta^2)$, which can be connected to
$(\bar{x},0)$ by a smooth reduced geodesic, with reduced length smaller than $\frac{1}{4}-\frac{1}{2}\epsilon$.
Projecting this reduced geodesic to time zero slice, we obtain a curve connecting $\bar{x}$ and $\bar{z}$ with
\begin{align*}
 d_0^2(\bar{x}, \bar{y}) < 4(1+\delta^2) \cdot \left(\frac{1}{4}-\frac{1}{2}\epsilon \right)=(1+\delta^2)(1- 2\epsilon)<1-\epsilon
\end{align*}
if we choose $\delta$ sufficiently small.  This is impossible since $d_0(\bar{x},\bar{y})=1$.   Therefore, we have
\begin{align*}
   \lim_{i \to \infty}  l((x_i,0), (y_i, -1))  = \frac{1}{4}.
\end{align*}
 \end{proof}
Since singular set has measure zero, it is clear that
\begin{align}
   \mathcal{V}((\bar{x},0), |\bar{t}|) \leq \lim_{i \to \infty} \mathcal{V}((x_i,0),|\bar{t}|),
\label{eqn:SK27_10}
\end{align}
where the  ``$\lim$" of the right hand side of the above inequality should be understood as  ``$\limsup$". 
We shall improve the above inequality as equality.\\

\begin{lemma}[\textbf{Major part of reduced volume}]
For every positive $\eta$ and $H$, there exists an $\epsilon=\epsilon(n,A,\eta,H)$ with the following properties.

Suppose $\mathcal{LM} \in \mathscr{K}(n,A)$, $x \in \mathcal{F}_{\eta}(M,0)$.  Then we have
\begin{align}
   \left|\mathcal{V}((x,0),1)-(4\pi)^{-n}\int_{B_{g(0)}(x,H)} e^{-l}dv \right| \leq 2a(H),  \label{eqn:SL18_1}
\end{align}
whenever $\sup_{\mathcal{M}} (|R|+|\lambda|)<\epsilon$.
Here $a$ is a positive function defined as
\begin{align}
a(H) \triangleq (4\pi)^{-n} \int_{\{|\vec{w}|>\frac{H}{100}\} \subset \R^{2n}}  e^{-\frac{|\vec{w}|^2}{4}} dw. \label{eqn:SL18_2}
\end{align}
\label{lma:SL18_1}
\end{lemma}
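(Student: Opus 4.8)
\textbf{Proof proposal for Lemma~\ref{lma:SL18_1}.}

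The plan is to show that the reduced volume integral is concentrated in a bounded ball around $x$, up to an error controlled by the Gaussian tail $a(H)$. The mechanism is twofold: first, a \emph{lower bound} on the reduced distance $l((x,0),(y,-1))$ in terms of $d_{g(0)}(x,y)$ for $y$ far from $x$, which forces the integrand $e^{-l}$ to be Gaussian-small outside $B_{g(0)}(x,H)$; second, a comparison of $(4\pi)^{-n}\int_M e^{-l}dv$ with the model Euclidean Gaussian integral, using the monotonicity of Perelman's reduced volume together with the fact that at a regular scale the geometry near $x$ is almost Euclidean. Throughout, since this is a flow property with no polarization involved, I would normalize $\lambda=0$ by parabolic rescaling and work with the backward-heat/reduced-geodesic formalism from Section~\ref{subsec:reduced}.

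First I would establish the key distance-to-reduced-distance estimate: there is a constant $c=c(n,A)$ such that for $y\in M$ with $d_{g(0)}(x,y)=\rho \geq 1$ one has $l((x,0),(y,-1)) \geq c\rho^2$, provided $\sup_{\mathcal{M}}(|R|+|\lambda|)<\epsilon$. This follows from the definition $l=\frac{1}{2\sqrt{\bar\tau}}\int_0^{\bar\tau}\sqrt\tau(R+|\dot\gamma|^2)_{g(-\tau)}d\tau$ with $\bar\tau=1$: since $|R|$ is uniformly tiny, $\int_0^1\sqrt\tau|\dot\gamma|^2 d\tau$ must be at least comparable to $\rho^2$ because the space-projection $\gamma$ connects $x$ to $y$ and the metrics $g(-\tau)$ and $g(0)$ are uniformly equivalent along \emph{any} competitor (here I would use that distances contract/expand by a bounded factor under the flow over a unit time interval — a consequence of $|R|$ small plus the a priori bounds of $\mathscr{K}(n,A)$, via a Cauchy-Schwarz argument as in Lemma~\ref{lma:SL14_1}, equations~(\ref{eqn:MA22_7})--(\ref{eqn:MA22_8})). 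Combined with the Bishop-type volume upper bound $|B_{g(0)}(x,\rho)| \leq C\rho^{2n}$ from~(\ref{eqn:SK20_1}), a dyadic decomposition of $M\setminus B_{g(0)}(x,H)$ into annuli gives $(4\pi)^{-n}\int_{M\setminus B_{g(0)}(x,H)} e^{-l}dv \leq a(H)$ for the normalization~(\ref{eqn:SL18_2}) (the constant $\frac{1}{100}$ and the shape of $a$ are exactly chosen to absorb these fixed geometric constants).

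Next I would show $\left|\mathcal{V}((x,0),1) - (4\pi)^{-n}\int_{B_{g(0)}(x,H)}e^{-l}dv\right| \leq a(H)$. Since $\mathcal{V}((x,0),1) = (4\pi)^{-n}\int_M e^{-l}dv$, this is just the tail estimate of the previous step applied to the complement integral. So in fact~(\ref{eqn:SL18_1}) with constant $a(H)$ rather than $2a(H)$ already follows; the slack factor of $2$ in the statement is there to absorb the minor discrepancy between $\mathcal{V}$ computed via $l$ and via the reduced-geodesic construction (and to be safe about whether $x\in\mathcal{F}_\eta$ enters — the condition $x\in\mathcal{F}_\eta(M,0)$ guarantees that near $x$ at scale $\sim\eta$ the metric is bounded so that short reduced geodesics from $(x,0)$ behave tamely, which is needed to control $l$ from \emph{above} on the bounded part and hence to know the bounded-part integral is of order $1$ and not degenerate). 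I would make this precise by the same lifting-of-curves construction as in Lemma~\ref{lma:SL13_1}: for $y$ with $d_{g(0)}(x,y)\leq H$ and $y$ regular at a definite scale, build a space-time competitor curve with controlled $\mathcal{L}$-energy, yielding $l((x,0),(y,-1)) \leq C(\eta,H)$, so the bounded-part integral is uniformly positive and finite.

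The main obstacle is the distance-to-reduced-distance lower bound in the backward time direction when reduced geodesics are allowed to wander into high-curvature regions of $M$ (we have no two-sided curvature bound yet — that is Theorem~\ref{thmin:HC06_1}, proved later). The point is that $l$ could a priori be small even for $y$ far from $x$ if a reduced geodesic shoots through a region where $R$ is very negative; but $R$ is \emph{globally} almost zero here (this is the hypothesis $\sup_{\mathcal{M}}(|R|+|\lambda|)<\epsilon$, not merely a local bound), so the $R$-contribution to $\mathcal{L}(\boldsymbol\gamma)$ is negligible and only $\int\sqrt\tau|\dot\gamma|^2$ remains, which is bounded below by the length functional squared. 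The one genuinely delicate point is comparing $|\dot\gamma|_{g(-\tau)}$ with $|\dot\gamma|_{g(0)}$ uniformly along an arbitrary curve: this requires a distance-distortion estimate over the unit time interval that does not presuppose pointwise Ricci bounds. I would obtain it from the integral bound on $R$ together with the evolution $\partial_t dv = -R\,dv$ and a maximum-principle/Bishop-Gromov argument already available in $\mathscr{K}(n,A)$ (cf. the use of $C_S$, volume-ratio bounds, and Zhang's volume-ratio upper bound cited in the introduction), or alternatively restrict attention to competitor curves that stay in $\mathcal{F}_\xi$ and handle the rest by the density estimate~(\ref{eqn:SC02_2}) showing the $\mathcal{D}_\xi$-portion of any annulus has small measure. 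This is essentially the same package of estimates used in Lemma~\ref{lma:SL14_1}, so I expect it to go through with the constants depending only on $n,A,\eta,H$ as claimed.
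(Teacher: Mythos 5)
There is a genuine gap in the first and central step of your proposal. You assert that for any $y$ with $d_{g(0)}(x,y)=\rho \geq 1$, the bound $l((x,0),(y,-1)) \geq c\rho^2$ holds once $\sup_{\mathcal{M}}(|R|+|\lambda|)<\epsilon$, by comparing $\int_0^1\sqrt\tau|\dot\gamma|^2_{g(-\tau)}d\tau$ with the $g(0)$-length squared. This comparison requires a \emph{pointwise} equivalence of $g(-\tau)$ and $g(0)$ along the reduced geodesic $\gamma$, and you acknowledge this is the delicate point. But it does not follow from $|R|$ being small. Scalar curvature controls only the trace of the Ricci tensor, so the metric along the geodesic could shrink dramatically in the direction of $\dot\gamma$ while expanding in transverse directions, and neither $\partial_t dv=-R\,dv$ nor Bishop--Gromov volume comparison rule this out (both are trace-level statements and cannot detect anisotropic distortion). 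A reduced geodesic entering a high-curvature region could have $g(-\tau)$-length far below its $g(0)$-length, making $l$ small even for $y$ far from $x$. So the claimed lower bound $l\geq c\rho^2$, and with it the annular dyadic estimate and your claim that $a(H)$ alone already suffices, do not stand.

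The paper takes a different route precisely to avoid this. Instead of bounding $l$ from below in terms of $d_{g(0)}$ for all $y$, it parametrizes $y$ by the reduced tangent vector $\vec{w}\in T_xM$ of the shortest reduced geodesic from $(x,0)$ to $(y,-1)$, and uses Perelman's monotonicity of the reduced volume element to get the Gaussian tail $\int_{\{|\vec w|>H\}}(4\pi)^{-n}e^{-|\vec w|^2/4}dw$ directly in $\vec w$-space, with no distance comparison needed. It then splits $M$ according to whether the shortest reduced geodesic stays in $\mathcal{F}_\xi(M,0)\times[-1,0]$ or not. For geodesics staying in the regular region, the two-sided metric bound \emph{does} hold (this is exactly where the hypothesis $x\in\mathcal{F}_\eta$ and the choice $\xi<\eta$ enter), so $|\vec w|$ and $d_{g(0)}(x,y)$ are comparable and one gets the containments $\Omega_{100H}\cap B(x,H)\subset\Omega'_\xi$ and $M\setminus(\Omega'_\xi\cup B(x,H))\subset\Omega_{H/100}$. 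For geodesics that do enter the singular region, the paper invokes the hypersurface-projection mechanism of Lemma~\ref{lma:SL14_1} (using the small $(2n-1)$-measure of $\partial\mathcal{F}_\xi$ and the almost-monotone reduced volume element) to bound the integral over $\Omega'_\xi$ by $a(H)$ after choosing $\xi$ small. The factor $2$ in $2a(H)$ is precisely the sum of these two error contributions, which is a second reason your claim that $a(H)$ suffices is off. Your own ``alternative'' suggestion to restrict to competitor curves in $\mathcal{F}_\xi$ and treat the rest via a density estimate points in the right direction, but you would need the reduced-geodesic projection argument (not just the volume of $\mathcal{D}_\xi$ itself, since what must be controlled is the set of endpoints whose geodesics pass through $\mathcal{D}_\xi$, which is a much larger set) to actually close it.
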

\begin{proof}
The line bundle structure is not used in the following proof. So up to a parabolic rescaling if necessary, we can assume $\lambda=0$.

For every $y \in M$, there is at least one shortest reduced geodesic $\boldsymbol{\gamma}$ connecting $(x,0)$ and $(y,-1)$.
By standard ODE theory, the limit $\displaystyle \lim_{\tau \to 0} \sqrt{\tau} \gamma'(\tau)$ is unique as a vector in $T_{x} M$,
which is called the reduced tangent vector of $\boldsymbol{\gamma}$.
Away from a measure-zero set, every $(y,-1)$ can be connected to $(x,0)$ by a
unique shortest reduced geodesic. For simplicity for our argument, we may assume this measure-zero set is empty, since measure-zero
set does not affect integral at all.
So there is a natural injective map from $M$ to $T_xM$, by mapping $y$ to the corresponding
reduced tangent vector $\vec{w}$.
We define
\begin{align*}
  \Omega(H) \triangleq \{ y \in M | |\vec{w}|>H \}.
\end{align*}
It follows from the monotonicity of reduced element along reduced geodesic that
\begin{align*}
  \int_{\Omega(H)} (4\pi)^{-n} e^{-l} dv \leq \int_{\{|\vec{w}| > H\} \subset \R^{2n}} (4\pi)^{-n} e^{-\frac{|\vec{w}|^2}{4}} dw.
\end{align*}

Choose $\xi<\eta$, with size to be determined.
Suppose $\boldsymbol{\gamma}$ is a reduced geodesic connecting $(x,0)$ to $(y,-1)$ for some $y \in M$.
It is clear that $\gamma(0)$ is in the interior part of $\mathcal{F}_{\xi}(M,0)$. Let $\tau$ to be the first time such that $\gamma(\tau)$ touches
the boundary of $\mathcal{F}_{\xi}(M,0)$.  Then we see that $\boldsymbol{\gamma}([0,\tau])$ locates in a space-time domain
with uniformly bounded geometry, Ricci curvature very small.  In particular, the reduced distance between $(x,0)$ and $\boldsymbol{\gamma}(\tau)$
is comparable to the length of $\vec{w}$, which is the reduced tangent vector of $\boldsymbol{\gamma}$ at $(x,0)$.
If $|\vec{w}|<H$, then we see that
\begin{align*}
   \frac{H^2}{4} > \frac{|\vec{w}|^2}{4} \sim \frac{d_{g(0)}^2(x, \gamma(\tau))}{4\tau}> \frac{c_a^2 \eta^2}{100\tau},
   \quad \Rightarrow \quad \tau > \frac{c_a^2 \eta^2}{25H^2}.
\end{align*}
Note that $\boldsymbol{\gamma}([0,\tau])$ is in a space-time region where Ricci curvature is almost flat, geometry is uniformly bounded.
So the lower bound of $\tau$ and the upper bounded of $|\vec{w}|$ imply an upper bound of $d_{g(0)}(x,\gamma(\tau))$.
Say $d_{g(0)}(x,\gamma(\tau))<H'$.

Around $\boldsymbol{\gamma}$, there is a natural projection (induced by reduced geodesic) from the space-time hypersurface
$\partial \mathcal{F}_{\xi}(M,0) \times [-1, -\frac{c_a^2 \eta^2}{25H^2}]$,
to the time slice $M \times \{-1\}$. At point $\boldsymbol{\gamma}(\tau)$, $\boldsymbol{\gamma}$ has space-time tangent vector $(\gamma', -1)$,
with $\tau|\gamma'(\tau)|^2$ is almost less than $\frac{H^2}{4}$.  Together with the lower bound of $\tau$, we obtain an upper bound of $|\gamma'(\tau)|$.
Up to a constant depending on $H,\eta$, the volume element of $\partial \mathcal{F}_{\xi}(M,0) \times [-1,-\frac{c_a^2 \eta^2}{25H^2}]$
is comparable to the reduced volume element
$(4\pi \tau)^{-n} e^{-l}$ of $M$, around the point $\boldsymbol{\gamma}(\tau)$.
Note that the reduced volume element is monotone along each reduced geodesic. This implies that the projection map mentioned above
``almost" decreases weighted hypersurface volume element, if we equip $\{B(x,H') \cap \partial \mathcal{F}_{\xi}(M,0)\} \times [-1, -\frac{\eta^2}{4H^2}]$ with the natural
weighted volume element $e^{-l}|d \sigma \wedge dt|$.  Let $\Omega_{\xi}'$ be the collection of all $y$'s such that $(y,-1)$ cannot be connected to $(x,0)$ by a shortest reduced geodesic $\boldsymbol{\gamma}$ which locates completely in $\mathcal{F}_{\xi}(M,0) \times [-1,0]$.
Then we have
\begin{align*}
   \int_{\Omega_{\xi}'} e^{-l}(4\pi \tau)^{-n} dv &\leq C \int_{\frac{c_a^2 \eta^2}{25H^2}}^1 \int_{B(x,H') \cap \partial \mathcal{F}_{\xi}(M,0)} e^{-l} d\sigma d\tau
        \leq C \int_{B(x,H') \cap \partial \mathcal{F}_{\xi}(M,0)} d\sigma
        \leq C \xi^{2p_0-1}, 
\end{align*}
where $C=C(n,H,H',\eta)=C(n,H,\eta)$.  By choosing $\xi$ small enough, we have
\begin{align}
  \int_{\Omega_{\xi}'} e^{-l}(4\pi \tau)^{-n} dv \leq (4\pi)^n a(H).    \label{eqn:SL18_3}
\end{align}
Note that
\begin{align*}
  \Omega_{100 H} \cap B_{g(0)}(x,H) \subset \Omega_{\xi}', \qquad
  M \backslash (\Omega_{\xi}' \cup B_{g(0)}(x,H)) \subset   \Omega_{\frac{H}{100}}.
\end{align*}
Therefore, recalling the definition of reduced volume (\ref{eqn:MA22_4}),  we have
\begin{align*}
    &\quad (4\pi)^n \mathcal{V}((x,0),1)
     =\int_{M} e^{-l}dv
    =\int_{M \backslash (\Omega_{\xi}' \cup B_{g(0)}(x,H))} e^{-l}dv + \int_{\Omega_{\xi}'} e^{-l}dv  + \int_{B_{g(0)}(x,H) \backslash \Omega_{\xi}'} e^{-l}dv\\
    &\leq \int_{|\vec{w}|>\frac{H}{100}}  e^{-\frac{|\vec{w}|^2}{4}} dw + \int_{\Omega_{\xi}'} e^{-l}dv  + \int_{B_{g(0)}(x,H)} e^{-l}dv
     \leq \int_{|\vec{w}|>\frac{H}{100}}  e^{-\frac{|\vec{w}|^2}{4}} dw + C \xi^{2p_0-1} + \int_{B_{g(0)}(x,H)} e^{-l}dv\\
    &\leq 2 (4\pi)^n a(H) + \int_{B_{g(0)}(x,H)} e^{-l}dv.
\end{align*}
Then (\ref{eqn:SL18_1}) follows from the above inequality directly.
\end{proof}

Lemma~\ref{lma:SL18_1} is related to Corollary 6.82 of~\cite{MT}. 

\begin{lemma}[\textbf{Uniform continuity of reduced volume}]
Suppose $\mathcal{M}=\{(M, g(t)), -\tau \leq t \leq 0\}$ is an unnormalized K\"ahler Ricci flow solution.
Suppose $x,y$ are two points in $M$, $d=d_{g(0)}(x,y)$. Then we have
\begin{align}
   |\mathcal{V}((x,0),\tau) - \mathcal{V}((y,0),\tau)|<(4n+1) (e^{\frac{d}{2}}-1).
\label{eqn:SL15_2}
\end{align}
In particular, the reduced volume changes uniformly continuously with respect to the base point.
\label{lma:SL14_2}
\end{lemma}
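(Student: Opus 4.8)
The plan is to estimate the variation of the reduced volume $\mathcal{V}((\cdot,0),\tau)$ along a minimizing $g(0)$-geodesic from $x$ to $y$, reducing everything to a pointwise differential inequality for the density $(4\pi\tau)^{-n}e^{-l}$ in the base point. First I would recall the representation
\begin{align*}
   \mathcal{V}((x,0),\tau)=\int_M (4\pi\tau)^{-n} e^{-l((x,0),(z,-\tau))}\, dv_{g(-\tau)}(z),
\end{align*}
and fix $z\in M$. For $z$ outside the $\mathcal{L}$-cut locus of $(x,0)$ the reduced distance $l((x,0),(z,-\tau))$ is a Lipschitz (indeed smooth) function of $x$, and one has the standard gradient bound $|\nabla_x l|^2 \le \frac{l}{\tau}$ coming from the first variation of the $\mathcal{L}$-length together with $|\dot\alpha|^2=|\nabla l|^2 = \tau^{-1} l$ type identities (these hold in the static case by \eqref{eqn:SL25_6}, but the bound $|\nabla_x l|\le\sqrt{l/\tau}$ is general). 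Thus along a unit-speed $g(0)$-geodesic $\sigma(s)$, $0\le s\le d$, from $x$ to $y$,
\begin{align*}
  \left|\frac{d}{ds}\, l((\sigma(s),0),(z,-\tau))\right| \le \sqrt{\frac{l}{\tau}}.
\end{align*}

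Next I would integrate this to control the density. Writing $F(s,z)=(4\pi\tau)^{-n}e^{-l((\sigma(s),0),(z,-\tau))}$, we get $|\partial_s F| \le \sqrt{l/\tau}\, F$, so $\partial_s\big(\log F\big)$ is bounded in absolute value by $\sqrt{l/\tau}$. The subtle point is that $l$ itself appears on the right, so I would combine this with the sharp pointwise Gaussian-type lower bound $l((x,0),(z,-\tau)) \ge -\tfrac{?}{?}$ — more precisely, use that $e^{-l}(4\pi\tau)^{-n}$ integrates to at most $1$ (the reduced volume is $\le 1$) together with the change-of-variables to the reduced tangent space, exactly as in Lemma~\ref{lma:SL18_1}: under the reduced exponential map $z\mapsto\vec w\in T_xM$, the measure $e^{-l}(4\pi\tau)^{-n}dv$ is dominated by the Gaussian $(4\pi)^{-n}e^{-|\vec w|^2/(4\tau)}d\vec w$, and on that model $l\sim |\vec w|^2/(4\tau)$. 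So I would change variables to $\vec w$, where $\sqrt{l/\tau}$ becomes $\sim |\vec w|/(2\tau)$ and $\frac{d}{ds}\log F \approx -\frac{1}{2\tau}\langle \vec w, \dot\sigma\rangle$ up to the monotonicity defect, bounding $|\frac{d}{ds}\log F|\le \frac{|\vec w|}{2\sqrt\tau}\cdot\frac{1}{\sqrt\tau}$. Integrating over $s\in[0,d]$ and then over the Gaussian weight,
\begin{align*}
  |\mathcal V((x,0),\tau)-\mathcal V((y,0),\tau)| \le (4\pi)^{-n}\!\!\int_{\R^{2n}}\! \left(e^{\frac{|\vec w| d}{2\tau}\cdot\frac{?}{?}}-1\right) e^{-\frac{|\vec w|^2}{4\tau}}d\vec w,
\end{align*}
and a Gaussian integral computation (completing the square in $\vec w$, so $\int |\vec w|^k e^{c|\vec w|}e^{-|\vec w|^2/4}$ type moments) yields a bound of the form $(4n+1)(e^{d/2}-1)$ after normalizing $\tau$; the constant $4n+1$ should come from $\mathbb{E}|\vec w| \lesssim \sqrt{2n}$ together with the first-order Taylor remainder. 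I would do the normalization $\tau=1$ first (the statement is scale-invariant once $d$ is measured in the rescaled metric, since both $\mathcal V$ and $d^2/\tau$ scale trivially), which simplifies the arithmetic.

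The main obstacle I anticipate is making the informal ``$e^{-l}(4\pi\tau)^{-n}dv \preceq$ Gaussian, and $l\sim|\vec w|^2/(4\tau)$'' comparison precise enough to extract the clean constant $4n+1$, rather than some larger dimensional constant — in particular handling the $\mathcal{L}$-cut locus (measure zero, so it does not affect the integral, but one must justify that the reduced exponential map is a diffeomorphism onto a full-measure set and that the Jacobian comparison with the Gaussian holds with the stated monotone factor, which is precisely Perelman's monotonicity of the reduced volume element along reduced geodesics). A secondary technical point is that differentiating $l$ in the base point $x$ (rather than the usual endpoint $z$) requires the symmetry/first-variation formula for $\mathcal L$-length in the starting point; this is standard but I would state it carefully. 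Once these are in place, the estimate \eqref{eqn:SL15_2} follows by the triangle inequality along the geodesic and the Gaussian moment bound, with no use of the K\"ahler or polarized structure — indeed the lemma is purely a statement about unnormalized Ricci (here K\"ahler Ricci) flow.
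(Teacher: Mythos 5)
Your proposal assembles the right ingredients — the first variation of $l$ in the base point, Perelman's monotonicity of the reduced volume element ($J\le 1$), and a Gaussian moment computation — but puts them together in the wrong order, and the order matters here in an essential way. You propose to first bound $|\partial_s \log F|$ pointwise in $z$ (or $\vec w$), integrate over $s\in[0,d]$, and then integrate the result against the Gaussian weight. That yields a bound of the shape $(4\pi)^{-n}\int_{\R^{2n}}\bigl(e^{c|\vec w|d}-1\bigr)e^{-|\vec w|^2/4}\,d\vec w$; completing the square shows this grows like $d^{2n-1}e^{d^2/4}$ as $d\to\infty$, which is qualitatively worse than the claimed $O(e^{d/2})$ and cannot be massaged into $(4n+1)(e^{d/2}-1)$. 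There is also a secondary technical obstruction you don't address: the change of variables $z\mapsto\vec w$ via the $\mathcal L$-exponential map is \emph{based at $\sigma(s)$} and therefore moves with $s$, so ``integrate in $s$ at fixed $\vec w$'' is not the same as ``integrate in $s$ at fixed $z$''.

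The paper's proof inverts the order of integration precisely to avoid this. One first differentiates the whole integral $\mathcal V_s=\mathcal V((\sigma(s),0),\tau)$ in $s$, obtaining
\begin{align*}
  \left|\frac{d}{ds}\mathcal V_s\right|
  =\left|(4\pi\tau)^{-n}\int_M \langle\vec u,\vec w\rangle\, e^{-l}\,dv\right|
  \le (4\pi\tau)^{-n}\int_M \frac{1+|\vec w|^2}{2}\, e^{-l}\,dv,
\end{align*}
where the elementary splitting $|\langle\vec u,\vec w\rangle|\le|\vec w|\le\tfrac{1+|\vec w|^2}{2}$ is the crucial move: the ``$1$'' term reproduces $\tfrac12\mathcal V_s$, while the ``$|\vec w|^2$'' term, after changing variables and using $J\le1$, is bounded \emph{independently of $s$} by the Gaussian second moment $(4\pi)^{-n}\int_{\R^{2n}}|\vec w|^2 e^{-|\vec w|^2/4}\,d\vec w=4n$. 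This produces a \emph{linear} differential inequality $|\dot{\mathcal V}_s|\le\tfrac12\mathcal V_s+2n$, and integrating it (Gronwall, using $0<\mathcal V_0\le1$) gives exactly $(4n+1)(e^{s/2}-1)$. Your pointwise-in-$z$ Gronwall on $\log F$ cannot reproduce this because the $|\vec w|^2$ contribution then sits \emph{inside} the exponential before you ever average against the Gaussian. One further small point: the paper does not use (and does not need) the bound $|\nabla_x l|\le\sqrt{l/\tau}$ in the base point, which you assert ``is general''; it uses only the first-variation identity $\partial_s l=\langle\vec u,\vec w\rangle$, which is both easier to justify and exactly what is needed once you split by AM--GM.
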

\begin{proof}
Recall the definition of reduced volume (\ref{eqn:MA22_4}):
\begin{align*}
   \mathcal{V}((x,0),\tau)= (4\pi \tau)^{-n} \int_M e^{-l} dv.
\end{align*}
Let $x$ move along a unit speed Riemannian geodesic $\alpha$, with respect to the metric  $g(0)$.
Let $x=\alpha(0)$, $s$ be parameter of $\alpha$, $\vec{u}=\alpha'$.
For simplicity of notation, we denote $\mathcal{V}((\alpha(s),0),\tau)$ by $\mathcal{V}_s$.
It can be calculated directly the first variation of $l$ is $\langle\vec{u}, \vec{w} \rangle$ where $\vec{w}$ is the tangent vector of the reduced geodesic at time $t=0$.
Therefore, we have
\begin{align*}
  \left|\frac{d}{ds} \mathcal{V}((\alpha(s),0),\tau) \right|&=\left| (4\pi \tau)^{-n} \int_M \langle \vec{u}, \vec{w} \rangle e^{-l}dv \right|
    \leq  (4\pi \tau)^{-n} \int_M  \frac{1+|\vec{w}|^2}{2} e^{-l}dv\\
     &=\frac{1}{2}\mathcal{V} + \frac{1}{2} \int_{\R^{2n}} |\vec{w}|^2e^{-\frac{|\vec{w}|^2}{4}} J dw
     \leq \frac{1}{2}\mathcal{V} + \frac{(4\pi)^{-n}}{2} \int_{\R^{2n}} |\vec{w}|^2e^{-\frac{|\vec{w}|^2}{4}}dw,
\end{align*}
where $J$ is the Jacobian determinant of the reduced exponential map, which is always not greater than $1$, due to Perelman's argument in Section 7 of \cite{Pe1}.
Plugging the identity
\begin{align*}
(4\pi)^{-n}  \int_{\R^{2n}} |\vec{w}|^2 e^{-\frac{|\vec{w}|^2}{4}} dw = 4n
\end{align*}
into the above inequality implies $ \left|\frac{d}{ds} \mathcal{V} \right| \leq \frac{1}{2}\mathcal{V} +2n$, 
which can be integrated as
\begin{align*}
      (-\mathcal{V}_0+4n) (1-e^{-\frac{s}{2}})
       \leq  \mathcal{V}_s -\mathcal{V}_0
       \leq (\mathcal{V}_0 + 4n) (e^{\frac{s}{2}}-1).
\end{align*}
Note that $0<\mathcal{V}_0\leq 1, s>0$. So we obtain
\begin{align*}
   |\mathcal{V}_s -\mathcal{V}_0| \leq (4n+1) (e^{\frac{s}{2}}-1),
\end{align*}
which yields (\ref{eqn:SL15_2}) by letting $s=d$.
\end{proof}

The above argument clearly works for every Riemannian Ricci flow.

Note that the reduced volume is continuous for geodesic balls of each fixed scale under the Cheeger-Gromov convergence.
Combining this continuity together with the estimate in Lemma~\ref{lma:SL18_1} and Lemma~\ref{lma:SL14_2},
we can improve (\ref{eqn:SK27_10}) as an equality.

\begin{proposition}[\textbf{Continuity of reduced volume}]
Same conditions as in Lemma~\ref{lma:SK27_4}, $\bar{t}<0$ is a finite number.
Then we have
\begin{align}
   \mathcal{V}((\bar{x},0), |\bar{t}|) = \lim_{i \to \infty} \mathcal{V}((x_i,0),|\bar{t}|).
\label{eqn:SL09_1}
\end{align}
Therefore, reduced volume is a continuous function under the Cheeger-Gromov convergence.
\label{prn:SL14_2}
\end{proposition}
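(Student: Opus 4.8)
\textbf{Proof proposal for Proposition~\ref{prn:SL14_2}.}
The plan is to reduce the equality \eqref{eqn:SL09_1} to the continuity of reduced volume over fixed-sized geodesic balls, which is already available from the Cheeger-Gromov convergence and the smooth convergence away from the singular set (Proposition~\ref{prn:SL04_1}), and then to control the ``tail'' contribution uniformly via Lemma~\ref{lma:SL18_1}. Since reduced volume only depends on the flow structure, not the polarization, we may assume $\lambda=0$ and, up to parabolic rescaling, $\bar t=-1$. We already have the one-sided bound \eqref{eqn:SK27_10}, so the task is to establish the reverse inequality
\begin{align*}
  \mathcal{V}((\bar x,0),1) \geq \limsup_{i\to\infty}\mathcal{V}((x_i,0),1).
\end{align*}

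First I would fix $\eta>0$ so that $\bar x\in\mathcal{R}_\eta$, hence $x_i\in\mathcal{F}_{\eta}(M_i,0)$ for large $i$ (this uses \eqref{eqn:HA11_3} and the proof of Theorem~\ref{thm:HE11_1}). Given $\epsilon>0$, choose $H$ large enough that $2a(H)<\epsilon$, where $a$ is the Gaussian tail function \eqref{eqn:SL18_2}. Then apply Lemma~\ref{lma:SL18_1} to each flow $\mathcal{LM}_i$ (whose $\sup_{\mathcal{M}_i}(|R|+|\lambda|)\to0$ by \eqref{eqn:SL06_1}, so the smallness hypothesis $\sup_{\mathcal{M}}(|R|+|\lambda|)<\epsilon(n,A,\eta,H)$ holds for $i$ large): this gives
\begin{align*}
  \left|\mathcal{V}((x_i,0),1)-(4\pi)^{-n}\int_{B_{g_i(0)}(x_i,H)}e^{-l_i}\,dv_{g_i}\right|\leq 2a(H)<\epsilon.
\end{align*}
The same estimate holds on the limit: since $\bar M=\mathcal{R}\cup\mathcal{S}$ with $\mathcal{S}$ of Minkowski codimension $\geq 2p_0>2$ (Proposition~\ref{prn:SL15_1}), and $\mathcal{S}$ has measure zero, the limit reduced volume $\mathcal{V}((\bar x,0),1)$ is defined as the integral over $\mathcal{R}$ of $(4\pi)^{-n}e^{-\bar f}$ where $\bar f$ is the limit $\bar u$-potential; one checks that Lemma~\ref{lma:SL18_1} passes to the limit, giving
\begin{align*}
  \left|\mathcal{V}((\bar x,0),1)-(4\pi)^{-n}\int_{B_{\bar g}(\bar x,H)}e^{-\bar l}\,dv_{\bar g}\right|\leq 2a(H)<\epsilon.
\end{align*}
So it suffices to show $(4\pi)^{-n}\int_{B_{g_i(0)}(x_i,H)}e^{-l_i}\,dv_{g_i}\to(4\pi)^{-n}\int_{B_{\bar g}(\bar x,H)}e^{-\bar l}\,dv_{\bar g}$.

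For this convergence of the truncated integrals I would split $B(\bar x,H)$ into the $r$-regular part $\mathcal{F}_r$ and the complement. On $\mathcal{F}_r$ the flow converges smoothly (Proposition~\ref{prn:SL04_1}), the reduced distances $l_i$ converge pointwise to $\bar l$ by Proposition~\ref{prn:SL14_1} (continuity of reduced distance at regular points), and $e^{-l_i}$ is uniformly bounded, so dominated convergence applies. On $B(\bar x,H)\setminus\mathcal{F}_r$ the volume is $\leq Cr^{2p_0}$ by the density estimate (Corollary~\ref{cly:HD19_2}, or \eqref{eqn:SC02_2} combined with a covering argument) and $e^{-l_i}\leq 1$ on the whole manifold since $l_i\geq 0$ (the reduced distance is nonnegative when $R\geq -\epsilon$ is nearly zero, up to an error absorbed in the estimate), hence the contribution is $O(r^{2p_0})$, uniformly in $i$; the same bound holds on the limit. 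Letting $r\to0$ kills these pieces, and then $\epsilon\to0$ gives \eqref{eqn:SL09_1}. The continuity statement then follows by combining with Lemma~\ref{lma:SL14_2}, which shows the reduced volume varies uniformly continuously in the base point. The main obstacle is the justification that Lemma~\ref{lma:SL18_1} and the truncated-integral convergence are compatible with the singular limit — i.e. that the reduced potential $e^{-\bar f}$ on $\mathcal{R}$ genuinely captures the full limiting reduced volume with no mass lost into $\mathcal{S}$ — which is exactly where the high-codimension bound $\dim_{\mathcal{M}}\mathcal{S}\leq 2n-2p_0$ and the monotonicity of the reduced volume element along reduced geodesics (used inside Lemma~\ref{lma:SL18_1}) are essential.
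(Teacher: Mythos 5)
Your proposal is correct and follows essentially the same route the paper takes: the paper's own proof is a three-sentence appeal to (i) continuity of reduced volume over fixed-size geodesic balls under Cheeger-Gromov convergence, (ii) Lemma~\ref{lma:SL18_1} to control the Gaussian tail uniformly, and (iii) Lemma~\ref{lma:SL14_2} for uniform continuity in the base point; you have simply filled in item (i) explicitly via Proposition~\ref{prn:SL14_1}, the density estimate, and dominated convergence on the regular part, and placed Lemma~\ref{lma:SL14_2} at the end for the "continuity under Cheeger-Gromov convergence" conclusion rather than inside the equality argument.
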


Then we can study the gap property of the singularities.

\begin{proposition}[\textbf{Gap of local volume density}]
Same conditions as in Theorem~\ref{thm:SC09_1}.

Suppose $\bar{y} \in \mathcal{S}(\bar{M})$, then we have
\begin{align}
  \mathrm{v}(\bar{y})=\lim_{r \to 0} \omega_{2n}^{-1} r^{-2n}|B(\bar{y},r)| \leq 1-2\delta_0.
  \label{eqn:SC30_4}
\end{align}
\label{prn:SC17_7}
\end{proposition}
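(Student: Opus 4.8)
The plan is to combine the tangent cone structure from Theorem~\ref{thm:SC09_1} with the volume/reduced-volume dictionary developed in Section~\ref{subsec:reduced} and the continuity results just established. Fix $\bar y\in\mathcal S(\bar M)$ and let $\hat Y$ be any tangent cone of $\bar M$ at $\bar y$; by Theorem~\ref{thm:SC09_1}, $\hat Y$ is an irreducible metric cone with vertex $\hat y$, and by the K\"ahler cone splitting (Lemma~\ref{lma:HD30_1} applied to the scaled limit, as in the proof of Theorem~\ref{thm:SC09_1} and Proposition~\ref{prn:SL15_1}) it is a nonflat cone whose singular set has Hausdorff codimension at least $4$. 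The first step is to note that the local volume density $\mathrm v(\bar y)$ equals the volume ratio $\omega_{2n}^{-1}r^{-2n}|B(\hat y,r)|$ of the tangent cone, which by the cone structure is a constant in $r$; call this constant $\theta$. So it suffices to show $\theta\le 1-2\delta_0$.

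First I would pass the reduced-volume quantities to the limit. By Proposition~\ref{prn:SL14_2} (continuity of reduced volume under Cheeger-Gromov convergence) together with the static space-time structure of the limit (Proposition~\ref{prn:SL04_1}), and then by a further rescaling/tangent-cone limit together with Theorem~\ref{thm:SL25_1} (volume ratio = limit of reduced volume, and $\mathrm v(x)=\lim_{\tau\to 0}\mathcal V((x,0),\tau)$ on any space in $\widetilde{\mathscr{KS}}(n)$), I can identify $\theta$ with $\lim_{\tau\to0}\mathcal V((\hat y,0),\tau)$ computed on $\hat Y$ regarded as a static Ricci flow. Here the key point is that the cone $\hat Y$ inherits enough analytic structure (it is an intrinsic Ricci-flat space with high-codimension singular set, so the formulas (\ref{eqn:SL25_6}) apply) that its reduced volume at the vertex is literally the asymptotic volume ratio of its own tangent cone at $\hat y$, which is $\hat Y$ itself by homogeneity — hence the reduced volume is exactly $\theta$ for all $\tau$.

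The heart of the argument is then a rigidity/gap statement: on $\hat Y$, if $\mathcal V((\hat y,0),\tau)$ (equivalently the volume ratio $\theta$) were strictly larger than $1-2\delta_0$, then Anderson's gap theorem would force $\hat Y$ to be flat $\C^n$, contradicting that $\bar y$ is singular. More precisely, I would argue: by the volume density gap built into the model space (Property 5 of Definition~\ref{dfn:SC27_1}, inherited by $\hat Y$ via Theorem~\ref{thm:HD19_1} / the weak compactness), every point of $\hat Y$ with density $>1-2\delta_0$ is regular; since $\hat Y$ is a cone with constant volume ratio $\theta$, \emph{every} point has density $\theta$, so if $\theta>1-2\delta_0$ then $\hat Y$ is a smooth complete Ricci-flat manifold, hence in $\mathscr{KS}(n)$, and then Anderson's gap theorem (Gap Lemma, as quoted at the start of Section~2.1) applied to $\mathrm{avr}(\hat Y)=\theta>1-2\delta_0$ forces $\hat Y=\C^n$; but then $\bar y$ would be regular by Proposition~\ref{prn:HD20_1} ($\epsilon$-regularity) or directly by the definition of regular point, a contradiction. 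Therefore $\theta\le 1-2\delta_0$, which is exactly (\ref{eqn:SC30_4}).

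The main obstacle I anticipate is the careful justification that the tangent cone $\hat Y$ genuinely lies in (or behaves like an element of) $\widetilde{\mathscr{KS}}(n,\kappa)$ so that the gap theorem and Theorem~\ref{thm:SL25_1} are applicable — in particular that $\hat Y$ is noncollapsed and that its own tangent cone at the vertex is $\hat Y$ itself (cone homogeneity), and that the local volume density is upper semicontinuous/achieved by the cone ratio. This is where Proposition~\ref{prn:SL15_1} (codimension of $\mathcal S$), Proposition~\ref{prn:SC12_1} (volume convergence), and the iterated-tangent-cone argument from Proposition~\ref{prn:SL15_1} do the work; once these are in place the gap dichotomy is immediate. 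A secondary technical point is relating $\mathrm v(\bar y)$ to the tangent cone volume ratio, which follows from volume convergence under rescaling (Proposition~\ref{prn:SC12_1}) exactly as in the classical Cheeger-Colding setting.
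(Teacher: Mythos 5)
Your argument is correct in substance but takes a genuinely different route from the paper's. The paper's proof works entirely through Perelman's reduced volume: it passes from $y_i$ to nearby points $z_i$ achieving a local minimum of $\mathbf{cvr}$ via a point-picking argument, blows up at scale $\rho_i=\mathbf{cvr}(z_i)$ to obtain a complete non-Euclidean Ricci-flat limit, applies Anderson's gap to bound its asymptotic volume ratio (and hence its asymptotic reduced volume, by Theorem~\ref{thm:SL25_1}) by $1-2\delta_0$, then uses the scale-invariance and monotonicity of reduced volume to push that bound back down to the fixed scale $r^2$ on the original flows, and finally invokes the continuity of reduced volume (Proposition~\ref{prn:SL14_2}) to pass to the limit $\bar M$. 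Your approach avoids both the point-picking and the reduced-volume monotonicity/continuity machinery: you work directly on the tangent cone $\hat Y$, use the fact that $\hat Y\in\widetilde{\mathscr{KS}}(n,\kappa)$ so that Property 5 of Definition~\ref{dfn:SC27_1} furnishes the regular/singular density dichotomy, and apply Anderson's gap there. This is closer in spirit to the classical Cheeger--Colding argument for Einstein limits and is arguably the shorter route, at the cost of leaning more heavily on the already-established structure theory for the tangent cone. Two points need tightening. First, the assertion that ``since $\hat Y$ is a cone with constant volume ratio $\theta$, every point has density $\theta$'' is false: only the vertex has density $\theta$; an off-vertex point generically has larger density (a regular point has density $1$). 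What you actually need is that every point $z\in\hat Y$ satisfies $\mathrm v(z)\geq\mathrm{avr}(\hat Y)=\theta$, which follows from Bishop--Gromov on $\hat Y$ (Proposition~\ref{prn:HD19_1}) together with the fact that the asymptotic volume ratio of the cone equals $\theta$; this still gives $\mathrm v(z)>1-2\delta_0$ at every point under the hypothesis $\theta>1-2\delta_0$, and the rest of your contradiction goes through. Second, the justification that $\hat Y\in\widetilde{\mathscr{KS}}(n,\kappa)$ should cite Theorem~\ref{thm:SC04_1} applied to the diagonal blowup sequence (where the rescaling automatically forces $|R|+|\lambda|\to 0$, $T_i\to\infty$, $\Vol\to\infty$, so that condition~(\ref{eqn:SL06_1}) holds), rather than Theorem~\ref{thm:HD19_1}, which only asserts that $\widetilde{\mathscr{KS}}(n,\kappa)$ is a closed class.
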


\begin{proof}
  Due to the tangent cone structure(c.f. Theorem~\ref{thm:SL25_1}), we have
  \begin{align}
      \mathrm{v}(\bar{y})=\lim_{r \to 0} \omega_{2n}^{-1} r^{-2n}|B(\bar{y},r)|=\lim_{r \to 0} \mathcal{V}((\bar{y},0), r^2).
  \label{eqn:SL14_6}
  \end{align}
  Let $y_i \to \bar{y}$ under the metric $g_i(0)$.  By rearranging points and taking subsequences if necessary, we can assume $y_i$ has the 
  ``local minimum" canonical volume radius $\rho_i$.  
  
  The rearrangement is a standard point-picking technique.
  In fact, since $\bar{y}$ is a singular point, it is clear that $r_i=\mathbf{cvr}(y_i, 0) \to 0$. 
  Since everything is done at time slice $t=0$, we shall drop the time in the following argument. 
  Fix $L \geq 1$ and $i$, we search if $y_i$ is the point such that
  \begin{align*}
     \mathbf{cvr}(y) <0.5 \mathbf{cvr}(y_i), \quad \forall \; y \in B(y_i, Lr_i).
  \end{align*}
  If so, we stop. Otherwise, we can find a point $z \in B(y_i, Lr_i)$ such that $\mathbf{cvr}(z)<0.5  \mathbf{cvr}(y_i)$. 
  Denote such $z$ by $y_i^{(1)}$ and set $r_i^{(1)}=\mathbf{cvr} \left(y_i^{(1)} \right)$.   We then repeat the previous process for $y_i^{(1)}$ and $r_i^{(1)}$. 
  To search points in the ball $B\left(y_i^{(1)}, Lr_i^{(1)} \right)$ with $\mathbf{cvr}<0.5 r_i^{(1)}$. If no such points exist, we stop. Otherwise, we find such a point
  and denote it by $y_i^{(2)}$ and set $r_i^{(2)}=\mathbf{cvr} \left(y_i^{(2)} \right)$. Note this process happens in a compact set since 
  \begin{align*}
    d\left( y_i^{(k)}, y_i\right)<L\left( r_i + r_i^{(1)} + \cdots + r_i^{(k)} \right)<2Lr_i. 
  \end{align*} 
  Each $\mathcal{LM}_i$ is smooth. Therefore, the process above must stop at some finite step $k$.  Denote $z_i=y_i^{(k)}$ and $\rho_i=\mathbf{cvr}(z_i)$.
  Then we have
  \begin{align*}
     \mathbf{cvr}(y)>0.5  \rho_i,  \quad \forall \; y \in B(z_i, L\rho_i). 
  \end{align*}
  Note that $L\rho_i \to 0$ as $i \to \infty$. Therefore,  the limit of $z_i$ and the limit of $y_i$ are the same point $\bar{y}$. 
  Then we let $L \to \infty$ and take diagonal sequence, we obtain $z_i$ such that 
   \begin{align*}
     \mathbf{cvr}(y)>0.5  \rho_i,  \quad \forall \; y \in B(z_i, 2^i \rho_i);  \quad \quad \lim_{i \to \infty} z_i=\bar{y}.  
  \end{align*}
  Therefore, we can regard $z_i$ as the rearrangement of $y_i$, with the property that each $z_i$ achieve the ``local minimum" of $\mathbf{cvr}$.

  By rescaling $\rho_i$ to $1$, we obtain new Ricci flows $\tilde{g}_i$.
  Taking limit of $(M_i,y_i,\tilde{g}_i(0))$, we have  a complete, Ricci flat
  eternal Ricci flow solution. 
  It is not hard to see the limit space is not Euclidean. For otherwise, each geodesic ball's volume ratio, under metric $\tilde{g}_{\infty}(0)$, is exactly the Euclidean volume ratio $\omega_{2n}$.
  Following from the volume convergence and the definition of the canonical volume radius, it is clear that the canonical volume radius of the rescaled flow is strictly greater than $1$ which contradicts to our assumption. 
  So it has normalized asymptotic volume ratio less than $1-2\delta_0$, according to Anderson's gap theorem. Then the infinity tangent cone
  structure implies the asymptotic reduced volume is the same as the asymptotic reduced volume ratio. So it is
  at most  $1-2\delta_0$.    Therefore,  there exists a big constant $H$ such that
  \begin{align*}
    \mathcal{V}_{\tilde{g}_i}((y_i,0),H) < 1-2\delta_0.
  \end{align*}
  Note that $H\rho_i^2<r$ for each fixed $r$ and the corresponding large $i$.  Recall the scaling invariant property of reduced volume, we can apply the reduced volume monotonicity to obtain
 \begin{align*}
     \mathcal{V}_{g_i}((y_i,0),r^2) \leq \mathcal{V}_{\tilde{g}_i}((y_i,0), \rho_i^{-2} r^2)
     \leq  \mathcal{V}_{\tilde{g}_i}((y_i,0),H) < 1-2\delta_0.
 \end{align*}
 The continuity of reduced volume (Proposition~\ref{prn:SL14_2}) then implies that
 \begin{align*}
   \mathcal{V}((\bar{y},0),r^2) \leq 1-2\delta_0
 \end{align*}
 for each $r>0$, which in turn yields
 \begin{align}
     \lim_{r \to 0} \mathcal{V}((\bar{y},0),r^2) \leq 1-2\delta_0.
 \label{eqn:SL14_7}
 \end{align}
 Then (\ref{eqn:SC30_4}) follows from the combination of (\ref{eqn:SL14_6}) and (\ref{eqn:SL14_7}).
\end{proof}

\begin{theorem}[\textbf{Metric structure of a blowup limit}]
Suppose $\mathcal{LM}_i \in \mathscr{K}(n,A;1)$ satisfies (\ref{eqn:SL06_1}), $x_i \in M_i$.
Let $(\bar{M}, \bar{x}, \bar{g})$ be the limit space of $(M_i, x_i, g_i(0))$. Then  $\bar{M} \in \widetilde{\mathscr{KS}}(n,\kappa)$.
\label{thm:SC04_1}
\end{theorem}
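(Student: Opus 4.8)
The plan is to verify, one by one, the defining properties of the class $\widetilde{\mathscr{KS}}(n,\kappa)$ from Definition~\ref{dfn:SC27_1} for the limit space $(\bar{M},\bar{x},\bar{g})$, assembling the pieces already established in Section~4. First I would recall from Theorem~\ref{thm:HE11_1} (rough weak compactness) that $(M_i,x_i,g_i(0))$ subconverges to a length space $\bar{M}$ with a regular-singular decomposition $\bar{M}=\mathcal{R}\cup\mathcal{S}$, where $\mathcal{R}$ is an open $C^4$ (in fact, by Proposition~\ref{prn:SL04_1}, $C^\infty$) Riemannian manifold and $\dim_{\mathcal{M}}\mathcal{S}\le 2n-2p_0$. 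Since $|R|+|\lambda|\to 0$ and the convergence on $\mathcal{R}$ is smooth, $\mathcal{R}$ is Ricci-flat; the limit complex structures $J_i$ converge on $\mathcal{R}$ to a complex structure $\bar J$ making $(\mathcal{R},\bar g,\bar J)$ a K\"ahler manifold. This gives Property~2. Property~4 (Minkowski dimension) was strengthened in Proposition~\ref{prn:SL15_1} to $\dim_{\mathcal{M}}\mathcal{S}\le 2n-2p_0<2n-3$ and $\dim_{\mathcal{H}}\mathcal{S}\le 2n-4$, using the tangent cone structure from Theorem~\ref{thm:SC09_1} and the iterated-cone induction.

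Next I would address Property~3, the weak convexity of $\mathcal{R}$. This is exactly the content of Proposition~\ref{prn:SC30_1}: for each fixed $\bar x\in\mathcal{R}$, away from a measure-zero set $\mathcal{C}_{\bar x}\supset\mathcal{S}$ every point of $\bar M$ is joined to $\bar x$ by a unique smooth shortest geodesic lying in $\mathcal{R}$, which follows from Lemma~\ref{lma:SK27_4} (most reduced geodesics avoid the high-curvature region, hence pass to smooth limit reduced geodesics) together with the fact that on the static Ricci-flat limit, reduced geodesics project to Riemannian geodesics. For Property~1 (the definition of regular/singular via isometry to a totally geodesic convex domain of a smooth manifold), I would note that smooth convergence on $\mathcal{R}$ plus weak convexity upgrades "$C^4$-manifold neighborhood" to "totally geodesic convex domain", and that the gap in Proposition~\ref{prn:SC17_7} guarantees the two notions of "singular" coincide. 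Property~5 (the volume-density criterion $\mathrm{v}\equiv 1$ on $\mathcal{R}$, $\mathrm{v}\le 1-2\delta_0$ on $\mathcal{S}$) splits into two halves: on $\mathcal{R}$ the density is $1$ by smoothness; on $\mathcal{S}$ the bound $\mathrm{v}(\bar y)\le 1-2\delta_0$ is precisely Proposition~\ref{prn:SC17_7}, proved via the tangent-cone identification $\mathrm{v}(\bar y)=\lim_{r\to 0}\mathcal{V}((\bar y,0),r^2)$ (Theorem~\ref{thm:SL25_1}), a point-picking blowup to a non-flat Ricci-flat limit, Anderson's gap theorem, and the continuity of reduced volume under Cheeger-Gromov convergence (Proposition~\ref{prn:SL14_2}).

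Finally, Property~6 (the non-collapsing bound $\mathrm{avr}(\bar M)\ge\kappa$) follows from the uniform volume-ratio lower bound $\omega_{2n}^{-1}r^{-2n}|B(x_i,r)|\ge\kappa$ in $\mathscr{K}(n,A)$ combined with Bishop-Gromov on $\bar M$ (which holds once $\mathcal{R}$ is weakly convex, as noted in the proof of Theorem~\ref{thm:HD19_1} for the model case) and the volume continuity of Proposition~\ref{prn:SC12_1}; letting $r\to\infty$ gives $\mathrm{avr}(\bar M)\ge\kappa$. Collecting all six properties, $\bar M\in\widetilde{\mathscr{KS}}(n,\kappa)$.

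I expect the main obstacle to be Property~3, the weak convexity of $\mathcal{R}$, since it is the one place where the static metric-space picture alone is insufficient and one genuinely must import the Ricci flow: the argument runs through the reduced-geodesic machinery of Lemmas~\ref{lma:SL13_1}, \ref{lma:SL14_1} and \ref{lma:SK27_4}, where the delicate estimates on the escape time $\bar\tau$, the position $d(\bar x,\beta(\bar\tau))$, and the tangent bound $|\dot\beta(\bar\tau)|$ must survive passage to the limit, and where one must carefully track that the bad set $E\cup\mathcal{S}$ is closed of measure zero. Everything else is either already stated as a proposition in Section~4 or is a routine packaging of such statements; the bookkeeping — matching each clause of Definition~\ref{dfn:SC27_1} to the correct earlier result and checking that the hypothesis (\ref{eqn:SL06_1}), in particular $\frac{1}{\mathrm{Vol}(M_i)}\to 0$ and $\sup_{\mathcal{M}_i}(|R|+|\lambda|)\to 0$, is exactly what those results require — is the remaining work.
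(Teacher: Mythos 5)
Your proposal is correct and follows the same route as the paper's own proof: both verify the six defining properties of $\widetilde{\mathscr{KS}}(n,\kappa)$ one by one, matching Property~1 to Theorem~\ref{thm:HE11_1}, Property~2 to the scalar-flat K\"ahler Ricci flow structure on $\mathcal{R}$, Property~3 to Proposition~\ref{prn:SC30_1}, Property~4 to Proposition~\ref{prn:SL15_1}, Property~5 to Proposition~\ref{prn:SC17_7}, and Property~6 to the non-collapsing/volume-convergence package with $\Vol(M_i)\to\infty$. Your remarks on where the real work lives (the weak convexity via reduced geodesics) and which hypotheses of (\ref{eqn:SL06_1}) are used where are accurate.
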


\begin{proof}
  We only need to check $\bar{M}$ satisfies all the 6 properties required in the definition of $\widetilde{\mathscr{KS}}(n,\kappa)$. In fact,
  the 1st property is implied by Theorem~\ref{thm:HE11_1}.
  The 2nd property follows from the fact that $\mathcal{R}$ is scalar flat and satisfies K\"ahler Ricci flow equation.
  The 3rd property, weak convexity of $\mathcal{R}$ is shown in Proposition~\ref{prn:SC30_1}.
  The 4th property, codimension estimate of singularity follows from Proposition~\ref{prn:SL15_1}.
  The 5th property, gap estimate, follows from Proposition~\ref{prn:SC17_7}.
  The 6th property, asymptotic volume ratio estimate can be obtained by the condition $Vol(M_i) \to \infty$, Sobolev constant uniformly bounded, and the volume convergence, Proposition~\ref{prn:SC12_1}.
  So we have checked all the properties needed to define $\widetilde{\mathscr{KS}}(n,\kappa)$ are satisfied by $\bar{M}$.
  In other words, $\bar{M} \in \widetilde{\mathscr{KS}}(n,\kappa)$.
\end{proof}

 Since $\bar{M} \in \widetilde{\mathscr{KS}}(n,\kappa)$, it is clear that $\mathbf{cr}(\bar{x})=\infty$.  Therefore, we have $\mathbf{vr}(\bar{x})=\mathbf{cvr}(\bar{x})$
 by definition.

\begin{proposition}
Same conditions as in Theorem~\ref{thm:SC04_1}.  Let $\displaystyle \bar{r}=\lim_{i \to \infty} \mathbf{cr}(x_i)$.
Then we have
\begin{align}
  \min\{\bar{r}, \mathbf{vr}(\bar{x})\} = \lim_{i \to \infty} \mathbf{cvr}(x_i).
\label{eqn:SC06_9}
\end{align}
\label{prn:SC06_5}
\end{proposition}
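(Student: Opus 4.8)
The plan is to argue by subsequences: it suffices to show that every subsequence of $\{\mathbf{cvr}(x_i)\}$ has a further subsequence converging to $\min\{\bar r,\mathbf{vr}(\bar x)\}$, since this number depends only on the fixed data $\bar r=\lim_i\mathbf{cr}(x_i)$ and on the limit $(\bar M,\bar x,\bar g)$. So, passing to a subsequence, I assume $\mathbf{cvr}(x_i)\to\ell\in[0,\infty]$ and must prove $\ell=\min\{\bar r,\mathbf{vr}(\bar x)\}$. Since $\mathbf{cvr}(x_i)\le\mathbf{cr}(x_i)$ by definition, $\ell\le\bar r$ automatically. Throughout I use that $\bar M\in\widetilde{\mathscr{KS}}(n,\kappa)$ (Theorem~\ref{thm:SC04_1}), so that Bishop--Gromov monotonicity (Proposition~\ref{prn:HD19_1}), the rigidity of optimal volume ratio (Proposition~\ref{prn:SC17_3}), and the continuity of $r\mapsto|B(\bar x,r)|$ (Corollary~\ref{cly:GD08_1}) are all available on $\bar M$, together with the volume convergence $|B(x_i,r)|\to|B(\bar x,r)|$ for each fixed $r$ (Proposition~\ref{prn:SC12_1}).

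\emph{Lower bound $\mathbf{vr}(\bar x)\ge\ell$.} This is trivial when $\ell=0$, so assume $\ell>0$, and first suppose $\ell<\infty$. By (\ref{eqn:SC16_2}) and (\ref{eqn:SC17_1}) one always has $\omega_{2n}^{-1}\mathbf{cvr}(x_i)^{-2n}|B(x_i,\mathbf{cvr}(x_i))|\ge 1-\delta_0$. Fixing $\epsilon>0$, for all large $i$ we have $\ell-\epsilon\le\mathbf{cvr}(x_i)\le\ell+\epsilon$, and hence
\[
 \omega_{2n}^{-1}(\ell-\epsilon)^{-2n}\,|B(x_i,\ell+\epsilon)| \;\ge\; \omega_{2n}^{-1}\mathbf{cvr}(x_i)^{-2n}\,|B(x_i,\mathbf{cvr}(x_i))| \;\ge\; 1-\delta_0 .
\]
Letting $i\to\infty$ (Proposition~\ref{prn:SC12_1}) and then $\epsilon\to 0$ (Corollary~\ref{cly:GD08_1}) gives $\omega_{2n}^{-1}\ell^{-2n}|B(\bar x,\ell)|\ge 1-\delta_0$, i.e. $\ell\in\Omega_{\bar x}$, so $\mathbf{vr}(\bar x)=\sup\Omega_{\bar x}\ge\ell$. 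If instead $\ell=\infty$, the same computation with $\ell\pm\epsilon$ replaced by an arbitrary large $R$ shows $\mathbf{vr}(\bar x)=\infty$, i.e. $\bar M=\C^n$, and the claimed identity is immediate. In any case $\min\{\bar r,\mathbf{vr}(\bar x)\}\ge\min\{\bar r,\ell\}=\ell$.

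\emph{Upper bound $\min\{\bar r,\mathbf{vr}(\bar x)\}\le\ell$.} If $\bar r=\ell$ or $\mathbf{vr}(\bar x)=\ell$ there is nothing to prove, so suppose $\bar r>\ell$ and $\mathbf{vr}(\bar x)>\ell$, and fix $r_0$ with $\ell<r_0<\min\{\bar r,\mathbf{vr}(\bar x)\}$. Since $\mathbf{cvr}(x_i)\to\ell<\bar r=\lim_i\mathbf{cr}(x_i)$, for all large $i$ we have $\mathbf{cvr}(x_i)<r_0<\mathbf{cr}(x_i)$; in particular $\mathbf{cvr}(x_i)<\mathbf{cr}(x_i)$, so (\ref{eqn:SC16_3}) applies at scale $r_0$ and yields $\omega_{2n}^{-1}r_0^{-2n}|B(x_i,r_0)|<1-\delta_0$. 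Passing to the limit, $\omega_{2n}^{-1}r_0^{-2n}|B(\bar x,r_0)|\le 1-\delta_0$. On the other hand, $r_0<\mathbf{vr}(\bar x)=\sup\Omega_{\bar x}$ produces some $r_1\in\Omega_{\bar x}$ with $r_0<r_1$, and Proposition~\ref{prn:HD19_1} then forces
\[
 1-\delta_0 \;\ge\; \omega_{2n}^{-1}r_0^{-2n}|B(\bar x,r_0)| \;\ge\; \omega_{2n}^{-1}r_1^{-2n}|B(\bar x,r_1)| \;\ge\; 1-\delta_0 ,
\]
so the normalized volume ratio of $B(\bar x,\cdot)$ equals the constant $1-\delta_0$ at the two scales $r_0<r_1$. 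Because $\mathbf{vr}(\bar x)>0$, $\bar x$ is a regular point, so Proposition~\ref{prn:SC17_3}, applied to $B(\bar x,r_0)\subset B(\bar x,r_1)$, shows that $B(\bar x,r_1)$ is isometric to a geodesic ball of radius $r_1$ in $\C^n$, which would force that volume ratio to equal $1$ --- a contradiction. Hence $\min\{\bar r,\mathbf{vr}(\bar x)\}=\ell$, and the subsequence argument completes the proof.

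The argument is essentially parallel to that of Proposition~\ref{prn:HE07_2}, with $\mathbf{cvr}$ on the smooth flows playing the role of $\mathbf{vr}$ on the model spaces; since all of the analytic input is already in hand, there is no serious obstruction. The only points requiring care are (i) the scale approximation in the lower-bound step, which is why one invokes the sharp identities (\ref{eqn:SC16_2})--(\ref{eqn:SC17_1}) rather than the bare supremum defining $\mathbf{cvr}$, and (ii) checking that the two radii $r_0<r_1$ with equal volume ratio both lie within the unit-ball scale on which the canonical-radius estimates are valid and that $\bar x$ is regular, so that the rigidity input (Proposition~\ref{prn:SC17_3}) genuinely applies.
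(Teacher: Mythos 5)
Your subsequence-and-dichotomy reorganization is a legitimate alternative to the paper's four-way case split on $\min\{\bar r,\mathbf{vr}(\bar x)\}$, and the bulk of the argument is sound. In particular, the lower bound for finite $\ell$ (sandwiching $\mathbf{cvr}(x_i)$ between $\ell\pm\epsilon$, invoking (\ref{eqn:SC16_2})--(\ref{eqn:SC17_1}) at the scale $\mathbf{cvr}(x_i)$ itself, and then using volume convergence plus the continuity of $r\mapsto|B(\bar x,r)|$) and the upper bound (applying (\ref{eqn:SC16_3}) at an intermediate scale $r_0$, passing to the limit, and deriving a contradiction from Bishop--Gromov monotonicity and Proposition~\ref{prn:SC17_3}) are both correct, and correspond to the paper's Cases 1, 3(a), 3(b).

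However, the $\ell=\infty$ branch of the lower bound has a genuine gap. You need to rule out the possibility that $\min\{\bar r,\mathbf{vr}(\bar x)\}<\infty$ while some subsequence has $\mathbf{cvr}(x_i)\to\infty$, and the claim that ``the same computation with $\ell\pm\epsilon$ replaced by an arbitrary large $R$'' does this cannot be carried out as written. In the finite case the chain of inequalities uses both the lower bound $\ell-\epsilon\le\mathbf{cvr}(x_i)$ and the upper bound $\mathbf{cvr}(x_i)\le\ell+\epsilon$; when $\ell=\infty$ there is no upper cutoff, and the estimate $\omega_{2n}^{-1}R^{-2n}|B(x_i,R)|\ge 1-\delta_0$ for $R<\mathbf{cvr}(x_i)$ simply does not follow from $\mathbf{cvr}(x_i)\ge R$: the quantity $\mathbf{cvr}(x_i)$ is only a supremum of the defining set $I_{x_i}^{(\mathbf{cr}(x_i))}$, which need not contain every smaller radius, and there is no Bishop--Gromov monotonicity available on $(M_i,g_i(0))$ itself (no Ricci lower bound is assumed). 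The paper sidesteps this by casing on $\min$ instead of on $\ell$: in its Case 2 it only needs the implication $\mathbf{vr}(\bar x)=\infty\Rightarrow\ell=\infty$, which is elementary because the gap theorem (Proposition~\ref{prn:SC17_1}) forces $\bar M=\C^n$, so that at every fixed scale $H$ volume convergence yields $\omega_{2n}^{-1}H^{-2n}|B(x_i,H)|\to 1>1-\delta_0$ and hence $\mathbf{cvr}(x_i)\ge H$ eventually. Closing your gap requires a genuinely different ingredient --- for example, applying the regularity estimate from Definition~\ref{dfn:SC02_1} at the scale $\mathbf{cvr}(x_i)$ to obtain $|Rm|\le 4c_a^{-2}\mathbf{cvr}(x_i)^{-2}\to 0$ on balls $B(x_i,c_a\mathbf{cvr}(x_i)/2)$ of divergent radius, concluding the non-collapsed limit $\bar M$ is flat and K\"ahler, hence $\C^n$ and $\mathbf{vr}(\bar x)=\infty$ --- rather than a re-run of the finite-$\ell$ computation.
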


\begin{proof}
 We divide the proof in three cases according to the value of $\min\{\bar{r}, \mathbf{vr}(\bar{x})\}$. \\

 \textit{Case 1. $\min\{\bar{r}, \mathbf{vr}(\bar{x})\}=0$.}

 Otherwise, there exists a positive number $\rho_0$ such that $\displaystyle \lim_{i \to \infty} \mathbf{cvr}(x_i) \geq \rho_0$. Therefore,
 $\bar{x} \in \mathcal{R}_{\rho_0} \subset \mathcal{R}$, which in turn implies that $\mathbf{vr}(\bar{x})>0$.
 Consequently,  we have $\min\{\bar{r}, \mathbf{vr}(\bar{x})\}>0$. Contradiction.

 \textit{Case 2. $\min\{\bar{r}, \mathbf{vr}(\bar{x})\}=\infty$.}

 In this case, $\bar{r}=\infty$.
 By the gap theorem in the space $\widetilde{\mathscr{KS}}(n,\kappa)$, we see that $\bar{M}$ is the Euclidean space $\C^{n}$.
 Therefore, for each $H>0$, we have $\omega_{2n}^{-1}H^{-2n}|B(x_i,H)|$ converges to $1$, the normalized volume ratio of $\C^{n}$.
 Since $\displaystyle \bar{r}=\lim_{i \to \infty} \mathbf{cr}(x_i)=\infty$, this means that $\mathbf{cvr}(x_i) \geq H$ for large $i$ by the volume convergence.
 Since $H$ is chosen arbitrarily, we obtain $\displaystyle \lim_{i \to \infty} \mathbf{cvr}(x_i)=\infty$.\\

 So the remainder case is that $\min\{\bar{r}, \mathbf{vr}(\bar{x})\}$ is a finite positive number.
 Two more subcases can be divided.\\

 \textit{Case 3(a). $\min\{\bar{r}, \mathbf{vr}(\bar{x})\}<\bar{r}$.}

 Let $H=\mathbf{vr}(\bar{x})$, a finite number in this case.   Clearly, $\bar{x}$ is a regular point and the normalized volume ratio of the ball $B(\bar{x},H)$
 is $1-\delta_0$. Clearly, $B(\bar{x},H)$ cannot be a isometric to a Euclidean ball.
 Therefore, by the rigidity of $\widetilde{\mathscr{KS}}(n,\kappa)$(c.f. Proposition~\ref{prn:SC17_3}), we see that
 \begin{align*}
   &\omega_{2n}^{-1}r^{-2n}|B(\bar{x},r)|>1-\delta_0, \quad \forall \; r \in (0,H), \\
   &\omega_{2n}^{-1}r^{-2n}|B(\bar{x},r)|<1-\delta_0, \quad \forall \; r \in (H,\bar{r}).
 \end{align*}
 Then the volume convergence implies that $\displaystyle \lim_{i \to \infty} \mathbf{cvr}(x_i)=H$. \\

 \textit{Case 3(b). $\min\{\bar{r}, \mathbf{vr}(\bar{x})\}=\bar{r}$.}

 In this case, we see that the normalized volume ratio of $B(\bar{x},\bar{r})$ is at least $1-\delta_0$. Also, we see that $\bar{x}$  is a regular point.
 Same argument as in the previous case, we see that
 \begin{align*}
   \omega_{2n}^{-1}r^{-2n}|B(\bar{x},r)|>1-\delta_0, \quad \forall \; r \in (0,\bar{r}).
 \end{align*}
 Therefore, for every fixed $r \in (0, \bar{r})$, the volume convergence implies that $\displaystyle \lim_{i \to \infty} \mathbf{cvr}(x_i) \geq r$. Consequently,
 we have $\displaystyle \lim_{i \to \infty} \mathbf{cvr}(x_i) \geq \bar{r}$ by the arbitrariness of $r$.  On the other hand, the definition of $\mathbf{cvr}(x_i)$
 implies that
 \begin{align*}
      \lim_{i \to \infty} \mathbf{cvr}(x_i) \leq \lim_{i \to \infty} \mathbf{cr}(x_i)=\bar{r}.
 \end{align*}
 Therefore, we obtain $\displaystyle \lim_{i \to \infty} \mathbf{cvr}(x_i) =\bar{r}$.
\end{proof}

\begin{corollary}
Same conditions as in Theorem~\ref{thm:SC04_1}. Then for each $r \in (0,1)$, we have
\begin{align}
    \mathcal{F}_{r}(\bar{M})=\mathcal{R}_{r}(\bar{M}).
\label{eqn:SC06_11}
\end{align}
In particular, for each $0<r<1<H<\infty$, we have
\begin{align*}
  B(x_i,H) \cap \mathcal{F}_{r}(M_i)  \longright{G.H.} B(\bar{x},H) \cap \mathcal{F}_{r}(\bar{M}).
\end{align*}
Moreover, this convergence can be improved to take place in $C^{\infty}$-topology, i.e.,
\begin{align}
  B(x_i,H) \cap \mathcal{F}_{r}(M_i)  \longright{C^{\infty}} B(\bar{x},H) \cap \mathcal{F}_{r}(\bar{M}). \label{eqn:SC06_12}
\end{align}
\label{cly:SC06_1}
\end{corollary}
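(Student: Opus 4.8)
The heart of the statement is $\mathcal{F}_r(\bar M)=\mathcal{R}_r(\bar M)$, and this is essentially a restatement of Proposition~\ref{prn:SC06_5}. I would first observe that the hypotheses of Theorem~\ref{thm:SC04_1} do not single out the base point: for \emph{any} $\bar y\in\bar M$ and any sequence $y_i\in M_i$ with $y_i\to\bar y$, the argument of Proposition~\ref{prn:SC06_5}, recentered at $y_i$, gives, after passing to a subsequence along which both $\mathbf{cr}(y_i)$ and $\mathbf{cvr}(y_i)$ converge, $\lim_i \mathbf{cvr}(y_i)=\min\{\lim_i\mathbf{cr}(y_i),\,\mathbf{vr}(\bar y)\}$. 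Since $\mathcal{LM}_i\in\mathscr{K}(n,A;1)$ forces $\mathbf{pcr}(M_i\times[-1,1])\ge 1$, hence $\mathbf{cr}(M_i)\ge 1$ at time $0$, we have $\lim_i\mathbf{cr}(y_i)\ge 1>r$ for every $r\in(0,1)$. Therefore along such a subsequence $\lim_i\mathbf{cvr}(y_i)\ge r$ holds iff $\mathbf{vr}(\bar y)\ge r$. Feeding this into the definition of $\mathcal{R}_r(\bar M)$ in (\ref{eqn:HA11_3}): if $\bar y\in\mathcal{R}_r(\bar M)$, choose $y_i\to\bar y$ realizing $\liminf_i\mathbf{cvr}(y_i)\ge r$ and extract as above to get $\mathbf{vr}(\bar y)\ge\liminf_i\mathbf{cvr}(y_i)\ge r$ (using $\liminf_i\mathbf{cvr}(y_i)=\min\{\lim_i\mathbf{cr}(y_i),\mathbf{vr}(\bar y)\}\le\mathbf{vr}(\bar y)$), so $\bar y\in\mathcal{F}_r(\bar M)$; conversely if $\mathbf{vr}(\bar y)\ge r$ then every sequence $y_i\to\bar y$ has every subsequential limit of $\mathbf{cvr}(y_i)$ equal to $\min\{\lim\mathbf{cr},\mathbf{vr}(\bar y)\}\ge\min\{1,r\}=r$, so $\liminf_i\mathbf{cvr}(y_i)\ge r$ and $\bar y\in\mathcal{R}_r(\bar M)$. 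This settles $\mathcal{F}_r(\bar M)=\mathcal{R}_r(\bar M)$.

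\textbf{Convergence of the sets.} For the ``in particular'' part I would combine this with the $\hat C^\infty$ convergence $(M_i,x_i,g_i(0))\to(\bar M,\bar x,\bar g)$ and the volume convergence of Proposition~\ref{prn:SC12_1}. One inclusion is immediate: if $y_i\in\mathcal{F}_r(M_i)\cap B(x_i,H)$ and $y_i\to\bar y$, then $\mathbf{cvr}(y_i)\ge\mathbf{vr}^{(\mathbf{cr}(M_i))}(y_i)\ge r$, hence $\liminf_i\mathbf{cvr}(y_i)\ge r$ and $\bar y\in\mathcal{R}_r(\bar M)=\mathcal{F}_r(\bar M)$, while $d(\bar y,\bar x)=\lim_i d(y_i,x_i)\le H$. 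For the reverse inclusion, given $\bar y\in\mathcal{F}_r(\bar M)\cap B(\bar x,H)$, I would use the continuity of $\mathbf{vr}$ on $\bar M$ (Proposition~\ref{prn:HE07_2}) and its rigidity (Proposition~\ref{prn:SC17_3}): for every $r'<r$ the ball $B(\bar y,r')$ has normalized volume ratio $>1-\delta_0$ unless it is Euclidean, so by Proposition~\ref{prn:SC12_1} the approximating points satisfy $\mathbf{vr}^{(\mathbf{cr}(M_i))}(y_i)\ge r'$ for $i$ large; a diagonal argument over a sequence $r'\uparrow r$, using the monotonicity $\mathcal{F}_{r''}(M_i)\subset\mathcal{F}_{r'}(M_i)$ for $r'\le r''$, then produces $y_i\in\mathcal{F}_r(M_i)$ with $y_i\to\bar y$. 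Thus $B(x_i,H)\cap\mathcal{F}_r(M_i)$ Hausdorff-converges to $B(\bar x,H)\cap\mathcal{F}_r(\bar M)$ inside the ambient Gromov--Hausdorff convergence.

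\textbf{Upgrade to $C^\infty$, and the main obstacle.} Every point of $\mathcal{F}_r(\bar M)$ has $\mathbf{vr}\ge r$, so Corollary~\ref{cly:SL23_1} gives $r^{2+k}|\nabla^k Rm|\le c_a^{-2}$ on the ball of radius $c_a r$ for $0\le k\le 5$, and, since $\mathcal{R}(\bar M)$ is smooth and Ricci-flat, Shi-type estimates promote this to uniform bounds on all $|\nabla^k Rm|$; the same bounds hold uniformly for $\mathcal{F}_r(M_i)$ by the regularity estimate in Definition~\ref{dfn:SC02_1} (and Shi's estimates on the slightly larger ball where $|Rm|$ is bounded). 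Hence the convergence is $C^\infty$ on a fixed-size neighborhood of $\mathcal{F}_r(\bar M)$, which upgrades the set convergence to (\ref{eqn:SC06_12}). The only genuinely delicate step is the reverse inclusion above, namely matching the intrinsic sublevel set $\mathcal{F}_r(M_i)=\{\,\mathbf{vr}^{(\mathbf{cr}(M_i))}\ge r\,\}$ with the limiting regular set: on $M_i$ the Bishop--Gromov monotonicity of the volume ratio is unavailable, so $I^{(\rho)}_{y}$ need not be an interval and $\mathbf{vr}^{(\mathbf{cr}(M_i))}$ need not agree with $\mathbf{cvr}$. This is overcome precisely by the rigidity of the limit volume radius together with the $r'\uparrow r$ diagonal argument, and it is here that the hypothesis $r<1\le\mathbf{cr}(M_i)$ is used in an essential way.
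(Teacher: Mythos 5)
The paper gives no explicit proof of this corollary; it is meant to be immediate from Proposition~\ref{prn:SC06_5}, Theorem~\ref{thm:SC03_1}, Corollary~\ref{cly:SL27_1}, the volume convergence of Proposition~\ref{prn:SC12_1}, and the definitions. Your reconstruction of the first equality $\mathcal{F}_r(\bar M)=\mathcal{R}_r(\bar M)$ is correct and is the expected route: recenter Proposition~\ref{prn:SC06_5} (equivalently Corollary~\ref{cly:SL27_1}) at an arbitrary $\bar y$, use $\mathbf{cr}(M_i)\ge 1>r$ — in fact $\mathbf{cr}(M_i)\to\infty$ by Theorem~\ref{thm:SC03_1} — to kill the $\bar r$ term in the minimum, and match $\mathbf{vr}(\bar y)\ge r$ against $\liminf_i\mathbf{cvr}(y_i)\ge r$. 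Likewise the $C^\infty$ upgrade by combining Corollary~\ref{cly:SL23_1} with the regularity estimate in Definition~\ref{dfn:SC02_1} is the right mechanism, and this is what Proposition~\ref{prn:SL04_1} has already carried out.

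The step that needs tightening is the reverse inclusion of the Hausdorff convergence at the critical value $\mathbf{vr}(\bar y)=r$. Your diagonal argument over $r'\uparrow r$ produces approximants $y_i\in\mathcal{F}_{r_i}(M_i)$ with $r_i\uparrow r$, but the monotonicity $\mathcal{F}_{r''}(M_i)\subset\mathcal{F}_{r'}(M_i)$ for $r'\le r''$ runs the wrong way: it shows $\mathcal{F}_{r_i}(M_i)\supset\mathcal{F}_{r}(M_i)$, so the diagonal sequence is not forced to lie in $\mathcal{F}_r(M_i)$ itself. If $\mathbf{vr}$ had a strict local maximum with value exactly $r$ at $\bar y$ — so that $\mathbf{vr}<r$ on a punctured neighborhood — then no points of $\mathcal{F}_r(M_i)$ need accumulate at $\bar y$, and the raw Hausdorff distance to $\mathcal{F}_r(\bar M)$ would not go to zero. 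What one does get for free is the two-sided sandwich: the Kuratowski limit of $\mathcal{F}_r(M_i)$ contains $\overline{\{\mathbf{vr}>r\}}$ (by your strict-inequality-for-$r'<r$ argument via rigidity, Proposition~\ref{prn:SC17_3}, and volume convergence) and is contained in $\{\mathbf{vr}\ge r\}$, with the gap carried entirely by the level set $\{\mathbf{vr}=r\}$, which has measure zero by the density estimate. That sandwich is what the corollary is actually used for later (e.g.\ in the proof of Corollary~\ref{cly:SC06_2}), so your proof captures the substance; but as written the claim ``produces $y_i\in\mathcal{F}_r(M_i)$'' overstates what the diagonal argument yields, and you should either rule out the boundary configuration or explicitly weaken the convergence to the sandwich form.
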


\begin{corollary}
Same conditions as in Theorem~\ref{thm:SC04_1}, $0<H\leq 3$.
Then we have
\begin{align}
  \lim_{i \to \infty} \int_{B(x_i,H)} \mathbf{vr}^{(1)}(y)^{-2p_0}dy \leq  H^{2n-2p_0}\mathbf{E}.
 \label{eqn:SC06_13}
\end{align}
\label{cly:SC06_2}
\end{corollary}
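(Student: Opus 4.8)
The plan is to deduce Corollary~\ref{cly:SC06_2} directly from the density estimate that is already available in two forms---the \emph{strong density estimate} valid uniformly on the flows $\mathcal{LM}_i$ (item 3 of the \emph{Rough estimates} subsection, i.e. inequality (\ref{eqn:SL25_2}) applied on the scale $r=1$), and the volume-radius convergence recorded in Corollary~\ref{cly:SC06_1}. First I would recall that each $\mathcal{LM}_i \in \mathscr{K}(n,A;1)$ has $\mathbf{pcr}(M_i\times[-1,1])\geq 1$, hence in particular $\mathbf{cr}(M_i)\geq 1$, so that (\ref{eqn:SL25_2}) holds with $r=1$ and any $x_0\in M_i$:
\begin{align*}
  \int_{B(x_i,H)} \mathbf{vr}^{(1)}(y)^{-2p_0}\, dy \leq H^{2n-2p_0}\int_{B(x_i,H)} \mathbf{vr}^{(1)}(y)^{-2p_0}\, H^{2p_0-2n}\, dy.
\end{align*}
Since $H\leq 3\leq \mathbf{cr}(M_i)$ is automatic once we have $\mathbf{cr}(M_i)$ large, and since the inequality $r^{2p_0-2n}\int_{B(x_i,r)}\mathbf{vr}^{(r)}(y)^{-2p_0}dy\leq 2\mathbf{E}$ is the defining density estimate of the canonical radius applied at the point $x_i$ on the scale $r=H$, we get the clean uniform bound $\int_{B(x_i,H)}\mathbf{vr}^{(1)}(y)^{-2p_0}dy\leq H^{2n-2p_0}(2\mathbf{E})$ for every $i$; the factor $2\mathbf{E}$ rather than $\mathbf{E}$ is exactly the slack built into Definition~\ref{dfn:SC02_1}, and one should be slightly careful here because the corollary claims the limit is bounded by $H^{2n-2p_0}\mathbf{E}$, not $2H^{2n-2p_0}\mathbf{E}$.

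To recover the sharper constant $\mathbf{E}$ in the limit, I would pass to the limit space $\bar M\in\widetilde{\mathscr{KS}}(n,\kappa)$ (Theorem~\ref{thm:SC04_1}) and use that on $\bar M$ the \emph{genuine} density estimate, Proposition~\ref{prn:SB25_2}, holds with the original constant $E(n,\kappa,p_0)\leq\mathbf{E}$ (recall $\mathbf{E}=E(n,\kappa,p_0)+200\omega_{2n}\kappa^{-1}$, so in fact one has room to spare). The key point is the convergence of volume radius under the Cheeger-Gromov limit. By Corollary~\ref{cly:SC06_1}, for each fixed $r\in(0,1)$ we have $\mathcal{F}_r(M_i)\to\mathcal{F}_r(\bar M)$ in $C^\infty$ on compact sets, and correspondingly $\mathbf{vr}^{(1)}$ on $M_i$ converges to $\mathbf{vr}=\mathbf{cvr}$ on $\mathcal{R}(\bar M)$ by Proposition~\ref{prn:SC06_5}; the set $\mathcal{S}(\bar M)$ has measure zero by Proposition~\ref{prn:SL15_1}, so it does not contribute to the integral. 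Splitting $B(x_i,H)=\bigl(B(x_i,H)\cap\mathcal{F}_r(M_i)\bigr)\cup\bigl(B(x_i,H)\cap\mathcal{D}_r(M_i)\bigr)$, on the first piece we have pointwise convergence of the integrand and a uniform $L^1$ bound, so dominated convergence (in the Cheeger-Gromov sense, using the volume convergence Proposition~\ref{prn:SC12_1}) gives
\begin{align*}
  \lim_{i\to\infty}\int_{B(x_i,H)\cap\mathcal{F}_r(M_i)} \mathbf{vr}^{(1)}(y)^{-2p_0}\,dy = \int_{B(\bar x,H)\cap\mathcal{F}_r(\bar M)} \mathbf{vr}(y)^{-2p_0}\,dy,
\end{align*}
while on the second piece the contribution is controlled by $(\ref{eqn:SC02_2})$, which shows $|B(x_i,H)\cap\mathcal{D}_r(M_i)|\leq 4\mathbf{E}H^{2n-2p_0}r^{2p_0}$, hence $\int_{B(x_i,H)\cap\mathcal{D}_r(M_i)}\mathbf{vr}^{(1)}(y)^{-2p_0}dy\leq$ (a modification of the same density estimate restricted to the $\mathcal{D}_{2^{-k}r}\setminus\mathcal{D}_{2^{-k-1}r}$ dyadic shells, as in the proof of Corollary~\ref{cly:HD19_2}), and this tends to $0$ as $r\to 0$, uniformly in $i$.

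Combining the two pieces and letting $r\to 0$, I obtain $\lim_{i\to\infty}\int_{B(x_i,H)}\mathbf{vr}^{(1)}(y)^{-2p_0}dy = \int_{B(\bar x,H)}\mathbf{vr}(y)^{-2p_0}dy$, and then I apply Proposition~\ref{prn:SB25_2} on $\bar M$ with the ball $B(\bar x,H)$ of radius $H$ to conclude $\int_{B(\bar x,H)}\mathbf{vr}(y)^{-2p_0}dy\leq H^{2n-2p_0}E(n,\kappa,p_0)\leq H^{2n-2p_0}\mathbf{E}$, which is (\ref{eqn:SC06_13}). The main obstacle I anticipate is the interchange of limit and integral on the regular part: one needs the $C^\infty$-Cheeger-Gromov convergence of $\mathcal{F}_r(M_i)$ together with a uniform integrable majorant on $B(x_i,H)$, and the only available majorant is $\mathbf{vr}^{(1)}(y)^{-2p_0}$ itself, whose uniform $L^1$ bound is precisely the density estimate we are trying to transfer; so the argument must be organized so that the uniform bound is used to justify both the tail estimate near $\mathcal{S}$ and the dominated-convergence step on the regular part, without circularity. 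This is handled exactly as in the proof of Theorem~\ref{thm:HD19_1}, where the analogous passage to the limit of (\ref{eqn:SB25_15}) is carried out, so I would refer to that argument for the routine measure-theoretic details rather than reproduce them.
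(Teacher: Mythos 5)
The central step of your argument is the claim that $\mathbf{vr}^{(1)}$ on $M_i$ converges pointwise on $\mathcal{R}(\bar M)$ to $\mathbf{vr}=\mathbf{cvr}$, so that $\lim_i\int_{B(x_i,H)}\mathbf{vr}^{(1)}(y)^{-2p_0}\,dy=\int_{B(\bar x,H)}\mathbf{vr}(y)^{-2p_0}\,dy$, after which Proposition~\ref{prn:SB25_2} closes the argument immediately. This is where the gap lies. By Definition~\ref{dfn:SC24_1}, $\mathbf{vr}^{(1)}$ is by construction capped at $1$: it is the supremum of admissible $r\in(0,1)$. The quantity that Proposition~\ref{prn:SC06_5} and Corollary~\ref{cly:SL27_1} show converges to $\mathbf{vr}(\bar y)$ is $\mathbf{cvr}(y_i)=\mathbf{vr}^{(\mathbf{cr}(y_i))}(y_i)$, whose scale $\mathbf{cr}(y_i)\to\infty$ by Theorem~\ref{thm:SC03_1}. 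The function $\mathbf{vr}^{(1)}(y_i)$ instead converges to $\min\{\mathbf{vr}(\bar y),1\}$, which is \emph{smaller} than $\mathbf{cvr}$ precisely on the very-regular part of $\bar M$ where $\mathbf{vr}>1$. Since $p_0>0$, raising to the $-2p_0$ power reverses the inequality, so the correct limit $\int_{B(\bar x,H)}\min\{\mathbf{vr},1\}^{-2p_0}$ is strictly \emph{larger} than $\int_{B(\bar x,H)}\mathbf{vr}^{-2p_0}$, and the bound from Proposition~\ref{prn:SB25_2} does not apply to it directly.

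Quantitatively, the discrepancy is the full-measure term: $\min\{\mathbf{vr},1\}^{-2p_0}\leq 1+\mathbf{vr}^{-2p_0}$, so the limit integral is controlled by $|B(\bar x,H)|+H^{2n-2p_0}E(n,\kappa,p_0)$, and $|B(\bar x,H)|$ is of size $\omega_{2n}H^{2n}$, which is not absorbed into $H^{2n-2p_0}E(n,\kappa,p_0)$. The paper's proof handles this by (i) splitting into three zones $\mathcal{F}_{r_1}$, $\mathcal{F}_{r_2}\setminus\mathcal{F}_{r_1}$, $\mathcal{D}_{r_2}$; (ii) in the middle zone, estimating $\mathbf{vr}^{(1)}\geq\min\{\mathbf{vr},1\}$ on the limit and then expanding $\min\{\mathbf{vr},1\}^{-2p_0}<1+\mathbf{vr}^{-2p_0}$; and (iii) using $H\leq 3$ to write $|B(\bar x,H)|\leq\omega_{2n}H^{2n}\leq 9^{p_0}\omega_{2n}H^{2n-2p_0}$ and then absorbing the extra $\omega_{2n}$-term into the slack $\mathbf{E}-E(n,\kappa,p_0)=200\omega_{2n}\kappa^{-1}$ in (\ref{eqn:SC17_11}). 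In your version the hypothesis $H\leq 3$ is used only for the preliminary uniform bound and never enters the final estimate, which should be a warning sign: the constant $\mathbf{E}$ was deliberately padded precisely to accommodate the volume term that your argument drops. The rest of your decomposition (Vitali-type covering near $\mathcal{S}$, dyadic shells, volume continuity to pass to the limit) is in the right spirit and broadly matches the paper, but the limit identification must be corrected before it yields (\ref{eqn:SC06_13}).
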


\begin{proof}
 Fix two positive scales $r_1,r_2$ such that $0<r_2<r_1<1$.
 \begin{align}
    \int_{B(x_i,H) \cap \mathcal{F}_{r_1}} \mathbf{vr}^{(1)}(y)^{-2p_0} dy\leq r_1^{-2p_0} |B(x_i,H) \cap \mathcal{F}_{r_1}| \leq r_1^{-2p_0}|B(x_i,H)|.
 \label{eqn:SC17_6}
 \end{align}
 Fix arbitrary $r \in (0,1)$, then we have
 \begin{align*}
    \lim_{i \to \infty} \int_{B(x_i,H) \cap (\mathcal{F}_{r_2} \backslash \mathcal{F}_{r_1})} \mathbf{vr}^{(1)}(y)^{-2p_0}  dy
    =\int_{B(\bar{x},H) \cap (\mathcal{F}_{r_2} \cap \mathcal{F}_{r_1})} \mathbf{vr}^{(1)}(y)^{-2p_0}  dy.
 \end{align*}
 Note that
 \begin{align*}
    &\qquad \int_{B(\bar{x},H) \cap (\mathcal{F}_{r_2} \cap \mathcal{F}_{r_1})} \mathbf{vr}^{(1)}(y)^{-2p_0}  dy\\
    &\leq \int_{B(\bar{x},H) \cap (\mathcal{F}_{r_2} \cap \mathcal{F}_{r_1})} \min\{\mathbf{vr}, 1\}^{-2p_0}  dy
    <\int_{B(\bar{x},H) \cap (\mathcal{F}_{r_2} \cap \mathcal{F}_{r_1})} \left\{1+\mathbf{vr}(y)^{-2p_0} \right\} dy\\
    &<\int_{B(\bar{x},H)} \left\{1+\mathbf{vr}(y)^{-2p_0} \right\} dy
      <|B(\bar{x},H)| + H^{2n-2p_0}E(n,\kappa,p_0).
 \end{align*}
 It follows that
 \begin{align}
    \lim_{i \to \infty} \int_{B(x_i,H) \cap (\mathcal{F}_{r_2} \backslash \mathcal{F}_{r_1})} \mathbf{vr}^{(1)}(y)^{-2p_0}  dy
    \leq |B(\bar{x},H)| + H^{2n-2p_0}E(n,\kappa,p_0).
  \label{eqn:SC17_7}
 \end{align}
 Note that $\mathcal{S} \cap \overline{B(\bar{x},H)}$ is a compact set with Hausdorff dimension at most
 $2n-4$, which is strictly less than $2n-2p_0$. By the definition of Hausdorff dimension, for every small number $\xi$,
 we can find finite cover
 $\cup_{j=1}^{N_{\xi}} B(\bar{y}_j, \rho_j)$  of $\mathcal{S} \cap \overline{B(\bar{x},H)}$ , such that $\displaystyle  \sum_{j=1}^{N_{\xi}} |\rho_{j}|^{2n-2p_0} < \xi$. 
 By the finiteness of this cover, we can choose an $r_2$ very small such that $\cup_{j=1}^{N_{\xi}} B(\bar{y}_j, \rho_j)$
 is a cover of $\mathcal{D}_{r_2} \cap \overline{B(\bar{x},H)}$.
 Therefore, for large $i$, we have a finite cover $\cup_{j=1}^{N_{\xi}} B(y_{i,j}, \rho_j)$ of the set
 $\mathcal{D}_{r_2}(M_i) \cap \overline{B(x_i,H)}$ such that $\displaystyle \sum_{j=1}^{N_i} |\rho_{i,j}|^{2n-2p_0} < \xi$. 
 Combining this with the canonical radius density estimate, we have
 \begin{align}
    \int_{B(x_i,H) \cap \mathcal{D}_{r_2}} \mathbf{vr}^{(1)}(y)^{-2p_0}dy
    \leq \sum_{j=1}^{N_i} \int_{B(y_{i,j},\rho_{i,j})} \mathbf{vr}^{(\rho_{i,j})}(y)^{-2p_0} dy
    \leq 2\mathbf{E} \sum_{j=1}^{N_i} |\rho_{i,j}|^{2n-2p_0} < 2\mathbf{E}\xi.
 \label{eqn:SC17_8}
 \end{align}
 Putting (\ref{eqn:SC17_6}), (\ref{eqn:SC17_7}) and (\ref{eqn:SC17_8}) together, we have
 \begin{align*}
   &\qquad \int_{B(x_i,H)} \mathbf{vr}^{(1)}(y)^{-2p_0} dy\\
   &\leq \int_{B(x_i,H) \cap \mathcal{F}_{r_1}} \mathbf{vr}^{(1)}(y)^{-2p_0}dy
    +\int_{B(x_i,H) \cap (\mathcal{F}_{r_2} \backslash \mathcal{F}_{r_1})} \mathbf{vr}^{(1)}(y)^{-2p_0}dy
    +\int_{B(x_i,H) \cap \mathcal{D}_{r_2}} \mathbf{vr}^{(1)}(y)^{-2p_0} dy\\
   &\leq r_1^{-2p_0}|B(x_i,H)|+ |B(\bar{x},H)| +H^{2n-2p_0} E(n,\kappa,p_0) +2\mathbf{E}\xi.
 \end{align*}
 Taking limit on both sides and then letting $\xi \to 0, r_1 \to 1$, we have
 \begin{align*}
  \lim_{i \to \infty} \int_{B(x_i,H)} \mathbf{vr}^{(1)}(y)^{-2p_0}  &\leq 2|B(\bar{x},H)| + H^{2n-2p_0} E(n,\kappa,p_0)
    \leq \left( 2 \omega_{2n} H^{2p_0} + E(n,\kappa,p_0) \right) H^{2n-2p_0}\\
    &\leq  \left( 2 \cdot 9^{p_0}\omega_{2n} +  E(n,\kappa,p_0) \right) H^{2n-2p_0},
 \end{align*}
 where we used the fact that $H\leq 3$ in the last step. Then (\ref{eqn:SC06_13}) follows from the definition of $\mathbf{E}$.
\end{proof}

\begin{proposition}
Same conditions as in Theorem~\ref{thm:SC04_1}, $1 \leq H <\infty$. Then we have
 \begin{align}
    \lim_{i \to \infty} \sup_{1 \leq \rho \leq H} \omega_{2n}^{-1}\rho^{-2n}|B(x_i,\rho)| <\kappa^{-1}, \label{eqn:SC06_4}
 \end{align}
where $g_i(0)$ is the default metric.   In particular, for every large $i$, the volume ratio estimate holds
on $(M_i,x_i,g_i(0))$ for every scale $\rho \in (0,H]$.
\label{prn:SC06_1}
\end{proposition}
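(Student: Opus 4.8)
The plan is to prove \eqref{eqn:SC06_4} by a contradiction argument that runs in parallel to the proof of Theorem~\ref{thm:SC04_1} and Proposition~\ref{prn:SC06_5}, using the volume continuity of Proposition~\ref{prn:SC12_1} and the fact that the blowup limit $\bar{M}$ lies in $\widetilde{\mathscr{KS}}(n,\kappa)$. First I would suppose, for contradiction, that after passing to a subsequence there exist scales $\rho_i \in [1,H]$ with $\omega_{2n}^{-1}\rho_i^{-2n}|B(x_i,\rho_i)| \to c \geq \kappa^{-1}$. Since $[1,H]$ is compact we may assume $\rho_i \to \bar{\rho} \in [1,H]$.

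Next I would invoke Theorem~\ref{thm:SC04_1}: since $\mathcal{LM}_i \in \mathscr{K}(n,A;1)$ satisfies \eqref{eqn:SL06_1}, the pointed spaces $(M_i,x_i,g_i(0))$ converge in the $\hat{C}^\infty$-Cheeger-Gromov topology to a limit $(\bar{M},\bar{x},\bar{g}) \in \widetilde{\mathscr{KS}}(n,\kappa)$. By the volume continuity in Proposition~\ref{prn:SC12_1}, and the fact that $|B(x_i,\rho)|$ is a uniformly Lipschitz function of $\rho$ on $[1,H]$ (a consequence of the Bishop-Gromov-type estimates underlying Corollary~\ref{cly:SC04_2}, so that the convergence $\rho_i \to \bar\rho$ does not disturb the limit), we get
\begin{align*}
  c = \lim_{i\to\infty} \omega_{2n}^{-1}\rho_i^{-2n}|B(x_i,\rho_i)| = \omega_{2n}^{-1}\bar{\rho}^{-2n}|B(\bar{x},\bar{\rho})|.
\end{align*}
But $\bar{M} \in \widetilde{\mathscr{KS}}(n,\kappa)$ satisfies property 6 of Definition~\ref{dfn:SC27_1} together with Bishop-Gromov volume comparison (Proposition~\ref{prn:HD19_1}): the normalized volume ratio is non-increasing in the radius and bounded below by $\mathrm{avr}(\bar{M}) \geq \kappa$, while at radius $0$ it equals the volume density $\mathrm{v}(\bar{x}) \leq 1$. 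Hence
\begin{align*}
  \kappa \leq \omega_{2n}^{-1}\bar{\rho}^{-2n}|B(\bar{x},\bar{\rho})| \leq 1 < \kappa^{-1},
\end{align*}
so $c \leq 1 < \kappa^{-1}$, contradicting $c \geq \kappa^{-1}$. This proves \eqref{eqn:SC06_4}. The final sentence of the proposition is then immediate: \eqref{eqn:SC06_4} gives the upper bound on volume ratios for scales $\rho \in [1,H]$ and large $i$, while for $\rho \in (0,1]$ the bound $\omega_{2n}^{-1}\rho^{-2n}|B(x_i,\rho)| \leq \kappa^{-1}$ is already part of the volume ratio estimate in the definition of canonical radius (Definition~\ref{dfn:SC02_1}), since $\mathbf{cr}(M_i) \geq 1$.

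The only genuinely delicate point is the interchange of the two limits $i \to \infty$ and $\rho_i \to \bar{\rho}$: one must know that $\omega_{2n}^{-1}\rho^{-2n}|B(x_i,\rho)|$ is equicontinuous in $\rho$ on $[1,H]$, uniformly in $i$. I expect this to be the main obstacle to write cleanly, but it follows from the uniform volume doubling property of $\mathscr{K}(n,A;1)$ (the first item of Definition~\ref{dfn:SC02_1} applied at all scales up to $1$, combined with a covering argument as in Corollary~\ref{cly:SC04_2}), which forces $\frac{d}{d\rho}|B(x_i,\rho)| \leq C(n,\kappa)\rho^{2n-1}$ in the barrier sense, hence a uniform modulus of continuity for the normalized ratio. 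Alternatively, one can bypass this entirely by first taking $i \to \infty$ for each fixed rational $\rho \in [1,H]$, using Proposition~\ref{prn:SC12_1} and the rigidity of $\widetilde{\mathscr{KS}}(n,\kappa)$ to get $\omega_{2n}^{-1}\rho^{-2n}|B(\bar x,\rho)| \leq 1$, and then using the uniform Lipschitz dependence on $\rho$ to upgrade to the supremum, choosing $i$ so large that the ratio at any $\rho \in [1,H]$ is within $\tfrac{1}{2}(\kappa^{-1}-1)$ of its limit value.
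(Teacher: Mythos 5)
Your proof is correct and follows essentially the same contradiction argument as the paper: pass to a convergent subsequence $\rho_i \to \bar\rho$, use volume continuity under Cheeger--Gromov convergence (Proposition~\ref{prn:SC12_1}) to transfer the estimate to the limit, and invoke the fact that $\bar{M}\in\widetilde{\mathscr{KS}}(n,\kappa)$ forces the normalized volume ratio to be at most $1 < \kappa^{-1}$. The equicontinuity worry you flag can actually be dispatched more simply than either of your two suggestions: for any $\epsilon>0$ and $i$ large, $|B(x_i,\rho_i)| \leq |B(x_i,\bar\rho+\epsilon)|$ by monotonicity of ball volumes, so taking $i\to\infty$ at the fixed radius $\bar\rho+\epsilon$ and then $\epsilon\to 0^+$ (using continuity of $|B(\bar x,\cdot)|$ on the single limit space) already yields $\omega_{2n}^{-1}\bar\rho^{-2n}|B(\bar x,\bar\rho)|\geq\kappa^{-1}$ without any uniform-in-$i$ modulus of continuity.
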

\begin{proof}
  We argue by contradiction. If (\ref{eqn:SC06_4}) were false, by taking subsequence if necessary, one can assume that there exists $\rho_i \in [1,H]$ such that
  $ \omega_{2n}^{-1}\rho_i^{-2n}|B(x_i,\rho_i)| >\kappa^{-1}$. 
  Let $\bar{\rho}$ be the limit of $\rho_i$, then by the volume continuity in the Cheeger-Gromov convergence, we see that
  \begin{align}
     \omega_{2n}^{-1}\bar{\rho}^{-2n}|B(\bar{x},\bar{\rho})| \geq \kappa^{-1}.
  \label{eqn:SC06_5}
  \end{align}
  However, since $\bar{M} \in \widetilde{\mathscr{KS}}(n,\kappa)$, we know $ \omega_{2n}^{-1}\bar{\rho}^{-2n}|B(\bar{x},\bar{\rho})| \leq 1$,
  which contradicts (\ref{eqn:SC06_5}).
\end{proof}

\begin{proposition}
Same conditions as in Theorem~\ref{thm:SC04_1}, $0<H<\infty$. For every large $i$, the regularity estimate holds
on $(M_i,x_i,g_i(0))$ for every scale $\rho \in (0,H]$.
\label{prn:SC06_2}
\end{proposition}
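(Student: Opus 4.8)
The plan is to argue by contradiction, separating the scales $\rho$ into those below the (a priori) canonical radius, where the estimate is automatic, and the scales $\rho\in[1,H]$, which are handled by a point‑selection and blowup argument together with the smooth convergence of the regular parts.

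First I would note that since $\mathcal{LM}_i\in\mathscr{K}(n,A;1)$, Definition~\ref{dfn:SK27_1} forces $\mathbf{cr}(x_i)\geq 1$ for every $i$. By the regularity estimate built into Definition~\ref{dfn:SC02_1}, the desired bound
\[
   \rho^{2+k}|\nabla^k Rm|\leq 4c_a^{-2}\quad\text{on } B\!\left(x_i,\tfrac12 c_a\rho\right),\quad 0\leq k\leq 5,
\]
whenever $\omega_{2n}^{-1}\rho^{-2n}|B(x_i,\rho)|\geq 1-\delta_0$, therefore holds automatically for every $\rho\in(0,1)$. So I may assume $H>1$ and it remains only to treat $\rho\in[1,H]$.

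Next, suppose the conclusion fails. Then, after passing to a subsequence, there are scales $\rho_i\in[1,H]$ with $\omega_{2n}^{-1}\rho_i^{-2n}|B(x_i,\rho_i)|\geq 1-\delta_0$, an index $k\in\{0,\dots,5\}$, and points $y_i\in B(x_i,\tfrac12 c_a\rho_i)$ with $\rho_i^{2+k}|\nabla^k Rm|(y_i)>4c_a^{-2}$. After a further subsequence $\rho_i\to\bar\rho\in[1,H]$. Since $\bar M\in\widetilde{\mathscr{KS}}(n,\kappa)$ by Theorem~\ref{thm:SC04_1}, the volume convergence (Proposition~\ref{prn:SC12_1}) gives $\omega_{2n}^{-1}\bar\rho^{-2n}|B(\bar x,\bar\rho)|\geq 1-\delta_0$, i.e.\ $\mathbf{vr}(\bar x)\geq\bar\rho$ on the model space. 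The local Harnack inequality for $\mathbf{vr}$ (Proposition~\ref{prn:SC26_7}) then yields $\mathbf{vr}(z)\geq\tilde K^{-1}\bar\rho\geq\tilde K^{-1}$ for every $z\in B(\bar x,\tilde K^{-1}\bar\rho)$; since $\tfrac12 c_a<\tilde K^{-1}$ this puts $B(\bar x,\tfrac12 c_a\bar\rho)$ inside $\mathcal{F}_{r_0}(\bar M)=\mathcal{R}_{r_0}(\bar M)$ for some fixed $r_0\in(0,1)$ (Corollary~\ref{cly:SC06_1}), while Corollary~\ref{cly:SL23_1} gives $\bar\rho^{2+k}|\nabla^k Rm|\leq c_a^{-2}$ on $B(\bar x,c_a\bar\rho)\supset B(\bar x,\tfrac12 c_a\bar\rho)$.

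Finally I would transfer this bound back to $M_i$. Because $d_{g_i(0)}(x_i,y_i)\leq\tfrac12 c_a\rho_i\to\tfrac12 c_a\bar\rho$, the $y_i$ subconverge to a point $\bar y\in B(\bar x,\tilde K^{-1}\bar\rho)\subset\mathcal{R}_{r_0}(\bar M)$, which is regular. By the $\hat C^\infty$-convergence of the regular regions (Corollary~\ref{cly:SC06_1}, in particular its improvement to $C^\infty$), for $i$ large the ball $B(x_i,\tfrac12 c_a\rho_i)$ lies in $\mathcal{F}_{r_0}(M_i)$ and $\nabla^k Rm$ of $g_i(0)$ converges smoothly there to $\nabla^k Rm$ of $\bar g$; hence $\rho_i^{2+k}|\nabla^k Rm|(y_i)\to\bar\rho^{2+k}|\nabla^k Rm|(\bar y)\leq c_a^{-2}$, so $\rho_i^{2+k}|\nabla^k Rm|(y_i)\leq 2c_a^{-2}<4c_a^{-2}$ for $i$ large, contradicting the choice of $\rho_i$ and $y_i$. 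The main obstacle, and the only non-formal point, is this last transfer step: one must know that the offending ball $B(x_i,\tfrac12 c_a\rho_i)$ sits inside a region $\mathcal{F}_{r_0}(M_i)$ with $r_0$ bounded away from $0$, so that the smooth convergence of Corollary~\ref{cly:SC06_1} is available --- this is exactly what the lower bound $\mathbf{vr}(\bar x)\geq\bar\rho\geq 1$ and the local Harnack inequality for $\mathbf{vr}$ deliver.
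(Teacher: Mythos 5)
Your argument is correct and follows essentially the same contradiction scheme as the paper's proof: reduce to $\rho\in[1,H]$ using $\mathbf{cr}(x_i)\geq 1$, pass to a limit scale $\bar\rho$, invoke volume convergence to get $\mathbf{vr}(\bar x)\geq\bar\rho$ on the model space $\bar M\in\widetilde{\mathscr{KS}}(n,\kappa)$, use the improving-regularity property of $\mathbf{vr}$ (Corollary~\ref{cly:SL23_1}) to bound curvature derivatives on $B(\bar x, c_a\bar\rho)$, and contradict the failure via the $\hat C^\infty$-convergence. The one stylistic deviation is the detour through Proposition~\ref{prn:SC26_7} and the unstated assumption $\tfrac12 c_a<\tilde K^{-1}$; this is superfluous, since the curvature bound on $B(\bar x,c_a\bar\rho)$ from Corollary~\ref{cly:SL23_1} already shows that the limit ball $B(\bar x,\tfrac12 c_a\bar\rho)$ is compactly contained in a uniformly regular region, which is all that is needed to upgrade the Gromov--Hausdorff convergence there to $C^\infty$-convergence (Proposition~\ref{prn:SL04_1}) and read off the contradiction.
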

\begin{proof}
  If the statement were false, then by taking subsequence if necessary, we can assume there exists $\rho_i \in (0,H]$ such that the regularity estimates fail on
  the scale $\rho_i$, i.e., the following two inequalities hold simultaneously.
  \begin{align}
     &\omega_{2n}^{-1}\rho_i^{-2n}|B(x_i,\rho_i)|>1-\delta_0,  \label{eqn:SC06_6}\\
     &\max_{0 \leq k \leq 5} \left\{\rho_i^{2+k} \sup_{B(x_i,\frac{1}{2}c_a \rho_i)} |\nabla^k Rm| \right\} > 4c_a^{-2}.   \label{eqn:SC06_7}
  \end{align}
  Clearly, $\rho_i \in [1,H]$ by the fact $\mathbf{cr}(x_i,0) \geq 1$. Let $\bar{\rho}$ be the limit of $\rho_i$. Then we have
\begin{align}
  \omega_{2n}^{-1}\bar{\rho}^{-2n}|B(\bar{x},\bar{\rho})| \geq 1-\delta_0. \label{eqn:SC06_8}
\end{align}
 Since $\bar{M} \in \widetilde{\mathscr{KS}}(n,\kappa)$, (\ref{eqn:SC06_8}) implies
\begin{align*}
 \max_{0 \leq k \leq 5} \left\{\bar{\rho}^{2+k} \sup_{B(\bar{x}, c_a \bar{\rho})} |\nabla^k Rm| \right\} < c_a^{-2},
\end{align*}
which contradicts (\ref{eqn:SC06_7}) in light of the smooth convergence(c.f. Proposition~\ref{prn:SL04_1}).
\end{proof}

\begin{proposition}
Same conditions as in Theorem~\ref{thm:SC04_1}, $1 \leq H \leq 2$. Then we have
\begin{align}
    \lim_{i \to \infty} \sup_{1 \leq \rho \leq H} \rho^{2p_0-2n} \int_{B(x_i,\rho)} \mathbf{vr}^{(\rho)}(y)^{-2p_0}dy \leq \frac{3}{2}\mathbf{E}. \label{eqn:SC06_16}
\end{align}
In particular, for every large $i$, the density estimate holds on $(M_i,x_i,g_i(0))$ for every scale $\rho \in (0,H]$.
\label{prn:SC06_3}
\end{proposition}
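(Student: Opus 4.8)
The plan is to argue by contradiction, exactly as in the proofs of Proposition~\ref{prn:SC06_1} and Proposition~\ref{prn:SC06_2}, feeding in the computation behind Corollary~\ref{cly:SC06_2}. If (\ref{eqn:SC06_16}) fails, then after passing to a subsequence there are $\rho_i\in[1,H]\subset[1,2]$ with $\rho_i^{2p_0-2n}\int_{B(x_i,\rho_i)}\mathbf{vr}^{(\rho_i)}(y)^{-2p_0}\,dy>\frac{3}{2}\mathbf{E}$, and we may assume $\rho_i\to\bar\rho\in[1,2]$. By Theorem~\ref{thm:SC04_1} the limit $(\bar M,\bar x,\bar g)$ of $(M_i,x_i,g_i(0))$ lies in $\widetilde{\mathscr{KS}}(n,\kappa)$, and the convergence is in $\hat C^{\infty}$ away from $\mathcal S$. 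Since $\rho_i\ge 1$ and $2p_0<2n$ we have $\rho_i^{2p_0-2n}\le 1$, and since $\rho\mapsto\mathbf{vr}^{(\rho)}$ is nondecreasing, $\mathbf{vr}^{(\rho_i)}\ge\mathbf{vr}^{(1)}$ on $M_i$. The naive bound obtained from this together with Corollary~\ref{cly:SC06_2} and $B(x_i,\rho_i)\subset B(x_i,2)$ is only $2^{2n-2p_0}\mathbf{E}$, which is too weak when $n\ge 3$; the sharp estimate must instead be extracted at the scale $\bar\rho$ itself, where the fact that $\bar M\in\widetilde{\mathscr{KS}}(n,\kappa)$ satisfies the \emph{sharp} density bound $E(n,\kappa,p_0)<\mathbf{E}$ (Proposition~\ref{prn:SB25_2}) enters.

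Concretely, I would rerun the three–region decomposition from the proof of Corollary~\ref{cly:SC06_2} on the ball $B(x_i,\rho_i)$, with fixed small scales $0<r_2<r_1<1$. On $B(x_i,\rho_i)\cap\mathcal F_{r_1}(M_i)$ one has $\mathbf{vr}^{(\rho_i)}(y)\ge\mathbf{vr}^{(1)}(y)\ge r_1$ (the second inequality precisely as in the proof of Corollary~\ref{cly:SC06_2}, via the regularity clause of the canonical radius), so this region contributes at most $r_1^{-2p_0}\rho_i^{2p_0-2n}|B(x_i,\rho_i)|$, which by the volume convergence (Proposition~\ref{prn:SC12_1}) and $|B(\bar x,\bar\rho)|\le\omega_{2n}\bar\rho^{2n}$ tends to at most $r_1^{-2p_0}\omega_{2n}\bar\rho^{2p_0}\le 16\,r_1^{-2p_0}\omega_{2n}$. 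On the intermediate annulus $B(x_i,\rho_i)\cap(\mathcal F_{r_2}\setminus\mathcal F_{r_1})(M_i)$ the $\hat C^{\infty}$ convergence of Corollary~\ref{cly:SC06_1}, together with the Bishop–Gromov monotonicity on the regular part of $\bar M$ (Corollary~\ref{cly:GD08_1}), shows that along $y_i\to\bar y$ regular $\mathbf{vr}^{(\rho_i)}(y_i)\to\min\{\mathbf{vr}_{\bar M}(\bar y),\bar\rho\}\ge\min\{\mathbf{vr}_{\bar M}(\bar y),1\}$, so this region contributes at most $\bar\rho^{2p_0-2n}\int_{B(\bar x,\bar\rho)}\min\{\mathbf{vr}_{\bar M},1\}^{-2p_0}<\bar\rho^{2p_0-2n}\bigl(|B(\bar x,\bar\rho)|+\bar\rho^{2n-2p_0}E(n,\kappa,p_0)\bigr)\le 16\omega_{2n}+E(n,\kappa,p_0)$, the density estimate on $\bar M$ being Proposition~\ref{prn:SB25_2}. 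Finally $\mathcal S(\bar M)$ has Hausdorff dimension at most $2n-4<2n-2p_0$ (Proposition~\ref{prn:SL15_1}), so for each $\xi>0$ one covers $\mathcal D_{r_2}(M_i)\cap\overline{B(x_i,\rho_i)}$, for $i$ large and $r_2$ small, by finitely many balls $B(y_{i,j},\rho_{i,j})$ with $\sum_j\rho_{i,j}^{2n-2p_0}<\xi$ and $\rho_{i,j}<1\le\mathbf{cr}(M_i)$; the density clause of the canonical radius, together with $\mathbf{vr}^{(\rho_i)}\ge\mathbf{vr}^{(\rho_{i,j})}$ there, bounds this region by $\rho_i^{2p_0-2n}\cdot 2\mathbf{E}\xi\le 2\mathbf{E}\xi$.

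Adding the three contributions and then letting $i\to\infty$, $\xi\to 0$, $r_1\to 1$ yields $\limsup_i\rho_i^{2p_0-2n}\int_{B(x_i,\rho_i)}\mathbf{vr}^{(\rho_i)}(y)^{-2p_0}\,dy\le 32\,\omega_{2n}+E(n,\kappa,p_0)$. Since $\kappa\le 1$ and $\mathbf{E}=E(n,\kappa,p_0)+200\omega_{2n}\kappa^{-1}$ (see (\ref{eqn:SC17_11})), the right side is strictly less than $E(n,\kappa,p_0)+300\omega_{2n}\kappa^{-1}\le\frac{3}{2}\mathbf{E}$, contradicting the choice of $\rho_i$; hence (\ref{eqn:SC06_16}) holds. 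The ``in particular'' assertion then follows because, at the point $x_i$, the density estimate of Definition~\ref{dfn:SC02_1} at scales $\rho\in(0,1)$ is already contained in $\mathbf{cr}(x_i)\ge\mathbf{cr}(M_i)\ge 1$, while for $\rho\in[1,H]$ we have just proved the bound $\frac{3}{2}\mathbf{E}<2\mathbf{E}$ for all large $i$. The main obstacle is precisely the one highlighted above: the crude inequality $\mathbf{vr}^{(\rho_i)}\ge\mathbf{vr}^{(1)}$ does not suffice, so one has to localise the estimate to the scale $\bar\rho$ and use that $\bar M\in\widetilde{\mathscr{KS}}(n,\kappa)$ satisfies the sharp density bound, with the reserve term $200\omega_{2n}\kappa^{-1}$ built into $\mathbf{E}$ absorbing the boundary and volume errors; the passage of the singular integrand $\mathbf{vr}^{(\rho_i)\,-2p_0}$ across $\mathcal S$ in the limit is routine and handled verbatim as in Corollary~\ref{cly:SC06_2}.
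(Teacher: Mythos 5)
Your argument is correct, but it takes a longer route than necessary because it overlooks the key feature of Corollary~\ref{cly:SC06_2}: its bound is \emph{scale-localised}, of the form $H^{2n-2p_0}\mathbf{E}$ for any $H\in(0,3]$. You apply the corollary only at the fixed scale $H=2$ (via $B(x_i,\rho_i)\subset B(x_i,2)$) and, correctly noting that $2^{2n-2p_0}\mathbf{E}$ is too weak for $n\ge 3$, then re-derive a localised density estimate from scratch by redoing the three-region decomposition at scale $\bar\rho$ inside $\bar M\in\widetilde{\mathscr{KS}}(n,\kappa)$. That works—your bound $32\omega_{2n}+E(n,\kappa,p_0)<\frac32\mathbf{E}$ is indeed absorbed by the $200\omega_{2n}\kappa^{-1}$ margin built into $\mathbf{E}$—but it is unnecessary. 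The paper's proof simply applies Corollary~\ref{cly:SC06_2} at the scale $H=\bar\rho+\epsilon$ (where $\bar\rho=\lim\rho_i$) instead of $H=2$: if the claimed bound failed then $\int_{B(x_i,\rho_i)}\mathbf{vr}^{(1)-2p_0}>\frac32\mathbf{E}\,\rho_i^{2n-2p_0}$ after reducing to $\mathbf{vr}^{(1)}$, and since $B(x_i,\rho_i)\subset B(x_i,\bar\rho+\epsilon)$ while $\rho_i^{2n-2p_0}/(\bar\rho+\epsilon)^{2n-2p_0}\to 1$, this yields $\int_{B(x_i,\bar\rho+\epsilon)}\mathbf{vr}^{(1)-2p_0}>\frac54\mathbf{E}(\bar\rho+\epsilon)^{2n-2p_0}$ for large $i$, directly contradicting (\ref{eqn:SC06_13}) with $H=\bar\rho+\epsilon<3$. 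So the factor $\rho_i^{2p_0-2n}$ should be cancelled against the $H^{2n-2p_0}$ in the corollary rather than bounded crudely by $1$; that is the missing observation, and it replaces your entire second paragraph with a two-line comparison.
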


\begin{proof}
Since $\mathbf{vr}^{(\rho)} \geq \mathbf{vr}^{(1)}$ whenever $\rho \geq 1$, in order to show (\ref{eqn:SC06_16}), it suffices to show
\begin{align}
    \lim_{i \to \infty} \sup_{1 \leq \rho
    \leq H} \rho^{2p_0-2n} \int_{B(x_i,\rho)} \mathbf{vr}^{(1)}(y)^{-2p_0}dy \leq \frac{3}{2}\mathbf{E}.
    \label{eqn:SL23_9}
\end{align}
We argue by contradiction. If (\ref{eqn:SL23_9}) were false, by taking subsequence if necessary, one can assume that there exists $\rho_i \in [1,H]$ such that
  \begin{align*}
     \rho_i^{2p_0-2n} \int_{B(x_i,\rho_i)} \mathbf{vr}^{(1)}(y)^{-2p_0}dy > \frac{3}{2}\mathbf{E}, \quad
     \Rightarrow \quad \int_{B(x_i,\rho_i)} \mathbf{vr}^{(1)}(y)^{-2p_0}dy \geq  \frac{3}{2}\mathbf{E} \rho_i^{2n-2p_0}.
  \end{align*}
  Let $\bar{\rho}$ be the limit of $\rho_i$. Fix $\epsilon$ arbitrary small positive number, then we have
  \begin{align}
     \int_{B(x_i,\bar{\rho}+\epsilon)} \mathbf{vr}^{(1)}(y)^{-2p_0}dy> \frac{3}{2}\mathbf{E} \rho_i^{2n-2p_0}> \frac{5}{4} \mathbf{E} (\bar{\rho}+\epsilon)^{2n-2p_0}
  \label{eqn:SC07_1}
  \end{align}
  for large $i$.  Note that $\bar{\rho}+\epsilon<3$, so (\ref{eqn:SC07_1}) contradicts (\ref{eqn:SC06_13}).
\end{proof}

\begin{proposition}
Same conditions as in Theorem~\ref{thm:SC04_1}, $0<H\leq 2$.
Then for every large $i$, the connectivity estimate holds on $(M_i,x_i,g_i(0))$ for every scale $\rho \in (0,H]$.
\label{prn:SC06_4}
\end{proposition}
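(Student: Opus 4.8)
The plan is to run the contradiction scheme already used for the three preceding conditions, Propositions~\ref{prn:SC06_1}--\ref{prn:SC06_3}: the strong connectivity estimate available in the model space $\widetilde{\mathscr{KS}}(n,\kappa)$ (item~4 of Theorem~\ref{thm:SL21_1}) is stated with strictly better constants — endpoints in $\mathcal{F}_{\frac{1}{100}c_b r}$, curve in $\mathcal{F}_{\epsilon_b r}$, and length exactly $d$ — than the connectivity estimate required in Definition~\ref{dfn:SC02_1} — endpoints in $\mathcal{F}^{(r)}_{\frac{1}{50}c_b r}$, curve in $\mathcal{F}^{(r)}_{\frac12\epsilon_b r}$, length $<2d$ — so the latter is an open condition under Cheeger--Gromov convergence to a space in $\widetilde{\mathscr{KS}}(n,\kappa)$. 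Since $\mathcal{LM}_i\in\mathscr{K}(n,A;1)$ forces $\mathbf{cr}(M_i)\ge1$, the connectivity estimate on $(M_i,x_i,g_i(0))$ already holds for every scale $\rho\in(0,1]$; it remains to treat $\rho\in[1,H]$. If it failed, then after passing to a subsequence there would be $\rho_i\in[1,H]$ and points $y_i,z_i\in B(x_i,\rho_i)\cap\mathcal{F}^{(\rho_i)}_{\frac{1}{50}c_b\rho_i}(M_i)$ that cannot be joined inside $\mathcal{F}^{(\rho_i)}_{\frac12\epsilon_b\rho_i}(M_i)$ by a curve of length $<2d_{g_i(0)}(y_i,z_i)$; passing to a further subsequence, $\rho_i\to\bar\rho\in[1,H]$, $y_i\to\bar y$, $z_i\to\bar z$ in $\bar M$, which by Theorem~\ref{thm:SC04_1} lies in $\widetilde{\mathscr{KS}}(n,\kappa)$.

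First I would dispose of the degenerate case $\bar y=\bar z$, i.e.\ $d(y_i,z_i)\to0$. Since $\mathbf{vr}^{(\rho_i)}(y_i)\ge\frac{1}{50}c_b\rho_i$ with $\rho_i\ge1$, the regularity estimate of the canonical radius together with the local Harnack inequality for the volume radius (Proposition~\ref{prn:SC24_1}) gives a uniform curvature bound on a definite ball $B_{g_i(0)}(y_i,c\,c_b\rho_i)$ on which $\mathbf{vr}^{(\rho_i)}\ge c'\,c_b\rho_i$ for dimensional constants $c,c'$; for $i$ large the minimizing geodesic from $y_i$ to $z_i$ has length $d(y_i,z_i)<2d(y_i,z_i)$ and stays in this ball, hence in $\mathcal{F}^{(\rho_i)}_{\frac12\epsilon_b\rho_i}(M_i)$ provided $\epsilon_b$ is small relative to $c_b$ — a property built into the choices \eqref{eqn:SL23_6}--\eqref{eqn:SL23_7} — contradicting the choice of $y_i,z_i$. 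So I may assume $d(\bar y,\bar z)=\lim_id(y_i,z_i)>0$. Next I claim $\bar y,\bar z\in B(\bar x,\bar\rho)\cap\mathcal{F}_{\frac{1}{100}c_b\bar\rho}(\bar M)$: containment in $\overline{B(\bar x,\bar\rho)}$ is immediate, and arguing exactly as in Proposition~\ref{prn:SC06_5} — comparing $\mathbf{vr}^{(\rho_i)}(y_i)$ with $\mathbf{vr}^{(1)}(y_i)=\mathbf{cvr}(y_i)$ when the latter is $<1$, using the regularity estimate when it is $\ge1$, then invoking the volume convergence (Proposition~\ref{prn:SC12_1}) and the rigidity of $\widetilde{\mathscr{KS}}(n,\kappa)$ (Proposition~\ref{prn:SC17_3}) — one gets $\mathbf{vr}_{\bar M}(\bar y)\ge\frac{1}{50}c_b\bar\rho>\frac{1}{100}c_b\bar\rho$, and likewise for $\bar z$. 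Then Theorem~\ref{thm:SL21_1}(4), applied on the ball $B(\bar x,\bar\rho)\subset\bar M$, produces a shortest geodesic $\bar\gamma$ from $\bar y$ to $\bar z$ with $\bar\gamma\subset\mathcal{F}_{\epsilon_b\bar\rho}(\bar M)=\mathcal{R}_{\epsilon_b\bar\rho}(\bar M)$ and $|\bar\gamma|=d(\bar y,\bar z)$.

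Finally I would transplant $\bar\gamma$ back to $M_i$. Because $\bar\gamma$ is a compact curve contained in the open smooth manifold $\mathcal{F}_{\epsilon_b\bar\rho}(\bar M)$ (note $\epsilon_b\bar\rho<1$), and the convergence $B(x_i,H)\cap\mathcal{F}_r(M_i)\to B(\bar x,H)\cap\mathcal{F}_r(\bar M)$ takes place in $C^\infty$-topology for each fixed $r<1$ (Corollary~\ref{cly:SC06_1}, \eqref{eqn:SC06_12}), for $i$ large I obtain a curve $\gamma_i'\subset\mathcal{F}^{(1)}_{\frac34\epsilon_b\bar\rho}(M_i)$ with endpoints $o(1)$-close to $y_i,z_i$ and $|\gamma_i'|\to d(\bar y,\bar z)$; since $\rho_i\to\bar\rho$, for $i$ large $\frac34\epsilon_b\bar\rho\ge\frac12\epsilon_b\rho_i$ and hence $\gamma_i'\subset\mathcal{F}^{(\rho_i)}_{\frac12\epsilon_b\rho_i}(M_i)$. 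I then connect the endpoints of $\gamma_i'$ to $y_i$ and $z_i$: each such pair consists of a point $o(1)$-close to $y_i$ (resp.\ $z_i$) with $\mathbf{vr}^{(\rho_i)}\gtrsim\epsilon_b\rho_i$ and $y_i$ (resp.\ $z_i$) with $\mathbf{vr}^{(\rho_i)}\ge\frac{1}{50}c_b\rho_i$, so their minimizing geodesics are $o(1)$-short and stay, by the local Harnack inequality again, inside a region where $\mathbf{vr}^{(\rho_i)}\ge\frac12\epsilon_b\rho_i$. Concatenating yields a curve $\gamma_i$ from $y_i$ to $z_i$ with $\gamma_i\subset\mathcal{F}^{(\rho_i)}_{\frac12\epsilon_b\rho_i}(M_i)$ and $|\gamma_i|\le d(\bar y,\bar z)+o(1)<2d(\bar y,\bar z)=2\lim_id(y_i,z_i)$; hence $|\gamma_i|<2d_{g_i(0)}(y_i,z_i)$ for $i$ large, the desired contradiction. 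The one step that genuinely needs to be written out with care — and the only real obstacle — is the bookkeeping between the three radius functions $\mathbf{vr}^{(\rho_i)}$, $\mathbf{cvr}=\mathbf{vr}^{(1)}$ and $\mathbf{vr}_{\bar M}$ across the limit when the scale $\rho_i$ is not $1$ but merely converges to $\bar\rho\in[1,H]$: one must check that the transplanted curve \emph{and} the short patching geodesics remain in $\mathcal{F}^{(\rho_i)}_{\frac12\epsilon_b\rho_i}$, which is exactly what the slack in the constants ($\tfrac{1}{100}c_b$ versus $\tfrac{1}{50}c_b$, $\epsilon_b$ versus $\tfrac12\epsilon_b$, $d$ versus $2d$) and the smallness of $\epsilon_b$ relative to $c_b$ are designed to absorb. (Alternatively, one may first rescale $g_i(0)$ by $\rho_i^{-2}$ — a bounded rescaling, and $\widetilde{\mathscr{KS}}(n,\kappa)$ is scale-invariant — to reduce literally to the scale-$1$ situation, at the cost of tracking how the flow hypotheses degrade under a factor bounded by $H^{2}$.) Everything else is the routine compactness-plus-perturbation argument already carried out for the preceding three conditions.
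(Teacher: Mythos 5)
Your proposal is correct and runs the same contradiction-plus-compactness scheme as the paper: pass to a subsequential limit in $\widetilde{\mathscr{KS}}(n,\kappa)$, invoke the strong connectivity estimate (Theorem~\ref{thm:SL21_1}(4)) there, and transplant the limit geodesic back via the $C^\infty$-convergence of Corollary~\ref{cly:SC06_1}. Your handling is actually a bit more careful than the paper's on two minor points — you explicitly dispose of the degenerate possibility $d(y_i,z_i)\to 0$, which the paper dismisses by simply asserting $d(y_i,z_i)\in[1,2H]$, and your final length comparison $|\gamma_i|\le d(\bar y,\bar z)+o(1)<2d(y_i,z_i)$ is cleaner than the paper's ``$<\tfrac{3}{2}d(\bar y,\bar z)<3d(y_i,z_i)$'' (which only yields the needed $<2d(y_i,z_i)$ after noting $d(y_i,z_i)\to d(\bar y,\bar z)$).
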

\begin{proof}
  By the canonical radius assumption, we know the connectivity estimate holds for every scale $\rho \in (0,1]$.

  If the statement were false, then by taking subsequence if necessary, we can assume that for each $i$,
  there is a scale $\rho_i \in [1,H]$ such that the connectivity estimate fails on the scale $\rho_i$.
  In other words, $\mathcal{F}_{\frac{1}{50}c_b\rho_i} \cap B(x_i,\rho_i)$ is not $\frac{1}{2}\epsilon_b \rho_i$-regular-connected.
  So there exist points $y_i,z_i \in \mathcal{F}_{\frac{1}{50}c_b\rho_i} \cap B(x_i,\rho_i)$ which cannot be connected by a curve
  $\gamma \subset \mathcal{F}_{\frac{1}{2}\epsilon_b \rho_i}$ satisfying $|\gamma|\leq 2d(y_i,z_i)$.
  By the canonical radius assumption, it is clear that $\rho_i \in [1,H]$, $d(y_i,z_i) \in [1,2H]$.
  Let $\bar{\rho}$ be the limit of $\rho_i$, $\bar{y}$ and $\bar{z}$ be the limit of $y_i$ and $z_i$ respectively.
  Clearly, we have $\bar{y}, \bar{z} \in \mathcal{R}_{\frac{1}{50}c_b \bar{\rho}} \subset \mathcal{F}_{\frac{1}{100}c_b \bar{\rho}}(\bar{M})$.
  Since $\bar{M} \in \widetilde{\mathscr{KS}}(n,\kappa)$, we can find a shortest geodesic
  $\bar{\gamma}$ connecting $\bar{y}$ and $\bar{z}$ such that $\bar{\gamma} \subset \mathcal{F}_{\epsilon_b \bar{\rho}}$.
  Note that  the limit set of  $\mathcal{F}_{\frac{1}{50}c_b \rho_i}\cap B(x_i,\rho_i)$ falls into $\mathcal{F}_{\frac{1}{100}c_b \bar{\rho}}$.
  Moreover, this convergence takes place in the smooth topology(c.f.~Corollary~\ref{cly:SC06_1}).
  So by deforming $\bar{\gamma}$ if necessary, we can construct a curve $\gamma_i$ which locates in $\mathcal{F}_{\frac{1}{2}\epsilon_b \rho_i}$
  and $|\gamma_i| <\frac{3}{2}d(\bar{y},\bar{z})<3d(y_i,z_i)$.  The existence of such a curve contradicts the choice of the points
  $y_i$ and $z_i$.
\end{proof}

Combining Proposition~\ref{prn:SC06_1} to~\ref{prn:SC06_4}, we obtain a weak-semi-continuity of canonical radius.

\begin{theorem}[\textbf{Weak continuity of canonical radius}]
Same conditions as in Theorem~\ref{thm:SC04_1}. Then we have
$\displaystyle  \lim_{i \to \infty} \mathbf{cr}(\mathcal{M}_i^{0})=\infty$. 
\label{thm:SC03_1}
\end{theorem}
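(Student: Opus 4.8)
The plan is to prove Theorem~\ref{thm:SC03_1} by a direct verification that, for large $i$, every one of the four defining properties of the canonical radius (Definition~\ref{dfn:SC02_1}) holds at $x_i$ on every scale $\rho\le H$, for an arbitrarily fixed $H$. Since $H$ can be chosen arbitrarily large, this forces $\mathbf{cr}(\mathcal{M}_i^0)\to\infty$. The engine of the argument is Theorem~\ref{thm:SC04_1}, which tells us the blowup limit $(\bar M,\bar x,\bar g)$ lies in the model space $\widetilde{\mathscr{KS}}(n,\kappa)$; the estimates available on $\widetilde{\mathscr{KS}}(n,\kappa)$ — the \textbf{a priori estimates in model spaces} of Theorem~\ref{thm:SL21_1} — are exactly the (strong form of the) four conditions, and these pass to the approaching manifolds by the various continuity statements already established.

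The key steps, in order, are as follows. First, note that under the hypotheses of Theorem~\ref{thm:SC04_1} we may take a subsequence so that $(M_i,x_i,g_i(0))\stackrel{\hat C^\infty}{\longrightarrow}(\bar M,\bar x,\bar g)\in\widetilde{\mathscr{KS}}(n,\kappa)$, with $\mathbf{cr}(\bar x)=\infty$. Fix $H<\infty$; it suffices to show that for all large $i$ the volume ratio, regularity, density, and connectivity estimates of Definition~\ref{dfn:SC02_1} hold at $x_i$ for every scale $\rho\in(0,H]$. Each of these is precisely one of Proposition~\ref{prn:SC06_1} (volume ratio, valid on $[0,H]$ for any finite $H$), Proposition~\ref{prn:SC06_2} (regularity, valid on $[0,H]$ for any finite $H$), Proposition~\ref{prn:SC06_3} (density, stated for $1\le H\le 2$), and Proposition~\ref{prn:SC06_4} (connectivity, stated for $0<H\le 2$). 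For scales $\rho\le 1$ all four hold automatically by the standing assumption $\mathcal{LM}_i\in\mathscr{K}(n,A;1)$, i.e. $\mathbf{cr}(M_i)\ge 1$. For scales $1\le\rho\le H$, one first treats the range $1\le\rho\le 2$ directly from Propositions~\ref{prn:SC06_3} and~\ref{prn:SC06_4}; then the density and connectivity estimates on larger scales are reduced to the small-scale case by the standard covering/ball-packing arguments (Proposition~\ref{prn:SC02_2}, Corollary~\ref{cly:SC04_2}, Proposition~\ref{prn:SB27_1}), exactly as in the proof of Theorem~\ref{thm:SL21_1} in $\widetilde{\mathscr{KS}}(n,\kappa)$. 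Combining, we conclude $\mathbf{cr}(x_i)\ge H$ for all $i$ large, hence $\mathbf{cr}(\mathcal{M}_i^0)=\inf_{x}\mathbf{cr}(x)\to\infty$ as $H$ was arbitrary; here one uses that $\mathbf{cr}(\mathcal{M}_i^0)$ being small would, by the usual point-selection, produce a sequence of base points converging to a limit in $\widetilde{\mathscr{KS}}(n,\kappa)$ violating one of the model-space estimates.

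Concretely the cleanest formulation is the contrapositive: if the conclusion failed, then after passing to a subsequence there would be points $y_i\in M_i$ with $\mathbf{cr}(y_i)\le C_0$ for a fixed $C_0$; replacing $x_i$ by $y_i$ (note $\mathcal{LM}_i$ still satisfies (\ref{eqn:SL06_1}) and $\mathbf{pcr}\ge 1$, so Theorem~\ref{thm:SC04_1} applies to $(M_i,y_i,g_i(0))$), the limit $(\bar M,\bar y,\bar g)\in\widetilde{\mathscr{KS}}(n,\kappa)$, whence $\mathbf{cr}(\bar y)=\infty$ and all four model-space estimates of Theorem~\ref{thm:SL21_1} hold at $\bar y$ on all scales. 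Then Propositions~\ref{prn:SC06_1}--\ref{prn:SC06_4} — whose proofs are precisely the statement that each model-space estimate is inherited on the pre-limit manifolds via volume convergence (Proposition~\ref{prn:SC12_1}) and the improved smooth convergence away from singularities (Proposition~\ref{prn:SL04_1}, Corollary~\ref{cly:SC06_1}) — give $\mathbf{cr}(y_i)\ge C_0+1$ for large $i$, a contradiction.

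The main obstacle is not conceptual but organizational: one must make sure the scale ranges line up. Propositions~\ref{prn:SC06_3} and~\ref{prn:SC06_4} are only stated for $H\le 2$, so for a general target $H$ one iterates — once the canonical radius is known to be $\ge 2$ at $x_i$, the ball-packing reductions (Corollary~\ref{cly:SC04_2}, Proposition~\ref{prn:SB27_1}, and the generic-regular-sub-ball estimate Proposition~\ref{prn:HE11_1}/Proposition~\ref{prn:SC02_2}) upgrade the density and connectivity estimates to all scales $\le H$ using only the already-verified volume ratio and regularity bounds (which hold on $[0,H]$ by Propositions~\ref{prn:SC06_1}--\ref{prn:SC06_2}). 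A secondary technical point is the density estimate: one needs Corollary~\ref{cly:SC06_2}, the uniform bound $\lim_i\int_{B(x_i,H)}\mathbf{vr}^{(1)}(y)^{-2p_0}\,dy\le H^{2n-2p_0}\mathbf{E}$ for $H\le 3$, which in turn rests on the Hausdorff-codimension-$4$ bound for $\mathcal S(\bar M)$ (Proposition~\ref{prn:SL15_1}) to control the contribution of a neighborhood of the singular set; this is the one place where the finer structure of $\bar M\in\widetilde{\mathscr{KS}}(n,\kappa)$, rather than just the rough Minkowski bound, is genuinely used.
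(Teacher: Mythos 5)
Your argument is correct and follows essentially the same route as the paper: a contradiction argument in which the weak compactness Theorem~\ref{thm:SC04_1} produces a limit $(\bar M,\bar x,\bar g)\in\widetilde{\mathscr{KS}}(n,\kappa)$ and Propositions~\ref{prn:SC06_1}--\ref{prn:SC06_4} transfer the model-space estimates back to the approaching manifolds, forcing $\mathbf{cr}(x_i,0)\ge 2H$ and contradicting $\lim_i\mathbf{cr}(x_i,0)\le\frac32 H$. You are right to single out the scale-range mismatch that the published proof passes over in one line: Propositions~\ref{prn:SC06_3} and~\ref{prn:SC06_4} are stated only for $H\le 2$, so quoting them for scales up to $2H$ requires an iteration. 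One caution on your proposed fix, though: the pure ball-packing route does not straightforwardly deliver the density estimate at scales $\rho>2$. Corollary~\ref{cly:SC04_2} bounds $|B(x,\rho)\cap\mathcal D_r|$ by a quantity proportional to $\rho^{2n}r^{2p_0}$, and after the layer-cake integration this produces a bound proportional to $\rho^{2n}$ rather than the required $2\mathbf{E}\,\rho^{2n-2p_0}$; the $\rho^{2p_0}$ excess does not cancel. What actually saves the density estimate at large $\rho$ is that the integrand is $\mathbf{vr}^{(\rho)}(y)^{-2p_0}=\min\{\mathbf{vr}(y),\rho\}^{-2p_0}$, not $\mathbf{vr}^{(1)}(y)^{-2p_0}$, so that the contribution of the region $\{\mathbf{vr}\ge\rho\}$ is at most $\rho^{-2p_0}|B(x,\rho)|\le\omega_{2n}\rho^{2n-2p_0}$ and the rest is controlled by Proposition~\ref{prn:SB25_2} on the limit. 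The cleanest way to propagate this to the pre-limit manifolds is a rescaling iteration (each step a factor $2$, using Theorem~\ref{thm:SK27_2} to restore $\mathbf{pcr}\ge 1$ after scaling), rather than a single covering at a fixed scale. With that substitution your write-up is sound and faithfully reproduces the paper's intent with more detail than the original.
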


\begin{proof}
If the statement were wrong, then we can find a sequence of polarized K\"ahler Ricci flow solutions
$\mathcal{LM}_i \in \mathscr{K}(n,A;1)$ satisfying (\ref{eqn:SL06_1}) and
 \begin{align}
    \lim_{i \to \infty} \mathbf{cr}(\mathcal{M}_i^{0})=H<\infty.
 \label{eqn:SC06_17}
 \end{align}
 For each $\mathcal{M}_i$, we can find a point $x_i$ such that $\mathbf{cr}(x_i,0) \leq \frac{3}{2} \mathbf{cr}(\mathcal{M}_i^{0})$ by definition. So we have
 \begin{align}
    \lim_{i \to \infty} \mathbf{cr}(x_i,0) \leq \frac{3}{2}H<\infty.  \label{eqn:SC04_3}
 \end{align}
 Note that $T_i \to \infty$, so the first property holds trivially on the scale $2H$ for large $i$.
 By Propositions listed before, we see that there exists an $N=N(H)$ such that for every $i>N$, we have volume ratio estimate, regularity estimate,
 density estimate and connectivity estimate hold on each scale $\rho \in (0,2H]$. Therefore, by definition, we obtain that
 $\displaystyle \lim_{i \to \infty} \mathbf{cr}(x_i,0) \geq 2H$, which contradicts (\ref{eqn:SC04_3}).
\end{proof}

\begin{corollary}[\textbf{Weak continuity of canonical volume radius}]
  Same conditions as in Theorem~\ref{thm:SC04_1}. Then we have
  $\displaystyle   \mathbf{vr}(\bar{x})=\lim_{i \to \infty} \mathbf{cvr}(x_i)$. 
\label{cly:SL27_1}
\end{corollary}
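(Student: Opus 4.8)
The plan is to deduce Corollary~\ref{cly:SL27_1} directly from Theorem~\ref{thm:SC03_1} and Proposition~\ref{prn:SC06_5}, so that no new estimate is required. First I would record that, by Theorem~\ref{thm:SC04_1}, the limit space $\bar{M}$ lies in $\widetilde{\mathscr{KS}}(n,\kappa)$, hence $\mathbf{cr}(\bar{x})=\infty$ and therefore $\mathbf{vr}(\bar{x})=\mathbf{cvr}(\bar{x})$. This identifies the left-hand side of the desired equality with a canonical volume radius in the model space and lets me quote Proposition~\ref{prn:SC06_5} verbatim.

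Next I would invoke Theorem~\ref{thm:SC03_1}, which asserts $\lim_{i\to\infty}\mathbf{cr}(\mathcal{M}_i^0)=\infty$. Since $\mathbf{cr}(\mathcal{M}_i^0)$ is by definition the infimum of $\mathbf{cr}(\cdot,0)$ over $M_i$, we have $\mathbf{cr}(x_i,0)\ge \mathbf{cr}(\mathcal{M}_i^0)$ for every $i$, and consequently $\bar{r}:=\lim_{i\to\infty}\mathbf{cr}(x_i)=\infty$. If the sequence $\mathbf{cvr}(x_i)$ does not converge a priori, one passes to an arbitrary convergent subsequence and argues on it; since the limit along any such subsequence will be shown to equal $\mathbf{vr}(\bar{x})$, the full sequence in fact converges, which justifies writing $\lim_{i\to\infty}\mathbf{cvr}(x_i)$ without comment.

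Finally, I would feed $\bar{r}=\infty$ into Proposition~\ref{prn:SC06_5}, which gives $\min\{\bar{r},\mathbf{vr}(\bar{x})\}=\lim_{i\to\infty}\mathbf{cvr}(x_i)$. With $\bar{r}=\infty$ the left-hand side collapses to $\mathbf{vr}(\bar{x})$, whether $\mathbf{vr}(\bar{x})$ is finite (the generic case, forced by Anderson's gap theorem unless $\bar{M}$ is $\C^n$) or infinite (the case $\bar{M}=\C^n$, where both sides are $\infty$ by Case~2 of the proof of Proposition~\ref{prn:SC06_5}). This yields the claimed identity. The statement is a bookkeeping consequence of the two cited results, so I do not anticipate any genuine analytic obstacle; the only mild point of care is the reduction from $\mathbf{cr}(\mathcal{M}_i^0)\to\infty$ to $\mathbf{cr}(x_i)\to\infty$ via the infimum definition of $\mathbf{cr}$ of a manifold, together with the subsequence remark above.
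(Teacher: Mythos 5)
Your proposal is correct and is exactly the paper's argument: the paper proves this corollary by citing the combination of Proposition~\ref{prn:SC06_5} and Theorem~\ref{thm:SC03_1}, which is precisely the two-step reduction you carry out (first $\bar{r}=\infty$ from Theorem~\ref{thm:SC03_1} via the infimum definition of $\mathbf{cr}(\mathcal{M}_i^0)$, then feed $\bar{r}=\infty$ into Proposition~\ref{prn:SC06_5}). The subsequence remark and the observation that $\mathbf{vr}(\bar{x})=\mathbf{cvr}(\bar{x})$ since $\mathbf{cr}(\bar{x})=\infty$ are both consistent with what the paper records immediately before Proposition~\ref{prn:SC06_5}.
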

\begin{proof}
 It follows from the combination of Proposition~\ref{prn:SC06_5} and Theorem~\ref{thm:SC03_1}.
\end{proof}

\begin{theorem}[\textbf{Weak continuity of polarized canonical radius}]
Suppose $\mathcal{LM}_i \in \mathscr{K}(n,A;0.5)$ satisfies (\ref{eqn:SL06_1}).
Then $\mathbf{pcr}(\mathcal{M}_i^{0}) \geq 1$ for $i$ large enough.
\label{thm:SK27_2}
\end{theorem}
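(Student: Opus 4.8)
The plan is to argue by contradiction, assembling three facts established earlier: the weak continuity of the canonical radius (Theorem~\ref{thm:SC03_1}), the identification of every blowup limit as a model space (Theorem~\ref{thm:SC04_1}), and the weak equivalence of $\mathbf{cr}$ and $\mathbf{pcr}$ near the model moduli (Corollary~\ref{cly:SK27_1}). Suppose the conclusion fails; then, after passing to a subsequence, for each $i$ there is a point $x_i\in M_i$ with $\mathbf{pcr}(x_i)<1$.

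First I would pass to the regime of the weak-continuity theorems by a parabolic rescaling. Since $\mathcal{LM}_i\in\mathscr{K}(n,A;0.5)$, the rescaled flows $\widetilde{\mathcal{LM}}_i=(M_i,4g_i,J,L_i^2,h_i^2)$ lie in $\mathscr{K}(n,A;1)$: the Sobolev constant is scale invariant, the $C^0$-norms of $\dot\varphi$ and of $R-n\lambda$ are unchanged or decreased, the volume increases, the existence time increases, and $\mathbf{pcr}$ scales by the length factor $2$, so $\mathbf{pcr}\ge1$ on $M_i\times[-1,1]$ for the rescaled flow. Moreover the normalization (\ref{eqn:SL06_1}) is preserved, since $T_i$ and $\Vol(M_i)$ blow up by fixed factors while $\sup_{\widetilde{\mathcal{M}}_i}(|R|+|\lambda|)$ is divided by $4$. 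Applying Theorem~\ref{thm:SC03_1} to $\widetilde{\mathcal{LM}}_i$ and undoing the scaling gives $\mathbf{cr}(\mathcal{M}_i^0)\to\infty$; in particular $\mathbf{cr}(x_i)\ge1$ for $i$ large. By Definition~\ref{dfn:SK27_1}, once $\mathbf{cr}(x_i)\ge1$ the failure $\mathbf{pcr}(x_i)<1$ is forced by the Bergman condition, so $\sup_{1\le j\le 2k_0}\mathbf{b}^{(j)}(x_i)<-2k_0$ for all large $i$, where $k_0=k_0(n,A)$ is the constant of Corollary~\ref{cly:SK27_1}.

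Next, since $\mathbf{cr}(M_i^0)$ is uniformly bounded below, the weak compactness of the central time slices (Theorem~\ref{thm:HE11_1}, together with the volume convergence of Proposition~\ref{prn:SC12_1}) yields, after a further subsequence, $(M_i,x_i,g_i(0))\stackrel{\hat{C}^{\infty}}{\longrightarrow}(\bar M,\bar x,\bar g)$; applying Theorem~\ref{thm:SC04_1} to the rescaled flows $\widetilde{\mathcal{LM}}_i$ and using the scale invariance of $\widetilde{\mathscr{KS}}(n,\kappa)$ shows $\bar M\in\widetilde{\mathscr{KS}}(n,\kappa)$. Let $\epsilon=\epsilon(n,A,\kappa)$ be the constant of Corollary~\ref{cly:SK27_1}, and observe that the central slices $(M_i,g_i(0),J,L_i,h_i)$ satisfy (\ref{eqn:SK27_1}) with $B=B(n,A)$, directly from (\ref{eqn:SK20_1}). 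For $i$ large we then have both $d_{PGH}((M_i,x_i,g_i(0)),(\bar M,\bar x,\bar g))<\epsilon$ and $\mathbf{cr}(M_i^0)\ge1$, so Corollary~\ref{cly:SK27_1} applies at $x_i$ and yields $\mathbf{pcr}(x_i)\ge1$, contradicting the choice of $x_i$. This completes the argument.

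The substantive input is Corollary~\ref{cly:SK27_1} (which rests on Proposition~\ref{prn:SK27_1}, i.e. the H\"ormander/Donaldson--Sun partial-$C^0$ argument applied to the central slice); the present statement is merely its ``limit'' form. Accordingly I expect the only delicate point to be the rescaling bookkeeping between $\mathscr{K}(n,A;0.5)$ and $\mathscr{K}(n,A;1)$ — checking that all quantitative hypotheses of Theorems~\ref{thm:SC03_1} and~\ref{thm:SC04_1}, and the relation between the radius lower bound and the fixed time interval $[-1,1]$, transform correctly under the parabolic dilation — together with the routine verification that (\ref{eqn:SK27_1}) follows uniformly from (\ref{eqn:SK20_1}).
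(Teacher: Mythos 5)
Your proof is correct and follows essentially the same route the paper takes: the paper's proof is a one‑line combination of Theorem~\ref{thm:SC04_1}, Theorem~\ref{thm:SC03_1} (``for the case $r_0=0.5$''), and Corollary~\ref{cly:SK27_1}, and your argument is exactly that combination with the parabolic rescaling from $\mathscr{K}(n,A;0.5)$ to $\mathscr{K}(n,A;1)$ carried out explicitly instead of cited implicitly. The contradiction framing and the intermediate observation that the Bergman condition must fail are unnecessary (Corollary~\ref{cly:SK27_1} gives $\mathbf{pcr}(x_i)\geq 1$ directly once $\mathbf{cr}$ is large and the base point is $d_{PGH}$-close to a model space), but they do no harm.
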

\begin{proof}
 It follows from the combination of Theorem~\ref{thm:SC04_1}, Theorem~\ref{thm:SC03_1} (for the case $r_0=0.5$) and Corollary~\ref{cly:SK27_1}.
\end{proof}

\subsection{A priori bound of polarized canonical radius}
We shall use a maximum principle type argument to show that the polarized canonical radius cannot be too small.
The technique used in the following proof is inspired by the proof of Theorem 12.1 of~\cite{Pe1}.

\begin{proposition}[\textbf{A priori bound of $\textbf{pcr}$}]
There is a uniform integer constant $j_0=j_0(n,A)$ with the following property.

Suppose $\mathcal{LM} \in \mathscr{K}(n,A)$, then
 \begin{align}
    \mathbf{pcr}(\mathcal{M}^{t}) \geq \frac{1}{j_0}
 \label{eqn:SB27_4}
 \end{align}
for every $t \in [-1, 1]$.
  \label{prn:SC28_2}
\end{proposition}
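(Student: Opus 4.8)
The plan is to argue by contradiction using a point-selecting and blowup argument, exactly in the spirit of the proof of Theorem 12.1 of~\cite{Pe1}. Suppose no such $j_0$ exists. Then there is a sequence of polarized K\"ahler Ricci flows $\mathcal{LM}_i \in \mathscr{K}(n,A)$ and times $t_i \in [-1,1]$ with $\mathbf{pcr}(\mathcal{M}_i^{t_i}) \to 0$. After a time translation we may assume $t_i = 0$, so $\mathbf{pcr}(\mathcal{M}_i^{0}) = \frac{1}{j_i}$ with $j_i \to \infty$. Since $\mathbf{pcr}$ of a manifold is an infimum of pointwise values and is realized up to a fixed factor, we can select points $x_i \in M_i$ where the polarized canonical radius is comparably small, i.e. $\mathbf{pcr}(x_i, 0) \leq \frac{2}{j_i}$.

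The key point is to rescale so that the polarized canonical radius at the base point becomes $1$, and then further perform a point-picking procedure (a ``worst point of the worst scale'' argument, of the type used in Perelman's pseudolocality and Theorem 12.1, and reused repeatedly above, e.g. in Proposition~\ref{prn:HE11_2} and Proposition~\ref{prn:SC17_7}) so that in a large parabolic neighborhood of the rescaled base point the polarized canonical radius is bounded below, while at the center it equals $1$. After rescaling by $j_i^2$, the rescaled flows $\widetilde{\mathcal{LM}}_i$ satisfy: the curvature-type bounds in~\eqref{eqn:SK20_1} are inherited (with $\widetilde{T}_i \to \infty$ and $\sup |\widetilde{R}| + |\widetilde{\lambda}| \to 0$, since rescaling by a large factor kills scalar curvature and $\lambda$), the Sobolev constant and volume bounds persist, and $\mathbf{pcr}(\widetilde{M}_i \times [-\widetilde{T}_i, \widetilde{T}_i])$ is bounded below by some fixed $r_0 > 0$ on a large region. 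Thus $\widetilde{\mathcal{LM}}_i \in \mathscr{K}(n,A; r_0)$ after the rescaling normalization, and $\widetilde{\mathcal{LM}}_i$ satisfies the hypotheses~\eqref{eqn:SL06_1} of Theorem~\ref{thm:SC04_1} and Theorem~\ref{thm:SK27_2}.

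Now I would invoke the structure theory developed in Section 4.3: by Theorem~\ref{thm:SC04_1}, the pointed limit $(\bar{M}, \bar{x}, \bar{g})$ of $(\widetilde{M}_i, x_i, \widetilde{g}_i(0))$ lies in $\widetilde{\mathscr{KS}}(n,\kappa)$, and by Theorem~\ref{thm:SC03_1} the canonical radii satisfy $\mathbf{cr}(\widetilde{\mathcal{M}}_i^0) \to \infty$, so in particular $\mathbf{cr}(x_i, 0) \to \infty$. By Corollary~\ref{cly:SK27_1} / Theorem~\ref{thm:SK27_2}, once $\mathbf{cr}(x_i,0)$ is large and the pointed-Gromov-Hausdorff distance from $(\widetilde{M}_i, x_i, \widetilde{g}_i)$ to a space in $\widetilde{\mathscr{KS}}(n,\kappa)$ is small (which holds since the limit is in the model space), we get $\mathbf{pcr}(x_i, 0) \geq 1$ for all large $i$. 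After undoing the rescaling, this means $\mathbf{pcr}(x_i, 0) \geq j_i^{-2} \cdot$(something bounded below by a fixed amount relative to the rescaled scale), which contradicts the choice $\mathbf{pcr}(x_i,0) \le \frac{2}{j_i}$ for $j_i$ large. To make the contradiction clean one must track the normalization carefully: after rescaling by $j_i^2$ the quantity $\mathbf{pcr}$ at $x_i$ in the rescaled flow is $j_i^2 \cdot \mathbf{pcr}(x_i,0) \le 2 j_i \to \infty$, which does \emph{not} immediately contradict $\mathbf{pcr} \geq 1$; the real contradiction comes from the \emph{definition} of $\mathbf{pcr}$ as a supremum over reciprocals of integers together with the selection that $x_i$ realizes (up to a fixed factor) the worst polarized canonical radius, so that after rescaling to make the worst scale equal to $1$, the lower bound $\mathbf{pcr} \geq 1$ at $x_i$ from the model-space argument contradicts the strict inequality $\mathbf{pcr} < 1$ built into the rescaling normalization.

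The main obstacle is precisely organizing the point-selecting / rescaling bookkeeping so that (i) the rescaled flows genuinely land in $\mathscr{K}(n,A;r_0)$ for a \emph{fixed} $r_0$ independent of $i$ (this is where one needs the monotonicity of Perelman's reduced volume and the localized $W$-functional to propagate a lower bound of $\mathbf{pcr}$ from a single bad point to a whole parabolic neighborhood — analogous to how Theorem 12.1 of~\cite{Pe1} uses the $\kappa$-noncollapsing and canonical neighborhood structure), and (ii) the condition $\sup_{\mathcal{M}}(|R| + |\lambda|) \to 0$ holds after rescaling, which requires $\mathbf{pcr} \to 0$, i.e. the rescaling factor $\to \infty$. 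Condition (ii) is automatic. Condition (i) is the heart of the matter: one must show that if $\mathbf{pcr}$ is $\ge \frac{1}{j_i}$ globally but fails to be much larger at $x_i$, then on a ball of radius $\sim L_i \to \infty$ (in rescaled coordinates) around $x_i$ the polarized canonical radius stays $\ge r_0$; this follows by a secondary contradiction argument combined with the weak continuity results (Theorem~\ref{thm:SC03_1}, Theorem~\ref{thm:SK27_2}) applied inductively on scales, together with the a priori bounds in $\mathscr{K}(n,A)$ and the rough long-time pseudolocality theorem, Theorem~\ref{thm:K8_1}. Once (i) is in place, Theorems~\ref{thm:SC04_1} and~\ref{thm:SK27_2} close the argument.
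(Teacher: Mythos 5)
Your outline is correct at the macroscopic level (contradiction, rescaling, invoke Theorem~\ref{thm:SK27_2} to get $\mathbf{pcr}\geq 1$ in the limit), but the crucial point-picking step is misidentified, and this is precisely the gap your proposal leaves open at the end. You say the heart of the matter is showing that a pointwise $\mathbf{pcr}$ lower bound at $x_i$ spreads to a large spatial ball around $x_i$. This problem does not arise: the quantity $\mathbf{pcr}(\mathcal{M}^t)$ in the statement is, by definition, the infimum of $\mathbf{pcr}(x,t)$ over all $x$ in the time-$t$ slice, so any lower bound on it is automatically a lower bound on every point of the time slice, and rescaling preserves this. There is no spatial propagation problem.

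The genuine difficulty is \emph{temporal}: the contradiction hypothesis only controls $\mathbf{pcr}$ at a single time $t_0$, while membership in $\mathscr{K}(n,A;r_0)$ requires a lower bound on $\mathbf{pcr}(M\times[-1,1])$ after rescaling, i.e.\ on a whole time interval. The paper handles this with a point-picking argument purely in the time-and-scale variables: starting from a time $t_0$ where $\mathbf{pcr}(\mathcal{M}^{t_0})<\frac{1}{j_0}$, it checks whether $\mathbf{pcr}(\mathcal{M}^{t})\geq\frac{1}{2j_0}$ on the interval $[t_0-\frac{1}{2j_0},t_0+\frac{1}{2j_0}]$; if not, it moves to a violating time $t_1$ and halves the scale again. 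This process stays in $[-2,2]$ because the time increments are geometrically summable, and it terminates after finitely many steps by smoothness and compactness of the underlying manifold. At the terminal step one obtains a time $t_k$ with $\frac{1}{2^{k+1}j_0}\leq\mathbf{pcr}(\mathcal{M}^{t_k})<\frac{1}{2^kj_0}$ and $\mathbf{pcr}(\mathcal{M}^{t})\geq\frac{1}{2^{k+1}j_0}$ on a comparably sized symmetric time interval. Rescaling by $4^kj_0^2$ then yields a flow in $\mathscr{K}(n,A;0.5)$ with $\mathbf{pcr}(\widetilde{\mathcal{M}}^0)<1$ and $\sup(|R|+|\lambda|)$ small, so Theorem~\ref{thm:SK27_2} gives the contradiction $\mathbf{pcr}(\widetilde{\mathcal{M}}^0)\geq 1$ directly. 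Your proposal replaces this clean binary-halving-in-time step with an unspecified ``secondary contradiction argument combined with weak continuity results applied inductively on scales,'' which is exactly the piece that is missing; the secondary pseudolocality and reduced-volume machinery you invoke already lives inside Theorems~\ref{thm:SC04_1}--\ref{thm:SK27_2} and does not need to be redeployed here.

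You also flag a normalization issue in your own rescaling bookkeeping ($j_i^2\cdot\mathbf{pcr}(x_i,0)\leq 2j_i\to\infty$ is not a contradiction with $\mathbf{pcr}\geq 1$), which is a symptom of the same problem: without the binary-halving step you have no control over the ratio between the rescaling factor and the worst scale achieved on a whole time interval, so the contradiction does not close. Once the halving-in-time argument is in place, the rescaling factor is pinned to $4^kj_0^2$, the rescaled $\mathbf{pcr}$ at time $0$ is strictly below $1$ by construction, and the contradiction with Theorem~\ref{thm:SK27_2} is immediate.
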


\begin{proof}
Suppose for some positive integer $j_0$, (\ref{eqn:SB27_4}) fails at time $t_0 \in [-1,1]$.
Then we check whether  $\mathbf{pcr}(\mathcal{M}^t) \geq \frac{1}{2j_0}$ on the interval $[t_0-\frac{1}{2j_0}, t_0+\frac{1}{2j_0}]$.
If so, stop. Otherwise, choose $t_1$ to be such a time and continue to check if $\mathbf{pcr}(\mathcal{M}^t) \geq \frac{1}{4j_0}$
on the interval $[t_1-\frac{1}{4j_0}, t_1+\frac{1}{4j_0}]$.  In each step, we shrink the scale to one half of the scale in the previous step.
Note this process will never escape the time interval $[-2,2]$ since
\begin{align*}
  |t_k-t_0|<\frac{1}{j_0} \left( \frac{1}{2} + \frac{1}{4} + \cdot +\frac{1}{2^k}\right)<\frac{1}{j_0}<1, \quad |t_k|<|t_0|+1 \leq 2.
\end{align*}
By compactness of the underlying manifold,
it is clear that the process stops after finite steps.  So we can find $t_k$ such that  $\frac{1}{2^{k+1}j_0} \leq \mathbf{pcr}(\mathcal{M}^{t_k}) < \frac{1}{2^kj_0}$ and
$\mathbf{pcr}(\mathcal{M}^{t}) \geq \frac{1}{2^{k+1}j_0}$ for every
$t \in [t_k-\frac{1}{2^{k+1} j_0}, t_k + \frac{1}{2^{k+1} j_0}]$.
Translate the flow and rescale by constant $4^{k}j_0^2$,
we obtain a new polarized K\"ahler Ricci flow $\widetilde{\mathcal{LM}} \in \mathscr{K}(n,A)$ such that
\begin{align}
\begin{cases}
   &\mathbf{pcr}(\widetilde{\mathcal{M}}^0) <1, \\
   &\mathbf{pcr}(\widetilde{\mathcal{M}}^{t}) \geq \frac{1}{2}, \quad \forall \; t \in [-2^{k-1}j_0,2^{k-1}j_0], \\
   &|R|+|\lambda|<\frac{A}{4^{k}j_0^2}<\frac{A}{j_0^2}, \quad \textrm{on} \;  \widetilde{\mathcal{M}}, \\
   &\frac{1}{T}+\frac{1}{\Vol(M)}<\frac{1}{2^{k-1}j_0} + \frac{A}{j_0^2}, \quad \textrm{on} \;  \widetilde{\mathcal{M}}.
\end{cases}
\label{eqn:SL04_3}
\end{align}
In other words, $\widetilde{\mathcal{LM}} \in \mathscr{K}(n,A;0.5)$ and $|R|+|\lambda|+\frac{1}{T}+\frac{1}{\Vol(M)}$ very small.

Now we return to the main proof.  If the statement fails, after adjusting, translating and rescaling,  we can find a sequence of polarized
K\"ahler Ricci flow $\widetilde{\mathcal{LM}}_i \in \mathscr{K}(n,A;0.5)$ satisfying
\begin{align*}
\begin{cases}
   &\mathbf{pcr}\left( \mathcal{M}_i^{0} \right) <1, \\
   &\frac{1}{T_i}+\frac{1}{\Vol(M_i)}+\sup_{\widetilde{\mathcal{M}}_i} (|R|+|\lambda|) \to 0,
\end{cases}
\end{align*}
which contradicts Theorem~\ref{thm:SK27_2}. 
\end{proof}

 Let $\hslash=\frac{1}{j_0}$. Then we have the following fact.

\begin{theorem}[\textbf{Homogeneity on small scales}]
For some small positive number $\hslash=\hslash(n,A)$, we have
 \begin{align}
   \mathscr{K}(n,A)=\mathscr{K}\left(n,A;\hslash \right).  \label{eqn:SL04_4}
 \end{align}
\label{thm:SC28_1}
\end{theorem}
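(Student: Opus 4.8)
The plan is to deduce Theorem~\ref{thm:SC28_1} as an essentially formal consequence of the a priori bound on the polarized canonical radius, Proposition~\ref{prn:SC28_2}, together with the definitions of $\mathscr{K}(n,A)$ and $\mathscr{K}(n,A;r)$. First I would recall that, by Definition~\ref{dfn:SC23_1}, for every $r\in(0,1]$ we have the inclusion $\mathscr{K}(n,A;r)\subset\mathscr{K}(n,A)$, simply because imposing the extra condition $\mathbf{pcr}(M\times[-1,1])\geq r$ only shrinks the class. Hence the inclusion ``$\supseteq$'' in \eqref{eqn:SL04_4} is immediate once we set $\hslash=\tfrac{1}{j_0}$, where $j_0=j_0(n,A)$ is the integer produced by Proposition~\ref{prn:SC28_2}.

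The content is the reverse inclusion $\mathscr{K}(n,A)\subseteq\mathscr{K}(n,A;\hslash)$. Take an arbitrary polarized K\"ahler Ricci flow $\mathcal{LM}\in\mathscr{K}(n,A)$. By Proposition~\ref{prn:SC28_2}, $\mathbf{pcr}(\mathcal{M}^{t})\geq\tfrac{1}{j_0}=\hslash$ for every $t\in[-1,1]$. I would then invoke the definition of the polarized canonical radius of a time interval: $\mathbf{pcr}(M\times[-1,1])$ is the infimum of $\mathbf{pcr}(\mathcal{M}^t)$ over $t\in[-1,1]$ (the paragraph following Definition~\ref{dfn:SK27_1} states that one defines $\mathbf{pcr}$ of time slices of a flow, and of a manifold as the infimum of pointwise $\mathbf{pcr}$; the interval version is the infimum over the slices). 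Therefore $\mathbf{pcr}(M\times[-1,1])=\inf_{t\in[-1,1]}\mathbf{pcr}(\mathcal{M}^t)\geq\hslash$, which is precisely the defining condition for $\mathcal{LM}\in\mathscr{K}(n,A;\hslash)$. Since $\mathcal{LM}$ was arbitrary, this gives $\mathscr{K}(n,A)\subseteq\mathscr{K}(n,A;\hslash)$, and combined with the trivial inclusion we conclude equality \eqref{eqn:SL04_4}.

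There is essentially no analytic obstacle here: all the real work has already been done in Proposition~\ref{prn:SC28_2}, whose proof runs the point-selecting/rescaling ``first violating scale'' argument and contradicts the weak continuity of the polarized canonical radius (Theorem~\ref{thm:SK27_2}), which in turn rests on the whole structure theory of $\mathscr{K}(n,A;r_0)$ developed in this section. The only point requiring a word of care is the passage from ``$\mathbf{pcr}(\mathcal{M}^t)\geq\hslash$ for each $t$'' to ``$\mathbf{pcr}(M\times[-1,1])\geq\hslash$'': one must make sure the interval version of $\mathbf{pcr}$ is indeed defined as an infimum over time slices (so that a uniform lower bound on slices transfers), and that $\hslash$ being the reciprocal of an integer is consistent with the fact that $\mathbf{pcr}$ always takes values in $\{1/j:j\in\Z^+\}$ — which it is, since $\hslash=1/j_0$. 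With these remarks the theorem follows in a few lines.

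\begin{proof}[Proof of Theorem~\ref{thm:SC28_1}]
Let $j_0=j_0(n,A)$ be the integer from Proposition~\ref{prn:SC28_2} and set $\hslash\triangleq\frac{1}{j_0}$.

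Since the defining condition of $\mathscr{K}(n,A;\hslash)$ in Definition~\ref{dfn:SC23_1} is obtained from that of $\mathscr{K}(n,A)$ by imposing the additional requirement $\mathbf{pcr}(M\times[-1,1])\geq\hslash$, we trivially have $\mathscr{K}(n,A;\hslash)\subseteq\mathscr{K}(n,A)$.

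Conversely, let $\mathcal{LM}\in\mathscr{K}(n,A)$. By Proposition~\ref{prn:SC28_2},
\begin{align*}
  \mathbf{pcr}(\mathcal{M}^{t})\geq\frac{1}{j_0}=\hslash,\qquad\forall\;t\in[-1,1].
\end{align*}
Recall that the polarized canonical radius of the time interval is, by definition, the infimum of the polarized canonical radii of its time slices:
\begin{align*}
  \mathbf{pcr}(M\times[-1,1])=\inf_{t\in[-1,1]}\mathbf{pcr}(\mathcal{M}^{t})\geq\hslash.
\end{align*}
Thus $\mathcal{LM}$ satisfies the defining condition of $\mathscr{K}(n,A;\hslash)$, and since $\mathcal{LM}$ was arbitrary we obtain $\mathscr{K}(n,A)\subseteq\mathscr{K}(n,A;\hslash)$.

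Combining the two inclusions yields $\mathscr{K}(n,A)=\mathscr{K}(n,A;\hslash)$.
\end{proof}
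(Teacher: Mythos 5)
Your proof is correct and takes essentially the same approach as the paper, which simply sets $\hslash = 1/j_0$ and observes that the identity \eqref{eqn:SL04_4} is an immediate consequence of Proposition~\ref{prn:SC28_2} together with Definition~\ref{dfn:SC23_1}. Your write-up spells out the two inclusions and the passage from time-slice bounds to the interval bound, which the paper leaves implicit, but there is no substantive difference in the argument.
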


\section{Structure of polarized K\"ahler Ricci flows in $\mathscr{K}(n,A)$}

Because of Theorem~\ref{thm:SC28_1}, $\mathscr{K}(n,A)=\mathscr{K}\left(n,A;\hslash \right)$.
We do have a uniform lower bound for polarized canonical radius.

\subsection{Local metric structure, flow structure, and line bundle structure}
The purpose of this subsection is to set up estimates related to the local metric structure, flow structure and
line bundle structure of every flow in $\mathscr{K}(n,A)$. In particular, we shall prove
Theorem~\ref{thmin:SC24_1} and Theorem~\ref{thmin:HC08_1}.

\begin{proposition}[\textbf{K\"ahler tangent cone}]
Suppose $\mathcal{LM}_i \in \mathscr{K}(n,A)$ is a sequence of polarized K\"ahler Ricci flows.
Let $(\bar{M}, \bar{x}, \bar{g})$ be the limit space of $(M_i,x_i, g_i(0))$.
Then for each $\bar{y} \in \bar{M}$, every tangent space of $\bar{M}$ at $\bar{y}$ is an irreducible metric cone.
Moreover, this metric cone can be extended as an eternal, possibly singular Ricci flow solution.
  \label{prn:HA07_2}
\end{proposition}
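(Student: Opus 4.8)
The plan is to reduce Proposition~\ref{prn:HA07_2} to the machinery already developed for flows in $\mathscr{K}(n,A;r_0)$. By Theorem~\ref{thm:SC28_1} we know $\mathscr{K}(n,A)=\mathscr{K}(n,A;\hslash)$ for the uniform Planck scale $\hslash=\hslash(n,A)$; hence any sequence $\mathcal{LM}_i \in \mathscr{K}(n,A)$ automatically satisfies $\mathbf{pcr}(\mathcal{M}_i^t)\geq \hslash$ for all $t\in[-1,1]$, so the hypotheses of the results in Section 4.3 are met (after the usual parabolic rescaling to normalize $\lambda$, as is done throughout that section). First I would fix $\bar y\in\bar M$ and a tangent space $\hat Y$ of $\bar M$ at $\bar y$, realized as $\hat Y=\lim_{k\to\infty}(\bar M,\bar y,r_k^{-2}\bar g)$ for some scales $r_k\to 0$. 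As in the proof of Theorem~\ref{thm:SC09_1}, I would pass to a diagonal subsequence so that $\hat Y$ is simultaneously the pointed Cheeger--Gromov limit of $(M_{i_k}, y_{i_k}, r_{i_k}^{-2}g_{i_k}(0))$ for suitably chosen points $y_{i_k}\in M_{i_k}$; the point is that blowing up a flow in $\mathscr{K}(n,A)$ at an arbitrarily small scale produces a sequence that still lies in $\mathscr{K}(n,A)$ (the conditions in \eqref{eqn:SK20_1} are scale-invariant except for $T\geq 2$ and the volume lower bound, both of which improve under blowup), so in particular $\mathbf{pcr}$ stays bounded below along the rescaled flows.

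The heart of the argument is then exactly Theorem~\ref{thm:SC09_1}: once $\hat Y$ is exhibited as a blowup limit of polarized K\"ahler Ricci flows with uniform lower bound on polarized canonical radius and with $\sup|R|+|\lambda|\to 0$ along the rescaled flows (which holds because rescaling by $r_{i_k}^{-2}$ with $r_{i_k}\to 0$ multiplies $|R|+|\lambda|$ by $r_{i_k}^2\to 0$), Proposition~\ref{prn:SC16_1} gives a limit backward heat solution $\hat u>0$ on $\mathcal{R}(\hat Y)\times(-\infty,0]$ satisfying $\nabla\nabla\hat f+\frac{\hat g}{2t}\equiv 0$, and the four-step argument in the proof of Theorem~\ref{thm:SC09_1} (using the high codimension of $\mathcal{S}(\hat Y)$ from Proposition~\ref{prn:SL15_1}, the Killing-field technique of Lemma~\ref{lma:HE04_1}, and the path-connectedness of $\mathcal{R}(\hat Y)$ from Theorem~\ref{thm:HE11_1}) produces the irreducible metric cone structure, with $\hat f=\frac{d^2}{4|t|}$. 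The "eternal, possibly singular Ricci flow solution" claim is then immediate: the cone $\hat Y$, being scalar-flat (limit of $\mathcal{R}$ which is scalar-flat since $R\to 0$) and K\"ahler Ricci-flat on its regular part, extends trivially in the time direction to $\hat Y\times(-\infty,\infty)$ as a static Ricci flow, exactly as described in Section~\ref{subsec:reduced} for elements of $\widetilde{\mathscr{KS}}(n)$. One can further note that $\hat Y\in\widetilde{\mathscr{KS}}(n,\kappa)$ by Theorem~\ref{thm:SC04_1} when the diameter/non-collapsing hypotheses there apply, but even without that, the cone and flow structure follow from Theorem~\ref{thm:SC09_1} alone.

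The main obstacle is bookkeeping rather than a genuinely new difficulty: I need to verify carefully that the point-selection producing $y_{i_k}$ can be arranged so that the rescaled flows $(M_{i_k}, y_{i_k}, r_{i_k}^{-2}g_{i_k})$ genuinely satisfy the standing hypothesis \eqref{eqn:SL06_1} of the $\mathscr{K}(n,A;r_0)$ theory, in particular that $r_{i_k}^{-2}T_{i_k}\to\infty$ and that $\Vol$ of the rescaled manifolds tends to infinity. Both are delicate only in that $T_i$ and $\Vol(M_i)$ are merely bounded below by a fixed constant for generic $\mathcal{LM}_i\in\mathscr{K}(n,A)$, not tending to infinity; however, blowing up by $r_{i_k}^{-2}\to\infty$ rescales $T$ to $r_{i_k}^{-2}T_{i_k}\to\infty$ and, because of the uniform Sobolev bound and the local volume-ratio lower bound $\kappa$, rescales the relevant local volume to grow without bound, so the effective hypotheses of Theorem~\ref{thm:SC09_1} and Proposition~\ref{prn:SC16_1} are satisfied on the scale of the limit. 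Assembling these observations, together with a reference to the proof of Theorem~\ref{thm:SC09_1} for the cone structure and to Section~\ref{subsec:reduced} for the trivial time-extension, completes the proof.
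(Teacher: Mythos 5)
Your proposal is correct and follows essentially the same route as the paper: reduce to the uniform lower bound $\mathbf{pcr}\geq\hslash$ via Theorem~\ref{thm:SC28_1}, realize the tangent cone as a blowup limit of rescaled flows (for which $\sup(|R|+|\lambda|)\to 0$ automatically), and apply Theorem~\ref{thm:SC09_1} for the irreducible metric cone structure, with the static time-extension following from scalar-flatness of the limit. The paper's own proof is a two-sentence citation of the same pair of theorems; the extra bookkeeping you flag (checking that $T$, $\Vol$, and $|R|+|\lambda|$ behave correctly under the rescaling $g\mapsto r_{i_k}^{-2}g$) is correct but is already implicit in the blowup step inside the proof of Theorem~\ref{thm:SC09_1}, so no genuinely new verification is needed.
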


\begin{proof}
 It follows from Theorem~\ref{thm:SC28_1} and Theorem~\ref{thm:SC09_1} that every tangent space is an irreducible metric cone.  From the proof of Theorem~\ref{thm:SC09_1}, it is clear that the tangent cone can be extended as
 an eternal, static Ricci flow solution.
\end{proof}

\begin{proposition}[\textbf{Regularity equivalence}]
Same conditions as in Proposition~\ref{prn:HA07_2}, $\bar{y} \in \bar{M}$. Then the following statements are equivalent.
\begin{enumerate}
 \item One tangent space of $\bar{y}$ is $\C^n$.
 \item Every tangent space of $\bar{y}$ is $\C^n$.
 \item $\bar{y}$ has a neighborhood with $C^{4}$-manifold structure.
 \item $\bar{y}$ has a neighborhood with $C^{\infty}$-manifold structure.
 \item $\bar{y}$ has a neighborhood with $C^{\omega}$-manifold (real analytic manifold) structure.
\end{enumerate}
\label{prn:HA08_1}
\end{proposition}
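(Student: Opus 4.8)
The plan is to prove a cycle of implications, with $(5) \Rightarrow (4) \Rightarrow (3) \Rightarrow (1)$ essentially trivial (a real analytic manifold structure is in particular smooth, which is in particular $C^4$, and a $C^4$ chart exhibits a tangent space isometric to $\C^n$ by the usual blowup). The content is in closing the loop with $(1) \Rightarrow (2) \Rightarrow (5)$. First I would prove $(1) \Rightarrow (2)$, the statement that one Euclidean tangent space forces every point near $\bar y$ to be regular with uniform estimates. Here the key mechanism is the gap/rigidity structure available from Theorem~\ref{thm:SC28_1}: since $\mathcal{LM}_i \in \mathscr{K}(n,A) = \mathscr{K}(n,A;\hslash)$, the polarized canonical radius is uniformly bounded below, so Theorem~\ref{thm:SC04_1} applies and $\bar M \in \widetilde{\mathscr{KS}}(n,\kappa)$. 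By Proposition~\ref{prn:SC12_1} the volume converges, and if some tangent cone at $\bar y$ is $\C^n$ then $\lim_{r\to 0}\omega_{2n}^{-1}r^{-2n}|B(\bar y,r)| = 1$, so $\mathrm{v}(\bar y) = 1$; by the criterion in Definition~\ref{dfn:SC27_1} (item 5) and Proposition~\ref{prn:SC17_7}, $\bar y$ cannot be singular, hence $\bar y \in \mathcal{R}$. Every point in $\mathcal{R}$ has all tangent spaces $\C^n$ by definition of the regular set, giving $(2)$; alternatively, for the sequence $x_i \to \bar y$ one gets $\mathbf{cvr}(x_i) \to \mathbf{vr}(\bar y) > 0$ by Corollary~\ref{cly:SL27_1}, and the regularity estimate in Definition~\ref{dfn:SC02_1} upgrades the convergence to $C^\infty$ on a fixed-size ball, which simultaneously delivers $(3)$ and $(4)$.

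Next, $(2) \Rightarrow (3)$, or more precisely the passage from a purely metric regularity condition to an honest manifold chart: this is really the same argument as above, using that $\mathrm{v}(\bar y) = 1$ (equivalently all tangent cones Euclidean) implies, via the $\epsilon$-regularity Proposition~\ref{prn:HD20_1} for $\bar M \in \widetilde{\mathscr{KS}}(n,\kappa)$ together with volume convergence, that $\mathbf{vr}(\bar y) > 0$, hence by Proposition~\ref{prn:SC26_7} (local Harnack of volume radius) there is a whole neighborhood on which $\mathbf{vr}$ is bounded below and $|Rm|$ with all derivatives is bounded (Corollary~\ref{cly:SL23_1}); this neighborhood carries a smooth manifold structure, and since the convergence $(M_i,x_i,g_i(0)) \to (\bar M,\bar x,\bar g)$ is $\hat C^\infty$ (Proposition~\ref{prn:SL04_1} and the definition of $\mathbf{cr}$), it is in fact $C^\infty$, giving $(4)$, which trivially gives $(3)$.

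The interesting step is $(4) \Rightarrow (5)$: bootstrapping a $C^\infty$ neighborhood to a real analytic one. Here I would use that on the regular part $\mathcal{R}$ the metric $\bar g$ is K\"ahler (Theorem~\ref{thm:HE11_1}, Theorem~\ref{thm:SC04_1}) and Ricci-flat (it is a static limit of K\"ahler Ricci flows with $|R| \to 0$, by Proposition~\ref{prn:SL04_1}). A K\"ahler--Einstein, in particular K\"ahler--Ricci-flat, metric satisfies an elliptic system — in local holomorphic coordinates a complex Monge--Amp\`ere equation $\det(g_{i\bar j}) = $ (positive analytic function), with analytic coefficients — whose solutions are real analytic by the classical elliptic analyticity theorem of Morrey (and the observation that one may pass to a chart where the equation has real-analytic data once the metric is known to be $C^{2,\alpha}$). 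Thus the $C^\infty$ neighborhood from $(4)$ is automatically a $C^\omega$ neighborhood, giving $(5)$ and closing the cycle.

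I expect the main obstacle to be bookkeeping rather than a deep new idea: one must be careful that the equivalence is stated for the \emph{limit space} $\bar M$ and that the regularity notions ($C^4$ versus smooth) introduced in Remark~\ref{rmk:HA01_1} do genuinely collapse — i.e. that the a priori weak notion of ``regular point'' used throughout Section~4 (having a $C^4$ neighborhood) coincides with the classical one (all tangent cones Euclidean) once the canonical radius is uniformly bounded below. The crux is therefore the quantitative input: it is precisely Theorem~\ref{thm:SC28_1} ($\mathscr{K}(n,A) = \mathscr{K}(n,A;\hslash)$) that makes Proposition~\ref{prn:SC06_5}, Corollary~\ref{cly:SL27_1}, and the volume-gap Proposition~\ref{prn:SC17_7} available, and these together force the dichotomy ``either $\mathrm{v}(\bar y) = 1$ and $\bar y$ is uniformly regular, or $\mathrm{v}(\bar y) \leq 1 - 2\delta_0$.'' Once that is in hand, the only genuinely analytic ingredient is Morrey's elliptic analyticity applied to the Monge--Amp\`ere equation, and I would cite it rather than reprove it.
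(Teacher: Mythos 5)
Your easy direction $(5) \Rightarrow (4) \Rightarrow (3) \Rightarrow (1)$ and your volume-density-gap mechanism for upgrading a single Euclidean tangent cone to uniform regularity match the paper's in spirit (the paper closes the cycle as $1 \Rightarrow 5$ directly, treating $5 \Rightarrow 4 \Rightarrow 3 \Rightarrow 2 \Rightarrow 1$ as obvious, but this difference is cosmetic). There is, however, a genuine gap in your $(4) \Rightarrow (5)$ step.

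You write that on $\mathcal{R}$ the limit metric $\bar g$ is ``Ricci-flat (it is a static limit of K\"ahler Ricci flows with $|R| \to 0$, by Proposition~\ref{prn:SL04_1})'' and then propose Morrey-type elliptic analyticity for the Monge--Amp\`ere equation. But the hypothesis of Proposition~\ref{prn:HA07_2} is only that $\mathcal{LM}_i \in \mathscr{K}(n,A)$; there is no assumption that $\sup_{\mathcal{M}_i}(|R|+|\lambda|) \to 0$. That assumption appears in Theorem~\ref{thm:SC04_1} and Proposition~\ref{prn:SL04_1}, which therefore do \emph{not} apply to $\bar M$ itself here --- they apply to the tangent cones of $\bar M$ (obtained by a further blowup, which forces the rescaled $|R|+|\lambda| \to 0$). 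The limit metric $\bar g$ at time $0$ is merely a time slice of a limit K\"ahler Ricci flow with bounded scalar curvature; it is in general neither Ricci-flat nor Einstein, so $\det(\bar g_{i\bar j})$ does not satisfy a closed elliptic equation with analytic data, and the proposed bootstrap fails. The paper instead uses the Ricci flow structure in \emph{time}: the two-sided pseudolocality (Theorem~\ref{thm:SL27_2}, and the backward direction from Proposition~\ref{prn:HA08_3}) produces a smooth limit K\"ahler Ricci flow on a small space-time cylinder $B(\bar y, \tfrac14 c r_k) \times [-\tfrac14 c^2 r_k^2, 0]$, and then Kotschwar's theorem~\cite{Kotsch} --- that interior time slices of a smooth local Ricci flow are spatially real analytic, with no Einstein assumption --- delivers the $C^\omega$ structure. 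Your plan has no substitute for the time direction, and I do not see how to recover analyticity from the metric alone without it.

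Two smaller points, in the same vein of conflating the general-sequence setting with the bubble setting. First, you assert ``$\bar M \in \widetilde{\mathscr{KS}}(n,\kappa)$'' via Theorem~\ref{thm:SC04_1}, which again requires $(\ref{eqn:SL06_1})$; $\bar M$ is in general not in the model space (it need not even be Ricci-flat), although every tangent cone is, and that is what Propositions~\ref{prn:SC17_7} and~\ref{prn:HA07_1} actually use. Your argument survives this slip because you ultimately invoke the correct gap statement, but the intermediate claim is false as written. Second, ``regular'' in the paper (Theorem~\ref{thm:HE11_1}) is defined by possession of a $C^4$ chart, not by Euclidean tangent cones, so the sentence ``$\bar y \in \mathcal{R}$, and every point in $\mathcal{R}$ has all tangent spaces $\C^n$ by definition'' should instead route through the already-established $(3) \Rightarrow (2)$ direction; as stated it reads as circular, though it is easily repaired.
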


\begin{proof}
   It is obvious that $5 \Rightarrow 4 \Rightarrow 3 \Rightarrow 2 \Rightarrow 1$.  So it suffices to show $1 \Rightarrow 5$
   to close the circle.   Suppose $\bar{y}$ has a tangent space which is isometric to $\C^n$. So we can find a sequence   $r_k \to 0$ such that
   \begin{align*}
     (\bar{M}, \bar{y}, r_k^{-2}\bar{g})  \stackrel{G.H.}{\longrightarrow} (\C^n, 0, g_{Euc}).
   \end{align*}
  So for large $k$, the unit ball $B_{r_k^{-2}\bar{g}}(\bar{y},1)$ has volume ratio almost the Euclidean one.
  Fix such a large $k$, we see that $B_{\bar{g}}(\bar{y}, r_k)$ has almost Euclidean volume ratio.  It follows from volume convergence that $\mathbf{cvr}(y_i,0) \geq r_k$ for large $i$, where $y_i \in M_i$ and $y_i \to \bar{y}$ as
  $(M_i,x_i,g_i(0))$ converges to $(\bar{M}, \bar{x}, \bar{g})$.  By the regularity improving property of
  canonical volume radius, there is a uniform small constant $c$ such that $B(y_i,cr_k)$ is diffeomorphic to the
  same radius Euclidean ball in $\C^n$ and the metrics on $B(y_i, c r_k)$ is $C^2$-close to the Euclidean metric.
  Then one can apply the backward pseudolocality(c.f. Theorem~\ref{thm:SL27_2}) to obtain higher order derivative estimate for the metrics.  Therefore, $B(y_i, \frac{1}{2}c r_k)$ will converge in smooth topology to a limit smooth geodesic ball $B(\bar{y},  \frac{1}{2}cr_k)$.   Moreover,  it is clear that geometry is uniformly bounded in a space-time neighborhood containing
  $B(y_i, \frac{1}{2} cr_k) \times [-c^2 r_k^2, 0]$, by shrinking $c$ if necessary. So we obtain a limit K\"ahler Ricci flow
  solution on $B(\bar{y}, \frac{1}{4}c r_k) \times [-\frac{1}{4}c^2 r_k^2, 0]$.
  It follows from the result of Kotschwar(c.f.~\cite{Kotsch}), that $B(\bar{y}, \frac{1}{4}cr_k)$ is actually an analytic manifold, which is the desired neighborhood of  $\bar{y}$.
  So we finish the proof of $1 \Rightarrow 5$ and close the circle.
   \end{proof}

\begin{remark}
By Proposition~\ref{prn:HA08_1}, our initial non-classical definition of regularity is proved to be the same as the classical
one (c.f.~Remark~\ref{rmk:HA01_1}).
\label{rmk:HA07_1}
\end{remark}

\begin{proposition}[\textbf{Volume density gap}]
  Same conditions as in Proposition~\ref{prn:HA07_2}, $\bar{y} \in \bar{M}$.
  Then $\bar{y}$ is singular if and only if
\begin{align}
   \limsup_{r \to 0}  \frac{|B(\bar{y}, r)|}{\omega_{2n}r^{2n}} \leq 1-2\delta_0.
\label{eqn:HA07_3}
\end{align}
\label{prn:HA07_1}
\end{proposition}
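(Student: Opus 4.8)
The plan is to deduce the statement from the structure theory already established for flows in $\mathscr{K}(n,A)$, rather than re-running the blowup machinery from scratch. By Theorem~\ref{thm:SC28_1} we have $\mathscr{K}(n,A)=\mathscr{K}(n,A;\hslash)$, so any sequence $\mathcal{LM}_i\in\mathscr{K}(n,A)$ lies in $\mathscr{K}(n,A;\hslash)$, and the whole Section~4 toolkit — the tangent cone structure (Proposition~\ref{prn:HA07_2}, via Theorem~\ref{thm:SC09_1}), the continuity of reduced volume (Proposition~\ref{prn:SL14_2}), and the gap of local volume density (Proposition~\ref{prn:SC17_7}) — is available for the limit $(\bar M,\bar x,\bar g)$. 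I will prove the two implications separately.

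For the ``if'' direction I argue by contraposition. If $\bar y$ is a regular point, then by Proposition~\ref{prn:HA08_1} it has a neighborhood carrying a $C^{\infty}$ (indeed $C^{\omega}$) manifold structure, so every tangent space at $\bar y$ is $\C^n$ and consequently $\lim_{r\to 0}\frac{|B(\bar y,r)|}{\omega_{2n}r^{2n}}=1$ by the usual infinitesimal Euclidean behaviour of a smooth Riemannian metric. Since $\delta_0>0$ this gives $\limsup_{r\to 0}\frac{|B(\bar y,r)|}{\omega_{2n}r^{2n}}=1>1-2\delta_0$. Hence, whenever $\limsup_{r\to 0}\frac{|B(\bar y,r)|}{\omega_{2n}r^{2n}}\le 1-2\delta_0$, the point $\bar y$ must be singular.

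For the ``only if'' direction, suppose $\bar y\in\mathcal{S}(\bar M)$. Applying Proposition~\ref{prn:SC17_7} with $r_0=\hslash$ gives that $\mathrm{v}(\bar y)=\lim_{r\to 0}\omega_{2n}^{-1}r^{-2n}|B(\bar y,r)|$ exists and satisfies $\mathrm{v}(\bar y)\le 1-2\delta_0$; in particular $\limsup_{r\to 0}\frac{|B(\bar y,r)|}{\omega_{2n}r^{2n}}=\mathrm{v}(\bar y)\le 1-2\delta_0$. For completeness I would recall the mechanism behind Proposition~\ref{prn:SC17_7}: one selects $y_i\to\bar y$ of locally minimal canonical volume radius $\rho_i\to 0$ and rescales by $\rho_i^{-1}$; the rescaled scalar curvature and $\lambda$ tend to zero while the rescaled time interval and volume tend to infinity, so the rescaled sequence subconverges (by the Section~4 weak compactness) to a complete, non-flat, Ricci-flat eternal Ricci flow solution. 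Anderson's gap theorem forces its asymptotic volume ratio, hence via the tangent-cone-at-infinity identity of Theorem~\ref{thm:SL25_1} its asymptotic reduced volume, to be $<1-2\delta_0$; Perelman's reduced volume monotonicity together with its scaling invariance then bounds $\mathcal{V}_{g_i}((y_i,0),r^2)$ by $1-2\delta_0$ for all $r$ and large $i$; continuity of reduced volume (Proposition~\ref{prn:SL14_2}) transfers this to $\mathcal{V}((\bar y,0),r^2)\le 1-2\delta_0$; and since the tangent cone at $\bar y$ is a metric cone (Proposition~\ref{prn:HA07_2}), the computation in the proof of Theorem~\ref{thm:SL25_1} identifies $\lim_{r\to 0}\mathcal{V}((\bar y,0),r^2)$ with $\lim_{r\to 0}\omega_{2n}^{-1}r^{-2n}|B(\bar y,r)|$, closing the loop.

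The argument is, at bottom, bookkeeping; the only point deserving care is that all of this genuinely holds for general $\mathcal{LM}_i\in\mathscr{K}(n,A)$ and not merely under the extra normalization~(\ref{eqn:SL06_1}). This is exactly what Theorem~\ref{thm:SC28_1} buys us: after rescaling at $y_i$ by $\rho_i^{-1}$ the hypotheses~(\ref{eqn:SL06_1}) are automatically satisfied by the rescaled sequence, which is precisely the situation exploited in the proof of Proposition~\ref{prn:SC17_7}, so no new input is needed. A second, milder subtlety — that the $\limsup$ in the statement may be upgraded to a genuine limit — is already subsumed in the conclusion of Proposition~\ref{prn:SC17_7}, and independently follows from the scaling invariance of reduced volume, which forces every tangent cone at $\bar y$ to have one and the same volume density.
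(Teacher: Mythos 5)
Your proposal is correct and takes essentially the same route as the paper: the easy direction reduces to the regularity equivalence of Proposition~\ref{prn:HA08_1}, and the hard direction invokes the already-proved gap of local volume density (Proposition~\ref{prn:SC17_7}), which encodes the same Anderson-gap/tangent-cone mechanism the paper compresses into ``gap property of $\widetilde{\mathscr{KS}}(n,\kappa)$ plus volume convergence and scaling.'' Your appeal to Theorem~\ref{thm:SC28_1} to justify that Proposition~\ref{prn:SC17_7} applies under the present hypotheses is exactly the right bookkeeping step.
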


\begin{proof}
 If (\ref{eqn:HA07_3}) holds, then every tangent cone of $\bar{y}$ cannot be $\C^n$, so $\bar{y}$ is singular.
 If $\bar{y}$ is singular, then every tangent space of $\bar{y}$ is an irreducible metric cone in the model space
 $\widetilde{\mathscr{KS}}(n,\kappa)$ with vertex a singular point, it follows from the gap property of
 $\widetilde{\mathscr{KS}}(n,\kappa)$ that asymptotic volume ratio of such a metric cone must be at most $1-2\delta_0$.
 Then (\ref{eqn:HA07_3}) follows from the volume convergence and a scaling argument.
\end{proof}

\begin{proposition}[\textbf{Regular-Singular decomposition}]
Same conditions as in Proposition~\ref{prn:HA07_2}, $\bar{M}$ has the regular-singular decomposition
$\bar{M}=\mathcal{R} \cup \mathcal{S}$. Then the regular part $\mathcal{R}$ admits a natural K\"ahler structure $\bar{J}$.
The singular part $\mathcal{S}$ satisfies the estimate $\dim_{\mathcal{H}} \mathcal{S} \leq 2n-4$.
\label{prn:HA08_2}
\end{proposition}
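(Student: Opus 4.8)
The plan is to read off both conclusions from the structural results already established for blowup limits of flows in $\mathscr{K}(n,A)$. By Proposition~\ref{prn:HA07_2} every tangent space of $\bar M$ at every point is an irreducible metric cone; by Proposition~\ref{prn:HA08_1} a point is regular (has a $C^\infty$, indeed $C^\omega$, manifold neighborhood) precisely when some (equivalently every) tangent cone is $\C^n$; and by Proposition~\ref{prn:HA07_1} the singular points are exactly those with volume density at most $1-2\delta_0$. So the regular-singular decomposition $\bar M=\mathcal R\cup\mathcal S$ is well defined, $\mathcal R$ is open, and it remains only to produce the K\"ahler structure on $\mathcal R$ and to bound $\dim_{\mathcal H}\mathcal S$.

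For the K\"ahler structure, I would argue as follows. Fix $\bar y\in\mathcal R$. By Proposition~\ref{prn:HA08_1} there is a radius $r$ and points $y_i\in M_i$ with $y_i\to\bar y$ such that $\mathbf{cvr}(y_i,0)\geq r$ uniformly, and $(M_i,g_i(0),J_i)$ restricted to $B(y_i,\tfrac12 c r)$ converges in $C^\infty$-topology (using backward pseudolocality and Shi's estimates, exactly as in Proposition~\ref{prn:SL04_1} and the circle of ideas in Proposition~\ref{prn:HA08_1}) to the smooth metric $\bar g$ on $B(\bar y,\tfrac12 c r)$. The complex structures $J_i$ are $g_i(0)$-parallel tensors with $|J_i|$ and all covariant derivatives uniformly bounded on this ball, so a subsequence converges in $C^\infty$ to a limit tensor $\bar J$ on $B(\bar y,\tfrac14 c r)$ satisfying $\bar J^2=-\mathrm{id}$, $\bar\nabla\bar J=0$, and compatibility with $\bar g$; hence $(B(\bar y,\tfrac14 c r),\bar g,\bar J)$ is K\"ahler. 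These local limit complex structures agree on overlaps (the diffeomorphisms used in the Cheeger-Gromov convergence are compatible away from $\mathcal S$, as in the construction of the limit K\"ahler structure in Theorem~\ref{thmin:SC24_1}), so they patch to a global K\"ahler structure $\bar J$ on $\mathcal R$.

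For the Hausdorff dimension bound on $\mathcal S$, the point is that the tangent cone structure lets one iterate. By Proposition~\ref{prn:HA07_2} and the fact that each tangent cone again lies in $\widetilde{\mathscr{KS}}(n,\kappa)$ (Theorem~\ref{thm:SC04_1} together with the a priori bound Theorem~\ref{thm:SC28_1}), each iterated tangent cone away from its vertex is itself a tangent cone with an extra splitting direction; the K\"ahler condition forces the splitting to occur in complex lines (Lemma~\ref{lma:HD30_1}, Proposition~\ref{prn:HE08_1}), so the stratification of $\mathcal S$ is by strata of even real codimension, and the lowest stratum that can occur has real codimension $4$. Combined with the Minkowski dimension bound $\dim_{\mathcal M}\mathcal S\leq 2n-2p_0$ already available from Theorem~\ref{thm:HE11_1} (so in particular $\dim_{\mathcal H}\mathcal S<2n-3$, ruling out codimension-$2$ and codimension-$3$ strata by the parity and the dimension reduction), this forces $\dim_{\mathcal H}\mathcal S\leq 2n-4$. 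This is exactly the argument of Proposition~\ref{prn:SL15_1}, which I would simply invoke.

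I expect the only genuinely non-routine step to be the verification that the locally-defined limit complex structures on regular neighborhoods are mutually compatible and therefore assemble into a single global $\bar J$; everything else is a direct citation of Propositions~\ref{prn:HA07_2}--\ref{prn:HA07_1} and Proposition~\ref{prn:SL15_1}. The compatibility is handled the same way it was for the metric: the Cheeger-Gromov convergence provides, for each pair of overlapping regular balls, a consistent system of approximating diffeomorphisms on $(M_i,g_i(0))$, and pulling back $J_i$ through these gives the same limit tensor on the overlap, so $\bar J$ is well defined on all of $\mathcal R$.
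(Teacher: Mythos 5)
Your proposal is correct and follows the paper's proof almost verbatim: the paper's actual argument for this proposition is a two-sentence citation—the K\"ahler structure on $\mathcal{R}$ comes from smooth convergence via the two-sided pseudolocality (Theorem~\ref{thm:SL27_2}) plus Shi's estimates, and the Hausdorff bound follows from Proposition~\ref{prn:SL15_1} combined with Theorem~\ref{thm:SC28_1}. You fill in the same details (the $J_i$ are parallel, hence converge; overlapping regular balls give compatible limits; iterated tangent cones plus the Minkowski bound $\dim_{\mathcal M}\mathcal S\leq 2n-2p_0$ squeeze the integer Hausdorff dimension down to $2n-4$), so this is the same route with the unpacking made explicit.
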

\begin{proof}
 The existence of $\bar{J}$ on $\mathcal{R}$ follows from smooth convergence,
 due to the backward pseudolocality(c.f. Theorem~\ref{thm:SL27_2}) and Shi's estimate.
 The Hausdorff dimension estimate of $\mathcal{S}$ follows from the
 combination of Proposition~\ref{prn:SL15_1} and Theorem~\ref{thm:SC28_1}.
\end{proof}

Therefore,  Theorem~\ref{thmin:SC24_1} follows from
the combinations from Proposition~\ref{prn:HA07_2} to Proposition~\ref{prn:HA08_2}.
Now we are going to discuss more delicate properties of the moduli space
$\widetilde{\mathscr{K}}(n,A)$.

\begin{proposition}[\textbf{Improve regularity in two time directions}]
There is a small positive constant $c=c(n,A)$ with the following properties.

Suppose $\mathcal{LM} \in \mathscr{K}(n,A)$, $x_0 \in M$. Let $r_0=\min\{\mathbf{cvr}(x_0,0), 1\}$.  Then we have
 \begin{align*}
     r^{2+k}|\nabla^k Rm|(x,t) \leq \frac{C_k}{c^{2+k}},  \quad \forall \;  k \in \Z^{+}, \quad x \in B_{g(0)}(x_0, cr_0),
     \quad t \in [-c^2 r^2, c^2 r^2],
 \end{align*}
where $C_k$ is a constant depending on $n,A$ and $k$.
\label{prn:HA08_3}
\end{proposition}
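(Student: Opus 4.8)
The plan is to combine the forward-time control coming from the rough long-time pseudolocality theorem (Theorem~\ref{thm:K8_1}) with the backward-time control coming from Proposition~\ref{prn:SK27_3}, and then feed the resulting two-sided curvature bound into Shi's estimates. Since $\mathcal{LM} \in \mathscr{K}(n,A)=\mathscr{K}(n,A;\hslash)$ by Theorem~\ref{thm:SC28_1}, we have a uniform a priori lower bound $\hslash$ on the polarized canonical radius at every time slice $t \in [-1,1]$, which is the structural input that makes the statement non-vacuous. Set $r_0=\min\{\mathbf{cvr}(x_0,0),1\}$. By the \emph{regularity estimate} in the definition of canonical radius (Definition~\ref{dfn:SC02_1}) together with Proposition~\ref{prn:SC24_1}, the hypothesis $\mathbf{cvr}(x_0,0)\geq r_0$ already gives $|Rm|_{g(0)} \leq K^2 r_0^{-2}$ on a ball $B_{g(0)}(x_0, K^{-1}r_0)$ at time $0$; after shrinking constants this is an almost-Euclidean ball in the sense required to start a pseudolocality argument.

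First I would propagate this regularity \emph{forward} in time: applying Theorem~\ref{thm:K8_1} (the rough long-time pseudolocality theorem for polarized K\"ahler Ricci flow, valid because $|R|+|\lambda|$ is controlled by $A$ — more precisely one rescales so that the relevant smallness holds, or invokes the short-time version of Perelman's strong pseudolocality which only needs $\kappa$-noncollapsing plus the curvature bound at the initial slice, exactly as used in the proof of Proposition~\ref{prn:HE11_2}) one obtains $|Rm|(\cdot,t) \leq 2 c^{-2} r_0^{-2}$ on $B_{g(t)}(x_0, c r_0)$ for $t \in [0, c^2 r_0^2]$, for a uniform $c=c(n,A)$. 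Next I would propagate it \emph{backward}: Proposition~\ref{prn:SK27_3} (improving regularity in the backward time direction) shows $\mathcal{F}_{r_0}(M,0) \subset \mathcal{F}_{r_0/K}(M,t)$ for $t$ in a fixed backward interval, so that the canonical volume radius at $x_0$ stays bounded below by $K^{-1}r_0$ for $t \in [-c^2 r_0^2, 0]$; invoking the regularity estimate in Definition~\ref{dfn:SC02_1} / Proposition~\ref{prn:SC24_1} again at those slices gives the curvature bound $|Rm|(\cdot,t) \leq C(n,A) c^{-2} r_0^{-2}$ on a fixed-size ball around $x_0$ for negative $t$ as well. Here I would use, as in the proof of Proposition~\ref{prn:SL04_1} and Proposition~\ref{prn:HE11_3}, the comparison of metric balls $B_{g(0)}(x_0,\cdot)$ and $B_{g(t)}(x_0,\cdot)$ coming from the Riemannian curvature bound and the evolution $\partial_t g = -Ric+\lambda g$, so that all these balls are uniformly comparable; the elementary estimate $|Ric| \leq \sqrt{|Rm||R|}$ type inequality (cf.~\cite{Wa2}) is available whenever $|Rm|$ is bounded on a slightly larger ball.

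Combining the two halves, there is $c=c(n,A)$ with $|Rm| \leq C(n,A)\, c^{-2} r_0^{-2}$ on the parabolic neighborhood $B_{g(0)}(x_0, c r_0) \times [-c^2 r_0^2,\, c^2 r_0^2]$ (after one more harmless shrinking of $c$ to absorb the ball-comparison losses). Finally I would apply the interior derivative estimates of Shi to this parabolic neighborhood: since the time interval has length comparable to $(c r_0)^2$, the scale on which $|Rm|$ is bounded, Shi's estimate yields $|\nabla^k Rm| \leq C_k (n,A)\, c^{-(2+k)} r_0^{-(2+k)}$ on the half-size neighborhood $B_{g(0)}(x_0, (c/2) r_0)$ for all $t \in [-(c^2/2) r_0^2, (c^2/2) r_0^2]$ and all $k \geq 0$; relabelling $c/2$ as $c$ gives the statement with $C_k$ depending only on $n,A,k$. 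The main obstacle — and the only place requiring care rather than bookkeeping — is the backward direction: one must be sure that Proposition~\ref{prn:SK27_3} applies on a time interval whose \emph{length} is comparable to $r_0^2$ rather than to a fixed constant, so that after rescaling by $r_0^{-2}$ the hypotheses of the long-time pseudolocality (in particular the uniform smallness of $|R|+|\lambda|$, which scales like $r_0^2$, and the existence-time condition) are genuinely met; this is handled exactly as in Proposition~\ref{prn:HE11_3} by rescaling the flow so that $r_0 = 1$ and noting that $\mathscr{K}(n,A)$ is not scale-invariant but the relevant local quantities behave well under the rescaling, with $|R|+|\lambda| \to 0$ in the blow-up, while the polarized canonical radius lower bound $\hslash$ rescales to $\hslash/r_0 \geq \hslash$.
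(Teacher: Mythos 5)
Your proposal takes a genuinely different, direct route from the paper's argument, which is a contradiction-and-blowup proof: if the statement fails one extracts a sequence with $(c_ir_i)^{2+k_0}|\nabla^{k_0}Rm|(y_i,t_i)\to\infty$, rescales by $(c_ir_i)^{-2}$ (so that $\mathbf{cvr}_{\tilde g_i}(y_i,0)\to\infty$ and $|R|+|\lambda|\to 0$ automatically), and then Proposition~\ref{prn:SL04_1}, Theorem~\ref{thm:SC04_1}, Corollary~\ref{cly:SL27_1} and the gap theorem Proposition~\ref{prn:SC17_1} force the limit to be flat $\C^n$, contradicting the assumed blowup. All of Shi's estimates and both pseudolocality directions are absorbed into the compactness machinery rather than invoked explicitly.

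The gap in your direct approach is the backward step. Proposition~\ref{prn:SK27_3} is only valid under $\sup_{\mathcal{M}}(|R|+|\lambda|)<\epsilon(n,A,r_0,r,T_0)$, whereas a flow in $\mathscr{K}(n,A)$ only satisfies $|R-n\lambda|\leq A$; you propose to obtain the smallness by rescaling so that $r_0=1$ and pointing out that $|R|+|\lambda|\to 0$ in the blowup. That only works when $r_0\to 0$: the proposition, however, must also cover the case $r_0=1$ (i.e., $\mathbf{cvr}(x_0,0)\geq 1$), where the rescaling is trivial, $|R|+|\lambda|$ stays bounded by $A$ rather than small, and Proposition~\ref{prn:SK27_3} simply does not apply. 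The forward step is salvageable because Perelman's short-time pseudolocality, as you observe, needs only $\kappa$-noncollapsing and the initial curvature bound; there is no comparably unconditional backward estimate in the paper's toolkit. The paper's contradiction argument dissolves this difficulty because its rescaling factor $(c_ir_i)^{-2}$ tends to $\infty$ regardless of whether $r_i$ is small — $c_i\to 0$ forces $|R|+|\lambda|\to 0$ along the sequence without any case distinction. To repair your direct argument you would need to split off the case $r_0$ bounded below and handle it by a separate blowup, at which point the argument collapses into the one the paper actually gives.
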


\begin{proof}
  Otherwise, there exists a fixed positive integer $k_0$ and a sequence of $c_i \to 0$ such that
  \begin{align}
     (c_i r_i)^{2+k_0}|\nabla^{k_0} Rm|(y_i,t_i)  \to \infty
  \label{eqn:HA08_4}
  \end{align}
  for some $y_i \in B_{g_i(0)}(x_i, r_i)$, $t_i \in [-c_i r_i^2, c_i r_i^2]$, where $r_i=\min\{\mathbf{cvr}(x_i,0), 1\}$.

  Let
  $\tilde{g}_i(t)=(c_i r_i)^{-2}g_i((c_ir_i)^2t+t_i)$.
  Then we have $\mathbf{cvr}_{\tilde{g}_i}(y_i, 0)=(c_i r_i)^{-1} \to \infty$.  Note that
  $\mathbf{pcr}_{\tilde{g}_i}(y_i,0) \geq \min\{\hslash (c_i r_i)^{-1}, 1\} \geq 1$.
  It is also clear that for the flows $\tilde{g}_i$, $|R|+|\lambda| \to 0$.
  Therefore, Proposition~\ref{prn:SL04_1} can be applied to obtain
  \begin{align}
    (M_i, y_i, \tilde{g}_i(0)) \stackrel{\hat{C}^{\infty}}{\longrightarrow} (\hat{M},\hat{y},\hat{g}).
  \label{eqn:HA08_5}
  \end{align}
  However, it follows from Theorem~\ref{thm:SC04_1} and Corollary~\ref{cly:SL27_1} that
  \begin{align*}
   (\hat{M},\hat{y},\hat{g}) \in \widetilde{\mathscr{KS}}(n,\kappa),   \quad
   \mathbf{cvr}(\hat{y})=\infty.
  \end{align*}
  In light of the gap property, Proposition~\ref{prn:SC17_1},
  we know that $\hat{M}$ is isometric to $\C^n$.
  So the convergence (\ref{eqn:HA08_5}) can be  rewritten as
  \begin{align*}
   (M_i, y_i, \tilde{g}_i(0)) \stackrel{C^{\infty}}{\longrightarrow} (\C^n, 0, g_{Euc}).
  \end{align*}
  In particular, $|\nabla^{k_0} Rm|_{\tilde{g}_i}(y_i,0) \to 0$, which is the same as
  \begin{align*}
     (c_i r_i)^{2+k_0}|\nabla^{k_0} Rm|(y_i,t_i)  \to 0.
  \end{align*}
  This contradicts the assumption (\ref{eqn:HA08_4}).
\end{proof}

Perelman's pseudolocality theorem says that an almost Euclidean domain cannot become very singular in a short time.
His almost Euclidean condition is explained as isoperimetric constant close to that of the Euclidean one.  In our special setting,
we can reverse this theorem, i.e., an almost Euclidean domain cannot become very singular in the reverse time direction for a short time period.

\begin{theorem}[\textbf{Two-sided pseudolocality}]
There is a small positive constant $\xi=\xi(n,A)$ with the following properties.

  Suppose $\mathcal{LM} \in \mathscr{K}(n,A)$, $x_0 \in M$.
  Let $\Omega=B_{g(0)}(x_0,r)$, $\Omega'=B_{g(0)}(x_0, \frac{r}{2})$ for some $0<r\leq 1$.
  Suppose $\mathbf{I}(\Omega) \geq (1-\delta_0) \mathbf{I}(\C^n)$ at time $t=0$, then
  \begin{align*}
     (\xi r)^{2+k}|\nabla^k Rm|(x,t) \leq C_k,  \quad \forall \;  k \in \Z^{\geq 0}, \quad x \in \Omega',
     \quad t \in [-\xi^2 r^2, \xi^2 r^2],
  \end{align*}
  where $C_k$ is a constant depending on $n,A$ and $k$.
\label{thm:SL27_2}
\end{theorem}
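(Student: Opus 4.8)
The plan is to deduce Theorem~\ref{thm:SL27_2} from the already-established machinery, especially Proposition~\ref{prn:HA08_3}, via a contradiction/blowup argument driven by the Croke-type lower bound on volume ratio coming from the isoperimetric hypothesis. First I would normalize by rescaling so that $r=1$, and translate so that the time slice in question is $t=0$. The isoperimetric hypothesis $\mathbf{I}(\Omega)\ge(1-\delta_0)\mathbf{I}(\C^n)$ on the unit ball $\Omega=B_{g(0)}(x_0,1)$ should first be converted into a volume-ratio statement: by the relation between isoperimetric constant and volume ratio (as used in Theorem~\ref{thmin:HC06_1}/Theorem~\ref{thmin:SC24_1}, and in the spirit of Croke's argument invoked in Proposition~\ref{prn:HC29_3}), an almost-optimal isoperimetric constant on $\Omega$ forces $\omega_{2n}^{-1}\rho^{-2n}|B(x_0,\rho)|$ to be close to $1$ on some definite sub-scale, hence $\mathbf{cvr}(x_0,0)\ge c_1$ for a uniform $c_1=c_1(n,A)$. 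Indeed if the normalized volume ratio on scale $c_1$ were below $1-\delta_0$ for every small $c_1$, a blowup would produce a non-Euclidean element of $\widetilde{\mathscr{KS}}(n,\kappa)$ whose unit ball has isoperimetric constant $(1-\delta_0)\mathbf{I}(\C^n)$, contradicting the uniform isoperimetric rigidity in the model space (Proposition~\ref{prn:HC29_3} combined with Anderson's gap and the rigidity statements Proposition~\ref{prn:SC17_3}).

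Once $\mathbf{cvr}(x_0,0)\ge c_1>0$ is in hand, I would apply Proposition~\ref{prn:HA08_3} directly. Setting $r_0=\min\{\mathbf{cvr}(x_0,0),1\}\ge c_1$, Proposition~\ref{prn:HA08_3} yields, for a uniform $c=c(n,A)$,
\begin{align*}
 r_0^{2+k}|\nabla^k Rm|(x,t)\le \frac{C_k}{c^{2+k}},\quad \forall\, k\in\Z^{\ge 0},\ x\in B_{g(0)}(x_0,cr_0),\ t\in[-c^2r_0^2,c^2r_0^2].
\end{align*}
Since $r_0\ge c_1$, the ball $B_{g(0)}(x_0,cr_0)\supset B_{g(0)}(x_0,cc_1)$ and the time interval contains $[-c^2c_1^2,c^2c_1^2]$. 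Choosing $\xi=\min\{cc_1,cc_1/2\}=cc_1/2$ (so that $\Omega'=B_{g(0)}(x_0,\tfrac12)$ is not quite what we need — rather we obtain the estimate on $B_{g(0)}(x_0,\xi)$, and then re-examine the statement: after the initial rescaling $r=1$, the claimed conclusion is exactly an estimate of the form $(\xi)^{2+k}|\nabla^k Rm|\le C_k$ on $\Omega'$ for $t\in[-\xi^2,\xi^2]$), I would absorb the constants: replace the conclusion's $\xi$ by $\min\{cc_1,1/2\}$ and note $|\nabla^k Rm|(x,t)\le C_k/(cr_0)^{2+k}\le C_k'/\xi^{2+k}$ on the smaller ball and shorter time interval, which is precisely the assertion. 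The dependence of all constants on $n,A$ only is preserved throughout because $c_1$, $c$, and the $C_k$ are all of this form.

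The main obstacle, and the step deserving the most care, is the first one: extracting the definite lower bound on $\mathbf{cvr}(x_0,0)$ (equivalently, a near-Euclidean volume ratio on a definite scale) from the isoperimetric hypothesis. This requires that a sharp isoperimetric constant on a geodesic ball, under the uniform Sobolev/non-collapsing bounds available in $\mathscr{K}(n,A)$, forces the ball to be volumetrically close to a Euclidean ball. For this I would run a contradiction argument: given $\mathcal{LM}_i\in\mathscr{K}(n,A)$ with $x_{0,i}$ satisfying $\mathbf{I}(B_{g_i(0)}(x_{0,i},1))\ge(1-\delta_0)\mathbf{I}(\C^n)$ but $\mathbf{cvr}(x_{0,i},0)\to 0$, pass to the Cheeger-Gromov limit at time $0$ using Theorem~\ref{thm:SC04_1}/Theorem~\ref{thm:SC28_1} to get $\bar{M}\in\widetilde{\mathscr{KS}}(n,\kappa)$ with $\bar{x}$ a singular point (since $\mathbf{vr}(\bar{x})=\lim\mathbf{cvr}(x_{0,i},0)=0$ by Corollary~\ref{cly:SL27_1}); then lower semicontinuity of the isoperimetric constant under measured Gromov-Hausdorff convergence (or an explicit comparison of isoperimetric regions, using the volume convergence Proposition~\ref{prn:SC12_1}) gives $\mathbf{I}(B_{\bar g}(\bar x,1))\ge(1-\delta_0)\mathbf{I}(\C^n)$; but by the gap theorem (Proposition~\ref{prn:HA07_1}) $\mathrm{v}(\bar x)\le 1-2\delta_0$, and a singular point with such a volume density deficit cannot support an almost-optimal isoperimetric constant on a unit ball — this is where one invokes the quantitative form of the isoperimetric inequality together with the Bishop–Gromov monotonicity (Proposition~\ref{prn:HD19_1}), yielding the contradiction. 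I would need to be careful that the Croke-type estimate is applied on the correct scale and that the constant $\delta_0$ (Anderson's constant) appearing in the isoperimetric hypothesis is exactly the threshold that forces regularity — but this is precisely the calibration built into Theorem~\ref{thmin:SC24_1} and Theorem~\ref{thmin:HC06_1}, so the bookkeeping, while delicate, is routine given the earlier results.
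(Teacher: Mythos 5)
Your overall reduction to Proposition~\ref{prn:HA08_3} is the same as the paper's, but there are two places where your proposal diverges in a way that leaves a real gap.

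First, the conversion from the isoperimetric hypothesis to a volume-ratio bound does not require a blowup at all, and the paper does not use one. The hypothesis $\mathbf{I}(\Omega)\ge(1-\delta_0)\mathbf{I}(\C^n)$ feeds directly into the classical Croke-style integration: writing $V(\rho)=|B(y,\rho)|$ for any ball $B(y,\rho)\subset\Omega$ and using $V'(\rho)\ge|\partial B(y,\rho)|\ge\mathbf{I}(\Omega)^{\frac{1}{2n}}V(\rho)^{\frac{2n-1}{2n}}$, one integrates to get $V(\rho)\ge\frac{\mathbf{I}(\Omega)}{(2n)^{2n}}\rho^{2n}\ge(1-\delta_0)\omega_{2n}\rho^{2n}$. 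Notice this holds \emph{exactly} with the constant $(1-\delta_0)$ (not merely ``close to $1$'' on ``some definite sub-scale''), and — crucially — for \emph{every} geodesic ball contained in $\Omega$, not just those centered at $x_0$. The paper's one-sentence justification is precisely this observation. Your blowup argument would also require a lower-semicontinuity statement for isoperimetric constants under GH convergence, which is not elementary and is not among the paper's established tools; the direct integration avoids all of this.

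Second — and this is the genuine gap — you only apply Proposition~\ref{prn:HA08_3} at the single point $x_0$, which yields the curvature bound on a ball $B_{g(0)}(x_0,cc_1)$ of uniform but small radius. You notice yourself that this is strictly smaller than $\Omega'=B_{g(0)}(x_0,r/2)$ (``not quite what we need — rather we obtain the estimate on $B_{g(0)}(x_0,\xi)$''), but ``absorbing the constants'' cannot fix a coverage problem: you need the estimate at every $x\in\Omega'$, not just on a small ball near $x_0$. The correct fix is exactly what the paper's phrasing provides: since the Croke integration applies to \emph{any} ball inside $\Omega$, for every $x\in\Omega'$ and every $\rho\le r/2$ the ball $B(x,\rho)\subset\Omega$ satisfies $\omega_{2n}^{-1}\rho^{-2n}|B(x,\rho)|\ge 1-\delta_0$, hence $\mathbf{cvr}(x,0)\ge\min\{r/2,\hslash\}\ge c(n,A)\,r$ for every $x\in\Omega'$. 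One then applies Proposition~\ref{prn:HA08_3} at each such $x$ (not just at $x_0$) and reads off the stated bounds with $\xi=\xi(n,A)$. With that correction your argument closes; as written, it proves a strictly weaker statement.
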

\begin{proof}
  Note that each geodesic ball contained in $\Omega$ has volume ratio at least $(1-\delta_0)\omega_{2n}$.
  Then the theorem follows from directly from Proposition~\ref{prn:HA08_3}.
\end{proof}

After we obtain the bound of geometry, we can go further to study the evolution of potential functions.

\begin{theorem}[\textbf{Two-sided pseudolocality on the potential level}]
Same conditions as in Theorem~\ref{thm:SL27_2}.
Let $\omega_B$ be a smooth metric form in $2\pi c_1(M,J)$ and denote
$\omega_t$ by $\omega_B + \sqrt{-1} \partial \bar{\partial} \varphi(\cdot, t)$.
Suppose $\varphi(x_0,0)=0$ and $Osc_{\Omega} \varphi(\cdot, 0) \leq H$.
Let $\Omega''=B_{g(0)}(x_0,\frac{r}{4})$. Then we have
\begin{align}
    (\xi r)^{-2+k}\norm{\varphi(\cdot, t)}{C^k(\Omega'', \omega_t)} \leq C_k, \quad \forall \; k \in \Z^{\geq 0},
    \quad t \in \left[-\frac{\xi^2}{2} r^2, \frac{\xi^2}{2} r^2 \right],
\label{eqn:HA05_2}
\end{align}
where $C_k$ depends on $k,n,A,\xi$ and $\frac{H}{r^2}$.
\label{thm:HA03_5}
\end{theorem}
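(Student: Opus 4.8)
The plan is to combine the two-sided curvature pseudolocality (Theorem~\ref{thm:SL27_2}) with the standard parabolic Schauder theory applied to the complex Monge–Amp\`ere flow satisfied by $\varphi$. First I would observe that on the ball $\Omega'=B_{g(0)}(x_0,\frac r2)$ and the time interval $[-\xi^2r^2,\xi^2r^2]$ we already have $(\xi r)^{2+k}|\nabla^kRm|\le C_k$ for all $k\ge 0$ by Theorem~\ref{thm:SL27_2}. In particular the metrics $\omega_t$ are all uniformly equivalent to $\omega_0$ on $\Omega'\times[-\xi^2r^2,\xi^2r^2]$ (integrate $\partial_t g=-Ric+\lambda g$, using $|Ric|\le \sqrt{|Rm||R|}$-type bounds together with $|R|\le A$ and the curvature bound), with all covariant derivatives controlled. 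This reduces the whole statement to a local parabolic estimate for the scalar potential, after rescaling so that $r=1$ (note the scaling-invariant bookkeeping: a ball of radius $r$ rescales with $\varphi$ scaling like $r^2$, which is exactly why $C_k$ is allowed to depend on $H/r^2$, not $H$ and $r$ separately).

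Next I would set up the equation for $\varphi$. Since $\omega_t=\omega_B+\sqrt{-1}\ddb\varphi(\cdot,t)$ and $\omega_t$ solves the K\"ahler Ricci flow \eqref{eqn:K07_1}, the potential $\varphi$ satisfies a parabolic complex Monge–Amp\`ere equation of the form $\frac{\partial}{\partial t}\varphi=\log\frac{(\omega_B+\sqrt{-1}\ddb\varphi)^n}{\omega_B^n}+F$, where $F$ depends on $\omega_B$, the Ricci potential of $\omega_B$, and $\lambda$; all data of $F$ are smooth and bounded on $\Omega'$ by the already-established geometry bounds. On $\Omega'\times[-\xi^2,\xi^2]$ we know (i) $\omega_t$ is uniformly equivalent to $\omega_0$, hence the equation is uniformly parabolic in the linearized sense, and (ii) $\mathrm{Osc}_{\Omega}\varphi(\cdot,0)\le H$ together with $\varphi(x_0,0)=0$ gives a $C^0$-bound of $\varphi$ at the initial time. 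Propagating the $C^0$-bound forward and backward in time on a slightly smaller ball is straightforward from $|\dot\varphi|=|\log\det(\cdot)+F|\le C$ (the log-determinant term is bounded because $\omega_t$ and $\omega_0$ are uniformly equivalent). This yields $\|\varphi\|_{C^0(B_{g(0)}(x_0,3r/8)\times[-\xi^2/2,\xi^2/2])}\le C$. Then the $C^{2}$-estimate: $n-\Delta_{\omega_0}\varphi$ and $\Lambda_{\omega_0}\omega_t$ are pinched between two positive constants by the metric equivalence, so the complex Hessian of $\varphi$ in $g(0)$-coordinates is bounded. With a uniformly parabolic equation, bounded coefficients, and a bounded complex Hessian, the Evans–Krylov and parabolic Schauder estimates (exactly as used in Lemma~\ref{lma:HE11_1}, but now in their local interior form with cutoffs) give a uniform $C^{2,\alpha}$-bound on $B_{g(0)}(x_0,r/4)\times[-\xi^2/2,\xi^2/2]$, and bootstrapping differentiated equations gives all higher $C^k$-bounds, where at each stage the coefficients are controlled by the curvature derivative bounds from Theorem~\ref{thm:SL27_2}.

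Finally, since $\omega_t$ is uniformly equivalent to $\omega_0$ with all derivatives controlled, the $C^k$-norm with respect to $\omega_t$ is comparable to the $C^k$-norm with respect to $\omega_0$ (up to constants depending on $k,n,A,\xi$), so the bounds transfer to $\|\varphi(\cdot,t)\|_{C^k(\Omega'',\omega_t)}$, which is \eqref{eqn:HA05_2} after reinstating the scaling. The main obstacle I expect is purely technical rather than conceptual: making the localization clean, i.e. running the Evans–Krylov/Schauder machinery with cutoff functions on the shrinking balls $\Omega\supset\Omega'\supset\Omega''$ and across $t=0$ in both time directions, while keeping track that all auxiliary constants depend only on $n,A,\xi$ and on $H/r^2$ (the latter entering solely through the initial $C^0$-data). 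One should be a little careful that the backward-time direction is legitimate here — it is, precisely because Theorem~\ref{thm:SL27_2} already supplies two-sided curvature bounds, so the flow is a genuine (forward and backward) smooth solution on the relevant space-time slab, and there is no ill-posedness to worry about at the level of the a priori estimate.
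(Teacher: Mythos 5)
Your overall strategy — invoke Theorem~\ref{thm:SL27_2} to get two-sided geometry bounds on the slab, reduce to a local complex Monge--Amp\`ere estimate, and then close with Evans--Krylov and Schauder plus bootstrapping — is the same as the paper's, and your scaling bookkeeping (why $C_k$ depends only on $H/r^2$) is correct. But the route you take through the PDE is genuinely different from the paper's, and the paper's route is cleaner at exactly the spot where your sketch is vaguest.

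The paper never runs the parabolic Monge--Amp\`ere flow for $\varphi$. Instead it first exploits the linear elliptic equation satisfied by the Ricci potential, $-\Delta_{\omega_t}\dot{\varphi}=R-n\lambda$, whose right-hand side is bounded with all covariant derivatives controlled by the curvature bounds from Theorem~\ref{thm:SL27_2}; this gives uniform $C^k$ bounds on $\dot{\varphi}$ for free from linear elliptic theory, with no Monge--Amp\`ere machinery at all. Then the flow identity is rewritten as a \emph{static} complex Monge--Amp\`ere equation at each fixed time slice with $\omega_t$ as the reference metric:
$\log\frac{(\omega_t-\sqrt{-1}\partial\bar{\partial}\varphi)^n}{\omega_t^n}=\varphi-\bigl(\dot{\varphi}-\dot{\varphi}(\cdot,0)\bigr)$,
whose right-hand side is now fully controlled in every $C^k$ by the previous step plus the $C^0$ bound on $\varphi$. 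This has two advantages over your parabolic route: (i) the reference metric $\omega_t$ is precisely the one whose geometry you already control from the pseudolocality, so you never need to compare $\omega_t$ to $\omega_B$ through the equation; and (ii) you invoke only elliptic Evans--Krylov and Schauder at a fixed time, so you never have to propagate $C^0$ bounds or worry about the additive time-dependent constant in $\partial_t\varphi$.

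That last point is the concrete soft spot in your write-up. You assert that propagating the $C^0$ bound is ``straightforward from $|\dot\varphi|\le C$,'' but $\partial_t\varphi$ (the time derivative of the potential relative to the \emph{fixed} $\omega_B$) equals the normalized Ricci potential only up to a time-dependent additive constant, and the $|\dot\varphi|_{C^0}\le A$ bound in \eqref{eqn:SK20_1} is for the normalized potential. You also write that ``the log-determinant term is bounded because $\omega_t$ and $\omega_0$ are uniformly equivalent,'' but the log-determinant in your equation is $\log(\omega_t^n/\omega_B^n)$, not $\log(\omega_t^n/\omega_0^n)$, so one needs a separate handle on $\omega_0^n/\omega_B^n$ (which in fact comes from the flow identity at $t=0$ and the $C^0$ bound on $\varphi(\cdot,0)$, but you should say so). Similarly, in your $C^2$ step, ``$n-\Delta_{\omega_0}\varphi$ pinched by metric equivalence'' tacitly needs an upper bound on $\Lambda_{\omega_0}\omega_B$; the lower bound follows from AM--GM and the volume-form comparison, but the upper bound is the nontrivial Schwarz-lemma input, which your sketch does not supply. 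None of these is fatal — they can be patched — but they all vanish automatically in the paper's version because the elliptic identity at each time slice has a fully controlled right-hand side and uses $\omega_t$ as reference. I would recommend adopting the paper's observation about the Poisson equation for $\dot\varphi$; it shortcuts most of the careful parabolic bookkeeping you would otherwise have to do.
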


\begin{proof}
Up to rescaling, we may assume $\xi r=1$.

Note that $\varphi$ and $\dot{\varphi}$ satisfy the equations
  \begin{align*}
  \begin{cases}
    &\dot{\varphi}=\log \frac{\omega_t^n}{\omega_B^n}+\varphi +\dot{\varphi}(\cdot, 0), \\
    &-\sqrt{-1} \partial \bar{\partial} \dot{\varphi}=Ric-\lambda g.
  \end{cases}
  \end{align*}
  It follows from Theorem~\ref{thm:SL27_2} that  geometry is uniformly bounded in $\Omega' \times [-\xi r^2, \xi r^2]$.
  The trace form of the second equation in the above list is $-\Delta \dot{\varphi}=R-n\lambda$.
  Therefore, the regularity theory
   of Laplacian operator applies and we have uniform bound of $\norm{\dot{\varphi}}{C^k}$ in a neighborhood of
   $\Omega'' \times [-\frac{\xi}{2}r^2, \frac{\xi}{2}r^2]$.
   Up to a normalization, we can rewrite the first  equation as
   \begin{align*}
      \log \frac{(\omega_t-\sqrt{-1}\partial \bar{\partial} \varphi)^n}{\omega_t^n}=\varphi - \dot{\varphi}+\dot{\varphi}(\cdot, 0).
   \end{align*}
   On $\Omega'$, the metric $g(0)$ and $g(t)$ are uniformly equivalent in each $C^k$-topology.  So it is clear that
   $\norm{\dot{\varphi}-\dot{\varphi}(\cdot, 0)}{C^k(\Omega')}$ are uniformly bounded, for each $k$, with respect to
   metric $g(t)$.
   Since all higher derivatives of curvature are uniformly bounded on $\Omega'$, (\ref{eqn:HA05_2})
   follows from standard Monge-Ampere  equation theory and bootstrapping argument.
\end{proof}

\begin{theorem}[\textbf{Improving regularity of potentials}]
 Suppose $\mathcal{LM} \in \mathscr{K}(n,A)$, $\mathbf{cvr}(M,0)=r_0$.
 Let $\omega_B$ be a smooth metric in $[\omega_0]$ such that
 \begin{align}
     \frac{1}{2}\omega_B \leq  \omega_0 \leq 2\omega_B.
 \label{eqn:HA06_1}
 \end{align}
 Let $\omega_0=\omega_B + \sqrt{-1} \partial \bar{\partial} \varphi$.
 Suppose $\int_M \varphi \omega_0^n=0$ and $Osc_M \varphi \leq H$.  Then we have
 \begin{align}
      \norm{\varphi}{C^k(M,\omega_B)} \leq C_k, \quad \forall \; k \in \Z^{\geq 0},
 \label{eqn:HA06_2}
 \end{align}
 where $C_k$ depends on $k,\omega_B, n,A,r_0$ and $H$.
\label{thm:HA06_1}
\end{theorem}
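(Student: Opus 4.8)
The plan is to reduce the global estimate to a local one and then invoke the two-sided pseudolocality machinery already developed (Theorem~\ref{thm:SL27_2} and Theorem~\ref{thm:HA03_5}). First I would observe that the hypothesis $\mathbf{cvr}(M,0)=r_0$ together with the improving-regularity property of canonical volume radius (Corollary~\ref{cly:SL23_1}, Proposition~\ref{prn:SC24_1}, and Proposition~\ref{prn:HA08_3}) gives a uniform curvature bound: at every point $x_0\in M$ we have $|Rm|_{g(0)}\le K^2 r_0^{-2}$ on $B_{g(0)}(x_0, K^{-1}r_0)$, and in fact all covariant derivatives $|\nabla^k Rm|$ are bounded on a slightly smaller ball, with constants depending only on $n,A,r_0,k$. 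In particular $M$ itself is a smooth compact K\"ahler manifold with uniformly bounded geometry on the fixed scale $r_0$, and the isoperimetric constant of every geodesic ball of radius $\le c(n,A)r_0$ is close to the Euclidean one. This is exactly the input required to apply Theorem~\ref{thm:SL27_2} and its potential-level refinement Theorem~\ref{thm:HA03_5}.

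Next I would set up the bootstrap for $\varphi$. The metric $\omega_0$ satisfies the complex Monge-Amp\`ere equation $(\omega_B+\sqrt{-1}\partial\bar\partial\varphi)^n = e^{F}\,\omega_B^n$ for the function $F=\log(\omega_0^n/\omega_B^n)$, and the normalization $\tfrac12\omega_B\le\omega_0\le2\omega_B$ in \eqref{eqn:HA06_1} already provides the two-sided $C^0$-estimate on the complex Hessian of $\varphi$, i.e. a uniform $C^{1,1}$-type (metric equivalence) bound. Combined with $Osc_M\varphi\le H$ and the normalization $\int_M\varphi\,\omega_0^n=0$, this gives a uniform $C^0$-bound on $\varphi$. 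The function $F$ is controlled in every $C^k(M,\omega_B)$-norm because the curvature of $\omega_B$ is fixed and the curvature of $\omega_0$ is uniformly bounded in all $C^k$ by the previous paragraph (recall $-\sqrt{-1}\partial\bar\partial\log(\omega_0^n/\omega_B^n)=Ric(\omega_0)-Ric(\omega_B)$, whose trace form is an elliptic equation for $F$ with bounded right-hand side). Then one runs the standard Evans--Krylov plus Schauder bootstrapping for the Monge-Amp\`ere equation on each fixed-size ball: from the $C^{1,1}$ bound and bounded $F$ one gets a uniform $C^{2,\alpha}$ estimate, hence $C^{k,\alpha}$ for all $k$, with constants depending only on $n,A,r_0,H$ and the geometry of $\omega_B$. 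Covering $M$ by finitely many such balls (the number being controlled by $r_0$ and $\mathrm{Vol}(M)\ge A^{-1}$) yields the global estimate \eqref{eqn:HA06_2}.

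The one genuinely delicate point — and the step I expect to be the main obstacle — is making sure the estimates are \emph{uniform}, i.e. that all constants depend only on the stated quantities and not on the particular flow $\mathcal{LM}$. The cleanest route is a contradiction/compactness argument: if \eqref{eqn:HA06_2} failed for some fixed $k$, take a sequence $\mathcal{LM}_i\in\mathscr K(n,A)$ with $\mathbf{cvr}(M_i,0)=r_0$, normalizations as in the statement, but $\norm{\varphi_i}{C^k(M_i,\omega_{B,i})}\to\infty$. Since $\mathbf{cvr}(M_i,0)\ge r_0$ we may normalize the underlying scale and pass to a Cheeger--Gromov limit as in Proposition~\ref{prn:SL04_1} and Theorem~\ref{thm:SC04_1}; the curvature bounds force smooth convergence on balls of size $\sim r_0$, the reference forms $\omega_{B,i}$ subconverge (their $C^k$-norms are fixed by hypothesis, using \eqref{eqn:HA06_1} and the bounded geometry of $\omega_{0,i}$), and the $\varphi_i$ — being uniformly $C^0$-bounded and satisfying a uniformly elliptic Monge-Amp\`ere equation with smoothly converging data — subconverge in $C^\infty_{loc}$ to a smooth limit $\varphi_\infty$. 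This contradicts $\norm{\varphi_i}{C^k}\to\infty$. Alternatively one avoids compactness entirely by feeding the already-uniform conclusions of Theorem~\ref{thm:SL27_2}, Theorem~\ref{thm:HA03_5} and Proposition~\ref{prn:HA08_3} into the Monge-Amp\`ere bootstrap directly; I would present the argument this second way since those theorems already package the uniformity, with the contradiction argument available as a backup for the reader who prefers it.
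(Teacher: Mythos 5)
Your approach is genuinely different from the paper's, and it contains a gap worth flagging. The paper never touches the Monge--Amp\`ere equation. Its proof takes as input the bounds $\norm{\varphi}{C^k(M,\omega_0)}\le C_k$ (the two-sided pseudolocality output, cf.~Theorem~\ref{thm:HA03_5}), then works in an $\omega_0$-normal holomorphic coordinate chart $\Omega$ of uniform size $\sim r_0$, where $\omega_0=\omega_{Euc}+\sqrt{-1}\partial\bar\partial f$ with $f$ small in $C^{k_0}$. In that chart the $\omega_0$-Christoffel symbols are pure derivatives of $f$ and hence bounded, so the $\omega_0$-covariant bounds on $\varphi$ become \emph{coordinate} bounds $\norm{\varphi}{C^k(\Omega,\omega_{Euc})}\le C_k$. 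Since $\omega_B=\omega_0-\sqrt{-1}\partial\bar\partial\varphi=\omega_{Euc}+\sqrt{-1}\partial\bar\partial(f-\varphi)$, the coordinate bounds on $f-\varphi$ control the $\omega_B$-connection and its derivatives in $\Omega$, and one converts coordinate bounds back to $\omega_B$-covariant bounds. A finite cover by such charts finishes the argument. This is purely a ``change-of-frame'' computation; no elliptic bootstrap is ever run.

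The gap in your version is the claim that ``$F$ is controlled in every $C^k(M,\omega_B)$-norm because \ldots the curvature of $\omega_0$ is uniformly bounded in all $C^k$''. The curvature bounds on $\omega_0$ produced by Proposition~\ref{prn:HA08_3} are \emph{intrinsic}: they bound $|\nabla^j_{\omega_0}Rm(\omega_0)|_{\omega_0}$. Passing from these to $C^k(\omega_B)$-bounds on $Ric(\omega_0)$, and hence on $F=\log(\omega_0^n/\omega_B^n)$, requires controlling the difference of Christoffel symbols $T=\Gamma(\omega_B)-\Gamma(\omega_0)$ and its $\omega_B$-derivatives. But $T$ involves the \emph{third} $\omega_B$-derivatives of $\varphi$, which is precisely what you are trying to bound. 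So the statement ``$F\in C^k(M,\omega_B)$ for all $k$'' is not a free consequence of the bounded-geometry input; it is entangled with the conclusion. To make your Monge--Amp\`ere route work you have to run an honest two-track induction: metric equivalence and the $C^0$ Ricci bound give $F\in C^{1,\alpha}(\omega_B)$; Evans--Krylov (complex version, or Calabi's third-order estimate) plus differentiated Schauder give $\varphi\in C^{3,\alpha}(\omega_B)$; this controls $T\in C^\alpha$ and hence $Ric(\omega_0)\in C^\alpha(\omega_B)$, upgrading $F$ to $C^{2,\alpha}$; then $\varphi\in C^{4,\alpha}$; and so on, with the $\varphi$-bound of one step feeding the $F$-bound of the next. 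As written, you assert the full $F$-bound up front, which is circular. (Your compactness backup has a parallel subtlety: Cheeger--Gromov convergence of $g_{0,i}$ produces diffeomorphisms $\phi_i$, and one must argue that the $\phi_i$ themselves subconverge with controlled derivatives before the pulled-back potentials $\phi_i^*\varphi_i$ can be said to satisfy ``smoothly converging data''.) By contrast, the paper's coordinate-translation argument sidesteps both difficulties by never isolating $F$ and never invoking the Monge--Amp\`ere structure, using only the already-established $\omega_0$-covariant bounds on $\varphi$ together with the bounded geometry of $\omega_0$ in its own normal charts.
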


\begin{proof}
Since $\mathbf{cvr}(M,0)=r_0>0$, we see that all the possible $\omega_0$'s form a compact set under the smooth topology.
In other words, $\omega_0$ has uniformly bounded geometry in each regularity level.   Fix a positive integer $k_0 \geq 4$.
Therefore,  around each point $x \in M$, one can find a coordinate chart $\Omega$, with uniform size, such that
\begin{align*}
   \omega_0=\omega_{Euc} + \sqrt{-1} \partial \bar{\partial} f, \quad
   \norm{f}{C^{k_0}(\Omega, \omega_{Euc})} \leq  0.01.
\end{align*}
Note that in $\Omega$, the connection terms of the metric $\omega_0$ are pure derivatives $f_{i\bar{j}l}$,
which are uniformly bounded.  Similarly, all derivatives of connection terms can be expressed as high order
pure derivatives of $f$.  Therefore, up to order $k_0-3$, the derivatives of connections are uniformly bounded.
It is clear that the metric $\omega_0$ and $\omega_{Euc}$ are uniformly equivalent.
By the covariant derivatives' bounds $\norm{\varphi}{C^k(M, \omega_0)} \leq C_k$,
the bounds of connection derivatives yield that
\begin{align}
    \norm{\varphi}{C^k(\Omega, \omega_{Euc})} \leq C_k, \quad \forall \; 0 \leq k \leq k_0-1.
\label{eqn:HA08_3}
\end{align}
In other words, we have uniform bound for every order pure derivatives of $\varphi$, up to order $k_0-1$.
Together with the choice assumption of $\Omega$, we have
\begin{align*}
    \norm{f-\varphi}{C^k(\Omega, \omega_{Euc})} \leq C_k, \quad \forall \; 0 \leq k \leq k_0-1.
\end{align*}
Therefore, the connection derivatives of metric $\omega_B$ in $\Omega$ are uniformly bounded, up to order $k_0-4$.
Consequently, the pure derivative bound (\ref{eqn:HA08_3}) implies
\begin{align*}
   \norm{\varphi}{C^k(\Omega, \omega_B)} \leq C_k, \quad \forall \; 0 \leq k \leq k_0-1,
\end{align*}
since $\omega_B$ is a fixed smooth, compact metric with every level of regularity.  Clearly, the above constant $C_k$ depends on
$k,n,A,r_0,\omega_B$ and $H$.  Recall that the size of $\Omega$ is uniformly bounded from below, $(M, \omega_B)$ is a compact
manifold.   Consequently, a standard covering argument implies (\ref{eqn:HA06_2}) for each $k \leq k_0-1$.
In the end, we free $k_0$ and finish the proof.
\end{proof}

In Ricci-flat theory, a version of Anderson's gap theorem says that regularity can be improved in the center of a ball
if the volume ratio of the unit ball is very close to the Euclidean one.
In our special setting,  this gap theorem has a reduced volume version.

\begin{theorem}[\textbf{Gap of reduced volume}]
There is a  constant $\delta_0' \in (0, \delta_0]$ and a small constant $\eta$ with the following property.

Suppose $\mathcal{LM} \in \mathscr{K}(n,A)$, $x_0 \in M$, $0<r \leq 1$.
If $\mathcal{V}((x_0,0),r^2) \geq 1-\delta_0'$, then we have
\begin{align}
   \mathbf{cvr}(x_0,0) \geq \eta r.
\label{eqn:SL27_2}
\end{align}
\label{thm:SL27_3}
\end{theorem}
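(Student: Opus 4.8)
The plan is to argue by contradiction using the compactness machinery already assembled. Suppose no such $\delta_0'$ and $\eta$ exist. Then after a parabolic rescaling normalizing $r=1$, we obtain a sequence of polarized K\"ahler Ricci flows $\mathcal{LM}_i \in \mathscr{K}(n,A)$ and points $x_i \in M_i$ with $\mathcal{V}((x_i,0),1) \geq 1-\delta_0^{(i)}$ where $\delta_0^{(i)} \to 0$, but $\mathbf{cvr}(x_i,0) \to 0$. The first step is to observe that by Theorem~\ref{thm:SC28_1} we have $\mathbf{pcr}(\mathcal{M}_i^t) \geq \hslash$ uniformly on $[-1,1]$, so the flows have uniformly bounded polarized canonical radius; in particular $\mathbf{cr}(\mathcal{M}_i^0) \geq \hslash$. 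The issue is that $\mathbf{cvr}(x_i,0) \to 0$ forces $x_i$ to look singular in the limit, which we will show is incompatible with the reduced volume lower bound. Since we want a clean limit, I would first rescale again: set $\rho_i = \mathbf{cvr}(x_i,0) \to 0$ and consider the flows $\tilde g_i(t) = \rho_i^{-2} g_i(\rho_i^2 t + 0)$ (after a point-picking argument as in the proof of Proposition~\ref{prn:SC17_7}, so that $x_i$ achieves a ``local minimum'' of $\mathbf{cvr}$ on a large ball, guaranteeing $\mathbf{pcr}_{\tilde g_i}(x_i) \geq 1$ and $\sup_{\tilde{\mathcal{M}}_i}(|R|+|\lambda|) \to 0$).

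The second step is to pass to the limit. By Proposition~\ref{prn:SL04_1} and Theorem~\ref{thm:SC04_1} (applied to the rescaled flows, which now satisfy $|R|+|\lambda| \to 0$ and have $\mathbf{pcr} \geq 1$; the non-collapsing and diameter-blowup hypotheses are supplied by the point-picking), we get
\begin{align*}
  (M_i, x_i, \tilde g_i(0)) \stackrel{\hat{C}^\infty}{\longrightarrow} (\hat M, \hat x, \hat g) \in \widetilde{\mathscr{KS}}(n,\kappa),
\end{align*}
with the limit static in the regular part. By the normalization $\mathbf{cvr}_{\tilde g_i}(x_i,0) = 1$ and Corollary~\ref{cly:SL27_1}, we have $\mathbf{vr}(\hat x) = 1$, so $\hat M$ is \emph{not} isometric to $\C^n$ (otherwise $\mathbf{vr}(\hat x) = \infty$, or at least the unit ball would be Euclidean giving $\mathbf{vr} \neq 1$ by the rigidity Proposition~\ref{prn:SC17_3}). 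Hence $\hat M \in \widetilde{\mathscr{KS}}^*(n,\kappa)$, and by Anderson's gap theorem $\mathrm{avr}(\hat M) \leq 1-2\delta_0 < 1$. Now invoke Theorem~\ref{thm:SL25_1}: viewing $\hat M$ as a static space-time, $\lim_{\tau \to 0}\mathcal{V}((\hat x,0),\tau) = \mathrm{v}(\hat x) = 1$ (since $\hat x$ is regular with $\mathbf{vr}(\hat x)=1>0$... wait — actually $\mathrm{v}(\hat x)=1$ only because $\hat x \in \mathcal{R}$). The point is rather to use the reduced volume at a \emph{fixed finite} scale: by the continuity of reduced volume under Cheeger-Gromov convergence (Proposition~\ref{prn:SL14_2}), and the scaling invariance and monotonicity of $\mathcal{V}$ along the Ricci flow, for each fixed $H$ we have $\mathcal{V}_{\tilde g_i}((x_i,0),H) \to \mathcal{V}((\hat x,0),H)$, and by Theorem~\ref{thm:SL25_1} the latter tends to $\mathrm{avr}(\hat M) \leq 1-2\delta_0$ as $H \to \infty$. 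Fix $H$ large enough that $\mathcal{V}((\hat x,0),H) < 1-\delta_0$; then for large $i$, $\mathcal{V}_{\tilde g_i}((x_i,0),H) < 1-\delta_0$. Unwinding the rescaling (using $\mathcal{V}_{g_i}((x_i,0),s^2) = \mathcal{V}_{\tilde g_i}((x_i,0),\rho_i^{-2}s^2)$ and monotonicity, with $\rho_i^2 H < 1$ for large $i$), this yields $\mathcal{V}_{g_i}((x_i,0),1) \leq \mathcal{V}_{\tilde g_i}((x_i,0),H) < 1-\delta_0$, contradicting $\mathcal{V}((x_i,0),1) \geq 1-\delta_0^{(i)} \to 1$. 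This forces the existence of $\delta_0'$ and $\eta$; one takes $\delta_0' = \min\{\delta_0, \delta_0/2\}$ or any value small enough that the contradiction bites, and $\eta$ accordingly.

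The main obstacle I anticipate is the point-picking/rescaling bookkeeping: one must ensure that after rescaling by $\rho_i = \mathbf{cvr}(x_i,0)$ the hypotheses of Proposition~\ref{prn:SL04_1} and Theorem~\ref{thm:SC04_1} genuinely hold — in particular that $\mathbf{pcr}_{\tilde g_i}(x_i \times [-T_i,T_i]) \geq 1$ with $T_i \to \infty$, that $\sup(|R|+|\lambda|) \to 0$ after rescaling (automatic since $|R|+|\lambda| \leq A$ and $\rho_i \to 0$), and that the volume non-collapsing survives. The ``local minimum of $\mathbf{cvr}$'' selection from the proof of Proposition~\ref{prn:SC17_7} handles this: it produces a sequence of centers (still converging to the same blown-up picture) around which $\mathbf{cvr} \geq \tfrac12 \rho_i$ on balls of radius $2^i\rho_i$, so after rescaling $\mathbf{cr} \geq \tfrac12$ on arbitrarily large balls, hence (via Theorem~\ref{thm:SC28_1} and Corollary~\ref{cly:SK27_1}) $\mathbf{pcr} \geq 1$. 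A secondary technical point is checking that $\hat M$ is truly non-Euclidean; this is where $\mathbf{vr}(\hat x) = 1$ (a finite, nonzero value) together with the rigidity of volume ratio in $\widetilde{\mathscr{KS}}(n)$ (Proposition~\ref{prn:SC17_3}) does the job, since a Euclidean $\hat M$ would have $\mathbf{vr} \equiv \infty$. Everything else is a direct application of the monotonicity of Perelman's reduced volume and the continuity results (Propositions~\ref{prn:SL14_1}, \ref{prn:SL14_2}) already established.
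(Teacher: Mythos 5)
Your proposal is correct in structure and uses essentially the same ingredients as the paper: parabolic rescaling so that $\mathbf{cvr}$ is normalized to scale one, passage to a limit in $\widetilde{\mathscr{KS}}(n,\kappa)$ via Theorem~\ref{thm:SC04_1}, the identity $\lim_{H\to\infty}\mathcal{V}((\hat x,0),H)=\mathrm{avr}(\hat M)$ from Theorem~\ref{thm:SL25_1}, and Anderson's gap theorem. You run the contradiction in the opposite direction from the paper --- you start from $\mathbf{vr}(\hat x)=1<\infty$ to conclude $\hat M \neq \C^n$, hence $\mathrm{avr}(\hat M)\leq 1-2\delta_0$, and then contradict the reduced-volume lower bound; the paper instead starts from the reduced-volume lower bound to force $\mathrm{avr}(\hat M)\geq 1-\delta_0$, hence $\hat M\cong\C^n$ via Proposition~\ref{prn:SC17_1}, and then contradicts the finiteness of $\mathbf{cvr}_{\tilde g_i}(x_i,0)$ through Corollary~\ref{cly:SL27_1}. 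These two directions are logically equivalent, and both rely on the continuity of the reduced volume under Cheeger--Gromov convergence (Proposition~\ref{prn:SL14_2}) together with the monotonicity of reduced volume.

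Where your plan deviates is in the point-picking step, and your justification there is not quite right. You worry that the rescaled flow may not satisfy $\mathbf{pcr}_{\tilde g_i}\geq 1$ and import the ``local minimum of $\mathbf{cvr}$'' device from the proof of Proposition~\ref{prn:SC17_7}; but first, that device was needed in Proposition~\ref{prn:SC17_7} only because the base points there converge to a genuine singularity of a fixed limit space, and second, the claim that $\mathbf{cvr}\geq \tfrac12$ on arbitrarily large balls implies ``$\mathbf{cr}\geq\tfrac12$, hence $\mathbf{pcr}\geq 1$'' does not follow from Corollary~\ref{cly:SK27_1} as you suggest. The correct (and simpler) observation, which the paper uses implicitly, is that $\mathbf{pcr}$ scales like a length: once Theorem~\ref{thm:SC28_1} gives $\mathbf{pcr}_{g_i}\geq\hslash$ uniformly, the rescaled flow satisfies $\mathbf{pcr}_{\tilde g_i}\geq\hslash/\rho_i\to\infty$, so $\tilde{\mathcal{LM}}_i\in\mathscr{K}(n,A;1)$ for large $i$ with no further point selection. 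The remaining hypotheses of Theorem~\ref{thm:SC04_1}, namely $T_i\to\infty$, $\Vol(M_i)\to\infty$, and $\sup(|R|+|\lambda|)\to 0$, are automatic under the same rescaling. With this correction, the rest of your argument goes through exactly as you describe.
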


\begin{proof}
If $\lambda=0$, reduced volume is monotone.  If $\lambda$ is bounded, then reduced volume is almost monotone.
A simple calculation shows that $\mathcal{V}((x_0,0), \rho^2) \geq 1-\delta_0$ for all $0<\rho \leq r^2$
whenever $\mathcal{V}((x_0,0),r^2) \geq 1-\delta_0'$ for some $0<r\leq 1$.   Therefore, without loss of generality,
we may assume $\lambda=0$ and $\delta_0'=\delta_0$ in the proof.

If the statement was wrong, there exists a sequence of $\eta_i \to 0$, $0<r_i \leq 1$ and $x_i \in M_i$, and corresponding K\"ahler Ricci flows satisfying
\begin{align*}
  \begin{cases}
   &\mathcal{V}((x_i,0),r_i^2) \geq 1-\delta_0, \\
   &\mathbf{cvr}(x_i,0) <\eta_i r_i.
  \end{cases}
\end{align*}
By the monotonicity of reduced volume, we have
\begin{align*}
 \begin{cases}
   &\mathcal{V}((x_i,0),H\eta_i^2 r_i^2) \geq 1-\delta_0, \\
   &\mathbf{cvr}(x_i,0) <\eta_i r_i,
  \end{cases}
\end{align*}
for each fixed $H$ and large $i$.
Let $\tilde{g}_i(t)=(\eta_i r_i)^{-2}g((\eta_i r_i)^2t)$.  It is clear that
\begin{align}
  \mathbf{cvr}_{\tilde{g}_i}(x_i,0)=1.   \label{eqn:HA09_1}
\end{align}
The canonical radius of $\tilde{g}_i$ tends to infinity, $|R|+|\lambda| \to 0$.
Similar to the proof of Proposition~\ref{prn:HA08_3}, we have the convergence:
\begin{align*}
 (M_i, x_i, \tilde{g}_i(0)) \stackrel{\hat{C}^{\infty}}{\longrightarrow} (\hat{M}, \hat{x}, \hat{g}) \in \widetilde{\mathscr{KS}}(n,\kappa).
\end{align*}
The limit space $\hat{M}$ can be extended to a static eternal K\"ahler Ricci flow solution.
Moreover, Proposition~\ref{prn:SL14_2} can be applied here and guarantees the reduced volume convergence.
\begin{align*}
   \mathcal{V}((\hat{x},0), H)=\lim_{i \to \infty} \mathcal{V}_{\tilde{g}_i}((x_i,0), H)
   =\lim_{i \to \infty} \mathcal{V}((x_i,0), H(\eta_i r_i)^2) \geq 1-\delta_0.
\end{align*}
Note that $H$ is arbitrary. By the homogeneity
of reduced volume at infinity, Theorem~\ref{thm:SL25_1}, we see that
\begin{align*}
   \mathrm{avr}(\hat{M})=\lim_{H \to \infty} \mathcal{V}((\bar{x},0),H) \geq 1-\delta_0 \geq 1-\delta_0.
\end{align*}
So Proposition~\ref{prn:SC17_1} applies to force $\hat{M}$ to be isometric to be $\C^n$.
In particular, $\mathbf{vr}(\hat{x})=\infty$.
It follows from Corollary~\ref{cly:SL27_1} that $\displaystyle  \lim_{i \to \infty} \mathbf{cvr}_{\tilde{g}_i}(x_i,0)=\infty$, 
which contradicts (\ref{eqn:HA09_1}).
\end{proof}

According to Theorem~\ref{thm:SL27_3}, one can define a concept of reduced volume radius for the purpose of
improving regularity.  Clearly, other regularity radius can also be defined. However, it seems all of them are equivalent.
For simplicity, we shall not compare all of them, but only prove an example case:
 the equivalence of harmonic radius and canonical volume radius. The proof of other cases are verbatim.

\begin{proposition}[\textbf{Equivalence of regularity radii}]
  Suppose $\mathcal{LM} \in \mathscr{K}(n,A)$, $x \in M$.
  Suppose $\max\{\mathbf{hr}(x,0), \mathbf{cvr}(x,0)\} \leq 1$,
  then we have
  \begin{align*}
       \frac{1}{C} \mathbf{hr}(x,0) \leq    \mathbf{cvr}(x,0) \leq C \mathbf{hr}(x,0)
  \end{align*}
  for some uniform constant $C=C(n,A)$.
\label{prn:HA09_2}
\end{proposition}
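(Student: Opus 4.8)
The plan is to prove the two inequalities by contradiction-and-compactness, exactly in the spirit of Proposition~\ref{prn:HA08_3} and Theorem~\ref{thm:SL27_3}. The key mechanism is that both $\mathbf{hr}$ and $\mathbf{cvr}$ are regularity radii that, when rescaled to unit size, force convergence to a limit in $\widetilde{\mathscr{KS}}(n,\kappa)$ which must then be $\C^n$ by the gap theorem (Proposition~\ref{prn:SC17_1}), and on $\C^n$ both radii equal $+\infty$ — contradiction. So the skeleton is: assume no uniform constant $C$ works for (say) the inequality $\mathbf{cvr}(x,0)\le C\,\mathbf{hr}(x,0)$; extract a sequence $\mathcal{LM}_i\in\mathscr{K}(n,A)$, points $x_i\in M_i$, with $r_i:=\mathbf{hr}(x_i,0)\to 0$ relative to $\mathbf{cvr}(x_i,0)$, i.e. $\mathbf{cvr}(x_i,0)/\mathbf{hr}(x_i,0)\to\infty$; rescale $\tilde g_i(t)=r_i^{-2}g_i(r_i^2 t)$ so that $\mathbf{hr}_{\tilde g_i}(x_i,0)=1$ while $\mathbf{cvr}_{\tilde g_i}(x_i,0)\to\infty$; use Theorem~\ref{thm:SC28_1} (a priori lower bound $\hslash$ on $\mathbf{pcr}$) together with $r_i\to 0$ to see $\mathbf{pcr}_{\tilde g_i}(x_i,0)\ge\min\{\hslash r_i^{-1},1\}=1$ for large $i$, and observe $|R|+|\lambda|\to 0$ for the rescaled flows; apply Proposition~\ref{prn:SL04_1} to get $\hat C^\infty$-convergence $(M_i,x_i,\tilde g_i(0))\to(\hat M,\hat x,\hat g)$; invoke Theorem~\ref{thm:SC04_1} and Corollary~\ref{cly:SL27_1} to conclude $\hat M\in\widetilde{\mathscr{KS}}(n,\kappa)$ with $\mathbf{vr}(\hat x)=\lim\mathbf{cvr}_{\tilde g_i}(x_i,0)=\infty$; then Proposition~\ref{prn:SC17_1} forces $\hat M=\C^n$, so $(M_i,x_i,\tilde g_i(0))\to(\C^n,0,g_{\mathrm{Euc}})$ smoothly, hence $\mathbf{hr}_{\tilde g_i}(x_i,0)\to\infty$, contradicting $\mathbf{hr}_{\tilde g_i}(x_i,0)=1$.

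For the reverse inequality $\mathbf{hr}(x,0)\le C\,\mathbf{cvr}(x,0)$ the argument is the mirror image: suppose it fails, extract $x_i$ with $\rho_i:=\mathbf{cvr}(x_i,0)\to 0$ relative to $\mathbf{hr}(x_i,0)$; rescale by $\rho_i^{-2}$ so that $\mathbf{cvr}_{\tilde g_i}(x_i,0)=1$; again $\mathbf{pcr}_{\tilde g_i}(x_i,0)\ge 1$ and $|R|+|\lambda|\to 0$, so Proposition~\ref{prn:SL04_1} gives a limit $(\hat M,\hat x,\hat g)$, and by Theorem~\ref{thm:SC04_1}, Corollary~\ref{cly:SL27_1} we have $\hat M\in\widetilde{\mathscr{KS}}(n,\kappa)$ with $\mathbf{vr}(\hat x)=\lim\mathbf{cvr}_{\tilde g_i}(x_i,0)=1$ — but this $\hat x$ is a regular point, so the convergence is smooth near $\hat x$ and $\mathbf{hr}_{\tilde g_i}(x_i,0)$ converges to $\mathbf{hr}_{\hat g}(\hat x)>0$, a finite positive number. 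Then $\mathbf{hr}_{\tilde g_i}(x_i,0)/\mathbf{cvr}_{\tilde g_i}(x_i,0)$ converges to $\mathbf{hr}_{\hat g}(\hat x)$, a finite number, contradicting that this ratio was assumed to diverge. Here one should note that $\mathbf{hr}$ is continuous under smooth Cheeger-Gromov convergence at a regular point (standard: harmonic coordinates persist under $C^\infty$-convergence), which justifies passing $\mathbf{hr}$ to the limit; this is already implicit in the discussion preceding Proposition~\ref{prn:HA09_1} in the model-space setting.

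The main obstacle I expect is the bookkeeping around which radius gets normalized and making sure the hypothesis $\max\{\mathbf{hr}(x,0),\mathbf{cvr}(x,0)\}\le 1$ is actually used — it guarantees the rescaling factors $r_i,\rho_i$ tend to $0$, which is exactly what triggers $\mathbf{pcr}_{\tilde g_i}\ge 1$ via Theorem~\ref{thm:SC28_1}; without it one of the two radii could already be large and the compactness machine would not directly apply (though in that case the trivial a priori bound from $\mathscr{K}(n,A)$ suffices). A secondary technical point is verifying that $|R|+|\lambda|\to 0$ for the rescaled flows: since $\mathcal{LM}\in\mathscr{K}(n,A)$ gives $|R-n\lambda|+|\lambda|\le C(A)$ on the original flow (from \eqref{eqn:SK20_1}, noting $\lambda$ is fixed along the flow and controlled because $c_1(M)/c_1(L)$ is fixed — or more simply $|\lambda|\le A$ via the definition and $|R|\le |R-n\lambda|+n|\lambda|\le 2nA$), rescaling by $r_i^{-2}$ with $r_i\to 0$ multiplies these quantities by $r_i^2\to 0$. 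Once these two points are in place, the proof is a routine instance of the blow-up/compactness scheme used repeatedly in Sections 4 and 5, and the final contradiction in each direction comes down to the single rigidity fact that on $\C^n$ (and only on $\C^n$, among spaces in $\widetilde{\mathscr{KS}}(n,\kappa)$) every regularity radius is infinite.
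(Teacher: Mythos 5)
Your blow-up argument is correct and not circular — Theorem~\ref{thm:SC28_1}, Proposition~\ref{prn:SL04_1}, Theorem~\ref{thm:SC04_1}, Corollary~\ref{cly:SL27_1}, Proposition~\ref{prn:SC17_1}, and Proposition~\ref{prn:HA09_1} are all established before this proposition — but it is genuinely different from, and considerably heavier than, the paper's own proof, which is a short direct argument with no limit spaces at all. The paper notes that $\mathbf{cvr}(x,0)\leq C\,\mathbf{hr}(x,0)$ is immediate from the $C^5$-regularity clause in Definition~\ref{dfn:SC02_1} (a fifth-order curvature bound on the scale $\mathbf{cvr}$ produces a harmonic chart of comparable radius outright); and for the reverse inequality the key observation is that a harmonic chart of radius $\rho=\mathbf{hr}(x,0)$ forces $(1-\delta_0)$-volume ratio on a scale $c\rho$, which registers as $\mathbf{cvr}(x,0)\geq c\rho$ provided $c\rho$ is below the canonical radius; the a priori floor $\mathbf{cr}(x,0)\geq\hslash$ from Theorem~\ref{thm:SC28_1} guarantees this whenever $c\rho\leq\hslash$, so $\mathbf{cvr}(x,0)\geq C^{-1}\min\{\mathbf{hr}(x,0),\hslash\}$, and the remaining case $\mathbf{hr}(x,0)>\hslash$ is closed off by the normalization $\mathbf{hr}(x,0)\leq 1$. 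So both proofs lean on Theorem~\ref{thm:SC28_1}, but the paper uses it pointwise and at a single scale rather than through rescaling and compactness of $\widetilde{\mathscr{KS}}(n,\kappa)$; the compactness route buys nothing extra here, and the direct route makes visible that the hypothesis $\max\{\mathbf{hr},\mathbf{cvr}\}\leq 1$ is only used to dispose of the trivial regime $\mathbf{hr}>\hslash$.

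One phrasing in your second direction needs tightening. You say that $\mathbf{hr}_{\tilde g_i}(x_i,0)$ ``converges to $\mathbf{hr}_{\hat g}(\hat x)>0$, a finite positive number'' — but by assumption this sequence tends to infinity, so it converges to nothing finite. The correct version of the contradiction is: if $\mathbf{hr}_{\tilde g_i}(x_i,0)\to\infty$, then for each fixed $R$ the quantitatively controlled harmonic charts of radius $R$ around $x_i$ pass to the Cheeger--Gromov limit, giving $\mathbf{hr}_{\hat g}(\hat x)\geq R$ for every $R$; the scale-invariant $C^{1,1/2}$ bound on arbitrarily large balls then forces $\hat g$ flat and $\hat M$ smooth, hence $\hat M=\C^n$ by noncollapsing, which is incompatible with $\mathbf{vr}(\hat x)=1<\infty$ (equivalently, Proposition~\ref{prn:HA09_1} gives $\mathbf{hr}(\hat x)\leq C\,\mathbf{vr}(\hat x)<\infty$). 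The issue with invoking ``continuity of $\mathbf{hr}$ under smooth convergence at a regular point'' is that the smooth convergence furnished by Proposition~\ref{prn:SL04_1} only controls a ball of radius comparable to $\mathbf{vr}(\hat x)=1$, whereas $\mathbf{hr}_{\tilde g_i}\to\infty$ concerns arbitrarily large balls around $x_i$; the large-scale control comes from the harmonic-chart hypothesis itself passing to the limit, not from the canonical-radius apparatus.
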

\begin{proof}
Clearly, $\mathbf{cvr}(x,0) \leq C \mathbf{hr}(x,0)$ follows from the $C^5$-regularity property of  canonical volume radius.  It suffices to show $\frac{1}{C} \mathbf{hr}(x,0) \leq    \mathbf{cvr}(x,0)$.  However, since $\mathbf{cr}(x,0) \geq \hslash$,
it is clear from definition that
\begin{align*}
  \mathbf{cvr}(x,0) \geq \frac{1}{C} \min\{\mathbf{hr}(x,0), \hslash\}.
\end{align*}
If $\mathbf{hr}(x,0)\leq \hslash$, then we are done. Otherwise, we have $\hslash <\mathbf{hr}(x,0) \leq 1$. It follows that
\begin{align*}
  \mathbf{cvr}(x,0) \geq \frac{1}{C}\hslash \geq \frac{\hslash}{C} \mathbf{hr}(x,0) \geq \frac{1}{C'} \mathbf{hr}(x,0).
\end{align*}
So we finish the proof.
\end{proof}

\begin{theorem}[\textbf{Improved density estimate}]
For arbitrary small $\epsilon$, arbitrary $0 \leq p<2$, there is a constant $\delta=\delta(n,A,p)$ with the following properties.

 Suppose $\mathcal{LM} \in \mathscr{K}(n,A)$, $x \in M$. Then under the metric $g(0)$, we have
 \begin{align}
    \log \frac{\int_{B(x,r)} \mathbf{cvr}^{-2p}dv}{E(n,\kappa,p) r^{2n-2p} } < \epsilon
 \label{eqn:SC13_2}
 \end{align}
whenever $r<\delta$.
 Here the number $E(n,\kappa,p)$ is defined in Proposition~\ref{prn:SB25_2}.
  \label{thm:SC12_2}
\end{theorem}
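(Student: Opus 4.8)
The plan is to argue by contradiction, exactly as in the proofs of Proposition~\ref{prn:SC06_3} and Corollary~\ref{cly:SC06_2}, and to exploit the weak continuity results established in this section together with the density estimate valid on the model space $\widetilde{\mathscr{KS}}(n,\kappa)$. Suppose the statement fails for some $p \in [0,2)$ and some $\epsilon_0>0$: then there is a sequence $\mathcal{LM}_i \in \mathscr{K}(n,A)$, points $x_i \in M_i$ and radii $r_i \to 0$ such that
\begin{align*}
  \int_{B_{g_i(0)}(x_i,r_i)} \mathbf{cvr}^{-2p} \, dv \geq e^{\epsilon_0} E(n,\kappa,p)\, r_i^{2n-2p}.
\end{align*}
First I would rescale: set $\tilde{g}_i(t) = r_i^{-2} g_i(r_i^2 t + t_i)$ with $t_i=0$, so that the balls become unit balls and, because $\mathbf{cvr}$ scales like a length, the hypothesis reads $\int_{B_{\tilde g_i(0)}(x_i,1)} \widetilde{\mathbf{cvr}}^{-2p} \, dv \geq e^{\epsilon_0} E(n,\kappa,p)$. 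Since $\mathcal{LM}\in\mathscr{K}(n,A)=\mathscr{K}(n,A;\hslash)$ by Theorem~\ref{thm:SC28_1}, the rescaled flows satisfy $\mathbf{cr}_{\tilde g_i}(x_i,0)\geq \hslash r_i^{-1}\to\infty$ and $\sup|R|+|\lambda|\leq A r_i^2\to 0$, while $T_i, \Vol(M_i)$ become arbitrarily large after rescaling; this is precisely the setup of condition (\ref{eqn:SL06_1}).

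The heart of the argument is then a convergence-plus-continuity step. By Theorem~\ref{thm:SC04_1}, passing to a subsequence, $(M_i, x_i, \tilde g_i(0))$ converges in $\hat{C}^\infty$-Cheeger-Gromov topology to a limit space $(\bar M,\bar x,\bar g)\in\widetilde{\mathscr{KS}}(n,\kappa)$, and on this limit space $\mathbf{vr}=\mathbf{cvr}$ (since $\mathbf{cr}\equiv\infty$). On $\bar M$ the density estimate Proposition~\ref{prn:SB25_2} holds with the \emph{sharp} constant:
\begin{align*}
  \int_{B(\bar x,1)} \mathbf{vr}^{-2p}\, dv \leq E(n,\kappa,p).
\end{align*}
So I need to show the left-hand integrals converge, i.e.\ $\lim_{i\to\infty}\int_{B_{\tilde g_i(0)}(x_i,1)}\widetilde{\mathbf{cvr}}^{-2p}\,dv \leq \int_{B(\bar x,1)}\mathbf{vr}^{-2p}\,dv$, which already contradicts the standing inequality $\geq e^{\epsilon_0}E(n,\kappa,p)$. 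This convergence is carried out exactly as in Corollary~\ref{cly:SC06_2}: split $B(x_i,1)$ into the regular part $\mathcal{F}_{r_1}$ (where $\widetilde{\mathbf{cvr}}$ converges smoothly by Proposition~\ref{prn:SL04_1}/Corollary~\ref{cly:SC06_1}, so the integral converges by dominated convergence, using $\mathbf{vr}\geq r_1$ as an upper bound for the integrand $\mathbf{vr}^{-2p}$), the annular part $\mathcal{F}_{r_2}\setminus\mathcal{F}_{r_1}$ (again smooth convergence, controlled by $\int_{B(\bar x,1)}(1+\mathbf{vr}^{-2p})$ which is finite by the model-space density estimate), and the highly singular part $\mathcal{D}_{r_2}$. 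On the last piece one uses the fact that $\mathcal S\cap\overline{B(\bar x,1)}$ has Hausdorff dimension $\leq 2n-4<2n-2p_0\leq 2n-2p$, so it is covered by finitely many balls $B(\bar y_j,\rho_j)$ with $\sum\rho_j^{2n-2p}$ arbitrarily small; pulling these back to $M_i$ and applying the canonical-radius density estimate (item 3 of Definition~\ref{dfn:SC02_1}, equivalently (\ref{eqn:SL25_2})) bounds $\int_{B(x_i,1)\cap\mathcal{D}_{r_2}}\widetilde{\mathbf{cvr}}^{-2p}\leq 2\mathbf{E}\sum\rho_j^{2n-2p}$, which is negligible.

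The main obstacle, as in Corollary~\ref{cly:SC06_2}, is handling the singular neighborhood: one must genuinely use $\dim_{\mathcal H}\mathcal S\leq 2n-4$ (rather than just the cruder Minkowski bound $2n-2p_0$) to get a finite covering of $\mathcal{D}_{r_2}\cap\overline{B(\bar x,1)}$ with summed $(2n-2p)$-content as small as desired uniformly for all $p<2$, and then transfer this covering to the approximating manifolds $M_i$ — which is where the uniform canonical-radius density estimate is indispensable, since $\mathbf{cvr}^{-2p}$ can blow up on $\mathcal{D}_{r_2}$. I would also double-check that the constant $E(n,\kappa,p)$ appearing on the limit $\bar M$ is indeed the same one from Proposition~\ref{prn:SB25_2} (not the inflated $\mathbf{E}$), which is why the sharp inequality on $\bar M$, not the factor-of-2 weaker estimate used in the definition of canonical radius, is what forces the contradiction against the $e^{\epsilon_0}$ gap. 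Once this is in place, letting $r_1\to 1$, $r_2\to 0$, and the covering content $\to 0$ in the right order yields $\lim_i\int_{B(x_i,1)}\widetilde{\mathbf{cvr}}^{-2p}\,dv\leq E(n,\kappa,p)$, contradicting the assumption and completing the proof.
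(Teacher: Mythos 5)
You follow the same contradiction-plus-blowup strategy as the paper: rescale by $r_i^{-1}$, apply Theorem~\ref{thm:SC04_1} to land in $\widetilde{\mathscr{KS}}(n,\kappa)$, invoke the weak continuity of $\mathbf{cvr}$ (Corollary~\ref{cly:SL27_1}), and compare against the model-space density estimate of Proposition~\ref{prn:SB25_2}. The architecture is correct.

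However, there is a genuine gap in the upper semi-continuity step, specifically for $p\in(p_0,2)$. Your chain ``$2n-4<2n-2p_0\leq 2n-2p$'' holds only when $p\leq p_0=2-\tfrac{1}{1000n}$, yet the theorem is stated for all $0\leq p<2$. The deeper problem is that the canonical-radius density estimate (item 3 of Definition~\ref{dfn:SC02_1}, or inequality (\ref{eqn:SL25_2})) is calibrated to the fixed exponent $p_0$: it gives $\int_{B(y,\rho)}\mathbf{vr}^{(\rho)}(z)^{-2p_0}dz\leq 2\mathbf{E}\rho^{2n-2p_0}$. To deduce from this a bound of the form $\int_{B(y,\rho)}\mathbf{cvr}^{-2p}\leq C\rho^{2n-2p}$ on the covering balls, one would invoke $\mathbf{cvr}^{-2p}\leq\rho^{2(p_0-p)}\mathbf{cvr}^{-2p_0}$ wherever $\mathbf{cvr}\leq\rho$; for $p>p_0$ this elementary inequality reverses, and the dyadic decomposition of $\mathcal{D}_{r_2}$ likewise produces a geometric series in $k$ with ratio $2^{2(p-p_0)}>1$, hence divergent. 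So the claimed bound $\int_{B(x_i,1)\cap\mathcal{D}_{r_2}}\widetilde{\mathbf{cvr}}^{-2p}\leq 2\mathbf{E}\sum_j\rho_j^{2n-2p}$ is not justified for $p>p_0$, and the needed inequality $\limsup_i\int_{B(x_i,1)}\widetilde{\mathbf{cvr}}^{-2p}\,dv\leq\int_{B(\bar x,1)}\mathbf{vr}^{-2p}\,dv$ does not follow from what you have written. To close the argument for $p>p_0$ one should instead run the Cheeger--Naber quantitative stratification (i.e.\ the proof of Proposition~\ref{prn:SB25_2}) directly and uniformly on the rescaled sequence $(M_i,x_i,\tilde g_i(0))$: since the canonical radius escalates to infinity and $\sup(|R|+|\lambda|)\to 0$, the volume convergence, almost-rigidity, and $\epsilon$-regularity (Proposition~\ref{prn:HD20_1}) inputs all hold with constants uniform in $i$ at all scales $\leq 1$, and the quantitative calculus then produces the sharp constant $E(n,\kappa,p)$ at any exponent $p<2$ in the limit, bypassing the $p_0$-calibrated estimate built into Definition~\ref{dfn:SC02_1}.
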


\begin{proof}
We argue by contradiction.
Note that every  blowup limit is in $\widetilde{\mathscr{KS}}(n,\kappa)$(c.f. Theorem~\ref{thm:SC04_1}).
Then a contradiction can be obtained by the weak continuity of  $\mathbf{cvr}$(c.f. Corollary~\ref{cly:SL27_1})
if the statement of this theorem does not hold.
\end{proof}

Note that $E(n,\kappa,0)=\omega_{2n}$.  So we are led to the volume ratio estimate immediately.

\begin{corollary}[\textbf{Volume-ratio estimate}]
For arbitrary small $\epsilon$, there is a constant $\delta=\delta(n,A)$ with the following properties.

Suppose $\mathcal{LM} \in \mathscr{K}(n,A)$, $x \in M$. Then under the metric $g(0)$, we have
 \begin{align}
    \log \frac{|B(x,r)|}{\omega_{2n} r^{2n}}  < \epsilon
 \label{eqn:SC13_1}
 \end{align}
whenever $r<\delta$.
  \label{cly:SC13_1}
\end{corollary}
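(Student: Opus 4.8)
The plan is to deduce Corollary~\ref{cly:SC13_1} directly from Theorem~\ref{thm:SC12_2} by specializing the exponent to $p=0$. First I would recall that in the statement of Proposition~\ref{prn:SB25_2} the constant $E(n,\kappa,p)$ was introduced precisely so that $r^{2p-2n}\int_{B(x,r)} \mathbf{vr}(y)^{-2p}\,dy \le E(n,\kappa,p)$, and observe that when $p=0$ the integrand $\mathbf{vr}(y)^{-2p}$ (equivalently $\mathbf{cvr}(y)^{-2p}$) is identically $1$, so that $\int_{B(x,r)} \mathbf{cvr}^{-2\cdot 0}\,dv = |B(x,r)|$ and the normalizing factor $E(n,\kappa,0)r^{2n-2\cdot 0}$ becomes $E(n,\kappa,0)\,r^{2n}$. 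Hence one only needs to identify $E(n,\kappa,0)$ with $\omega_{2n}$; this is noted in the paragraph immediately preceding the corollary and follows because the model bound for $p=0$ is nothing but the trivial statement $|B(x,r)| \le \omega_{2n} r^{2n}$ on Euclidean balls, which is the sharp constant in the Bishop-type estimate (item 1 of Theorem~\ref{thm:SL21_1} in the model space, respectively Proposition~\ref{prn:HD19_1}).

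Second, I would simply invoke Theorem~\ref{thm:SC12_2} with $p=0$: for every small $\epsilon>0$ there is $\delta=\delta(n,A,0)=\delta(n,A)$ such that for all $\mathcal{LM}\in\mathscr{K}(n,A)$, all $x\in M$, and all $r<\delta$,
\begin{align*}
   \log \frac{\int_{B(x,r)} \mathbf{cvr}^{0}\,dv}{E(n,\kappa,0)\,r^{2n}} < \epsilon.
\end{align*}
Substituting $\mathbf{cvr}^{0}\equiv 1$ and $E(n,\kappa,0)=\omega_{2n}$ turns the numerator into $|B(x,r)|$ and the denominator into $\omega_{2n}r^{2n}$, which is exactly the desired inequality
\begin{align*}
   \log \frac{|B(x,r)|}{\omega_{2n} r^{2n}} < \epsilon .
\end{align*}
Since $\delta$ depends only on $n$ and $A$ (the parameter $p=0$ being fixed), this is the claimed statement.

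There is no substantive obstacle here: the corollary is a purely formal specialization of the preceding theorem, and the only point that deserves an explicit word is the value of the model constant $E(n,\kappa,0)$. The one thing I would be slightly careful about is that $\mathbf{cvr}$ is finite and positive on the compact smooth manifold $M$ (so the integral $\int_{B(x,r)}\mathbf{cvr}^{-2p}\,dv$ is well-defined for $p>0$ and the $p=0$ limit is literally the volume), which is guaranteed because $\mathbf{pcr}(M)>0$ for every compact smooth polarized K\"ahler manifold, as remarked after Definition~\ref{dfn:SK27_1}. With that observed, the proof reduces to the single sentence ``apply Theorem~\ref{thm:SC12_2} with $p=0$ and use $E(n,\kappa,0)=\omega_{2n}$.''
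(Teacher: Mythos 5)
Your proposal is correct and matches the paper's own argument: the paper proves the corollary in one line by observing that $E(n,\kappa,0)=\omega_{2n}$ and specializing Theorem~\ref{thm:SC12_2} to $p=0$, which is exactly what you do. Your extra remarks on the well-definedness of the integral and the Bishop-type origin of the constant $E(n,\kappa,0)$ are sound but not needed beyond what the paper already records.
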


In the K\"ahler Ricci flow setting, Corollary~\ref{cly:SC13_1} improves the volume ratio estimates in~\cite{Zhq3}
and~\cite{CW5} (c.f.~Remark 1.1 of \cite{CW5}). Note that the integral (\ref{eqn:SC13_2}) can be used to show that
for every $p \in (0,2)$,  there is a $C=C(n,A,p)$ such that the $r$-neighborhood of $\mathcal{S}$ in a unit ball is
bounded by $C r^{2p}$(c.f. Theorem~\ref{thm:HE11_1}), where $\mathcal{S}$ is the singular part of a limit space.
By the definition of Minkowski dimension(c.f.~Definition~\ref{dfn:HE08_1}),
we can improve Proposition~\ref{prn:HA08_2} as follows.

\begin{corollary}[\textbf{Minkowski dimension of singular set}]
Same conditions as in Proposition~\ref{prn:HA07_2}, $\bar{M}$ has the regular-singular decomposition
$\bar{M}=\mathcal{R} \cup \mathcal{S}$. Then $\dim_{\mathcal{M}} \mathcal{S} \leq 2n-4$.
\label{cly:HE25_2}
\end{corollary}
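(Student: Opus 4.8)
The final statement, Corollary~\ref{cly:HE25_2}, asserts that the singular set $\mathcal{S}$ of a blowup limit $\bar{M}$ (arising from a sequence in $\mathscr{K}(n,A)$) has Minkowski dimension at most $2n-4$, thereby strengthening the Hausdorff dimension bound of Proposition~\ref{prn:HA08_2}. The plan is to combine the improved density estimate, Theorem~\ref{thm:SC12_2} (or equivalently the volume-ratio improvement Corollary~\ref{cly:SC13_1} together with the full $p$-range of Theorem~\ref{thm:SC12_2}), with the already-established structural facts about $\bar{M}$, namely that $\bar{M} \in \widetilde{\mathscr{KS}}(n,\kappa)$ by Theorem~\ref{thm:SC04_1} after applying Theorem~\ref{thm:SC28_1}, and then to run the same argument that produced the Minkowski bound inside the model space in the proof of Theorem~\ref{thm:HD19_1} and Corollary~\ref{cly:HD19_2}.

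Concretely, first I would fix $p \in (2-\tfrac{1}{2n}, 2)$, say $p=p_0$, and invoke Theorem~\ref{thm:SC12_2}: for each flow $\mathcal{LM}_i \in \mathscr{K}(n,A)$ and each $x_i \in M_i$ one has, under $g_i(0)$, the uniform estimate $\int_{B(x_i,r)} \mathbf{cvr}^{-2p}\,dv \le C(n,A,p)\,r^{2n-2p}$ for all small $r$. By the weak continuity of canonical volume radius, Corollary~\ref{cly:SL27_1}, this bound passes to the limit space $\bar{M}$ with $\mathbf{vr}$ in place of $\mathbf{cvr}$, exactly as in Corollary~\ref{cly:SC06_2}; that is, $\int_{B(\bar{x},r)} \mathbf{vr}(y)^{-2p}\,dy \le C\,r^{2n-2p}$ for every unit-scale geodesic ball. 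Next I would repeat the two-line computation of Corollary~\ref{cly:HD19_2}: decomposing $B(\bar{x},1)$ into the annular regions $\mathcal{F}_r \setminus \mathcal{F}_{2r}$ and using $\mathbf{vr} < 2r$ there, the density bound forces $|B(\bar{x},1)\cap \mathcal{D}_{2r}| \le \frac{C}{1-4^{-p}}(2r)^{2p}$, and then the containment $\{x : d(x,\mathcal{S}) < \rho\} \subset \mathcal{D}_{\tilde{K}\rho}$ (the analog of (\ref{eqn:SB25_13}), valid here since every point of $\bar{M}$ away from $\mathcal{S}$ has positive volume radius by the regularity-equivalence Proposition~\ref{prn:HA08_1}) yields $|\{x \in B(\bar{x},1): d(x,\mathcal{S})<\rho\}| \le C\rho^{2p}$. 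Since this holds for a range of $p$ approaching $2$, the definition of Minkowski dimension (Definition~\ref{dfn:HE08_1}) gives $\dim_{\mathcal{M}}\mathcal{S} \le 2n-2p \to 2n-4$, hence $\dim_{\mathcal{M}}\mathcal{S} \le 2n-4$.

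I do not anticipate a genuine obstacle here, since all the analytic inputs have already been assembled: the improved density estimate Theorem~\ref{thm:SC12_2} with the full $0 \le p < 2$ range, the identification $\bar{M} \in \widetilde{\mathscr{KS}}(n,\kappa)$, the continuity of $\mathbf{vr}$ under Cheeger-Gromov convergence, and the neighborhood-of-$\mathcal{S}$ volume estimate. The one point requiring a little care is making the transition from the manifold-level estimate (a statement about $\mathbf{cvr}$ on the smooth $M_i$) to the limit-level estimate (a statement about $\mathbf{vr}$ on $\bar{M}$) rigorous; this is handled precisely as in the proof of Corollary~\ref{cly:SC06_2}, splitting the ball into the part lying in $\mathcal{F}_{r_1}$, the intermediate shell $\mathcal{F}_{r_2}\setminus\mathcal{F}_{r_1}$ where smooth convergence applies, and the remaining piece $\mathcal{D}_{r_2}$, which is controlled by a finite Vitali-type cover whose total $(2n-2p)$-content is small because $\dim_{\mathcal{H}}\mathcal{S} \le 2n-4 < 2n-2p$ (this last fact being exactly Proposition~\ref{prn:HA08_2}, already in hand). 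Assembling these pieces and letting $p \uparrow 2$ completes the argument.
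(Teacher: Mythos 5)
Your argument is correct and follows exactly the route the paper indicates in the paragraph preceding this corollary: apply the improved density estimate, Theorem~\ref{thm:SC12_2}, for $p$ arbitrarily close to $2$, and run the covering argument from the proof of Theorem~\ref{thm:HE11_1} to bound the measure of the $r$-neighborhood of $\mathcal{S}$ by $Cr^{2p}$, whence $\dim_{\mathcal{M}}\mathcal{S}\le 2n-4$. One small imprecision worth flagging: Corollary~\ref{cly:SL27_1} and Corollary~\ref{cly:SC06_2}, which you invoke for passing the density integral $\int \mathbf{cvr}^{-2p}$ to the limit, formally carry the hypotheses of Theorem~\ref{thm:SC04_1} (in particular $\sup_{\mathcal{M}_i}(|R|+|\lambda|)\to 0$), which is not assumed under the conditions of Proposition~\ref{prn:HA07_2}; this is inessential, however, because after Theorem~\ref{thm:SC28_1} gives the uniform lower bound $\mathbf{pcr}\ge\hslash$, one can take the limit of the measure bound $|B(x_i,\rho_0)\cap\mathcal{D}_r|\le C(p)\rho_0^{2n-2p}r^{2p}$ directly via volume convergence (Proposition~\ref{prn:SC12_1}) and the containment $\{d(\cdot,\mathcal{S})<r\}\subset\mathcal{S}_{4c_a^{-1}r}$, exactly as at the end of the proof of Theorem~\ref{thm:HE11_1}, without ever passing $\mathbf{vr}^{-2p}$ itself to the limit.
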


In \cite{Wa2}, the second author developed an estimate of the type $|Ric| \leq \sqrt{|Rm||R|}$,  where $\sqrt{|Rm|}$ should be understood as the reciprocal of a regular scale. Due to the improving regularity property of canonical volume radius, it induces the estimate
$|Ric| \leq \frac{\sqrt{|R|}}{\mathbf{cvr}}$ pointwisely.
By the uniform bound of scalar curvature and Theorem~\ref{thm:SC12_2}, the following estimate is clear now.

\begin{corollary}[\textbf{Ricci curvature estimate}]
  There is a constant $C=C(n,A,r_0)$ with the following property.

  Suppose $\mathcal{LM} \in \mathscr{K}(n,A)$, $x_0 \in M$, $0<r \leq r_0$, $0<p<2$.
  Then under the metric $g(0)$, we have
  \begin{align}
      r^{2p-2n}\int_{B(x_0,r)}  |Ric|^{2p} dv<C.
  \label{eqn:SL27_3}
  \end{align}
\label{cly:SL27_4}
\end{corollary}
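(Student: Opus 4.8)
The plan is to derive the Corollary directly from the pointwise Ricci estimate of the second author's paper \cite{Wa2} combined with the improved density estimate already established in Theorem~\ref{thm:SC12_2}. Recall from the discussion preceding the statement that the estimate $|Ric| \leq \sqrt{|Rm||R|}$ of \cite{Wa2}, together with the improving regularity property of the canonical volume radius (Corollary~\ref{cly:SL23_1}, Proposition~\ref{prn:SC24_1}), yields the pointwise bound
\begin{align*}
  |Ric|(x,0) \leq \frac{C_0\sqrt{|R|(x,0)}}{\mathbf{cvr}(x,0)}
\end{align*}
for some uniform $C_0=C_0(n,A)$ and every $x \in M$, where we use that $\mathbf{cvr}(\cdot,0)$ is a regular scale at which higher-order curvature is controlled. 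Since $\mathcal{LM} \in \mathscr{K}(n,A)$ satisfies $|R-n\lambda|_{C^0(M)} \leq A$ and $|\lambda| \leq A$ by (\ref{eqn:SK20_1}), we have $\sup_M |R| \leq C(n,A)$, hence $|Ric|(x,0) \leq C_1 \mathbf{cvr}(x,0)^{-1}$ for a uniform $C_1=C_1(n,A)$.

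First I would raise this pointwise inequality to the power $2p$ and integrate over $B(x_0,r)$, obtaining
\begin{align*}
  \int_{B(x_0,r)} |Ric|^{2p}\, dv \leq C_1^{2p} \int_{B(x_0,r)} \mathbf{cvr}(y,0)^{-2p}\, dy.
\end{align*}
Then I would invoke Theorem~\ref{thm:SC12_2} with, say, $\epsilon = 1$ (any fixed choice suffices): for $r$ smaller than the threshold $\delta(n,A,p)$ provided there, the right-hand side is bounded by $e \cdot E(n,\kappa,p)\, r^{2n-2p}$. Multiplying through by $r^{2p-2n}$ gives (\ref{eqn:SL27_3}) with $C = e\, C_1^{2p} E(n,\kappa,p)$ on the scale range $r \in (0,\delta)$. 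To extend to all $r \in (0,r_0]$ for the prescribed $r_0$, I would cover $B(x_0,r)$ by a uniformly bounded number $N = N(n,A,r_0/\delta)$ of balls of radius $\delta/2$, using the volume ratio bounds of (\ref{eqn:SK20_1}) and the volume-ratio estimate Corollary~\ref{cly:SC13_1} to control $N$; applying the small-scale estimate on each piece and summing yields (\ref{eqn:SL27_3}) with a constant $C$ now also depending on $r_0$, as stated.

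The main obstacle is making the pointwise step $|Ric| \leq C_1 \mathbf{cvr}^{-1}$ fully rigorous at points where $\mathbf{cvr}(x,0)$ is small, i.e., near the singular set: there one must be careful that the regular scale controlling $|\nabla Rm|$ (needed to run the argument of \cite{Wa2}) is comparable to $\mathbf{cvr}$, which is exactly the content of Proposition~\ref{prn:SC24_1} and Corollary~\ref{cly:SL23_1} combined with the a priori lower bound $\mathbf{cr}(M) \geq \hslash$ from Theorem~\ref{thm:SC28_1}; one also needs that $\mathbf{cvr}$ and the harmonic radius are equivalent (Proposition~\ref{prn:HA09_2}) so that the smooth estimates of \cite{Wa2} apply on a ball of size proportional to $\mathbf{cvr}$. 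Once these equivalences are in hand the remainder is a routine integration and covering argument, so I expect the write-up to be short.
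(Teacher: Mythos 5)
Your proof takes exactly the route the paper intends: the pointwise bound $|Ric| \leq C\,\mathbf{cvr}^{-1}$ from \cite{Wa2} (via the improving regularity of $\mathbf{cvr}$ and the uniform scalar curvature bound), raised to the $2p$-th power and integrated against Theorem~\ref{thm:SC12_2}. The paper leaves the write-up as "clear now," and your covering argument to pass from the scale $\delta(n,A,p)$ to $r_0$ is the correct way to account for the stated dependence of $C$ on $r_0$.
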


Corollary~\ref{cly:SL27_4} localizes the $L^{2p}$-curvature estimate of \cite{TZZ2} in a weak sense, since
(\ref{eqn:SL27_3}) only holds for $p<2$.   If $n=2$,  (\ref{eqn:SL27_3}) also holds for $p=2$,
since the finiteness of singularity guarantees that one can choose good cutoff functions.
We believe that the same localization result hold for $p=2$ even if $n>2$.\\

We return to the canonical neighborhood theorems in the introduction,
Theorem~\ref{thmin:SC24_1}, Theorem~\ref{thmin:HC08_1} and Theorem~\ref{thmin:HC06_1}.
However, Theorem~\ref{thmin:HC06_1} is not completely local.
Actually,  Theorem~\ref{thm:SL27_2} is enough to show
the local flow structure of $\mathscr{K}(n,A)$ can be approximated by $\mathscr{KS}(n,\kappa)$.
In light of its global properties, the proof of Theorem~\ref{thmin:HC06_1} is harder and is postponed to section 5.5.
On the other hand, Theorem~\ref{thmin:SC24_1} and Theorem~\ref{thmin:HC08_1} are local.
We now close this subsection by proving Theorem~\ref{thmin:SC24_1} and Theorem~\ref{thmin:HC08_1}.

\begin{proof}[Proof of Theorem~\ref{thmin:SC24_1}]
It follows from the combination of Proposition~\ref{prn:HA07_2}, Proposition~\ref{prn:HA08_1}, Proposition~\ref{prn:HA07_1},
and Proposition~\ref{prn:HA08_2}.
\end{proof}

\begin{proof}[Proof of Theorem~\ref{thmin:HC08_1}]
It follows from Theorem~\ref{thm:SC28_1},  Definition~\ref{dfn:SK27_1} and a scaling argument.
\end{proof}

\subsection{Local variety structure}

 We focus on the variety structure of the limit space in this subsection.
 We essentially follow the argument in~\cite{DS}, with slight modification.

 Suppose $\mathcal{LM}_i \in \mathscr{K}(n,A)$, $x_i \in M_i$. Let $(\bar{M}, \bar{x},\bar{g})$
 be a pointed-Gromov-Hausdorff limit of $(M_i,x_i,g_i(0))$.
 Since $\bar{M}$ may be non-compact, the limit line bundle $\bar{L}$ may have infinitely many orthogonal holomorphic sections.
 Therefore, in general, we cannot expect to embed $\bar{M}$ into a projective space of finite dimension
 by the complete linear system of $\bar{L}$. However, when
 we focus our attention to the unit geodesic ball $B(\bar{x},1)$,
 we can choose some holomorphic sections of $\bar{L}$, peaked around $\bar{x}$, to embed $B(\bar{x},1)$ into $\CP^N$ for a
 finite $N$.

 Actually, for every $\epsilon>0$, we can find an $\epsilon$-net of $B(\bar{x},2)$ such that every point in this net has canonical volume radius
 at least $c_0 \epsilon$.    For each point $y$ in this $\epsilon$-net, we have a peak section $s_y$, which is a holomorphic section such that
 $\norm{s(y)}{}$ achieves the maximum among all unit $L^2$-norm holomorphic sections $s \in H^0(\bar{M},\bar{L})$.  By the partial-$C^0$-estimate
 argument(c.f.~\cite{CW4} for the flow case with weak convergence), we can assume that $\norm{s_y}{}^2$ is uniformly bounded below in $B(y,2\epsilon)$.

 On the other hand, by the choice of $y$, $B(y, \eta \epsilon)$ has a smooth manifold structure for some $\eta=\eta(n)$.
 Therefore, we can choose $n$ holomorphic sections of $\bar{L}^k$ such that these sections  are the local deformation of $ z_1, z_2, \cdots, z_n$.
 Here $k$ is a positive integer proportional to $\epsilon^{-2}$. Put these holomorphic sections together with $s_y^k$,
 we obtain $(n+1)$-holomorphic sections of $\bar{L}^k$ based at the point $y$.  Let $y$ run through all points in the $\epsilon$-net and collect all the holomorphic sections based at $y$,  we obtain a set of holomorphic sections $\{s_i\}_{i=0}^N$ of $\bar{L}^{k}$. Let $\{\tilde{s}_i\}_{i=0}^N$
  be the orthonormal basis of $span\{s_0,s_1, \cdots, s_N\}$. We define the Kodaira map $\iota$ as follows.
   \begin{align*}
     \iota:  B(0,2) &\mapsto \CP^N, \\
                x &\mapsto [\tilde{s}_0(x) : \tilde{s}_1(x): \cdots:\tilde{s}_N(x)].
  \end{align*}
  This map is well defined.   In fact,  for every $z \in B(\bar{x},1)$, we can find a point $y$ in the $\epsilon$-net and $z \in B(y,2\epsilon)$, then
  $\norm{s_y}{}^2(z)>0$ by the partial-$C^0$-estimate. It forces that $\tilde{s}_j(z) \neq 0$ for some $j$.
  Since $k$ is proportional to $\epsilon^{-2}$, we can just let $\epsilon=\frac{1}{\sqrt{k}}$ without loss of generality.
  In the following argument,  by saying ``raise the power of line bundle" from $k_1$ to $k_2$,
  we simultaneously means the underlying $\epsilon$-net is
  strengthened from a $\frac{1}{\sqrt{k_1}}$-net to a $\frac{1}{\sqrt{k_2}}$-net.

\begin{lemma}
Suppose $w \in \iota(B(\bar{x},1))$, then $ \iota^{-1}(w) \cap \overline{B(\bar{x},1)}$ is a finite set.
\label{lma:HB09_1}
\end{lemma}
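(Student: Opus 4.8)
The plan is to argue by contradiction: if $\iota^{-1}(w)\cap\overline{B(\bar x,1)}$ were infinite for some $w\in\iota(B(\bar x,1))$, then by compactness of $\overline{B(\bar x,1)}$ we could extract a sequence of distinct points $z_j\to z_\infty\in\overline{B(\bar x,1)}$ with $\iota(z_j)=w$ for all $j$. The key point to exploit is that $\iota$ is essentially built from holomorphic sections: on the regular part $\mathcal{R}$, the map $\iota$ is holomorphic, and on a neighborhood of each net point $y$ it contains, among its components, local deformations of the coordinate functions $z_1,\dots,z_n$, so $\iota$ is a holomorphic immersion (indeed a local biholomorphism onto its image, up to the scaling $\bar L^k$) near every regular point where the peak section $s_y$ does not vanish. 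Since $w\in\iota(B(\bar x,1))$, the partial-$C^0$-estimate guarantees $w$ has a representative whose corresponding peak-section coordinate is nonzero, so $w=\iota(z)$ for some regular $z$, and the preimage near such a $z$ is discrete.

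The main step is therefore to handle the limit point $z_\infty$: either $z_\infty\in\mathcal{R}$ or $z_\infty\in\mathcal{S}$. If $z_\infty$ is regular, then on a small ball $B(z_\infty,\eta\epsilon)$ the map $\iota$ is a holomorphic immersion (by the construction of the $n$ coordinate-type sections based at the nearby net point, together with non-vanishing of some $\tilde s_j$), hence locally injective, contradicting the existence of infinitely many distinct $z_j\to z_\infty$ all mapping to $w$. If $z_\infty\in\mathcal{S}$, I would instead use that the $z_j$ accumulate near the singular set, invoke that $\dim_{\mathcal{H}}\mathcal{S}\le 2n-4$ (Corollary~\ref{cly:HE25_2}, or Proposition~\ref{prn:HA08_2}) so that $\mathcal{R}$ is dense and the fiber $\iota^{-1}(w)$, being an analytic subset away from $\mathcal{S}$, cannot accumulate at a point of $\mathcal{S}$ unless it has positive dimension there; but positive-dimensional fibers are excluded because on the dense open regular part the differential of $\iota$ is injective (it contains the $dz_i$). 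This forces $z_\infty\in\mathcal{R}$, reducing to the first case.

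Concretely I would carry out the steps in this order: (1) Recall from the construction of $\iota$ that for each net point $y$ the sections include $s_y^k$ and $n$ further sections which, in the smooth chart on $B(y,\eta\epsilon)$, agree to leading order with $z_1,\dots,z_n$; conclude $d\iota$ is injective at every point of $B(\bar x,1)\cap\mathcal{R}$ where the relevant peak section is nonzero, which by the partial-$C^0$-estimate is all of $B(\bar x,1)\cap\mathcal{R}$. (2) Deduce $\iota|_{B(\bar x,1)\cap\mathcal{R}}$ is locally injective, hence fibers over $\iota(B(\bar x,1)\cap\mathcal{R})$ are discrete in $\mathcal{R}$. (3) Assume for contradiction $\iota^{-1}(w)\cap\overline{B(\bar x,1)}$ is infinite, extract $z_j\to z_\infty$. (4) Show $z_\infty\in\mathcal{R}$: if not, use high codimension of $\mathcal{S}$ and the immersion property on the dense set $\mathcal{R}$ to rule out accumulation at a singular point. (5) Apply local injectivity of $\iota$ near $z_\infty\in\mathcal{R}$ to get a contradiction.

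The hard part will be step (4), controlling the behavior of the fiber $\iota^{-1}(w)$ near the singular set $\mathcal{S}$: $\iota$ is a priori only defined metrically on $\mathcal{S}$ and need not be holomorphic there in any obvious chart, so one cannot directly invoke the identity theorem or properness of analytic fibers across $\mathcal{S}$. I expect the cleanest route is to combine the Minkowski-codimension bound on $\mathcal{S}$ with a removable-singularity / normality argument (as in the variety-structure discussion of~\cite{DS}) to see that $\iota^{-1}(w)$ is an analytic subvariety of $B(\bar x,1)$ whose dimension is $0$ because its restriction to the dense regular part is $0$-dimensional; a $0$-dimensional analytic subvariety of a relatively compact set is finite, which is exactly the claim. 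Alternatively, one may avoid $\mathcal{S}$ entirely by noting that an accumulation point of the discrete fiber, if it existed in $\overline{B(\bar x,1)}$, would have to lie in $\mathcal{S}$ (else contradiction as above), and then show directly that $w\notin\iota(\text{small neighborhood of }\mathcal{S})$ using that the coordinate-type sections separate points there too, since each point of $\mathcal{S}\cap\overline{B(\bar x,2)}$ is within $\epsilon$ of a net point carrying full local coordinates.
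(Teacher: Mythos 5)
Your approach is genuinely different from the paper's and, as written, has two gaps that the paper's shorter argument cleanly sidesteps.

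\textbf{What the paper does.} The paper fixes $y\in\iota^{-1}(w)\cap\overline{B(\bar x,1)}$ and first observes, from the peak-section separation property (the partial-$C^0$-estimate plus the fact that $\norm{s_y}{}^2$ is bounded below on $B(y,2\epsilon)$ and decays away from $y$), that the whole fiber $\iota^{-1}(w)$ is contained in a ball of radius $O(\epsilon)$ and is therefore compact. It then takes a connected component $F$ and exploits the power-raising trick: for any higher-power Kodaira map $\iota'$, the set $\iota'(F)$ is still compact and connected; since it lies in an affine chart of $\CP^{N'}$ it must be a single point, but if $F$ had more than one point then for $k'$ large the sections of $\bar L^{k'}$ would separate them, so $\iota'(F)$ would not be a point. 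Hence $F$ is a point, and the compact set $\iota^{-1}(w)\cap\overline{B(\bar x,1)}$ is a finite union of points. No immersion property of $\iota$ is ever needed, and no analysis of the fiber near $\mathcal{S}$ is required.

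\textbf{Gap in your Step (1).} You claim $d\iota$ is injective at every point of $B(\bar x,1)\cap\mathcal{R}$ ``because the peak section is nonzero there.'' The peak section being nonzero does not give injectivity of $d\iota$; for that you need the $n$ coordinate-type sections, and the construction guarantees those agree with $z_1,\dots,z_n$ only on $B(y,\eta\epsilon)$ around each net point $y$, with $\eta=\eta(n)$ and $\eta\epsilon<\epsilon$. An $\epsilon$-net places each point within $\epsilon$ of a net point, not within $\eta\epsilon$, so the $\eta\epsilon$-balls need not cover $B(\bar x,1)$ and the immersion property is not established everywhere. More fundamentally, injectivity and non-degeneracy of $\iota$ on $B(\bar x,1)$ is precisely what the paper establishes \emph{after} this lemma (following Proposition 4.10 and Lemma 4.11 of \cite{DS}), and that argument uses finiteness of fibers as an input; assuming the immersion property here would be circular.

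\textbf{Gap in your Step (4).} You acknowledge this is the hard part, and it is. Discreteness of $\iota^{-1}(w)$ inside $\mathcal{R}$ does not prevent an infinite discrete set from accumulating at a point of $\mathcal{S}$, and the codimension bound $\dim_{\mathcal{H}}\mathcal{S}\le 2n-4$ alone does not rule this out either, since an infinite discrete subset of $\mathcal{R}$ can accumulate on a null set. Your proposed fix — treating $\iota^{-1}(w)$ as a zero-dimensional analytic subvariety of $\bar M$ and invoking removable singularities and normality — relies on the analytic variety structure of $\bar M$, which is Theorem~\ref{thm:HE19_1} in the paper and comes \emph{after} the present lemma. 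Your alternative (``show $w\notin\iota$ of a small neighborhood of $\mathcal{S}$'') cannot work as stated, since $w$ may well equal $\iota(z)$ for $z\in\mathcal{S}$.

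In short, the paper's compactness-plus-raising-power argument avoids needing either a global immersion property or any direct analysis of the fiber near $\mathcal{S}$, which is why it goes through at this stage of the development; your route would require repackaging material that logically depends on this lemma.
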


\begin{proof}
Let $y \in \iota^{-1}(w) \cap \overline{B(\bar{x},1)}$. It is clear that $\iota^{-1}(w)$ is contained in a ball centered at $y$ with fixed radius, say $10\epsilon$.
Therefore, $\iota^{-1}(w)$ is a bounded, closed set and therefore compact.  Let $F$ be a connected component of $\iota^{-1}(w)$. Then
$\iota(F)$ is a connected, compact subvariety of $\C^N$, and consequently is a point. Note that $\iota(F)$ is always a connected set no matter how do we raise the power of $\iota$. On the other hand, $\iota(F)$ will contain more than one point if $F$ is not a single point, after we raise power high enough.
These force that $F$ can only be a point. Since $\iota^{-1}(w) \cap \overline{B(\bar{x},1)}$ is compact, it must be union of finite points.
\end{proof}

Denote $\iota(\overline{B(\bar{x},1)})$ by $W$. Then $W$ is a compact set and locally can be extended as an analytic variety.
By dividing $W$ into different components, one can apply induction argument as that in~\cite{DS}.
Following verbatim the argument of Proposition 4.10, Lemma 4.11 of~\cite{DS}, one can show that $\iota$ is an injective, non-degenerate embedding
map on $B(\bar{x},1)$, by raising power of $\bar{L}$ if necessary.  Furthermore, since being normal is a local property, one can
improve Lemma 4.12 of~\cite{DS} as follows.

\begin{lemma}
By raising power if necessary, $W$ is normal at the point $\iota(y)$ for every $y \in B(\bar{x}, \frac{1}{2})$.
\end{lemma}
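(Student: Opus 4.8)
The plan is to follow the scheme of Donaldson--Sun (\cite{DS}, Lemma 4.12), adapting it to the present non-compact, flow-limit setting, where the key point is that normality is a local condition at each point $\iota(y)$ and therefore need only be checked near the finitely many preimage points guaranteed by Lemma~\ref{lma:HB09_1}. First I would recall the standard criterion: the image variety $W$ is normal at $\iota(y)$ if and only if, locally near $\iota(y)$, $W$ is a normal analytic space; by Serre's criterion it suffices to show $W$ is regular in codimension one and satisfies Serre's condition $(S_2)$ at $\iota(y)$, or equivalently that the local ring is integrally closed. Since we have already arranged (via the strengthened Proposition 4.10 and Lemma 4.11 of \cite{DS}) that $\iota$ is an injective, non-degenerate holomorphic embedding of $B(\bar{x},1)$, and since the regular part $\mathcal{R}$ is dense with $\dim_{\mathcal{H}}\mathcal{S}\le 2n-4$ (Theorem~\ref{thmin:SC24_1}, Corollary~\ref{cly:HE25_2}), the singular set of $W$ has complex codimension at least $2$; this already handles the $R_1$ part.

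The core of the argument is the $S_2$ (or integral closure) statement, and this is where the $\bar{\partial}$-estimate enters, exactly as in \cite{DS}. Concretely, I would argue: given a bounded holomorphic function $f$ on $W\setminus \iota(\mathcal{S})$ near $\iota(y)$, pull it back to a bounded holomorphic function on $B(\bar{x},1)\cap\mathcal{R}$ near $y$; using the high codimension of $\mathcal{S}$ together with the analytic/metric structure of $\mathcal{R}$ and the properties of the weak Sobolev/heat theory developed in section 2 (in particular the removable singularity results in the spirit of Proposition~\ref{prn:HD16_1} and the fact that $\mathcal{S}$ has $2n$-Hausdorff measure zero with codimension $>2$), extend $f$ across $\mathcal{S}$ to a bounded holomorphic function near $y$. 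Then, after raising the power of $\bar{L}$ so that the Kodaira map $\iota$ separates points and tangent directions finely enough on $B(\bar{x},\tfrac12)$, express $f$ as a ratio of sections of $\bar{L}^m$ for suitable $m$, using Hörmander's $L^2$-estimate (available here by the uniform Sobolev bound and the bound on $\dot\varphi$, as used for the partial-$C^0$-estimate in \cite{CW4} and section 3 of \cite{DS}) to produce holomorphic sections with prescribed jets at $y$. This shows $f$ lies in the integral closure of the local ring $\mathcal{O}_{W,\iota(y)}$, i.e.\ $W$ is normal at $\iota(y)$. Carrying this out uniformly over all $y\in B(\bar{x},\tfrac12)$ requires only finitely many ``power raisings'' because the local complexity of $W$ (number of branches, multiplicity) is uniformly bounded, again by Lemma~\ref{lma:HB09_1} and a compactness argument on $\overline{B(\bar{x},\tfrac12)}$.

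The main obstacle I expect is the holomorphic removable-singularity step: one must show that a bounded holomorphic function on $B(\bar{x},1)\cap\mathcal{R}$ extends holomorphically across $\mathcal{S}$, even though $\mathcal{R}$ is a priori only a smooth K\"ahler manifold without a polarization-compatible analytic structure near $\mathcal{S}$, and $\bar{M}$ itself may not yet be known to be a complex analytic space in a neighborhood of $\mathcal{S}$. The way around this is precisely the Donaldson--Sun device: one does not extend $f$ intrinsically on $\bar{M}$, but transports the question to the ambient $\C^N$ via $\iota$, where $W$ is already a genuine analytic variety with normalization well understood, and uses the Hörmander estimate on $(M_i, g_i(0))$ at finite stage $i$ (where everything is smooth and compact) to build the required sections, then passes to the limit. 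Thus the subtlety is entirely bookkeeping about which estimates survive the limit; the weak compactness in $\hat{C}^\infty$-topology on $\mathcal{R}$, the uniform partial-$C^0$-estimate, and the uniform Sobolev constant are exactly what make this work, so no genuinely new analytic input beyond what has been established is needed. I would therefore present the proof as: (i) reduce to checking normality pointwise; (ii) verify $R_1$ from the codimension bound on $\mathcal{S}$; (iii) verify integral closure by the $\bar{\partial}$-construction of sections with prescribed jets, quoting \cite{DS} for the algebraic bookkeeping; (iv) note uniformity over $B(\bar{x},\tfrac12)$ by compactness and finite power raising.
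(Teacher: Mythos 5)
Your proposal follows exactly the route the paper takes, which itself only remarks that normality is a local property and refers to Lemma 4.12 of~\cite{DS}; what you have done is correctly unpack the content of that citation (Serre's criterion, $R_1$ from the codimension bound on $\mathcal{S}$, and the H\"ormander/$\bar\partial$-construction of sections to handle the $S_2$ part), together with the point that Lemma~\ref{lma:HB09_1} reduces everything to a local statement near finitely many preimages. One small slip in the last step of your item~(iii): a weakly holomorphic germ $f$ a priori lies in the integral closure of $\mathcal{O}_{W,\iota(y)}$, and the H\"ormander construction is meant to show that $f$ actually lies in $\mathcal{O}_{W,\iota(y)}$ itself -- your sentence as written restates the hypothesis rather than the conclusion.
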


Under the help of parabolic Schwarz lemma and heat flow localization technique(c.f. Section 4.1 and Proposition~\ref{prn:HB05_1}),
we can parallelly generalize Proposition 4.14 of~\cite{DS} as follows.

\begin{lemma}
 Suppose $y \in B(\bar{x},\frac{1}{2})\cap \mathcal{S}$, then $\iota(y)$ is a singular point of $W$.
\end{lemma}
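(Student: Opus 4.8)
The statement is the singular-point converse to the preceding lemma: if $y \in B(\bar{x},\tfrac12)\cap\mathcal{S}$, then the image $\iota(y)$ must be a singular point of the variety $W$. The plan is to argue by contradiction, in the spirit of Proposition 4.14 of~\cite{DS}. Suppose $\iota(y)$ is a smooth point of $W$. Then, since $\iota$ is by the previous lemmas an injective, non-degenerate embedding of $B(\bar{x},\tfrac12)$ onto (an open subset of) $W$ which is normal at $\iota(y)$, a neighbourhood of $\iota(y)$ in $W$ is a smooth complex submanifold of $\CP^N$, and $\iota$ restricts to a homeomorphism from a neighbourhood $U$ of $y$ in $B(\bar{x},\tfrac12)$ onto it. The goal is to transport the smooth complex-manifold structure through $\iota^{-1}$ and contradict $y\in\mathcal{S}$, i.e.\ contradict Proposition~\ref{prn:HA08_1}, which says that a point with a $C^4$ (equivalently $C^\infty$, equivalently $C^\omega$) neighbourhood is regular.

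First I would set up the approximation picture: choose $y_i \in M_i$ with $y_i \to y$, and recall that the holomorphic sections $\{\tilde{s}_j\}$ of $\bar{L}^k$ used to define $\iota$ are limits (after passing to a subsequence) of holomorphic sections $\{s_{j,i}\}$ of $L_i^k$ on $(M_i, g_i(0))$, obtained from the partial-$C^0$-estimate machinery of~\cite{CW4} adapted to the weak-convergence setting; the peaked sections give a uniform lower bound on $\sum_j \|s_{j,i}\|^2$ near $y_i$, so the corresponding Kodaira maps $\iota_i: B(y_i,1) \to \CP^N$ are well defined and converge to $\iota$ in $\hat{C}^\infty$ away from the singular set. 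Second, and this is the technical heart, I would use the parabolic Schwarz lemma together with the heat-flow localization technique of Section 4.1 (invoking Proposition~\ref{prn:HB05_1}) to show that the pulled-back Fubini--Study metric $\iota_i^*(\omega_{FS})$ is uniformly equivalent — on a fixed-size ball around $y_i$ — to $k\,\omega_i(0)$: the upper bound on the trace comes from $\square \log \operatorname{tr}_{\omega}\iota^*\omega_{FS} \le C\operatorname{tr}_\omega\iota^*\omega_{FS}$ combined with a localized integral bound for the trace (as in Lemma~\ref{lma:J02_1}), and the lower bound from the non-degeneracy of $\iota_i$ plus the uniform $L^2$-bounds and gradient estimates on the chosen sections. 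Passing to the limit, $\iota^*(\omega_{FS})$ is uniformly equivalent to $k\,\bar g$ on a neighbourhood of $y$ inside $\mathcal{R}\cap U$.

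Third, assuming for contradiction that $\iota(y)$ is a smooth point of $W$, the metric $\iota^*(\omega_{FS})$ is a genuine smooth Kähler metric on $\iota^{-1}(W_{\mathrm{sm}})\supset U$; the uniform equivalence from the previous step then forces the limit metric $\bar g$ to extend to a metric of bounded geometry (in the measure/volume-ratio sense) across $y$, so $\operatorname{vol}B(y,r)/(\omega_{2n}r^{2n})\to 1$, and by Proposition~\ref{prn:HA07_1} (volume density gap) $y$ cannot be singular — contradiction. Equivalently, one can argue that the smooth complex structure pulled back via $\iota^{-1}$ makes $U$ a smooth complex manifold on which $\bar g$ is quasi-isometric to a smooth metric, hence $y$ has a $C^4$ neighbourhood, contradicting $y\in\mathcal{S}$ via Proposition~\ref{prn:HA08_1}. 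The main obstacle will be the second step: making rigorous the two-sided metric comparison $c^{-1}k\bar g \le \iota^*\omega_{FS} \le ck\bar g$ uniformly near the (possibly singular) point $y$, since the lower bound requires controlling the differentials of the deformed coordinate sections $s_{j,i}$ on balls whose size shrinks only like the canonical volume radius, and one must check that the heat-flow localization of Section 4.1 still delivers uniform constants when the base point degenerates to $\mathcal{S}$; this is exactly where the high codimension of $\mathcal{S}$ (Corollary~\ref{cly:HE25_2}) and the density estimate (Theorem~\ref{thm:SC12_2}) must be fed in, just as in~\cite{DS}.
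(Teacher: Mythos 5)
Your overall framework (contradiction assuming $\iota(y)$ is smooth, peaked sections, trace comparison via the parabolic Schwarz lemma and Proposition~\ref{prn:HB05_1}) is the right one, and it matches what the paper intends by its reference to Proposition 4.14 of~\cite{DS}. However, the final step of your argument has a genuine gap: you conclude from the two-sided comparison $c^{-1}k\bar g\le \iota^{*}\omega_{FS}\le ck\bar g$ that ``$\bar g$ extends to a metric of bounded geometry across $y$, so $\operatorname{vol}B(y,r)/(\omega_{2n}r^{2n})\to 1$.'' This implication is false. A bi-Lipschitz comparison with a smooth metric near $y$ only gives $\mathrm{v}(y)\in[c^{-2n},c^{2n}]$ for the constant $c$ you obtain; it does \emph{not} force the density to 1. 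Likewise, the alternative phrasing --- ``$\bar g$ is quasi-isometric to a smooth metric, hence $y$ has a $C^{4}$ neighbourhood'' --- does not follow: item 3 of Proposition~\ref{prn:HA08_1} requires a $C^{4}$-Riemannian structure (the metric coefficients, not just the topology), and a bi-Lipschitz comparison gives only $L^{\infty}$ control on the metric tensor. So neither branch of your step three closes the argument.

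The missing ingredient is rigidity via rescaling and the Liouville-type theorem, Lemma~\ref{lma:HE11_1}. The way Proposition 4.14 of~\cite{DS} actually concludes is roughly this: one blows up at $y$ to obtain a tangent cone $C(Y)$, which by Proposition~\ref{prn:HA07_2} lies in $\widetilde{\mathscr{KS}}(n,\kappa)$ and whose regular part is Ricci-flat K\"ahler. The rescaling limit of the Kodaira map $\iota$ sends $C(Y)$ onto the algebraic tangent cone of $W$ at $\iota(y)$, which is a linear subspace $\C^{n}\subset\C^{N}$ because $\iota(y)$ was assumed smooth. Your trace bounds (inherited from the heat-flow localization and Proposition~\ref{prn:HB05_1}, which pass to the blow-up by scale invariance) show that this holomorphic map is a bi-Lipschitz embedding of $\mathcal{R}(C(Y))$ onto a dense open subset of $\C^{n}$ with pulled-back cone metric $\tilde g$ satisfying $C^{-1}\delta_{i\bar j}\le\tilde g_{i\bar j}\le C\delta_{i\bar j}$. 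Since $\mathcal{S}(C(Y))$ has real codimension at least $4$, the sections and hence $\tilde g$ extend across the image of the singular set, so $\tilde g$ is a complete Ricci-flat K\"ahler metric on all of $\C^{n}$ with uniformly bounded coefficients. Lemma~\ref{lma:HE11_1} then forces $\tilde g$ to be Euclidean, whence $C(Y)=\C^{n}$; by the equivalence in Proposition~\ref{prn:HA08_1} this makes $y$ regular --- the desired contradiction. In short, the bi-Lipschitz comparison is necessary but not sufficient; you must pass to the tangent cone, identify the Kodaira image with $\C^{n}$ using smoothness of $\iota(y)$, and invoke the Liouville rigidity to upgrade bi-Lipschitz to isometric.
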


It follows from the proof of Proposition 4.15 of~\cite{DS} that there always exist a holomorphic form $\Theta$ on $\mathcal{R} \cap B(\bar{x},1)$ such that
$\int_{\mathcal{R} \cap B(\bar{x},1)} \Theta \wedge \bar{\Theta}<\infty$.  This means that every singular point $y \in \iota(B(\bar{x},\frac{1}{2})) \cap W$ is
log-terminal.

Combining all the previous lemmas, we have the following structure theorem.

\begin{theorem}[\textbf{Analytic variety structure}]
Suppose $\mathcal{LM}_i \in \mathscr{K}(n,A)$, $x_i \in M_i$,
$(\bar{M}, \bar{x},\bar{g})$ is a pointed Gromov-Hausdorff limit of $(M_i,x_i,g_i(0))$.
Then $\bar{M}$ is an analytic space with normal, log terminal singularities.
\label{thm:HE19_1}
\end{theorem}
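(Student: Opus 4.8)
The plan is to follow the Donaldson--Sun strategy (\cite{DS}), but localized to the unit ball $B(\bar{x},1)$ in the limit space $\bar{M}$, using the structural results already established in this section. First I would recall that by Theorem~\ref{thmin:HC08_1} (the partial-$C^0$-estimate), for each sufficiently fine $\epsilon$-net of $B(\bar{x},2)$ one obtains peak sections of $\bar{L}^k$ (with $k\sim\epsilon^{-2}$) whose norms are uniformly bounded below on fixed-size balls around net points. The key point is that although $\bar{M}$ may be noncompact and $H^0(\bar{M},\bar{L})$ infinite-dimensional, we only need finitely many sections to embed $B(\bar{x},1)$: for each net point $y$ with $\mathbf{cvr}(y,0)$ comparable to $\epsilon$, Proposition~\ref{prn:HA08_1} gives a smooth (indeed analytic) manifold structure near $y$, so one can deform local holomorphic coordinates $z_1,\dots,z_n$ to genuine sections of $\bar{L}^k$, and adjoin the peak section $s_y^k$. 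Collecting these over all net points yields a finite family $\{s_i\}_{i=0}^N$ and a well-defined Kodaira map $\iota\colon B(0,2)\to\CP^N$, with well-definedness on $B(\bar{x},1)$ guaranteed by the lower bound on peak section norms.

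Next I would establish the properties of $W:=\iota(\overline{B(\bar{x},1)})$ in order. Lemma~\ref{lma:HB09_1}-type argument shows $\iota^{-1}(w)$ is compact; a connectedness-versus-separation argument as one raises the power of $\bar{L}$ forces the fibers over $B(\bar{x},1)$ to be finite, hence (after raising power) $\iota$ is injective and non-degenerate, exactly as in Proposition~4.10 and Lemma~4.11 of~\cite{DS}. Because being a finite-fiber image of a quasi-projective piece, $W$ is locally an analytic variety. Normality at $\iota(y)$ for $y\in B(\bar{x},\frac12)$ follows by the local argument of Lemma~4.12 of~\cite{DS}, again after possibly raising the power. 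The identification of singularities — that $y\in B(\bar{x},\frac12)\cap\mathcal{S}$ maps to a singular point of $W$ — uses the parabolic Schwarz lemma together with the heat-flow localization technique (cf.\ Section~4.1 and Proposition~\ref{prn:HB05_1}), parallel to Proposition~4.14 of~\cite{DS}. Finally, the log-terminal property comes from producing, as in Proposition~4.15 of~\cite{DS}, a holomorphic $n$-form $\Theta$ on $\mathcal{R}\cap B(\bar{x},1)$ with $\int_{\mathcal{R}\cap B(\bar{x},1)}\Theta\wedge\bar\Theta<\infty$; this is available because $\mathcal{R}$ carries the limit K\"ahler structure $\bar{J}$ (Proposition~\ref{prn:HA08_2}) and the codimension-$4$ Minkowski bound on $\mathcal{S}$ (Corollary~\ref{cly:HE25_2}) controls the behavior near the singular set.

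I expect the main obstacle to be technical rather than conceptual: verifying that the Donaldson--Sun arguments, which are originally phrased for compact K\"ahler--Einstein limits, go through \emph{uniformly and locally} in the present flow setting. Specifically, the delicate points are (i) ensuring the H\"ormander $\bar\partial$-estimate used to construct and perturb sections holds with constants depending only on $n,A$ — this requires the uniform Sobolev bound and the $C^0$-bound on the Ricci potential from~\eqref{eqn:SK20_1}, together with the general H\"ormander estimate referenced in Section~3 of~\cite{CW4} and Section~5 of~\cite{Wa1}; and (ii) the gluing of local analytic pieces of $W$ into a coherent analytic variety structure, since the power of $\bar{L}$ must be raised finitely many times in a way compatible across the overlapping balls of the $\epsilon$-net. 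Once these uniformity issues are handled, the cited lemmas of~\cite{DS} apply essentially verbatim, and Theorem~\ref{thm:HE19_1} follows by assembling them.

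\begin{proof}[Proof of Theorem~\ref{thm:HE19_1}]
The proof follows the argument of~\cite{DS}, localized to unit geodesic balls, using the partial-$C^0$-estimate (Theorem~\ref{thmin:HC08_1}), the regularity equivalence (Proposition~\ref{prn:HA08_1}), the existence of the limit K\"ahler structure (Proposition~\ref{prn:HA08_2}), and the Minkowski codimension bound (Corollary~\ref{cly:HE25_2}), as outlined above. The Kodaira map $\iota$ constructed from peak sections and local coordinate deformations is, after raising the power of $\bar{L}$, an injective non-degenerate embedding of $B(\bar{x},1)$ whose image $W$ is locally an analytic variety, normal at $\iota(y)$ for $y\in B(\bar{x},\tfrac12)$ (cf.\ Lemmas 4.11--4.12 of~\cite{DS}), with $\iota(\mathcal{S})$ the singular locus (cf.\ Proposition 4.14 of~\cite{DS}), and every singular point log-terminal by the existence of an $L^2$ holomorphic volume form (cf.\ Proposition 4.15 of~\cite{DS}). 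Assembling these local statements gives the claim.
\end{proof}
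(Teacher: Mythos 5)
Your proposal follows exactly the paper's argument: localize the Donaldson–Sun construction to $B(\bar{x},1)$ using peak sections over an $\epsilon$-net, raise the power of $\bar{L}$ to get an injective, non-degenerate Kodaira map with finite fibers, then invoke the analogues of Propositions 4.10–4.15 and Lemmas 4.11–4.12 of~\cite{DS} for normality, identification of the singular locus via the parabolic Schwarz lemma and heat-flow localization, and log-terminality via an $L^2$ holomorphic volume form. This matches the paper's proof in both strategy and cited ingredients.
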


\subsection{Distance estimates}
In this subsection, we shall develop the distance estimate along polarized K\"ahler Ricci flow in terms of the estimates
from line bundle.

\begin{lemma}
 Suppose $(M,L)$ is a polarized K\"ahler manifold satisfying the following conditions
 \begin{itemize}
  \item $|B(x,r)| \geq \kappa \omega_{2n} r^{2n}, \quad \forall x \in M, 0<r<1$.
  \item $|\mathbf{b}|\leq 2c_0$ where $\mathbf{b}$ is the Bergman function.
  \item  $\norm{\nabla S}{} \leq C_1$ for every $L^2$-unit section $S \in H^0(M,L)$.
 \end{itemize}
 For every positive number $a$, define $ \Omega(x,a)$ as the path-connected component containing $x$ of the set  
 \begin{align}
    \left\{z \left| \norm{S}{}^2(z) \geq e^{-2a-2c_0},
   \norm{S}{}^2(x)=e^{\mathbf{b}(x)}, \int_M \norm{S}{}^2dv=1 \right.\right\}.
 \label{eqn:HA15_2}
 \end{align}
 Then we have
 \begin{align}
  B\left(x,r\right) \subset   \Omega(x,a) \subset B(x, \rho)
 \label{eqn:HA15_1}
 \end{align}
 for some $r=r(n,\kappa,c_0,C_1,a)$ and $\rho=\rho(n,\kappa,c_0,C_1,a)$.
\label{lma:HA15_1}
\end{lemma}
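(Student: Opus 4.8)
The plan is to prove the two inclusions in \eqref{eqn:HA15_1} separately, with the outer inclusion $\Omega(x,a) \subset B(x,\rho)$ being the easier direction and the inner inclusion $B(x,r) \subset \Omega(x,a)$ the harder one. For the outer inclusion, I would use the gradient bound $\|\nabla S\| \le C_1$ together with the definition of $\mathbf{b}$ to control how fast $\log \|S\|^2$ can decay away from $x$. Precisely, along any smooth unit-speed path $\gamma$ from $x$, the function $\|S\|^2(\gamma(t))$ has derivative bounded in terms of $C_1$ and $\|S\|(\gamma(t))$, so as long as $\|S\|^2 \ge e^{-2a-2c_0}$ along the path, $\|S\|$ is bounded below (and above, by $e^{c_0}$ from $|\mathbf{b}| \le 2c_0$ — though one must be slightly careful since $\|S\|^2(x) = e^{\mathbf{b}(x)}$ may be as large as $e^{2c_0}$). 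A differential inequality of the form $|\frac{d}{dt}\log\|S\| | \le C_1 e^{a+2c_0}$ (valid on the region where $\|S\|$ is bounded away from zero) then forces any point $z \in \Omega(x,a)$, which by definition is path-connected to $x$ inside $\{\|S\|^2 \ge e^{-2a-2c_0}\}$, to lie within distance $\rho = (\mathbf{b}(x) + 2a + 2c_0)/(2C_1 e^{a+2c_0})$ or so of $x$. This is essentially a routine ODE estimate once the setup is fixed.

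For the inner inclusion $B(x,r) \subset \Omega(x,a)$, the idea is to show that $\|S\|^2$ stays above the threshold $e^{-2a-2c_0}$ on a whole geodesic ball around $x$, again via the gradient estimate: starting from $\|S\|^2(x) = e^{\mathbf{b}(x)} \ge e^{-2c_0}$ (using $|\mathbf{b}| \le 2c_0$), the bound $\|\nabla S\| \le C_1$ gives $\|S\|(z) \ge \|S\|(x) - C_1 d(x,z)$, so for $d(x,z) < r$ with $r$ small depending on $c_0, C_1, a$ we keep $\|S\|^2(z) \ge e^{-2a-2c_0}$. The subtlety — and this is where I expect the main obstacle — is that $\Omega(x,a)$ is defined as the \emph{path-connected component} of the superlevel set containing $x$, so it is not enough that every point of $B(x,r)$ lies in the superlevel set; I must check that $B(x,r)$ is path-connected to $x$ \emph{within} the superlevel set. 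Here I would use the non-collapsing hypothesis $|B(x,r)| \ge \kappa\omega_{2n}r^{2n}$: it prevents the ball from being pinched, but more importantly I should argue directly that for $z \in B(x,r)$, the minimizing geodesic (or a short curve of length $< (1+\epsilon)d(x,z) < 2r$, which exists by completeness/the length-space structure) from $x$ to $z$ stays inside $B(x,2r)$, hence inside the superlevel set provided $r$ is chosen so that $C_1 \cdot 2r < e^{-c_0} - e^{-a-c_0}$ or similar. That curve then witnesses path-connectedness, so $B(x,r) \subset \Omega(x,a)$.

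The one genuinely delicate point is ensuring the connecting curves stay in the region where the gradient estimate keeps $\|S\|$ from vanishing; this requires choosing $r$ uniformly small compared to the ``safe radius'' determined by $c_0$ and $C_1$, and handling the case where $M$ might not be geodesically complete by using short almost-minimizing curves instead of exact geodesics. I would package the argument so that $r$ is chosen first (to guarantee $B(x,2r)$ lies in the superlevel set and is path-connected to $x$ there), and then $\rho$ is chosen from the ODE bound on how far $\Omega(x,a)$ can extend. Both $r$ and $\rho$ then depend only on $n, \kappa, c_0, C_1, a$ as claimed, with $\kappa$ entering only through standard volume-comparison consequences (ensuring balls are genuinely $2n$-dimensional and that the curve-length estimates are not degenerate), and I do not expect to need anything beyond the three hypotheses plus elementary Riemannian geometry.
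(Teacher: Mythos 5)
Your argument for the inner inclusion $B(x,r)\subset\Omega(x,a)$ is essentially the paper's: use $\|S\|(x)\ge e^{-c_0}$ (from $|\mathbf{b}|\le 2c_0$) and $\|\nabla S\|\le C_1$ to get $\|S\|(z)\ge e^{-c_0}-C_1 d(x,z)$, and pick $r$ so the right side stays $\ge e^{-a-c_0}$. Your worry about path-connectedness is real in principle but resolves immediately: once the \emph{entire} metric ball $B(x,r)$ lies in the superlevel set, the fact that $B(x,r)$ is itself path-connected (it is a geodesic ball in a complete Riemannian manifold, and any two of its points are joined by a short curve inside $B(x,2r)$, or indeed inside $B(x,r)$ if one uses that balls are connected) forces $B(x,r)$ to lie in the component containing $x$. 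No extra care about ``the curve escaping the safe region'' is needed, because the threshold already holds on the whole ball.

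The outer inclusion $\Omega(x,a)\subset B(x,\rho)$ is where your proposal breaks down, and the gap is structural, not a matter of bookkeeping. The gradient bound gives an \emph{upper} bound on $|\tfrac{d}{dt}\log\|S\|^2|$ along a path, i.e. a bound on how fast $\|S\|^2$ can \emph{decrease}. It does not force $\|S\|^2$ to decrease at all. So the quantity $\rho=(\mathbf{b}(x)+2a+2c_0)/(2C_1 e^{a+2c_0})$ you obtain by dividing the allowed drop in $\log\|S\|^2$ by the Lipschitz constant is a \emph{lower} bound on the distance from $x$ to the complement of the superlevel set --- which is exactly the inner-inclusion estimate --- not an upper bound on the diameter of $\Omega(x,a)$. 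Concretely, nothing in the gradient hypothesis prevents $\|S\|^2$ from sitting near a constant value in $[e^{-2a-2c_0},e^{2c_0}]$ over an arbitrarily large connected region, in which case $\Omega(x,a)$ would be as large as that region and no ODE along paths would detect it. Notice also that your outer-inclusion argument never invokes the two remaining hypotheses, the $L^2$-normalization $\int_M\|S\|^2\,dv=1$ and the non-collapsing bound $|B(y,r)|\ge\kappa\omega_{2n}r^{2n}$; that is a sign something is missing, since they are precisely what rule out the flat counterexample.

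The correct route is a volume/packing argument. The normalization gives a \emph{volume} bound on the superlevel set: since $\|S\|^2\ge e^{-2a-2c_0}$ there, one has $|\Omega(x,a)|\le e^{2a+2c_0}$ (the paper actually controls the slightly larger set $\Omega(x,2a)$, to give room for the balls below, getting $|\Omega(x,2a)|\le e^{4a+2c_0}$). Now take a maximal collection of points $x_i\in\Omega(x,a)$ with $\{B(x_i,r)\}$ pairwise disjoint; the gradient estimate again shows each $B(x_i,r)\subset\Omega(x,2a)$, and the non-collapsing hypothesis gives $N\kappa\omega_{2n}r^{2n}\le|\Omega(x,2a)|\le e^{4a+2c_0}$, bounding the number $N$ of balls. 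By maximality $\{B(x_i,2r)\}$ covers $\Omega(x,a)$, and because $\Omega(x,a)$ is path-connected, any $z\in\Omega(x,a)$ is reached from $x$ by chaining through overlapping balls, giving $d(x,z)\le 4Nr$, i.e. $\rho=4Nr$ with $N$ bounded purely in terms of $n,\kappa,c_0,C_1,a$. This is how $\kappa$ and the normalization actually enter the lemma; your sketch relegated them to ``standard volume-comparison consequences,'' but in fact they carry the whole outer inclusion.
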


\begin{proof}
  Define  $r \triangleq \frac{1-e^{-a}}{C_1 e^{a+c_0}}$. 
  Recall that $\norm{S}{}(x) \geq e^{-c_0}$. By the gradient bound of $S$, it is clear that every point in
  $B(x,r)$ satisfies $\norm{S}{} \geq e^{-a-c_0}$. In other words, we have
  \begin{align*}
      B(x,r) \subset \Omega(x,a).
  \end{align*}
  On the other hand, we can cover $\Omega(x,a)$ by finite balls $B(x_i, 2r)$ such that each $x_i \in \Omega(x,a)$
  and different $B(x_i,r)$'s are disjoint to each other.   Again, the gradient bound of $S$ implies that
  $\norm{S}{} \geq e^{-2a-c_0}$ in each $B(x_i,r)$. Then we have
  \begin{align*}
    N \kappa \omega_{2n} r^{2n} \leq \sum_{i=1}^{N} |B(x_i,r)| \leq |\Omega(x, 2a)| \leq e^{4a+2c_0}.
  \end{align*}
  For every $z \in \Omega(x,a)$, we have
  \begin{align*}
     d(x,z) \leq 4Nr \leq \frac{4e^{4a+2c_0}}{\kappa \omega_{2n}r^{2n-1}}
     =\frac{4e^{4a+2c_0}}{\kappa \omega_{2n}} \cdot \frac{C_1^{2n-1} e^{(2n-1)(a+c_0)}}{(1-e^{-a})^{2n-1}}.
  \end{align*}
  Let $\rho$ be the number on the right hand side of the above inequality. Then it is clear that
  \begin{align*}
     \Omega(x,a) \subset B(x,\rho).
  \end{align*}
  So we finish the proof.
\end{proof}

Lemma~\ref{lma:HA15_1} implies that the level sets of peak holomorphic sections  are comparable to
geodesic balls.  However, the norm of peak holomorphic section has stability under the K\"ahler Ricci flows
in $\mathscr{K}(n,A)$. Therefore, one can compare distances at different time slices in terms the values
of norms of a same holomorphic sections.

\begin{lemma}
 There exists a small constant $\epsilon_0=\epsilon_0(n,A)$ such that
 the following properties are satisfied.

 Suppose $\mathcal{LM} \in \mathscr{K}(n,A)$, then we have
 \begin{align}
     B_{g(t_1)}(x,\epsilon_0) \subset B_{g(t_2)} \left(x,\epsilon_0^{-1} \right) \label{eqn:HA14_1}
 \end{align}
 whenever $t_1, t_2 \in [-1,1]$.
\label{lma:HA14_1}
\end{lemma}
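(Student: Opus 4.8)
The plan is to deduce \eqref{eqn:HA14_1} from the stability of the norm of a suitably chosen peak holomorphic section together with the level-set/geodesic-ball comparison of Lemma~\ref{lma:HA15_1}. First I would fix $x\in M$ and, working at some reference time $t_1\in[-1,1]$, choose an $L^2$-unit holomorphic section $S\in H^0(M,L)$ peaked at $x$, i.e. with $\norm{S}{h(t_1)}^2(x)=e^{\mathbf{b}(x,t_1)}$. By Theorem~\ref{thmin:HC08_1} (applied to $\mathcal{LM}\in\mathscr{K}(n,A)$, after raising to the power $k_0$ if needed, which is harmless up to renaming constants) there is a uniform lower bound $\mathbf{b}^{(k_0)}\geq -c_0$, and together with the a~priori bound $|\mathbf{b}^{(k)}|\leq 2c_0$ from \eqref{eqn:SK23_2} and the uniform Sobolev/scalar bounds in \eqref{eqn:SK20_1} one gets a uniform gradient bound $\norm{\nabla S}{}\leq C_1(n,A)$ for $L^2$-unit sections. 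The uniform non-collapsing $\frac{1}{\Vol(M)}\le A$ together with the volume-doubling estimate (Corollary~\ref{cly:SC13_1} or \cite{CW5}) gives the lower volume bound $|B(x,r)|\geq\kappa\omega_{2n}r^{2n}$. Hence at \emph{each} time slice $t\in[-1,1]$ the hypotheses of Lemma~\ref{lma:HA15_1} hold with constants depending only on $n,A$.

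Next I would exploit the stability of $\norm{S}{h(t)}^2$ along the flow. The key point is that $\norm{S}{h(t)}^2$ satisfies a controlled evolution: since $h(t)$ evolves with curvature $\omega(t)$ and $\frac{\partial}{\partial t}\log\omega_t^n=n\lambda-R$ with $|R-n\lambda|\le A$, while $\dot\varphi$ is uniformly bounded by $A$, the quantity $\log\norm{S}{h(t)}^2$ changes by at most a uniformly bounded amount over the time interval $[-1,1]$; more precisely $\square\log\norm{S}{h(t)}^2$ is controlled, and using the maximum principle / the heat-kernel estimates (as in Lemma~\ref{lma:SK23_1}) one gets $\big|\log\norm{S}{h(t_2)}^2(z)-\log\norm{S}{h(t_1)}^2(z)\big|\le C(n,A)$ for all $z$, $t_1,t_2\in[-1,1]$. (Alternatively one invokes the partial-$C^0$ stability already used in \cite{CW4}.) Consequently, a level set of $\norm{S}{h(t_1)}^2$ of the form appearing in \eqref{eqn:HA15_2} with parameter $a$ is sandwiched between two level sets of $\norm{S}{h(t_2)}^2$ with parameters $a'=a'(a,n,A)$ and $a''=a''(a,n,A)$.

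Putting these together: pick a fixed parameter $a=a(n,A)$ so that, by Lemma~\ref{lma:HA15_1} applied at time $t_1$, $B_{g(t_1)}(x,\epsilon_0)\subset\Omega_{t_1}(x,a)$ for a suitable small $\epsilon_0=\epsilon_0(n,A)$; this fixes $\epsilon_0$. By the norm-stability, $\Omega_{t_1}(x,a)$ is contained in the path component through $x$ of $\{\norm{S}{h(t_2)}^2\ge e^{-2a''-2c_0}\}$, i.e. in $\Omega_{t_2}(x,a'')$; and by Lemma~\ref{lma:HA15_1} applied at time $t_2$, $\Omega_{t_2}(x,a'')\subset B_{g(t_2)}(x,\rho)$ for $\rho=\rho(n,A)$. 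Shrinking $\epsilon_0$ further if necessary so that $\epsilon_0\le\rho^{-1}$ yields \eqref{eqn:HA14_1}. I would close by noting that the same argument applied to $L^{k_0}$ rather than $L$ only changes constants, so no loss of generality in working with $L$ itself. The main obstacle I anticipate is making the norm-stability step \eqref{eqn:HA14_1}-type estimate genuinely \emph{two-sided} and uniform in $t_1,t_2$: one needs the evolution of $\log\norm{S}{h(t)}^2$ to be controlled without assuming curvature is bounded, which is exactly where one must lean on the parabolic maximum principle together with the uniform bounds on $\dot\varphi$, $R-n\lambda$, and the Sobolev constant (and, implicitly, on the heat-kernel estimates of Lemma~\ref{lma:SK23_1}); handling the path-connectedness bookkeeping when passing between level sets at the two times is a secondary but necessary technical point.
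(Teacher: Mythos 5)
Your approach matches the paper's: choose an $L^2$-unit peak section $S$ at $x$ for time $t_1$, transfer the inclusion $B_{g(t_1)}(x,\epsilon_0)\subset\{\norm{S}{}^2\ge c\}$ to time $t_2$ via stability of the pointwise norm, and then bound the $g(t_2)$-diameter of the level set by a non-collapsing covering argument. Two points, however, should be tightened, and the second would actually stop a literal execution of your plan.

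First, the time-stability of $\log\norm{S}{h(t)}^2(z)$ is elementary and does not require a parabolic maximum principle or the heat-kernel estimates of Lemma~\ref{lma:SK23_1}. Since $h(t)=h(t_1)e^{-(\varphi(t)-\varphi(t_1))}$ with curvature $\omega(t)$, one has $\partial_t\log\norm{S}{h(t)}^2=-\dot\varphi$ pointwise, and $|\dot\varphi|\le A$; integrating over $|t_2-t_1|\le 2$ gives $\big|\log\norm{S}{h(t_2)}^2-\log\norm{S}{h(t_1)}^2\big|\le 2A$ directly. Invoking a maximum-principle argument here is not only unnecessary, it is not well posed as you have written it because you never specify which parabolic equation $\log\norm{S}{}^2$ is supposed to satisfy with controlled inhomogeneity. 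The paper simply observes that the inclusion $\Omega\subset\tilde\Omega$ (its notation) follows from the evolution of the volume element and of $\varphi$, i.e. this elementary integration.

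Second, and more substantively, you propose to apply Lemma~\ref{lma:HA15_1} \emph{at time $t_2$} to the same section $S$. This does not directly apply: at $t_2$ the section $S$ is neither $L^2$-unit (its $L^2$-norm lies in $[e^{-2A},e^{2A}]$) nor peaked at $x$ for the time-$t_2$ Bergman metric, so the hypotheses $\int_M\norm{S}{}^2=1$ and $\norm{S}{}^2(x)=e^{\mathbf{b}(x)}$ in \eqref{eqn:HA15_2} fail. What the paper does instead is repeat the \emph{proof} of that lemma (the ball-packing/covering argument) rather than cite its statement, using only that the $L^2$-norm of $S$ at time $t_2$ is bounded above by $e^{2A}$: this still yields a volume bound $|\tilde\Omega|_{t_2}\le C(n,A)$ and hence a $g(t_2)$-diameter bound $\le C(n,A)$. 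If you notice that the covering argument only needs an upper bound on $\int_M\norm{S}{}^2$, not exact normalization, your argument goes through and is the same as the paper's; but as literally stated your plan invokes the lemma outside its hypotheses.
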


\begin{proof}
Without loss of generality, we only need to show (\ref{eqn:HA14_1}) for time $t_1=0, t_2=1$.
Because of Theorem~\ref{thmin:HC08_1} and Moser iteration, we can assume $|\mathbf{b}| \leq 2c_0$ for some $c_0=c_0(n,A)$.
By Moser iteration technique, we can also assume $\norm{\nabla S}{} \leq C_1$ for every unit $L^2$-norm holomorphic
section of $L$(c.f. Lemma 5.1 of~\cite{Wa1} and Lemma 3.2 of~\cite{CW4}).  Note that $e^{\mathbf{b}(x)}$ is the maximum value of $\norm{S}{}^2$ among all
unit $L^2$-norm holomorphic sections of $L$.  So we can choose $\epsilon$ small enough such that
\begin{align*}
   \norm{S}{}(z) \geq \frac{1}{2}e^{-c_0}, \quad \forall \; z \in B(x,2\epsilon)
\end{align*}
for some unit holomorphic section $S$. Note $\epsilon$ can be chosen uniformly, say $\epsilon=\frac{e^{-c_0}}{4C_1}$.

Fix $S$ and define
\begin{align*}
  \Omega \triangleq \left\{z\left| \norm{S}{0}(z)\geq  \frac{1}{2}e^{-c_0} \right. \right\}, \qquad
  \tilde{\Omega} \triangleq \left\{z\left| \norm{S}{1}(z) \geq \frac{1}{4}e^{-c_0-A} \right. \right\}.
\end{align*}
Without loss of generality, we can assume both $\Omega$ and $\tilde{\Omega}$ are path-connected. Otherwise, just replace them by the corresponding path-connected part containing $z$.
It follows from definition that $B(x,\epsilon) \subset \Omega$.
In view of the volume element evolution equation,
it is also clear that $\Omega \subset \tilde{\Omega}$.

Note that $S$ is a unit section at time $t=0$.
At time $t=1$, its $L^2$-norm locates in $[e^{-2A}, e^{2A}]$. So we have
\begin{align*}
e^{2A} \geq \int_M \norm{S}{1}^2 dv>|\tilde{\Omega}|_{1} \frac{1}{16}e^{-2c_0-2A}, \quad
\Rightarrow \quad |\tilde{\Omega}|_{1}<16 e^{2c_0+4A}.
\end{align*}
Now we can follow the covering argument in the previous lemma to show a diameter bound of $\tilde{\Omega}$
under the metric $g(1)$.   In fact, we can cover $\tilde{\Omega}$ by finite geodesic balls $B(x_i,2\epsilon)$ such that $x_i \in \tilde{\Omega}$
and all different $B(x_i,\epsilon)$'s are disjoint to each other.  Clearly, each geodesic ball
$B(x_i,\epsilon)$
has volume at least $\kappa \omega_{2n} \epsilon^{2n}$, where $\kappa=\kappa(n,A)$.  Let
$N$ be the number of balls, then
\begin{align*}
 N \kappa \omega_{2n} \epsilon^{2n} \leq \sum_{i=1}^{N}|B(x_i,1)| \leq |\tilde{\Omega}| \leq 16e^{2c_0+4A}.
\end{align*}
Therefore, under metric $g(1)$,  we obtain
\begin{align*}
\diam \Omega \leq  \diam \tilde{\Omega} \leq 4N\epsilon \leq \frac{64e^{2c_0+4A}}{\kappa \omega_{2n}\epsilon^{2n-1}}.
\end{align*}
Recall that $B_{g(0)}(x,\epsilon) \subset \Omega \subset \tilde{\Omega}$, $\epsilon=\frac{e^{-c_0}}{4C_1}$.
So under metric $g(1)$, we have
\begin{align*}
   \diam B_{g(0)}(x,\epsilon) \leq \diam \tilde{\Omega} \leq \frac{4^{2n+2}e^{(2n+3)c_0+4A}C_1^{2n-1}}{\kappa \omega_{2n}}.
\end{align*}
Define
\begin{align}
 \epsilon_0 \triangleq
 \min\left\{\frac{e^{-c_0}}{4C_1}, \frac{\kappa \omega_{2n}}{4^{2n+2}e^{(2n+3)c_0+4A}C_1^{2n-1}} \right\}.
\label{eqn:HA15_3}
\end{align}
Note that $\epsilon_0$ depends only on $n,A$.
Then we have $ \diam B_{g(0)}(x,\epsilon_0) \leq \epsilon_0^{-1}$, which implies
\begin{align*}
  B_{g(0)}(x, \epsilon_0) \subset B_{g(1)}(x,\epsilon_0^{-1}).
\end{align*}
So we finish the proof.
\end{proof}

\begin{lemma}
For every $r$ small, there is a $\delta$ with the following property.

 Suppose $\mathcal{LM} \in \mathscr{K}(n,A)$.
 Suppose $|R|+|\lambda|<\delta$ on $M \times [-1,1]$,
 then we have
 \begin{align}
    B_{g(t_1)}(x,\epsilon_0 r) \subset B_{g(t_2)}(x,\epsilon_0^{-1}r)
 \label{eqn:HA15_4}
 \end{align}
 for every $t_1,t_2 \in [-1,1]$.  Here $\epsilon_0$ is the constant in Lemma~\ref{lma:HA14_1}.
\label{lma:HA14_2}
\end{lemma}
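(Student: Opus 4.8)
The plan is to reduce Lemma~\ref{lma:HA14_2} to the scale-invariant situation handled by Lemma~\ref{lma:HA14_1} by a parabolic rescaling, together with a continuity/compactness argument that controls the constants uniformly. First I would rescale: given $r$ small, set $\tilde{g}_i(t) = r^{-2} g(r^2 t)$, so that on the rescaled flow the statement (\ref{eqn:HA15_4}) becomes the assertion $B_{\tilde{g}(t_1)}(x,\epsilon_0) \subset B_{\tilde{g}(t_2)}(x,\epsilon_0^{-1})$ for $t_1, t_2 \in [-r^{-2}, r^{-2}]$. Under this rescaling $|R| + |\lambda|$ becomes $r^2(|R| + |\lambda|) < r^2 \delta$, which is tiny, but the rescaled flow no longer manifestly satisfies the uniform bounds (\ref{eqn:SK23_1}) defining $\mathscr{K}(n,A)$ on the whole rescaled interval; the Sobolev constant, $|\dot{\varphi}|_{C^0}$ and $|\mathbf{b}|_{C^0}$ need to be re-examined. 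However, by Theorem~\ref{thmin:HC08_1} (partial-$C^0$-estimate) and the scaling behaviour of the Bergman function built into Definition~\ref{dfn:SK27_1}, plus the a priori lower bound $\mathbf{pcr} \geq \hslash$ from Theorem~\ref{thm:SC28_1}, the relevant normalized quantities are controlled on balls of the rescaled scale, which is all that is needed to run the level-set comparison of Lemma~\ref{lma:HA15_1}.

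The cleaner route, which I would actually pursue, is a contradiction/compactness argument directly. Suppose (\ref{eqn:HA15_4}) fails: there are $\mathcal{LM}_i \in \mathscr{K}(n,A)$ with $\sup_{\mathcal{M}_i}(|R| + |\lambda|) \to 0$, points $x_i$, times $t_{1,i}, t_{2,i} \in [-1,1]$, and $r_i$ (which we may take bounded away from $1$ since the $r$ close to scale $1$ case is Lemma~\ref{lma:HA14_1}) such that $B_{g_i(t_{1,i})}(x_i, \epsilon_0 r_i) \not\subset B_{g_i(t_{2,i})}(x_i, \epsilon_0^{-1} r_i)$. If $r_i$ stays bounded below, we rescale and invoke Lemma~\ref{lma:HA14_1} directly; so $r_i \to 0$. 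Rescale by $r_i^{-2}$ at time $t_{2,i}$; then $|R| + |\lambda| \to 0$ on the rescaled flows and the polarized canonical radius assumption persists after rescaling (since $\mathbf{pcr} \geq \hslash$ and rescaling by a large factor only improves it locally on bounded balls). By Proposition~\ref{prn:SL04_1} the pointed rescaled flows converge, on the regular part, to a static limit $(\bar{M}, \bar{x}, \bar{g})$ with $\bar{M} \in \widetilde{\mathscr{KS}}(n,\kappa)$ by Theorem~\ref{thm:SC04_1}, and crucially the limit does not depend on time — combining Proposition~\ref{prn:SC09_3} and Proposition~\ref{prn:SK27_3}, forward and backward pseudolocality on the polarized canonical radius give the metric comparison $(1-\xi) g_i(t_{1,i}) \leq g_i(t_{2,i}) \leq (1+\xi) g_i(t_{1,i})$ on a fixed-size ball around $x_i$ in the rescaled metric, with $\xi \to 0$. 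This metric equivalence immediately forces $B_{g_i(t_{1,i})}(x_i, \epsilon_0 r_i) \subset B_{g_i(t_{2,i})}(x_i, (1+\xi)^{1/2}\epsilon_0 r_i) \subset B_{g_i(t_{2,i})}(x_i, \epsilon_0^{-1} r_i)$ for large $i$, contradicting the choice of the sequence.

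The main obstacle I anticipate is controlling the comparison of geodesic balls \emph{uniformly} across the time interval when $r_i \to 0$, i.e. making sure that the pseudolocality-type estimates of Proposition~\ref{prn:SC09_3} and Proposition~\ref{prn:SK27_3} apply at the small scale $r_i$ rather than only at scale $1$. The resolution is that those propositions are themselves scale-invariant once the polarized canonical radius lower bound is in place (Theorem~\ref{thm:SC28_1}), and after rescaling the hypothesis $|R| + |\lambda| < \delta$ becomes $< r_i^2 \delta \to 0$, so the smallness hypothesis $\sup(|R|+|\lambda|) < \epsilon(n,A,r_0,r,T_0)$ of those propositions is comfortably met for every fixed target scale. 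A secondary technical point is that the limit flow is only static on the \emph{regular} part a priori, so the ball-containment statement must be read through the $\hat{C}^{\infty}$-Cheeger--Gromov convergence together with the fact that $\mathcal{S}$ has measure zero and high codimension; but since $x_i$ may be taken to have canonical volume radius comparable to $r_i$ (by the point-selection rearrangement used in Proposition~\ref{prn:SC17_7}, or else the conclusion is already forced by the local curvature bound), $\bar{x}$ is a regular point and the balls in question live in the smooth locus where the metric comparison is honest.
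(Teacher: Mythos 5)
There is a genuine gap in your proposed argument, and it is precisely the case that the paper's proof is designed to handle: the base point $x_i$ converging to a \emph{singular} point $\bar{x}$ of the limit.

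Your "cleaner route" relies on Propositions~\ref{prn:SC09_3} and~\ref{prn:SK27_3} (forward/backward pseudolocality for $\mathbf{cvr}$) to produce a metric comparison $(1-\xi)g_i(t_{1,i}) \leq g_i(t_{2,i}) \leq (1+\xi)g_i(t_{1,i})$ on a fixed-size ball around $x_i$. But those propositions only transfer regularity for points that already lie in $\mathcal{F}_r(M,\cdot)$; they say nothing about the metric in any neighborhood of $x_i$ when $\mathbf{cvr}(x_i)$ is tiny compared to $r$. In that regime the balls $B_{g(t)}(x_i,\epsilon_0 r)$ necessarily contain high-curvature regions where the Ricci flow does change the metric by unbounded factors, so the two-sided metric equivalence you are asserting is simply false. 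Your attempted patch — that $x_i$ "may be taken to have canonical volume radius comparable to $r_i$ by the point-selection rearrangement" — does not work: the statement quantifies over \emph{every} $x$, so $x_i$ is given by the failing sequence and cannot be rechosen, and the point-selection technique in Proposition~\ref{prn:SC17_7} moves you to a nearby point and a \emph{smaller} scale comparable to $\mathbf{cvr}$, not to scale $r_i$. The phrase "or else the conclusion is already forced by the local curvature bound" is not correct either, since a curvature bound at $x_i$ at one scale says nothing about a ball of radius $\epsilon_0 r \gg \mathbf{cvr}(x_i)$. There is also a quantifier slip: in the lemma $r$ is fixed and only $\delta$ is allowed to degenerate, so a failing sequence has $r$ fixed rather than $r_i \to 0$; your fallback "if $r_i$ stays bounded below, rescale and invoke Lemma~\ref{lma:HA14_1} directly" does not close that case, since rescaling by $r_i^{-2}$ stretches the time interval to $[-r_i^{-2},r_i^{-2}]$, which Lemma~\ref{lma:HA14_1} does not cover.

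The paper's actual mechanism is qualitatively different and is exactly what survives when $\bar{x}$ is singular. After taking the Cheeger--Gromov limit with $|R|+|\lambda|\to 0$, the scale-invariant quantity $\mathrm{Osc}_M\,\dot{\varphi}$ stays bounded, so the limit Ricci potential is a bounded harmonic function on $\mathcal{R}(\bar{M})$ and hence constant by the Liouville theorem (Corollary~\ref{cly:HE08_1}); with the normalization this forces the limit Hermitian metric on $\bar{L}$ to be \emph{static in time}. One then works with the peak holomorphic section $S_i$ at $(x_i,0)$: by the definition of $\epsilon_0$ in Lemma~\ref{lma:HA14_1}, $\|S_i\|\geq\frac12 e^{-c_0}$ on $B_{g_i(0)}(x_i,\epsilon_0)$, so the level set $\Omega_{i,t}=\{\|S_i\|_t\geq\frac12 e^{-c_0}\}$ contains this ball; Lemma~\ref{lma:HA15_1} controls the diameter of $\Omega_{i,t}$ purely from the non-collapsing and the section's gradient bound, at every time, independently of whether the center is regular. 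Passing to the limit and comparing with the static level set $\bar{\Omega}$ gives $\diam(\bar{\Omega}) < \epsilon_0^{-1}$ (this inequality is exactly how $\epsilon_0$ was chosen in~\eqref{eqn:HA15_3}), contradicting the escape of $\bar{y}$. So the line bundle argument is not an optional extra here — it is the tool that replaces the unavailable two-sided metric comparison near singular points, and omitting it leaves the central difficulty unaddressed.
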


\begin{proof}
We proceed by a contradiction argument.

 Again, it suffices to show (\ref{eqn:HA15_4}) for $t_1=0$ and $t_2=1$.  By adjusting $r$ if necessary, we can
 also make a rescaling by integer factor.  Up to rescaling, (\ref{eqn:HA15_4}) is the same as
 \begin{align}
      B_{g(0)}(x,\epsilon_0) \subset B_{g(r^{-2})}(x, \epsilon_0^{-1}).   \label{eqn:HA15_5}
 \end{align}

 Suppose the statement of this lemma was wrong. Then there is an $r_0>0$ and a sequence of
 points $x_i \in M_i$ such that
 \begin{align*}
    B_{g_i(0)}(x_i,\epsilon_0) \not\subset B_{g_i(r_0^{-2})}(x_i,\epsilon_0^{-1}).
 \end{align*}
 However, $|R|+|\lambda| \to 0$ in $C^0$-norm as $i \to \infty$.  So we can take a limit
 \begin{align*}
   (M_i,x_i,g_i(0)) \stackrel{\hat{C}^{\infty}}{\longrightarrow} (\bar{M},\bar{x},\bar{g}).
 \end{align*}
 As usual, we can find a regular point $\bar{z} \in \bar{M}$ near $\bar{x}$.  Let $z_i \in M_i$ and $z_i \to \bar{z}$
 as the above convergence happens.  Then we can extend the above convergence to each time slice.
 \begin{align*}
    (M_i,z_i,g_i(t)) \stackrel{\hat{C}^{\infty}}{\longrightarrow} (\bar{M},\bar{z},\bar{g}), \quad \forall \; t \in [0,r_0^{-2}].
 \end{align*}
 Note that $\bar{x}$ may be a singular point of $\bar{M}$. So in the above convergence,  we only have $x_i$
 converges to $\bar{x}(t)$, which may depends on time $t$.  Lemma~\ref{lma:HA14_1} guarantees that $\bar{x}(t)$
 is not at infinity.

 Note that $Osc_M \dot{\varphi}$ is scaling invariant and consequently uniformly bounded by condition inequality (\ref{eqn:SK20_1}). 
 Therefore $\dot{\varphi}$ converges to a limit bounded function which is harmonic function on $\mathcal{R}(\bar{M})$, the regular part of $\bar{M}$. 
 Such a function must be a constant by Corollary~\ref{cly:HE08_1}.   
 Actually, a bounded harmonic function on $\mathcal{R}(\bar{M})$ will automatically be a bounded Lipschitz function on $\bar{M}$, by Proposition~\ref{prn:HD16_1}. 
 Applying normalization condition, the limit function must be zero on $\bar{M} \times [0,r_0^{-2}]$. Therefore, the limit line bundle $\bar{L}$
 admits a limit metric which does not evolve along time.  Therefore, for a fixed holomorphic section $\bar{S}$ and a
 fixed level value,  the level sets of $\norm{\bar{S}}{}^2$ does not depend on time.

 Choose $S_i$ be the peak section of $L_i$ at $x_i$, with respect to the metrics at time $t=0$.  By the choice of
 $\epsilon_0$, it is clear that $\norm{S_i}{}\geq \frac{1}{2}e^{-c_0}$ on the ball $B(x_i,\epsilon_0)$.
 In other words,  we have
 \begin{align*}
  B(x_i, \epsilon_0) \subset
  \Omega_{i,t} \triangleq \left\{z\left| \norm{S_i}{t}(z)\geq  \frac{1}{2}e^{-c_0} \right. \right\}.
 \end{align*}
 Without loss of generality, we can assume $\Omega_{i,t}$ is path connected.
 Clearly, each $\Omega_{i,t}$ has uniformly bounded diameter, due to Lemma~\ref{lma:HA15_1}.
 Let $\bar{\Omega}$ be the limit set of $\Omega_{i,0}$.  Clearly, $\bar{z} \in \bar{\Omega}$.
 Then the above discussion implies that $\bar{\Omega}$ is actually the limit set of each $\Omega_{i,t}$.

Let $\bar{y}$ be the limit point of $y_i$, which is a point in $B_{g_i(0)}(x_i, \epsilon_0)$ and start to escape
$B(x_i, \epsilon_0^{-1})$ at time $t_i$, which converges to $\bar{t}$. So we obtain
\begin{align}
   \bar{y} \in B_{\bar{g}(0)}(\bar{x},\epsilon_0),
   \quad
   d(\bar{y}, \bar{x}(\bar{t}))=\epsilon_0^{-1}.
\label{eqn:HA15_6}
\end{align}
Since volume element  of the underlying manifold and the line bundle metric are all almost static when time evolves,
it is easy to see that $y_i$ can never escape $\Omega_{i,t}$. So $\bar{y} \in \bar{\Omega}$.
Similarly, we know $\bar{x}(\bar{t}) \in \bar{\Omega}$.  Therefore,  at time $\bar{t}$, we have
\begin{align*}
    d(\bar{y}, \bar{x}(\bar{t})) \leq \diam \bar{\Omega}.
\end{align*}
Note that the argument in the proof of Lemma~\ref{lma:HA15_1} holds for the polarized singular manifold
$(\bar{M}, \bar{L})$, due to the high codimension of $\mathcal{S}(M)$ and the gradient bound of each $S_i$.
Since $\int_{\bar{M}} \norm{\bar{S}}{}^2 dv$ is uniformly bounded from above by $1$,  we can follow the proof of
Lemma~\ref{lma:HA15_1} to show that
\begin{align*}
   \diam(\bar{\Omega}) \leq \rho(n,\kappa,c_0,C_1,\log 2)<\epsilon_0^{-1}
\end{align*}
by the choice of $\epsilon_0$ in (\ref{eqn:HA15_3}).  Consequently, we have $d(\bar{y},\bar{x}(\bar{t}))<\epsilon_0^{-1}$,
which contradicts (\ref{eqn:HA15_6}).
\end{proof}

Based on Lemma~\ref{lma:HA14_2}, we can improve Proposition~\ref{prn:SL04_1}.
Namely, under the condition $|R|+|\lambda| \to 0$, the limit flow is static, even on the singular part.
Clearly,  due to Theorem~\ref{thm:SC28_1}, we do not need the assumption of lower bound of polarized
canonical radius anymore.

\begin{proposition}[\textbf{Static limit space-time}]
 Suppose $\mathcal{LM}_i \in \mathscr{K}(n,A)$ satisfies
 \begin{align*}
      \lim_{i \to \infty} \sup_{\mathcal{M}_i} (|R|+|\lambda|)=0.
 \end{align*}
 Suppose $x_i \in M_i$.  Then
 \begin{align*}
   (M_i,x_i, g_i(0)) \longright{\hat{C}^{\infty}}  (\bar{M},\bar{x}, \bar{g}).
 \end{align*}
 Moreover,  we have
 \begin{align*}
   (M_i,x_i, g_i(t)) \longright{\hat{C}^{\infty}}  (\bar{M},\bar{x}, \bar{g})
 \end{align*}
 for every $t \in (-\bar{T}, \bar{T})$, where $\displaystyle \bar{T}=\lim_{i \to \infty} T_i>0$.
 In other words, the identity maps between different time slices converge to the limit identity map.
\label{prn:HE15_1}
\end{proposition}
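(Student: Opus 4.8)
The plan is to upgrade Proposition~\ref{prn:SL04_1} by removing the hypothesis that the polarized canonical radius is uniformly bounded from below. First I would observe that this hypothesis is now automatic: by Theorem~\ref{thm:SC28_1} we have $\mathscr{K}(n,A)=\mathscr{K}(n,A;\hslash)$, so every $\mathcal{LM}_i \in \mathscr{K}(n,A)$ satisfies $\mathbf{pcr}(M_i \times [-1,1]) \geq \hslash>0$ automatically. Hence, after possibly discarding an initial segment (the condition $T \geq 2$ in (\ref{eqn:SK20_1}) guarantees the flow is defined on $[-2,2]$, so $\bar{T} \geq 2>0$), all the structural ingredients of Proposition~\ref{prn:SL04_1} apply verbatim with $r_0=\hslash$. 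In particular the $\hat{C}^\infty$-Cheeger-Gromov convergence $(M_i,x_i,g_i(0)) \longright{\hat{C}^{\infty}} (\bar{M},\bar{x},\bar{g})$ and the static nature of the regular part are already established there; what remains is only to promote ``static on $\mathcal{R}(\bar{M})$'' to ``static on all of $\bar{M}$'', i.e.\ to show the limit metric spaces at different times are genuinely isometric, not merely isometric away from the singular set.

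The key new input for this upgrade is the distance comparison of Lemma~\ref{lma:HA14_2}: for every small $r$ there is a $\delta$ such that $\sup_{\mathcal{M}_i}(|R|+|\lambda|)<\delta$ forces $B_{g_i(t_1)}(x,\epsilon_0 r) \subset B_{g_i(t_2)}(x,\epsilon_0^{-1} r)$ for all $t_1,t_2 \in [-1,1]$, where $\epsilon_0=\epsilon_0(n,A)$. Thus I would argue as follows. For each fixed $t$, volume doubling and ball-packing give a pointed-Gromov-Hausdorff limit $(M_i,x_i,g_i(t)) \to (\bar{M}^{(t)}, \bar{x}^{(t)}, \bar{g}^{(t)})$; the content of Proposition~\ref{prn:SK27_3} together with $|R|+|\lambda|\to 0$ already identifies $\mathcal{R}(\bar{M}^{(t)})$ with $\mathcal{R}(\bar{M})$ isometrically and $\bar{x}$ as a common point. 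To show $\bar{M}^{(t)}$ and $\bar{M}$ are isometric I would reuse the test-function technique from Claim~\ref{clm:MC05_1} and the proof of (\ref{eqn:MC07_2})--(\ref{eqn:MC07_3}): given regular points $\bar{y},\bar{z}$, build the Lipschitz function $\chi(\cdot)=\max\{d_{\bar{g}}(\bar{y},\bar{z})-d_{\bar{g}}(\cdot,\bar{y}),0\}$ on one limit space (extended by $0$ over the singular set, which is legitimate by the regular-curve estimate (\ref{eqn:HE11_1}) and the high codimension of $\mathcal{S}$), pass to a good version with $\sup|\nabla\chi|\leq \norm{\nabla\chi}{L^\infty(\mathcal{R})}=1$, and deduce $d_{\bar{g}}(\bar{y},\bar{z})\leq d_{\bar{g}^{(t)}}(\bar{y},\bar{z})$; reversing roles gives the reverse inequality, and density of $\mathcal{R}$ plus metric completion yields $\bar{M}^{(t)}\cong\bar{M}$. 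Alternatively, and perhaps more cleanly, one can feed Lemma~\ref{lma:HA14_2} directly: since for each scale $r$ the balls $B_{g_i(0)}(x_i,\epsilon_0 r)$ and $B_{g_i(t)}(x_i,\epsilon_0 r)$ trap each other up to a factor $\epsilon_0^{-2}$, the identity maps $\mathrm{id}:(M_i,g_i(0))\to(M_i,g_i(t))$ are uniformly quasi-isometric; combined with the genuine isometry on the regular part (where the identity map converges smoothly), a standard diagonal argument forces the limiting identity maps to be global isometries.

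The main obstacle will be the global control of the basepoint in the singular case: if $\bar{x}$ is a singular point of $\bar{M}$, the images $x_i$ need not converge to a time-independent point, and a priori $x_i$ could ``run to infinity'' as $t$ varies. This is exactly what Lemma~\ref{lma:HA14_2} (via Lemma~\ref{lma:HA14_1}, which rests on Theorem~\ref{thmin:HC08_1}, the uniform lower bound of the Bergman function, and the gradient/Moser estimates for holomorphic sections) is designed to prevent: the level sets of peaked holomorphic sections are comparable to geodesic balls at every time slice, so $\bar{x}(t)$ stays in a bounded region. Once this is in hand, the passage from ``static on $\mathcal{R}$'' to ``static on $\bar{M}$'' and the convergence of the identity maps between time slices follow by the arguments already developed for Proposition~\ref{prn:SL04_1} and Claim~\ref{clm:MC05_1}, now unconditionally since the polarized canonical radius lower bound is supplied by Theorem~\ref{thm:SC28_1}. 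Finally, the $\hat{C}^\infty$ (rather than merely pointed-Gromov-Hausdorff) nature of the convergence at each $t$ is inherited exactly as in Proposition~\ref{prn:SL04_1}, using Proposition~\ref{prn:SK27_3} to propagate the canonical volume radius lower bound in time, the regularity estimate of Definition~\ref{dfn:SC02_1}, the estimate $|Ric|\leq\sqrt{|Rm||R|}$ of~\cite{Wa2} to kill the Ricci curvature on regions of bounded $|Rm|$, and Shi's estimates to upgrade to all derivatives.
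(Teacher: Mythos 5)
Your proposal is correct and follows exactly the route the paper intends: Theorem~\ref{thm:SC28_1} supplies the polarized canonical radius bound unconditionally, Proposition~\ref{prn:SL04_1} handles the static convergence with a regular base point, and Lemma~\ref{lma:HA14_2} (resting on Lemma~\ref{lma:HA14_1} and the Bergman function estimates of Theorem~\ref{thmin:HC08_1}) supplies the uniform quasi-isometric distance control that pins down the base point when $\bar x$ is singular. Your closing diagonal/continuity argument --- quasi-isometry on all of $\bar M$ combined with genuine isometry on the dense regular part forces the limiting identity map to be the identity --- is exactly the missing step the paper leaves implicit in the sentence ``Based on Lemma~\ref{lma:HA14_2}, we can improve Proposition~\ref{prn:SL04_1}.''
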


As a direct application, we obtain the bubble structure of a given family of polarized K\"ahler Ricci flows.

\begin{theorem}[\textbf{Space-time structure of a bubble}]
Suppose $\mathcal{LM}_i \in \mathscr{K}(n,A)$, $x_i \in M_i$, $t_i \in (-T_i, T_i)$, and $r_i \to 0$.
Suppose $\widetilde{\mathcal{M}}_i$ is the adjusting of  $\mathcal{M}_i$ by shifting time $t_i$ to $0$ and then
rescaling the space-time by the factors $r_i^{-2}$, i.e., $\tilde{g}_i(t)=r_i^{-2}g(r_i^2 t + t_i)$.
Suppose $r_i^{-2} \max \{|t_i-T_i|, |t_i+T_i|\}=\infty$. Then we have
\begin{align*}
  (M_i,x_i, \tilde{g}_i(t)) \longright{\hat{C}^{\infty}} (\hat{M}, \hat{x}, \hat{g})
\end{align*}
for each time $t \in (-\infty, \infty)$ with $\hat{M} \in \widetilde{\mathscr{KS}}(n,\kappa)$.
\label{thm:HE26_1}
\end{theorem}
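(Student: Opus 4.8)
The plan is to derive Theorem~\ref{thm:HE26_1} as a direct consequence of the machinery already assembled, chiefly the a priori bound of polarized canonical radius (Theorem~\ref{thm:SC28_1}, i.e. $\mathscr{K}(n,A)=\mathscr{K}(n,A;\hslash)$) together with the static-limit and blowup results of Section 4.3. First I would set up the rescaled flows $\widetilde{\mathcal{LM}}_i$ explicitly: writing $\tilde g_i(t)=r_i^{-2}g_i(r_i^2 t+t_i)$, the scalar curvature and the constant $\lambda$ both scale by $r_i^2$, so $\sup_{\widetilde{\mathcal{M}}_i}(|R|+|\lambda|)=r_i^2\sup_{\mathcal{M}_i}(|R-n\lambda|+\cdots)\le Ar_i^2\to 0$; the Sobolev constant is scale invariant, and $\Vol$ of the rescaled time slices tends to infinity; the existence interval $(-\tilde T_i^-,\tilde T_i^+)=(-r_i^{-2}(t_i+T_i),\,r_i^{-2}(T_i-t_i))$ has length going to infinity by the hypothesis $r_i^{-2}\max\{|t_i-T_i|,|t_i+T_i|\}=\infty$. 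Hence $\widetilde{\mathcal{LM}}_i\in\mathscr{K}(n,A)$ after truncating to $[-2,2]$, and moreover it lies in $\mathscr{K}(n,A;\hslash)$ by Theorem~\ref{thm:SC28_1}, with $|R|+|\lambda|\to 0$.

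Next I would invoke Proposition~\ref{prn:HE15_1} (static limit space-time), whose hypotheses are exactly $\mathcal{LM}_i\in\mathscr{K}(n,A)$ and $\sup_{\mathcal{M}_i}(|R|+|\lambda|)\to 0$; this immediately gives $(M_i,x_i,\tilde g_i(t))\stackrel{\hat C^\infty}{\longrightarrow}(\hat M,\hat x,\hat g)$ for every fixed $t$, with the limit independent of $t$ and the identity maps between time slices converging to the limit identity. The point is that once the canonical radius is uniformly bounded below (Theorem~\ref{thm:SC28_1}), the lower bound on polarized canonical radius required in Section 4.3 comes for free, so Proposition~\ref{prn:HE15_1} applies without extra assumptions. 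The only remaining issue is whether the convergence is genuinely on the entire time axis $(-\infty,\infty)$ rather than on a bounded window; this I would handle by a diagonal/exhaustion argument: for each fixed $H>0$, the flows $\widetilde{\mathcal{LM}}_i$ restricted to $[-H,H]$ eventually lie in $\mathscr{K}(n,A)$ (since the existence intervals exhaust $\mathbb R$), so Proposition~\ref{prn:HE15_1} applies on $[-H,H]$, and letting $H\to\infty$ through a countable sequence with a diagonal subsequence yields the limit $(\hat M,\hat x,\hat g)$ on all of $(-\infty,\infty)$, as a static (hence eternal) solution.

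To conclude $\hat M\in\widetilde{\mathscr{KS}}(n,\kappa)$ I would appeal to Theorem~\ref{thm:SC04_1}: the rescaled flows satisfy condition~(\ref{eqn:SL06_1})---$1/\tilde T_i\to 0$, $1/\Vol\to 0$, $\sup(|R|+|\lambda|)\to 0$---so the time-zero limit is in $\widetilde{\mathscr{KS}}(n,\kappa)$, and since the limit space-time is static this is the same $\hat M$ for all times. One should also check $\mathrm{avr}(\hat M)\ge\kappa$: the uniform Sobolev bound plus the volume divergence $\Vol\to\infty$ and volume convergence (Proposition~\ref{prn:SC12_1}) force a uniform lower bound on volume ratios at all scales, passing to the limit. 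I expect the main obstacle to be purely bookkeeping rather than conceptual: carefully arranging the diagonal argument so that a single subsequence works simultaneously for all time windows $[-H,H]$ and for the Cheeger--Gromov convergence of the line bundle structure, and verifying that the rescaling genuinely preserves membership in $\mathscr{K}(n,A)$ on each finite window (in particular the $T\ge 2$ requirement, which forces the truncation and uses $r_i^{-2}\max\{|t_i\pm T_i|\}\to\infty$). Everything else is an immediate citation of Theorem~\ref{thm:SC28_1}, Proposition~\ref{prn:HE15_1}, and Theorem~\ref{thm:SC04_1}.
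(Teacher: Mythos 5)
Your proposal is correct and matches the route the paper intends: the paper offers Theorem~\ref{thm:HE26_1} without a written proof, calling it a ``direct application'' of Proposition~\ref{prn:HE15_1} and remarking it improves Theorem~\ref{thm:SC04_1}, and you have filled in exactly the bookkeeping that makes the citation precise — checking scale invariance/decay of $|R|+|\lambda|$, Sobolev constant, volume growth, and the existence-interval constraints, then invoking Theorem~\ref{thm:SC28_1} so the polarized canonical radius hypothesis comes for free, applying Proposition~\ref{prn:HE15_1} for the static limit, and Theorem~\ref{thm:SC04_1} for membership in $\widetilde{\mathscr{KS}}(n,\kappa)$. The one small loose end worth flagging: the stated hypothesis $r_i^{-2}\max\{|t_i-T_i|,|t_i+T_i|\}=\infty$ by itself does not force both sides of the rescaled interval to diverge, so the exhaustion of $(-\infty,\infty)$ you invoke in the diagonal argument implicitly reads the condition in the (intended) stronger sense; also, the a priori $\mathbf{pcr}$ bound of Proposition~\ref{prn:SC28_2} is stated on $[-1,1]$ and needs the routine time-translation extension to a window around $t_i$, a step you should mention explicitly since $t_i$ need not lie in $[-1,1]$.
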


Theorem~\ref{thm:HE26_1} means that the space-time structure of
$\hat{M} \in  \widetilde{\mathscr{KS}}(n,\kappa)$ is the model for the space-time structures around $(x_i, t_i)$,
up to proper rescaling.  Therefore, Theorem~\ref{thm:HE26_1} is an improvement of Theorem~\ref{thm:SC04_1},
where we only concern the metric structure.

In view of Proposition~\ref{prn:HE15_1},
it is not hard to see that distance is a uniform continuous function of time in $\mathscr{K}(n,A)$.

\begin{theorem}[\textbf{Uniform continuity of distance function}]
Suppose $\mathcal{LM} \in \mathscr{K}(n,A)$, $x,y \in M$.
Suppose $d_{g(0)}(x,y)<1$. Then for every small $\epsilon$, there is a
$\delta=\delta(n,A,\epsilon)$ such that
\begin{align*}
   |d_{g(t)}(x,y)-d_{g(0)}(x,y)|<\epsilon
\end{align*}
whenever $|t|<\delta$.
\label{thm:HE15_1}
\end{theorem}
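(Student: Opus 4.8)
The plan is to prove Theorem~\ref{thm:HE15_1} by a contradiction and compactness argument, exploiting the now-established homogeneity $\mathscr{K}(n,A)=\mathscr{K}(n,A;\hslash)$ (Theorem~\ref{thm:SC28_1}) together with the static-limit property of Proposition~\ref{prn:HE15_1}. Suppose the statement fails. Then there exist $\epsilon_0>0$, a sequence $\mathcal{LM}_i \in \mathscr{K}(n,A)$, points $x_i,y_i \in M_i$ with $d_{g_i(0)}(x_i,y_i)<1$, and times $t_i \to 0$ such that $|d_{g_i(t_i)}(x_i,y_i)-d_{g_i(0)}(x_i,y_i)| \geq \epsilon_0$. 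Passing to a subsequence, we may assume $d_{g_i(0)}(x_i,y_i) \to D \in [0,1]$. The first step is to pass to a pointed Cheeger-Gromov limit $(M_i,x_i,g_i(0)) \stackrel{\hat C^\infty}{\longrightarrow} (\bar M,\bar x,\bar g)$, which exists by the volume doubling and regularity estimates available for flows in $\mathscr{K}(n,A)$ (cf.\ Theorem~\ref{thm:HE11_1} and Theorem~\ref{thmin:SC24_1}); after a further subsequence $y_i \to \bar y$ with $d_{\bar g}(\bar x,\bar y)=D$.

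The key point is that $|R|+|\lambda|$ need not go to zero here, so Proposition~\ref{prn:HE15_1} does not apply verbatim; instead I would localize. Fix a small $r$ and apply the two-sided pseudolocality results: around any point whose canonical volume radius is bounded below, Proposition~\ref{prn:HA08_3} and Theorem~\ref{thm:SL27_2} give uniform curvature bounds (and all covariant derivative bounds) on a fixed space-time neighborhood $B_{g_i(0)}(z_i,cr) \times [-c^2r^2,c^2r^2]$. Thus on the regular part $\mathcal{R}(\bar M)$ the flows converge smoothly on a uniform backward-and-forward time interval, and in particular the metrics $g_i(t)$ restricted to any compact $K \subset \mathcal{R}(\bar M)$ are uniformly equivalent and converge, as $t \to 0$, to $\bar g$ uniformly in $C^0$ on $K$. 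The remaining issue is the contribution of the singular set and the possibility that $x_i$ or $y_i$ limit onto singular points. To control distances through $\mathcal{S}$, I would use Lemma~\ref{lma:HA14_2}: since by Theorem~\ref{thm:SC28_1} every flow in $\mathscr{K}(n,A)$ has $\mathbf{pcr} \geq \hslash$, and since the argument of Lemma~\ref{lma:HA14_2} only needs the partial-$C^0$-estimate (Theorem~\ref{thmin:HC08_1}) plus the gradient bounds on holomorphic sections, one gets that on the scale $r$ the geodesic balls $B_{g_i(t)}(z,\epsilon_0 r)$ stay inside $B_{g_i(0)}(z,\epsilon_0^{-1}r)$ for $|t|$ small; this prevents distance from changing by more than a fixed multiple of $r$ across the high-curvature region. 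Combining this with the high Minkowski codimension of $\mathcal{S}$ (Corollary~\ref{cly:HE25_2}), one shows that a near-minimizing $g_i(0)$-geodesic from $x_i$ to $y_i$ can be perturbed into $\mathcal{R}$ losing only $o(1)$ in length, and then its $g_i(t_i)$-length differs from its $g_i(0)$-length by $o(1)$ because the metrics agree up to $o(1)$ on the relevant compact regular piece; conversely the same for a $g_i(t_i)$-minimizing geodesic. Hence $|d_{g_i(t_i)}(x_i,y_i)-d_{g_i(0)}(x_i,y_i)| \to 0$, contradicting the choice of $\epsilon_0$.

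More concretely, the order of steps is: (1) set up the contradiction sequence and extract the Cheeger-Gromov limit; (2) record that, by Theorem~\ref{thm:SC28_1} and Theorem~\ref{thmin:HC08_1}, every $\mathcal{LM}_i$ carries a uniform partial-$C^0$-estimate and a uniform polarized canonical radius lower bound, so Lemma~\ref{lma:HA14_1} and Lemma~\ref{lma:HA14_2}-type containments hold at all scales $r \leq 1$ with constants depending only on $n,A$; (3) use these to reduce, up to an error controlled linearly in a chosen small parameter $r$, the distance estimate to a comparison of $g_i(0)$ and $g_i(t_i)$ on the portion of an (almost) geodesic lying in $\mathcal{F}_r(M_i,0) \cap \big(\bigcap_{|t|\le r^2}\mathcal{F}_{r/K}(M_i,t)\big)$, invoking Proposition~\ref{prn:SC09_3} and Proposition~\ref{prn:SK27_3} to propagate regularity in both time directions; (4) on that regular portion the flow equation $\partial_t g = -Ric + \lambda g$ with $|Ric|,|\lambda|$ bounded by $A/\mathbf{cvr}^2$-type estimates and $|t_i|\to 0$ forces $|\log(g_i(t_i)/g_i(0))| \to 0$ uniformly, hence lengths change by $o(1)$; (5) let $r \to 0$ after $i \to \infty$ to kill the linear-in-$r$ error and conclude.

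The main obstacle I expect is Step (3)-(4): controlling what happens when a minimizing geodesic must traverse a neighborhood of the singular set, where no uniform curvature bound is available and where the identity map between time slices has no a priori regularity. The device that resolves it is precisely the line-bundle-based distance comparison (Lemma~\ref{lma:HA14_2}, which survives with constants depending only on $n,A$ once $\mathscr{K}(n,A)=\mathscr{K}(n,A;\hslash)$), combined with the measure estimate for the $r$-neighborhood of $\mathcal{S}$ from the density estimates (Theorem~\ref{thm:SC12_2} and Corollary~\ref{cly:HE25_2}): the near-minimizing geodesic can be chosen to meet $\mathcal{D}_{r}$ only in a controlled way, and the short detours needed to skirt $\mathcal{S}$ contribute length $o(1)$ uniformly. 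One must be careful to apply the two-sided estimates on a time interval $[-c^2r^2,c^2r^2]$ that actually contains $t_i$ for large $i$, which is automatic since $t_i \to 0$; and to handle the (harmless) ambiguity that $x_i,y_i$ may limit to singular points by working with the containments $B_{g_i(0)}(x_i,\epsilon_0 r)\subset B_{g_i(t_i)}(x_i,\epsilon_0^{-1}r)$ rather than with limit positions of $x_i$ themselves.
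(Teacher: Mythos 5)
Your proposal is correct and follows essentially the same strategy as the paper: set up a contradiction sequence with $t_i\to 0$, invoke the homogeneity $\mathscr{K}(n,A)=\mathscr{K}(n,A;\hslash)$ (Theorem~\ref{thm:SC28_1}) to obtain partial-$C^0$-based ball containments at all small scales $r$ (via the rescaled version of Lemma~\ref{lma:HA14_1}, valid on $|t|\le r^2$, which is eventually satisfied because $t_i\to 0$), use two-sided pseudolocality to propagate regularity of nearby regular points in both time directions, and reduce the distance comparison to the regular part where smooth flow convergence makes it trivial. The only presentational difference is that the paper first replaces $x_i,y_i$ by nearby uniformly regular points $x_i',y_i'$ and then invokes the argument of Proposition~\ref{prn:SC17_4} in the limit, while you propose to keep $x_i,y_i$ and instead perturb near-minimizing geodesics into $\mathcal{R}$ using the codimension estimates and to absorb the singular-endpoint ambiguity via ball containments — both routes hinge on the same lemmas and lead to the same contradiction.
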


\begin{proof}
We argue by contradiction.  Suppose the statement was wrong, we can find an $\bar{\epsilon}>0$ and a sequence of
flows violating the statement for time $|t_i| \to 0$.  Around $x_i$,
in the ball $B_{g_i(0)}\left(x_i,\frac{\epsilon_0^2\bar{\epsilon}}{10} \right)$,
we can find $x_i'$ which are uniform regular at time $t=0$, where $\epsilon_0$
is the same constant in Lemma~\ref{lma:HA14_2} and Lemma~\ref{lma:HA14_1}.
By two-sided pseudolocality, Theorem~\ref{thm:SL27_2}, it is clear that $x_i'$ is also
uniform regular at time $t=t_i$.  Similarly, we can choose $y_i'$.
By virtue of triangle inequality and Lemma~\ref{lma:HA14_2}, we obtain
\begin{align*}
 & d_{g_i(0)}(x_i', y_i') -\frac{\epsilon_0^2 \bar{\epsilon}}{5}
 \leq  d_{g_i(0)}(x_i, y_i) \leq d_{g_i(0)}(x_i', y_i') + \frac{\epsilon_0^2 \bar{\epsilon}}{5},\\
 & d_{g_i(t_i)}(x_i', y_i') -\frac{\bar{\epsilon}}{5}
 \leq d_{g_i(t_i)}(x_i, y_i) \leq d_{g_i(t_i)}(x_i', y_i') + \frac{\bar{\epsilon}}{5}.
\end{align*}
By argument similar to that in Proposition~\ref{prn:SC17_4}, it is clear that
\begin{align*}
   \lim_{i \to \infty} d_{g_i(t_i)}(x_i', y_i')= \lim_{i \to \infty} d_{g_i(0)}(x_i', y_i').
\end{align*}
Then it follows that
\begin{align*}
\lim_{i \to \infty} d_{g_i(0)}(x_i,y_i)  -\frac{(1+\epsilon_0^2)}{5}\bar{\epsilon}
\leq
\lim_{i \to \infty} d_{g_i(t_i)}(x_i, y_i) \leq \lim_{i \to \infty} d_{g_i(0)}(x_i,y_i)  + \frac{(1+\epsilon_0^2)}{5} \bar{\epsilon}.
\end{align*}
In particular, for large $i$, we have
\begin{align*}
      \left| d_{g_i(0)}(x_i,y_i) - d_{g_i(t_i)}(x_i, y_i) \right|<\frac{(1+\epsilon_0^2)}{5} \bar{\epsilon}<\bar{\epsilon},
\end{align*}
which contradicts our assumption.
\end{proof}

\subsection{Volume estimate for high curvature neighborhood}
In this subsection, we shall develop the flow version of the volume estimate of
Donaldson and the first author(c.f.~\cite{CD1},~\cite{CD2}, see also~\cite{CN}).

\begin{proposition}[\textbf{K\"ahler cone complex splitting}]
  Same conditions as in Proposition~\ref{prn:HA07_2},  $\bar{y} \in \bar{M}$.  Suppose
  $\hat{Y}$ is a tangent cone of $\bar{M}$ at $\bar{y}$, then there is a fixed nonnegative integer $k$
  such that
  \begin{align}
       \hat{Y}=C(Z) \times \C^{n-k},
  \label{eqn:HA05_1}
  \end{align}
  where $C(Z)$ is a metric cone without straight line.  A point in $\bar{M}$ is regular if and only if one
of the tangent cone is $\C^{n}$.
\label{prn:SL26_1}
\end{proposition}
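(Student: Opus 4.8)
\textbf{Proof proposal for Proposition~\ref{prn:SL26_1}.}

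The plan is to reduce the statement to the already-established cone and splitting results for $\widetilde{\mathscr{KS}}(n,\kappa)$, combined with the regularity equivalence proven earlier. First I would invoke Proposition~\ref{prn:HA07_2}: since $\mathscr{K}(n,A)=\mathscr{K}(n,A;\hslash)$ by Theorem~\ref{thm:SC28_1}, every tangent cone $\hat{Y}$ of $\bar{M}$ at $\bar{y}$ is an irreducible metric cone, and by Theorem~\ref{thm:SC04_1} combined with Corollary~\ref{cly:SL27_1} the iterated blowup sequence producing $\hat{Y}$ lands in $\widetilde{\mathscr{KS}}(n,\kappa)$; in particular $\hat{Y}\in\widetilde{\mathscr{KS}}(n,\kappa)$, unless $\hat{Y}=\C^n$. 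If $\hat{Y}=\C^n$, we are in the trivial case $k=0$, so we may assume $\hat{Y}\in\widetilde{\mathscr{KS}}^{*}(n,\kappa)$ and is a metric cone with vertex $z^{*}$.

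Next I would apply Lemma~\ref{lma:HD30_1} (K\"ahler cone splitting): since $\hat{Y}\in\widetilde{\mathscr{KS}}^{*}(n,\kappa)$ is a metric cone with vertex $z^{*}$, it splits as $\hat{Y}=\C^{n-k}\times C(Z)$ with $z^{*}=(0,z_Z^{*})$, $2\le k\le n$, where $C(Z)$ is a metric cone with vertex $z_Z^{*}$ through which no straight line passes. This gives the decomposition \eqref{eqn:HA05_1} with the refinement that $C(Z)$ contains no line and, in the nontrivial case, $k\ge2$. To justify that $k$ is a \emph{fixed} integer independent of the chosen tangent cone, one argues by the Hausdorff-dimension stratification: by Proposition~\ref{prn:SL15_1} one has $\dim_{\mathcal{H}}\mathcal{S}\le 2n-4$, and the number of $\C$-factors that split off is controlled by the dimension of the singular stratum of $\hat{Y}$, which is locally constant near $\bar{y}$ in the standard stratification argument of Cheeger-Colding-Tian (each iterated tangent cone away from its vertex is again a tangent cone of $\bar{M}$, hence also splits complex-linearly, forcing integer jumps); the precise bookkeeping is as in~\cite{CCT,CC1}.

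For the last sentence I would use Proposition~\ref{prn:HA08_1}: a point $\bar{y}\in\bar{M}$ has a smooth ($C^{\omega}$, hence $C^{4}$) neighborhood if and only if \emph{one} tangent cone at $\bar{y}$ is $\C^n$, which is exactly the assertion that $\bar{y}$ is regular precisely when $k=0$ for some (equivalently every) tangent cone. The main obstacle I expect is the well-definedness and constancy of $k$: strictly speaking one must know that all tangent cones at a fixed point have the \emph{same} splitting dimension, which is not automatic from a single blowup and requires the full stratification machinery (quantitative differentiation or the iterated-cone argument). Everything else is a direct citation of Lemma~\ref{lma:HD30_1}, Theorem~\ref{thm:SC04_1}, Proposition~\ref{prn:HA07_2} and Proposition~\ref{prn:HA08_1}.
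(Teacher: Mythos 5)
Your core reduction — push the blowup sequence into $\widetilde{\mathscr{KS}}(n,\kappa)$ via the compactness theory (the paper cites this as "by compactness", essentially Theorem~\ref{thm:SC04_1}), then apply Lemma~\ref{lma:HD30_1}/Theorem~\ref{thm:HE08_1} to obtain the complex splitting, and cite Proposition~\ref{prn:HA08_1} for the regularity characterization — is exactly the route the paper takes. You are slightly more explicit than the paper about handling the $k=0$ ($\hat{Y}=\C^n$) case and about the last sentence, which the paper passes over implicitly, so in those respects you've improved on the original.

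However, the concern you raise about "justifying that $k$ is a fixed integer independent of the chosen tangent cone" rests on a misreading of the statement. "There is a fixed nonnegative integer $k$" means only that, for the particular tangent cone $\hat{Y}$ under consideration, the integer $k$ in the decomposition $\hat{Y}=C(Z)\times\C^{n-k}$ (with $C(Z)$ line-free) is well-defined; it is not the claim that every tangent cone at $\bar{y}$ splits off the same number of $\C$-factors. No such constancy is asserted in the statement, nor in Theorem~\ref{thm:HE08_1} or Lemma~\ref{lma:HD30_1}, and the Cheeger--Colding--Tian stratification you invoke would not in general supply it: the stratum $\mathcal{S}_k$ is defined by "some tangent cone splits off at most $\R^{2n-k}$," not "every tangent cone splits the same way." Since the statement does not require constancy, the machinery you flag as potentially needed is in fact unnecessary, and your proof is complete once that extra step is dropped. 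One small citation slip: $\hat{Y}\in\widetilde{\mathscr{KS}}(n,\kappa)$ follows from Theorem~\ref{thm:SC04_1} applied to the rescaled flows (where $|R|+|\lambda|\to 0$ automatically); Corollary~\ref{cly:SL27_1} is about continuity of $\mathbf{cvr}$ and is not what drives that step.
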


\begin{proof}
 By definition of tangent cone, one can find a sequence of numbers $r_i \to 0$.   Taking subsequence if necessary,
 let $\tilde{g}_i(t)=r_i^{-2} g_i(r_i^2t)$, then we have
 \begin{align*}
      (M_i, y_i, \tilde{g}_i(0)) \stackrel{\hat{C}^{\infty}}{\longrightarrow} (\hat{Y}, \hat{y}, \hat{g}).
 \end{align*}
 By compactness, we see that $(\hat{Y},\hat{y},\hat{g}) \in \widetilde{\mathscr{KS}}(n,\kappa)$.
 On the other hand, it is a metric cone, which is the tangent space of itself at the origin.
 So $\hat{Y}$ has the decomposition  (\ref{eqn:HA05_1}),
 by Theorem~\ref{thm:HE08_1}  or Lemma~\ref{lma:HD30_1}.
\end{proof}

\begin{proposition}[\textbf{K\"ahler tangent cone rigidity}]
Suppose $\mathcal{LM}_i \in \mathscr{K}(n,A)$.
Suppose $x_i \in M_i$ and $(\bar{M}, \bar{x}, \bar{g})$ is a limit space of $(M_i, x_i, g_i(0))$.
Let $\hat{Y}$ be
a tangent space of $\bar{M}$. Then $\hat{Y}$ satisfies the splitting (\ref{eqn:HA05_1}) for $k=2$ or $k=0$.
\label{prn:HA10_2}
\end{proposition}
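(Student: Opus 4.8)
\textbf{Proof proposal for Proposition~\ref{prn:HA10_2}.} The goal is to rule out the possibility that a tangent cone $\hat{Y}$ of a blowup limit $\bar{M}$ from flows in $\mathscr{K}(n,A)$ splits off exactly $\C^{n-1}$, i.e. to exclude $k=1$ in the splitting \eqref{eqn:HA05_1}. By Proposition~\ref{prn:SL26_1} (or Proposition~\ref{prn:HA07_2} together with Theorem~\ref{thm:HE08_1}) we already know $\hat{Y}=C(Z)\times\C^{n-k}$ with $C(Z)$ a metric cone containing no line, and $\hat{Y}\in\widetilde{\mathscr{KS}}(n,\kappa)$. The content is therefore entirely about $k=1$: in that case $C(Z)$ would be a two-real-dimensional metric cone without line, that is, a flat cone $\C/\Z_q$ (a $2$-cone of some cone angle $2\pi\beta$ with $\beta<1$, or more precisely the quotient giving $C(Z)=C(S^1_\ell)$ with $\ell<2\pi$), and $\hat{Y}=\C^{n-1}\times C(S^1_\ell)$. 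Such a space has an isolated-in-codimension-$2$ singular set $\C^{n-1}\times\{0\}$ whose Minkowski dimension is exactly $2n-2$.

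The plan is to derive a contradiction with the improved codimension estimate already available to us. First I would invoke Proposition~\ref{prn:SL15_1} (valid after Theorem~\ref{thm:SC28_1}, since every flow in $\mathscr{K}(n,A)$ lies in $\mathscr{K}(n,A;\hslash)$, so the hypotheses of the $r_0$-bounded-below theory are met) together with Corollary~\ref{cly:HE25_2}: the singular set $\mathcal{S}(\bar{M})$ satisfies $\dim_{\mathcal{M}}\mathcal{S}(\bar{M})\le 2n-2p_0<2n-2$. Next, observe that $\hat{Y}$, being a tangent cone of $\bar{M}$, is itself a pointed Gromov--Hausdorff (indeed $\hat{C}^\infty$) limit of rescalings of the $(M_i,x_i,g_i(0))$, hence lies in $\widetilde{\mathscr{KS}}(n,\kappa)$ and, more to the point, the density estimate and the Minkowski-codimension bound pass to $\hat{Y}$: by Theorem~\ref{thm:SC12_2} and Corollary~\ref{cly:HE25_2} applied to the rescaled sequence, $\dim_{\mathcal{M}}\mathcal{S}(\hat{Y})\le 2n-2p_0$. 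If $k=1$ held, then $\mathcal{S}(\hat{Y})\supseteq\C^{n-1}\times\{0\}$ would have Minkowski dimension $\ge 2n-2$, contradicting $2n-2p_0<2n-2$. Hence $k\ne 1$, leaving $k=2$ or $k=0$ by Proposition~\ref{prn:HA07_2} (which gives $k\ge 2$ unless $\hat{Y}=\C^n$, i.e. $k=0$). The last sentence of Proposition~\ref{prn:SL26_1}, that a point is regular iff one tangent cone is $\C^n$, identifies the $k=0$ case with regular points, and is already recorded; I would simply cite it, or equivalently cite Proposition~\ref{prn:HA08_1}.

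The one point that needs a little care, and which I expect to be the main (mild) obstacle, is making rigorous that $\dim_{\mathcal{M}}\mathcal{S}(\hat{Y})\le 2n-2p_0$ rather than merely the Hausdorff bound $\dim_{\mathcal H}\mathcal{S}(\hat Y)\le 2n-4$; the Hausdorff bound alone does \emph{not} immediately exclude a codimension-$2$ stratum, since one must know the singular set is genuinely thin in the Minkowski sense near the apex of the $2$-cone factor. This is handled exactly as in the proof of Theorem~\ref{thm:HD19_1} and Corollary~\ref{cly:HD19_2}: the uniform density estimate $r^{2p_0-2n}\int_{B(x_i,r)}\mathbf{cvr}(y)^{-2p_0}\,dy\le C$ (Theorem~\ref{thm:SC12_2}), which holds uniformly along the rescaled sequence converging to $\hat{Y}$ and hence, by the weak continuity of $\mathbf{cvr}$ (Corollary~\ref{cly:SL27_1}), passes to $\hat{Y}$; combined with $\{x: d(x,\mathcal{S})<\rho\}\subset\mathcal{D}_{\tilde K\rho}$ this forces $|\{x\in B(\hat y,1): d(x,\mathcal{S})<\rho\}|\le C\rho^{2p_0}$, i.e. $\dim_{\mathcal{M}}\mathcal{S}(\hat Y)\le 2n-2p_0$. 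A flat $2$-cone $C(S^1_\ell)$ with $\ell<2\pi$ has the property that $|\{d(\cdot,\{0\})<\rho\}|$ in $\C^{n-1}\times C(S^1_\ell)$ grows like $\rho^2$ (times the fixed volume of a ball in $\C^{n-1}$), which violates $\le C\rho^{2p_0}$ as $\rho\to 0$ since $2<2p_0$. This contradiction completes the argument, and no genuinely new estimate beyond those already proved is required — the proof is a short assembly of Proposition~\ref{prn:HA07_2}, Theorem~\ref{thm:SC28_1}, Proposition~\ref{prn:SL15_1}/Corollary~\ref{cly:HE25_2}, Theorem~\ref{thm:SC12_2} and Corollary~\ref{cly:SL27_1}.
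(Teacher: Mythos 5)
Your proposal misreads what Proposition~\ref{prn:HA10_2} is actually asserting, and as a result it addresses the wrong case. The splitting (\ref{eqn:HA05_1}) is $\hat{Y}=C(Z)\times\C^{n-k}$ with $C(Z)$ a metric cone containing no line, so $k$ is the complex dimension of the non-trivial cone factor $C(Z)$ (not of the Euclidean factor). For a singular tangent cone, the inequality $k\ge 2$ is \emph{already} supplied by Lemma~\ref{lma:HD30_1} (K\"ahler cone splitting) as invoked through Proposition~\ref{prn:SL26_1}: since the singular set has codimension at least four, a singular K\"ahler cone cannot split off $\C^{n-1}$, so $k=1$ is excluded before Proposition~\ref{prn:HA10_2} is even stated. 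Everything you argue — the Minkowski-dimension bound $\dim_{\mathcal{M}}\mathcal{S}\le 2n-2p_0<2n-2$, the volume-of-tube computation for $\C^{n-1}\times C(S^1_\ell)$ — is a re-derivation of that already-available fact and does not touch the genuine content of the proposition.

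What Proposition~\ref{prn:HA10_2} actually rules out is $k\ge 3$, i.e. the possibility that the ``essential'' cone factor $C(Z)$ has complex dimension three or more. This cannot be done by any codimension or volume estimate alone, because a cone $C(Z)\times\C^{n-k}$ with $k\ge 3$ and singular set only at the apex-times-$\C^{n-k}$ has singular codimension $2k\ge 6$, perfectly compatible with all the Minkowski bounds you invoke. The paper's argument is of a completely different nature: it uses the rigidity of the complex structure on a smooth annulus in $\C^k/\Gamma$ for $k\ge 3$, together with the integrality of $[\omega_i]=c_1(L_i)$, following Chen--Donaldson~\cite{CD2} verbatim; the bounded-Ricci hypothesis in~\cite{CD2} is replaced here by the Cheeger--Gromov convergence guaranteed by Theorem~\ref{thm:SC28_1}. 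None of this appears in your proposal, so the proof has a fundamental gap: the intended conclusion $k\in\{0,2\}$ does not follow from what you have written.
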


\begin{proof}
Clearly, $k=0$ if and only if the base point is regular. So it suffices to show that for every singular tangent space we have
$k=2$.   By Proposition~\ref{prn:SL26_1}, we only need to rule out the case $k \geq 3$. However, this follows from the rigidity of complex structure on the smooth annulus in $\C^k /\Gamma$, where $\Gamma$ is a finite group of holomorphic isometry
of $\C^k$, when $k \geq 3$. Note that $[\omega_i]=c_1(L_i)$, which is an integer class.
Therefore, the proof follows verbatim as that in~\cite{CD2}. Note that Ricci curvature uniformly bounded condition in~\cite{CD2} is basically used to guarantee the Cheeger-Gromov convergence. 
In our case, the convergence can be obtained from Theorem~\ref{thm:SC28_1}. 
\end{proof}

\begin{proposition}[\textbf{Existence of holomorphic slicing}]
 Suppose $\hat{Y} \in \widetilde{\mathscr{KS}}(n,\kappa)$ is a metric cone satisfying the splitting (\ref{eqn:HA05_1}).
 Suppose $\mathcal{LM} \in \mathscr{K}(n,A)$, $x \in M$.
 If $(M,x,g(0))$ is very close to $(\hat{Y}, \hat{y}, \hat{g})$, i.e., the pointed-Gromov-Hausdorff distance
 \begin{align*}
    d_{PGH}((M,x,g(0)), (\hat{Y}, \hat{y}, \hat{g}))<\epsilon
 \end{align*}
 for sufficiently small $\epsilon$, which depends on $n,A,\hat{Y}$, then there exists a holomorphic map
 \begin{align*}
     \Psi=(u_{k+1}, u_{k+2}, \cdots, u_n):  B(x, 10) \mapsto \C^{n-k}
 \end{align*}
 satisfying
 \begin{align}
 & |\nabla \Psi| \leq C(n,A), \label{eqn:HA10_1}\\
 & \sum_{k+1 \leq i,j\leq n}
 \int_{B(x,10)} \left| \delta_{ij} - \langle \nabla u_i, \nabla u_j \rangle\right|dv
 \leq \eta(n,A,\epsilon),  \label{eqn:HA10_2}
 \end{align}
 where $\eta$ is a small number such that $\displaystyle \lim_{\epsilon \to 0} \eta=0$.
\label{prn:HA10_1}
\end{proposition}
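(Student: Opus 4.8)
\textbf{Proof proposal for Proposition~\ref{prn:HA10_1}.}

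The plan is to produce the holomorphic map $\Psi$ by first constructing, on the model cone $\hat Y = C(Z)\times\C^{n-k}$, the standard coordinate projection to the $\C^{n-k}$ factor, which is manifestly holomorphic with $\langle\nabla u_i,\nabla u_j\rangle=\delta_{ij}$; then transplanting it to $(M,x,g(0))$ via the almost-splitting and almost-K\"ahler-splitting machinery already available (Proposition~\ref{prn:HE08_1}, Lemma~\ref{lma:HD30_1}, Lemma~\ref{lma:HE07_1}), and finally running H\"ormander's $\bar\partial$-estimate to correct the approximate holomorphic functions to genuine holomorphic ones while keeping the $C^0$-gradient bound and the integral orthogonality defect small. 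Concretely, since $d_{PGH}((M,x,g(0)),(\hat Y,\hat y,\hat g))<\epsilon$, a blowup/compactness argument (using Theorem~\ref{thm:SC28_1} so that $\mathbf{cr}\ge\hslash$ uniformly, hence $\hat C^\infty$-convergence on regular parts via Proposition~\ref{prn:HE15_1}) shows that geodesic balls in $M$ of definite size are $\hat C^\infty$-close, away from a set of small measure of high-codimension, to the corresponding balls in $\hat Y$. On $\hat Y$ the $n-k$ linear coordinate functions on the $\C^{n-k}$ factor are harmonic (indeed pluriharmonic real parts), have $|\nabla u_i|\equiv 1$, pairwise orthogonal gradients, and vanishing Hessians; by Proposition~\ref{prn:HE08_1} and Lemma~\ref{lma:HE07_1} one obtains on $B(x,10)\subset M$ functions $b_{k+1},\dots,b_n$ (harmonic approximations of the pulled-back Buseman-type coordinates) with $\sup|\nabla b_i|\le C(n,A)$ and $\fint_{B(x,10)}\big(\sum|Hess_{b_i}|^2+\sum_{i\ne j}|\langle\nabla b_i,\nabla b_j\rangle|+\sum||\nabla b_i|^2-1|\big)\le\eta(n,A,\epsilon)$ with $\eta\to0$ as $\epsilon\to0$.

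The next step is to holomorphize. Pairing $b_i$ with $Jb_i$-type conjugate functions — supplied by Proposition~\ref{prn:HE08_1}, the almost-K\"ahler-cone-splitting — one forms approximately holomorphic functions $\tilde u_i = b_i + \sqrt{-1}\,\tilde b_i$ with $\int_{B(x,10)}|\bar\partial\tilde u_i|^2\le\eta$. One then solves $\bar\partial v_i=\bar\partial\tilde u_i$ with the $L^2$-estimate of H\"ormander on a slightly larger ball, where the needed positivity comes from the polarization: $[\omega]=c_1(L)$ is an integer class and $\dot\varphi$, $C_S$ are bounded (this is precisely the setting where the general H\"ormander estimate of \cite{CW4}, \cite{Wa1} applies, cf.\ the use in the proof of Proposition~\ref{prn:SK27_1}), so $\int|v_i|^2\le C\int|\bar\partial\tilde u_i|^2\le C\eta$. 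Setting $u_i=\tilde u_i-v_i$ gives genuinely holomorphic functions on $B(x,10)$; interior elliptic estimates for $\bar\partial$ (or the Cheng--Yau/heat-kernel gradient estimate, Proposition~\ref{prn:HC29_6}, applied on the regular part together with Theorem~\ref{thm:SL27_2} to propagate regularity) upgrade the $L^2$-smallness of $v_i$ to a $C^1$-smallness on $B(x,10)$, hence $|\nabla u_i|\le C(n,A)$, giving \eqref{eqn:HA10_1}, and $\int_{B(x,10)}|\delta_{ij}-\langle\nabla u_i,\nabla u_j\rangle|\le C\eta$, giving \eqref{eqn:HA10_2}. Finally $\Psi=(u_{k+1},\dots,u_n)$ is the desired holomorphic map.

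The main obstacle I anticipate is controlling the correction $v_i$ in $C^1$ rather than merely $L^2$: the singular set $\mathcal S$ of the limit (and the near-singular, high-curvature region of $M$) has only Minkowski codimension slightly below $4$ (Proposition~\ref{prn:SL15_1}, Corollary~\ref{cly:HE25_2}), so one must combine the integral density estimates for $\mathbf{cvr}$ (Theorem~\ref{thm:SC12_2}, Proposition~\ref{prn:SB25_2}) with good cutoff functions on annuli (Lemma~\ref{lma:HE04_3}) and the De Giorgi/Moser iteration available on these spaces (Proposition~\ref{prn:HD11_1}, Proposition~\ref{prn:HC29_5}) to absorb the contribution of the bad set and still extract a pointwise gradient bound; this is where the high codimension of $\mathcal S$ and the uniform Sobolev constant are used essentially. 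A secondary technical point is matching the \emph{integer} $k$ in the splitting of $\hat Y$ with the geometry of $M$ so that exactly $n-k$ independent holomorphic functions appear — but this is handled by Proposition~\ref{prn:HA10_2} (rigidity forcing $k\in\{0,2\}$) together with the almost-splitting count in Lemma~\ref{lma:HE07_1}.
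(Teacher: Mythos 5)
Your route is genuinely different from the paper's. The paper works entirely on the \emph{section} level, following Donaldson--Sun: it first pulls back the constant section $1$ of the trivial bundle over $\hat Y$ via H\"ormander to obtain a uniformly non-vanishing holomorphic section $S_0$ of $L$ over $B(x,10)$, then constructs (again via H\"ormander) holomorphic sections $S_{k+1},\dots,S_n$ approximating the coordinate functions $z_j$, sets $u_j=S_j/S_0$, controls $\nabla u_j$ via the sub-elliptic inequality satisfied by $\norm{\nabla S_j}{h}^2$ plus Moser iteration, and derives (\ref{eqn:HA10_2}) from smooth convergence away from $\mathcal S$ together with the gradient bound. You instead stay on the \emph{function} level, building approximate real coordinates $b_i$ by Cheeger--Colding, complexifying them by Proposition~\ref{prn:HE08_1}, and then correcting by solving $\bar\partial v_i=\bar\partial\tilde u_i$.

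There are two gaps in your version. First, the $L^2$ $\bar\partial$-estimate you invoke is the one for $L^k$-valued forms, whose weight and positivity come from the polarization; it does not directly apply to the scalar equation $\bar\partial v_i=\bar\partial\tilde u_i$ on $B(x,10)$, which is neither Stein nor a priori equipped with a strictly plurisubharmonic weight of the right growth. The way to make your scheme work is to multiply by a non-vanishing holomorphic section $S_0$ (so the problem becomes $L$-valued), solve, and divide back --- but this is precisely the extra object the paper has to produce first, and you have not constructed it. Second, and more seriously, your upgrade of $v_i$ from $L^2$-small to $C^1$-small is not justified: $\bar\partial\tilde u_i$ is only $L^2$-small (the Cheeger--Colding bounds on $Hess_{b_i}$, $\langle\nabla b_i,\nabla b_j\rangle$, $|\nabla b_i|^2-1$ are all integral), so interior elliptic estimates do not give a pointwise bound on $\nabla v_i$; and Proposition~\ref{prn:HC29_6} (Cheng--Yau) lives on the Ricci-flat model space $\widetilde{\mathscr{KS}}(n,\kappa)$, whereas $M\in\mathscr K(n,A)$ has no pointwise Ricci bound, so the Bochner inequality for $|\nabla u_i|^2$ with a plain holomorphic function $u_i$ has an uncontrolled $Ric$ term. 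The paper avoids this by taking the gradient of a holomorphic \emph{section} of $L$, where the curvature term in the Bochner formula picks up $c_1(L)=[\omega]$ with the favorable sign, yielding the sub-elliptic inequality on which Moser iteration runs; you would need some analogue of this --- or the integral Ricci estimates of Corollary~\ref{cly:SL27_4} feeding into a Moser iteration --- to close the $L^\infty$-gradient bound (\ref{eqn:HA10_1}).
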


\begin{proof}

 It follows from the argument in~\cite{DS} that the constant section $1$ of the trivial bundle over $\hat{Y}$ can be
 ``pulled back" as a non-vanishing holomorphic section of $L$ over $B(x,10)$, up to a finite lifting of power of $L$.
 Therefore, we can regard $L$ as a trivial bundle over $B(x,10)$ without loss of generality.
 Let $S_0$ be the pull-back of the constant $1$ section. In particular, $S_0$ is  a non-vanishing
 holomorphic section on $B(x,10)$.
 On $B(x,10)$,  every holomorphic section $S$ of $L$ can be written as $S=u S_0$ for a holomorphic function $u$ and
 $\norm{S}{h}^2= \snorm{u}{}^2 \norm{S_0}{h}^2$.

 From the splitting (\ref{eqn:HA05_1}),  there exist natural coordinate holomorphic functions
 $\{z_j\}_{j=k+1}^{n}$ on $\hat{Y}$.
 Same as~\cite{DS}, one can apply H\"ormander's estimate
 to construct $\{S_j\}_{j=k+1}^{n}$, which are holomorphic sections of $L$.
 Each $S_j$ can be regarded as an ``approximation" of $z_j$, although they have different base spaces.
 Let $u_j=\frac{S_j}{S_0}$ for each $j \in \{k+1, \cdots, n\}$.
 Then we can define a holomorphic map $\Psi$ from $B(x,10)$ to $\C^{n-k}$ as follows
 \begin{align*}
     \Psi(y) \triangleq  \left( u_{k+1}, u_{k+2}, \cdots, u_n \right).
 \end{align*}
 Note that each $S_i$ is a holomorphic section of $L$ with $L^2$-norm bounded from two sides, according to its
 construction.  It is easy to check that $\norm{\nabla S_i}{h}^2$ satisfies a sub-elliptic equation. So there exists a uniform
 bound $\norm{\nabla S_i}{h}^2 \leq C(n,A)$, which implies (\ref{eqn:HA10_1}) when restricted on $B(x,10)$.
 Moreover, on $B(x,10)$, by smooth convergence, it is not hard to see that $ \langle \nabla u_i, \nabla u_j \rangle$ can pointwisely
 approximate $\delta_{ij}$ away from singularities of $\hat{Y}$, in any accuracy level when $\epsilon \to 0$.
 This approximation together with (\ref{eqn:HA10_1}) yields (\ref{eqn:HA10_2}).
\end{proof}

\begin{theorem}[\textbf{Weak monotonicity of curvature integral}]
There exists a small constant $\epsilon=\epsilon(n,A)$ with the following properties.

 Suppose $\mathcal{LM} \in \mathscr{K}(n,A)$.
 Suppose $x \in M$, $0<r \leq 1$.
 Then under the metric $g(0)$, we have
  \begin{align}
    \sup_{B(x, \frac{1}{2}r)} |Rm| \leq r^{-2}
 \label{eqn:SC13_3}
 \end{align}
 whenever $r^{4-2n}\int_{B(x,r)} |Rm|^2 dv \leq \epsilon$.
 \label{thm:SC12_3}
\end{theorem}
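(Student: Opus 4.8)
The plan is to argue by contradiction and compactness, using that the quantity $r^{4-2n}\int_{B(x,r)}|Rm|^2\,dv$ is scale invariant in real dimension $2n$. Both this quantity and the conclusion $\sup_{B(x,r/2)}|Rm|\le r^{-2}$ are unchanged under the parabolic rescaling $g(t)\mapsto r^{-2}g(r^2t)$, and since $0<r\le 1$ this rescaling sends $\mathscr{K}(n,A)$ into itself (the Sobolev constant is scale invariant, while $T$ and $\Vol(M)$ only increase and $|\dot{\varphi}|$, $|R-n\lambda|$, $|\lambda|$ are multiplied by $r^2\le 1$). Hence it suffices to treat $r=1$. If the statement fails, I obtain $\mathcal{LM}_i\in\mathscr{K}(n,A)$ and $x_i\in M_i$ with $\int_{B(x_i,1)}|Rm|^2\,dv_i\to 0$ but $\sup_{B(x_i,1/2)}|Rm|>1$, and I seek a contradiction.

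By Theorem~\ref{thm:SC28_1} we have $\mathbf{cr}(\mathcal{M}_i^0)\ge\hslash$, so by Theorem~\ref{thm:HE11_1} (together with the regularity improvement of Proposition~\ref{prn:HA08_1}) a subsequence satisfies $(M_i,x_i,g_i(0))\to(\bar M,\bar x,\bar g)$ in the $\hat{C}^{\infty}$-Cheeger--Gromov topology, with regular--singular decomposition $\bar M=\mathcal{R}\cup\mathcal{S}$. Passing to the limit of $\int|Rm|^2$ on compact subsets of $\mathcal{R}\cap B(\bar x,1)$, where the convergence is smooth, forces $\overline{Rm}\equiv 0$ there; thus $\mathcal{R}\cap B(\bar x,1)$ is flat. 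Next I apply a standard point-selection lemma: since $\sup_{B(x_i,1/2)}|Rm|>1$, there exist $z_i\in B(x_i,3/4)$ and $s_i\in(0,\tfrac14)$ with $|Rm|(z_i)=s_i^{-2}$, $B_{g_i(0)}(z_i,s_i)\subset B(x_i,1)$, and $\sup_{B_{g_i(0)}(z_i,s_i)}|Rm|\le 4s_i^{-2}$.

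If $s_i\not\to 0$ along a subsequence, then $|Rm|$ is uniformly bounded on a fixed-size ball around $z_i$ and $z_i\to\bar z\in\mathcal{R}$, so the smooth convergence gives $|\overline{Rm}|(\bar z)=\lim s_i^{-2}>1$, contradicting flatness of $\mathcal{R}\cap B(\bar x,1)$. So assume $s_i\to 0$ and consider the rescaled flows $\hat g_i(t)=s_i^{-2}g_i(s_i^2t)$. For these, $\tfrac{1}{T}+\tfrac{1}{\Vol(M)}+\sup(|R|+|\lambda|)\to 0$ and $\mathbf{pcr}(M_i\times[-1,1])\ge\hslash s_i^{-1}\to\infty$, so Theorem~\ref{thm:SC04_1} (via Proposition~\ref{prn:SL04_1}) yields $(M_i,z_i,\hat g_i(0))\to(\hat Y,\hat z,\hat g)\in\widetilde{\mathscr{KS}}(n,\kappa)$. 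The bound $|\widehat{Rm}|\le 4$ on $B_{\hat g_i(0)}(z_i,1)$ makes the volume ratio of small balls at $z_i$ nearly Euclidean at a uniform scale, so $\mathbf{cvr}_{\hat g_i}(z_i)\ge\rho_0>0$; hence by the weak continuity of canonical volume radius (Corollary~\ref{cly:SL27_1}) $\hat z\in\mathcal{R}(\hat Y)$ and $|\widehat{Rm}|(\hat z)=\lim s_i^{-2}=1$. On the other hand, by scale invariance $\int_{B_{\hat g_i(0)}(z_i,R)}|\widehat{Rm}|^2\,dv_{\hat g_i}=\int_{B_{g_i(0)}(z_i,s_iR)}|Rm|^2\,dv_{g_i}\le\int_{B(x_i,1)}|Rm|^2\,dv_i\to 0$ for each fixed $R$ (since $B_{g_i(0)}(z_i,s_iR)\subset B(x_i,1)$ once $s_i<\tfrac{1}{4R}$), so $\mathcal{R}(\hat Y)$ is flat and $|\widehat{Rm}|(\hat z)=0$: contradiction.

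The main obstacle is the point-selection bookkeeping: one must choose the scale $s_i$ so that the blowup point lands in the \emph{regular} part of the bubble $\hat Y$ — which the local bound $\sup_{B(z_i,s_i)}|Rm|\le 4s_i^{-2}$ provides — while keeping $B_{g_i(0)}(z_i,s_iR)\subset B(x_i,1)$ for every fixed $R$, so that the scale-invariant $L^2$-smallness descends to all scales of the bubble. Once these are in place the contradiction is forced, because a bubble cannot simultaneously have flat regular part and carry a curvature-one point on it; in particular no use of Chern--Simons theory or of a classification of the flat elements of $\widetilde{\mathscr{KS}}(n,\kappa)$ is needed, since the point-selection guarantees the offending point is regular.
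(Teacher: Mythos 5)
Your argument has a genuine scaling error at the decisive step. You reduce to $r=1$, pick a scale $s_i\to 0$ by point selection, rescale by $\hat g_i=s_i^{-2}g_i$, and then claim
\begin{align*}
\int_{B_{\hat g_i(0)}(z_i,R)}|\widehat{Rm}|^2\,dv_{\hat g_i}=\int_{B_{g_i(0)}(z_i,s_iR)}|Rm|^2\,dv_{g_i}\le\int_{B(x_i,1)}|Rm|^2\,dv_i\to 0.
\end{align*}
But under $\hat g=s^{-2}g$ one has $|\widehat{Rm}|^2=s^4|Rm|^2$ and $dv_{\hat g}=s^{-2n}dv_g$, so in fact
\begin{align*}
\int_{B_{\hat g_i(0)}(z_i,R)}|\widehat{Rm}|^2\,dv_{\hat g_i}=s_i^{\,4-2n}\int_{B_{g_i(0)}(z_i,s_iR)}|Rm|^2\,dv_{g_i}.
\end{align*}
The extra factor $s_i^{4-2n}$ is exactly the prefactor in the scale-invariant quantity $r^{4-2n}\int_{B(x,r)}|Rm|^2$, which you correctly identify at the start of the proof but then drop here. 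For $n=2$ (real dimension $4$) this factor equals $1$ and your argument closes; but for $n\ge 3$ it blows up as $s_i\to 0$, and $s_i^{4-2n}\epsilon_i$ need not tend to zero. Thus the hypothesis $\int_{B(x_i,1)}|Rm|^2\to 0$ does \emph{not} force the regular part of the bubble $\hat Y$ to be flat, and the contradiction you extract (flat regular part versus $|\widehat{Rm}|(\hat z)=1$) is unsupported. The final remark that the argument needs ``no use of Chern--Simons theory or of a classification of the flat elements'' is therefore too optimistic: the $L^2$-smallness simply does not descend across scales in real dimension $>4$.

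This is precisely the difficulty the paper's proof is designed to handle. The paper blows up at a \emph{singular} point of the limit (so the tangent cone is automatically flat, being a tangent cone of the already-flat regular part), identifies the flat cone as $(\C^2/\Gamma)\times\C^{n-2}$ via the K\"ahler tangent cone rigidity (Proposition~\ref{prn:HA10_2}), and then uses the holomorphic slicing map $\Psi$ from Proposition~\ref{prn:HA10_1} to pass to generic $4$-real-dimensional complex slices $\Omega$. On such a slice, the curvature $4$-form $|Rm|^2\,d\sigma$ \emph{is} scale-invariant, and the coarea formula transfers the smallness of $\int|Rm|^2$ to the slice. The Chern--Simons differential character $\hat c_2$ on $\partial\Omega\simeq S^3/\Gamma$ then yields two incompatible limits, $0$ and $1/|\Gamma|$, forcing $|\Gamma|=1$. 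In other words, the dimensional reduction to slices is not decoration; it is what restores scale-invariance of the curvature integral when $n\ge 3$. Your point-selection setup is fine as far as it goes, and the $s_i\not\to 0$ branch is handled correctly, but the $s_i\to 0$ branch needs the slicing and Chern--Simons input to reach a contradiction in higher dimensions.
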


\begin{proof}
 Up to rescaling, we can assume $r=1$ without loss of generality. If the statement was wrong, we can find a sequence of
 points $x_i \in M_i$ such that
 \begin{align*}
   \int_{B(x_i,1)} |Rm|^2 dv \to 0, \quad
  \sup_{B(x_i,\frac{1}{2})} |Rm| \geq 1,
 \end{align*}
 where the default metric is $g_i(0)$, the time zero metric of a flow $g_i$, in the moduli space $\mathscr{K}(n,A)$.
 By the  smooth convergence at places when curvature uniformly bounded, it is clear that the above conditions imply that
 \begin{align*}
   \int_{B(x_i,1)} |Rm|^2 dv \to 0, \quad
  \sup_{B(x_i,\frac{3}{4})} |Rm| \to \infty.
 \end{align*}
 Let $(\bar{M},\bar{x},\bar{g})$ be the limit space of $(M_i,x_i,g_i(0))$. Then $B(\bar{x}, \frac{3}{4})$ contains at least one
 singularity $\bar{y}$.  Without loss of generality, we can assume $\bar{x}$ is a singular point.
 Note that $B(\bar{x}, \frac{1}{4})$ is a flat manifold away from singularities.  So every tangent space
 of $\bar{M}$ at $\bar{x}$ is a flat metric cone.  Let $\hat{Y}$ be one of such a flat metric cone.  By taking subsequence if necessary, we can assume
 \begin{align*}
    (M_i,x_i,\tilde{g}_i(0)) \stackrel{\hat{C}^{\infty}}{\longrightarrow} (\hat{Y}, \hat{x},\hat{g}),
 \end{align*}
 for some flow metrics $\tilde{g}_i$ satisfying $\displaystyle  \tilde{g}_i(t)=r_i^{-2} g_i(r_i^2 t), \; r_i \to 0$. Since $\hat{Y}$ is
 a flat metric cone,  in light of Proposition~\ref{prn:HA10_2}, we have the splitting
 \begin{align*}
    \hat{Y}= (\C^2/\Gamma) \times \C^{n-2}.
 \end{align*}
 Let $(M,x,\tilde{g})$ be one of $(M_i,x_i,\tilde{g}_i(0))$ for some large $i$.  Because of Proposition~\ref{prn:HA10_1}, we can construct a holomorphic map $\Psi:B(x,10) \to \C^{n-2}$
satisfying (\ref{eqn:HA10_1}) and (\ref{eqn:HA10_2}).
Then we can follow the slice argument as in~\cite{CCT} and~\cite{Cheeger03}.
Our argument will be simpler since our slice functions are holomorphic rather than harmonic.

Actually, for generic $\vec{z}=(z_{3}, z_{4}, \cdots, z_n)$ satisfying $|\vec{z}|<0.1$,
we know $\Psi^{-1}(\vec{z}) \cap B(x,5)$ is a complex surface with boundary.
Clearly,  $\Psi^{-1}((S^3/\Gamma) \times \{\vec{z}\})$ is close to $(S^3/\Gamma) \times \vec{z}$,
if we regard $S^3/\Gamma$ as the unit sphere in $\C^2/\Gamma$.  Deform the preimage a little bit if necessary, we can obtain
a $\partial \Omega$ which bounds a complex surface $\Omega$.  By coarea formula and the bound of
$|\nabla \Psi|$, it is clear that for generic $\Omega$ obtained in this way, we have
\begin{align*}
     \int_{\Omega} |Rm|^2 d\sigma\to 0.
\end{align*}
Consider the restriction of $TM$ on $\Omega$. Let $c_2$ be a form representing the second Chern class of the tangent bundle $TM$, obtained from the K\"ahler metric $\tilde{g}(0)$ from the classical way.
Let $\hat{c}_2$ be the corresponding differential character with value in $\R/\Z$.  Since the pointwise norm of $c_2$ is bounded by $|Rm|^2$, it is clear that
\begin{align}
   \hat{c}_2(\partial \Omega)=\int_{\Omega} c_2      \quad (mod \; \Z)  \to 0.  \label{eqn:HA10_3}
\end{align}
On the other hand, since $\partial \Omega$ converges to $S^3/\Gamma$, we have
\begin{align}
    \hat{c}_2(\Omega) \to \frac{1}{|\Gamma|}.  \label{eqn:HA10_4}
\end{align}
Therefore, the combination of (\ref{eqn:HA10_3}) and (\ref{eqn:HA10_4}) forces that $|\Gamma|=1$.  This is impossible
since $|\Gamma| \geq 2$ by our assumption that $\hat{Y}$ is a singular metric cone.
\end{proof}

From now on to the end of this subsection, we use $g(0)$ as the default metric.
Similar to the definition in~\cite{CD1},  for any small $r$, let $\mathcal{Z}_r$ be the $r$-neighborhood of the points where $|Rm| > r^{-2}$.
Recall the definition equation (\ref{eqn:SL27_1}), we denote $\mathcal{F}_r$ as the collection of points
whose canonical volume radii are greater than $r$, $\mathcal{D}_r$ as the component of $\mathcal{F}_r$.
Under these notations, we have the following property.

\begin{proposition}[\textbf{Equivalence of singular neighborhoods}]
Suppose $\mathcal{LM} \in \mathscr{K}(n,A)$, $0<r<\hslash$.
Then at time zero, we have
\begin{align}
      \mathcal{D}_{cr}  \subset \mathcal{Z}_r \subset \mathcal{D}_{\frac{1}{c}r}
\label{eqn:HA11_1}
\end{align}
for some small constant $c=c(n,A)$.
\label{prn:HA11_1}
\end{proposition}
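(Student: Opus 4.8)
The statement asserts a two-sided comparison between two notions of "singular neighborhood" at time zero: the $cr$-regular-volume-radius complement $\mathcal{D}_{cr}$ sits inside the $r$-curvature neighborhood $\mathcal{Z}_r$, which in turn sits inside $\mathcal{D}_{r/c}$. The plan is to prove the two inclusions separately, each by a local argument that converts between a bound on $|Rm|$ and a bound on $\mathbf{cvr}$, using the estimates already established in this section.

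For the inclusion $\mathcal{Z}_r \subset \mathcal{D}_{r/c}$: suppose $x \in \mathcal{Z}_r$, so there is a point $y$ with $|Rm|(y) > r^{-2}$ and $d(x,y) < r$. I want to show $\mathbf{cvr}(x) < r/c$, i.e. $x$ cannot be "too regular." If instead $\mathbf{cvr}(x) \geq r/c$ for a small constant $c$ to be chosen, then the local Harnack inequality for $\mathbf{cvr}$ (Proposition~\ref{prn:SC24_1}) together with the improving-regularity property of canonical volume radius (and its two-time-direction refinement, Proposition~\ref{prn:HA08_3}) forces $|Rm|$ to be uniformly bounded on a ball of radius comparable to $r/c$ around $x$ — explicitly $|Rm|(z) \leq K^2 (r/c)^{-2}$ type bounds for all $z \in B(x, K^{-1} r/c)$. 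Since $d(x,y) < r < K^{-1} r/c$ once $c$ is small enough, we get $|Rm|(y) \leq C c^2 r^{-2} < r^{-2}$ for $c$ small, contradicting the defining property of $y$. This pins down the constant $c$ in terms of $n,A$ only.

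For the inclusion $\mathcal{D}_{cr} \subset \mathcal{Z}_r$: suppose $x \in \mathcal{D}_{cr}$, i.e. $\mathbf{cvr}(x) < cr$. I want to produce a nearby point of high curvature, i.e. $x \in \mathcal{Z}_r$. The natural route is the weak monotonicity of the curvature integral (Theorem~\ref{thm:SC12_3}) together with the improved density estimate (Theorem~\ref{thm:SC12_2}) and the volume-ratio estimate (Corollary~\ref{cly:SC13_1}). If $B(x, r)$ contained no point with $|Rm| > r^{-2}$, then $|Rm| \leq r^{-2}$ on $B(x,r)$; combined with the uniform non-collapsing and volume ratio bounds this would give, via Shi's estimates and a Cheeger--Gromov-type argument, a uniform lower bound $\mathbf{cvr}(x) \geq \eta r$ for a dimensional constant $\eta = \eta(n,A)$ — essentially the statement that a curvature-bounded, non-collapsed, volume-ratio-controlled ball has controlled canonical volume radius (this is the content of Definition~\ref{dfn:SC02_1}'s regularity estimate combined with Proposition~\ref{prn:HA08_3}). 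Choosing $c < \eta$ then yields a contradiction with $\mathbf{cvr}(x) < cr$, so $B(x,r)$ must contain a point with $|Rm| > r^{-2}$, which by definition places $x$ in $\mathcal{Z}_r$ (after a harmless adjustment of the neighborhood radius absorbed into $c$).

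The main obstacle is bookkeeping the constants so that a single $c = c(n,A)$ works for both inclusions simultaneously and so that the radius losses (from the Harnack inequality for $\mathbf{cvr}$, from Shi's estimates, and from the $r$-neighborhood in the definition of $\mathcal{Z}_r$) all stay within fixed multiplicative factors. I expect the cleanest presentation is to fix first the constant $\eta$ from the "curvature bounded $\Rightarrow$ $\mathbf{cvr}$ bounded below" direction and the constant $K$ from Proposition~\ref{prn:SC24_1}/Proposition~\ref{prn:HA08_3}, then set $c = \min\{\eta, K^{-1}, \ldots\}/10$ or similar. One subtlety worth care: the curvature bound $|Rm| > r^{-2}$ in the definition of $\mathcal{Z}_r$ is a pointwise statement, so in the first inclusion I must make sure the pseudolocality/regularity estimates give a genuine pointwise $|Rm|$ bound at $y$, not merely an integral one — but Proposition~\ref{prn:HA08_3} and Proposition~\ref{prn:SC24_1} are exactly pointwise, so this is fine. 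The restriction $r < \hslash$ guarantees $\mathbf{cr} \geq \hslash > r$ everywhere (Theorem~\ref{thm:SC28_1}), so that $\mathbf{cvr}$ and $\mathbf{vr}^{(r)}$ behave as in the model-space estimates and all the cited propositions apply on scale $r$.
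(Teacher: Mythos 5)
Your two inclusions are both correct and, at the high level, reproduce what the paper does: $\mathcal{Z}_r \subset \mathcal{D}_{r/c}$ is handled directly via the improving-regularity property of $\mathbf{cvr}$ (the paper uses Corollary~\ref{cly:SL23_1}, essentially what you invoke via Proposition~\ref{prn:SC24_1}), and $\mathcal{D}_{cr} \subset \mathcal{Z}_r$ reduces to the claim that a curvature bound on $B(x,r)$ together with non-collapsing forces $\mathbf{cvr}(x) \geq \eta r$. One small but real misattribution in the second half: the citations you invoke — Definition~\ref{dfn:SC02_1}'s regularity estimate and Proposition~\ref{prn:HA08_3} — run in the \emph{opposite} direction (they produce a curvature bound from a $\mathbf{cvr}$ or volume-ratio lower bound, not a $\mathbf{cvr}$ lower bound from a curvature bound), and the appeals to Theorem~\ref{thm:SC12_2}/\ref{thm:SC12_3} and Corollary~\ref{cly:SC13_1} are extraneous. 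The needed implication is really Cheeger's injectivity-radius lemma (non-collapsing plus $|Rm|\leq r^{-2}$ gives $\mathrm{inj}\geq i_0 r$) followed by the normal-coordinate volume expansion on a sufficiently small proportional scale $\eta r$, which yields volume ratio $\geq 1-\delta_0$ and hence $\mathbf{cvr}(x)\geq \eta r$. The paper instead proves this point by a blowup-to-$\C^n$ contradiction argument (rescale so $\mathbf{cvr}(x_i)=1$, curvature bound goes to zero, limit is flat hence $\C^n$, forcing $\mathbf{cvr}$ to blow up). Either route closes the gap; what buys the blowup its convenience is that it avoids the explicit normal-coordinate estimate and instead leans on the compactness package already built in Section 4, at the cost of a non-quantitative $c$. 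Your direct route is equally valid and gives an in-principle explicit $\eta$ — just swap out the cited lemmas for Cheeger's lemma plus the standard curvature-controlled volume comparison.
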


\begin{proof}

  Let us first prove $\mathcal{D}_{cr} \subset \mathcal{Z}_r$. Suppose the statement was wrong, we can find a sequence $c_i \to 0$
  and flows in $\mathscr{K}(n,A)$ such that $\mathcal{D}_{c_ir_i} \not\subset Z_{r_i}$ for some $r_i <\hslash$.
  Choose $x_i \in \mathcal{D}_{c_i r_i} \cap \mathcal{Z}_{r_i}^{c}$.  Let $\rho_i$ be the canonical volume radius of $x_i$.  Rescale
  the flow such that the canonical volume radius at $x_i$ becomes $1$. Take limit, we will obtain a smooth flat space in
  $\widetilde{\mathscr{KS}}(n,\kappa)$, which is nothing but $\C^n$.  Therefore, the canonical volume radii
  of the base points $x_i$ should tend to infinity, which is a contradiction.

 Then we prove $\mathcal{Z}_r \subset \mathcal{D}_{\frac{1}{c}r}$.   Suppose $x \in \mathcal{Z}_r$, then $|Rm|(y) \geq r^{-2}$ for some $y \in B(x,r)$.  By the regularity improving property of canonical volume radius, it is clear
  that $\mathbf{cvr}(x) \leq \frac{2}{c_a} r$. In other words, $x \in \mathcal{D}_{\frac{2}{c_a}r}$.
\end{proof}

 \begin{theorem}[\textbf{Volume estimates of high curvature neighborhood}]
  Suppose $\mathcal{LM} \in \mathscr{K}(n,A)$.
  Under the metric $g(0)$, we have
   \begin{align*}
     |\mathcal{Z}_r| \leq C r^4,
  \end{align*}
  where $C$ depends on $n,A$ and the upper bound of $\int_M |Rm|^2 dv$.
\label{thm:HA11_1}
\end{theorem}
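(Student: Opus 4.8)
The plan is to adapt the volume estimate of Chen--Donaldson (\cite{CD1},~\cite{CD2}; compare~\cite{CN}) to the present setting, combining the $\epsilon$-regularity of Theorem~\ref{thm:SC12_3} with a Vitali-type covering argument. The density estimate of Theorem~\ref{thm:SC12_2}, fed through the inclusion $\mathcal{Z}_r \subset \mathcal{D}_{r/c}$ of Proposition~\ref{prn:HA11_1}, would only yield $|\mathcal{Z}_r| \leq C r^{2p}$ for every $p<2$; the point of the hypothesis $\int_M |Rm|^2\,dv < \infty$ (automatic since $M$ is compact) is precisely to upgrade this to the sharp exponent $4$ at the borderline $p=2$.

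Write $\Lambda \triangleq \int_M |Rm|^2\,dv$ and $\mathcal{W}_r \triangleq \{x \in M : |Rm|(x) > r^{-2}\}$, so that $\mathcal{Z}_r$ is the $r$-neighborhood of $\mathcal{W}_r$. Fix $r \in (0,1]$ and choose a maximal $r$-separated subset $\{x_i\}_{i=1}^{N} \subset \mathcal{W}_r$; then $\mathcal{W}_r \subset \bigcup_i B(x_i,r)$, hence $\mathcal{Z}_r \subset \bigcup_i B(x_i,2r)$, while the balls $B(x_i,r/2)$ are pairwise disjoint. Since $|Rm|(x_i) > r^{-2}$ we have $\sup_{B(x_i, r/2)} |Rm| > r^{-2}$, so the contrapositive of Theorem~\ref{thm:SC12_3}, applied at $x_i$ on scale $r$, gives
\[
    \int_{B(x_i, r)} |Rm|^2\,dv \;>\; \epsilon\, r^{2n-4},
\]
where $\epsilon=\epsilon(n,A)$ is the constant of that theorem.

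Next I would control the overlap multiplicity of the balls $B(x_i, r)$. Using the two-sided volume ratio bounds available on $\mathscr{K}(n,A)$ on scales $\leq 1$ — the $\kappa$-noncollapsing from the Sobolev bound in (\ref{eqn:SK20_1}) together with the volume ratio upper bound (Corollary~\ref{cly:SC13_1}, \cite{Zhq3}, \cite{CW5}) — a standard packing argument with the disjoint balls $B(x_i, r/2)$ shows that each point of $M$ belongs to at most $N_0 = N_0(n,A)$ of the $B(x_i, r)$. Summing the displayed inequality over $i$ and invoking this bounded multiplicity,
\[
    N\,\epsilon\, r^{2n-4} \;\leq\; \sum_{i=1}^{N} \int_{B(x_i,r)} |Rm|^2\,dv \;\leq\; N_0 \int_M |Rm|^2\,dv \;=\; N_0\,\Lambda,
\]
so $N \leq C_1 r^{4-2n}$ with $C_1 = C_1(n,A,\Lambda)$; and then, using the volume ratio upper bound once more,
\[
    |\mathcal{Z}_r| \;\leq\; \sum_{i=1}^{N} |B(x_i, 2r)| \;\leq\; C_1 r^{4-2n} \cdot C(n,A) r^{2n} \;=\; C\, r^4 .
\]

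The scheme above is essentially mechanical once Theorem~\ref{thm:SC12_3} and the volume ratio estimates are in hand; the only point requiring care is that $\epsilon$, $N_0$ and the volume ratio constants be chosen uniformly in $r \in (0,1]$, which they can by the stated results. Thus the genuine content of the theorem lies in Theorem~\ref{thm:SC12_3} — hence in the structural properties of $\widetilde{\mathscr{KS}}(n,\kappa)$ on which it rests — rather than in the present packing step. Finally, when $n=2$ the singular set is finite, so the same argument applies verbatim.
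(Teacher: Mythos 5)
Your proof is correct and follows essentially the same scheme as the paper's: the $\epsilon$-regularity Theorem~\ref{thm:SC12_3} forces $\int_{B(x_i,r)}|Rm|^2 \geq \epsilon r^{2n-4}$ at each bad center, a packing count then bounds the number of such balls by $Cr^{4-2n}\int_M|Rm|^2$, and the volume ratio upper bound converts this into $|\mathcal{Z}_r|\leq Cr^4$. The paper instead first passes to $\mathcal{D}_{cr}$ via Proposition~\ref{prn:HA11_1} and chooses the centers $2r$-separated so that the balls $B(x_i,r)$ are pairwise disjoint — which makes the multiplicity constant $N_0$ you introduce unnecessary — but this is only a cosmetic simplification, and your variant, working directly with $\mathcal{W}_r$ and invoking a bounded-overlap bound from the volume doubling property, is equally valid.
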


\begin{proof}
Because of Proposition~\ref{prn:HA11_1}, it suffices to show $|\mathcal{D}_{cr}| \leq C r^4$.

In light of Theorem~\ref{thm:SC12_3},  if $r^{4-2n}\int_{B(x,r)} |Rm|^2 dv<\epsilon$ for some
$r<\hslash$, then $x \in \mathcal{F}_{cr}$.  In other words, if $x \in \mathcal{D}_{cr}(M,0)$, then it is forced that
\begin{align*}
    r^{4-2n} \int_{B(x,r)} |Rm|^2 dv \geq \epsilon.
\end{align*}
Let $\bigcup_{i=1}^{N} B(x_i,2r)$ be a finite cover of $\mathcal{D}_{cr}$ such that
\begin{itemize}
\item $x_i \in \mathcal{D}_{cr}$.
\item $B(x_i,r)$ are disjoint to each other.
\end{itemize}
Then we can bound $N$ as follows.
\begin{align*}
 N\epsilon r^{2n-4} \leq  \sum_{i=1}^{N} \int_{B(x_i,r)} |Rm|^2 dv \leq \int_M |Rm|^2 dv \leq H.
\end{align*}
Consequently, we have
\begin{align*}
     |\mathcal{D}_{cr}(M)| \leq \sum_{i=1}^{N} |B(x_i, 2r)| \leq \frac{H}{\epsilon} r^{4-2n} \kappa^{-1} \omega_{2n} (2r)^{2n}
     \leq C r^4.
\end{align*}
Since both $\kappa$ and $\epsilon$ depends only on $n$ and $A$. It is clear that $C=C(n,A,H)$ where $H$
is the upper bound of $\int_M |Rm|^2 dv$.
\end{proof}

 \begin{corollary}[\textbf{Volume estimates of singular neighborhood}]
  Suppose  $\mathcal{LM}_i \in \mathscr{K}(n,A)$.
  Suppose $\int_{M_i} |Rm|^2 dv \leq H$ uniformly under the metric $g_i(0)$. Let $(\bar{M}, \bar{x}, \bar{g})$ be the limit
  space of $(M_i,x_i,g_i(0))$.  Let $\mathcal{S}_r$ be the set defined in (\ref{eqn:HA11_4}), then we have
  \begin{align*}
      |\mathcal{S}_{r}| \leq C r^4
  \end{align*}
  for each small $r$ and some constant $C=C(n,A,H)$.
  In particular, we have  the estimate of Minkowski dimension of the singularity
  \begin{align*}
      \dim_{\mathcal{M}} \mathcal{S} \leq 2n-4.
  \end{align*}
  \label{cly:HA11_1}
\end{corollary}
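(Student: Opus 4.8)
The plan is to push the curvature-neighborhood volume estimate of Theorem~\ref{thm:HA11_1}, which holds on each flow $\mathcal{LM}_i$, down to the limit space $\bar{M}$ by means of volume convergence. First I would observe that, by Theorem~\ref{thm:SC28_1}, each $\mathcal{LM}_i$ lies in $\mathscr{K}(n,A;\hslash)$, so $\mathbf{cr}(M_i)\geq\hslash$ and the volume convergence of Proposition~\ref{prn:SC12_1} (with $r_0=\hslash$) applies on every geodesic ball of $(M_i,x_i,g_i(0))$. Applying Theorem~\ref{thm:HA11_1} to $(M_i,g_i(0))$ and combining with the inclusion $\mathcal{D}_{cr}\subset\mathcal{Z}_r$ of Proposition~\ref{prn:HA11_1} yields, at time $0$,
\begin{align}
  |\mathcal{D}_{\rho}(M_i)|\leq C_1\rho^4, \qquad \text{for all small } \rho>0, \label{eqn:planHA11_1}
\end{align}
with $C_1=C_1(n,A,H)$, where $\mathcal{D}_\rho(M_i)$ denotes the $\rho$-singular part with respect to the scale $\hslash$.

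Next I would identify $\mathcal{S}_r$, defined in (\ref{eqn:HA11_4}), with a subset of the Kuratowski upper limit of the sets $\mathcal{D}_r(M_i)$. Indeed, if $\bar{y}\in\mathcal{S}_r$ then $\bar y\notin\mathcal{R}_r$, so for every sequence $y_i\to\bar y$ one has $\liminf_i\mathbf{cvr}(y_i,0)<r$; passing to a subsequence gives $y_{i_k}\in\mathcal{D}_r(M_{i_k})$, hence $\bar y$ is a subsequential Gromov--Hausdorff limit of points of $\mathcal{D}_r(M_i)$. To bound the measure of this limit set I would invoke the local Harnack inequality for $\mathbf{cvr}$ (Proposition~\ref{prn:SC24_1}): for $\delta\leq r$, the $\delta$-neighborhood of $\mathcal{D}_r(M_i)$ is contained in $\mathcal{D}_{Kr}(M_i)$. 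Therefore, for any fixed radius $H_0$ and any $\delta\in(0,r)$, the set $\mathcal{S}_r\cap B(\bar x,H_0)$ is covered, under the approximating Gromov--Hausdorff almost-isometries, by the $\delta$-neighborhood of $\mathcal{D}_r(M_i)\cap B(x_i,H_0+1)\subset\mathcal{D}_{Kr}(M_i)$ for all large $i$. Volume convergence on balls together with (\ref{eqn:planHA11_1}) then gives
\begin{align}
  |\mathcal{S}_r\cap B(\bar x,H_0)|\;\leq\;\liminf_{i\to\infty}|\mathcal{D}_{Kr}(M_i)|\;\leq\;C_1(Kr)^4\;=\;Cr^4, \label{eqn:planHA11_2}
\end{align}
with $C=C(n,A,H)$ independent of $H_0$ (the singular set, having measure zero, contributes nothing in the limit).

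For the Minkowski dimension of $\mathcal{S}=\mathcal{S}(\bar M)$ I would argue as follows. Since $\mathbf{cr}(M_i)\geq\hslash$, Theorem~\ref{thm:HE11_1} gives $\mathcal{R}(\bar M)=\bigcup_{r>0}\mathcal{R}_r$, hence $\mathcal{S}=\bigcap_{r>0}\mathcal{S}_r$. Moreover, for a dimensional constant $K''$, if $d(\bar y,\mathcal{S})<\rho$ then $\bar y\notin\mathcal{R}_{K''\rho}$: otherwise some $y_i\to\bar y$ with $\mathbf{cvr}(y_i,0)\geq K''\rho$ for all large $i$ would, by the regularity estimate of the canonical radius together with Shi-type estimates and the $C^\infty$-convergence on the regular part, force a fixed-proportion geodesic ball around $\bar y$ to lie in $\mathcal{R}(\bar M)$, i.e. $d(\bar y,\mathcal{S})\geq\rho$, a contradiction once $K''$ is chosen suitably. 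Thus $\{d(\cdot,\mathcal{S})<\rho\}\subset\mathcal{S}_{K''\rho}$, so by (\ref{eqn:planHA11_2}) its measure inside a unit ball is at most $C(K''\rho)^4$, and Definition~\ref{dfn:HE08_1} yields $\dim_{\mathcal{M}}\mathcal{S}\leq 2n-4$.

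The step I expect to be the main obstacle is the second one, namely making rigorous the passage from ``$\mathcal{S}_r$ is contained in a Kuratowski limit of the $\mathcal{D}_r(M_i)$'' to an honest measure bound on $\bar M$: one must dovetail the Gromov--Hausdorff almost-isometries with the volume convergence on balls and with the local Harnack control of $\mathbf{cvr}$ under small perturbations, and verify that the enlargement from $\mathcal{D}_r$ to $\mathcal{D}_{Kr}$ costs only a fixed multiplicative constant in measure. Everything else is a routine assembly of Theorem~\ref{thm:HA11_1}, Proposition~\ref{prn:HA11_1}, Proposition~\ref{prn:SC24_1}, Proposition~\ref{prn:SC12_1} and Theorem~\ref{thm:HE11_1}.
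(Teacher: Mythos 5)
Your proposal is correct and follows the natural (and, since the paper states this as a corollary with no written proof, implicitly intended) route: combine Theorem~\ref{thm:HA11_1} with Proposition~\ref{prn:HA11_1} to get $|\mathcal{D}_\rho(M_i)|\leq C\rho^4$ on each time slice, pass this to the limit to bound $|\mathcal{S}_r|$, and then use the inclusion of a tubular neighborhood of $\mathcal{S}$ in $\mathcal{S}_{C\rho}$ to read off the Minkowski bound. Two small remarks. First, the passage you flag as the main obstacle (turning ``$\mathcal{S}_r$ lies in the Kuratowski upper limit of $\mathcal{D}_r(M_i)$'' into a measure bound) is most cleanly made rigorous by the Vitali covering argument already carried out in the proof of Proposition~\ref{prn:SC12_1}: cover $\mathcal{S}_r\cap B(\bar{x},H_0)$ by balls $B(z_k,5r)$ with $z_k\in\mathcal{S}_r$ and $B(z_k,r)$ disjoint, approximate each $z_k$ by $z_{k,i}\in M_i$ with $\mathbf{cvr}(z_{k,i})<2r$ (for a common subsequence of $i$, via a finite diagonal argument since one only needs finitely many $k$), observe via Proposition~\ref{prn:SC24_1} that the disjoint balls $B(z_{k,i},r)$ all lie in $\mathcal{D}_{C'r}(M_i)$, and then the estimate $|\mathcal{D}_{C'r}(M_i)|\leq C r^4$ plus noncollapsing bounds the number of balls by $Cr^{4-2n}$, which is exactly the Minkowski $(2n-4)$-content bound. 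Your phrasing in terms of pulling back $\delta$-neighborhoods under GH almost-isometries is morally equivalent but requires care because a point of $\mathcal{S}_r$ only guarantees $\mathbf{cvr}<r$ along a subsequence of approximants, not eventually; the Vitali/diagonal formulation avoids this issue. Second, the inclusion $\{d(\cdot,\mathcal{S})<\rho\}\subset\mathcal{S}_{C\rho}$ that you rederive at the end is already recorded as (\ref{eqn:SB13_1}) in the proof of Theorem~\ref{thm:HE11_1}, so you could simply cite it rather than reprove it.
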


Following~\cite{CW4},  the space $\bar{M}=\mathcal{R} \cup \mathcal{S}$
is called a metric-normal $Q$-Fano variety if there exists
a homeomorphic map $\varphi: \bar{M} \to Z$ for some $Q$-Fano normal variety $Z$ such that $\varphi|_{\mathcal{R}}$
is a biholomorphic map.  Moreover, $\dim_{\mathcal{M}} \mathcal{S} \leq 2n-4$.

 \begin{theorem}[\textbf{Limit structure}]
  Suppose that $\mathcal{LM}_i  \in \mathscr{K}(n,A)$.
  Under the metric $g_i(0)$, suppose
  \begin{align}
    \Vol(M_i) + \int_{M_i} |Rm|^2 dv \leq H  \label{eqn:HA11_2}
  \end{align}
  for some uniform constant $H$.
  Let $(\bar{M}, \bar{x}, \bar{g})$ be the limit space of $(M_i,x_i,g_i(0))$.
  Then $\bar{M}$ is a compact metric-normal $Q$-Fano variety.
\label{thm:SC13_4}
\end{theorem}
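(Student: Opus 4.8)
The plan is to assemble Theorem~\ref{thm:SC13_4} from the structural results already established for $\mathscr{K}(n,A)$, with the extra hypothesis $\int_{M_i}|Rm|^2\,dv\le H$ used only to upgrade the Hausdorff codimension of the singular set to a Minkowski codimension bound. There are four ingredients to combine: compactness of $\bar M$; the regular--singular decomposition with $\mathcal R$ K\"ahler; the projective normal log-terminal variety structure; and the identification of $-K_{\bar M}$ with a positive rational multiple of the ample limit polarization $\bar L$, which yields the $Q$-Fano property. Since all the deep analytic input (weak compactness, partial-$C^0$, tangent cone rigidity, the $L^2$-curvature volume estimate) is already in hand, this is essentially a packaging argument, and the real work is the global bookkeeping.

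First I would establish that $\bar M$ is compact. By Theorem~\ref{thm:SC28_1}, every flow in $\mathscr{K}(n,A)$ has polarized canonical radius at least $\hslash$, so in particular at $t=0$ the volume ratio estimate in Definition~\ref{dfn:SC02_1} gives the non-collapsing bound $|B(x,r)|\ge\kappa\omega_{2n}r^{2n}$ for all $r\le\hslash$, with $\kappa=\kappa(n,A)$. Placing disjoint balls of radius $\hslash/2$ centered at $\hslash$-separated points along a minimizing geodesic realizing the diameter and summing volumes against $\Vol(M_i)\le H$ forces $\diam_{g_i(0)}(M_i)\le C(n,A,H)$. Bounded diameter together with volume doubling yields uniform covering numbers, hence Gromov--Hausdorff subconvergence to a compact limit $(\bar M,\bar x,\bar g)$; the convergence improves to $\hat C^\infty$ away from the singular set and produces the decomposition $\bar M=\mathcal R\cup\mathcal S$ with $\mathcal R$ carrying a limit K\"ahler structure $\bar J$ and $\dim_{\mathcal H}\mathcal S\le 2n-4$, exactly as in Theorem~\ref{thmin:SC24_1}.

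Next, the $L^2$-curvature hypothesis enters. By Theorem~\ref{thm:HA11_1} together with Proposition~\ref{prn:HA11_1}, the high-curvature neighborhoods $\mathcal Z_r$ at time $0$, equivalently the sublevel sets $\mathcal D_{cr}$ of the canonical volume radius, satisfy $|\mathcal Z_r|\le Cr^4$ uniformly in $i$ with $C=C(n,A,H)$; passing to the limit as in Corollary~\ref{cly:HA11_1} gives $|\{x\in\bar M : d(x,\mathcal S)<r\}|\le Cr^4$ for small $r$, hence $\dim_{\mathcal M}\mathcal S\le 2n-4$ by the definition of Minkowski dimension. For the variety structure I would apply Theorem~\ref{thm:HE19_1} on each unit geodesic ball and then, using that $\bar M$ is now compact, pass to a single sufficiently high power of the limit polarization $\bar L$ to define a global Kodaira embedding $\bar M\hookrightarrow\CP^N$, as in the compact case of \cite{DS} and \cite{CW4}. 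This exhibits $\bar M$ as a projective normal variety with at most log-terminal singularities, whose analytic structure restricts to $\bar J$ on $\mathcal R$, and whose singular locus coincides with $\mathcal S$ and thus has real codimension at least $4$.

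Finally, for the $Q$-Fano property: on each $M_i$ the polarization obeys $c_1(M_i)=\lambda\,c_1(L_i)$ with $\lambda=c_1(M)/c_1(L)>0$ a fixed rational number, and under the smooth convergence on $\mathcal R$ this passes to the limit as $-K_{\mathcal R}\sim_{\mathbb Q}\lambda\,\bar L|_{\mathcal R}$ (equality of first Chern classes on the regular part). Since $\bar M$ is normal with $\mathcal S$ of real codimension at least $4$, every line bundle on $\mathcal R$ extends to a Weil divisor class on $\bar M$; in particular $-K_{\bar M}$ is a Weil divisor with $-K_{\bar M}\sim_{\mathbb Q}\lambda\,\bar L$, and because $\bar L$ is Cartier and ample (from the Kodaira embedding) while $\lambda>0$, the divisor $-K_{\bar M}$ is ample and $\mathbb Q$-Cartier. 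Thus $\bar M$ is a klt $\mathbb Q$-Fano variety; taking $Z=\bar M$ with its own analytic structure and $\varphi=\mathrm{id}$, and recalling $\dim_{\mathcal M}\mathcal S\le 2n-4$ from the step above, $\bar M$ is a compact metric-normal $Q$-Fano variety. The main obstacle I expect is not analytic but structural: globalizing the local analytic charts of Theorem~\ref{thm:HE19_1} into one projective variety independent of the choice of subsequence and base point, and rigorously extending $-K_{\bar M}$ across $\mathcal S$ as a $\mathbb Q$-Cartier divisor proportional to $\bar L$ --- which is precisely where both the normality and the sharp Minkowski codimension estimate (hence the $L^2$-curvature hypothesis) are needed.
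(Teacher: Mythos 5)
Your proof is correct and follows essentially the same route as the paper's, which is only three sentences long: (1) the volume bound together with $\kappa$-non-collapsing bounds the diameter, so $\bar M$ is compact; (2) by the partial-$C^0$-estimate (Theorem~\ref{thmin:HC08_1}) one follows the Donaldson--Sun argument to get a $Q$-Fano normal variety; (3) Corollary~\ref{cly:HA11_1} gives the metric-normal (Minkowski codimension $\ge 4$) property. You have reconstructed each of these steps and, in addition, unpacked the $Q$-Fano identification $-K_{\bar M}\sim_{\mathbb Q}\lambda\bar L$ that the paper leaves implicit in its citation of \cite{DS}. The globalization worry you flag at the end is genuine but is exactly what the Donaldson--Sun construction delivers once $\bar M$ is compact and the partial-$C^0$ estimate holds uniformly: a single power $k_0$ works at every point, so the peak sections assemble into one Kodaira map independent of base point, and normality plus the codimension bound let one extend $-K$ across $\mathcal S$ as a $\mathbb Q$-Cartier class proportional to $\bar L$.
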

\begin{proof}
 It follows from (\ref{eqn:HA11_2}) and the non-collapsing that $diam(M_i)$ is uniformly bounded.
 So the limit space $\bar{M}$ is compact.   Due to Theorem~\ref{thmin:HC08_1}, the partial $C^0$-estimate,
 one can follow the argument in~\cite{DS} to show that $\bar{M}$ is a $Q$-Fano, normal variety.
 The metric-normal property follows from Corollary~\ref{cly:HA11_1}.
\end{proof}

Based on the estimates developed in this subsection, we can easily prove Corollary~\ref{clyin:SC24_4}
and Corollary~\ref{clyin:SC24_5} in the introduction.

\begin{proof}[Proof of Corollary~\ref{clyin:SC24_4} and Corollary~\ref{clyin:SC24_5}]
It follows from the combination of Theorem~\ref{thm:SC13_4}, Corollary~\ref{cly:HA11_1} of this paper and
main results in~\cite{CW4}. Note that the line bundle metric choice in this paper is
equivalent to that in~\cite{CW4}, due to the bound of $\dot{\varphi}$.
\end{proof}

\subsection{Singular K\"ahler Ricci flow solution}
\label{sec:sflow}

In this subsection, we shall relate the different limit time slices, without the assumption of $|R|+|\lambda| \to 0$.
We shall further improve regularity, by estimates essentially arising from complex analysis of holomorphic sections.

We want to compare $\omega_t$, the K\"ahler Ricci flow metrics, and $\tilde{\omega}_t$, the evolving Bergman metrics.
We first show that $\tilde{\omega}_t$ is very stable when $t$ evolves.

\begin{lemma}
Suppose $G(t)$ is a family of $(N+1)\times (N+1)$ matrices parameterized by $t \in [-1,1]$. Suppose $G(0)=Id$, $\dot{G}(0)=B$.
Let $\lambda_0\leq \lambda_1 \leq \cdots \leq \lambda_N$ be the real eigenvalues of the Hermitian matrix $B+\bar{B}^{\tau}$.
If we regard $G$ as a holomorphic map from $\CP^N$ to $\CP^N$,   then we have
\begin{align}
   (\lambda_0-\lambda_N) \omega_{FS} \leq
  \left. \frac{d}{dt}G(t)^*(\omega_{FS}) \right|_{t=0} \leq (\lambda_N-\lambda_0) \omega_{FS}.
\label{eqn:HB02_1}
\end{align}
\label{lma:HB02_1}
\end{lemma}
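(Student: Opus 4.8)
The plan is to reduce \eqref{eqn:HB02_1} to an infinitesimal comparison of Fubini--Study forms of Hermitian forms on $\C^{N+1}$. Writing $\omega_{FS}=\sqrt{-1}\partial\bar\partial\log|Z|^2$ and pulling this potential back through $W=G(t)Z$, we have for $t$ near $0$ (where $G(t)$ is invertible) $G(t)^*\omega_{FS}=\sqrt{-1}\partial\bar\partial\log\langle G(t)^*G(t)Z,Z\rangle$, with $G(t)^*$ the conjugate transpose. Differentiating at $t=0$ and using $G(0)=\mathrm{Id}$, $\dot G(0)=B$, we get
\begin{align*}
  \left.\frac{d}{dt}\right|_{t=0}G(t)^*\omega_{FS}=\sqrt{-1}\partial\bar\partial f,\qquad f(Z)=\frac{\langle HZ,Z\rangle}{|Z|^2},\quad H:=B+\bar B^{\tau}.
\end{align*}
In particular the left-hand side depends on $G$ only through the Hermitian matrix $H$, so I may replace the given family by the model family $\hat G(t)=\mathrm{Id}+\tfrac{t}{2}H$, which is positive-definite Hermitian for $|t|$ small and has $\dot{\hat G}(0)+\dot{\hat G}(0)^{*}=H$, hence the same derivative $\sqrt{-1}\partial\bar\partial f$. (Note also that replacing $H$ by $H-c\,\mathrm{Id}$ changes $f$ only by an additive constant and leaves the spread $\lambda_N-\lambda_0$ fixed, which is consistent with the statement.) Thus it suffices to estimate $\hat G(t)^*\omega_{FS}=\omega_{h(t)}$, the Fubini--Study form of the positive Hermitian form $h(t):=\hat G(t)^2=\mathrm{Id}+tH+\tfrac{t^2}{4}H^2$.

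The main step is a comparison lemma: if $h$ is a positive Hermitian form on $\C^{N+1}$ with $\alpha\,g_0\le h\le\beta\,g_0$ as Hermitian forms, where $g_0$ is the standard form, then $\tfrac{\alpha}{\beta}\,\omega_{FS}\le \omega_h\le\tfrac{\beta}{\alpha}\,\omega_{FS}$. I would prove this pointwise by the orthogonal-lift formula: at $[Z]$ and a tangent vector represented by $W\in\C^{N+1}$, one has $\omega_h([Z])(W,\bar W)=h(W_1,W_1)/h(Z,Z)$, where $W_1=W+cZ$ is the representative with $h(W_1,Z)=0$; the identity $h(W_1,W_1)=h(W_0,W_0)-|h(W_0,Z)|^2/h(Z,Z)$ (with $W_0$ the $g_0$-orthogonal representative) gives $h(W_1,W_1)\le h(W_0,W_0)$, and $W_0$ is the $g_0$-orthogonal projection of $W_1$, so $|W_0|\le|W_1|$. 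Combining these with $\alpha|\cdot|^2\le h(\cdot,\cdot)\le\beta|\cdot|^2$ and $\omega_{FS}([Z])(W,\bar W)=|W_0|^2/|Z|^2$ yields both inequalities. This trick is what makes the comparison routine rather than a messy chart computation, and I expect it to be the only genuinely non-bookkeeping part of the argument.

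Finally I would combine the two ingredients. A short computation shows the condition number of $h(t)=(\mathrm{Id}+\tfrac t2 H)^2$ is $\bigl(\tfrac{1+\frac t2\lambda_N}{1+\frac t2\lambda_0}\bigr)^2=1+t(\lambda_N-\lambda_0)+O(t^2)$ for small $t>0$, so the comparison lemma gives
\begin{align*}
  \bigl(1-t(\lambda_N-\lambda_0)+O(t^2)\bigr)\,\omega_{FS}\ \le\ \hat G(t)^*\omega_{FS}\ \le\ \bigl(1+t(\lambda_N-\lambda_0)+O(t^2)\bigr)\,\omega_{FS},
\end{align*}
while on the other hand $\hat G(t)^*\omega_{FS}=\omega_{FS}+t\sqrt{-1}\partial\bar\partial f+E(t)$ with $-Ct^2\omega_{FS}\le E(t)\le Ct^2\omega_{FS}$ (the remainder is a bounded multiple of $\omega_{FS}$ since $\hat G(t)^*\omega_{FS}$ is smooth jointly in $(t,x)$ on the compact $\CP^N$). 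Subtracting $\omega_{FS}$, dividing by $t>0$, and letting $t\to 0^{+}$ gives $-(\lambda_N-\lambda_0)\,\omega_{FS}\le\sqrt{-1}\partial\bar\partial f\le(\lambda_N-\lambda_0)\,\omega_{FS}$; since $\sqrt{-1}\partial\bar\partial f=\left.\frac{d}{dt}\right|_{0}G(t)^*\omega_{FS}$ and $\lambda_0-\lambda_N=-(\lambda_N-\lambda_0)$, this is exactly \eqref{eqn:HB02_1}.
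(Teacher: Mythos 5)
Your argument is correct, and it follows a genuinely different route from the paper. The paper works entirely infinitesimally and explicitly: it writes down the pulled-back form as a matrix $E_t=JF_t\bar J^{\tau}$ in coordinates, differentiates $F_t$ at $t=0$, diagonalizes $B+\bar B^{\tau}$, and shows by direct computation that for every $v$ with $z\bar v^{\tau}=0$ the quantity $v(\dot F_0-\text{correction})\bar v^{\tau}$ is a weighted average of eigenvalue differences $\lambda_i-\lambda_j$ and hence is bounded by $(\lambda_N-\lambda_0)vF_0\bar v^{\tau}$. You instead (i) reduce to the Hermitian case by observing that $\left.\frac{d}{dt}\right|_{0}G(t)^{*}\omega_{FS}$ only sees $H=\dot{\overline{(G^{\dagger}G)}}(0)=B+\bar B^{\tau}$, (ii) prove a clean finite-size comparison lemma -- if $\alpha g_0\le h\le\beta g_0$ then $\tfrac{\alpha}{\beta}\omega_{FS}\le\omega_h\le\tfrac{\beta}{\alpha}\omega_{FS}$ -- via the $h$-orthogonal vs.\ $g_0$-orthogonal lift trick, and (iii) extract the derivative bound by a Taylor expansion and a limit $t\to0^{+}$. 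What your route buys is a reusable and conceptually transparent comparison statement (the orthogonal-lift inequality) in place of the paper's chart-level bookkeeping; what it costs is the extra Taylor/limit step to go from a ratio bound on $\omega_{h(t)}$ to a bound on the $t$-derivative, and the need to know a priori that the remainder $E(t)$ is $O(t^2)$ uniformly as a form, which you correctly handle by compactness of $\CP^N$. Both proofs are complete; the paper's is shorter if one doesn't mind the direct computation, and yours isolates the geometry more cleanly.
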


\begin{proof}
Let $\{z_i\}_{i=0}^N$ be the homogeneous coordinate of $\CP^N$.   Let $G=G(t)$. Then we have
 \begin{align*}
    &\omega_{FS}=\sqrt{-1} \partial \bar{\partial} \log (|z_0|^2+|z_1|^2+\cdots |z_N|^2)
    =\sqrt{-1} \left\{\frac{\partial z_i \wedge \bar{\partial} \bar{z}_i}{|z|^2}
    +\frac{(z_i \bar{\partial} \bar{z}_i) \wedge (\bar{z}_j \partial z_j)}{|z|^4}\right\}, \\
    &G^*(\omega_{FS})=\sqrt{-1} \partial \bar{\partial} \log (|\tilde{z}_0|^2+|\tilde{z}_1|^2+\cdots |\tilde{z}_N|^2)
    =\sqrt{-1} \left\{\frac{\partial \tilde{z}_i \wedge \bar{\partial} \bar{\tilde{z}}_i}{|\tilde{z}|^2}
    +\frac{(\tilde{z}_i \bar{\partial} \bar{\tilde{z}}_i) \wedge (\bar{\tilde{z}}_j \partial \tilde{z}_j)}{|\tilde{z}|^4}\right\},
 \end{align*}
 where $\tilde{z}_i=G_{ij}z_j$.  Let $\{ w_1, \cdots, w_N\}$ be local coordinate.
 At point $z$, the matrix of $\omega_{FS}$ is
 \begin{align*}
    E_0=J \left(\frac{Id}{|z|^2}-\frac{\bar{z}^{\tau}z}{|z|^4}\right) \bar{J}^{\tau}=J F_0 \bar{J}^{\tau},
 \end{align*}
 where $J$ is an $N \times (N+1)$ matrix which is the Jacobi matrix $\left(\D{z_j}{w_{\alpha}}\right)$.
 The matrix of
 $\omega_{G^*\omega_{FS}}$ is
 \begin{align*}
    E_t=J G\left(\frac{Id}{|\tilde{z}|^2}-\frac{\bar{\tilde{z}}^{\tau} \tilde{z}}{|\tilde{z}|^4}\right) \bar{G}^{\tau} \bar{J}^{\tau}
      =JF_t \bar{J}^{\tau}.
 \end{align*}
 Clearly, we have
 \begin{align*}
   \left. \frac{d}{dt} F_t \right|_{t=0}&=B\left(\frac{Id}{|z|^2}-\frac{\bar{z}^{\tau}z}{|z|^4}\right)
     +\left(\frac{Id}{|z|^2}-\frac{\bar{z}^{\tau}z}{|z|^4}\right) \bar{B}^{\tau}
     -\frac{z(B+\bar{B}^{\tau})\bar{z}^{\tau}}{|z|^4} Id +\frac{2z(B+\bar{B}^{\tau})\bar{z}^{\tau}}{|z|^6} \bar{z}^{\tau}z
    -\frac{\bar{B}^{\tau}\bar{z}^{\tau}z+\bar{z}^{\tau}zB}{|z|^4}\\
    &=\left\{ \frac{B+\bar{B}^{\tau}}{|z|^2} -\frac{(B+\bar{B}^{\tau})\bar{z}^{\tau}z + \bar{z}^{\tau}z(B+\bar{B}^{\tau})}{|z|^4}
   +\frac{z(B+\bar{B}^{\tau})\bar{z}^{\tau}}{|z|^6} \bar{z}^{\tau}z \right\}
   -\frac{z(B+\bar{B}^{\tau})\bar{z}^{\tau}}{|z|^4} F_0\\
    &\triangleq M-\frac{z(B+\bar{B}^{\tau})\bar{z}^{\tau}}{|z|^4} F_0.
 \end{align*}
 It follows that
 \begin{align*}
  \left. \frac{d}{dt} E_t \right|_{t=0}=
  \left.  \frac{d}{dt}
  \left\{J G\left(\frac{Id}{|\tilde{z}|^2}-\frac{\bar{\tilde{z}}^{\tau} \tilde{z}}{|\tilde{z}|^4}\right) \bar{G}^{\tau} \bar{J}^{\tau} \right\}
  \right|_{t=0}=J\left(M-\frac{z(B+\bar{B}^{\tau})\bar{z}^{\tau}}{|z|^2} F_0 \right)\bar{J}^{\tau}.
 \end{align*}
 It is easy to check that
 \begin{align*}
    zM\bar{z}^{\tau}=0,  \quad zF_0\bar{z}^{\tau}=0, \quad
    z\left( M-\frac{z(B+\bar{B}^{\tau})\bar{z}^{\tau}}{|z|^4} F_0\right)\bar{z}^{\tau}=0.
 \end{align*}
 Without loss of generality, we can assume $B+\bar{B}^{\tau}$ is a diagonal matrix
 $\diag(\lambda_0, \lambda_1, \cdots, \lambda_N)$.
 Let $v=(v_0, v_1, \cdots, v_N)$ be a vector in $\C^{N+1}$ satisfying
 \begin{align*}
  z\bar{v}^{\tau}=\bar{v}_0z_0 +\bar{v}_1z_1 + \cdots + \bar{v}_N z_N=0.
 \end{align*}
 Then it is clear that
 \begin{align*}
  vM\bar{v}^{\tau}=\frac{v(B+\bar{B}^{\tau})\bar{v}^{\tau}}{|z|^2}, \quad
  vF_0\bar{v}^{\tau}=\frac{|v|^2}{|z|^2}.
 \end{align*}
Therefore, we have
 \begin{align*}
  &\quad v\left(M-\frac{z(B+\bar{B}^{\tau})\bar{z}^{\tau}}{|z|^2} F_0 \right)\bar{v}^{\tau}\\
  &=\frac{1}{|z|^4}\left\{ (\lambda_0 |v_0|^2+\cdots+\lambda_N |v_N|^2)(|z_0|^2+\cdots+|z_N|^2)
   -(\lambda_0|z_0|^2+\cdots \lambda_N|z_N|^2)(|v_0|^2+\cdots+|v_N|^2)\right\}\\
   &=\frac{1}{|z|^4} \left\{ \left[(\lambda_0-\lambda_0)|z_0|^2 +(\lambda_0-\lambda_1)|z_1|^2+\cdots
         +(\lambda_0-\lambda_N)|z_N|^2 \right]|v_0|^2 \right.\\
    &\quad +  \left[(\lambda_1-\lambda_0)|z_0|^2 +(\lambda_1-\lambda_1)|z_1|^2+\cdots
         +(\lambda_1-\lambda_N)|z_N|^2 \right]|v_1|^2 \\
    &\quad +\cdots\\
    &\quad \left.+\left[(\lambda_N-\lambda_0)|z_0|^2 +(\lambda_N-\lambda_1)|z_1|^2+\cdots
         +(\lambda_N-\lambda_N)|z_N|^2 \right]|v_N|^2 \right\}\\
     &\leq (\lambda_N-\lambda_0) \frac{|v|^2}{|z|^2}.
 \end{align*}
 Similarly, we have
 \begin{align*}
   v\left(M-\frac{z(B+\bar{B}^{\tau})\bar{z}^{\tau}}{|z|^2} F_0 \right)\bar{v}^{\tau} \geq (\lambda_0-\lambda_N)\frac{|v|^2}{|z|^2}.
 \end{align*}
 Note that $z F_0 \bar{v}^{\tau}=0$. Therefore, we can apply the orthogonal decomposition with respect to $F_0$ to obtain that
 for every vector $f=(f_0,f_1,\cdots, f_N) \in \C^{N+1}$, we have
 \begin{align*}
   \left| f\left(M-\frac{z(B+\bar{B}^{\tau})\bar{z}^{\tau}}{|z|^2} F_0 \right)\bar{f}^{\tau} \right|
   \leq (\lambda_N-\lambda_0) \frac{|f|^2}{|z|^2}=(\lambda_N-\lambda_0)fF_0\bar{f}^{\tau}.
 \end{align*}
 Let $u \in T_z^{(1,0)}\CP^N$. Then we have
 \begin{align*}
  \langle u,u \rangle_{\omega_{FS}}&=(uJ) F_0 \overline{(uJ)}^{\tau},\\
  \left| \langle u, u \rangle_{\left.\frac{d}{dt}G^*(\omega_{FS}) \right|_{t=0}} \right|
  &=\left|(uJ) \left(M-\frac{z(B+\bar{B}^{\tau})\bar{z}^{\tau}}{|z|^2} F_0 \right)\overline{uJ}^{\tau} \right|\\
  &\leq (\lambda_N-\lambda_0) (uJ) F_0 \overline{(uJ)}^{\tau}\\
  &\leq (\lambda_N-\lambda_0)\langle u,u \rangle_{\omega_{FS}}.
 \end{align*}
 By the arbitrary choice of $u$, then (\ref{eqn:HB02_1}) follows directly from the above inequality.
 \end{proof}

\begin{lemma}
 Suppose $\mathcal{LM} \in \mathscr{K}(n,A)$.
 Let $\tilde{\omega}_t$ be the pull back of the Fubini-Study metric by orthonormal basis of $L$ with respect to $\omega_t$ and $h_t$. 
 Then we have the evolution inequality of $\tilde{\omega}_t$:
 \begin{align}
     -2A\tilde{\omega}_t \leq  \frac{d}{dt} \tilde{\omega_t} \leq 2A \tilde{\omega}_t.
 \label{eqn:HB03_1}
 \end{align}
\label{lma:HB05_1}
\end{lemma}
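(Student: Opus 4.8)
The plan is to reduce the statement to Lemma~\ref{lma:HB02_1} by tracking how the $L^2$-orthonormal frame of $H^0(M,L)$ changes with $t$. First observe that $\tilde{\omega}_t=\iota_t^*(\omega_{FS})$ is insensitive to the choices entering its construction: any two families $h(t)$ with curvature $\omega(t)$ differ by a positive function of $t$ only, which rescales all $\|S_i\|$ uniformly and hence changes neither the Kodaira map nor its pulled-back Fubini--Study form; and the orthonormal basis is unique up to $U(N+1)$, under which $\omega_{FS}$ is invariant. So we may normalize $h(t)=h(0)e^{-\varphi(t)}$, so that the local weight satisfies $\dot{\phi}=\dot{\varphi}$.

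Fix $t_0\in[-1,1]$ and let $\{S_i\}_{i=0}^N$ be a basis of $H^0(M,L)$ orthonormal with respect to $\omega_{t_0},h(t_0)$. Let $H(t)_{ij}=\langle S_i,S_j\rangle_t$ be the Gram matrix at time $t$; it is Hermitian with $H(t_0)=\mathrm{Id}$. The choice $G(t)=H(t)^{-1/2}$ makes $G(t)S$ an $\omega_t$-orthonormal frame, so as maps into $\CP^N$ we have $\iota_t=G(t)\circ\iota_{t_0}$ and therefore $\tilde{\omega}_t=\iota_{t_0}^*\bigl(G(t)^*\omega_{FS}\bigr)$. Differentiating at $t=t_0$, using $G(t_0)=\mathrm{Id}$ and $\dot{G}(t_0)=-\tfrac12\dot{H}(t_0)=:B$, and noting $B+\bar B^{\tau}=-\dot{H}(t_0)$ by Hermitian symmetry of $\dot H(t_0)$, Lemma~\ref{lma:HB02_1} yields
\begin{align*}
  (\lambda_0-\lambda_N)\,\tilde{\omega}_{t_0}\ \le\ \left.\frac{d}{dt}\tilde{\omega}_t\right|_{t=t_0}\ \le\ (\lambda_N-\lambda_0)\,\tilde{\omega}_{t_0},
\end{align*}
where $\lambda_0\le\cdots\le\lambda_N$ are the eigenvalues of $-\dot{H}(t_0)$ (pulling back by the holomorphic map $\iota_{t_0}$ preserves the inequalities).

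It remains to bound the eigenvalue spread $\lambda_N-\lambda_0$ by $2A$. Using $\frac{\partial}{\partial t}\|S\|^2_{h(t)}=-\dot{\varphi}\,\|S\|^2_{h(t)}$, $\frac{\partial}{\partial t}\omega_t^n=(\Delta\dot{\varphi})\,\omega_t^n$, and the trace identity $\Delta\dot{\varphi}=-R+n\lambda$, one computes
\begin{align*}
  \dot{H}(t_0)_{ij}=\int_M\langle S_i,S_j\rangle_{h(t_0)}\,f\,\omega_{t_0}^n,\qquad f:=-\dot{\varphi}-R+n\lambda,
\end{align*}
and any normalization of the $L^2$-pairing contributes nothing since $\int_M\omega_t^n=\int_M\omega_0^n$ is constant. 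Thus $\dot{H}(t_0)$ is the matrix, in the orthonormal basis $\{S_i\}$, of the self-adjoint compression $PM_fP$ of multiplication by $f$ to $H^0(M,L)$, so its spectrum lies in $[-\sup_M|f|,\ \sup_M|f|]$; and $\sup_M|f|\le\sup_M|\dot{\varphi}|+\sup_M|R-n\lambda|\le A$ by the definition of $\mathscr{K}(n,A)$ in (\ref{eqn:SK20_1}). Hence $\lambda_N-\lambda_0\le 2A$ and $\lambda_0-\lambda_N\ge-2A$, which substituted above gives $-2A\,\tilde{\omega}_{t_0}\le\frac{d}{dt}\tilde{\omega}_t|_{t=t_0}\le 2A\,\tilde{\omega}_{t_0}$. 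Since $t_0$ was arbitrary, this is (\ref{eqn:HB03_1}).

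The only real work is the bookkeeping identifying the change of $L^2$-orthonormal frame with the action of $G(t)=H(t)^{-1/2}$ on $\CP^N$, together with the computation of $\dot{H}(t_0)$; once these are in place the conclusion is immediate from the uniform $C^0$-bounds on $\dot{\varphi}$ and $R-n\lambda$ and from Lemma~\ref{lma:HB02_1}. I do not expect a genuine obstacle, only care in checking that the $U(N+1)$-ambiguity and the pullback by $\iota_{t_0}$ are harmless, which they are because $\omega_{FS}$ is $U(N+1)$-invariant and $\iota_{t_0}$ is holomorphic.
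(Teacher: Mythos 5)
Your proof is correct and follows essentially the same route as the paper's: fix a time $t_0$, track the orthonormal frame via a matrix $G(t)$, identify $\dot{G}(t_0)+\overline{\dot{G}(t_0)}^{\tau}$ with the self-adjoint compression of multiplication by $-\dot{\varphi}+n\lambda-R$, bound its spectrum by $A$ using (\ref{eqn:SK20_1}), and conclude via Lemma~\ref{lma:HB02_1}. The paper does not insist on the concrete choice $G(t)=H(t)^{-1/2}$ but instead differentiates the orthonormality constraint $G H(t)\bar G^{\tau}=\mathrm{Id}$ directly to get $B+\bar B^{\tau}=-\dot H(t_0)$, which yields the same matrix inequality; your remarks on the $U(N+1)$ and $h(t)$-rescaling ambiguities are accurate bookkeeping that the paper leaves implicit.
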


\begin{proof}
 Without loss of generality,  it suffices to show (\ref{eqn:HB03_1}) at time $t=0$. \\

  Suppose $\left\{ s_i \right\}_{i=0}^{N}$ is an orthonormal basis at time $0$, $\left\{ \tilde{s}_i \right\}_{i=0}^{N}$
  is an orthonormal basis at time $t$.  They are related by $\tilde{s}_i=s_jG_{ji}$.   Fix $e_L$ a local representation of the line  bundle $L$ around a point $x$ so that locally we have $s_j=z_j e_L$ and $\tilde{s}_j=\tilde{z}_j e_L=z_j G_{ji}e_L$. Then we have
  \begin{align*}
   & \tilde{\omega}_0=\sqrt{-1} \partial \bar{\partial} \log \left(|z_0|^2+|z_1|^2+\cdots +|z_N|^2 \right), \\
   & \tilde{\omega}_t=\sqrt{-1} \partial \bar{\partial} \log \left( |\tilde{z}_0^2|+|\tilde{z}_1|^2+\cdots +|\tilde{z}_N|^2 \right).
  \end{align*}
  Let $\iota$ be the Kodaira embedding map induced by $\left\{ s_i \right\}_{i=0}^{N}$ at time $0$. Then it is clear that
 \begin{align*}
   \tilde{\omega}_0=\iota^* \omega_{FS},  \quad \tilde{\omega}_t=\iota^* (G^*\omega_{FS}).
 \end{align*}
 Therefore, we have
 \begin{align*}
    \left. \frac{d}{dt} \tilde{\omega}_t \right|_{t=0}=\iota^*\left(  \left. \frac{d}{dt}G^*(\omega_{FS}) \right|_{t=0} \right).
 \end{align*}
 So (\ref{eqn:HB03_1}) is reduced to the estimate
 \begin{align}
  -2A \omega_{FS} \leq   \left. \frac{d}{dt}G(t)^*(\omega_{FS}) \right|_{t=0} \leq 2A \omega_{FS}.
  \label{eqn:HB05_5}
 \end{align}
 However, note that
\begin{align*}
  \delta_{ik}=G_{ij}\bar{G}_{kl}  \int_{M}  \langle s_j, s_l\rangle_{h_t} \frac{\omega_t^n}{n!}.
\end{align*}
Taking derivative on both sides at time $0$ and denote $\dot{G}$ by $B$,  we obtain
\begin{align*}
  0=B_{ik}+\bar{B}_{ki} + \int_M (-\dot{\varphi}+n\lambda-R)\langle s_i,s_k\rangle_{h_0} \frac{\omega_0^n}{n!}.
\end{align*}
Therefore, for every $v\in \C^{N+1}$, the following inequality holds.
\begin{align}
  \left| v_i (B_{ij} + \bar{B}_{ji}) \bar{v}_j \right|  =\left|-v_i\bar{v}_j \int_M (-\dot{\varphi}+n\lambda-R)\langle s_i,s_j\rangle_{h_0} \frac{\omega_0^n}{n!} \right|\leq A|v|^2.
  \label{eqn:HB05_1}
\end{align}
In particular, each eigenvalue of the Hermitian matrix $B+\bar{B}^{\tau}$ has absolute value bounded by $A$.  Then (\ref{eqn:HB05_5}) follows from
Lemma~\ref{lma:HB02_1}.
\end{proof}

In view of Lemma~\ref{lma:HB05_1}, the following property is obvious now.

\begin{proposition}[\textbf{Bergman metric equivalence along time}]
 Suppose $\mathcal{LM} \in \mathscr{K}(n,A)$.  Then we have
 \begin{align}
    e^{-2A|t|} \tilde{\omega}_0 \leq \tilde{\omega}_t \leq e^{2A|t|} \tilde{\omega}_0.
 \label{eqn:HB03_2}
 \end{align}
\label{prn:HB05_2}
\end{proposition}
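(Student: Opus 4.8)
The plan is to integrate the differential inequality (\ref{eqn:HB03_1}) from Lemma~\ref{lma:HB05_1}. Fix $t>0$ (the case $t<0$ is symmetric). For each fixed point $x\in M$ and each fixed tangent vector $v\in T_x^{(1,0)}M$, consider the real-valued function $\phi(t)\triangleq\tilde\omega_t(v,\bar v)$, which is smooth in $t$ and positive since each $\tilde\omega_t$ is a K\"ahler metric (the Fubini--Study pullback under the Kodaira embedding, which is an embedding by the partial-$C^0$-estimate, Theorem~\ref{thmin:HC08_1}). The inequality (\ref{eqn:HB03_1}) says precisely that at every time $s$ one has $-2A\,\tilde\omega_s\le\frac{d}{ds}\tilde\omega_s\le 2A\,\tilde\omega_s$ as $(1,1)$-forms, hence evaluating on $(v,\bar v)$ gives the scalar differential inequality
\begin{align*}
   -2A\,\phi(s)\le \phi'(s)\le 2A\,\phi(s),\qquad s\in[0,t].
\end{align*}

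\textbf{Main step.} From this one concludes by the standard Gr\"onwall argument: the function $e^{2As}\phi(s)$ has nonnegative derivative, so it is nondecreasing, giving $\phi(t)\le e^{2At}\phi(0)$; likewise $e^{-2As}\phi(s)$ has nonpositive derivative, so it is nonincreasing, giving $\phi(t)\ge e^{-2At}\phi(0)$. Combining, $e^{-2At}\phi(0)\le\phi(t)\le e^{2At}\phi(0)$, i.e.
\begin{align*}
   e^{-2A|t|}\,\tilde\omega_0(v,\bar v)\le \tilde\omega_t(v,\bar v)\le e^{2A|t|}\,\tilde\omega_0(v,\bar v).
\end{align*}
Since $x$ and $v$ are arbitrary, this is exactly (\ref{eqn:HB03_2}). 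The case $t<0$ follows by running the same argument on $[t,0]$, or by noting the inequality is symmetric under $t\mapsto -t$ together with the absolute value; this accounts for the $|t|$ in the statement.

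\textbf{Remark on the only subtlety.} There is essentially no obstacle here: Lemma~\ref{lma:HB05_1} has already done the real work of establishing the infinitesimal estimate via Lemma~\ref{lma:HB02_1} and the bound (\ref{eqn:HB05_1}) on the eigenvalues of $B+\bar B^\tau$, which comes from differentiating the orthonormality relations and using $|\dot\varphi|+|R-n\lambda|+|\lambda|\le A$ from (\ref{eqn:SK20_1})--(\ref{eqn:SK23_1}). The mildest point worth a sentence is that $\tilde\omega_t$ depends smoothly on $t$ (so that $\phi$ is differentiable and the fundamental theorem of calculus applies): this is clear because $N=\dim H^0(M,L)-1$ is independent of $t$ (the class $[\omega(t)]=c_1(L)$ is fixed) and the Gram matrix $\int_M\langle s_i,s_j\rangle_{h(t)}\,\omega(t)^n$ varies smoothly in $t$, so an orthonormal frame, hence the Kodaira map $\iota_t$ and its pullback metric, varies smoothly. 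With that observed, the proof is a two-line Gr\"onwall estimate applied pointwise.
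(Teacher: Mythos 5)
Your proof is correct and matches the paper's approach: the paper simply notes that Proposition~\ref{prn:HB05_2} is ``obvious'' from Lemma~\ref{lma:HB05_1}, and your Gr\"onwall integration of the pointwise differential inequality $-2A\tilde\omega_t\le\frac{d}{dt}\tilde\omega_t\le 2A\tilde\omega_t$ is exactly the routine step being elided, including the observation on smooth time-dependence of the Bergman metric.
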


In general, we cannot hope a powerful estimate like (\ref{eqn:HB03_2}) holds for metrics $\omega_t$, since such  an estimate will imply the Ricci curvature is uniformly bounded by $A$.   However,  if we only focus  on points regular enough, then we do have a similar weaker estimate.

\begin{proposition}[\textbf{Flow metric equivalence along time}]
 Suppose $\mathcal{LM} \in \mathscr{K}(n,A)$,  $x \in  \mathcal{F}_r(M,0)$.  Then we have
 \begin{align}
     \frac{1}{C} \omega_0(x) \leq   \omega_t(x) \leq C\omega_0(x)  \label{eqn:HB05_2}
 \end{align}
 for every $t \in [-1,1]$. Here $C$ is a constant depending only on $n,A$ and $r$.
 \label{prn:HB05_1}
\end{proposition}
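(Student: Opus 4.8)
The plan is to exploit the near time-invariance of the Bergman metric $\tilde\omega_t$ (Proposition~\ref{prn:HB05_2}) together with the controlled evolution of the volume form, comparing $\omega_t(x)$ and $\omega_0(x)$ indirectly through $\tilde\omega$. Throughout we use that $\mathbf{cr}(M\times[-1,1])\geq\hslash$ by Theorem~\ref{thm:SC28_1}, so all the local canonical-radius estimates are available uniformly at every time, and that the partial-$C^0$-estimate of Theorem~\ref{thmin:HC08_1} holds. For small $|t|$ the statement is immediate: if $|t|\leq c^2r^2$ with $c=c(n,A)$ small, then Proposition~\ref{prn:HA08_3}, equivalently Theorem~\ref{thm:SL27_2}, gives $|Rm|\leq Cr^{-2}$ on a fixed parabolic neighbourhood of $x$, hence $C^{-1}\omega_0\leq\omega_t\leq C\omega_0$ there; so the real content is the range $c^2r^2\leq|t|\leq 1$, where no curvature bound is available.

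First I would record the cheap one-sided comparison $\tilde\omega_\tau\leq C_1\omega_\tau$ on all of $M$ for every $\tau\in[-1,1]$, with $C_1=C_1(n,A)$. Indeed, the uniform Sobolev, scalar-curvature and Ricci-potential bounds in (\ref{eqn:SK20_1}) feed a Moser iteration giving $\|\nabla S\|_{h(\tau)}^2\leq C_1'$ for every $L^2$-unit holomorphic section $S\in H^0(M,L)$ (exactly as in the proof of Lemma~\ref{lma:HA14_1}), while the partial-$C^0$-estimate gives a uniform positive lower bound on $\sum_i\|S_i^{(k)}\|_{h(\tau)}^2$ for the relevant power $k=k(n,A)$; since $\tilde\omega_\tau$ is the pullback of the Fubini--Study form by the Kodaira map, these two facts bound $\tilde\omega_\tau(v,\bar v)$ by $C_1\,\omega_\tau(v,\bar v)$ pointwise. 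If the partial-$C^0$-estimate is only available after raising the power of $L$, one runs the whole argument with $\tilde\omega^{(k)}$ in place of $\tilde\omega$; Proposition~\ref{prn:HB05_2} holds verbatim for any power of $L$.

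Next I would produce the reverse comparison, but only at the point $x$ and only at time $0$: since $x\in\mathcal{F}_r(M,0)$, the metric $g(0)$ has uniformly bounded geometry on $B_{g(0)}(x,cr)$ by the regularity estimate in Definition~\ref{dfn:SC02_1} (cf. Proposition~\ref{prn:SC24_1}), so Tian's peak-section construction (as in Lemma~\ref{lma:SK25_1}) together with H\"ormander's $\bar\partial$-estimate and the partial-$C^0$-estimate produces, in every complex direction at $x$, a holomorphic section of $L^k$ whose covariant derivative at $x$ is comparable to its value at $x$; this forces $\tilde\omega_0(x)\geq C_2^{-1}\omega_0(x)$ with $C_2=C_2(n,A,r)$. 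Alternatively this follows by contradiction: if it failed along a sequence, Theorem~\ref{thm:SC04_1} would give a blowdown limit $(\bar M,\bar x,\bar g)\in\widetilde{\mathscr{KS}}(n,\kappa)$ with $\bar x$ regular, contradicting the Donaldson--Sun lower bound on the Bergman metric there. Combining the two comparisons at $x$ with Proposition~\ref{prn:HB05_2}, for any $t\in[-1,1]$,
\begin{align*}
  \omega_t(x)\;\geq\;\frac{1}{C_1}\,\tilde\omega_t(x)\;\geq\;\frac{e^{-2A|t|}}{C_1}\,\tilde\omega_0(x)\;\geq\;\frac{e^{-2A}}{C_1C_2}\,\omega_0(x),
\end{align*}
which is the lower bound $\omega_t(x)\geq C_3^{-1}\omega_0(x)$ with $C_3=C_1C_2e^{2A}=C_3(n,A,r)$.

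Finally, the upper bound is deduced from the lower bound together with the volume form: from $\frac{\partial}{\partial t}\log\omega_t^n=n\lambda-R$ and $|R-n\lambda|_{C^0(M)}\leq A$ we get $e^{-A}\omega_0^n(x)\leq\omega_t^n(x)\leq e^{A}\omega_0^n(x)$; diagonalizing $\omega_t(x)$ with respect to $\omega_0(x)$, each of the $n$ eigenvalues is $\geq C_3^{-1}$ while their product is $\leq e^{A}$, hence each eigenvalue is $\leq e^{A}C_3^{\,n-1}$, i.e. $\omega_t(x)\leq e^{A}C_3^{\,n-1}\omega_0(x)$. Taking $C=\max\{C_3,\,e^{A}C_3^{\,n-1}\}$ finishes the proof. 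The one genuinely delicate point is the reverse Bergman comparison at the regular point: $\tilde\omega\leq C_1\omega$ is a soft consequence of the gradient bound, but the positive lower bound $\tilde\omega_0(x)\geq C(n,A,r)^{-1}\omega_0(x)$ uses the global partial-$C^0$-estimate and a careful peak-section construction, and it is here that one may be forced to pass to a fixed power $L^k$ and to track the dependence of the constant on $r$.
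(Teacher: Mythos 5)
Your argument is correct, and it reaches the conclusion by a genuinely different route than the paper's. The paper reduces to the one-sided trace bounds $\Lambda_{\omega_1}\tilde\omega_0\leq C$ and $\Lambda_{\omega_0}\tilde\omega_{-1}\leq C$, each of which is obtained from the conjugate heat-kernel/parabolic Schwarz machinery of Lemma~\ref{lma:J02_1}: one integrates $F^{(k)}u$ against a backward heat kernel, exploits the lower bound of a forward heat solution, and needs the auxiliary point-selection argument around $y_0\in\mathcal{F}_{c_b\hslash}(M,-1)$ to run the lemma from time $-1$. You replace this entirely by the \emph{same-time} soft comparison $\tilde\omega_\tau\leq C_1\omega_\tau$, which follows from the Moser-iteration gradient bound for $L^2$-unit holomorphic sections together with the partial-$C^0$-estimate, and then transport in time via the near-isometry of Bergman metrics in Proposition~\ref{prn:HB05_2}. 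The remaining ingredients (peak-section near-isometry $\tilde\omega_0(x)\simeq\omega_0(x)$ at the regular point, volume-form evolution, and the eigenvalue pinching to get the upper bound from the lower bound plus the determinant) coincide with the paper's. The net effect is that you avoid Lemma~\ref{lma:J02_1} altogether, which makes the logic more self-contained; the price is that all three Bergman inputs — the gradient bound, the partial-$C^0$-estimate, and the stability of $\tilde\omega^{(k)}_t$ — must be invoked at every time $\tau\in[-1,1]$, not just at $\tau=0$, and one must fix the power $k$ in terms of $(n,A,r)$. You rightly flag this; one small quantitative caveat worth making explicit is that for $L^k$ the stability exponent in Proposition~\ref{prn:HB05_2} becomes $e^{2kA|t|}$ (the Gram-matrix computation picks up a factor of $k$ from $k\dot\varphi$), so the constant in your step (iii) degrades with $k$ — but since $k=k(n,A,r)$ is fixed, this is consistent with $C=C(n,A,r)$ and is not a gap.
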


\begin{proof}
Without loss of generality, we assume $\mathbf{b}$ is uniformly bounded.

  By short time two-sided pseudolocality, Theorem~\ref{thm:SL27_2} and rescaling, it suffices to show (\ref{eqn:HB05_2}) for $t=-1$ and $t=1$.
  At time $0$, it is clear that $\omega_0(x)$ and $\tilde{\omega}_0(x)$ are uniformly equivalent.  The volume form
  $\omega_0^n$ is uniformly equivalent to $\omega_t^n$.  By the stability of $\tilde{\omega}$, inequality (\ref{eqn:HB03_2}), it suffices to prove
  the following two inequalities hold at point $x$.
  \begin{align}
  &  \Lambda_{\omega_1} \tilde{\omega}_0 \leq C,    \label{eqn:HB05_3}\\
  & \Lambda_{\omega_0} \tilde{\omega}_{-1} \leq C.  \label{eqn:HB05_4}
  \end{align}
  We shall prove the above two inequalities separately.

  Let $w_0$ be defined as that before Lemma~\ref{lma:SK23_1}.
  Let $w$ be the solution of $\square w=0$, initiating from $w_0$.   By the heat kernel estimate and the uniform upper
 bound of diameter of $B_{g(0)}(x,r)$ under metric $g(t)$(c.f. Lemma~\ref{lma:HA14_1}), we see that $w(x,1)$ is uniformly bounded away from $0$.
 Then Lemma~\ref{lma:J02_1} applies and we obtain that
 \begin{align*}
  \Lambda_{\omega_1(x)} \tilde{\omega}_0(x) =F(x,1) \leq \frac{C}{w(x,1)} <C.
 \end{align*}
  So we finish the proof of (\ref{eqn:HB05_3}).  The proof of (\ref{eqn:HB05_4}) is similar.  Modular time shifting, the only difference is that we do not know
  whether $x$ is very regular at time $t=-1$, so the construction of initial value of a heat equation may be a problem.
  However, due to Proposition~\ref{prn:SC02_2}, we can always find a point  $y_0 \in \mathcal{F}_{c_b \hslash}(M,-1) \cap B_{g(-1)}(x,\hslash)$. Consider the heat equation $w'$, starting from a cutoff function supported around $y_0$ at time $t=-1$.
 In light of uniform diameter bound of $B_{g(-1)}(y_0,\hslash)$ under the metric $g(0)$, $w'(x,0)$ is uniformly bounded away from $0$.  So we can follow the proof of
 Lemma~\ref{lma:J02_1} to obtain that
  \begin{align*}
       \Lambda_{\omega_0(x)} \tilde{\omega}_{-1}(x) <\frac{C}{w'(x,0)}<C.
 \end{align*}
 Therefore, (\ref{eqn:HB05_4}) is proved.

\end{proof}

Note that due to the two-sided pseudolocality, Theorem~\ref{thm:SL27_2},  we now can use blowup argument, taking for granted that every convergence in regular part takes place in
smooth topology.  Therefore, we can use the blowup argument in the proof of Proposition~\ref{prn:HE11_2},
based on the Liouville type theorem,  Lemma~\ref{lma:HE11_1}.
Then the following corollary follows directly from Proposition~\ref{prn:HB05_1}.

\begin{corollary}[\textbf{Long-time regularity improvement in two time directions}]
 Suppose $\mathcal{LM} \in \mathscr{K}(n,A)$, $r>0$, then
 \begin{align*}
    \mathcal{F}_r(M,0)  \subset \bigcap_{-1 \leq t \leq 1}  \mathcal{F}_{\delta}(M,t)
 \end{align*}
 for some $\delta=\delta(n,A,r)$.
\label{cly:HB05_1}
\end{corollary}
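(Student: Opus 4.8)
The plan is to deduce Corollary~\ref{cly:HB05_1} essentially as a direct combination of the time-direction metric equivalence already established in Proposition~\ref{prn:HB05_1} with a contradiction/blowup argument modeled on the proof of Proposition~\ref{prn:HE11_2}, now available in full strength because Theorem~\ref{thm:SC28_1} gives a uniform lower bound $\hslash$ on the polarized canonical radius and Theorem~\ref{thm:SL27_2} (two-sided pseudolocality) guarantees that all convergence in the regular part happens in the smooth topology. Concretely, fix $\mathcal{LM}\in\mathscr{K}(n,A)$, $r>0$, and a point $x\in\mathcal{F}_r(M,0)$, i.e. $\mathbf{cvr}(x,0)\ge r$. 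I want to produce $\delta=\delta(n,A,r)>0$ such that $\mathbf{cvr}(x,t)\ge\delta$ for all $t\in[-1,1]$, which is exactly the assertion $x\in\bigcap_{-1\le t\le 1}\mathcal{F}_\delta(M,t)$.

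First I would record the consequences of Proposition~\ref{prn:HB05_1}: for $x\in\mathcal{F}_r(M,0)$ the metrics satisfy $C^{-1}\omega_0(x)\le\omega_t(x)\le C\omega_0(x)$ with $C=C(n,A,r)$, for every $t\in[-1,1]$. Combined with Proposition~\ref{prn:HB05_2} (stability of the Bergman metric) and the volume-form equivalence coming from $|R|+|\lambda|\le A$ and $\frac{\partial}{\partial t}\log\omega_t^n=n\lambda-R$, this pins down the pointwise metric at $x$ at all times in $[-1,1]$ up to a uniform constant. Then I would argue by contradiction: if no such $\delta$ existed, there would be a sequence $\mathcal{LM}_i\in\mathscr{K}(n,A)$, points $x_i\in\mathcal{F}_r(M_i,0)$, and times $t_i\in[-1,1]$ with $\mathbf{cvr}(x_i,t_i)\to 0$. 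Because $\mathbf{cvr}(x_i,0)\ge r$, after passing to a subsequence the base metrics $g_i(0)$ converge in $\hat C^\infty$ near $x_i$; since $x_i$ is uniformly regular at time $0$ with regular scale $\ge c\,r$ (Proposition~\ref{prn:HA08_3}, applied at time $0$), the two-sided pseudolocality Theorem~\ref{thm:SL27_2} propagates a uniform curvature and higher-derivative bound on a space-time neighborhood $B_{g_i(0)}(x_i,\xi r)\times[-\xi^2 r^2,\xi^2 r^2]$, hence $\mathbf{cvr}(x_i,t)$ stays bounded below for $|t|\le\xi^2 r^2$. So the bad times $t_i$ must have $|t_i|\ge\xi^2 r^2$, and by passing to a further subsequence $t_i\to\bar t\neq 0$ with $|\bar t|\le 1$.

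With the time interval bounded away from $0$, I would then invoke Proposition~\ref{prn:HB05_1} directly: it gives $C^{-1}\omega_{t_i}(x_i)\le\omega_0(x_i)\le C\omega_{t_i}(x_i)$, i.e. the metric at the bad time is uniformly comparable to the (uniformly regular) metric at time $0$. Feeding this bound into a blowup at $(x_i,t_i)$ — rescaling by $\mathbf{cvr}(x_i,t_i)^{-1}\to\infty$ — one obtains, exactly as in the proof of Proposition~\ref{prn:HE11_2}, a nontrivial limit which is a complete K\"ahler Ricci-flat (Ricci-flat because $|R|+|\lambda|$ is scaled to $0$) metric on $\C^n$ satisfying a two-sided Euclidean equivalence $C^{-1}\delta_{i\bar j}\le\tilde g_{i\bar j}\le C\delta_{i\bar j}$; here the polarization lower bound $\hslash$ and Theorem~\ref{thm:SC28_1} are what allow the rescaled flows to converge in $\hat C^\infty$. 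By the Liouville type theorem Lemma~\ref{lma:HE11_1}, such a metric must be flat, so $\mathbf{cvr}$ of the base points of the rescaled flows tends to infinity — contradicting $\mathbf{cvr}_{\tilde g_i}(x_i,0)=1$. This yields the desired $\delta$ and proves the corollary.

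The main obstacle, and the step requiring the most care, is verifying that the blowup at the bad time $(x_i,t_i)$ genuinely converges and that the metric equivalence from Proposition~\ref{prn:HB05_1} survives the rescaling in the form needed to apply Lemma~\ref{lma:HE11_1}; one must check that the two-sided pseudolocality and the line-bundle stability estimates combine to give uniform higher-order control (not merely $C^0$ metric equivalence) in a fixed-size space-time neighborhood, so that Shi's estimate applies and the limit is a smooth Ricci-flat K\"ahler metric on all of $\C^n$ with the stated bilateral bound. Once that smooth convergence is in hand the contradiction is immediate, so the remainder is routine.
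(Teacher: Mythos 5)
Your argument is exactly the route the paper takes: the paper deduces the corollary from Proposition~\ref{prn:HB05_1} together with the blowup argument of Proposition~\ref{prn:HE11_2} and the Liouville-type Lemma~\ref{lma:HE11_1}, with Theorem~\ref{thm:SL27_2} supplying the smooth convergence needed in the rescaling step. Your write-up simply fills in the details of that blowup more explicitly; the structure and the invoked ingredients are identical.
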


Now we are ready to prove Theorem~\ref{thmin:HC06_1}, the long-time, two-sided pseudolocality theorem.

\begin{proof}[Proof of Theorem~\ref{thmin:HC06_1}]
  It follows from the combination of Corollary~\ref{cly:HB05_1} and Proposition~\ref{prn:HA08_3}.
\end{proof}

Suppose $\mathcal{LM}_i \in \mathscr{K}(n,A)$, $x_i \in M_i$.  Then for each time $t \in [-1,1]$ we have
\begin{align}
   (M_i,x_i,g_i(t)) \stackrel{\hat{C}^{\infty}}{\longrightarrow} (\bar{M}(t),\bar{x}(t),\bar{g}(t)).
\label{eqn:HE15_1}
\end{align}
Let us see how are the two time slice limits $\bar{M}(0)$ and $\bar{M}(1)$ related.
Clearly, by Theorem~\ref{thmin:HC06_1}, the regular parts of $\bar{M}(0)$ and $\bar{M}(1)$ can be identified.
The relations among the singular parts at different time slices are more delicate.
For simplicity of notations, we denote $(\bar{M}(0), \bar{x}(0), \bar{g}(0))$ by $(\bar{M},\bar{x},\bar{g})$,
denote $(\bar{M}(1), \bar{x}(1), \bar{g}(1))$ by $(\bar{M}', \bar{x}', \bar{g}')$.
Let us also assume $\Vol(M_i)$ is uniformly bounded. Then it is clear that both $\bar{M}$
and $\bar{M}'$ are compact by the uniform non-collapsing caused by Sobolev constant bound.
In light of the uniform partial-$C^0$-estimate along the flow,  without loss of generality,
we can assume that the Bergman function $\mathbf{b}$ is uniformly bounded below.  By the fundamental estimates
in~\cite{DS}, we obtain that the map
\begin{align*}
  Id_{0}:   \quad (\bar{M}, \bar{x}, \bar{g}) \to (\bar{M}, \bar{x}, \tilde{\bar{g}})
\end{align*}
is a homeomorphism. Recall that $(\bar{M}, \bar{x}, \tilde{\bar{g}})$ is the limit of $(M_i, x_i, \tilde{g}_i(0))$, where
$\tilde{g}_i$ is the pull-back of Fubini-Study metric.   Similarly, we have another homeomorphism map at time
$t=1$.
\begin{align*}
  Id_{1}:   \quad (\bar{M}', \bar{x}', \bar{g}') \to (\bar{M}', \bar{x}', \tilde{\bar{g}}').
\end{align*}
By Proposition~\ref{prn:HB05_2}, the pulled back Fubini-Study metrics $\tilde{g}_i(t)$ are uniformly equivalent
for $t \in [-1,1]$.  It follows that there is a Lipschitz map $Id_{01}$ between two time slices, for the pulled back
Fubini-Study metrics:
\begin{align*}
   Id_{01}: \quad (\bar{M}, \bar{x}, \tilde{\bar{g}}) \to (\bar{M}', \bar{x}', \tilde{\bar{g}}').
\end{align*}
Combining the previous steps and letting $\Psi=Id_{1}^{-1} \circ Id_{01} \circ Id_{0}$, we obtain that the map
\begin{align*}
   \Psi: (\bar{M}, \bar{x}, \bar{g}) \to (\bar{M}', \bar{x}', \bar{g}')
\end{align*}
is a homeomorphism.
By analyzing each component identity map,
it is  clear that $\Psi|_{\mathcal{R}(\bar{M})}$,  where $\mathcal{R}(\bar{M})$ is the regular part of $\bar{M}$,
maps $\mathcal{R}(\bar{M})$ to $\mathcal{R}(\bar{M}')$, as a biholomorphic map.
Similarly, $\Psi|_{\mathcal{S}(\bar{M})}$ is a homeomorphism to $\mathcal{S}(\bar{M}')$.
Therefore, the variety structure of the $\bar{M}(t)$ does not depend on time.
 We remark that the compactness of $\bar{M}$ is not essentially used here.
 If $\bar{M}$ is noncompact, the above argument go through formally
if we replace the target embedding space $\CP^{N}$ by $\CP^{\infty}$.
This formal argument can be made rigorous by applying delicate localization technique.
However, in our applications, $\bar{M}$ is always compact except it is a bubble, i.e., a blowup limit.
In this situation, we have the extra condition $|R|+|\lambda| \to 0$,
then $\Psi$ can be easily chosen as identity map, due to Proposition~\ref{prn:HE15_1}.

From the above discussion, it is clear that the topology structure and variety structure of $\bar{M}(t)$ does not  depend on time.
So we just denote $\bar{M}(t)$ by $\bar{M}$. Then we can denote the convergence (\ref{eqn:HE15_1}) by
\begin{align*}
   (M_i,x_i,g_i(t)) \stackrel{\hat{C}^{\infty}}{\longrightarrow} (\bar{M},\bar{x}(t),\bar{g}(t))
\end{align*}
for each $t$.
Hence, the limit family of metric spaces can be regarded as a family of evolving metrics on the limit
variety.  Therefore, the above convergence at each time $t$ can be glued together to obtain a global convergence
\begin{align*}
  \left\{ (M_i,x_i,g_i(t)), -T_i < t < T_i\right\}  \stackrel{\hat{C}^{\infty}}{\longrightarrow}  \left\{ (\bar{M}, \bar{x}, \bar{g}(t)), -\bar{T} <t <\bar{T}\right\}
\end{align*}
where $\displaystyle \bar{T}=\lim_{i \to \infty} t_i$.  Clearly, $\bar{g}(t)$ satisfies the K\"ahler Ricci flow equation on the regular part of $\bar{M}$.
Recall that we typically denote the K\"ahler Ricci flow $\left\{ (M_i,x_i,g_i(t)), -T_i < t < T_i\right\}$ by $\mathcal{M}_i$.  Then we obtain the convergence of
K\"ahler Ricci flows (with base points):
\begin{align}
  (\mathcal{M}_i, x_i)  \stackrel{\hat{C}^{\infty}}{\longrightarrow}  (\bar{\mathcal{M}}, \bar{x}).
\label{eqn:HE15_2}
\end{align}
If we further know the underlying space $\bar{M}$ is compact, then the notation can be even simplified as
\begin{align*}
  \mathcal{M}_i  \stackrel{\hat{C}^{\infty}}{\longrightarrow} \bar{\mathcal{M}}.
\end{align*}

\begin{remark}
 The limit flow $\bar{\mathcal{M}}$ can be regarded as an intrinsic K\"ahler Ricci flow on the normal variety $\bar{M}$.
 Actually, it is already clear that $\bar{\mathcal{M}}$ is at least a
 weak super solution of Ricci flow, in the sense of R.J. McCann and  P.M. Topping(\cite{MccTop}).
 From the point of view of K\"ahler geometry, when restricted to the potential level,
 the flow  $\bar{\mathcal{M}}$ coincides with the weak K\"ahler Ricci flow solution defined by
 Song and Tian(\cite{SongTian}), if $\bar{M}$ is compact.
\label{rmk:HE15_1}
\end{remark}

If we also consider the convergence of the line bundle structure, we can obviously generalize the convergence in (\ref{eqn:HE15_2})
as

\begin{align*}
  & \left(\mathcal{LM}_i, x_i \right) \stackrel{\hat{C}^{\infty}}{\longrightarrow} \left(\overline{\mathcal{LM}}, \bar{x}\right),
  \quad \textrm{if $\bar{M}$ is non-compact}.  \\
  & \mathcal{LM}_i \stackrel{\hat{C}^{\infty}}{\longrightarrow} \overline{\mathcal{LM}}, \quad \textrm{if $\bar{M}$ is compact}.
\end{align*}
With these notations, we  can formulate our compactness theorem as follows.

\begin{theorem}[\textbf{Polarized flow limit}]
  Suppose $\mathcal{LM}_i \in \mathscr{K}(n,A)$, $x_i \in M_i$.  Then we have
  \begin{align*}
       \left(\mathcal{LM}_i, x_i \right) \stackrel{\hat{C}^{\infty}}{\longrightarrow} \left(\overline{\mathcal{LM}}, \bar{x}\right),
  \end{align*}
  where $\overline{\mathcal{LM}}$ is a polarized K\"ahler Ricci flow solution on an analytic normal variety $\bar{M}$.
  Moreover, if $\bar{M}$ is compact, then it is a projective normal variety.
\label{thm:HB07_1}
\end{theorem}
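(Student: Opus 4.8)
\textbf{Proof proposal for Theorem~\ref{thm:HB07_1}.}

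The plan is to assemble the statement from the structural results already established in this section, treating the compact and non-compact cases uniformly at the level of the limit variety. First I would invoke the a priori lower bound on the polarized canonical radius, Theorem~\ref{thm:SC28_1}, which gives $\mathscr{K}(n,A)=\mathscr{K}(n,A;\hslash)$ for a uniform $\hslash=\hslash(n,A)$. This immediately puts every flow in $\mathscr{K}(n,A)$ into the regime treated in Section~4.3 and thereafter, so that all the conclusions of Proposition~\ref{prn:HA07_2} through Theorem~\ref{thm:HE19_1} apply to blowup/limit spaces of flows from $\mathscr{K}(n,A)$ without the need to assume a lower bound for $\mathbf{pcr}$ separately. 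Using the uniform volume ratio bound (Corollary~\ref{cly:SC13_1}) and volume doubling, a standard ball-packing argument yields a pointed Gromov-Hausdorff limit of $(M_i,x_i,g_i(0))$, which by the canonical radius estimates upgrades to $\hat{C}^{\infty}$-Cheeger-Gromov convergence away from the singular set, giving $(M_i,x_i,g_i(0)) \stackrel{\hat{C}^\infty}{\longrightarrow} (\bar M,\bar x,\bar g)$ at time $0$; the same holds at every fixed $t\in[-1,1]$, producing limits $(\bar M(t),\bar x(t),\bar g(t))$.

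Second, I would glue the time slices into a single evolving object on a fixed underlying space. Theorem~\ref{thmin:HC06_1} (proved just above as Theorem, via Corollary~\ref{cly:HB05_1} and Proposition~\ref{prn:HA08_3}) guarantees that the regular-singular decomposition of the limit is independent of $t$, so the regular parts $\mathcal{R}(\bar M(t))$ are canonically identified and carry a time-independent complex structure $\bar J$ (Proposition~\ref{prn:HA08_2}), with $\bar g(t)$ solving the K\"ahler Ricci flow on $\mathcal{R}$. The identification of the singular parts across time is the delicate point: here I would follow the argument sketched before the statement, composing the homeomorphism $Id_0\colon(\bar M,\bar g)\to(\bar M,\tilde{\bar g})$ coming from the Donaldson-Sun line-bundle estimates (available by the uniform partial-$C^0$-estimate, Theorem~\ref{thmin:HC08_1}, plus uniform Sobolev and $\dot\varphi$ bounds), the Lipschitz map $Id_{01}\colon(\bar M,\tilde{\bar g})\to(\bar M',\tilde{\bar g}')$ coming from the Bergman metric equivalence along time, Proposition~\ref{prn:HB05_2}, and the inverse $Id_1^{-1}$ at time $t=1$. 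The composite $\Psi=Id_1^{-1}\circ Id_{01}\circ Id_0$ is then a homeomorphism which is biholomorphic on regular parts and restricts to a homeomorphism of singular sets, so the topological and variety structure of $\bar M(t)$ does not depend on $t$, and by Theorem~\ref{thm:HE19_1} $\bar M$ is an analytic space with normal, log-terminal singularities. Together with the $\hat C^\infty$ convergence of the line-bundle metrics $h_i(t)$ and the curvature form $\omega_i(t)$ away from $\mathcal{S}$, this yields $(\mathcal{LM}_i,x_i)\stackrel{\hat C^\infty}{\longrightarrow}(\overline{\mathcal{LM}},\bar x)$ with $\overline{\mathcal{LM}}$ a polarized K\"ahler Ricci flow solution on $\bar M$.

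Third, for the compact case I would note that if $\bar M$ is compact then the Kodaira maps $\iota^{(k)}$ of a sufficiently high power of $\bar L$ embed $\bar M$ into $\CP^{N_k}$ for finite $N_k$ (this is where non-compactness would force passage to $\CP^\infty$ and a localization argument, which the stated theorem avoids by hypothesizing compactness), so $\bar M$ is a projective variety; combined with the normality from Theorem~\ref{thm:HE19_1} it is a projective normal variety. The main obstacle I anticipate is precisely the step of producing and controlling the time-linking homeomorphism $\Psi$ on the singular set: one must check that $Id_0$ and $Id_1$ are genuine homeomorphisms (not merely on regular parts), which relies on the partial-$C^0$-estimate holding uniformly along the flow and on the Donaldson-Sun machinery being applicable to the possibly singular limit polarized space, and that $Id_{01}$ is well-defined as a Lipschitz map between the Bergman-metric completions, which uses the two-sided bound $e^{-2A|t|}\tilde\omega_0\le\tilde\omega_t\le e^{2A|t|}\tilde\omega_0$ globally. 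Everything else is a matter of carefully citing Theorems~\ref{thm:SC28_1}, \ref{thmin:HC08_1}, \ref{thmin:HC06_1}, \ref{thm:HE19_1} and Propositions~\ref{prn:HB05_2}, \ref{prn:HB05_1}, \ref{prn:HA08_2}, \ref{prn:HE15_1} and organizing the convergence statements.
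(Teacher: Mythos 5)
Your proposal is correct and follows essentially the same route as the paper: the paper's ``proof'' of Theorem~\ref{thm:HB07_1} is precisely the discussion that precedes its statement, which constructs the time-slice limits via the canonical radius lower bound (Theorem~\ref{thm:SC28_1}), identifies regular parts across time via Theorem~\ref{thmin:HC06_1}, glues the singular parts through the composite homeomorphism $\Psi=Id_1^{-1}\circ Id_{01}\circ Id_0$ built from the Donaldson--Sun estimates and the Bergman metric stability (Proposition~\ref{prn:HB05_2}), and then invokes Theorem~\ref{thm:HE19_1} and the partial-$C^0$-estimate for the variety/projectivity conclusions. The obstacles you flag --- ensuring $Id_0$, $Id_1$ are genuine homeomorphisms on singular points and that $Id_{01}$ is a bona fide Lipschitz map between Bergman-metric completions --- are exactly the points the paper treats, via the uniform partial-$C^0$-estimate and the two-sided Bergman bound, so your account accurately reflects both the strategy and where the genuine work lies.
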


Notice that we have already proved Theorem~\ref{thmin:HC06_2} now.

\begin{proof}[Proof of Theorem~\ref{thmin:HC06_2}]
The limit polarized flow on variety follows from the combination of Theorem~\ref{thm:HB07_1} and Theorem~\ref{thm:HE19_1}.
The Minkowski dimension estimate of the singular set follows from Corollary~\ref{cly:HE25_2}.
\end{proof}

The properties of the limit spaces  can be improved if extra conditions are available.

\begin{proposition}[\textbf{KE limit}]
 Suppose $\mathcal{LM}_i \in \mathscr{K}(n,A)$ satisfies
 \begin{align}
 \int_{-T_i}^{T_i}\int_{M_i} |R-n\lambda| dv dt \to 0.
 \label{eqn:HA03_1}
 \end{align}
Then $\overline{\mathcal{LM}}$ is a static, polarized K\"ahler Ricci flow solution.  In other words, $\bar{g}(t) \equiv \bar{g}(0)$
and consequently are K\"ahler Einstein metric.
\label{prn:HB07_1}
\end{proposition}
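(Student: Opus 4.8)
The plan is to combine the weak compactness theorem just proved (Theorem~\ref{thm:HB07_1}) with the extra integral smallness hypothesis (\ref{eqn:HA03_1}) to force the limit flow to be stationary. First I would apply Theorem~\ref{thm:HB07_1} to extract a subsequence so that $(\mathcal{LM}_i, x_i) \stackrel{\hat{C}^{\infty}}{\longrightarrow} (\overline{\mathcal{LM}}, \bar{x})$, where $\overline{\mathcal{LM}}$ is a polarized K\"ahler Ricci flow on an analytic normal variety $\bar{M}$ with regular part $\mathcal{R}$ and singular part $\mathcal{S}$ of Minkowski codimension at least $4$. On $\mathcal{R} \times (-\bar{T}, \bar{T})$ the convergence is smooth, so it suffices to show that the limit Ricci potential $\dot{\bar{\varphi}}$ vanishes there, equivalently that $\bar{R} \equiv n\bar{\lambda}$ on $\mathcal{R}$, which by the flow equation $\frac{\partial}{\partial t} \bar{g}_{i\bar{j}} = -\bar{R}_{i\bar{j}} + \bar{\lambda} \bar{g}_{i\bar{j}} = -\sqrt{-1}\partial\bar{\partial}\dot{\bar{\varphi}}$ forces $\bar{g}(t) \equiv \bar{g}(0)$, hence each $\bar{g}(t)$ is K\"ahler Einstein.

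The key step is a lower-semicontinuity of the $L^1$-norm of $|R - n\lambda|$ under Cheeger-Gromov convergence. Fix any compact space-time region $K \times [a,b] \subset \mathcal{R} \times (-\bar{T},\bar{T})$. For large $i$ there are diffeomorphisms $\phi_i$ from a neighborhood of $K$ into $M_i$ with $\phi_i^* g_i(t) \to \bar{g}(t)$ in $C^\infty$ uniformly on $[a,b]$; in particular $\phi_i^*(R_{g_i} - n\lambda_i) \to \bar{R} - n\bar{\lambda}$ and $\phi_i^* dv_{g_i(t)} \to dv_{\bar{g}(t)}$ smoothly on $K$. Therefore
\begin{align*}
  \int_a^b \int_K |\bar{R} - n\bar{\lambda}| \, dv_{\bar{g}(t)} \, dt
  = \lim_{i\to\infty} \int_a^b \int_{\phi_i(K)} |R_{g_i} - n\lambda_i| \, dv_{g_i(t)} \, dt
  \leq \liminf_{i\to\infty} \int_{-T_i}^{T_i} \int_{M_i} |R - n\lambda| \, dv \, dt = 0,
\end{align*}
where the last equality is exactly hypothesis (\ref{eqn:HA03_1}). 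Since $K$ and $[a,b]$ are arbitrary, $\bar{R} \equiv n\bar{\lambda}$ on all of $\mathcal{R} \times (-\bar{T},\bar{T})$. Consequently $\sqrt{-1}\partial\bar\partial \dot{\bar{\varphi}} = -\bar{Ric} + \bar{\lambda}\bar{g} = 0$ on $\mathcal{R}$, so $\dot{\bar{\varphi}}(\cdot,t)$ is pluriharmonic on each time slice; combined with the normalization $\sup \dot{\bar{\varphi}} = 0$ inherited in the limit and the fact (from Theorem~\ref{thmin:SC24_1} and Corollary~\ref{cly:HE08_1}, plus the removable-singularity statement Proposition~\ref{prn:HD16_1}) that a bounded pluriharmonic—hence harmonic—function on $\mathcal{R}$ extends to a bounded harmonic function on $\bar{M}$ and is therefore constant, we get $\dot{\bar{\varphi}} \equiv 0$. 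Hence $\frac{\partial}{\partial t}\bar{g} = 0$ on $\mathcal{R}$, so $\bar{g}(t) \equiv \bar{g}(0)$, and $\bar{g}(0)$ is a K\"ahler Einstein metric on $\mathcal{R}$ with Einstein constant $\bar{\lambda}$.

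The main obstacle is the passage from ``$\dot{\bar{\varphi}}$ pluriharmonic on the regular part'' to ``$\dot{\bar{\varphi}}$ constant on $\bar{M}$'': one must know that $\dot{\bar{\varphi}}$ is a bounded function (which follows since $\|\dot{\varphi}\|_{C^0} \leq A$ along each $\mathcal{M}_i$ by (\ref{eqn:SK20_1}), and $C^0$ bounds pass to the limit), and then invoke the Liouville-type and removable-singularity results already established for the model/limit spaces — here I would cite Corollary~\ref{cly:HE08_1} and Proposition~\ref{prn:HD16_1}, noting that $\bar{M}$ has the requisite structure ($\mathcal{R}$ weakly convex, $\dim_{\mathcal{M}}\mathcal{S} \leq 2n-4$) by Theorem~\ref{thmin:HC06_2}. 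A secondary point to check is that $\bar{\lambda} = \lim \lambda_i$ exists after passing to a subsequence, which is immediate from $|\lambda_i| \leq A$. Everything else is routine bookkeeping about Cheeger-Gromov convergence. Finally, since the limit metric is static and polarized, $\overline{\mathcal{LM}}$ is a static polarized K\"ahler Ricci flow solution as claimed.
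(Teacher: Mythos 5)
Your overall strategy is the correct one: use smooth convergence on compact pieces of $\mathcal{R} \times (-\bar{T}, \bar{T})$ to pass the vanishing $L^1$ space-time integral to the limit, deduce $\bar{R} \equiv n\bar{\lambda}$, and then run a Liouville/maximum-principle argument on the limit Ricci potential. However, there is a concrete logical error at the transition to the Ricci potential. You write that $\bar{R} \equiv n\bar{\lambda}$ gives ``$\sqrt{-1}\partial\bar{\partial}\dot{\bar{\varphi}} = -\overline{Ric} + \bar{\lambda}\bar{g} = 0$'', i.e.\ that the limit metric is already Einstein. But the scalar curvature identity is only the \emph{trace} of the Ricci-potential equation: from $\sqrt{-1}\partial\bar{\partial}\dot{\varphi} = -Ric + \lambda g$ one obtains $\Delta\dot{\varphi} = -R + n\lambda$, so $\bar{R} \equiv n\bar{\lambda}$ yields only $\Delta\dot{\bar{\varphi}} \equiv 0$. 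It does not give $\overline{Ric} = \bar{\lambda}\bar{g}$ (this is precisely the conclusion you are trying to reach), and it certainly does not make $\dot{\bar{\varphi}}$ pluriharmonic. The ``pluriharmonic'' claim is a red herring; you should simply say $\dot{\bar{\varphi}}$ is harmonic on $\mathcal{R}$.

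Fortunately, the rest of your argument (bounded, harmonic on $\mathcal{R}$, extend across $\mathcal{S}$ via Proposition~\ref{prn:HD16_1}, conclude constancy) only ever uses harmonicity, so the overall chain can be salvaged once you correct the order of deductions: $\bar{R} \equiv n\bar{\lambda}$ gives $\Delta\dot{\bar{\varphi}}\equiv 0$, then constancy of $\dot{\bar{\varphi}}$ forces $\sqrt{-1}\partial\bar{\partial}\dot{\bar{\varphi}}\equiv 0$, and only \emph{then} do you get $\overline{Ric} = \bar{\lambda}\bar{g}$ and $\partial_t\bar{g}\equiv 0$. One further point you should address: Corollary~\ref{cly:HE08_1} is established for spaces in $\widetilde{\mathscr{KS}}(n,\kappa)$, i.e.\ under the additional hypothesis $\sup|R|+|\lambda|\to 0$ that makes the limit Ricci-flat; in the present proposition the limit $\bar{M}$ need not be Ricci-flat, so you should either restrict to the compact case (which covers the Fano applications, via volume and Sobolev bounds giving bounded diameter) and invoke the strong maximum principle after extension, or argue separately that the needed Cheng--Yau/Liouville machinery is available on $\bar{M}$.
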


Suppose $\mathcal{LM} \in \mathscr{K}(n,A)$  and $\lambda>0$.  Then it is clear that $c_1(M)>0$, or $M$ is Fano.
Note that for every Fano manifold, we have a uniform bound $c_1^n(M) \leq C(n)$(c.f.~\cite{Deba}).  This implies that
\begin{align*}
 \frac{1}{A}  \leq \Vol(M)=c_1^n(L)=\lambda^{-n} c_1^n(M) \leq C \lambda^{-n}.
\end{align*}
So $\lambda$ is bounded away from above.   If we assume $\lambda$ is bounded away from zero, then
$\Vol(M)=c_1^{n}(L)$ is uniformly bounded. Consequently,  $\diam(M)$ is uniformly bounded by non-collapsing, due to the Sobolev constant bound.  Therefore, if we have a sequence of $\mathcal{LM}_i \in \mathscr{K}(n,A)$
with $\lambda_i>\lambda_0>0$, we can always assume
\begin{align*}
  \lambda_i \to \bar{\lambda}>0, \quad  \mathcal{LM} \stackrel{\hat{C}^{\infty}}{\longrightarrow} \overline{\mathcal{LM}}
\end{align*}
without considering the base points.

\begin{proposition}[\textbf{KRS limit}]
 Suppose $\mathcal{LM}_i \in \mathscr{K}(n,A)$ satisfies
  \begin{align}
 \lambda_i>\lambda_0>0, \quad
 \mu\left(M_i, g(T_i), \frac{\lambda_i}{2} \right)-\mu\left(M_i, g(-T_i), \frac{\lambda_i}{2} \right) \to 0,
\label{eqn:HA03_2}
\end{align}
 where $\mu$ is Perelman's $W$-functional.   Suppose $\overline{\mathcal{LM}}$
 is the limit of $\mathcal{LM}$.
Then $\overline{\mathcal{M}}$ is a  gradient shrinking K\"ahler Ricci soliton.  In other words, there is a smooth real valued function $\hat{f}$
defined on $\mathcal{R}(\bar{M}) \times (-\bar{T},\bar{T})$ such that
\begin{align}
    \hat{f}_{jk}=\hat{f}_{\bar{j}\bar{k}}=0,  \quad  R_{j\bar{k}} + \hat{f}_{j\bar{k}}-\hat{g}_{j\bar{k}}=0.
 \label{eqn:HB07_1}
 \end{align}
\label{prn:HB07_2}
\end{proposition}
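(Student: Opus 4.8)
The plan is to carry out, at the level of Perelman's global $\mathcal{W}$-functional, the same argument that was used in Theorem~\ref{thm:SC09_1} with the localized $W$-functional, and then to pass to the limit on the regular part of $\bar M$. First, since $\lambda_i>\lambda_0>0$, the discussion preceding the statement shows $\Vol(M_i)=c_1^n(L_i)=\lambda_i^{-n}c_1^n(M_i)$ is uniformly bounded, hence $\diam_{g_i(0)}(M_i)$ is uniformly bounded by the uniform Sobolev and non-collapsing estimates in $\mathscr{K}(n,A)$; passing to a subsequence we may assume $\lambda_i\to\bar\lambda>0$, so $\bar M$ is compact and, by Theorem~\ref{thm:HB07_1}, $\mathcal{LM}_i\stackrel{\hat{C}^{\infty}}{\longrightarrow}\overline{\mathcal{LM}}$ on the analytic normal variety $\bar M$, with $\bar g(t)$ solving the K\"ahler Ricci flow on $\mathcal{R}(\bar M)$. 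Next I would record Perelman's monotonicity in its K\"ahler form: for the normalized flow $\partial_t g_{i\bar j}=-R_{i\bar j}+\lambda_i g_{i\bar j}$ there is a fixed parameter $\tau_i$ determined by $\lambda_i$ and a minimizer $u_i=(4\pi\tau_i)^{-n}e^{-f_i}$ realizing $\mu(M_i,g_i(t),\tau_i)$, solving the conjugate heat equation $\square^{*}u_i=0$, such that
\begin{align*}
  \mu\bigl(M_i,g_i(T_i),\tau_i\bigr)-\mu\bigl(M_i,g_i(-T_i),\tau_i\bigr)
  =\int_{-T_i}^{T_i}\!\!\int_{M_i}\Bigl(2\tau_i\bigl|R_{j\bar k}+(f_i)_{j\bar k}-\frac{1}{2\tau_i}g_{j\bar k}\bigr|^2+2\tau_i|(f_i)_{jk}|^2+2\tau_i|(f_i)_{\bar j\bar k}|^2\Bigr)u_i\,dv\,dt,
\end{align*}
the two extra $(2,0)$ and $(0,2)$ Hessian terms being the K\"ahler refinement of Perelman's formula and the eventual source of the rigidity $\hat f_{jk}=0$. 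Hence, by hypothesis, the right hand side tends to $0$.

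Second, I would establish the a priori control on the minimizers that is needed to take the limit. The uniform Sobolev constant and scalar curvature bound give a uniform lower bound for $\mu(M_i,g_i(t),\tau_i)$ and, via Perelman's estimates, uniform bounds for $f_i$ on the part of $M_i$ with controlled geometry (equivalently one may replace the $\mathcal{W}$-minimizers by conjugate heat solutions based at $t=-T_i$, which still make the corresponding functional monotone and whose value at $t=T_i$ stays bounded). Arguing as in Proposition~\ref{prn:SC16_1}, combining the on-diagonal heat kernel estimates available along flows in $\mathscr{K}(n,A)$ with the non-collapsing, one obtains that $u_i$ is uniformly bounded and converges smoothly, on every compact subset of $\mathcal{R}(\bar M)\times(-\bar T,\bar T)$, to a positive limit $\hat u$ solving the conjugate heat equation there; since $\mathcal{S}(\bar M)$ has Minkowski codimension at least $4$ by Corollary~\ref{cly:HE25_2} (hence measure zero) and $u_i$ is uniformly bounded, no mass escapes, so $\int_{\bar M}\hat u\,dv\equiv1$. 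I expect this step to be the main obstacle: ruling out concentration of the $u_i$ near $\mathcal{S}$ and securing their uniform regularity, so that $\hat u$ is a genuine positive, unit-mass solution on the regular part, requires essentially the full strength of the Sobolev, non-collapsing and heat-kernel estimates of $\mathscr{K}(n,A)$, exactly as in the proof behind Proposition~\ref{prn:SC16_1}.

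Finally I would pass to the limit in the defect integral. Writing $\hat f=-\log\hat u-n\log(4\pi\bar\tau)$ with $\bar\tau=\lim_i\tau_i$, for every compact $K\times[a,b]\subset\mathcal{R}(\bar M)\times(-\bar T,\bar T)$ the smooth convergence on the regular part together with $\lambda_i\to\bar\lambda$ gives
\begin{align*}
  \int_a^b\!\!\int_K\Bigl(\bigl|R_{j\bar k}+\hat f_{j\bar k}-\bar\lambda\hat g_{j\bar k}\bigr|^2+|\hat f_{jk}|^2+|\hat f_{\bar j\bar k}|^2\Bigr)\hat u\,dv\,dt
  =\lim_{i\to\infty}\int_a^b\!\!\int_K(\,\cdots\,)u_i\,dv\,dt=0,
\end{align*}
and since $K$ and $[a,b]$ are arbitrary and $\hat u>0$, we conclude $\hat f_{jk}=\hat f_{\bar j\bar k}=0$ and $R_{j\bar k}+\hat f_{j\bar k}=\bar\lambda\,\hat g_{j\bar k}$ on $\mathcal{R}(\bar M)\times(-\bar T,\bar T)$; after the harmless normalization $\bar\lambda=1$ (arranged by rescaling, or built into the choice of $\tau_i$) this is precisely (\ref{eqn:HB07_1}). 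Smoothness of $\hat f$ on $\mathcal{R}(\bar M)\times(-\bar T,\bar T)$ follows from the smoothness and positivity of $\hat u$ there together with standard parabolic regularity, completing the identification of $\overline{\mathcal{M}}$ with a gradient shrinking K\"ahler Ricci soliton.
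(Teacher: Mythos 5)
Your approach is essentially the one the paper takes: pass to a compact limit, invoke Perelman's $\mu$-functional monotonicity in its K\"ahler refinement (which produces the extra $(2,0)$- and $(0,2)$-Hessian terms responsible for $\hat f_{jk}=\hat f_{\bar j\bar k}=0$), control the conjugate-heat solution via the Sobolev, non-collapsing and heat-kernel estimates, and pass to the limit of the defect integral on compact subsets of $\mathcal{R}(\bar M)\times(-\bar T,\bar T)$. Two small corrections. First, the displayed identity should be an inequality $\leq$: the $\mu$-minimizer at the top time, flowed backward by $\square^*$, is merely admissible at $-T_i$, so the difference $\mu(g(T_i))-\mu(g(-T_i))$ only \emph{dominates} the defect integral --- which is all that is needed since that integral is nonnegative. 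Second, the paper does not push a single minimizer over the whole interval $[-T_i,T_i]$; it takes the minimizer at a fixed time $t=a$ with $a<\bar T$, flows back only to $t=-a$, bounds the defect on $[-a,a]$ by $\mu(g(T_i))-\mu(g(-T_i))$ via monotonicity, and then diagonalizes in $a\to\bar T$. This sidesteps precisely the ``main obstacle'' you flag --- long-time control of the conjugate heat solution near $\mathcal{S}$ --- since one only ever needs bounds over a bounded time interval where the elliptic minimizer at $t=a$ has good a priori regularity.
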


\begin{proof}
 Without loss of generality, we may assume $\lambda_i=1$.
 Let $\mathcal{LM} \in \mathscr{K}(n,A)$.  At time $t=1$, let $u$ be the minimizer of Perelman's $\mu$-functional.
 Then solve the backward heat equation $\square^* u= (-\partial_{t}-\Delta + R-n\lambda) u=0$.
 Let $f$ be the function such that $(2\pi)^{-n}e^{-f}=u$.  Then we have
 \begin{align*}
  &\quad  \int_{-1}^{1}\int_M (2\pi)^{-n} \left\{\left| R_{j\bar{k}}+f_{j\bar{k}}-g_{j\bar{k}}\right|^2 + |f_{jk}|^2+|f_{\bar{j}\bar{k}}|^2\right\} e^{-f} dv\\
  &\leq \mu\left(M,g(1),\frac{1}{2} \right)-\mu\left(M,g(-1),\frac{1}{2}\right)\\
  &\leq \mu\left(M,g(T),\frac{1}{2} \right)-\mu\left(M,g(-T),\frac{1}{2}\right) \to 0.
 \end{align*}
 At time $t=1$, $f$ has good regularity estimate for it is a solution of an elliptic equation.
 For $t \in (-1,1)$, we have estimate of $f$ from heat kernel estimate.
 It is not hard to see that, on the space-time domain $\mathcal{R} \times (-1,1)$, $f$ converges to a limit function $\hat{f}$ satisfying (\ref{eqn:HB07_1}).
 Clearly, the time interval of $(-1,1)$ can be replaced by $(-a,a)$ for every $a \in (1,\bar{T})$.  For each $a$, we have a limit function
 $\hat{f}^{(a)}$, which satisfies equation (\ref{eqn:HB07_1}) and therefore has enough a priori estimates.
 Then let $a \to \bar{T}$ and take diagonal sequence limit, we obtain a limit function $\hat{f}^{(\bar{T})}$ which satisfies (\ref{eqn:HB07_1})
 on $\mathcal{R} \times (-\bar{T}, \bar{T})$.  Without loss of generality, we still denote $\hat{f}^{(\bar{T})}$ by $\hat{f}$.
 Then $\hat{f}$ satisfies (\ref{eqn:HB07_1}) on $\mathcal{R} \times (-\bar{T}, \bar{T})$.
\end{proof}

\begin{remark}
It is an interesting problem to see whether $(\bar{M}, \bar{g}(0))$ is a conifold in Theorem~\ref{thm:HB07_1}.
This question has affirmative answer when we know $(\bar{M}, \bar{g}(0))$ has Einstein regular part, following the proof of
Theorem~\ref{thm:HE08_1} and Proposition~\ref{prn:SC30_1}.
In particular, the limit spaces in Proposition~\ref{prn:HB07_1} and Proposition~\ref{prn:HE15_1} are K\"ahler Einstein conifolds.
\label{rmk:HE20_1}
\end{remark}

\section{Applications}

 In this section, we will focus on the applications of our structure theory to the study
 of anti-canonical K\"ahler Ricci flows.

\subsection{Convergence of anti-canonical K\"ahler Ricci flows at time infinity}

Based on the structure theory, Theorem~\ref{thmin:SC24_3} can be easily proved.

\begin{proof}[Proof of Theorem~\ref{thmin:SC24_3}]
In view of the fundamental estimate of Perelman (c.f.~\cite{SeT}),  in order (\ref{eqn:SK20_1}) to hold, we only need a Sobolev constant bound,
which was proved by Q. Zhang (c.f.\cite{Zhq1}) and R. Ye (c.f. \cite{Ye}).
Therefore, the truncated flow sequences locate in $\mathscr{K}(n,A)$ for a uniform $A$.
It follows from Theorem~\ref{thmin:HC06_2} that the limit K\"ahler Ricci flow exists on a compact projective normal variety.
The limit normal variety is $Q$-Fano since it has a limit anti-canonical polarization.
According to Proposition~\ref{prn:HB07_2},
the boundedness and monotonicity of Perelman's $\mu$-functional force the limit flow to be a K\"ahler Ricci soliton.
The volume estimate of $r$-neighborhood of $\mathcal{S}$ follows from Corollary~\ref{cly:HA11_1}
and estimate (\ref{eqn:SB13_1}).
\end{proof}

We continue to discuss applications beyond Theorem~\ref{thmin:SC24_3}.
The following property is well known to experts, we write it down here for the convenience of the readers.

  \begin{proposition}[\textbf{Connectivity of limit moduli}]
    Suppose $\mathcal{M}=\left\{ (M^n, g(t)), 0 \leq t <\infty \right\}$ is an anti-canonical  K\"ahler Ricci flows on Fano manifold
 $(M, J)$.  Let $\mathscr{M}$ be the collection of all the possible limit space along this flow.  Then $\mathscr{M}$ is connected.
   \label{prn:SL06_1}
   \end{proposition}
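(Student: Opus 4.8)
The plan is to identify $\mathscr{M}$ with the $\omega$-limit set of the continuous trajectory $s\mapsto\mathcal{M}_s$ inside a compact metric moduli space, and then invoke the soft topological fact that the $\omega$-limit set of a continuous curve in a compact space is connected. First I would fix the ambient space. By Perelman's fundamental estimates along an anti-canonical K\"ahler Ricci flow (diameter, scalar curvature, and $C^1$-norm of the Ricci potential), together with the Sobolev constant bound of Zhang and Ye, there is a uniform $A$ with $\mathcal{M}_s\in\mathscr{K}(n,A)$ for all $s\ge 2$; the diameters are moreover uniformly bounded, so every subsequential limit is compact and no basepoint needs to be tracked.

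To deal with the fact that $\mathcal{M}_s$ lives over the growing interval $[-s,s]$, I would work with all finite sub-windows at once. For each integer $k\ge 1$ let $d_k$ denote a distance measuring $\hat C^\infty$-Cheeger-Gromov closeness of two flows after restriction to the time interval $[-k,k]$, and set $d=\sum_{k\ge 1}2^{-k}\min\{1,d_k\}$, a metric on the set of $\hat C^\infty$-equivalence classes of flows (equivalently one may use the inverse limit over the restriction maps $[-(k+1),k+1]\to[-k,k]$). By Theorem~\ref{thmin:HC06_2} the $d$-closure $\mathscr{X}$ of $\{\mathcal{M}_s:s\ge 2\}$ is compact. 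If one prefers to regard a ``limit space'' as merely the time-$0$ slice, replace the flow pieces by the metric spaces $(M,g(s))$ and $\mathscr{X}$ by the Gromov--Hausdorff closure, which is compact by Theorem~\ref{thmin:SC24_1}; the remaining argument is identical.

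Next I would check that $\gamma(s):=\mathcal{M}_s$, $s\in[2,\infty)$, is a continuous curve in $\mathscr{X}$: since $M$ is a fixed compact manifold and $g(\cdot)$ is a smooth solution with bounded geometry on each fixed finite time interval, the shifted metrics $g(s+\cdot)$ converge smoothly to $g(s_0+\cdot)$ on $M\times[-k,k]$ as $s\to s_0$, so $d_k(\gamma(s),\gamma(s_0))\to 0$ for every $k$ and hence $d(\gamma(s),\gamma(s_0))\to 0$. By definition
\[
  \mathscr{M}=\bigcap_{T\ge 2}\ \overline{\{\gamma(s):s\ge T\}}.
\]
Each $K_T:=\overline{\{\gamma(s):s\ge T\}}$ is compact (closed in $\mathscr{X}$), connected (closure of the continuous image of the connected set $[T,\infty)$), and nonempty (limits exist by Theorem~\ref{thmin:SC24_3}); the family $\{K_T\}$ is nested. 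A nested decreasing intersection of nonempty compact connected subsets of a Hausdorff space is connected: if $\mathscr{M}=A\sqcup B$ with $A,B$ nonempty, closed, disjoint, pick disjoint open $U\supseteq A$, $V\supseteq B$; then the nested compact sets $K_T\setminus(U\cup V)$ have empty intersection, so some $K_{T_0}\subseteq U\cup V$, contradicting connectedness of $K_{T_0}$. Hence $\mathscr{M}$ is connected.

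The only genuinely technical ingredient is the continuity of $\gamma$ in the $\hat C^\infty$-Cheeger-Gromov topology over growing time windows; the hard part is really just choosing the topology (the weighted-metric or inverse-limit device above) so that time-translation becomes continuous in its parameter, after which continuity is immediate from smoothness of the flow and the uniform $\mathscr{K}(n,A)$ bounds. Everything else --- compactness, and ``nested compact connected intersection is connected'' --- is standard.
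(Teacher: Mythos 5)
Your proof is correct and is, at bottom, the same argument as the paper's: uniform compactness of the moduli (via Perelman/Zhang/Ye and the paper's structure theory), continuity of the trajectory $s\mapsto (M,g(s))$ in the relevant topology, and a soft topological fact about $\omega$-limit sets. The difference is purely expository. The paper does not set up a metric on a moduli space of flows or quote the nested-intersection lemma; instead it works directly with Gromov--Hausdorff distance between the time slices $(M,g(s))$, argues by contradiction, extracts a gap $\eta_a>0$ between the putative connected component $\mathscr{M}_a$ and its complement in $\mathscr{M}$, and uses an intermediate-value argument (``by continuity of the flow, find $\theta_i$ with $d(M_{\theta_i},\mathscr{M}_a)=\tfrac12\eta_a$'') to produce a limit at distance exactly $\tfrac12\eta_a$ from $\mathscr{M}_a$, contradicting the definition of $\eta_a$. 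That is precisely the ad-hoc proof of the fact you cite, that the $\omega$-limit set of a continuous curve in a compact metric space is connected. Your version is a bit more systematic and cleaner to reuse (you observe that it works at the level of flows and not just metric spaces, via the inverse-limit / weighted-metric device), but it does not buy anything logically: both hinge on the same two non-trivial inputs — uniform precompactness of $\{(M,g(s))\}_{s\ge 2}$, and continuity of $s\mapsto (M,g(s))$ — plus elementary point-set topology. One small remark: the inverse-limit construction you introduce is more than what is needed for the stated proposition, since $\mathscr{M}$ consists of metric spaces (time-$0$ slices), and your parenthetical reduction to the Gromov--Hausdorff picture already contains everything required; in that reduction your argument and the paper's coincide almost verbatim.
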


  \begin{proof}

      If the statement was wrong, we have two limit spaces $\bar{M}_a$ and $\bar{M}_b$, locating in different
      connected components of $\mathscr{M}$.  Let $\mathscr{M}_a$ be the connected component containing
      $\bar{M}_a$. Since $\mathscr{M}_a$ is a connected component, it is open and closed.
      So its closure $\overline{\mathscr{M}_a}$ is the
      same as $\mathscr{M}_a$.  Clearly,  $\mathscr{M}_a$ is compact under the Gromov-Hausdorff topology.
      Define
      \begin{align}
       &d(X,\mathscr{M}_a) \triangleq \inf_{Y \in \mathscr{M}_a} d_{GH}(X,Y),  \label{eqn:SL06_7}\\
       &\eta_a \triangleq \inf_{X \in \mathscr{M} \backslash \mathscr{M}_a} d(X, \mathscr{M}_a).  \label{eqn:SL06_8}
      \end{align}
      Clearly, $\eta_a>0$ by the compactness of $\mathscr{M}_a$ and the fact that $\mathscr{M}_a$ is a connected component.

      Without loss of generality, we can assume $(M, g(t_i))$ converges to $\bar{M}_a$,
      $(M, g(s_i))$ converges to $\bar{M}_b$, for $t_i \to \infty$ and $s_i >t_i$.
      For simplicity of notation, we denote $(M,g(t_i))$ by $M_{t_i}$, $(M,g(s_i))$ by $M_{s_i}$.
      For large $i$, we have
      \begin{align}
        d_{GH}(M_{t_i}, \bar{M}_a)<\frac{\eta_a}{100}, \quad
        d_{GH}(M_{s_i}, \bar{M}_b) < \frac{\eta_a}{100}.  \label{eqn:SL06_6}
      \end{align}
      In particular,  the above inequalities imply that
      \begin{align*}
        d(M_{t_i}, \mathscr{M}_a)<\frac{\eta_a}{100}, \quad d(M_{s_i}, \mathscr{M}_a)>\frac{99}{100}\eta_a.
      \end{align*}
     By continuity of the flow, we can find $\theta_i \in (t_i, s_i)$ such that
     $d(M_{\theta_i}, \mathscr{M}_a)=\frac{1}{2}\eta_a$, whose limit form is
     \begin{align}
        d(\bar{M}_c, \mathscr{M}_a)=\frac{1}{2}\eta_a,
     \label{eqn:SL06_10}
     \end{align}
     where $\bar{M}_c$ is the limit of $M_{\theta_i}$.    However, (\ref{eqn:SL06_10}) contradicts with
     (\ref{eqn:SL06_8}) and the fact $\eta_a>0$.
  \end{proof}

 Proposition~\ref{prn:SL06_1} can be generalized as follows.

  \begin{proposition}[\textbf{KRS limit moduli}]
     Suppose $\mathcal{M}_s=\left\{ (M_s^n, g_s(t)), 0 \leq t <\infty,  s \in X \right\}$
      is a smooth family of anti-canonical  K\"ahler Ricci flows on Fano manifolds $(M_s, J_s)$, where
      $X$ is a connected parameter space.
     We call $(\bar{M}, \bar{g})$ as a limit space if  $(\bar{M},\bar{g})$ is the Gromov-Hausdorff limit of $(M, g_{s_i}(t_i))$ for some
     $t_i \to \infty$ and $s_i \to \bar{s} \in X$.

     Suppose $\displaystyle f(s)=\lim_{t \to \infty} \mu \left(g_s(t), \frac{1}{2} \right)$ is an upper semi-continuous function on $X$.
     Then we have the following properties.
     \begin{itemize}
     \item Every limit space is a K\"ahler Ricci soliton.
     \item  Let $\widetilde{\mathscr{M}}$ be the collection of all the limit spaces.
               Then $\widetilde{\mathscr{M}}$ is connected under the Gromov-Hausdorff topology.
     \end{itemize}
 \label{prn:SL07_2}
  \end{proposition}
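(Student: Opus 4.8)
\textbf{Proof proposal for Proposition~\ref{prn:SL07_2}.}
The plan is to mimic closely the proofs of Theorem~\ref{thmin:SC24_3} and Proposition~\ref{prn:SL06_1}, now carried out in a one-parameter-plus-time family, and then to argue connectedness by a contradiction argument in the product parameter space $X \times [0,\infty)$. First I would note that for each fixed $s$, Perelman's fundamental estimates (c.f.~\cite{SeT}) together with the Sobolev constant bound of Q.~Zhang (\cite{Zhq1}) and R.~Ye (\cite{Ye}) apply uniformly in $s$ on the compact family, so that all truncated flows $\{(M_{s},g_{s}(t+\tau)), -\tau \le t \le \tau\}$ with $\tau$ large locate in a single $\mathscr{K}(n,A)$ for a uniform $A=A(X)$; here I use that $X$ is compact or, if not, that we only ever take $s_i \to \bar s \in X$ so the estimates are needed only near $\bar s$. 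Then Theorem~\ref{thmin:HC06_2} (equivalently Theorem~\ref{thm:HB07_1}) gives, after passing to a subsequence, that $(M_{s_i}, g_{s_i}(t_i))$ converges in $\hat C^{\infty}$-Cheeger-Gromov topology to a polarized K\"ahler Ricci flow on a $Q$-Fano normal variety. The first bullet then follows from Proposition~\ref{prn:HB07_2}: the hypothesis that $f(s)=\lim_{t\to\infty}\mu(g_s(t),\tfrac12)$ exists (monotonicity of $\mu$ along each fixed-$s$ flow guarantees the limit exists) forces $\mu(g_{s_i}(t_i+\tau),\tfrac12)-\mu(g_{s_i}(t_i-\tau),\tfrac12)\to 0$ for every fixed $\tau$, so condition (\ref{eqn:HA03_2}) is met and the limit space-time is a gradient shrinking K\"ahler Ricci soliton.

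For the second bullet I would argue by contradiction exactly as in Proposition~\ref{prn:SL06_1}, but allowing the base point $s$ to move. Suppose $\widetilde{\mathscr{M}}$ is disconnected; since each limit space is compact (uniform diameter bound from non-collapsing plus the $Q$-Fano/volume bound $c_1^n \le C(n)$ as in the discussion before Proposition~\ref{prn:HB07_1}) and $\widetilde{\mathscr{M}}$ is closed under Gromov-Hausdorff limits, a connected component $\widetilde{\mathscr{M}}_a$ is compact, open and closed, so $\eta_a := \inf_{X \in \widetilde{\mathscr{M}}\setminus\widetilde{\mathscr{M}}_a} d(X,\widetilde{\mathscr{M}}_a) > 0$, where $d(\cdot,\widetilde{\mathscr{M}}_a)$ is the Gromov-Hausdorff distance to the set. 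Pick $\bar M_a \in \widetilde{\mathscr{M}}_a$ realized by $(M_{s_i}, g_{s_i}(t_i))$ and $\bar M_b \notin \widetilde{\mathscr{M}}_a$ realized by $(M_{\sigma_j}, g_{\sigma_j}(\tau_j))$. The key point is that I can connect these two data points by a \emph{continuous path} in $X \times [0,\infty)$: first move the parameter from $s$ to $\sigma$ along a path in the connected space $X$ at a fixed large time, then flow forward in $t$; along this path the function $(s,t)\mapsto (M_s, g_s(t))$ varies continuously in the Gromov-Hausdorff topology because the family is smooth and the flow depends continuously on $t$, so $p \mapsto d((M_{s(p)}, g_{s(p)}(t(p))), \widetilde{\mathscr{M}}_a)$ is continuous. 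By the intermediate value theorem there is a point $p_i$ on this path with $d(\,\cdot\,,\widetilde{\mathscr{M}}_a)=\tfrac12\eta_a$; passing to a Gromov-Hausdorff subsequential limit (justified by the uniform $\mathscr{K}(n,A)$ membership and Theorem~\ref{thmin:HC06_2}) yields a limit space $\bar M_c$ with $d(\bar M_c,\widetilde{\mathscr{M}}_a)=\tfrac12\eta_a$, which by the first bullet is again a K\"ahler Ricci soliton and hence lies in $\widetilde{\mathscr{M}}$. But then $\bar M_c \in \widetilde{\mathscr{M}} \setminus \widetilde{\mathscr{M}}_a$ while $d(\bar M_c,\widetilde{\mathscr{M}}_a)=\tfrac12\eta_a < \eta_a$, contradicting the definition of $\eta_a$. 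This contradiction proves connectedness.

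The main obstacle I expect is making the ``continuity of the flow in $(s,t)$ jointly'' step precise enough to feed the intermediate value theorem: one must know that along the constructed path in $X \times [0,\infty)$ the time slices stay in a fixed $\mathscr{K}(n,A)$ (so that subsequential Gromov-Hausdorff limits exist and the compactness machinery of Section 5 applies), and that the Gromov-Hausdorff distance between nearby slices is genuinely small --- for the $t$-direction this is Theorem~\ref{thm:HE15_1} (uniform continuity of the distance function along the flow), and for the $s$-direction it follows from smoothness of the family on compact parameter intervals. A secondary subtlety is that $\widetilde{\mathscr{M}}_a$ being a connected component need not a priori be closed in a general topological space, but the Gromov-Hausdorff ``space of limits'' here is compact (uniform diameter and non-collapsing, plus the $\hat C^\infty$ structure theory), so components are automatically closed; I would state this carefully. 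Everything else is a routine transcription of the single-flow arguments already in the paper.
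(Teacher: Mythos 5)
Your argument for the second bullet (connectedness) is a reasonable fleshing-out of the paper's ``almost the same as Proposition~\ref{prn:SL06_1}'' remark, but your proof of the first bullet contains a genuine gap: you never use the upper semi-continuity of $f$, and the step you use in its place is false.

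You claim that ``the hypothesis that $f(s)=\lim_{t\to\infty}\mu(g_s(t),\tfrac12)$ exists $\ldots$ forces $\mu(g_{s_i}(t_i+\tau),\tfrac12)-\mu(g_{s_i}(t_i-\tau),\tfrac12)\to 0$.'' This would be correct if $s_i$ were held fixed: monotonicity plus existence of the limit implies the difference of two far-out evaluations tends to zero. But here $s_i$ varies with $i$. For each $i$ the monotone function $t\mapsto\mu(g_{s_i}(t),\tfrac12)$ converges to $f(s_i)$, but nothing forces that convergence to be uniform in $i$: it is entirely possible that at time $t_i$ the $i$-th flow's $\mu$-functional is still far below $f(s_i)$ (the ``increasing part'' of the monotone function for manifold $s_i$ could be concentrated at times later than $t_i$), in which case $\mu(g_{s_i}(t_i+1),\tfrac12)-\mu(g_{s_i}(t_i-1),\tfrac12)$ need not be small. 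The hypothesis that $f$ is upper semi-continuous is precisely what closes this gap, and the paper's proof uses it as follows: fix $\epsilon$ and choose $T_\epsilon$ so that $\mu(g_{\bar s}(T_\epsilon),\tfrac12)>f(\bar s)-\epsilon$; by smooth convergence $\mu(g_{s_i}(T_\epsilon),\tfrac12)\to\mu(g_{\bar s}(T_\epsilon),\tfrac12)$, and by upper semi-continuity $\limsup_i f(s_i)\le f(\bar s)$, hence $\mu(g_{s_i}(T_\epsilon),\tfrac12)>f(s_i)-\epsilon$ for $i$ large; monotonicity then squeezes $\mu(g_{s_i}(t_i-1),\tfrac12)$ and $\mu(g_{s_i}(t_i+1),\tfrac12)$ between $f(s_i)-\epsilon$ and $f(s_i)$ for $i$ large, so the difference is less than $\epsilon$. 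Without this use of upper semi-continuity the hypothesis (\ref{eqn:HA03_2}) of Proposition~\ref{prn:HB07_2} is not established, and your soliton conclusion does not follow.

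A secondary remark on your connectedness argument: you invoke ``a path in the connected space $X$,'' which requires path-connectedness rather than mere connectedness of $X$; if $X$ is only assumed connected you should run the intermediate-value argument via separation of $X$ into open sets instead. This is a minor issue since in practice $X$ will be path-connected, but the statement only assumes connectedness.
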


  \begin{proof}
   We shall only show that every limit space is a K\"ahler Ricci soliton.
   The connectedness of  $\widetilde{\mathscr{M}}$ can be proved almost the same as Proposition~\ref{prn:SL06_1}.
   So we leave the details to the readers.

    Suppose $s_i \to \bar{s}$.  Fix $\epsilon$, we can choose $T_{\epsilon}$ such that
    \begin{align*}
         \mu\left(g_{\bar{s}}(T_{\epsilon}), \frac{1}{2} \right)>f_{\bar{s}}-\epsilon.
    \end{align*}
    By the smooth convergence of $g_{s_i}(T_{\epsilon})$ and the upper semi-continuity of $f$, we have
    \begin{align*}
        \mu\left(g_{s_i}(T_{\epsilon}), \frac{1}{2} \right) >f_{s_i}-\epsilon
    \end{align*}
    for large $i$.  Recall that  $t_i \to \infty$. Therefore, it follows from the monotonicity of Perelman's functional that
    \begin{align*}
         \mu\left(g_{s_i}(T_{\epsilon}), \frac{1}{2} \right)<\mu\left(g_{s_i}(t_i-1), \frac{1}{2} \right)<\lim_{t \to \infty}\mu\left(g_{s_i}(t), \frac{1}{2} \right)=f_{s_i}.
    \end{align*}
    Hence, we have
    \begin{align*}
      0\leq  \mu\left(g_{s_i}(t_i+1), \frac{1}{2} \right)-\mu\left(g_{s_i}(t_i-1), \frac{1}{2} \right)  <\epsilon
    \end{align*}
    for large $i$. By the arbitrary choice of $\epsilon$, we obtain
    \begin{align*}
       \lim_{i \to \infty} \left\{  \mu\left(g_{s_i}(t_i+1), \frac{1}{2} \right)-\mu\left(g_{s_i}(t_i-1), \frac{1}{2} \right) \right\}=0.
    \end{align*}
    Therefore, $(M, g_{s_i}(t_i))$ converges to a K\"ahler Ricci soliton, in light of  Proposition~\ref{prn:HB07_2}.
  \end{proof}

  The gap between singularity and regularity in Theorem~\ref{thmin:SC24_1} has a global version as follows.

  \begin{proposition}[\textbf{Gap around smooth KE}]
   Suppose $(\tilde{M},\tilde{g},\tilde{J})$ is a compact, smooth K\"ahler Einstein manifold which belongs to $\mathscr{K}(n,A)$
   when regarded as a trivial polarized K\"ahler Ricci flow solution.
   Then there exists an $\epsilon=\epsilon(n,A,\tilde{g})$ with the following properties.

   Suppose $\mathcal{LM} \in \mathscr{K}(n,A)$ and $ d_{GH}((\tilde{M},\tilde{g}), (M,g(0))) < \epsilon$, then we have
   \begin{align*}
     \mathbf{vcr}(M, g(0)) >\frac{1}{2}\mathbf{vcr}(\tilde{M},\tilde{g}).
   \end{align*}
  \label{prn:SL07_3}
  \end{proposition}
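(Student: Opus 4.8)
The plan is to argue by contradiction using the weak compactness theory already established. Suppose the statement fails. Then there is a smooth compact K\"ahler Einstein manifold $(\tilde M, \tilde g, \tilde J) \in \mathscr{K}(n,A)$ (as a trivial/static polarized flow) and a sequence $\mathcal{LM}_i \in \mathscr{K}(n,A)$ together with points $x_i \in M_i$ realizing the failure: $d_{GH}((\tilde M, \tilde g),(M_i, g_i(0))) \to 0$, yet $\mathbf{vcr}(M_i, g_i(0)) \leq \tfrac12 \mathbf{vcr}(\tilde M, \tilde g)$ — equivalently, choosing $x_i$ to nearly realize the infimum defining $\mathbf{vcr}(M_i,g_i(0))$, we may assume $\mathbf{cvr}(x_i, 0) \leq \tfrac12 \mathbf{vcr}(\tilde M, \tilde g) + o(1)$. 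Since $\mathcal{LM}_i \in \mathscr{K}(n,A) = \mathscr{K}(n,A;\hslash)$ by Theorem~\ref{thm:SC28_1}, the polarized canonical radius is uniformly bounded below, so all the convergence machinery of Section~5 applies. In particular, by Theorem~\ref{thm:HB07_1} (or already Theorem~\ref{thm:SC04_1} together with Corollary~\ref{cly:SL27_1}), after passing to a subsequence we get $(M_i, x_i, g_i(0)) \stackrel{\hat C^\infty}{\longrightarrow} (\bar M, \bar x, \bar g)$ for some limit space $\bar M$.

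The first key step is to identify the limit: since $d_{GH}((\tilde M, \tilde g), (M_i, g_i(0))) \to 0$, the Gromov--Hausdorff limit $\bar M$ is isometric to $\tilde M$ as a metric space. But $\tilde M$ is a smooth compact manifold, so every point of $\bar M$ is regular in the classical sense, hence (by Proposition~\ref{prn:HA08_1}, the regularity-equivalence statement) every point of $\bar M$ has $C^\infty$ — indeed real analytic — manifold structure, and the convergence $(M_i,x_i,g_i(0)) \to (\bar M,\bar x,\bar g)$ is smooth everywhere with no singular set. Consequently $\bar M$ is a smooth compact K\"ahler manifold isometric to $(\tilde M, \tilde g)$, and the normalized volume radius function $\mathbf{vr}$ on $\bar M$ is a positive continuous function, with $\mathbf{vr}(\bar x) = \mathbf{vcr}(\tilde M, \tilde g)$ if $\bar x$ attains the minimum (which we arrange).

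The second key step is the weak continuity of canonical volume radius under this convergence. By Corollary~\ref{cly:SL27_1} (weak continuity of $\mathbf{cvr}$), we have $\mathbf{vr}(\bar x) = \lim_{i\to\infty} \mathbf{cvr}(x_i, 0)$. Combining with the contradiction hypothesis gives $\mathbf{vr}(\bar x) = \lim_i \mathbf{cvr}(x_i,0) \leq \tfrac12 \mathbf{vcr}(\tilde M, \tilde g)$. On the other hand, since $\bar M$ is isometric to $\tilde M$ and $\mathbf{vr}$ is intrinsic to the metric, $\mathbf{vr}(\bar x) \geq \inf_{\bar M} \mathbf{vr} = \mathbf{vcr}(\tilde M,\tilde g) > \tfrac12 \mathbf{vcr}(\tilde M, \tilde g)$ — here one uses that $\mathbf{vcr}$ of a fixed smooth compact manifold (viewed as the static flow) equals $\min_{\tilde M} \mathbf{vr}$, which is a fixed positive number, and that for the smooth compact $\tilde g$ the canonical radius is infinite so $\mathbf{cvr}=\mathbf{vr}$ there. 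This is the desired contradiction.

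The main obstacle is bookkeeping around the definitions: one must be careful that $\mathbf{vcr}$ in the statement means the infimum over $M$ of the canonical volume radius, that along the static smooth flow $(\tilde M, \tilde g)$ we genuinely have $\mathbf{cr} = \infty$ hence $\mathbf{cvr} = \mathbf{vr}$, and that the point-selection $x_i$ can be taken to (nearly) realize the infimum while still converging to a point $\bar x$ realizing $\inf_{\bar M}\mathbf{vr}$ — this uses compactness of $\bar M \cong \tilde M$ and continuity of $\mathbf{vr}$ on the smooth limit. One subtlety worth checking explicitly: the weak continuity statement Corollary~\ref{cly:SL27_1} is phrased for a limit point $\bar x$, so after choosing $x_i$ with $\mathbf{cvr}(x_i,0) \to \inf_{M_i}\mathbf{cvr}(\cdot,0)$ and passing to a subsequence with $x_i \to \bar x$, one gets $\mathbf{vr}(\bar x) = \lim_i \mathbf{cvr}(x_i,0)$; then one separately notes $\lim_i \inf_{M_i}\mathbf{cvr} \geq \inf_{\bar M}\mathbf{vr}$ would follow, but for the contradiction we only need the single-point identity, so no uniform lower semicontinuity in the base point is actually required. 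Modulo these definitional checks, the argument is a direct application of Theorem~\ref{thm:SC28_1}, Proposition~\ref{prn:HA08_1}, and Corollary~\ref{cly:SL27_1}.
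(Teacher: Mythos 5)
Your proposal follows the paper's approach — the paper gives a one-line proof citing ``continuity of canonical volume radius under the Cheeger--Gromov convergence,'' and your contradiction argument, passing to a $\hat C^\infty$ limit which must equal $(\tilde M,\tilde g)$ and then invoking weak continuity of $\mathbf{cvr}$, is exactly the intended expansion of that remark. However, two of the specific identifications you make are incorrect as stated.

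First, it is not true that ``for the smooth compact $\tilde g$ the canonical radius is infinite.'' For a compact manifold, $\mathbf{cr}$ is finite: once $r$ exceeds the diameter, $|B(x_0,r)|=\Vol(\tilde M)$ is constant, so the lower volume ratio bound $\omega_{2n}^{-1}r^{-2n}|B(x_0,r)|\geq\kappa$ in Definition~\ref{dfn:SC02_1} fails. Thus $\mathbf{cvr}$ and $\mathbf{vr}$ need not agree on $\tilde M$ (one only has $\mathbf{cvr}\leq\mathbf{vr}$), and the identity $\mathbf{vcr}(\tilde M,\tilde g)=\min_{\tilde M}\mathbf{vr}$ you rely on is not established. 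What you do have, from Theorem~\ref{thm:SC28_1}, is $\mathbf{cr}\geq\hslash$, which is a lower bound rather than $\infty$.

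Second, Corollary~\ref{cly:SL27_1} is stated under the hypotheses of Theorem~\ref{thm:SC04_1}, which include condition~(\ref{eqn:SL06_1}), in particular $\sup_{\mathcal{M}_i}(|R|+|\lambda|)\to 0$. Your sequence $\mathcal{LM}_i\in\mathscr{K}(n,A)$ has no such scalar curvature decay, and the limit $\tilde M$ is compact and K\"ahler Einstein (not Ricci-flat), so it is not in $\widetilde{\mathscr{KS}}(n,\kappa)$. The mechanism behind Corollary~\ref{cly:SL27_1} (Proposition~\ref{prn:SC06_5} combined with Theorem~\ref{thm:SC03_1}) uses the rigidity of volume ratios on Ricci-flat cones (Proposition~\ref{prn:SC17_3}) and the fact that $\mathbf{cr}\to\infty$, both of which rely on $|R|\to 0$ and the limit living in the model moduli. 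So you cannot simply cite Corollary~\ref{cly:SL27_1} here. The argument needs a separate continuity statement: since the limit $\tilde M$ is smooth with empty singular set, the convergence is genuinely $C^\infty$ everywhere (via the regularity estimate built into $\mathbf{cr}\geq\hslash$ and Proposition~\ref{prn:HA08_1}), and one then has to verify directly — using the $\tfrac12$ buffer — that $\mathbf{cvr}(x_i,0)$ cannot drop below $\tfrac12\mathbf{vcr}(\tilde M,\tilde g)$ for large $i$. That verification is what the paper's phrase ``continuity of canonical volume radius'' is silently invoking, and it does require some care since $\mathbf{vr}^{(\rho)}$, as a supremum defined by the non-strict inequality $\geq 1-\delta_0$, is only upper semi-continuous a priori; a rigidity or slack argument (possible here because $\tilde g$ is fixed) is needed to obtain the lower bound.
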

  \begin{proof}
    It follows from the continuity of canonical volume radius under the Cheeger-Gromov convergence.
  \end{proof}

  Proposition~\ref{prn:SL07_3}  means that  there is no singular limit space around any given smooth K\"ahler Einstein manifold.
  Clearly, the single smooth K\"ahler Einstein manifold in this Proposition
  can be replaced by a family of smooth K\"ahler Einstein manifolds with bounded geometry.
  The gap between smooth and singular K\"ahler Einstein metrics can be conveniently used to carry out topology argument.

   \begin{theorem}[\textbf{Convergence of KRF family}]
     Suppose $\mathcal{M}_s=\left\{ (M_s^n, g_s(t), J_s), 0 \leq t <\infty,  s \in X \right\}$
      is a smooth family of anti-canonical  K\"ahler Ricci flows on Fano manifolds $(M_s, J_s)$, where
      $X$ is a connected parameter space.      Moreover, we assume that
  \begin{itemize}
  \item The Mabuchi's K-energy is  bounded from below along each flow.
  \item Smooth K\"ahler Einstein metrics in all adjacent complex structures(c.f. Defintion 1.4 of~\cite{CS1}) have uniformly bounded Riemannian curvature.
  \end{itemize}
     Let $\Omega$ be the collection of $s$ such that the flow
      $g_s$ has bounded Riemannian curvature.  Then $\Omega=\emptyset$ or $\Omega=X$.
   \label{thm:HC03_2}
   \end{theorem}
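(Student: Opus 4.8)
The statement is a connectedness argument: since $X$ is connected, it suffices to show that $\Omega$ is both open and closed in $X$. The plan is to use the structure theory of $\mathscr{K}(n,A)$ together with the gap between smooth and singular limit spaces.

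\textbf{Openness.} First I would fix $s_0\in\Omega$ and show that the flow $g_{s_0}(t)$ converges smoothly, as $t\to\infty$, to a smooth K\"ahler--Einstein metric $\bar g$ on $(M_{s_0},J_{s_0})$. Perelman's fundamental estimates place the truncated flows $\{(M_{s_0},g_{s_0}(t+\sigma)),-\sigma\le t\le\sigma\}$ in $\mathscr{K}(n,A)$ for a uniform $A$, so Theorem~\ref{thmin:SC24_3} applies and any time-infinity limit of $g_{s_0}$ is a K\"ahler Ricci soliton flow on a $Q$-Fano normal variety; the uniform curvature bound for $g_{s_0}$ rules out bubbling by Theorem~\ref{thm:HE26_1}, so the limit variety is smooth, and the lower bound of the Mabuchi K-energy forces the limit soliton to be static, i.e. K\"ahler--Einstein (cf. Proposition~\ref{prn:HB07_2} and Proposition~\ref{prn:HB07_1}). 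Next I would invoke the gap around a smooth K\"ahler--Einstein metric, Proposition~\ref{prn:SL07_3}: there is $\epsilon=\epsilon(n,A,\bar g)$ so that any $\mathcal{LM}\in\mathscr{K}(n,A)$ whose center time slice is $\epsilon$-Gromov--Hausdorff close to $\bar g$ has canonical volume radius bounded below there, hence bounded curvature; combining this with the two-sided long-time pseudolocality Theorem~\ref{thmin:HC06_1}, such a flow remains bounded-curvature for all later time and converges to a nearby smooth K\"ahler--Einstein metric, whose curvature is uniformly controlled by the adjacent-complex-structure hypothesis. Finally, by smooth dependence of $g_s$ on $s$ over compact time intervals, for $s$ close to $s_0$ and $T$ large the slice $g_s(T)$ is $\epsilon$-close to $\bar g$; thus $g_s$ has bounded curvature for $t\ge T$, and bounded curvature for $t\le T$ by smoothness on a compact interval, so $s\in\Omega$.

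\textbf{Closedness.} Take $s_i\to\bar s$ with $s_i\in\Omega$. By the openness step each $g_{s_i}(t)$ converges to a smooth K\"ahler--Einstein metric $\bar g_i$ on $(M_{s_i},J_{s_i})$; since these complex structures are adjacent to $(M_{\bar s},J_{\bar s})$, the hypothesis gives a uniform bound $|Rm|_{\bar g_i}\le C$, and with the non-collapsing coming from the Sobolev bound a subsequence converges smoothly to a smooth K\"ahler--Einstein metric $\bar g_\infty$. Suppose, toward a contradiction, that $\bar s\notin\Omega$. Then $g_{\bar s}$ has unbounded curvature, so by Theorem~\ref{thm:HE26_1} a bubble forms and some time-infinity limit $\bar N$ of $g_{\bar s}$ is a K\"ahler--Einstein metric on a \emph{singular} $Q$-Fano variety, i.e. $\bar N$ has a point with vanishing canonical volume radius. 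Now consider the total moduli $\widetilde{\mathscr M}$ of all time-infinity limits over the family; by Proposition~\ref{prn:SL07_2}, applicable since the K-energy lower bound makes $f(s)=\lim_{t\to\infty}\mu(g_s(t),\frac{1}{2})$ upper semicontinuous, $\widetilde{\mathscr M}$ is connected and consists of K\"ahler--Einstein spaces, and it contains both the smooth $\bar g_\infty$ and the singular $\bar N$. Running the exit-time argument of Proposition~\ref{prn:SL06_1} and Proposition~\ref{prn:SL07_2} along a path in $\widetilde{\mathscr M}$ joining $\bar g_\infty$ to $\bar N$ produces a limit space which is Gromov--Hausdorff close to a smooth K\"ahler--Einstein metric of the adjacent family, hence of bounded geometry, yet has canonical volume radius strictly smaller than half of it — contradicting the gap theorem Proposition~\ref{prn:SL07_3}. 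Hence $\bar s\in\Omega$.

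Since $\Omega$ is open and closed and $X$ is connected, $\Omega=\emptyset$ or $\Omega=X$. The main obstacle is the closedness step: one has to combine the connectedness of the limit moduli $\widetilde{\mathscr M}$ with the quantitative gap around smooth K\"ahler--Einstein metrics in a way that survives the double limit $s_i\to\bar s$, $t_i\to\infty$ — in particular, ensuring that the ``first moment at which a singularity could appear'' is itself a Cheeger--Gromov limit of polarized flows in $\mathscr{K}(n,A)$ and is therefore excluded by Proposition~\ref{prn:SL07_3}. A secondary technical point is the stability statement used for openness, namely that a flow which once comes $\epsilon$-close to a smooth K\"ahler--Einstein metric stays bounded-curvature forever and converges to a nearby K\"ahler--Einstein metric, which rests on Theorem~\ref{thmin:HC06_1} and Proposition~\ref{prn:HB07_1}.
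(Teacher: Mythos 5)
Your closedness argument follows essentially the same strategy as the paper's: invoke Proposition~\ref{prn:SL07_2} (upper semicontinuity of the limit Perelman functional following from the $K$-energy lower bound) to get a connected moduli of K\"ahler--Einstein limits, observe that the adjacent smooth K\"ahler--Einstein metrics have uniformly bounded geometry and are therefore (by the gap theorem Proposition~\ref{prn:SL07_3}) open and closed in this moduli, and conclude by connectedness that the limit of $g_{\bar s}$ must be smooth. Your phrasing as a contradiction via an explicit exit-time step from Proposition~\ref{prn:SL06_1} is a minor presentational variant; the substance matches the paper.

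Your openness argument, however, has a genuine gap. You assert that, once a flow in $\mathscr{K}(n,A)$ has a time slice $\epsilon$-Gromov--Hausdorff close to the smooth K\"ahler--Einstein metric $\bar g$, the gap theorem together with the two-sided long-time pseudolocality (Theorem~\ref{thmin:HC06_1}) shows that the flow ``remains bounded-curvature for all later time and converges to a nearby smooth K\"ahler--Einstein metric.'' This does not follow from those two ingredients. Proposition~\ref{prn:SL07_3} gives a lower bound for $\mathbf{cvr}$ at the single time slice that is $\epsilon$-close; Theorem~\ref{thmin:HC06_1} then propagates a curvature bound only over the fixed time interval $[-1,1]$ around that slice. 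To continue past $t=T+1$ you would need to re-apply the gap at $g_s(T+1)$, which requires knowing that $g_s(T+1)$ is still $\epsilon$-close to a smooth K\"ahler--Einstein metric — and nothing in your argument controls this drift. The monotonicity of Perelman's $\mu$-functional and the $K$-energy lower bound constrain only the \emph{time-infinity} limits, not the short-term evolution; one fixed slice being near $\bar g$ does not force any $t_i\to\infty$ subsequence to land near $\bar g$. The missing ingredient is precisely the dynamical stability of the K\"ahler Ricci flow near a K\"ahler--Einstein metric, which the paper imports as a black box from Sun--Wang~\cite{SW}. If you want to avoid citing that theorem you would have to reconstruct it (e.g.\ a contraction/linearization argument on the potential level, or a $\mu$-functional pinching iteration), but the gap-plus-pseudolocality combination alone is not enough.
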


   \begin{proof}
     It suffices to show that $\Omega$ is both open and closed in $X$.

    The openness follows from the stability of K\"ahler Ricci flow around a given smooth K\"ahler Einstein metric, due to Sun and Wang (c.f.~\cite{SW}).
    Suppose $s \in \Omega$, then the flow $g_s$ converges to some K\"ahler Einstein manifold $(M',g',J')$, which is the unique
    K\"ahler Einstein metric in its small smooth neighborhood.   By continuous dependence of flow on the initial data, and the stability of K\"ahler Ricci flow in a
    very small neighborhood of $(M',g',J')$,
     it is clear that $s$ has a neighborhood consisting of points in $\Omega$. Therefore, $\Omega$ is an open subset of $X$.

     The closedness follows from Proposition~\ref{prn:SL07_2}.
     Suppose $s_i \in \Omega$ and $s_i \to \bar{s} \in X$.
   Due to the fact that the Mabuchi's K-energy is bounded from below along each K\"ahler Ricci flow we are concerning now, the
   limit Perelman functional is always the same(c.f.\cite{CS1}).  Therefore, we can apply Proposition~\ref{prn:SL07_2} to show
   that every limit space is a possibly singular K\"ahler Einstein. However,  along every $g_{s_i}$, we obtain a smooth
    limit K\"ahler Einstein manifold
   $(M',g',J')$, which has uniformly bounded curvature, as a K\"ahler Einstein manifold in an adjacent complex structure.
    Note that the diameter of $M'$ is uniformly bounded by Myers theorem.
   The volume of $M'$ is a topological constant.  Therefore, the geometry of $(M',g')$ are uniformly bounded.
   By a generalized version of Proposition~\ref{prn:SL07_3}, $(M',g',J')$ is uniformly bounded away from singular K\"ahler Einstein metrics.
   Due to Proposition~\ref{prn:SL07_2}, the connectedness of $\mathscr{M}$ forces that the flow
   $g_{\bar{s}}$ must converge to a smooth $(M',g',J')$. In particular, $g_{\bar{s}}$ has bounded curvature.
   Threrefore,  $\bar{s} \in \Omega$ and $\Omega$ is closed.
   \end{proof}

The two assumptions in Theorem~\ref{thm:HC03_2} seem to be artificial. However, if $J_s$ is a trivial family or a test configuration family, by the unique
degeneration theorem of Chen-Sun(c.f.~\cite{CS1}), all the smooth K\"ahler Einstein metrics form an isolated family,
then the second condition is satisfied automatically. On the other hand, by the existence of K\"ahler Einstein metrics in the weak sense,
one can also obtain the lower bound of Mabuchi's $K$-energy(c.f.~\cite{BM},~\cite{DT},~\cite{C08}).
Consequently, Theorem~\ref{thm:HC03_2} can be applied to these special cases and obtain the following corollaries.

\begin{corollary}[\textbf{Convergence to given KE}, c.f. Tian-Zhu~\cite{TZ1}, Collins-Sz\'{e}kelyhidi~\cite{CoSz}]
 Suppose $(M,J)$ is a Fano manifold with a K\"ahler Einstein metric $g_{KE}$. Then every anti-canonical K\"ahler Ricci flow on $(M,J)$ converges to
 $(M,g_{KE},J)$.
\label{cly:HC04_0}
\end{corollary}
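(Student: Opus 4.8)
The plan is to deduce Corollary~\ref{cly:HC04_0} directly from Theorem~\ref{thm:HC03_2} applied to the trivial family. First I would set $X$ to be a single point (or, more cautiously, a small connected neighborhood of the given $(M,J)$ in an appropriate deformation space; for the corollary as stated the one-point family suffices), and consider the anti-canonical K\"ahler Ricci flow $\mathcal{M}=\{(M,g(t),J),\,0\le t<\infty\}$ for an arbitrary initial metric $g(0)\in 2\pi c_1(M,J)$. The key input is the hypothesis that $(M,J)$ admits a K\"ahler Einstein metric $g_{KE}$. By the work of Bando-Mabuchi (and the K-energy properness/boundedness results cited in the excerpt, e.g.~\cite{BM},~\cite{DT},~\cite{C08}), the existence of $g_{KE}$ forces the Mabuchi K-energy to be bounded from below on $2\pi c_1(M,J)$; this verifies the first hypothesis of Theorem~\ref{thm:HC03_2}. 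For the second hypothesis, since the family of complex structures is trivial, the relevant smooth K\"ahler Einstein metrics in adjacent complex structures are just $g_{KE}$ itself (up to the action of $\mathrm{Aut}(M,J)$ on a finite-dimensional space of such metrics), which manifestly has bounded Riemannian curvature; by the uniqueness/isolation theorem of Chen-Sun~\cite{CS1} this family is isolated and hence uniformly bounded.

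Next I would invoke Theorem~\ref{thm:HC03_2}: the set $\Omega\subset X$ of parameters for which the flow has uniformly bounded Riemannian curvature is either empty or all of $X$. To rule out $\Omega=\emptyset$, I would point to the stability of the K\"ahler Ricci flow near a smooth K\"ahler Einstein metric (Sun-Wang~\cite{SW}): starting the flow from a metric sufficiently $C^\infty$-close to $g_{KE}$ produces a flow with bounded curvature converging to $g_{KE}$. Hence $\Omega$ is nonempty, so $\Omega=X$, meaning \emph{every} anti-canonical K\"ahler Ricci flow on $(M,J)$ has uniformly bounded Riemannian curvature.

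Finally, with uniform curvature bounds in hand, I would extract convergence. Along any sequence $t_i\to\infty$, Theorem~\ref{thmin:SC24_3} (or directly Theorem~\ref{thm:HB07_1}) gives a subsequential limit which is a K\"ahler Ricci soliton on a normal variety; but the uniform curvature bound (together with Shi's estimates and Perelman's no-local-collapsing) upgrades this to smooth Cheeger-Gromov convergence to a \emph{smooth} K\"ahler Ricci soliton $(M_\infty,g_\infty,J_\infty)$. Since the $K$-energy is bounded below and monotone along the flow, the soliton is in fact a smooth K\"ahler Einstein manifold; moreover $J_\infty$ is a (small) deformation of $J$, so by the uniqueness/openness of the K\"ahler Einstein condition under deformation and the fact that $(M,J)$ already carries $g_{KE}$, one identifies $(M_\infty,g_\infty,J_\infty)=(M,g_{KE},J)$. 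The connectedness of the limit moduli $\mathscr{M}$ from Proposition~\ref{prn:SL06_1}, combined with the gap around smooth K\"ahler Einstein metrics in Proposition~\ref{prn:SL07_3}, then forces the \emph{full} flow (not merely subsequences) to converge to $(M,g_{KE},J)$ in the smooth topology.

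The main obstacle I anticipate is not any single step but the bookkeeping needed to legitimately pass from Theorem~\ref{thm:HC03_2}'s dichotomy to genuine smooth convergence of the entire flow: one must argue that the limit soliton is unique (independent of the chosen time sequence) and smooth, which requires simultaneously invoking the curvature bound from Theorem~\ref{thm:HC03_2}, the connectedness of $\mathscr{M}$, the gap estimate of Proposition~\ref{prn:SL07_3} to exclude wandering between distinct limits, and the Bando-Mabuchi-type uniqueness of K\"ahler Einstein metrics to pin down the limit as $(M,g_{KE},J)$. The stability result of Sun-Wang is what closes the loop by guaranteeing $\Omega\neq\emptyset$; without it one only knows the dichotomy abstractly.
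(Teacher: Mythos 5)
Your overall strategy (reduce to Theorem~\ref{thm:HC03_2}, verify its two hypotheses via Bando--Mabuchi and Chen--Sun, show $\Omega\neq\emptyset$, then extract full convergence) is the right one, and your verification of the K-energy lower bound and the bounded-curvature hypothesis is correct. However, there is a genuine gap in the choice of parameter space $X$.

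You set $X$ to be a single point carrying the arbitrary initial metric $g(0)$, and then invoke the dichotomy ``$\Omega=\emptyset$ or $\Omega=X$'' from Theorem~\ref{thm:HC03_2}. With a one-point family this dichotomy is vacuous: it merely says that the one flow in the family either has bounded curvature or it does not, which is no information at all. The dichotomy only carries content when $X$ is a \emph{connected} family that contains \emph{both} the arbitrary initial metric \emph{and} something already known to lie in $\Omega$. Your attempt to rule out $\Omega=\emptyset$ by invoking Sun--Wang stability for a flow started from some metric $C^\infty$-close to $g_{KE}$ does not repair this, because that flow is not a member of the one-point family $X$; its good behavior says nothing about the single flow you placed in $X$. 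The alternative you offer in passing (``a small connected neighborhood of $(M,J)$ in a deformation space'') also misses the point: the complex structure is fixed here, what must vary is the \emph{initial metric}, and a small neighborhood of $g_{KE}$ will not in general contain the arbitrary $g(0)$.

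The paper instead writes $\omega=\omega_{KE}+\sqrt{-1}\,\partial\bar\partial\varphi$ and takes $X=[0,1]$ parametrizing the linear segment of initial metrics $\omega_s=\omega_{KE}+s\,\sqrt{-1}\,\partial\bar\partial\varphi$. This is a connected one-parameter family of flows on a fixed $(M,J)$. At $s=0$ the flow is the static flow from $\omega_{KE}$, which trivially has bounded curvature, so $\Omega\neq\emptyset$; the dichotomy then forces $\Omega=[0,1]$ and in particular $s=1$ (your arbitrary $\omega$) lies in $\Omega$. With this fix the rest of your argument --- uniform curvature bounds giving a smooth limit, monotonicity of $K$-energy forcing the limit to be K\"ahler--Einstein, and Chen--Sun / Bando--Mabuchi uniqueness identifying it with $g_{KE}$ --- goes through. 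So the missing idea is precisely the explicit interpolation of initial metrics that makes the open-closed argument of Theorem~\ref{thm:HC03_2} nontrivial.
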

\begin{proof}
 Let $\omega_{KE}$ be the K\"ahler Einstein metric form. Then every metric form $\omega$ can be written as $\omega_{KE}+\sqrt{-1} \partial \bar{\partial}\varphi$
 for some smooth function $\varphi$. Define
 \begin{align*}
    \omega_s= \omega_{KE} + s \sqrt{-1} \partial \bar{\partial} \varphi, \quad s \in [0,1].
 \end{align*}
 It follows from Theorem~\ref{thm:HC03_2} that the K\"ahler Ricci flow from every $\omega_s$ has bounded curvature, and consequently
 converges to $\omega_{KE}$, by the uniqueness theorem of Chen-Sun(c.f.~\cite{CS1}).  In particular, the flow
 start from $\omega$ converges to $\omega_{KE}$.
\end{proof}

\begin{corollary}[\textbf{Convergence of a test configuration}]
   Suppose $\mathbf{M}$ is a smooth test configuration, i.e., a family of Fano manifolds $(M_s,J_s)$
   parameterized by  $s$ in unit disk $D \subset \C^1$ with a natural $C^*$-action.
   Suppose each fiber is smooth and the central fiber $(M_0,g_0,J_0)$
   admits K\"ahler Einstein metric $(M_0,g_{KE},J_0)$.  Then each K\"ahler Ricci flow starting from $(M_s, g_s,J_s)$ for arbitrary $s \in D$
    converges to $(M_0, g_{KE},J_0)$.
\label{cly:HC04_1}
\end{corollary}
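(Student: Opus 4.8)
The plan is to deduce this from Theorem~\ref{thm:HC03_2} applied to the connected parameter space $X=D$. Fix a smooth family $g_s(0)$ of K\"ahler metrics in the anti-canonical class $2\pi c_1(M_s,J_s)$ (possible because the total space $\mathbf{M}\to D$ is smooth and each fiber is a smooth, hence Fano, member of a test configuration of a Fano manifold), and let $\mathcal{M}_s=\{(M_s,g_s(t),J_s),\ 0\le t<\infty\}$ be the resulting family of anti-canonical K\"ahler Ricci flows. Set $\Omega=\{s\in D:\ g_s\ \text{has bounded Riemannian curvature}\}$. Since $0\in D$ and $D$ is connected, Theorem~\ref{thm:HC03_2} reduces the problem to showing $0\in\Omega$ and then identifying the limit of each flow with $(M_0,g_{KE},J_0)$.

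First I would check the two structural hypotheses of Theorem~\ref{thm:HC03_2}, as indicated in the discussion following it. The Mabuchi $K$-energy is bounded from below along each flow because every fiber admits a K\"ahler Einstein metric in the weak sense — genuinely for $s=0$, and for $s\ne 0$ since $(M_s,J_s)$ degenerates via the $C^*$-action to the K-polystable $(M_0,J_0)$ — and a weak K\"ahler Einstein metric bounds the $K$-energy from below (\cite{BM},\cite{DT},\cite{C08}). For the second hypothesis, a test configuration is exactly the situation in which, by the uniqueness/unique-degeneration theorem of Chen--Sun (\cite{CS1}), the smooth K\"ahler Einstein metrics in all adjacent complex structures form an isolated family; together with Myers' diameter bound and the topological identity $\Vol=c_1^n(M_s)$ this forces their Riemannian curvature to be uniformly bounded. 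Granting these, Theorem~\ref{thm:HC03_2} yields $\Omega=\emptyset$ or $\Omega=D$. Now $(M_0,J_0)$ carries the smooth K\"ahler Einstein metric $g_{KE}$, so Corollary~\ref{cly:HC04_0} shows the anti-canonical K\"ahler Ricci flow from $g_0$ on $(M_0,J_0)$ converges to $(M_0,g_{KE},J_0)$, in particular has bounded curvature; hence $0\in\Omega$ and $\Omega=D$. Therefore every flow $\mathcal{M}_s$ has bounded Riemannian curvature, and by the convergence mechanism inside the proof of Theorem~\ref{thm:HC03_2} (bounded curvature plus $K$-energy bounded below) it converges smoothly to a smooth K\"ahler Einstein manifold $(M_s',g_s',J_s')$ that is a degeneration of $(M_s,J_s)$.

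It remains to identify $(M_s',J_s')$ with $(M_0,J_0)$ and to pass from the chosen family $g_s(0)$ to an arbitrary initial metric. For $s\ne 0$ the fiber $(M_s,J_s)$ is isomorphic to the generic fiber and admits $(M_0,J_0)$ (the flat limit of its $C^*$-orbit) as a degeneration carrying a K\"ahler Einstein metric; Chen--Sun's uniqueness of K\"ahler Einstein degenerations (\cite{CS1}) then forces $(M_s',g_s',J_s')=(M_0,g_{KE},J_0)$, and for $s=0$ this was already established. Finally, for a fixed complex structure $(M_s,J_s)$ the set of initial K\"ahler metrics in $2\pi c_1$ whose flow has bounded curvature is open (stability of K\"ahler Ricci flow near the smooth limit $(M_0,g_{KE},J_0)$) and closed (the connectedness of the limit moduli as in the closedness step of Theorem~\ref{thm:HC03_2}, using that the smooth limit is isolated from singular K\"ahler Einstein metrics), hence is all of $2\pi c_1$; so every anti-canonical K\"ahler Ricci flow on $(M_s,J_s)$ converges to $(M_0,g_{KE},J_0)$. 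The Ricci-flow analysis is supplied entirely by Theorem~\ref{thm:HC03_2}; the real work — and the main obstacle — is the algebro-geometric bookkeeping, namely producing the lower bound of Mabuchi's $K$-energy on the generic fibers and invoking Chen--Sun's uniqueness to pin down the limiting complex structure.
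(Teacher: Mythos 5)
Your proposal is correct and follows essentially the same route as the paper: apply Theorem~\ref{thm:HC03_2} with $X=D$, verify its two hypotheses via the remarks following that theorem (K-energy bounded below from the weak K\"ahler--Einstein structure arising through the degeneration, uniform curvature of adjacent K\"ahler--Einstein metrics via Chen--Sun's unique degeneration), use Corollary~\ref{cly:HC04_0} on the central fiber to conclude $0\in\Omega$ and hence $\Omega=D$, and then invoke Chen--Sun's uniqueness again to pin the smooth limit of each fiber's flow to $(M_0,g_{KE},J_0)$. The only addition you make beyond the paper's short proof is the last paragraph extending the conclusion to arbitrary initial metrics on a fixed fiber; this is a correct supplementary observation (re-running the open-and-closed argument of Theorem~\ref{thm:HC03_2} for the trivial family), but the corollary as stated concerns the given family $g_s$ and the paper does not address it.
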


\begin{proof}
Theorem~\ref{thm:HC03_2} can be applied for $X=D$.
 The central K\"ahler Ricci flow converges by Corollary~\ref{cly:HC04_0}.  Therefore,  the K\"ahler Ricci flow on each fiber has bounded curvature and converge to
 some smooth K\"ahler Einstein metric, which can only be $(M_0,g_{KE},J_0)$, due to the uniqueness theorem of Chen-Sun again.
\end{proof}

\begin{remark}
Corollary~\ref{cly:HC04_0} was announced by G.Perelman. The first written proof was given by Tian-Zhu in~\cite{TZ1} whenever there is no non-trivial holomorphic vector field.
The general case was proved by Collins-Sz\'{e}kelyhidi in~\cite{CoSz}.   
The strategy of Corollary~\ref{cly:HC04_0} was inspired by that in~\cite{TZ2}.
Corollary~\ref{cly:HC04_0}-Corollary~\ref{cly:HC04_1} have the corresponding K\"ahler Ricci soliton versions. These generalizations will be discussed in a separate paper.
\label{rmk:HC09_1}
\end{remark}

\subsection{Degeneration of anti-canonical K\"ahler Ricci flows}

In this subsection, we shall prove Theorem~\ref{thmin:HA01_1} and related corollaries.

The following Theorem is due to Jiang(c.f.~\cite{Jiang}).

\begin{theorem}[\textbf{Jiang's estimate}]
 Suppose $\mathcal{M}=\{(M^n, g(t), J), 0 \leq t <\infty\}$ is an anti-canonical K\"ahler Ricci flow solution satisfying \begin{align}
   \norm{Ric^{-}}{C^0(M)} + |\log \Vol(M)| + C_S(M, g(0)) \leq F
 \label{eqn:HA01_1}
 \end{align}
 at time $t=0$.  Then we have
 \begin{align}
    |R|+ |\nabla \dot{\varphi}|^2  \leq \frac{C}{t^{n+1}}
 \label{eqn:HA01_2}
 \end{align}
 for some constant $C=C(n,F)$.
\label{thm:HA01_1}
\end{theorem}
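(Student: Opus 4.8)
The plan is to establish the smoothing estimate~(\ref{eqn:HA01_2}) by a parabolic mean value (Moser) iteration, bootstrapped from Perelman's monotonicity. Recall that for the anti-canonical flow $\lambda=1$, so $\Delta\dot\varphi=n-R$ and, for the appropriate choice of the time-dependent additive constant, $\square\dot\varphi=\dot\varphi$ with $\square=\partial_t-\Delta$. First I would dispose of $t\geq1$: there~(\ref{eqn:HA01_2}) follows directly from Perelman's fundamental estimates along the anti-canonical flow (c.f.~\cite{SeT},~\cite{Pe1}), since~(\ref{eqn:HA01_1}) together with the Sobolev constant estimates of Q.~Zhang~(\cite{Zhq1}) and R.~Ye~(\cite{Ye}) provides a uniform Sobolev inequality for all $t\geq0$. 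It then remains to treat $t\in(0,1]$. On this range the initial Sobolev bound together with the monotonicity of Perelman's $\mu$-functional yields a uniform $L^2$-Sobolev inequality on $(M,g(t))$ with constant depending only on $n$ and $F$. Moreover, the lower Ricci bound at $t=0$ gives $R(\cdot,0)\geq-nF$; since $\square R=2|Ric|^2-R$ and the K\"ahler identity forces $|Ric|^2\geq R^2/n$, the maximum principle propagates a uniform lower bound $R\geq-C(n,F)$ along the whole flow. Finally, $\int_M R\,dv=n\,\Vol(M)$ is constant along the flow, so $\int_M|R|\,dv\leq C(n,F)$ at every time.

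With these preliminaries the gradient term is the easier one. A direct computation along the flow, using $\square\dot\varphi=\dot\varphi$ and the K\"ahler Bochner formula, gives
\begin{align*}
   \square|\nabla\dot\varphi|^2=|\nabla\dot\varphi|^2-|\nabla\nabla\dot\varphi|^2-|\partial\bar\partial\dot\varphi|^2\leq|\nabla\dot\varphi|^2,
\end{align*}
so that $e^{-t}|\nabla\dot\varphi|^2$ is a nonnegative subsolution of the heat equation. Combined with an a priori space-time integral bound of the type $\int_0^t\!\int_M|\nabla\dot\varphi|^2\,dv\,ds\leq C(n,F)$ --- which one extracts from the evolution of suitable energy functionals together with $\int_M|R|\,dv\leq C$ and Perelman's bounds --- the parabolic De~Giorgi--Nash--Moser mean value inequality, driven by the uniform Sobolev inequality, yields
\begin{align*}
  \sup_{M\times\{t\}}|\nabla\dot\varphi|^2\leq\frac{C}{t^{\,n+1}}\int_0^t\!\int_M|\nabla\dot\varphi|^2\,dv\,ds\leq\frac{C}{t^{\,n+1}}.
\end{align*}
Here the exponent $n+1=\tfrac{2n}{2}+1$ is precisely the parabolic scaling exponent for the heat equation on a real $2n$-dimensional manifold; this is the source of the power $t^{-(n+1)}$.

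The main obstacle is the upper bound for the scalar curvature, and this is the real content of the theorem: at $t=0$ we only assume a lower Ricci bound, so $R$ is a priori unbounded above, and the evolution $\square R=2|Ric|^2-R$ contains the \emph{positive} quadratic term $2|Ric|^2=2|\partial\bar\partial\dot\varphi|^2+4R-2n$, so $R$ is not a heat subsolution and a naive iteration fails. The strategy is to run the Moser iteration not on $R$ itself but on a Perelman-type combination $u$ of $R$, $|\nabla\dot\varphi|^2$ and $\dot\varphi$ (a forward analogue of Perelman's entropy integrand) which, modulo lower order terms already controlled by the previous steps, \emph{is} a heat subsolution; the dangerous $|Ric|^2$-contribution is then absorbed using the K\"ahler identity $|Ric|^2=|\partial\bar\partial\dot\varphi|^2+2R-n$ together with the now-available bound $|\nabla\dot\varphi|^2\leq Ct^{-(n+1)}$. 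Equivalently, one may feed the gradient estimate into the parabolic complex Monge--Amp\`ere equation $\dot\varphi=\log(\omega^n/\omega_0^n)+\varphi+h_0$ to control the volume ratio, and then a second-order (Chern--Lu type) inequality for $\Delta\dot\varphi=n-R$ produces the missing lower bound on $\Delta\dot\varphi$, i.e. the upper bound on $R$. This is precisely Jiang's argument in~\cite{Jiang}; once $R\leq Ct^{-(n+1)}$ is established, combining it with the lower bound $R\geq-C$ and the gradient estimate gives~(\ref{eqn:HA01_2}) and completes the proof.
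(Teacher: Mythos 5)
The theorem you are trying to prove is not proved in this paper: the paper explicitly introduces it with ``The following Theorem is due to Jiang (c.f.~\cite{Jiang}),'' so there is no internal argument to compare your sketch against. With that caveat, your outline is in the right spirit --- reduce to $t\in(0,1]$, use the Sobolev bound inherited from $g(0)$ via Perelman's monotonicity, propagate the lower Ricci/scalar bound by the maximum principle, and run a parabolic Moser iteration whose scaling in real dimension $2n$ produces the power $t^{-(n+1)}$ --- and the pointwise Bochner identity $\square|\nabla\dot\varphi|^2=|\nabla\dot\varphi|^2-|\nabla\nabla\dot\varphi|^2-|\nabla\bar\nabla\dot\varphi|^2$ that makes $e^{-t}|\nabla\dot\varphi|^2$ a subsolution is correct.

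However, the sketch has two gaps that are not cosmetic. First, the claimed space-time bound $\int_0^t\int_M|\nabla\dot\varphi|^2\,dv\,ds\leq C(n,F)$ is asserted but not derived, and it is not an innocuous input. Using $\Delta\dot\varphi=n-R$ and the Poincar\'e inequality (available from the Sobolev bound), one has $\|\nabla\dot\varphi\|_{L^2}\leq C_P\,\|R-n\|_{L^2}$, so controlling this space-time $L^2$ norm amounts to controlling $\int_0^1\int_M R^2$ --- which is precisely the kind of second-order curvature integral you are trying to establish, and cannot be extracted from the constancy of $\int_M R\,dv$ together with the one-sided lower bound on $R$ alone (these give only $\int_M|R|\,dv\leq C$). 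As it stands, the argument for the gradient term is circular. Second, you correctly identify the upper bound on $R$ as ``the real content,'' but the proposal then discharges it with ``This is precisely Jiang's argument in~\cite{Jiang}.'' That is exactly the step that has to be filled in: naming an unspecified ``Perelman-type combination'' or invoking ``a Chern--Lu type inequality'' does not yet produce a quantity that is a subsolution, let alone one whose $L^1$ space-time norm is controlled on $(0,1]$. Without an explicit choice of test quantity and a proof of both its subsolution property and its integrability, the central estimate $R\leq Ct^{-(n+1)}$ remains unproved. You should either write out the monotone/energy functional computation that gives the needed a priori integral bound without presupposing $R\in L^2$, or quote Jiang's precise intermediate lemma rather than presenting the argument as self-contained.
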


Note that (\ref{eqn:HA01_2}) implies a uniform bound of diameter at each time $t>0$, by the uniform bound of
Perelman's functional.  Then one can easily deduce a uniform bound (depending on $t$) of $\norm{\dot{\varphi}}{C^1(M)}$.
Combing this with the Sobolev constant estimate along the flow(c.f.~\cite{Zhq1},~\cite{Ye}), we see that
\begin{align}
   \norm{R}{C^0(M)} + \norm{\dot{\varphi}}{C^1(M)} + C_S(M, g(t)) \leq C(n,F,t)
\label{eqn:HA01_3}
\end{align}
for each $t>0$.   Therefore,  away from the initial time, we can always apply our structure theory.

\begin{theorem}[\textbf{Weak convergence with initial time}]
 Suppose $\mathcal{M}_i=\{(M_i^n,g_i(t), J_i), 0 \leq t <\infty\}$ is a sequence of anti-canonical K\"ahler Ricci flow
 solutions, whose initial time slices satisfy estimate (\ref{eqn:HA01_1}) uniformly.  Then we have
 \begin{align}
     (\mathcal{M}_i, g_i) \stackrel{G.H.}{\longrightarrow} (\bar{\mathcal{M}}, \bar{g}),
 \end{align}
 where the limit is a weak K\"ahler Ricci flow solution on a $Q$-Fano normal variety $\bar{M}$, for time $t>0$.
 Moreover, the convergence can be improved to
 be in the $\hat{C}^{\infty}$-Cheeger-Gromov topology for each $t>0$, i.e.,
 \begin{align}
    (M_i, g_i(t)) \stackrel{\hat{C}^{\infty}}{\longrightarrow}  (\bar{M}(t), \bar{g}(t))
 \end{align}
 for each $t>0$.
\label{thm:HA01_2}
\end{theorem}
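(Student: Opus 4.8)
The plan is to combine Jiang's estimate, Theorem~\ref{thm:HA01_1}, with the weak compactness theory of polarized K\"ahler Ricci flows developed in Sections 3--5, applied to the flows truncated away from the initial time. First I would fix the polarization: for an anti-canonical flow $\mathcal{M}_i$ one has $\lambda=1$, and one takes $L_i=K_{M_i}^{-1}$ with $h_i(t)$ the Hermitian metric whose curvature is $\omega_i(t)$; moreover $\Vol(M_i)=c_1^n(M_i)$ is a topological constant, bounded above by $C(n)$ (the Fano volume bound) and below by $e^{-F}$ by (\ref{eqn:HA01_1}). Next, Theorem~\ref{thm:HA01_1} gives the curvature and gradient bound (\ref{eqn:HA01_2}), and, as recorded in (\ref{eqn:HA01_3}), combining this with Perelman's estimates and the Sobolev constant bound along the flow yields, for every $t_0>0$,
\[
  \sup_{t\ge t_0}\left(\norm{R}{C^0(M_i)}+\norm{\dot{\varphi}}{C^1(M_i)}+C_S(M_i,g_i(t))\right)\le C(n,F,t_0).
\]
Consequently, for each $t_0>0$ and each $s\ge t_0+2$, the time-translated polarized flow $\mathcal{LM}_{i,s}:=\{(M_i,g_i(t+s),J_i,L_i,h_i(t+s)),\ t\in(-2,2)\}$ satisfies (\ref{eqn:SK20_1}), i.e. it lies in $\mathscr{K}(n,A)$ for a uniform $A=A(n,F,t_0)$ independent of $i$ and $s$.

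Then I would invoke the weak compactness of polarized flows. For a fixed $t_0>0$, cover $[t_0,\infty)$ by such windows and apply Theorem~\ref{thm:HB07_1} (equivalently Theorem~\ref{thmin:HC06_2}) together with Theorem~\ref{thm:HE19_1}: after passing to a subsequence one gets $\hat{C}^{\infty}$-Cheeger--Gromov convergence of $(M_i,g_i(t),J_i)$ for every $t\ge t_0$; since Perelman's estimate bounds the diameter uniformly, the limits are compact, hence projective normal varieties with log-terminal singularities, and in fact $Q$-Fano because the anti-canonical polarizations pass to the limit (as in the proof of Theorem~\ref{thmin:SC24_3}, using the partial-$C^0$-estimate of Theorem~\ref{thmin:HC08_1} and Theorem~\ref{thm:SC13_4}). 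Crucially, the two-sided long-time pseudolocality, Theorem~\ref{thmin:HC06_1}, forces the regular--singular decomposition — and with it the underlying variety structure — to be independent of $t$ on each window, so the pointwise-in-time limits glue into a single $Q$-Fano normal variety $\bar{M}$ carrying evolving metrics $\bar{g}(t)$ that satisfy the K\"ahler Ricci flow equation on $\mathcal{R}(\bar{M})$.

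To reach all $t>0$ I would take a diagonal subsequence over $t_0=1/k\downarrow 0$, extracting convergence for every rational $t>0$ and then for every $t>0$ via the uniform continuity of the distance function, Theorem~\ref{thm:HE15_1}, together with the smooth convergence on the regular part. The resulting family $\bar{\mathcal{M}}=\{(\bar{M},\bar{g}(t)),\ t>0\}$ is a weak K\"ahler Ricci flow solution in the sense of Section~\ref{sec:sflow} and Remark~\ref{rmk:HE15_1}; restricting to each compact time subinterval gives the asserted Gromov--Hausdorff convergence of space-times, which upgrades to $\hat{C}^{\infty}$-Cheeger--Gromov convergence for each fixed $t>0$.

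\textbf{The hard part} is entirely the passage $t\to 0$: the constant $A(n,F,t_0)$ produced by (\ref{eqn:HA01_3}) degenerates as $t_0\to 0$, since Jiang's bound is of order $t^{-(n+1)}$, so one cannot include the initial time slice in any $\mathscr{K}(n,A)$ and the limit is only a flow for $t>0$ — this is precisely why the statement is phrased with the restriction $t>0$. The other essential ingredient, already provided by the structure theory, is Theorem~\ref{thmin:HC06_1}, without which one could not assert that the limit varieties at different times coincide; beyond these two points the argument is a routine application of Theorem~\ref{thm:HB07_1} and a diagonal extraction.
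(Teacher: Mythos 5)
Your argument is essentially the paper's: use Jiang's estimate (Theorem~\ref{thm:HA01_1}) to upgrade to (\ref{eqn:HA01_3}) for $t\ge t_0>0$, place the time-shifted flows in $\mathscr{K}(n,A(n,F,t_0))$, invoke Theorem~\ref{thm:HB07_1} together with Theorem~\ref{thm:HE19_1} and Theorem~\ref{thmin:HC06_1}, and then diagonalize over $t_0\downarrow 0$. The paper's own treatment is exactly this, stated informally in the paragraph preceding the theorem, so no comparison or correction is needed.
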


Clearly, if $(M_i,g_i)$ is a sequence of almost K\"ahler Einstein metrics(c.f.~\cite{TW2}) in the anti-canonical classes, then
$(\bar{M}(0), \bar{g}(0))$ and $(\bar{M}(1), \bar{g}(1))$ are isometric to each other, due to Proposition~\ref{prn:HB07_1} and the estimate in~\cite{TW2}.
In this particular case, it is easy to see that partial-$C^0$-estimate holds uniformly at time $t=0$ for each $i$, at least intuitively.
Actually, by the work Jiang~\cite{Jiang}, it is now clear that partial-$C^0$-estimate  at time $t=0$ only requires a uniform Ricci lower bound.

Note that the evolution equation of the anti-canonical K\"ahler Ricci flow is
\begin{align}
  \dot{\varphi}=\log \frac{\omega_{\varphi}^n}{\omega^n} + \varphi - u_{\omega},
\label{eqn:HA01_5}
\end{align}
where $u_{\omega}$ is the Ricci potential satisfying the normalization condition
$\int_M e^{-u_{\omega}} \frac{\omega^n}{n!}=(2\pi)^n$.
By maximum principle and Green function argument, we have the following property(c.f.~\cite{Jiang}).

\begin{proposition}[\textbf{Potential equivalence}]
  Suppose $\mathcal{M}=\{(M^n, g(t), J), 0 \leq t <\infty\}$ is an anti-canonical K\"ahler Ricci flow solution satisfying
  (\ref{eqn:HA01_1}). At time $t=0$, let $\varphi=0$ and $u_{\omega}$ satisfy the normalization condition. Then we have
  \begin{align}
     C(1-e^{t}) \leq  \varphi  \leq C e^t
  \label{eqn:HA01_4}
  \end{align}
  for a constant $C=C(n,F)$.
\label{prn:HA01_3}
\end{proposition}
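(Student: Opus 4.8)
The plan is to obtain both inequalities in (\ref{eqn:HA01_4}) by applying the parabolic maximum principle to the scalar equation (\ref{eqn:HA01_5}), the only substantive input being a uniform two-sided bound on the Ricci potential $u_\omega$ of the initial metric, which is precisely the point where the ``Green function argument'' enters.

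First I would record the preliminary estimate $\|u_\omega\|_{C^0(M,g(0))}\le C_0$ with $C_0=C_0(n,F)$. Tracing $\sqrt{-1}\partial\bar\partial u_\omega=\omega-Ric(\omega)$ gives $\Delta u_\omega=n-R$, and the assumption $\|Ric^-\|_{C^0}\le F$ forces $R\ge -nF$, hence $\Delta u_\omega\le n(1+F)$; so $u_\omega^{+}$ is a subsolution of a controlled elliptic equation and the uniform Sobolev bound $C_S(M,g(0))\le F$ yields, via De Giorgi--Nash--Moser iteration over $M$, a bound $\sup_M u_\omega\le C(\|u_\omega^{+}\|_{L^1(M)}+1)$. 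The $L^1$-bound on $u_\omega^{+}$ is where one must use the normalization $\int_M e^{-u_\omega}\omega^n/n!=(2\pi)^n$ together with $|\log\Vol(M)|\le F$ and the fact that $\int_M R\,\omega^n/n!$ is a fixed topological number: one first bounds $\inf_M u_\omega$ from below by noting that $e^{-u_\omega}$ is a subsolution of $(\Delta+n(1+F))v=0$ (since $\Delta e^{-u_\omega}\ge -n(1+F)e^{-u_\omega}$) and applying local boundedness of subsolutions to bound $\sup_M e^{-u_\omega}$ by $\|e^{-u_\omega}\|_{L^1(M)}$, and then feeds this lower bound, together with the Jensen inequality $\fint_M u_\omega\ge \log\Vol(M)-n\log(2\pi)$ and $\int_M(R-n)^{+}\le \int_M(R-n)^{-}+|\int_M(R-n)|\le C(n,F)$, back into the iteration. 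This is exactly Jiang's estimate (\cite{Jiang}), and I would simply cite it; the delicate direction, as indicated, is the upper bound $\sup_M u_\omega\le C_0$, since no pointwise upper bound on the scalar curvature is available at $t=0$.

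Granted $|u_\omega|\le C_0$ on $(M,g(0))$, both bounds on $\varphi$ follow in one line. Set $\phi_{\max}(t)=\max_{x\in M}\varphi(x,t)$; it is locally Lipschitz with $\phi_{\max}(0)=0$, and at a spatial maximum $\sqrt{-1}\partial\bar\partial\varphi\le 0$, so $0<\omega_\varphi\le\omega$ gives $\log(\omega_\varphi^n/\omega^n)\le 0$, and (\ref{eqn:HA01_5}) yields $\tfrac{d}{dt}\phi_{\max}\le \phi_{\max}-\inf_M u_\omega\le\phi_{\max}+C_0$ in the barrier sense; Gronwall gives $\phi_{\max}(t)\le C_0(e^t-1)\le C_0 e^t$. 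Symmetrically, $\phi_{\min}(t)=\min_{x\in M}\varphi(x,t)$ satisfies $\tfrac{d}{dt}\phi_{\min}\ge\phi_{\min}-\sup_M u_\omega\ge\phi_{\min}-C_0$ at a spatial minimum, where $\omega_\varphi\ge\omega$ forces $\log(\omega_\varphi^n/\omega^n)\ge 0$, so $\phi_{\min}(t)\ge -C_0(e^t-1)=C_0(1-e^t)$. Taking $C=C_0=C(n,F)$ yields (\ref{eqn:HA01_4}).

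The main obstacle is entirely concentrated in the preliminary step: the uniform $C^0$-control of the initial Ricci potential under nothing more than a Ricci lower bound, a volume bound, and a Sobolev constant bound, and within it the upper bound $\sup_M u_\omega\le C_0$, which requires playing the Sobolev/Moser machinery against the normalization of $e^{-u_\omega}$ and the topological value of $\int_M R$. Once this is in hand the remainder is a routine application of the maximum principle, with the factors $e^t$ appearing as the homogeneous solutions of $\dot y=y\pm C_0$.
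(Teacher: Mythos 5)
Your argument is correct and matches the paper's proof: the paper's one-line justification ("By maximum principle and Green function argument, we have the following property (c.f.~\cite{Jiang})") is exactly the two-step structure you identify, namely a uniform $C^0$-bound $|u_\omega|\le C_0(n,F)$ on the initial Ricci potential from Jiang, followed by the maximum/minimum principle and Gronwall applied to the Monge--Amp\`ere flow equation (\ref{eqn:HA01_5}). The only cosmetic difference is that you sketch the $u_\omega$ bound via Moser iteration while the paper mentions a Green function representation; both rest on the Sobolev and volume bounds in (\ref{eqn:HA01_1}) and both defer the full details to Jiang, so this is a distinction without a difference.
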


Let $\mathbf{b}(\cdot, t)$ be the Bergman function at time $t$.   By definition, at point $x \in M$ and time $t=0$,
we can find a holomorphic section $S \in H^0(M, K_M^{-1})$ such that
\begin{align*}
    \int_{M} \norm{S}{h(0)}^2 \frac{\omega^n}{n!}=1, \quad\mathbf{b}(x,0)=\log \norm{S}{h(0)}^2(x).
\end{align*}
Note that  $\norm{S}{h(1)}^2=\norm{S}{h(0)}^2e^{-\varphi(1)}$.
By (\ref{eqn:HA01_4}), it is clear that  $\norm{S}{h(1)}^2$ and $\norm{S}{h(0)}^2$ are uniformly equivalent.
On the other hand, $\Delta \norm{S}{}^2 \geq -n \norm{S}{}^2$. At time $t=0$, applying Moser iteration implies that $\norm{S}{h(0)}^2 \leq C$.
Hence we obtain $\norm{S}{h(1)}^2 \leq C$.
At time $t=1$, let $\tilde{S}$ be the normalization of $S$, i.e., $\tilde{S}=\lambda S$ such that
$\int_{M} \norm{\tilde{S}}{h(1)}^2 \frac{\omega_1^n}{n!}=1$. Then we have
\begin{align*}
  \lambda^{-2}= \int_{M} \norm{S}{h(1)}^2 \frac{\omega_1^n}{n!} \leq C.
\end{align*}
It follows that
\begin{align*}
   \mathbf{b}(x,1) &\geq  \log  \norm{\tilde{S}}{h(1)}^2(x)=  \log  \norm{S}{h(1)}^2(x)+ \log \lambda^2\\
    &=\log  \norm{S}{h(0)}^2(x) -\varphi(1) + \log \lambda^2\\
    &=\mathbf{b}(x,0)-\varphi(1) + \log \lambda^2\\
    &\geq  \mathbf{b}(x,0)-C.
\end{align*}
By reversing time, we can obtain a similar inequality with reverse direction.
Same analysis applies to $\mathbf{b}^{(k)}$ for each positive integer $k$.
So we have the following property.

\begin{proposition}[\textbf{Bergman function equivalence}]
Suppose $\mathcal{M}=\{(M^n, g(t), J), 0 \leq t <\infty\}$ is an anti-canonical K\"ahler Ricci flow solution satisfying
  (\ref{eqn:HA01_1}).  For each positive integer $k$, there exists $C=C(n,F,k)$ such that
  \begin{align}
   \mathbf{b}^{(k)}(x,0) -C \leq  \mathbf{b}^{(k)}(x,1) \leq \mathbf{b}^{(k)}(x,0)+C
  \label{eqn:HE27_1}
  \end{align}
  for all $x \in M$.
\label{prn:HA01_4}
\end{proposition}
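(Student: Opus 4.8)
The plan is to reduce the statement for $L^k = K_M^{-k}$ to the comparison of two Hermitian metrics $h^k(0)$ and $h^k(1)$ on $L^k$, exactly as in the discussion preceding the statement, but keeping careful track of the power $k$. First I would record the key input from Proposition~\ref{prn:HA01_3}: since $\varphi(0) = 0$ and $u_\omega$ is normalized, we have $C(1 - e^t) \leq \varphi \leq Ce^t$, so in particular $|\varphi(1)| \leq C = C(n,F)$. Because $h^k(t) = h^k(0) e^{-k\varphi(t)}$, this gives the two-sided bound $e^{-kC} \norm{\cdot}{h^k(0)}^2 \leq \norm{\cdot}{h^k(1)}^2 \leq e^{kC} \norm{\cdot}{h^k(0)}^2$ pointwise on $M$, with the constant depending on $n, F, k$.

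Next I would carry out the normalization bookkeeping. Fix $x \in M$ and choose $S \in H^0(M, L^k)$ realizing the maximum in the definition of $\mathbf{b}^{(k)}(x,0)$, i.e., $\int_M \norm{S}{h^k(0)}^2 \frac{\omega(0)^n}{n!} = 1$ and $\mathbf{b}^{(k)}(x,0) = \log \norm{S}{h^k(0)}^2(x)$. I need to control the $L^2$-norm of $S$ at time $1$: using the metric equivalence above together with the volume-form equivalence $\frac{1}{C}\omega(0)^n \leq \omega(1)^n \leq C\omega(0)^n$ (which follows from $\partial_t \log \omega_t^n = n\lambda - R$ and the uniform bound \eqref{eqn:HA01_3} on $R$ for $t$ in a fixed interval, or directly from \eqref{eqn:HA01_4}), I get $\int_M \norm{S}{h^k(1)}^2 \frac{\omega(1)^n}{n!} \in [c, C]$ for constants depending on $n,F,k$. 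Let $\tilde{S} = \lambda S$ be the $L^2(\omega(1), h^k(1))$-normalization; then $\lambda^2 \in [c,C]$, and since $\mathbf{b}^{(k)}(x,1) \geq \log \norm{\tilde S}{h^k(1)}^2(x) = \log\norm{S}{h^k(0)}^2(x) - k\varphi(1) + \log\lambda^2 \geq \mathbf{b}^{(k)}(x,0) - C$, one direction of \eqref{eqn:HE27_1} follows. The reverse inequality is obtained by running the identical argument with the roles of the time slices $t=0$ and $t=1$ interchanged, using that $|\varphi(1)|$ and the volume-form distortion are bounded in both directions.

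I do not anticipate a serious obstacle here; the proposition is a soft consequence of the potential estimate in Proposition~\ref{prn:HA01_3} plus elementary bookkeeping. The one point requiring mild care is ensuring that the constant $C$ in the definition of $\mathbf{b}^{(k)}$ versus $\mathbf{b} = \mathbf{b}^{(1)}$ picks up the factor $k$ correctly — this is why the stated constant is allowed to depend on $k$ — and that the $L^2$-normalization constant $\lambda$ stays bounded away from $0$ and $\infty$, which in turn is where the uniform Sobolev bound \eqref{eqn:HA01_3} and the volume-form comparison enter. No further regularity theory or compactness argument is needed; this is purely a pointwise and integral comparison on a fixed time interval.
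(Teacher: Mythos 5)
Your overall structure — pick a peak section $S$ at time $0$, renormalize at time $1$ using the potential estimate $|\varphi(1)| \le C$, and reverse roles for the other inequality — matches the paper's argument. But there is a real gap in the way you bound the $L^2$-norm at time $1$. You assert a pointwise volume-form equivalence $\frac{1}{C}\omega(0)^n \le \omega(1)^n \le C\omega(0)^n$, and justify it either by integrating $\partial_t \log\omega_t^n = n\lambda - R$ using (\ref{eqn:HA01_3}), or ``directly from (\ref{eqn:HA01_4})''. Neither works under the stated hypotheses. Jiang's bound (\ref{eqn:HA01_2}) gives $|R| \le C/t^{n+1}$, which is \emph{not} integrable on $(0,1]$, so integrating the volume-form evolution across $t=0$ produces no uniform constant; the bound (\ref{eqn:HA01_3}) holds only for $t>0$ with constant depending on $t$. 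And (\ref{eqn:HA01_4}) controls only $\varphi$, whereas the Monge--Amp\`ere relation (\ref{eqn:HA01_5}) shows $\omega_1^n/\omega_0^n = e^{\dot\varphi(1) - \varphi(1) + u_{\omega_0}}$, which additionally requires a pointwise bound on the initial Ricci potential $u_{\omega_0}$; with only $\operatorname{Ric} \ge -F$ and a Sobolev bound at $t=0$, such a two-sided bound on $u_{\omega_0}$ is not available. Indeed, the whole point of the setup is that the time-$0$ geometry may be quite rough, so a pointwise comparison of $\omega_0^n$ and $\omega_1^n$ should not be expected.

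The paper sidesteps this by never using a pointwise volume-form comparison. Instead, one applies Moser iteration to the subsolution inequality $\Delta \norm{S}{}^2 \ge -nk\norm{S}{}^2$ at time $t=0$ (where the Sobolev constant is controlled by hypothesis (\ref{eqn:HA01_1})) to get a pointwise bound $\norm{S}{h^k(0)}^2 \le C$; combined with $|\varphi(1)| \le C$ this yields $\norm{S}{h^k(1)}^2 \le C$ pointwise; and then $\int_M \norm{S}{h^k(1)}^2 \frac{\omega_1^n}{n!} \le C\,\Vol(M,\omega_1)$ is bounded because $\Vol(M,\omega_1) = \Vol(M,\omega_0)$ is cohomological and controlled by (\ref{eqn:HA01_1}). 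This gives $\log\lambda^2 \ge -C$, which is all that is needed for $\mathbf{b}^{(k)}(x,1) \ge \mathbf{b}^{(k)}(x,0) - C$. For the reverse inequality one repeats the argument with the roles of $t=0$ and $t=1$ exchanged, using the Sobolev bound at $t=1$ supplied by (\ref{eqn:HA01_3}). Replace your volume-form equivalence step with this Moser-iteration argument and the proof goes through.
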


In view of Theorem~\ref{thm:HA01_2}, partial-$C^0$-estimate holds at time $t=1$,
which induces the partial-$C^0$-estimate at time $t=0$, by Proposition~\ref{prn:HA01_4}.
Therefore, the following theorem is clear now.

\begin{theorem}[\textbf{Partial-$C^0$-estimate at initial time}]
Suppose $\mathcal{M}=\{(M^n, g(t), J), 0 \leq t <\infty\}$ is an anti-canonical K\"ahler Ricci flow solution satisfying
  (\ref{eqn:HA01_1}).  Then
  \begin{align*}
       \inf_{x \in M} \mathbf{b}^{(k_0)}(x,0) \geq -c_0
  \end{align*}
  for some positive integer $k_0=k_0(n,F)$ and positive number $c_0=c_0(n,F)$.
\label{thm:HA01_3}
\end{theorem}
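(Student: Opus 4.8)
The plan is to deduce the estimate from the general line bundle estimate, Theorem~\ref{thmin:HC08_1}, applied to a time-shift of the given flow, and then to transport the resulting bound back to the initial slice using the potential comparison of Proposition~\ref{prn:HA01_3}. A transport step is unavoidable here, because Jiang's estimate, Theorem~\ref{thm:HA01_1}, degenerates as $t\to 0$, so the flow cannot be placed in $\mathscr{K}(n,A)$ with the initial slice as center; instead I would work at a later time and then return to $t=0$.

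First I would check that, for any $t_0\ge 1$, the reparametrized polarized flow $\widetilde{\mathcal{LM}}$ given by $\tilde g(s)=g(s+t_0)$, $s\in(-2,2)$, with line bundle $K_M^{-1}$ carrying the induced Hermitian metric, lies in $\mathscr{K}(n,A)$ for some $A=A(n,F)$ (with $T=2$). Here $\lambda=\frac{c_1(M)}{c_1(K_M^{-1})}=1$; $\Vol(M)\in[e^{-F},e^{F}]$ by (\ref{eqn:HA01_1}); the Sobolev constant stays uniformly bounded along the anti-canonical flow in terms of $C_S(M,g(0))\le F$ by Zhang and Ye; $\|\dot\varphi\|_{C^0}$ is uniformly bounded by Perelman's estimate; and $\|R-n\lambda\|_{C^0}=\|R-n\|_{C^0}\le C(n,F)$ for $t\ge 1$ by Theorem~\ref{thm:HA01_1}. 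Taking $t_0=3$ and applying Theorem~\ref{thmin:HC08_1} to $\widetilde{\mathcal{LM}}$ yields $k_0=k_0(n,A)=k_0(n,F)$ and $c_0=c_0(n,A)=c_0(n,F)$ with
\begin{align*}
   \inf_{x\in M}\mathbf{b}^{(k_0)}(x,3)\ge -c_0;
\end{align*}
this is the partial-$C^0$-estimate at $t=3$, the same input that underlies the $Q$-Fano conclusion of Theorem~\ref{thm:HA01_2}.

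Finally I would compare $\mathbf{b}^{(k_0)}(x,3)$ with $\mathbf{b}^{(k_0)}(x,0)$. The proof of Proposition~\ref{prn:HA01_4} extends verbatim with the time $1$ replaced by any fixed $t_0$: it uses only the bound $|\varphi(t_0)|\le C(n,F)e^{t_0}$ from Proposition~\ref{prn:HA01_3}, the uniform equivalence of $\omega_0^n$ and $\omega_{t_0}^n$ (from (\ref{eqn:HA01_5}) and the bounds on $\dot\varphi,\varphi,u_\omega$), the identity $\|S\|_{h(t_0)}^2=\|S\|_{h(0)}^2 e^{-\varphi(t_0)}$, and the Moser iteration bound $\|S\|^2\le C$ for a holomorphic section normalized at $t=0$, valid since $C_S(M,g(0))\le F$. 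This gives $|\mathbf{b}^{(k_0)}(x,3)-\mathbf{b}^{(k_0)}(x,0)|\le C(n,F,k_0)=C(n,F)$, hence $\inf_{x}\mathbf{b}^{(k_0)}(x,0)\ge -c_0-C(n,F)$, which is the assertion after renaming the constant. The substance of the argument is carried by Theorem~\ref{thmin:HC08_1} and Theorem~\ref{thm:HA01_1}, both available; the only real subtlety is the re-centering forced by the degeneration of Theorem~\ref{thm:HA01_1} at $t=0$, together with the harmless extension of Proposition~\ref{prn:HA01_4} from $t=1$ to $t=3$ (an alternative route — iterating the $t\mapsto t+1$ comparison — is less clean, since each step of it would require a lower Ricci bound at the later base time).
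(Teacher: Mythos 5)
Your proof is correct and follows essentially the same route as the paper: get the partial-$C^0$-estimate at a later, uniformly regular time by applying the general line bundle estimate Theorem~\ref{thmin:HC08_1} to a time-shifted flow in $\mathscr{K}(n,A)$, then transfer it to $t=0$ via the Bergman function equivalence of Proposition~\ref{prn:HA01_4}. The only real difference is that you re-center at $t_0=3$ rather than $t=1$, which is actually the more careful choice: membership in $\mathscr{K}(n,A)$ requires $T\ge 2$, and since Jiang's estimate~(\ref{eqn:HA01_3}) only gives uniform bounds for $t$ bounded away from $0$, the shifted flow must be centered at a time $\ge 3$ for the $[-2,2]$ interval around it to lie in the region where the bounds are uniform. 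The extension of Proposition~\ref{prn:HA01_4} from $t=1$ to $t=3$ that you invoke is indeed routine, since the potential bound~(\ref{eqn:HA01_4}) gives $|\varphi(t_0)|\le C e^{t_0}$ for any fixed $t_0$ and the rest of that proof is time-uniform.
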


By the Sobolev constant estimates for manifolds with uniform positive Ricci curvature, it is clear that
Theorem~\ref{thmin:HA01_1} follows from Theorem~\ref{thm:HA01_3} directly.
It is also clear that Corollary~\ref{clyin:HA01_2} follows from Theorem~\ref{thm:HA01_3}.

The proof of Corollary~\ref{clyin:HA03_1} is known in literature(c.f.~\cite{Sze}), provided the partial-$C^0$-estimate along
the K\"ahler Ricci flow.  We shall be sketchy here.
In fact, due to the work of S. Paul(\cite{Paul1209},\cite{Paul1308}) and the argument in  section 6 of  Tian and Zhang(\cite{TZZ2}), one obtains that the $I$-functional is bounded along the flow.
Then the K\"ahler Ricci flow converges to a K\"ahler-Einstein metric, on the same Fano manifold. 

It is an interesting problem to study the $K$-stability through the K\"ahler Ricci flow. 
Based on Theorem~\ref{thmin:SC24_3}, the weak compactness of polarized K\"ahler Ricci flow,  we are able to give an alternative K\"ahler Ricci flow proof of the stability theorem(Yau's conjecture) of Chen-Donaldson-Sun. 
Interested readers are referred to~\cite{CSW} for the details.

\appendixpage
\addappheadtotoc
\appendix
  
\section{Proof of weighted Sobolev inequality}  
\label{app:A}
The proof of Proposition~\ref{prn:HD07_2} follows exactly the same strategy as Proposition 2.1 of ~\cite{Bak}, which will be described below 
for the convenience of the readers. 

\begin{proof}[Strategy of the proof of Proposition~\ref{prn:HD07_2}]
Without loss of generality, we can assume $t=1$.  Then the weighted Sobolev inequality becomes
\begin{align}
  \int_X f^2(x) p(t,x,y) d\mu_x - \left(\int_X f(x) p(t,x,y)d\mu_x \right)^2 \leq 2 \int_X |\nabla f|^2(x) p(t,x,y) d\mu_x.   \label{eqn:HH09_3}
\end{align}
By density argument, we can assume $f \in C_c^{\infty}(\mathcal{R})$ without loss of generality.  
Let $u$ be the heat solution initiating from $f$, i.e., $ u(y,1)=\int_X f(x) p(1,x,y) d\mu_x$. 
Then we define
\begin{align*}
\Psi_0(s)(y)  \triangleq \int_X u^2(1-s,x) p(s,x,y)d\mu_x,    \quad \Psi(s)(y) \triangleq \int_X |\nabla u|^2(1-s, x) p(s,x,y)d\mu_x.
\end{align*}
According to this definition, we have
\begin{align*}
&\Psi_0(0)(y)=u^2(y,1)=\left( \int_X f(x) p(1,x,y) d\mu_x \right)^2,  &\Psi_0(1)(y)=\int_X f^2(x) p(1,x,y)d\mu_x, \\
&\Psi(0)(y)=|\nabla u|^2(y,1),             &\Psi(1)(y)=\int_X |\nabla f|^2(x) p(1,x,y)d\mu_x.
\end{align*}
Now the weighted-Sobolev inequality (\ref{eqn:HH09_3}) is the same as
\begin{align*}
  \Psi_0(1)-\Psi_0(0) \leq 2 \Psi(1), 
\end{align*}
which can be proved by the combination of the following three steps. 
\begin{enumerate}
\item  $\dot{\Psi}_0(s)=2\Psi(s)$ for each $s \in (0,1)$.
\item  $\Psi(a) \leq \Psi(b), \quad \forall \; 0<a<b<1$.   
\item  $\displaystyle \lim_{s \to 1^{-}} \Psi(s)=\Psi(1)$. 
\end{enumerate}
Actually, since $\Psi_0$ is a continuous function on $[0,1]$, the above three steps yield that
\begin{align*}
    \Psi_0(1)-\Psi_0(0) =\int_0^{1} \dot{\Psi}_0(s) ds=\int_0^{1} 2\Psi(s) ds \leq 2 \Psi(1).
\end{align*}
Consequently, (\ref{eqn:HH09_3}) is proved. 
\end{proof}

However, due to the existence of singularities, we need to check integrability and integration by parts very carefully in each step. 
The most delicate thing is to show that for a bounded heat solution $u$, we have $|\nabla u|^2 \in N_{loc}^{1,2}(X)$ for positive time. 
This is trivial when $X$ is smooth and known when $\dim_{\mathcal{M}} \mathcal{S}<2n-4$. 
We shall show that the same conclusion hold under the condition given by Definition~\ref{dfn:SC27_1}.  
In fact, we first  show that $\frac{|\nabla |\nabla u||^2}{|\nabla u|}$ is locally integrable whenever $|\nabla u|>1$, which is proved in Lemma~\ref{lma:HH18_1}.
Then in Lemma~\ref{lma:HH18_2}, by taking advantage of the weak convexity of $\mathcal{R}$, we show that actually $|\nabla |\nabla u||^2$ is
locally integrable whenever $|\nabla u|>1$.

\begin{lemma}
 Suppose $u$ is a bounded heat solution, i.e., $\square u=0$,  on $X \times [0,1]$ satisfying 
 \begin{align*}
  |u|+|\dot{u}| +\int_X |\nabla \dot{u}|^2 <K
 \end{align*}
 on $X \times [\frac{1}{2}, 1]$.  At time $t=1$, let $w=\max\{|\nabla u|, 1\}$. 
 Then we have
 \begin{align}
   \int_{B(x_0,1) \backslash \mathcal{S}} \frac{|\nabla w|^2}{w} < H,
 \label{eqn:HH10_2}  
 \end{align}
 where $H$ depends on $K$ and $n$, is independent of $x_0$. 
\label{lma:HH18_1} 
\end{lemma}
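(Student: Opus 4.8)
\textbf{Proof plan for Lemma~\ref{lma:HH18_1}.}
The plan is to run a Caccioppoli-type (energy) estimate for the function $w=\max\{|\nabla u|,1\}$ at time $t=1$, using the fact that $|\nabla u|^2$ is a subsolution of the heat equation on the regular part $\mathcal{R}$. First I would recall the Bochner--Weitzenb\"ock identity on $\mathcal{R}$: since $\mathcal{R}$ is Ricci-flat and $u$ solves $\square u=0$, we have $\square |\nabla u|^2 = -2|\nabla\nabla u|^2 - 2|\nabla\bar\nabla u|^2 \le 0$ in the classical sense on $\mathcal{R}\times(0,1]$. Equivalently, $|\nabla u|^2$ is a smooth subsolution of $\square$ on $\mathcal{R}$, and by Kato's inequality $|\nabla|\nabla u||^2 \le |\nabla\nabla u|^2 + |\nabla\bar\nabla u|^2$ pointwise on the set $\{|\nabla u|>0\}$. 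The standard trick is to write, at the fixed time $t=1$, $\Delta|\nabla u|^2 = \dot{u}$-dependent terms plus the Hessian terms; more precisely $\Delta|\nabla u|^2 = \partial_t|\nabla u|^2 + 2|\nabla\nabla u|^2 + 2|\nabla\bar\nabla u|^2$, and $\partial_t|\nabla u|^2 = 2\langle\nabla u,\nabla\dot u\rangle$ is controlled in $L^1$ by the hypothesis $\int_X|\nabla\dot u|^2<K$ together with the (local) bound on $\int_X|\nabla u|^2$ which follows from Caccioppoli applied to $u$ itself using $|u|+|\dot u|<K$.

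Next I would introduce the cutoff $\chi_{\epsilon}(x)=\phi(d(x,\mathcal{S})/\epsilon)$ near $\mathcal{S}$ as in (\ref{eqn:HD16_1})--(\ref{eqn:HD16_2}), together with a fixed spatial cutoff $\eta$ supported in $B(x_0,2)$ and equal to $1$ on $B(x_0,1)$. Since $w=\max\{|\nabla u|,1\}$ is a bounded subsolution on $\mathcal{R}$ (it equals a constant where $|\nabla u|\le 1$ and equals the subsolution $|\nabla u|$ elsewhere, so it is still subharmonic in the classical sense away from $\mathcal{S}$), I would test the differential inequality $\Delta w \ge$ (the relevant lower bound coming from Bochner) against $\eta^2(1-\chi_\epsilon)^2 \cdot g(w)$ for a suitable increasing function $g$ — the natural choice being $g(w)=\log w$ or $g(w)=1$ on the plateau region and then integrating by parts. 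The key algebraic point is that on $\{|\nabla u|>1\}$ one has, after integration by parts on $\mathcal{R}$ (which is justified because all integrands are smooth there and the cutoffs have compact support in $\mathcal{R}$),
\begin{align*}
\int_{\mathcal{R}} \eta^2(1-\chi_\epsilon)^2 \frac{|\nabla w|^2}{w}
&\le C\int_{\mathcal{R}} \eta^2(1-\chi_\epsilon)^2 \frac{|\nabla\nabla u|^2+|\nabla\bar\nabla u|^2}{|\nabla u|}\\
&\le C\int (|\nabla\eta|^2 + \eta^2|\nabla\chi_\epsilon|^2) \cdot w + C\int \eta^2(1-\chi_\epsilon)^2 \frac{|\langle\nabla u,\nabla\dot u\rangle|}{|\nabla u|},
\end{align*}
where the last term is bounded by $C\int_{B(x_0,2)}|\nabla\dot u| \le C K^{1/2}$ by Cauchy--Schwarz and the hypothesis. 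The boundary terms from $\eta$ are bounded by $C\int_{B(x_0,2)}w$, which is finite by Caccioppoli applied to $u$ (multiply $\square u=0$ at time near $1$ by $\tilde\eta^2 u$ and integrate, using $|u|+|\dot u|\le K$). The term involving $|\nabla\chi_\epsilon|^2$ is handled exactly as in (\ref{eqn:HD16_2}): $\int|\nabla\chi_\epsilon|^2\cdot w \le Cr^{-2}|\{\nabla\chi_\epsilon\ne 0\}|\sup w$; but here $\sup w$ is not known a priori, so instead I would absorb: use that $\int|\nabla\chi_\epsilon|^2 \le C\epsilon$ by the Minkowski codimension bound $\dim_{\mathcal{M}}\mathcal{S}<2n-3$ in Definition~\ref{dfn:SC27_1}, and combine with an $L^2$-bound on $w$ in a slightly larger ball — or, more robustly, bound $\int\eta^2|\nabla\chi_\epsilon|^2 w$ using H\"older: $\left(\int|\nabla\chi_\epsilon|^{2}\right)^{1/2}\left(\int w^2\right)^{1/2}$-type estimate over the shrinking neighborhood $\{d(\cdot,\mathcal{S})<2\epsilon\}$, whose measure is $o(\epsilon^2)$, against the a priori $L^2$-bound $\int_{B(x_0,2)}|\nabla u|^2<\infty$. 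Letting $\epsilon\to 0$ kills this term and yields (\ref{eqn:HH10_2}) with $H=H(n,K)$.

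The main obstacle I anticipate is precisely the control of the singular cutoff term $\int \eta^2|\nabla\chi_\epsilon|^2 w$: one does not have a pointwise bound on $w=|\nabla u|$ near $\mathcal{S}$ (that is, in a sense, what the whole appendix is building toward), so one cannot simply pull $\sup w$ out. The resolution is to exploit that $\dim_{\mathcal{M}}\mathcal{S}<2n-3$ so that the measure of the $\epsilon$-tube around $\mathcal{S}$ is $O(\epsilon^{3})$ — strictly better than $\epsilon^2$ — which, after applying Cauchy--Schwarz against the already-established local $L^2$-integrability of $\nabla u$ (hence of $w$), gives a bound of the form $C\epsilon^{-2}\cdot\|w\|_{L^2(\text{tube})}\cdot|\text{tube}|^{1/2} \le C\epsilon^{-2}\cdot\epsilon^{3/2}\cdot o(1)$, which still tends to $0$ only if one is slightly more careful; the honest way is to test with $\eta^2(1-\chi_\epsilon)^2$ (no extra $\log w$), which produces $\int\eta^2(1-\chi_\epsilon)^2|\nabla w|^2/w$ on the left and the troublesome term becomes $\int\eta^2(1-\chi_\epsilon)|\nabla w||\nabla\chi_\epsilon|$, which by Young's inequality is absorbed into the left side plus $C\int|\nabla\chi_\epsilon|^2 w$, and then one iterates or uses that $\int|\nabla\chi_\epsilon|^2\le C\epsilon\to 0$ together with $w\in L^2_{loc}$. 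This is the same mechanism used in the proof of Proposition~\ref{prn:HD16_2}, and I would follow that template essentially verbatim, with $u$ there replaced by $|\nabla u|^2$ (respectively $w$) here and the Bochner inequality supplying the needed subsolution property.
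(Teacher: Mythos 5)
Your overall shape — test a Bochner-type inequality for $w$ against $\eta^2(1-\chi_\epsilon)^2$, integrate by parts on $\mathcal{R}$, and let the singular cutoff radius $\epsilon$ go to zero — is the same as the paper's. But the central step, the second inequality in your displayed chain, does not follow from integration by parts, and the "honest way" you fall back on does not work either; and the mechanism you use to control the singular cutoff term is not the one the paper uses.

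Concretely: integrating $\Delta|\nabla u|^2$ against $\eta^2(1-\chi_\epsilon)^2/|\nabla u|$ produces control of the Kato \emph{defect}
$\int \chi^2\bigl(|\nabla\nabla u|^2+|\nabla\bar\nabla u|^2 - |\nabla|\nabla u||^2\bigr)/|\nabla u|$
(in terms of cutoff and $\langle\nabla u,\nabla\dot u\rangle$ terms), not of the middle quantity $\int\chi^2(|\nabla\nabla u|^2+|\nabla\bar\nabla u|^2)/|\nabla u|$ itself. Crude Kato only says the defect is nonnegative, which gives you nothing about $\int\chi^2|\nabla w|^2/w$. Likewise, your alternative — "test with $\eta^2(1-\chi_\epsilon)^2$ (no extra $\log w$), which produces $\int\eta^2(1-\chi_\epsilon)^2|\nabla w|^2/w$ on the left" — is simply false with the crude differential inequality $\Delta w\ge -|\nabla\dot u|$: that test function yields only $-\int\langle\nabla w,\nabla[\eta^2(1-\chi_\epsilon)^2]\rangle$, with no $|\nabla w|^2$ term on either side. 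What is missing is the \emph{refined} Kato inequality. The paper works with $v=|\nabla u|$, not $|\nabla u|^2$, and exploits the trace constraint $\Delta u=\dot u$ in normal coordinates aligned so $\nabla u=\partial_1$ to prove the pointwise lower bound $|\nabla\nabla u|^2-|\nabla v|^2 \ge \frac{3}{4(m-1)}|\nabla v|^2-\frac{4K^2}{m-1}$, $m=2n$. Feeding this into $\Delta v=\tfrac{|\nabla\nabla u|^2-|\nabla v|^2}{v}+\tfrac{\langle\nabla u,\nabla\dot u\rangle}{v}$ gives the weak inequality $\Delta w\ge \frac{3}{4(m-1)}\frac{|\nabla w|^2}{w}-\text{const}-|\nabla\dot u|$, which is precisely what makes testing against $\chi^2$ productive: Young absorbs $\tfrac{1}{2(m-1)}\int\chi^2|\nabla w|^2/w$ from the cross term and the positive gap $\tfrac{1}{4(m-1)}$ survives. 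A $\log w$ test function with the crude inequality could in principle also close (the Young step produces $\int w(\log w)^2|\nabla\chi|^2$, controllable using $\int w^2\lesssim K^2$), but you would still need your displayed chain replaced by this direct computation — as written it does not hold.

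On the secondary issue you flag, the singular cutoff term: your $L^2$/H\"older workaround does not close because $|\nabla\chi_\epsilon|^2\sim\epsilon^{-2}$ while the tube has measure $O(\epsilon^{3})$, and $\|w\|_{L^2(\text{tube})}\to 0$ qualitatively but with no explicit rate. The paper instead uses a \emph{pointwise} gradient bound near $\mathcal{S}$: for $y$ at distance $\sim\epsilon$ from $\mathcal{S}$ the ball $B(y,\epsilon)$ is regular, and $u$ is a bounded heat solution on $B(y,\epsilon)\times[\tfrac12,1]$, so Li--Yau (or parabolic Moser iteration for $|\nabla u|$, which satisfies $\square|\nabla u|\le 0$) gives $|\nabla u|\lesssim C(K)/\epsilon$ at $t=1$. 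Combined with $|\nabla\psi_\epsilon|\lesssim\epsilon^{-1}$ and $|\{\psi_\epsilon\ne 0\}\cap B(x_0,2)|\lesssim\epsilon^{3+\delta}$ from $\dim_{\mathcal{M}}\mathcal{S}<2n-3$, this makes the cutoff term $O(\epsilon^\delta)\to 0$. This is where the parabolic hypothesis genuinely enters; a purely elliptic $L^2$ argument does not suffice.
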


\begin{proof}
 Let $l=\dot{u}, v=|\nabla u|, m=2n$.  Direct calculation shows that
 \begin{align*}
   \Delta v&=\frac{|\nabla \nabla u|^2-|\nabla v|^2}{v} + \frac{\langle \nabla u, \nabla l \rangle}{v} \geq \frac{|\nabla \nabla u|^2-|\nabla v|^2}{v} -|\nabla l|. 
 \end{align*}
 In local frame, we can delicately compare $|\nabla \nabla u|$ and $|\nabla v|$.   
 Actually, by choosing normal coordinate such that $\nabla u=\frac{\partial}{\partial x^1}$, we then have
 $\Delta u=u_{11}+u_{22} + \cdots u_{mm}$ and $|\nabla v|^2=u_{11}^2+u_{12}^2+\cdots u_{1m}^2$.
 Therefore, we have
\begin{align*}
  &\quad|\nabla \nabla u|^2\\
  &=u_{11}^2+u_{22}^2 + \cdots + u_{mm}^2 +  2(u_{12}^2+\cdots+u_{1m}^2)
     \geq u_{11}^2+ \frac{(u_{22}+\cdots +u_{mm})^2}{m-1}+2(u_{12}^2+\cdots+u_{1m}^2)\\
  &\geq u_{11}^2+ \frac{(l-u_{11})^2}{m-1}+2(u_{12}^2+\cdots+u_{1m}^2)
     =\frac{m}{m-1}u_{11}^2 +\frac{l^2-2lu_{11}}{m-1}+2(u_{12}^2+\cdots+u_{1m}^2)\\
  &=\frac{m}{m-1}|\nabla v|^2 +\frac{l^2-2lu_{11}}{m-1}+\frac{m-2}{m-1}(u_{12}^2+\cdots+u_{1m}^2). 
\end{align*}
 Then it is easy to see that
 \begin{align*}
   |\nabla \nabla u|^2-|\nabla v|^2 \geq \frac{1}{m-1}|\nabla v|^2 + \frac{l^2-2lu_{11}}{m-1} \geq \frac{1}{m-1}|\nabla v|^2 -\frac{2K}{m-1} |\nabla v|
   \geq \frac{3}{4(m-1)} |\nabla v|^2 - \frac{4K^2}{(m-1)}. 
 \end{align*}
 Let $w=\max\{v,1\}$, we claim that
 \begin{align}
    \Delta w \geq \frac{3}{4(m-1)} \frac{|\nabla w|^2}{w} - \frac{4K^2}{(m-1)} -|\nabla l|,  \label{eqn:HH10_4}   
 \end{align}
 on $\mathcal{R}$ in the weak sense.  Actually, let $\varphi \in C_c^{\infty}(\mathcal{R})$ and $\varphi \geq 0$.   Let $\Omega$ be the domain consisting of points where $\varphi>0$ and $v>1$. 
 Clearly, $\nabla w \equiv 0$ outside $\overline{\Omega}$. Following from definition of weak Laplacian (Definition~\ref{dfn:HD29_1}) and $\varphi$ has compact support, we have
 \begin{align*}
    -\int_X \varphi \Delta w=\int_X \langle \nabla \varphi, \nabla w\rangle=\int_{\Omega} \langle \nabla \varphi, \nabla w\rangle=\int_{\Omega} \langle \nabla \varphi, \nabla v\rangle
    =-\int_{\Omega} \varphi \Delta v + \int_{\partial \Omega} \varphi \langle  \nabla v, \vec{n} \rangle,  
 \end{align*}
 where $\vec{n}$ is the outward normal vector field of $\Omega$ along $\partial \Omega$.  Note that $\nabla w \equiv 0$ outside $\Omega$. So we have
 \begin{align*}
   &\quad \int_X \varphi \Delta w + \int_X \varphi \left(  -\frac{3}{4(m-1)} \frac{|\nabla w|^2}{w} +\frac{4K^2}{(m-1)} +|\nabla l| \right)\\
   &=\int_{\Omega} \varphi \left( \Delta v  -\frac{3}{4(m-1)} \frac{|\nabla v|^2}{v} +\frac{4K^2}{(m-1)} +|\nabla l| \right)
       +\int_{X \backslash \Omega} \varphi \left( \frac{4K^2}{(m-1)} +|\nabla l| \right) 
       -\int_{\partial \Omega} \varphi \langle \nabla v, \vec{n} \rangle\\
    &\geq   -\int_{\partial \Omega} \varphi \langle \nabla v, \vec{n} \rangle \geq 0. 
 \end{align*}
 In the last step, $\vec{n}$ is the outward normal vector field along $\partial \Omega$ and $\varphi \langle \nabla v, \vec{n} \rangle \leq 0$.  By the arbitrary choice of $\varphi$, 
 the above inequality implies (\ref{eqn:HH10_4}). 
 
Suppose  $\eta$ is a radial cutoff function which vanishes outside $B(x_0,2)$ and equals $1$ in $B(x_0,1)$, 
$\psi_{\epsilon}$ is a cutoff function which vanishes outside the $2\epsilon$-neighborhood of $\mathcal{S}$ and equals $1$ in the $\epsilon$-neighborhood of $\mathcal{S}$. 
We can also require that $|\nabla \eta|<2, |\nabla \psi_{\epsilon}|<\frac{2}{\epsilon}$.   Let $\chi=\eta(1-\psi_{\epsilon})$. 
Multiplying both sides of the above inequality by $\chi^2$ and integrating by parts, we have
 \begin{align*}
  &\quad \frac{1}{2(m-1)} \int_X  \chi^2 \frac{|\nabla w|^2}{w} +2(m-1) \int_X w|\nabla \chi|^2\\
  &\geq
    -2\int_X \langle \nabla \chi, \chi \nabla w \rangle=\int_X \chi^2 \Delta w
     \geq \frac{3}{4(m-1)} \int_X \chi^2 \frac{|\nabla w|^2}{w} -\frac{4K^2}{m-1} \int_X \chi^2 -\int_X \chi^2 |\nabla l|.
 \end{align*}
 It follows that
 \begin{align}
      &\quad \frac{1}{4(m-1)} \int_X \chi^2 \frac{|\nabla w|^2}{w} \notag\\
      &\leq C \int_X \chi^2 (1+|\nabla l|) + 2(m-1) \int_X w|\nabla \chi|^2 \leq C \int_X \chi^2 (1+|\nabla l|^2) + 2(m-1) \int_X w|\nabla \chi|^2 \notag\\
      &\leq C \int_{B(x_0,2)} (1+|\nabla l|^2) + 2(m-1) \int_X (1+|\nabla u|) |\nabla \chi|^2  \notag\\
      &\leq C \int_{B(x_0,2)} (1+|\nabla l|^2) + 4(m-1) \int_X (1+|\nabla u|) \left\{|\nabla \eta|^2 + |\nabla \psi_{\epsilon}|^2 \right\}.  \label{eqn:GD02_3}
 \end{align}
 In the support of $\psi_{\epsilon}$, $u$ satisfies heat equation from time $t=\frac{1}{2}$ to $t=1$. Moreover, $|u|$ is bounded by $K$.
 By adding $K$ if necessary, we can assume $u$ to be positive.
 In light of classical Li-Yau gradient estimate for heat solutions(c.f.~\cite{LiYau},~\cite{Ha93}), we see that $|\nabla u|< \frac{C(K)}{\epsilon}$ at time $t=1$. 
 This can also be obtained from parabolic Moser iteration. Actually, let $y$ be a point such that $B(y,\epsilon)$ is regular.  
 Since $\square |\nabla u| \leq 0$ in $B(y, \epsilon) \times [0,1]$ as smooth functions,  Moser iteration implies that the value of $|\nabla u|(y,1)$ is
 dominated by the $L^2$-average of $u$ in $B(y, \epsilon) \times [\frac{1}{2},1]$, multiplying by a number which is about $\frac{C}{\epsilon}$. 
 Now we return to the main argument.  Recall that
 $|\nabla \psi_{\epsilon}|<\frac{C}{\epsilon}$ also.  The support of $\psi_{\epsilon}$ in $B(x_0, 2)$ has volume
 bounded above by $C\epsilon^{3+\delta}$ by the assumption $\dim_{\mathcal{M}} \mathcal{S}<2n-3$.  Consequently, we have
 \begin{align*}
   \int_{B(x_0,2)} |\nabla u||\nabla \psi_{\epsilon}|^2< C \epsilon^{3+\delta} \cdot \epsilon^{-1} \cdot \epsilon^{-2}=C\epsilon^{\delta}=o(\epsilon). 
 \end{align*}
 In inequality (\ref{eqn:GD02_3}),  let $\epsilon \to 0$, we obtain $\int_{\mathcal{R}} \eta^2 \frac{|\nabla w|^2}{w} <C$,  which of course implies (\ref{eqn:HH10_2}).  
 \end{proof}

\begin{lemma}
Same conditions as in Lemma~\ref{lma:HH18_1}. Then we have $w \in N_{loc}^{1,2}(X)$. 
\label{lma:HH18_2}
\end{lemma}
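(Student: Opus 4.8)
\textbf{Proof proposal for Lemma~\ref{lma:HH18_2}.} The goal is to upgrade the weighted estimate $\int_{B(x_0,1)\backslash\mathcal{S}} \frac{|\nabla w|^2}{w} < H$ from Lemma~\ref{lma:HH18_1} to the honest statement $w \in N^{1,2}_{loc}(X)$, i.e. $\int_{B(x_0,1)\backslash\mathcal{S}} |\nabla w|^2 < \infty$. Since $w = \max\{|\nabla u|, 1\}$ is bounded on any fixed compact set (by the Li--Yau type gradient estimate / parabolic Moser iteration applied on regular balls away from $\mathcal{S}$, exactly as invoked at the end of the proof of Lemma~\ref{lma:HH18_1}), say $w \le M$ on $B(x_0,2)$, we have $|\nabla w|^2 \le M \cdot \frac{|\nabla w|^2}{w}$ pointwise on $\mathcal{R}$ wherever $|\nabla u|>1$, and $|\nabla w| = 0$ wherever $|\nabla u| \le 1$. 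Hence $\int_{B(x_0,1)\backslash\mathcal{S}} |\nabla w|^2 \le M H < \infty$ directly. The only genuine content is therefore to justify that the a priori bound $w \le M$ on compact sets is available, and that $w \in N^{1,2}_{loc}(X)$ as an element of the Sobolev space — not merely that its gradient is $L^2$ on the smooth part — which requires the high-codimension hypothesis and the weak convexity of $\mathcal{R}$ to rule out any distributional contribution from $\mathcal{S}$.

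The plan is as follows. First I would fix a relatively compact $\Omega' \Subset \Omega = B(x_0,2)$ and record that $w$ is a bounded function on $\Omega$: this follows because $u$ solves the heat equation smoothly on $\mathcal{R}\times[\tfrac12,1]$ with $|u|+|\dot u| \le K$, so on every regular geodesic ball $B(y,\epsilon)\subset\mathcal{R}$ parabolic Moser iteration (or the Li--Yau estimate) bounds $|\nabla u|(y,1)$ by $C(K)/\epsilon$; but more to the point, for the finiteness of $\int|\nabla w|^2$ we only need $w$ bounded on $\Omega'$, which holds as soon as $\Omega' \Subset \Omega$ and $\overline{\Omega'}$ is compact — the gradient estimate on regular balls combined with the fact that near $\mathcal{S}$ we are allowed a bound blowing up like $d(\cdot,\mathcal{S})^{-1}$, together with $w$ being genuinely bounded away from $\mathcal{S}$. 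Actually, since $u$ is a \emph{bounded} heat solution on all of $X\times[\tfrac12,1]$, the standard interior gradient estimate for bounded heat solutions on the smooth manifold $\mathcal{R}$ gives $|\nabla u|(y,1) \le C(K)\, d(y,\mathcal{S})^{-1}$ only; to get a true $L^\infty$ bound on $\Omega'$ one uses that the heat semigroup regularity and the weak convexity of $\mathcal{R}$ allow the same De Giorgi iteration that appears in the proof of Proposition~\ref{prn:HC29_6} / Lemma~\ref{lma:HH10_1}, yielding $\sup_{\Omega'} |\nabla u| \le C(K, \Omega', \Omega)$. Granting this, set $M := \sup_{\Omega'} w < \infty$.

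Second, I would combine this with Lemma~\ref{lma:HH18_1}: on $\mathcal{R}\cap\Omega'$, wherever $|\nabla u| > 1$ we have $|\nabla w|^2 = \frac{|\nabla w|^2}{w}\cdot w \le M\,\frac{|\nabla w|^2}{w}$, and elsewhere $|\nabla w| = 0$; integrating over $\Omega'\backslash\mathcal{S}$ and using \eqref{eqn:HH10_2} gives $\int_{\Omega'\backslash\mathcal{S}} |\nabla w|^2 \le M H$. Since $\mathcal{S}$ has measure zero, $\Omega'\backslash\mathcal{S} = \Omega'\cap\mathcal{R}$ is a smooth manifold and the above says $w \in W^{1,2}(\Omega'\cap\mathcal{R})$, which by Proposition~\ref{prn:HD13_1} (identity is an isometry $N^{1,2}(\Omega') \cong W^{1,2}(\Omega'\cap\mathcal{R})$, valid precisely because $\dim_{\mathcal M}\mathcal S < 2n-3$) is the same as $w \in N^{1,2}(\Omega')$. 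By the arbitrary choice of $\Omega' \Subset \Omega$ and then of $\Omega = B(x_0,2)$, we conclude $w \in N^{1,2}_{loc}(X)$.

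The main obstacle I anticipate is Step one — the honest $L^\infty$ bound on $|\nabla u|$ on compact subsets of $X$ (not just of $\mathcal{R}$). Near $\mathcal{S}$ one cannot appeal naively to interior estimates on $\mathcal{R}$ because regular balls shrink as one approaches $\mathcal{S}$. The resolution is the same device used repeatedly in this section: run the heat flow, note $|\nabla u|$ is a heat subsolution (Proposition~\ref{prn:HD16_2} gives the needed $N^{1,2}_{loc}$ membership for the \emph{initial} regularity via the finite-codimension and weak-convexity hypotheses, and one bootstraps), then apply the weak maximum principle / parabolic Moser iteration on the singular space (as in Proposition~\ref{prn:HC29_5} and the alternative proof via De Giorgi iteration mentioned after Proposition~\ref{prn:HC29_6}) to get $\sup_{\Omega'}|\nabla u| \le C \, \|u\|_{L^\infty}\cdot (\mathrm{dist}(\Omega',\partial\Omega))^{-1}$. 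Once this uniform bound $M$ is in hand, the rest is the one-line comparison above. It is worth flagging that this is exactly the $|\nabla u| \in N^{1,2}_{loc}$ claim used in Appendix~\ref{app:A} (and referenced in the proof of Claim~\ref{clm:GD11_1} and in Proposition~\ref{prn:SL04_1}), so Lemma~\ref{lma:HH18_2} is the technical heart of the weighted Sobolev inequality.
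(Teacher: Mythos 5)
Your proposal has a circularity that the paper's proof is specifically designed to avoid. You want to deduce $\int_{\Omega'}|\nabla w|^2 < \infty$ from $\int_{\Omega'} \frac{|\nabla w|^2}{w} < H$ (Lemma~\ref{lma:HH18_1}) by the pointwise inequality $|\nabla w|^2 \le M\,\frac{|\nabla w|^2}{w}$, where $M = \sup_{\Omega'} w$. But establishing $M < \infty$ on a compact set touching $\mathcal{S}$ is precisely the content of Lemma~\ref{lma:HH10_1}, which is proved \emph{after} Lemma~\ref{lma:HH18_2} and \emph{uses} its conclusion: the paper's proof of Lemma~\ref{lma:HH10_1} reads ``By Lemma~\ref{lma:HH18_1} and Lemma~\ref{lma:HH18_2}, we know $w(\cdot,s) \in N^{1,2}_{loc}(X)$ \ldots By parabolic version of De-Giorgi iteration, we then have $w$ is locally bounded.'' The De Giorgi / Moser iteration on the singular space that you invoke to produce $M$ requires $w \in N^{1,2}_{loc}$ as input; that is the hypothesis under which the iteration machinery (constructed for the Dirichlet form $\mathscr{E}$) applies. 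You also cannot reach for Proposition~\ref{prn:HD16_2} (extension of bounded subharmonic functions) as a substitute, because it too takes boundedness as a hypothesis, which is exactly what you are trying to prove.

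The paper breaks the circle with a truncation. It sets $\tilde{w} = (1-r^2)^2 w$ and $\tilde{w}_k = \min\{\tilde{w}, k\}$; the truncated function $\tilde{w}_k$ is bounded \emph{by construction}, so it can legitimately be used as a test function without any a priori $L^\infty$ bound on $w$. Weak convexity enters via $\Delta(1-r^2)^2 \ge -4m$ to control the error from the cutoff weight, and the differential inequality (\ref{eqn:HH10_4}) plus Lemma~\ref{lma:HH18_1} give, after integration by parts against $\chi^2 \tilde{w}_k$ and the usual cutoff argument near $\mathcal{S}$, a bound $\int_\Omega |\nabla \tilde{w}_k|^2 \le C$ with $C$ \emph{independent of $k$}. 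Sending $k\to\infty$ yields $\int_\Omega |\nabla \tilde{w}|^2 < \infty$, hence the claim. So the key technical device is the $k$-truncation, not an $L^\infty$ bound; the $L^\infty$ bound is a downstream consequence, not an available input.
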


\begin{proof}
It suffices to prove 
\begin{align}
    \int_{B(x_0, \frac{1}{2})} |\nabla w|^2 <\infty
\label{eqn:HH10_3}    
\end{align}
for arbitrary point $x_0 \in X$. 

Let $\tilde{w}=(1-r^2)^2 w$, $E=\frac{4K^2}{m-1}$.   It follows from (\ref{eqn:HH10_4}) and the weak convexity of $\mathcal{R}$ (c.f. Proposition~\ref{prn:HC29_1}) that
\begin{align*}
     \Delta w \geq \frac{1}{2(m-1)} \frac{|\nabla w|^2}{w} - Ew-|\nabla l|, \qquad    \Delta (1-r^2)^2 \geq -4m. 
\end{align*}
Then
\begin{align*}
  \Delta \tilde{w}&=w \Delta (1-r^2)^2 + 2 \langle \nabla (1-r^2)^2, \nabla w\rangle +(1-r^2)^2 \Delta w \\
     &\geq -4mw -8(1-r^2)r \langle \nabla r, \nabla w\rangle +\frac{1}{2(m-1)} (1-r^2)^2|\nabla w|^2w^{-1} -E(1-r^2)^2 w -(1-r^2)^2 |\nabla l|\\
     &\geq -4mw -8(1-r^2)|\nabla w| +\frac{1}{2(m-1)} (1-r^2)^2|\nabla w|^2w^{-1} -E(1-r^2)^2 w -(1-r^2)^2 |\nabla l|\\
     &\geq -C w -|\nabla l|. 
\end{align*}
In short, we have $Cw+|\nabla l| \geq -\Delta \tilde{w}$. 
Let $\tilde{w}_k=\min\{\tilde{w}, k\}$. Clearly, $\tilde{w}_k|_{\partial \Omega}=0$ where $\Omega=B(x_0,1)$.  Let $\chi=1-\psi_{\epsilon}$. 
Multiplying both sides of the above inequality by $\chi^2 \tilde{w}_k$,  integration by parts implies that
\begin{align*}
  C \int_{\Omega} \chi^2 \tilde{w}_k w +\int_{\Omega} \chi^2 \tilde{w}_k |\nabla l|
 &\geq  -\int_{\Omega} \chi^2 \tilde{w}_k \Delta \tilde{w} 
 =\int_{\Omega} \chi^2 \langle \nabla \tilde{w}_k, \nabla \tilde{w} \rangle 
 + 2 \int_{\Omega} \langle  \tilde{w}_k \nabla \chi, \chi \nabla \tilde{w} \rangle\\
 &=\int_{\Omega} \chi^2 |\nabla \tilde{w}_k|^2+ 2 \int_{\Omega} \langle  (\tilde{w}_k-k) \nabla \chi, \chi \nabla \tilde{w} \rangle +2k \int_{\Omega} \langle \nabla \chi, \chi \nabla \tilde{w}\rangle\\
 &=\int_{\Omega} \chi^2 |\nabla \tilde{w}_k|^2+ 2 \int_{\Omega} \langle  (\tilde{w}_k-k) \nabla \chi, \chi \nabla \tilde{w}_k \rangle +2k \int_{\Omega} \langle \nabla \chi, \chi \nabla \tilde{w}\rangle.
\end{align*} 
Note that in the above inequality we used the fact that $\nabla \tilde{w}_k=0$ and $\tilde{w}_k-\tilde{w}=0$ whenever $w>k$.   
Applying an elementary inequality in the last step, we arrive
\begin{align*} 
 &\quad C \int_{\Omega} \chi^2 \tilde{w}_k w +\int_{\Omega} \chi^2 \tilde{w}_k |\nabla l|\\
 &\geq \frac{1}{2}\int_{\Omega} \chi^2 |\nabla \tilde{w}_k|^2-2 \int_{\Omega} (\tilde{w}_k-k)^2 |\nabla \chi|^2 
 - 2k \left(\int_{\Omega} w |\nabla \chi|^2 \right)^{\frac{1}{2}} \left( \int_{\Omega} \frac{\chi^2 |\nabla \tilde{w}|^2}{w} \right)^{\frac{1}{2}}\\
 &\geq \frac{1}{2}\int_{\Omega} \chi^2 |\nabla \tilde{w}_k|^2 - 2k^2 \int_{\Omega} |\nabla \chi|^2 
   -Ck\left(\int_{\Omega} w |\nabla \chi|^2 \right)^{\frac{1}{2}} \left( \int_{\Omega} \chi^2 \left( w + \frac{|\nabla w|^2}{w}  \right) \right)^{\frac{1}{2}}.  
\end{align*}
Recall that $\chi=1-\psi_{\epsilon}$.  Let $\epsilon \to 0$, the last two terms in the above inequality vanishes. Then we have
\begin{align*}
   \int_{\Omega} |\nabla \tilde{w}_k|^2 &\leq C \int_{\Omega} \tilde{w}_k + \int_{\Omega} \tilde{w}_k |\nabla l| \leq C\int_{\Omega} w + \left( \int_{\Omega} \tilde{w}_k^2 \right)^{\frac12} \left( \int_{\Omega} |\nabla l|^2 \right)^{\frac12}\\
     &\leq C\int_{\Omega} (1+|\nabla u|) + \left( \int_{\Omega} 1+|\nabla u|^2 \right)^{\frac12} \left( \int_{\Omega} |\nabla l|^2 \right)^{\frac12}  \leq C.
\end{align*}
Note that the last constant $C$ does not depend on $k$. Let $k \to \infty$, we obtain $\int_{\Omega} |\nabla \tilde{w}|^2 \leq C$. 
In particular, we can bound $\int_{B(x_0, \frac{1}{2})} |\nabla \tilde{w}|^2$ and consequently we have (\ref{eqn:HH10_3}). 
\end{proof}

\begin{remark}
If $\dim_{\mathcal{M}} \mathcal{S}<2n-4+\frac{2}{2n-1}$, we can obtain Lemma~\ref{lma:HH18_2} without using the weak convexity of $\mathcal{R}$. 
The ingredient is to show sub-solution property of $|\nabla u|^{q}$, for $q $ slightly bigger than $\frac{2n-2}{2n-1}$. 
Also, Lemma~\ref{lma:HH18_1} and Lemma~\ref{lma:HH18_2} has  versions for bounded solution of Poisson equation $\Delta u=c$ where $c$ is a constant.
\label{rmk:GD11_1}
\end{remark}

After we know $w=\max\{1, |\nabla u|\} \in N_{loc}^{1,2}(X)$, parabolic De-Giorgi iteration implies that  $w$'s point wise bound can be
dominated by $w$'s $L^2$-norm in the space-time.  By Lemma~\ref{lma:HH18_2}, we then have $w$ is bounded in Lemma~\ref{lma:HH10_1}.
This of course implies that $|\nabla u|$ is bounded.

\begin{lemma}
Suppose $f \in C_c^{\infty}(\mathcal{R})$, $u$ is the heat solution initiating from $f$. 
Let $h=|\nabla u|^2(\cdot, t)$ for some $t>0$. Then $\norm{h}{L^{\infty}(X)}<\infty$. 
\label{lma:HH10_1}
\end{lemma}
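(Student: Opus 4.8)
\textbf{Proof proposal for Lemma~\ref{lma:HH10_1}.}

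The plan is to run a parabolic De Giorgi--Nash--Moser iteration for the function $w = \max\{1, |\nabla u|\}$, using the weak sub-solution property established in \eqref{eqn:HH10_4} and the local $N^{1,2}$-regularity of $w$ proved in Lemma~\ref{lma:HH18_1} and Lemma~\ref{lma:HH18_2}. The key point is that once $w$ is known to lie in $N_{loc}^{1,2}(X)$ and to satisfy a differential inequality of the form $\Delta w \geq \frac{c}{w}|\nabla w|^2 - Ew - |\nabla l|$ in the weak sense on $\mathcal{R}$ (and hence, because $\mathcal{S}$ is $\mathcal{H}^{2n}$-null and $\nabla w$ is globally $L^2_{loc}$, in the weak sense on all of $X$ when tested against nonnegative $\varphi \in N_c^{1,2}(X)$), one may treat $w$ as a genuine bounded-from-below weak sub-solution of a uniformly parabolic-type inequality. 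Since the underlying space $X$ satisfies volume doubling (Corollary~\ref{cly:HD20_1}) and a uniform $(1,2)$-Poincar\'e inequality (Proposition~\ref{prn:HC29_2}), the standard Moser iteration machinery (as in Saloff-Coste~\cite{Saloffnote}, Sturm~\cite{Stu95,Stu96}) applies verbatim on the metric measure space $(X,d,d\mu)$.

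First I would fix $x_0 \in X$ and $t>0$, and set up the iteration on the parabolic cylinder $B(x_0,1) \times [\tfrac{t}{2}, t]$ (after a harmless time-shift and rescaling so that one may assume the needed bounds $|u|+|\dot u| + \int_X |\nabla \dot u|^2 < K$ hold on the relevant time interval; these follow from the smoothing property of the heat semigroup applied to $f \in C_c^\infty(\mathcal R)$, together with Proposition~\ref{prn:HD07_1} and Corollary 2.7-type derivative estimates). Next I would note that $w$ satisfies $\square w \leq \frac{c}{w}|\nabla w|^2 + Ew + |\nabla l| \leq Ew + |\nabla l|$ in the weak space-time sense (the $|\nabla w|^2/w$ term has a good sign and is dropped), so that $w$ is a weak sub-solution of $\square w \leq Ew + |\nabla l|$, where the forcing term $|\nabla l|$ is controlled in $L^2$ by hypothesis. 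The multiplication by test functions $\chi^2 w^{2k-1}$ (with $\chi$ a radial space cutoff times a time cutoff, and with the singular cutoff $1-\psi_\epsilon$ inserted and then sent to $0$ exactly as in the proof of Lemma~\ref{lma:HH18_2}, using $\dim_{\mathcal M}\mathcal S < 2n-3$ to kill the boundary contributions near $\mathcal S$) produces the Caccioppoli inequalities at every exponent $2k$. Combining with the Sobolev inequality coming from doubling + Poincar\'e gives the reverse-H\"older chain, and iterating yields
\begin{align*}
   \sup_{B(x_0, \frac12) \times [\frac{3t}{4}, t]} w^2 \leq C \left( \int_{\frac{t}{2}}^{t} \int_{B(x_0,1)} w^2 \, d\mu \, ds + \|\,|\nabla l|\,\|_{L^2}^2 + 1 \right),
\end{align*}
with $C = C(n,\kappa, t, K)$. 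By Lemma~\ref{lma:HH18_2} the right-hand side is finite, since $w \in N_{loc}^{1,2}(X)$ gives $\int_{B(x_0,1)} w^2 < \infty$ at each time slice, and the time integrability follows from the uniform-in-time bounds. Since $h = |\nabla u|^2(\cdot, t) \leq w^2(\cdot,t)$ pointwise, this gives $\|h\|_{L^\infty(X)} < \infty$. A final covering argument over $x_0$ is unnecessary because the constant $C$ is already independent of $x_0$.

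The main obstacle I anticipate is a bookkeeping one rather than a conceptual one: justifying that the Moser iteration test-function manipulations (integration by parts against $\chi^2 w^{2k-1}$ on the singular space) are legitimate, i.e.\ that all the relevant products lie in $N_c^{1,2}$ and that the singular-set cutoff $1-\psi_\epsilon$ can indeed be removed in the limit $\epsilon \to 0$ at every stage of the iteration. This is exactly the type of argument carried out in Lemma~\ref{lma:HH18_1} and Lemma~\ref{lma:HH18_2}, relying on $|\Omega \cap \supp \chi| \leq Cr^3$, on $\int_\Omega |\nabla \chi|^2 \leq Cr$ from \eqref{eqn:HD16_2}, and on the already-established local integrability of $|\nabla w|^2/w$ (for the lower-order terms) and $|\nabla w|^2$ (from Lemma~\ref{lma:HH18_2}); one checks that at each iteration step the near-$\mathcal{S}$ error involves a bounded factor against a cutoff whose support has measure $o(1)$, so it vanishes. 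Everything else is standard parabolic regularity on a doubling space with Poincar\'e inequality, so I would simply cite~\cite{Saloffnote,Stu95,Stu96} for the iteration scheme itself and only spell out the singular-set truncation.
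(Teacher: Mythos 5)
The proposal has a genuine gap coming from the choice of differential inequality on which to base the iteration, and it is worth being explicit about why.

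You base your parabolic Moser iteration on the \emph{elliptic} inequality~\eqref{eqn:HH10_4} for $w$, and then add the time derivative (using $|\partial_t w| \leq |\nabla l|$) to produce $\square w \leq Ew + C|\nabla l|$. The forcing term $|\nabla l| = |\nabla \dot u|$ is then, as you note, controlled only in $L^2(X)$ (uniformly in time) by the hypotheses carried from Lemma~\ref{lma:HH18_1}. But $L^\infty_t L^2_x$ control of the forcing is \emph{not sufficient} for the parabolic De Giorgi--Moser iteration to yield an $L^\infty$ bound when the spatial dimension $2n \geq 4$ (the standard requirement is integrability $L^q$ with $q$ above roughly $n+1$). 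So the iteration as described does not close, and the estimate you display --- $\sup w^2 \lesssim \int\!\!\int w^2 + \|\nabla l\|_{L^2}^2 + 1$ --- cannot come out of a Moser scheme driven by that forcing term. One cannot rescue it by appealing to a bound on $|\nabla \dot u|$ obtained from the very lemma being proved: applying Lemma~\ref{lma:HH10_1} to $\dot u$ would require $L^2$ control of $|\nabla \ddot u|$, and so on, giving an infinite regress.

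The paper sidesteps this entirely by working directly with the \emph{parabolic} Bochner identity. On the Ricci-flat regular part $\mathcal{R}$, $\square|\nabla u|^2 = -2|\nabla\nabla u|^2 \leq 0$, because the would-be forcing term $2\langle \nabla u, \nabla \Delta u\rangle$ from the Bochner formula is exactly cancelled by $\partial_t|\nabla u|^2 = 2\langle\nabla u, \nabla \dot u\rangle$ (since $\dot u = \Delta u$). Combined with Kato's inequality this gives $\square v = \frac{-|\nabla\nabla u|^2 + |\nabla v|^2}{v} \leq 0$ for $v = |\nabla u|$, i.e.\ a heat sub-solution with \emph{zero} forcing, and then the parabolic De Giorgi iteration goes through with no integrability issue. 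Lemmas~\ref{lma:HH18_1} and~\ref{lma:HH18_2} are needed only to put $w$ in $N^{1,2}_{loc}$ (so that the iteration's test-function manipulations are legitimate on the singular space and the inequality $\square w \leq 0$ extends from $\mathcal{R}$ to all of $X$ by the density argument you outline); they do not enter the final parabolic inequality. In short: your singular-cutoff bookkeeping and the doubling/Poincar\'e ingredients are exactly the right ingredients, but you need to replace the elliptic-plus-time-derivative inequality with the clean parabolic Bochner identity $\square v \leq 0$ before running the iteration, otherwise the $|\nabla l|$ forcing kills the estimate.
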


\begin{proof}
Without loss of generality, we assume $t=1$. 

Note that $\Delta u=\dot{u}$, which can be written down explicitly as
\begin{align*}
    \dot{u}(x, s)=\int_{X} f(y) \dot{p}(s,y,x) d\mu_y.
\end{align*}
By the exponential decay of $\dot{p}$ (c.f. Proposition~\ref{prn:HD07_1}), it is clear that $\dot{u}$ also decays exponentially fast.   Note that $\square \dot{u}=0$ on the regular part. 
Therefore,  for each $s>0$, we have $\dot{u} \in L^{\infty}(X) \cap N^{1,2}(X)$. 
Let $l=\dot{u}$, $v=|\nabla u|$, $w= \max \{v, 1\}$. 
By Lemma~\ref{lma:HH18_1} and Lemma~\ref{lma:HH18_2},  we know $w(\cdot, s) \in N_{loc}^{1,2}(X)$ and $w \in N_{loc}^{1,2}(X \times (0,\infty))$. 
Moreover, we have
\begin{align*}
  \dot{v}=\partial_t(|\nabla u|)=\frac{\langle \nabla u, \nabla \dot{u}\rangle}{v}, \quad
  |\dot{v}| \leq |\nabla \dot{u}|.
\end{align*} 
For each $0<t_1<s<t_2<\infty$, it is clear that $\dot{v}(\cdot, s) \in L^2(\mathcal{R})$ and
\begin{align*}
   \int_{t_1}^{t_2} \norm{\dot{v}}{L^2(\mathcal{R})}^2 ds \leq  \int_{t_1}^{t_2} \norm{\dot{u}}{N^{1,2}(\mathcal{R})}^2 ds<\infty. 
\end{align*}
Direct calculation shows that
\begin{align*}
  \square v=(\partial_s-\Delta) v=\frac{-|\nabla \nabla u|^2 +|\nabla |\nabla u||^2}{v} \leq 0. 
\end{align*}
Recall that $w=\max\{v,1\}$. So we have $\square w \leq 0$ on $\mathcal{R} \times (0,\infty)$ in the weak sense, i.e., for each nonnegative smooth cutoff function
$\varphi$ compactly supported on $\mathcal{R} \times (0, \infty)$, we have
\begin{align*}
   \iint_{\mathcal{R} \times (0,\infty)}  \varphi \square w \triangleq \iint_{\mathcal{R} \times (0,\infty)} \left\{ \dot{w} \varphi + \langle \nabla \varphi, \nabla w \rangle \right\} \geq 0. 
\end{align*}
This can be proved following similar argument as that in the proof of inequality (\ref{eqn:HH10_4}) in Lemma~\ref{lma:HH18_1}. 
Since $w \in N_{loc}^{1,2}(X \times (0,\infty))$, $\norm{\dot{w}(\cdot, s)}{L^2(X)}^2$ is locally integrable on $(0,\infty)$, 
the compactly supported smooth functions are dense in $N^{1,2}(X \times (0,\infty))$, 
we have $\square w \leq 0$ in the weak sense on $X \times (0,\infty)$. 
By parabolic version of De-Giorgi iteration, we then have $w$ is locally bounded.
Consequently, $|\nabla u|$ is locally bounded.   Similar to sub-harmonic extension theorem(c.f. Proposition~\ref{prn:HD16_2}),  we have a heat sub-solution extension theorem.
Since $|\nabla u|$ is locally bounded and it is a heat sub-solution on $\mathcal{R} \times (0,\infty)$, we obtain that $|\nabla u|$ is a heat sub-solution
on $X \times (0,\infty)$.  In particular, we have $|\nabla u|\in N_{loc}^{1,2}(X \times (0,\infty))$. 
Therefore, by parabolic De-Giorgi iteration again, $\norm{\nabla u}{L^{\infty}(B(x,1) \times [\frac{3}{4}, \frac{5}{4}])}$ is bounded by 
$\norm{\nabla u}{L^2(B(x,2) \times [\frac{1}{2}, 2])}$, which is uniformly bounded, independent of the choice of $x$.  Therefore, $|\nabla u|(\cdot, 1)$, and hence $h$, are globally bounded on $X$.
\end{proof}

We continue to show  the integrability of $\Delta |\nabla u|^2$ and $\partial_t |\nabla u|^2$. 

\begin{lemma}
Same conditions as that in Lemma~\ref{lma:HH10_1}. Then
$h \in N^{1,2}(X)$ and $|\Delta h|+|\dot{h}| \in L^1(X)$. 
\label{lma:HH09_1}
\end{lemma}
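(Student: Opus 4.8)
The plan is to reduce everything to the Bochner identity on the Ricci--flat regular part $\mathcal{R}$, namely $\Delta h = 2|Hess_u|^2 + \dot h$ with $\dot h = 2\langle \nabla u,\nabla \dot u\rangle$, combined with the pointwise bound $|\nabla u|\in L^\infty(X)$ from Lemma~\ref{lma:HH10_1} and the Gaussian estimates for $p,\dot p,\ddot p$ and their gradients from Proposition~\ref{prn:HD07_1} and its corollary. Since $f\in C_c^\infty(\mathcal{R})$, the representations $u(\cdot,s)=\int_X f\,p(s,\cdot,y)\,d\mu_y$, $\dot u=\int_X f\,\dot p\,d\mu_y$, $\ddot u=\int_X f\,\ddot p\,d\mu_y$ show that $u,\dot u,\ddot u$, and (after combining with the heat--kernel gradient estimate) $\nabla\dot u$, all decay like $e^{-c\,d(\cdot,\supp f)^2}$ away from $\supp f$. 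Because $X$ has at most Euclidean volume growth (Corollary~\ref{cly:HD20_1}), such decay gives $u(\cdot,1),\dot u(\cdot,1)\in L^2(X)$, and via the Caccioppoli inequality for the heat solution $u$ together with the $L^2$ mean value inequality for the heat subsolution $|\nabla u|$ (which is in $N^{1,2}_{loc}$ and a subsolution by Lemmas~\ref{lma:HH18_1},~\ref{lma:HH18_2},~\ref{lma:HH10_1}), also $h=|\nabla u|^2\in L^1(X)\cap L^\infty(X)\subset L^2(X)$. Applying the same chain of lemmas to $\dot u$ in place of $u$ --- its hypotheses hold since $|\dot u|+|\ddot u|+\int_X|\nabla\ddot u|^2$ is finite by the $\partial_t^jp$ Gaussian bounds and their gradient versions --- one gets in the same manner $|\nabla\dot u|(\cdot,1)\in L^1(X)$, hence $\dot h=2\langle\nabla u,\nabla\dot u\rangle\in L^1(X)$.

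The main step is then $\int_X|Hess_u|^2<\infty$, which I would obtain from the integrated Bochner identity with a double cutoff. Fix $x_0$, let $\eta_R$ be a radial cutoff equal to $1$ on $B(x_0,R)$, supported in $B(x_0,2R)$ with $|\nabla\eta_R|\le 2/R$, let $\psi_\epsilon$ cut off the $\epsilon$--neighborhood of $\mathcal{S}$ with $|\nabla\psi_\epsilon|\le 2/\epsilon$, and set $\chi=\eta_R(1-\psi_\epsilon)$, Lipschitz with compact support in $\mathcal{R}$. Integrating $\chi^2\Delta h$ by parts on $\mathcal{R}$ (Proposition~\ref{prn:HD13_2}) and using $|\nabla h|\le 2|\nabla u|\,|Hess_u|$ with $|\nabla u|$ bounded, one arrives at
\[
\int_X \chi^2 |Hess_u|^2 \;\le\; 4\int_X |\nabla\chi|^2\, h \;+\; \norm{\dot h}{L^1(X)}.
\]
The contribution $\int|\nabla\eta_R|^2(1-\psi_\epsilon)^2 h\le 4R^{-2}\norm{h}{L^1(X)}\to0$ as $R\to\infty$, while $\int\eta_R^2|\nabla\psi_\epsilon|^2 h\le 4\epsilon^{-2}\norm{h}{L^\infty}\,|\{d(\cdot,\mathcal{S})<2\epsilon\}\cap B(x_0,2R)|\le C(R)\epsilon^{2p-2}\to0$ as $\epsilon\to0$ for any $p\in(1,2)$, by the volume estimate of the singular neighborhood (Corollary~\ref{cly:HD19_2}). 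Letting first $\epsilon\to0$ and then $R\to\infty$ yields $\int_X|Hess_u|^2\le\norm{\dot h}{L^1(X)}<\infty$. Consequently $\Delta h=2|Hess_u|^2+\dot h\in L^1(X)$ (interpreted as the classical Laplacian on $\mathcal{R}$, where $\int_X|\Delta h|=\int_{\mathcal{R}}|\Delta h|$ since $\mathcal{S}$ has measure zero), and $|\nabla h|\le 2|\nabla u|\,|Hess_u|\le C|Hess_u|\in L^2(X)$; combined with $h\in L^2(X)$ this gives $h\in W^{1,2}(\mathcal{R})=N^{1,2}(X)$ by Proposition~\ref{prn:HD13_1}. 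Hence $h\in N^{1,2}(X)$ and $|\Delta h|+|\dot h|\in L^1(X)$.

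The delicate point is the simultaneous control of the two cutoff errors in the Bochner estimate: the non-compactness error is killed by the global integrability $h\in L^1(X)$, which itself comes from the exponential spatial decay of $|\nabla u|$ away from $\supp f$; and the singular error is killed by the strict codimension inequality $\dim_{\mathcal{M}}\mathcal{S}<2n-2p_0$ through Corollary~\ref{cly:HD19_2}. A secondary technical burden, which I would spell out but do not expect to be hard, is verifying the pointwise decay of $u,\dot u,\ddot u,\nabla\dot u$ from the heat--kernel estimates and the $N^{1,2}_{loc}$ and subsolution properties of $|\nabla\dot u|$, so that Lemmas~\ref{lma:HH18_1}--\ref{lma:HH10_1} may be reapplied with $\dot u$ in place of $u$; both follow from Proposition~\ref{prn:HD07_1} and the almost super-harmonicity of the distance function (Proposition~\ref{prn:HC29_1}) exactly as in the proofs of those lemmas.
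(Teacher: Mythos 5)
Your proof is correct and follows essentially the same route as the paper: both arguments hinge on the Bochner identity $\Delta h = 2|\mathrm{Hess}_u|^2 + \dot h$ on the Ricci-flat part $\mathcal{R}$, control $\dot h$ via the $L^2$-integrability of $\nabla u$ and $\nabla\dot u$ (obtained from the heat-kernel decay), and then integrate the Bochner identity against a double cutoff $\chi=\eta(1-\psi_\epsilon)$ to extract $\int_X|\mathrm{Hess}_u|^2<\infty$, from which $|\Delta h|\in L^1$ and $h\in N^{1,2}$ follow.

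Two small remarks on details. First, where you write that reapplying Lemmas~\ref{lma:HH18_1}--\ref{lma:HH10_1} to $\dot u$ gives ``$|\nabla\dot u|(\cdot,1)\in L^1(X)$'', the natural output of the Caccioppoli route you are describing is $|\nabla\dot u|^2\in L^1(X)$, i.e.\ $|\nabla\dot u|\in L^2(X)$ (this is also all the paper uses: combined with $|\nabla u|\in L^2(X)$ it gives $\dot h\in L^1$ by Cauchy--Schwarz). The conclusion $\dot h\in L^1$ is unaffected, and in fact the full detour through the chain of lemmas with $\dot u$ in place of $u$ is more machinery than needed: the $L^2$-integrability of $\nabla\dot u$ already does the job, without the $L^\infty$ bound on $|\nabla\dot u|$. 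Second, your cutoff computation is a slightly sharper version of the paper's: you scale $\eta_R$ so that $|\nabla\eta_R|\le 2/R$ and let both cutoff errors vanish, giving $\int_X|\mathrm{Hess}_u|^2\le\norm{\dot h}{L^1(X)}$, whereas the paper fixes $|\nabla\eta_k|\le 2$ and simply absorbs the outer-cutoff contribution into a finite global bound $32\int_X|\nabla u|^2$. Both work; yours is cosmetically cleaner, the paper's is a touch simpler. Your explicit invocation of Corollary~\ref{cly:HD19_2} for the singular-neighborhood volume is a good way to make the $\psi_\epsilon$ error vanish rigorously; the paper does the same estimate implicitly via the Minkowski codimension bound.
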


\begin{proof}
 Let us first show that $|\dot{h}| \in L^{1}(X)$.
 Note that $\dot{u}=\Delta u$ is also a bounded heat solution,  due to the exponential decay of $\dot{p}$. 
 Since both $u$ and $\Delta u$ decays exponentially fast at infinity, we see that  $u, \Delta u \in N^{1,2}(X)$. 
 Therefore, we have
\begin{align}
  \int_X |\dot{h}|=2\int_X \left| \langle \nabla \Delta u, \nabla u\rangle \right| < C \left( \int_X |\nabla u|^2 \right)^{\frac12} \left( \int_X |\nabla \Delta u|^2 \right)^{\frac12}<\infty. 
\label{eqn:GD03_1}  
\end{align}
This means that $|\dot{h}| \in L^1(X)$. 

 Then we continue to show that $|\Delta h| \in L^1(X)$.  Actually, we have
 \begin{align*}
     \Delta h=\Delta |\nabla u|^2=2|\nabla \nabla u|^2 +2 \langle \nabla u, \nabla \dot{u} \rangle.
 \end{align*}
 Fix $x_0 \in X$. 
 Let $\eta_k$ be a radial cutoff function which vanishes outside $B(x_0, k+1)$ and equals $1$ in $B(x_0,k)$, $|\nabla \eta_k|<2$. 
 Let $\psi_{\epsilon}$ be as usual. Let $\chi_k=\eta_k(1-\psi_{\epsilon})$.  Then we have
 \begin{align*}
    &\quad \int_X \chi_k^2|\nabla \nabla u|^2 +  \int_X \chi_k^2 \langle \nabla u, \nabla \dot{u} \rangle\\
    &= \frac{1}{2}\int_X \chi_k^2 \Delta |\nabla u|^2
    =-\frac{1}{2}\int_X \langle \nabla |\nabla u|^2, \nabla \chi_k^2 \rangle
    =-2\int_X \langle \chi_k \nabla |\nabla u|, |\nabla u| \nabla \chi_k \rangle\\
    &\leq \frac{1}{2} \int_X \chi_k^2 |\nabla |\nabla u||^2 + 2 \int_X |\nabla u|^2 |\nabla \chi_k|^2
      \leq \frac{1}{2} \int_X \chi_k^2 |\nabla \nabla u|^2 + 2\int_X |\nabla u|^2 |\nabla \chi_k|^2. 
 \end{align*}
 Recall that $|\nabla \chi_k|^2 \leq 2(|\nabla \eta_k|^2 + |\nabla \psi_{\epsilon}|^2)$ and $|\dot{h}|=2|\langle \nabla u, \nabla \dot{u}\rangle|$.
 Then we have
 \begin{align*}
    \int_X \chi_k^2|\nabla \nabla u|^2  \leq 8 \int_{B(x_0,k+1)} |\nabla u|^2 \left( |\nabla \eta_k|^2 + |\nabla \psi_{\epsilon}|^2 \right) + \int_X |\dot{h}|. 
 \end{align*}
 Note that $|\nabla u|$ is bounded here, due to Lemma~\ref{lma:HH10_1}.  Let $\epsilon \to 0$, we have
 \begin{align*}
        \int_{B(x_0,k)}  |\nabla \nabla u|^2 \leq  \int_X \eta_k^2 |\nabla \nabla u|^2  \leq 32 \int_{B(x_0,k+1)} |\nabla u|^2   + \int_X |\dot{h}|.
 \end{align*}
 Let $k \to \infty$, by (\ref{eqn:GD03_1}),  we obtain
 \begin{align}
    \int_X |\nabla \nabla u|^2 \leq 32 \int_X |\nabla u|^2+ \int_X |\dot{h}| < C.
 \label{eqn:HH09_4}   
 \end{align}
 It follows that
 \begin{align*}
    \int_X |\Delta h| \leq 2 \int_X |\nabla \nabla u|^2  +  \int_X |\dot{h}|<\infty. 
 \end{align*} 
 So we proved that $|\Delta h| \in L^1(X)$ and hence $|\dot{h}| + |\Delta h| \in L^1(X)$.

 Finally, we show that $h \in N^{1,2}(X)$.   Recall that $h=|\nabla u|^2$ is bounded. So we have
  \begin{align*}
    \int_X \left( h^2 + |\nabla h|^2 \right) &=\int_X  \left( |\nabla u|^4 + 4|\nabla u|^2 |\nabla |\nabla u||^2\right) \leq C \int_X \left( |\nabla u|^2 + |\nabla |\nabla u||^2 \right)
    \leq C \left( 1+ \int_X |\nabla \nabla u|^2 \right).
  \end{align*}
  Plugging (\ref{eqn:HH09_4}) into the above inequality, we have $h \in N^{1,2}(X)$.   
\end{proof}

After we obtain that $|\nabla u|^2 \in N^{1,2}(X)$,  in the following Lemma~\ref{lma:HH09_2},  Lemma~\ref{lma:HH08_1} and Lemma~\ref{lma:HH12_1}, 
we focus on the checking of integration by parts and continuity of integrals at boundary time.
The heat kernel's exponential decay will play an important role there. However, the following proof  will be by no means optimal. 
We only prove what we need by what we have. 

\begin{lemma}
Same conditions as that in Lemma~\ref{lma:HH10_1}. 
Suppose $x_0 \in X$. Then we have
 \begin{align}
  \int_{\mathcal{R}} h\Delta p(t,\cdot, x_0) = \int_{\mathcal{R}} p(t,\cdot, x_0) \Delta h
\label{eqn:HH08_1}   
\end{align}
\label{lma:HH09_2}
\end{lemma}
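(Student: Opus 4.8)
\textbf{Proof proposal for Lemma~\ref{lma:HH09_2}.} The plan is a cutoff-and-limit argument: approximate the identity~(\ref{eqn:HH08_1}) by integrating over compactly supported regions inside $\mathcal{R}$, where the classical integration by parts on the smooth manifold $\mathcal{R}$ applies, and then remove the cutoffs. Write $p:=p(t,\cdot,x_0)$. The ingredients already available are: $h$ is bounded (Lemma~\ref{lma:HH10_1}), $h\in N^{1,2}(X)$ and $\Delta h\in L^1(X)$ (Lemma~\ref{lma:HH09_1}); and $p$ is smooth and positive on $\mathcal{R}$, bounded by $Ct^{-n}$ for the fixed time $t>0$, with $\Delta p=\dot p$ satisfying the Gaussian bound $|\dot p|\le Ct^{-n-1}e^{-d^2(\cdot,x_0)/5t}$ from~(\ref{eqn:HD08_1}), while $|\nabla p|$ is uniformly bounded (the gradient estimate for the heat kernel derived from Proposition~\ref{prn:HC29_6}) and, together with $p$ itself, lies in $L^1(X)\cap L^2(X)$ (the off-diagonal estimates~(\ref{eqn:HD08_7}), (\ref{eqn:HD08_4}) combined with the Euclidean volume growth of $X$). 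In particular $h\dot p$, $p\Delta h\in L^1(X)$, so both sides of~(\ref{eqn:HH08_1}) are finite, and since $\mathcal{S}$ has measure zero all integrals over $\mathcal{R}$ may be read as integrals over $X$.

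Next I would introduce two families of Lipschitz cutoffs: a radial one $\eta_k$ equal to $1$ on $B(x_0,k)$, vanishing outside $B(x_0,k+1)$, with $|\nabla\eta_k|\le 2$; and a singular one $\zeta_\epsilon=1-\phi(d(\cdot,\mathcal{S})/\epsilon)$ that vanishes in the $\epsilon$-neighborhood $N_\epsilon(\mathcal{S})$, equals $1$ outside $N_{2\epsilon}(\mathcal{S})$, with $|\nabla\zeta_\epsilon|\le C\epsilon^{-1}$. Then $\phi_{k,\epsilon}:=\eta_k\zeta_\epsilon$ is Lipschitz with support a compact subset of $\mathcal{R}$, so the ordinary integration by parts on $\mathcal{R}$ gives
\begin{align*}
\int_{\mathcal{R}}\phi_{k,\epsilon}^2\bigl(h\,\Delta p-p\,\Delta h\bigr)
=2\int_{\mathcal{R}}\phi_{k,\epsilon}\,p\,\langle\nabla\phi_{k,\epsilon},\nabla h\rangle
-2\int_{\mathcal{R}}\phi_{k,\epsilon}\,h\,\langle\nabla\phi_{k,\epsilon},\nabla p\rangle .
\end{align*}
As $k\to\infty$ and then $\epsilon\to0$ we have $0\le\phi_{k,\epsilon}\le1$ and $\phi_{k,\epsilon}\to1$ pointwise a.e.\ on $\mathcal{R}$, so dominated convergence (dominated by $|h||\dot p|$ and $|p||\Delta h|$, both in $L^1(X)$) shows the left-hand side tends to $\int_{\mathcal{R}}(h\,\Delta p-p\,\Delta h)$. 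It then remains to show the right-hand side tends to $0$.

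Splitting $\nabla\phi_{k,\epsilon}=\zeta_\epsilon\nabla\eta_k+\eta_k\nabla\zeta_\epsilon$ separates each error term into a ``radial'' piece, supported on the annulus $B(x_0,k+1)\setminus B(x_0,k)$, and a ``singular'' piece, supported on $N_{2\epsilon}(\mathcal{S})\setminus N_\epsilon(\mathcal{S})$. For the radial pieces I would send $k\to\infty$ first with $\epsilon$ fixed: using $\|h\|_\infty<\infty$, $\|\nabla h\|_{L^2(X)}<\infty$, the polynomial volume growth $|B(x_0,k+1)|\le Ck^{2n}$, and Cauchy--Schwarz together with the exponential decay of $p$ and of $\|\nabla p\|_{L^2(X\setminus B(x_0,k))}$ from~(\ref{eqn:HD08_4}), each radial piece is bounded by $Ck^{n}e^{-ck^2/t}\to0$. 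This leaves $\int_{\mathcal{R}}\zeta_\epsilon^2(h\,\Delta p-p\,\Delta h)$ plus the singular pieces; the former tends to the desired limit by dominated convergence as $\epsilon\to0$, and the singular pieces are estimated using $\|h\|_\infty$, $\|p\|_\infty$, $\|\nabla p\|_\infty\le C(n,\kappa,t)$, $|\nabla\zeta_\epsilon|\le C\epsilon^{-1}$, and the Minkowski codimension bound $|N_{2\epsilon}(\mathcal{S})\cap B(x_0,R)|=o(\epsilon^3)$ (for the $h$-side term one uses Cauchy--Schwarz against $\|\nabla h\|_{L^2}$, gaining $o(\epsilon^{3/2})\cdot\epsilon^{-1}=o(\epsilon^{1/2})$; for the $p$-side term one gets directly $\epsilon^{-1}\cdot o(\epsilon^3)=o(\epsilon^2)$), the contribution outside a large ball being controlled instead by $\|\nabla p\|_{L^1}$ and the exponential decay. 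I expect the management of these singular-set error terms --- verifying that the $o(\epsilon^3)$ tubular volume bound for $\mathcal{S}$ dominates the $\epsilon^{-1}$ blow-up of $|\nabla\zeta_\epsilon|$, and simultaneously handling a possibly unbounded $\mathcal{S}$ via the decay/integrability of $p$ and $\nabla p$ --- to be the only real point requiring care; everything else is dominated convergence and Cauchy--Schwarz.
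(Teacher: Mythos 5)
Your proposal is correct and follows essentially the same strategy as the paper: establish finiteness of both sides, truncate by a radial cutoff $\eta_k$, integrate by parts over the truncated region, and show the resulting boundary terms vanish via Cauchy--Schwarz together with the exponential decay of $p$, $\nabla p$ and the off-diagonal estimates~(\ref{eqn:HD08_7}), (\ref{eqn:HD08_4}). The one genuine difference is how the singular set $\mathcal{S}$ is handled. The paper never introduces a second cutoff near $\mathcal{S}$: after truncating radially, it simply invokes the integration-by-parts machinery already developed on $X$ (Proposition~\ref{prn:HD13_2}, justified by the high Minkowski codimension of $\mathcal{S}$) to convert $\int\eta_k^2 h\,\Delta p$ and $\int\eta_k^2 p\,\Delta h$ into first-order pairings, and the subtraction then leaves only the annulus term $\int_{A_k} 2\eta_k\langle\nabla\eta_k,\,p\nabla h-h\nabla p\rangle$, which decays like $\norm{p}{N^{1,2}(A_k)}\norm{h}{N^{1,2}(X)}$. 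You instead re-derive the integration by parts from scratch with the extra cutoff $\zeta_\epsilon$ near $\mathcal{S}$. This is a valid and more self-contained route, but you pay for it: you are re-proving a special case of what the paper has already packaged in Proposition~\ref{prn:HD13_2}, and the interplay of the two limits needs care in the singular-cutoff error when $\mathcal{S}$ is unbounded (the constant in the tubular estimate $|N_{2\epsilon}(\mathcal{S})\cap B(x_0,R)|\le C(R)\,\epsilon^{3}$ grows with $R$, and you must balance $R\to\infty$ against $\epsilon\to0$ using the Gaussian decay of $\nabla p$). You flag exactly this as the delicate point, which is right; the paper's choice to cite its IBP lemma is precisely what lets it avoid that bookkeeping and keep only the single radial boundary term. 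Net: same idea, but the paper's version leans on its established framework and is correspondingly shorter, whereas yours makes the singular-set handling explicit at the cost of a more intricate double-limit argument.
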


\begin{proof}

   For simplicity of notation, denote $p(t,\cdot, x_0)$ by $p$. 

   We first note that both sides of (\ref{eqn:HH08_1}) are finite integral.  Actually, we have
   \begin{align*}
   &\left| \int_{\mathcal{R}} h\Delta p\right| \leq \left| \int_{\mathcal{R}} |h||\Delta p| \right| \leq \norm{h}{L^2(X)} \norm{\Delta p}{L^2(\mathcal{R})}<\infty,\\ 
   &\left| \int_{\mathcal{R}} p\Delta h\right| \leq \left| \int_{\mathcal{R}} |p||\Delta h| \right| \leq \norm{p}{L^{\infty}(X)} \norm{\Delta h}{L^1(\mathcal{R})}<\infty.
   \end{align*}

   Then we show that both sides of (\ref{eqn:HH08_1}) can be approximated by integrations over compact supported sets. 
   Let $\eta_k$ be a radial cutoff function which vanishes outside $B(x_0, k+1)$ and equals $1$ in $B(x_0,k)$, $|\nabla \eta_k|<2$. 
   Then we have
   \begin{align*}
     \left|\int_{\mathcal{R}} h\Delta p -\int_{\mathcal{R}} \eta_k^2 h \Delta p \right| &=\left| \int_{\mathcal{R} \backslash B(x_0,k)} (1-\eta_k^2) h \Delta p \right|
     \leq \left| \int_{\mathcal{R} \backslash B(x_0,k)} |h| |\Delta p| \right|
     \leq \norm{\Delta p}{L^2(X \backslash B(x_0,k))} \norm{h}{L^2(X)}.
   \end{align*} 
   Clearly, $\norm{\Delta p}{L^2(X \backslash B(x_0,k))} \to 0$ as $k \to \infty$, due to the exponential decay of $\Delta p$.
   Note that $h \in L^2(X)$ by Lemma~\ref{lma:HH09_1}.  Thus we have proved that
   \begin{align}
    \lim_{k \to \infty} \int_{\mathcal{R}} \eta_k^2 h\Delta p= \int_{\mathcal{R}} h \Delta p.    \label{eqn:HH08_2}
   \end{align}
   Similarly, we calculate
    \begin{align*}
     \left|\int_{\mathcal{R}} p\Delta h -\int_{\mathcal{R}} \eta_k^2 p \Delta h \right| &=\left| \int_{\mathcal{R} \backslash B(x_0,k)} (1-\eta_k^2) p \Delta h \right|
     \leq \left| \int_{\mathcal{R} \backslash B(x_0,k)} |p| |\Delta h| \right|
     \leq \norm{p}{L^{\infty}(X \backslash B(x_0,k))} \norm{\Delta h}{L^1(X)}.
   \end{align*} 
   It follows from the exponential decay that $\norm{p}{L^{\infty}(X \backslash B(x_0,k))} \to 0$ as $k \to \infty$. 
   Also, we know $\Delta h \in L^1(X)$ by Lemma~\ref{lma:HH09_1}.
   So we have
   \begin{align}
      \lim_{k \to \infty} \int_{\mathcal{R}} \eta_k^2 p \Delta h= \int_{\mathcal{R}} p \Delta h.    \label{eqn:HH08_3}
   \end{align}
   Clearly,  $\eta_k^2 h \in N_c^{1,2}(X)$ and $p \in N_c^{1,2}(X)$, both of them are bounded functions.
   Furthermore, both $|\Delta p|$ and $|\Delta h|$ are integrable on $B(x_0,k+1)$. 
   Due to the fact that Minkowski codimension of $\mathcal{S}$ is greater than $2$, it is not hard to check that
   \begin{align*}
     -\int_{\mathcal{R}} \langle \nabla (\eta_k^2 h), \nabla p \rangle = \int_{\mathcal{R}} \eta_k^2 h \Delta p, \qquad
     -\int_{\mathcal{R}} \langle \nabla (\eta_k^2 p), \nabla h \rangle=\int_{\mathcal{R}} \eta_k^2 p \Delta h.
   \end{align*}
   It follows that
   \begin{align*}
     \int_{\mathcal{R}} \eta_k^2 h \Delta p-\int_{\mathcal{R}} \eta_k^2 p\Delta h=\int_{\mathcal{R}}  2\eta_k \left\langle \nabla \eta_k, p\nabla h -h\nabla p\right\rangle.
   \end{align*}
   Denoting $B(x_0,k+1) \backslash B(x_0,k)$ by $A_k$, we have
   \begin{align*}
     &\quad \left|  \int_{\mathcal{R}} \eta_k^2 h \Delta p -\int_{\mathcal{R}} \eta_k^2 p\Delta h\right|\\
     &=\left| \int_{\mathcal{R}}   2\eta_k \left\langle \nabla \eta_k, p\nabla h -h\nabla p\right\rangle \right|
     \leq 4\int_{B(x_0,k+1) \backslash B(x_0,k)} |p\nabla h-h\nabla p|\\
     &\leq 4\int_{A_k} |p\nabla h|+|h\nabla p|
       \leq 4  \left( \int_{A_k} p^2 \right)^{\frac12} \left( \int_{A_k} |\nabla h|^2\right)^{\frac12} 
       + 4 \left( \int_{A_k} h^2 \right)^{\frac12} \left( \int_{A_k} |\nabla p|^2\right)^{\frac12}\\ 
     &\leq  8 \norm{p}{N^{1,2}(A_k)} \norm{h}{N^{1,2}(X)}.  
   \end{align*}
   By the exponential decay of $p$ and $\Delta p$, it is clear that $\norm{p}{N^{1,2}(A_k)} \to 0$ as $k \to \infty$.  Therefore, we have
   \begin{align}
     \lim_{k \to \infty}  \int_{\mathcal{R}} \eta_k^2 f \Delta p = \lim_{k \to \infty}  \int_{\mathcal{R}} \eta_k^2 p \Delta h. 
   \label{eqn:HH08_4}  
   \end{align}
   Therefore, (\ref{eqn:HH08_1}) follows from the combination of (\ref{eqn:HH08_2}), (\ref{eqn:HH08_3}) and (\ref{eqn:HH08_4}). 
\end{proof}

\begin{lemma}
Same conditions as that in Lemma~\ref{lma:HH10_1}. 
Fix $T>0$, then for every pair $0<a<b<T$, we have 
 \begin{align}
     \int_X h(x,T-b) p(b,x,y)d\mu_x - \int_X h(x,T-a) p(a,x,y) d\mu_x = 2 \int_a^b \int_{\mathcal{R}} p(t,x,x_0) |\nabla \nabla u|^2(x) d\mu_x dt.
 \label{eqn:HH09_5}    
 \end{align}
\label{lma:HH08_1} 
\end{lemma}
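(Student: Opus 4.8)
\textbf{Proof plan for Lemma~\ref{lma:HH08_1}.}
The identity (\ref{eqn:HH09_5}) is the ``$\dot\Psi_0(s)=2\Psi(s)$'' step in the strategy outlined after Proposition~\ref{prn:HD07_2}, recast with the time-shift $s=T-t$ on the heat solution $u$. The plan is to differentiate $\int_X h(x,T-t)p(t,x,y)\,d\mu_x$ in $t$ on the open interval $(a,b)$, where all regularity is available, and then integrate back. First I would record the ingredients already established: by Lemma~\ref{lma:HH10_1} the function $h=|\nabla u|^2(\cdot,T-t)$ is bounded on $X$ uniformly for $t$ in compact subsets of $(0,T)$; by Lemma~\ref{lma:HH09_1} and Lemma~\ref{lma:HH09_2}, $h(\cdot,T-t)\in N^{1,2}(X)$, $|\Delta h|+|\dot h|\in L^1(X)$, and the integration-by-parts identity $\int_{\mathcal R} h\,\Delta p = \int_{\mathcal R} p\,\Delta h$ holds; and the heat kernel $p(t,\cdot,y)$ together with $\Delta p$ and $\partial_t p$ enjoys the Gaussian bounds of Proposition~\ref{prn:HD07_1}, so it and its derivatives lie in $L^1\cap L^\infty\cap N^{1,2}$ away from $\mathcal S$ with exponentially small tails. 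Note also that $\partial_t h(\cdot,T-t) = -\dot h = -2\langle\nabla u,\nabla\Delta u\rangle$ evaluated at time $T-t$, so $\square$ acting on $h$ as a function of $(x,t)$ (with this shifted time) gives $\left(\partial_t-\Delta\right)h(x,T-t) = -\dot h - \Delta h = -2|\nabla\nabla u|^2(x,T-t) \le 0$, since $\Delta|\nabla u|^2 = 2|\nabla\nabla u|^2 + 2\langle\nabla u,\nabla\dot u\rangle = 2|\nabla\nabla u|^2 + \dot h$.

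The main computation is then to justify
\begin{align*}
  \frac{d}{dt}\int_X h(x,T-t)\,p(t,x,y)\,d\mu_x
  = \int_X \left\{\partial_t h(x,T-t)\right\} p\,d\mu_x + \int_X h\,\partial_t p\,d\mu_x,
\end{align*}
where differentiation under the integral sign is legitimate because $|\partial_t h|\,p$ and $h\,|\partial_t p|$ are dominated by fixed $L^1(X)$ functions on $t\in[a',b']\Subset(a,b)$ (using the $L^1$ bound on $\dot h$, boundedness of $h$, the Gaussian bounds on $p$ and $\partial_t p$, and stochastic completeness). Using $\partial_t p = \Delta p$ (as a classical identity on $\mathcal R\times(0,\infty)$, valid everywhere away from $\mathcal S$), and then Lemma~\ref{lma:HH09_2} to replace $\int h\,\Delta p$ by $\int p\,\Delta h = \int p\,(\dot h + 2|\nabla\nabla u|^2)$, the two $\dot h$-terms cancel and we are left with $\frac{d}{dt}\int_X h(x,T-t)p(t,x,y)\,d\mu_x = 2\int_{\mathcal R} p(t,x,y)|\nabla\nabla u|^2(x,T-t)\,d\mu_x$, which is exactly the integrand on the right of (\ref{eqn:HH09_5}). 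Integrating this over $t\in[a,b]$ gives the claimed formula, provided the map $t\mapsto\int_X h(x,T-t)p(t,x,y)\,d\mu_x$ is continuous up to the endpoints $a$ and $b$ — but that follows from the same domination estimate, so in fact one can integrate on $[a,b]$ directly without a separate limiting argument. (If one prefers to be careful at $a,b$, approximate by $[a+\epsilon,b-\epsilon]$ and let $\epsilon\to0$, using monotone/dominated convergence on the right-hand side, whose integrand is nonnegative.)

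The main obstacle is the legitimacy of differentiating under the integral and of the integration by parts in the presence of the singular set $\mathcal S$: one must ensure no boundary contribution appears when testing $\Delta p$ against $h$ near $\mathcal S$. This is precisely what Lemma~\ref{lma:HH09_2} supplies — it was proved by cutting off a neighborhood of $\mathcal S$ with $1-\psi_\epsilon$, using $\mathrm{codim}_{\mathcal M}\mathcal S>2$ together with $h\in N^{1,2}(X)$ and $p\in N^{1,2}_c$, and letting $\epsilon\to0$ so the cross terms $\int\langle\nabla(1-\psi_\epsilon),\ \cdot\ \rangle$ vanish. So the real content has already been isolated in the preceding lemmas, and the proof of Lemma~\ref{lma:HH08_1} reduces to assembling these facts plus the elementary pointwise identity $\Delta|\nabla u|^2 = 2|\nabla\nabla u|^2+\dot h$ on $\mathcal R$ and a routine dominated-convergence argument for differentiating the $t$-integral; I would write it in that order.
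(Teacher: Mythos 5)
Your proof is correct and follows essentially the same route as the paper: differentiate $t\mapsto\int_X h(x,T-t)p(t,x,y)\,d\mu_x$, replace $h\,\partial_t p$ by $h\,\Delta p$, move the Laplacian onto $h$ via Lemma~\ref{lma:HH09_2}, use $-\dot h+\Delta h=2|\nabla\nabla u|^2$ on the Ricci-flat regular part, and integrate in $t$. The only small extra content you supply is spelling out the domination argument for differentiating under the integral, which the paper compresses into a one-line citation of Lemmas~\ref{lma:HH10_1} and~\ref{lma:HH09_1}.
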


\begin{proof}
  Applying H\"older inequality, then we see that each integral on the left hand side of (\ref{eqn:HH09_5}) is well defined and finite.
  Direct calculation shows that
  \begin{align*}
     \frac{d}{d t} \int_X h(x,T-t) p(t,x,y)d\mu_x =\int_X -\dot{h}p + h \dot{p} =\int_{\mathcal{R}} -\dot{h}p + h\Delta p
     =\int_{\mathcal{R}} \left( -\dot{h} +\Delta h\right) p.
  \end{align*}     
  Note that we have used the integrability of $-\dot{h}p+h\dot{p}$ (By Lemma~\ref{lma:HH10_1} and Lemma~\ref{lma:HH09_1}) and integration by parts (Lemma~\ref{lma:HH09_2}) in the above deduction. 
  Recall that $h(\cdot, s)=|\nabla u|^2(\cdot, s)$, which implies that
  $\displaystyle -\dot{h}+ \Delta h=2|\nabla \nabla u|^2$. Plugging this into the above equation and then integrating both sides of the equation over time,
  we obtain  (\ref{eqn:HH09_5}). 
\end{proof}

\begin{lemma}
Same conditions as that in Lemma~\ref{lma:HH10_1}. 
Fix $T>0$, $y \in X$, then we have 
 \begin{align}
     \lim_{b \to T^{-}}\int_X h(x,T-b) p(b,x,y)d\mu_x=\int_X |\nabla f|^2 p(T,x,y) d\mu_x.
 \label{eqn:HH12_1}    
 \end{align}
\label{lma:HH12_1} 
\end{lemma}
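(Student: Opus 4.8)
The statement \eqref{eqn:HH12_1} asserts that the quantity $\int_X h(x,T-b)p(b,x,y)\,d\mu_x$ converges, as $b\to T^-$, to $\int_X |\nabla f|^2 p(T,x,y)\,d\mu_x$, where $h(\cdot,s)=|\nabla u|^2(\cdot,s)$, $u$ is the heat solution from $f\in C_c^\infty(\mathcal{R})$, and $f(\cdot,0)=f$. Since $\int_X h(x,T-b)p(b,x,y)\,d\mu_x$ is the value at $(y,b)$ of the heat evolution of the function $h(\cdot,T-b)$, and $b$ is the running time, the natural approach is to split the integral into a ``bulk'' piece on a geodesic ball $B(y,\rho)$ and a ``tail'' piece on $X\setminus B(y,\rho)$, show the tail is uniformly small using the off-diagonal Gaussian estimates, and use continuity of $h(\cdot,s)$ near $s=0$ (where $h(\cdot,0)=|\nabla f|^2$ is smooth with compact support) together with stochastic completeness $\int_X p(b,x,y)\,d\mu_x=1$ to control the bulk.

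\textbf{Key steps.} First I would record the facts I will need: $h$ is bounded on $X$ (Lemma~\ref{lma:HH10_1}), $h\in L^1(X)$ with $h(\cdot,s)\to h(\cdot,0)=|\nabla f|^2$ in an appropriate sense as $s\to 0^+$ (indeed $u$ solves the heat equation so $|\nabla u|^2$ is smooth on $\mathcal{R}\times(0,\infty)$ and, since $f\in C_c^\infty(\mathcal{R})$, standard parabolic regularity on the smooth part $\mathcal{R}$ gives $|\nabla u|^2(\cdot,s)\to |\nabla f|^2$ uniformly on compact subsets of $\mathcal{R}$ and the $L^1$-norms stay bounded), the Gaussian upper bound $p(t,x,y)\le Ct^{-n}e^{-d^2(x,y)/(5t)}$ (Proposition~\ref{prn:HD07_1}), and stochastic completeness $\int_X p(t,x,y)\,d\mu_x=1$. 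Second, fix $\epsilon>0$. Because $|\nabla f|^2$ is continuous and compactly supported on $\mathcal{R}$, there is a $\rho>0$ and a $\delta>0$ such that $|h(x,s)-|\nabla f|^2(x)|<\epsilon$ for all $x\in B(y,\rho)\cap\{d(\cdot,\mathcal{S})>\delta'\}$ and $s\in(0,\delta)$ — more cleanly, one writes
\begin{align*}
\left|\int_X h(x,T-b)p(b,x,y)\,d\mu_x-\int_X |\nabla f|^2(x)p(b,x,y)\,d\mu_x\right|\le \int_X |h(x,T-b)-|\nabla f|^2(x)|\,p(b,x,y)\,d\mu_x,
\end{align*}
and the right-hand side is split over $B(y,\rho)$ and its complement. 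On $X\setminus B(y,\rho)$ the integrand is bounded by $(\|h\|_{L^\infty}+\|\nabla f\|^2_{L^\infty})p(b,x,y)$ and, by the Gaussian estimate together with the volume doubling/non-collapsing bounds, $\int_{X\setminus B(y,\rho)}p(b,x,y)\,d\mu_x\to 0$ as $b\to 0^+$ (this is exactly the kind of estimate in \eqref{eqn:HD08_7}, or one invokes stochastic completeness and the $\delta$-function property of Proposition~\ref{prn:HD09_1}); choose $b$ small so this is $<\epsilon$. On $B(y,\rho)$ one further removes the $\delta$-neighborhood of $\mathcal{S}$: its $p$-mass is controlled since $p$ is bounded by $Cb^{-n}$ only pointwise, so instead I would use that $|h(\cdot,T-b)-|\nabla f|^2|$ is uniformly bounded and the Minkowski-codimension estimate gives $|B(y,\rho)\cap\{d(\cdot,\mathcal{S})<\delta'\}|\le C(\delta')^{2p_0}$, but this must be weighed against $p\le Cb^{-n}$, so one first chooses $\delta'$ small depending on $b$ — equivalently, since $|\nabla f|^2\equiv 0$ near $\mathcal S$ and $h$ is small near $\mathcal S$ for small time (again by parabolic regularity on the smooth part and the boundedness), the contribution from this strip is small for small $b$. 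On the remaining good region the integrand is $<\epsilon$ by continuity, and $\int p\le 1$. Finally, by the $\delta$-function property (Proposition~\ref{prn:HD09_1}) applied to $w(s,x)=|\nabla f|^2(x)$, which is in $N_c^{1,2}(X)$ and continuous at $(0,y)$, one has $\int_X |\nabla f|^2(x)p(b,x,y)\,d\mu_x\to |\nabla f|^2(y)$... no — more precisely one wants $\int_X |\nabla f|^2(x)p(b,x,y)\,d\mu_x\to \int_X|\nabla f|^2(x)p(T,x,y)\,d\mu_x$, which holds by strong continuity of the heat semigroup in $L^2$ applied to $|\nabla f|^2\in L^2(X)$ (since $|\nabla f|^2$ is bounded with compact support) and pointwise evaluation via the continuous heat kernel. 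Combining, the left side of \eqref{eqn:HH12_1} is within $3\epsilon$ of $\int_X |\nabla f|^2 p(T,x,y)\,d\mu_x$ for $b$ close to $T$, and letting $\epsilon\to 0$ finishes the proof.

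\textbf{Main obstacle.} The delicate point is the interaction between the singular set $\mathcal{S}$ and the smallness of $b$: the Gaussian bound only gives $p\le Cb^{-n}$ pointwise, so a naive estimate of the $p$-mass of a $\delta'$-neighborhood of $\mathcal S$ in $B(y,\rho)$ gives $Cb^{-n}(\delta')^{2p_0}$, which is not small unless $\delta'$ is taken much smaller than $b^{n/(2p_0)}$. The clean way around this is to avoid integrating the pointwise bound over the singular strip and instead exploit that $|\nabla f|^2$ vanishes identically in a fixed neighborhood of $\mathcal S$ (as $f\in C_c^\infty(\mathcal R)$) and that $h(\cdot,s)=|\nabla u|^2(\cdot,s)$ remains uniformly small there for small $s$ — this last uniform smallness near $\mathcal S$ for short time is itself a consequence of the parabolic maximum principle / the heat-subsolution property of $|\nabla u|$ established in Lemma~\ref{lma:HH18_2} and Lemma~\ref{lma:HH10_1}, applied with initial data vanishing near $\mathcal S$. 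Once that uniform-smallness-near-$\mathcal S$ input is in hand, the rest is the routine bulk-plus-tail splitting using the Gaussian estimate and stochastic completeness, exactly parallel to the proof of the $\delta$-function property in Proposition~\ref{prn:HD09_1}.
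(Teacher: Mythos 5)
Your argument is built around the wrong limit. You repeatedly treat $b$ as tending to $0^+$ and propose to ``choose $b$ small'', but in the lemma $T>0$ is fixed and $b\to T^-$; it is $T-b$, the time argument of $h$, that tends to $0^+$. Consequently $p(b,\cdot,y)$ does \emph{not} concentrate near $y$ --- it converges to the fixed, smooth kernel $p(T,\cdot,y)$ --- so $\int_{X\setminus B(y,\rho)}p(b,x,y)\,d\mu_x$ does not become small for fixed $\rho$ as $b\to T^-$, and your tail estimate on $X\setminus B(y,\rho)$ fails as stated. Your ``main obstacle'' is misidentified for the same reason: since $b\to T^->0$, the Gaussian bound gives $p(b,\cdot,y)\le CT^{-n}$ uniformly for $b$ near $T$, a fixed constant, so the scenario in which a blowing-up kernel competes with the singular set simply does not arise.

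The observation you are missing --- and the paper's route --- is to exploit that uniform bound on $p$ and split around $\supp f$ rather than around $y$. Choose $\Omega$ with $\supp f\Subset\Omega\Subset\mathcal{R}$. On $\Omega$, $u$ solves the smooth heat equation on a smooth region with smooth compactly supported initial data, so $h(\cdot,T-b)\to |\nabla f|^2$ uniformly on $\overline{\Omega}$, and together with continuity of $p(\cdot,x,y)$ in time at $T$ the bulk piece converges to $\int_X|\nabla f|^2 p(T,\cdot,y)\,d\mu$. On $X\setminus\Omega$ one has $|\nabla f|^2\equiv 0$, and using $\|p(b,\cdot,y)\|_{L^\infty}\le CT^{-n}$ the whole matter reduces to showing $\int_{X\setminus\Omega}h(x,T-b)\,d\mu_x\to 0$ as $b\to T^-$. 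This $L^1$-smallness of $h$ away from $\supp f$ at short time is the real content of the lemma; the paper obtains it by writing $|\nabla u|(x,s)\le\int_{\supp f}|f|(z)\,|\nabla_x p|(s,z,x)\,d\mu_z$ and invoking the Gaussian decay of $\nabla_x p$ to deduce $h(x,s)\le Cs^{-n-1}e^{-(d^2(x,z_0)+D^2)/(C_2 s)}$ for $x$ bounded away from $\supp f$, which is integrable over $X\setminus\Omega$ and tends to zero as $s\to 0^+$. Your sketch never supplies any estimate of this kind for $h$ away from $\supp f$, and the ``uniform smallness near $\mathcal{S}$'' you gesture at in the final paragraph is both unproved and not the point (the point is smallness far from $\supp f$, not near $\mathcal{S}$).
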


\begin{proof}
Choose an open set $\Omega$ such that $\supp f \Subset \Omega \Subset \mathcal{R}$. 
Note that $u$ is a smooth heat solution on $\Omega \times [0, \infty)$.  Then it is clear that
\begin{align*}
  \lim_{b \to T^{-}}\int_{\Omega} h(x,T-b) p(b,x,y)d\mu_x=\int_{\Omega} |\nabla f|^2 p(T,x,y) d\mu_x=\int_{X} |\nabla f|^2 p(T,x,y) d\mu_x.
\end{align*}
Therefore, in order to show (\ref{eqn:HH12_1}), it suffices to show $\displaystyle \lim_{b \to T^{-}} \int_{X \backslash \Omega}  h(x, T-b) p(b,x,y) d\mu_x=0$. 
However, by the uniform bound of $p(b,\cdot, \cdot)$ when $b \to T$, this equation can be deduced from 
\begin{align}
  \lim_{b \to T^{-}} \int_{X \backslash \Omega}  h(x, T-b) d\mu_x=0. 
\label{eqn:HH12_2}
\end{align}
Recalling that
\begin{align*}
   u(x,s)=\int_X f(z)p(s,z,x) d\mu_z, \quad
   h(x,s)=|\nabla u|(x,s) \leq \int_{\supp f} |f|(z) |\nabla_x p|(s,z,x) d\mu_z. 
\end{align*}
By the exponential decay of $p$ and $\Delta p$,  for every $w \in X$, $z \in \supp f$, it is not hard to see that 
\begin{align*}
  \int_{B(w,1)} |\nabla_x p|(s,z,x) d\mu_x  &\leq C \left(  \int_{B(w,1)} |\nabla_x p|^2(s,z,x) d\mu_x \right)^{\frac12}
  \leq C_1 s^{-n-1}e^{\frac{-d^2(w,z_0)+D^2}{C_2 s}}, 
\end{align*}
where $z_0$ is a fixed point in $\supp f$, $D$ is the diameter of $\Omega$, $0<s<1$. 
It follows that 
\begin{align}
  h(x,s)=|\nabla u|(x,s) \leq C  s^{-n-1}e^{-\frac{d^2(x,z_0) +D^2}{C_2 s}}.    \label{eqn:HH12_3}
\end{align}
Suppose $x \in X \backslash \Omega$, then  $d(x,z_0) \geq c_0>0$ always.  Then (\ref{eqn:HH12_2}) follows from (\ref{eqn:HH12_3}), the Euclidean volume growth estimate and direct calculation. 
\end{proof}

Now we can finish the proof of Proposition~\ref{prn:HD07_2}.

\begin{proof}[Proof of Proposition~\ref{prn:HD07_2}]
It suffices to check the three steps mentioned in the strategy. 

We first check that $\dot{\Psi}_0(s)=2\Psi(s)$ for each $s \in (0,1)$.  Formal calculation shows that
\begin{align*}
   \dot{\Psi}_0(s)&=\int_X \left( \frac{d}{ds} u^2(1-s,x) \right) p(s,x,y) d\mu_x  + \int_X u^2(1-s,x) \Delta p(s,x,y) d\mu_x\\
      &=\int_X \left\{  -2u(x,1-s) \dot{u}(x,1-s) \right\} p(s,x,y) d\mu_x  + \int_X u^2(1-s,x)  \Delta p(s,x,y) d\mu_x.       
\end{align*}
By the boundedness of $u, \dot{u}$ and exponential decay of $p$ and $\Delta p$, the above formal calculation is in fact rigorous for each $s \in (0,1)$.
Moreover, $u^2(1-s,\cdot) \in L^{\infty}(X) \cap N^{1,2}(X)$.  Since $\Delta p$ has exponential decay, similar to Lemma~\ref{lma:HH09_2}, one can have 
integration by parts to obtain
\begin{align*}
  \int_X u^2(1-s,x) \Delta p(s,x,y) d\mu_x=\int_X p(s,x,y) \Delta u^2(1-s,x) d\mu_x. 
\end{align*}
Therefore, we have
\begin{align*}   
       \dot{\Psi}_0(s)
      &=\int_X \left\{-2u(x,1-s) \dot{u}(x,1-s) \right\} p(s,x,y) d\mu_x \\
      &\qquad + \int_X \left\{ 2u(x,1-s) \Delta u(x,1-s) +2|\nabla u|^2(x,1-s) \right\} p(s,x,y) d\mu_x\\
      &=2 \int_X |\nabla u|^2 (x,1-s) p(s,x,y) d\mu_x
         =2\Psi(s).
\end{align*}
So we checked the first step.  
However, the second step follows from Lemma~\ref{lma:HH08_1},  the third step follows from Lemma~\ref{lma:HH12_1}. 
Therefore, the proof of Proposition~\ref{prn:HD07_2} is complete.  
\end{proof}

 \section{Perturbation technique} 
 \label{app:B}
 
 We often meet the problem of decomposing a manifold $M$ by regularity scales, e.g. $\mathbf{cvr}$. 
Although such regularity scale functions are not smooth in general, they satisfy local Harnack inequalities (c.f. inequality (\ref{eqn:SC24_1})).
In this section, we show that there is a general way to perturb the regularity scale functions to smooth functions, while keeping the major
properties of regularity scales. 
The perturbation method is a standard application of the proof of partition of unity. 
 
 \begin{proposition}[\textbf{Perturbation of general functions with local Harnack inequality}]
  Suppose $K$ is a big positive constant, $f$ is a map from Riemannian manifold $M^{2n}$ to $(0,K^{-1}]$ with the following local Harnack inequality
  \begin{align}
     K^{-1} f(x) < f(y)< Kf(x), \quad \forall \; y \in B(x, K^{-1}f(x)). 
  \label{eqn:MA25_1}   
  \end{align}
  Suppose each geodesic ball of radius $r$ has volume ratio in $(\kappa, \kappa^{-1})$ whenever $0<r<1$. 
  Then there exists a constant $C=C(n,\kappa,K)$ and smooth function $\tilde{f}$ such that
      \begin{align}
	 K^{-1} f< \tilde{f}<Cf, \quad     |\nabla \tilde{f}|<C.
      \label{eqn:GI26_1}	 
      \end{align}   
   Furthermore, $\tilde{f}$ also satisfies the local Harnack inequality
   \begin{align}
     \tilde{K}^{-1} \tilde{f}(x) < \tilde{f}(y)< \tilde{K} \tilde{f}(x), \quad \forall \; y \in B(x, \tilde{K}^{-1} \tilde{f}(x)) 
   \label{eqn:MA25_2}  
   \end{align}   
   for some $\tilde{K}=\tilde{K}(n,\kappa, K)$. 
  \label{prn:GI26_1}
  \end{proposition}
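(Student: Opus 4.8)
The plan is to mimic the standard construction of a partition of unity, but with balls whose radii are governed by $f$ itself, and then to average $f$ against the resulting bump functions. First I would fix a maximal $\frac{1}{100K}f(x)$-separated net; more precisely, I would choose a countable set $\{x_i\}$ such that the balls $B(x_i, r_i)$ with $r_i \triangleq \frac{1}{100K}f(x_i)$ are pairwise disjoint while the doubled balls $B(x_i, 2r_i)$ cover $M$. The existence of such a net follows from Zorn's lemma (or a greedy exhaustion), using only that $M$ is a metric space. The key geometric input is a uniform bound on the \emph{overlap multiplicity}: if $B(x,\rho)$ with $\rho = \frac{1}{100K}f(x)$ meets several of the $B(x_i, 2r_i)$, then by the local Harnack inequality (\ref{eqn:MA25_1}) (applied a bounded number of times along a short chain, since $\frac{1}{100K}f(x_i) < \frac{1}{K}f(x_i)$ and distances are controlled by $f$) all the radii $r_i$ of the involved balls are comparable to $\rho$, say within a factor $K^{C}$; then the disjoint balls $B(x_i, r_i)$ all sit inside a fixed ball $B(x, C'\rho)$ and each has volume at least $\kappa\,\omega_{2n}(c\rho)^{2n}$, while $B(x, C'\rho)$ has volume at most $\kappa^{-1}\omega_{2n}(C'\rho)^{2n}$ — so the number of them is bounded by $N = N(n,\kappa,K)$. (Here I am using the volume-ratio two-sided bound, valid because $\rho$ is always $<1$ as $f\le K^{-1}$.)

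Next I would build the bump functions. Let $\phi$ be a fixed smooth function on $[0,\infty)$ with $\phi\equiv 1$ on $[0,1]$, $\phi\equiv 0$ on $[2,\infty)$, $0\le\phi\le1$, $|\phi'|\le 2$. Set $\psi_i(y) \triangleq \phi\!\left(\frac{d(y,x_i)}{r_i}\right)$, so $\psi_i$ is Lipschitz, supported in $B(x_i,2r_i)$, identically $1$ on $B(x_i,r_i)$, with $|\nabla \psi_i|\le 2/r_i$. The functions $\psi_i$ need not be smooth because the distance function is not smooth, but one can mollify each $\psi_i$ on the scale $\frac{1}{10}r_i$ in a coordinate chart of size comparable to $r_i$ (such charts exist with uniformly controlled geometry once we pass to scale $r_i$; alternatively, since the final statement only asks for a smooth $\tilde f$ and Lipschitz-in-disguise bounds, one may instead run the heat flow for a short time as was done in the body of the paper — but the cleanest route is a straightforward mollification, as the metric need not have any curvature bound for this). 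After mollification the $\psi_i$ are smooth, still supported in $B(x_i, 2r_i)$, still $\ge \frac12$ on $B(x_i, r_i)$, and $|\nabla\psi_i|\le C/r_i$. Now define
\begin{align*}
  \tilde f(y) \triangleq \frac{\sum_i \psi_i(y)\, r_i}{\sum_i \psi_i(y)}.
\end{align*}
The denominator is everywhere $\ge \frac12$ because each $y$ lies in some $B(x_i,r_i)$, and the sums are locally finite by the overlap bound, so $\tilde f$ is smooth.

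Finally I would verify (\ref{eqn:GI26_1}) and (\ref{eqn:MA25_2}). For a fixed $y$, every index $i$ with $\psi_i(y)\neq 0$ satisfies $d(y,x_i)<2r_i = \frac{1}{50K}f(x_i) < \frac1K f(x_i)$, hence by (\ref{eqn:MA25_1}) $f(y)$ and $f(x_i)$ — and therefore $f(y)$ and $r_i = \frac{1}{100K}f(x_i)$ — are comparable up to a factor depending only on $K$: say $\frac{1}{100K^2}f(y) \le r_i \le \frac{1}{100}f(y)$. Since $\tilde f(y)$ is a weighted average of these $r_i$, it lies between the same two multiples of $f(y)$, which gives $K^{-1}f < \tilde f < Cf$ after adjusting constants (note $\tilde f \le \frac{1}{100}f < f < Kf$ so the lower bound $K^{-1}f<\tilde f$ is generous). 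For the gradient bound, write $\tilde f = A/B$ with $A=\sum\psi_i r_i$, $B=\sum\psi_i$; then $\nabla\tilde f = \frac{B\nabla A - A\nabla B}{B^2}$, and since $B\ge\frac12$, $A\le C f(y) r$-type bound, $|\nabla A|\le \sum|\nabla\psi_i|r_i \le C\cdot N\cdot \max(C/r_i)\cdot r_i \le CN$, and similarly $|\nabla B|\le C N / (\text{smallest relevant }r_i) \le CN/(c f(y))$, one gets $|\nabla\tilde f|\le C(n,\kappa,K)$ (the $f(y)$ in the denominator of $|\nabla B|$ is cancelled by the factor $A\le Cf(y)$ in $A\nabla B$). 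Then (\ref{eqn:MA25_2}) follows from (\ref{eqn:GI26_1}): given $y\in B(x,\tilde K^{-1}\tilde f(x))$ with $\tilde K$ large, $d(x,y)\le \tilde K^{-1}\tilde f(x)\le \tilde K^{-1}Cf(x)$, and integrating the gradient bound along a short path, $|\tilde f(y)-\tilde f(x)|\le C\,d(x,y)\le C\tilde K^{-1}Cf(x)\le C^2\tilde K^{-1}K\tilde f(x)$, which is $\le \frac12\tilde f(x)$ once $\tilde K$ is chosen large in terms of $n,\kappa,K$. The main obstacle is the overlap-multiplicity estimate — getting a \emph{uniform} bound $N(n,\kappa,K)$ on how many of the $B(x_i,2r_i)$ can meet at a point — which is where the Harnack inequality (\ref{eqn:MA25_1}) and the two-sided volume-ratio bound must be combined carefully; everything downstream is routine.
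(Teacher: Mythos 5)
Your approach is essentially the paper's: a Vitali/greedy net of balls at scale proportional to $f$, an overlap-multiplicity bound $N(n,\kappa,K)$ from the Harnack inequality plus the two-sided volume ratio, smooth cutoffs adapted to the balls, and a quotient-rule gradient estimate. The paper takes the unnormalized sum $\tilde f=\sum_i f(x_i)\eta_i$ rather than your normalized average; with the sum the lower bound is immediate, because every $x$ lies in some ball where $\eta_i\equiv 1$, so $\tilde f(x)\ge f(x_i)>K^{-1}f(x)$.

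There is, however, a real error in your construction as written: you weight by $r_i=\frac{1}{100K}f(x_i)$ instead of by $f(x_i)$. Your own estimates give $\frac{1}{100K^2}f(y)\le r_i\le\frac{1}{100}f(y)$ for the indices contributing at $y$, so the weighted average satisfies $\tilde f(y)\ge\frac{1}{100K^2}f(y)$ — which is \emph{smaller} than the required bound $K^{-1}f(y)$ for any $K>\frac{1}{100}$, i.e.\ precisely in the regime "$K$ big" the statement is set in. The parenthetical remark that the lower bound "is generous" because $\tilde f\le\frac{1}{100}f<Kf$ conflates the upper bound (which is fine) with the lower bound (which fails). The fix is trivial and brings your construction into line with the paper: weight by $f(x_i)$ (equivalently, by $100K\,r_i$), so that $\tilde f(y)$ is a convex combination of the $f(x_i)$, each of which lies in $(K^{-1}f(y),Kf(y))$ by the Harnack inequality. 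With that change the rest of your argument — overlap count, smoothing of the cutoffs, $|\nabla\tilde f|\le C$, and deducing (\ref{eqn:MA25_2}) by integrating the gradient bound over a short path — goes through.
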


  \begin{proof}
    $M$ can be covered by $\displaystyle \cup_{x \in M} B(x, 0.01K^{-1} f(x))$. By Vitalli covering lemma, we can find countably many points $x_i \in M$ such that
    $\displaystyle M \subset \cup_{i} B(x_i, 0.1K^{-1} f(x_i))$ and $B(x_i, 0.01 K^{-1}f(x_i))$ are disjoint to each other.  
    Let $\eta_i$ be a smooth function supported on $B(x_i, 0.2 K^{-1}f(x_i))$ such that $\eta_i \equiv 1$ on $B(x_i, 0.1 K^{-1}f(x_i))$.  
    Moreover, $|\nabla \eta_i| \leq \frac{100K}{f(x_i)}$.    Fix $i$, let $B(x_j, 0.2 K^{-1}f(x_j))$ be a ball with non-empty intersection with $B(x_i, 0.2 K^{-1}f(x_i))$.
    Denote all such $j$'s by $J_{i}$.   By triangle inequality, we have 
    \begin{align*}
      d(x_i, x_j) < 0.2 K^{-1} \left( f(x_i) + f(x_j) \right)< K^{-1} \max \{f(x_i), f(x_j)\}.
    \end{align*}
    It follows from (\ref{eqn:MA25_1}) that $K^{-1}f(x_i)<f(x_j)<Kf(x_i)$. In particular, we have
    $d(x_i, x_j)<f(x_i)$ and consequently $\displaystyle B(x_j, 0.01K^{-1}f(x_j)) \subset B(x_i, 1.01 f(x_i))$.  
    By the disjoint property, we obtain
    \begin{align*}
      \sum_{j \in J_i} \left| B(x_j, 0.01K^{-1}f(x_j))  \right| < \left| B(x_i, 1.01 f(x_i)) \right|.
    \end{align*}
    Now we apply the lower bound $f(x_j)>K^{-1} f(x_i)$ and  the volume ratio's two-side-bound. The above inequality implies that
    \begin{align*}
      |J_i| \cdot  \kappa \left( 0.01 K^{-2} f(x_i)  \right)^{2n} < \kappa^{-1} \left( 1.01 f(x_i) \right)^{2n}.
    \end{align*}
    Therefore, $|J_i|<\kappa^{-2} (101 K^2)^{2n}$, which we denoted by $C=C(n,\kappa, K)$. 
    According to the definition of $\eta_j$, we know that any point in $M$ can at most locate in the support of $C$ number of $\eta_j$'s. 
    Now we define 
    \begin{align*}
      \tilde{f}(x) \triangleq \sum_{i} f(x_i) \eta_i(x), \quad \forall \; x \in M. 
    \end{align*}
    At every point $x$, there is a neighborhood of $x$ such that the above sum is a sum of at most $C$ non-zero terms of smooth functions.
    Therefore, $\tilde{f}$ is smooth.  
    Choose an arbitrary point $x \in M$ and assume $x \in B(x_i, 0.1K^{-1}f(x_i))$. Since $0 \leq \eta_i \leq 1$, recalling the definition of $J_i$, we have
    \begin{align*}
      \tilde{f}(x) = \sum_{j \in J_{i}} f(x_j) \eta_j(x) \leq \sum_{j \in J_{i}} f(x_j) \leq K \sum_{j \in J_{i}} f(x_i)=K|J_i| f(x_i) \leq CK f(x_i)< CK^2 f(x), 
    \end{align*}
    where we used (\ref{eqn:MA25_1}) and the fact $d(x,x_i)<0.1Kf(x_i)$ in the last step. Clearly, we have
    \begin{align*}
      \tilde{f}(x) \geq f(x_i) \eta_i(x)=f(x_i)> K^{-1} f(x). 
    \end{align*}
    Therefore, we obtain $\displaystyle K^{-1} f(x) < \tilde{f}(x) < CK^2 f(x)$.  
    In light of  the arbitrary choice of $x$, we obtain the first part of (\ref{eqn:GI26_1}), by adjusting $C$ if necessary. 
    The second part of (\ref{eqn:GI26_1}) follows from the following direct calculation. 
    \begin{align*}
      \left|\nabla \tilde{f}(x) \right| \leq \sum_{j \in J_i} f(x_j) |\nabla \eta_j|  \leq \sum_{j \in J_i} 100K \leq 100K|J_i|<C.
    \end{align*}
    The local Harnack inequality (\ref{eqn:MA25_2}) of $\tilde{f}$ follows from the combination of (\ref{eqn:MA25_1}) and the first part of (\ref{eqn:GI26_1}), by adjusting $K$ to $\tilde{K}=CK$ for some $C=C(n,\kappa,K)$.  
  \end{proof}
  
  In our application of  Proposition~\ref{prn:GI26_1}, we typically let $f=\min \left\{K^{-1}, \mathbf{cvr}(\cdot) \right\}$. 
  By Proposition~\ref{prn:SC24_1},  the function $f$ satisfies local Harnack inequality (\ref{eqn:MA25_1}). 
  Then Propsition~\ref{prn:GI26_1} guarantees the existence of a smooth function $\tilde{f}$, which is comparable to $f$ and also satisfies local Harnack inequality, with bounded gradient.
  Since $\tilde{f}$ has better regularity and its value is comparable to $\mathbf{cvr}$, it is convenient to use the level sets of $\tilde{f}$ to decompse the underlying manifold $M$.

  \begin{corollary}[\textbf{Perturbation of the level sets of $\mathbf{cvr}$}]
    Suppose $\mathbf{cr}(M)>1$, $\xi_0=\xi_0(n, \kappa)$ is a very small constant.     
    Suppose  $\xi=\mathbf{cvr}(x)<\xi_0$ for some $x \in B(x_0, 0.5)$.   Then there is a smooth $(2n-1)$-dimensional hyper-surface $\Sigma_{\xi}$ such that
    \begin{itemize}
    \item [(a).]   $C^{-1} \xi<\mathbf{cvr}(y)<C\xi$ for every $y \in \Sigma_{\xi}$.
    \item [(b).]    $\left|\Sigma_{\xi} \cap B(x_0,1) \right|_{\mathcal{H}^{2n-1}}< C\xi^{2p_0-1}$.  
    \end{itemize}
    Here $C=C(n,\kappa,K)=C(n,\kappa)$ since $K$ is the constant depending on $n, \kappa$ in Proposition~\ref{prn:SC24_1}. 
  \label{cor:GI26_1}  
  \end{corollary}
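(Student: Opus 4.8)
\textbf{Proof proposal for Corollary~\ref{cor:GI26_1}.}

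The plan is to obtain $\Sigma_\xi$ as a regular level set of a smooth perturbation $\tilde{f}$ of the function $f=\min\{K^{-1},\mathbf{cvr}(\cdot)\}$, where $K$ is the constant from Proposition~\ref{prn:SC24_1}. First I would invoke Proposition~\ref{prn:GI26_1}: since every geodesic ball of radius $<1$ has volume ratio in $(\kappa,\kappa^{-1})$ when $\mathbf{cr}(M)>1$, and $f$ satisfies the local Harnack inequality~(\ref{eqn:MA25_1}) by Proposition~\ref{prn:SC24_1}, we get a smooth function $\tilde{f}$ with $K^{-1}f<\tilde{f}<Cf$, $|\nabla\tilde{f}|<C$, and $\tilde{f}$ itself satisfies a local Harnack inequality with constant $\tilde{K}=\tilde{K}(n,\kappa)$. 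By Sard's theorem, for a.e.\ value $a$ the level set $\{\tilde{f}=a\}$ is a smooth embedded hypersurface; I would choose such a regular value $a$ comparable to $\xi$ (say $a\in[\tfrac12 K^{-1}\xi,\,2C\xi]$, which is a set of positive measure once $\xi_0$ is small enough that this interval lies in the range of $\tilde{f}$ near $x$) and set $\Sigma_\xi=\{\tilde{f}=a\}$. Property (a) is then immediate: on $\Sigma_\xi$ we have $\tilde{f}=a\sim\xi$, and since $C^{-1}\mathbf{cvr}\le f\le\tilde{f}\le Cf\le C\mathbf{cvr}$ away from the cutoff level (which is harmless for $\xi<\xi_0$ small), $\mathbf{cvr}(y)$ is comparable to $\xi$ on $\Sigma_\xi$; the existence of a point $x\in B(x_0,0.5)$ with $\mathbf{cvr}(x)=\xi$ guarantees $\Sigma_\xi$ is nonempty and meets $B(x_0,1)$.

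For property (b), the key point is the coarea formula together with the density estimate. Since $|\nabla\tilde{f}|\le C$ and $\tilde{f}$ is comparable to $\mathbf{cvr}$, the set $\Sigma_\xi\cap B(x_0,1)$ sits inside the ``annular'' region $A=\{y\in B(x_0,1): c^{-1}\xi\le\mathbf{cvr}(y)\le c\xi\}$ for a suitable $c=c(n,\kappa)$, i.e.\ inside $\mathcal{D}_{c\xi}\setminus\mathcal{D}_{c^{-1}\xi}$. Applying the coarea formula to $\tilde{f}$ over $A$ and averaging over the admissible values $a$, one finds that for a positive-measure set of regular values $a$,
\begin{align*}
  \left|\Sigma_\xi\cap B(x_0,1)\right|_{\mathcal{H}^{2n-1}}
  \le \frac{C}{\xi}\left|A\right|
  \le \frac{C}{\xi}\left|B(x_0,1)\cap\mathcal{D}_{c\xi}\right|.
\end{align*}
Then I would invoke the density estimate~(\ref{eqn:SC02_2}) of Proposition~\ref{prn:SC02_2} (valid since $\mathbf{cr}(M)>1$), which gives $|B(x_0,1)\cap\mathcal{D}_{c\xi}|\le 4\mathbf{E}(c\xi)^{2p_0}\le C\xi^{2p_0}$; combining this with the display above yields $|\Sigma_\xi\cap B(x_0,1)|_{\mathcal{H}^{2n-1}}\le C\xi^{2p_0-1}$, which is exactly (b). Choosing the regular value $a$ to be one at which the coarea average is attained (or better) makes all three requirements hold simultaneously.

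The main obstacle I anticipate is not any single step but the bookkeeping around the cutoff in $f=\min\{K^{-1},\mathbf{cvr}\}$ and the comparison constants: one must check that for $\xi<\xi_0(n,\kappa)$ the value range $[\tfrac12 K^{-1}\xi, 2C\xi]$ avoids the ``plateau'' region where $f\equiv K^{-1}$, so that on $\Sigma_\xi$ the identity $f=\mathbf{cvr}$ genuinely holds and (a) follows with clean constants; this is what forces $\xi_0$ to depend on $n,\kappa$. A secondary technical point is justifying the coarea/averaging argument to select a regular value $a$ for which both the smoothness of $\Sigma_\xi$ (Sard) and the measure bound hold — this is routine once one notes the set of $a$ failing either condition has measure strictly less than the length of the admissible interval. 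All the geometric inputs (volume doubling, density estimate, Harnack for $\mathbf{cvr}$) are already available, so the argument is essentially a packaging of Proposition~\ref{prn:GI26_1} and Proposition~\ref{prn:SC02_2} through the coarea formula.
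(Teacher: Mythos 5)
Your proposal follows essentially the same argument as the paper's proof: perturb $f=\min\{K^{-1},\mathbf{cvr}\}$ to a smooth $\tilde{f}$ via Proposition~\ref{prn:GI26_1}, use the coarea formula plus the density estimate~(\ref{eqn:SC02_2}) to control the average size of the level sets over a $\xi$-scale interval, and pick a regular value by Sard together with a mean-value argument. The only detail you gloss over is the nonemptiness of $\Sigma_\xi\cap B(x_0,1)$, which the paper handles by noting that the density estimate gives a point $y\in B(x_0,1)$ with $\mathbf{cvr}(y)>K^{-1}$, hence $\tilde{f}(y)>K^{-2}$, so by continuity $\tilde{f}$ attains every value in $[C\xi,K^{-2}]$ inside $B(x_0,1)$; this is a minor addition rather than a gap.
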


  \begin{proof}
    Let $f=\min \left\{K^{-1}, \mathbf{cvr}(\cdot) \right\}$, which satisfies (\ref{eqn:MA25_1})  by  Proposition~\ref{prn:SC24_1}. 
    Therefore, Proposition~\ref{prn:GI26_1} can be applied. We perturb $f$ to a smooth function $\tilde{f}$ such that  inequality (\ref{eqn:GI26_1}) hold.  
    At the given point $x$, we have $\tilde{f}(x)<C\xi<C\xi_0<K^{-2}$ since $\xi_0$ is chosen very small. 
    Recalling that $\mathbf{cr}(M)>1$,  the density estimate(c.f. Proposition~\ref{prn:SC02_2}) guarantees the existence of point $y \in B(x_0,1)$ such that $\mathbf{cvr}(y)>K^{-1}$.
    Clearly, $f(y)=K^{-1}$ by definition. 
    Therefore, $\tilde{f}(y)>K^{-2}$ by (\ref{eqn:GI26_1}).   In light of the continuity of $\tilde{f}$,  we have $\tilde{f}^{-1}(a) \cap B(x_0,1) \neq \emptyset$
    for each $a \in [C\xi, K^{-2}]$.  Applying coarea formula, we obtain
    \begin{align*}
           \int_{C\xi}^{2C \xi} \left|\tilde{f}^{-1}(a)  \cap B(x_0,1)  \right|_{\mathcal{H}^{2n-1}} dt \leq \int_{\tilde{f}^{-1}([C\xi, 2C\xi]) \cap B(x_0,1)}  \left| \nabla \tilde{f} \right| d\mu
            \leq C \int_{\tilde{f}^{-1}([0, 2C\xi]) \cap B(x_0,1)} 1 d\mu, 
    \end{align*}
    where we applied (\ref{eqn:GI26_1}) in the last step.   Note that $\tilde{f}$ is comparable to $\mathbf{cvr}$ on small values,
    our conditions provide $\mathbf{cr}(M)>1$, 
    the last term in the above inequality can be bounded by the density estimate(c.f. inequality (\ref{eqn:SC02_2})).  
    Therefore, we have
    \begin{align*}
       \int_{C\xi}^{2C \xi} \left|\tilde{f}^{-1}(a)  \cap B(x_0,1)  \right|_{\mathcal{H}^{2n-1}} dt< C \xi^{2p_0}. 
    \end{align*} 
    By Sard theorem and mean-value inequality, we can choose $a_0 \in [C\xi, 2C \xi]$ to be a regular value of $\tilde{f}$ and it satisfies
    \begin{align*}
      \left|\tilde{f}^{-1}(a_0) \right|_{\mathcal{H}^{2n-1}} \leq \frac{2}{C\xi}  \int_{C\xi}^{2C \xi} \left|\tilde{f}^{-1}(a) \right|_{\mathcal{H}^{2n-1}} dt \leq C \xi^{2p_0-1}. 
    \end{align*}
    Let $\Sigma_{\xi}$ be $\tilde{f}^{-1}(a_0)$. Then it satisfies all the requirements. 
  \end{proof}

  Because of the properties (a) and (b) of Corollary~\ref{cor:GI26_1},  we can regard $\Sigma_{\xi}$ as a perturbation of  $\partial \mathcal{F}_{\xi}$ in many applications. 
 
 \begin{corollary}[\textbf{Perturbation of distance function}]
  Let $X=\mathcal{R} \cup \mathcal{S} \in \widetilde{\mathscr{KS}}(n,\kappa)$. 
  Let $f=\min\{0.1, d(\cdot, \mathcal{S})\}$.  Then there is a smooth function $\tilde{d}$ such that
  \begin{align}
    0.1f < \tilde{f}<Cf, \quad |\nabla \tilde{f}|<C.
  \label{eqn:MA20_2}  
  \end{align}
  Furthermore, for each small positive number $\xi$, large positive number $H$ and  point $x_0 \in X$ satisfying $B(x_0, H) \cap \mathcal{S} \neq \emptyset$, we can find a smooth $(2n-1)$-dimensional hyper surface $\Sigma_{\xi} \subset \mathcal{R}$
  such that 
  \begin{itemize}
  \item [(a).]   $C^{-1} \xi<d(y, \mathcal{S})<C\xi$ for every $y \in \Sigma_{\xi}$.
  \item [(b).]    $\left|\Sigma_{\xi} \cap B(x_0,H) \right|_{\mathcal{H}^{2n-1}}< L\xi^2$.     
  \end{itemize}
  All the $C$ in this corollary depend only on $n$ and $\kappa$,  the constant $L$ depends on $n,\kappa$ and the ball $B(x_0, H)$. 
  \label{cor:MA20_1}
  \end{corollary}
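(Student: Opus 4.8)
\textbf{Proof proposal for Corollary~\ref{cor:MA20_1}.}
The plan is to invoke Proposition~\ref{prn:GI26_1} with the function $f=\min\{0.1,d(\cdot,\mathcal S)\}$ and then mimic the coarea argument of Corollary~\ref{cor:GI26_1}, replacing the density estimate for $\mathbf{cvr}$ by the volume estimate of the singular neighborhood (Corollary~\ref{cly:HD19_2}, or equivalently Corollary~\ref{cly:HA11_1} / Proposition~\ref{prn:SB25_2}). First I would check that $f$ satisfies the local Harnack inequality (\ref{eqn:MA25_1}): since $d(\cdot,\mathcal S)$ is $1$-Lipschitz, for $y\in B(x,K^{-1}f(x))$ we have $|d(y,\mathcal S)-d(x,\mathcal S)|\le K^{-1}f(x)$, so taking $K$ a fixed dimensional-type constant (say $K=10$) gives $K^{-1}f(x)<f(y)<Kf(x)$ after truncation at $0.1$; one only has to observe that truncation preserves the two-sided bound. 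The non-collapsing hypothesis of Proposition~\ref{prn:GI26_1} is exactly the Bishop-Gromov two-sided volume ratio bound available on $X\in\widetilde{\mathscr{KS}}(n,\kappa)$ by Proposition~\ref{prn:HD19_1} and the definition of $\widetilde{\mathscr{KS}}(n,\kappa)$. Hence Proposition~\ref{prn:GI26_1} produces a smooth $\tilde f$ with $0.1f<\tilde f<Cf$ and $|\nabla\tilde f|<C$, $C=C(n,\kappa)$, which is (\ref{eqn:MA20_2}).

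Next I would construct $\Sigma_\xi$. Since $B(x_0,H)\cap\mathcal S\ne\emptyset$, $\tilde f$ takes values close to $0$ somewhere in $B(x_0,H)$, and since $\widetilde{\mathscr{KS}}(n,\kappa)$ spaces contain plenty of points with $d(\cdot,\mathcal S)\ge 0.1$ nearby (by non-collapsing and the codimension bound on $\mathcal S$), $\tilde f$ also attains a value comparable to $0.1$ in $B(x_0,H)$; by continuity $\tilde f^{-1}(a)\cap B(x_0,H)\ne\emptyset$ for every $a\in[C\xi,\,c_0]$ with $c_0$ a fixed constant. Now apply the coarea formula on the set $\tilde f^{-1}([C\xi,2C\xi])\cap B(x_0,H)$:
\begin{align*}
  \int_{C\xi}^{2C\xi}\left|\tilde f^{-1}(a)\cap B(x_0,H)\right|_{\mathcal H^{2n-1}}\,da
  \le \int_{\tilde f^{-1}([C\xi,2C\xi])\cap B(x_0,H)}|\nabla\tilde f|\,d\mu
  \le C\left|\{x\in B(x_0,H): d(x,\mathcal S)<C'\xi\}\right|.
\end{align*}
By Corollary~\ref{cly:HD19_2} applied with $p=1$ (covering $B(x_0,H)$ by boundedly many unit balls meeting $\mathcal S$), the right-hand side is bounded by $L\xi^{2}$ with $L=L(n,\kappa,B(x_0,H))$. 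By Sard's theorem and the mean-value inequality I can then pick a regular value $a_0\in[C\xi,2C\xi]$ with $\left|\tilde f^{-1}(a_0)\cap B(x_0,H)\right|_{\mathcal H^{2n-1}}\le \frac{2}{C\xi}\cdot L\xi^2= L'\xi$. Wait—this gives the wrong exponent; the honest statement should read $L\xi^{2-\epsilon}$ if one insists on $p=1$, but since Proposition~\ref{prn:SB25_2} holds for all $p<2$ and in particular for $p=1$, the neighborhood of $\mathcal S$ has measure $\le C\xi^{2}$, hence the mean value over an interval of length $C\xi$ yields $\le C\xi$ for the area — to recover $\xi^2$ in (b) one uses instead that $\mathcal S$ has Minkowski codimension $>3$ (item 4 of Definition~\ref{dfn:SC27_1}), so the neighborhood measure is $o(\xi^3)$ and the area is $o(\xi^2)$; I would state (b) with the exponent the codimension allows, namely $\le L\xi^2$, which is exactly what the Minkowski dimension bound $\dim_{\mathcal M}\mathcal S<2n-3$ combined with coarea gives. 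Set $\Sigma_\xi=\tilde f^{-1}(a_0)\cap\{d(\cdot,\mathcal S)<2c_0\}$, a smooth hypersurface in $\mathcal R$; property (a) is immediate from $0.1f<\tilde f<Cf$ at points of $\Sigma_\xi$.

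The main obstacle I anticipate is purely bookkeeping: making sure the constant $C$ in Proposition~\ref{prn:GI26_1} is genuinely dimensional (depends only on $n,\kappa$, given the fixed choice $K=10$), so that properties (a) and the Harnack-type comparison are clean, while isolating the dependence on the ball $B(x_0,H)$ entirely into the single constant $L$ coming from the covering-number estimate in Corollary~\ref{cly:HD19_2}. A secondary technical point is confirming that $\tilde f^{-1}(a_0)$ restricted to a neighborhood of $\mathcal S$ is a genuine closed hypersurface that can play the role of $\partial\mathcal F_\xi$ in later arguments (e.g. in Lemma~\ref{lma:SL14_1}); this follows because $\tilde f$ is smooth and proper on $\{0<\tilde f<c_0\}\subset\mathcal R$, so a regular level set is a properly embedded smooth $(2n-1)$-manifold, and intersecting with $B(x_0,H)$ gives the area bound. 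No new ideas are needed beyond the two cited estimates and the partition-of-unity perturbation already established.
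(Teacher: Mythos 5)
Your proposal is correct and follows essentially the same approach as the paper: smooth $f$ via Proposition~\ref{prn:GI26_1}, then run coarea plus Sard to extract a good level set, using the Minkowski-codimension bound on $\mathcal S$ to control the volume of the tubular neighborhood. Your mid-stream self-correction (realizing that $p=1$ in Corollary~\ref{cly:HD19_2} gives only $\xi^2$ for the neighborhood measure and hence only $\xi$ for the area, and that one must instead invoke the codimension-$>3$ bound from item 4 of Definition~\ref{dfn:SC27_1} to get $\xi^3$ and thus $\xi^2$) lands on exactly what the paper does. One small point the paper makes explicit and you leave implicit: Proposition~\ref{prn:GI26_1} is stated for a Riemannian manifold, and here one must apply it with $M=\mathcal R$ (which is incomplete) rather than $X$; the justification is that the balls $B(x,0.2K^{-1}f(x))$ used in the Vitali covering and partition of unity satisfy $B(x,0.2K^{-1}f(x))\subset B(x,0.02\,d(x,\mathcal S))\subset\mathcal R$, so all cutoff functions are constructed entirely within the smooth part and never see the singular set. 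This is precisely why the $f\le d(\cdot,\mathcal S)$ truncation at $0.1$ and the choice $K=10$ matter, so it is worth stating. With that noted, your argument is complete.
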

  
  \begin{proof}
  The proof of (\ref{eqn:MA20_2}) follows from the  proof of inequality (\ref{eqn:GI26_1}) by letting $M=\mathcal{R}$ and $K=10$, with the following facts in mind.
  First, it is clear that $f$ satisfies the local Harnack inequality (\ref{eqn:MA25_1}) by triangle inequality.
  Second, for each $x \in \mathcal{R}$, we have
  \begin{align*}
   B(x, 0.2K^{-1}f(x)) \subset B(x, 0.02 d(x, \mathcal{S}))  \subset \mathcal{R}. 
  \end{align*}
  Therefore, it makes sense to construct smooth cutoff functions supported on $B(x, 0.2K^{-1}f(x))$. 
  
  The existence of such $\Sigma_{\xi}$ follows from the proof of Corollary~\ref{cor:GI26_1} and the Minkowski codimension assumption of $\mathcal{S}$. In other words, we have
  \begin{align*}
   \left| \{x \in B(x_0, H)| d(x, \mathcal{S})<\xi\} \right|< L \xi^{3}. 
  \end{align*}
  Similar to the proof of Corollary~\ref{cor:GI26_1}, the property of $\Sigma_{\xi}$ is the application of Sard theorem and co-area formula. 
  \end{proof}

\section{Direct estimate of distance by reduced distance}
\label{app:C}

 In this appendix, we provide an alternative approach to estimate distance by reduced distance. 

 \begin{lemma}[\textbf{Estimate distance by weighted integral of reduced distance}]
 There is a constant $C=C(n,\eta, D)$ with the following properties. 
 
  Suppose $\{(M^{2n}, g(t)), -1 \leq t \leq 1\}$ is a Ricci flow solution such that $R \geq -1$. 
  Suppose $\boldsymbol{\beta}$ is a space-time curve connecting $(y, 0)$ and $(z, -\delta)$. Moreover, we have
  \begin{align}
     \inf_{x \in \beta,  t \in [-\delta, 0]} \mathbf{cvr}(x, t) \geq \eta, \quad  |\beta|_{g(0)} \leq D.    \label{eqn:MB28_5}
  \end{align}
  Then for each $\delta <<\eta$, we have
  \begin{align}
      d_{g(0)}^2(y, z) \geq -C\delta + \left. \int_{B_{g(0)}\left(z, \delta^{\frac{1}{4}} \right)} (4\tau l) \cdot (4\pi \tau)^{-n} e^{-\tilde{l}}  dv \right|_{t=-\delta}.       \label{eqn:MB28_6}
  \end{align}
  Here $l$ is the reduced distance to base point $(y, 0)$, $\tilde{l}$ is the reduced distance to base point $(z, 0)$, $\tau=-t$.  
  \label{lma:MB27_1}
  \end{lemma}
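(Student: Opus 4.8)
<br>

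The plan is to localize the reduced volume element near the endpoint $(z,-\delta)$ and exploit the near-Euclidean geometry guaranteed by the uniform lower bound $\mathbf{cvr}(x,t)\geq \eta$ along $\beta$. First I would use the assumption $R\geq -1$ together with $\delta<<\eta$ to control the reduced distance $l$ from below: along the curve $\boldsymbol{\beta}$, and in fact in a uniform space-time neighborhood of $\beta$ where $\mathbf{cvr}\geq \eta$, curvature and its derivatives are uniformly bounded (by the improving regularity property of $\mathbf{cvr}$, e.g. Corollary~\ref{cly:SL23_1}-type estimates and the canonical radius regularity estimate), so the metrics $g(t)$ for $t\in[-\delta,0]$ are $C^k$-close to $g(0)$ on this neighborhood. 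This lets me compare the reduced distance $l((y,0),(\cdot,-\delta))$ with the Riemannian quantity $\tfrac{d_{g(0)}^2(y,\cdot)}{4\delta}$ up to an error controlled by $C\delta$ (the error coming from the $\int\sqrt{\tau}R$ term and the metric distortion), at least for points in $B_{g(0)}(z,\delta^{1/4})$, which is well inside the region of bounded geometry once $\delta$ is small.

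Next, the key identity is that $(4\pi\tau)^{-n}e^{-\tilde l}dv$ is a probability-like measure: by Perelman's reduced volume monotonicity applied to the static-like comparison near $(z,0)$, or directly by the fact that on an almost-Euclidean ball the reduced exponential map is almost volume-preserving with Jacobian $\leq 1$, one has $\int_{B_{g(0)}(z,\delta^{1/4})}(4\pi\tau)^{-n}e^{-\tilde l}dv \leq 1$ and moreover this measure concentrates mass near $z$ as $\tau\to 0$. Then I would write
\begin{align*}
 d_{g(0)}^2(y,z) \geq \int_{B_{g(0)}(z,\delta^{1/4})} d_{g(0)}^2(y,x)\,(4\pi\tau)^{-n}e^{-\tilde l}\,dv - (\textrm{error}),
\end{align*}
where the error accounts for the missing mass outside the small ball (exponentially small, hence absorbed in $C\delta$) and for the fact that $d_{g(0)}(y,x)$ differs from $d_{g(0)}(y,z)$ by at most $\delta^{1/4}$ on the ball (contributing $O(\delta^{1/4}D)$, which for the squared distance is $O(\delta^{1/4}(D+1))\leq C\delta^{1/4}$; if one wants a clean $C\delta$ one shrinks the ball to radius $\delta$, but the statement as written tolerates $\delta^{1/4}$). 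Finally I replace $d_{g(0)}^2(y,x)$ by $4\tau l((y,0),(x,-\delta))$ using the comparison from the first paragraph, which holds with error $C\delta$ uniformly on the ball since $\beta$ passes through $z$ and the bounded-geometry neighborhood contains $B_{g(0)}(z,\delta^{1/4})$. This yields
\begin{align*}
 d_{g(0)}^2(y,z) \geq -C\delta + \int_{B_{g(0)}(z,\delta^{1/4})}(4\tau l)\,(4\pi\tau)^{-n}e^{-\tilde l}\,dv\Big|_{t=-\delta},
\end{align*}
which is exactly \eqref{eqn:MB28_6}.

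The main obstacle I expect is making the comparison ``$d_{g(0)}^2(y,x)\approx 4\tau\, l((y,0),(x,-\delta))$'' rigorous with a uniform error, because $l$ is defined by minimizing over space-time curves that a priori could wander far from $\beta$ into regions of bad geometry where $\mathbf{cvr}$ is small. Handling this requires showing that a near-minimal $\mathcal{L}$-geodesic from $(y,0)$ to a point in $B_{g(0)}(z,\delta^{1/4})$ must stay in the uniformly regular tube around $\beta$ when $\delta$ is small relative to $\eta$ and $D$ — this is a short-time trapping argument: any competitor curve leaving the $\eta/2$-regular tube accumulates $\mathcal{L}$-length bounded below by a fixed amount $\gg \delta\cdot(\textrm{something})$, while the curve $\boldsymbol{\beta}$ itself gives an upper bound $\mathcal{L}\leq \tfrac{(D+\epsilon)^2}{2\sqrt\delta}+C\sqrt\delta$, forcing the minimizer into the good region for $\delta$ small. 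The other technical point, essentially routine given the bounded geometry, is the estimate $\int(4\pi\tau)^{-n}e^{-\tilde l}dv\le 1$ with exponentially small tail outside $B_{g(0)}(z,\delta^{1/4})$, which follows from the standard Gaussian-type bounds on $\tilde l$ on an almost-Euclidean ball (cf.\ the estimates behind Lemma~\ref{lma:SL18_1}).
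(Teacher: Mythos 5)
Your route is genuinely different from the paper's, and it has a gap at the step you flag as the ``main obstacle''; the trapping argument you sketch does not actually close it. To run your plan you need the pointwise upper bound $4\delta\, l((y,0),(x,-\delta)) \le d_{g(0)}^2(y,x)+C\delta$ uniformly for $x\in B_{g(0)}(z,\delta^{1/4})$, so that $\int d^2(y,x)\,w\ge \int 4\tau l\,w - C\delta$. An upper bound on $l$ comes from a competitor curve, but the only ``long'' curve the hypotheses control is $\beta$ itself, and $\beta$ is not assumed to be length-minimizing: its $g(0)$-length is only bounded by $D$, which may be much larger than $d(y,x)$. Using $\beta$ as the competitor gives $4\delta\,l \lesssim |\beta|_{g(0)}^2 + C\delta$, which is strictly weaker than $d^2(y,x)+C\delta$ whenever $|\beta|>d(y,z)$. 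To do better you would need a nearly-minimizing $g(0)$-geodesic from $y$ to $x$ that stays in the regular tube, and nothing in \eqref{eqn:MB28_5} grants that. Your trapping argument (bounding below the $\mathcal{L}$-cost of an excursion outside the $\eta/2$-tube) only shows the minimizer of $\mathcal L$ cannot stray, not that a \emph{Riemannian} geodesic stays inside; and with $R\ge -1$ the penalty for a brief excursion near $\tau=0$ is only of order $\delta^{3/2}$, which is no barrier. So the central pointwise comparison $4\tau l\approx d_{g(0)}^2$ is not available under these hypotheses.

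The paper's proof avoids this entirely by never comparing $l$ to $d^2$ pointwise. It sets $h=4\tau l$ and $w=(4\pi\tau)^{-n}e^{-\tilde l}$ and exploits Perelman's \emph{global} differential inequalities $\square h\ge -4n$, $\square^*w\le 0$, $\int_M w\,dv\le 1$, which hold on all of $M$ in the barrier sense with no regularity assumptions beyond $R\ge -1$. One multiplies by a static cutoff $\chi$ supported in a small annulus around $z$, integrates $\tfrac{d}{dt}\int h\chi^2 w$ from $t=-\delta$ to $t=0^-$ (where the boundary value is $d_{g(0)}^2(y,z)$ by the $\delta$-function property of $w$), and estimates the error terms involving $\nabla\chi$; the bounded geometry near $z$ together with the a priori bound $h\lesssim |\beta|^2$ (which \emph{does} follow from $\beta$ being a competitor) is only used for these cutoff-error estimates, and a final choice $\lambda=\delta^{-1/2}$ balances the powers of $\delta$. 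What the parabolic argument buys you is that the information about $l$ enters only through identities valid globally, so you never need to know where minimizing $\mathcal L$-geodesics or Riemannian geodesics go. If you want to keep a more ``pointwise'' version of your argument, you would at least need to replace your step ``replace $d^2$ by $4\tau l$'' by an integral-in-time comparison that uses $\square h\ge -4n$, which is essentially what the paper does.
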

  
  \begin{proof}
     Based on $l$ and $\tilde{l}$, we define two auxiliary functions $h=4\tau l$(c.f. $\bar{L}$ in Perelman's terminology) and $w=(4\pi \tau)^{-n} e^{-\tilde{l}}$.
     Recall that $n$ is complex dimension in our case. 
     According to Perelman's calculation(c.f. inequality (7.13) and (7.15) in section 7 and Corollary 9.5 in section 9 of Perelman~\cite{Pe1}. Note that there is a mistake of statement in the proof of Corollary 9.5~\cite{Pe1}, which is corrected in Corollary 29.23 in Kleiner-Lott~\cite{KL}), we have
     \begin{align}
      &\square h=\left( \partial_{t} - \Delta \right) h \geq -4n,    \label{eqn:MB27_0}\\
      &\square^* w=(-\partial_t - \Delta +R) w \leq 0,   \label{eqn:MB27_3}\\
      &\int_{\Omega} w dv \leq 1, \quad \forall \; \tau>0,     \label{eqn:MB28_0}
     \end{align}
     where $\tau=-t$, $n$ is the complex dimension of $M$.  Note that $w$ converges to $\delta$-function based at $(y, 0)$ and $h$ converges to $d_{g(0)}^2(y, z)$
     as $\tau \to 0^{+}$.  So we have
     \begin{align}
        \lim_{\tau \to 0^{+}} \int_{M} h w dv=d_{g(0)}^2(y, z).   \label{eqn:MB28_1}
     \end{align}
     Let $\chi=\phi\left(\frac{d_{g(0)}(z, \cdot)}{\sqrt{\lambda \delta}} \right)$ be a cutoff function, $\phi$ achieves $1$ on $(-\infty, 1)$ and $0$ on $(2, \infty)$,  $|\phi'| \leq 2$.
     Here $\lambda$ is some positive number greater than $1$ such that $\lambda \delta$ is very small, and it will be determined later. 
     Note that $\chi$ is independent of time.  Clearly, we have $|\nabla \chi|^2 \leq \frac{4}{\lambda \delta}$ at time $t=0$. 
     By uniformly bounded geometry around $(z, 0)$ and the fact $\delta <<\eta$, it is easy to obtain the metric equivalence and the estiamte $|\nabla \chi|^2 \leq \frac{40}{\lambda \delta}$
     for each time $t \in [-\delta, 0]$. 
    
     Recall that $R \geq -1$. It follows from definition of reduced length that
     \begin{align}
       l((y, 0), (x, -\tau))=\frac{1}{2\sqrt{\tau}} \int_{0}^{\tau}  \sqrt{s} \left(|\dot{\gamma}|^2 + R \right) ds \geq -\frac{1}{3} \tau+ \frac{1}{2\sqrt{\tau}} \int_{0}^{\tau}  \sqrt{s} |\dot{\gamma}|^2  ds \geq -\frac{1}{3} \tau
     \label{eqn:MC22_1}  
     \end{align}
     for arbitrary $x \in M$. It follows that  $h+\frac{4}{3}\tau^2$ is a nonnegative function. 
     Direct calculation shows that
     \begin{align*}
          \frac{d}{dt} \int_{M} h \chi^2 w dv
         &=\int_{M} \left\{ w \square (h \chi^2) - (h \chi^2) \square^*w \right\} dv\\
         &= \int_{M} \left\{ w \square (h \chi^2) - \left(h+\frac{4}{3} \tau^2 \right) \chi^2 \square^*w \right\} dv
          +\frac{4}{3}\tau^2  \int_{M} \chi^2 \square^* w dv.
     \end{align*}    
     Since $h+\frac{4}{3}\tau^2 \geq 0$, $0<\chi^2 \leq 1$ and $\square^* w \leq 0$, we have
     \begin{align}
        \frac{d}{dt} \int_{M} h \chi^2 w dv \geq \int_{M}  w \square (h \chi^2)  dv +\frac{4}{3}\tau^2  \int_{M} \square^* w dv.   \label{eqn:MB28_2}
     \end{align}
    Note that $\chi$ is independent of time. So we have     
    \begin{align*}     
         &\quad \int_{M} w \square (h \chi^2)  dv\\
         &=\int_{M} w \left\{ \chi^2 \square h +h \square \chi^2 - 2 \langle \nabla h, \nabla \chi^2 \rangle\right\} dv
         =\int_{M} w \left\{ \chi^2 \square h -h \Delta \chi^2 - 2 \langle \nabla h, \nabla \chi^2 \rangle\right\} dv\\
         &=\int_{M} w \left\{ \chi^2 \square h  -  \langle \nabla h, \nabla \chi^2 \rangle\right\} dv + \int_{M} h \langle \nabla \chi^2, \nabla w \rangle dv
           =\int_{M} w \chi^2 \square h dv +   \int_{M} \langle \nabla \chi^2,  h\nabla w -w \nabla h \rangle dv.  
     \end{align*}
     Plugging (\ref{eqn:MB27_0}) into the above equation and then in (\ref{eqn:MB28_2}), we have
     \begin{align*}
       \frac{d}{dt} \int_{M} h \chi^2 w dv &\geq -4n \int_{M} w \chi^2 dv + \int_{M} \langle \nabla \chi^2,  h \nabla w -w \nabla h \rangle dv +\frac{4}{3}\tau^2 \int_{M} \square^* w dv \\
         &\geq -4n + \int_{M} \left\langle \nabla \chi^2, h \nabla w - w \nabla h \right\rangle  dv +\frac{4}{3}\tau^2  \int_{M} \square^* w dv
     \end{align*}
     where we used (\ref{eqn:MB28_0}) in the last step. Integrating the above inequality from $t=-\delta$ to $t=0$, we have
     \begin{align*}
        d_{g(0)}^2(y, z)- \left. \int_{M} \chi^2 h w dv \right|_{t=-\delta}
        \geq -4n \delta + \frac{4}{3} \int_{-\delta}^{0} \int_{M} \tau^2 \square^* w dv dt
        + \int_{-\delta}^{0}  \int_{M} \left\langle \nabla \chi^2, h \nabla w - w \nabla h \right\rangle  dv dt.
     \end{align*}
     Recall that $\frac{d}{dt} \int_{M} w dv=-\int_{M} \square^* w dv$. So we have
     \begin{align*}
       \int_{-\delta}^{0} \int_{M} \tau^2 \square^* w dv dt &\geq \delta^2 \int_{-\delta}^{0} \int_{M}  \square^* w dv dt= \delta^2 \left\{  \left. \int_{M} w dv \right|_{t=-\delta} - \lim_{t \to 0^{-}} \int_{M} w dv \right\}\\
        &\geq -\delta^2  \lim_{t \to 0^{-}} \int_{M} w dv=-\delta^2. 
     \end{align*}
     Consequently, we obtain
      \begin{align*}
        \quad d_{g(0)}^2(y, z)- \left. \int_{M} \chi^2 h w dv \right|_{t=-\delta}
        \geq -4n \delta -\frac{4}{3} \delta^2+ \int_{-\delta}^{0}  \int_{M} \left\langle \nabla \chi^2, h \nabla w - w \nabla h \right\rangle  dv dt.
     \end{align*}
     Let $\Omega=B_{g(0)}(z, 2\sqrt{\lambda \delta}) \backslash B_{g(0)}(z, \sqrt{\lambda \delta})$.  
     Noting that the support of $\nabla \chi^2$ is contained in $\Omega$, we have
     \begin{align*}
      &\quad d_{g(0)}^2(y, z)- \left. \int_{M} \chi^2 h w dv \right|_{t=-\delta} \\ 
      &\geq -4n \delta -\frac{4}{3}\delta^2+ \int_{-\delta}^{0}   \int_{\Omega} \left\langle \nabla \chi^2, h \nabla w - w \nabla h \right\rangle  dv dt\\
      &\geq -5n \delta + \int_{-\delta}^{0}   \int_{\Omega} 2h \chi \left\langle  \nabla \chi,  \nabla w\right\rangle  dv dt
        - \int_{-\delta}^{0}   \int_{\Omega} 2w\chi \left\langle  \nabla \chi,  \nabla h\right\rangle  dv dt\\
      &=-5n \delta + \int_{-\delta}^{0}   \int_{\Omega} 2\left\langle  h \nabla \chi,  \chi \nabla w\right\rangle  dv dt
        - \int_{-\delta}^{0}   \int_{\Omega} 2 \left\langle w \nabla \chi,  \chi \nabla h\right\rangle  dv dt.
     \end{align*}
     Using H\"older inequality, the above inequality implies that
     \begin{align}
      &\quad 5n\delta + d_{g(0)}^2(y, z)- \left. \int_{M} \chi^2 h w dv \right|_{t=-\delta}  \notag\\
      &\geq -\int_{-\delta}^{0}   \int_{\Omega} \left\{ (h^2+w^2)|\nabla \chi|^2 + (|\nabla h|^2 + |\nabla w|^2) \chi^2\right\}  dv dt \notag\\
      &\geq -\frac{40}{\lambda \delta} \left\{  \underbrace{\int_{-\delta}^{0}   \int_{\Omega} h^2 dv dt}_{I} +  \underbrace{\int_{-\delta}^{0}   \int_{\Omega} w^2 dv dt}_{II} \right\}
        -\left\{ \underbrace{\int_{-\delta}^{0}   \int_{\Omega} |\nabla h|^2 dv dt}_{III} + \underbrace{\int_{-\delta}^{0}   \int_{\Omega} |\nabla w|^2 dv dt}_{IV}  \right\}.
       \label{eqn:MB29_0} 
     \end{align}
     
     Let's estimate terms $I, III$ first and then $II, IV$. 
     
     By our condition (\ref{eqn:MB28_5}), we know that $h \leq C |\beta|_{g(0)}^2$ and therefore is uniformly bounded.  Also by the uniform equivalence of volume ratio on $\Omega$(due to bounded geometry around $z$ and $\delta << \eta$), we have
     \begin{align}
       I \leq C \lambda^{n} \delta^{n+1}      \label{eqn:MB29_A}
     \end{align}
     for some depending on $n,\eta$ and $|\beta|_{g(0)}$. 
     
     For estimating term $III$, we need to transfrom $|\nabla h|^2$ to $h_t$.  Actually, it follows from Perelman's calculation(c.f. the inequality between (7.12) and (7.13) of Perelman~\cite{Pe1} and note the fact $X=\nabla l$)
     that
     \begin{align*}
        \frac{d}{d\tau} l= \frac{R+|\nabla l|^2}{2} - \frac{l}{2\tau}
     \end{align*} 
     along the reduced geodesic. This implies that
     \begin{align}
        l_{\tau}=\frac{d}{d\tau} l - |\nabla l|^2=\frac{R-|\nabla l|^2}{2} - \frac{l}{2\tau}.   \label{eqn:MC24_1}
     \end{align}
     Consequently, we have
   \begin{align*}
     h_{\tau}=(4\tau l)_{\tau}=4l+4\tau l_{\tau}=4l + 2\tau (R-|\nabla l|^2) - 2l=2\tau R +2l -2\tau |\nabla l|^2=2\tau R + \frac{h}{2\tau} - \frac{|\nabla h|^2}{8\tau},
   \end{align*}
   which implies that
   \begin{align*}
     |\nabla h|^2=-8\tau h_{\tau} +4h+16\tau^2 R=-8t h_t +4h+16t^2R. 
   \end{align*}
   Plugging this into the formula of term $III$, we obtain
   \begin{align*}
    III&=\int_{-\delta}^{0}   \int_{\Omega} |\nabla h|^2 dv dt = \int_{-\delta}^{0}   \int_{\Omega}    \left( -8t h_t +4h+16t^2 R\right) dv dt\\
       &=\int_{-\delta}^{0} \left\{ \frac{d}{dt} \int_{\Omega} -8th dv + \int_{\Omega} \left\{ 12h + 8R(2t^2 -th)  \right\} dv \right\}  dt\\
       &=-8\delta \left. \int_{\Omega} h dv \right|_{t=-\delta}  + \int_{-\delta}^{0} \int_{\Omega} \left\{ 12h + 8R(2t^2 -th)  \right\} dv dt.
   \end{align*}
   As explained before, $h$ is uniformly bounded, geometry on $\Omega$ is uniformly bounded.  
   Recall that  $\Omega=B_{g(0)}(z, 2\sqrt{\lambda \delta}) \backslash B_{g(0)}(z, \sqrt{\lambda \delta})$ and $\lambda \delta$ is very small. 
   So we obtain
   \begin{align}
     III \leq C |\Omega|_{g(0)} \delta \leq C \lambda^{n} \delta^{n+1}.      \label{eqn:MB29_B}
   \end{align}
  
   We now move on to estimate term $II$ and $IV$. 
   Recall that $w=(4\pi \tau)^{-n} e^{-\tilde{l}}$.  Using the uniformly bounded geometry and all high curvature derivative bounds around $(z, 0)$, we see that
     \begin{align*}
      w \leq C \tau^{-n}e^{- \frac{\tilde{d}^2}{5\tau}},  \quad   |\nabla w| \leq C \tau^{-n-1}  e^{- \frac{\tilde{d}^2}{5\tau}},  \quad \textrm{in} \; \Omega,    
     \end{align*}
    where $\tilde{d}$ is the distance to $z$ with respect to the metric $g(0)$.  Here $C$ depends only on $n$ and $\eta$.  Note that $\tilde{d} \geq \sqrt{\lambda \delta}$ in $\Omega$. 
    It follows that
     \begin{align}
      w \leq C \tau^{-n}e^{- \frac{\lambda \delta}{5\tau}},  \quad   |\nabla w| \leq C \tau^{-n-1}  e^{- \frac{\lambda \delta}{5\tau}},  \quad \textrm{in} \; \Omega.   \label{eqn:MB29_1}
     \end{align}
     Consequently,  we obtain
    \begin{align*}
      II &=\int_{-\delta}^{0}   \int_{\Omega} w^2 dv dt \leq   C|\Omega|_{g(0)} \int_{-\delta}^{0}  \tau^{-2n}e^{- \frac{2\lambda \delta}{5\tau}}  dt\\
         &\leq C (\lambda \delta)^{n} \int_{0}^{\delta} \tau^{-2n}e^{- \frac{2\lambda \delta}{5\tau}}  d\tau  \longeq{s=\frac{5\tau}{2\lambda \delta}}{}
         C (\lambda \delta)^{-n+1} \int_{0}^{\frac{5}{2\lambda}} s^{-2n} e^{-\frac{1}{s}} ds.
    \end{align*}  
    Similar to the estimate  we used in the proof of Proposition~\ref{prn:HC29_6}, we can bound $s^{-2n} e^{-\frac{1}{s}}$ by $C s^{\theta}$ on $(0,\frac{5}{2\lambda}) \subset (0, 1)$ for any positive $\theta$.
    We then have
    \begin{align}
      II \leq  C_{\theta} \delta^{-n+1} \lambda^{-n-\theta}.    \label{eqn:MB29_C}
    \end{align} 
    On the other hand, using (\ref{eqn:MB29_1}),  we obtain
    \begin{align*}
      IV&=\int_{-\delta}^{0}   \int_{\Omega} |\nabla w|^2 dv dt \leq C \int_{-\delta}^{0}   \int_{B(z, 2\sqrt{\lambda \delta}) \backslash B(z, \sqrt{\lambda \delta})}  \tau^{-2n-2} e^{-\frac{2 \lambda \delta}{5\tau}} dv dt\\
          &\leq C (\lambda \delta)^{n} \int_{0}^{\delta} \tau^{-2n-2}e^{- \frac{2\lambda \delta}{5\tau}}  d\tau
            \leq C (\lambda \delta)^{-n-1} \int_{0}^{\frac{5}{2\lambda}} s^{-2n-2} e^{-\frac{1}{s}} ds.
    \end{align*}
   Using  $s^{-2n-2} e^{-\frac{1}{s}}< C s^{\theta}$, we have 
   \begin{align}       
      IV&\leq C \delta^{-n-1} \lambda^{-n-2-\theta}.   \label{eqn:MB29_D}
    \end{align}
    Plugging (\ref{eqn:MB29_A}), (\ref{eqn:MB29_B}), (\ref{eqn:MB29_C}) and (\ref{eqn:MB29_D}) into (\ref{eqn:MB29_0}),  we obtain
    \begin{align*}
      5n\delta + d_{g(0)}^2(x, y)- \left. \int_{M} \chi^2 h w dv \right|_{t=-\delta}  \geq - C \left\{ \lambda^{n-1} \delta^{n} + \lambda^n \delta^{n+1} + \delta^{-n} \lambda^{-n-1-\beta} + \delta^{-n-1} \lambda^{-n-2-\beta} \right\},
    \end{align*}
    for some $C$ depending on $n, \eta$ and $\beta$.  Now we choose $\lambda=\delta^{-\frac{1}{2}}$. Note that by this choice of $\lambda$, we have $\lambda \delta=\delta^{\frac{1}{2}}$, which is very small. 
    Then we have
    \begin{align*}
      5n\delta + d_{g(0)}^2(x, y)- \left. \int_{M} \chi^2 h w dv \right|_{t=-\delta}  \geq - C \left\{  \delta^{\frac{n+1}{2}}  + \delta^{\frac{n+2}{2}} + \delta^{\frac{\beta+1-n}{2}} + \delta^{\frac{\beta-n}{2}} \right\}. 
    \end{align*} 
    Choosing $\beta=2n+1$ and noting that $\delta^{\frac{n+2}{2}} < \delta^{\frac{n+1}{2}} <\delta$, we obtain (\ref{eqn:MB28_6}). 
    
  \end{proof}
  
\begin{proposition}[\textbf{Estimate distance by reduced distance}]
Suppose $\mathcal{LM}_i \in \mathscr{K}(n,A;1)$ satisfies equation (\ref{eqn:SL06_1}) in Lemma~\ref{lma:SK27_4}.   
Suppose $x_i \in M_i$.
Let $(\bar{M}, \bar{x}, \bar{g})$ be the limit space of $(M_i, x_i, g_i(0))$, $\mathcal{R}$ be the regular part of $\bar{M}$ and $\bar{x} \in \mathcal{R}$.
For every two points $\bar{y}, \bar{z} \in \mathcal{R}$ and $\epsilon>0$, we have a smooth curve $\alpha$ connecting $\bar{y}, \bar{z}$ such that
\begin{align}
  |\alpha|<d(\bar{y}, \bar{z})+ \epsilon.      \label{eqn:MC01_1}
\end{align}
\label{prn:MC04_1}
\end{proposition}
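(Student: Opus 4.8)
The plan is to deduce Proposition~\ref{prn:MC04_1} from the reduced-geodesic machinery developed in Lemma~\ref{lma:SK27_4} and Lemma~\ref{lma:MB27_1}, thereby giving a self-contained proof of the weak convexity of $\mathcal{R}$ that does not invoke the tangent cone structure. First I would fix $\bar{y},\bar{z}\in\mathcal{R}$ and $\epsilon>0$. Since $\bar{y},\bar{z}$ are regular, they lie in $\mathcal{R}_{\eta}(\bar{M})$ for some $\eta>0$, and by Lemma~\ref{lma:SK27_4} (applied with $\bar{t}=-\delta$ for a small $\delta>0$ to be chosen, and centering the reduced geodesics at $(\bar{y},0)$ rather than $(\bar{x},0)$ — the proof is insensitive to the choice of base point among regular points) there is a measure-zero set $E\cup\mathcal{S}$ away from which every point can be joined to $(\bar{y},0)$ by a shortest smooth reduced geodesic. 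Using the weak convexity estimate (\ref{eqn:HE11_1}) in Theorem~\ref{thm:HE11_1} I can pick a point $\bar{z}'$ arbitrarily close to $\bar{z}$ (hence with $d(\bar{y},\bar{z}')$ within $\epsilon/4$ of $d(\bar{y},\bar{z})$) that lies outside $E\cup\mathcal{S}$ and in $\mathcal{R}_{\eta'}$ for some $\eta'>0$; moreover $\bar{z}'$ can be connected to $(\bar{y},0)$ by a smooth shortest reduced geodesic $\boldsymbol{\beta}$ with space projection $\beta$, which is uniformly regular (contained in $\mathcal{R}_{\xi}$ for some $\xi>0$) and has uniformly bounded $\bar{g}(0)$-length $D$, by the estimates (\ref{eqn:MA22_6}), (\ref{eqn:MA22_9}), (\ref{eqn:MA22_11}) recalled in the proof of Lemma~\ref{lma:SK27_4}.

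Next I would lift this picture to the approximating flows: choose $y_i\to\bar{y}$, $z_i'\to\bar{z}'$ with $\mathbf{cvr}(y_i,0),\mathbf{cvr}(z_i',0)$ uniformly bounded below, and let $\boldsymbol{\beta}_i$ be a shortest reduced geodesic in $\mathcal{LM}_i$ from $(y_i,0)$ to $(z_i',-\delta)$. Since $\boldsymbol{\beta}$ is uniformly regular, by Proposition~\ref{prn:SK27_3} (improving regularity in the backward time direction) the curves $\boldsymbol{\beta}_i$ stay in $\mathcal{F}_{\eta/K}(M_i,t)$ for $t\in[-\delta,0]$ for large $i$, so the hypothesis (\ref{eqn:MB28_5}) of Lemma~\ref{lma:MB27_1} holds uniformly with constants independent of $i$. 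Applying Lemma~\ref{lma:MB27_1} to each $\mathcal{LM}_i$ (note $R\geq-1$ is guaranteed by (\ref{eqn:SK20_1}) after possibly rescaling; more precisely $|R-n\lambda|\leq A$ and $|\lambda|$ is controlled, so $R$ is bounded below) gives
\begin{align*}
  d_{g_i(0)}^2(y_i,z_i') \geq -C\delta + \left.\int_{B_{g_i(0)}\left(z_i',\delta^{1/4}\right)} (4\tau l)\,(4\pi\tau)^{-n}e^{-\tilde{l}}\,dv\right|_{t=-\delta},
\end{align*}
with $C=C(n,\eta,D)$ independent of $i$. Now I pass to the limit $i\to\infty$: by Proposition~\ref{prn:SL04_1} the regular parts converge smoothly and the limit flow is static there, by Proposition~\ref{prn:SL14_1} (continuity of reduced distance) and Proposition~\ref{prn:SL14_2} (continuity of reduced volume) the reduced distance $l$ to $(\bar{y},0)$ and the weighted measure $(4\pi\tau)^{-n}e^{-\tilde{l}}\,dv$ converge, and $\tilde{l}$ converges to the reduced distance to $(\bar{z}',0)$. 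Since $\bar{z}'$ is regular and the limit space-time is static near $\bar{z}'$, one has $\tilde{l}((\bar{z}',0),(w,-\delta))=d_{\bar{g}}^2(\bar{z}',w)/(4\delta)$, so the weighted measure concentrates near $\bar{z}'$ as $\delta\to0$ and $(4\tau l)$ evaluated there tends to $d_{\bar{g}}^2(\bar{y},\bar{z}')$. Taking $i\to\infty$ then $\delta\to0$ in the displayed inequality yields
\begin{align*}
  d_{\bar{g}}^2(\bar{y},\bar{z}') \geq \mathcal{L}(\boldsymbol{\beta}_{\infty}) \text{-type lower bound},
\end{align*}
and unwinding the reduced-geodesic relation $\mathcal{L}(\boldsymbol{\beta})=\tfrac12|\beta|^2$ on the Ricci-flat static limit (equations (\ref{eqn:SL25_6})) shows that the limit space projection $\beta_{\infty}$ of $\boldsymbol{\beta}_i$ is a smooth curve in $\mathcal{R}$ from $\bar{y}$ to $\bar{z}'$ with $|\beta_{\infty}|\leq d_{\bar{g}}(\bar{y},\bar{z}')+\epsilon/2$. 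Concatenating $\beta_{\infty}$ with a short curve in $\mathcal{R}$ from $\bar{z}'$ to $\bar{z}$ (available by (\ref{eqn:HE11_1}), with length $<3d(\bar{z}',\bar{z})<\epsilon/4$) produces the desired curve $\alpha\subset\mathcal{R}$ with $|\alpha|<d(\bar{y},\bar{z})+\epsilon$, proving (\ref{eqn:MC01_1}).

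The main obstacle will be the passage to the limit in the concentration argument: I must control the integral $\int_{B(z_i',\delta^{1/4})}(4\tau l)(4\pi\tau)^{-n}e^{-\tilde{l}}\,dv$ uniformly, show that the weighted measure $(4\pi\tau)^{-n}e^{-\tilde{l}}\,dv$ really does converge (total mass $\to 1$) to the $\delta$-mass at $\bar{z}'$ as first $i\to\infty$ and then $\delta\to0$, and that no mass escapes to infinity or into the singular set — this requires the Gaussian-type heat-kernel comparison for the conjugate heat kernel based at $(z_i',0)$ together with the uniform regularity of $z_i'$, exactly the ingredients appearing in Lemma~\ref{lma:MB27_1} and Proposition~\ref{prn:SL14_2}. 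A secondary technical point is verifying that the reduced geodesics $\boldsymbol{\beta}_i$, which I only know to be shortest \emph{among all} reduced geodesics, have space-projections whose limit is genuinely a smooth curve of the claimed length rather than a broken or singular object; this is handled by the uniform interior regularity estimates (\ref{eqn:MA22_11}) and Proposition~\ref{prn:SK27_3}, which force $\boldsymbol{\beta}_i$ into a region of uniformly bounded geometry. Once these limiting steps are secured, the rest is the elementary bookkeeping of concatenating curves and invoking (\ref{eqn:HE11_1}).
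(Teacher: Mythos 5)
The overall approach you take is the same as the paper's (apply Lemma~\ref{lma:MB27_1} on the approximating flows, take limits, concatenate), and your case reduction of pushing $\bar{z}$ to a nearby $\bar{z}'\notin E\cup\mathcal{S}$ mirrors the paper's Case~2 step. However, the central step — converting the lower bound coming from Lemma~\ref{lma:MB27_1} into an \emph{upper} bound for the length of some smooth curve — is not actually accomplished by the concentration argument you describe, and this is a genuine gap. After passing to the limit and letting the weighted conjugate-heat measure concentrate at $\bar{z}'$, the right-hand side of inequality (\ref{eqn:MB28_6}) tends to $4\delta\, l((\bar{y},0),(\bar{z}',-\delta))$, which by Proposition~\ref{prn:SL14_1} is exactly $d^2(\bar{y},\bar{z}')$. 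The inequality then reads $d^2(\bar{y},\bar{z}')\geq d^2(\bar{y},\bar{z}') - O(\delta)$, a tautology that yields nothing. Moreover, if $\boldsymbol{\beta}_\infty$ denotes the smooth shortest reduced geodesic, the relation $4\delta\,l = |\beta_\infty|^2$ only holds when $\boldsymbol{\beta}_\infty$ actually \emph{achieves} $l$; a priori $l$ is the infimum over all (possibly singular) space-time curves, and the smooth one could be longer, giving $|\beta_\infty|\geq d(\bar{y},\bar{z}')$ — the wrong direction. Your proposal never explains how one extracts the needed inequality $|\beta_\infty|\leq d(\bar{y},\bar{z}')+\epsilon/2$.

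What the paper does differently, and what closes the gap, is this: rather than evaluating $4\delta l$ at $\bar{z}'$ (which is tautological), the paper uses the \emph{pointwise infimum} of $l(\cdot,-\delta)$ over $B(\bar{z},\delta^{1/4})\setminus E$. It picks a point $e$ in that set that nearly attains the infimum — and crucially, since $e\notin E$, the smooth reduced geodesic $\boldsymbol{\gamma}$ from $(\bar{y},0)$ to $(e,-\delta)$ really does achieve $l((\bar{y},0),(e,-\delta))$, so $4\delta\,l((\bar{y},0),(e,-\delta))=|\gamma|^2$ exactly, and the integrand $4\delta l\geq |\gamma|^2-0.01\epsilon$ uniformly on the ball minus $E$. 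Plugging this in gives $d^2(\bar{y},\bar{z})\geq |\gamma|^2 - C\delta$, hence $|\gamma|\leq d(\bar{y},\bar{z})+C\delta^{1/4}$; one then concatenates $\gamma$ with a short geodesic from $e$ to $\bar{z}$. The essential move you are missing is the choice of the near-minimizing point $e$, and the use of the freedom that $e$ need not be $\bar{z}$ itself, only close to it. A secondary imprecision is your justification of hypothesis (\ref{eqn:MB28_5}): invoking ``Proposition~\ref{prn:SK27_3} and the regularity of the limit $\boldsymbol{\beta}$'' is not the right mechanism, since Proposition~\ref{prn:SK27_3} only relates regularity of the \emph{same point} at different times and says nothing about where a reduced geodesic on $M_i$ travels spatially; what one actually needs is that the choice of $\bar{z}'$ outside $E_\delta$ (as constructed in the proof of Lemma~\ref{lma:SK27_4}) forces the approximating shortest reduced geodesics $\boldsymbol{\beta}_i$ to avoid $\mathcal{D}_\xi(M_i,0)$, after which backward regularity improvement applies.
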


\begin{proof}
There are only two possibilities: $(\bar{y}, 0)$ can be connected to $(\bar{z}, -1)$ by a smooth shortest reduced geodesic, or they cannot be connected by such a reduced geodesic. We shall treat them seperately.

\textit{Case 1. $(\bar{y}, 0)$ can be connected to $(\bar{z}, -1)$ by a smooth shortest reduced geodesic $\boldsymbol{\beta}$ with space projection curve $\beta$.}

Intuitively, $\beta$ should be the shortest Riemannian geodesic connecting $\bar{y}, \bar{z}$.  However, we do not know this directly right now. 
In the following discussion, we shall use the existence of $\boldsymbol{\beta}$ to apply Lemma~\ref{lma:MB27_1}. Therefore, for each small $\delta$,
we can bound $d^2(\bar{y}, \bar{z})$ from below by the weighted average of $4\delta$ times reduced distance in $B(\bar{z}, \delta^{\frac{1}{4}})$, 
at time slice $t=-\delta$.  Applying a rough mean value theorem, such a lower bound can be approximated by a shortest reduced geodesic $\boldsymbol{\gamma}$
connecting $(\bar{y}, 0)$ and $(e, -\delta)$ for some $e$ nearby $\bar{z}$.  By concatenating $e$ to $\bar{z}$ with a smooth curve and modifying the connection point a little bit, we obtain a smooth curve connecting $\bar{y}$ and $\bar{z}$, whose length approximates the lower bound of $d(\bar{y}, \bar{z})$.

Now we start the detailed discussion. Since $\beta \subset \mathcal{R}$, we can assume $\beta \subset \mathcal{R}_{2\eta}$ for some $\eta>0$.  
In other words, we can find  curves $\beta_i \subset \mathcal{F}_{\eta}(M_i, 0)$  such that $\beta_i \to \beta$.  
   Furthermore, $\beta_i$ is the space projection to time slice $t=0$ of the shortest reduced geodesic $\boldsymbol{\beta}_i$ connecting $(y_i, 0)$ and $(z_i, -1)$, where $y_i, z_i \in M_i$
   and $y_i \to \bar{y}$, $z_i \to \bar{z}$.  To be more precise, we have $\boldsymbol{\beta}_i(\tau)=(\beta_i(\tau), -\tau)$ for each $\tau \in [0, 1]$. 
   
   Fix $\delta>0$ small.  We see that $(y_i, 0)$ and $(z_i, -\delta)$ can be connected by a space-time curve $\tilde{\boldsymbol{\beta}}_i$, which comes from $\beta_i$ by the relationship
   $\tilde{\boldsymbol{\beta}}_i(\tau)=(\beta_i(\frac{\tau}{\delta}), -\tau)$ for each $\tau \in [0, \delta]$.    
   Clearly, we have uniform bound of the length 
   $|\beta_i|_{g_i(0)}$, say by $|\beta|_{\bar{g}}+1$.  We also have the uniform regularity bound around the curve $\beta_i$. In other words, the inequality (\ref{eqn:MB28_5}) in Lemma~\ref{lma:MB27_1} is satisfied, for curve
   $\beta_i$ and end points $(y_i, 0)$ and $(z_i, -\delta)$.  
   Therefore, we can apply inequality (\ref{eqn:MB28_6}) in Lemma~\ref{lma:MB27_1} to obtain
   \begin{align}
     d_{g_i(0)}^2(y_i, z_i)  \geq -C\delta + \left. \int_{B_{g_i(0)}\left(z_i, \delta^{\frac{1}{4}} \right)} (4\tau l) \cdot (4\pi \delta)^{-n} e^{-\tilde{l}}  dv \right|_{t=-\delta},     \label{eqn:MC01_3}
   \end{align}
   where $l$ is the reduced distance to $(y_i, 0)$ and $\tilde{l}$ is the reduced distance to $(z_i, 0)$.  Note that most shortest geodesics stay away from high curvature part by Lemma~\ref{lma:SL14_1}. 
   Away from a measure zero set $E$, $l(\cdot, -\delta)$ is achieved by smooth reduced geodesics.  
   If $l$ is achieved by a shortest reduced geodesic $\boldsymbol{\alpha}$ with uniform regular neighborhood, then $|\nabla l|$ is bounded in the small neighborhood of the end point of $\boldsymbol{\alpha}$, using
   the uniformly bounded geometry around $\bar{z}$(c.f. inequality (\ref{eqn:MA22_11}) and equation (\ref{eqn:MC24_1})). 
   Also, we have a rough estimate of  $l$ in Lemma~\ref{lma:SL13_1} even if $l$ cannot be achieved by a shortest reduced geodesic avoiding high curvature part. 
   Note also that $l$ is uniformly bounded from below by (\ref{eqn:MC22_1}).    
   Therefore, we can take limit on both sides of (\ref{eqn:MC01_3}) to obtain
   \begin{align}
     d^2(\bar{y}, \bar{z})  \geq -C\delta + \left. \int_{B\left(\bar{z}, \delta^{\frac{1}{4}} \right)} (4\tau l) \cdot (4\pi \delta)^{-n} e^{-\tilde{l}}  dv \right|_{t=-\delta}.   \label{eqn:MB01_2}
   \end{align}
   By weak convexity of reduced geodesic, i.e., Lemma~\ref{lma:SK27_4}, there exists a measure-zero set $E$ such that for each $x \in B\left(\bar{z}, \delta^{\frac{1}{4}} \right) \backslash E$,  
   we have $(x, -\delta)$ can be connected to $(\bar{y}, 0)$ by a smooth shortest reduced geodesic $\boldsymbol{\alpha}$.  
   Note that the reduced length of $\boldsymbol{\alpha}$ is uniformly bounded from below by a positive number.  Actually, let $\tau_0$ be the smallest parameter such that $\boldsymbol{\alpha}$ escape $B(\bar{y}, 0.01 \eta)$. Then we have
   \begin{align*}
     2\sqrt{\delta} l = \int_{0}^{\delta} \sqrt{s} |\dot{\alpha}|^2 ds \geq  \int_{0}^{\tau_0} \sqrt{s} |\dot{\alpha}|^2 ds \geq \frac{\left(\int_{0}^{\tau_0} |\dot{\alpha}| ds \right)^2}{\int_0^{\tau_0} \frac{1}{\sqrt{s}} ds} \geq \frac{10^{-4} \eta^2}{2\sqrt{\tau_0}}
     \geq \frac{\eta^2}{20000\sqrt{\delta}}. 
   \end{align*}
   So $l$ is uniformly bounded from below by a positive number $\frac{\eta^2}{40000\delta}$.    Let $\epsilon$ be a very small number in $(0, \frac{\eta^2}{10000})$, whose precise value will be determined later.
    We can choose a point $e \in B\left(\bar{z}, \delta^{\frac{1}{4}} \right) \backslash E$ such that
   \begin{align*}
     l((\bar{y}, 0), (e, -\delta)) \leq \inf_{x \in B\left(\bar{z}, \delta^{\frac{1}{4}} \right) \backslash E} l((\bar{y}, 0), (x, -\delta)) + \frac{0.01 \epsilon}{4\delta}. 
   \end{align*}
   In light of the definition of $E$, we see that $l((\bar{y}, 0), (e, -\delta))$ is achieved by some smooth reduced geodesic, say $\boldsymbol{\gamma}$, with space projection $\gamma$.  
   Intuitively, this $\boldsymbol{\gamma}$ is the shortest reduced reduced geodesic from $(\bar{y}, 0)$ to a ``neibhorood" of $(\bar{z}, -\delta)$. 
   Note that $\boldsymbol{\gamma}$ is different from $\boldsymbol{\beta}$, which is a shortest reduced geodesic from $(\bar{y}, 0)$ to $(\bar{z}, -1)$, for the ``exact" point $\bar{z}$. 
   Their space projection should be very close. But this relationship is not clear and not needed now.  We do know that $|\gamma|^2$ is bounded by some number determined by $|\beta|^2$, say $|\beta|^2+1$.   
   It follows from the choice of $\boldsymbol{\gamma}$ that
   \begin{align*}
     l((\bar{y}, 0), (x, -\delta)) \geq \frac{|\gamma|^2- 0.01 \epsilon}{4\delta}>0, \quad \forall \; x \in B\left(\bar{z}, \delta^{\frac{1}{4}} \right) \backslash E. 
   \end{align*} 
   Plugging the above inequality into (\ref{eqn:MB01_2}), we obtain
   \begin{align*}
     d^2(\bar{y}, \bar{z})  &\geq -C\delta + \left. \int_{B\left(\bar{z}, \delta^{\frac{1}{4}} \right)} \left(|\gamma|^2- 0.01 \epsilon \right) \cdot (4\pi \delta)^{-n} e^{-\tilde{l}}  dv \right|_{t=-\delta}\\
       &= -C\delta + \left(|\gamma|^2- 0.01 \epsilon \right) \left. \int_{B\left(\bar{z}, \delta^{\frac{1}{4}} \right)}  \cdot (4\pi \delta)^{-n} e^{-\tilde{l}}  dv \right|_{t=-\delta}. 
   \end{align*}
   Note that $(\bar{z}, 0)$ has a uniform regular space-time neighborhood, which is Ricci flat.  
   Therefore  $\tilde{l}=\frac{d^2(\bar{z}, \cdot)}{4\delta}$ on the ball  $B\left(\bar{z}, \delta^{\frac{1}{4}} \right)$, at time $t=-\delta$. 
   Then it is not hard to obtain
   \begin{align*}
     \left. \int_{B\left(\bar{z}, \delta^{\frac{1}{4}} \right)}  (4\pi \delta)^{-n} e^{-\tilde{l}}  dv \right|_{t=-\delta}>1-\delta,
   \end{align*}
   whenever $\delta$ is very small. It follows that
   \begin{align*}
     d^2(\bar{y}, \bar{z}) \geq -C\delta + \left(|\gamma|^2- 0.01 \epsilon \right) (1-\delta) \geq |\gamma|^2 - (C+|\gamma|^2) \delta -0.01 \epsilon(1-\delta). 
   \end{align*}
   Note $\delta$ is very small, compared with $10^{-4}\eta^2$. So we can choose $\epsilon=\delta$.  Also note that $4|\gamma|^2$ is bounded by $4|\beta|^2+4$. 
   Then we have
   \begin{align}
       d^2(\bar{y}, \bar{z}) \geq |\gamma|^2 - C\delta   \label{eqn:MC15_1}
   \end{align}
 for some $C$ depending on $\boldsymbol{\beta}$.  Recall that $\gamma$ is a smooth curve connecting $\bar{y}, e$.
 Since $e \in B(\bar{z}, \delta^{\frac{1}{4}})$ and $\bar{z}$ is a regular point,  there is a unique shortest geodesic $\tilde{\gamma}$ connecting
 $e$ to $\bar{z}$, whenever $\delta$ is very small. 
 Now we concatenate $\gamma$ and $\tilde{\gamma}$ and smoothen the concatenated curve around the connection point, we obtain
 a curve $\alpha$ such that
 \begin{align*}
   |\alpha|<|\gamma| +|\tilde{\gamma}| + \delta^{\frac{1}{4}}< |\gamma| + 2 \delta^{\frac{1}{4}},  \quad
   \Rightarrow \quad |\alpha|^2 < |\gamma|^2 +4\delta^{\frac{1}{4}} |\gamma| + 4\delta^{\frac{1}{2}}< |\gamma|^2 + C \delta^{\frac{1}{4}}. 
 \end{align*}
 Plugging the above inequality into (\ref{eqn:MC15_1}), we obtain
 \begin{align*}
  d^2(\bar{y}, \bar{z}) \geq |\alpha|^2 - C\delta^{\frac{1}{4}}, 
 \end{align*}
 which implies (\ref{eqn:MC01_1}) if we choose $\delta << \epsilon^{4}$.   We finish the proof of the first case. 
   
 \textit{Case 2. $(\bar{y}, 0)$ cannot be connected to $(\bar{z}, -1)$ by a smooth shortest reduced geodesic.}  
     
   For each fixed $\epsilon>0$, by the density of $\mathcal{R}(\bar{M}) \backslash E$ in $\mathcal{R}(\bar{M})$, we can find $\bar{z}' \in \mathcal{R}(\bar{M}) \backslash E$ such that 
   $\bar{z}' \in B(\bar{z}, 0.01\epsilon) \cap \mathcal{R}(\bar{M}) \backslash E$.  Furthermore, we can assume $d(\bar{z}', \bar{z})$ is much less
   that the regularity scale of $\bar{z}$.    
   By definition, we know $(\bar{z}', -1)$ can be connected to $(\bar{y}, 0)$ by a smooth shorted geodesic $\boldsymbol{\gamma}$. 
   In light of the proof in Case 1, we can obtain a smooth curve $\alpha_1$ connecting $\bar{y}$ and $\bar{z}'$ such that 
   \begin{align*}
      d(\bar{y}, \bar{z}') \geq |\alpha_1| - 0.01 \epsilon. 
   \end{align*}
   Recall that $\bar{z}$ is regular and $\bar{z}'$ is nearby $\bar{z}$.
   Similar to the last step in Case 1,  we have a smooth curve $\alpha_2$ connecting $\bar{z}'$ and $\bar{z}$ such that
   \begin{align*}
     |\alpha_2| =d(\bar{z}, \bar{z}') \leq 0.01 \epsilon. 
   \end{align*}
   Let $\alpha$ be the smooth curve obtained by concatenating $\alpha_1$ and $\alpha_2$. Then we have
   \begin{align*}
      |\alpha| =|\alpha_1| +|\alpha_2| \leq d(\bar{y}, \bar{z}') + 0.02 \epsilon \leq d(\bar{y}, \bar{z}) + d(\bar{z}, \bar{z}') + 0.02\epsilon \leq d(\bar{y}, \bar{z}) + 0.03\epsilon. 
    \end{align*}
    By smoothing around the connection point of $\alpha_1$ and $\alpha_2$ if necessary, we finish the proof of (\ref{eqn:MC01_1}) in Case 2. 
\end{proof}

\vspace{0.5in}

Xiuxiong Chen, School of Mathematics, Department of Mathematics, State University of New York, Stony Brook, NY 11794, USA;
University of Science and Technology of China, Hefei, Anhui, 230026, PR China;  xiu@math.sunysb.edu.\\

Bing  Wang, Department of Mathematics, University of Wisconsin-Madison,
Madison, WI 53706, USA;  bwang@math.wisc.edu.\\

\end{document}